\documentclass[12pt]{book}
\usepackage[margin=0.8in]{geometry}

\usepackage{background}
\backgroundsetup{
  position=current page.east,
  angle=-90,
  vshift=-3mm,
  hshift=0mm,
  opacity=1,
  scale=3,
  contents= {\rm --~$\overline{\underline{\mbox{DRAFT}}}$ -- {\tiny(contains {\bf errors}, if you find them please let us know) --- {\today}}}
}

\usepackage[htt]{hyphenat}
\usepackage{url}

\newcommand{\R}{\mathbb{R}}
\newcommand{\Z}{\mathbb{Z}}
\newcommand{\N}{\mathbb{N}}

\newcommand{\C}{\mathbb{C}}
\newcommand{\cF}{\mathcal{F}}

\renewcommand{\epsilon}{\varepsilon}
\newcommand{\eps}{\varepsilon}
\newcommand{\e}{\varepsilon}

\usepackage[english]{babel}
\usepackage[alphabetic]{amsrefs}
\usepackage{amsmath,amssymb,amsfonts,amsthm,enumerate}
\usepackage{mathtools}
\usepackage{hyperref}
\usepackage{graphicx,epstopdf,color}
\usepackage{newtxtext}
\usepackage[varvw]{newtxmath}
\usepackage{calligra}
\usepackage[T1]{fontenc}
\usepackage{multirow}

\numberwithin{equation}{chapter}
\numberwithin{figure}{chapter}

\newtheorem{theorem}{Theorem}[chapter]
\newtheorem{lemma}[theorem]{Lemma}

\newtheorem{proposition}[theorem]{Proposition}
\newtheorem{corollary}[theorem]{Corollary}

\theoremstyle{definition}
\newtheorem{definition}[theorem]{Definition}

\theoremstyle{remark}
\newtheorem{remark}[theorem]{Remark}

\renewcommand{\leq}{\leqslant}
\renewcommand{\le}{\leqslant}
\renewcommand{\geq}{\geqslant}
\renewcommand{\ge}{\geqslant}

\DeclareMathOperator{\DIV}{div}
\DeclareMathOperator{\pv}{p.v.}

\renewcommand{\div}{\DIV}

\newcommand{\hf}{\,{}_2F_1}
\allowdisplaybreaks

\usepackage{imakeidx}
\makeindex

\begin{document}

\author{Nicola Abatangelo,\thanks{
NA: Universit\`a di Bologna,
Piazza di Porta S. Donato 5, I-40126 Bologna, Italy. {\tt nicola.abatangelo@unibo.it}}
Serena Dipierro \& Enrico Valdinoci\thanks{
SD \& EV:
University of Western Australia,
35 Stirling Highway,
Crawley WA 6009, Australia. {\tt serena.dipierro@uwa.edu.au}, 
{\tt enrico.valdinoci@uwa.edu.au}}}

\title{A gentle invitation to the fractional world}
\maketitle

\tableofcontents

\chapter*{Preface}

This book is intended as a self-contained introduction to selected topics in the fractional world, focusing particularly on aspects that arise in the study of equations driven by the fractional Laplacian.

The scope of this work is not intended to be exhaustive or all-encompassing. The literature on nonlocal equations is vast, given their relevance across various domains in mathematics and the sciences. Thus, creating a definitive compendium that fully captures the complexity and breadth of fractional operators would be practically unfeasible. Instead, we have chosen topics that we believe will appeal to readers embarking on their journey into fractional analysis. We hope that this book can further inspire and motivate interested readers, offering a perspective on the nonlocal landscape and sparking a curiosity for new fractional adventures.

The book requires only fundamental calculus and a basic understanding of measure theory. In Chapter~\ref{CHAPT-EQUIV}, we introduce the primary object of study, the fractional Laplacian. This operator appears in diverse contexts, prompting multiple definitions and viewpoints, many of which we explore, along with some key identities.

A notable distinction between local and nonlocal analysis is that in the latter, explicit calculations are often impractical or impossible. While traditional differentiation may be achieved with diligence and patience, computing the fractional Laplacian for a specific function is typically out of reach. There are anyway some fortunate exceptions, in which explicit calculations can be performed. Some of these exceptions are gathered in Chapter~\ref{CH:EXAMPLES}, providing also useful and instructive examples.

A similar feature between the classical and the nonlocal world is however that a building block in the analysis and classifications of the solutions often relies on some Liouville-type result. The literature on this topic is quite extensive and we do not aim at presenting here all possible results in their full generality, however Chapter~\ref{LIOUV:CHAP} presents an introduction to this important aspect.

A large portion of this book is devoted to the regularity theory of solutions. This can be approached through various methodologies, including partial differential equations, harmonic analysis, pseudodifferential operators, functional analysis, etc. While an exhaustive treatment of the vast literature on fractional regularity theory is beyond our scope, we have selected key topics that can be approached with minimal prerequisites and extended to more complex situations. Our focus is on regularity in Lebesgue spaces. Chapter~\ref{CHAP6} examines global solutions using Riesz and Bessel potential analysis, which happen to be well-suited to integral operators, capturing the impact of both low and high frequencies on smoothness, decay, and oscillations. These spaces are also flexible enough to provide, as a byproduct, a solid regularity theory in the more commonly\footnote{Here, we assume that the reader is already familiar with the basics of
Sobolev spaces, for which we refer e.g. to~\cite{MR2944369, MR3726909} and the references therein.} used fractional Sobolev spaces
(however, it must be recalled that
regularity theory for pseudodifferential operators often
sits better into Bessel potential and Besov settings rather
than into the Sobolev space framework, which sometimes presents
a wealth of exceptional cases with less sharpness).

From this theory, in Chapter~\ref{CHAP:INTER-REG-LEB} we derive, through suitable cutoffs and localizations, the corresponding interior regularity theory for solutions within a bounded domain using appropriate cutoffs and localization techniques. Additionally, technical appendices include auxiliary results used in key proofs.

In terms of functional analysis, one of the main objective here is to present a regularity theory
in Sobolev spaces, since these are a class of functions which have received much attention in the literature; however, in the approach that we highlight here, this kind of results is just a byproduct of the theory developed in various scales of spaces.

As a notational remark, we acknowledge the lack of a universally adopted notation for the functional spaces discussed in this book. To assist readers in comparing our notation with that of other classical references, we provide a comparative table to map equivalent notations across sources.

\begin{center}
\begin{tabular}{ |p{4cm}||p{2cm}|p{2cm}|p{2cm}|  }
 \hline
& This book &\cite{MR0290095}&\cite{MR2884718}\\
\hline
 \hline
Sobolev space &$ W^{k,p}(\R^n) $&$L^p_k(\R^n)$&$ W^{k}_p(\R^n)$\\
\hline
Bessel potential space & ${\mathcal{L}}^p_{2s}(\R^n) $ &${\mathcal{L}}^p_{2s}(\R^n)$&  $H^{2s}_p(\R^n)$\\
\hline
Besov space& $B^{s,p,q}(\R^n)$ & $\Lambda^{p,q}_\alpha(\R^n)$  &$B^s_{pq}(\R^n)$\\
 \hline
\end{tabular}\end{center}

We have strived to present a coherent and accessible resource, but perhaps human beings (and certainly ourselves) were not meant to be perfect. Therefore, if you spot inconsistencies or errors, or you would like to provide criticisms or feedback, please feel free to contact us.

\chapter{Equivalent definitions of the fractional Laplacian}\label{CHAPT-EQUIV}

The fractional Laplacian\footnote{We do not delve into the vast realm of fractional derivatives and fractional vector calculus here. For an in-depth discussion of these topics, we refer the reader to~\cite{MR30102, MR52018, MR2218073}.

In this vein, it is worth noting that numerous variations of fractional operators have been explored in the literature, resulting in a variety of so-called ``fractional Laplacians''. Thus, what we study here may not be regarded as ``the fractional Laplacian'' but rather ``a fractional Laplacian''.

Our choice of focus is primarily guided by personal preference, though alternative notions of the fractional Laplacian are equally fascinating, offering both a rich mathematical framework and practical applications. It is important to keep in mind, however, that different fractional operators exhibit distinct mathematical properties. For a description of their similarities and differences, see, for example,\cite[Sections~2.1--2.4 and~4.2--4.5]{getting}.}
can be introduced in a number of different ways, 
each one having its own advantages and drawbacks. 
In this chapter we list some of these 
and we try to highlight the different ``domains'' on which the definitions act,
i.e., the families of functions on which the definitions make sense.
Nevertheless, all these different definitions are equivalent
on a suitable common core of functions.

We take as a reference definition the one which is most times 
used as primary definition in the literature,
which is the one as an integral operator 
with singular\footnote{Sometimes ``hypersingular'' is preferred, 
to stress that the kernel is not locally integrable.} kernel.

\begin{definition}
Let~$s\in(0,1)$, $r>0$, $\sigma>s$, $x\in\R^n$ and~$u\in C^{2\sigma}(B_r(x))$ be such that
\begin{align}\label{w-cond-infty}
\int_{\R^n}\frac{|u(y)|}{{(1+|y|)}^{n+2s}}\;dy<+\infty.
\end{align}
The fractional Laplacian of~$u$ at the point~$x$ is defined as
\begin{align}
(-\Delta)^s u(x) &:= c_{n,s}\lim_{\eps\searrow 0}
\int_{\R^n\setminus B_\eps(x)}\frac{u(x)-u(y)}{{|x-y|}^{n+2s}}\;dy, \label{pv-def0} \\
{\mbox{where }}\qquad c_{n,s} &:= -\frac{2^{2s}\Gamma(\frac{n+2s}2)}{\pi^{n/2}\Gamma(-s)}>0. \label{cns}
\end{align}
\end{definition}

Other possible representations of the fractional Laplacian are listed in the following\footnote{We skip for the moment on some interesting details such as the value of the different normalizing constants involved in the formulas, limiting ourselves to just highlighting their dependences on~$n$,~$s$ and possibly other quantities. Yet, the constant denoted by~$c_{n,s}$ is always the one defined in~\eqref{cns}.

Also, in this book we are using the
``ordinary frequency'', ``unitary''
Fourier Transform $$\widehat u(\xi):=
\int_{\R^n}u(x)\,e^{-2\pi ix\cdot\xi}\,dx.$$
This convention has several advantages with respect to others, such as:
\begin{itemize}
\item The inverse Fourier Transform is the same, up to a minus sign,
\item No scaling factor appears in the integration,
\item The coordinates in Fourier space maintain the physical notion of frequencies rather than rescaling to angular frequencies,
\item The convention is in line with several classical textbooks,
such as~\cite{MR0209834}, and also~\cite{MR0290095} up to a minus sign in the complex exponential.\end{itemize}

An unpleasant drawback of this choice is that some powers of~$2\pi$ appear in the representation of pseudo-differential operators. The fact that~$2\pi\ne1$ seems to be an  unavoidable nuisance when dealing with Fourier analysis,
and other conventions can be taken to
shift the appearance to this annoying factors to other formulas.

A commonly used alternative is for instance to define
$$\widehat u(\xi):=
\int_{\R^n}u(x)\,e^{-ix\cdot\xi}\,dx,$$
which provides the benefit of not introducing powers of~$2\pi$
when considering (fractional) derivatives.
The downside with this convention is however that the map is
not unitary, that a factor is needed in the inverse Fourier Transform, and that the Fourier space
refers to angular frequencies.

In any case, our suggestion to the person entering in the theory
is to ``forget'' factors of~$2\pi$ whenever this makes the reading smoother, temporarily suspending disbelief in front of formulas such as~$2\pi=1$.

By the way, the fact that in 1897 the General Assembly of the State of Indiana wanted to pass a ``bill for an act introducing a new mathematical truth'' according to which~$\pi=3.2$
would not solve the annoyance of the factor~$2\pi$
in this book (the fact that the law, in spite of being presented as ``free of cost'', did not pass thanks to the intervention
of the mathematician Clarence Abiathar Waldo, suggests that in~1897 mathematicians did possess some influence on politicians, at least on matters of mathematical truth).} result.

\begin{theorem}\label{CARATT}
Let~$s\in(0,1)$,~$u\in C^\infty_c(\R^n)$ and~$x\in\R^n$.
Then, the fractional Laplacian of~$u$ at~$x$ can be computed as:
\begin{align}
& (-\Delta)^s u(x) \nonumber \\
&=
c_{n,s}\int_{\R^n}\frac{u(x)-u(y)}{{|x-y|}^{n+2s}}\;dy
&\text{for }s\in\left(0,\frac12\right) \label{SENZAPV}\\
&=
c_{n,s}\int_{\R^n}\frac{u(x)-u(y)-\nabla u(x)\cdot(x-y)}{{|x-y|}^{n+2s}}\;dy
&\text{for }s\in\left(\frac12,1\right) \label{taylor-def} \\
&=
c_{n,s}\int_{\R^n}\frac{u(x)-u(y)-\nabla u(x)\cdot(x-y)\chi_{B_1}(x-y)}{{|x-y|}^{n+2s}}\;dy
& \label{taylor-def-cutoff} \\
&=
\frac{c_{n,s}}2\int_{\R^n}\frac{2u(x)-u(x+y)-u(x-y)}{{|y|}^{n+2s}}\;dy \label{symmetric-def} \\
&=
\frac{c_{n,m,s}}{2}\int_{\R^n}\left[\sum_{k=-m}^m (-1)^k { \binom{2m}{m-k}} u(x+ky)\right]\;\frac{dy}{|y|^{n+2s}}
&\text{for }m\in\N\setminus\{0\} \label{HI6} \\
&=
c_{n,s-1}\int_{\R^n}\frac{\Delta u(y)}{{|x-y|}^{n-2+2s}}\;dy 
&\text{for }n\neq 2s\label{RIE}\\
&=
\int_{\R^n}(2\pi|\xi|)^{2s}\widehat{u}(\xi)e^{2\pi i x\cdot\xi}\;d\xi 
&\text{where }\widehat{u}\text{ is the Fourier Transform of }u \label{FORi} \\
&=
\frac{s}{\Gamma(1-s)}\int_0^{+\infty}\frac{u(x)-\big[e^{t\Delta}u\big](x)}{t^{1+s}}\;dt
&\text{where } \{e^{t\Delta}\}_{t>0} \text{ is the heat semigroup} \label{HESE}\\
&=
-\div^s\big(\nabla^s u\big)(x) \label{RADCVE}\\
&=
-\div^{2s-1}\big(\nabla u)(x) 
& \text{for }s\in\left(\frac12,1\right) \label{RADCVE2}\\
&=
-\div\big(\nabla^{2s-1}u\big)(x)
& \text{for }s\in\left(\frac12,1\right) \label{RADCVE3}
\end{align}  
where~$\div^s$ and~$\nabla^s$ denote respectively the fractional divergence and gradient
(see Section~\ref{sec:nonlocal-grad}). 
\end{theorem}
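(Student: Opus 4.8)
The plan is to fix $u\in C^\infty_c(\R^n)$, take the singular‑integral formula \eqref{pv-def0} as the reference definition, and reduce every other expression to it, with the Fourier transform as the main bridge. I would split the work into three groups: the purely real‑variable reformulations, the Fourier multiplier identity, and the identities that follow from it.

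\textbf{Real-variable reformulations \eqref{SENZAPV}--\eqref{symmetric-def}.} First I would observe that the limit in \eqref{pv-def0} exists: outside $B_1(x)$ the integrand is absolutely integrable thanks to \eqref{w-cond-infty} (and, for $u\in C^\infty_c$, trivially so), so only the behaviour near $x$ matters. There I would use that $\R^n\setminus B_\eps(x)$ is invariant under the reflection $y\mapsto 2x-y$, so the odd term $\nabla u(x)\cdot(x-y)$ has vanishing integral over it for every $\eps>0$ and may be inserted for free; a second-order Taylor expansion then bounds the resulting integrand by $C\,|x-y|^{2-n-2s}$, integrable near $x$ for all $s\in(0,1)$, so the limit can be dropped. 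This gives \eqref{taylor-def-cutoff} (the cutoff $\chi_{B_1}(x-y)$ is even in $x-y$, hence does not disturb the symmetry cancellation). Specializing: for $s<\tfrac12$ the quotient $\frac{u(x)-u(y)}{|x-y|^{n+2s}}$ is already locally integrable near $x$, giving \eqref{SENZAPV}; for $s>\tfrac12$ the first-order correction is needed near $x$ but the cutoff is not needed away from $x$, since there $\frac{|\nabla u(x)\cdot(x-y)|}{|x-y|^{n+2s}}\le C|x-y|^{1-n-2s}$ is integrable once $2s>1$, giving \eqref{taylor-def}. Finally, averaging the substitutions $y=x+z$ and $y=x-z$ in \eqref{pv-def0} turns it into \eqref{symmetric-def}, whose integrand is $O(|z|^{2-n-2s})$ near the origin, so the principal value is again superfluous.

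\textbf{The Fourier multiplier \eqref{FORi}.} Starting from \eqref{symmetric-def}, I would take the Fourier transform in $x$; this is licit by Fubini, since $u$ is Schwartz, the kernel is integrable away from the origin, and the numerator is $O(|y|^2)$ near it. Using $\widehat{u(\cdot+y)}(\xi)=e^{2\pi i y\cdot\xi}\widehat u(\xi)$ one obtains $\widehat{(-\Delta)^su}(\xi)=c_{n,s}\left(\int_{\R^n}\frac{1-\cos(2\pi y\cdot\xi)}{|y|^{n+2s}}\,dy\right)\widehat u(\xi)$. Rotating so that $\xi$ is a multiple of a coordinate vector and rescaling $y\mapsto y/|\xi|$ shows the bracket equals $|\xi|^{2s}$ times the dimensional constant $\int_{\R^n}\frac{1-\cos(2\pi y_1)}{|y|^{n+2s}}\,dy$, and computing this last integral (reducing to one dimension, then using Beta/Gamma identities) shows it equals $(2\pi)^{2s}/c_{n,s}$ with $c_{n,s}$ exactly as in \eqref{cns} --- equivalently, this is where the value \eqref{cns} gets pinned down. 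I expect this explicit constant evaluation --- together with the analogous one needed for \eqref{HI6} --- to be the main technical obstacle; the remaining identities are mechanical consequences of \eqref{FORi}.

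\textbf{Consequences of the multiplier property.} For each remaining identity I would check that the Fourier transform of the right-hand side equals $(2\pi|\xi|)^{2s}\widehat u(\xi)$. For \eqref{HI6} I would first invoke the finite-difference identity $\sum_{k=-m}^{m}(-1)^k\binom{2m}{m-k}k^j=0$ for $0\le j\le 2m-1$, so the bracketed combination is $O(|y|^{2m})$ near the origin and the integral converges; the Fourier transform then reduces, as above, to a one-dimensional Gamma integral fixing $c_{n,m,s}$ (the case $m=1$ being \eqref{symmetric-def}). For \eqref{RIE} I would recognize the right-hand side as the appropriate normalizing constant times the Riesz potential of order $2-2s$ applied to $\Delta u$, whose Fourier multiplier is $(2\pi|\xi|)^{-(2-2s)}$; combined with $\widehat{\Delta u}(\xi)=-(2\pi|\xi|)^2\widehat u(\xi)$ and the fact that $c_{n,s-1}$ carries precisely the right Riesz normalization, this yields $(2\pi|\xi|)^{2s}\widehat u(\xi)$, the hypothesis $n\neq 2s$ ruling out the degenerate logarithmic-kernel case $n=1$, $s=\tfrac12$ (in which $\int_{\R^n}\Delta u=0$). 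For \eqref{HESE} I would use $\widehat{e^{t\Delta}u}(\xi)=e^{-t(2\pi|\xi|)^2}\widehat u(\xi)$ together with the elementary identity $\frac{s}{\Gamma(1-s)}\int_0^{+\infty}\frac{1-e^{-t\lambda}}{t^{1+s}}\,dt=\lambda^{s}$ for $\lambda>0$ (an integration by parts reduces it to the definition of $\Gamma$); the exchange with the $t$-integral is justified by the pointwise bound $|u-e^{t\Delta}u|\le C\min\{t,1\}$, from a Taylor expansion against the heat kernel for small $t$ and $L^\infty$ boundedness of the semigroup for large $t$. Finally, \eqref{RADCVE}, \eqref{RADCVE2} and \eqref{RADCVE3} follow from the Fourier symbols of the nonlocal gradient and divergence of Section~\ref{sec:nonlocal-grad}: each of $\nabla^s$ and $\div^s$ has multiplier a positive constant times $2\pi i\xi\,|\xi|^{s-1}$ (with a sign change, via complex conjugation, for the divergence), so $-\div^s\nabla^s$, $-\div^{2s-1}\nabla$ and $-\div\nabla^{2s-1}$ all have multiplier a positive constant times $|\xi|^{2s}$; the constants in those definitions are normalized precisely so that this is $(2\pi)^{2s}$, and in \eqref{RADCVE2}--\eqref{RADCVE3} the constraint $s>\tfrac12$ guarantees that $\nabla u$ (respectively the order-$(2s-1)$ object) lies in the domain of the outer operator.
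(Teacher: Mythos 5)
Your plan is sound and, for most items, lands on the same computations as the paper, but the overall architecture is genuinely different. The paper treats \eqref{SENZAPV}--\eqref{symmetric-def} and \eqref{HI6} by purely real-variable arguments (for \eqref{HI6}, a scaling comparison between the symmetric and higher-order difference quotients based on the combinatorial identity \eqref{app-combinissima}, which pins down \eqref{cnms} without any Fourier analysis), proves \eqref{RIE} by a double integration by parts (Lemma~\ref{RIEL}), and only then obtains \eqref{FORi} from \eqref{RIE} when $n\ge2$ via the Fourier transform of the Riesz kernel, with a separate one-dimensional argument close in spirit to your hub computation; \eqref{HESE} is done by a direct Gaussian/Gamma calculation and \eqref{RADCVE2}--\eqref{RADCVE3} by integration by parts. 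You instead make \eqref{FORi} the hub: you prove it directly from \eqref{symmetric-def} by evaluating $\int_{\R^n}(1-\cos(2\pi y_1))\,|y|^{-n-2s}\,dy$ (the same Beta/Gamma evaluation the paper carries out in its section on periodic functions), and then read off \eqref{RIE}, \eqref{HESE}, \eqref{HI6} and the nonlocal-gradient identities as multiplier statements. For $u\in C^\infty_c(\R^n)$ this is a legitimate and arguably more economical organization (one hard constant evaluation instead of several), at the price of pushing everything through distributional Fourier analysis; the paper's real-variable detours buy the identities under weaker hypotheses (functions in $C^{2\sigma}\cap L^1_s$ rather than Schwartz-type data) and avoid Fourier transforms of homogeneous kernels.

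Two points need repair. First, reading \eqref{RIE} off as a ``mechanical consequence'' of \eqref{FORi} is only off-the-shelf when $0<n-2+2s<n$, i.e. when $n\ge2$, or $n=1$ with $s>\frac12$. For $n=1$ and $s\in\left(0,\frac12\right)$ the kernel $|x-y|^{1-2s}$ grows at infinity, and its Fourier transform is a homogeneous distribution of degree $2s-2<-1$, defined only as a finite part; the standard Riesz-multiplier identity does not apply as stated, and one must exploit the moment cancellations $\int_\R\Delta u\,dy=\int_\R y\,\Delta u(y)\,dy=0$ to make sense of the pairing and to recover the constant. This is precisely why the paper proves \eqref{RIE} by the integration-by-parts argument of Lemma~\ref{RIEL}, which covers all $n\neq2s$ uniformly. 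Second, your parenthetical claim that the symbol of the nonlocal divergence differs from that of the nonlocal gradient ``by a sign change, via complex conjugation'' is incorrect: by Lemma~\ref{lem:fourier-symbol-of-nonlocal-grad} both operators have the \emph{same} symbol $i\,(2\pi|\xi|)^{s-1}\,2\pi\xi$, and the positive multiplier of $-\div^s\nabla^s$ arises because $i\cdot i=-1$ compensates the explicit minus sign. Taken literally, your version would give $-\div^s\nabla^s$ the symbol $-(2\pi|\xi|)^{2s}$, contradicting \eqref{FORi}; the conclusions you state for \eqref{RADCVE}, \eqref{RADCVE2} and \eqref{RADCVE3} are correct, but only with the unconjugated symbol.
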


Here above and in the rest of this monograph, $\Gamma$ stands for the Euler Gamma Function, for which we refer to Appendix~\ref{sec:gamma}.

The proof of Theorem~\ref{CARATT} is provided here below in Sections~\ref{fractsection:1}, \ref{fractsection:2}, \ref{fractsection:3},
\ref{fractsection:4} and~\ref{sec:nonlocal-grad},
where also more general approaches will be presented
(see also~\cite{MR3613319} for a detailed investigation
of different equivalent definitions of the fractional Laplacian). The relevant notation will be introduced when needed in the following sections.

Unless otherwise stated, in all the following~$s$ will denote a real number in the interval~$(0,1)$.

\section[The fractional Laplacian as an integral operator]{The fractional Laplacian as an integral operator:
proof of \eqref{SENZAPV}, \eqref{taylor-def},
\eqref{taylor-def-cutoff} and~\eqref{symmetric-def}}\label{fractsection:1}

In the following we indicate
\begin{align}\label{w-l1-space}
L^1_s(\R^n):=\left\{u\in L^1_{{\rm{loc}}}(\R^n)\text{ s.t. }\int_{\R^n}\frac{|u(y)|}{{(1+|y|)}^{n+2s}}\;dy<+\infty\right\}.
\end{align}
Namely, $L^1_s(\R^n)$ is the space of functions in~$L^1_{{\rm{loc}}}(\R^n)$
for which the assumption in~\eqref{w-cond-infty} is satisfied.

As introduced in~\eqref{pv-def0}, the fractional Laplacian operator can be
defined as an integral operator  with singular kernel:
\begin{align}
(-\Delta)^s u(x) &= c_{n,s}\pv\int_{\R^n}\frac{u(x)-u(y)}{{|x-y|}^{n+2s}}\;dy, \label{pv-def}
\end{align}
where~$c_{n,s}$ is the constant defined in~\eqref{cns}.
The~``$\pv$'' specification stands for the integral taken in its principal value sense, i.e.,
with the limit as in~\eqref{pv-def0}: this specification is not necessary when~$s\in(0,1/2)$,
in which case some regularity on~$u$ is enough to make~\eqref{pv-def} meaningful
in the Lebesgue sense.
This is made clear by the following observation:

\begin{lemma}\label{SENZAPVL}
Let~$s\in(0,1/2)$.
If there exist~$r>0$ and~$\sigma\in(s,1/2)$ such that~$u\in C^{2\sigma}(B_r(x))\cap L^1_s(\R^n)$, then
\begin{align*}
(-\Delta)^s u(x) &= c_{n,s}\int_{\R^n}\frac{u(x)-u(y)}{{|x-y|}^{n+2s}}\;dy, 
\end{align*}
with the integral taken in Lebesgue sense.
\end{lemma}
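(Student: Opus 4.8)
The plan is to prove that the function
\[
y\;\longmapsto\;\frac{u(x)-u(y)}{|x-y|^{n+2s}}
\]
belongs to $L^1(\R^n)$. Once this is known, the principal value in~\eqref{pv-def0} becomes superfluous: for any $L^1$ function the truncated integrals over $\R^n\setminus B_\eps(x)$ converge, as $\eps\searrow0$, to the integral over all of $\R^n$ (by dominated convergence, dominating by the integrand itself), so the two expressions for $(-\Delta)^s u(x)$ coincide, and the one on the right is a genuine Lebesgue integral.

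To establish the integrability I would fix some $\rho\in(0,r)$ and split $\R^n=B_\rho(x)\cup\big(\R^n\setminus B_\rho(x)\big)$. On $B_\rho(x)$ I use the hypothesis $u\in C^{2\sigma}(B_r(x))$, where $2\sigma\in(0,1)$: this yields the Hölder bound $|u(x)-u(y)|\le [u]_{C^{2\sigma}(\overline{B_\rho(x)})}\,|x-y|^{2\sigma}$ for $y\in B_\rho(x)$, hence
\[
\frac{|u(x)-u(y)|}{|x-y|^{n+2s}}\;\le\;\frac{[u]_{C^{2\sigma}(\overline{B_\rho(x)})}}{|x-y|^{\,n-2(\sigma-s)}},
\]
and the right-hand side is integrable on $B_\rho(x)$ precisely because $n-2(\sigma-s)<n$, i.e.\ because $\sigma>s$. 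This is the step in which the strict inequality $\sigma>s$ (rather than merely $\sigma=s$) enters in an essential way, and although the estimate itself is completely elementary, I expect this to be the conceptual crux of the argument.

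On $\R^n\setminus B_\rho(x)$ I would split the numerator and bound the two resulting pieces separately. The contribution of $u(x)$ is $|u(x)|\int_{\R^n\setminus B_\rho(x)}|x-y|^{-n-2s}\,dy$, which is finite since $|x-y|^{-n-2s}$ is integrable at infinity. For the contribution of $u(y)$ I note that on the set $\{|y-x|\ge\rho\}$ one has $1+|y|\le 1+|x|+|x-y|\le C(x,\rho)\,|x-y|$, whence $|x-y|^{-n-2s}\le C(x,\rho)^{n+2s}(1+|y|)^{-n-2s}$, and therefore
\[
\int_{\R^n\setminus B_\rho(x)}\frac{|u(y)|}{|x-y|^{n+2s}}\,dy
\;\le\;C(x,\rho)^{n+2s}\int_{\R^n}\frac{|u(y)|}{(1+|y|)^{n+2s}}\,dy\;<\;+\infty,
\]
by the assumption $u\in L^1_s(\R^n)$; recall~\eqref{w-l1-space}. (The annulus $B_r(x)\setminus B_\rho(x)$ causes no trouble: the weight is bounded there and $u\in L^1_{\rm loc}(\R^n)$, but in fact the previous bound already covers it.) Combining the three estimates gives $y\mapsto\frac{u(x)-u(y)}{|x-y|^{n+2s}}\in L^1(\R^n)$, and the statement follows as explained above.
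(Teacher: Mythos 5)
Your proof is correct and follows essentially the same route as the paper: split the integral at a ball centred at~$x$, use the $C^{2\sigma}$ bound with $\sigma>s$ to tame the singularity, and use the $L^1_s(\R^n)$ condition for the tail, so that the integrand is absolutely integrable and the principal value reduces to a Lebesgue integral. The only cosmetic difference is that you handle the whole exterior region with the single comparison $1+|y|\le C(x,\rho)\,|x-y|$, whereas the paper splits the exterior into a bounded annulus (using local integrability of~$u$ and $|x-y|\ge r$) and a far region where $2|x-y|>1+|y|$.
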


\begin{proof}
The claim follows by a direct estimate:
\begin{align*}
& \int_{\R^n}\frac{|u(x)-u(y)|}{{|x-y|}^{n+2s}}\;dy =
\int_{B_r(x)}\frac{|u(x)-u(y)|}{{|x-y|}^{n+2s}}\;dy
+\int_{\R^n\setminus B_r(x)}\frac{|u(x)-u(y)|}{{|x-y|}^{n+2s}}\;dy \\
&\leq
[u]_{C^{2\sigma}(B_r(x))}\int_{B_r(x)}\frac{dy}{{|x-y|}^{n+2s-2\sigma}}
+|u(x)|\int_{\R^n\setminus B_r(x)}\frac{dy}{{|x-y|}^{n+2s}}
+\int_{\R^n\setminus B_r(x)}\frac{|u(y)|}{{|x-y|}^{n+2s}}\;dy \\
&=
[u]_{C^{2\sigma}(B_r(x))}\frac{|\mathbb{S}^{n-1}|}{2\sigma-2s}r^{2\sigma-2s}
+|u(x)|\frac{|\mathbb{S}^{n-1}|}{2s}r^{-2s}
+\int_{B_{2|x|+1}\setminus B_r(x)}\frac{|u(y)|}{{|x-y|}^{n+2s}}\;dy\\
&\qquad
+\int_{\R^n\setminus B_{2|x|+1}}\frac{|u(y)|}{{|x-y|}^{n+2s}}\;dy \\
&\leq
[u]_{C^{2\sigma}(B_r(x))}\frac{|\mathbb{S}^{n-1}|}{2\sigma-2s}r^{2\sigma-2s}
+|u(x)|\frac{|\mathbb{S}^{n-1}|}{2s}r^{-2s}
+r^{-n-2s}\int_{B_{2|x|+1}\setminus B_r(x)}|u(y)|\;dy \\
&\qquad +2^{n+2s}\int_{\R^n\setminus B_{2|x|+1}}\frac{|u(y)|}{{(1+|y|)}^{n+2s}}\;dy ,
\end{align*}
where we have used that~$|y|\geq2|x|+1$ implies~$2|y|-2|x|\geq1+|y|$ and therefore~$2|x-y|>1+|y|$.
\end{proof}

The claim in~\eqref{SENZAPV} is thus a consequence of Lemma~\ref{SENZAPVL}.

In order to avoid the use of the principal value in~\eqref{pv-def} for~$s\in(1/2,1)$
one could alternatively take inspiration from a Taylor expansion of~$u$ centred at~$x$, from which~\eqref{taylor-def} follows
(the condition~$s\in(1/2,1)$ being needed to ensure integrability at infinity).

To avoid both the principal value and the restriction~$s\in(1/2,1)$,
it is possible to cut off the linear term in~\eqref{taylor-def} 
as it is done in~\eqref{taylor-def-cutoff}: 
in this way no integrability issues arise at infinity 
and the singularity of the kernel is still compensated by the expansion of~$u$.

Also, one can exploit the symmetries of the integral in~\eqref{pv-def}, thus obtaining~\eqref{symmetric-def}.

Similarly as above one could verify that these expressions make sense 
when there exist~$r>0$ and~$\sigma\in(s-1/2,1)$ such that~$u\in C^{2\sigma}(B_r(x))\cap L^1_s(\R^n)$.

\subsection[Higher-order representations]{Higher-order representations: proof of~\eqref{HI6}}

We point out that
the expressions for~$(-\Delta)^s$ discussed so far
do not allow for values~$s\geq1$.
This is because the expression in~\eqref{taylor-def}
allows only for operators up to order~$2$
acting on a smooth function~$u$.

Still, it is possible to include larger values of~$s$ at the price of 
increasing the degree of the Taylor polynomial in~\eqref{taylor-def}. Namely,
one can write 
\begin{align}\label{higher-taylor-def}
(-\Delta)^s u(x) = 
-c_{n,s}\int_{\R^n}\left(u(x+y)-\sum_{|\alpha|\leq 2s}\frac{D^\alpha u(x)}{\alpha!}\,y^\alpha\right)\;\frac{dy}{{|y|}^{n+2s}}.
\end{align}
Here~$\alpha=(\alpha_1,\ldots,\alpha_n)\in\N^n$ denotes a multi-index,
$|\alpha|=\alpha_1+\ldots+\alpha_n$ the length of~$\alpha$, 
$\alpha!=\alpha_1!\cdots\alpha_n!$, $y^\alpha=y_1^{\alpha_1}\cdots y_n^{\alpha_n}$ and~$D^\alpha=\partial_{x_1}^{\alpha_1}\cdots\partial_{x_n}^{\alpha_n}$. Also, we will denote by~$\lfloor{2s}\rfloor$ the lower integer part of~$2s$.

Representation~\eqref{higher-taylor-def} does not directly apply to~$s\in\N$, 
indeed the constant~$c_{n,s}$ degenerates there (as a byproduct of
the singularity of the~$\Gamma$ functions at negative integers). 
If one is willing to obtain a ``smoother'' representation in~$s$, 
valid for all~$s>0$ and more in the spirit of~\eqref{symmetric-def}, 
an equivalent expression for~\eqref{higher-taylor-def} is
\begin{eqnarray}
&&
(-\Delta)^su(x) = 
\frac{c_{n,m,s}}{2}\int_{\R^n} \frac{\delta_m u(x,y)}{|y|^{n+2s}}\;dy\qquad
\text{for some }m\in\N,\ m>s, \label{higher-symmetric-def} \\ 
&& \text{with}\quad 
\delta_m u(x,y) = 
\sum_{k=-m}^m (-1)^k { \binom{2m}{m-k}} u(x+ky) \label{higher-symmetric-def2} .\end{eqnarray}
Also, the constant~$c_{n,m,s}$ can be written as
\begin{equation}c_{n,m,s} = 
-c_{n,s}\left(\sum_{k=1}^{m}(-1)^{k}{\binom{2m}{m-k}} k^{2s}\right)^{-1} 
\quad
\text{for } s\in(0,m)\setminus\N \label{cnms} \end{equation}
and
\begin{equation}c_{n,m,s} = 
\frac{2^{2s-1}\,\Gamma(\frac{n}2+s)\,s!}{\pi^{n/2}}\left(\sum_{k=2}^{m}(-1)^{k-s+1}{\binom{2m}{m-k}} k^{2s}\ln(k)\right)^{-1} 
\quad
\text{for } s\in(0,m)\cap\N. \label{cnms2}
\end{equation}
See~\cite{MR3809107} for more details on this definition and related references.

The validity of~\eqref{higher-taylor-def} and~\eqref{higher-symmetric-def} 
is useful when one is interested in analysing 
the behaviour of~${(-\Delta)}^s$ when~$s\to1$ or, more philosophically,
to place the fractional and the classical Laplacian under a common roof:
it might not be evident at first glance, but~\eqref{higher-taylor-def} 
and~\eqref{higher-symmetric-def} give back a classical Laplacian upon setting~$s=1$,
and~\eqref{higher-symmetric-def} is merely a more convoluted version of~\eqref{symmetric-def}
for~$s\in(0,1)$. We give here below an explanation of these facts.

\begin{lemma}
Let~$r>0$, $2s\in(0,+\infty)\setminus\N$, $\beta\in(2s,\lfloor{2s}\rfloor+1]$, $x\in\R^n$ and~$u\in C^\beta(B_r(x))\cap L^1_s(\R^n)$. Then, \eqref{higher-taylor-def} and~\eqref{higher-symmetric-def} coincide.
\end{lemma}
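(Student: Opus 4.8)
The plan is to show that the two integrands define the same integrable quantity by expressing the finite-difference operator $\delta_m u(x,y)$ in terms of the Taylor jet of $u$ at $x$, and then integrating against the kernel $|y|^{-n-2s}$. First I would fix $x$ and write, using the $C^\beta$ regularity with $\beta\in(2s,\lfloor 2s\rfloor+1]$, the Taylor expansion with integral (or Peano) remainder
\[
u(x+ky) = \sum_{|\alpha|\le \lfloor 2s\rfloor}\frac{D^\alpha u(x)}{\alpha!}\,(ky)^\alpha + R_k(x,y),
\qquad |R_k(x,y)|\le C\,|ky|^{\beta},
\]
valid for $|ky|<r$, and a cruder bound $|u(x+ky)|\le |u(x)|+\dots$ or the $L^1_s$ control for $|ky|\ge r$. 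Plugging this into $\delta_m u(x,y)=\sum_{k=-m}^{m}(-1)^k\binom{2m}{m-k}u(x+ky)$, the polynomial part organises by multi-index $\alpha$, and for each $\alpha$ the coefficient is $y^\alpha D^\alpha u(x)/\alpha!$ times the numerical sum $\sum_{k=-m}^{m}(-1)^k\binom{2m}{m-k}k^{|\alpha|}$.

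The key combinatorial fact I would isolate as a sublemma is that
\[
\sum_{k=-m}^{m}(-1)^k\binom{2m}{m-k}k^{j}=0
\qquad\text{for all integers }0\le j\le 2m-1,
\]
which follows from the identity $\sum_{k}(-1)^k\binom{2m}{m-k}z^k = z^{-m}(1-z)^{2m}$ by applying $(z\frac{d}{dz})^j$ and evaluating at $z=1$ (every derivative up to order $2m-1$ kills the factor $(1-z)^{2m}$). Since $m>s$ forces $\lfloor 2s\rfloor\le 2m-1$, every monomial in the Taylor polynomial of degree $\le\lfloor 2s\rfloor$ is annihilated, so that $\delta_m u(x,y)=\sum_{k=-m}^{m}(-1)^k\binom{2m}{m-k}R_k(x,y)$ pointwise. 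The remainder bound $|R_k(x,y)|\lesssim |y|^{\beta}$ on $|y|<r$ and the $L^1_s$-type tail bound on $|y|\ge r$ then give $|\delta_m u(x,y)|\lesssim \min\{|y|^{\beta},\,1+|u(x+ky)|\text{-terms}\}$, which is integrable against $|y|^{-n-2s}$ precisely because $\beta>2s$ near the origin and the weight $(1+|y|)^{-n-2s}$ controls the tail; this is the same mechanism already used for \eqref{symmetric-def} and \eqref{taylor-def-cutoff}.

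With integrability in hand, I would run the identical computation for the integrand of \eqref{higher-taylor-def}, namely $-\bigl(u(x+y)-\sum_{|\alpha|\le 2s}\frac{D^\alpha u(x)}{\alpha!}y^\alpha\bigr)$, which is just $-R_1(x,y)$, again with the same integrability. The final step is to reconcile the two via the change of variables $y\mapsto ky$ inside each $k$-th summand of \eqref{higher-symmetric-def}: after substituting, $\int_{\R^n}R_k(x,y)|y|^{-n-2s}\,dy = k^{2s}\int_{\R^n}R_1(x,y')|y'|^{-n-2s}\,dy'$ (using $|y|=|y'|/k$ and $dy=k^{-n}dy'$, and that $R_k(x,y)=R_1(x,ky)$ since both equal $u(x+ky)-\text{Taylor jet at }x$ evaluated at the point $x+ky$). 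Summing over $k$ and exploiting the evenness $R_{-k}=R_k$ under $y\mapsto -y$ (or simply keeping track of signs), the two constants $c_{n,m,s}$ and $-c_{n,s}\bigl(\sum_{k=1}^{m}(-1)^k\binom{2m}{m-k}k^{2s}\bigr)^{-1}$ match exactly by \eqref{cnms}, proving the coincidence. The main obstacle is the bookkeeping in this last reconciliation — getting the factor $k^{2s}$, the $(-1)^k$ signs, the $\binom{2m}{m-k}$ weights, and the symmetrisation in $\pm k$ to line up with the stated formula for $c_{n,m,s}$ — together with being careful that the splitting $\delta_m u=\sum(-1)^k\binom{2m}{m-k}R_k$ is legitimate term by term even though the Taylor expansion is only valid on $B_r(x)$ (which requires handling the region $|ky|\ge r$ by absolute convergence rather than by the expansion).
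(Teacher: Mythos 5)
Your plan is correct and follows essentially the same route as the paper's proof: write $\delta_m u$ via the Taylor remainder $R(x,ky)$, annihilate the polynomial part using the identity $\sum_{k=-m}^m(-1)^k\binom{2m}{m-k}k^{|\alpha|}=0$ for $|\alpha|\le 2s<2m$ (the paper's \eqref{app-combinissima}, which you reprove by a generating-function argument instead of the appendix's induction), then rescale $y\mapsto ky$ in each term to produce the factor $|k|^{2s}$ and match constants through \eqref{cnms}. The integrability and sign/symmetry bookkeeping you flag is exactly what the paper handles, so no gap remains.
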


\begin{proof}
We define, for every~$y\in\R^n$,
\begin{eqnarray*}
R(x,y)&:=&u(x+y)- \sum_{|\alpha|\leq {2s}}\frac{D^\alpha u(x)}{\alpha!}y^\alpha\\&=&
u(x+y)- \sum_{|\alpha|\leq \lfloor{2s}\rfloor}\frac{D^\alpha u(x)}{\alpha!}y^\alpha.\end{eqnarray*}
Thanks to the regularity of~$u$, we have that
\begin{align*}
\big|R(x,y)\big|&\leq \|u\|_{C^{\beta}(B_r(x))}{|y|}^{\beta}
\qquad\text{for every }y\in B_r(x).
\end{align*}
Therefore, one can see that
\begin{align*}
\int_{\R^n}\frac{\big|R(x,ky)\big|}{{|y|}^{n+2s}}\;dy<+\infty
\qquad\text{for }k\in\Z.
\end{align*}

Also, whenever~$|\alpha|\leq 2s<2m$, from~\eqref{app-combinissima} we deduce that
\begin{align*}
\sum_{k=-m}^m(-1)^k\binom{2m}{m-k}k^{|\alpha|}=0.
\end{align*}
As a consequence,
\begin{align*}
& \frac{c_{n,m,s}}{2}\int_{\R^n}\frac{\delta_mu(x,y)}{|y|^{n+2s}}\,dy
\\
&=\frac{c_{n,m,s}}{2}\int_{\R^n}\sum_{k=-m}^m(-1)^k\binom{2m}{m-k}u(x+ky)\;\frac{dy}{{|y|}^{n+2s}} \\
&=\frac{c_{n,m,s}}{2}\int_{\R^n}\sum_{k=-m}^m(-1)^k\binom{2m}{m-k} \left(R(x,ky)+ \sum_{|\alpha|\leq {2s} }\frac{D^\alpha u(x)}{\alpha!}k^{|\alpha|} y^\alpha \right)
\;\frac{dy}{{|y|}^{n+2s}}\\
& =
\frac{c_{n,m,s}}{2}\int_{\R^n}\sum_{k=-m}^m(-1)^k\binom{2m}{m-k}R(x,ky)\;\frac{dy}{{|y|}^{n+2s}} \\
& =
\frac{c_{n,m,s}}{2}\sum_{k=-m}^m(-1)^k\binom{2m}{m-k}
\int_{\R^n}R(x,ky)\;\frac{dy}{{|y|}^{n+2s}} \\
& =
\frac{c_{n,m,s}}{2}\Bigg(\sum_{k=-m}^m(-1)^k\binom{2m}{m-k}|k|^{2s}\Bigg)
\int_{\R^n}R(x,z)\;\frac{dy}{{|z|}^{n+2s}} \\
& =
-c_{n,s}
\int_{\R^n}R(x,z)\;\frac{dy}{{|z|}^{n+2s}} 
\\&=
-c_{n,s}
\int_{\R^n}\left(u(x+z)-\sum_{|\alpha|\leq {2s} }\frac{D^\alpha u(x)}{\alpha!}{z}^\alpha\right)\;\frac{dy}{{|z|}^{n+2s}},
\end{align*}as desired.
\end{proof}

\begin{lemma}\label{lem:sym-vs-high}
Let~$s\in(0,1)$,~$r>0$,~$\beta\in(2s,\lfloor 2s\rfloor+1]$, $x\in\R^n$ and~$u\in C^\beta(B_r(x))\cap L^1_s(\R^n)$. Then, \eqref{symmetric-def} and~\eqref{higher-symmetric-def} coincide.
\end{lemma}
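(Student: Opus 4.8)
The plan is to reduce Lemma~\ref{lem:sym-vs-high} to the previous lemma by showing that for $s\in(0,1)$ the higher-order symmetric expression \eqref{higher-symmetric-def} agrees with the basic symmetric expression \eqref{symmetric-def} through the common intermediate object \eqref{higher-taylor-def}. Since $s\in(0,1)$, the condition $m>s$ in \eqref{higher-symmetric-def} is satisfied by every $m\in\N\setminus\{0\}$, and in particular by $m=1$, for which \eqref{higher-symmetric-def2} gives $\delta_1 u(x,y)=-u(x+y)+2u(x)-u(x-y)$ and, after checking that $c_{n,1,s}=c_{n,s}$ from \eqref{cnms}, the expression \eqref{higher-symmetric-def} with $m=1$ is literally \eqref{symmetric-def}. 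So the statement is really the assertion that \eqref{higher-symmetric-def} is independent of the choice of admissible $m$; once that is known, comparing a general $m$ with $m=1$ finishes the proof.

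First I would observe that the hypotheses here match those of the preceding lemma except that $2s$ may be smaller than $1$; but since $s\in(0,1)$ we have $2s\in(0,2)$, and if $2s\notin\N$ the preceding lemma applies directly with $\beta\in(2s,\lfloor 2s\rfloor+1]$, giving that \eqref{higher-symmetric-def} coincides with \eqref{higher-taylor-def} for every admissible $m$. In particular \eqref{higher-symmetric-def} for a general $m$ equals \eqref{higher-taylor-def}, which in turn equals \eqref{higher-symmetric-def} for $m=1$, which equals \eqref{symmetric-def}. The only point requiring care is that \eqref{higher-taylor-def} truncates the Taylor polynomial at order $\lfloor 2s\rfloor$: when $s\in(0,1/2]$ this is the order-$0$ polynomial (just $u(x)$), and when $s\in(1/2,1)$ it is the order-$1$ polynomial $u(x)+\nabla u(x)\cdot y$. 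In the latter case one still has to note that the linear term integrates to zero against the even kernel $|y|^{-n-2s}$ over $\R^n$ — but this is exactly the symmetry cancellation already used to pass between \eqref{pv-def} and \eqref{symmetric-def}, and it is also subsumed in the combinatorial identity $\sum_k(-1)^k\binom{2m}{m-k}k^{|\alpha|}=0$ invoked in the previous proof. So the relation with \eqref{symmetric-def} is immediate once one writes both sides as $-c_{n,s}\int_{\R^n}R(x,z)\,|z|^{-n-2s}\,dz$.

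Then I would handle the borderline case $2s=1$, i.e.\ $s=1/2$, which is excluded from the hypotheses of the preceding lemma (it requires $2s\notin\N$). Here I would argue by hand: with $s=1/2$ and $\beta\in(1,2]$, the symmetrized second difference $\delta_m u(x,y)=\sum_{k=-m}^m(-1)^k\binom{2m}{m-k}u(x+ky)$ is even in $y$ and vanishes to second order at $y=0$ after the zeroth- and first-order Taylor terms cancel (the zeroth by $\sum_k(-1)^k\binom{2m}{m-k}=0$, the first by oddness), so $|\delta_m u(x,y)|\le C|y|^\beta$ near $0$ and $\le C$ away from $0$, making the integral absolutely convergent. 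Expanding $u(x+ky)=u(x)+\nabla u(x)\cdot(ky)+R(x,ky)$ and using both cancellations reduces $\frac{c_{n,m,s}}2\int \delta_m u(x,y)|y|^{-n-2s}dy$ to $\frac{c_{n,m,s}}2(\sum_k(-1)^k\binom{2m}{m-k}|k|^{2s})\int R(x,z)|z|^{-n-2s}dz=-c_{n,s}\int R(x,z)|z|^{-n-2s}dz$, where I use \eqref{cnms} to evaluate the bracketed sum as $-1/c_{n,s}$. Taking $m=1$ gives the same with $\delta_1$, hence \eqref{symmetric-def}; comparing shows the two coincide.

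The main obstacle, modest as it is, is making sure all the integrals in play are genuinely absolutely convergent under the stated regularity $u\in C^\beta(B_r(x))\cap L^1_s(\R^n)$ with $\beta\in(2s,\lfloor 2s\rfloor+1]$: one needs $\beta>2s$ near the origin to beat the kernel singularity $|y|^{-n-2s}$, the $L^1_s$ condition to control the tails of $u(x+ky)$ for $|y|$ large (after a linear change of variables $z=ky$, noting $(1+|z/k|)^{-n-2s}$ is comparable to $(1+|z|)^{-n-2s}$ up to a $k$-dependent constant), and the cancellation of the low-order Taylor terms (which is where $\binom{2m}{m-k}$ and the combinatorial identity \eqref{app-combinissima} enter) to make the comparison term-by-term legitimate. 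Beyond that bookkeeping, the proof is a short chain of equalities routed through \eqref{higher-taylor-def} and the $m=1$ specialisation, together with the constant identity $c_{n,1,s}=c_{n,s}$.
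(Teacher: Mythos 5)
Your route for $s\ne\tfrac12$ is correct and genuinely different from the paper's. You pass through \eqref{higher-taylor-def}: the preceding (Taylor-remainder) lemma, applied once with a general admissible $m$ and once with $m=1$, together with the checks $c_{n,1,s}=c_{n,s}$ and $\delta_1u(x,y)=2u(x)-u(x+y)-u(x-y)$, gives \eqref{higher-symmetric-def}${}={}$\eqref{higher-taylor-def}${}={}$\eqref{symmetric-def}. The paper never does this: it works directly with the principal-value integrals, normalizes $u(x)=0$ via \eqref{combino}, rescales $z=ky$, pairs $k$ with $-k$ through $\binom{2m}{m-k}=\binom{2m}{m+k}$, shows the annular corrections over $B_{\eps k}\setminus B_\eps$ vanish as $\eps\searrow0$ because $\beta>2s$, and concludes via \eqref{cnms}. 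That argument handles every $s\in(0,1)$, including $s=\tfrac12$, in one stroke; yours buys a shorter generic case at the price of a separate borderline case, and it is exactly there that a gap appears.

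For $s=\tfrac12$ you reduce $\int\delta_m u(x,y)\,|y|^{-n-2s}\,dy$ to $\bigl(\sum_k(-1)^k\binom{2m}{m-k}|k|^{2s}\bigr)\int R(x,z)\,|z|^{-n-2s}\,dz$ by splitting the sum over $k$ and changing variables $z=ky$ term by term. But when $2s=1$ the individual integrals $\int R(x,ky)\,|y|^{-n-1}\,dy$ are not absolutely convergent: at infinity $R(x,z)=u(x+z)-u(x)-\nabla u(x)\cdot z$ contains the linear term, whose contribution is of size $|z|^{-n}$, so the object $\int R(x,z)\,|z|^{-n-1}\,dz$ is not even defined unless $\nabla u(x)=0$ (this is precisely why the preceding lemma excludes $2s\in\N$). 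The Taylor cancellations you invoke hold only for the whole sum, not term by term, so the claimed reduction does not follow as written. The repair is to pair $k$ with $-k$ before splitting: by \eqref{foot-combino}, $\delta_m u(x,y)=\sum_{k=1}^m(-1)^k\binom{2m}{m-k}\bigl(u(x+ky)+u(x-ky)-2u(x)\bigr)$, each summand is absolutely integrable against $|y|^{-n-2s}$ (of order $|y|^\beta$ near the origin, and controlled at infinity by the $L^1_s$ condition plus the constant term), rescaling each pair produces the factor $k^{2s}$, and \eqref{cnms} then yields \eqref{symmetric-def}; note that this paired argument requires no case distinction and proves the lemma for every $s\in(0,1)$, which is in substance the paper's proof. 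A minor related slip: your bound ``$|\delta_m u(x,y)|\le C$ away from $0$'' is not available, since the hypotheses do not make $u$ globally bounded; the tails must be controlled through $L^1_s$, as you yourself indicate in the final paragraph.
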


\begin{proof}
Thanks to the combinatorial identity in~\eqref{foot-combino}, we have that
\begin{align}\label{combino}
\sum_{k=-m}^m(-1)^k\binom{2m}{m-k}=0,
\end{align}
and therefore we can suppose that~$u(x)=0$ in~\eqref{higher-symmetric-def},
up to substituting~$u(x\pm ky)$ with~$u(x\pm ky)-u(x)$.

We start from~\eqref{higher-symmetric-def} and write
\begin{align*}
\int_{\R^n}\frac{\delta_m u(x,y)}{{|y|}^{n+2s}}\;dy=\lim_{\eps\searrow 0}
\sum_{k=-m}^m(-1)^k\binom{2m}{m-k}\int_{\R^n\setminus B_\eps}\frac{u(x+ky)}{{|y|}^{n+2s}}\;dy,
\end{align*}
where, for~$k>0$,
\begin{align*}
\int_{\R^n\setminus B_\eps}\frac{u(x+ky)}{{|y|}^{n+2s}}\;dy
&=
k^{2s}\int_{\R^n\setminus B_{\eps k}}\frac{u(x+y)}{{|y|}^{n+2s}}\;dy \\
&=
k^{2s}\int_{\R^n\setminus B_{\eps}}\frac{u(x+y)}{{|y|}^{n+2s}}\;dy
-k^{2s}\int_{B_{\eps k}\setminus B_\eps}\frac{u(x+y)}{{|y|}^{n+2s}}\;dy,
\end{align*}
and, for~$k<0$,
\begin{align*}
\int_{\R^n\setminus B_\eps}\frac{u(x+ky)}{{|y|}^{n+2s}}\;dy
&=
|k|^{2s}\int_{\R^n\setminus B_{\eps |k|}}\frac{u(x-y)}{{|y|}^{n+2s}}\;dy \\ 
&=
|k|^{2s}\int_{\R^n\setminus B_{\eps}}\frac{u(x-y)}{{|y|}^{n+2s}}\;dy
-|k|^{2s}\int_{B_{\eps |k|}\setminus B_\eps}\frac{u(x-y)}{{|y|}^{n+2s}}\;dy.
\end{align*}

Notice also that, for all~$k\in\{1,\dots,m\}$,
$$ \binom{2m}{m-k}=\binom{2m}{m+k}.$$
Gathering these pieces of information, we obtain that
\begin{equation}\label{657483gfhdsged624utru}\begin{split}
&\sum_{k=-m}^m(-1)^k\binom{2m}{m-k}\int_{\R^n\setminus B_\eps}\frac{u(x+ky)}{{|y|}^{n+2s}}\;dy \\
&=
\sum_{k=1}^m(-1)^k\binom{2m}{m-k}k^{2s}\int_{\R^n\setminus B_{\eps k}}\frac{u(x+y)}{{|y|}^{n+2s}}\;dy 
\\&\qquad\qquad+\sum_{k=-m}^{-1}(-1)^k\binom{2m}{m-k}{|k|}^{2s}\int_{\R^n\setminus B_{\eps |k|}}\frac{u(x-y)}{{|y|}^{n+2s}}\;dy \\
&=
\sum_{k=1}^m(-1)^k\binom{2m}{m-k}k^{2s}\int_{\R^n\setminus B_{\eps k}}\frac{u(x+y)}{{|y|}^{n+2s}}\;dy 
\\&\qquad\qquad+\sum_{k=1}^{m}(-1)^k\binom{2m}{m+k}{k}^{2s}\int_{\R^n\setminus B_{\eps k}}\frac{u(x-y)}{{|y|}^{n+2s}}\;dy \\
&=
\sum_{k=1}^m(-1)^k\binom{2m}{m-k}k^{2s}\int_{\R^n\setminus B_{\eps}}\frac{u(x+y)+u(x-y)}{{|y|}^{n+2s}}\;dy \\
&\qquad\qquad 
-\sum_{k=1}^m(-1)^k\binom{2m}{m-k}k^{2s}\int_{B_{\eps k}\setminus B_\eps}\frac{u(x+y)+u(x-y)}{{|y|}^{n+2s}}\;dy .
\end{split}\end{equation}

We now check that
\begin{equation}\label{feiw75658490203}
\lim_{\eps\searrow 0}
\sum_{k=1}^m(-1)^k\binom{2m}{m-k}k^{2s}\int_{B_{\eps k}\setminus B_\eps}\frac{u(x+y)+u(x-y)}{{|y|}^{n+2s}}\;dy=0.
\end{equation}
Indeed, for every~$k\in\{1,\ldots,m\}$, recalling that we are supposing~$u(x)=0$
and~$u\in C^\beta(B_r(x))$ with~$\beta>2s$,
\begin{eqnarray*}
\left|\int_{B_{\eps k}\setminus B_\eps}\frac{u(x+y)+u(x-y)}{{|y|}^{n+2s}}\;dy\right|
&\leq&\|u\|_{C^{\beta}(B_r(x))}\int_{B_{\eps m}}\frac{dy}{{|y|}^{n+2s-\beta}}
\\&=&\|u\|_{C^{\beta}(B_r(x))}|\mathbb{S}^{n-1}|\frac{\eps^{\beta-2s} m^{\beta-2s}}{\beta-2s},
\end{eqnarray*}
which gives~\eqref{feiw75658490203}.

{F}rom~\eqref{657483gfhdsged624utru}
and~\eqref{feiw75658490203}, and recalling~\eqref{cnms}, we thereby see that~\eqref{symmetric-def} and~\eqref{higher-symmetric-def} coincide.
\end{proof}

The claim in~\eqref{HI6} now follows from~\eqref{higher-symmetric-def} and~\eqref{higher-symmetric-def2} in combination with
Lemma~\ref{lem:sym-vs-high}.

Although~\eqref{higher-symmetric-def} with~\eqref{cnms} does not apply for~$s\in\N$
because of the inherent degeneracy in the constant~$c_{n,s}$,
we point out that a suitable change of the value of 
the normalizing constant, as performed in~\eqref{cnms2}, 
allows for an extension of~\eqref{symmetric-def} also to~$s\in\N$. 
This accounts for an interesting nonlocal representation\footnote{Results such as the one
in Lemma~\ref{qkjdcm.203weodlc} are perhaps a bit puzzling, since they reveal that it
is sometimes difficult to detect the ``locality'' of an operator given one of its possible representation
and a natural question is thus how to recognize that a given operator is ``local''. 
For this, see~\cite[Th\'eor\`em~2]{MR124611}
or~\cite[Theorem 3.3.11]{MR346855},
which give that a pseudo-differential operator is local if and only if its symbol is a polynomial (see also~\cite[Definition~3.3.1 on page~172]{MR346855} for the definition of linear differential operators and~\cite[Theorem 3.3.3 on page~174]{MR346855}
which locally identifies the local operators with differential operators).

For related questions, see also~\cite{MR850715, MR968206, MR1873235, MR1917230, MR2158336}.}
of a local operator, namely, a polylaplacian.
We show this fact in the particular case~$s=1$ by giving the following:

\begin{lemma}\label{qkjdcm.203weodlc}
Let~$r>0$, $\beta>2$, $x\in\R^n$ and~$u\in C^\beta(B_r(x))$ with
\begin{align*}
\int_{\R^n}\frac{|u(y)|}{{(1+|y|)}^{n+2}}\;dy<+\infty.
\end{align*}
Then,
\begin{align*}
\Delta u(x)=
\frac{\Gamma(\frac{n}2+1)}{\pi^{n/2}}
\left(\sum_{k=1}^m(-1)^{k+1}\binom{2m}{m-k}k^2\,\ln k\right)^{-1}
\int_{\R^n} \frac{\delta_m u(x,y)}{|y|^{n+2}}\;dy
\qquad\text{for }m\in\N\setminus\{0,1\}.
\end{align*}
\end{lemma}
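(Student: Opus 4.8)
The plan is to evaluate both sides by reducing to a one-dimensional computation through the substitution already used in Lemma~\ref{lem:sym-vs-high}, and then to identify the limit as $s\to1$ of the fractional formula. Concretely, I would start from the representation~\eqref{higher-symmetric-def}--\eqref{higher-symmetric-def2} with the constant $c_{n,m,s}$ given by~\eqref{cnms} for $s\in(0,1)$, which is legitimate for any $m>s$, in particular $m\ge2$, and which we already know equals $(-\Delta)^s u(x)$ by Theorem~\ref{CARATT}. Since $u\in C^\beta(B_r(x))$ with $\beta>2$ and $u\in L^1_1(\R^n)$, I would first note that $\int_{\R^n}\delta_m u(x,y)\,|y|^{-n-2s}\,dy$ is, for $s$ close to $1$, a convergent integral depending continuously on $s$: near the origin the singularity is controlled because $\delta_m u(x,y)=O(|y|^\beta)$ (the vanishing of the low-order moments $\sum_k(-1)^k\binom{2m}{m-k}k^{|\alpha|}=0$ for $|\alpha|\le 2m$, via~\eqref{app-combinissima}, kills the Taylor polynomial of $u$ up to order $2$), and at infinity the bound $|\delta_m u(x,y)|\le C\sum_k|u(x+ky)|$ together with the $L^1_1$ condition gives integrability uniformly for $s$ in a left-neighbourhood of $1$.

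Having fixed the integral factor, the whole statement then reduces to computing $\lim_{s\to1^-} c_{n,m,s}$ and checking it equals the explicit constant in the claim. From~\eqref{cnms}, $c_{n,m,s}=-c_{n,s}\,\big(\sum_{k=1}^m(-1)^k\binom{2m}{m-k}k^{2s}\big)^{-1}$, and the key observation is that the sum $S(s):=\sum_{k=1}^m(-1)^k\binom{2m}{m-k}k^{2s}$ vanishes at $s=1$: indeed $2\,S(1)=\sum_{k=-m}^m(-1)^k\binom{2m}{m-k}k^2=0$ again by~\eqref{app-combinissima} since $2<2m$. On the other hand $c_{n,s}=-2^{2s}\Gamma(\tfrac{n+2s}{2})/(\pi^{n/2}\Gamma(-s))$ also vanishes as $s\to1$, because $\Gamma(-s)$ has a simple pole at $s=1$. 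So the limit is a $0/0$ indeterminate form that I would resolve by l'Hôpital (differentiating numerator and denominator in $s$): $\frac{d}{ds}S(s)=2\sum_{k=1}^m(-1)^k\binom{2m}{m-k}k^{2s}\ln k$, which at $s=1$ gives $2\sum_{k=1}^m(-1)^k\binom{2m}{m-k}k^2\ln k$ (the $k=1$ term drops since $\ln 1=0$), while $\frac{d}{ds}\big(1/\Gamma(-s)\big)$ at $s=1$ is computed from the residue of $\Gamma$ at $-1$, namely $\Gamma(z)$ has residue $-1$ at $z=-1$, so $1/\Gamma(-s)\sim (s-1)$ to leading order and its derivative at $s=1$ is $-1$. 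Assembling these pieces yields $\lim_{s\to1^-}c_{n,m,s}=\dfrac{2\cdot 2^{2}\Gamma(\tfrac{n}{2}+1)/(2\pi^{n/2})}{2\sum_{k=1}^m(-1)^k\binom{2m}{m-k}k^2\ln k}$ up to sign bookkeeping, which after simplification matches $\dfrac{\Gamma(\frac n2+1)}{\pi^{n/2}}\big(\sum_{k=1}^m(-1)^{k+1}\binom{2m}{m-k}k^2\ln k\big)^{-1}$; note this is the $s=1$ specialization of~\eqref{cnms2}.

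Finally I would need to justify passing the limit $s\to1^-$ through the integral, i.e.\ that $(-\Delta)^s u(x)=\tfrac{c_{n,m,s}}{2}\int_{\R^n}\delta_m u(x,y)|y|^{-n-2s}\,dy$ converges to $\Delta u(x)$. One clean way: split $\int_{B_\rho}+\int_{\R^n\setminus B_\rho}$ for small fixed $\rho<r$; on $B_\rho$ use the second-order Taylor expansion of $u$ at $x$ and the moment identities to write $\delta_m u(x,y)=c\,\Delta u(x)|y|^2+O(|y|^\beta)$ after averaging over the sphere, so that $\tfrac{c_{n,m,s}}{2}\int_{B_\rho}$ tends to $\Delta u(x)$ plus a term that is $O(\rho^{\beta-2})$ uniformly in $s$; on $\R^n\setminus B_\rho$ the integrand is bounded in $L^1$ uniformly for $s$ near $1$ and $c_{n,m,s}$ stays bounded, so dominated convergence applies and, combined with $\rho\to0$, gives the result. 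Alternatively, one can simply invoke continuity in $s$ of the left-hand side $(-\Delta)^s u$ together with the known limit $(-\Delta)^s u(x)\to\Delta u(x)$ as $s\to1$ for $u$ smooth enough, if that has been (or will be) established elsewhere; but the self-contained argument via the integral split is preferable here. The main obstacle I anticipate is the careful constant-chasing in the l'Hôpital step — keeping track of the factor of $2$ from $\tfrac{d}{ds}k^{2s}$, the sign of $c_{n,s}$, the residue of $\Gamma$ at $-1$, and the $\tfrac12$ prefactor in~\eqref{higher-symmetric-def} — rather than any conceptual difficulty; everything else is uniform-integrability bookkeeping that parallels Lemmas~\ref{SENZAPVL} and~\ref{lem:sym-vs-high}.
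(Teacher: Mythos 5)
Your overall route---prove the identity for $s<1$ via \eqref{higher-symmetric-def}--\eqref{cnms} and then let $s\nearrow1$, resolving the $0/0$ in $c_{n,m,s}$ by l'H\^opital---is genuinely different from the paper's proof, which works directly at $s=1$: there the cancellation $\sum_{k=1}^m(-1)^k\binom{2m}{m-k}k^2=0$ makes the contributions of $\R^n\setminus B_{\eps}$ in \eqref{657483gfhdsged624utru} cancel exactly, so the whole value comes from the annuli $B_{\eps k}\setminus B_\eps$, which is where the $\ln k$ arises. Your computation of $\lim_{s\nearrow 1}c_{n,m,s}$ is correct and does reproduce \eqref{cnms2} at $s=1$. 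However, there are two genuine gaps. First, the intermediate representation $(-\Delta)^s u(x)=\tfrac{c_{n,m,s}}{2}\int_{\R^n}\delta_m u(x,y)\,|y|^{-n-2s}\,dy$ for $s<1$, as well as your claimed ``integrability at infinity uniformly for $s$ in a left-neighbourhood of $1$'', require $u\in L^1_s(\R^n)$ for $s<1$, which is strictly stronger than the lemma's hypothesis $\int_{\R^n}|u(y)|(1+|y|)^{-n-2}\,dy<+\infty$. For instance $u(y)=(1+|y|)^2\log^{-2}(2+|y|)$ satisfies the hypothesis but belongs to no $L^1_s(\R^n)$ with $s<1$, so the crude bound $|\delta_m u|\le C\sum_k|u(x+ky)|$ gives a divergent tail for every $s<1$. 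The same mismatch affects invoking \eqref{ds-sto1}, whose hypothesis \eqref{weightoo} is also stronger than the lemma's assumption. This could be repaired by cutting $u$ off outside a large ball and passing to the limit in the cutoff at the end, but that step is absent from your proposal.

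Second, and more seriously, your ``self-contained'' justification of $\lim_{s\nearrow1}(-\Delta)^su(x)=-\Delta u(x)$ rests on a false expansion: for $m\ge2$ the very moment identities you quote (from \eqref{app-combinissima}) also annihilate the second-order Taylor term, so $\delta_m u(x,y)=O(|y|^{\min(\beta,3)})$ near the origin and there is \emph{no} term of the form $c\,\Delta u(x)|y|^2$ in $\delta_m u$. Consequently $\tfrac{c_{n,m,s}}{2}\int_{B_\rho}$ tends to $0$ as $\rho\to0$ (uniformly in $s$ near $1$), not to $\Delta u(x)$, and your split argument collapses into the tautology that the limit equals $\tfrac{c_{n,m,1}}{2}\int_{\R^n}\delta_m u(x,y)|y|^{-n-2}dy$, which is exactly the quantity you are trying to identify with $-\Delta u(x)$. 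In the $m$-difference representation both the constant and the integral are continuous at $s=1$, so the Laplacian cannot be extracted from a local concentration; you must either import the asymptotics $(-\Delta)^su\to-\Delta u$ proved via \eqref{taylor-def} (where the uncancelled second-order term and the degeneration $c_{n,s}\sim 1-s$ do the work, under its own integrability hypothesis), or argue directly at $s=1$ as the paper does, where $\Delta u(x)$ emerges from the logarithmically divergent annular integrals --- precisely the source of the $\ln k$ in the constant. A minor point in the same vein: the limit is $-\Delta u(x)$, not $+\Delta u(x)$; this is absorbed by your sign bookkeeping but should be stated correctly.
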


\begin{proof}
We proceed here in a somewhat similar way as that of the proof of Lemma~\ref{lem:sym-vs-high}.
Thanks to~\eqref{combino}, without loss of generality, we suppose that~$u(x)=0$ in~\eqref{higher-symmetric-def}. 

We start from~\eqref{higher-symmetric-def} with~$s=1$ and write
\begin{equation}\label{65748o1qsxc56534yoiky4}
\int_{\R^n}\frac{\delta_m u(x,y)}{{|y|}^{n+2}}\;dy=\lim_{\eps\searrow 0}
\sum_{k=-m}^m(-1)^k\binom{2m}{m-k}\int_{\R^n\setminus B_\eps}\frac{u(x+ky)}{{|y|}^{n+2}}\;dy.
\end{equation}

Thanks to the combinatorial identity in~\eqref{app-combinissima}, we find that
\begin{align*}
\sum_{k=1}^m(-1)^k\binom{2m}{m-k}k^2=0
\qquad\text{for }m>1.
\end{align*}
Therefore, using also the computations in~\eqref{657483gfhdsged624utru} with~$s=1$,
\begin{align*}
& \sum_{k=-m}^m(-1)^k\binom{2m}{m-k}\int_{\R^n\setminus B_\eps}\frac{u(x+ky)}{{|y|}^{n+2}}\;dy\\
& =
\sum_{k=1}^m
(-1)^k\binom{2m}{m-k}k^2\int_{\R^n\setminus B_{\eps}}\frac{u(x+y)+u(x-y)}{{|y|}^{n+2}}\;dy \\
& \qquad\qquad 
-\sum_{k=1}^m (-1)^k\binom{2m}{m-k}k^2\int_{B_{\eps k}\setminus B_\eps}\frac{u(x+y)+u(x-y)}{{|y|}^{n+2}}\;dy\\
& =
-\sum_{k=1}^m (-1)^k\binom{2m}{m-k}k^2\int_{B_{\eps k}\setminus B_\eps}\frac{u(x+y)+u(x-y)}{{|y|}^{n+2}}\;dy.
\end{align*}
Plugging this information into~\eqref{65748o1qsxc56534yoiky4} we thus find that
\begin{eqnarray*}
\int_{\R^n}\frac{\delta_m u(x,y)}{{|y|}^{n+2}}\;dy
&=&\lim_{\eps\searrow 0}
\sum_{k=1}^m(-1)^{k+1}\binom{2m}{m-k}k^2\int_{B_{\eps k}\setminus B_\eps}\frac{u(x+y)+u(x-y)}{{|y|}^{n+2}}\;dy\\
&=&2\lim_{\eps\searrow 0}
\sum_{k=1}^m(-1)^{k+1}\binom{2m}{m-k}k^2\int_{B_{\eps k}\setminus B_\eps}\frac{u(x+y)}{{|y|}^{n+2}}\;dy
.
\end{eqnarray*}

Now, exploiting the symmetries and using polar coordinates, we see that
\begin{align}
& \int_{B_{\eps k}\setminus B_\eps}\frac{u(x+y)}{{|y|}^{n+2}}\;dy=
\int_{B_{\eps k}\setminus B_\eps}\frac{u(x+y)-\nabla u(x)\cdot y}{{|y|}^{n+2}}\;dy \nonumber \\
&=
\int^{\eps k}_\eps\frac{1}{\rho^3}\int_{\mathbb{S}^{n-1}}\big(u(x+\rho\theta)-\rho\nabla u(x)\cdot\theta\big)\;d\theta\;d\rho \nonumber \\
&=
\int^{\eps k}_\eps\frac{1}{\rho^3}\int_{\mathbb{S}^{n-1}}\left(
u(x+\rho\theta)-\rho\nabla u(x)\cdot\theta-\frac{\rho^2}2 D^2u(x)\theta\cdot\theta\right)\;d\theta\;d\rho \label{fi80qyewiohflnkc} \\
&\qquad 
+\frac12\int^{\eps k}_\eps\frac{1}{\rho}\int_{\mathbb{S}^{n-1}} D^2u(x)\theta\cdot\theta\;d\theta\;d\rho.
\label{jf90we7jwihoe}
\end{align}
By the regularity of~$u$, if~$\rho$ is sufficiently small,
\begin{align*}
\left|u(x+\rho\theta)-\rho\nabla u(x)\cdot\theta-\frac{\rho^2}2 D^2u(x)\theta\cdot\theta\right|
\leq \|u\|_{C^\beta(B_r(x))}\rho^\beta,
\end{align*}
and therefore the integral in~\eqref{fi80qyewiohflnkc} can be estimated by 
\begin{align*}
\|u\|_{C^\beta(B_r(x))}\big|\mathbb{S}^{n-1}\big|
\int^{\eps k}_\eps\rho^{\beta-3}\;d\rho
=\|u\|_{C^\beta(B_r(x))}\big|\mathbb{S}^{n-1}\big|
\big(k^{\beta-2}-1\big)\frac{\eps^{\beta-2}}{\beta-2},
\end{align*}
which is infinitesimal as~$\epsilon\searrow0$ (recall that~$\beta>2$).

Moreover, by symmetry, we have that
\begin{align*}
\int_{\mathbb{S}^{n-1}}\partial_{ij}u(x)\theta_i \theta_j\;d\theta=0
\qquad\text{whenever }i,j\in\{1,\ldots,n\},\ i\neq j,
\end{align*}
and
\begin{align*}
\big|\mathbb{S}^{n-1}\big|=\int_{\mathbb{S}^{n-1}}|\theta|^2\;d\theta
=\sum_{i=1}^n\int_{\mathbb{S}^{n-1}}\theta_i^2\;d\theta=n\int_{\mathbb{S}^{n-1}}\theta_j^2\;d\theta \qquad\text{for all }j\in\{1,\ldots,n\}.
\end{align*}
Accordingly,
\begin{align}\label{o9fpewFJKEòJFkvnsdm}
\int_{\mathbb{S}^{n-1}}D^2u(x)\theta\cdot\theta\;d\theta=
\sum_{j=1}^n\partial_{jj}^2u(x)\int_{\mathbb{S}^{n-1}}\theta_j^2\;d\theta=
\sum_{j=1}^n\partial_{jj}^2u(x)\,\frac{\big|\mathbb{S}^{n-1}\big|}n=\frac{\big|\mathbb{S}^{n-1}\big|}n\,\Delta u(x).
\end{align}
Using this, we have that the integral in~\eqref{jf90we7jwihoe} equals
\begin{align*}
\frac12\ln k\int_{\mathbb{S}^{n-1}} D^2u(x)\theta\cdot\theta\;d\theta=\frac{\ln k}{2n}\big|\mathbb{S}^{n-1}\big|\,\Delta u(x).
\end{align*}

Hence, recalling also
the identity~\eqref{measure-n-sphere} for the measure of the sphere
and~\eqref{gamma-recursive},
we conclude that
\begin{align*}
\int_{\R^n}\frac{\delta_m u(x,y)}{{|y|}^{n+2}}\;dy
&=\frac{\big|\mathbb{S}^{n-1}\big|}{n}
\sum_{k=1}^m(-1)^{k+1}\binom{2m}{m-k}k^2\,\ln k
\,\Delta u(x)\\
&= \frac{2\pi^{n/2}}{n\Gamma(\frac{n}2)}
\sum_{k=1}^m(-1)^{k+1}\binom{2m}{m-k}k^2\,\ln k
\,\Delta u(x)\\&= \frac{\pi^{n/2}}{\Gamma(\frac{n}2+1)}
\sum_{k=1}^m(-1)^{k+1}\binom{2m}{m-k}k^2\,\ln k
\,\Delta u(x)
\end{align*}
and the proof is complete.
\end{proof}

\section[The fractional Laplacian as a Riesz potential]{The fractional Laplacian as a Riesz potential: proof of~\eqref{RIE}}\label{fractsection:2}

The fractional Laplacian can be obtained as the composition of a classical Laplacian
with a Riesz kernel, the one providing 
the fundamental solution of the fractional Laplacian of order~$2(1-s)$ (see Corollary~\ref{FURIEZ}): 
formally, such composition will give back an operator of order~$2-2(1-s)=2s$, 
which is consistent with the order of the fractional Laplacian.

More precisely we have\footnote{We point out that there is an abuse of notation
when writing~$c_{n,s-1}$ in~\eqref{cnwkel09j3o32mf}. Indeed the constant~$c_{n,s}$ in~\eqref{cns} was defined for~$s\in(0,1)$
and here we are implicitly computing this constant at a negative value in~$(-1,0)$
for its second entry.
\label{treyuwidbsna6437urfgsdjwktr36uwyfjhsgoyriwek}

While this notation is not going to create any confusion, it is useful to remark that a sign change occurs. Namely, while~$c_{n,s}>0$, we have that~$c_{n,s-1}<0$
for all~$s\in(0,1)$.} the following:

\begin{lemma}\label{RIEL}
Let~$s\in(0,1)$ (additionally, $s\neq 1/2$ if $n=1$) and~$u\in C^2(\R^n)\cap L^1_s(\R^n)$ be such that
\begin{equation}\label{RIEP}
\int_{\R^n}\frac{|\nabla u(y)|}{(1+|y|)^{n-1+2s}}\;dy<+\infty
\qquad{\mbox{and}}\qquad
\int_{\R^n}\frac{|\Delta u(y)|}{(1+|y|)^{n-2+2s}}\;dy<+\infty.
\end{equation}
Then,
\begin{align}\label{cnwkel09j3o32mf}
(-\Delta)^s u(x)=c_{n,s-1}\int_{\R^n}\frac{\Delta u(y)}{{|x-y|}^{n-2+2s}}\;dy.
\end{align}
\end{lemma}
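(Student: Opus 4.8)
The plan is to represent $u$ via the Riesz potential of its Laplacian and then apply the (already established) Fourier characterization. First I would recall the Riesz kernel $I_\alpha(z) = c(n,\alpha)\,|z|^{\alpha-n}$ which, for $\alpha = 2(1-s)$, is (up to the constant) the fundamental solution of $(-\Delta)^{1-s}$; equivalently, in Fourier variables, convolution with $|z|^{-(n-2+2s)}$ corresponds to multiplication by a constant times $(2\pi|\xi|)^{-2(1-s)} = (2\pi|\xi|)^{2s-2}$. The identity to be proven, rewritten, says
\begin{align*}
(-\Delta)^s u(x) = c_{n,s-1}\,\big(\,|\cdot|^{-(n-2+2s)} * \Delta u\,\big)(x),
\end{align*}
and since $\Delta u$ has Fourier transform $-(2\pi|\xi|)^2\,\widehat u(\xi)$, the right-hand side has Fourier transform a constant times $(2\pi|\xi|)^{2s-2}\cdot(2\pi|\xi|)^2\,\widehat u(\xi) = (2\pi|\xi|)^{2s}\,\widehat u(\xi)$, which by~\eqref{FORi} is exactly $(-\Delta)^s u(x)$. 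So at the level of formal Fourier computation the identity is immediate once the constant $c_{n,s-1}$ is matched against the Riesz normalization; the substance of the proof is justifying that the manipulations are legitimate under the stated hypotheses, since $u$ is only $C^2$ with growth controlled by~\eqref{RIEP} and is not assumed to be Schwartz.

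The key steps, in order, are: (1) State the Riesz potential identity $u = I_{2} * (-\Delta u)$ up to constants — i.e. the Newtonian/Riesz representation of $u$ — and verify it holds pointwise for our class of $u$, using the decay of $\Delta u$ from~\eqref{RIEP} to guarantee absolute convergence of the potential (this may require an argument that $u$ differs from its Riesz potential by a harmonic function which, by the growth control, must vanish or be handled by the weighted integrability; in $n\le 2$ the logarithmic/linear cases are the reason for excluding $s = 1/2$ when $n = 1$ and for the restriction $n \neq 2s$ appearing in~\eqref{RIE}). (2) Apply $(-\Delta)^s$ to this representation and use the semigroup/composition property $(-\Delta)^s = (-\Delta)^{s-1}\circ(-\Delta)$ in the sense that applying the singular integral to a Riesz potential reproduces the iterated Riesz potential — concretely, show $(-\Delta)^s(I_2 * f) = c\, I_{2(1-s)} * f$ for $f = -\Delta u$. (3) Convert $I_{2(1-s)} * (-\Delta u)$ back into the claimed kernel form $c_{n,s-1}\int \Delta u(y)\,|x-y|^{-(n-2+2s)}\,dy$ and verify the constant is exactly $c_{n,s-1}$ by tracking the Riesz normalization constants (using the Gamma-function identities from the appendix), including the sign change noted in the footnote, namely $c_{n,s-1} < 0$.

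The main obstacle will be step (1)–(2) made rigorous without Fourier transforms of tempered distributions doing all the work silently: one must control the interchange of the singular principal-value integral defining $(-\Delta)^s$ with the Riesz convolution integral, i.e. a Fubini-type argument on $\int_{\R^n\setminus B_\eps(x)} |x-z|^{-(n+2s)}\big(\int |z-y|^{-(n-2)}\Delta u(y)\,dy\big)\,dz$, splitting the $z$-integral near $x$ (where the $C^2$ regularity compensates the $|x-z|^{-(n+2s)}$ singularity) and far from $x$ (where~\eqref{RIEP} and membership in $L^1_s$ give integrability). A clean alternative — which I would favour if it fits the book's level — is to prove the identity first for $u\in C^\infty_c(\R^n)$ by the Fourier computation above (legitimate since both sides are then Schwartz-class computations), and then extend to the general $u$ of the lemma by a cutoff/approximation argument $u_R = u\,\chi(\cdot/R)$, using the weighted hypotheses to pass to the limit in both the left-hand side (the singular integral, dominated convergence with the bounds from Lemma~\ref{SENZAPVL}-type estimates) and the right-hand side (dominated convergence against $|x-y|^{-(n-2+2s)}$ using~\eqref{RIEP}); the care needed is exactly in the tail estimates, where one uses $2|x-y| \ge 1+|y|$ for $|y|$ large as in the proof of Lemma~\ref{SENZAPVL}.
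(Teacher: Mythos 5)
Your plan has two genuine problems. First, it is circular within this book's architecture: you take the Fourier characterization~\eqref{FORi} as ``already established'', but in the text~\eqref{FORi} is proved (for~$n\ge2$) precisely \emph{from} the representation~\eqref{cnwkel09j3o32mf} that you are asked to prove; so any proof of Lemma~\ref{RIEL} resting on~\eqref{FORi} would need an independent derivation of the multiplier identity first, which your plan does not supply. Second, and more substantively, the Fourier/Riesz-potential machinery you invoke only works when the kernel exponent satisfies~$0<n-2+2s<n$, i.e.\ essentially~$n\ge2$ (or~$n=1$ with~$s>\tfrac12$). The lemma, however, also covers~$n=1$,~$s\in\left(0,\tfrac12\right)$, where~$|x-y|^{-(n-2+2s)}=|x-y|^{1-2s}$ \emph{grows} at infinity: it is not a Riesz kernel, its distributional Fourier transform requires finite-part regularization, and the semigroup identity~$I_\alpha*I_\beta=I_{\alpha+\beta}$ is unavailable. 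Your step~(1) is also false as stated: under the hypotheses~$u$ need not equal~$I_2*(-\Delta u)$ (it can differ by an affine function, and for~$n\le2$ the potential~$I_2$ is not even of Riesz type), so a Liouville-type correction step would have to be carried out, not just mentioned; your cutoff route avoids this but still inherits the two issues above.

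For contrast, the paper's proof is entirely real-variable and uniform in all admissible cases: one first symmetrizes, writing the right-hand side as~$\tfrac12\int\Delta_y\big(2u(x)-u(x+y)-u(x-y)\big)\,|y|^{2-n-2s}\,dy$, then integrates by parts twice over balls~$B_{R_k}$, where the radii~$R_k\to+\infty$ are chosen (by a contradiction argument exploiting~\eqref{RIEP} and the~$L^1_s$ bound in polar coordinates) so that the boundary terms vanish along the sequence; the identity~$\Delta_y|y|^{2-n-2s}=-2s(2-n-2s)|y|^{-n-2s}$ then produces exactly the symmetric-difference representation~\eqref{symmetric-def} of~$(-\Delta)^su$, and the constant~$c_{n,s-1}$ is matched via~\eqref{gamma-recursive}. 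If you want to salvage your approach, you must (i) prove the multiplier identity for compactly supported smooth functions by a route independent of~\eqref{RIE}, and (ii) give a separate argument for~$n=1$,~$s<\tfrac12$, where the naive Fourier computation with the growing kernel breaks down.
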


\begin{proof}We stress that the right-hand side of~\eqref{cnwkel09j3o32mf}
is finite, due to~\eqref{RIEP}.

The proof of~\eqref{cnwkel09j3o32mf} relies on a double integration by parts,
but we need first to write 
\begin{align}\label{126BIS00}
\int_{\R^n}\frac{-\Delta u(y)}{{|x-y|}^{n-2+2s}}\;dy
=\frac12\int_{\R^n}\frac{\Delta_y\big(2u(x)-u(x+y)-u(x-y)\big)}{{|y|}^{n-2+2s}}\;dy
\end{align}
where by~$\Delta_y$ we mean that derivatives are taken with respect to the~$y$ variable.

In this notation, we have that
\begin{align*}
\nabla_y{|y|}^{2-n-2s}=(2-n+2s) {|y|}^{-n-2s}y
\end{align*}
and
\begin{align*}
\Delta_y{|y|}^{2-n-2s}=(2-n-2s)\div_y\big({|y|}^{-n-2s}y\big)=-2s(2-n-2s){|y|}^{-n-2s}.
\end{align*}

Also, if~$\rho\ge|x|$ and~$y\in \R^n\setminus B_\rho$,
$$ |x+y|\le |x|+|y|\le\rho+|y|\le2|y|$$
and accordingly, by~\eqref{w-cond-infty} and~\eqref{RIEP}, 
\begin{eqnarray*}&&
\int_{\R^n\setminus B_\rho}\left( \frac{|u(x+y)|}{|y|^{n+2s}}+\frac{|\nabla u(x+y)|}{|y|^{n-1+2s}}\right)\,dy\le C\int_{\R^n\setminus B_\rho}\left( \frac{|u(x+y)|}{(1+|y|)^{n+2s}}+\frac{|\nabla u(x+y)|}{(1+|y|)^{n-1+2s}}\right)\,dy
\\&&\qquad\le
C\int_{\R^n}\left( \frac{|u(x+y)|}{(1+|x+y|)^{n+2s}}+\frac{|\nabla u(x+y)|}{(1+|x+y|)^{n-1+2s}}\right)\,dy
\\&&\qquad=C\int_{\R^n}\left( \frac{|u(y)|}{(1+|y|)^{n+2s}}+\frac{|\nabla u(y)|}{(1+|y|)^{n-1+2s}}\right)\,dy<+\infty,
\end{eqnarray*}
for some~$C>0$ possibly depending on~$\rho$ and varying from line to line.

Hence, by polar coordinates,
\begin{equation}\label{MEEXPo-0.1}
\int_\rho^{+\infty}
\left[
\int_{\partial B_R}
\left( \frac{|u(x+y)|}{|y|^{n+2s}}+\frac{|\nabla u(x+y)|}{|y|^{n-1+2s}}\right)
\,d{\mathcal{H}}^{n-1}_y
\right]\,dR<+\infty.\end{equation}

Now we claim that there exists a diverging sequence~$R_k$ for which
\begin{equation}\label{MEEXPo-0.2}
\lim_{k\to+\infty}\int_{\partial B_{R_k}}
\left( \frac{|u(x+y)|}{|y|^{n-1+2s}}+\frac{|\nabla u(x+y)|}{|y|^{n-2+2s}}\right)
\,d{\mathcal{H}}^{n-1}_y=0.\end{equation}
Indeed, suppose not, then there exist~$R_0>\rho$ and~$c>0$ such that, for all~$R\ge R_0$,
$$\int_{\partial B_{R}}
\left( \frac{|u(x+y)|}{|y|^{n-1+2s}}+\frac{|\nabla u(x+y)|}{|y|^{n-2+2s}}\right)
\,d{\mathcal{H}}^{n-1}_y\ge c.$$
Consequently,
\begin{eqnarray*}&&
\int_\rho^{+\infty}
\left[
\int_{\partial B_R}
\left( \frac{|u(x+y)|}{|y|^{n+2s}}+\frac{|\nabla u(x+y)|}{|y|^{n-1+2s}}\right)
\,d{\mathcal{H}}^{n-1}_y
\right]\,dR\\&&\qquad\ge
\int_{R_0}^{+\infty}
\left[
\int_{\partial B_R}
\left( \frac{|u(x+y)|}{|y|^{n-1+2s}}+\frac{|\nabla u(x+y)|}{|y|^{n-2+2s}}\right)
\,d{\mathcal{H}}^{n-1}_y
\right]\,\frac{dR}R\ge c\int_{R_0}^{+\infty}\frac{dR}R=+\infty,
\end{eqnarray*}
in contradiction with~\eqref{MEEXPo-0.1}, thus proving~\eqref{MEEXPo-0.2}.

We are now in position to integrate by parts and use~\eqref{MEEXPo-0.2} to
obtain that 
\begin{eqnarray*} &&
\int_{\R^n}\frac{\Delta_y\big(2u(x)-u(x+y)-u(x-y)\big)}{{|y|}^{n-2+2s}}\;dy\\&&\qquad
=\lim_{k\to+\infty}\int_{B_{R_k}}\frac{\Delta_y\big(2u(x)-u(x+y)-u(x-y)\big)}{{|y|}^{n-2+2s}}\;dy
\\&&\qquad=\lim_{k\to+\infty}
\int_{B_{R_k}}\big(2u(x)-u(x+y)-u(x-y)\big)\,\Delta_y{|y|}^{2-n-2s}\;dy\\&&\qquad\qquad
+\int_{\partial B_{R_k}}
\nu(y)\cdot\nabla_y\big(2u(x)-u(x+y)-u(x-y)\big)|y|^{2-n-2s}\,d{\mathcal{H}}^{n-1}\\&&\qquad\qquad
-\int_{\partial B_{R_k}}\nu(y)\cdot\nabla_y|y|^{2-n-2s}
\big(2u(x)-u(x+y)-u(x-y)\big) 
\,d{\mathcal{H}}^{n-1}\\&&\qquad
=-2s(2-n-2s)\int_{\R^n}\frac{2u(x)-u(x+y)-u(x-y)}{|y|^{n+2s}}\,dy.
\end{eqnarray*}
Using this into~\eqref{126BIS00}, and recalling the representation
in~\eqref{symmetric-def}, we conclude that
\begin{equation}\label{thiscomprye74ur}\begin{split}
\int_{\R^n}\frac{\Delta u(y)}{{|x-y|}^{n-2+2s}}\;dy
&=s(2-n-2s)\int_{\R^n}\frac{2u(x)-u(x+y)-u(x-y)}{|y|^{n+2s}}\,dy\\&=
\frac{2s(2-n-2s)}{c_{n,s}}(-\Delta)^s u(x).\end{split}
\end{equation}

We use the property of the Gamma function in~\eqref{gamma-recursive}
and we recall footnote~\ref{treyuwidbsna6437urfgsdjwktr36uwyfjhsgoyriwek}
to compute
\begin{eqnarray*}
&& \frac{c_{n,s}}{2s(2-n-2s)}=
-\frac{2^{2s}\Gamma(\frac{n+2s}2)}{2s(2-n-2s)\pi^{n/2}\Gamma(-s)}\\&&\qquad
= \frac{2^{2s}s(n+2s-2)\Gamma(\frac{n-2+2s}2)}{4s(2-n-2s)\pi^{n/2}\Gamma(1-s)}=-\frac{2^{2s-2}\Gamma(\frac{n-2+2s}2)}{\pi^{n/2}\Gamma(1-s)}=c_{n,s-1}.
\end{eqnarray*}
This computation and~\eqref{thiscomprye74ur}
show that the representation in~\eqref{cnwkel09j3o32mf} holds true,
as desired.
\end{proof}

The claim in~\eqref{RIE} is now a consequence of Lemma~\ref{RIEL}.

\section[The fractional Laplacian as a Fourier multiplier]{The fractional Laplacian as a Fourier multiplier: proof of~\eqref{FORi}}\label{fractsection:3}

In this section, we consider the Fourier Transform approach to the fractional Laplacian. 
More precisely, as claimed in~\eqref{FORi},
the fractional Laplacian can be considered as a Fourier multiplier in frequency space with symbol~$(2\pi|\xi|)^{2s}$,
namely
\begin{align*}
(-\Delta)^s u(x) = \int_{\R^n}(2\pi|\xi|)^{2s}\widehat{u}(\xi)e^{2\pi i x\cdot\xi}\;d\xi
\end{align*}
for any $u\in C^\infty_c(\R^n)$.

For an introduction to the topic of pseudodifferential operators, we refer the reader to Appendix~\ref{ojdlfwenSTRFGbdollDeltafRn}.

\begin{lemma}
Let~$s\in(0,1)$, $u\in C^\infty_c(\R^n)$ and~$x\in\R^n$.
Then, \eqref{FORi} holds true.
\end{lemma}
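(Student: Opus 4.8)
The plan is to show that the integral operator definition~\eqref{symmetric-def} (already established on $C^\infty_c(\R^n)$ via Lemma~\ref{SENZAPVL} and the comments following it) agrees with the Fourier-multiplier expression~\eqref{FORi}. First I would take $u\in C^\infty_c(\R^n)$ so that $\widehat u\in\mathcal S(\R^n)$, and observe that the right-hand side of~\eqref{FORi} is a well-defined absolutely convergent integral since $(2\pi|\xi|)^{2s}\widehat u(\xi)$ is integrable (rapid decay of $\widehat u$ controls infinity, and $|\xi|^{2s}$ is locally integrable because $2s>-n$). Writing the Fourier inversion formula for $u(x)$, $u(x+y)$ and $u(x-y)$, I would plug these into the symmetric second-difference $2u(x)-u(x+y)-u(x-y)$ appearing in~\eqref{symmetric-def}, obtaining $\int_{\R^n}(2-e^{2\pi i y\cdot\xi}-e^{-2\pi i y\cdot\xi})\widehat u(\xi)e^{2\pi i x\cdot\xi}\,d\xi = \int_{\R^n}2\big(1-\cos(2\pi y\cdot\xi)\big)\widehat u(\xi)e^{2\pi i x\cdot\xi}\,d\xi$.

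The key step is then to interchange the $y$-integral and the $\xi$-integral in
\[
\frac{c_{n,s}}{2}\int_{\R^n}\frac{2u(x)-u(x+y)-u(x-y)}{|y|^{n+2s}}\,dy
= \frac{c_{n,s}}{2}\int_{\R^n}\left[\int_{\R^n}\frac{2\big(1-\cos(2\pi y\cdot\xi)\big)}{|y|^{n+2s}}\,dy\right]\widehat u(\xi)e^{2\pi i x\cdot\xi}\,d\xi,
\]
and to evaluate the inner integral $I(\xi):=\int_{\R^n}\frac{1-\cos(2\pi y\cdot\xi)}{|y|^{n+2s}}\,dy$. By rotational invariance and the scaling $y\mapsto y/(2\pi|\xi|)$ one gets $I(\xi)=(2\pi|\xi|)^{2s}\,I(e_1)$ where $A_{n,s}:=I(e_1)=\int_{\R^n}\frac{1-\cos y_1}{|y|^{n+2s}}\,dy$ is a finite positive constant (finiteness: near $0$ the numerator is $O(|y|^2)$ so the integrand is $O(|y|^{2-n-2s})$ with $2-n-2s>-n$; near infinity it is $O(|y|^{-n-2s})$). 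This yields the multiplier $\frac{c_{n,s}}{2}\cdot 2 A_{n,s}\,(2\pi|\xi|)^{2s}$, so the proof reduces to the normalization identity $c_{n,s}A_{n,s}=1$, i.e. to checking that $A_{n,s}=\big(\int_{\R^n}\frac{1-\cos y_1}{|y|^{n+2s}}\,dy\big)$ equals $c_{n,s}^{-1}$ with $c_{n,s}$ as in~\eqref{cns}; this is a standard Gamma-function computation (for instance by writing $1-\cos y_1$ via the heat kernel or by using $\int_0^\infty t^{-1-s}(1-e^{-t})\,dt=\Gamma(1-s)/s$ after integrating over the $(n-1)$ transverse variables), and I would present it using the identities in Appendix~\ref{sec:gamma}.

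To make the interchange rigorous I would split the $y$-integral into $B_1$ and $\R^n\setminus B_1$. On $B_1$, Taylor-expand: $|2u(x)-u(x+y)-u(x-y)|\le \|D^2 u\|_{L^\infty}|y|^2$, so the integrand is dominated by $C|y|^{2-n-2s}\in L^1(B_1)$, and Fubini applies to this piece directly (or one dominates $|1-\cos(2\pi y\cdot\xi)|\le C|\xi|^2|y|^2$ together with $|\xi|^2|\widehat u(\xi)|\in L^1$). On $\R^n\setminus B_1$, use $|2u(x)-u(x+y)-u(x-y)|\le 2u(x)\cdot\mathbf{1}+\text{(integrable tail)}$ — more precisely bound it by $4\|u\|_\infty$ times an indicator plus the $L^1_s$ tail — so the integrand is in $L^1(\R^n\setminus B_1)$ and, on the Fourier side, $|1-\cos(2\pi y\cdot\xi)|\le 2$ with $\widehat u\in L^1$, again permitting Fubini. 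The main obstacle is purely bookkeeping: organizing these two regimes so that Tonelli/Fubini is unambiguously justified, and then carrying out the Gamma-function evaluation of $A_{n,s}$ cleanly enough to recognize $c_{n,s}^{-1}$; alternatively, one can sidestep the explicit constant computation by invoking the already-proven equivalence~\eqref{HESE} with the heat semigroup, since $\widehat{e^{t\Delta}u}(\xi)=e^{-4\pi^2 t|\xi|^2}\widehat u(\xi)$ makes the Fourier identity almost immediate via $\frac{s}{\Gamma(1-s)}\int_0^\infty t^{-1-s}(1-e^{-4\pi^2 t|\xi|^2})\,dt=(4\pi^2|\xi|^2)^s=(2\pi|\xi|)^{2s}$ — but since~\eqref{HESE} is proven later in the text, I would keep the self-contained singular-integral argument as the primary route.
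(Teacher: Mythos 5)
Your proof is correct, but it follows a different route from the paper's. The paper splits into two cases: for $n\ge2$ it starts from the Riesz-potential representation~\eqref{cnwkel09j3o32mf} (i.e.~\eqref{RIE}, already established in Lemma~\ref{RIEL}) and applies Lemma~\ref{CAL:GA:099} with~$P\equiv1$ and~$\alpha:=n-2+2s\in(0,n)$ to pass to the Fourier side, while only for~$n=1$ does it run the argument you propose — start from~\eqref{symmetric-def}, insert Fourier inversion, swap integrals, and reduce to the one-dimensional cosine integral of Lemma~\ref{lem:coseno} plus Gamma-function identities. You instead carry out this direct singular-integral computation uniformly in all dimensions, which requires the extra step of evaluating the $n$-dimensional constant $A_{n,s}=\int_{\R^n}\frac{1-\cos y_1}{|y|^{n+2s}}\,dy$ and checking $c_{n,s}A_{n,s}=1$; this is exactly the computation the paper performs later in the periodic-function section (integrating out the transverse variables via a Beta integral, see~\eqref{902fujioewnubgeifj8io23}--\eqref{r8y23horir238ryh23ohir}, which yields $\int_{\R^n}\frac{2-e^{2\pi iz_1}-e^{-2\pi iz_1}}{|z|^{n+2s}}\,dz=(2\pi)^{2s}\,\frac{2}{c_{n,s}}$), so your normalization claim is right and your sketch of it is adequate. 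Your Fubini justification (Taylor bound $|1-\cos(2\pi y\cdot\xi)|\le C|\xi|^2|y|^2$ near $y=0$, the trivial bound $2$ at infinity, together with $\widehat u$ Schwartz) is sound. What the two approaches buy: yours is self-contained, avoids the case split and the restriction $n\neq2s$ inherent in~\eqref{RIE}, at the price of the $n$-dimensional constant evaluation; the paper's recycles the already-proven Riesz representation and the appendix machinery, confining the Fubini/constant bookkeeping to dimension one. Your remark that the heat-semigroup shortcut via~\eqref{HESE} should be avoided because it is proven later is a correct reading of the paper's logical order.
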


\begin{proof}
Let us start from the representation of the fractional Laplacian given in~\eqref{cnwkel09j3o32mf}. In this way, we write
\begin{eqnarray*}
(-\Delta)^s u(x)=
c_{n,s-1}\int_{\R^n}\frac{\Delta u(y)}{{|x-y|}^{n-2+2s}}\;dy
=c_{n,s-1}\int_{\R^n}\frac{\Delta u(x+y)}{|y|^{n-2+2s}}\;dy .
\end{eqnarray*}

If~$n\ge2$, we exploit Lemma~\ref{CAL:GA:099}
with~$k:=0$, $P\equiv 1$ and~$\alpha:=n-2+2s$. Notice that with this choice~$\alpha\in(0,n)$. As a result, we have that
\begin{align*}&
\frac{\pi^{-\frac{n}2+2-2s}\,\Gamma\big(\frac{n-2+2s}2\big)}{\Gamma(1-s)}\int_{\R^n}{|y|}^{-n+2-2s}\,\Delta u(x+y)\;dy \\
&\qquad\qquad=
\int_{\R^n}{|\xi|}^{-2+2s}\,\mathcal{F}\big[\Delta u(x+\cdot)\big](\xi)\;d\xi \\
&\qquad\qquad=
\int_{\R^n}{|\xi|}^{-2+2s}\,\mathcal{F}\big[\Delta u\big](\xi)\,e^{2\pi ix\cdot\xi}\;d\xi \\
&\qquad\qquad=-
\int_{\R^n}{|\xi|}^{-2+2s} (2\pi|\xi|)^{2}\widehat{u}(\xi)\,e^{2\pi ix\cdot\xi}\;d\xi \\
&\qquad\qquad=-
2^{2-2s} \pi^{2-2s}\int_{\R^n}{(2\pi|\xi|)}^{2s}\widehat{u}(\xi)\,e^{2\pi ix\cdot\xi}\;d\xi. 
\end{align*}
Using~\eqref{cns}, we see that
$$ c_{n,s-1}= -\frac{2^{2s-2}\Gamma(\frac{n-2+2s}2)}{\pi^{n/2}\Gamma(1-s)}$$
and therefore
\begin{eqnarray*}
\int_{\R^n}{(2\pi|\xi|)}^{2s}\widehat{u}(\xi)\,e^{2\pi ix\cdot\xi}\;d\xi
&=& -\frac{2^{2s-2}\pi^{-\frac{n}2}\,\Gamma\big(\frac{n-2+2s}2\big)}{\Gamma(1-s)}\int_{\R^n}{|y|}^{-n+2-2s}\,\Delta u(x+y)\;dy\\&=&
c_{n,s-1}\int_{\R^n}{|y|}^{-n+2-2s}\,\Delta u(x+y)\;dy\\&=&
(-\Delta)^s u(x),
\end{eqnarray*}
which proves~\eqref{FORi} in this case.

Actually, the same approach would work also for~$n=1$ and~$s\in\left(\frac12,1\right)$,
but we prefer to treat separately the whole case~$n=1$ as follows.

Starting from~\eqref{symmetric-def} 
and representing~$u$ in terms of its Fourier Transform~$\widehat{u}$, 
we have that
\begin{align*}
{(-\Delta)}^su(x)
&=
\frac{c_{1,s}}2
\int_\R\frac{2u(x)-u(x+y)-u(x-y)}{{|y|}^{1+2s}}\;dy \\
&=
-\frac{2^{2s}\Gamma(\frac12+s)}{\sqrt\pi\,\Gamma(-s)}
\int_0^{+\infty}\frac{2u(x)-u(x+y)-u(x-y)}{y^{1+2s}}\;dy \\
&=
-\frac{2^{2s}\Gamma(\frac12+s)}{\sqrt\pi\,\Gamma(-s)}
\int_0^{+\infty}
\int_\R\left(2-e^{2\pi iy\xi}-e^{-2\pi iy\xi}\right)\widehat{u}(\xi)e^{2\pi ix\xi}\;d\xi\;\frac{dy}{y^{1+2s}} \\
&=
-\frac{2^{2s}\Gamma(\frac12+s)}{\sqrt\pi\,\Gamma(-s)}
\int_\R\int_0^{+\infty}
\frac{2-e^{2\pi iy\xi}-e^{-2\pi iy\xi}}{y^{1+2s}}\;dy\;\widehat{u}(\xi)e^{2\pi ix\xi}\;d\xi \\
&=
-\frac{2^{2s+1}\Gamma(\frac12+s)}{\sqrt\pi\,\Gamma(-s)}
\int_\R\int_0^{+\infty}
\frac{1-\cos(2\pi\xi y)}{y^{1+2s}}\;dy\;\widehat{u}(\xi)e^{2\pi ix\xi}\;d\xi.
\end{align*}
With the change of variables~$z:=2\pi|\xi|y$, we obtain that
\[
\int_0^{+\infty}
\frac{1-\cos(2\pi\xi y)}{y^{1+2s}}\;dy
=
(2\pi|\xi|)^{2s}\int_0^{+\infty}
\frac{1-\cos\big(\frac{\xi}{|\xi|} z\big)}{z^{1+2s}}\;dz
\]
and therefore, using the fact that~$\cos$ is an even function,
\[
\int_0^{+\infty}
\frac{1-\cos(2\pi\xi y)}{y^{1+2s}}\;dy
=
(2\pi|\xi|)^{2s}\int_0^{+\infty}
\frac{1-\cos y}{y^{1+2s}}\;dy.
\]

Moreover, it holds from Lemma~\ref{lem:coseno} that 
\[
\int_0^{+\infty}
\frac{1-\cos y}{y^{1+2s}}\;dy=\frac{\Gamma(s)\Gamma(1-s)}{2\Gamma(1+2s)}
\]
from which we deduce that
\[
{(-\Delta)}^su(x)=
-\frac{2^{2s}\Gamma(\frac12+s)}{\sqrt\pi\,\Gamma(-s)}\frac{\Gamma(s)\Gamma(1-s)}{\Gamma(1+2s)}
\int_\R(2\pi|\xi|)^{2s}\widehat{u}(\xi)e^{2\pi ix\xi}\;d\xi.
\]

Now, using the properties of $\Gamma$, in particular~\eqref{gamma-recursive}
and~\eqref{gamma-dupli}, we remark that 
\[
-\frac{2^{2s}\Gamma(\frac12+s)}{\sqrt\pi\,\Gamma(-s)}\frac{\Gamma(s)\Gamma(1-s)}{\Gamma(1+2s)}
=\frac{2^{2s}\Gamma(\frac12+s)}{\sqrt\pi}\frac{\Gamma(s)s}{\Gamma(1+2s)}
=\frac{2^{2s-1}\Gamma(\frac12+s)}{\sqrt\pi}\frac{\Gamma(s)}{\Gamma(2s)}
=1
\]
and this concludes the proof.
\end{proof}

\section[The fractional Laplacian as a Bochner integral]{The fractional Laplacian as a Bochner integral: proof of~\eqref{HESE}}\label{fractsection:4}

As claimed in~\eqref{HESE},
the fractional Laplacian can also be introduced as 
\begin{align}\label{bochner-def}
(-\Delta)^s u(x)=\frac{s}{\Gamma(1-s)}\int_0^{+\infty}\frac{u(x)-[e^{t\Delta}u](x)}{t^{1+s}}\;dt
\end{align}
where~$(e^{t\Delta})_{t>0}$ stands for the heat semigroup, i.e.,
\begin{align}\label{bochner-def00}
e^{t\Delta}u(x)=\frac1{{(4\pi t)}^{n/2}}\int_{\R^n}e^{-|x-y|^2/(4t)}u(y)\;dy.
\end{align}
We refer the reader to~\cite{MR3916700, MR3965397} for a thorough introduction
to the semigroup approach to the fractional Laplacian.

The factor~${(4\pi t)}^{-n/2}$ in~\eqref{bochner-def00}
plays the role of a normalizing constant,
according to the following observation:

\begin{lemma}\label{lemmazero0000}
We have that
$$  \int_{\R^n}e^{-|x-y|^2/(4t)}\;dy={(4\pi t)}^{n/2}.$$
\end{lemma}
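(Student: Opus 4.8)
The plan is to evaluate the Gaussian integral
$\int_{\R^n} e^{-|x-y|^2/(4t)}\,dy$ directly. First I would perform the change of variables $z := y - x$, which is a translation with unit Jacobian, reducing the integral to $\int_{\R^n} e^{-|z|^2/(4t)}\,dz$ with no dependence on $x$. Next I would rescale by setting $w := z/(2\sqrt{t})$, so that $dz = (2\sqrt{t})^n\,dw = 2^n t^{n/2}\,dw$ and $|z|^2/(4t) = |w|^2$, turning the integral into $2^n t^{n/2}\int_{\R^n} e^{-|w|^2}\,dw$.

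It then remains to recall the standard identity $\int_{\R^n} e^{-|w|^2}\,dw = \pi^{n/2}$. I would obtain this by factoring the integrand over coordinates, $e^{-|w|^2} = \prod_{j=1}^n e^{-w_j^2}$, so that by Fubini's theorem the integral equals $\left(\int_{\R} e^{-r^2}\,dr\right)^n$; the one-dimensional Gaussian integral $\int_{\R} e^{-r^2}\,dr = \sqrt{\pi}$ can be taken as classical (or proved by the usual polar-coordinates trick on its square). Combining the two factors gives $2^n t^{n/2}\,\pi^{n/2} = (4\pi t)^{n/2}$, which is exactly the claimed value.

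There is essentially no obstacle here: the statement is a normalization computation, and every step is an elementary change of variables together with the classical value of the Gaussian integral. If anything, the only point worth a word is that all the integrals involved converge absolutely (the integrand is positive and decays like a Gaussian), which justifies the use of Fubini's theorem in the factorization step.
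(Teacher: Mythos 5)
Your proof is correct. The first step coincides with the paper's: both rescale via $z:=(y-x)/\sqrt{4t}$ (you do it in two stages, translation then dilation), reducing the claim to the value of $\int_{\R^n}e^{-|w|^2}\,dw$. Where you diverge is in evaluating that Gaussian integral: you factor the integrand over coordinates and use Fubini's theorem together with the one-dimensional identity $\int_\R e^{-r^2}\,dr=\sqrt\pi$, whereas the paper passes to polar coordinates, substitutes $\rho:=r^2$ to recognize the integral $\frac{|\mathbb{S}^{n-1}|}{2}\Gamma\left(\frac n2\right)$, and then invokes its formulas~\eqref{gamma-def} and~\eqref{measure-n-sphere} for the Gamma function and the measure of the sphere. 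Your route is more elementary and self-contained (no Gamma function or sphere-measure formula needed, only the classical one-dimensional Gaussian), while the paper's choice keeps the computation aligned with the special-function machinery of its appendix, which it reuses constantly elsewhere; either argument fully establishes Lemma~\ref{lemmazero0000}, and your remark on absolute convergence justifying Fubini is the only hypothesis that needs checking on your side.
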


\begin{proof}
The change of variable~$z:=(y-x)/\sqrt{4t}$ gives that
\begin{align*}
\int_{\R^n}e^{-|x-y|^2/(4t)}\;dy={(4t)}^{n/2}\int_{\R^n}e^{-|z|^2}\;dz.
\end{align*}
Thus, passing to polar coordinates,
\begin{align*}
\int_{\R^n}e^{-|x-y|^2/(4t)}\;dy &=
{(4t)}^{n/2}\big|\mathbb{S}^{n-1}\big|\int_{\R^n}e^{-r^2}r^{n-1}\;dz \\
&=
{(4t)}^{n/2}\frac{\big|\mathbb{S}^{n-1}\big|}2\int_{\R^n}e^{-\rho}\rho^{n/2-1}\;dz
\\&=
{(4t)}^{n/2}\frac{|\mathbb{S}^{n-1}|}2\Gamma\left(\frac{n}2\right)\\&=
{(4\pi t)}^{n/2},
\end{align*}
see~\eqref{gamma-def} for the definition of~$\Gamma$ and
\eqref{measure-n-sphere} for further details on the measure of the sphere.
\end{proof}

Now, to establish~\eqref{HESE}, we provide the following result:

\begin{lemma}
Let~$s\in(0,1)$, $r>0$, $\sigma>s$, $x\in\R^n$ and~$u\in C^{2\sigma}(B_r(x))\cap L^1_s(\R^n)$.

Then, the definitions in~\eqref{symmetric-def} and~\eqref{bochner-def}
coincide. 
\end{lemma}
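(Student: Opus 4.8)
The plan is to show that the Bochner-type integral on the right-hand side of~\eqref{bochner-def} reproduces the singular-kernel expression~\eqref{symmetric-def}, by inserting the explicit heat-kernel representation~\eqref{bochner-def00} and exchanging the order of integration. First I would use Lemma~\ref{lemmazero0000} to write, for each fixed $t>0$,
\begin{align*}
u(x)-[e^{t\Delta}u](x)
&=\frac1{(4\pi t)^{n/2}}\int_{\R^n}e^{-|z|^2/(4t)}\big(u(x)-u(x+z)\big)\,dz\\
&=\frac1{2(4\pi t)^{n/2}}\int_{\R^n}e^{-|z|^2/(4t)}\big(2u(x)-u(x+z)-u(x-z)\big)\,dz,
\end{align*}
where the symmetrization in the last line uses the radial symmetry $e^{-|z|^2/(4t)}=e^{-|{-z}|^2/(4t)}$ and is what will eventually produce the symmetric second difference $2u(x)-u(x+z)-u(x-z)$ appearing in~\eqref{symmetric-def}. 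Substituting into~\eqref{bochner-def} gives a double integral in $(t,z)$, and the whole computation reduces to justifying Fubini and then evaluating the $t$-integral explicitly.

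The key step is the Fubini exchange: after swapping, one is left with
\[
\frac{s}{\Gamma(1-s)}\int_{\R^n}\big(2u(x)-u(x+z)-u(x-z)\big)\left(\int_0^{+\infty}\frac{e^{-|z|^2/(4t)}}{2(4\pi t)^{n/2}\,t^{1+s}}\,dt\right)dz,
\]
so the inner $t$-integral must be computed. With the substitution $\tau:=|z|^2/(4t)$ (so $t=|z|^2/(4\tau)$, $dt=-|z|^2/(4\tau^2)\,d\tau$) this integral becomes a Gamma function: it equals $|z|^{-n-2s}$ times a numerical constant of the form $2^{-n-1}\pi^{-n/2}\Gamma\big(\frac n2+s\big)$. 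Matching the resulting prefactor $\frac{s}{\Gamma(1-s)}\cdot 2^{-n-1}\pi^{-n/2}\Gamma\big(\frac n2+s\big)$ against $\frac{c_{n,s}}{2}$ — using $c_{n,s}=-\frac{2^{2s}\Gamma(\frac{n+2s}2)}{\pi^{n/2}\Gamma(-s)}$ together with the reflection-type identity $\Gamma(-s)\Gamma(1-s)\cdot s = -\Gamma(1-s)^2 s \cdot\frac{1}{\,\cdots}$, or more directly $s/\Gamma(1-s)=1/\Gamma(1-s)\cdot s = -1/\Gamma(-s)$ via $\Gamma(1-s)=-s\,\Gamma(-s)$ — yields exactly the constant in~\eqref{symmetric-def}, completing the identification.

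The main obstacle is making the interchange of integrals rigorous, since neither the $t$-integral nor the $z$-integral is absolutely convergent on its own near the relevant endpoints, and the pairing is only conditionally summable. Concretely, near $z=0$ the factor $2u(x)-u(x+z)-u(x-z)$ is $O(|z|^{2\sigma})$ by the $C^{2\sigma}(B_r(x))$ hypothesis with $\sigma>s$, which controls the small-$z$ behaviour after the $t$-integration has been carried out; but before that, for small $t$ the heat kernel is highly concentrated and one must be careful. The clean way to handle this is to split the $t$-integral at $t=1$ (say): for $t\ge1$ one bounds $|u(x)-[e^{t\Delta}u](x)|\le |u(x)|+\|e^{t\Delta}u\|_{L^\infty}\le C$ using $u\in L^1_s(\R^n)$ and a crude bound on the heat semigroup, making $\int_1^\infty t^{-1-s}\,dt$ convergent; for $t<1$ one writes $u(x)-[e^{t\Delta}u](x)=\frac1{2(4\pi t)^{n/2}}\int e^{-|z|^2/(4t)}\big(2u(x)-u(x+z)-u(x-z)\big)\,dz$, splits the inner integral over $B_r$ and its complement, uses the $C^{2\sigma}$ modulus on $B_r$ (which gives an integrable $t^{\sigma-s-1}$ singularity since $\sigma>s$) and the $L^1_s$ decay with the Gaussian tail off $B_r$, and applies Tonelli to the resulting nonnegative majorants to license Fubini. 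Once integrability is secured the rest is the bookkeeping of Gamma-function identities already recalled in Appendix~\ref{sec:gamma} and used repeatedly above.
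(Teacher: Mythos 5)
Your proposal is correct and follows essentially the same route as the paper's proof: symmetrize the heat-kernel representation to produce the second difference $2u(x)-u(x+y)-u(x-y)$, swap the order of integration, evaluate the $t$-integral via the substitution $\tau=|y|^2/(4t)$ as a Gamma function, and match constants through $\Gamma(1-s)=-s\,\Gamma(-s)$ (the paper does not dwell on the Fubini justification you spell out, but the computation is the same). Only note that the inner $t$-integral actually yields the prefactor $2^{2s-1}\pi^{-n/2}\Gamma\big(\tfrac{n+2s}2\big)\,|z|^{-n-2s}$ rather than the $2^{-n-1}\pi^{-n/2}\Gamma\big(\tfrac n2+s\big)$ you quote (you dropped the factor $4^{n/2+s}$ from the substitution), and with the correct power of $2$ the matching with $\tfrac{c_{n,s}}2$ goes through exactly as you describe.
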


\begin{proof}
Using Lemma~\ref{lemmazero0000} we write 
\begin{align*}&
\int_0^{+\infty}\frac{u(x)-[e^{t\Delta}u](x)}{t^{1+s}}\;dt\\
 &\qquad=
\int_0^{+\infty}\left(u(x)-\frac1{{(4\pi t)}^{n/2}}\int_{\R^n}e^{-|y|^2/(4t)}u(x+y)\;dy\right)\;\frac{dt}{t^{1+s}} \\
 &\qquad=
\frac12\int_0^{+\infty}\left(2u(x)-\frac1{{(4\pi t)}^{n/2}}\int_{\R^n}e^{-|y|^2/(4t)}\big(u(x+y)+u(x-y)\big)\;dy\right)\;\frac{dt}{t^{1+s}} \\
 &\qquad=
\frac12\int_0^{+\infty}\frac1{{(4\pi t)}^{n/2}}\int_{\R^n}e^{-|y|^2/(4t)}\big(2u(x)-u(x+y)-u(x-y)\big)\;dy\;\frac{dt}{t^{1+s}} \\
 &\qquad=
\frac1{2^{n+1}\pi^{n/2}}\int_{\R^n}\left(
\big(2u(x)-u(x+y)-u(x-y)\big)\int_0^{+\infty}e^{-|y|^2/(4t)}\;\frac{dt}{t^{n/2+1+s}}\right)\;dy.
\end{align*}

We now compute the integral in the~$t$ variable. With the change of variable~$\tau:=|y|^2/(4t)$ we have
\begin{align*}
\int_0^{+\infty}e^{-|y|^2/(4t)}\;\frac{dt}{t^{n/2+1+s}}
=4^{n/2+s}|y|^{-n-2s}\int_0^{+\infty}e^{-\tau}\tau^{n/2-1+s}\;d\tau
=2^{n+2s}|y|^{-n-2s}\,\Gamma\Big(\frac{n+2s}2\Big)
\end{align*}
where we have used the definition of the Gamma function~$\Gamma$, see~\eqref{gamma-def}.

With this, and recalling the definition of~$c_{n,s}$ in~\eqref{cns},
we obtain that 
\begin{align*}
\int_0^{+\infty}\frac{u(x)-[e^{t\Delta}u](x)}{t^{1+s}}\;dt
&=\frac{2^{2s}\,\Gamma\big(\frac{n+2s}2\big)}{\pi^{n/2}}\frac12
\int_{\R^n}\frac{2u(x)-u(x+y)-u(x-y)}{{|y|}^{n+2s}}\;dy\\&
=-\frac{c_{n,s}\Gamma(-s)}2
\int_{\R^n}\frac{2u(x)-u(x+y)-u(x-y)}{{|y|}^{n+2s}}\;dy\\&=
\frac{c_{n,s}\Gamma(1-s)}{2s}
\int_{\R^n}\frac{2u(x)-u(x+y)-u(x-y)}{{|y|}^{n+2s}}\;dy,
\end{align*}
where the property of the Gamma function in~\eqref{gamma-recursive}
has been used in the last line.

This shows that
the definitions in~\eqref{symmetric-def} and~\eqref{bochner-def} coincide,
as desired.
\end{proof}

\section{Nonlocal gradients and divergences}\label{sec:nonlocal-grad}

An interesting property of the classical Laplacian is that it can be written as the divergence of the gradient.
To recover this feature in the nonlocal setting, one needs to introduce a suitable notion of fractional divergence and gradient.

\begin{definition}
Let~$s\in(0,1)$, $\sigma>s$, $r>0$, $x\in\R^n$
and~$u\in C^\sigma(B_r(x))\cap L^1_{s/2}(\R^n)$, where the notation in~\eqref{w-l1-space} has been used.

The nonlocal gradient of~$u$ at~$x$ is defined as
\begin{align}\label{NONGRAD}
\begin{split}
\nabla^su(x)&:=d_{n,s}\int_{\R^n}\frac{u(y)-u(x)}{{|y-x|}^{n+s}}\,\frac{y-x}{|y-x|}\;dy \\
&=d_{n,s}\,\pv\int_{\R^n}\frac{u(y)}{{|y-x|}^{n+s}}\,\frac{y-x}{|y-x|}\;dy 
\end{split}
\end{align}
where
\begin{align}\label{dns}
d_{n,s}:=
\frac{2^s\Gamma(\frac{n+s+1}2)}{\pi^{n/2}\Gamma(\frac{1-s}2)}
=\frac1{1+s}\,c_{n,\frac{1+s}2}.
\end{align}
\end{definition}

\begin{definition}
Let~$s\in(0,1)$, $\sigma>s$, $r>0$, $x\in\R^n$ and~$F\in C^\sigma(B_r(x),\R^n)\cap L^1_{s/2}(\R^n)$. 

The nonlocal divergence of~$F$ at~$x$ is defined as
\begin{align*}
\div^sF(x)&:=d_{n,s}\int_{\R^n}\frac{F(y)-F(x)}{{|y-x|}^{n+s}}\cdot
\frac{y-x}{|y-x|}\;dy \\
&=d_{n,s}\,\pv\int_{\R^n}\frac{F(y)}{{|y-x|}^{n+s}}\cdot\frac{y-x}{|y-x|}\;dy 
\end{align*}
where~$d_{n,s}$ is defined as in~\eqref{dns}.
\end{definition}

According to~\cite[Section~1]{silhavi},
the first appearance of the nonlocal gradient dates back to the works~\cite{MR107788, MR500133}.
Nonlocal gradient and divergence have been studied in the recent literature
(see e.g.~\cite{MR3420498, shieh, MR3615452, MR3714833, shiehtwo, comi, piola, mora}), though a complete understanding of their
rather complex behavior is still under development. 

The constant~$d_{n,s}$ appearing in~\eqref{NONGRAD} is such that the fractional gradient~$\nabla^su$ coincides with the classical gradient~$\nabla u$
when~$s=1$, see e.g.~\cite[Proposition~2.6]{depas}. 
We refer the reader to~\cite[Section~2.1]{depas} for a thorough introduction
to nonlocal gradients.
\medskip

In what follows, we will denote by~$\cF u$ the Fourier Transform of~$u$.

\begin{lemma}\label{lem:fourier-symbol-of-nonlocal-grad}
If~$u$ belongs to the Schwartz space of smooth and rapidly decreasing functions, then, for any~$\xi\in\R^n$,
\begin{align}\label{rmk:fourier-symbol-of-nonlocal-div00}
\cF\big[\nabla^s u\big](\xi)=i{(2\pi|\xi|)}^{s-1}2\pi\xi\,\cF{u}(\xi).
\end{align}

Furthermore, if~$F=(F_1,\dots, F_n)$ and~$F_j$ belongs to the Schwartz space of smooth and rapidly decreasing functions for all~$j\in\{1,\dots,n\}$,
then, for any~$\xi\in\R^n$,
\begin{align}\label{rmk:fourier-symbol-of-nonlocal-div}
\cF[\div^s F](\xi)=i({(2\pi|\xi|)}^{s-1}\,2\pi\xi\cdot \cF F(\xi).
\end{align}
\end{lemma}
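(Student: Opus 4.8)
The plan is to compute $\cF[\nabla^s u]$ directly from the principal-value representation in~\eqref{NONGRAD} and reduce everything to a single scalar Fourier-transform identity, after which the divergence formula~\eqref{rmk:fourier-symbol-of-nonlocal-div} follows by linearity applied componentwise. First I would write, using the second line of~\eqref{NONGRAD},
\[
\nabla^s u(x)=d_{n,s}\,\pv\int_{\R^n}\frac{y-x}{|y-x|^{n+s+1}}\,u(y)\;dy
=d_{n,s}\,\bigl(K*u\bigr)(x),
\qquad
K(z):=-\pv\frac{z}{|z|^{n+s+1}},
\]
so that $\cF[\nabla^s u](\xi)=d_{n,s}\,\widehat K(\xi)\,\cF u(\xi)$, the convolution theorem being justified since $u$ is Schwartz and $K$ is a tempered distribution (the kernel is homogeneous of degree $-(n+s)$ with $s\in(0,1)$, hence locally integrable away from the origin and compensated at the origin by its oddness in the principal-value sense). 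The task thus becomes: identify $\widehat K(\xi)$.

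The key step is to recognize $K$ as (a constant multiple of) the gradient of the Riesz kernel. Indeed, up to the normalizing constant, $|z|^{-(n+s-1)}$ is the Riesz kernel of order $1-s$, and $\nabla_z |z|^{-(n+s-1)} = -(n+s-1)\,z\,|z|^{-(n+s+1)}$, so $K(z) = \tfrac1{n+s-1}\,\nabla_z\bigl(|z|^{-(n+s-1)}\bigr)$ in the distributional sense. Now I would invoke the Fourier transform of the Riesz kernel already available in the paper (this is exactly the content of the Riesz-potential computation behind Lemma~\ref{RIEL}, equivalently Lemma~\ref{CAL:GA:099} / Corollary~\ref{FURIEZ}): for a suitable constant, $\cF\bigl[|z|^{-(n+s-1)}\bigr](\xi) = \text{(const)}\,|\xi|^{-(1-s)} = \text{(const)}\,|\xi|^{s-1}$. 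Combining with $\cF[\partial_{z_j} g](\xi) = 2\pi i\,\xi_j\,\widehat g(\xi)$ gives $\widehat K(\xi) = \text{(const)}\, i\, \xi\, |\xi|^{s-1}$, which already has the claimed shape $i(2\pi|\xi|)^{s-1}\,2\pi\xi$ up to a multiplicative constant.

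The main obstacle — and the only place where real work is needed — is bookkeeping the constants so that the product $d_{n,s}\cdot(\text{Riesz-transform constant})\cdot(\text{gradient factor }2\pi i\,\xi)$ collapses exactly to $i(2\pi|\xi|)^{s-1}2\pi\xi$, with no stray factor. Here I would use the explicit value of $d_{n,s}$ in~\eqref{dns}, the Gamma-function duplication and recursion formulas~\eqref{gamma-dupli}, \eqref{gamma-recursive}, and the Riesz-kernel normalization stated earlier; the computation is of the same flavor as the constant-chasing in the proof of the Fourier-multiplier formula~\eqref{FORi} and in Lemma~\ref{RIEL}, and I expect it to close cleanly. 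An alternative, if one prefers to avoid distributional subtleties, is to compute $\cF[\nabla^s u]$ componentwise by splitting the principal value into an $\eps$-ball complement, integrating against $e^{-2\pi i x\cdot\xi}$, using the oddness of the kernel to write $u(y)$ as $u(y)-u(x)$ inside $B_1$, and passing to the limit — but this just reproves the Riesz-transform identity by hand, so the distributional route is shorter. Finally, for~\eqref{rmk:fourier-symbol-of-nonlocal-div}, the divergence $\div^s F = \sum_j (\nabla^s F_j)_j$ is literally the $j$-th component of $\nabla^s$ applied to $F_j$ and summed, so $\cF[\div^s F](\xi) = \sum_j i(2\pi|\xi|)^{s-1}2\pi\xi_j\,\cF F_j(\xi) = i(2\pi|\xi|)^{s-1}\,2\pi\xi\cdot\cF F(\xi)$, with no additional argument required.
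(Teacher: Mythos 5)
Your argument is correct, but it takes a genuinely different route from the paper's. The paper computes the symbol directly: starting from \eqref{star647432bstarr43thgfbs067} it scales out ${(2\pi|\xi|)}^s$, reduces by rotational symmetry to the scalar integral $\int_{\R^n}|z|^{-n-s-1}z_1\sin z_1\,dz$, and evaluates it by hand through Beta/Gamma identities, including a Laplace-transform computation of $\int_0^{+\infty}t^{-1-s}\sin t\,dt$; all the constant work happens there (and the radial splitting in \eqref{03ur2j09ghg83w4hgw389hgoi2904gh2309hgfivowhgeuogb} is written for $n\ge2$). You instead recognize the convolution kernel $-z|z|^{-n-s-1}$ as $\tfrac1{n+s-1}\nabla\big(|z|^{1-s-n}\big)$ and quote the Fourier transform of the homogeneous kernel (Lemma~\ref{CAL:GA:099} with $k=0$, $P\equiv1$, $\alpha:=1-s\in(0,n)$) together with $\cF[\partial_j g]=2\pi i\xi_j\,\widehat g$, which bypasses the trigonometric integrals and works uniformly in $n\ge1$. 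Two points you left implicit but which do go through: first, identifying the distributional gradient of the locally integrable function $|z|^{1-s-n}$ with the principal value of its pointwise gradient requires checking that the boundary term $\int_{\partial B_\eps}\nu_j\,|z|^{1-s-n}\varphi\,d{\mathcal{H}}^{n-1}$ vanishes, which it does at rate $O(\eps^{1-s})$ precisely by the oddness you invoke (replace $\varphi$ by $\varphi-\varphi(0)$); second, the constant bookkeeping you postponed closes exactly, since Lemma~\ref{CAL:GA:099} gives $\cF\big[|z|^{1-s-n}\big](\xi)=\pi^{\frac n2-1+s}\,\Gamma(\tfrac{1-s}2)\,\Gamma(\tfrac{n+s-1}2)^{-1}\,|\xi|^{s-1}$, and with $d_{n,s}$ as in \eqref{dns} and the recursion \eqref{gamma-recursive} in the form $\Gamma(\tfrac{n+s+1}2)=\tfrac{n+s-1}2\,\Gamma(\tfrac{n+s-1}2)$ one finds $d_{n,s}\,\tfrac{1}{n+s-1}\,\pi^{\frac n2-1+s}\,\Gamma(\tfrac{1-s}2)\,\Gamma(\tfrac{n+s-1}2)^{-1}=(2\pi)^{s-1}$, which is exactly the factor needed in \eqref{rmk:fourier-symbol-of-nonlocal-div00}. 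Your componentwise deduction of \eqref{rmk:fourier-symbol-of-nonlocal-div} from the gradient formula is fine, and is in fact a cleaner justification than the paper's ``similar and therefore omitted'' remark.
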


\begin{proof}
We will show~\eqref{rmk:fourier-symbol-of-nonlocal-div00}, as the proof
of~\eqref{rmk:fourier-symbol-of-nonlocal-div} is similar and therefore is omitted.

We rewrite
\[
\nabla^su(x)=d_{n,s}\int_{\R^n}\frac{u(x+z)-u(x)}{{|z|}^{n+s}}\,\frac{z}{|z|}\;dz.
\]
In this way, we see that
\begin{equation}\label{star647432bstarr43thgfbs067}\begin{split}
\cF\left[\int_{\R^n}\frac{u(\cdot+z)-u}{{|z|}^{n+s}}\,\frac{z}{|z|}\;dz\right](\xi)
&=
\int_{\R^n}\frac{e^{2\pi iz\cdot\xi}\cF u(\xi)-\cF u(\xi)}{{|z|}^{n+s}}\,\frac{z}{|z|}\;dz\\&=
\cF u(\xi)\int_{\R^n}\frac{e^{2\pi iz\cdot\xi}-1}{{|z|}^{n+s}}\,\frac{z}{|z|}\;dz \\
&={(2\pi|\xi|)}^s\cF u(\xi)\int_{\R^n}\frac{e^{iz\cdot\frac{\xi}{|\xi|}}-1}{{|z|}^{n+s}}\,\frac{z}{|z|}\;dz\\&=
i{(2\pi|\xi|)}^s\cF u(\xi)\int_{\R^n}\frac{\sin\left(z\cdot\frac{\xi}{|\xi|}\right)}{{|z|}^{n+s}}\,\frac{z}{|z|}\;dz.
\end{split}\end{equation}
Without loss of generality, by rotational symmetry we can suppose that~$\xi=|\xi|e_1$. Accordingly,
\begin{align*}
\int_{\R^n}\frac{\sin\left(z\cdot\frac{\xi}{|\xi|}\right)}{{|z|}^{n+s}}\,\frac{z_j}{|z|}\;dz=\int_{\R^n}\frac{\sin(z_1)}{{|z|}^{n+s}}\,\frac{z_j}{|z|}\;dz
=0
\qquad\text{for any }j\in\{2,\ldots,n\}
\end{align*}
and
\begin{equation}\label{03ur2j09ghg83w4hgw389hgoi2904gh2309hgfivowhgeuogb}
\begin{split}
\int_{\R^n}\frac{\sin\left(z\cdot\frac{\xi}{|\xi|}\right)}{{|z|}^{n+s}}\,\frac{z_1}{|z|}\;dz &=
\int_{\R^n}\frac{z_1\,\sin z_1}{{|z|}^{n+s+1}}\;dz\\&=
\int_\R \left(z_1\,\sin z_1\int_{\R^{n-1}}\frac{dz'}{\big(z_1^2+|z'|^2\big)^{(n+s+1)/2}}\right)\;dz_1 \\
&=
\int_\R\left(\frac{z_1\,\sin z_1 }{{|z_1|}^{2+s}}\int_{\R^{n-1}}\frac{dz'}{\big(1+{|z'|}^2\big)^{(n+s+1)/2}}\right)\;dz_1 \\
&= 2\big|\mathbb{S}^{n-2}\big|
\int_0^{+\infty}\frac{\sin z_1 }{z_1^{1+s}}\;dz_1 
\int_0^{+\infty}\frac{t^{n-2}}{\big(1+t^2\big)^{(n+s+1)/2}}\;dt.
\end{split}
\end{equation}

Now, if~$n\ge2$, we can perform
the change of variable~$\tau:=1/(1+t^2)$ and,
using~\eqref{beta-identity} with~$a:=1+s/2$ and~$b:=(n-1)/2$,
we obtain that
\begin{align}\label{932urpoj2390rfu23j90f23jwighjiog8ho8g2}
\int_0^{+\infty}\frac{t^{n-2}}{\big(1+t^2\big)^{(n+s+1)/2}}\;dt
=\frac12\int_0^1{(1-\tau)}^{\frac{n-3}2}\tau^{\frac{s}2}\;d\tau
=\frac{\Gamma(\frac{n-1}2)\,\Gamma(1+\frac{s}2)}{2\Gamma(\frac{n+s+1}2)}
.\end{align}

Given~$a>0$, integrating by parts twice, we find that
\begin{align*}
\int_0^{+\infty}e^{-a\rho}\sin\rho\;d\rho
=
1-a\int_0^{+\infty}e^{-a\rho}\cos\rho\;d\rho
=
1-a^2\int_0^{+\infty}e^{-a\rho}\sin\rho\;d\rho
\end{align*}
and therefore
\begin{align*}
\int_0^{+\infty}e^{-a\rho}\sin\rho\;d\rho=\frac1{1+a^2}.
\end{align*}
That is, for all~$\tau>0$,
\begin{align}\label{laplasine}
\int_0^{+\infty}e^{-\frac\rho\tau}\sin\rho\;d\rho
=
\frac{\tau^2}{1+\tau^2}.
\end{align}

We also claim that
\begin{align}\label{9u23rijfp1jfc1i230dcjkwpo0}
\int_0^{+\infty}\frac{\sin z_1}{z_1^{1+s}}\;dz_1
=\frac{\sqrt\pi}{2^{1+s}}\frac{\Gamma(\frac{1-s}2)}{\Gamma(1+\frac{s}2)}.
\end{align}
To prove this, we argue as follows:
by the definition of~$\Gamma$ in~\eqref{gamma-def}, we see that
\begin{align*}
\Gamma(1+s)\int_0^{+\infty}\frac{\sin z_1}{z_1^{1+s}}\;dz_1
=\int_0^{+\infty}\int_0^{+\infty}e^{-r}r^s\,\frac{\sin z_1}{z_1^{1+s}}\;dz_1\;dr.
\end{align*}
We use the changes of variable~$z_1:=r\tau$ and~$\rho:=r\tau$ and we obtain that
\begin{align*}
\int_0^{+\infty}\int_0^{+\infty}e^{-r}r^s\,\frac{\sin z_1}{z_1^{1+s}}\;dz_1\;dr
&=
\int_0^{+\infty}\int_0^{+\infty}e^{-r}\,\frac{\sin(r\tau)}{\tau^{1+s}}\;d\tau\;dr\\&=
\int_0^{+\infty}\int_0^{+\infty}e^{-\frac\rho\tau}\,\frac{\sin\rho}{\tau^{2+s}}\;d\tau\;d\rho.
\end{align*}
Recalling~\eqref{laplasine}, we have that
\begin{align*}
\int_0^{+\infty}\int_0^{+\infty}e^{-\frac\rho\tau}\,\frac{\sin\rho}{\tau^{2+s}}\;d\tau\;d\rho
=
\int_0^{+\infty}\frac{d\tau}{\tau^{s}(1+\tau^2)}.
\end{align*}
Moreover, the change of variable~$\eta:=1/(1+\tau^2)$ gives
\begin{eqnarray*}&&
\int_0^{+\infty}\frac{d\tau}{\tau^{s}(1+\tau^2)}
=
\frac12\int_0^1\eta^{\frac{s-1}2}(1-\eta)^{-\frac{1+s}{2}}\;d\eta
=
\frac{\Gamma(\frac{1+s}2)\,\Gamma(\frac{1-s}2)}{2}\\&&\qquad
=\frac{\sqrt\pi\, \Gamma(s)\Gamma(\frac{1-s}2)}{2^s\Gamma(\frac{s}2)}
= \frac{\sqrt\pi \,\Gamma(1+s)\Gamma(\frac{1-s}2)}{2^{1+s}\Gamma(\frac{1+s}2)}\end{eqnarray*}
due to~\eqref{gamma-recursive}, \eqref{beta-identity} with~$a:=(1+s)/2$ and~$b:=(1-s)/2$
and~\eqref{gamma-dupli} with~$a:=s/2$.

Gathering together these considerations proves~\eqref{9u23rijfp1jfc1i230dcjkwpo0}, as desired.

Plugging~\eqref{932urpoj2390rfu23j90f23jwighjiog8ho8g2} and~\eqref{9u23rijfp1jfc1i230dcjkwpo0}
into~\eqref{03ur2j09ghg83w4hgw389hgoi2904gh2309hgfivowhgeuogb}, we find that
$$
\int_{\R^n}\frac{\sin\left(z\cdot\frac{\xi}{|\xi|}\right)}{{|z|}^{n+s}}\,\frac{z_1}{|z|}\;dz
=\frac{\pi^{n/2}}{2^{s}}\,
\frac{\Gamma(\frac{1-s}2)}{\Gamma(\frac{n+s+1}2)}.
$$
Using this information into~\eqref{star647432bstarr43thgfbs067}
and recalling the definition of~$d_{n,s}$ in~\eqref{dns}
we complete the proof of~\eqref{rmk:fourier-symbol-of-nonlocal-div00}.
\end{proof}

With this preliminary work, in the next subsections we will focus on the proof of~\eqref{RADCVE}, \eqref{RADCVE2} and~\eqref{RADCVE3}.

\subsection[The fractional Laplacian as the nonlocal divergence of the nonlocal gradient]{The fractional Laplacian as the nonlocal divergence of the nonlocal gradient: proof of~\eqref{RADCVE}}

Here we establish~\eqref{RADCVE},
namely we show the following:

\begin{lemma}\label{divgrad-def2-lemma}
Let~$u$ be in the Schwartz space of rapidly decreasing functions. Then,
\begin{align}\label{divgrad-def}
(-\Delta)^s u(x)=-\div^s\big(\nabla^s u(x)\big).
\end{align}
\end{lemma}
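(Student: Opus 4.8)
The plan is to pass everything through the Fourier transform, exploiting the symbol computed in Lemma~\ref{lem:fourier-symbol-of-nonlocal-grad} together with the Fourier characterization~\eqref{FORi} of the fractional Laplacian, which is available since a function in the Schwartz space certainly lies in~$C^\infty_c$-type hypotheses (more precisely, all the earlier lemmas apply, as Schwartz functions belong to every~$L^1_s(\R^n)$ and are smooth). First I would note that for a Schwartz function~$u$, the vector field~$F:=\nabla^s u$ has each component~$F_j$ again Schwartz-like enough (smooth and rapidly decreasing, as follows from the Fourier-side representation~\eqref{rmk:fourier-symbol-of-nonlocal-div00}, whose right-hand side is, away from the origin, a Schwartz multiple of~$\cF u$, and near the origin the factor~${(2\pi|\xi|)}^{s-1}2\pi\xi$ is bounded and continuous), so that the hypotheses of~\eqref{rmk:fourier-symbol-of-nonlocal-div} are met and we may compose the two operators.

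The core computation is then a one-line chain on the Fourier side:
\begin{align*}
\cF\big[-\div^s(\nabla^s u)\big](\xi)
&=-\,i\,{(2\pi|\xi|)}^{s-1}\,2\pi\xi\cdot\cF\big[\nabla^s u\big](\xi)\\
&=-\,i\,{(2\pi|\xi|)}^{s-1}\,2\pi\xi\cdot\big(i\,{(2\pi|\xi|)}^{s-1}\,2\pi\xi\,\cF u(\xi)\big)\\
&={(2\pi|\xi|)}^{2(s-1)}\,|2\pi\xi|^2\,\cF u(\xi)\\
&={(2\pi|\xi|)}^{2s}\,\cF u(\xi),
\end{align*}
using~$-i\cdot i=1$ and~$|2\pi\xi|^2={(2\pi|\xi|)}^{2}$. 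Comparing with~\eqref{FORi}, which states that~$\cF[(-\Delta)^s u](\xi)={(2\pi|\xi|)}^{2s}\cF u(\xi)$ (read off by applying the inverse Fourier transform to that representation), we get that~$\cF[-\div^s(\nabla^s u)]=\cF[(-\Delta)^s u]$ pointwise in~$\xi$. Since both sides are continuous (indeed Schwartz, as the symbol~${(2\pi|\xi|)}^{2s}$ times a Schwartz function is integrable and smooth decay is preserved), the inverse Fourier transform yields the desired identity~\eqref{divgrad-def} at every~$x$.

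The main obstacle I anticipate is not the algebra but the justification that~$\div^s$ and~$\nabla^s$ may legitimately be composed and that taking Fourier transforms commutes with the order of operations here: one must check that~$\nabla^s u$ falls within the class of admissible inputs for~$\div^s$. The subtlety is the behavior of the multiplier~${(2\pi|\xi|)}^{s-1}2\pi\xi$ at the origin (it is continuous but not smooth there when~$s\in(0,1)$), so~$\cF[\nabla^s u]$ is continuous and rapidly decreasing but perhaps not Schwartz; however, continuity plus rapid decay is more than enough to guarantee that~$\nabla^s u$ is a bounded continuous vector field in every~$L^1_{s/2}(\R^n)$ and sufficiently regular (its components are~$C^\infty$, being inverse transforms of rapidly decreasing functions) for the definition of~$\div^s$ to apply. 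I would dispatch this with a short remark rather than a lengthy argument, then let the Fourier multiplier computation above do the rest. Alternatively, if one wishes to avoid any regularity bookkeeping, one can instead verify~\eqref{divgrad-def} directly by writing~$-\div^s(\nabla^s u)$ as the iterated principal-value integral and reducing, via the identities already established for~$\nabla^s$, to the symmetric representation~\eqref{symmetric-def}; but the Fourier route is cleaner and I would present that.
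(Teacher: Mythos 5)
Your proposal is correct and follows essentially the same route as the paper: the paper's proof likewise computes the Fourier transform of~$\div^s(\nabla^s u)$ by applying Lemma~\ref{lem:fourier-symbol-of-nonlocal-grad} twice, obtains~$-{(2\pi|\xi|)}^{2s}\cF u(\xi)$, and identifies this with~$-\cF[(-\Delta)^s u](\xi)$ via~\eqref{FORi}. Your extra remarks on why~$\nabla^s u$ is an admissible input for~$\div^s$ are sound but not part of the paper's (shorter) argument.
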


\begin{proof}
Identity~\eqref{divgrad-def} follows by computing the Fourier Transform on both sides.
Indeed, exploiting Lemma~\ref{lem:fourier-symbol-of-nonlocal-grad},
we have that
\begin{align*}
\cF\big[\div^s(\nabla^s u)\big](\xi)
&=i{(2\pi|\xi|)}^{s-1}2\pi\xi\cdot\cF[\nabla^s u](\xi)\\ 
&=i{(2\pi|\xi|)}^{s-1}2\pi\xi\cdot\big(i{(2\pi|\xi|)}^{s-1}2\pi\xi\cF u(\xi)\big)
\\&=-{(2\pi|\xi|)}^{2s}\cF u(\xi) \\
&=-\cF\big[(-\Delta)^s u\big](\xi),
\end{align*} as desired.
\end{proof}

\subsection[The fractional Laplacian as the nonlocal divergence of the local gradient]{The fractional Laplacian as the nonlocal divergence of the local gradient: proof of~\eqref{RADCVE2}}

When~$s\in(1/2,1)$, the fractional Laplacian can be also represented
as in~\eqref{RADCVE2}. To prove this claim, one can use a Fourier Transform
approach, as in Lemma~\ref{divgrad-def2-lemma},
but we follow here a different idea which relies on an integration by parts.

Indeed, note that
\begin{align}\label{cwcjeklwecmlkwecmlwkecmlwenmc}
\div\big(|x|^{-n-2s}x\big)=-(n+2s){|x|}^{-n-2s}+n{|x|}^{-n-2s}=-2s{|x|}^{-n-2s}
\qquad \text{for }x\in\R^n\setminus\{0\}.
\end{align}
With this information, we will be able to perform the desired integration by part
trick and show~\eqref{RADCVE2}, as follows.

\begin{lemma}
Let~$s\in(1/2,1)$, $\sigma>2s$, $r>0$, $x\in\R^n$  and~$u\in C^\sigma(B_r(x))\cap C^1(\R^n)\cap L^1_s(\R^n)$ be
such that 
\[
\int_{\R^n}\frac{|\nabla u(y)|}{(1+|y|)^{n+2s-1}}\;dy<+\infty.
\]
Then, \eqref{RADCVE2} holds true.
\end{lemma}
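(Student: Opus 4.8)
The plan is to prove that for $u\in C^\sigma(B_r(x))\cap C^1(\R^n)\cap L^1_s(\R^n)$ with the stated integrability of $\nabla u$, one has $(-\Delta)^su(x)=-\div^{2s-1}(\nabla u)(x)$, using the representation~\eqref{RADCVE2} where $\div^{2s-1}$ is the nonlocal divergence of order $2s-1\in(0,1)$. First I would write out $-\div^{2s-1}(\nabla u)(x)$ from its definition: up to the constant $d_{n,2s-1}$, it equals
\[
-d_{n,2s-1}\,\pv\int_{\R^n}\frac{\nabla u(y)}{|y-x|^{n+2s-1}}\cdot\frac{y-x}{|y-x|}\;dy
=-d_{n,2s-1}\,\pv\int_{\R^n}\frac{\nabla u(x+z)\cdot z}{|z|^{n+2s}}\;dz,
\]
after the translation $z=y-x$. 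The idea is then to integrate by parts in $z$, transferring the derivative off $\nabla u$ onto the kernel $|z|^{-n-2s}z$, using the identity~\eqref{cwcjeklwecmlkwecmlkwecmlwkecmlwenmc} which says $\div_z(|z|^{-n-2s}z)=-2s|z|^{-n-2s}$.

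The key technical steps, in order, would be: (i) regularize by removing a ball $B_\e$ around $z=0$, so that on $\R^n\setminus B_\e$ everything is smooth and compactly controlled at infinity (the decay of $\nabla u$ from the hypothesis handles the integrand at $\infty$); (ii) apply the divergence theorem on $\R^n\setminus B_\e$ to $\nabla u(x+z)$ against the vector field $|z|^{-n-2s}z$, producing a bulk term $-2s\int_{\R^n\setminus B_\e}\frac{u(x+z)}{|z|^{n+2s}}\,dz$ — wait, more precisely $\int_{\R^n\setminus B_\e}\nabla u(x+z)\cdot|z|^{-n-2s}z\,dz=-\int_{\R^n\setminus B_\e}u(x+z)\,\div_z(|z|^{-n-2s}z)\,dz + \text{boundary terms}=2s\int_{\R^n\setminus B_\e}\frac{u(x+z)}{|z|^{n+2s}}\,dz+\text{bdry}$, after noting $\div_z\nabla u(x+z)=\Delta u(x+\cdot)$ is not what appears — rather one integrates by parts componentwise on the product $\partial_i u(x+z)\cdot |z|^{-n-2s}z_i$; (iii) analyze the boundary term on $\partial B_\e$, which is $\int_{\partial B_\e}u(x+z)\,|z|^{-n-2s}z\cdot\nu\,d\mathcal H^{n-1}$ with outward normal (pointing into $B_\e$) $\nu=-z/|z|$, giving $-\e^{-n-2s+1}\int_{\partial B_\e}u(x+z)\,d\mathcal H^{n-1}$; by the regularity $u\in C^\sigma(B_r(x))$ with $\sigma>2s>1$, writing $u(x+z)=u(x)+O(|z|)$ and using that the zeroth-order term integrates against $z\cdot\nu$ to zero by symmetry, one checks this boundary term vanishes as $\e\searrow0$ (the surviving contribution is $O(\e^{1-(2s-1)})=O(\e^{2-2s})\to0$ since $s<1$); (iv) assemble the limit to get $-\div^{2s-1}(\nabla u)(x)=-2s\,d_{n,2s-1}\,\pv\int_{\R^n}\frac{u(x+z)-u(x)}{|z|^{n+2s}}\,dz$ (inserting $u(x)$ freely because of the symmetry of the kernel against the principal value), which is exactly $c_{n,s}\,\pv\int\frac{u(x)-u(x+z)}{|z|^{n+2s}}\,dz=(-\Delta)^su(x)$ by~\eqref{pv-def}, once the constant identity $2s\,d_{n,2s-1}=c_{n,s}$ is verified from~\eqref{dns} and~\eqref{cns}.

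The final bookkeeping step is the constant check: from~\eqref{dns}, $d_{n,2s-1}=\frac{2^{2s-1}\Gamma(\frac{n+2s}2)}{\pi^{n/2}\Gamma(\frac{2-2s}2)}=\frac{2^{2s-1}\Gamma(\frac{n+2s}2)}{\pi^{n/2}\Gamma(1-s)}$, so $2s\,d_{n,2s-1}=\frac{2^{2s}\,s\,\Gamma(\frac{n+2s}2)}{\pi^{n/2}\Gamma(1-s)}=\frac{2^{2s}\Gamma(\frac{n+2s}2)}{\pi^{n/2}\cdot\frac{\Gamma(1-s)}{s}}=-\frac{2^{2s}\Gamma(\frac{n+2s}2)}{\pi^{n/2}\Gamma(-s)}=c_{n,s}$, using $\Gamma(1-s)=-s\,\Gamma(-s)$ (i.e.~\eqref{gamma-recursive}). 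I expect the main obstacle to be the rigorous justification of the boundary-term vanishing and the interchange of the $\pv$ limit with the integration by parts: one must be careful that the condition $\sigma>2s$ (rather than merely $\sigma>s$) is exactly what is needed so that the $\partial B_\e$ term, which scales like $\e^{2-2s}$ after cancelling the constant term by symmetry, goes to zero — and one needs the outer integrability $\int |\nabla u(y)|(1+|y|)^{-(n+2s-1)}\,dy<\infty$ precisely to make the no-boundary-term-at-infinity argument go through, in the spirit of the diverging-sphere argument used in the proof of Lemma~\ref{RIEL}. Everything else is routine translation, symmetrization, and the constant computation above.
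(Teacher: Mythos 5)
Your overall route is the paper's own: the pointwise divergence identity $\div\big(|z|^{-n-2s}z\big)=-2s\,|z|^{-n-2s}$, one integration by parts, and the constant identity $2s\,d_{n,2s-1}=c_{n,s}$ (which is already built into \eqref{dns}); the paper simply runs the computation starting from the $(-\Delta)^s$ side and keeps the increment $u(x)-u(y)$ under the integral sign throughout. However, as written your steps (iii)--(iv) fail. On $\partial B_\eps$ the factor $z\cdot\nu=-\eps$ is \emph{constant}, so there is no symmetry cancellation of the zeroth-order term: the boundary contribution is $-\eps^{1-n-2s}\int_{\partial B_\eps}u(x+z)\,d{\mathcal{H}}^{n-1}_z=-u(x)\,|\mathbb{S}^{n-1}|\,\eps^{-2s}+o(1)$, which blows up when $u(x)\neq0$ (the oddness you invoke kills the \emph{linear} term $\nabla u(x)\cdot z$, not the constant one). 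Even your power counting for the remainder is off: with only $u(x+z)=u(x)+O(|z|)$ the boundary remainder is $O(\eps^{1-2s})$, which does not tend to zero since $2s>1$; you need $u(x+z)=u(x)+\nabla u(x)\cdot z+O(|z|^{\min\{\sigma,2\}})$, the vanishing of the linear term on the sphere, and precisely $\sigma>2s$ to obtain $O(\eps^{\min\{\sigma,2\}-2s})\to0$. Correspondingly, in (iv) you cannot ``insert $u(x)$ freely'': the kernel $|z|^{-n-2s}$ is even and positive, so $\pv\int u(x)\,|z|^{-n-2s}\,dz$ is $+\infty$ and no principal-value cancellation is available there.

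The argument is rescued by pairing the two divergences instead of discarding them: the bulk term produced by your integration by parts is $2s\int_{\R^n\setminus B_\eps}u(x+z)\,|z|^{-n-2s}\,dz$, whose divergent part $u(x)\,|\mathbb{S}^{n-1}|\,\eps^{-2s}$ cancels exactly the divergent boundary term above, leaving $2s\int_{\R^n\setminus B_\eps}\frac{u(x+z)-u(x)}{|z|^{n+2s}}\,dz+o(1)$; letting $\eps\searrow0$ and using your (correct) constant computation then gives \eqref{RADCVE2} via \eqref{pv-def}. Equivalently, and closer to what the paper does, integrate by parts with the increment $u(x+z)-u(x)$ in place of $u(x+z)$, noting that $\nabla_z\big(u(x+z)-u(x)\big)=\nabla u(x+z)$: then neither the bulk nor the boundary term is ever divergent, and the whole $\eps$-analysis reduces to the estimate $O(\eps^{\min\{\sigma,2\}-2s})$ above, which is exactly where the hypothesis $\sigma>2s$ is used. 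Your treatment of the behaviour at infinity (the decay assumption on $\nabla u$ together with a diverging-sphere selection as in the proof of Lemma~\ref{RIEL}) and the Gamma-function bookkeeping identifying $2s\,d_{n,2s-1}$ with $c_{n,s}$ are fine.
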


\begin{proof}
By~\eqref{cwcjeklwecmlkwecmlwkecmlwenmc} and the Divergence Theorem,
\begin{align*}
\pv\int_{\R^n}\frac{u(x)-u(y)}{{|x-y|}^{n+2s}}\;dy
&=-\frac1{2s}\,\pv\int_{\R^n}\big(u(x)-u(y)\big)\div_y\big({|x-y|}^{-n-2s}(x-y)\big)\;dy \\
&=\frac1{2s}\,\pv\int_{\R^n}
\nabla_y\big(u(x)-u(y)\big)\cdot{|x-y|}^{-n-2s}(x-y) \;dy \\
&=-\frac1{2s}\,\pv\int_{\R^n}\frac{\nabla u(y)}{{|x-y|}^{n+2s-1}}\cdot
\frac{x-y}{|x-y|}\;dy\\&=
\frac1{2s d_{n,2s-1}}\div^{2s-1}(\nabla u)(x).
\end{align*}
The desired claim then follows from~\eqref{dns}.
\end{proof}

\subsection[The fractional Laplacian as the local divergence of the nonlocal gradient]{The fractional Laplacian as the local divergence of the nonlocal gradient: proof of~\eqref{RADCVE3}}

When~$s\in(1/2,1)$, one can also express the fractional
Laplacian as in~\eqref{RADCVE3}. To check this, one can rely on the Fourier
Transform approach as in Lemma~\ref{divgrad-def2-lemma}. Alternatively,
one can proceed in a more direct way, as follows.

\begin{lemma}
Let~$s\in(1/2,1)$, $\sigma>2s$, $r>0$, $x\in\R^n$  and~$u\in C^\sigma(B_r(x))\cap C^1(\R^n)\cap L^1_s(\R^n)$ be
such that 
\[
\int_{\R^n}\frac{|u(y)|+|\nabla u(y)|}{(1+|y|)^{n+2s-1}}\;dy<+\infty.
\]
Then, \eqref{RADCVE3} holds true.
\end{lemma}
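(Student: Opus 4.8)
The plan is to compute the classical divergence of $\nabla^{2s-1}u$ by differentiating under the integral sign, to integrate by parts in the resulting hypersingular integral, and to recognize the outcome as the Taylor-type representation~\eqref{taylor-def} of the fractional Laplacian.

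Note first that $2s-1\in(0,1)$, so $\nabla^{2s-1}u$ is well-defined in a neighbourhood of~$x$: indeed $u\in C^\sigma(B_r(x))$ with $\sigma>2s>2s-1$, and the assumed integrability of $|u(y)|\,(1+|y|)^{-(n+2s-1)}$ is exactly the $L^1_{(2s-1)/2}(\R^n)$ condition. Writing
$$\nabla^{2s-1}u(x)=d_{n,2s-1}\int_{\R^n}\frac{\big(u(x+z)-u(x)\big)\,z}{|z|^{n+2s}}\;dz,$$
I would justify differentiation under the integral in~$x$: near $z=0$ the differentiated integrand is $O\big(|z|^{\min(\sigma,2)-n-2s}\big)$, which is integrable since $\min(\sigma,2)>2s$, while for large~$z$ it is controlled by $|\nabla u(x+z)|\,|z|^{-(n+2s-1)}$ plus a constant multiple of $|z|^{-(n+2s-1)}$, both integrable at infinity thanks to the gradient-integrability hypothesis and to $2s-1>0$. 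This yields
$$\div\big(\nabla^{2s-1}u\big)(x)=d_{n,2s-1}\int_{\R^n}\frac{\big(\nabla u(x+z)-\nabla u(x)\big)\cdot z}{|z|^{n+2s}}\;dz,$$
an absolutely convergent integral.

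Setting $w(z):=u(x+z)-u(x)-\nabla u(x)\cdot z$, so that $\nabla_z w(z)=\nabla u(x+z)-\nabla u(x)$, I would integrate by parts over the annulus $B_R\setminus B_\eps$, using $\div_z\big(|z|^{-n-2s}z\big)=-2s\,|z|^{-n-2s}$ (as in~\eqref{cwcjeklwecmlkwecmlwkecmlwenmc}), to obtain
$$\int_{B_R\setminus B_\eps}\nabla_z w\cdot\frac{z}{|z|^{n+2s}}\;dz=2s\int_{B_R\setminus B_\eps}\frac{w(z)}{|z|^{n+2s}}\;dz+\int_{\partial B_R}\frac{w(z)}{R^{n+2s-1}}\;d\mathcal H^{n-1}-\int_{\partial B_\eps}\frac{w(z)}{\eps^{n+2s-1}}\;d\mathcal H^{n-1}.$$
Since $|w(z)|\le C|z|^{\min(\sigma,2)}$ for $z$ near the origin, the inner boundary term is $O\big(\eps^{\min(\sigma,2)-2s}\big)\to 0$; the outer boundary term splits into the contribution of $u(x)+\nabla u(x)\cdot z$, which is $O(R^{-2s})+O(R^{1-2s})\to 0$ because $s>\tfrac12$, and the contribution of $u(x+z)$, which vanishes along a suitable diverging sequence $R_k$ by the $\liminf$-in-polar-coordinates argument already used in the proof of Lemma~\ref{RIEL}, exploiting $u\in L^1_s(\R^n)$. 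Passing to the limit (the remaining integrals converging absolutely, $w(z)/|z|^{n+2s}$ being integrable near the origin because $\min(\sigma,2)>2s$ and at infinity because $u\in L^1_s(\R^n)$ and $2s-1>0$), I arrive at
$$\div\big(\nabla^{2s-1}u\big)(x)=2s\,d_{n,2s-1}\int_{\R^n}\frac{u(x+z)-u(x)-\nabla u(x)\cdot z}{|z|^{n+2s}}\;dz.$$

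To finish, the change of variables $y=x+z$ together with~\eqref{taylor-def} (valid precisely for $s\in(1/2,1)$) identifies the last integral with $-c_{n,s}^{-1}(-\Delta)^su(x)$, and~\eqref{dns} gives $d_{n,2s-1}=\tfrac{1}{2s}c_{n,s}$; hence $\div\big(\nabla^{2s-1}u\big)(x)=-(-\Delta)^su(x)$, which is~\eqref{RADCVE3}. I expect the main obstacle to be the careful treatment of the two boundary terms produced by the integration by parts: the vanishing of the inner one forces the local regularity $\sigma>2s$, while the vanishing of the outer one requires both $s>\tfrac12$ and the $L^1_s$/gradient-decay hypotheses via the subsequence trick of Lemma~\ref{RIEL}; the differentiation under the integral sign is the secondary point making the quantitative assumptions necessary.
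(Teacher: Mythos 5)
Your proof is correct, and it follows the same overall strategy as the paper's: push the divergence inside the hypersingular integral, integrate by parts in~$z$ using $\div_z\big(z\,|z|^{-n-2s}\big)=-2s\,|z|^{-n-2s}$ (i.e.\ \eqref{cwcjeklwecmlkwecmlwkecmlwenmc}), and conclude from a known pointwise representation of $(-\Delta)^s$ together with the constant identity $2s\,d_{n,2s-1}=c_{n,s}$ coming from \eqref{dns}. The differences lie in the compensation device and in the bookkeeping. The paper first symmetrizes the nonlocal gradient in $z\mapsto-z$, so that the natural compensator is the constant $-2u(x)$, the linear term never appears, and the endpoint is the symmetric representation \eqref{symmetric-def}; you instead subtract the first-order Taylor polynomial, work with $w(z)=u(x+z)-u(x)-\nabla u(x)\cdot z$, and land on \eqref{taylor-def}, which is legitimate precisely because $s>\frac12$. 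Your version is more explicit where the paper is formal: you carry out the integration by parts on annuli $B_R\setminus B_\eps$, killing the inner boundary term by $\sigma>2s$ and the outer one by $s>\frac12$ together with the subsequence trick of Lemma~\ref{RIEL} (cf.\ \eqref{MEEXPo-0.2}), whereas the paper integrates by parts over all of~$\R^n$ and leaves those boundary terms implicit. The one step both treatments leave at the same slightly informal level is the differentiation under the integral sign in the far field: as written, your dominating function $|\nabla u(x+z)|\,|z|^{-(n+2s-1)}$ depends on the base point, so strictly one should either make the domination locally uniform in~$x$, or first differentiate the far-field piece written as $\int_{\{|y-x|\ge1\}}u(y)\,(y-x)\,|y-x|^{-n-2s}\,dy$, so that the derivative falls on the smooth kernel and is dominated using only $u\in L^1_s(\R^n)$, and then integrate by parts back using the gradient-decay hypothesis; this is a routine fix and does not affect the argument.
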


\begin{proof}
First let us notice that
\begin{align*}
\int_{\R^n}\frac{u(x)-u(y)}{{|x-y|}^{n+2s-1}}\frac{x-y}{|x-y|}\;dy
=
\int_{\R^n}\frac{u(x)-u(x-z)}{{|z|}^{n+2s-1}}\frac{z}{|z|}\;dz 
=
-\int_{\R^n}\frac{u(x)-u(x+z)}{{|z|}^{n+2s-1}}\frac{z}{|z|}\;dz
\end{align*}
and therefore
\begin{align*}
\int_{\R^n}\frac{u(x)-u(y)}{{|x-y|}^{n+2s-1}}\frac{x-y}{|x-y|}\;dy
&=
\frac12\int_{\R^n}\frac{u(x)-u(x-z)}{{|z|}^{n+2s-1}}\frac{z}{|z|}\;dz
-\frac12\int_{\R^n}\frac{u(x)-u(x+z)}{{|z|}^{n+2s-1}}\frac{z}{|z|}\;dz \\
&=
\frac12\int_{\R^n}\frac{u(x+z)-u(x-z)}{{|z|}^{n+2s-1}}\frac{z}{|z|}\;dz.
\end{align*}
Then, by using an integration by parts,
\begin{align*}
\div_x\left(\int_{\R^n}\frac{u(x)-u(y)}{{|x-y|}^{n+2s-1}}\frac{x-y}{|x-y|}\;dy\right)
&=
\frac12\div_x\left(\int_{\R^n}\frac{u(x+z)-u(x-z)}{{|z|}^{n+2s-1}}\frac{z}{|z|}\;dz\right) \\
&=
\frac12\int_{\R^n}\frac{\nabla_x\big(u(x+z)-u(x-z)\big)}{{|z|}^{n+2s-1}}\cdot\frac{z}{|z|}\;dz \\
&=
\frac12\int_{\R^n}\frac{\nabla_z\big(u(x+z)+u(x-z)\big)}{{|z|}^{n+2s-1}}\cdot\frac{z}{|z|}\;dz \\
&=
\frac12\int_{\R^n}\nabla_z\big(u(x+z)+u(x-z)-2u(x)\big)\cdot\frac{z}{{|z|}^{n+2s}}\;dz \\
&=
-\frac12\int_{\R^n}\big(u(x+z)+u(x-z)-2u(x)\big)\div_z\left(\frac{z}{{|z|}^{n+2s}}\right)\;dz .
\end{align*}
Thus, taking advantage of~\eqref{cwcjeklwecmlkwecmlwkecmlwenmc}, we finally get
\begin{align*}
\div_x\left(\int_{\R^n}\frac{u(x)-u(y)}{{|x-y|}^{n+2s-1}}\frac{x-y}{|x-y|}\;dy\right)
&=
s\int_{\R^n}\frac{u(x+z)+u(x-z)-2u(x)}{{|z|}^{n+2s}}\;dz \\
&=
-s\int_{\R^n}\frac{2u(x)-u(x+z)-u(x-z)}{{|z|}^{n+2s}}\;dz.
\end{align*}
In conclusion, \eqref{RADCVE3} follows by using~\eqref{symmetric-def} and~\eqref{dns}.
\end{proof}

\section{General properties of the fractional Laplacian}

We have thus far dealt with the very definition of the fractional Laplacian.
A number of nice properties of the operator can be directly derived from 
its representation in~\eqref{pv-def0} or the other equivalent ones given in Theorem~\ref{CARATT}.
We list here below some of these.

\subsection{Translation and rotation invariances, homogeneity}

We recall that the space~$L^1_s(\R^n)$ has been introduced in~\eqref{w-l1-space}.

The first one that we present here is the translation invariance. 

\begin{lemma}\label{lem:translation}
Let~$r>0$,~$\sigma>s$,~$x\in\R^n$ and~$u\in C^{2\sigma}(B_r(x))\cap L^1_s(\R^n)$.
Let~$h\in\R^n$  
and define the translated function 
\begin{align*}
u_h(y):=u(y+h)\qquad\text{for any }y\in\R^n.
\end{align*}
Then,
$$
{(-\Delta)}^s u_h(x)={(-\Delta)}^s u(x+h).
$$
\end{lemma}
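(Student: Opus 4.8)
The plan is to verify the translation invariance directly from the integral representation of the fractional Laplacian, exploiting only a change of variables in the defining integral. First I would note that the hypotheses on $u$ transfer correctly to $u_h$: since $u\in C^{2\sigma}(B_r(x))$, we have $u_h\in C^{2\sigma}(B_r(x-h))$, so in particular $u_h\in C^{2\sigma}(B_{r'}(x))$ for a suitable $r'>0$ (indeed we only need $2\sigma$-regularity of $u_h$ near $x$, which follows because $u_h(y)=u(y+h)$ and $y+h$ ranges over a neighborhood of $x+h$ where $u$ is regular); and the weighted integrability condition~\eqref{w-cond-infty} for $u_h$ follows from that for $u$ by the substitution $y\mapsto y-h$ together with the elementary comparison of the weights $(1+|y|)^{-n-2s}$ and $(1+|y+h|)^{-n-2s}$, which are comparable up to a constant depending on $h$. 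Hence $u_h\in C^{2\sigma}(B_{r'}(x))\cap L^1_s(\R^n)$ and ${(-\Delta)}^s u_h(x)$ is well-defined by the reference definition~\eqref{pv-def0}.

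Next I would write out the principal value definition for $u_h$ at the point $x$ and perform the change of variables $z:=y+h$ in the integral over $\R^n\setminus B_\eps(x)$. Concretely,
\begin{align*}
{(-\Delta)}^s u_h(x)
&=c_{n,s}\lim_{\eps\searrow 0}\int_{\R^n\setminus B_\eps(x)}\frac{u_h(x)-u_h(y)}{{|x-y|}^{n+2s}}\;dy\\
&=c_{n,s}\lim_{\eps\searrow 0}\int_{\R^n\setminus B_\eps(x)}\frac{u(x+h)-u(y+h)}{{|x-y|}^{n+2s}}\;dy.
\end{align*}
Under $z=y+h$ the domain $\R^n\setminus B_\eps(x)$ maps to $\R^n\setminus B_\eps(x+h)$, the Jacobian is $1$, and $|x-y|=|(x+h)-z|$, so the last expression equals
\begin{align*}
c_{n,s}\lim_{\eps\searrow 0}\int_{\R^n\setminus B_\eps(x+h)}\frac{u(x+h)-u(z)}{{|(x+h)-z|}^{n+2s}}\;dz={(-\Delta)}^s u(x+h),
\end{align*}
which is exactly the definition~\eqref{pv-def0} evaluated at the point $x+h$. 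This yields the claim.

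There is essentially no serious obstacle here; the only point requiring a line of care is the verification that $u_h$ satisfies the structural hypotheses of the definition, so that the right-hand side ${(-\Delta)}^s u_h(x)$ is legitimately defined before one manipulates it — in particular the weighted $L^1$ bound, for which one uses that for fixed $h$ there is $C_h>0$ with $(1+|z-h|)^{n+2s}\le C_h(1+|z|)^{n+2s}$ for all $z\in\R^n$. One could alternatively run the same one-line change of variables starting from any of the other equivalent representations in Theorem~\ref{CARATT} (for instance the symmetric form~\eqref{symmetric-def}, where the substitution is even cleaner since it does not involve the cutoff ball), but the principal-value form is the most economical and is the one the statement's hypotheses are tailored to.
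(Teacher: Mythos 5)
Your proof is correct and follows essentially the same route as the paper's: a direct change of variables $z:=y+h$ in the principal value integral defining ${(-\Delta)}^s u_h(x)$, noting that the kernel and the domain translate accordingly. The only difference is your (welcome but routine) extra verification that the hypotheses transfer to $u_h$, which the paper leaves implicit.
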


\begin{proof}
The claim directly follows from a translation ($z:=y+h$) in the integral defining the fractional Laplacian:
\begin{align*}
{(-\Delta)}^s u_h(x) &=
c_{n,s}\pv\int_{\R^n}\frac{u_h(x)-u_h(y)}{{|x-y|}^{n+2s}}\;dy \\
&= 
c_{n,s}\pv\int_{\R^n}\frac{u(x+h)-u(y+h)}{{|x-y|}^{n+2s}}\;dy \\
&= 
c_{n,s}\pv\int_{\R^n}\frac{u(x+h)-u(z)}{{|x+h-z|}^{n+2s}}\;dy \\&
=
{(-\Delta)}^s u(x+h),
\end{align*}as desired.
\end{proof}

The second property that we mention is the rotation invariance.

\begin{lemma}\label{lem:rotation}
Let~$r>0$,~$\sigma>s$,~$x\in\R^n$ and~$u\in C^{2\sigma}(B_r(x))\cap L^1_s(\R^n)$. Let~$R\in\R^{n\times n}$ be an orthogonal matrix
and define the rotated function 
\begin{align*}
u_R(y):=u(Ry)\qquad \text{for any }y\in\R^n.
\end{align*}
Then,
$$
{(-\Delta)}^s u_R(x)={(-\Delta)}^s u(Rx).
$$
\end{lemma}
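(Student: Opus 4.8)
The plan is to mimic exactly the proof of the translation invariance (Lemma~\ref{lem:translation}), replacing the translation change of variables by an orthogonal one. First I would write out the defining principal value integral for~${(-\Delta)}^s u_R$ at the point~$x$, using~\eqref{pv-def}:
\[
{(-\Delta)}^s u_R(x) = c_{n,s}\,\pv\int_{\R^n}\frac{u_R(x)-u_R(y)}{{|x-y|}^{n+2s}}\;dy
= c_{n,s}\,\pv\int_{\R^n}\frac{u(Rx)-u(Ry)}{{|x-y|}^{n+2s}}\;dy.
\]
Then I would perform the change of variables~$z:=Ry$, so that~$y=R^{-1}z=R^{T}z$ (since~$R$ is orthogonal), observing that the Jacobian determinant is~$|\det R|=1$, so~$dy=dz$, and that~$R$ maps~$\R^n\setminus B_\eps(x)$ onto~$\R^n\setminus B_\eps(Rx)$ bijectively. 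The only genuinely non-formal point is the invariance of the kernel: one needs~$|x-y| = |x - R^T z| = |R(x - R^T z)| = |Rx - z|$, which is precisely where orthogonality of~$R$ is used (it preserves the Euclidean norm). After this substitution the integral becomes~$c_{n,s}\,\pv\int_{\R^n}\frac{u(Rx)-u(z)}{{|Rx-z|}^{n+2s}}\;dz$, which by~\eqref{pv-def} is exactly~${(-\Delta)}^s u(Rx)$.

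There is a small preliminary bookkeeping step: one should check the integrability hypotheses are met, i.e. that~$u_R\in C^{2\sigma}(B_r(x))\cap L^1_s(\R^n)$ so that the fractional Laplacian of~$u_R$ is well-defined in the sense of the reference definition. Regularity is immediate since~$R$ is linear and~$u\in C^{2\sigma}(B_r(Rx))$ pulls back to~$u_R\in C^{2\sigma}(B_r(x))$ (using that~$R$ maps~$B_r(x)$ onto~$B_r(Rx)$); the weighted~$L^1$ condition~\eqref{w-cond-infty} for~$u_R$ follows from the same condition for~$u$ by the change of variables~$z=Ry$ together with~$|Ry|=|y|$, so the weight~${(1+|y|)}^{-n-2s}$ is unchanged. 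I would state this briefly before the main computation, paralleling the structure used throughout the chapter, though it could equally well be subsumed into the displayed chain of equalities.

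I do not expect any real obstacle here; the statement is essentially a one-line consequence of the rotational symmetry of the kernel~$|x-y|^{-n-2s}$ and the fact that orthogonal maps are isometries with unit Jacobian. The only point deserving a word of care is to make sure the principal-value excision is transported correctly under~$R$ (the deleted ball~$B_\eps(x)$ goes to~$B_\eps(Rx)$, not to some ellipsoid), which again is just the isometry property. So the proof I would write is short:
\begin{align*}
{(-\Delta)}^s u_R(x)
&= c_{n,s}\,\pv\int_{\R^n}\frac{u_R(x)-u_R(y)}{{|x-y|}^{n+2s}}\;dy \\
&= c_{n,s}\,\pv\int_{\R^n}\frac{u(Rx)-u(Ry)}{{|x-y|}^{n+2s}}\;dy \\
&= c_{n,s}\,\pv\int_{\R^n}\frac{u(Rx)-u(z)}{{|Rx-z|}^{n+2s}}\;dz \\
&= {(-\Delta)}^s u(Rx),
\end{align*}
where the third equality uses the substitution~$z=Ry$, the identity~$|x-R^{T}z|=|Rx-z|$ coming from orthogonality of~$R$, and~$|\det R|=1$.
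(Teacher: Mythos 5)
Your proof is correct and follows essentially the same route as the paper: the change of variables $z:=Ry$, the isometry identity $|x-y|=|Rx-Ry|$, and the unit Jacobian give the result in one short chain of equalities. The extra remark on transporting the principal-value excision and on $u_R\in C^{2\sigma}(B_r(x))\cap L^1_s(\R^n)$ is sound bookkeeping that the paper leaves implicit.
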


\begin{proof}
The claim directly follows from a rotation ($z:=Ry$) in the integral defining the fractional Laplacian:
\begin{align*}
{(-\Delta)}^s u_R(x) &=
c_{n,s}\pv\int_{\R^n}\frac{u_R(x)-u_R(y)}{{|x-y|}^{n+2s}}\;dy \\
&=
c_{n,s}\pv\int_{\R^n}\frac{u(Rx)-u(Ry)}{{|Rx-Ry|}^{n+2s}}\;dy \\
&= 
c_{n,s}\pv\int_{\R^n}\frac{u(Rx)-u(z)}{{|Rx-z|}^{n+2s}}\;dz \\&
=
{(-\Delta)}^s u(Rx),
\end{align*}as desired.
\end{proof}

We also remark that the fractional Laplace operator is homogeneous, according to the following result:

\begin{lemma}\label{lem:homogeneity}
Let~$r>0$,~$\sigma>s$,~$x\in\R^n$ and~$u\in C^{2\sigma}(B_r(x))\cap L^1_s(\R^n)$.
Given~$\lambda>0$, define the dilated function 
\begin{align*}
u_\lambda(y):=u(\lambda y)\qquad \text{for any }y\in\R^n.
\end{align*}
Then,
$$
{(-\Delta)}^s u_\lambda(x) = \lambda^{2s} {(-\Delta)}^s u(\lambda x).
$$
\end{lemma}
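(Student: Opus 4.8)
The plan is to mimic the proof of the translation invariance in Lemma~\ref{lem:translation}, performing the natural change of variables in the principal value integral defining $(-\Delta)^s u_\lambda(x)$. First I would verify that $u_\lambda\in C^{2\sigma}(B_{r/\lambda}(x/\lambda))\cap L^1_s(\R^n)$ so that $(-\Delta)^s u_\lambda(x)$ is well-defined at the relevant point; the $C^{2\sigma}$ regularity is immediate by the chain rule, and the $L^1_s$ condition follows from the change of variables $y\mapsto\lambda y$ together with the comparison $(1+\lambda|y|)\asymp(1+|y|)$ up to constants depending on $\lambda$ (or, more cleanly, one just needs finiteness, which is invariant under the linear substitution).

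The core computation is then
\begin{align*}
{(-\Delta)}^s u_\lambda(x)
&= c_{n,s}\,\pv\int_{\R^n}\frac{u_\lambda(x)-u_\lambda(y)}{{|x-y|}^{n+2s}}\;dy \\
&= c_{n,s}\,\pv\int_{\R^n}\frac{u(\lambda x)-u(\lambda y)}{{|x-y|}^{n+2s}}\;dy \\
&= c_{n,s}\,\pv\int_{\R^n}\frac{u(\lambda x)-u(z)}{{|x-z/\lambda|}^{n+2s}}\;\frac{dz}{\lambda^n} \\
&= \lambda^{2s}\,c_{n,s}\,\pv\int_{\R^n}\frac{u(\lambda x)-u(z)}{{|\lambda x-z|}^{n+2s}}\;dz \\
&= \lambda^{2s}\,{(-\Delta)}^s u(\lambda x),
\end{align*}
where in the third line I substituted $z:=\lambda y$, so $dy=\lambda^{-n}dz$, and in the fourth line I used $|x-z/\lambda|^{n+2s}=\lambda^{-(n+2s)}|\lambda x-z|^{n+2s}$, so that $\lambda^{-n}\cdot\lambda^{n+2s}=\lambda^{2s}$.

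The only point requiring a line of care is the handling of the principal value: one must check that the excised balls transform correctly under the dilation, i.e.\ that $\{|x-y|\ge\eps\}$ becomes $\{|\lambda x - z|\ge\lambda\eps\}$, so the limit $\eps\searrow0$ on the left corresponds to $\lambda\eps\searrow0$ on the right, which is the same limit. Since $\lambda>0$ is fixed this is harmless, and when $s\in(0,1/2)$ the principal value is unnecessary anyway by Lemma~\ref{SENZAPVL}. I expect no genuine obstacle here; the main (mild) subtlety is simply bookkeeping the exponent $\lambda^{2s}$ correctly from the competition between the Jacobian factor $\lambda^{-n}$ and the kernel rescaling factor $\lambda^{n+2s}$, which is exactly what makes $2s$ the ``order'' of the operator.
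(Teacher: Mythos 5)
Your proof is correct and follows essentially the same route as the paper's: a single change of variables $z:=\lambda y$ in the principal value integral, with the factor $\lambda^{2s}$ coming from the Jacobian $\lambda^{-n}$ against the kernel rescaling $\lambda^{n+2s}$. Your additional remarks on well-definedness and on how the excised balls transform under the dilation are fine but not needed beyond what the paper records.
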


\begin{proof}
The claim directly follows from a dilation ($z:=\lambda y$) in the integral defining the fractional Laplacian:
\begin{align*}
{(-\Delta)}^s u_\lambda(x) &=
c_{n,s}\pv\int_{\R^n}\frac{u_\lambda(x)-u_\lambda(y)}{{|x-y|}^{n+2s}}\;dy \\
&=
\lambda^{n+2s} c_{n,s}\pv\int_{\R^n}\frac{u(\lambda x)-u(\lambda y)}{{|\lambda x-\lambda y|}^{n+2s}}\;dy \\
&=
\lambda^{2s} c_{n,s}\pv\int_{\R^n}\frac{u(\lambda x)-u(z)}{{|\lambda x-z|}^{n+2s}}\;dz \\&
=
\lambda^{2s} {(-\Delta)}^s u(\lambda x),
\end{align*}as desired.
\end{proof}

\subsection{Asymptotic behaviour as~\texorpdfstring{$s\searrow0$}{s to 0} and~\texorpdfstring{$s\nearrow1$}{s to 1}}

In this section we discuss the asymptotic behaviour of the fractional Laplacian
as~$s\searrow0$ and~$s\nearrow1$.
The Fourier Transform representation in~\eqref{FORi} provides a hint
on what these limits could give and here we aim at making this intuition rigorous.

For this, we remark that the normalizing constant~$c_{n,s}$ 
defined in~\eqref{cns} can be written in an equivalent, but perhaps more transparent way by making use of the properties of the~$\Gamma$ function.
In particular, formulas~\eqref{gamma-recursive} and~\eqref{gamma-recursive2}
give that
\begin{align*}
\Gamma(-s)=\frac{\Gamma(1-s)}{-s}=\frac{\Gamma(2-s)}{-s(1-s)},
\end{align*}
and therefore we can write 
\begin{align}\label{cns2}
c_{n,s}=\frac{2^{2s}\Gamma(\frac{n+2s}2)}{\pi^{n/2}\Gamma(2-s)}\,s(1-s).
\end{align}
Representation~\eqref{cns2} makes it clear that
\begin{align}\label{cns-asymp}
\lim_{s\searrow 0} c_{n,s} =0
\qquad\text{and}\qquad
\lim_{s\nearrow 1} c_{n,s} =0,
\end{align}
and, more precisely,
\begin{align}\label{cns-asymp-precise}
\lim_{s\searrow 0} \frac{c_{n,s}}{s} =\frac{\Gamma(\frac{n}2)}{\pi^{n/2}}=\frac{2}{\big|\mathbb{S}^{n-1}\big|}
\qquad\text{and}\qquad
\lim_{s\nearrow 1} \frac{c_{n,s}}{1-s} =\frac{2n\Gamma(\frac{n}2)}{\pi^{n/2}}=\frac{4n}{\big|\mathbb{S}^{n-1}\big|},
\end{align}
see~\eqref{measure-n-sphere}.

These limit properties play a pivotal role in showing that
the family of operators~${\big\{(-\Delta)^s\big\}}_{s\in(0,1)}$ interpolates 
the identity and the Laplacian, according to the following statement:

\begin{lemma}
Let~$r$, $\sigma>0$ and~$x\in\R^n$. 

For every~$u\in C^\sigma(B_r(x))$ with
\begin{align}\label{weighto}
\int_{\R^n}\frac{|u(y)|}{{(1+|y|)}^n}\;dy<+\infty
\end{align}
it holds that
\begin{align}\label{ds-sto0} 
\lim_{s\searrow 0}(-\Delta)^s u(x) = u(x).
\end{align} 

Moreover, for every~$u\in C^{2+\sigma}(B_r(x))$ with
\begin{align}\label{weightoo}
\int_{\R^n}\frac{|u(y)|}{{(1+|y|)}^{n+2-\sigma}}\;dy<+\infty
\end{align}
it holds that
\begin{align}\label{ds-sto1}
\lim_{s\nearrow 1}(-\Delta)^s u(x) = -\Delta u(x).
\end{align}
\end{lemma}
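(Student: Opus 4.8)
The plan is to prove both limits directly from the symmetric representation~\eqref{symmetric-def}, which is legitimate since the hypotheses grant enough local regularity and the weighted integrability needed to apply that formula. Write
\[
(-\Delta)^s u(x) = \frac{c_{n,s}}{2}\int_{\R^n}\frac{2u(x)-u(x+y)-u(x-y)}{|y|^{n+2s}}\;dy,
\]
and split the integral as $\int_{B_1} + \int_{\R^n\setminus B_1}$. The idea is that, after dividing by the appropriate vanishing factor ($s$ as $s\searrow 0$, or $1-s$ as $s\nearrow 1$), exactly one of the two pieces survives in the limit while the other becomes negligible, and the surviving piece, multiplied by the limit of $c_{n,s}/s$ or $c_{n,s}/(1-s)$ from~\eqref{cns-asymp-precise}, produces $u(x)$ or $-\Delta u(x)$ respectively.

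For~\eqref{ds-sto0}: here the outer integral $\int_{\R^n\setminus B_1}$ dominates. On $\R^n\setminus B_1$ one has $\frac{1}{|y|^{n+2s}} \to \frac{1}{|y|^n}$ monotonically from below as $s\searrow 0$, and the integrand $2u(x)-u(x+y)-u(x-y)$ is controlled in absolute value by $2|u(x)| + |u(x+y)| + |u(x-y)|$, which is integrable against $|y|^{-n}$ on $\R^n\setminus B_1$ precisely by hypothesis~\eqref{weighto} (after the usual comparison $|y|\ge 1 \Rightarrow 1+|x\pm y|\le C(1+|y|)$). One then needs the normalization: by~\eqref{cns-asymp-precise}, $c_{n,s}/s \to 2/|\mathbb{S}^{n-1}|$, so the factor $\tfrac{c_{n,s}}{2}$ times $\int_{\R^n\setminus B_1}|y|^{-n}\,dy$... — but that last integral diverges, which signals that the $B_1$ piece and the tail must be handled together. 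The cleaner route: observe $\int_{\R^n}\frac{dy}{(1+|y|)^{n+2s}}\cdot\text{(something)}$; more precisely write $2u(x) = u(x)\cdot\big(2 - 0\big)$ and use that $\frac{c_{n,s}}{2}\int_{\R^n}\frac{2u(x)}{|y|^{n+2s}}\,dy$ is not finite, so instead one should keep the difference structure. The robust argument is: on $B_1$, $|2u(x)-u(x+y)-u(x-y)|\le [u]_{C^\sigma}|y|^\sigma$ (or $\le C|y|^{\min(\sigma,2)}$ using $C^\sigma$ with whatever $\sigma$ we have — here $\sigma$ may be less than $1$, so use the Hölder bound on each difference), hence $\frac{c_{n,s}}{2}\int_{B_1}\cdots \le C\, c_{n,s}\int_0^1 \rho^{\sigma - 1 - 2s}\,d\rho = \frac{C\,c_{n,s}}{\sigma - 2s}$, which $\to 0$ since $c_{n,s}\to 0$. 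For the tail, use dominated convergence: $\frac{c_{n,s}}{2}\int_{\R^n\setminus B_1}\frac{2u(x)-u(x+y)-u(x-y)}{|y|^{n+2s}}\,dy$. Split off the $2u(x)$ term: $c_{n,s}\,u(x)\int_{\R^n\setminus B_1}\frac{dy}{|y|^{n+2s}} = c_{n,s}\,u(x)\,\frac{|\mathbb{S}^{n-1}|}{2s} = \frac{c_{n,s}}{s}\cdot\frac{|\mathbb{S}^{n-1}|}{2}u(x) \to u(x)$ by~\eqref{cns-asymp-precise}. The remaining term $-\frac{c_{n,s}}{2}\int_{\R^n\setminus B_1}\frac{u(x+y)+u(x-y)}{|y|^{n+2s}}\,dy$ is bounded by $C\,c_{n,s}\int_{\R^n\setminus B_1}\frac{|u(x+y)|+|u(x-y)|}{|y|^n}\,dy$, and this integral is a fixed finite number by~\eqref{weighto}, so the whole term $\to 0$ because $c_{n,s}\to 0$. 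Adding the pieces gives~\eqref{ds-sto0}.

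For~\eqref{ds-sto1}: now the inner integral $\int_{B_1}$ carries the limit. Perform a second-order Taylor expansion: $2u(x)-u(x+y)-u(x-y) = -D^2u(x)y\cdot y + o(|y|^2)$ as $|y|\to 0$, valid since $u\in C^{2+\sigma}$ near $x$. Then $\frac{c_{n,s}}{2}\int_{B_1}\frac{-D^2u(x)y\cdot y}{|y|^{n+2s}}\,dy = -\frac{c_{n,s}}{2}\sum_{i}\partial_{ii}^2 u(x)\int_{B_1}\frac{y_i^2}{|y|^{n+2s}}\,dy$ (off-diagonal terms vanish by oddness), and by the sphere-averaging identity~\eqref{o9fpewFJKEòJFkvnsdm}-type computation $\int_{B_1}\frac{y_i^2}{|y|^{n+2s}}\,dy = \frac{|\mathbb{S}^{n-1}|}{n}\int_0^1 \rho^{1-2s}\,d\rho = \frac{|\mathbb{S}^{n-1}|}{n(2-2s)}$; hence this equals $-\frac{c_{n,s}}{2}\cdot\frac{|\mathbb{S}^{n-1}|}{n(2-2s)}\,\Delta u(x) = -\frac{c_{n,s}}{1-s}\cdot\frac{|\mathbb{S}^{n-1}|}{4n}\,\Delta u(x) \to -\Delta u(x)$ by~\eqref{cns-asymp-precise}. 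The $o(|y|^2)$ error contributes $\frac{c_{n,s}}{2}\int_{B_1}\frac{o(|y|^2)}{|y|^{n+2s}}\,dy$, which is $c_{n,s}$ times something like $\int_0^1 \rho^{\sigma+1-2s}\,d\rho = \frac{1}{\sigma+2-2s}$ using the $C^{2+\sigma}$ modulus, hence $o(1)$; one must be a little careful to split $B_1$ into $B_\delta$ (where the Taylor remainder is genuinely small, giving a contribution $\le \varepsilon\cdot\frac{c_{n,s}}{1-s}\cdot C$) and $B_1\setminus B_\delta$ (where the integrand is bounded and the region has $|y|$ bounded below, so the contribution is $\le C(\delta)c_{n,s}\to 0$). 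Finally the tail $\frac{c_{n,s}}{2}\int_{\R^n\setminus B_1}\frac{2u(x)-u(x+y)-u(x-y)}{|y|^{n+2s}}\,dy$ is bounded by $C\,c_{n,s}\int_{\R^n\setminus B_1}\frac{2|u(x)|+|u(x+y)|+|u(x-y)|}{|y|^{n+2}}\,dy$ (using $|y|^{n+2s}\ge |y|^{n+2}$ on $\R^n\setminus B_1$ for $s<1$... wait, $2s<2$, so actually $|y|^{n+2s}\le|y|^{n+2}$; instead bound below by $|y|^{n+1}$ using $2s>1$ for $s$ near $1$, or simply by $|y|^{n+2-\sigma}$ noting $2s \ge 2-\sigma$ for $s$ close enough to $1$), which is finite by~\eqref{weightoo}; since $c_{n,s}\to 0$ this tail vanishes. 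Summing the contributions yields~\eqref{ds-sto1}.

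The main obstacle I anticipate is bookkeeping the interplay between the vanishing of $c_{n,s}$ and the mildly divergent radial integrals: one must always pair a divergent-looking $\int \rho^{\pm 1 - 2s}\,d\rho \sim (2-2s)^{-1}$ or $\int_1^\infty\rho^{-1-2s} \sim (2s)^{-1}$ with the compensating zero of $c_{n,s}$ encoded in~\eqref{cns-asymp-precise}, and make sure the error terms genuinely decay rather than merely staying bounded. The other mild subtlety is that in the $s\searrow 0$ case we are only given $u\in C^\sigma$ with possibly $\sigma<1$, so on $B_1$ one cannot use a gradient cancellation and must instead bound each of $|u(x)-u(x\pm y)|$ by $[u]_{C^\sigma}|y|^\sigma$; this still suffices because the full factor $c_{n,s}$ (not just $c_{n,s}/s$) multiplies that piece. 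Everything else is routine dominated convergence and the sphere-averaging identity already recorded in the excerpt.
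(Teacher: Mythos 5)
Your argument is correct and follows essentially the same route as the paper's proof: split into a near part and a far part, pair the mildly divergent radial integrals with the asymptotics of $c_{n,s}$ in~\eqref{cns-asymp-precise}, let the remaining pieces die because $c_{n,s}\to0$ multiplies fixed finite weighted integrals, and for~\eqref{ds-sto1} expand to second order and handle the remainder by a two-parameter limit (your $\delta$-then-$s$ step is exactly the paper's $\omega(r)$ argument). The only cosmetic caveat is that you split at radius~$1$ while the H\"older hypotheses live on~$B_r(x)$: when $r<1$ you should split at $r$ (or treat the annulus $B_1\setminus B_r(x)$ via local integrability of $u$ and $c_{n,s}\to0$), and on $B_1\setminus B_\delta$ the integrand is merely integrable rather than bounded --- neither point affects the conclusion.
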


\begin{proof}
We start by proving~\eqref{ds-sto0}. We have that
\begin{align*}
(-\Delta)^s u(x)=c_{n,s}\int_{\R^n}\frac{u(x)-u(y)}{{|x-y|}^{n+2s}}\;dy.
\end{align*}
We point out that we do not need to use the principal value notation
because we can assume~$s\in(0,1/2)$.

Using that~$u\in C^\sigma(B_r(x))$ we obtain that
\begin{align*}
|u(x)-u(y)|\leq\|u\|_{C^\sigma(B_r(x))}|x-y|^\alpha
\qquad\text{for every }y\in B_r(x)
\end{align*}
and therefore
\begin{align*}
&\int_{B_r(x)}\frac{|u(x)-u(y)|}{{|x-y|}^{n+2s}}\;dy \leq 
\|u\|_{C^\sigma(B_r(x))}\int_{B_r(x)}{|x-y|}^{\sigma-n-2s}\;dy \\&\qquad
=
\|u\|_{C^\sigma(B_r(x))}\big|\mathbb{S}^{n-1}\big|\int_0^r{t}^{\sigma-1-2s}\;dy=
\|u\|_{C^\sigma(B_r(x))}\big|\mathbb{S}^{n-1}\big|\,\frac{r^{\sigma-2s}}{\sigma-2s},
\end{align*}
where we have used that, as we are interested in~$s\searrow 0$, 
we can suppose with no harm that~$\sigma-2s>0$. 

This and~\eqref{cns-asymp} then give that
\begin{align}\label{lllll1}
\lim_{s\searrow 0}(-\Delta)^s u(x) = 
\lim_{s\searrow 0} c_{n,s}\int_{\R^n\setminus B_r(x)}\frac{u(x)-u(y)}{{|x-y|}^{n+2s}}\;dy.
\end{align}

Now, we point out that there exists~$C>0$, possibly depending on~$x$, such that, for all~$y\in\R^n\setminus B_r(x)$,
\begin{equation}\label{fghebdnfhbedwjny4u3i854678ytigefsdjhj54y45reTG}
|x-y|\ge C(1+|y|).
\end{equation}
To check this, we observe that
$$ \lim_{|y|\to+\infty}\frac{|x-y|}{1+|y|}=1$$
and thus there exists~$R>1$, possibly depending on~$x$, such that~$|x-y|\ge (1+|y|)/2$ for all~$y\in\R^n\setminus B_R$.

Also, if~$y\in B_R\setminus B_r(x)$, then
$$ \frac{|x-y|}{1+|y|}\ge \frac{r}{1+R}.$$
Gathering these observations, we obtain~\eqref{fghebdnfhbedwjny4u3i854678ytigefsdjhj54y45reTG}.

As a consequence of~\eqref{fghebdnfhbedwjny4u3i854678ytigefsdjhj54y45reTG},
we see that
\begin{align*}
\int_{\R^n\setminus B_r(x)}\frac{|u(y)|}{{|x-y|}^{n+2s}}\;dz\leq 
\frac1{C^{n+2s}}\int_{\R^n\setminus B_r(x)}\frac{|u(y)|}{{(1+|y|)}^{n+2s}}\;dy\leq \frac1{C^{n+2s}}
\int_{\R^n}\frac{|u(y)|}{{(1+|y|)}^n}\;dy,
\end{align*}
which is finite, thanks to~\eqref{weighto}.

Therefore, by~\eqref{cns-asymp},
\begin{align*}
\lim_{s\searrow 0} c_{n,s}\int_{\R^n\setminus B_r(x)}\frac{u(y)}{{|x-y|}^{n+2s}}\;dy = 0.
\end{align*}
Thus, \eqref{lllll1} reduces to
\begin{equation}\label{fbdnsr435627qiwfsdafcxgjhas34562}
\lim_{s\searrow 0}(-\Delta)^s u(x) = 
\lim_{s\searrow 0} c_{n,s}\int_{\R^n\setminus B_r(x)}\frac{u(x)}{{|x-y|}^{n+2s}}\;dy.
\end{equation}

Now, we have that
\begin{eqnarray*}&&
\int_{\R^n\setminus B_r(x)}\frac{dy}{{|x-y|}^{n+2s}}
=|\mathbb{S}^{n-1}|\int_r^{+\infty}t^{-1-2s}\;dy
=\frac{ |\mathbb{S}^{n-1}|}{2s r^{2s}}
.
\end{eqnarray*}
Therefore, by the first limit in~\eqref{cns-asymp-precise},
$$ \lim_{s\searrow0} c_{n,s}\int_{\R^n\setminus B_r(x)}\frac{dy}{{|x-y|}^{n+2s}}
=\lim_{s\searrow0} \frac{c_{n,s} |\mathbb{S}^{n-1}|}{2s r^{2s}}
=1.
$$
This fact, together with~\eqref{fbdnsr435627qiwfsdafcxgjhas34562},
establishes~\eqref{ds-sto0}.

We now show~\eqref{ds-sto1}. 
We remark that, by the translation invariance 
of the operator in Lemma~\ref{lem:translation},
we can suppose without loss of generality that~$x=0$ and~$u(0)=0$.
In this setting, we write, using~\eqref{taylor-def},
\begin{align*}
(-\Delta)^s u(0)=-c_{n,s}\int_{\R^n}\frac{u(y)-\nabla u(0)\cdot y}{{|y|}^{n+2s}}\;dy.
\end{align*}
Exploiting symmetries and using~\eqref{fghebdnfhbedwjny4u3i854678ytigefsdjhj54y45reTG}, we deduce that, whenever~$2s>2-\sigma$,
\begin{align*}
\left|\int_{\R^n\setminus B_r}\frac{u(y)-\nabla u(0)\cdot y}{{|y|}^{n+2s}}\;dy\right|
&=
\left|\int_{\R^n\setminus B_r}\frac{u(y)}{{|y|}^{n+2s}}\;dy\right| \\
&\leq
\frac{1}{C^{n+2s}}\int_{\R^n\setminus B_r}\frac{|u(y)|}{{(1+|y|)}^{n+2s}}\;dy \\
&\leq
\frac{1}{C^{n+2s}}\int_{\R^n}\frac{|u(y)|}{{(1+|y|)}^{n+2-\sigma}}\;dy
\end{align*}
which is finite, thanks to~\eqref{weightoo}. 

As a result, recalling~\eqref{cns-asymp}, we find at
\begin{align}\label{nwvlkwovnklwvnkl0}
\lim_{s\nearrow1}c_{n,s}\int_{\R^n\setminus B_r}\frac{u(y)-\nabla u(0)\cdot y}{{|y|}^{n+2s}}\;dy
=0 .
\end{align}

Now, we write
\begin{equation}\label{nwvlkwovnklwvnkl}\begin{split}&
\int_{B_r}\frac{u(y)-\nabla u(0)\cdot y}{{|y|}^{n+2s}}\;dy\\&\qquad=
\frac12\int_{B_r}\frac{D^2u(0)y\cdot y}{{|y|}^{n+2s}}\;dy+
\int_{B_r}\frac{u(y)-\nabla u(0)\cdot y-\frac12D^2u(0)y\cdot y}{{|y|}^{n+2s}}\;dy.\end{split}
\end{equation}
Using polar coordinates and recalling the computation
in~\eqref{o9fpewFJKEòJFkvnsdm}, we have that
the first integral in the right-hand side of~\eqref{nwvlkwovnklwvnkl} satisfies
\begin{equation*}\begin{split}
\int_{B_r}\frac{D^2u(0)y\cdot y}{{|y|}^{n+2s}}\;dy&=
\left(\int_0^r t^{1-2s}\;dt\right)
\left(\int_{\mathbb{S}^{n-1}}D^2u(0)\theta\cdot\theta\;d\theta\right)
\\&=\frac{r^{2-2s}}{2-2s}
\int_{\mathbb{S}^{n-1}}D^2u(0)\theta\cdot\theta\;d\theta\\
&=\frac{r^{2-2s}}{2-2s}\frac{\big|\mathbb{S}^{n-1}\big|}n\,\Delta u(0).
\end{split}\end{equation*}

Therefore, by~\eqref{cns-asymp-precise},
\begin{align*}
\lim_{s\nearrow 1}\frac{c_{n,s}}2\int_{B_r}\frac{D^2u(0)y\cdot y}{{|y|}^{n+2s}}\;dy=\lim_{s\nearrow 1}
\frac{\big|\mathbb{S}^{n-1}\big|}{4n}\,\frac{c_{n,s}}{1-s}\,r^{2-2s}\,\Delta u(0)
=\Delta u(0).
\end{align*}
{F}rom this fact, \eqref{nwvlkwovnklwvnkl0} and~\eqref{nwvlkwovnklwvnkl},
we thus conclude that
\begin{equation}\label{1234567qwertyuisdfwertdfg9507yhgekj}
\lim_{s\nearrow1}(-\Delta)^su(0)=-\Delta u(0)
- \lim_{s\nearrow1}c_{n,s}\int_{B_r}\frac{u(y)-\nabla u(0)\cdot y-\frac12D^2u(0)y\cdot y}{{|y|}^{n+2s}}\;dy.
\end{equation}

Now, the regularity of~$u$ yields that
$$
\left|u(y)-\nabla u(0)\cdot y-\frac12D^2u(0)y\cdot y\right| \leq \omega(r)|y|^2
\qquad\text{for every }y\in B_r,$$
where
\begin{align}\lim_{r\searrow 0}\omega(r) = 0. \label{9ru23jpoidm}
\end{align}
Thus, we estimate
\begin{align*}
&\left|\int_{B_r}\frac{u(y)-\nabla u(0)\cdot y-\frac12D^2u(0)y\cdot y}{{|y|}^{n+2s}}\;dy\right| \leq
\omega(r)\int_{B_r}{|y|}^{2-n-2s}\;dy \\
&\qquad =\omega(r) \big|\mathbb{S}^{n-1}\big|\int_0^r t^{1-2s}\;dt
=\omega(r)\big|\mathbb{S}^{n-1}\big|\frac{r^{2-2s}}{2-2s}.
\end{align*}
Therefore, recalling~\eqref{cns2} and~\eqref{measure-n-sphere},
\begin{eqnarray*}&&
\limsup_{s\nearrow 1}
c_{n,s}\left|\int_{B_r}\frac{u(y)-\nabla u(0)\cdot y-\frac12D^2u(0)y\cdot y}{{|y|}^{n+2s}}\;dy\right|
\\&&\qquad \leq\limsup_{s\nearrow 1}
\frac{2^{2s}\Gamma(\frac{n+2s}2)}{\pi^{n/2}\Gamma(2-s)}\,s(1-s)
\omega(r)\big|\mathbb{S}^{n-1}\big|\frac{r^{2-2s}}{2-2s}
\\&&\qquad = \limsup_{s\nearrow 1}
\frac{2^{2s}\Gamma(\frac{n+2s}2)}{\Gamma(2-s)\Gamma(\frac{n}2)}
\omega(r) r^{2-2s}
= \frac{4\Gamma(\frac{n+2}2)}{\Gamma(\frac{n}2)}
\omega(r)
= 2n\omega(r).
\end{eqnarray*}
This and~\eqref{9ru23jpoidm} give that
$$ \lim_{r\searrow0}
\limsup_{s\nearrow 1}
c_{n,s}\left|\int_{B_r}\frac{u(y)-\nabla u(0)\cdot y-\frac12D^2u(0)y\cdot y}{{|y|}^{n+2s}}\;dy\right|=0.$$
Using this information together with~\eqref{1234567qwertyuisdfwertdfg9507yhgekj},
we obtain the desired limit in~\eqref{ds-sto1}.
\end{proof}

\subsection{Point inversions for the fractional Laplacian}

In this section, we discuss 
a useful transformation that allows to reduce the study of
(non)local equations in exterior domains to that of interior ones.

The simplest example of point inversion is the so-called Kelvin Transform, that is defined as a map~${\mathcal{K}}:\R^n\setminus\{0\}\to\R^n\setminus\{0\}$ given by
\begin{align}\label{kelvin-0}
\mathcal{K}(x):= \frac{x}{{|x|}^2}.
\end{align}
This transformation
fixes all points on~$\partial B_1$ and maps the interior of~$B_1$ to the exterior and vice versa. 

See e.g.~\cite[Section~2.6]{2021arXiv210107941D} and the references therein for
the properties of the Kelvin Transform and historical remarks.

Here we focus our attention on how the Kelvin Transform interacts with the fractional Laplacian.
For this, recall the definition of the space~$L^1_s(\R^n)$ given in~\eqref{w-l1-space}.
For a function~$u\in L^1_s(\R^n)$, define
\begin{align}\label{kelvin-00}
u_{{\mathcal{K}}}(y):=|y|^{2s-n}\,u\left({\mathcal{K}}(y)\right)
\qquad\text{for any }y\in\R^n\setminus\{0\}.
\end{align}

\begin{lemma}\label{lem:weight-inversion}
If~$u\in L^1_s(\R^n)$, then~$u_{{\mathcal{K}}}\in L^1_s(\R^n)$.
\end{lemma}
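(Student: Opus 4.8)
The plan is to prove the claim by a direct change of variables in the integral defining $L^1_s$, reducing the integrability of $u_{\mathcal K}$ over $\R^n$ to the (assumed) integrability of $u$ over $\R^n$. First I would write out
\[
\int_{\R^n}\frac{|u_{\mathcal K}(y)|}{(1+|y|)^{n+2s}}\;dy
=\int_{\R^n}\frac{|y|^{2s-n}\,\big|u\big(y/|y|^2\big)\big|}{(1+|y|)^{n+2s}}\;dy,
\]
and then perform the inversion substitution $z:=\mathcal K(y)=y/|y|^2$, which is an involution of $\R^n\setminus\{0\}$ with $|z|=1/|y|$ and Jacobian $|y|^{-2n}$ (equivalently $dy=|z|^{-2n}\,dz$). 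The singularity at the origin is harmless since $\{0\}$ has measure zero and, as we shall see, the transformed integrand is locally integrable there.

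After substituting, the key algebraic step is to simplify the weight. With $|y|=1/|z|$ one gets $|y|^{2s-n}=|z|^{n-2s}$, the Jacobian contributes $|z|^{-2n}$, and $(1+|y|)^{n+2s}=(1+1/|z|)^{n+2s}=|z|^{-(n+2s)}(1+|z|)^{n+2s}$. Multiplying these together, the powers of $|z|$ combine as $|z|^{n-2s}\cdot|z|^{-2n}\cdot|z|^{n+2s}=|z|^{0}=1$, so that
\[
\int_{\R^n}\frac{|u_{\mathcal K}(y)|}{(1+|y|)^{n+2s}}\;dy
=\int_{\R^n}\frac{|u(z)|}{(1+|z|)^{n+2s}}\;dz,
\]
which is finite precisely because $u\in L^1_s(\R^n)$. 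This exact cancellation of all powers of $|z|$ is the whole point of the $2s-n$ exponent in the definition~\eqref{kelvin-00}, and it is what makes the Kelvin transform preserve the class $L^1_s$.

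It remains to check $u_{\mathcal K}\in L^1_{\rm loc}(\R^n)$, i.e.\ that the change of variables is legitimate and that no mass is hidden near the origin. For any bounded set $A\subset\R^n$, the same substitution turns $\int_A|u_{\mathcal K}|$ into $\int_{\mathcal K(A)}|u(z)|\,|z|^{-n}\,dz$ (tracking only the Jacobian and the $|y|^{2s-n}$ factor); if $0\notin\overline A$ then $\mathcal K(A)$ is bounded and bounded away from infinity, so finiteness follows from $u\in L^1_{\rm loc}$, while if $0\in\overline A$ then $\mathcal K(A)$ is an unbounded region and one estimates $|z|^{-n}\le C(1+|z|)^{-n}\le C(1+|z|)^{-(n+2s)-}$... more simply, one observes directly that $\int_{\R^n}|u_{\mathcal K}(y)|(1+|y|)^{-n-2s}\,dy<+\infty$ already forces $u_{\mathcal K}\in L^1_{\rm loc}(\R^n)$ since the weight is bounded below on every bounded set. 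The only mild obstacle is bookkeeping the substitution cleanly (the involution property, the Jacobian, and the behavior near $0$ and $\infty$), but there is no analytic difficulty: the computation is an identity, not an inequality, so the statement is in fact sharp.
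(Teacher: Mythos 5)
Your proof is correct and takes essentially the same route as the paper's: the inversion $z={\mathcal{K}}(y)=y/|y|^2$ with $dy=dz/|z|^{2n}$, and the exact cancellation of the powers of $|z|$ in the weight $|y|^{2s-n}(1+|y|)^{-n-2s}$, yielding the identity with the $L^1_s$-integral of $u$. The only (minor) difference is that you obtain $u_{{\mathcal{K}}}\in L^1_{\rm loc}(\R^n)$ directly from the finiteness of the weighted integral, since the weight is bounded below on bounded sets, whereas the paper verifies local integrability by a separate change of variables over $B_R$; both are fine.
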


\begin{proof}
We first check that~$u_{{\mathcal{K}}}\in L^1_{{\rm{loc}}}(\R^n)$. For this, let~$R>0$ and notice that 
\begin{eqnarray*}
&&\int_{B_R} |u_{{\mathcal{K}}}(y)|\,dy =
\int_{B_R} |y|^{2s-n}\,| u({\mathcal{K}}(y) )|\,dy=
\int_{B_R} |y|^{2s-n}\,\left|
u\left(\frac{y}{|y|^2}\right)\right|\,dy.
\end{eqnarray*}
We now change variable~$z:={\mathcal{K}}(y)=y/|y|^2$.
In this way,
we have that~$dy=dz/|z|^{2n}$ (see formula~(2.6.7) and footnote~9 in~\cite{2021arXiv210107941D}). Also, if~$y\in B_R$ then
$$ |z|=\left|\frac{y}{|y|^2}\right| =\frac1{|y|}>\frac1R,$$
and thus~${\mathcal{K}}(B_R)=\R^n\setminus B_{1/R}$.
As a result,
$$ \int_{B_R} |u_{{\mathcal{K}}}(y)|\,dy=
\int_{\R^n\setminus B_{1/R}} \frac{|u(z)|}{|z|^{n+2s}}\,dz
\le C\int_{\R^n}\frac{|u(z)|}{{(1+|z|)}^{n+2s}}\;dz<+\infty
,$$
for some~$C>0$, depending on~$n$, $s$ and~$R$.

We now check that
$$ \int_{\R^n}\frac{|u_{{\mathcal{K}}}(y)|}{{(1+|y|)}^{n+2s}}\;dy<+\infty.$$
For this, we perform the change of variable~$z:=y/|y|^2$ and we see that
\begin{align*}
& \int_{\R^n}\frac{|u_{{\mathcal{K}}}(y)|}{{(1+|y|)}^{n+2s}}\;dy=
\int_{\R^n}\frac{\left|u\left(\frac{y}{|y|^2}\right)\right|}{{(1+|y|)}^{n+2s}}\;\frac{dy}{{|y|}^{n-2s}}\\
& \qquad=\int_{\R^n}\frac{|u(z)|}{\big(1+|z|^{-1}\big)^{n+2s}}\;{|z|}^{n-2s-2n}\;dz=
\int_{\R^n}\frac{|u(z)|}{{(1+|z|)}^{n+2s}}\;dz<+\infty,
\end{align*} as desired.
\end{proof}

Whenever~$x\neq 0$, the function~$u_{{\mathcal{K}}}$ has in a neighborhood of~$x$ the same regularity 
as the one that~$u$ has in a neighborhood of~$x/|x|^2$: this observation, 
together with Lemma~\ref{lem:weight-inversion}, indicates that it will make sense 
to compute~$(-\Delta)^s u_{{\mathcal{K}}}$ at those points where~$(-\Delta)^s u({\mathcal{K}}(x))$ is well-defined. This is the content of the following
statement.

\begin{proposition}\label{prop:inversion}
Let~$r>0$, $x\in\R^n\setminus\{0\}$ and~$u\in C^2(B_r({\mathcal{K}}(x) ))\cap L^1_s(\R^n)$.

Then, $u_{{\mathcal{K}}}\in C^2(B_r(x))\cap L^1_s(\R^n)$ and
\begin{align}\label{fl-inversion}
{(-\Delta)}^s u_{{\mathcal{K}}}(x)=|x|^{-n-2s} {(-\Delta)}^s u({\mathcal{K}}(x)).
\end{align}
Moreover,
\begin{align}\label{fundsol}
{(-\Delta)}^s |x|^{2s-n}=0 \qquad\text{in }\R^n\setminus\{0\}.
\end{align}
\end{proposition}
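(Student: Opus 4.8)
The plan is to establish the conformal-type identity~\eqref{fl-inversion} by a direct change of variables in the singular integral representation~\eqref{pv-def0} of the fractional Laplacian, and then to deduce~\eqref{fundsol} as an immediate corollary by applying~\eqref{fl-inversion} to the constant function $u\equiv 1$. First I would record the regularity and integrability of $u_{\mathcal K}$: membership $u_{\mathcal K}\in L^1_s(\R^n)$ is exactly Lemma~\ref{lem:weight-inversion}, while the local $C^2$ regularity near $x\neq 0$ follows because ${\mathcal K}$ is a smooth diffeomorphism of $\R^n\setminus\{0\}$ onto itself (with ${\mathcal K}\circ{\mathcal K}=\mathrm{id}$) and the prefactor $|y|^{2s-n}$ is smooth and nonvanishing on $B_r(x)$ provided $r<|x|$ (one may shrink $r$ for this purpose without loss of generality). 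Hence $(-\Delta)^s u_{\mathcal K}(x)$ is well-defined in the sense of the reference definition.

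The core computation is the change of variables $y={\mathcal K}(z)=z/|z|^2$ in the integral defining $(-\Delta)^s u_{\mathcal K}(x)$. The two facts I would isolate first are the Jacobian identity $dy = |z|^{-2n}\,dz$ (as used in the proof of Lemma~\ref{lem:weight-inversion}, cf.~\cite{2021arXiv210107941D}) and the elementary but crucial metric identity
\begin{align}\label{kelvin-metric-id}
\left|{\mathcal K}(x)-{\mathcal K}(z)\right| = \frac{|x-z|}{|x|\,|z|}\qquad\text{for all }x,z\in\R^n\setminus\{0\},
\end{align}
which one checks by expanding $\big|\,x/|x|^2 - z/|z|^2\,\big|^2$. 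Writing ${\mathcal K}(x)=:x^\ast$, and using that ${\mathcal K}$ maps $B_\eps(x)$-type neighborhoods to neighborhoods of $x^\ast$ so that the principal-value limits correspond, I would compute
\begin{align*}
(-\Delta)^s u_{\mathcal K}(x)
&= c_{n,s}\,\pv\!\int_{\R^n}\frac{|x|^{2s-n}u(x^\ast)-|z|^{2s-n}u({\mathcal K}(z))}{|x-z|^{n+2s}}\;dz.
\end{align*}
Splitting this as $|x|^{2s-n}\,\pv\!\int \big(u(x^\ast)-u({\mathcal K}(z))\big)|x-z|^{-n-2s}\,dz$ plus a correction term $\pv\!\int \big(|x|^{2s-n}-|z|^{2s-n}\big)u({\mathcal K}(z))|x-z|^{-n-2s}\,dz$, the first piece becomes, after substituting $w={\mathcal K}(z)$ and using~\eqref{kelvin-metric-id} together with $dz=|w|^{-2n}dw$,
\[
|x|^{2s-n}\cdot|x|^{n+2s}\,\pv\!\int_{\R^n}\frac{u(x^\ast)-u(w)}{|x^\ast-w|^{n+2s}}\;|w|^{(n+2s)-2n}\,|w|^{2n-2n}\,dw,
\]
and the bookkeeping of the powers of $|w|$ (namely $|x-z|^{n+2s} = |x^\ast-w|^{n+2s}|x|^{n+2s}|w|^{n+2s}$ and $dz=|w|^{-2n}dw$) must be arranged so that, after combining with the correction term, all spurious factors of $|w|$ cancel. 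The identity~\eqref{fundsol}, i.e.\ that $|x|^{2s-n}$ is (up to constants) the fundamental solution away from the origin, already appears in the excerpt (Corollary~\ref{FURIEZ} is alluded to), which is a useful sanity check: it forces the correction term to integrate, after the substitution, precisely into what is needed to convert the naive expression into $|x|^{-n-2s}(-\Delta)^s u(x^\ast)$.

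I expect the main obstacle to be the careful handling of the principal-value limit under the singular change of variables — one must verify that the excised ball $B_\eps(x)$ in the $z$-variable maps to a set that shrinks to $x^\ast$ in a way compatible with the $\pv$ defining $(-\Delta)^s u(x^\ast)$, and that the limit can legitimately be taken after splitting the integrand (each piece individually being only principal-value integrable when $s\geq 1/2$). The cleanest route to sidestep this is to assume first $s\in(0,1/2)$, where by Lemma~\ref{SENZAPVL} everything converges absolutely in the Lebesgue sense and all manipulations are justified by Fubini/Tonelli; then extend to $s\in[1/2,1)$ either by the symmetric representation~\eqref{symmetric-def} (which is absolutely convergent for $C^2$ functions) applied at the point $x^\ast$ and transported back, or by an approximation/analyticity-in-$s$ argument. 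Once~\eqref{fl-inversion} is in hand for $C^2$ data, taking $u\equiv 1$ gives $(-\Delta)^s 1 = 0$ trivially (the integrand vanishes identically), so $u_{\mathcal K}(y)=|y|^{2s-n}$ satisfies $(-\Delta)^s|x|^{2s-n} = |x|^{-n-2s}\cdot 0 = 0$ in $\R^n\setminus\{0\}$, which is~\eqref{fundsol}.
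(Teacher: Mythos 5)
Your change-of-variables computation is the right engine and coincides with the paper's (Jacobian $dy=dz/|z|^{2n}$ plus the distance identity recorded in~\eqref{inverted-distance}), but there is a genuine circularity in how you dispose of the correction term. After the substitution and the add-and-subtract, what is left over is not a collection of ``spurious factors of $|w|$'' that cancel by bookkeeping: it is precisely
\[
c_{n,s}\,|{\mathcal{K}}(x)|^{n+2s}\,u({\mathcal{K}}(x))\,\pv\!\int_{\R^n}\frac{|{\mathcal{K}}(x)|^{2s-n}-|z|^{2s-n}}{|{\mathcal{K}}(x)-z|^{n+2s}}\,dz ,
\]
i.e.\ a multiple of $(-\Delta)^s\big[\,|\cdot|^{2s-n}\big]({\mathcal{K}}(x))$, so its vanishing \emph{is} the statement~\eqref{fundsol}. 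Your plan uses this vanishing (you say~\eqref{fundsol} ``forces the correction term'' into the right shape) to obtain~\eqref{fl-inversion}, and then derives~\eqref{fundsol} from~\eqref{fl-inversion} by taking $u\equiv1$. As written this is circular; and invoking Corollary~\ref{FURIEZ} instead would be a forward reference to an independent Fourier-analytic fact rather than a proof within the present argument.

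The paper breaks the circle with a bootstrap you are missing: it first records the raw two-term identity (its~\eqref{1269-8219}, main term plus correction), then applies that identity to the specific choice $u:=|\cdot|^{2s-n}$, whose Kelvin transform is the constant $1$, so the left-hand side vanishes. This turns the identity into
\[
(-\Delta)^s |\cdot|^{2s-n}({\mathcal{K}}(x))=-\,|{\mathcal{K}}(x)|^{2s-n}\,(-\Delta)^s |\cdot|^{2s-n}({\mathcal{K}}(x)),
\]
which at points ${\mathcal{K}}(x)\in\partial B_1$ forces $(-\Delta)^s|\cdot|^{2s-n}=0$ there; homogeneity (Lemma~\ref{lem:homogeneity}) then extends this to all of $\R^n\setminus\{0\}$, proving~\eqref{fundsol} first, after which the correction term drops out for general $u$ and~\eqref{fl-inversion} follows. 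Your deduction ``$u\equiv1$ gives~\eqref{fundsol}'' is fine as a consequence, but it cannot replace this step. (Your remaining points are sound: Lemma~\ref{lem:weight-inversion} gives $u_{\mathcal{K}}\in L^1_s(\R^n)$, smoothness of ${\mathcal{K}}$ gives the local $C^2$ regularity, and the principal-value bookkeeping is a minor matter that the paper also handles by the same direct substitution in~\eqref{pv-def}, without any restriction to $s<\tfrac12$.)
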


\begin{proof}
We remark that the fact that~$u\in C^2(B_r({\mathcal{K}}(x)))$ implies
that~$ u_{{\mathcal{K}}}\in C^2(B_r(x))$. Moreover, we know that~$u_{{\mathcal{K}}}\in L^1_s(\R^n)$, thanks to Lemma~\ref{lem:weight-inversion}.

We can use therefore use~\eqref{pv-def} and write
$$ {(-\Delta)}^s u_{{\mathcal{K}}}(x) =
c_{n,s}\pv\int_{\R^n}\frac{u_{{\mathcal{K}}}(x)-u_{{\mathcal{K}}}(y)}{{|x-y|}^{n+2s}}\;dy.$$
We now perform the change of variable~$z:={\mathcal{K}}(y)=y/|y|^2$,
which gives~$dy=dz/|z|^{2n}$ (see formula~(2.6.7) and footnote~9 in~\cite{2021arXiv210107941D}), thus finding that
\begin{equation}\label{asdfghjpoiuytremnbvcxz098765432}\begin{split}
{(-\Delta)}^s u_{{\mathcal{K}}}(x)
&=c_{n,s}\pv\int_{\R^n}\frac{|x|^{2s-n}u({\mathcal{K}}(x))-|y|^{2s-n}u({\mathcal{K}}(y))}{{|x-y|}^{n+2s}}\;dy  \\
&=c_{n,s}\pv\int_{\R^n}\frac{|x|^{2s-n}u({\mathcal{K}}(x))
-|z|^{n-2s}u(z)}{|x-{\mathcal{K}}(z)|^{n+2s}}\;\frac{dz}{{|z|}^{2n}}.
\end{split}\end{equation}

We recall that, for all~$y$, $z\in\R^n\setminus \{0\}$,
$$ |{\mathcal{K}}(y)-{\mathcal{K}}(z)|=\frac{|y-z|}{|y|\,|z|},$$
see formula~(2.6.5) in~\cite{2021arXiv210107941D}.

We exploit this formula with~$y:={\mathcal{K}}(x)$, which gives that~${\mathcal{K}}(y)= {\mathcal{K}}({\mathcal{K}}(x))=x$, obtaining that
\begin{align}\label{inverted-distance}
|x-{\mathcal{K}}(z)|=\frac{|{\mathcal{K}}(x)-z|}{|{\mathcal{K}}(x)|\,|z|}.
\end{align}
Using this into~\eqref{asdfghjpoiuytremnbvcxz098765432} we find that
\begin{equation}\label{1269-8219}\begin{split}
{(-\Delta)}^s u_{{\mathcal{K}}}(x)
&=
c_{n,s}\pv\int_{\R^n}\frac{|{\mathcal{K}}(x)|^{n-2s}u({\mathcal{K}}(x))-|z|^{n-2s}u(z)}{{|{\mathcal{K}}(x)-z|}^{n+2s}}\;
\frac{dz}{{|z|}^{n-2s}}\;{|{\mathcal{K}}(x)|}^{n+2s} \\
&= 
c_{n,s}\pv\int_{\R^n}\frac{|z|^{n-2s}u({\mathcal{K}}(x))-|z|^{n-2s}u(z)}{{|{\mathcal{K}}(x)-z|}^{n+2s}}\;\frac{dz}{{|z|}^{n-2s}}\;{|{\mathcal{K}}(x)|}^{n+2s} \\
& \qquad +c_{n,s}\pv\int_{\R^n}\frac{|{\mathcal{K}}(x)|^{n-2s}u({\mathcal{K}}(x))-|z|^{n-2s}u({\mathcal{K}}(x))}{{|{\mathcal{K}}(x)-z|}^{n+2s}}\;\frac{dz}{{|z|}^{n-2s}}\;{|{\mathcal{K}}(x)|}^{n+2s} \\
&= |{\mathcal{K}}(x)|^{n+2s} {(-\Delta)}^s u({\mathcal{K}}(x))
\\&\qquad+c_{n,s}\pv\int_{\R^n}\frac{|{\mathcal{K}}(x)|^{2s-n}-|z|^{2s-n}}{{|{\mathcal{K}}(x)-z|}^{n+2s}}\;dz\;{|{\mathcal{K}}(x)|}^{n+2s}u({\mathcal{K}}(x)). 
\end{split}\end{equation}

We now use the notation~$u_{2s-n}(x):=|x|^{2s-n}$ and we see that,
for any~$x\in\R^n\setminus\{0\}$,
\begin{eqnarray*}
(u_{2s-n})_{{\mathcal{K}}}(x)=
|x|^{2s-n}\,u_{2s-n}\left({\mathcal{K}}(x)\right)
=|x|^{2s-n}\,|{\mathcal{K}}(x)|^{2s-n}=1
\end{eqnarray*}
and therefore
\begin{align*}
{(-\Delta)}^s (u_{2s-n})_{{\mathcal{K}}}(x) = 0. 
\end{align*}
Hence, employing~\eqref{1269-8219} with~$u:=u_{2s-n}$ gives
\begin{align*}
0 &= |{\mathcal{K}}(x)|^{n+2s} {(-\Delta)}^s u_{2s-n}({\mathcal{K}}(x))\\&\qquad+c_{n,s}\pv\int_{\R^n}\frac{u_{2s-n}({\mathcal{K}}(x))
-u_{2s-n}(z)}{{|{\mathcal{K}}(x)-z|}^{n+2s}}\;dz\;{|{\mathcal{K}}(x)|}^{n+2s}u_{2s-n}({\mathcal{K}}(x)).
\end{align*}
Namely,
\begin{align*}
{(-\Delta)}^s u_{2s-n}({\mathcal{K}}(x))
=-u_{2s-n}({\mathcal{K}}(x)){(-\Delta)}^s u_{2s-n}({\mathcal{K}}(x))
\qquad\text{for any } {\mathcal{K}}(x)\in\R^n\setminus\{0\}.
\end{align*}
In particular, 
\begin{align*}
{(-\Delta)}^s u_{2s-n}({\mathcal{K}}(x))=0
\qquad\text{for any }{\mathcal{K}}(x)\in\partial B_1
\end{align*}
{F}rom this and the homogeneity of the operator (see Lemma~\ref{lem:homogeneity}), we deduce~\eqref{fundsol}.

Having proved~\eqref{fundsol}, we insert it in~\eqref{1269-8219} which therefore simplifies to
\begin{align*}
{(-\Delta)}^s u_{{\mathcal{K}}}(x)=|{\mathcal{K}}(x)|^{n+2s} {(-\Delta)}^s u({\mathcal{K}}(x))
\end{align*}
and the proof of~\eqref{fl-inversion} is concluded by observing that~$|{\mathcal{K}}(x)|=1/|x|$.
\end{proof}

\begin{remark}
Identity~\eqref{fundsol} is tightly connected to the notion of fundamental solution,
see Corollary~\ref{FURIEZ}.
\end{remark}

More generally, for~$R>0$ and~$x_0\in\R^n$, point inversions are mappings of the form~${\mathcal{K}}_{R,x_0}:\R^n\setminus\{x_0\} \to \R^n\setminus\{x_0\} $ defined as
\begin{align}\label{isinforceBIS}
{\mathcal{K}}_{R,x_0}(x):= R^2\frac{x-x_0}{{|x-x_0|}^2}+x_0.
\end{align}
They fix points on~$\partial B_R(x_0)$ and map the interior of~$B_R(x_0)$ to the exterior and vice versa.

As this general form is obtained by simply composing the one with~$R=1$ and~$x_0=0$ with dilations and translations, we get the following generalization of formula~\eqref{fl-inversion}.

\begin{proposition}\label{prop:inversion-general}
Let~$R$, $r>0$, $x_0\in\R^n$, $x\in\R^n\setminus\{x_0\}$ and~$u\in C^2(B_r(\mathcal{K}_{R,x_0}(x)))\cap L^1_s(\R^n)$.

Let
\begin{align}\label{isinforceBIS2}
u_{{\mathcal{K}}_{R,x_0}}(y):=|y-x_0|^{2s-n}u({\mathcal{K}}_{R,x_0}(y))\qquad\text{for any }y\in\R^n\setminus\{x_0\}.
\end{align}
Then, $u_{{\mathcal{K}}_{R,x_0}} \in C^2(B_r(x))\cap L^1_s(\R^n)$ and
\begin{align}\label{fl-inversion-general}
{(-\Delta)}^s u_{{\mathcal{K}}_{R,x_0}}(x)=R^{4s}|x-x_0|^{-n-2s}{(-\Delta)}^s u(\mathcal{K}_{R,x_0}(x)).
\end{align}
\end{proposition}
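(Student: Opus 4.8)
The plan is to reduce the claim to the special case $R=1$, $x_0=0$ already established in Proposition~\ref{prop:inversion}, exploiting the translation invariance (Lemma~\ref{lem:translation}) and the homogeneity (Lemma~\ref{lem:homogeneity}) of the fractional Laplacian. The starting point is the factorization
\[
\mathcal{K}_{R,x_0}(x)=R\,\mathcal{K}\big(R^{-1}(x-x_0)\big)+x_0,
\]
which expresses the general point inversion in~\eqref{isinforceBIS} as the standard inversion $\mathcal{K}$ of~\eqref{kelvin-0} conjugated by the affine map $x\mapsto R^{-1}(x-x_0)$. Accordingly, set $x':=R^{-1}(x-x_0)$ and introduce the auxiliary function $v(w):=u(Rw+x_0)$. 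Since $u\in C^2(B_r(\mathcal{K}_{R,x_0}(x)))\cap L^1_s(\R^n)$ and the affine change of variables $w\mapsto Rw+x_0$ preserves local $C^2$ regularity as well as, up to a multiplicative constant, the weighted integrability defining $L^1_s(\R^n)$ in~\eqref{w-l1-space} (because $1+|Rw+x_0|$ and $1+|w|$ are comparable, with constants depending only on $R$ and $|x_0|$), one gets $v\in C^2(B_{r/R}(\mathcal{K}(x')))\cap L^1_s(\R^n)$, so that Proposition~\ref{prop:inversion} applies to $v$ at the point $x'$.

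The first step is the bookkeeping identity $u_{\mathcal{K}_{R,x_0}}(Rz+x_0)=R^{2s-n}\,v_{\mathcal{K}}(z)$ for every $z\in\R^n\setminus\{0\}$, where $v_{\mathcal{K}}$ is associated with $v$ as in~\eqref{kelvin-00}; this is immediate from $\mathcal{K}_{R,x_0}(Rz+x_0)=R\,\mathcal{K}(z)+x_0$, from $|(Rz+x_0)-x_0|^{2s-n}=R^{2s-n}|z|^{2s-n}$, and from $u(R\,\mathcal{K}(z)+x_0)=v(\mathcal{K}(z))$. In particular, combined with the conclusion of Proposition~\ref{prop:inversion} for $v_{\mathcal{K}}$ and with the fact that $z\mapsto Rz+x_0$ maps $B_{r/R}(x')$ onto $B_r(x)$, this already yields the asserted regularity $u_{\mathcal{K}_{R,x_0}}\in C^2(B_r(x))\cap L^1_s(\R^n)$. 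The second step is to apply $(-\Delta)^s$ and track the constants. Writing $w:=u_{\mathcal{K}_{R,x_0}}$ and applying Lemmas~\ref{lem:translation} and~\ref{lem:homogeneity} to the function $z\mapsto w(Rz+x_0)$, one finds that its fractional Laplacian at $z$ equals $R^{2s}(-\Delta)^s w(Rz+x_0)$; on the other hand, by the identity above that same function is $R^{2s-n}v_{\mathcal{K}}$, so $(-\Delta)^s w(Rz+x_0)=R^{-n}(-\Delta)^s v_{\mathcal{K}}(z)$. Proposition~\ref{prop:inversion} applied to $v$ at $z=x'$ gives $(-\Delta)^s v_{\mathcal{K}}(x')=|x'|^{-n-2s}(-\Delta)^s v(\mathcal{K}(x'))$, and a further application of Lemmas~\ref{lem:translation} and~\ref{lem:homogeneity} to $u$ (since $v(w)=u(Rw+x_0)$) yields $(-\Delta)^s v(\mathcal{K}(x'))=R^{2s}(-\Delta)^s u(R\,\mathcal{K}(x')+x_0)=R^{2s}(-\Delta)^s u(\mathcal{K}_{R,x_0}(x))$, using once more the factorization and $Rx'+x_0=x$.

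Finally, chaining these equalities at $z=x'$ and substituting $|x'|=R^{-1}|x-x_0|$, hence $|x'|^{-n-2s}=R^{n+2s}|x-x_0|^{-n-2s}$, the powers of $R$ combine as $R^{-n}\cdot R^{n+2s}\cdot R^{2s}=R^{4s}$, which is exactly the claimed identity~\eqref{fl-inversion-general}. I do not expect any conceptual obstacle here: the argument is essentially a change of variables dressed up through the invariance lemmas, and the only point demanding a little care is the exponent bookkeeping together with checking that the affine conjugation transfers the hypotheses $C^2$ and $L^1_s$ among $u$, $v$, $v_{\mathcal{K}}$ and $w$, which is routine. One could alternatively argue directly by changing variables in the principal-value integral defining $(-\Delta)^s u_{\mathcal{K}_{R,x_0}}$, exactly as in the proof of Proposition~\ref{prop:inversion}, but the reduction via Lemmas~\ref{lem:translation} and~\ref{lem:homogeneity} is shorter and keeps the stated generality of the hypotheses.
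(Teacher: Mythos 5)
Your argument is correct and is precisely the route the paper intends: the text states that the general case is obtained by composing the case $R=1$, $x_0=0$ (Proposition~\ref{prop:inversion}) with dilations and translations, i.e.\ by exploiting Lemmas~\ref{lem:translation} and~\ref{lem:homogeneity}, which is exactly what you carried out, and your exponent bookkeeping $R^{-n}\cdot R^{n+2s}\cdot R^{2s}=R^{4s}$ together with the transfer of the $C^2$ and $L^1_s(\R^n)$ hypotheses under the affine conjugation is accurate. No gaps.
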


\subsection{The fractional Laplacian for functions depending
only on a subset of variables}

If a function depends only on a subset of the Euclidean coordinates, its full Laplacian coincides with the Laplacian computed on
these coordinates. The same holds true for the fractional Laplacian (provided that normalizing constants are chosen appropriately).

To check this claim, we consider~$k\in\{1,\ldots,n-1\}$ and write~$x\in\R^n$ as~$x=(x',x'')$ 
with~$x'\in\R^k$ and~$x''\in\R^{n-k}$. Also, we will make it explicit that the fractional Laplacian is computed in~$\R^n$ by writing~$(-\Delta)^s_{\R^n}$.

\begin{lemma}\label{lem:mute}
Let~$r>0$,~$\sigma>s$,~$x\in\R^n$ and~$u\in C^{2\sigma}(B_r(x))\cap L^1_s(\R^n)$. Suppose that
there exists~$\overline{u}:\R^k\to\R$ such that~$u(y)=\overline{u}(y')$
for any~$y\in\R^n$.

Then,
\begin{align}\label{fl-mute}
(-\Delta)^s_{\R^n} u(x)=(-\Delta)^s_{\R^k}\overline{u}(x').
\end{align}
\end{lemma}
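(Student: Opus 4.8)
The plan is to start from the symmetric representation~\eqref{symmetric-def} of~$(-\Delta)^s_{\R^n}$, which under the present hypotheses is an absolutely convergent Lebesgue integral, split the integration variable as~$y=(y',y'')\in\R^k\times\R^{n-k}$, and integrate out~$y''$ so as to recover the $k$-dimensional operator acting on~$\overline u$.

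Before that, I would record that the right-hand side of~\eqref{fl-mute} makes sense, namely that~$\overline u\in C^{2\sigma}(B_r(x'))\cap L^1_s(\R^k)$. The regularity is immediate since~$\overline u(y')=u(y',x'')$, so that~$\overline u$ coincides, near~$x'$, with the restriction of~$u$ to the slice~$\{y''=x''\}$. For the integrability, Tonelli gives
\[
\int_{\R^n}\frac{|u(y)|}{(1+|y|)^{n+2s}}\,dy
=\int_{\R^k}|\overline u(y')|\left(\int_{\R^{n-k}}\frac{dy''}{(1+|y|)^{n+2s}}\right)dy',
\]
and restricting the inner integral to~$\{|y''|\le 1+|y'|\}$, where~$1+|y|\le 3(1+|y'|)$, one obtains the pointwise lower bound~$\int_{\R^{n-k}}(1+|y|)^{-n-2s}\,dy''\ge c\,(1+|y'|)^{-k-2s}$ for some~$c=c(n,k,s)>0$; since the left-hand side above is finite by assumption, so is~$\int_{\R^k}|\overline u(y')|(1+|y'|)^{-k-2s}\,dy'$, whence~$\overline u\in L^1_s(\R^k)$.

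Next, since~$2u(x)-u(x+y)-u(x-y)=2\overline u(x')-\overline u(x'+y')-\overline u(x'-y')$ is independent of~$y''$ while~$|y|^2=|y'|^2+|y''|^2$, I would apply Fubini--Tonelli in~\eqref{symmetric-def} --- legitimate because the double integral is absolutely convergent, the singularity at~$y'=0$ being controlled by~$|2\overline u(x')-\overline u(x'+y')-\overline u(x'-y')|\le C|y'|^{\min(2\sigma,2)}$ together with~$\min(2\sigma,2)>2s$ --- to obtain that~$(-\Delta)^s_{\R^n}u(x)$ equals~$\frac{c_{n,s}}2$ times the integral over~$\R^k$ of~$\bigl(2\overline u(x')-\overline u(x'+y')-\overline u(x'-y')\bigr)$ against the kernel~$\int_{\R^{n-k}}(|y'|^2+|y''|^2)^{-(n+2s)/2}\,dy''$. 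For~$y'\neq0$ this inner integral is evaluated by the scaling~$y''=|y'|w$, then polar coordinates, then the substitution~$\tau=t^2$ and the Beta integral~\eqref{beta-identity}, together with the value~\eqref{measure-n-sphere} of~$|\mathbb{S}^{n-k-1}|$, and equals~$C_{n,k,s}\,|y'|^{-k-2s}$ with~$C_{n,k,s}=\pi^{(n-k)/2}\,\Gamma(\tfrac{k+2s}2)/\Gamma(\tfrac{n+2s}2)$.

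Finally I would check, directly from~\eqref{cns}, the identity~$c_{n,s}\,C_{n,k,s}=c_{k,s}$ (a one-line cancellation of~$\Gamma(\tfrac{n+2s}2)$ and of the powers of~$\pi$), so that~$(-\Delta)^s_{\R^n}u(x)=\frac{c_{k,s}}2\int_{\R^k}\bigl(2\overline u(x')-\overline u(x'+y')-\overline u(x'-y')\bigr)|y'|^{-k-2s}\,dy'=(-\Delta)^s_{\R^k}\overline u(x')$, once again by~\eqref{symmetric-def}, now in dimension~$k$. The main obstacle is not any single computation but the bookkeeping: one must ensure that the inner integral is \emph{exactly} a constant times a power of~$|y'|$ (so that what remains is a bona fide fractional-Laplacian integral) and that the exponent~$\sigma>s$ is precisely what makes the whole expression absolutely integrable near~$y'=0$, justifying Fubini. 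A slicker but heavier alternative would start from~\eqref{FORi}, using that~$\widehat u=\widehat{\overline u}(\xi')\otimes\delta_0(\xi'')$ as tempered distributions and that the symbol~$(2\pi|\xi|)^{2s}$ restricts to~$(2\pi|\xi'|)^{2s}$ on~$\{\xi''=0\}$; I would keep the direct computation as the primary argument.
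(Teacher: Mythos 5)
Your proposal is correct and follows essentially the same route as the paper's proof: start from the symmetric representation \eqref{symmetric-def}, split $y=(y',y'')$, integrate out $y''$ by the scaling $y''=|y'|z$ followed by polar coordinates and the Beta integral \eqref{beta-identity}, and check via \eqref{cns} and \eqref{measure-n-sphere} that the resulting constant turns $c_{n,s}$ into $c_{k,s}$. The only additions on your side (the verification that $\overline u\in C^{2\sigma}\cap L^1_s(\R^k)$ and the explicit Fubini justification) are harmless refinements of the same argument.
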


Note that in the above identity~\eqref{fl-mute} 
the fractional Laplacian on the left-hand side 
and the one on the right-hand side 
are not, strictly speaking, the same operator: they are respectively the 
$n$-dimensional and the~$k$-dimensional versions of~\eqref{pv-def}.
Note also that~\eqref{fl-mute} is more precise than just a translation invariance of the operator:
being~$u$ invariant for translations in the~$x''$ directions, 
we know that its fractional will also be invariant along these directions; 
formula~\eqref{fl-mute} precisely states what value is attained by the operator.

\begin{proof}[Proof of Lemma~\ref{lem:mute}]
To prove identity~\eqref{fl-mute}, we make use of~\eqref{symmetric-def} 
and proceed via a direct calculation:
\begin{align*}
(-\Delta)^s_{\R^n} u(x) &= \frac{c_{n,s}}{2}\int_{\R^n}\frac{2u(x)-u(x+y)-u(x-y)}{{|y|}^{n+2s}}\;dy \\
&=
\frac{c_{n,s}}{2}\int_{\R^k}\Big(2\overline{u}(x'))-\overline{u}(x'+y')-\overline{u}(x'-y')\Big)\left(\int_{\R^{n-k}}\frac{dy''}{{(|y'|^2+|y''|^2)}^{n/2+s}}\right)\;dy'.
\end{align*}
Now, by the change of variables~$z:=y''/|y'|$, one has that
\begin{align*}
\int_{\R^{n-k}}\frac{dy''}{{(|y'|^2+|y''|^2)}^{n/2+s}}=
\frac1{{|y'|}^{k+2s}}\int_{\R^{n-k}}\frac{dz}{{(1+|z|^2)}^{n/2+s}}
\end{align*}
which entails that
\begin{equation}\label{scuiore8563od3seperchegu544i}\begin{split}
(-\Delta)^s_{\R^n} u(x)&=\frac{c_{n,s}}{2}\left(\int_{\R^k}\frac{2\overline{u}(x'))-\overline{u}(x'+y')-\overline{u}(x'-y')}{{|y'|}^{k+2s}}\;dy'\right)\left(\int_{\R^{n-k}}\frac{dz}{{(1+|z|^2)}^{n/2+s}}\right)\\&
=\frac{c_{n,s}}{c_{k,s}}(-\Delta)^s_{\R^k}\overline{u}(x')\int_{\R^{n-k}}\frac{dz}{{(1+|z|^2)}^{n/2+s}}.
\end{split}\end{equation}

Now, passing to polar coordinates, we see that
\begin{align*}
\int_{\R^{n-k}}\frac{dz}{{(1+|z|^2)}^{n/2+s}}=
\big|\mathbb{S}^{n-k-1}\big|\int_0^{+\infty}\frac{\rho^{n-k-1}}{{(1+\rho^2)}^{n/2+s}}\;d\rho
\end{align*}
which, after the change of variable~$t:=(1+\rho^2)^{-1}$, gives (see~\eqref{beta-identity})
\begin{align*}
\int_0^{+\infty}\frac{\rho^{n-k-1}}{{(1+\rho^2)}^{n/2+s}}\;d\rho=
\frac12\int_0^1 t^{k/2+s-1}{(1-t)}^{n/2-k/2-1}\;dt=\frac{\Gamma(\frac{k}2+s)\,\Gamma(\frac{n-k}2)}{2\,\Gamma(\frac{n}2+s)}.
\end{align*}
Hence (see~\eqref{measure-n-sphere} for the measure of the sphere) we have that 
\begin{align*}
\frac{c_{n,s}}{c_{k,s}}\int_{\R^{n-k}}\frac{dz}{{(1+|z|^2)}^{n/2+s}}=
\frac{2^{2s}\Gamma(\frac{n+2s}2)s}{\pi^{n/2}\Gamma(1-s)}
\frac{\pi^{k/2}\Gamma(1-s)}{2^{2s}\Gamma(\frac{k+2s}2)s}
\frac{2\pi^{n/2-k/2}}{\Gamma(\frac{n-k}2)}
\frac{\Gamma(\frac{k}2+s)\,\Gamma(\frac{n-k}2)}{2\,\Gamma(\frac{n}2+s)}=
1.
\end{align*}
Combining this information with~\eqref{scuiore8563od3seperchegu544i}
completes the proof.
\end{proof}

\subsection{Exchanging the order of derivatives and the fractional Laplacian}

In this section, we investigate 
how the fractional Laplacian 
interacts with the usual derivatives.

Let us first start from a simple case.

\begin{lemma}
Let~$u\in C^\infty_c(\R^n)$. Then, for any~$j\in\{1,\ldots,n\}$,
\begin{align*}
\partial_{x_j}(-\Delta)^s u(x)=(-\Delta)^s\big(\partial_{x_j}u\big)(x)
\qquad\text{for any }x\in\R^n.
\end{align*}
\end{lemma}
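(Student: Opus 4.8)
The plan is to exploit the Fourier multiplier representation in~\eqref{FORi}, which is available here because~$u\in C^\infty_c(\R^n)$ (and hence~$\partial_{x_j}u\in C^\infty_c(\R^n)$ as well). The key structural fact is that~$(-\Delta)^s$ and~$\partial_{x_j}$ are both Fourier multipliers, and multipliers commute because multiplication of the symbols~$(2\pi|\xi|)^{2s}$ and~$2\pi i\xi_j$ is commutative. First I would recall that, for a Schwartz-class (in particular $C^\infty_c$) function~$v$, differentiation corresponds on the Fourier side to multiplication by the symbol~$2\pi i\xi_j$, namely~$\widehat{\partial_{x_j}v}(\xi)=2\pi i\xi_j\,\widehat v(\xi)$; this is the standard transformation rule for the chosen normalization of the Fourier Transform (see the convention fixed in the footnote to Theorem~\ref{CARATT}).

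Next I would compute both sides in frequency space. Using~\eqref{FORi} for~$(-\Delta)^s u$ and then differentiating under the integral sign (legitimate since~$\widehat u$ decays rapidly and the extra factor~$2\pi i\xi_j$ does not spoil integrability), one gets
\[
\partial_{x_j}(-\Delta)^s u(x)=\int_{\R^n}(2\pi|\xi|)^{2s}\,\widehat u(\xi)\,(2\pi i\xi_j)\,e^{2\pi ix\cdot\xi}\;d\xi.
\]
On the other hand, applying~\eqref{FORi} to the function~$\partial_{x_j}u\in C^\infty_c(\R^n)$ and using the differentiation rule~$\widehat{\partial_{x_j}u}(\xi)=2\pi i\xi_j\,\widehat u(\xi)$ gives
\[
(-\Delta)^s(\partial_{x_j}u)(x)=\int_{\R^n}(2\pi|\xi|)^{2s}\,\widehat{\partial_{x_j}u}(\xi)\,e^{2\pi ix\cdot\xi}\;d\xi
=\int_{\R^n}(2\pi|\xi|)^{2s}\,(2\pi i\xi_j)\,\widehat u(\xi)\,e^{2\pi ix\cdot\xi}\;d\xi.
\]
The two expressions coincide, which proves the claim.

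The only point requiring a little care, and the natural candidate for ``the main obstacle'', is the justification of differentiating~$(-\Delta)^s u$ under the integral sign in~\eqref{FORi}: one must check that the integrand~$(2\pi|\xi|)^{2s}\widehat u(\xi)e^{2\pi ix\cdot\xi}$ and its $x$-derivative~$(2\pi i\xi_j)(2\pi|\xi|)^{2s}\widehat u(\xi)e^{2\pi ix\cdot\xi}$ are dominated by an integrable function uniformly in~$x$ on a neighborhood of the point. Since~$u\in C^\infty_c(\R^n)$, its Fourier Transform~$\widehat u$ is Schwartz, so~$(1+|\xi|)^{2s+1}|\widehat u(\xi)|$ is bounded and integrable; this furnishes the required domination and the interchange is valid. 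An alternative, Fourier-free route would be to differentiate directly inside the principal-value integral in~\eqref{pv-def0} after a change of variables that moves the $x$-dependence onto~$u$ (writing the kernel in the translated form~$\int (u(x)-u(x+z))|z|^{-n-2s}dz$), using the compact support and smoothness of~$u$ to differentiate under the integral; but the Fourier argument is shorter and cleaner and is the one I would present.
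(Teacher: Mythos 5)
Your argument is correct and is essentially identical to the paper's proof: both use the Fourier representation~\eqref{FORi}, differentiate under the integral sign, and identify~$(2\pi i\xi_j)\widehat u$ with~$\widehat{\partial_{x_j}u}$. Your explicit justification of the interchange of derivative and integral (via the rapid decay of~$\widehat u$) is a welcome detail that the paper leaves implicit.
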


\begin{proof}
By the representation of the fractional Laplacian in~\eqref{FORi},
\begin{align*}
\partial_{x_j}(-\Delta)^s u(x)
&=
\partial_{x_j}\int_{\R^n}{(2\pi|\xi|)}^{2s}\widehat{u}(\xi)e^{2\pi ix\cdot\xi}\;d\xi \\
&=
\int_{\R^n}{(2\pi|\xi|)}^{2s}(2\pi i\xi_j)\widehat{u}(\xi)e^{2\pi ix\cdot\xi}\;d\xi \\
&=
\int_{\R^n}{(2\pi|\xi|)}^{2s}\,\widehat{\partial_{x_j}u}(\xi)\,e^{2\pi ix\cdot\xi}\;d\xi \\&
=
(-\Delta)^s\big(\partial_{x_j}u\big)(x),
\end{align*} as desired.
\end{proof}

More generally, we have the following:

\begin{proposition}\label{prop:exchange}
Let~$r>0$,~$\sigma>s$,~$x\in\R^n$ and~$u\in C^{1+2\sigma}(B_r(x))\cap L^1_s(\R^n)$ with~$\nabla u\in L^1_s(\R^n)$.

Then, for any~$j\in\{1,\ldots,n\}$,
\begin{align*}
\partial_{x_j}(-\Delta)^s u(x)=(-\Delta)^s\big(\partial_{x_j}u\big)(x).
\end{align*}
\end{proposition}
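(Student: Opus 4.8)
The plan is to reduce the statement to a difference-quotient argument: write the partial derivative as a limit of finite differences, use the translation invariance of the fractional Laplacian (Lemma~\ref{lem:translation}), and then pass the limit inside the integral defining~$(-\Delta)^s$ via dominated convergence. First I would fix the index~$j$ and the unit vector~$e_j$, and for~$h\neq 0$ set~$u_h(y):=u(y+he_j)$. By Lemma~\ref{lem:translation} we have~${(-\Delta)}^s u_h(x)={(-\Delta)}^s u(x+he_j)$, hence
\begin{align*}
\frac{{(-\Delta)}^s u(x+he_j)-{(-\Delta)}^s u(x)}{h}
={(-\Delta)}^s\!\left(\frac{u_h-u}{h}\right)(x),
\end{align*}
where the last identity uses linearity of the operator (which is immediate from any of the integral representations). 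Denoting~$\delta_h u:=(u_h-u)/h$, the left-hand side converges to~$\partial_{x_j}{(-\Delta)}^s u(x)$ as~$h\to 0$, provided we already know that~$s\mapsto{(-\Delta)}^s u$ is differentiable near~$x$; but in fact it is cleaner to argue that the right-hand side converges to~${(-\Delta)}^s(\partial_{x_j}u)(x)$, and deduce the differentiability of~$x\mapsto{(-\Delta)}^s u(x)$ as a byproduct.

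So the core of the argument is to show that~${(-\Delta)}^s(\delta_h u)(x)\to{(-\Delta)}^s(\partial_{x_j}u)(x)$ as~$h\to 0$. I would use the representation~\eqref{taylor-def-cutoff} (valid for all~$s\in(0,1)$ with no principal value), writing, for~$v\in\{\delta_h u,\ \partial_{x_j}u\}$,
\begin{align*}
{(-\Delta)}^s v(x)=c_{n,s}\int_{\R^n}\frac{v(x)-v(y)-\nabla v(x)\cdot(x-y)\chi_{B_1}(x-y)}{{|x-y|}^{n+2s}}\;dy.
\end{align*}
The hypothesis~$u\in C^{1+2\sigma}(B_r(x))$ with~$\sigma>s$ guarantees that~$\delta_h u\in C^{2\sigma}(B_{r/2}(x))$ uniformly in small~$h$ (with~$C^{2\sigma}$ seminorm bounded by the~$C^{1+2\sigma}$ norm of~$u$ on~$B_r(x)$, using~$\delta_h u=\int_0^1\partial_{x_j}u(\cdot+the_j)\,dt$), and that~$\delta_h u\to\partial_{x_j}u$ in~$C^{2\sigma}(B_{r/2}(x))$; simultaneously~$\nabla\delta_h u(x)\to\nabla\partial_{x_j}u(x)$. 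For the tail~$\R^n\setminus B_{r/2}(x)$ I would dominate~$|\delta_h u(y)|$ by a fixed~$L^1_s$ function: since~$\nabla u\in L^1_s(\R^n)$, for~$|h|\leq 1$ we have~$|\delta_h u(y)|\leq\int_0^1|\nabla u(y+the_j)|\,dt$, and a change of variables plus the elementary comparison~$(1+|y+the_j|)\geq c(1+|y|)$ for~$|t|,|h|\le1$ shows~$\int_{\R^n}|\delta_h u(y)|(1+|y|)^{-n-2s}\,dy$ is bounded uniformly in~$h$ by a multiple of~$\|\nabla u\|_{L^1_s}$. This provides the integrable domination needed for dominated convergence on the tail, while on~$B_{r/2}(x)$ the uniform~$C^{2\sigma}$ bound controls the numerator by~$C|x-y|^{2\sigma}$ with~$2\sigma-2s>0$, an integrable singularity. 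Passing to the limit gives~${(-\Delta)}^s(\delta_h u)(x)\to{(-\Delta)}^s(\partial_{x_j}u)(x)$, which combined with the displayed difference-quotient identity yields both that~$x\mapsto{(-\Delta)}^s u(x)$ has a partial derivative in~$x_j$ at~$x$ and that this derivative equals~${(-\Delta)}^s(\partial_{x_j}u)(x)$.

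The main obstacle I expect is the bookkeeping near infinity: one must verify carefully that the~$L^1_s$ domination of the difference quotients is uniform in~$h$ and that the gradient term~$\nabla(\delta_h u)(x)\chi_{B_1}$ behaves well, so that dominated convergence genuinely applies to~\eqref{taylor-def-cutoff} rather than to a formally divergent expression. A secondary subtlety is ensuring the regularity transfer~$u\in C^{1+2\sigma}\Rightarrow\delta_h u\in C^{2\sigma}$ uniformly, for which the integral formula~$\delta_h u=\int_0^1\partial_{x_j}u(\cdot+the_j)\,dt$ and the fact that~$\partial_{x_j}u\in C^{2\sigma}(B_r(x))$ are exactly what is needed; here one should also shrink the ball slightly (from~$B_r(x)$ to~$B_{r-|h|}(x)$, say) so that the translated argument stays in the domain of regularity. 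Once these uniform bounds are in place, the rest is routine.
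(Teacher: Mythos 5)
Your argument is correct in substance but follows a genuinely different route from the paper. You differentiate the map $x\mapsto(-\Delta)^su(x)$ by writing its difference quotient, via translation invariance (Lemma~\ref{lem:translation}) and linearity, as $(-\Delta)^s(\delta_hu)(x)$, and then pass to the limit in the representation~\eqref{taylor-def-cutoff} using a uniform $C^{2\sigma}$ bound near $x$ and a uniform $L^1_s$ domination of $\delta_hu$ at infinity coming from $\nabla u\in L^1_s(\R^n)$. The paper instead never forms difference quotients of the operator: it starts from the symmetric representation~\eqref{symmetric-def}, splits the integral with a cutoff $\varphi_0$ equal to $1$ on $B_{r/2}$, differentiates under the integral sign in the near part (where the $C^{1+2\sigma}$ regularity controls the second difference of $\partial_{x_j}u$), and handles the far part by rewriting it as a convolution and integrating by parts in $y$, so that the derivative lands on $u$ and produces $\partial_{x_j}u$ inside a kernel with no singularity. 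Your route buys a conceptually clean reduction to translation invariance plus a single limit; the paper's route avoids any discussion of convergence of difference quotients of $u$ at infinity, at the price of the integration-by-parts manipulation.

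Two points in your write-up deserve tightening. First, on the tail you invoke dominated convergence, but for $u$ merely weakly differentiable with $\nabla u\in L^1_s(\R^n)$ the difference quotients $\delta_hu$ need not converge to $\partial_{x_j}u$ pointwise a.e.\ on $\R^n\setminus B_{r/2}(x)$; they converge in $L^1_{\rm loc}$. The uniform bound $|\delta_hu(y)|\le\int_0^1|\nabla u(y+the_j)|\,dt$ that you derive does the job, but the limit passage should be phrased via Fubini and continuity of translations in the weighted $L^1$ norm (or a Vitali-type argument), not via pointwise domination. Second, applying~\eqref{taylor-def-cutoff} to $v=\partial_{x_j}u$ requires $\nabla(\partial_{x_j}u)(x)$ to exist, which is not guaranteed when $2\sigma\le1$ (i.e.\ $s<\tfrac12$); in that regime you should instead use~\eqref{SENZAPV} (no gradient correction needed), or observe that the gradient term integrates to zero over $B_1(x)$ by odd symmetry, so the claimed convergence $\nabla\delta_hu(x)\to\nabla\partial_{x_j}u(x)$ is only needed, and indeed only available, when $2\sigma>1$. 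With these adjustments the proof goes through.
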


\begin{proof}
Take a bump function~$\varphi_0\in C^\infty_c(B_r, [0,1])$
with~$\varphi=1$ in~$B_{r/2}$.
Recalling~\eqref{symmetric-def}, we find that
\begin{align}
(-\Delta)^s u(x)
&=
\frac{c_{n,s}}{2}\int_{\R^n}\frac{2u(x)-u(x+y)-u(x-y)}{{|y|}^{n+2s}}\;dy \notag \\
&=
\frac{c_{n,s}}{2}\int_{B_r}\frac{2u(x)-u(x+y)-u(x-y)}{{|y|}^{n+2s}}\,\varphi_0(y)\;dy
\label{8ryh2oryhf9823yf98f32h98cnw} \\
&\qquad+
\frac{c_{n,s}}{2}\int_{\R^n}\frac{2u(x)-u(x+y)-u(x-y)}{{|y|}^{n+2s}}(1-\varphi_0(y))\;dy.
\label{d8y932hd9823hd8y3hywgcg67} 
\end{align}
The term in~\eqref{8ryh2oryhf9823yf98f32h98cnw} can be differentiated under the integral sign 
after we notice that
\begin{align*}
\big|2\partial_{x_j}u(x)-\partial_{x_j}u(x+y)-\partial_{x_j}u(x-y)\big|
\leq\|u\|_{C^{1+2\sigma}(B_r(x))}|y|^{2\sigma}
\qquad\text{for any }y\in B_r.
\end{align*}
To show the differentiability under the integral sign also of the term in~\eqref{d8y932hd9823hd8y3hywgcg67},
we split it into three addends and we simply show the differentiability under the integral sign of 
\begin{align*}
\int_{\R^n}\frac{u(x-y)}{{|y|}^{n+2s}}(1-\varphi_0(y))\;dy.
\end{align*}
To do so, we first write 
\begin{align*}
\int_{\R^n}\frac{u(x-y)}{{|y|}^{n+2s}}(1-\varphi_0(y))\;dy=
\int_{\R^n}\frac{u(y)}{{|x-y|}^{n+2s}}(1-\varphi_0(x-y))\;dy
\end{align*}
and we integrate by part as follows:
\begin{align*}
\partial_{x_j}\int_{\R^n}\frac{u(y)}{{|x-y|}^{n+2s}}(1-\varphi_0(x-y))\;dy
&=
\int_{\R^n}u(y)\partial_{x_j}\left(\frac{1-\varphi_0(x-y)}{{|x-y|}^{n+2s}}\right)\;dy \\
&=
-\int_{\R^n}u(y)\partial_{y_j}\left(\frac{1-\varphi_0(x-y)}{{|x-y|}^{n+2s}}\right)\;dy \\
&= 
\int_{\R^n}\frac{\partial_{y_j}u(y)}{{|x-y|}^{n+2s}}(1-\varphi_0(x-y))\;dy \\
&=
\int_{\R^n}\frac{\partial_{x_j}u(x-y)}{{|y|}^{n+2s}}(1-\varphi_0(y))\;dy,
\end{align*}thus establishing the desired result.
\end{proof}

\section{The fractional Laplacian of periodic functions}

Here we compute the fractional Laplacian of a periodic function, by reducing the calculation to its Fourier modes.
For this, we pick~$a=(a_1,\dots,a_n)\in\R^n$ with positive components, i.e.,~$a_j>0$ for every~$j=1,\ldots,n$.
We consider the lattices
\begin{eqnarray*} &&
L_a:=\big\{(k_1a_1,\ldots,k_na_n)\;{\mbox{ with }}\; k=(k_1,\ldots,k_n)\in\Z^n\big\}\\ {\mbox{and }} &&
L_a':=\left\{\left(\frac{k_1}{a_1},\ldots,\frac{k_n}{a_n}\right)\;{\mbox{ with }}\;k=(k_1,\ldots,k_n)\in\Z^n\right\}.
\end{eqnarray*}
We say that~$u:\R^n\to\R$ is~$a$-periodic if
\begin{align*}
u(x+h)=u(x) \qquad \text{for any~$x\in\R^n$ and any~$ h\in L_a$}.
\end{align*}

If~$u$ is~$a$-periodic, we can describe it via the Fourier series
\begin{equation}\label{168BIS56748329hgjfkd}\begin{split}
&u(x)=\sum_{k\in L_a'}u_k e^{2\pi ik\cdot x},
\\ 
{\mbox{where }}\quad & u_k:=\left(\displaystyle\prod_{j=1}^n a_j\right)^{-1}\int_{[0,a_1]\times\cdots\times[0,a_n]} u(x) e^{-2\pi ik\cdot x}\;dx.
\end{split}\end{equation}
The Fourier coefficients~$u_k$ determine the fractional Laplacian of~$u$, as stated in the following result:

\begin{lemma}
Let~$r>0$, $\sigma>s$, $x\in\R^n$ and~$u\in C^{2\sigma}(B_r(x))\cap L^1_s(\R^n)$.
If~$u$ is~$a$-periodic, then
\begin{align}\label{fl-period}
{(-\Delta)}^s u(x)=\sum_{k\in L_a'}|2\pi k|^{2s}u_k e^{2\pi ik\cdot x}.
\end{align}
\end{lemma}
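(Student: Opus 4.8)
The plan is to reduce the computation to a single Fourier mode and then invoke the representation of the fractional Laplacian as the Fourier multiplier with symbol $(2\pi|\xi|)^{2s}$, suitably adapted to the periodic setting. The key observation is that on a plane wave $e_k(x):=e^{2\pi i k\cdot x}$ one expects $(-\Delta)^s e_k = |2\pi k|^{2s}\,e_k$, and the claim then follows by (formally) summing over $k\in L_a'$.

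First I would establish the single-mode identity. Rather than differentiating term by term in~\eqref{fl-period} directly, the cleanest route is to use the symmetric representation~\eqref{symmetric-def}, which is legitimate under the stated hypotheses $u\in C^{2\sigma}(B_r(x))\cap L^1_s(\R^n)$ with $\sigma>s$. Applying it formally to $e_k$ gives
\begin{align*}
{(-\Delta)}^s e_k(x)
&=\frac{c_{n,s}}{2}\int_{\R^n}\frac{2e^{2\pi i k\cdot x}-e^{2\pi i k\cdot(x+y)}-e^{2\pi i k\cdot(x-y)}}{|y|^{n+2s}}\;dy\\
&=\frac{c_{n,s}}{2}\,e^{2\pi i k\cdot x}\int_{\R^n}\frac{2-e^{2\pi i k\cdot y}-e^{-2\pi i k\cdot y}}{|y|^{n+2s}}\;dy
=c_{n,s}\,e^{2\pi i k\cdot x}\int_{\R^n}\frac{1-\cos(2\pi k\cdot y)}{|y|^{n+2s}}\;dy.
\end{align*}
By rotational invariance the last integral depends on $k$ only through $|k|$, and the scaling $y\mapsto y/(2\pi|k|)$ pulls out a factor $(2\pi|k|)^{2s}$, leaving a fixed constant; this constant is precisely $c_{n,s}^{-1}$, as one sees by comparing with the case of a Schwartz function (or by the one-dimensional computation of $\int_0^{+\infty}(1-\cos y)\,y^{-1-2s}\,dy$ via Lemma~\ref{lem:coseno}, already used in the proof of~\eqref{FORi}). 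Hence ${(-\Delta)}^s e_k = |2\pi k|^{2s}\,e_k$; note also that this is consistent with the special case $k=0$, where both sides vanish.

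Next I would justify exchanging $(-\Delta)^s$ with the infinite sum. Using again~\eqref{symmetric-def}, write
\[
{(-\Delta)}^s u(x)=\frac{c_{n,s}}{2}\int_{\R^n}\frac{2u(x)-u(x+y)-u(x-y)}{|y|^{n+2s}}\;dy
\]
and substitute the Fourier series~\eqref{168BIS56748329hgjfkd} for $u$. The interchange of $\sum_{k\in L_a'}$ with $\int_{\R^n}$ is the step requiring care: one must produce a summable-in-$k$, integrable-in-$y$ majorant for the integrand $|u_k|\,|2-e^{2\pi i k\cdot y}-e^{-2\pi i k\cdot y}|\,|y|^{-n-2s}$. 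Near $y=0$ one uses $|2-e^{2\pi i k\cdot y}-e^{-2\pi i k\cdot y}|\le C\min\{1,|2\pi k|^2|y|^2\}$, which yields after integration a bound of order $|2\pi k|^{2s}|u_k|$; away from $y=0$ the factor $|y|^{-n-2s}$ is integrable and one simply uses $|u_k|$ bounded. The series $\sum_k |2\pi k|^{2s}|u_k|$ need not converge for general $u\in C^{2\sigma}$, so a small regularization is called for: either invoke that $u$ is smooth near $x$ (so one may apply the higher-order-Taylor representation~\eqref{taylor-def-cutoff} or~\eqref{symmetric-def} with a larger smoothness exponent, gaining decay of the relevant quantities), or — more robustly — split the $y$-integral into $B_\delta$ and its complement: on $\R^n\setminus B_\delta$ the interchange is trivial since the measure $\chi_{\R^n\setminus B_\delta}|y|^{-n-2s}\,dy$ is finite and $u\in L^\infty_{\mathrm{loc}}$ suffices, while on $B_\delta$ one uses the local $C^{2\sigma}$-regularity of $u$ at $x$ together with the smoothness of each $e_k$ and dominated convergence on partial sums, which converge uniformly on the compact set $\overline{B_\delta(x)}$ because $u$ is real-analytic-like on a torus; in practice, since $u$ is assumed merely $C^{2\sigma}$ near $x$ but globally only $L^1_s$, the honest argument passes through approximating the periodic $u$ by its partial Fourier sums, which converge to $u$ together with derivatives up to order $2\sigma$ uniformly on $\overline{B_r(x)}$ and boundedly in $L^1_s$, so that $(-\Delta)^s$ commutes with the limit by the dominated-convergence estimate from Lemma~\ref{SENZAPVL}-type bounds and the $s\in(1/2,1)$ analogue.

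The main obstacle is exactly this last exchange-of-limit-and-operator point: one must ensure that the partial sums $S_N u:=\sum_{|k|_\infty\le N} u_k e_k$ satisfy both $S_N u\to u$ in $C^{2\sigma}(\overline{B_r(x)})$ and $\sup_N \|S_N u\|_{L^1_s(\R^n)}<\infty$ (or converge in that space), so that $(-\Delta)^s S_N u(x)\to (-\Delta)^s u(x)$; granting this, each $(-\Delta)^s S_N u(x)=\sum_{|k|_\infty\le N}|2\pi k|^{2s} u_k e^{2\pi i k\cdot x}$ by the single-mode computation and linearity, and letting $N\to\infty$ gives~\eqref{fl-period}. The $C^{2\sigma}$-convergence of Fourier partial sums on a compact subset of the torus is standard once one knows $u$ is globally $a$-periodic and locally $C^{2\sigma}$; some mild care (e.g. a Bernstein-type or localization argument) is needed because $C^{2\sigma}$-regularity is only assumed near $x$, but periodicity propagates it to all translates of $B_r(x)$ by $L_a$, which is enough.
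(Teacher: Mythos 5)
Your proposal is correct and follows the same skeleton as the paper's proof: insert the Fourier series \eqref{168BIS56748329hgjfkd} into the symmetric representation \eqref{symmetric-def}, exchange sum and integral, and scale out the factor $|2\pi k|^{2s}$ mode by mode. The genuine difference lies in how the remaining universal constant is identified. The paper computes it explicitly: after a rotation it evaluates $\int_{\R^n}\big(2-e^{2\pi iz_1}-e^{-2\pi iz_1}\big)|z|^{-n-2s}\,dz$ by factoring it into a one-dimensional integral in $z_1$ and a transverse integral in $z'$ (see \eqref{902fujioewnubgeifj8io23}), computing the latter via Beta-function identities and Legendre duplication in \eqref{r9230uru23jifofhf3209329fj} and the former via the Gamma-function device in \eqref{r8y23horir238ryh23ohir}, arriving at the value $(2\pi)^{2s}\,2/c_{n,s}$. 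You instead pin the constant by consistency of \eqref{symmetric-def} with the Fourier-multiplier representation \eqref{FORi} on Schwartz functions; this is legitimate and shorter, but it leans on Theorem~\ref{CARATT}, and your parenthetical alternative via Lemma~\ref{lem:coseno} alone only settles $n=1$ --- for general $n$ you would still need the transverse integral (precisely \eqref{r9230uru23jifofhf3209329fj}) or a reduction such as Lemma~\ref{lem:mute}. Finally, the paper carries out the sum/integral interchange formally and never discusses partial sums, so your approximation scheme goes beyond what is done there; be aware, however, that your claim that the partial Fourier sums of a merely $C^{2\sigma}$ periodic function converge in $C^{2\sigma}(\overline{B_r(x)})$ is not automatic (uniform convergence can fail already for continuous functions; Fej\'er means or a mollification would be safer), and periodicity propagates the local regularity only to the $L_a$-translates of $B_r(x)$, so a fully rigorous interchange is better run through the dominating-function estimate you sketch first rather than through $C^{2\sigma}$-convergence of partial sums.
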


\begin{proof}
By~\eqref{168BIS56748329hgjfkd} and the change of variable~$z:=|k|y$, we have that
\begin{equation}\label{168BIS56748329hgjfkd2}\begin{split}&
\int_{\R^n}\frac{2u(x)-u(x+y)-u(x-y)}{{|y|}^{n+2s}}\;dy\\
&=
\sum_{k\in L_a'}\int_{\R^n}\frac{2u_k e^{2\pi ik\cdot x}-u_k e^{2\pi ik\cdot (x+y)}-u_k e^{2\pi ik\cdot (x-y)}}{{|y|}^{n+2s}}\;dy \\
&=
\sum_{k\in L_a'}u_k e^{2\pi ik\cdot x}\int_{\R^n}\frac{2-e^{2\pi ik\cdot y}-e^{-2\pi ik\cdot y}}{{|y|}^{n+2s}}\;dy \\
&=
\sum_{k\in L_a'}{|k|}^{2s} u_k e^{2\pi ik\cdot x}\int_{\R^n}\frac{2-e^{2\pi i\frac{k}{|k|}\cdot z}-e^{-2\pi i\frac{k}{|k|}\cdot z}}{{|z|}^{n+2s}}\;dz.
\end{split}\end{equation}
Up to a rotation, the latter integral can be rewritten as
\begin{align*}
\int_{\R^n}\frac{2-e^{2\pi iz_1}-e^{-2\pi iz_1}}{{|z|}^{n+2s}}\;dz,
\end{align*}
which in particular does not depend on~$k\in L_a'$ and produces a multiplicative constant that we will now compute. First, split the integration as 
\begin{equation}\label{902fujioewnubgeifj8io23}\begin{split}
\int_{\R^n}\frac{2-e^{2\pi iz_1}-e^{-2\pi iz_1}}{{|z|}^{n+2s}}\;dz
&=
\int_{\R}\big(2-e^{2\pi iz_1}-e^{-2\pi iz_1}\big)\int_{\R^{n-1}}\frac{dz'}{\big(z_1^2+|z'|^2\big)^{n/2+s}}\;dz_1  \\
&=
\int_{\R}\frac{2-e^{2\pi iz_1}-e^{-2\pi iz_1}}{|z_1|^{1+2s}}\;dz_1
\int_{\R^{n-1}}\frac{d\eta}{\big(1+|\eta|^2\big)^{n/2+s}}. 
\end{split}\end{equation}
The two integrals obtained are now decoupled, so we can compute them separately. 

We start with the one in the~$\eta$ variable: by passing to polar coordinates
and applying the change of variables~$\rho:=(1+r^2)^{-1}$, we find that
\begin{align*}
\int_{\R^{n-1}}\frac{d\eta}{\big(1+|\eta|^2\big)^{n/2+s}}&=
|\mathbb{S}^{n-2}|\int_0^{+\infty}\frac{r^{n-2}}{\big(1+r^2\big)^{n/2+s}}\;dr\\
&=
\frac12|\mathbb{S}^{n-2}|\int_0^1\rho^{(n-3)/2}{(1-\rho)}^{s-1/2}\;d\rho \\
&=
\frac12
\frac{2\pi^{(n-1)/2}}{\Gamma\big(\frac{n-1}2\big)}
\frac{\Gamma\big(\frac{n-1}2\big)\,\Gamma\big(s+\frac12\big)}{\Gamma\big(\frac{n}2+s\big)}\\&
=
\pi^{(n-1)/2}
\frac{\Gamma\big(s+\frac12\big)}{\Gamma\big(\frac{n}2+s\big)}, 
\end{align*}
see~\eqref{beta-identity} for the integral in~$\rho$ and~\eqref{measure-n-sphere} for the explicit expression of~$|\mathbb{S}^{n-2}|$. 

The expression of the above formula can be modified with~\eqref{gamma-dupli} into 
\begin{align}\label{r9230uru23jifofhf3209329fj}
\int_{\R^{n-1}}\frac{d\eta}{\big(1+|\eta|^2\big)^{n/2+s}}=
\pi^{n/2}
\frac{2^{-2s}\Gamma(2s+1)}{\Gamma(s+1)\Gamma\big(\frac{n}2+s\big)}.
\end{align}

We turn now to the computation of the integral in~$z_1$ in~\eqref{902fujioewnubgeifj8io23}.
By rescaling and exploiting the parity of the integrand, we see that
\begin{equation}\label{xczvxfuyt8thebdnt5u6iykjgn}
\int_{\R}\frac{2-e^{2\pi iz_1}-e^{-2\pi iz_1}}{|z_1|^{1+2s}}\;dz_1
=2(2\pi)^{2s}\int_0^{+\infty}\frac{2-e^{it}-e^{-it}}{t^{1+2s}}\;dt.
\end{equation}
Now, by using the definition of the~$\Gamma$ function~\eqref{gamma-def}, we notice that
\begin{align*}
\Gamma(1+2s)\int_0^{+\infty}\frac{2-e^{it}-e^{-it}}{t^{1+2s}}\;dt
=\int_0^{+\infty}\int_0^{+\infty}e^{-w}w^{2s}\frac{2-e^{it}-e^{-it}}{t^{1+2s}}\;dt\;dw
\end{align*}
and, applying the change of variable~$t:=w\tau$, 
\begin{align*}
\Gamma(1+2s)\int_0^{+\infty}\frac{2-e^{it}-e^{-it}}{t^{1+2s}}\;dt
&=
\int_0^{+\infty}\frac1{\tau^{1+2s}}\int_0^{+\infty}e^{-w}\big(2-e^{iw\tau}-e^{-iw\tau}\big)\;dw\;d\tau \\
&=
\int_0^{+\infty}\frac1{\tau^{1+2s}}\left(2+\frac1{i\tau-1}-\frac1{i\tau+1}\right)\;d\tau \\
&=
\int_0^{+\infty}\frac{\tau^{1-2s}}{\tau^2+1}\;d\tau.
\end{align*}
We now use the change of variable~$v:=(\tau^2+1)^{-1}$ and deduce that
\begin{align*}
\Gamma(1+2s)\int_0^{+\infty}\frac{2-e^{it}-e^{-it}}{t^{1+2s}}\;dt
=\int_0^1 v^{s-1}(1-v)^{-s}\;dv=\Gamma(1-s)\Gamma(s),
\end{align*}
see~\eqref{beta-identity}.

{F}rom this and~\eqref{xczvxfuyt8thebdnt5u6iykjgn} we thus obtain that
\begin{align}\label{r8y23horir238ryh23ohir}
\int_{\R}\frac{2-e^{2\pi iz_1}-e^{-2\pi iz_1}}{|z_1|^{1+2s}}\;dz_1
=2(2\pi)^{2s}\frac{\Gamma(1-s)\Gamma(s)}{\Gamma(1+2s)}.
\end{align}
Inserting~\eqref{r9230uru23jifofhf3209329fj} and~\eqref{r8y23horir238ryh23ohir} into~\eqref{902fujioewnubgeifj8io23}, we conclude that
\begin{align*}
&\int_{\R^n}\frac{2-e^{2\pi iz_1}-e^{-2\pi iz_1}}{{|z|}^{n+2s}}\;dz
=
\pi^{n/2+2s}
\frac{2}{\Gamma(s+1)\Gamma\big(\frac{n}2+s\big)}
\Gamma(1-s)\Gamma(s) \\
&\qquad\qquad =
(2\pi)^{2s}\frac{2\pi^{n/2}\Gamma(1-s)}{2^{2s}s\Gamma\big(\frac{n}2+s\big)}
=(2\pi)^{2s}\frac{2}{c_{n,s}},
\end{align*}
which, together with~\eqref{168BIS56748329hgjfkd2}, completes the proof of~\eqref{fl-period}.
\end{proof}

As a straightforward application of~\eqref{fl-period}, we have that
if~$u:\R\to\R$ is~$a$-periodic for some~$a>0$ then
\begin{align}\label{fgdhsjtyreuw5647382qwsaxdertgbhy789i09876543}
{(-\Delta)}^s u(x)=\left(\frac{2\pi}{a}\right)^{2s}\sum_{k\in\Z}|k|^{2s}u_ke^{\frac{2\pi}{a} ikx}
\qquad\text{for any }x\in\R.
\end{align}

In particular, for example:

\begin{lemma} For any~$x\in\R$, we have that
\begin{align*}
{(-\Delta)}^s \sin (x)= \sin x 
\qquad\text{and}\qquad
{(-\Delta)}^s \cos(x) = \cos x.
\end{align*}
\end{lemma}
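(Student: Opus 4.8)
The plan is to invoke formula~\eqref{fgdhsjtyreuw5647382qwsaxdertgbhy789i09876543}, which is precisely tailored to one-dimensional periodic functions, with period $a=2\pi$. Both $\sin$ and $\cos$ are $2\pi$-periodic, of class $C^\infty(\R)$, and bounded; hence they belong to $C^{2\sigma}(B_r(x))\cap L^1_s(\R)$ for every $x\in\R$, $r>0$ and $\sigma>s$, the weighted integrability being immediate from $\int_\R(1+|y|)^{-1-2s}\,dy<+\infty$. Thus the hypotheses under which~\eqref{fgdhsjtyreuw5647382qwsaxdertgbhy789i09876543} was derived are satisfied, and with the choice $a=2\pi$ the prefactor $(2\pi/a)^{2s}$ equals $1$.

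Next I would record the Fourier coefficients. Taking $a=2\pi$ in~\eqref{168BIS56748329hgjfkd}, the expansion reads $u(x)=\sum_{k\in\Z}u_k e^{ikx}$. From Euler's identities $\sin x=\tfrac1{2i}\big(e^{ix}-e^{-ix}\big)$ and $\cos x=\tfrac12\big(e^{ix}+e^{-ix}\big)$ one reads off that, for $u=\sin$, one has $u_1=\tfrac1{2i}$, $u_{-1}=-\tfrac1{2i}$ and $u_k=0$ otherwise, while for $u=\cos$ one has $u_1=u_{-1}=\tfrac12$ and $u_k=0$ otherwise. Substituting into~\eqref{fgdhsjtyreuw5647382qwsaxdertgbhy789i09876543}, only the modes $k=\pm1$ survive (the mode $k=0$ would be annihilated by the factor $|k|^{2s}$ anyway, and is absent here), and for these $|k|^{2s}=1$; hence
\[
{(-\Delta)}^s\sin x=\tfrac1{2i}e^{ix}-\tfrac1{2i}e^{-ix}=\sin x,
\qquad
{(-\Delta)}^s\cos x=\tfrac12 e^{ix}+\tfrac12 e^{-ix}=\cos x,
\]
as claimed.

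There is no genuine obstacle in this argument: the lemma does essentially all the work. The only point requiring a moment's care is the bookkeeping of the normalization, namely verifying that with period $a=2\pi$ the constant $(2\pi/a)^{2s}$ collapses to $1$, so that the spectral multiplier $|k|^{2s}$ acts as the identity on the only frequencies $k=\pm1$ carried by $\sin$ and $\cos$; once this is noted, the conclusion is immediate.
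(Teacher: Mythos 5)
Your proof is correct and follows essentially the same route as the paper: apply the periodic formula~\eqref{fgdhsjtyreuw5647382qwsaxdertgbhy789i09876543} with $a=2\pi$, read off the Fourier coefficients of $\sin$ and $\cos$, and observe that $|\pm1|^{2s}=1$. (Your coefficients $u_{\pm1}=\pm\tfrac1{2i}$ for $\sin$ are in fact the correct ones; the paper's display swaps the two signs but arrives at the same sum, so nothing changes.)
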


\begin{proof} We prove the desired formula for~$\sin x$, being the one for~$\cos x$ analogous.

One can check that
\begin{eqnarray*} u_k&=& \frac1{2\pi}\int_0^{2\pi} \sin y\, e^{-iky}\,dy=
\begin{cases}
-\frac1{2i} &{\mbox{ if }} k=1,\\
\frac1{2i} &{\mbox{ if }} k=-1,\\
0 &{\mbox{ if }} k\neq 
\pm1.
\end{cases} \end{eqnarray*}
Therefore, from~\eqref{fgdhsjtyreuw5647382qwsaxdertgbhy789i09876543} we see that
\begin{eqnarray*}
{(-\Delta)}^s \sin (x)=\sum_{k\in\Z}|k|^{2s}u_ke^{ikx}=-\frac1{2i}e^{-ix}+\frac1{2i} e^{ix}=\sin x,
\end{eqnarray*}
as desired.
\end{proof}

\section{Other identities for the fractional Laplacian}

We prove here below a couple of other formulas which may come in handy at times.

We will use the definition of nonlocal gradient in~\eqref{NONGRAD}. Moreover,
we will denote by~$e_k$ the~$k$-th element of the Euclidean basis.

\begin{lemma}\label{fkoencalmd}
Let~$s\in(1/2,1)$, $r>0$, $\sigma>s$, $x\in\R^n$ and~$u\in C^{2\sigma}(B_r(x))$ with
\begin{align*}
\int_{\R^n}\frac{|u(y)|}{{(1+|y|)}^{n+2s-1}}\;dy<+\infty.
\end{align*}
Then,
\begin{align*} 
(-\Delta)^s (x_ku)(x) &= x_k\,(-\Delta)^s u(x)-2s\,e_k\cdot\nabla^{2s-1} u(x).
\end{align*}
\end{lemma}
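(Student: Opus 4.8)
The plan is to work from the principal‑value representation~\eqref{pv-def0}, applied to the function $x_k u$. This is legitimate: $x_k u\in C^{2\sigma}(B_r(x))$ trivially, and $x_k u\in L^1_s(\R^n)$ because $\int_{\R^n}|y_k|\,|u(y)|\,(1+|y|)^{-n-2s}\,dy\le\int_{\R^n}|u(y)|\,(1+|y|)^{-n-2s+1}\,dy<+\infty$ by hypothesis. One then splits the numerator algebraically,
$$
x_k u(x)-y_k u(y)=x_k\bigl(u(x)-u(y)\bigr)+(x_k-y_k)\,u(y).
$$
Integrating the first summand against $|x-y|^{-n-2s}$ in the p.v.\ sense yields exactly $x_k\,(-\Delta)^s u(x)$, so everything reduces to identifying the contribution of the second summand, namely $c_{n,s}\lim_{\eps\searrow0}\int_{\R^n\setminus B_\eps(x)}(x_k-y_k)\,u(y)\,|x-y|^{-n-2s}\,dy$, with $-2s\,e_k\cdot\nabla^{2s-1}u(x)$.

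For this, I would change variables $z:=x-y$, turning the tail integral into $\int_{\R^n\setminus B_\eps}z_k\,u(x-z)\,|z|^{-n-2s}\,dz$. Since $\int_{B_R\setminus B_\eps}z_k\,|z|^{-n-2s}\,dz=0$ for all $0<\eps<R$ by oddness, one may, fixing some $R\in(0,r)$, write
$$
\int_{\R^n\setminus B_\eps}\frac{z_k\,u(x-z)}{|z|^{n+2s}}\,dz
=\int_{B_R\setminus B_\eps}\frac{z_k\bigl(u(x-z)-u(x)\bigr)}{|z|^{n+2s}}\,dz
+\int_{\R^n\setminus B_R}\frac{z_k\,u(x-z)}{|z|^{n+2s}}\,dz.
$$
In the first integral the regularity of $u$ (note $u\in C^1(B_r(x))$ since $2\sigma>2s>1$) gives $|z_k(u(x-z)-u(x))|\le C|z|^2$, so the integrand is dominated by $C|z|^{2-n-2s}$, integrable near the origin because $s<1$; hence this integral converges absolutely as $\eps\searrow0$. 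The second integral is $\eps$‑independent and absolutely convergent: bounding $|z_k u(x-z)|\le|z|\,|u(x-z)|$ and undoing the substitution, $\int_{\R^n\setminus B_R}|u(x-z)|\,|z|^{-n-2s+1}\,dz\le C\int_{\R^n}|u(y)|\,(1+|y|)^{-n-2s+1}\,dy<+\infty$, where the hypothesis on $u$ is used and, crucially, $s>1/2$ guarantees $n+2s-1>n$ so the weight $|z|^{-n-2s+1}$ is integrable at infinity. Finally, since $s>1/2$ also makes $\int_{\R^n\setminus B_R}z_k\,|z|^{-n-2s}\,dz$ absolutely convergent (hence $=0$ by oddness), the $u(x)$‑subtraction may be carried over the whole space, and the second summand is seen to contribute exactly $c_{n,s}\int_{\R^n}z_k\bigl(u(x-z)-u(x)\bigr)\,|z|^{-n-2s}\,dz$, an absolutely convergent integral.

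It remains to recognize this integral as a multiple of the nonlocal gradient. Flipping $z\mapsto-z$ and recalling~\eqref{NONGRAD} with parameter $2s-1\in(0,1)$ (for which $(2s-1)+1=2s$, and whose admissibility conditions $u\in C^{\sigma'}(B_r(x))$ with $\sigma'>2s-1$ and $u\in L^1_{(2s-1)/2}(\R^n)$ are met since $2\sigma>2s-1$ and by the stated weighted bound), the $k$‑th component of $\nabla^{2s-1}u(x)$ equals $d_{n,2s-1}\int_{\R^n}\bigl(u(x+z)-u(x)\bigr)z_k\,|z|^{-n-2s}\,dz$. Hence the second summand equals $-\dfrac{c_{n,s}}{d_{n,2s-1}}\,e_k\cdot\nabla^{2s-1}u(x)$, and by~\eqref{dns} one has $d_{n,2s-1}=\frac1{1+(2s-1)}\,c_{n,\frac{1+(2s-1)}{2}}=\frac1{2s}\,c_{n,s}$, so $c_{n,s}/d_{n,2s-1}=2s$. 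Putting the two summands together gives $(-\Delta)^s(x_k u)(x)=x_k\,(-\Delta)^s u(x)-2s\,e_k\cdot\nabla^{2s-1}u(x)$, as claimed. The only genuinely delicate point is the bookkeeping of the principal value in the splitting — verifying that the $u(x)$‑subtraction is permissible and that the resulting integral is absolutely convergent — which is precisely where $s>1/2$ is needed; the rest is elementary algebra together with the constant identity extracted from~\eqref{dns}.
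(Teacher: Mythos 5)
Your proposal is correct and follows essentially the same route as the paper's proof: split $x_ku(x)-y_ku(y)=x_k\bigl(u(x)-u(y)\bigr)+(x_k-y_k)u(y)$, identify the second term with the nonlocal gradient from~\eqref{NONGRAD}, and use~\eqref{dns} to get $c_{n,s}/d_{n,2s-1}=2s$. The paper simply works with the principal-value form of~\eqref{NONGRAD} directly, whereas you additionally verify the convergence and the oddness cancellations justifying that identification, which is a welcome (if routine) amount of extra care.
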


\begin{proof}
Recalling~\eqref{NONGRAD} and~\eqref{dns}, 
\begin{align*}
(-\Delta)^s (x_ku)(x) &=
c_{n,s}\pv\int_{\R^n}\frac{x_k u(x)-y_ku(y)}{{|x-y|}^{n+2s}}\;dy \\
&=
x_k\,c_{n,s}\pv\int_{\R^n}\frac{u(x)-u(y)}{{|x-y|}^{n+2s}}\;dy
+c_{n,s}\pv\int_{\R^n}\frac{x_k-y_k}{{|x-y|}^{n+2s}}\,u(y)\;dy \\
&= x_k\,(-\Delta)^s u(x)+2s\,d_{n,s}e_k\cdot\pv\int_{\R^n}\frac{u(y)}{{|x-y|}^{n+2s-1}}\frac{x-y}{|x-y|}\;dy \\
&= x_k\,{(-\Delta)}^s u(x)-2s\,e_k\cdot\nabla^{2s-1}u(x),
\end{align*}as desired.
\end{proof}

\begin{proposition}
Let~$s\in(1/2,1)$, $r>0$, $\sigma>s$, $x\in\R^n$ and~$u\in C^{1+2\sigma}(B_r(x))\cap C^1(\R^n)\cap L^1_s(\R^n)$ with
\begin{align}\label{rewfydu647312qwsdx4rfg7uj9ol}
\int_{\R^n}\frac{|\nabla u(y)|}{{(1+|y|)}^{n+2s-1}}\;dy<+\infty.
\end{align}
Then,
\begin{align*}
(-\Delta)^s (x\cdot\nabla u)(x) = x\cdot\nabla(-\Delta)^su(x)+2s(-\Delta)^su(x).
\end{align*}
\end{proposition}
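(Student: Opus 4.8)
The plan is to reduce the identity to Lemma~\ref{fkoencalmd}, applied to each first-order partial derivative of~$u$, combined with Proposition~\ref{prop:exchange} and the representation~\eqref{RADCVE2}.

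First I would decompose $x\cdot\nabla u=\sum_{k=1}^n x_k\,\partial_{x_k}u$ and use the linearity of $(-\Delta)^s$. For each~$k$ the function $\partial_{x_k}u$ lies in $C^{2\sigma}(B_r(x))$ (since $u\in C^{1+2\sigma}(B_r(x))$) and satisfies the weighted integrability bound required by Lemma~\ref{fkoencalmd} because $|\partial_{x_k}u|\le|\nabla u|$ and~\eqref{rewfydu647312qwsdx4rfg7uj9ol} holds. Hence, applying Lemma~\ref{fkoencalmd} with $u$ replaced by $\partial_{x_k}u$ and summing over~$k$,
\[
(-\Delta)^s(x\cdot\nabla u)(x)=\sum_{k=1}^n x_k\,(-\Delta)^s\big(\partial_{x_k}u\big)(x)-2s\sum_{k=1}^n e_k\cdot\nabla^{2s-1}\big(\partial_{x_k}u\big)(x).
\]

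For the first sum I would invoke Proposition~\ref{prop:exchange}: its hypotheses are met since $u\in C^{1+2\sigma}(B_r(x))\cap L^1_s(\R^n)$ and $\nabla u\in L^1_s(\R^n)$, the latter following from~\eqref{rewfydu647312qwsdx4rfg7uj9ol} because $(1+|y|)^{-n-2s}\le(1+|y|)^{-n-2s+1}$; thus $(-\Delta)^s(\partial_{x_k}u)(x)=\partial_{x_k}(-\Delta)^s u(x)$ and the first sum equals $x\cdot\nabla(-\Delta)^s u(x)$. For the second sum, the key observation is that, comparing the integral definitions of $\nabla^{2s-1}$ in~\eqref{NONGRAD} and of $\div^{2s-1}$ term by term,
\[
\sum_{k=1}^n e_k\cdot\nabla^{2s-1}\big(\partial_{x_k}u\big)(x)=d_{n,2s-1}\int_{\R^n}\frac{\big(\nabla u(y)-\nabla u(x)\big)\cdot(y-x)}{|y-x|^{n+2s}}\,dy=\div^{2s-1}(\nabla u)(x),
\]
the vector field $\nabla u$ being admissible for the nonlocal divergence of order $2s-1\in(0,1)$ thanks to $\nabla u\in C^{2\sigma}(B_r(x),\R^n)$ and~\eqref{rewfydu647312qwsdx4rfg7uj9ol}. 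By the representation~\eqref{RADCVE2}, valid for $s\in(1/2,1)$, one has $\div^{2s-1}(\nabla u)(x)=-(-\Delta)^s u(x)$, so the second contribution becomes $2s\,(-\Delta)^s u(x)$, and adding the two pieces gives the claim.

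The computation itself is short; the only real work is the bookkeeping of the integrability hypotheses — namely verifying that~\eqref{rewfydu647312qwsdx4rfg7uj9ol} and the assumptions on~$u$ guarantee membership in all the spaces $L^1_s(\R^n)$ and $L^1_{(2s-1)/2}(\R^n)$ needed to legitimately invoke Lemma~\ref{fkoencalmd}, Proposition~\ref{prop:exchange} and~\eqref{RADCVE2} — together with checking that the finite sum may be exchanged with the (principal-value) integrals. I expect no serious obstacle: choosing to route the argument through $\div^{2s-1}(\nabla u)$ and~\eqref{RADCVE2} is precisely what avoids having to establish a commutation of $\nabla^{2s-1}$ with $\partial_{x_k}$, which would otherwise be the delicate point.
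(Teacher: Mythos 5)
Your proposal is correct and follows essentially the same route as the paper: decompose $x\cdot\nabla u=\sum_k x_k\partial_{x_k}u$, apply Lemma~\ref{fkoencalmd} to each $\partial_{x_k}u$, commute $(-\Delta)^s$ with $\partial_{x_k}$ via Proposition~\ref{prop:exchange}, recognize the summed nonlocal-gradient terms as $\div^{2s-1}(\nabla u)$, and conclude with~\eqref{RADCVE2}. The hypothesis checks you sketch are exactly the ones the paper relies on, so no further work is needed.
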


\begin{proof}
We point out that the assumptions of Lemma~\ref{fkoencalmd}
are satisfied with~$u$ replaced by~$\partial_j u$ for all~$j\in\{1,\dots,n\}$,
thanks to the regularity of~$u$ and~\eqref{rewfydu647312qwsdx4rfg7uj9ol}.
Therefore, exploiting Lemma~\ref{fkoencalmd}, we obtain that
\begin{eqnarray*}
(-\Delta)^s (x\cdot\nabla u)(x) &=&
\sum_{j=1}^n(-\Delta)^s(x_j\partial_j u)(x)\\&=&
\sum_{j=1}^n x_j(-\Delta)^s(\partial_j u)(x)-2s\sum_{j=1}^ne_j\cdot\nabla^{2s-1}(\partial_j u)(x) .\end{eqnarray*}
Furthermore, Proposition~\ref{prop:exchange}
and formula~\eqref{RADCVE2} give that
\begin{align*}
(-\Delta)^s (x\cdot\nabla u)(x) &=\sum_{j=1}^n x_j\partial_j(-\Delta)^su(x)-2s \sum_{j=1}^ne_j\cdot\nabla^{2s-1} (\partial_ju)(x)
\\&=
x\cdot\nabla(-\Delta)^su(x)-2s\div^{2s-1}\big(\nabla u\big)(x)
\\&
=x\cdot\nabla(-\Delta)^su(x)+2s(-\Delta)^su(x),
\end{align*} yielding the desired result.
\end{proof}

\chapter{Some explicit calculations of fractional Laplacians}\label{CH:EXAMPLES}

Differently from the classical case of the Laplacian, 
whose explicit calculations only involve derivatives,
the fractional Laplacian typically involve integral computations 
which do not produce simple results in terms of elementary functions.
There are however a few exceptions in which exact results can be obtained in a rather explicit way. 
We describe some
of these cases here below (see also~\cite{MR2974318, MR3469920, MR3640641} and~\cite[Lemma~A.2]{MR3596708}
for related examples and methodologies).

\section{Functions of one real variable}

\subsection{Power functions on the half-line}

Let~$\alpha\in(-1,2s)$ and
\begin{align}\label{accadialfa}
h_\alpha(x):=x_+^\alpha:=\left\lbrace\begin{aligned}
& x^\alpha && \text{for }x>0, \\
& 0 && \text{for }x\leq 0.
\end{aligned}\right.
\end{align}

\begin{lemma}\label{lem:halfline}
There exists~$\kappa(\alpha)\in\R$ such that
\begin{align}\label{powers-half-line}
(-\Delta)^s h_\alpha(x)=\left\lbrace\begin{aligned}
& c_{1,s}\,\kappa(\alpha)\,x^{\alpha-2s} && \text{for }x>0, \\
& -c_{1,s}\,\frac{\Gamma(\alpha+1)\,\Gamma(2s-\alpha)}{\Gamma(2s+1)}\,|x|^{\alpha-2s} && \text{for }x<0.
\end{aligned}\right.
\end{align}
The constant~$\kappa(\alpha)$ satisfies
\begin{align}\label{kappalfa}
\kappa(\alpha)\ \left\lbrace\begin{aligned}
& <0 && \text{for }\alpha\in(s,2s), \\
& =0 && \text{for }\alpha=s, \\
& >0 && \text{for }\alpha\in(s-1,s), \\
& =0 && \text{for }\alpha=s-1, \\
& <0 && \text{for }\alpha\in(-1,s-1).
\end{aligned}\right.
\end{align}
\end{lemma}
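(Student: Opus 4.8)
The plan is to reduce the computation to a one–dimensional integral of the form $\int_0^{+\infty} t^{a}(1+t)^{b}\,dt$, which is a Beta function, by exploiting the scaling structure. First I would observe that $h_\alpha$ is positively homogeneous of degree $\alpha$, so by Lemma~\ref{lem:homogeneity} (homogeneity of $(-\Delta)^s$) the function $(-\Delta)^s h_\alpha$ is positively homogeneous of degree $\alpha-2s$ on each of the two half-lines $\{x>0\}$ and $\{x<0\}$ separately (note $h_\alpha$ is not even, so the two half-lines must be treated independently). Hence there are constants $A,B\in\R$ with $(-\Delta)^s h_\alpha(x)=A\,x^{\alpha-2s}$ for $x>0$ and $(-\Delta)^s h_\alpha(x)=B\,|x|^{\alpha-2s}$ for $x<0$; it remains to evaluate $A$ and $B$ by computing the operator at the single convenient point $x=\pm1$. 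Before doing so I must check that $h_\alpha\in L^1_s(\R)$ and that it has enough local regularity near $x=\pm1$ for the pointwise definition to apply: the condition $\alpha\in(-1,2s)$ is precisely what makes $\int_\R |x_+^\alpha|(1+|x|)^{-1-2s}\,dx<+\infty$ (integrability at $0$ needs $\alpha>-1$, at $\infty$ needs $\alpha<2s$, actually $\alpha<1+2s$ suffices there but $\alpha<2s$ is assumed), and $h_\alpha$ is $C^\infty$ away from the origin, so Lemma~\ref{SENZAPVL} handles $s<1/2$ and~\eqref{taylor-def} the range $s>1/2$, with the principal value well defined in all cases.

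For $x<0$, say $x=-1$, the computation is cleaner because $h_\alpha$ vanishes on a neighbourhood of $x$, so no cancellation is needed: using~\eqref{pv-def0},
\begin{align*}
(-\Delta)^s h_\alpha(-1)
&= c_{1,s}\int_\R \frac{h_\alpha(-1)-h_\alpha(y)}{|{-1}-y|^{1+2s}}\,dy
= -c_{1,s}\int_0^{+\infty}\frac{y^\alpha}{(1+y)^{1+2s}}\,dy.
\end{align*}
The change of variable $\tau=1/(1+y)$, i.e. $y=(1-\tau)/\tau$, turns this into $-c_{1,s}\int_0^1 \tau^{2s-\alpha-1}(1-\tau)^{\alpha}\,d\tau = -c_{1,s}\,B(2s-\alpha,\alpha+1)$, which by~\eqref{beta-identity} equals $-c_{1,s}\,\Gamma(2s-\alpha)\Gamma(\alpha+1)/\Gamma(2s+1)$, exactly the stated value of $B$; note $2s-\alpha>0$ and $\alpha+1>0$ ensure convergence.

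For $x>0$, say $x=1$, I expect the main obstacle: here $h_\alpha$ does not vanish near the point, so the integrand $\dfrac{1-y_+^\alpha}{|1-y|^{1+2s}}$ is genuinely singular at $y=1$ and must be handled in the principal value (or Taylor-corrected) sense, and the resulting integral $\kappa(\alpha)=\frac{1}{c_{1,s}}(-\Delta)^s h_\alpha(1)$ does not collapse to a single Beta function. My plan is to write $\kappa(\alpha)=\mathrm{p.v.}\!\int_0^{+\infty}\frac{1-y^\alpha}{|1-y|^{1+2s}}\,dy - \int_{-\infty}^0\frac{1}{|1-y|^{1+2s}}\,dy$ (the second piece being elementary, equal to $\frac{1}{2s}$), and then to evaluate the first integral by the substitution $y\mapsto 1/y$ on the portion $(1,+\infty)$ to fold it onto $(0,1)$, combining with a known hypergeometric/Beta evaluation; alternatively one can differentiate a parameter integral or invoke the reflection/duplication formulas for $\Gamma$ to obtain a closed form for $\kappa(\alpha)$ in terms of Gamma functions (this is the classical computation, cf. the references cited at the start of the chapter). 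Once a closed form is in hand, the sign table~\eqref{kappalfa} follows by inspecting where that expression vanishes and changes sign: the two zeros at $\alpha=s$ and $\alpha=s-1$ should appear as zeros of factors like $\Gamma(\tfrac{\alpha-s}{?})^{-1}$ or, more transparently, from the two distinguished behaviours — $h_s$ being (up to constants) the one-dimensional model of the $s$-harmonic "boundary power" for which $(-\Delta)^s$ vanishes on the half-line's interior contribution, and $h_{s-1}$ being the model whose $(-\Delta)^s$ also degenerates — with the sign on each of the five subintervals determined by continuity and the sign of $c_{1,s}$ together with the monotonicity of the Gamma quotient. I would double-check the two borderline cases $\alpha=s,\ \alpha=s-1$ directly, since these are the assertions most prone to a sign or normalization slip.
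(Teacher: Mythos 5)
Your homogeneity reduction to $x=\pm1$, the verification that $h_\alpha\in L^1_s(\R)$ with enough local smoothness, and the Beta-function evaluation at $x=-1$ all coincide with the paper's argument. The genuine gap is in the half of the lemma that carries its content: the determination of $\kappa(\alpha)$ and of the sign table \eqref{kappalfa}. At the decisive step you defer to ``a known hypergeometric/Beta evaluation'', ``the classical computation'' and a closed form whose shape you do not actually know (the placeholder $\Gamma(\tfrac{\alpha-s}{?})^{-1}$ makes this explicit), and you then propose to read off \eqref{kappalfa} ``by continuity and the monotonicity of the Gamma quotient''. Continuity only localizes sign changes at zeros that have been independently identified; nothing in your text exhibits the zero set $\{s,\,s-1\}$ or the sign on any subinterval, so as written this is a citation of the result rather than a proof of it (the closed-form route is viable, but it has to be carried out and its zeros and signs checked). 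There is also a sign slip that matters in a lemma about signs: since $h_\alpha=0$ on $(-\infty,0]$, the contribution of $y<0$ to $(-\Delta)^s h_\alpha(1)$ is $h_\alpha(1)-h_\alpha(y)=1$ and enters with a plus, i.e.\ $\kappa(\alpha)=\tfrac1{2s}+\pv\int_0^{+\infty}\frac{1-y^\alpha}{|1-y|^{1+2s}}\,dy$, whereas you subtract the term $\tfrac1{2s}$.

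For comparison, the paper never produces a Gamma closed form. For $\alpha\in(0,2s)$ it integrates by parts inside the principal value and, after a change of variables, reaches the representation
\begin{equation*}
\kappa(\alpha)=\frac{\alpha}{2s}\int_0^1\frac{(1-t)^{\alpha-1}-(1-t)^{2s-\alpha-1}}{t^{2s}}\,dt,
\end{equation*}
whose sign is immediate because the integrand has the sign of $s-\alpha$ on $(0,1)$; this gives $\kappa>0$ on $(0,s)$, $\kappa(s)=0$, $\kappa<0$ on $(s,2s)$. It then invokes the point inversion \eqref{kelvin-00}--\eqref{fl-inversion}, under which $(h_\alpha)_{\mathcal{K}}=h_{2s-\alpha-1}$, to obtain the symmetry $\kappa(\alpha)=\kappa(2s-\alpha-1)$ on $(-1,2s-1)$, which transports the sign information and yields the zero at $\alpha=s-1$. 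Your idea of folding $(1,+\infty)$ onto $(0,1)$ via $y\mapsto1/y$ is close in spirit, but as stated it is only a substitution inside one integral, not the identity $\kappa(\alpha)=\kappa(2s-\alpha-1)$, which is what makes the second half of \eqref{kappalfa} follow from the first. Finally, when $s\le\frac12$ the ranges $(0,2s)$ and $(-1,2s-1)$ do not cover $[2s-1,0]$, a case your plan does not address; the paper treats it by a separate direct computation (using $t:=1/y$ on $(1,+\infty)$) showing that there $\kappa(\alpha)=\int_0^1\frac{(y^\alpha-1)(y^{2s-\alpha-1}-1)}{(1-y)^{1+2s}}\,dy+\frac1{2s}>0$, since both factors in the numerator are nonnegative. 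If you replace the appeal to references by either this integration-by-parts representation or an actually derived closed form, and add the missing range, your outline becomes a proof.
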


\begin{proof}
Let us notice that~$h_\alpha$ is homogeneous of degree~$\alpha$, i.e.,~$h_\alpha(\lambda x)=\lambda^\alpha h_\alpha(x)$ for every~$\lambda>0$ and~$x\in\R$. 

Hence, by the homogeneity of the fractional Laplacian (see
Lemma~\ref{lem:homogeneity}), we have that
\begin{eqnarray*}&& {(-\Delta)}^s h_\alpha(x)= |x|^\alpha {(-\Delta)}^s\left( h_\alpha\left(\frac{ x}{|x|}\right)\right)=
|x|^{\alpha-2s} {(-\Delta)}^s h_\alpha\left(\frac{ x}{|x|}\right)
\\&&\qquad =
\left\lbrace\begin{aligned}
& x^{\alpha-2s}{(-\Delta)}^s h_\alpha(1) && \text{for }x>0, \\
& (-x)^{\alpha-2s}{(-\Delta)}^s h_\alpha(-1) && \text{for }x<0. \\
\end{aligned}\right.
\end{eqnarray*}

Now, let us first deal with the case~$x<0$. In this range, we see that
\begin{align*}
(-\Delta)^s h_\alpha(-1)
=
c_{1,s}\pv\int_\R\frac{-h_\alpha(y)}{{|y+1|}^{1+2s}}\;dy
=-c_{1,s}\int_0^{+\infty}\frac{y^\alpha}{{(y+1)}^{1+2s}}\;dy 
\end{align*}
which, after the change of variable~$z:=y/(1+y)$, results in
\begin{align*}
(-\Delta)^s h_\alpha(-1)=-c_{1,s}\int_0^1 t^\alpha(1-t)^{2s-\alpha-1}\;dt
=-c_{1,s}\,\frac{\Gamma(\alpha+1)\,\Gamma(2s-\alpha)}{\Gamma(2s+1)}
\end{align*}
where we have used~\eqref{beta-identity} (with~$a:=\alpha+1$ and~$b:= 2s-\alpha$) to evaluate the last integral. Formula~\eqref{powers-half-line}
when~$x<0$ is thereby established.

If instead~$x>0$, we split our computation into different cases. 

Let us first consider the case~$\alpha\in(0,2s)$.
By using~\eqref{pv-def} and integrating by parts, we obtain that
\begin{align*}
{(-\Delta)}^s h_\alpha(1) &=
c_{1,s}\int_{-\infty}^0\frac{dy}{{|1-y|}^{1+2s}}+
c_{1,s}\lim_{\eps\searrow 0}\left[
\int_0^{1-\eps}\frac{1-y^\alpha}{{(1-y)}^{1+2s}}\;dy
+\int_{1+\eps}^{+\infty}\frac{1-y^\alpha}{{(y-1)}^{1+2s}}\;dy
\right] \\
&=
c_{1,s}\int_0^{+\infty}\frac{dy}{{(1+y)}^{1+2s}}+
c_{1,s}\lim_{\eps\searrow 0}\left[
\frac{1-y^\alpha}{2s(1-y)^{2s} }\Big|_{y=0}^{1-\eps}
+\frac\alpha{2s}\int_0^{1-\eps}\frac{y^{\alpha-1}}{{(1-y)}^{2s}}\;dy \right. \\
& \qquad\qquad\left.
-\frac{1-y^\alpha}{2s {(y-1)}^{2s}}\Big|_{y=1+\eps}^{+\infty}
-\frac\alpha{2s}\int_{1+\eps}^{+\infty}\frac{y^{\alpha-1}}{{(y-1)}^{2s}}\;dy
\right] \\
&=
\frac{c_{1,s}}{2s}+
c_{1,s}\lim_{\eps\searrow 0}\left[\frac{1-(1-\eps)^\alpha}{2s \eps^{2s}}-\frac1{2s}
+\frac\alpha{2s}\int_0^{1-\eps}\frac{y^{\alpha-1}}{{(1-y)}^{2s}}\;dy \right. \\
& \qquad\qquad\left.
+\frac{1-(1+\eps)^\alpha}{2s\eps^{2s}}
-\frac\alpha{2s}\int_{1+\eps}^{+\infty}\frac{y^{\alpha-1}}{{(y-1)}^{2s}}\;dy
\right] \\
&=
\frac{c_{1,s}}{2s}\lim_{\eps\searrow 0}\frac{2-(1-\eps)^\alpha-(1+\eps)^\alpha}{\eps^{2s}} \\
& \qquad\qquad
+\frac{\alpha c_{1,s}}{2s}\lim_{\eps\searrow 0}\left[
\int_0^{1-\eps}\frac{y^{\alpha-1}}{{(1-y)}^{2s}}\;dy 
-\int_{1+\eps}^{+\infty}\frac{y^{\alpha-1}}{{(y-1)}^{2s}}\;dy
\right] \\
&=
\frac{c_{1,s}}{2s}\lim_{\eps\searrow 0}\eps^{-2s}\left(
2-1+\alpha\eps -\frac12(\alpha-1)\alpha\eps^2-1-\alpha\eps-\frac12(\alpha-1)\alpha\eps^2+o(\eps^2)
\right) \\
&  \qquad\qquad
+\frac{\alpha c_{1,s}}{2s}\lim_{\eps\searrow 0}\left[
\int_0^{1-\eps}\frac{y^{\alpha-1}}{{(1-y)}^{2s}}\;dy 
-\int_{1+\eps}^{+\infty}\frac{y^{\alpha-1}}{{(y-1)}^{2s}}\;dy
\right] \\
&=
\frac{\alpha c_{1,s}}{2s}\lim_{\eps\searrow 0}\left[
\int_0^{1-\eps}\frac{y^{\alpha-1}}{{(1-y)}^{2s}}\;dy 
-\int_{1+\eps}^{+\infty}\frac{y^{\alpha-1}}{{(y-1)}^{2s}}\;dy
\right] .
\end{align*}
We transform the two integrals above via the change of variables respectively~$t:=1-y$ and~$t:=(y-1)/y$ and we find that
\begin{equation}\label{usingthis547593hgvddkjf859}
{(-\Delta)}^s h_\alpha(1)=
\frac{\alpha c_{1,s}}{2s}\lim_{\eps\searrow 0}\left[\int_\eps^1\frac{(1-t)^{\alpha-1}}{t^{2s}}\;dt-\int_{\eps/(1+\eps)}^1\frac{{(1-t)}^{2s-\alpha-1}}{t^{2s}}\;dt\right].\end{equation}

We notice that~$(1-t)^{2s-\alpha-1}$ remains bounded for~$t$ close to~$0$ and therefore, for some~$C>0$,
\begin{align*}
\left|\int_{\eps/(1+\eps)}^\eps\frac{{(1-t)}^{2s-\alpha-1}}{t^{2s}}\;dt\right|
\leq C\int_{\eps/(1+\eps)}^\eps\frac{dt}{t^{2s}}=\frac{C\eps^{1-2s}}{2s-1}\Big((1+\eps)^{2s-1}-1\Big).
\end{align*}
This implies that
$$\lim_{\epsilon\searrow0}\left|\int_{\eps/(1+\eps)}^\eps\frac{{(1-t)}^{2s-\alpha-1}}{t^{2s}}\;dt\right|\le
\lim_{\epsilon\searrow0}\frac{C\eps^{1-2s}}{2s-1}\Big((1+\eps)^{2s-1}-1\Big)=0.$$
Plugging this information into~\eqref{usingthis547593hgvddkjf859} we thus find that
\begin{eqnarray*}
{(-\Delta)}^s h_\alpha(1)&=&\frac{\alpha c_{1,s}}{2s}\lim_{\eps\searrow 0}\left[\int_\eps^1\frac{(1-t)^{\alpha-1}}{t^{2s}}\;dt-\int_{\eps}^1\frac{{(1-t)}^{2s-\alpha-1}}{t^{2s}}\;dt\right]\\
&=&
\frac{\alpha c_{1,s}}{2s}\int_0^1\frac{(1-t)^{\alpha-1}-(1-t)^{2s-\alpha-1}}{t^{2s}}\;dt.
\end{eqnarray*}

The value of~$\kappa(\alpha)$ in~\eqref{powers-half-line} is therefore given by
\begin{align}\label{thrjekwfbdnsfgdhsjt54y3ut54y3564738}
\kappa(\alpha)=\frac{\alpha}{2s}\int_0^1\frac{(1-t)^{\alpha-1}-(1-t)^{2s-\alpha-1}}{t^{2s}}\;dt
\qquad\text{for~$\alpha\in(0,2s)$}. 
\end{align}
Notice that, in this case,
\begin{align}\label{thrjekwfbdnsfgdhsjt54y3ut54y35647382}
\kappa(\alpha)\ \left\lbrace\begin{aligned}
& <0 && \text{for }\alpha\in(s,2s), \\
& =0 && \text{for }\alpha=s, \\
& >0 && \text{for }\alpha\in(0,s),
\end{aligned}\right.
\end{align}
which is in agreement with~\eqref{kappalfa}.

For~$\alpha\in(-1,2s-1)$ we use the point inversion in~\eqref{kelvin-00}.
Notice indeed that, for any~$x\in\R\setminus\{0\}$,
\begin{align*}
\big(h_\alpha\big)_{\mathcal{K}}(x)=|x|^{2s-1} h_{\alpha}\left(\frac{x}{|x|^2}\right)=
h_{2s-\alpha-1}(x).
\end{align*}
Moreover, using~\eqref{fl-inversion}, we see that
\begin{align*}
{(-\Delta)}^s h_\alpha(1)={(-\Delta)}^s 
\big(h_\alpha\big)_{\mathcal{K}}(1)={(-\Delta)}^s h_{2s-\alpha-1}(1)=
c_{1,s}\kappa(2s-\alpha-1).
\end{align*}
Notice also that~$2s-\alpha-1\in(0,2s)$.
As a consequence, we have that
\begin{equation}\label{ytrueiobfnd1qaszxcder456tyhnu89o}
\kappa(\alpha)=\kappa(2s-\alpha-1)\qquad{\mbox{for all~$\alpha\in(-1,2s-1)\cup(0,2s)$}}.\end{equation}

Now, if~$s\in\left(\frac12,1\right)$, this identity holds true for all~$\alpha\in(-1,2s)$, and therefore the value
of~$\kappa(\alpha)$ and the claim in~\eqref{kappalfa} follow from~\eqref{thrjekwfbdnsfgdhsjt54y3ut54y3564738}
and~\eqref{thrjekwfbdnsfgdhsjt54y3ut54y35647382}.

If~$s\in\left(0,\frac12\right]$, the value
of~$\kappa(\alpha)$ and the claim in~\eqref{kappalfa} follow
from~\eqref{thrjekwfbdnsfgdhsjt54y3ut54y3564738}, \eqref{thrjekwfbdnsfgdhsjt54y3ut54y35647382}
and~\eqref{ytrueiobfnd1qaszxcder456tyhnu89o}
when~$\alpha\in(-1,2s-1)\cup(0,2s)$. 

So, we are left with the case~$s\in\left(0,\frac12\right]$
and~$\alpha\in[2s-1,0]$. In this range, 
the particular case~$\alpha=0$ is easy, because
\begin{align*}
{(-\Delta)}^s h_0(1)=c_{1,s}\int_{-\infty}^0\frac{dy}{{(1-y)}^{1+2s}}=\frac{c_{1,s}}{2s}>0.
\end{align*}
Furthermore, when~$\alpha\in[2s-1,0)$, we have
\begin{align*}
{(-\Delta)}^s h_\alpha(1)
&=
c_{1,s}\int_0^{+\infty}\frac{1-y^\alpha}{{|1-y|}^{1+2s}}\;dy
+c_{1,s}\int_{-\infty}^0\frac{dy}{{(1-y)}^{1+2s}} \\
&=
c_{1,s}\int_0^1\frac{1-y^\alpha}{{(1-y)}^{1+2s}}\;dy
+c_{1,s}\int_1^{+\infty}\frac{1-y^\alpha}{{(y-1)}^{1+2s}}\;dy
+\frac{c_{1,s}}{2s}.
\end{align*}
We now transform the integral over~$(1,+\infty)$ via
the change of variable~$t:=1/y$, thus obtaining that
\begin{align*}
{(-\Delta)}^s h_\alpha(1)
&=
c_{1,s}\int_0^1\frac{1-y^\alpha}{{(1-y)}^{1+2s}}\;dy
+c_{1,s}\int_0^1\frac{1-t^{-\alpha}}{{(1-t)}^{1+2s}}\,t^{2s-1}\;dt
+\frac{c_{1,s}}{2s} \\
&=
-c_{1,s}\int_0^1\frac{y^\alpha-1}{{(1-y)}^{1+2s}}\;dy
+c_{1,s}\int_0^1\frac{t^\alpha-1}{{(1-t)}^{1+2s}}\,t^{2s-\alpha-1}\;dt
+\frac{c_{1,s}}{2s} \\
&=
c_{1,s}\int_0^1\frac{\big(y^\alpha-1\big)\big(y^{2s-\alpha-1}-1\big)}{{(1-y)}^{1+2s}}\;dy
+\frac{c_{1,s}}{2s}.
\end{align*}
Hence, in this case,
$$ \kappa(\alpha)=c_{1,s}\int_0^1\frac{\big(y^\alpha-1\big)\big(y^{2s-\alpha-1}-1\big)}{{(1-y)}^{1+2s}}\;dy
+\frac{c_{1,s}}{2s},$$
which is positive, since~$\alpha<0$ and~$2s-\alpha-1\leq 0$.
This also establishes the claim in~\eqref{kappalfa}.
\end{proof}

We point out that, by symmetry, a similar statement to Lemma~\ref{lem:halfline}
holds true for the function~$x_-^\alpha$.

Let now~$\alpha\in(-1,2s)$ and
\begin{align*}
r_\alpha(x):=\left\lbrace\begin{aligned}
& |x|^\alpha && \text{for }x\neq 0, \\
& 0 && \text{for }x=0.
\end{aligned}\right.
\end{align*}
As a direct consequence of Lemma~\ref{lem:halfline}, we obtain the following result.

\begin{proposition}\label{prop:fullline}
We have that
\begin{align}\label{powers-full-line}
(-\Delta)^s r_\alpha(x)=
c_{1,s}\left(\kappa(\alpha)-\frac{\Gamma(\alpha+1)\,\Gamma(2s-\alpha)}{\Gamma(2s+1)}\right)
|x|^{\alpha-2s} \qquad\text{for }x\neq 0,
\end{align}
where~$\kappa(\alpha)$ is the one given in Lemma~\ref{lem:halfline}.
\end{proposition}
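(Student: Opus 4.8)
The plan is to exploit the decomposition of $r_\alpha$ into its two one-sided pieces and then invoke Lemma~\ref{lem:halfline} together with the symmetric statement noted right after it. Concretely, for $x\neq 0$ we have the pointwise identity
\begin{align*}
r_\alpha(x)=|x|^\alpha = x_+^\alpha + x_-^\alpha = h_\alpha(x) + \widetilde h_\alpha(x),
\qquad\text{where }\widetilde h_\alpha(x):=h_\alpha(-x)=x_-^\alpha.
\end{align*}
Since $\alpha\in(-1,2s)$, both $h_\alpha$ and $\widetilde h_\alpha$ belong to $L^1_s(\R)$ and are smooth away from the origin, so each of $(-\Delta)^s h_\alpha(x)$ and $(-\Delta)^s \widetilde h_\alpha(x)$ is well defined for $x\neq 0$ (in the principal value sense), and the fractional Laplacian, being a linear integral operator, satisfies $(-\Delta)^s r_\alpha(x)=(-\Delta)^s h_\alpha(x)+(-\Delta)^s \widetilde h_\alpha(x)$.

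First I would treat the case $x>0$. By Lemma~\ref{lem:halfline}, $(-\Delta)^s h_\alpha(x)=c_{1,s}\,\kappa(\alpha)\,x^{\alpha-2s}$. For the reflected function, the symmetry remark after Lemma~\ref{lem:halfline} (equivalently, applying the rotation/reflection invariance of Lemma~\ref{lem:rotation} with the orthogonal matrix $x\mapsto -x$ in $\R^1$) gives that $(-\Delta)^s \widetilde h_\alpha(x) = \big((-\Delta)^s h_\alpha\big)(-x)$; since $-x<0$, the second line of~\eqref{powers-half-line} yields $(-\Delta)^s \widetilde h_\alpha(x) = -c_{1,s}\,\frac{\Gamma(\alpha+1)\,\Gamma(2s-\alpha)}{\Gamma(2s+1)}\,x^{\alpha-2s}$. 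Adding the two contributions produces exactly
\begin{align*}
(-\Delta)^s r_\alpha(x)= c_{1,s}\left(\kappa(\alpha)-\frac{\Gamma(\alpha+1)\,\Gamma(2s-\alpha)}{\Gamma(2s+1)}\right)x^{\alpha-2s},
\qquad x>0.
\end{align*}

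To finish, I would handle $x<0$. The cleanest route is to observe that $r_\alpha$ is an even function, hence by the reflection invariance (Lemma~\ref{lem:rotation}) so is $(-\Delta)^s r_\alpha$; therefore the formula for $x<0$ follows from the one for $x>0$ with $x^{\alpha-2s}$ replaced by $|x|^{\alpha-2s}$. Alternatively, one repeats the same splitting argument at a negative point, now reading off the first line of~\eqref{powers-half-line} for $\widetilde h_\alpha$ and the second line for $h_\alpha$, which gives the identical combination. Either way, \eqref{powers-full-line} holds for all $x\neq 0$. There is no real obstacle here: the only point requiring a line of care is the justification that the principal value splits additively over $h_\alpha+\widetilde h_\alpha$, which is immediate because each summand is individually admissible in the sense of Lemma~\ref{lem:halfline}, so the limit defining the principal value of the sum is the sum of the limits.
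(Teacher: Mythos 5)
Your proposal is correct and follows essentially the same route as the paper: decompose $r_\alpha=x_+^\alpha+x_-^\alpha$, apply Lemma~\ref{lem:halfline} (and its reflected counterpart, noted right after that lemma) to each piece, and add. The paper's proof simply compresses the reflection bookkeeping and the $x<0$ case into "by symmetry", which you have spelled out explicitly.
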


\begin{proof}
We observe that, for all~$x\in\R\setminus\{0\}$,
$$ r_\alpha(x)= |x|^\alpha= x_+^\alpha+x_-^\alpha $$
and therefore we deduce from Lemma~\ref{lem:halfline} that
\begin{eqnarray*}
(-\Delta)^s r_\alpha(x)=(-\Delta)^s  x_+^\alpha+ (-\Delta)^s x_-^\alpha
= c_{1,s}\kappa(\alpha)|x|^{\alpha-2s} -c_{1,s}\,\frac{\Gamma(\alpha+1)\,\Gamma(2s-\alpha)}{\Gamma(2s+1)}\,|x|^{\alpha-2s},
\end{eqnarray*} 
as desired.
\end{proof}

Recall that, from Proposition~\ref{prop:inversion}, 
and in particular from~\eqref{fundsol},
we already know that
\begin{align*}
(-\Delta)^s r_{2s-1}(x)=0
\qquad\text{for }x\neq 0.
\end{align*}
This fact and Proposition~\ref{prop:fullline}
give the particular equality
\begin{align*}
\kappa(2s-1)=\frac{\Gamma(2s)\,\Gamma(1)}{\Gamma(2s+1)}=\frac1{2s}.
\end{align*}

\subsection{Power functions on intervals}

For~$\alpha>-1$, we consider the function
\begin{align}\label{ytruieow54738564783574839fgdhsja}
u_\alpha(x):=\left\lbrace\begin{aligned}
& \big(1-x^2\big)^\alpha && \text{if }|x|< 1 \\
& 0 && \text{if } |x|\geq 1.
\end{aligned}\right.
\end{align}
In this case one has the following result:

\begin{lemma}\label{lemma:p405475ryfrg}
We have that
\begin{align}\label{fl-ball-1d}
\begin{split}
& {(-\Delta)}^s u_\alpha(x) \\
&=\left\lbrace\begin{aligned}
& -c_{1,s}\frac{\Gamma(\alpha+1)\,\Gamma(-s)}{\Gamma(\alpha-s+1)}
{(1-x^2)}^{\alpha-2s}
\hf\left(-s,\alpha-s+\frac12;\frac12\,\bigg|\,x^2\right) 
\qquad \text{if }|x|< 1 \\
\\
& -c_{1,s}\,2^{2\alpha+1}\frac{\Gamma(\alpha+1)^2}{\Gamma(2\alpha+2)}
{(x^2-1)}^{\alpha-2s}{(|x|+1)}^{2s-2\alpha-1}\hf\left(2\alpha-2s+1,\alpha+1;2\alpha+2\,\bigg|\,\frac2{|x|+1}\right) \\
& \hspace*{.7\linewidth} \text{if }|x|>1.
\end{aligned}\right.
\end{split}
\end{align}
\end{lemma}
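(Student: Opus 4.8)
The plan is to treat the regimes $|x|>1$ and $|x|<1$ separately; in both of them the idea is to reduce the singular integral in~\eqref{pv-def} to Euler--Pochhammer beta integrals, to recognize the outcome as a Gauss hypergeometric function, and then to move between equivalent forms using the classical Euler transformation $\hf(a,b;c\,|\,z)=(1-z)^{c-a-b}\hf(c-a,c-b;c\,|\,z)$ together with the properties of~$\Gamma$.

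\emph{The exterior region.} By the invariance under the reflection $x\mapsto-x$ (Lemma~\ref{lem:rotation} with orthogonal matrix $-1$) and the evenness of $u_\alpha$, we may assume $x>1$. Then $u_\alpha$ vanishes near $x$, no principal value is needed, and $(-\Delta)^s u_\alpha(x)=-c_{1,s}\int_{-1}^{1}(1-y^2)^\alpha(x-y)^{-1-2s}\,dy$. The substitution $y=2t-1$ turns this into $-c_{1,s}\,2^{2\alpha+1}(x+1)^{-1-2s}\int_0^1 t^\alpha(1-t)^\alpha\bigl(1-\tfrac{2t}{x+1}\bigr)^{-1-2s}\,dt$; since $\tfrac{2}{x+1}\in(0,1)$ and $\alpha>-1$ this is Euler's integral representation of a $\hf$, equal to $\frac{\Gamma(\alpha+1)^2}{\Gamma(2\alpha+2)}\hf\bigl(1+2s,\alpha+1;2\alpha+2\,\big|\,\tfrac{2}{x+1}\bigr)$ (cf.~\eqref{beta-identity}). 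Applying Euler's transformation, whose exponent here is $c-a-b=\alpha-2s$, and collapsing the powers via $(x+1)^{-1-2s}\bigl(\tfrac{x-1}{x+1}\bigr)^{\alpha-2s}=(x^2-1)^{\alpha-2s}(x+1)^{2s-2\alpha-1}$, produces exactly the second branch of~\eqref{fl-ball-1d}.

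\emph{The interior region.} Assume again $x\in[0,1)$ by evenness, and split $\R$ into $(-1,1)$ and its complement. On the complement $u_\alpha\equiv0$, so that contribution is the elementary $c_{1,s}(1-x^2)^\alpha\int_{|y|>1}|x-y|^{-1-2s}\,dy=\tfrac{c_{1,s}}{2s}(1-x^2)^{\alpha-2s}\bigl[(1+x)^{2s}+(1-x)^{2s}\bigr]$. For the remaining principal-value integral over $(-1,1)$ we use the M\"obius substitution $y=\frac{t+x}{1+xt}$, which maps $(-1,1)$ onto itself, sends $t=0$ to $y=x$, and satisfies $1-y^2=\frac{(1-x^2)(1-t^2)}{(1+xt)^2}$, $\ y-x=\frac{t(1-x^2)}{1+xt}$, $\ dy=\frac{1-x^2}{(1+xt)^2}\,dt$. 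Tracking these factors, all powers of $1-x^2$ combine into a single $(1-x^2)^{\alpha-2s}$, and one is left with
\[
(-\Delta)^s u_\alpha(x)=c_{1,s}(1-x^2)^{\alpha-2s}\left[\,\pv\!\int_{-1}^{1}\!\frac{\bigl[(1+xt)^{2\alpha}-(1-t^2)^\alpha\bigr](1+xt)^{2s-2\alpha-1}}{|t|^{1+2s}}\,dt+\frac{(1+x)^{2s}+(1-x)^{2s}}{2s}\right].
\]
Thus the prefactor $(1-x^2)^{\alpha-2s}$ of~\eqref{fl-ball-1d} is already isolated, and it remains to identify the bracket with $-\frac{\Gamma(\alpha+1)\Gamma(-s)}{\Gamma(\alpha-s+1)}\hf\bigl(-s,\alpha-s+\tfrac12;\tfrac12\,\big|\,x^2\bigr)$.

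\emph{Evaluating the bracket (the main obstacle).} The idea is to expand $(1+xt)^{2\alpha}$ and $(1+xt)^{2s-2\alpha-1}$ in their binomial series in $xt$: the $t^0$-singularities of the two pieces of the numerator cancel against each other (their difference is $O(t^2)$ near $0$), the odd powers of $t$ drop out in the principal-value sense, and each surviving even power $x^{2j}t^{2j}$ multiplies a convergent integral of beta type which~\eqref{beta-identity} and~\eqref{gamma-recursive} evaluate to a ratio of $\Gamma$-values; resumming the resulting power series in $x^2$ and matching coefficients identifies the bracket with the asserted $\hf$. Its overall constant is pinned down by the value at $x=0$, where, via the substitution $u=t^2$ followed by one integration by parts, the bracket reduces to $\int_0^1\frac{1-(1-u)^\alpha}{u^{1+s}}\,du+\frac1s=-\frac{\Gamma(\alpha+1)\Gamma(-s)}{\Gamma(\alpha-s+1)}$. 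The delicate point is precisely the bookkeeping that collapses the double binomial sum into a single Gauss series; an alternative, possibly cleaner route is to pass to the Fourier side, where $\cF u_\alpha$ is an explicit Bessel function and $(-\Delta)^s u_\alpha$ becomes a Weber--Schafheitlin integral, at the price of invoking a body of standard Bessel identities. Finally, since the beta integrals used above converge as written on an open set of parameters and both sides of~\eqref{fl-ball-1d} depend analytically on $(\alpha,s)$ wherever they are finite, the identity propagates to the full stated range by analytic continuation.
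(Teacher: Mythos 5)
Your exterior-region argument is complete and correct, and it is a mild variant of the paper's: the paper reaches $\hf\big(2\alpha-2s+1,\alpha+1;2\alpha+2\,\big|\,\tfrac{2}{|x|+1}\big)$ in one stroke via the substitution $t=(1-|x|y)/(|x|-y)$, whereas you obtain $\hf\big(1+2s,\alpha+1;2\alpha+2\,\big|\,\tfrac{2}{x+1}\big)$ from $y=2t-1$ and then apply the Euler transformation \eqref{hyp-transf1}; your power bookkeeping $(x+1)^{-1-2s}\big(\tfrac{x-1}{x+1}\big)^{\alpha-2s}=(x^2-1)^{\alpha-2s}(x+1)^{2s-2\alpha-1}$ is right. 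For $|x|<1$ your reduction is exactly the paper's: the same M\"obius substitution (the paper's $t=(x-y)/(1-xy)$ is yours up to $t\mapsto-t$), the same evaluation of the tails over $\{|y|>1\}$, and your bracket agrees with the paper's after writing $\big[(1+xt)^{2\alpha}-(1-t^2)^\alpha\big](1+xt)^{2s-2\alpha-1}=(1+xt)^{2s-1}-(1-t^2)^\alpha(1+xt)^{2s-2\alpha-1}$; the $x=0$ evaluation is also correct. (Minor slip: the numerator difference is $2\alpha xt+O(t^2)$, not $O(t^2)$; harmless, since the linear term is odd and the principal value removes it, as you note.)

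The genuine gap is the step you yourself call ``the main obstacle'': asserting that ``matching coefficients identifies the bracket with the asserted $\hf$'' is precisely the nontrivial content of the lemma for $|x|<1$, and it is nowhere carried out. What is actually needed (and what the paper does) is: (i) the even terms from expanding $(1+xt)^{2s-1}$ give $\frac{2}{2j-2s}\binom{2s-1}{2j}x^{2j}=-\frac1s\binom{2s}{2j}x^{2j}$ and hence cancel the boundary contribution $\frac{(1+x)^{2s}+(1-x)^{2s}}{2s}$ except for its constant $\frac1s$, which combined with $\int_{-1}^1\frac{1-(1-t^2)^\alpha}{|t|^{1+2s}}\,dt$ yields the prefactor $-\frac{\Gamma(\alpha+1)\Gamma(-s)}{\Gamma(\alpha-s+1)}$ (the paper instead evaluates this piece in closed form in \eqref{84u2roi2222}--\eqref{8924h3oinf2222}); and (ii) for the remaining series $-\sum_{k\ge1}\binom{2s-2\alpha-1}{2k}x^{2k}\int_{-1}^1|t|^{2k-1-2s}(1-t^2)^\alpha\,dt$, each integral is a beta value by \eqref{beta-identity}, and the coefficient $\frac{\Gamma(2s-2\alpha)}{\Gamma(2s-2\alpha-2k)\,(2k)!}\cdot\frac{\Gamma(k-s)\,\Gamma(\alpha+1)}{\Gamma(k-s+\alpha+1)}$ must then be converted, via the Legendre duplication formula \eqref{gamma-dupli} and Euler's reflection formula \eqref{euler-reflection}, into $\frac{(-s)_k\,(\alpha-s+\frac12)_k}{(\frac12)_k\,k!}$ times that same prefactor (this is the paper's passage from \eqref{758493wsdahgtxzxcfvg} to \eqref{riu2o4ejfwn2222}). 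Without (ii) the specific parameters $(-s,\alpha-s+\tfrac12;\tfrac12)$ are assumed rather than derived; fixing the constant at $x=0$ only pins the $k=0$ coefficient. Your fallback remarks do not close this: the Weber--Schafheitlin/Bessel route is only mentioned, and analytic continuation in $(\alpha,s)$ cannot help, since the identity to be proved is the same unproven statement at every parameter value in the open range.
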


Here above and in what follows, $\hf\big(a,b;c|x\big)$ denotes the
hypergeometric function
defined in~\eqref{def:hyp}.

\begin{proof}[Proof of Lemma~\ref{lemma:p405475ryfrg}]
Let us start from the easier part, which is the case~$|x|>1$ when~$u_\alpha(x)=0$. In this situation, we have that
\begin{align*}
{(-\Delta)}^s u_\alpha(x)=c_{1,s}\int_\R\frac{-u_\alpha(y)}{{|x-y|}^{1+2s}}\;dy
=-c_{1,s}\int_{-1}^1\frac{{(1-y^2)}^\alpha}{{|x-y|}^{1+2s}}\;dy.
\end{align*}
To compute the above integral, we apply the change of variable~$t:=(1-|x|y)/(|x|-y)$
which gives
\begin{align*}
y=\frac{1-|x|t}{|x|-t},\qquad 
1-y^2=\frac{(x^2-1)(1-t^2)}{{(|x|-t)}^2}, \qquad
dy=\frac{x^2-1}{(|x|-t)^2}\,dt,
\end{align*}
and therefore
\begin{align*}
\int_{-1}^1\frac{{(1-y^2)}^\alpha}{{|x-y|}^{1+2s}}\;dy=
{(x^2-1)}^{\alpha-2s}\int_{-1}^1{(1-t^2)}^\alpha{(|x|-t)}^{2s-2\alpha-1}\;dt.
\end{align*}
We now apply a second change of variable, namely~$\tau:=(t+1)/2$, and get
\begin{align*}&
\int_{-1}^1\frac{{(1-y^2)}^\alpha}{{|x-y|}^{1+2s}}\;dy \\&=
2^{2\alpha+1}{(x^2-1)}^{\alpha-2s}{(|x|+1)}^{2s-2\alpha-1}
\int_0^1\tau^\alpha{(1-\tau)}^\alpha\left(1-\frac{2\tau}{|x|+1}\right)^{2s-2\alpha-1}\;d\tau \\
&=
2^{2\alpha+1}\frac{\Gamma(\alpha+1)^2}{\Gamma(2\alpha+2)}
{(x^2-1)}^{\alpha-2s}{(|x|+1)}^{2s-2\alpha-1} \\
& \qquad\qquad 
\times\hf\left(2\alpha-2s+1,\alpha+1;2\alpha+2\,\bigg|\,\frac2{|x|+1}\right),
\end{align*}
where we have used~\eqref{hyp:int1} with~$a:=2\alpha-2s+1$,
$b:=\alpha+1$ and~$c:=2\alpha+2$.
This completes the proof of~\eqref{fl-ball-1d} when~$|x|>1$.

The case~$|x|<1$ is more involved. 
The computations presented here are based on~\cites{zbMATH05836420,MR2974318}.
We note that
\begin{equation}\label{jfwoiehfnwfklnczxlkh}\begin{split}
{(-\Delta)}^su_\alpha(x) &=
c_{1,s}\pv\int_\R\frac{u_\alpha(x)-u_\alpha(y)}{{|x-y|}^{1+2s}}\;dy \\
&=c_{1,s}\int_{-\infty}^{-1}\frac{{(1-x^2)}^\alpha}{{(x-y)}^{1+2s}}\;dy
+c_{1,s}\pv\int_{-1}^1\frac{{(1-x^2)}^\alpha-{(1-y^2)}^\alpha}{{|x-y|}^{1+2s}}\;dy
\\&\qquad\qquad+c_{1,s}\int_1^{+\infty}\frac{{(1-x^2)}^\alpha}{{(y-x)}^{1+2s}}\;dy. 
\end{split}\end{equation}
The first and third integral can be directly computed. Indeed,
\begin{align}\label{jdjdfjfjfjgnvndjdmxmcj}
\int_1^{+\infty}\frac{{(1-x^2)}^\alpha}{{(y-x)}^{1+2s}}\;dy
={(1-x^2)}^\alpha\frac{-1}{2s}{(y-x)}^{-2s}\bigg|_{y=1}^{+\infty}
=\frac{1}{2s}{(1-x^2)}^\alpha{(1-x)}^{-2s}
\end{align}
and similarly
\begin{align}\label{jdjdfjfjfjgnvndjdmxmcj22}
\int_{-\infty}^{-1}\frac{{(1-x^2)}^\alpha}{{(x-y)}^{1+2s}}\;dy
=\frac{1}{2s}{(1-x^2)}^\alpha{(1+x)}^{-2s}.
\end{align}

Accordingly, we are left with the evaluation of the middle term in~\eqref{jfwoiehfnwfklnczxlkh}, which is quite elaborate. We start off with the change of variable~$t:=(x-y)/(1-xy)$, which gives
\begin{align*}
y=\frac{x-t}{1-xt},\qquad 1-y^2=\frac{(1-x^2)(1-t^2)}{(1-xt)^2},
\qquad dy=\frac{1-x^2}{(1-xt)^2}\,dt,
\end{align*}
and so
\begin{eqnarray*}&&
\pv\int_{-1}^1\frac{{(1-x^2)}^\alpha-{(1-y^2)}^\alpha}{{|x-y|}^{1+2s}}\;dy
\\&&\qquad={(1-x^2)}^{\alpha-2s}\pv\int_{-1}^1\frac{1-{(1-t^2)}^\alpha{(1-xt)}^{-2\alpha}}{{|t|}^{1+2s}}\,{(1-xt)}^{2s-1}\;dt.
\end{eqnarray*}
We write
\begin{eqnarray*}&&
\big[1-{(1-t^2)}^\alpha{(1-xt)}^{-2\alpha}\big]{(1-xt)}^{2s-1}\\&&\qquad
={(1-xt)}^{2s-1}-1+1-{(1-t^2)}^\alpha+{(1-t^2)}^\alpha\big[1-(1-xt)^{2s-1-2\alpha}\big]
.\end{eqnarray*}
As a result,\begin{align}
\pv\int_{-1}^1\frac{{(1-x^2)}^\alpha-{(1-y^2)}^\alpha}{{|x-y|}^{1+2s}}\;dy
=
& {(1-x^2)}^{\alpha-2s} \notag \\
&\qquad \times\bigg[\pv\int_{-1}^1\frac{(1-xt)^{2s-1}-1}{{|t|}^{1+2s}}\;dt \label{84u2roi}\\
&\qquad+\pv\int_{-1}^1\frac{1-{(1-t^2)}^\alpha}{{|t|}^{1+2s}}\;dt \label{8924h3oinf}\\
&\qquad+\pv\int_{-1}^1\frac{1-(1-xt)^{2s-1-2\alpha}}{{|t|}^{1+2s}}\,{(1-t^2)}^\alpha\;dt\bigg]. \label{riu2o4ejfwn}
\end{align}
Now we proceed with the computations of~\eqref{84u2roi}, \eqref{8924h3oinf} and~\eqref{riu2o4ejfwn} separately.

We first deal with the term in~\eqref{84u2roi}. Changing variable~$\eta:=-t$, we have that
\begin{equation*}\begin{split}
& \pv\int_{-1}^1\frac{(1-xt)^{2s-1}-1}{{|t|}^{1+2s}}\;dt
\\&=\lim_{\eps\searrow 0} \int_{\epsilon}^1\frac{(1-xt)^{2s-1}-1}{{|t|}^{1+2s}}\;dt+
\int_{-1}^{-\epsilon}\frac{(1-xt)^{2s-1}-1}{{|t|}^{1+2s}}\;dt\\
&=\lim_{\eps\searrow 0} \int_{\epsilon}^1\frac{(1-xt)^{2s-1}-1}{{|t|}^{1+2s}}\;dt+
\int_{\epsilon}^1\frac{(1+x\eta)^{2s-1}-1}{{|\eta|}^{1+2s}}\;d\eta \\
&=
\lim_{\eps\searrow 0}\int_\eps^1\frac{(1-xt)^{2s-1}+(1+xt)^{2s-1}-2}{t^{1+2s}}\;dt.
\end{split}\end{equation*}
We now split the three integrals in the last line above and we see that
\begin{equation} \label{84u2roi2222}\begin{split}
& \pv\int_{-1}^1\frac{(1-xt)^{2s-1}-1}{{|t|}^{1+2s}}\;dt\\&=
\lim_{\eps\searrow 0}\left[\int_\eps^1\left(\frac1t-x\right)^{2s-1}\frac{dt}{t^2}
+\int_\eps^1\left(\frac1t+x\right)^{2s-1}\frac{dt}{t^2}
-2\int_{\eps}^1\frac{dt}{t^{1+2s}}
\right]  \\
&=
\lim_{\eps\searrow 0}\left[
-\frac1{2s}(1-x)^{2s}
+\frac1{2s}\left(\frac1{\eps}-x\right)^{2s}
-\frac1{2s}(1+x)^{2s}
-\frac1{2s}\left(\frac1{\eps}+x\right)^{2s}
+\frac{1-\eps^{-2s}}{s}\right]   \\
&=
\lim_{\eps\searrow 0}\frac1{2s}\left[2-(1-x)^{2s}-(1+x)^{2s}
+\eps^{-2s}\left(
(1-\eps x)^{2s}+(1+\eps x)^{2s}-2\right)\right]  \\
&=\frac1{2s}\left[2-(1-x)^{2s}-(1+x)^{2s}\right].
\end{split}\end{equation}

To compute the term in~\eqref{8924h3oinf}, we notice that
\begin{equation}\label{mjhy54tgdfrgvcf3wesf43547547}\begin{split}
\pv\int_{-1}^1\frac{1-{(1-t^2)}^\alpha}{{|t|}^{1+2s}}\;dt
&=
\lim_{\eps\searrow 0}2\int_\eps^1\frac{1-{(1-t^2)}^\alpha}{t^{1+2s}}\;dt\\&=
\lim_{\eps\searrow 0}2\int_\eps^1\frac{dt}{t^{1+2s}}-
2\int_\eps^1\frac{{(1-t^2)}^\alpha}{t^{1+2s}}\;dt\\&=
\lim_{\eps\searrow 0}\frac{\eps^{-2s}-1}{s}-
2\int_\eps^1\frac{{(1-t^2)}^\alpha}{t^{1+2s}}\;dt
.
\end{split}\end{equation}
Now, changing variable~$\tau:=t^2$, we have that
$$ 2\int_\eps^1\frac{{(1-t^2)}^\alpha}{t^{1+2s}}\;dt
=\int_{\eps^2}^1\frac{{(1-\tau)}^\alpha}{\tau^{1+s}}\;d\tau.$$
Also, we observe that
\begin{eqnarray*}
&&\frac{{(1-\tau)}^\alpha}{\tau^{1+s}}=\frac{{(1-\tau)}^\alpha(1-\tau+\tau)}{\tau^{1+s}}
=\frac{{(1-\tau)}^{\alpha+1}}{\tau^{1+s}}+\frac{{(1-\tau)}^\alpha}{\tau^{s}}
\end{eqnarray*}
and therefore
$$ 2\int_\eps^1\frac{{(1-t^2)}^\alpha}{t^{1+2s}}\;dt
=\int_{\eps^2}^1\frac{{(1-\tau)}^{\alpha+1}}{\tau^{1+s}}\;d\tau
+\int_{\eps^2}^1\frac{{(1-\tau)}^\alpha}{\tau^{s}}\;d\tau
.$$
We now integrate by parts the first integral in the right-hand side above
and find that
\begin{eqnarray*}
&&2\int_\eps^1\frac{{(1-t^2)}^\alpha}{t^{1+2s}}\;dt
\\&&=
\frac1s\left[{\big(1-\eps^2\big)}^{\alpha+1}\eps^{-2s}
-(\alpha+1)\int_{\eps^2}^1(1-\tau)^\alpha\tau^{-s}\;d\tau
+s\int_{\eps^2}^1(1-\tau)^\alpha\tau^{-s}\;d\tau\right] \\
&&=
\frac1s\left[{\big(1-\eps^2\big)}^{\alpha+1}\eps^{-2s}
-(\alpha+1-s)\int_{\eps^2}^1(1-\tau)^\alpha\tau^{-s}\;d\tau\right]   .\end{eqnarray*}

Plugging this information into~\eqref{mjhy54tgdfrgvcf3wesf43547547}, we thereby obtain that
\begin{equation*}\begin{split}
&\pv\int_{-1}^1\frac{1-{(1-t^2)}^\alpha}{{|t|}^{1+2s}}\;dt\\
&=\frac1{s}
\lim_{\eps\searrow 0}\left[ \eps^{-2s}-1-
{\big(1-\eps^2\big)}^{\alpha+1}\eps^{-2s}
+(\alpha+1-s)\int_{\eps^2}^1(1-\tau)^\alpha\tau^{-s}\;d\tau\right]\\&=\frac1{s}
\left[
(\alpha+1-s)\int_{0}^1(1-\tau)^\alpha\tau^{-s}\;d\tau -1\right]
.
\end{split}\end{equation*}
Also, using~\eqref{beta-identity} with~$a:=\alpha+1$ and~$b:=1-s$, and then~\eqref{gamma-recursive} and~\eqref{gamma-recursive2},
\begin{equation}\label{8924h3oinf2222}\begin{split}
\pv\int_{-1}^1\frac{1-{(1-t^2)}^\alpha}{{|t|}^{1+2s}}\;dt&=\frac1{s}
\left[
(\alpha+1-s)\frac{\Gamma(\alpha+1)\,\Gamma(1-s)}{\Gamma(\alpha+2-s)} -1\right]\\&=
-\frac{\Gamma(\alpha+1)\,\Gamma(-s)}{\Gamma(\alpha+1-s)}-\frac1s
.
\end{split}\end{equation}

We finally compute the term in~\eqref{riu2o4ejfwn}. For this, we use the series expansion
\begin{align*}
(1-xt)^{2s-1-2\alpha}-1=\sum_{j=1}^{+\infty}\binom{2s-1-2\alpha}{j}(-xt)^j
=\sum_{j=1}^{+\infty}(-1)^j\frac{\Gamma(2s-2\alpha)}{\Gamma(2s-2\alpha-j)\,j!}x^jt^j.
\end{align*}
As the integral in~\eqref{riu2o4ejfwn} is on a symmetric interval, we can discard odd powers in the above expansion. This entails that
\begin{eqnarray*}&&
\pv\int_{-1}^1\frac{1-(1-xt)^{2s-1-2\alpha}}{{|t|}^{1+2s}}\,{(1-t^2)}^\alpha\;dt
\\&&=-\sum_{k=1}^{+\infty}\frac{\Gamma(2s-2\alpha)}{\Gamma(2s-2\alpha-2k)\,(2k)!}x^{2k}
\int_{-1}^1{|t|}^{2k-1-2s}\,{(1-t^2)}^\alpha\;dt.
\end{eqnarray*}
By the change of variable~$\tau:=t^2$ and~\eqref{beta-identity} with~$a:=k-s$ and~$b:=\alpha+1$, we see that
\begin{eqnarray*}&&
\int_{-1}^1{|t|}^{2k-1-2s}\,{(1-t^2)}^\alpha\;dt=
2\int_0^1 t^{2k-1-2s}\,{(1-t^2)}^\alpha\;dt\\&&\qquad=
\int_{-1}^1\tau^{k-1-s}\,(1-\tau)^\alpha\;d\tau
=\frac{\Gamma(k-s)\,\Gamma(\alpha+1)}{\Gamma(k-s+\alpha+1)}.
\end{eqnarray*}
As a result,
\begin{equation}\label{758493wsdahgtxzxcfvg}\begin{split}
&\pv\int_{-1}^1\frac{1-(1-xt)^{2s-1-2\alpha}}{{|t|}^{1+2s}}\,{(1-t^2)}^\alpha\;dt
\\&=
-\sum_{k=1}^{+\infty}\frac{\Gamma(2s-2\alpha)}{\Gamma(2s-2\alpha-2k)\,(2k)!}
\frac{\Gamma(k-s)\,\Gamma(\alpha+1)}{\Gamma(k-s+\alpha+1)}\,x^{2k}.
\end{split}\end{equation}

This series expansion can be transformed using some identities on the Gamma Function 
(see~\eqref{gamma-integer}, \eqref{gamma-dupli} and~\eqref{euler-reflection}), according to which
\begin{align*}
\Gamma(2s-2\alpha) &=
\frac{\pi}{\sin\big(\pi(2s-2\alpha)\big)}
\frac1{\Gamma(2\alpha-2s+1)}\\&=
\frac{\pi}{\sin\big(\pi(2s-2\alpha)\big)}\frac{2^{2s-2\alpha}\sqrt\pi}{\Gamma\big(\alpha-s+\frac12\big)\,\Gamma(\alpha-s+1)},
\end{align*}
$$
(2k)!=\Gamma(2k+1) = \frac{2^{2k}}{\sqrt\pi}\Gamma\left(
k+\frac12\right)\,\Gamma(k+1)=\frac{2^{2k}}{\sqrt\pi}\Gamma\left(k+\frac12\right)\,k! $$
and
\begin{align*}
\Gamma(k-s+\alpha+1) &= 2^{2s-2\alpha-2k}\sqrt\pi\,\frac{\Gamma(2k-2s+2\alpha+1)}{\Gamma\big(k-s+\alpha+\frac12\big)} \\
&=2^{2s-2\alpha-2k}\frac{\pi^{3/2}}{\sin\big(\pi(2s-2\alpha-2k)\big)}\frac1{\Gamma(2s-2\alpha-2k)\,\Gamma\big(k-s+\alpha+\frac12\big)}.
\end{align*}
In this way, after some cancellations,
\begin{align*}&
\frac{\Gamma(2s-2\alpha)}{\Gamma(2s-2\alpha-2k)\,(2k)!}
\frac1{\Gamma(k-s+\alpha+1)}\\&\qquad=
\frac{\sqrt\pi\,\Gamma\big(k-s+\alpha+\frac12\big)}{\Gamma\big(\alpha-s+\frac12\big)\,\Gamma(\alpha-s+1)\,\Gamma\big(k+\frac12\big)\,k!}
.
\end{align*}

Therefore, from this and~\eqref{758493wsdahgtxzxcfvg} we get that
\begin{equation}\label{riu2o4ejfwn2222}\begin{split}
& \pv\int_{-1}^1\frac{1-(1-xt)^{2s-1-2\alpha}}{{|t|}^{1+2s}}\,{(1-t^2)}^\alpha\;dt  \\
&=
\frac{-\sqrt\pi\,\Gamma(\alpha+1)}{\Gamma\big(\alpha-s+\frac12\big)\,\Gamma(\alpha-s+1)}\sum_{k=1}^{+\infty}\frac{\Gamma(k-s)\,\Gamma\big(k-s+\alpha+\frac12\big)}{\Gamma\big(k+\frac12\big)}\frac{x^{2k}}{k!}  \\
&=
-\frac{\Gamma(\alpha+1)\,\Gamma(-s)}{\Gamma(\alpha-s+1)}
\left[\hf\left(-s,\alpha-s+\frac12;\frac12\,\bigg|\,x^2\right)
-1\right]. 
\end{split}\end{equation}

Collecting the new representations~\eqref{84u2roi2222},
\eqref{8924h3oinf2222} and~\eqref{riu2o4ejfwn2222} for~\eqref{84u2roi},
\eqref{8924h3oinf} and~\eqref{riu2o4ejfwn} respectively, 
we obtain that 
\begin{eqnarray*}&&
\pv\int_{-1}^1\frac{{(1-x^2)}^\alpha-{(1-y^2)}^\alpha}{{|x-y|}^{1+2s}}\;dy\\
&&=-
{(1-x^2)}^{\alpha-2s}\left[
\frac1{2s}\left(
(1-x)^{2s}+(1+x)^{2s}\right) 
+\frac{\Gamma(\alpha+1)\,\Gamma(-s)}{\Gamma(\alpha-s+1)}
\hf\left(-s,\alpha-s+\frac12;\frac12\,\bigg|\,x^2\right)
\right].
\end{eqnarray*}
Plugging this identity into~\eqref{jfwoiehfnwfklnczxlkh}, and recalling~\eqref{jdjdfjfjfjgnvndjdmxmcj} and~\eqref{jdjdfjfjfjgnvndjdmxmcj22}, we conclude that
\begin{align*}
{(-\Delta)}^su_\alpha(x)=-c_{1,s}\frac{\Gamma(\alpha+1)\,\Gamma(-s)}{\Gamma(\alpha-s+1)}
{(1-x^2)}^{\alpha-2s}
\hf\left(-s,\alpha-s+\frac12;\frac12\,\bigg|\,x^2\right)
\qquad\text{for }|x|<1,
\end{align*}
as desired.
\end{proof}

Recalling the definition of~$c_{1,s}$ given in~\eqref{cns}, from~\eqref{fl-ball-1d} we may also write
\begin{align*}
{(-\Delta)}^su_\alpha(x)=\frac{2^{2s}\Gamma\big(\frac12+s\big)\,\Gamma(\alpha+1)}{\sqrt\pi\,\Gamma(\alpha-s+1)}
{(1-x^2)}^{\alpha-2s}
\hf\left(-s,\alpha-s+\frac12;\frac12\,\bigg|\,x^2\right)
\qquad\text{for }|x|<1,
\end{align*}
or, using the hypergeometric transformation~\eqref{hyp-transf1}, also
\begin{align}\label{c0c0c0c0c0c0c0c}
{(-\Delta)}^su_\alpha(x)=\frac{2^{2s}\Gamma\big(\frac12+s\big)\,\Gamma(\alpha+1)}{\sqrt\pi\,\Gamma(\alpha-s+1)}
\hf\left(s-\alpha,\frac12+s;\frac12\,\bigg|\,x^2\right)
\qquad\text{for }|x|<1.
\end{align}
Equation~\eqref{c0c0c0c0c0c0c0c} has some interesting consequences,
by performing suitable choices of the power~$\alpha$, as we now illustrate.

\begin{proposition}
We have that
\begin{eqnarray}&&
{(-\Delta)}^su_s(x) =
\frac{2^{2s}\Gamma\big(\frac12+s\big)\,\Gamma(1+s)}{\sqrt\pi}
\qquad \text{for }|x|<1, \label{torsion1d} \\
{\mbox{and }} && {(-\Delta)}^su_{s-1}(x) =0
\qquad \text{for }|x|<1. \label{sing1d}
\end{eqnarray}
Moreover, for any~$m\in\N$,
\begin{align}\label{poly1d}
{(-\Delta)}^su_{s+m}(x)
\quad\text{is a polynomial of degree~$2m$ in~$x$ for~$|x|<1$.}
\end{align}
\end{proposition}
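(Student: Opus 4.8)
The plan is to read off all three identities from the closed form~\eqref{c0c0c0c0c0c0c0c},
\begin{align*}
{(-\Delta)}^su_\alpha(x)=\frac{2^{2s}\Gamma\big(\frac12+s\big)\,\Gamma(\alpha+1)}{\sqrt\pi\,\Gamma(\alpha-s+1)}\,\hf\left(s-\alpha,\frac12+s;\frac12\,\bigg|\,x^2\right)\qquad\text{for }|x|<1,
\end{align*}
by specialising the exponent~$\alpha$. In each of the three cases the relevant value of~$\alpha$ (namely $s$, $s-1$ and $s+m$) is~$>-1$, so that~$u_\alpha$ is a well-defined element of the class in~\eqref{ytruieow54738564783574839fgdhsja} and~\eqref{c0c0c0c0c0c0c0c} applies. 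Everything then reduces to elementary facts about the Gauss hypergeometric function and the Gamma function (see Appendix~\ref{sec:gamma}).

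For~\eqref{torsion1d} I would take $\alpha=s$, so that the first parameter of the hypergeometric function becomes $s-\alpha=0$. Since $\hf(0,b;c\,|\,z)=1$ for all admissible $b,c$ and every $z$ (the defining power series collapses to its $k=0$ term, because $(0)_k=0$ for $k\ge1$), the right-hand side of~\eqref{c0c0c0c0c0c0c0c} reduces to the constant $\frac{2^{2s}\Gamma(\frac12+s)\,\Gamma(1+s)}{\sqrt\pi}$ (using $\Gamma(1)=1$), which is exactly the asserted value. For~\eqref{sing1d} I would take $\alpha=s-1\in(-1,0)$; then $\Gamma(\alpha-s+1)=\Gamma(0)$, so the normalising constant in~\eqref{c0c0c0c0c0c0c0c} vanishes (with the usual convention $1/\Gamma(0)=0$), whereas the hypergeometric factor $\hf(1,\tfrac12+s;\tfrac12\,|\,x^2)$ stays finite for $|x|<1$; hence ${(-\Delta)}^su_{s-1}\equiv0$ on $(-1,1)$.

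For~\eqref{poly1d} I would take $\alpha=s+m$ with $m\in\N$, so that the first hypergeometric parameter equals the non-positive integer $-m$. Then the series defining $\hf(-m,\tfrac12+s;\tfrac12\,|\,z)$ terminates after $k=m$, because $(-m)_k=0$ for $k>m$, so that
\begin{align*}
\hf\left(-m,\frac12+s;\frac12\,\bigg|\,z\right)=\sum_{k=0}^m\frac{(-m)_k\,(\frac12+s)_k}{(\frac12)_k\,k!}\,z^k
\end{align*}
is a polynomial of degree at most~$m$ in~$z$. Substituting $z=x^2$ and multiplying by the nonzero constant $\frac{2^{2s}\Gamma(\frac12+s)\,\Gamma(s+m+1)}{\sqrt\pi\,\Gamma(m+1)}$ shows that ${(-\Delta)}^su_{s+m}$ is a polynomial of degree at most~$2m$ in~$x$ on $(-1,1)$; the degree is in fact exactly~$2m$, since the coefficient of~$z^m$ equals $(-1)^m(\frac12+s)_m/(\frac12)_m\ne0$, all Pochhammer factors being strictly positive for $s\in(0,1)$. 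There is no serious obstacle in this argument: the only point requiring a little care is the bookkeeping --- verifying admissibility of the chosen exponents in~\eqref{c0c0c0c0c0c0c0c}, observing that $1/\Gamma(0)=0$ is precisely what produces~\eqref{sing1d}, and checking that the $\hf$ series truly truncates at $k=m$ in~\eqref{poly1d} --- none of which goes beyond the standard identities for $\hf$ and $\Gamma$.
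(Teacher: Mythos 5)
Your treatment of \eqref{torsion1d} and \eqref{poly1d} coincides with the paper's: specialise \eqref{c0c0c0c0c0c0c0c} to $\alpha=s$ and invoke \eqref{hyp(0)}, respectively to $\alpha=s+m$ and invoke the truncation \eqref{hyp-poly}; your additional remark that the degree is exactly $2m$ is fine.

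The gap is in \eqref{sing1d}. What you propose --- put $\alpha=s-1$ into \eqref{c0c0c0c0c0c0c0c}, read $1/\Gamma(0)=0$, and note that the hypergeometric factor stays finite --- is precisely the deduction the paper labels ``formal'' and deliberately does not use as a proof. The reason is that $\alpha=s-1$ is the exceptional exponent at which the closed formula \eqref{fl-ball-1d}, hence \eqref{c0c0c0c0c0c0c0c}, has not actually been established: the prefactor contains $\Gamma(\alpha-s+1)$ evaluated at its pole, and the derivation of \eqref{riu2o4ejfwn2222} applies the reflection formula \eqref{euler-reflection} at the argument $2s-2\alpha=2$, where $\pi/\sin\big(\pi(2s-2\alpha)\big)$ and $1/\Gamma(2\alpha-2s+1)$ form an indeterminate product $\infty\cdot 0$; so the identity you substitute into is only derived for $\alpha\neq s-1$, and invoking the convention $1/\Gamma(0)=0$ begs the question unless you also justify continuity in $\alpha$ of both sides (or redo the computation of Lemma~\ref{lemma:p405475ryfrg} at $\alpha=s-1$). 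A minor additional point: $\hf(1,\tfrac12+s;\tfrac12\,|\,x^2)$ lies outside the admissibility condition $a+b<c$ of Definition~\ref{hyp:par}, so its finiteness on $(-1,1)$ deserves at least a word (the series does converge there). The paper instead proves \eqref{sing1d} by an independent argument: with the point inversion $\mathcal{K}:=\mathcal{K}_{\sqrt 2,-1}$ one has $2u_{s-1}=(h_s)_{\mathcal{K}}+(h_{s-1})_{\mathcal{K}}$ with $h_\alpha(x)=x_+^\alpha$, and then \eqref{fl-inversion-general} from Proposition~\ref{prop:inversion-general} combined with $\kappa(s)=\kappa(s-1)=0$ from Lemma~\ref{lem:halfline} (see \eqref{kappalfa}) shows that each summand is $s$-harmonic in $(-1,1)$. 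To close your argument you should either follow this route or supply the missing limiting argument in $\alpha$.
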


\begin{proof}
Equation~\eqref{c0c0c0c0c0c0c0c} gives that
\begin{align*}
{(-\Delta)}^su_s(x) =\frac{2^{2s}\Gamma\big(\frac12+s\big)\,\Gamma(1+s)}{\sqrt\pi}
\hf\left(0,\frac12+s;\frac12\,\bigg|\,x^2\right).
\end{align*}
{F}rom~\eqref{hyp(0)} we also have that
\begin{align*}
\hf\left(0,\frac12+s;\frac12\,\bigg|\,x^2\right)=1
\end{align*}
and this completes the proof of~\eqref{torsion1d}.

For~$\alpha:=s+m$, with~$m\in\N$, formula~\eqref{c0c0c0c0c0c0c0c} coupled with~\eqref{hyp-poly} renders a polynomial,
and this proves~\eqref{poly1d}.

As for the proof of~\eqref{sing1d}, one can formally deduce it from the singularity 
of the factor~$\Gamma(\alpha-s+1)$ at~$\alpha=s-1$ in~\eqref{c0c0c0c0c0c0c0c}.
We present here a rigorous proof, whose details are as follows.

We recall the notation in~\eqref{isinforceBIS} and~\eqref{isinforceBIS2} and,
for the sake of readability, we denote by~$\mathcal{K}:=\mathcal{K}_{\sqrt 2,-1}$.
In this way, we consider the function~$h_\alpha$ in~\eqref{accadialfa},
with~$\alpha\in(-1,2s)$), and we can write
\begin{align*}&
\big(h_\alpha\big)_{\mathcal{K}}(x) =
{|x+1|}^{2s-1}h_\alpha\left(2\frac{x+1}{{|x+1|}^2}-1\right) \\
&\qquad=
{|x+1|}^{2s-1}h_\alpha\left(\frac{1-x^2}{{|x+1|}^2}\right)
=\left\lbrace\begin{aligned}
& {(x+1)}^{2s-1-2\alpha}\big(1-x^2\big)^\alpha
&& \text{for }|x|<1, \\
& 0 && \text{for }|x|\geq 1.
\end{aligned}\right.
\end{align*}
Now, using~\eqref{fl-inversion-general} and Lemma~\ref{lem:halfline}, we find that, for~$|x|<1$,
\begin{eqnarray*}
{(-\Delta)}^s \big(h_\alpha\big)_{\mathcal{K}}(x)
&=&2^{2s}|x+1|^{-1-2s} {(-\Delta)}^s h_\alpha \left(\mathcal{K}(x)
\right)
\\&=&2^{2s}|x+1|^{-1-2s} c_{1,s}\kappa(\alpha)
\left(\frac{1-x^2}{{|x+1|}^2}\right)^{\alpha-2s}
\\&=&
2^{2s}c_{1,s}\kappa(\alpha)|x+1|^{2s-1-2\alpha}
\big(1-x^2\big)^{\alpha-2s}.
\end{eqnarray*}

In particular, from the values of~$\kappa(\alpha)$ in~\eqref{kappalfa},
we see that
\begin{align*}
{(-\Delta)}^s \big(h_s\big)_{\mathcal{K}}(x)=0={(-\Delta)}^s \big(
h_{s-1}\big)_{\mathcal{K}}(x)
\qquad\text{for any }|x|<1.
\end{align*}
As a consequence,
\begin{align}\label{turieowfhdjsdvsb1234567890987654e3}
{(-\Delta)}^s\Big( \big(h_s\big)_{\mathcal{K}}+\big(
h_{s-1}\big)_{\mathcal{K}}\Big)(x)=0
\qquad\text{for any }|x|<1
\end{align}

Moreover, we observe that
\begin{align*}
\big(h_s\big)_{\mathcal{K}}(x) &=\left\lbrace\begin{aligned}
& \frac{\big(1-x^2\big)^s}{x+1}
&& \text{for }|x|<1, \\
& 0 && \text{for }|x|\geq 1,\end{aligned}\right.
\\&=\left\lbrace\begin{aligned}
& (1-x)^s(1+x)^{s-1}
&& \text{for }|x|<1 ,\\
& 0 && \text{for }|x|\geq 1
\end{aligned}\right.
\end{align*}
and\begin{align*}
\big(h_{s-1}\big)_{\mathcal{K}}(x)&=\left\lbrace\begin{aligned}
& (x+1)\big(1-x^2\big)^{s-1}
&& \text{for }|x|<1, \\
& 0 && \text{for }|x|\geq 1,
\end{aligned}\right. \\
&=\left\lbrace\begin{aligned}
& (1-x)^{s-1}(1+x)^s
&& \text{for }|x|<1, \\
& 0 && \text{for }|x|\geq 1
\end{aligned}\right.
\end{align*}
and therefore
\begin{align*}
\big(h_s\big)_{\mathcal{K}}(x)+\big(h_{s-1}\big)_{\mathcal{K}}(x)&=\left\lbrace\begin{aligned}
& 2\big(1-x^2\big)^{s-1}
&& \text{for }|x|<1, \\
& 0 && \text{for }|x|\geq 1,
\end{aligned}\right. \\
&=2 u_{s-1}(x).
\end{align*}
{F}rom this and~\eqref{turieowfhdjsdvsb1234567890987654e3}
we obtain the desired result in~\eqref{sing1d}.
\end{proof}

\section{Functions supported on a half-space}

Let~$\alpha>-1$ and define the function~$h_{n,\alpha}:\R^n \to\R$ as
\begin{align}\label{yueiwgdsh2wsxcfr56yhnjui890po-09876543}
h_{n,\alpha}(x):=\left\lbrace\begin{aligned}
& x_1^\alpha && \text{if }x_1>0, \\
& 0 && \text{if }x_1\leq 0,
\end{aligned}\right.
\end{align}
for all~$x=(x_1,\dots,x_n)\in\R^n$.

\begin{proposition}
We have that
\begin{align}\label{fl-halfspace}
{(-\Delta)}^s h_{n,\alpha}(x)=c_{1,s}\kappa(\alpha)x_1^{\alpha-2s}
\qquad\text{in }\{x_1>0\},
\end{align}
where~$\kappa(\alpha)$ is the
constant given by Lemma~\ref{lem:halfline}.

In particular,
\begin{align}\label{fl-halfspaceBIS}
{(-\Delta)}^s h_{n,s}(x)=0 
\qquad\text{and}\qquad
{(-\Delta)}^s h_{n,s-1}(x)=0
\qquad\text{in }\{x_1>0\}.
\end{align}
\end{proposition}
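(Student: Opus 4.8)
The plan is to reduce the $n$-dimensional computation to the one-dimensional result already established in Lemma~\ref{lem:halfline}. The crucial observation is that the function $h_{n,\alpha}$ defined in~\eqref{yueiwgdsh2wsxcfr56yhnjui890po-09876543} depends only on the first coordinate $x_1$: indeed, writing $x=(x_1,x'')$ with $x''\in\R^{n-1}$, we have $h_{n,\alpha}(x)=h_\alpha(x_1)$ where $h_\alpha$ is the one-variable power function from~\eqref{accadialfa}. This is precisely the setting of Lemma~\ref{lem:mute} on the fractional Laplacian of functions depending only on a subset of the variables (here $k=1$).

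First I would verify that the hypotheses of Lemma~\ref{lem:mute} are met at a point $x$ with $x_1>0$. Away from the hyperplane $\{x_1=0\}$ the function $h_{n,\alpha}$ is smooth (it equals $x_1^\alpha$ near such a point), so the local regularity condition $u\in C^{2\sigma}(B_r(x))$ holds for suitable $r,\sigma$; and the global integrability condition $h_{n,\alpha}\in L^1_s(\R^n)$ follows because $h_\alpha\in L^1_s(\R)$ for $\alpha\in(-1,2s)$, which in turn holds since $\alpha>-1$ controls the singularity at the origin and $\alpha<2s$ controls the growth at infinity (one should note here the implicit hypothesis $\alpha\in(-1,2s)$, inherited from Lemma~\ref{lem:halfline}). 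Then Lemma~\ref{lem:mute} gives directly
\begin{align*}
{(-\Delta)}^s_{\R^n} h_{n,\alpha}(x) = {(-\Delta)}^s_{\R} h_\alpha(x_1)
\qquad\text{for }x_1>0.
\end{align*}

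Next I would invoke Lemma~\ref{lem:halfline}: for $x_1>0$ the one-dimensional fractional Laplacian equals $c_{1,s}\,\kappa(\alpha)\,x_1^{\alpha-2s}$, which yields~\eqref{fl-halfspace} immediately. Finally, for the special cases in~\eqref{fl-halfspaceBIS}, I would read off from~\eqref{kappalfa} that $\kappa(s)=0$ and $\kappa(s-1)=0$, so that ${(-\Delta)}^s h_{n,s}(x)={(-\Delta)}^s h_{n,s-1}(x)=0$ on $\{x_1>0\}$. There is no real obstacle here: the entire content is packaged in Lemmas~\ref{lem:mute} and~\ref{lem:halfline}, and the only mild care needed is checking the normalizing-constant bookkeeping (the $n$-dimensional constant $c_{n,s}$ versus the one-dimensional $c_{1,s}$), but this is exactly what the identity~\eqref{fl-mute} in Lemma~\ref{lem:mute} handles—the constants match so that the two operators agree on functions of one variable, so $c_{1,s}$ correctly appears in the final formula rather than $c_{n,s}$.
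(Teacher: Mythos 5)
Your proposal is correct and follows exactly the paper's argument: the paper proves this proposition precisely by combining Lemma~\ref{lem:mute} (reduction to the one-dimensional operator for functions of $x_1$ alone) with Lemma~\ref{lem:halfline}, and the special cases in~\eqref{fl-halfspaceBIS} follow from $\kappa(s)=\kappa(s-1)=0$ in~\eqref{kappalfa}. Your additional checks of the hypotheses of Lemma~\ref{lem:mute} and of the constant bookkeeping are sound and only make explicit what the paper leaves implicit.
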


\begin{proof}The desired result follows from
Lemmata~\ref{lem:mute} and~\ref{lem:halfline}.
\end{proof}

\section{Functions supported on a ball or an ellipsoid}

Here we compute the fractional Laplacian of some special functions possessing rotational or elliptic symmetry (see~\cite{MR4181195}
for explicit formulas of, possibly higher-order, fractional Laplacians
and applications to maximum principle results).

For this,
let~$a=(a_1,\dots,a_n)\in\R^n$, with~$a_i>0$ for any~$i\in\{1,\ldots,n\}$, and let~$A$ be the diagonal matrix induced by the vector~$a$, i.e.,
\begin{align*}
A_{ij}=a_i\delta_{i,j}
\qquad\text{for }i,j\in\{1,\ldots,n\}.
\end{align*}
For any~$x,y\in\R^n$, we set
\begin{align*}
\langle x,y\rangle_a:=Ax\cdot y=\sum_{i=1}^n a_ix_iy_i
\qquad\text{and}\qquad
|x|_a=\sqrt{\langle x,x\rangle_a},
\end{align*}
which respectively define a scalar product and a norm on~$\R^n$.

Let~$E_a\subseteq\R^n$ be the open unitary ball with respect to this new norm, that is
\begin{align*}
E_a:=\{x\in\R^n:|x|_a<1\}.
\end{align*}
{F}rom the geometric point of view,~$E_a$
corresponds to an open ellipsoid in~$\R^n$, 
reducing to~$B_1$ as soon as~$a_i=1$ for any~$i\in\{1,\ldots,n\}$. 

For~$\beta>-1$, we define
\begin{align*}
v_\beta(x):=\left\lbrace\begin{aligned}
& \big(1-|x|_a^2\big)^\beta && \text{for }x\in E_a, \\
& 0 && \text{for }x\in\R^n\setminus E_a.
\end{aligned}\right.
\end{align*}
One can calculate the fractional Laplacian of~$v_\beta$, as described by the following result:

\begin{proposition}
For all~$x\in E_a$, we have that
\begin{align}\label{fl-ellipsoids}
\begin{split}&
{(-\Delta)}^s v_\beta(x)=
\frac{c_{n,s}\,\Gamma(\beta+1)\,\Gamma(-s)}{2\Gamma(\beta-s+1)}
\\
&\qquad \times\ \int_{\partial E_a}\big(  v_1(x)+\langle x,\theta\rangle_a^2\big)^{\beta-s} \;
\hf\left(\frac12+s,s-\beta;\frac12\Big|\frac{\langle x,\theta\rangle_a^2}{ v_1(x)+\langle x,\theta\rangle_a^2}\right) \; \frac{
d{\mathcal{H}}^{n-1}_\theta}{|\theta|^{n+2s}|A\theta|}.
\end{split}
\end{align}

In particular, for all~$x\in E_a$,
\begin{eqnarray*}&&
{(-\Delta)}^s v_{s}(x) = \frac{2^{2s-1}\Gamma(1+s)\Gamma(\frac{n}2+s)}{\pi^{n/2}}
\int_{\partial E_a}\frac{d{\mathcal{H}}^{n-1}_\theta}{|\theta|^{n+2s}|A\theta|} \\
{\mbox{and }}&&
{(-\Delta)}^s v_{s-1}(x) = 0 
\end{eqnarray*}
and, for any~$m\in\N$, 
\begin{align*}
(-\Delta)^s v_{s+m}\quad\text{is a polynomial of degree~$2m$ in~$E_a$.}
\end{align*}
\end{proposition}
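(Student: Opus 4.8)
The plan is to reduce the $n$-dimensional, possibly anisotropic, computation for $v_\beta$ on the ellipsoid $E_a$ to the one-dimensional computation for $u_\beta$ on the interval $(-1,1)$ that was carried out in Lemma~\ref{lemma:p405475ryfrg}, exploiting a slicing along lines through a fixed point $x\in E_a$. Concretely, I would start from the symmetric representation~\eqref{symmetric-def} and write
\[
{(-\Delta)}^s v_\beta(x)=\frac{c_{n,s}}2\int_{\R^n}\frac{2v_\beta(x)-v_\beta(x+y)-v_\beta(x-y)}{|y|^{n+2s}}\;dy,
\]
then pass to ``polar'' coordinates adapted to the norm $|\cdot|_a$: write $y=\rho\theta$ with $\theta\in\partial E_a$ (so $|\theta|_a=1$) and $\rho>0$. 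The change of variables $y\mapsto(\rho,\theta)$ introduces a Jacobian factor proportional to $|\theta|\,|A\theta|$ against the Euclidean surface measure $d{\mathcal H}^{n-1}_\theta$ on $\partial E_a$, which is the origin of the weight $|\theta|^{-n-2s}|A\theta|^{-1}$ appearing in~\eqref{fl-ellipsoids}. (This is the standard fact that the Lebesgue measure disintegrates over the unit sphere of any norm with a density determined by that norm; here $|A\theta|$ is, up to constants, the relevant surface density for the ellipsoidal sphere.)

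The key observation is that, along each line $t\mapsto x+t\theta$ with $\theta\in\partial E_a$, one has $|x+t\theta|_a^2=|x|_a^2+2t\langle x,\theta\rangle_a+t^2$, so the condition $|x+t\theta|_a<1$ becomes a quadratic inequality in $t$; completing the square, $x+t\theta\in E_a$ iff $(t+\langle x,\theta\rangle_a)^2< v_1(x)+\langle x,\theta\rangle_a^2$, where $v_1(x)=1-|x|_a^2$. Thus the restriction of $v_\beta$ to this line is, up to an affine reparametrization $t=\ell\,\tau-\langle x,\theta\rangle_a$ with $\ell:=\sqrt{v_1(x)+\langle x,\theta\rangle_a^2}$, exactly a rescaled copy of $u_\beta$: along the line the function equals $\ell^{2\beta}u_\beta(\tau)$ evaluated at $\tau=(t+\langle x,\theta\rangle_a)/\ell$. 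Carrying the radial integral through this substitution, and using the homogeneity of the one-dimensional fractional Laplacian (Lemma~\ref{lem:homogeneity}) to absorb the factor $\ell$, the inner $\rho$-integral becomes $\ell^{2\beta-2s}$ times the one-dimensional expression ${(-\Delta)}^s u_\beta$ evaluated at the point $\tau_0=-\langle x,\theta\rangle_a/\ell\in(-1,1)$. Plugging in the explicit value from Lemma~\ref{lemma:p405475ryfrg} — namely $-c_{1,s}\,\tfrac{\Gamma(\beta+1)\Gamma(-s)}{\Gamma(\beta-s+1)}\,(1-\tau_0^2)^{\beta-2s}\,\hf\!\big(-s,\beta-s+\tfrac12;\tfrac12\,|\,\tau_0^2\big)$, or better its transformed form~\eqref{c0c0c0c0c0c0c0c} — and noting $1-\tau_0^2=v_1(x)/\ell^2$ and $\tau_0^2=\langle x,\theta\rangle_a^2/\ell^2$, the powers of $\ell$ reorganize into $\big(v_1(x)+\langle x,\theta\rangle_a^2\big)^{\beta-s}$ and the hypergeometric argument into $\langle x,\theta\rangle_a^2/(v_1(x)+\langle x,\theta\rangle_a^2)$, matching~\eqref{fl-ellipsoids} (the hypergeometric parameters $(\tfrac12+s,s-\beta)$ come from applying the transformation~\eqref{hyp-transf1} exactly as in the passage from the first to the second form in Lemma~\ref{lemma:p405475ryfrg}'s corollary). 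The three special cases then follow immediately: for $\beta=s$, \eqref{c0c0c0c0c0c0c0c} and~\eqref{hyp(0)} give that the one-dimensional value is the constant $\tfrac{2^{2s}\Gamma(\frac12+s)\Gamma(1+s)}{\sqrt\pi}$, independent of $\tau_0$, so only the surface integral $\int_{\partial E_a}|\theta|^{-n-2s}|A\theta|^{-1}\,d{\mathcal H}^{n-1}_\theta$ survives (and the constant is repackaged using $c_{1,s}$ and the duplication formula for $\Gamma$); for $\beta=s-1$, \eqref{sing1d} gives that the integrand vanishes identically; and for $\beta=s+m$, \eqref{poly1d} says the one-dimensional value is a polynomial of degree $2m$ in $\tau_0$, hence — after multiplying by $\ell^{2\beta-2s}=\ell^{2m}$ and integrating $\theta$ over $\partial E_a$ — a polynomial of degree $2m$ in the entries of $x$ (each $\langle x,\theta\rangle_a^{2k}\ell^{2(m-k)}=\langle x,\theta\rangle_a^{2k}\big(v_1(x)+\langle x,\theta\rangle_a^2\big)^{m-k}$ is a polynomial of degree $2m$ in $x$).

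The main obstacle I anticipate is bookkeeping rather than conceptual: getting the Jacobian of the anisotropic polar change of variables exactly right (so that the density is precisely $|\theta|^{n+2s}|A\theta|$ in the denominator and the factor of $\tfrac12$ in front is consistent with the symmetric representation), and tracking the several powers of the scale $\ell$ through the homogeneity rescaling so that they recombine cleanly into $\big(v_1(x)+\langle x,\theta\rangle_a^2\big)^{\beta-s}$. A secondary point requiring a little care is the principal-value issue: for $s\ge 1/2$ the integrals must be interpreted in the symmetric principal-value sense, and one should check that the slicing and the Fubini-type exchange of the $\rho$ and $\theta$ integrations are legitimate there — this is handled exactly as in Lemma~\ref{lemma:p405475ryfrg}, since along each line the singularity at $t=-\langle x,\theta\rangle_a$ is the one already analyzed in one dimension and the remaining angular integral is absolutely convergent thanks to the smoothness of $\partial E_a$ and $x$ being in the interior of $E_a$.
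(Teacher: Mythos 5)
Your proposal is correct and follows essentially the same route as the paper: anisotropic polar coordinates via the co-area formula (whose density is $\rho^{n-1}/|A\theta|$, the factor $|\theta|^{-n-2s}$ coming from the homogeneity of the kernel rather than from the Jacobian), reduction along each line through $x$ to the one-dimensional result of Lemma~\ref{lemma:p405475ryfrg} via the affine substitution with $\ell=\sqrt{v_1(x)+\langle x,\theta\rangle_a^2}$, the hypergeometric transformation~\eqref{hyp-transf1} to reach the stated form, and the special cases from~\eqref{hyp(0)}, \eqref{sing1d} and~\eqref{hyp-poly}. The only slips are attributing the whole weight to the Jacobian and the sign of the base point $\tau_0$ (harmless, since only $\tau_0^2$ enters), which are precisely the bookkeeping issues you flagged and do not affect the argument.
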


\begin{proof}
We consider spherical coordinates with respect to the~$a$-norm by writing
any~$y\in\R^n$ as~$y=t\theta$ with~$t>0$ and~$\theta\in\partial E_a$. 
In this way, noticing also that~$\nabla|x|_a=Ax/|x|_a$,
by the co-area formula, we see that
\begin{eqnarray*}
\int_{\R^n}f(x)\;dx&=&\int_{\R^n}\frac{f(x)}{\big|\nabla|x|_a\big|}\big|\nabla|x|_a\big|\;dx\\
&=&\int_{\R^n}\frac{f(x)\,|x|_a}{|Ax|}\big|\nabla|x|_a\big|\;dx
\\&=&
\int_0^{+\infty}\int_{\partial E_a}\frac{f(t\theta) \,|t\theta|_a}{|A t\theta|} \; d{\mathcal{H}}^{n-1}_\theta\; t^{n-1} \;dt
\\&=&
\int_0^{+\infty}\int_{\partial E_a}\frac{f(t\theta)}{|A\theta|} \; d{\mathcal{H}}^{n-1}_\theta\; t^{n-1} \;dt.
\end{eqnarray*}

Using this formula and the representation in~\eqref{pv-def}, we find that
\begin{align*}
{(-\Delta)}^s v_\beta(x) 
&= 
c_{n,s}\pv\int_{\R^n}\frac{v_\beta(x)-v_\beta(x+y)}{{|y|}^{n+2s}}\;dy \\
&=
c_{n,s}\pv\int_{\partial E_a}\int_0^{+\infty}\frac{v_\beta(x)-v_\beta(x+t\theta)}{t^{1+2s}}\;dt\;\frac{d{\mathcal{H}}^{n-1}_\theta}{|\theta|^{n+2s}|A\theta|} \\
&= 
\frac{c_{n,s}}2\int_{\partial E_a}\pv\int_\R\frac{v_\beta(x)-v_\beta(x+t\theta)}{{|t|}^{1+2s}}\;dt\;\frac{d{\mathcal{H}}^{n-1}_\theta}{|\theta|^{n+2s}|A\theta|}.
\end{align*}

We now focus on the inner integral.
We apply the change of variables
\begin{align*}
t &= -\langle x,\theta\rangle_a + \tau \sqrt{1-|x|_a^2+\langle x,\theta\rangle_a^2}
\end{align*}
and, for~$k\in\{0,1\}$, we see that
\begin{align*}
& 1-|x+kt\theta|_a^2=1-\left|
x-k\langle x,\theta\rangle_a\theta+k\tau\theta \sqrt{1-|x|_a^2+\langle x,\theta\rangle_a^2}\right|_a^2 \\
&= 
1-|x|_a^2-k^2\langle x,\theta\rangle_a^2-k^2\tau^2\Big(
1-|x|_a^2+\langle x,\theta\rangle_a^2\Big)
+2k\langle x,\theta\rangle_a^2\\&\qquad
-2k(1-k)\langle x,\theta\rangle_a\tau\sqrt{1-|x|_a^2+\langle x,\theta\rangle_a^2} \\
&=\left(
\frac{1-|x|_a^2}{1-|x|_a^2+\langle x,\theta\rangle_a^2}+(2k-k^2)\frac{\langle x,\theta\rangle_a^2}{1-|x|_a^2+\langle x,\theta\rangle_a^2}-k^2\tau^2
-2k(1-k)\tau\frac{\langle x,\theta\rangle_a}{\sqrt{1-|x|_a^2+\langle x,\theta\rangle_a^2}}
\right)\\
&\qquad \times \Big( 1-|x|_a^2+\langle x,\theta\rangle_a^2\Big) \\
&=\left(
1-(1-k)^2\frac{\langle x,\theta\rangle_a^2}{1-|x|_a^2+\langle x,\theta\rangle_a^2}-k^2\tau^2
-2k(1-k)\tau\frac{\langle x,\theta\rangle_a}{\sqrt{1-|x|_a^2+\langle x,\theta\rangle_a^2}}
\right) \Big( 1-|x|_a^2+\langle x,\theta\rangle_a^2\Big) \\
&=\left(1-\left(
(1-k)\frac{\langle x,\theta\rangle_a}{\sqrt{1-|x|_a^2+\langle x,\theta\rangle_a^2}}+k\tau\right)^2\right)\Big( 1-|x|_a^2+\langle x,\theta\rangle_a^2\Big).
\end{align*}

We now recall the function~$u_\beta$ introduced in~\eqref{ytruieow54738564783574839fgdhsja}.
We observe that, if~$x\in E_a$ then~$\langle x,\theta\rangle_a< {\sqrt{1-|x|_a^2+\langle x,\theta\rangle_a^2}}$, and therefore
utilizing the formula above with~$k=0$ we have that
\begin{eqnarray*}
v_\beta(x)&=&\big(1-|x|_a^2\big)^\beta\\&=&
\left(1-\frac{\langle x,\theta\rangle_a^2}{{1-|x|_a^2+\langle x,\theta\rangle_a^2}} \right)^\beta\Big( 1-|x|_a^2+\langle x,\theta\rangle_a^2\Big)^\beta 
\\&=&
u_\beta\left(\frac{\langle x,\theta\rangle_a}{\sqrt{1-|x|_a^2+\langle x,\theta\rangle_a^2}}\right)\Big( 1-|x|_a^2+\langle x,\theta\rangle_a^2\Big)^\beta
.\end{eqnarray*}
Similarly, exploiting the formula with~$k=1$, we get that
$$ 1-|x+t\theta|_a^2=\big(1-\tau^2\big)\Big( 1-|x|_a^2+\langle x,\theta\rangle_a^2\Big),$$
and thus~$x+t\theta\in E_a$ if and only if~$|\tau|<1$. This entails that
\begin{eqnarray*}
v_\beta(x+t\theta)=
u_\beta(\tau)\Big( 1-|x|_a^2+\langle x,\theta\rangle_a^2\Big)^\beta
. \end{eqnarray*}

We thus deduce that
\begin{align*}
&\pv\int_\R\frac{v_\beta(x)-v_\beta(x+t\theta)}{{|t|}^{1+2s}}\;dt
\\&=
\Big(1-|x|_a^2+\langle x,\theta\rangle_a^2\Big)^{\beta-s}  \pv\int_\R\frac{\displaystyle
u_\beta\left(\frac{\langle x,\theta\rangle_a}{\sqrt{1-|x|_a^2+\langle x,\theta\rangle_a^2}}\right)
-u_\beta(\tau)}
{\displaystyle\bigg|\tau-\frac{\langle x,\theta\rangle_a}{\sqrt{1-|x|_a^2+\langle x,\theta\rangle_a^2}}\bigg|^{1+2s}}\;d\tau.
\end{align*}
We let~$\tilde x_\theta := \langle x,\theta\rangle_a \, (1-|x|_a^2+\langle x,\theta\rangle_a^2)^{-1/2}$ and, using~\eqref{fl-ball-1d} and~\eqref{hyp-transf1}, we find that
\begin{align*}
& \pv\int_\R\frac{\displaystyle
u_\beta\left(\frac{\langle x,\theta\rangle_a}{\sqrt{1-|x|_a^2+\langle x,\theta\rangle_a^2}}\right)
-u_\beta(\tau)}
{\displaystyle\bigg|\tau-\frac{\langle x,\theta\rangle_a}{\sqrt{1-|x|_a^2+\langle x,\theta\rangle_a^2}}\bigg|^{1+2s}}\;d\tau  \\
&= \pv\int_\R\frac{\displaystyle
u_\beta\left(\tilde x_\theta\right)
-u_\beta(\tau)}
{\displaystyle|\tau-\tilde x_\theta|^{1+2s}}\;d\tau\\
& =
-\frac{\Gamma(\beta+1)\,\Gamma(-s)}{\Gamma(\beta-s+1)}
\big(1-\widetilde x_\theta^2\big)^{\beta-2s}
\hf\left(-s,\beta-s+\frac12;\frac12\,\bigg|\,\widetilde x_\theta^2\right) \\ 
& = 
-\frac{\Gamma(\beta+1)\,\Gamma(-s)}{\Gamma(\beta-s+1)}
\hf\left(\frac12+s,s-\beta;\frac12\,\bigg|\,\widetilde x_\theta^2\right).
\end{align*}
Therefore, for~$x\in E_a$,
\begin{align*}&
{(-\Delta)}^s u_\beta(x)\\& = 
-\frac{\Gamma(\beta+1)\,\Gamma(-s)}{\Gamma(\beta-s+1)}\frac{c_{n,s}}2
\int_{\partial E_a}\big( 1-|x|_a^2+\langle x,\theta\rangle_a^2\big)^{\beta-s} \;
\hf\left(\frac12+s,s-\beta;\frac12\Big|\widetilde x_\theta^2\right) \; \frac{d{\mathcal{H}}_\theta^{n-1}}{|\theta|^{n+2s}|A\theta|},
\end{align*}
which gives the desired result in~\eqref{fl-ellipsoids}.

In particular, 
when~$\beta=s$ we use~\eqref{propsym75655535} and~\eqref{hyp(0)}
to obtain that
\begin{eqnarray*}{(-\Delta)}^s u_s(x)&=&
-\Gamma(1+s)\,\Gamma(-s) \frac{c_{n,s}}2
\int_{\partial E_a}
\hf\left(\frac12+s,0;\frac12\Big|\widetilde x_\theta^2\right) \; \frac{d{\mathcal{H}}_\theta^{n-1}}{|\theta|^{n+2s}|A\theta|}
\\&=&-\Gamma(1+s)\,\Gamma(-s) \frac{c_{n,s}}2
\int_{\partial E_a}\frac{d{\mathcal{H}}_\theta^{n-1}}{|\theta|^{n+2s}|A\theta|}
,\end{eqnarray*}
which entails the desired result, recalling the expression of~$c_{n,s}$
in~\eqref{cns}.

When~$\beta=s-1$, we deduce from~\eqref{sing1d} and the computations
above that~$(-\Delta)^s u_{s-1}=0$ in~$E_a$.

When~$\beta=s+m$ with~$m\in\N$, we use~\eqref{hyp-poly}
and obtain that~$(-\Delta)^s u_{s+m}$ is a polynomial of degree~$2m$
in~$E_a$.
\end{proof}

It is worth mentioning that when~$A$ coincides with the identity matrix, 
then~$E_a=B_1$ and~\eqref{fl-ellipsoids} simplifies drastically.

\begin{theorem}\label{thm:flonballs}
If~$A$ is the identity matrix then~$E_a=B_1$ and, for any~$x\in B_1$, we have that
\begin{align*}
{(-\Delta)}^s u_\beta(x) =
-\frac{c_{n,s}\,\pi^{n/2}\,\Gamma(\beta+1)\,\Gamma(-s)}{\Gamma(\beta-s+1)\,\Gamma(\frac{n}2)}
\hf\left(\frac{n}2+s,s-\beta;\frac{n}2\bigg||x|^2\right).
\end{align*}
\end{theorem}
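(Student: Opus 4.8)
The plan is to deduce the statement from the ellipsoidal formula~\eqref{fl-ellipsoids} by specializing to $A=I$ and then evaluating the integral over the sphere that appears there. When $A$ is the identity matrix one has $E_a=B_1$, $\partial E_a=\mathbb{S}^{n-1}$, $\langle x,\theta\rangle_a=x\cdot\theta$, $|x|_a=|x|$, $v_1(x)=1-|x|^2$, and $|\theta|=|A\theta|=1$ for every $\theta\in\mathbb{S}^{n-1}$, so the weight $|\theta|^{-n-2s}|A\theta|^{-1}$ disappears and~\eqref{fl-ellipsoids} becomes
\[
{(-\Delta)}^s u_\beta(x)=-\frac{c_{n,s}\,\Gamma(\beta+1)\,\Gamma(-s)}{2\,\Gamma(\beta-s+1)}\int_{\mathbb{S}^{n-1}}\big(1-|x|^2+(x\cdot\theta)^2\big)^{\beta-s}\,\hf\left(\tfrac12+s,\,s-\beta;\,\tfrac12\,\Big|\,\tfrac{(x\cdot\theta)^2}{1-|x|^2+(x\cdot\theta)^2}\right)d{\mathcal H}^{n-1}_\theta
\]
(the overall sign being pinned down by tracking the constants; e.g.\ the case $\beta=s$ must give a positive constant). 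Thus the whole task is to prove that this spherical integral equals $\frac{2\pi^{n/2}}{\Gamma(n/2)}\,\hf\big(\tfrac n2+s,\,s-\beta;\,\tfrac n2\,\big|\,|x|^2\big)$; for $n=1$ this is already contained in~\eqref{c0c0c0c0c0c0c0c}, so I would assume $n\ge2$.

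After rotating so that $x=|x|e_1$, the integrand depends on $\theta$ only through $\theta_1$, so I would use the slicing identity $\int_{\mathbb{S}^{n-1}}g(\theta_1)\,d{\mathcal H}^{n-1}=|\mathbb{S}^{n-2}|\int_{-1}^1 g(t)\,(1-t^2)^{(n-3)/2}\,dt$, expand the hypergeometric function in its power series in the last argument, and interchange sum and integral — legitimate since $\frac{|x|^2t^2}{1-|x|^2+|x|^2t^2}\le|x|^2<1$ uniformly in $t\in[-1,1]$, giving a uniformly convergent series with an integrable dominating function. The $k$-th term then involves $\int_{-1}^1 t^{2k}\big(1-|x|^2(1-t^2)\big)^{\beta-s-k}(1-t^2)^{(n-3)/2}\,dt$; by parity and the substitution $\sigma=1-t^2$ this equals $\int_0^1\sigma^{(n-3)/2}(1-\sigma)^{k-1/2}\big(1-|x|^2\sigma\big)^{-(k+s-\beta)}\,d\sigma$, which is exactly Euler's integral representation~\eqref{hyp:int1} with $a=k+s-\beta$, $b=\tfrac{n-1}{2}$, $c=k+\tfrac n2$, hence equals $\frac{\Gamma(\frac{n-1}{2})\,\Gamma(k+\frac12)}{\Gamma(k+\frac n2)}\,\hf\big(k+s-\beta,\,\tfrac{n-1}{2};\,k+\tfrac n2\,\big|\,|x|^2\big)$.

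The last step is to expand this inner hypergeometric too, turning the spherical integral into a double series in $k$ and $j$. Writing $\Gamma(k+\tfrac12)=\sqrt\pi\,(\tfrac12)_k$ and $\Gamma(k+\tfrac n2)=\Gamma(\tfrac n2)\,(\tfrac n2)_k$, then using $(s-\beta)_k(s-\beta+k)_j=(s-\beta)_{k+j}$ and $(\tfrac n2)_k(\tfrac n2+k)_j=(\tfrac n2)_{k+j}$ to regroup by total degree $m=k+j$, the inner $k$-sum becomes $\sum_{k=0}^m\frac{(\frac12+s)_k}{k!}\,\frac{(\frac{n-1}{2})_{m-k}}{(m-k)!}=\frac{(\frac n2+s)_m}{m!}$, which is the Chu--Vandermonde identity (equivalently the Cauchy product of $(1-z)^{-(1/2+s)}$ with $(1-z)^{-(n-1)/2}$). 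The double series therefore collapses to $\hf\big(\tfrac n2+s,\,s-\beta;\,\tfrac n2\,\big|\,|x|^2\big)$, and since $|\mathbb{S}^{n-2}|=2\pi^{(n-1)/2}/\Gamma(\tfrac{n-1}{2})$ (see~\eqref{measure-n-sphere}) the prefactor $|\mathbb{S}^{n-2}|\cdot\sqrt\pi\,\Gamma(\tfrac{n-1}{2})/\Gamma(\tfrac n2)$ simplifies to $2\pi^{n/2}/\Gamma(\tfrac n2)$. Inserting this into the reduced form of~\eqref{fl-ellipsoids} produces exactly the claimed identity. The main obstacle is precisely this double-series computation — hitting the Euler integral with the right parameters and then doing the Pochhammer bookkeeping so that Chu--Vandermonde applies — whereas the slicing of the sphere and the interchange of summation and integration are routine.
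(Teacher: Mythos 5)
Your proposal is correct, but the key computation — the evaluation of the spherical integral — follows a genuinely different route from the paper's. The paper first applies the Pfaff transformation~\eqref{hyp-transf3} under the integral sign, rewriting $\hf\big(\tfrac12+s,s-\beta;\tfrac12\big|\widetilde x_\theta^2\big)$ so that a factor $(1-|x|^2)^{\beta-s}$ comes out and the remaining hypergeometric has argument $-\tfrac{|x|^2\theta_1^2}{1-|x|^2}$; after slicing the sphere, each term of its series then contributes only the Beta integral $\int_0^1 t^{k-1/2}(1-t)^{(n-3)/2}\,dt$, which simply replaces the third parameter $\tfrac12$ by $\tfrac n2$ and yields the single series $\hf\big(-s,s-\beta;\tfrac n2\big|-\tfrac{|x|^2}{1-|x|^2}\big)$, converted to the stated form by a second application of~\eqref{hyp-transf3}. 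You instead expand directly in the original argument, identify each $\theta_1$-integral as Euler's integral~\eqref{hyp:int1} with the $k$-dependent parameters $\big(k+s-\beta,\tfrac{n-1}2;k+\tfrac n2\big)$, and collapse the resulting double series by the Chu--Vandermonde convolution $\sum_{k=0}^m\tfrac{(\frac12+s)_k(\frac{n-1}2)_{m-k}}{k!\,(m-k)!}=\tfrac{(\frac n2+s)_m}{m!}$. Both routes work: the paper's buys a lighter computation (one series, no Vandermonde step) at the cost of the two Pfaff transformations, while yours avoids~\eqref{hyp-transf3} entirely but uses the Vandermonde identity, which is not among the facts in the paper's appendix, so you should include its one-line justification via the Cauchy product $(1-z)^{-(\frac12+s)}(1-z)^{-\frac{n-1}2}=(1-z)^{-(\frac n2+s)}$, as you indicate. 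Your remaining bookkeeping — the Pochhammer regrouping, the prefactor $|\mathbb{S}^{n-2}|\,\sqrt\pi\,\Gamma(\tfrac{n-1}2)/\Gamma(\tfrac n2)=2\pi^{n/2}/\Gamma(\tfrac n2)$, the sign pinned down by the $\beta=s$ case (consistent with the form of~\eqref{fl-ellipsoids} actually used in the paper's proof), and the separate treatment of $n=1$ via~\eqref{c0c0c0c0c0c0c0c} — checks out.
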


\begin{proof}
Under our assumptions, equation~\eqref{fl-ellipsoids} reduces to
\begin{align*}
{(-\Delta)}^s u_\beta(x) = 
-\frac{c_{n,s}\,\Gamma(\beta+1)\,\Gamma(-s)}{2\Gamma(\beta-s+1)}
\int_{\partial B_1}\big( 1-|x|^2+\langle x,\theta\rangle^2\big)^{\beta-s} \;
\hf\left(\frac12+s,s-\beta;\frac12\Big|\widetilde x_\theta^2\right) \; d{\mathcal{H}}_\theta^{n-1},
\end{align*}
where~$\tilde x_\theta := \langle x,\theta\rangle \, (1-|x|^2+\langle x,\theta\rangle^2)^{-1/2}$.

By spherical symmetry, we can suppose that~$x=|x|e_1$, which gives
\begin{align*}
& {(-\Delta)}^s u_\beta(x)  \\
&=
-\frac{c_{n,s}\,\Gamma(\beta+1)\,\Gamma(-s)}{2\Gamma(\beta-s+1)}
\int_{\partial B_1}\big(1-|x|^2+|x|^2\theta_1^2\big)^{\beta-s} \;
\hf\left(\frac12+s,s-\beta;\frac12\bigg|\frac{|x|^2\theta_1^2}{1-|x|^2+|x|^2\theta_1^2} \right) \; d{\mathcal{H}}_\theta^{n-1}.
\end{align*}
To compute this expression, we transform the hypergeometric function via~\eqref{hyp-transf3} by writing
\begin{align*}
\hf\left(\frac12+s,s-\beta;\frac12\bigg|\frac{|x|^2\theta_1^2}{1-|x|^2+|x|^2\theta_1^2}\right)
=
\left(1+\frac{|x|^2\theta_1^2}{1-|x|^2}\right)^{s-\beta}
\hf\left(-s,s-\beta;\frac12\bigg|-\frac{|x|^2\theta_1^2}{1-|x|^2}\right)
\end{align*}
and therefore
\begin{align*}
& {(-\Delta)}^s u_\beta(x) \\&= 
-\frac{c_{n,s}\,\Gamma(\beta+1)\,\Gamma(-s)}{2\Gamma(\beta-s+1)}
\big(1-|x|^2\big)^{\beta-s}\int_{\partial B_1}
\hf\left(-s,s-\beta;\frac12\bigg|-\frac{|x|^2\theta_1^2}{1-|x|^2}\right) \; d{\mathcal{H}}_\theta^{n-1} \\
&=
-\frac{c_{n,s}\,|\mathbb{S}^{n-2}|
\,\Gamma(\beta+1)\,\Gamma(-s)}{\Gamma(\beta-s+1)}
\big(1-|x|^2\big)^{\beta-s}\int_0^1
\hf\left(-s,s-\beta;\frac12\bigg|-\frac{|x|^2\theta_1^2}{1-|x|^2}\right)\,\big(1-\theta_1^2\big)^{\frac{n-3}2} \; d\theta_1.
\end{align*}

To evaluate the integral in~$\theta_1$, 
we use the definition of hypergeometric function~$\hf$, see~\eqref{def2:hyp}, finding that
\begin{align*}
& 
\int_0^1\hf\left(-s,s-\beta;\frac12\bigg|-\frac{|x|^2\theta_1^2}{1-|x|^2}\right)\,\big(1-\theta_1^2\big)^{\frac{n-3}2} \; d\theta_1 \\
&=
\frac{\Gamma(\frac12)}{\Gamma(-s)\Gamma(s-\beta)}\sum_{k=0}^{+\infty}
\frac{\Gamma(-s+k)\,\Gamma(s-\beta+k)}{\Gamma(\frac12+k)}\frac1{k!}\left(-\frac{|x|^2}{1-|x|^2}\right)^k
\int_0^1\theta_1^{2k}\,\big(1-\theta_1^2\big)^{\frac{n-3}2} \; d\theta_1 \\
&=
\frac{\Gamma(\frac12)}{2\Gamma(-s)\Gamma(s-\beta)}\sum_{k=0}^{+\infty}
\frac{\Gamma(-s+k)\,\Gamma(s-\beta+k)}{\Gamma(\frac12+k)}\frac1{k!}\left(-\frac{|x|^2}{1-|x|^2}\right)^k
\int_0^1 t^{k-1/2}\,(1-t)^{\frac{n-3}2} \; dt \\
&=
\frac{\sqrt\pi\,\Gamma(\frac{n-1}2)}{2\Gamma(-s)\Gamma(s-\beta)}\sum_{k=0}^{+\infty}
\frac{\Gamma(-s+k)\Gamma(s-\beta+k)}{\Gamma(\frac{n}2+k)}\frac1{k!}\left(-\frac{|x|^2}{1-|x|^2}\right)^k \\
&=
\frac{\sqrt\pi\,\Gamma(\frac{n-1}2)}{2\Gamma(\frac{n}2)}
\hf\left(-s,s-\beta;\frac{n}2\bigg|-\frac{|x|^2}{1-|x|^2}\right)
\end{align*}
where we have used~\eqref{beta-identity} and~\eqref{gamma-one-half}.

We thus obtain that
\begin{align*}
{(-\Delta)}^s u_\beta(x) &= 
-\frac{c_{n,s}\,\pi^{n/2}\,\Gamma(\beta+1)\,\Gamma(-s)}{\Gamma(\beta-s+1)\,\Gamma(\frac{n}2)}
\big(1-|x|^2\big)^{\beta-s}\hf\left(-s,s-\beta;\frac{n}2\bigg|-\frac{|x|^2}{1-|x|^2}\right) \\
&=
-\frac{c_{n,s}\,\pi^{n/2}\,
\Gamma(\beta+1)\,\Gamma(-s)}{\Gamma(\beta-s+1)\,\Gamma(\frac{n}2)}
\hf\left(\frac{n}2+s,s-\beta;\frac{n}2\bigg||x|^2\right) 
\end{align*}
by using~\eqref{hyp-transf3} once again.
\end{proof}

Theorem~\ref{thm:flonballs} can be used as a basis for further examples. To this end, given~$\beta>-1$, we consider the function
\begin{align*}
\overline{u}_\beta(x):=\left\lbrace\begin{aligned}
& x_j\big(1-|x|^2\big)^\beta && \text{for }x\in B_1, \\
& 0 && \text{for }x\in\R^n\setminus B_1,
\end{aligned}\right.
\end{align*}
and we have that:

\begin{corollary}
For~$x\in B_1$, it holds that
\begin{align*}
{(-\Delta)}^s \overline{u}_\beta(x) =
-\frac{c_{n,s}(n+2s)\pi^{n/2}\,
\Gamma(\beta+1)\,\Gamma(-s)}{2\Gamma(\beta-s+1)\,\Gamma(\frac{n}2+1)}
\,x_j\,
\hf\left(\frac{n}2+s+1,s-\beta;\frac{n}2+1\bigg||x|^2\right).
\end{align*}
\end{corollary}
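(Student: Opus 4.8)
The plan is to recognize $\overline{u}_\beta$ as a constant multiple of a partial derivative of the function $u_{\beta+1}$ treated in Theorem~\ref{thm:flonballs}, and then to commute that derivative with the operator via Proposition~\ref{prop:exchange}. Indeed, for every $x\in\R^n$,
\[
\partial_{x_j}u_{\beta+1}(x)=\partial_{x_j}\Big(\big(1-|x|^2\big)^{\beta+1}\chi_{B_1}(x)\Big)=-2(\beta+1)\,x_j\big(1-|x|^2\big)^\beta\chi_{B_1}(x)=-2(\beta+1)\,\overline{u}_\beta(x),
\]
and since $\beta>-1$ the function $u_{\beta+1}$ is continuous on $\R^n$ with zero trace on $\partial B_1$, so its distributional gradient has no boundary singular part and equals the $L^1(\R^n)$ function above; thus $\overline{u}_\beta=-\tfrac1{2(\beta+1)}\partial_{x_j}u_{\beta+1}$ is a genuine pointwise (a.e.) equality on all of $\R^n$.

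First I would check that Proposition~\ref{prop:exchange} applies with $u:=u_{\beta+1}$: for $x\in B_1$ pick $r>0$ with $B_r(x)\subset B_1$, where $u_{\beta+1}$ is real-analytic (hence $C^{1+2\sigma}$ for any $\sigma\in(0,1)$), and note that $u_{\beta+1}$ and $\nabla u_{\beta+1}$ are compactly supported and integrable (using $\beta>-1$ for $|\nabla u_{\beta+1}|\sim(1-|x|^2)^\beta$ near $\partial B_1$), so both lie in $L^1_s(\R^n)$. The proposition then gives $(-\Delta)^s\overline{u}_\beta(x)=-\tfrac1{2(\beta+1)}\,\partial_{x_j}\big[(-\Delta)^s u_{\beta+1}\big](x)$. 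Into this I would substitute Theorem~\ref{thm:flonballs} with $\beta$ replaced by $\beta+1$,
\[
(-\Delta)^s u_{\beta+1}(x)=-\frac{c_{n,s}\,\pi^{n/2}\,\Gamma(\beta+2)\,\Gamma(-s)}{\Gamma(\beta-s+2)\,\Gamma(\frac n2)}\,\hf\!\left(\frac n2+s,\ s-\beta-1;\ \frac n2\,\bigg|\,|x|^2\right),
\]
and differentiate in $x_j$ using $\partial_{x_j}|x|^2=2x_j$ together with the differentiation formula $\frac{d}{dz}\hf(a,b;c\,|\,z)=\frac{ab}{c}\hf(a+1,b+1;c+1\,|\,z)$ (immediate from the series~\eqref{def:hyp}) with $a=\frac n2+s$, $b=s-\beta-1$, $c=\frac n2$. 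This produces the factor $x_j$, the hypergeometric function $\hf(\frac n2+s+1,\ s-\beta;\ \frac n2+1\,|\,|x|^2)$ of the statement, and the scalar $\frac{2(\frac n2+s)(s-\beta-1)}{\frac n2}$.

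Finally I would simplify the overall constant with the recursion $\Gamma(z+1)=z\Gamma(z)$ (formula~\eqref{gamma-recursive}): using $\Gamma(\beta+2)=(\beta+1)\Gamma(\beta+1)$, $\Gamma(\beta-s+2)=(\beta-s+1)\Gamma(\beta-s+1)$, $\Gamma(\frac n2+1)=\frac n2\Gamma(\frac n2)$, and the cancellations $s-\beta-1=-(\beta-s+1)$ and $\frac n2+s=\frac{n+2s}2$, everything collapses to the claimed prefactor $-\frac{c_{n,s}(n+2s)\pi^{n/2}\Gamma(\beta+1)\Gamma(-s)}{2\Gamma(\beta-s+1)\Gamma(\frac n2+1)}$. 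The only step needing real care is the first paragraph — justifying the pointwise derivative identity on $\R^n$ and the applicability of Proposition~\ref{prop:exchange} — while everything afterwards is routine bookkeeping with Gamma and hypergeometric identities. (Alternatively, for $s\in(1/2,1)$ one could write $\overline{u}_\beta=x_ju_\beta$ and combine Lemma~\ref{fkoencalmd} with Theorem~\ref{thm:flonballs} and a nonlocal-gradient computation, but the derivative route above handles all $s\in(0,1)$ at once.)
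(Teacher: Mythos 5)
Your proposal is correct and follows essentially the same route as the paper: identify $\overline{u}_\beta=-\tfrac{1}{2(\beta+1)}\partial_{x_j}u_{\beta+1}$, commute the derivative with $(-\Delta)^s$ via Proposition~\ref{prop:exchange}, then invoke Theorem~\ref{thm:flonballs} (with $\beta+1$) and the hypergeometric differentiation formula~\eqref{hyp'}, simplifying the constant with~\eqref{gamma-recursive}. Your extra care in verifying the hypotheses of Proposition~\ref{prop:exchange} and the pointwise validity of the derivative identity is a welcome addition, but the argument itself coincides with the paper's.
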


\begin{proof}
We notice that the following identity holds true:
\begin{align*}
\overline{u}_\beta(x)=-\frac1{2(\beta+1)}\partial_{x_j}u_{\beta+1}(x).
\end{align*}
Therefore, from Proposition~\ref{prop:exchange}
we have that
\begin{eqnarray*}
&& (-\Delta)^s\overline{u}_\beta(x)
=-\frac1{2(\beta+1)}(-\Delta)^s \big(\partial_{x_j}u_{\beta+1}\big)(x)
=-\frac1{2(\beta+1)}
\partial_{x_j}(-\Delta)^s u_{\beta+1}(x)
.\end{eqnarray*}
Hence, the desired claim follows from Theorem~\ref{thm:flonballs}
and formula~\eqref{hyp'}.
\end{proof}

In the next example, we give a class of (unbounded)~$s$-harmonic functions in the ball. This is quite surprising, since classical harmonic functions
with zero Dirichlet datum vanish identically. This is one of the several
differences between the classical and the fractional worlds, see~\cite{getting}
for more of them.

\begin{proposition}
For any~$\theta\in\partial B_1$, the function
\begin{align*}
M_\alpha(x,\theta):=\left\lbrace\begin{aligned}
& \frac{\big(1-|x|^2\big)^\alpha}{{|x-\theta|}^{n-2s+2\alpha}} && \text{if }x\in B_1, \\
& 0 && \text{if }x\in\R^n\setminus B_1,
\end{aligned}\right.
\end{align*}
satisfies
\begin{align*}
(-\Delta)^s M_\alpha(\cdot,\theta)=
4^sc_{1,s}\kappa(\alpha)\frac{\big(1-|x|^2\big)^{\alpha-2s}}{{|x-\theta|}^{n-2s+2\alpha}}
\qquad\text{in }B_1.
\end{align*}
In particular,
\begin{align}\label{xvdgyry3453ygtku9967ry-5ydgd}
(-\Delta)^s M_s(\cdot,\theta)=0
\qquad\text{and}\qquad
(-\Delta)^s M_{s-1}(\cdot,\theta)=0
\qquad\text{in }B_1.
\end{align}
\end{proposition}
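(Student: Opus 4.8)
The plan is to realize $M_\alpha(\cdot,\theta)$ as a Kelvin-type transform of a half-space power function, whose fractional Laplacian is already known from~\eqref{fl-halfspace}, and then to invoke the inversion identity~\eqref{fl-inversion-general}; this is the $n$-dimensional analogue of the argument used above to prove~\eqref{sing1d}. By the rotation invariance in Lemma~\ref{lem:rotation} we may assume $\theta=e_1$. Let $\mathcal{K}:=\mathcal{K}_{\sqrt2,\,e_1}$ be the point inversion in~\eqref{isinforceBIS}, and let $g_\alpha(y):=(y_1)_-^\alpha$ be the power function supported on $\{y_1<0\}$. Reflecting $h_{n,\alpha}$ through $\{y_1=0\}$ (a legitimate operation by Lemma~\ref{lem:rotation}, a reflection being orthogonal) and using~\eqref{fl-halfspace} gives $(-\Delta)^s g_\alpha(y)=c_{1,s}\,\kappa(\alpha)\,(-y_1)^{\alpha-2s}$ for $y_1<0$, with $\kappa(\alpha)$ the constant from Lemma~\ref{lem:halfline}; moreover $g_\alpha\in L^1_s(\R^n)$ exactly because $\alpha\in(-1,2s)$.

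The single computation driving everything is the elementary identity
\begin{equation*}
\big(\mathcal{K}(x)\big)_1=\frac{2(x_1-1)+|x-e_1|^2}{|x-e_1|^2}=\frac{|x|^2-1}{|x-e_1|^2}=-\,\frac{1-|x|^2}{|x-e_1|^2},
\end{equation*}
which shows that $\mathcal{K}$ maps $B_1$ into the open half-space $\{y_1<0\}$ (and $\partial B_1\setminus\{e_1\}$ onto $\{y_1=0\}$). In particular, for $x\in B_1$ the image $\mathcal{K}(x)$ is interior to $\{y_1<0\}$, so $g_\alpha$ is smooth near it; combined with $g_\alpha\in L^1_s(\R^n)$, the hypotheses of Proposition~\ref{prop:inversion-general} are met with $R=\sqrt2$, $x_0=e_1$, $u=g_\alpha$. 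From the definition~\eqref{isinforceBIS2} and the displayed identity we then obtain, for $x\in B_1$,
\begin{equation*}
\big(g_\alpha\big)_{\mathcal{K}}(x)=|x-e_1|^{2s-n}\,g_\alpha\big(\mathcal{K}(x)\big)=|x-e_1|^{2s-n}\left(\frac{1-|x|^2}{|x-e_1|^2}\right)^{\!\alpha}=\frac{(1-|x|^2)^\alpha}{|x-e_1|^{\,n-2s+2\alpha}},
\end{equation*}
while $\big(g_\alpha\big)_{\mathcal{K}}(x)=0$ for $x\notin B_1$ (there $\mathcal{K}(x)$ has positive first coordinate); that is, $\big(g_\alpha\big)_{\mathcal{K}}=M_\alpha(\cdot,e_1)$ almost everywhere.

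It then remains to apply~\eqref{fl-inversion-general}: for $x\in B_1$,
\begin{equation*}
(-\Delta)^s M_\alpha(\cdot,e_1)(x)=(\sqrt2)^{4s}\,|x-e_1|^{-n-2s}\,(-\Delta)^s g_\alpha\big(\mathcal{K}(x)\big)=2^{2s}\,c_{1,s}\,\kappa(\alpha)\,|x-e_1|^{-n-2s}\left(\frac{1-|x|^2}{|x-e_1|^2}\right)^{\!\alpha-2s},
\end{equation*}
and collecting the exponents, together with $2^{2s}=4^s$, produces precisely $4^s c_{1,s}\kappa(\alpha)(1-|x|^2)^{\alpha-2s}|x-e_1|^{-(n-2s+2\alpha)}$. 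Undoing the initial rotation yields the assertion for arbitrary $\theta\in\partial B_1$, and~\eqref{xvdgyry3453ygtku9967ry-5ydgd} follows immediately from $\kappa(s)=\kappa(s-1)=0$ in~\eqref{kappalfa}. The only point requiring care is checking that $\big(g_\alpha\big)_{\mathcal{K}}$ really coincides with $M_\alpha(\cdot,\theta)$ on all of $\R^n$, namely matching the exponent $n-2s+2\alpha$ and confirming the vanishing outside $B_1$, and that the regularity and integrability hypotheses of Proposition~\ref{prop:inversion-general} hold at every $x\in B_1$; the rest is bookkeeping identical in spirit to the proof of~\eqref{sing1d}.
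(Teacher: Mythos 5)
Your proof is correct and follows essentially the same route as the paper: the paper inverts through $\mathcal{K}_{\sqrt2,-e_1}$ acting on $h_{n,\alpha}(y)=(y_1)_+^\alpha$ and treats $\theta=-e_1$, whereas you invert through $\mathcal{K}_{\sqrt2,\,e_1}$ acting on the reflected function $(y_1)_-^\alpha$ — a mirror image of the same argument, with the extra reflection step correctly justified by Lemma~\ref{lem:rotation}. All the key ingredients (Proposition~\ref{prop:inversion-general}, formula~\eqref{fl-halfspace}, rotation invariance, and $\kappa(s)=\kappa(s-1)=0$ from~\eqref{kappalfa}) are used exactly as in the paper's proof.
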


\begin{proof}
We prove the claim for~$\theta=-e_1$.
The general case then follows by the rotational invariance of the fractional Laplacian given by Lemma~\ref{lem:rotation}.

We use the notation in~\eqref{isinforceBIS} with~$R:=\sqrt 2$ and~$x_0:=-e_1$
and, for the facility of the reader, we write~$\mathcal{K}:=\mathcal{K}_{\sqrt 2,-e_1}$. In this way,
\begin{eqnarray*}&&
\mathcal{K}(x) = 2\frac{x+e_1}{{|x+e_1|}^2}-e_1\\{\mbox{and }}&&
\big(\mathcal{K}(x)\big)_1 = 2\frac{x_1+1}{{|x+e_1|}^2}-1=\frac{1-|x|^2}{{|x+e_1|}^2}.
\end{eqnarray*}
We also recall the function~$h_{n,\alpha}$
introduced in~\eqref{yueiwgdsh2wsxcfr56yhnjui890po-09876543}.

{F}rom Proposition~\ref{prop:inversion-general} and
formula~\eqref{fl-halfspace}
we deduce that, in the set~$\big\{(\mathcal{K}(x))_1>0\big\}$,
\begin{equation*}\begin{split}
(-\Delta)^s \big(h_{n,\alpha}\big)_{\mathcal{K}}
(x)
&=4^{s}{|x+e_1|}^{-n-2s}(-\Delta)^s  h_{n,\alpha}\big(\mathcal{K}(x)\big)
\\&
=4^sc_{1,s}\kappa(\alpha){|x+e_1|}^{-n-2s}
\big(\mathcal{K}(x)\big)^{\alpha-2s}_1\\&=
4^sc_{1,s}\kappa(\alpha){|x+e_1|}^{-n-2s}
\left(\frac{1-|x|^2}{{|x+e_1|}^2}
\right)^{\alpha-2s}\\&=
4^sc_{1,s}\kappa(\alpha)\frac{\big(1-|x|^2\big)^{\alpha-2s}}{
{|x+e_1|}^{n-2s+2\alpha}}.
\end{split}\end{equation*}

We observe that
$$
\big(h_{n,\alpha}\big)_{\mathcal{K}}(x) =
{|x+e_1|}^{2s-n}\left(\frac{1-|x|^2}{{|x+e_1|}^2}\right)^\alpha
= M_\alpha(x,-e_1)$$
and thus
$$
(-\Delta)^s M_\alpha(x,-e_1) =
4^sc_{1,s}\kappa(\alpha)\frac{\big(1-|x|^2\big)^{\alpha-2s}}{
{|x+e_1|}^{n-2s+2\alpha}}
\qquad\text{in }B_1,
$$ as desired. 

The claim in~\eqref{xvdgyry3453ygtku9967ry-5ydgd} follows
from these observations and~\eqref{fl-halfspaceBIS}.
\end{proof}

\section{Power functions of the norm}

Let~$\alpha\in(-n,2s)$ and consider the function
\begin{align*}
w_\alpha(x):={|x|}^\alpha
\qquad\text{for any }x\in\R^n\setminus\{0\}.
\end{align*}
The restriction on~$\alpha$ is needed in order to be able to compute the fractional Laplacian
of~$w_\alpha$ in the pointwise sense, as this requires~$w_\alpha$ to lie in~$L^1_s(\R^n)$, as given by~\eqref{w-l1-space}.

In this situation, one obtains the following result:

\begin{proposition}\label{prop:inversion2}
We have that
\begin{align*}
(-\Delta)^s w_\alpha(x)= |x|^{\alpha-2s}(-\Delta)^s w_\alpha(e_1)
\qquad\text{for any }x\in\R^n\setminus\{0\},
\end{align*}
where
\begin{align*}
\text{if }2s<n,\ &\text{ then }\ (-\Delta)^s w_\alpha(e_1)\ 
\left\lbrace\begin{aligned}
& <0 && \text{for }\alpha\in(0,2s) ,\\
& =0 && \text{for }\alpha=0 ,\\
& >0 && \text{for }\alpha\in(2s-n,0), \\
& =0 && \text{for }\alpha=2s-n ,\\
& <0 && \text{for }\alpha\in(-n,2s-n),
\end{aligned}\right. \\
\text{if }n=1\text{ and } s>\frac12,\ &\text{ then }\ (-\Delta)^s w_\alpha(e_1)\ 
\left\lbrace\begin{aligned}
& <0 && \text{for }\alpha\in(2s-1,2s) ,\\
& =0 && \text{for }\alpha=2s-1, \\
& >0 && \text{for }\alpha\in(0,2s-1), \\
& =0 && \text{for }\alpha=0, \\
& <0 && \text{for }\alpha\in(-1,0),
\end{aligned}\right. \\
\text{if }n=1\text{ and } s=\frac12,\ &\text{ then }\ (-\Delta)^s w_\alpha(e_1)\ 
\left\lbrace\begin{aligned}
& <0 && \text{for }\alpha\in(0,1) ,\\
& =0 && \text{for }\alpha=0, \\
& <0 && \text{for }\alpha\in(-1,0).
\end{aligned}\right.
\end{align*}
\end{proposition}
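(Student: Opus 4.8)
The first displayed identity is pure homogeneity plus rotational symmetry. Since $w_\alpha$ is radial and positively homogeneous of degree $\alpha$, Lemma~\ref{lem:rotation} shows that $(-\Delta)^s w_\alpha$ is radial too, while Lemma~\ref{lem:homogeneity} applied with $u=w_\alpha$ and $\lambda=|z|$ gives $(-\Delta)^s w_\alpha(z)=|z|^{\alpha-2s}(-\Delta)^s w_\alpha(|z|^{-1}z)$; combining the two one obtains $(-\Delta)^s w_\alpha(z)=|z|^{\alpha-2s}(-\Delta)^s w_\alpha(e_1)$ for all $z\in\R^n\setminus\{0\}$. Writing $A(\alpha):=(-\Delta)^s w_\alpha(e_1)$ — well defined exactly for $\alpha\in(-n,2s)$, since then $w_\alpha\in C^\infty(\R^n\setminus\{0\})\cap L^1_s(\R^n)$ — the whole proposition reduces to the sign of $A(\alpha)$.

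Two facts are essentially free. First, $A(0)=0$ because $w_0\equiv1$; second, $A(2s-n)=0$ by~\eqref{fundsol} (note $2s-n\in(-n,2s)$ always). Moreover there is a Kelvin-transform symmetry: computing $(w_\alpha)_{\mathcal K}$ as in~\eqref{kelvin-00} gives $(w_\alpha)_{\mathcal K}=w_{2s-n-\alpha}$, so Proposition~\ref{prop:inversion} evaluated at the fixed point $e_1=\mathcal K(e_1)$ yields $A(\alpha)=A(2s-n-\alpha)$ for every $\alpha\in(-n,2s)$. The involution $\alpha\mapsto 2s-n-\alpha$ interchanges the two zeros $0$ and $2s-n$ and swaps the two ``outer'' intervals, so it suffices to determine the sign of $A$ on $\alpha\ge s-\tfrac n2$, the remaining range following by this symmetry.

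The heart of the matter is the explicit evaluation
\begin{align}\label{plan-flpowerconst}
A(\alpha)=2^{2s}\,\frac{\Gamma\!\left(\tfrac{n+\alpha}{2}\right)\,\Gamma\!\left(s-\tfrac{\alpha}{2}\right)}{\Gamma\!\left(-\tfrac{\alpha}{2}\right)\,\Gamma\!\left(\tfrac{n+\alpha}{2}-s\right)}\,,
\end{align}
which I would get by representing $|x|^\alpha$ through its (homogeneous, tempered-distributional) Fourier transform, applying the multiplier $(2\pi|\xi|)^{2s}$ from~\eqref{FORi}, and inverting — this collapses to the classical Fourier transform of $|x|^\alpha$ and a ratio of $\Gamma$ factors; alternatively one evaluates $A(\alpha)=c_{n,s}\,\pv\int_{\R^n}\frac{1-|y|^\alpha}{|e_1-y|^{n+2s}}\,dy$ directly in spherical coordinates, writing the angular integral as a ${}_2F_1$ and the radial one via Beta integrals, in the spirit of Theorem~\ref{thm:flonballs}. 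A couple of sanity checks: as $s\nearrow1$ the right-hand side of~\eqref{plan-flpowerconst} collapses to $-\alpha(n+\alpha-2)=-\Delta w_\alpha(e_1)$, and for $n=1$ it must match Proposition~\ref{prop:fullline}. The one technical nuisance is that $w_\alpha$ is not globally $C^2$, so the representation must be read as an identity of distributions away from the origin; the cleanest fix is to prove~\eqref{plan-flpowerconst} first on a subinterval of $(-n,2s)$ where every step is licit, and then extend it to all of $(-n,2s)$ by observing that both sides are real-analytic in $\alpha$ there.

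Granted~\eqref{plan-flpowerconst}, the signs are immediate. On $(-n,2s)$ the numbers $\tfrac{n+\alpha}{2}$ and $s-\tfrac\alpha2$ are strictly positive, so $A(\alpha)$ has the sign of $\bigl[\Gamma(-\tfrac\alpha2)\Gamma(\tfrac{n+\alpha}2-s)\bigr]^{-1}$. Both $-\tfrac\alpha2$ and $\tfrac{n+\alpha}2-s$ lie in $(-1,\infty)$, so each of the two $\Gamma$'s is positive on $(0,\infty)$, negative on $(-1,0)$, and has a pole at $0$; since $-\tfrac\alpha2>0\iff\alpha<0$ and $\tfrac{n+\alpha}2-s>0\iff\alpha>2s-n$, a short case split according to the ordering of $0$ and $2s-n$ — $2s<n$, or $n<2s$ (forcing $n=1$, $s>\tfrac12$), or $n=2s$ (forcing $n=1$, $s=\tfrac12$) — reproduces exactly the three sign tables in the statement, with the zeros at $\alpha=0$ and $\alpha=2s-n$ as already noted. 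I expect the only genuinely non-routine step to be the derivation of~\eqref{plan-flpowerconst} together with its distributional/analytic-continuation justification; everything else is bookkeeping with Lemmata~\ref{lem:homogeneity} and~\ref{lem:rotation}, Proposition~\ref{prop:inversion}, and elementary properties of $\Gamma$.
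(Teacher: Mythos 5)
Your argument is correct and reaches the same sign tables, but its core is genuinely different from the paper's proof. You share the opening moves — homogeneity plus rotational invariance to reduce everything to $A(\alpha):=(-\Delta)^s w_\alpha(e_1)$, the zeros at $\alpha=0$ and $\alpha=2s-n$ (the latter via~\eqref{fundsol}), and the Kelvin identity $A(\alpha)=A(2s-n-\alpha)$ coming from $(w_\alpha)_{\mathcal{K}}=w_{2s-n-\alpha}$ and~\eqref{fl-inversion} — but from there the paper never computes an explicit constant: it averages the two representations, writing $A(\alpha)=\frac{c_{n,s}}{2}\,\pv\int_{\R^n}\frac{2-|y|^{\alpha}-|y|^{2s-n-\alpha}}{|e_1-y|^{n+2s}}\,dy$, then applies the inversion $y\mapsto y/|y|^2$ to the exterior of $B_1$, and after carefully controlling the mismatch between the excluded ball $B_\eps(e_1)$ and its image ball $B_{\eps/(1-\eps^2)}\big(\tfrac{e_1}{1-\eps^2}\big)$ (Lemmata~\ref{lem-strange-ballssss} and~\ref{ballsss-integral}, which is where the real technical effort of the paper sits) it arrives at~\eqref{cisiamoquasi}, an integral over $B_1$ of $(|y|^\alpha-1)(|y|^{2s-n-\alpha}-1)/|e_1-y|^{n+2s}$, whose sign is read off by inspection. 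You instead go through the closed formula $A(\alpha)=2^{2s}\,\Gamma\big(\tfrac{n+\alpha}{2}\big)\Gamma\big(s-\tfrac{\alpha}{2}\big)\big/\big[\Gamma\big(-\tfrac{\alpha}{2}\big)\Gamma\big(\tfrac{n+\alpha}{2}-s\big)\big]$, which is indeed correct (your $s\nearrow1$ check and the placement of the zeros confirm it), and your Gamma-sign case analysis correctly reproduces all three tables; this buys strictly more than the paper's qualitative conclusion. The trade-off is that the derivation of that formula — the Fourier transform of the homogeneous distribution $|x|^\alpha$ or the pv/Beta/hypergeometric evaluation, together with reconciling the distributional multiplier with the pointwise principal-value operator of~\eqref{pv-def0} and the analytic continuation in $\alpha$ — is precisely the nontrivial part, and in your write-up it is a plan rather than an executed proof. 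The plan is sound (establish the identity on a range of $\alpha$ where every step is classical, then extend by analyticity of $\alpha\mapsto A(\alpha)$, which follows from the symmetrized representation of the principal value and dominated convergence), but be aware that carrying it out rigorously is comparable in weight to the paper's ball-matching lemmas: you have not shortened the work so much as relocated it, in exchange for an explicit constant.
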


We point out that the case~$\alpha:=2s-n$
is also a consequence of formula~\eqref{fundsol} in Proposition~\ref{prop:inversion}, which gives that
\begin{align*}
(-\Delta)^s w_{2s-n}(e_1)=0.
\end{align*}

\begin{proof}[Proof of Proposition~\ref{prop:inversion2}]
The function~$w_\alpha$ is homogeneous of degree~$\alpha$ and radial: as a result,
recalling Lemmata~\ref{lem:rotation} and~\ref{lem:homogeneity}, we see that, for any~$x\in\R^n\setminus\{0\}$,
\begin{align*}
(-\Delta)^s w_\alpha(x)=
|x|^\alpha {(-\Delta)}^s\left( w_\alpha\left(\frac{ x}{|x|}\right)\right)
|x|^{\alpha-2s}(-\Delta)^s w_\alpha
\left(\frac{x}{|x|}\right)
=|x|^{\alpha-2s}(-\Delta)^s w_\alpha(e_1).
\end{align*}

With this observation, if we are able to characterize~$(-\Delta)^s w_\alpha(e_1)$, 
then we fully characterize~$(-\Delta)^s w_\alpha$. To this end, we remark that,
in the notation of~\eqref{kelvin-0} and~\eqref{kelvin-00},
for any~$x\in\R^n\setminus\{0\}$,
\begin{align*}
\big(w_\alpha\big)_{\mathcal{K}}(x)=|x|^{2s-n}w_\alpha\left(\frac{x}{|x|^2}\right)=w_{2s-n-\alpha}(x),
\end{align*}
and, owing to~\eqref{fl-inversion},
\begin{align*}
(-\Delta)^s w_{2s-n-\alpha}(x)=|x|^{-n-2s}(-\Delta)^s w_\alpha\left(
\frac{x}{|x|^2}\right),
\end{align*}
which in particular gives that
\begin{align*}
(-\Delta)^s w_{2s-n-\alpha}(e_1)=(-\Delta)^s w_\alpha(e_1).
\end{align*}

We use this information to write
\begin{align*}
(-\Delta)^s w_\alpha(e_1) &=
\frac12(-\Delta)^s w_\alpha(e_1)+\frac12(-\Delta)^s w_{2s-n-\alpha}(e_1) \\
&=
\frac{c_{n,s}}2\pv\int_{\R^n}\frac{2-|y|^{\alpha}-|y|^{2s-n-\alpha}}{{|e_1-y|}^{n+2s}}\;dy \\
&=
\frac{c_{n,s}}2\lim_{\eps\searrow 0}\int_{\R^n\setminus B_\eps(e_1)}\frac{2-|y|^{\alpha}-|y|^{2s-n-\alpha}}{{|e_1-y|}^{n+2s}}\;dy.
\end{align*}
We now split the integration over~$\R^n\setminus B_\eps(e_1)$ into~$B_1\setminus B_\eps(e_1)$
and~$(\R^n\setminus B_1)\setminus B_\eps(e_1)$.

In~$(\R^n\setminus B_1)\setminus B_\eps(e_1)$ we
apply the change of variable~$y:=z/|z|^2$
(which produces~$dy=dz/|z|^{2n}$, see formula~(2.6.7) and footnote~9 in~\cite{2021arXiv210107941D}).
In this framework, we observe that the condition~$|y-e_1|>\eps$ translates into
$$\left|\frac{z}{|z|^2}-e_1\right|>\eps,$$ which, by~\eqref{inverted-distance},
is equivalent to~$ |z-e_1|>\eps|z|$.

We now remark that
\begin{equation}\label{tyie4875634yhliedshgfliewsyht}
|z-e_1|>\eps|z| \quad{\mbox{ is equivalent to }}\quad 
\left|z-\frac{e_1}{1-\eps^2}\right|>\frac{\eps}{1-\eps^2}.\end{equation}
Indeed,
\begin{align*}
& |z-e_1|^2>\eps^2|z|^2 \\
\Leftrightarrow\ & (1-\eps^2)|z|^2-2z_1+1>0 \\
\Leftrightarrow\ & |z|^2-\frac{2z_1}{1-\eps^2}+\frac1{1-\eps^2}>0 \\
\Leftrightarrow\ & |z|^2-\frac{2z_1}{1-\eps^2}+\frac1{{(1-\eps^2)}^2}>\frac{\eps^2}{{(1-\eps^2)}^2} \\
\Leftrightarrow\ & \Big|z-\frac{e_1}{1-\eps^2}\Big|^2>\frac{\eps^2}{{(1-\eps^2)}^2},
\end{align*}
which establishes~\eqref{tyie4875634yhliedshgfliewsyht}.

Hence, also in light of~\eqref{inverted-distance},
\begin{align*}
\int_{(\R^n\setminus B_1)\setminus B_\eps(e_1)}
\frac{2-|y|^{\alpha}-|y|^{2s-n-\alpha}}{{|e_1-y|}^{n+2s}}\;dy
&=
\int_{B_1\setminus B_{\eps/(1-\eps^2)}(\frac{e_1}{1-\eps^2})}
\frac{2-|z|^{-\alpha}-|z|^{-2s+n+\alpha}}{\big|e_1-\frac{z}{|z|^2}\big|^{n+2s}}\;\frac{dz}{|z|^{2n}} \\
&=
\int_{B_1\setminus B_{\eps/(1-\eps^2)}(\frac{e_1}{1-\eps^2})}
\frac{2|z|^{2s-n}-|z|^{2s-n-\alpha}-|z|^{\alpha}}{{|e_1-z|}^{n+2s}}\;dz.
\end{align*}

\begin{figure}
\centering
\begin{tikzpicture} 
    \draw[->] (-1,0) -- (7,0);
    \node at (6.85,-.25) {$x_1$};
    \draw (0,-4) -- (0,4);
    \node at (-.27,3.85) {$x'$};
    \draw[ultra thick] (3,0) arc (0:110:3);
    \draw[ultra thick] (3,0) arc (0:-110:3);
    \draw[ultra thick,red] (3,0) circle (1.5);
    \filldraw[red] (3,0) circle (2pt);
    \draw[ultra thick,blue] (4,0) circle (2);
    \filldraw[blue] (4,0) circle (2pt);
    \draw[densely dotted,very thick,blue] (4,0) -- (4,2.7);
    \node at (4,3.1) {\color{blue}~$x_1=\frac1{1-\eps^2}$};
    \draw[densely dotted,very thick,red] (3,0) -- (3,3.5);
    \node at (3,3.7) {\color{red}~$x_1=1$};
    \draw[densely dotted,very thick] (2.625,-3) -- (2.625,3);
    \node at (2.625,-3.5) {$x_1=1-\frac{\eps^2}{2}$};
\end{tikzpicture}
\caption{A depiction of the statement of Lemma~\ref{lem-strange-ballssss}:
in black the ball~$B_1$;
in red, the ball~$B_\eps(e_1)$; 
in blue, the ball~$B_{\eps/(1-\eps^2)}(\frac{e_1}{1-\eps^2})$.}
\label{superballs-fig}
\end{figure}

Also, we remark that, for any~$\eps\in(0,1)$,
\begin{align}\label{jdsocndk902wdL}
B_{\eps/(1-\eps^2)}\Big(\frac{e_1}{1-\eps^2}\Big)\cap B_1\subseteq B_\eps(e_1)\cap B_1.
\end{align}
We postpone the proof of this fact to a separate lemma (see Lemma~\ref{lem-strange-ballssss} below and also Figure~\ref{superballs-fig}).

We then have 
\begin{align}
(-\Delta)^s w_\alpha(e_1) &=
\frac{c_{n,s}}2\lim_{\eps\searrow 0}\Bigg(\int_{B_1\setminus B_\eps(e_1)}\frac{2-|y|^{\alpha}-|y|^{2s-n-\alpha}}{{|e_1-y|}^{n+2s}}\;dy
 \nonumber \\
& \qquad
+\int_{B_1\setminus B_{\eps/(1-\eps^2)}(\frac{e_1}{1-\eps^2})}
\frac{2|y|^{2s-n}-|y|^{2s-n-\alpha}-|y|^{\alpha}}{{|e_1-y|}^{n+2s}}\;dy\Bigg) \nonumber \\
&=
c_{n,s}\lim_{\eps\searrow 0}\Bigg(\int_{B_1\setminus B_\eps(e_1)}\frac{1+|y|^{2s-n}-|y|^{\alpha}-|y|^{2s-n-\alpha}}{{|e_1-y|}^{n+2s}}\;dy \nonumber \\
& \qquad
+\frac12\int_{B_\eps(e_1)\setminus B_{\eps/(1-\eps^2)}(\frac{e_1}{1-\eps^2})}
\frac{2|y|^{2s-n}-|y|^{2s-n-\alpha}-|y|^{\alpha}}{{|e_1-y|}^{n+2s}}\;dy\Bigg) \nonumber \\
&=
c_{n,s}\lim_{\eps\searrow 0}\Bigg(\int_{B_1\setminus B_\eps(e_1)}\frac{\big(|y|^\alpha-1\big)\big(|y|^{2s-n-\alpha}-1\big)}{{|e_1-y|}^{n+2s}}\;dy \label{9epijkdm03iru94p3i} \\
& \qquad
+\frac12\int_{B_\eps(e_1)\setminus B_{\eps/(1-\eps^2)}(\frac{e_1}{1-\eps^2})}
\frac{2|y|^{2s-n}-|y|^{2s-n-\alpha}-|y|^{\alpha}}{{|e_1-y|}^{n+2s}}\;dy\Bigg). \label{hiog3jnf9fioj3nl}
\end{align}
The limit in~\eqref{hiog3jnf9fioj3nl} is zero: the proof of this fact is postponed to Lemma~\ref{ballsss-integral}.

We are then left with the limit in~\eqref{9epijkdm03iru94p3i}, which entails that
\begin{align}\label{cisiamoquasi}
(-\Delta)^s w_\alpha(e_1) = c_{n,s}
\int_{B_1}\frac{\big(|y|^\alpha-1\big)\big(|y|^{2s-n-\alpha}-1\big)}{{|e_1-y|}^{n+2s}}\;dy.
\end{align}
By inspection, one can see that, for~$y\in B_1$, if~$2s<n$ then 
\begin{align*}
\big(|y|^\alpha-1\big)\big(|y|^{2s-n-\alpha}-1\big)\ 
\left\lbrace\begin{aligned}
& <0 && \text{for }\alpha\in(0,2s), \\
& =0 && \text{for }\alpha=0 ,\\
& >0 && \text{for }\alpha\in(2s-n,0), \\
& =0 && \text{for }\alpha=2s-n, \\
& <0 && \text{for }\alpha\in(-n,2s-n).
\end{aligned}\right.\  \end{align*}
Similarly, if~$n=1$ and~$ 2s>1$ then
\begin{align*}
\big(|y|^\alpha-1\big)\big(|y|^{2s-n-\alpha}-1\big)\ 
\left\lbrace\begin{aligned}
& <0 && \text{for }\alpha\in(2s-1,2s) ,\\
& =0 && \text{for }\alpha=2s-1, \\
& >0 && \text{for }\alpha\in(0,2s-1) ,\\
& =0 && \text{for }\alpha=0, \\
& <0 && \text{for }\alpha\in(-1,0)
\end{aligned}\right.\ \end{align*}
and if~$n=1=2s$ then
\begin{align*}\big(|y|^\alpha-1\big)\big(|y|^{2s-n-\alpha}-1\big)\ 
\left\lbrace\begin{aligned}
& <0 && \text{for }\alpha\in(0,1) ,\\
& =0 && \text{for }\alpha=0, \\
& <0 && \text{for }\alpha\in(-1,0).
\end{aligned}\right.
\end{align*}
The above signs on the integrand of~\eqref{cisiamoquasi} give the corresponding signs of~$(-\Delta)^s w_\alpha(e_1)$, concluding the proof.
\end{proof}

In order to complete the proof of Proposition~\ref{prop:inversion2}
it remains to prove~\eqref{jdsocndk902wdL} and the fact that the quantity in~\eqref{hiog3jnf9fioj3nl}
is infinitesimal. These proofs are presented here below.

\begin{lemma}\label{lem-strange-ballssss}
For any~$\eps\in(0,1)$, it holds that
\begin{align*}
B_{\eps/(1-\eps^2)}\left(\frac{e_1}{1-\eps^2}\right)\cap B_1\subseteq B_\eps(e_1)\cap B_1.
\end{align*}
\end{lemma}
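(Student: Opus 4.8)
The plan is to reduce the claim to the algebraic equivalence already recorded in~\eqref{tyie4875634yhliedshgfliewsyht}. First I would fix $\eps\in(0,1)$ and take an arbitrary point $z$ lying in $B_{\eps/(1-\eps^2)}\!\left(\frac{e_1}{1-\eps^2}\right)\cap B_1$, so that $|z|<1$ and $\left|z-\frac{e_1}{1-\eps^2}\right|<\frac{\eps}{1-\eps^2}$; the goal is then exactly to show $|z-e_1|<\eps$, since $z\in B_1$ is part of the hypothesis.

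The key observation is that the chain of equivalences displayed in the proof of~\eqref{tyie4875634yhliedshgfliewsyht} --- namely
$|z-e_1|^2>\eps^2|z|^2\iff(1-\eps^2)|z|^2-2z_1+1>0\iff\cdots\iff\big|z-\tfrac{e_1}{1-\eps^2}\big|^2>\tfrac{\eps^2}{(1-\eps^2)^2}$ --- is a genuine sequence of equivalences whose only use of the sign of $1-\eps^2$ is that it is positive (so that dividing through by it preserves the direction of the inequalities). Consequently the same chain holds verbatim with every ``$>$'' replaced by ``$<$'' (or by ``$=$''). Applying the ``$<$'' version, the hypothesis $\left|z-\frac{e_1}{1-\eps^2}\right|<\frac{\eps}{1-\eps^2}$ is equivalent to $|z-e_1|<\eps|z|$.

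Finally I would combine this with $|z|<1$ to obtain $|z-e_1|<\eps|z|<\eps$, i.e.\ $z\in B_\eps(e_1)$; together with $z\in B_1$ this yields the asserted inclusion. I do not expect any real obstacle: the only point worth stating explicitly is that $\eps\in(0,1)$ ensures $1-\eps^2>0$, which is what legitimizes transporting the equivalences of~\eqref{tyie4875634yhliedshgfliewsyht} from strict ``$>$'' to strict ``$<$''. After that, the lemma is a one-line consequence of~\eqref{tyie4875634yhliedshgfliewsyht} and the strict bound $|z|<1$.
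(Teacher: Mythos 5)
Your proof is correct, and it takes a genuinely different and shorter route than the one in the paper. The paper proves the lemma by brute force: it describes both intersections explicitly as regions of the form $\{|x'|^2<\min\{\cdot,\cdot\}\}$ (its displays~\eqref{superballs} and~\eqref{superballs2}), shows that any point of the left-hand set has $x_1\in\left(\frac1{1+\eps},1\right)$, and then compares the two minima separately on the ranges $x_1\in\left(\frac1{1+\eps},1-\frac{\eps^2}2\right)$ and $x_1\in\left[1-\frac{\eps^2}2,1\right)$. You instead observe that the chain of equivalences in~\eqref{tyie4875634yhliedshgfliewsyht} consists only of adding constants to both sides and dividing by the positive quantity $1-\eps^2$, hence it holds verbatim with ``$>$'' replaced by ``$<$'': thus $\left|z-\frac{e_1}{1-\eps^2}\right|<\frac{\eps}{1-\eps^2}$ is equivalent to $|z-e_1|<\eps|z|$, and combined with $|z|<1$ this gives $|z-e_1|<\eps$. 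This is sound and not circular, since~\eqref{tyie4875634yhliedshgfliewsyht} is established in the paper independently of the lemma (and you could in any case reproduce its five-line algebra directly). Your argument in fact yields the slightly sharper statement that every point of $B_{\eps/(1-\eps^2)}\!\left(\frac{e_1}{1-\eps^2}\right)\cap B_1$ satisfies $|z-e_1|<\eps|z|$, i.e.\ lies in the set whose inverted image was being computed, which is conceptually the reason the inclusion holds; what the paper's longer computation buys is an explicit coordinate description of both lens-shaped regions (matching Figure~\ref{superballs-fig}), which it then partly reuses in the measure estimate of Lemma~\ref{ballsss-integral}, but for the inclusion itself your argument is cleaner.
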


\begin{proof}
Notice that
\begin{align}\label{superballs}
\begin{split}
B_\eps(e_1)\cap B_1
&=
\big\{x\in\R^n\;{\mbox{ s.t. }}\;
|x'|^2<1-x_1^2\;\text{ and }\;|x'|^2<\eps^2-x_1^2+2x_1-1\big\} \\
&=
\big\{x\in\R^n\;{\mbox{ s.t. }}\;|x'|^2<\min\{\eps^2-x_1^2+2x_1-1,1-x_1^2\}\big\}.
\end{split}\end{align}
Similarly,
\begin{align}\label{superballs2}\begin{split}
B_{\eps/(1-\eps^2)}\left(\frac{e_1}{1-\eps^2}\right)\cap B_1
&=
\Big\{x\in\R^n\;{\mbox{ s.t. }}\;|x'|^2<1-x_1^2\;\text{ and } \\
&\qquad\qquad
(1-\eps^2)^2|x'|^2<\eps^2-(1-\eps^2)^2x_1^2+2(1-\eps^2)x_1-1\Big\} \\
&=
\left\{x\in\R^n\;{\mbox{ s.t. }}\;|x'|^2<1-x_1^2\;\text{ and }\;|x'|^2<-x_1^2+\frac{2x_1-1}{1-\eps^2}\right\} \\
&=
\left\{x\in\R^n\;{\mbox{ s.t. }}\;|x'|^2<\min\left\{-x_1^2+\frac{2x_1-1}{1-\eps^2},1-x_1^2\right\}\right\}.
\end{split}
\end{align}

Also, we see that
\begin{equation}\label{yuie73224erdsgfa43675}
{\mbox{if~$x\in B_{\eps/(1-\eps^2)}\left(\frac{e_1}{1-\eps^2}\right)\cap B_1$ then~$x_1\in\left(\frac1{1+\eps},1\right)$.}}\end{equation}
Indeed,
\begin{eqnarray*}
1>x_1>\frac{1}{1-\eps^2}-\frac{\eps}{1-\eps^2}=\frac{1-\eps}{1-\eps^2}=\frac{1}{1+\eps},
\end{eqnarray*}
which proves~\eqref{yuie73224erdsgfa43675}.



In light of~\eqref{yuie73224erdsgfa43675}, we consider two cases,
according to whether~$x_1\in\left(\frac1{1+\eps},1-\frac{\eps^2}2\right)$
or~$x_1\in\left[1-\frac{\eps^2}2,1\right)$.

If~$x_1\in\left(\frac1{1+\eps},1-\frac{\eps^2}2\right)$,
we have that
$$\eps^2+2x_1-1<\eps^2+2-\eps^2-1=1
$$
and therefore
$$ 
\min\big\{\eps^2-x_1^2+2x_1-1,1-x_1^2\big\}=\eps^2-x_1^2+2x_1-1.$$
Moreover,
$$ \frac{2x_1-1}{1-\eps^2}<\frac{2-\eps^2-1}{1-\eps^2}=1
$$
and so
$$
\min\left\{-x_1^2+\frac{2x_1-1}{1-\eps^2},1-x_1^2\right\}
=-x_1^2+\frac{2x_1-1}{1-\eps^2}.
$$
Furthermore, 
$$ (2x_1-1)\left(\frac1{1-\eps^2}-1\right)<
(1-\eps^2)\left(\frac1{1-\eps^2}-1\right)=\eps^2,$$
which implies that
\begin{eqnarray*}
-x_1^2+\frac{2x_1-1}{1-\eps^2}
<
\eps^2-x_1^2+2x_1-1.
\end{eqnarray*}
These observations prove that
\begin{align*}
B_{\eps/(1-\eps^2)}\left(\frac{e_1}{1-\eps^2}\right)\cap \left\{
x_1\in\left(\frac1{1+\eps},1-\frac{\eps^2}2\right)\right\}
\subseteq 
B_\eps(e_1)\cap\left\{x_1\in\left(\frac1{1+\eps},1-\frac{\eps^2}2\right)\right\}.
\end{align*}

If instead~$x_1\in\left[1-\frac{\eps^2}2,1\right)$ we have that
$$ \eps^2+2x_1-1>\eps^2+2-\eps^2-1=1$$
which gives that
$$\min\big\{\eps^2-x_1^2+2x_1-1,1-x_1^2\big\}=1-x_1^2.$$
Similarly,
$$ \frac{2x_1-1}{1-\eps^2}>\frac{2-\eps^2-1}{1-\eps^2}=1
$$
and so
$$
\min\left\{-x_1^2+\frac{2x_1-1}{1-\eps^2},1-x_1^2\right\}
=1-x_1^2. $$
Accordingly,
\begin{align*}
B_{\eps/(1-\eps^2)}\left(\frac{e_1}{1-\eps^2}\right)
\cap \left\{x_1\in\left[1-\frac{\eps^2}2,1\right)\right\}\cap B_1
=
B_\eps(e_1)\cap\left\{x_1\in\left[1-\frac{\eps^2}2,1\right)\right\}\cap B_1.
\end{align*}

Gathering these observations, we obtain the desired inclusion.
\end{proof}

\begin{lemma}\label{ballsss-integral}
For any~$s\in(0,1)$ and~$n\in\N\setminus\{0\}$, it holds that
\begin{align}\label{ballsss-integral-eq}
\lim_{\eps\searrow 0}\int_{B_\eps(e_1)\setminus B_{\eps/(1-\eps^2)}(\frac{e_1}{1-\eps^2})}
\frac{2|y|^{2s-n}-|y|^{2s-n-\alpha}-|y|^{\alpha}}{{|e_1-y|}^{n+2s}}\;dy=0.
\end{align}
\end{lemma}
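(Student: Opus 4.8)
The plan is to show that on the (tiny) domain of integration the kernel is controlled by a fixed power of $\eps$ while the numerator is \emph{small} and the domain has \emph{small} measure, and that these two gains together beat the kernel singularity. Abbreviate the domain as $D_\eps:=B_\eps(e_1)\setminus B_{\eps/(1-\eps^2)}\!\big(\tfrac{e_1}{1-\eps^2}\big)$ and set $\lambda:=(1-\eps^2)^{-1}$, so the second ball is $B_{\lambda\eps}(\lambda e_1)$. The proof will rest on two quantitative facts about $D_\eps$.

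First I would establish the geometric containment $D_\eps\subseteq B_\eps(e_1)\setminus B_{r_\eps}(e_1)$ with $r_\eps:=\eps\sqrt{\tfrac{1-2\eps}{1-\eps^2}}$. To see this, for $y\in B_\eps(e_1)$ write $y=e_1+w$ with $|w|<\eps$; expanding the inequality $|y-\lambda e_1|\ge\lambda\eps$ one finds $|w|^2\ge\tfrac{\eps^2}{1-\eps^2}(1+2w_1)$, and since $w_1>-\eps$ the right-hand side is at least $\tfrac{\eps^2}{1-\eps^2}(1-2\eps)=r_\eps^2$, i.e. $|y-e_1|\ge r_\eps$. For $\eps$ small one has $r_\eps\ge\eps/\sqrt2$, so on $D_\eps$ we get $|e_1-y|^{-n-2s}\le 2^{(n+2s)/2}\,\eps^{-n-2s}$; and because $D_\eps$ lies in a thin spherical annulus,
\[
|D_\eps|\le |B_\eps(e_1)|-|B_{r_\eps}(e_1)|=|B_1|\,\eps^n\Big(1-\big(\tfrac{1-2\eps}{1-\eps^2}\big)^{n/2}\Big)\le C\,\eps^{n+1},
\]
using the elementary inequality $1-(1-t)^{n/2}\le n\,t$ for $t\in[0,1]$ with $t=1-\tfrac{1-2\eps}{1-\eps^2}=\tfrac{\eps(2-\eps)}{1-\eps^2}\le C\eps$.

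Second I would use the key cancellation that the numerator \emph{vanishes} at $y=e_1$: the function $g(r):=2r^{2s-n}-r^{2s-n-\alpha}-r^\alpha$ is $C^1$ near $r=1$ (all three exponents are fixed, $\alpha\in(-n,2s)$, and $r\mapsto r^\beta$ is smooth away from $0$) and $g(1)=2-1-1=0$; hence for $y\in D_\eps\subseteq B_\eps(e_1)$, where $|y|$ stays in a fixed compact subset of $(0,\infty)$,
\[
\big|\,2|y|^{2s-n}-|y|^{2s-n-\alpha}-|y|^{\alpha}\,\big|=|g(|y|)|\le C\,\big||y|-1\big|\le C\,|y-e_1|<C\,\eps.
\]
Combining the three bounds,
\[
\left|\int_{D_\eps}\frac{2|y|^{2s-n}-|y|^{2s-n-\alpha}-|y|^{\alpha}}{{|e_1-y|}^{n+2s}}\;dy\right|
\le\frac{C\,\eps}{(\eps/\sqrt2)^{n+2s}}\,|D_\eps|\le\frac{C\,\eps}{\eps^{n+2s}}\,\eps^{n+1}=C\,\eps^{2-2s},
\]
which tends to $0$ as $\eps\searrow0$ because $s<1$.

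The genuine subtlety — and the point where a careless estimate fails — is that one needs \emph{both} improvements simultaneously: using only the crude volume bound $|D_\eps|\le|B_\eps(e_1)|\sim\eps^n$ together with mere boundedness of the numerator yields $O(\eps^{1-2s})$, which does not vanish for $s\ge\tfrac12$. It is the extra factor $\eps$ coming from $g(1)=0$ (ultimately from the symmetrization $2-1-1=0$ in the numerator) together with the sharp $|D_\eps|=O(\eps^{n+1})$, obtained from the annular containment $D_\eps\subseteq B_\eps(e_1)\setminus B_{r_\eps}(e_1)$, that produces the harmless exponent $2-2s$. Note that Lemma~\ref{lem-strange-ballssss} already records the weaker qualitative inclusion $D_\eps\subseteq B_1$, which is consistent with, but not quantitative enough for, the present estimate; the argument above is self-contained and does not need it.
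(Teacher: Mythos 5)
Your proof is correct, and it follows the same three-ingredient strategy as the paper: the cancellation of the numerator at $|y|=1$ (in the paper, the first-order Taylor expansion giving $(2s-n)(y_1-1)+o(|y-e_1|)$; in your write-up, the Lipschitz bound for $g$ with $g(1)=0$ — same content), a lower bound $|e_1-y|\gtrsim\eps$ on the integration set, and the measure bound $|D_\eps|\le C\eps^{n+1}$, combining to $O(\eps^{2-2s})$. Where you genuinely diverge is in how the last two ingredients are obtained. The paper proves the distance bound $|e_1-y|\ge\frac{\eps}{1+\eps}$ by locating $e_1$ inside $B_{\eps/(1-\eps^2)}\big(\frac{e_1}{1-\eps^2}\big)$ and measuring the distance to its boundary, and then establishes the measure estimate \eqref{misurapallle2} through the explicit parametrization of the lune in \eqref{palle7565yfhnfekjupalle06588}, followed by a radial integration and a change of variables. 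You instead extract both facts from a single two-line algebraic expansion: writing $y=e_1+w$ and expanding $|y-\lambda e_1|\ge\lambda\eps$ yields $|w|^2\ge\frac{\eps^2}{1-\eps^2}(1+2w_1)\ge r_\eps^2$, i.e.\ the annular containment $D_\eps\subseteq B_\eps(e_1)\setminus B_{r_\eps}(e_1)$, from which the kernel bound and $|D_\eps|\le|B_1|(\eps^n-r_\eps^n)\le C\eps^{n+1}$ follow at once via the elementary inequality $1-(1-t)^{n/2}\le nt$. This is shorter and avoids the explicit lens description altogether (it also, as you note, makes Lemma~\ref{lem-strange-ballssss} unnecessary for this step), at the modest cost of restricting to $\eps$ small enough that $r_\eps$ is real and comparable to $\eps$ — harmless here since only the limit $\eps\searrow0$ matters. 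Your closing remark correctly identifies why both gains are needed: either one alone only yields $O(\eps^{1-2s})$, which fails for $s\ge\frac12$.
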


\begin{proof}
For~$y\in B_\eps(e_1)$, a first order Taylor expansion gives that
\begin{eqnarray*}
&&2|y|^{2s-n}-|y|^{2s-n-\alpha}-|y|^{\alpha}\\
&=&2\big( 1+(2s-n) (y_1-1)\big)-\big(1+(2s-n-\alpha) (y_1-1)\big)
-\big(1+\alpha  (y_1-1)\big)+o(|y-e_1|)
\\&=&(2s-n)(y_1-1)+o(|y-e_1|)
\end{eqnarray*}
and therefore 
\begin{equation*}\begin{split}
&\left|\int_{B_\eps(e_1)\setminus B_{\eps/(1-\eps^2)}(\frac{e_1}{1-\eps^2})}
\frac{2|y|^{2s-n}-|y|^{2s-n-\alpha}-|y|^{\alpha}}{{|e_1-y|}^{n+2s}}\;dy\right|\\
&\qquad\leq 
C\int_{B_\eps(e_1)\setminus B_{\eps/(1-\eps^2)}(\frac{e_1}{1-\eps^2})}
\frac{dy}{{|e_1-y|}^{n+2s-1}}.
\end{split}\end{equation*}

Moreover, if~$y\in B_\eps(e_1)\setminus B_{\eps/(1-\eps^2)}(\frac{e_1}{1-\eps^2})$, then
$$ \frac\eps{1+\eps}\le |e_1-y|\le \eps$$
because, since $e_1\in B_{\eps/(1-\eps^2)}(\frac{e_1}{1-\eps^2})$ for~$\eps$ small, we have that
\begin{align*}
&|e_1-y|\geq\min\left\{|e_1-z|\;{\mbox{ with }}\; z\in\partial B_{\eps/(1-\eps^2)}\left(\frac{e_1}{1-\eps^2}\right)\right\}
\\&\qquad =\left|e_1-\left(\frac1{1-\eps^2}-\frac\eps{1-\eps^2}\right)e_1\right|
=1-\frac1{1+\eps}=\frac\eps{1+\eps}.
\end{align*}
This entails that
\begin{equation}\label{misurapallle}\begin{split}
&\left|\int_{B_\eps(e_1)\setminus B_{\eps/(1-\eps^2)}(\frac{e_1}{1-\eps^2})}
\frac{2|y|^{2s-n}-|y|^{2s-n-\alpha}-|y|^{\alpha}}{{|e_1-y|}^{n+2s}}\;dy\right|\\
&\qquad\leq
C\eps^{1-n-2s}\left|B_\eps(e_1)\setminus B_{\eps/(1-\eps^2)}\left(\frac{e_1}{1-\eps^2}\right)\right|,
\end{split}\end{equation}
up to renaming~$C$.

We now claim that
\begin{align}\label{misurapallle2}
\left|B_\eps(e_1)\setminus B_{\eps/(1-\eps^2)}\left(\frac{e_1}{1-\eps^2}\right)\right|
\leq C\eps^{n+1},
\end{align}
which, together with~\eqref{misurapallle} and the fact that~$s\in(0,1)$, gives~\eqref{ballsss-integral-eq}.

In order to prove~\eqref{misurapallle2}, 
we notice that
\begin{equation}\label{palle7565yfhnfekjupalle06588}\begin{split}&
B_\eps(e_1)\setminus B_{\eps/(1-\eps^2)}\Big(\frac{e_1}{1-\eps^2}\Big)\\&
=
\left\{x\in\R^n\;{\mbox{ s.t. }}\; 1-\sqrt{\eps^2-|x'|^2}<x_1<\frac1{1-\eps^2}-\sqrt{\frac{\eps^2}{(1-\eps^2)^2}-|x'|^2} \;{\mbox{ and }} \;|x'|^2<\eps^2-\frac{\eps^4}4\right\},\end{split}
\end{equation}
see also Figure~\ref{superballs-fig} for a depiction of the situation that we are in.

To check this, we observe that if~$x\in B_\eps(e_1)$ then~$|x'|<\eps$ and
$$  1-\sqrt{\eps^2-|x'|^2}<x_1<  1+\sqrt{\eps^2-|x'|^2}.$$
Also, if~$x\in\R^n\setminus B_{\eps/(1-\eps^2)}\Big(\frac{e_1}{1-\eps^2}\Big)$ then 
either~$|x'|>\frac\eps{1-\eps^2}$ or
\begin{eqnarray*} && |x'|\le\frac\eps{1-\eps^2} \quad {\mbox{ and }}\\
&&{\mbox{either }}\quad 
x_1<\frac1{1-\eps^2}-\sqrt{\frac{\eps^2}{(1-\eps^2)^2}-|x'|^2}
\quad{\mbox{ or }}\quad x_1>\frac1{1-\eps^2}+\sqrt{\frac{\eps^2}{(1-\eps^2)^2}-|x'|^2}
.\end{eqnarray*}
As a consequence, if~$x\in B_\eps(e_1)\setminus B_{\eps/(1-\eps^2)}\left(\frac{e_1}{1-\eps^2}\right)$ then~$ |x'|<\eps $ and
\begin{eqnarray*}1-\sqrt{\eps^2-|x'|^2}<x_1<
\min\left\{1+\sqrt{\eps^2-|x'|^2},\;
\frac1{1-\eps^2}-\sqrt{\frac{\eps^2}{(1-\eps^2)^2}-|x'|^2} \right\}
.\end{eqnarray*}
{F}rom this, it follows that if~$x\in B_\eps(e_1)\setminus B_{\eps/(1-\eps^2)}\left(\frac{e_1}{1-\eps^2}\right)$ then
$$1-\sqrt{\eps^2-|x'|^2} < \frac1{1-\eps^2}-\sqrt{\frac{\eps^2}{(1-\eps^2)^2}-|x'|^2} $$
and therefore
$$ |x'|^2< \eps^2-\frac{\eps^4}4.$$
This also gives that
$$ \frac1{1-\eps^2}-\sqrt{\frac{\eps^2}{(1-\eps^2)^2}-|x'|^2}<1<1+\sqrt{\eps^2-|x'|^2}.$$
Gathering these pieces of information, we obtain~\eqref{palle7565yfhnfekjupalle06588}.

{F}rom~\eqref{palle7565yfhnfekjupalle06588}, we have that
\begin{align*}
& \left|B_\eps(e_1)\setminus B_{\eps/(1-\eps^2)}\left(\frac{e_1}{1-\eps^2}\right)\right| \\
& = 
\int_{\{|x'|<{\sqrt{\eps^2-\eps^4/4}}\}}\int_{1-\sqrt{\eps^2-|x'|^2}}^{\frac1{1-\eps^2}-\sqrt{\frac{\eps^2}{(1-\eps^2)^2}-|x'|^2}}dx_1\;dx' \\
& =
|\mathbb{S}^{n-2}|\int_0^{\sqrt{\eps^2-\eps^4/4}}\rho^{n-2}
\left[
\frac1{1-\eps^2}-\sqrt{\frac{\eps^2}{(1-\eps^2)^2}-\rho^2}
-1+\sqrt{\eps^2-\rho^2}
\right]\;d\rho \\
& =
|\mathbb{S}^{n-2}|\int_0^{\sqrt{\eps^2-\eps^4/4}}\rho^{n-2}\left[
\frac{\eps^2}{1-\eps^2}-\sqrt{\frac{\eps^2}{(1-\eps^2)^2}-\rho^2}
+\sqrt{\eps^2-\rho^2}
\right]\;d\rho .
\end{align*}
Thus, the change of variable~$r:=\rho^2/\eps^2$ gives that
\begin{align*}
& \left|B_\eps(e_1)\setminus B_{\eps/(1-\eps^2)}\left(\frac{e_1}{1-\eps^2}\right)\right| \\
& \leq
|\mathbb{S}^{n-2}|\eps^n\int_0^{{1-\eps^2/4}}r^{\frac{n-3}2}\left[
\frac{\eps}{1-\eps^2}-\sqrt{\frac1{(1-\eps^2)^2}-r}
+\sqrt{1-r}
\right]\;dr .
\end{align*}
We also observe that
\begin{align*}
\sqrt{1-r}-\sqrt{\frac1{(1-\eps^2)^2}-r}\leq 0,
\end{align*}
and therefore
\begin{align*}
\left|B_\eps(e_1)\setminus B_{\eps/(1-\eps^2)}\left(\frac{e_1}{1-\eps^2}\right)\right| 
&\leq
|\mathbb{S}^{n-2}|\frac{\eps^{n+1}}{1-\eps^2}\int_0^{{1-\eps^2/4}}r^{\frac{n-3}2}\;dr\\
& \le
\frac{2|\mathbb{S}^{n-2}|}{n-1}\frac{\eps^{n+1}}{1-\eps^2}.\end{align*}
This proves~\eqref{misurapallle2} and completes the proof of~\eqref{ballsss-integral-eq}.
\end{proof}

\subsection{A particular example for \texorpdfstring{$n=1$}{n=1} and \texorpdfstring{$s=1/2$}{s=1/2}}

We point out that Proposition~\ref{prop:inversion2} provides 
nontrivial~$s$-harmonic functions in~$\R^n\setminus\{0\}$,
namely~$w_{2s-n}$ 
whenever~$2s\neq n$. Since~$s\in(0,1)$, the equality~$2s=n$ can only hold 
for~$n=1$ and~$s=1/2$. 

We fill here below this gap left by Proposition~\ref{prop:inversion2}
by giving the example of a nontrivial~$1/2$-harmonic function on~$\R\setminus\{0\}$ (compare this result
also with Corollary~\ref{FURIEZ}).

\begin{lemma}
We have that
\begin{align*}
(-\Delta)^{1/2}\ln|x|=0
\qquad\text{for any }x\in\R\setminus\{0\}.
\end{align*}
\end{lemma}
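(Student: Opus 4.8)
The plan is to realize $\ln|x|$ as a limit of the power functions $w_\alpha(x)=|x|^\alpha$ (equivalently the functions $r_\alpha$ of Proposition~\ref{prop:fullline} in dimension one) as $\alpha\to 0$, exploiting that $(-\Delta)^{1/2}r_\alpha$ has been computed explicitly. Indeed, for $n=1$, $s=1/2$ and $\alpha\in(-1,1)$, Proposition~\ref{prop:fullline} gives
\begin{align*}
(-\Delta)^{1/2}r_\alpha(x)=c_{1,1/2}\left(\kappa(\alpha)-\frac{\Gamma(\alpha+1)\,\Gamma(1-\alpha)}{\Gamma(2)}\right)|x|^{\alpha-1}\qquad\text{for }x\neq 0,
\end{align*}
with $\kappa$ as in Lemma~\ref{lem:halfline}. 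Since $r_\alpha(x)-1=|x|^\alpha-1$ and $\tfrac{d}{d\alpha}\big|_{\alpha=0}|x|^\alpha=\ln|x|$, the formal idea is that $(-\Delta)^{1/2}\ln|x|=\tfrac{d}{d\alpha}\big|_{\alpha=0}(-\Delta)^{1/2}(r_\alpha-1)(x)$, and the right-hand side above should vanish to first order in $\alpha$ at $\alpha=0$.

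The first step is to make this rigorous at the level of the integral representation, avoiding differentiation under nonrigorous limits. For fixed $x\neq 0$ write, using~\eqref{symmetric-def} (valid since $\ln|x|$ is smooth near $x$, and lies in $L^1_{1/2}(\R)$ because of the logarithmic growth against the weight $(1+|y|)^{-2}$),
\begin{align*}
(-\Delta)^{1/2}\ln|x|=\frac{c_{1,1/2}}2\int_\R\frac{2\ln|x|-\ln|x+y|-\ln|x-y|}{y^2}\;dy.
\end{align*}
Next I would use the elementary pointwise identity $\ln|t|=\lim_{\alpha\searrow 0}\tfrac{|t|^\alpha-1}{\alpha}$ together with the fact that $\tfrac{2-|x+y|^\alpha-|x-y|^\alpha}{\alpha y^2}$ is dominated, uniformly for small $\alpha>0$, by an integrable function of $y$ (near $y=0$ using a second-order Taylor expansion of $|x\pm y|^\alpha$ in $y$, giving a bound $\lesssim |y|^{-2}\cdot\alpha|y|^2/\alpha=O(1)$ times local factors, and near infinity using $|x\pm y|^\alpha\le (1+|x|+|y|)^\alpha$ and monotone/concavity estimates to get a bound like $C\ln(2+|y|)/y^2$). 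By dominated convergence this yields
\begin{align*}
(-\Delta)^{1/2}\ln|x|=\lim_{\alpha\searrow 0}\frac1\alpha\cdot\frac{c_{1,1/2}}2\int_\R\frac{2-|x+y|^\alpha-|x-y|^\alpha}{y^2}\;dy=\lim_{\alpha\searrow 0}\frac{(-\Delta)^{1/2}r_\alpha(x)}{\alpha},
\end{align*}
where in the last equality I used that $(-\Delta)^{1/2}$ annihilates constants so that $(-\Delta)^{1/2}(r_\alpha-1)=(-\Delta)^{1/2}r_\alpha$, and that for $\alpha\in(0,1)$ the principal value is not needed (Lemma~\ref{SENZAPVL} applies, $s=1/2\notin(0,1/2)$ strictly, so one should instead quote the symmetric form~\eqref{symmetric-def} directly, which is the safe route).

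The final step is to evaluate $\lim_{\alpha\searrow 0}\alpha^{-1}(-\Delta)^{1/2}r_\alpha(x)$ using the explicit formula above. Plugging in $s=1/2$: $(-\Delta)^{1/2}r_\alpha(x)=c_{1,1/2}\big(\kappa(\alpha)-\Gamma(1+\alpha)\Gamma(1-\alpha)\big)|x|^{\alpha-1}$. Here $\Gamma(1+\alpha)\Gamma(1-\alpha)=\tfrac{\pi\alpha}{\sin(\pi\alpha)}=1+O(\alpha^2)$, so this factor contributes $1+o(\alpha)$. For $\kappa(\alpha)$ one uses formula~\eqref{thrjekwfbdnsfgdhsjt54y3ut54y3564738} with $2s=1$, namely $\kappa(\alpha)=\tfrac{\alpha}{1}\int_0^1\tfrac{(1-t)^{\alpha-1}-(1-t)^{-\alpha}}{t}\,dt$ for $\alpha\in(0,1)$; as $\alpha\to 0$ this behaves like $\kappa(\alpha)=1+o(1)$ (the bracketed factor $\kappa(\alpha)/\alpha\cdot\alpha$... more precisely $\kappa(\alpha)\to\kappa(0)$ which by continuity and the known value $\kappa(2s-1)=\kappa(0)=\tfrac1{2s}=1$ equals $1$), and the key point is that $\kappa(\alpha)-\Gamma(1+\alpha)\Gamma(1-\alpha)=(1+o(1))-(1+O(\alpha^2))=o(1)$ is itself $o(\alpha)$, hence divided by $\alpha$ still tends to $0$. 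The main obstacle is exactly this last cancellation: I must show not merely that $\kappa(\alpha)\to 1$ but that $\kappa(\alpha)=1+o(\alpha)$ as $\alpha\searrow 0$, i.e.\ that $\kappa'(0)=1$ (matching $\tfrac{d}{d\alpha}\big|_0\Gamma(1+\alpha)\Gamma(1-\alpha)=0$ would be the wrong target — actually one needs $\kappa'(0)=0$). Re-examining: since $\Gamma(1+\alpha)\Gamma(1-\alpha)=1+O(\alpha^2)$ has vanishing first derivative at $0$, and $(-\Delta)^{1/2}\ln|x|=c_{1,1/2}|x|^{-1}\tfrac{d}{d\alpha}\big|_0\big(\kappa(\alpha)-\Gamma(1+\alpha)\Gamma(1-\alpha)\big)$ plus a term from differentiating $|x|^{\alpha-1}$ times $(\kappa(0)-1)=0$, the statement reduces to $\kappa'(0)=0$. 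I would verify $\kappa'(0)=0$ directly from~\eqref{thrjekwfbdnsfgdhsjt54y3ut54y3564738}: writing $\kappa(\alpha)=\alpha\, I(\alpha)$ with $I(\alpha)=\int_0^1 t^{-1}\big((1-t)^{\alpha-1}-(1-t)^{-\alpha}\big)\,dt$, one has $\kappa'(0)=I(0)$, and $I(0)=\int_0^1 t^{-1}\big((1-t)^{-1}-1\big)\,dt=\int_0^1\tfrac{dt}{1-t}$, which diverges — signalling that the correct bookkeeping keeps the $|x|^{\alpha-1}$ factor and the near-$t=1$ divergence of $I$ is cancelled by the $\alpha$ prefactor in a more delicate way. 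The cleanest resolution, which I would adopt, is to bypass $\kappa$ entirely: use instead the Riesz/Fourier representation. By Theorem~\ref{CARATT}, $(-\Delta)^{1/2}$ is the Fourier multiplier $2\pi|\xi|$; and $\ln|x|$ has (tempered-distributional) Fourier transform proportional to $|\xi|^{-1}$ (up to a delta at the origin), so $2\pi|\xi|\cdot|\xi|^{-1}$ is constant, and the delta contributes only a constant, whose coefficient one checks vanishes — or, more elementarily, directly verify from the symmetric representation~\eqref{symmetric-def} that $\int_\R y^{-2}\big(2\ln|x|-\ln|x+y|-\ln|x-y|\big)\,dy=0$ by the substitution $y\mapsto x^2/y$ type manipulations: for $x>0$, split at $|y|=x$, substitute $y\mapsto -y$ to fold, then $y\mapsto x^2/y$ on one piece, and observe the integrand is odd under this inversion combined with $\ln|x+y|+\ln|x-y|=\ln|x^2-y^2|$; the antisymmetry forces the integral to vanish. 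I expect this inversion-symmetry argument — which mirrors the structure already used in the proof of Proposition~\ref{prop:inversion2} — to be the cleanest, with the only real work being the careful tracking of principal values and the Jacobian under $y\mapsto x^2/y$.
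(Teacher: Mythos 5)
Your final, adopted route --- fold the symmetric representation \eqref{symmetric-def}, substitute $y\mapsto x^2/y$ on one piece, and conclude because ``the integrand is odd under this inversion'' --- has a genuine gap: that pointwise antisymmetry is false. Under $y=x^2/z$ the kernel transforms as $\frac{dy}{y^2}=\frac{dz}{x^2}$ (not $\frac{dz}{z^2}$), and the numerator $2\ln|x|-\ln|x^2-y^2|$ becomes $-\ln|x^2-z^2|+2\ln|z|$, which differs from $-\bigl(2\ln|x|-\ln|x^2-z^2|\bigr)$ by $2\ln\frac{|z|}{|x|}$. Concretely, at $x=1$ the folded integrand $h(y)=-\ln|1-y^2|/y^2$ satisfies $h(1/t)\,t^{-2}=-\ln|1-t^2|+2\ln t$, which is not $-h(t)$ (test $t=1/2$). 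The two half-line integrals do in fact cancel, but establishing that requires an actual evaluation (for instance integrating $\ln|1-y^2|/y^2$ by parts and checking the boundary terms), not the symmetry you invoke --- so as written the key step fails.

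The inversion idea does work, but on the non-symmetrized representation \eqref{pv-def}, and that is exactly the paper's proof. There the kernel is invariant, $\frac{dy}{|x-y|^2}\mapsto\frac{dz}{|x-z|^2}$ under $y\mapsto x^2/z$, while the numerator is genuinely odd, $\ln|x|-\ln\bigl(x^2/|z|\bigr)=-\bigl(\ln|x|-\ln|z|\bigr)$; hence the principal value equals its own negative and vanishes (the pv is preserved since the inversion fixes $y=x$ and the numerator vanishes linearly there). The paper carries this out at $x=\pm1$ via $y\mapsto 1/z$ and then reduces general $x$ by the scaling $y=|x|z$, under which the $\ln|x|$ contributions cancel. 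Your two earlier branches are also not proofs as they stand: you abandon the $\alpha$-differentiation yourself, and the Fourier/fundamental-solution remark (essentially Corollary~\ref{FURIEZ}) would still require computing the distributional transform of $\ln|x|$ and identifying the distributional with the pointwise operator away from the origin, neither of which you carry out.
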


\begin{proof}
We start with the evaluation of the~$1/2$-Laplacian at~$x=\pm1$.
We use representation~\eqref{pv-def} and the change of variables~$y:=1/z$
to write that
\begin{align*}&
\Big[(-\Delta)^{1/2}\ln|x|\Big]\Big|_{x=\pm1} =
\frac1\pi\pv\int_\R\frac{-\ln|y|}{{|\pm1-y|}^2}\;dy
=\frac1\pi\pv\int_\R\frac{\ln|z|}{\big|\pm1-\frac1z\big|^2}\frac{dz}{z^2} \\
&\qquad=
\frac1{\pi}\pv\int_\R\frac{\ln|z|}{{|z\pm1|}^2}\;dz
=-\Big[(-\Delta)^{1/2}\ln|x|\Big]\Big|_{x=\pm1},
\end{align*}
which yields that
\begin{align}\label{vwolkvmc 2oink}
\Big[(-\Delta)^{1/2}\ln|x|\Big]\Big|_{x=\pm1}=0.
\end{align}

For a general~$x\in\R^n\setminus\{-1,0,1\}$, we apply the change of variables~$y:=|x|z$
and we use~\eqref{vwolkvmc 2oink} to find that
\begin{eqnarray*}&&
(-\Delta)^{1/2}\ln|x| =
\frac1\pi\pv\int_\R\frac{\ln|x|-\ln|y|}{{|x-y|}^2}\;dy\\&&\qquad
=\frac1{\pi|x|}\pv\int_\R\frac{-\ln|z|}{{|z\pm1|}^2}\;dz 
=\frac1{|x|}\Big[(-\Delta)^{1/2}\ln|x|\Big]\Big|_{x=\pm1}=0,
\end{eqnarray*}
as desired.\end{proof}

\chapter{Liouville-type results}\label{LIOUV:CHAP}

In complex analysis, Liouville's Theorem (named after Joseph Liouville,
who presented it in~1847,
although probably first proved by Augustin-Louis Cauchy in~1844)
states that a bounded entire function of complex variable is necessarily bounded.
This result had a striking impact on the development of complex analysis and reached out to other fields (for instance,
it provides a quick and elegant proof of the Fundamental Theorem of Algebra).

The natural counterpart of this result in the theory of elliptic partial differential equations states that
a bounded harmonic function in~$\R^n$ is necessarily constant. This type of results serves as a cornerstone for a solid regularity and classification theory, see~\cite{MR2569331} and the references therein for a full account of the importance of Liouville-type results.

Liouville's Theorem was extended to the fractional world in~\cite[Lemma~3.2]{MR1936936} and several different versions and refinements are nowadays available in the literature, see e.g.~\cites{MR2759038, MR3311908, MR3318148, MR3385173, MR3482695, MR3511811, MR3538413, MR3959045, MR4149690, MR4395952}. In these pages, we will present an approach proposed by~\cite{MR3348929}, relying on Fourier methods and distribution theory (see~\cite{MR3477075} for a different approach based on Cauchy-type estimates).

\begin{theorem}\label{BiknsoU98ikjmd9}
Assume that
\begin{equation}\label{BiknsoU98ikjmd} \int_{\R^n}\frac{|u(x)|}{1+|x|^{n+2s}}\,dx<+\infty\end{equation}
and that~$(-\Delta)^su=0$ in~$\R^n$.

Then,
\begin{itemize}
\item If~$s\in\left(\frac12,1\right)$, we have that~$u$ is necessarily an affine function.
\item If~$s\in\left(0,\frac12\right]$, we have that~$u$ is necessarily constant.
\end{itemize}
\end{theorem}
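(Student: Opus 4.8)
The strategy is to work in the space of tempered distributions and to exploit the fact that the Fourier symbol of $(-\Delta)^s$ is $(2\pi|\xi|)^{2s}$, which vanishes only at the origin. First I would observe that condition~\eqref{BiknsoU98ikjmd} guarantees $u\in L^1_s(\R^n)$, so that $(-\Delta)^s u$ makes sense at least in the sense of distributions (one can pair against Schwartz functions using the symmetric representation~\eqref{symmetric-def}, since the slow growth $|u(x)|\lesssim$ something integrable against $(1+|x|)^{-n-2s}$ is exactly what is needed to make the double integral converge when the test function is smooth and compactly supported). Thus $u$ is a tempered distribution and $\widehat u$ is well defined in $\mathcal S'(\R^n)$. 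The equation $(-\Delta)^s u=0$ translates, on the Fourier side, into $(2\pi|\xi|)^{2s}\widehat u(\xi)=0$ in $\mathcal S'$, i.e. the distribution $\widehat u$ is supported at the single point $\{0\}$.

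The second step is the classical structure theorem: a tempered distribution supported at the origin is a finite linear combination of the Dirac delta and its derivatives, $\widehat u=\sum_{|\alpha|\le N} c_\alpha\, \partial^\alpha\delta_0$. Taking the inverse Fourier transform, this forces $u$ to be a polynomial, $u(x)=\sum_{|\alpha|\le N} c_\alpha' x^\alpha$. Now the growth hypothesis~\eqref{BiknsoU98ikjmd} enters decisively: since $\int_{\R^n}|u(x)|(1+|x|)^{-n-2s}\,dx<+\infty$, the polynomial $u$ can have degree at most $\lfloor 2s\rfloor$. For $s\in(0,1/2]$ this means $\deg u=0$, i.e. $u$ is constant; for $s\in(1/2,1)$ this means $\deg u\le 1$, i.e. $u$ is affine. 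This already gives the dichotomy in the statement, \emph{provided} one checks that in the range $s\in(1/2,1)$ an affine function indeed satisfies $(-\Delta)^s u=0$ — which is immediate from~\eqref{taylor-def}, since the integrand $u(x)-u(y)-\nabla u(x)\cdot(x-y)$ vanishes identically for affine $u$.

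The main obstacle, and the step requiring the most care, is the very first one: making rigorous the passage from the pointwise identity $(-\Delta)^s u=0$ to the distributional identity $(2\pi|\xi|)^{2s}\widehat u=0$, when $u$ is merely known to lie in $L^1_s(\R^n)$ and is a priori only assumed to solve the equation in whatever sense it is given. One must define $(-\Delta)^s u$ as a distribution via $\langle (-\Delta)^s u,\varphi\rangle:=\langle u,(-\Delta)^s\varphi\rangle$ for $\varphi\in\mathcal S$ (or $\varphi\in C^\infty_c$), verify that $(-\Delta)^s\varphi$ decays fast enough — it decays like $|x|^{-n-2s}$, which is exactly integrable against the $L^1_s$ weight — so that the pairing is finite and continuous, and then check that this distributional notion is consistent with the pointwise one whenever the latter is defined. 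The multiplication $(2\pi|\xi|)^{2s}\widehat u$ also needs justification since $(2\pi|\xi|)^{2s}$ is not smooth at the origin; one handles this by testing against $\varphi$ whose Fourier transform vanishes to high order at $0$, or by a standard regularization/cutoff argument near the origin. A secondary technical point is the elementary but slightly fiddly verification that a polynomial lies in $L^1_s(\R^n)$ if and only if its degree is $\le \lfloor 2s\rfloor$, which follows by passing to polar coordinates and comparing exponents; one should be mindful of the borderline case, where logarithmic factors could a priori appear but do not, since polynomials have no logarithmic growth.
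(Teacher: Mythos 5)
Your proposal follows essentially the same route as the paper's proof: view $u$ as a tempered distribution, test the equation against Schwartz functions to conclude that $\widehat u$ (the paper works with $\check u$, which is the same up to a reflection) is supported at the origin --- the paper's specific device for dodging the non-smoothness of $(2\pi|\xi|)^{2s}$ at the origin is to test against $\psi\in C^\infty_c(\R^n\setminus\{0\})$ and take $\varphi:={\mathcal{F}}^{-1}\big(\psi(\xi)/(2\pi|\xi|)^{2s}\big)$, which is one of the two options you sketch --- then invoke the structure theorem for distributions supported at a single point (Theorem~\ref{DE:CALFA:TG}) to deduce that $u$ is a polynomial, and finally use~\eqref{BiknsoU98ikjmd} to bound its degree.

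The one inaccuracy is in the degree-counting step: a polynomial of degree $d$ satisfies~\eqref{BiknsoU98ikjmd} if and only if $d<2s$, not $d\le\lfloor 2s\rfloor$. The two conditions agree when $2s\notin\N$, but at the borderline $s=\frac12$ a degree-one polynomial produces a logarithmically divergent integral and is therefore excluded; it is precisely this divergence (and not its absence, as your closing remark about logarithmic factors suggests) that forces $u$ to be constant when $s=\frac12$, as the theorem claims --- with the bound stated as $d\le\lfloor 2s\rfloor$ your argument would only give ``affine'' at $s=\frac12$. Once the criterion is corrected to $d<2s$, your argument yields the full dichotomy; the final verification that affine functions are $s$-harmonic for $s>\frac12$ is not needed for this (purely necessary) statement, though it is a harmless sanity check.
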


We observe that if~$u$ is bounded, then condition~\eqref{BiknsoU98ikjmd} is automatically satisfied
(and bounded affine functions are necessarily constant): in this sense, Theorem~\ref{BiknsoU98ikjmd9} is actually even stronger than the classical Liouville's statement. Interestingly, condition~\eqref{BiknsoU98ikjmd} is also required for a pointwise definition
of the fractional Laplacian, and therefore this assumption is sharp (see however Theorem~1.5 in~\cite{MR3988080}
for a more general Liouville-type result allowing polynomial growths).

\begin{proof}[Proof of Theorem~\ref{BiknsoU98ikjmd9}] The core of this argument consists in considering~$\check u$ as a distribution (see Appendix~\ref{APPECC}
for more details about the theory of distributions)
and in proving that, for every~$\psi\in C^\infty_c(\R^n\setminus\{0\})$,
\begin{equation}\label{BiknsoU98ikjmd9232}
\check u(\psi)=0.
\end{equation}

Once~\eqref{BiknsoU98ikjmd9232} is established, the proof of Theorem~\ref{BiknsoU98ikjmd9} can be completed in this way.
By~\eqref{BiknsoU98ikjmd9232}, one has that the distribution~$\check u$ has support contained in the singleton~$\{0\}$.
Accordingly (see Theorem~\ref{DE:CALFA:TG}), one deduces that the distribution~$\check u$ is a finite combination of derivatives of Dirac Delta Functions at the origin, namely there exist~$N\in\N$ and~$\{c_\alpha\}_{{\alpha\in\N^n}\atop{|\alpha|\le N}}$ such that
$$ \check u=\sum_{{\alpha\in\N^n}\atop{|\alpha|\le N}} c_\alpha D^\alpha \delta_{0}.$$
That is, for every~$f\in C^\infty_c(\R^n)$,
\begin{equation}\label{BiknsoU98ikjmd92323}
\int_{\R^n} u(x)\,\check f(x)\,dx=
\check u(f)=\sum_{{\alpha\in\N^n}\atop{|\alpha|\le N}} c_\alpha D^\alpha \delta_{0}(f)
=\sum_{{\alpha\in\N^n}\atop{|\alpha|\le N}} (-1)^{|\alpha|} c_\alpha D^\alpha f(0).
\end{equation}
Now, using that
$$ f(y)=\overline{\int_{\R^n}\widehat f(x)\,e^{2\pi ix\cdot y}\,dy}=
\int_{\R^n}\check f(x)\,e^{-2\pi ix\cdot y}\,dy,$$
we deduce that
$$ D^\alpha f(y)=\int_{\R^n}(-1)^{|\alpha|}(2\pi ix)^\alpha \check f(x)\,e^{2\pi ix\cdot y}\,dx.$$
Plugging this information into~\eqref{BiknsoU98ikjmd92323}, we obtain that
\begin{eqnarray*}
\int_{\R^n} u(x)\,\check f(x)\,dx
=\sum_{{\alpha\in\N^n}\atop{|\alpha|\le N}} c_\alpha \int_{\R^n}(2\pi ix)^\alpha \check f(x)\,dx
\end{eqnarray*}
and therefore
\begin{equation*} u(x)=\sum_{{\alpha\in\N^n}\atop{|\alpha|\le N}}c_\alpha (2\pi ix)^\alpha,\end{equation*}
whence~$u$ is a polynomial.

{F}rom~\eqref{BiknsoU98ikjmd}, we deduce that~$u$ is necessarily affine for all~$s\in(0,1)$, and actually constant if~$s\in\left(0,\frac12\right]$.

This would complete the proof of Theorem~\ref{BiknsoU98ikjmd9}, therefore it remains to prove~\eqref{BiknsoU98ikjmd9232}.
To this end, we argue as follows.
For every~$\varphi$ in the Schwartz space of smooth and rapidly decreasing functions,
\begin{equation}\label{BiknsoU98ikjmd91}
0=\int_{\R^n}u(x)\,(-\Delta)^s\varphi(x)\,dx=\int_{\R^n}u(x)\,{\mathcal{F}}^{-1} \Big(  (2\pi|\xi|)^{2s}\widehat\varphi(\xi)\Big)(x)\,dx.
\end{equation}
Also, if~$\psi\in C^\infty_c(\R^n\setminus\{0\})$, we have that the function~$\beta(\xi):=\frac{\psi(\xi)}{(2\pi|\xi|)^{2s}}$ is also in~$\psi\in C^\infty_c(\R^n\setminus\{0\})$.
Thus, we take~$\varphi:={\mathcal{F}}^{-1}(\beta)$ and deduce from~\eqref{BiknsoU98ikjmd91} that
\begin{equation*}
\begin{split}
0&=\int_{\R^n}u(x)\,{\mathcal{F}}^{-1} \Big(  (2\pi|\xi|)^{2s}\widehat\varphi(\xi)\Big)(x)\,dx\\&=\int_{\R^n}u(x)\,{\mathcal{F}}^{-1} \Big(  (2\pi|\xi|)^{2s}\beta(\xi)\Big)(x)\,dx\\&=\int_{\R^n}u(x)\,{\mathcal{F}}^{-1} \psi (x)\,dx.
\end{split}\end{equation*}
This completes the proof of~\eqref{BiknsoU98ikjmd9232}, as desired.
\end{proof}

As a variant of Theorem~\ref{BiknsoU98ikjmd9}, we point out a (more rigid)
Liouville-type result\footnote{The notation concerning~$(1-\Delta)^s$ is not uniform in the literature. Fpr example,
this operator is denoted 
by~${\mathcal{I}}_{-2s}$ in~\cite{MR0290095},
by~${\mathcal{G}}_{-2s}$ in~\cite{MR1411441}, and
by~$\langle D_x\rangle^s$
in~\cite{MR2884718}. Other notations in the literature
include~$X^s$ and~$\Lambda^s$. To avoid confusion,
we will stick to the notation~$(1-\Delta)^s$ in this book.}
for the operator~$(1-\Delta)^s$.

\begin{theorem} \label{BiknsoU98ikjmd9f3}
Let~$s>0$.
Assume that~$u\in L^1_{\rm loc}(\R^n)$ is a distributional solution of~$(1-\Delta)^su=0$ in~$\R^n$.
Then, $u$ vanishes identically.
\end{theorem}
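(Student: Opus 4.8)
The plan is to mimic the Fourier/distributional argument used for Theorem~\ref{BiknsoU98ikjmd9}, but exploiting the fact that the symbol~$(1+4\pi^2|\xi|^2)^s$ of~$(1-\Delta)^s$ never vanishes, not even at the origin. First I would recall that, since~$u\in L^1_{\rm loc}(\R^n)$ and~$(1-\Delta)^su=0$ in the sense of distributions, we have, for every~$\varphi$ in the Schwartz space,
\begin{equation*}
0=\int_{\R^n}u(x)\,(1-\Delta)^s\varphi(x)\,dx
=\int_{\R^n}u(x)\,{\mathcal{F}}^{-1}\Big((1+4\pi^2|\xi|^2)^s\,\widehat\varphi(\xi)\Big)(x)\,dx,
\end{equation*}
where one must first make sense of~$(1-\Delta)^su$ as a tempered distribution; this requires a preliminary reduction (see the obstacle below) guaranteeing that~$u$ is tempered, so that its Fourier transform~$\check u$ makes sense in~$\mathcal{S}'(\R^n)$.

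The heart of the matter is then the analogue of~\eqref{BiknsoU98ikjmd9232}: I claim that~$\check u(\psi)=0$ for \emph{every}~$\psi\in C^\infty_c(\R^n)$, with no puncture at the origin. Indeed, given such a~$\psi$, the function~$\beta(\xi):=\psi(\xi)/(1+4\pi^2|\xi|^2)^s$ is again in~$C^\infty_c(\R^n)$ (the denominator is smooth, positive and bounded below on the support of~$\psi$), so taking~$\varphi:={\mathcal{F}}^{-1}(\beta)$ in the displayed identity yields
\begin{equation*}
0=\int_{\R^n}u(x)\,{\mathcal{F}}^{-1}\Big((1+4\pi^2|\xi|^2)^s\,\beta(\xi)\Big)(x)\,dx
=\int_{\R^n}u(x)\,{\mathcal{F}}^{-1}\psi(x)\,dx
=\check u(\psi).
\end{equation*}
Since~$\psi\in C^\infty_c(\R^n)$ was arbitrary and~$C^\infty_c(\R^n)$ is dense in~$\mathcal{S}(\R^n)$, the distribution~$\check u$ is the zero distribution, hence~$u=0$. (Note the key contrast with Theorem~\ref{BiknsoU98ikjmd9}: there one could only divide by~$(2\pi|\xi|)^{2s}$ away from~$\xi=0$, so~$\check u$ was merely supported at the origin, leaving room for polynomials; here there is no such room.)

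The main obstacle is making the distributional framework legitimate when~$u$ is only assumed in~$L^1_{\rm loc}$, since then~$\widehat u$ need not exist a priori. The plan to handle this is a local regularization: fix a cutoff~$\chi\in C^\infty_c(\R^n)$ with~$\chi\equiv1$ near a ball, and test the equation against functions supported there; alternatively, mollify~$u$ and use that~$(1-\Delta)^s$ commutes with translations to show~$u\ast\rho_\varepsilon$ is a smooth solution, then apply elliptic regularity for~$(1-\Delta)^s$ (its symbol is elliptic of order~$2s$) to deduce~$u\in C^\infty(\R^n)$. Once~$u$ is smooth, one argues locally: for any~$\psi\in C^\infty_c(\R^n)$ and any~$\varphi$ with~$\widehat\varphi=\beta$ as above, the pairing~$\int u\,(1-\Delta)^s\varphi$ is well defined because~$\varphi\in\mathcal{S}(\R^n)$ and~$u$ has at most polynomial growth after the regularity step (or one truncates and controls the tails using that~$(1-\Delta)^s\varphi$ is Schwartz). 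With~$u$ smooth and tempered the Fourier manipulation above goes through verbatim. I would also remark that no decay hypothesis on~$u$ beyond~$L^1_{\rm loc}$ is needed precisely because the positivity of the Bessel symbol forces~$\check u\equiv0$ rather than merely~$\mathrm{supp}\,\check u\subseteq\{0\}$; this is what makes the statement ``more rigid'' than Theorem~\ref{BiknsoU98ikjmd9}, as advertised.
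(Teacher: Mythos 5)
Your core argument is exactly the paper's: test the equation against~$\varphi:=\mathcal{F}^{-1}\big((1+4\pi^2|\xi|^2)^{-s}\psi\big)$ and use that the Bessel symbol never vanishes to conclude~$\check u(\psi)=0$, hence~$u\equiv0$. Two remarks. First, your restriction to~$\psi\in C^\infty_c(\R^n)$ and the subsequent density step are unnecessary: since~$(1+4\pi^2|\xi|^2)^{-s}$ is smooth, bounded away from zero, and has polynomially bounded derivatives, division by the symbol preserves the whole Schwartz class, which is precisely how the paper argues and is the reason why, in contrast with Theorem~\ref{BiknsoU98ikjmd9}, no puncture at the origin is needed. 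Second, your proposed cure for the temperedness issue is not sound as stated: mollification together with interior elliptic regularity yields smoothness of~$u$ but gives no control on its growth at infinity, so it does not by itself legitimize~$\check u$ as a tempered distribution; the paper simply takes for granted, as the natural reading of a ``distributional solution'' of a Fourier-multiplier equation, that~$u$ (hence~$\check u$) is tempered, and with that reading your argument, stripped of the regularization detour, coincides with the paper's proof.
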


\begin{proof} Let~$T$ be the distribution~$(1-\Delta)^su$. For every~$\varphi$ in the Schwartz space of smooth and rapidly decreasing functions we have that
\begin{eqnarray*}
0=T(\varphi)= \check u\Big[ (1+2\pi|\xi|^2)^s\widehat\varphi(\xi)\Big].
\end{eqnarray*}
So, given any~$\psi$ in the Schwartz space of smooth and rapidly decreasing functions, we set~$\varphi(x):={\mathcal{F}}^{-1}
\big((1+2\pi|\xi|^2)^{-s}\psi(\xi)\big)$. In this way, we have that~$(1+2\pi|\xi|^2)^s\widehat\varphi(\xi)=\psi(\xi)$ and thus~$
0= \check u[\psi]$. As a result,~$\check u$ vanishes identically and so does~$u$.
\end{proof}

\chapter{Regularity theory in Lebesgue spaces for global solutions}\label{CHAP6}

\section{Baloney around the regularity theory in Lebesgue spaces}

In the following pages, we address the global and interior regularity theory of fractional equations. To this end, different approaches are possible. The strategy that we follow here consists of three steps.

The first step introduces the so-called Bessel potential spaces. Roughly speaking, these can be seen as spaces of the type~$(1-\Delta)^{-s}(L^p)$.
That is, these spaces encode, somewhat in a tautological sense, that if~$u$ is a global solution of~$(1-\Delta)^s u = f$ with~$f \in L^p(\R^n)$, then~$u=(1-\Delta)^{-s} f$, whence~$u$ belongs, by default, to the corresponding Bessel potential space.

The advantage of this setting is that one can develop a suitable functional analysis to deduce regularity estimates for a related, but structurally different equation, namely for global solutions of~$(-\Delta)^s u=f\in L^p(\R^n)$. This will rely on a result (namely Theorem~\ref{TH:9191}) which states, roughly speaking, that global solutions of~$(-\Delta)^s u = f$ are as good as those of~$(1-\Delta)^s u = f$, provided they are in~$L^p(\R^n)$.

In this way, this first step allows one to place global solutions of~$(-\Delta)^s u=f\in L^p(\R^n)$ in the corresponding Bessel potential space. The second step thus consists in understanding ``how nice'', in terms of an appropriate notion of regularity, the functions belonging to a Bessel potential space are. To describe the appropriate notion of regularity, one can rely on Besov spaces (see~\cite{zbMATH03254379}), as well as on the more familiar\footnote{We assume that the reader has already some basic familiarity with Sobolev spaces. For what is needed in this book, it suffices to recall that, for~$p\in[1,+\infty)$
and~$s\in(0,1)$, the Sobolev space \( W^{s,p}(\R^n) \) is defined as the collection of functions~$ u \in L^p(\R^n)$ such that~$ \|u\|_{W^{s,p}(\R^n)} < +\infty $,
where the norm \( \|u\|_{W^{s,p}(\R^n)} \) is given by
\[
\|u\|_{W^{s,p}(\R^n)} := \left( \|u\|_{L^p(\R^n)}^p + \int_{\R^n} \int_{\R^n} \frac{|u(x) - u(y)|^p}{|x-y|^{n+sp}} \, dx \, dy \right)^{\frac{1}{p}}
\]
and
\[
\|u\|_{L^p(\R^n)} := \left( \int_{\R^n} |u(x)|^p \, dx \right)^{\frac{1}{p}}.
\]

Also, as usual, when~$k\in\N$, the space~$
W^{k,p}(\R^n) $ is the set of functions~$ u \in L^p(\R^n) $
which, for all multi-indices~$\alpha=(\alpha_1,\dots,\alpha_n)\in\N^n$ with~$ \ |\alpha| :=\alpha_1 + \alpha_2 + \dots + \alpha_n\leq k$, possess weak derivatives~$ \partial^\alpha u $
which are functions in~$ L^p(\R^n)$.
Namely, for all multi-indices~$\alpha\in\N^n$ with~$ \ |\alpha| \leq k$,
there exists a function (denoted, with a slight abuse of notation~$ \partial^\alpha u$) belonging to~$L^p(\R^n)$
and such that\[
\int_{\R^n} u(x) \, \partial^\alpha \phi(x) \, dx = (-1)^{|\alpha|} \int_{\R^n} (\partial^\alpha u)(x) \, \phi(x) \, dx,
\]
for all \( \phi \in C_c^\infty(\R^n) \), where \( |\alpha| = \alpha_1 + \alpha_2 + \cdots + \alpha_n \).

The norm \( \|u\|_{W^{k,p}(\R^n)} \) is given by
\[
\|u\|_{W^{k,p}(\R^n)} := \left( \sum_{|\alpha| \leq k} \|\partial^\alpha u\|_{L^p(\R^n)}^p \right)^{\frac{1}{p}}.
\]

As a notational remark, we recall that the notation for Sobolev spaces is not uniform in the literature.
For example, the space which is denoted here by~$W^{k,p}(\R^n)$ corresponds to the space~$L^p_k(\R^n)$
in~\cite[Chapter~V, Section~2]{MR0290095}.}
Sobolev spaces
(the bridge between different spaces will be discussed in Theorems~\ref{TH0-104-okn5KMD3} and~\ref{THPS2}, as well as in Corollaries~\ref{COROLEEP2r34t5},~\ref{0pirj09365-4g4eZXCHJKLRFG-544-NO1-I2L3C2O25R213t4TY} and~\ref{0pirj09365-4g4eZXCHJKLRFG-544-NO1-I2L3C2O25R213t4TYFGHSJDFRAMISJMFG}).
This second step thus provides a regularity theory for fractional equations in suitable Besov or Sobolev spaces, specifically for global solutions of~$(1-\Delta)^s u = f\in L^p(\R^n)$, as well as of~$(-\Delta)^s u = f\in L^p(\R^n)$ (provided, in the latter case, that~$u\in L^p(\R^n)$, see also Propositions~\ref{HSZUONDCCONT} and~\ref{HSZUONDCCONT2} to appreciate the importance of global assumptions on~$u$ when dealing with the fractional Laplacian).

Then, the third step consists in using suitable cutoff arguments to deduce, from the previous work, interior regularity theories for solutions of~$(1-\Delta)^s u = f\in L^p(\Omega)$, as well as of~$(-\Delta)^s u = f\in L^p(\Omega)$, for a given open and bounded set~$\Omega$ in~$\R^n$. In this, once again, Bessel potential spaces play the role of a useful pivot to join Besov and Sobolev spaces. An important ingredient for this regularity theory also comes from the setting of pseudodifferential operators (which is sketchily recalled, in a rather essential version, in Appendix~\ref{ojdlfwenSTRFGbdollDeltafRn}).

Let us now start our journey\footnote{In this set of notes, we will not go into the details of the boundary regularity theory in Lebesgue spaces for solutions
of fractional equations. For this, we refer to~\cite{MR3276603, MR3293447}.
See also~\cite{MR3168912, MR3482695, NLBFR}
for the interior and boundary regularity theories in H\"older spaces.}
towards this global and interior regularity theory in Lebesgue spaces.

\section{Riesz potential analysis}\label{R:P:A:SEC}

Given~$s\in(0,1)$, we now turn our attention to the regularity theory in Lebesgue spaces for solutions of~$(-\Delta)^s u=f$ in all~$\R^n$.
A first natural question is whether the facts that~$(-\Delta)^s u\in L^p(\R^n)$ and~$u$ goes to zero at infinity
can automatically guarantee that~$u\in L^p(\R^n)$. The answer to this question is negative, even when~$n=1$, as pointed out here below:

\begin{proposition}\label{HSZUONDCCONT}
Let~$p\in[1,+\infty)$,~$s\in(0,1)$ and~$\beta\in\left(\max\left\{0,\frac1p-2s\right\},\frac1p\right)$.

Let
$$\R\ni x\longmapsto u(x):=\frac{1}{(1+x^2)^{\frac\beta2}}.$$
Then,
\begin{equation}\label{PLSK:001}
u\not\in L^p(\R).
\end{equation}
Also, if~$|x|\ge1$,
\begin{equation}\label{PLSK:002}
|(-\Delta)^s u(x)|\le\frac{C}{|x|^{\beta+2s}},
\end{equation}
for some constant~$C>0$.

Finally,
\begin{equation}\label{PLSK:003}
(-\Delta)^s u \in L^p(\R).
\end{equation}
\end{proposition}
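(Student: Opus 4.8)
The plan is to prove the three claims of Proposition~\ref{HSZUONDCCONT} separately, noting that \eqref{PLSK:001} and \eqref{PLSK:002} are the preparatory steps and \eqref{PLSK:003} is a direct consequence of them.

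First I would dispatch \eqref{PLSK:001}: since $u(x)=(1+x^2)^{-\beta/2}\sim |x|^{-\beta}$ as $|x|\to+\infty$, and $\beta<1/p$ gives $\beta p<1$, the integral $\int_{\R}|u(x)|^p\,dx$ diverges at infinity because $\int^{+\infty} |x|^{-\beta p}\,dx=+\infty$. This is immediate. Next, for \eqref{PLSK:002}, the function $u$ is smooth, bounded, and lies in $L^1_s(\R)$ (its decay $|x|^{-\beta}$ beats $|x|^{-1-2s}$ since $\beta>\frac1p-2s\ge -2s$, and more to the point $\beta>0$), so the fractional Laplacian is pointwise well-defined. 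To get the decay estimate I would exploit homogeneity-type scaling: set $u_\lambda(y)=u(\lambda y)$ and use Lemma~\ref{lem:homogeneity}, writing $(-\Delta)^s u(x)$ for large $|x|$ via the rescaled function. More concretely, for $|x|\ge 1$ write $\lambda:=|x|$ and $x=\lambda e$ with $e=\pm1$; then $u(\lambda y)=\lambda^{-\beta}(\lambda^{-2}+y^2)^{-\beta/2}=:\lambda^{-\beta}v_\lambda(y)$, where $v_\lambda(y)=(\lambda^{-2}+y^2)^{-\beta/2}$ converges (as $\lambda\to+\infty$, away from $y=0$) to $|y|^{-\beta}$, a function whose fractional Laplacian at $y=e$ is finite and whose behaviour near $y=0$ is the integrable singularity $|y|^{-\beta}$ with $\beta<1$. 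Using $(-\Delta)^s u_\lambda(x/\lambda)=\lambda^{2s}(-\Delta)^s u(x)$ one obtains $(-\Delta)^s u(x)=\lambda^{-\beta}\lambda^{-2s}(-\Delta)^s v_\lambda(e)$, and a uniform bound $|(-\Delta)^s v_\lambda(e)|\le C$ for all $\lambda\ge1$ (split the defining integral into a neighbourhood of $y=e$, where $C^2$ regularity of $v_\lambda$ uniformly in $\lambda$ controls things, a neighbourhood of $y=0$, where $|y|^{-\beta}$-type bounds with $\beta<1$ apply uniformly, and the region at infinity, where the weight $(1+|y|)^{-1-2s}$ and the uniform decay $v_\lambda(y)\le |y|^{-\beta}$ suffice) yields exactly $|(-\Delta)^s u(x)|\le C|x|^{-\beta-2s}$.

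Then \eqref{PLSK:003} follows by splitting $\int_{\R}|(-\Delta)^s u(x)|^p\,dx$ into $|x|\le1$ and $|x|\ge1$. On $|x|\le1$ the function $(-\Delta)^s u$ is continuous (indeed smooth, since $u$ is smooth and all the integrals converge locally uniformly), hence bounded there, contributing a finite amount. On $|x|\ge1$ we use \eqref{PLSK:002}:
\[
\int_{|x|\ge1}|(-\Delta)^s u(x)|^p\,dx\le C^p\int_{|x|\ge1}|x|^{-(\beta+2s)p}\,dx,
\]
which is finite precisely when $(\beta+2s)p>1$, i.e. $\beta>\frac1p-2s$ — and this is exactly the lower constraint imposed on $\beta$ in the hypothesis. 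Hence $(-\Delta)^s u\in L^p(\R)$.

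The main obstacle is establishing \eqref{PLSK:002} with a genuinely uniform constant. The clean way is the rescaling argument sketched above, but one must be careful that the family $\{v_\lambda\}_{\lambda\ge1}$ has $C^2$ norms bounded uniformly on a fixed neighbourhood of $y=\pm1$ (true, since $v_\lambda(y)=(\lambda^{-2}+y^2)^{-\beta/2}$ and all derivatives are uniformly controlled away from $y=0$) and that the contribution near $y=0$ and near infinity is handled by $\lambda$-independent dominating functions; an alternative, more pedestrian route is to estimate $(-\Delta)^s u(x)$ directly from $(-\Delta)^s u(x)=c_{1,s}\,\mathrm{p.v.}\int_{\R}\frac{u(x)-u(y)}{|x-y|^{1+2s}}\,dy$ by breaking the integral at $|y-x|\le |x|/2$, $|x|/2\le|y-x|\le 2|x|$, $|y-x|\ge 2|x|$ and using the bounds $|u''(x)|\lesssim |x|^{-\beta-2}$, $|u(x)|\lesssim |x|^{-\beta}$, but this requires more bookkeeping. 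Either way the estimate is routine once the decomposition is set up; no conceptual difficulty beyond uniformity remains.
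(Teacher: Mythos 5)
Your proposal is correct: \eqref{PLSK:001} from $\beta p<1$, \eqref{PLSK:003} from \eqref{PLSK:002} together with $(\beta+2s)p>1$ and local boundedness of~$(-\Delta)^su$, all as in the paper. Where you differ is in the proof of \eqref{PLSK:002}. The paper takes exactly the ``pedestrian'' route you mention only in passing: it fixes $x\ge1$ (by evenness), writes the operator as $\int_\R\frac{u(x+y)-u(x)}{|y|^{1+2s}}\,dy$, splits at $|y|=x/2$ and $|y|=2x$, and uses $\sup_{(x/2,2x)}|u''|\lesssim x^{-\beta-2}$ on the near region (after subtracting $u'(x)y$) and the decays $|u|\lesssim|x|^{-\beta}$, $|u(x+y)|\lesssim |y|^{-\beta}$ on the far regions, obtaining $C x^{-\beta-2s}$ directly. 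Your primary route instead rescales: writing $x=\lambda e$, $u(\lambda y)=\lambda^{-\beta}v_\lambda(y)$ with $v_\lambda(y)=(\lambda^{-2}+y^2)^{-\beta/2}$, and invoking Lemma~\ref{lem:homogeneity} to reduce \eqref{PLSK:002} to a $\lambda$-uniform bound $|(-\Delta)^s v_\lambda(\pm1)|\le C$, which you then prove by a three-region splitting (near $y=\pm1$ via uniform $C^2$ bounds, near $y=0$ via the integrable singularity $|y|^{-\beta}$ with $\beta<1/p\le1$, and at infinity via the kernel decay). This is a genuinely different organization: the scaling argument makes the exponent $\beta+2s$ appear structurally, at the price of having to check uniformity in the family $\{v_\lambda\}_{\lambda\ge1}$ --- which you correctly identify as the only delicate point and which does hold --- whereas the paper's direct decomposition avoids any family of functions and keeps all constants explicit in a single computation. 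Both arguments are sound; the underlying estimates (second-order cancellation near the singularity, $|y|^{-\beta}$-decay elsewhere) are the same in substance.
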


\begin{proof} The claim in~\eqref{PLSK:001} follows from the fact that~$\beta p<1$.

Moreover, the claim in~\eqref{PLSK:003} would follow from~\eqref{PLSK:002} and the fact that~$(\beta+2s)p>1$, therefore we can focus our attention on the proof of~\eqref{PLSK:002}.

To this end, we pick~$x\in\R\setminus(-1,1)$. Actually, since~$u$ is even, to prove~\eqref{PLSK:002} we can restrict ourselves to the case~$x\ge1$: we note that
$$ \int_{\{|y|\in(x/2,2x)\}} \frac{|u(x+y)|}{|y|^{1+2s}}\,dy\le
C\int_{\{|y|\in(x/2,2x)\}} \frac{dy}{|y|^\beta |y|^{1+2s}}\le C\int_{\{|y|\in(x/2,2x)\}} \frac{dy}{x^\beta x^{1+2s}}
\le \frac{C}{x^{\beta+2s}},$$
up to renaming~$C$ at any step of the computation.

Accordingly,
\begin{eqnarray*}&& \int_{\{|y|>x/2\}} \frac{|u(x+y)|}{|y|^{1+2s}}\,dy\le
\frac{C}{x^{\beta+2s}}+\int_{\{|y|\ge2x\}} \frac{|u(x+y)|}{|y|^{1+2s}}\,dy\\&&\qquad\qquad\le
\frac{C}{x^{\beta+2s}}+C\int_{\{|y|\ge2x\}} \frac{dy}{|x+y|^\beta |y|^{1+2s}}\\&&\qquad\qquad
\le\frac{C}{x^{\beta+2s}}+C\int_{\{|y|\ge 2x\}} \frac{dy}{|y|^{1+\beta+2s}}
\le\frac{C}{x^{\beta+2s}}\end{eqnarray*}
and therefore
\begin{equation}\label{PLSK:008}\begin{split}&
\left| \int_{\{ |y|>x/2\}}\frac{u(x+y)-u(x)}{|y|^{1+2s}}\,dy\right|
\le\frac{C}{x^{\beta+2s}}+\int_{\{ |y|>x/2\}}\frac{|u(x)|}{|y|^{1+2s}}\,dy\\&\qquad
\le\frac{C}{x^{\beta+2s}}+\frac1{x^\beta}\int_{\{ |y|>x/2\}}\frac{dy}{|y|^{1+2s}}\le\frac{C}{x^{\beta+2s}}.\end{split}
\end{equation}

In addition,
\begin{equation*}\begin{split}&
\left| \int_{\{ |y|\le x/2\}}\frac{u(x+y)-u(x)}{|y|^{1+2s}}\,dy\right|
=\left| \int_{\{ |y|\le x/2\}}\frac{u(x+y)-u(x)-u'(x)y}{|y|^{1+2s}}\,dy\right|\\&\qquad
\le \int_{\{ |y|\le x/2\}}\frac{\sup_{|t|\le x/2}|u''(x+t)|}{|y|^{2s-1}}\,dy
\le 
\sup_{(x/2,2x)}|u''(t)|
\int_{\{ |y|\le x/2\}}\frac{dy}{|y|^{2s-1}}\\&\qquad\le
\frac{C}{x^{2+\beta}}\;x^{2-2s}=\frac{C}{x^{\beta+2s}}.\end{split}
\end{equation*}
{F}rom this and~\eqref{PLSK:008} we obtain the desired result in~\eqref{PLSK:002}.
\end{proof}

In view of this result, some care is required when dealing with solutions of~$(-\Delta)^s u=f$ in all~$\R^n$.
A first step is to somewhat select the ``appropriate'' notion of solution, since~$u+c$ would also satisfy the same equation.
For this, it is useful to consider the kernel
$$ \R^n\setminus\{0\}\ni x\mapsto {\mathcal{R}}(x):=\begin{dcases} \displaystyle\frac{\displaystyle\Gamma\left(\frac{n-2s}{2}\right)}{2^{2s}\pi^{\frac{n}2}\,\Gamma(s)} \,|x|^{2s-n} & {\mbox{ if }} n>2s,\\
	\frac{1}{2\cos(\pi s)\Gamma(2s-1)}|x|^{2s-1} & {\mbox{ if~$ n=1$ and~$s\in\left(\displaystyle\frac12,1\right)$,}}\\ -\frac{1}{\pi}\ln|x|
	& {\mbox{ if~$ n=1$ and~$s=\displaystyle\frac12$.}}
\end{dcases}$$
This is sometimes called the Riesz kernel \index{Riesz kernel} and, for our purposes, it provides the fundamental solution \index{fundamental solution} of the fractional Laplace operator (see Corollary~\ref{FURIEZ}), namely the function~$u:={\mathcal{R}}*f$
is a distributional solution of~$(-\Delta)^s u=\delta_0*f=f$ in~$\R^n$.

Let us observe that~${\mathcal{R}}\in L^1_{\rm loc}(\R^n)$, hence
\begin{equation}\label{PRODpoikjhr3}
{\mbox{${\mathcal{R}}*f$ is properly defined whenever, e.g.,~$f\in C^\infty_c(\R^n)$.}}\end{equation}


When~$n>2s$ we also have the following result:

\begin{lemma}\label{PRODpoikjhr3-2}
If~$s\in(0,1)$ and~$n>2s$, then
\begin{equation*}
\widehat{\mathcal{R}}(\xi)=(2\pi|\xi|)^{-2s} 
\qquad\text{for }\xi\in\R^n,\ \text{in the sense of distributions.}
\end{equation*}
\end{lemma}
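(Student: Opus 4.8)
The plan is to establish the Fourier identity $\widehat{\mathcal{R}}(\xi)=(2\pi|\xi|)^{-2s}$ as an equality of tempered distributions, by testing against an arbitrary Schwartz function and reducing everything to an absolutely convergent integral computation. First I would recall that $\mathcal{R}(x)=C_{n,s}|x|^{2s-n}$ with $C_{n,s}=\Gamma(\frac{n-2s}{2})\big/\big(2^{2s}\pi^{n/2}\Gamma(s)\big)$, and that since $0<2s<n$ the function $|x|^{2s-n}$ is locally integrable and of polynomial growth at infinity, hence defines a tempered distribution; likewise $(2\pi|\xi|)^{-2s}$ is locally integrable (because $2s<n$) and of polynomial growth, so both sides are bona fide tempered distributions and the statement is meaningful.

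The core computation is the classical fact that, for $0<\gamma<n$, the Fourier transform (in the convention $\widehat u(\xi)=\int u(x)e^{-2\pi i x\cdot\xi}\,dx$) of $|x|^{-\gamma}$ is $c(n,\gamma)\,|\xi|^{-(n-\gamma)}$ with
\[
c(n,\gamma)=\pi^{\gamma-\frac n2}\,\frac{\Gamma\big(\frac{n-\gamma}{2}\big)}{\Gamma\big(\frac\gamma2\big)}.
\]
I would prove this by the standard subordination trick: write $|x|^{-\gamma}$ as a superposition of Gaussians using the Gamma integral,
\[
|x|^{-\gamma}=\frac{1}{\Gamma(\frac\gamma2)}\int_0^{+\infty} t^{\frac\gamma2-1}e^{-t|x|^2}\,dt,
\]
which follows from the definition of $\Gamma$ in~\eqref{gamma-def} after the change of variable $\tau=t|x|^2$. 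Since Gaussians are eigenfunctions of the Fourier transform — namely $\mathcal{F}\big(e^{-t|x|^2}\big)(\xi)=(\pi/t)^{n/2}e^{-\pi^2|\xi|^2/t}$, which is Lemma~\ref{lemmazero0000} in disguise — one obtains, after interchanging the $t$-integral with the Fourier integral (justified by Fubini/Tonelli since $t^{\frac\gamma2-1}e^{-t|x|^2}$ is nonnegative and integrable against a Schwartz test function), that
\[
\mathcal{F}\big(|x|^{-\gamma}\big)(\xi)=\frac{\pi^{n/2}}{\Gamma(\frac\gamma2)}\int_0^{+\infty} t^{\frac\gamma2-1-\frac n2}e^{-\pi^2|\xi|^2/t}\,dt,
\]
and a further change of variable $u=\pi^2|\xi|^2/t$ turns this into $\pi^{n/2}\Gamma(\frac\gamma2)^{-1}(\pi^2|\xi|^2)^{\frac\gamma2-\frac n2}\Gamma(\frac{n-\gamma}{2})=c(n,\gamma)|\xi|^{\gamma-n}$, as claimed. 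Throughout, all manipulations are done at the level of distributions paired with a Schwartz function, so only the absolute convergence of the iterated integral needs checking, and that is elementary given $0<\gamma<n$.

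Specializing to $\gamma=n-2s$ (so that $n-\gamma=2s$), this yields
\[
\mathcal{F}\big(|x|^{2s-n}\big)(\xi)=\pi^{\frac n2-2s}\,\frac{\Gamma(s)}{\Gamma\big(\frac{n-2s}{2}\big)}\,|\xi|^{-2s}.
\]
Multiplying by the constant $C_{n,s}$ and simplifying, $C_{n,s}\cdot\pi^{\frac n2-2s}\Gamma(s)\Gamma(\frac{n-2s}{2})^{-1}=2^{-2s}\pi^{-2s}=(2\pi)^{-2s}$, so $\widehat{\mathcal{R}}(\xi)=(2\pi)^{-2s}|\xi|^{-2s}=(2\pi|\xi|)^{-2s}$, which is precisely the assertion. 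The main obstacle is purely bookkeeping: carefully justifying the exchange of the order of integration in the subordination formula when everything is paired against a test function, and tracking the powers of $\pi$ and $2$ through the two changes of variables so that the constant collapses exactly to $(2\pi)^{-2s}$; there is no conceptual difficulty beyond the care demanded by the chosen Fourier normalization. (One could alternatively bypass the direct computation by invoking Lemma~\ref{RIEL}/Corollary~\ref{FURIEZ} and the Fourier-multiplier property~\eqref{FORi}, observing that $(-\Delta)^s(\mathcal{R}*\varphi)=\varphi$ for $\varphi\in C^\infty_c(\R^n)$ forces $(2\pi|\xi|)^{2s}\widehat{\mathcal{R}}(\xi)\widehat\varphi(\xi)=\widehat\varphi(\xi)$, hence $\widehat{\mathcal{R}}(\xi)=(2\pi|\xi|)^{-2s}$ away from the origin, and then noting that both sides are locally integrable so they agree as distributions; I would mention this as the slicker route if the fundamental-solution statement is already available.)
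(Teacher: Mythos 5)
Your main argument is correct, and it is a genuinely different route from the one taken in the paper. The paper proves this lemma by leaning on the fundamental-solution identity already recorded in the proof of Corollary~\ref{FURIEZ} (namely~\eqref{CAL:GA:098}): it tests that identity with $f:={\mathcal{F}}^{-1}\big(\varphi(\xi)\,(\e+4\pi^2|\xi|^2)^{-s}\big)$ and lets $\e\searrow0$, so no new Fourier computation is needed. You instead rederive the classical Riesz formula ${\mathcal{F}}\big(|x|^{-\gamma}\big)=\pi^{\gamma-\frac n2}\Gamma\big(\tfrac{n-\gamma}2\big)\Gamma\big(\tfrac\gamma2\big)^{-1}|\xi|^{\gamma-n}$ by Gaussian subordination; this is essentially the $P\equiv1$, $k=0$ case of the paper's Corollary~\ref{CAL:GA:099} (whose appendix proof uses the same Gaussian superposition idea), so your proof is self-contained, makes the constant collapse to $(2\pi)^{-2s}$ explicit, and avoids the detour through the fundamental solution, at the cost of redoing a computation the paper gets for free from its appendix machinery. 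Your constants and the Fubini/Tonelli justifications check out under the paper's Fourier convention.

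One caveat concerns only the parenthetical ``slicker route'' at the end: from $(2\pi|\xi|)^{2s}\widehat{\mathcal{R}}\,\widehat\varphi=\widehat\varphi$ you can conclude that $\widehat{\mathcal{R}}-(2\pi|\xi|)^{-2s}$ vanishes on $\R^n\setminus\{0\}$, but a priori $\widehat{\mathcal{R}}$ is only a tempered distribution, so the difference could still be a finite combination of derivatives of $\delta_0$; the phrase ``both sides are locally integrable'' does not by itself rule this out, since local integrability of $\widehat{\mathcal{R}}$ is not known in advance. This is repaired either by a homogeneity argument (the difference is homogeneous of degree $-2s\notin\{-n-k:k\in\N\}$) or by a regularization as in the paper's proof; your main subordination argument does not need any of this.
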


\begin{proof}
Notice that we need here the condition~$n>2s$ in order for the term~$|\xi|^{-2s}$ to be locally integrable (i.e., to define
a distribution).
To prove the claim,
we take~$\varphi\in C^\infty_c(\R^n)$ and~$\e>0$. We use~\eqref{CAL:GA:098} 
with~$ f:={\mathcal{F}}^{-1}\left(\frac{\varphi(\xi)}{(\e+4\pi^2|\xi|^2)^{s}}\right)$. This gives that
\begin{eqnarray*}&&
\int_{\R^n} \widehat {\mathcal{R}}(\xi)\,{\frac{(4\pi^2|\xi|^{2})^s\,\varphi(\xi)}{(\e+4\pi^2|\xi|^2)^{s}}}\,d\xi=
(2\pi)^{2s} \int_{\R^n}|\xi|^{2s}\widehat {\mathcal{R}}(\xi)\,\overline{\frac{\varphi(\xi)}{(\e+4\pi^2|\xi|^2)^{s}}}\,d\xi\\&&\qquad=
(2\pi)^{2s} \int_{\R^n}|\xi|^{2s}\widehat {\mathcal{R}}(\xi)\,\overline{\widehat f(\xi)}\,d\xi= f(0)=\int_{\R^n}
\frac{\varphi(\xi)}{(\e+4\pi^2|\xi|^2)^{s}}\,d\xi
\end{eqnarray*}
and thus, taking the limit as~$\e\searrow0$,
\begin{eqnarray*}&&
\int_{\R^n} \widehat {\mathcal{R}}(\xi)\,\varphi(\xi)\,d\xi=
\int_{\R^n}\frac{\varphi(\xi)}{(4\pi^2|\xi|^2)^{s}}\,d\xi
,\end{eqnarray*}
which proves the desired result.
\end{proof}

Now we remark that the Riesz kernels enjoy a natural semigroup property with respect to the fractional parameter: 

\begin{lemma}\label{PRODpoikjhr3-2:le}
If~$s$,~$\sigma\in(0,1)$,~$s+\sigma\in(0,1)$ and~$2(s+\sigma)<n$, denoting by~${\mathcal{R}}^{(s)}$ the Riesz kernel with fractional parameter~$s$
and by~${\mathcal{R}}^{(\sigma)}$ the Riesz kernel with fractional parameter~$\sigma$, we have that
\begin{equation*}
{\mathcal{R}}^{(s)}*{\mathcal{R}}^{(\sigma)}=
{\mathcal{R}}^{(s+\sigma)}
\end{equation*}
in the sense of distributions.
\end{lemma}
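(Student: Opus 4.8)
The plan is to prove the semigroup identity ${\mathcal{R}}^{(s)}*{\mathcal{R}}^{(\sigma)}={\mathcal{R}}^{(s+\sigma)}$ by passing to the Fourier side, where the convolution becomes a product of multipliers and the claim reduces to the elementary identity $(2\pi|\xi|)^{-2s}\cdot(2\pi|\xi|)^{-2\sigma}=(2\pi|\xi|)^{-2(s+\sigma)}$. The key tool is Lemma~\ref{PRODpoikjhr3-2}, which identifies $\widehat{{\mathcal{R}}^{(s)}}(\xi)=(2\pi|\xi|)^{-2s}$ in the sense of distributions whenever $n>2s$; by hypothesis $n>2(s+\sigma)\ge 2s$ and $n>2(s+\sigma)\ge 2\sigma$, so this applies to all three kernels involved.

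First I would fix a test function $\varphi\in C^\infty_c(\R^n)$ and make sense of the pairing $\langle {\mathcal{R}}^{(s)}*{\mathcal{R}}^{(\sigma)},\varphi\rangle$. Since ${\mathcal{R}}^{(\sigma)}\in L^1_{\rm loc}(\R^n)$ and $|{\mathcal{R}}^{(\sigma)}(x)|\le C|x|^{2\sigma-n}$ decays at infinity with exponent $2\sigma-n<0$, the convolution ${\mathcal{R}}^{(\sigma)}*\varphi$ is a well-defined continuous function that decays like $|x|^{2\sigma-n}$ at infinity (convolving an integrable compactly supported function against a kernel with this homogeneity preserves the decay); hence ${\mathcal{R}}^{(s)}*({\mathcal{R}}^{(\sigma)}*\varphi)$ makes sense because ${\mathcal{R}}^{(s)}(x)\sim|x|^{2s-n}$ and the product $|x|^{2s-n}|x|^{2\sigma-n}$ is integrable at infinity precisely when $2(s+\sigma)<n$ — this is where the hypothesis $2(s+\sigma)<n$ is genuinely used, and local integrability near the origin is automatic since both exponents exceed $-n$. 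Then I would invoke associativity of convolution, ${\mathcal{R}}^{(s)}*{\mathcal{R}}^{(\sigma)}*\varphi=({\mathcal{R}}^{(s)}*{\mathcal{R}}^{(\sigma)})*\varphi$, which is the standard statement that the convolution of two tempered distributions (one of which can be paired suitably) against a Schwartz function is associative; with ${\mathcal{R}}^{(s)},{\mathcal{R}}^{(\sigma)}\in L^1_{\rm loc}$ of polynomial growth this is a Fubini argument.

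Next I would take Fourier transforms. Writing ${\mathcal{F}}\big(({\mathcal{R}}^{(s)}*{\mathcal{R}}^{(\sigma)})*\varphi\big)=\widehat{{\mathcal{R}}^{(s)}}\,\widehat{{\mathcal{R}}^{(\sigma)}}\,\widehat\varphi$ and using Lemma~\ref{PRODpoikjhr3-2} twice, this equals $(2\pi|\xi|)^{-2s}(2\pi|\xi|)^{-2\sigma}\widehat\varphi(\xi)=(2\pi|\xi|)^{-2(s+\sigma)}\widehat\varphi(\xi)$, which by Lemma~\ref{PRODpoikjhr3-2} applied with parameter $s+\sigma$ is $\widehat{{\mathcal{R}}^{(s+\sigma)}}(\xi)\,\widehat\varphi(\xi)={\mathcal{F}}\big({\mathcal{R}}^{(s+\sigma)}*\varphi\big)$. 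Inverting the Fourier transform gives $({\mathcal{R}}^{(s)}*{\mathcal{R}}^{(\sigma)})*\varphi={\mathcal{R}}^{(s+\sigma)}*\varphi$ for every $\varphi\in C^\infty_c(\R^n)$, which is exactly the assertion that ${\mathcal{R}}^{(s)}*{\mathcal{R}}^{(\sigma)}={\mathcal{R}}^{(s+\sigma)}$ in the sense of distributions.

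The main obstacle is not the Fourier computation, which is a one-line cancellation, but the justification that all the objects are well-defined tempered distributions and that the manipulations (associativity of convolution, multiplicativity of the Fourier transform on the relevant product) are legitimate when the multipliers $(2\pi|\xi|)^{-2s}$ have a non-integrable singularity at the origin and the kernels grow at infinity. The clean way to handle the singularity is the same $\e$-regularization trick used in the proof of Lemma~\ref{PRODpoikjhr3-2}: replace $(2\pi|\xi|)^{-2s}$ by $(\e+4\pi^2|\xi|^2)^{-s}$, which is a bounded smooth multiplier, carry out the product and the convolution identity at the level of honest $L^1$ or Schwartz functions, and then let $\e\searrow 0$ using dominated convergence on both sides — the decay hypothesis $2(s+\sigma)<n$ ensures the limiting integrals converge. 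This regularization is the only delicate point; everything else is bookkeeping.
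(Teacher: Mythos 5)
Your proposal is correct and follows essentially the same route as the paper: both reduce the claim to Lemma~\ref{PRODpoikjhr3-2}, multiply the Fourier multipliers $(2\pi|\xi|)^{-2s}(2\pi|\xi|)^{-2\sigma}=(2\pi|\xi|)^{-2(s+\sigma)}$, and conclude by Fourier inversion. The extra care you take with test-function pairings, the use of $2(s+\sigma)<n$ for integrability at infinity, and the $\e$-regularization simply makes explicit the distributional bookkeeping the paper leaves implicit.
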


\begin{proof} By Lemma~\ref{PRODpoikjhr3-2}, in the sense of distributions we have that
\[
\widehat{\mathcal{R}}^{(s)}(\xi)\widehat{\mathcal{R}}^{(\sigma)}(\xi)=(2\pi|\xi|)^{-2(s+\sigma)}=\widehat{\mathcal{R}}^{(s+\sigma)}(\xi)
\qquad\text{for }\xi\in\R^n,
\]
from which one obtains the desired result by the inversion of the Fourier Transform.
\end{proof}

We now point out that~\eqref{PRODpoikjhr3} can be conveniently generalized in Lebesgue spaces as follows:

\begin{lemma}\label{P:RIE:1}
Let~$n>2sp$ and~$p\in[1,+\infty)$. Let~$f\in L^p(\R^n)$. Then, the integral defining~${\mathcal{R}}*f(x)$ converges absolutely for a.e.~$x\in\R^n$.
\end{lemma}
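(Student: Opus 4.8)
The statement to prove is that, for $n > 2sp$ and $p \in [1,+\infty)$ and $f \in L^p(\R^n)$, the integral $\int_{\R^n} {\mathcal{R}}(x-y) f(y)\,dy$ converges absolutely for almost every $x$. Since ${\mathcal{R}}(x) = c_{n,s}|x|^{2s-n}$ (up to the explicit positive constant) when $n > 2s$ — and the hypothesis $n > 2sp \ge 2s$ places us in that regime — this is really a statement about the absolute convergence of the convolution of $|f| \in L^p$ with the locally integrable, slowly decaying kernel $|\cdot|^{2s-n}$. The natural route is to split the kernel into its singular (near-diagonal) part and its tail part and treat each by an elementary $L^p$–$L^{p'}$ argument, avoiding the full Hardy–Littlewood–Sobolev machinery.

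\textbf{Key steps.} First I would fix $R>0$ and write, for each $x\in\R^n$,
\[
\int_{\R^n}\frac{|f(y)|}{|x-y|^{n-2s}}\;dy
= \int_{B_R(x)}\frac{|f(y)|}{|x-y|^{n-2s}}\;dy
+ \int_{\R^n\setminus B_R(x)}\frac{|f(y)|}{|x-y|^{n-2s}}\;dy =: I_R(x) + J_R(x).
\]
For the tail term $J_R(x)$, I would apply Hölder's inequality with exponents $p$ and $p'=p/(p-1)$ (and, when $p=1$, a direct bound): one gets
\[
J_R(x) \le \|f\|_{L^p(\R^n)}\left(\int_{\R^n\setminus B_R(x)}\frac{dy}{|x-y|^{(n-2s)p'}}\right)^{1/p'},
\]
and the integral on the right is finite precisely because $(n-2s)p' > n$, which is equivalent to $n > 2s p$ — here is where the hypothesis is used. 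Moreover this bound is uniform in $x$, so $J_R \in L^\infty(\R^n)$ and in particular $J_R(x) < +\infty$ for every $x$. For the singular term $I_R(x)$, the cleanest argument is to integrate in $x$: by Tonelli's theorem and translation invariance,
\[
\int_{B_\rho} I_R(x)\,dx \le \int_{\R^n}|f(y)|\left(\int_{B_R(y)}\frac{dx}{|x-y|^{n-2s}}\right)\chi_{B_{\rho+R}}(y)\,dy
= C(R)\int_{B_{\rho+R}}|f(y)|\,dy < +\infty,
\]
for every $\rho>0$, since $|\cdot|^{2s-n}$ is integrable on $B_R$ (as $n-2s < n$) and $f\in L^p_{\rm loc}\subseteq L^1_{\rm loc}$ by Hölder on the bounded set $B_{\rho+R}$. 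Hence $I_R \in L^1_{\rm loc}(\R^n)$, so $I_R(x) < +\infty$ for a.e.\ $x$. Combining the two pieces, $\int_{\R^n}|{\mathcal{R}}(x-y)||f(y)|\,dy < +\infty$ for a.e.\ $x$, which is the claimed absolute convergence.

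\textbf{Main obstacle.} There is no deep obstacle: the argument is a standard splitting, and the only point requiring care is matching the integrability exponents. The tail estimate needs $(n-2s)p' > n \iff n > 2sp$, which is exactly the hypothesis; if instead one had only $n>2s$ the tail integral could diverge, so the splitting threshold $R$ cannot be dispensed with and the two halves genuinely require the two different facts ($n-2s<n$ for the local part, $(n-2s)p'>n$ for the tail). The case $p=1$ (where $p'=\infty$) should be handled by noting directly that for $|x-y|\ge R$ one has $|x-y|^{2s-n}\le R^{2s-n}$, so $J_R(x)\le R^{2s-n}\|f\|_{L^1(\R^n)}$; alternatively one can absorb $p=1$ into the Tonelli computation globally. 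Finally, one should remark at the outset that the hypothesis $n>2sp\ge 2s$ guarantees we are in the branch ${\mathcal{R}}(x)=c_{n,s}|x|^{2s-n}$ of the definition, so the one-dimensional logarithmic and power cases of ${\mathcal{R}}$ do not arise.
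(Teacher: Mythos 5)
Your proof is correct and follows essentially the same strategy as the paper: split the Riesz kernel at a fixed radius, control the tail by H\"older's inequality with the dual exponent (where $(n-2s)\frac{p}{p-1}>n$ is exactly $n>2sp$), and handle the singular part via the local integrability of $|\cdot|^{2s-n}$. The only cosmetic difference is that the paper bounds the near-singularity piece by Young's convolution inequality (getting it in $L^p$), while you integrate it locally via Tonelli (getting it in $L^1_{\rm loc}$); both yield a.e.\ finiteness in the same way.
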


We stress that the condition~$p\in[1,+\infty)$ in Lemma~\ref{P:RIE:1} cannot be improved, since if~$f:=1$ we have that~${\mathcal{R}}*f$ is never finite.\medskip

Moreover, the condition~$n>2sp$ in Lemma~\ref{P:RIE:1} cannot be removed. Indeed, if~$n\le2sp$
we consider the function 
$$ \R^n\ni x\longmapsto f(x):=\frac{\chi_{\R^n\setminus B_{e^2}}(x)}{|x|^{\frac{n}p}\,\ln |x|\, (\ln(\ln |x|))^\gamma},$$
with
$$ \gamma:=\begin{dcases}
2 & {\mbox{ if }}p=1,\\0 &{\mbox{ if }}p>1.
\end{dcases}$$
We observe that
$$\frac{\|f\|_{L^p(\R^n)}^p}{{\mathcal{H}}^{n-1}(\partial B_1)}=\int_{e^2}^{+\infty}\frac{dr}{r\,(\ln r)^{p}(\ln(\ln r))^{\gamma p}}=
\int_{2}^{+\infty}\frac{dy}{y^{p}(\ln y)^{\gamma p}}
=\int_{\ln2}^{+\infty}\frac{dt}{e^{(p-1)t} t^{\gamma p}}
<+\infty,$$ due to our choice of~$\gamma$,
whence~$f\in L^p(\R^n)$.

But, in this case
\begin{equation}\label{P:RIE:1e}
{\mbox{$|{\mathcal{R}}*f(x)|=+\infty$ for every~$x\in\R^n$.}}\end{equation}
Indeed, given every~$x\in\R^n$, if~$\beta\in\R$, 
$$ \lim_{y\to+\infty} \frac{|x-y|^\beta}{|y|^\beta}=1\qquad{\mbox{and}}\qquad
\lim_{y\to+\infty} \frac{\ln|x-y|}{\ln|y|}=1.$$
Hence, we can take~$k_x\ge e^2$ sufficiently large such that~$|{\mathcal{R}}(x-y)|\ge\frac{|{\mathcal{R}}(y)|}2$
and~$\ln(\ln|y|)>0$ whenever~$|y|\ge k_x$.

We now distinguish two cases, the first being when~$n=1$ and~$s=\frac12$ and the second being the complementary situation.
In the first case,
\begin{equation}\label{P:RIE:1eJMBIS}\begin{split}
		&\left|\int_{\R} {\mathcal{R}}(x-y)\,f(y)\,dy\right|=
		\frac1\pi\left|\int_{\{ |y|\ge e^2\}} \frac{-\ln|x-y|}{|y|^{\frac{1}p}\, \ln| y|\,(\ln(\ln |y|))^{2}}\,dy\right|\\&\qquad
		\ge \int_{{\{ |y|\ge e^2\}}\atop{\{|x-y|>1\}}} \frac{\ln|x-y|}{|y|^{\frac{1}p}\, \ln| y|\,(\ln(\ln |y|))^{2}}\,dy
		-\int_{{\{ |y|\ge e^2\}}\atop{\{|x-y|\le1\}}} \frac{|\ln|x-y||}{|y|^{\frac{1}p}\, \ln| y|\,(\ln(\ln |y|))^{2}}\,dy\\
		&\qquad \ge \frac12\int_{{\{ |y|\ge k_x\}}\atop{\{|x-y|>1\}}} \frac{dy}{|y|^{\frac{1}p}\, (\ln(\ln |y|))^{2}}\,dy
		-C_x,
\end{split}\end{equation}
for some~$C_x>0$ possibly depending also on~$x$.
We observe that, up to a multiplicative constant, the last integral term in~\eqref{P:RIE:1eJMBIS} takes the form
\begin{eqnarray*}
	\int_{k_x}^{+\infty} \frac{dy}{y^{\frac{1}p}\,(\ln(\ln y))^{\gamma}}=\int_{\ln(\ln{k_x})}^{+\infty} \frac{(\exp(e^t))^{1-\frac{1}p}e^t dt}{t^{\gamma}}=+\infty.
\end{eqnarray*}

In the second case,
\begin{equation}\label{P:RIE:1eJM}\begin{split}
\left|\int_{\R^n} {\mathcal{R}}(x-y)\,f(y)\,dy\right|&=
\int_{\R^n\setminus B_{e^2}} \frac{ |{\mathcal{R}}(x-y)|}{|y|^{\frac{n}p}\, \ln| y|\,(\ln(\ln |y|))^{2}}\,dy\\&\ge
\frac12\int_{\R^n\setminus B_{k_x}} \frac{| {\mathcal{R}}(y)|}{|y|^{\frac{n}p}\,\ln| y|\,(\ln(\ln| y|))^{2}}\,dy.\end{split}
\end{equation}
Also, up to a multiplicative constant, the last term in~\eqref{P:RIE:1eJM} becomes
\begin{eqnarray*}&&\int_{\R^n\setminus B_{k_x}} \frac{ |y|^{2s-n-\frac{n}p}}{\ln| y|\,(\ln(\ln| y|))^{\gamma}}\,dy
= {\mathcal{H}}^{n-1}(\partial B_1) \int_{k_x}^{+\infty} \frac{ r^{2s-1-\frac{n}p}}{\ln r\,(\ln(\ln r))^{\gamma}}\,dr\\&&\qquad={\mathcal{H}}^{n-1}(\partial B_1)\int_{\ln(\ln{k_x})}^{+\infty} \frac{ (\exp(e^t))^{2s-\frac{n}p}}{t^{\gamma}}\,dt.
\end{eqnarray*}
This is also equal to~$+\infty$ because if~$n<2sp$ the divergence comes from the exponential term, and if instead~$n=2sp$
we have that~$p>1$ (otherwise~$n=2s$ and we go back to the first case) and therefore the divergence comes from the fact that~$\gamma=0$.

These considerations show~\eqref{P:RIE:1e} and thus confirm that the condition~$n>2sp$ in Lemma~\ref{P:RIE:1} cannot be avoided.
\medskip

In our setting, Lemma~\ref{P:RIE:1} is also related to two auxiliary results of independent interest, namely
Lemmata~\ref{P:RIE:2} and~\ref{P:RIE:3} here below.

\begin{lemma}\label{P:RIE:2}
Let~$n> 2s$,~$f\in L^1(\R^n)$ and
\begin{equation}\label{P:RIE:5p1}q:=\frac{n}{n-2s}.\end{equation}

Then, for every~$\lambda>0$,
\begin{equation}\label{P:RIE:5p2} \left| \big\{x\in\R^n{\mbox{ s.t. }}|{\mathcal{R}}*f(x)|>\lambda\big\}\right|\le C\left( \frac{\|f\|_{L^1(\R^n)}}{\lambda}\right)^q,\end{equation}
for some constant~$C>0$ depending only on~$n$ and~$s$.
\end{lemma}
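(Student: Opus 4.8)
The statement to prove is a weak-type $(1,q)$ estimate for the Riesz potential, with $q = n/(n-2s)$. This is the classical weak-type bound for fractional integration, and the plan is to prove it directly by a level-set decomposition of the kernel, combining the contribution of the singular part of $\mathcal{R}$ (near the origin) estimated via $L^\infty$-type bounds, with the contribution of the tail of $\mathcal{R}$ estimated trivially.

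The plan is as follows. First I would normalize: since $\mathcal{R}(x) = c_{n,s}|x|^{2s-n}$ with $c_{n,s} = \frac{\Gamma(\frac{n-2s}{2})}{2^{2s}\pi^{n/2}\Gamma(s)}>0$ in the regime $n>2s$, it suffices to prove the estimate for the kernel $|x|^{2s-n}$ up to an absolute constant, and by homogeneity one may also assume $\|f\|_{L^1(\R^n)}=1$, so the target becomes $\left|\{|\mathcal{R}*f|>\lambda\}\right|\le C\lambda^{-q}$. The key device is to split, for a radius $\rho>0$ to be chosen later depending on $\lambda$,
\[
\mathcal{R} = \mathcal{R}\chi_{B_\rho} + \mathcal{R}\chi_{\R^n\setminus B_\rho} =: \mathcal{R}_0^\rho + \mathcal{R}_\infty^\rho,
\]
so that $\mathcal{R}*f = \mathcal{R}_0^\rho * f + \mathcal{R}_\infty^\rho*f$. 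For the tail piece, $\mathcal{R}_\infty^\rho\in L^\infty(\R^n)$ with $\|\mathcal{R}_\infty^\rho\|_{L^\infty}\le C\rho^{2s-n}$ (since $2s-n<0$), hence by Young's inequality $\|\mathcal{R}_\infty^\rho*f\|_{L^\infty}\le C\rho^{2s-n}\|f\|_{L^1} = C\rho^{2s-n}$. For the singular piece, note $\mathcal{R}_0^\rho\in L^1(\R^n)$ with $\|\mathcal{R}_0^\rho\|_{L^1} = c_{n,s}\,|\mathbb{S}^{n-1}|\int_0^\rho r^{2s-1}\,dr = C\rho^{2s}$, hence $\|\mathcal{R}_0^\rho*f\|_{L^1}\le C\rho^{2s}\|f\|_{L^1} = C\rho^{2s}$.

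Now I combine these via the union bound on level sets. If $|\mathcal{R}*f(x)|>\lambda$ then either $|\mathcal{R}_0^\rho*f(x)|>\lambda/2$ or $|\mathcal{R}_\infty^\rho*f(x)|>\lambda/2$. Choose $\rho=\rho(\lambda)$ so that $C\rho^{2s-n}=\lambda/2$, i.e. $\rho \sim \lambda^{-1/(n-2s)}$; then the tail term never exceeds $\lambda/2$ in absolute value, so that set is empty. It remains to bound $\left|\{|\mathcal{R}_0^\rho*f|>\lambda/2\}\right|$, which by Chebyshev's inequality is at most $\frac{2}{\lambda}\|\mathcal{R}_0^\rho*f\|_{L^1}\le \frac{C\rho^{2s}}{\lambda}$. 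Substituting $\rho\sim\lambda^{-1/(n-2s)}$ gives $\frac{C}{\lambda}\cdot\lambda^{-2s/(n-2s)} = C\lambda^{-(1+\frac{2s}{n-2s})} = C\lambda^{-\frac{n}{n-2s}} = C\lambda^{-q}$, which is exactly \eqref{P:RIE:5p2}. Restoring $\|f\|_{L^1(\R^n)}$ by homogeneity (replace $\lambda$ by $\lambda/\|f\|_{L^1}$, or equivalently track the norm through the Young and $L^1$ estimates) yields the stated inequality with $C$ depending only on $n$ and $s$.

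The bookkeeping obstacle — and the only real subtlety — is making sure the convolutions $\mathcal{R}_0^\rho*f$ and $\mathcal{R}_\infty^\rho*f$ are genuinely well-defined for a.e.\ $x$ and that the Chebyshev/Young manipulations are legitimate: $\mathcal{R}_0^\rho*f$ is fine since it is an $L^1*L^1$ convolution, and $\mathcal{R}_\infty^\rho*f$ is fine since it is $L^\infty*L^1$; together this actually re-proves a.e.\ finiteness of $\mathcal{R}*f$ (consistent with Lemma~\ref{P:RIE:1} in the case $p=1$), so no circularity arises. One should also double-check the endpoint cases of the kernel ($n=1$, $s\in(1/2,1)$ and $n=1$, $s=1/2$): in the first, $\mathcal{R}(x)=C|x|^{2s-1}$ and the argument above applies verbatim with $n=1$; in the second, $\mathcal{R}(x)=-\frac{1}{\pi}\ln|x|$, and one splits $\mathcal{R} = \mathcal{R}\chi_{B_\rho}+\mathcal{R}\chi_{\R\setminus B_\rho}$ where now the tail is only in $L^p_{\rm loc}$ — but since $n=1=2s$ forces $q=\infty$, the statement of Lemma~\ref{P:RIE:2} as written applies only when $n>2s$, so this degenerate case is outside its scope and need not be treated.
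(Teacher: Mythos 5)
Your proof is correct and is essentially the paper's own argument (which it runs for general $p\in[1,+\infty)$ with $n>2sp$ and then specializes to $p=1$): split $\mathcal{R}=\mathcal{R}\chi_{B_\ell}+\mathcal{R}\chi_{\R^n\setminus B_\ell}$ with a free radius, kill the tail term by choosing $\ell$ so that $\|\mathcal{R}\chi_{\R^n\setminus B_\ell}\|_{L^\infty}\|f\|_{L^1}\le\lambda/2$, and bound the near piece by Young's $L^1$ convolution inequality plus Chebyshev, which yields exactly the exponent $q=\frac{n}{n-2s}$. The only blemish is your closing aside: the case $n=1$, $s\in\left(\frac12,1\right)$ has $n<2s$ (so it is excluded by the hypothesis, and the argument would not apply there since the kernel $|x|^{2s-1}$ is unbounded at infinity), but this does not affect the proof of the lemma as stated.
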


\begin{lemma}\label{P:RIE:3}
Let~$p\in(1,+\infty)$,~$n> 2sp$ and~$f\in L^p(\R^n)$.

Set
\begin{equation}\label{P:RIE:5}q:=\frac{np}{n-2sp}.\end{equation}
Then,
\begin{equation}\label{P:RIE:4} \| {\mathcal{R}}*f\|_{L^q(\R^n)}\le C\|f\|_{L^p(\R^n)},\end{equation}
for some constant~$C>0$ depending only on~$n$,~$s$ and~$p$.
\end{lemma}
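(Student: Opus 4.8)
The statement is the Hardy--Littlewood--Sobolev inequality for the Riesz potential, and the plan is to prove it by the classical splitting of the kernel coupled with the Hardy--Littlewood maximal function. A preliminary remark is that, since $p>1$ and $n>2sp$, one has in particular $n>2s$, so the only relevant instance of the Riesz kernel here is a positive dimensional multiple of $|x|^{2s-n}$, with exponent $2s-n<0$; moreover, by Lemma~\ref{P:RIE:1}, the convolution $\mathcal{R}*f(x)$ converges absolutely for almost every $x\in\R^n$, so the left-hand side of~\eqref{P:RIE:4} is well defined. Fixing such an $x$ and a radius $\rho>0$ to be chosen later, I would split
\[
\mathcal{R}*f(x)=C_0\int_{B_\rho(x)}\frac{f(y)}{{|x-y|}^{n-2s}}\,dy+C_0\int_{\R^n\setminus B_\rho(x)}\frac{f(y)}{{|x-y|}^{n-2s}}\,dy=:\mathcal{I}_\rho(x)+\mathcal{J}_\rho(x),
\]
where $C_0>0$ is the normalizing constant of the Riesz kernel.

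For the local part $\mathcal{I}_\rho(x)$ I would decompose $B_\rho(x)$ into the dyadic annuli $B_{2^{-k}\rho}(x)\setminus B_{2^{-k-1}\rho}(x)$, $k\ge 0$, bound the contribution of each one by a constant times $(2^{-k}\rho)^{2s}$ times the average of $|f|$ over $B_{2^{-k}\rho}(x)$, and sum the geometric series (convergent since $s>0$) to get $|\mathcal{I}_\rho(x)|\le C\,\rho^{2s}\,Mf(x)$, where $Mf$ is the Hardy--Littlewood maximal function of $f$. For the far part $\mathcal{J}_\rho(x)$, H\"older's inequality with exponents $p$ and $p'=p/(p-1)$ gives
\[
|\mathcal{J}_\rho(x)|\le C\,\|f\|_{L^p(\R^n)}\left(\int_{\R^n\setminus B_\rho}{|z|}^{(2s-n)p'}\,dz\right)^{1/p'},
\]
and here the hypothesis $n>2sp$ is exactly equivalent to $(2s-n)p'<-n$, which makes the last integral finite and equal, by a direct computation in polar coordinates, to $C\,\rho^{2s-n/p}$. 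Combining the two estimates yields the pointwise bound $|\mathcal{R}*f(x)|\le C\big(\rho^{2s}Mf(x)+\rho^{2s-n/p}\|f\|_{L^p(\R^n)}\big)$, valid for every $\rho>0$ and almost every $x$.

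I would then optimize in $\rho$: assuming $f\not\equiv 0$ (the opposite case being trivial), one has $0<Mf(x)<+\infty$ for almost every $x$, so the choice $\rho:=\big(\|f\|_{L^p(\R^n)}/Mf(x)\big)^{p/n}$ is admissible and balances the two terms, leading to
\[
|\mathcal{R}*f(x)|\le C\,\|f\|_{L^p(\R^n)}^{1-\frac pq}\,\big(Mf(x)\big)^{\frac pq},
\]
where the exponents are forced by the scaling identity $\frac1q=\frac1p-\frac{2s}{n}$, which is precisely~\eqref{P:RIE:5}. Raising to the power $q$, integrating over $\R^n$, and invoking the Hardy--Littlewood maximal inequality $\|Mf\|_{L^p(\R^n)}\le C\|f\|_{L^p(\R^n)}$ (valid exactly because $p>1$), one obtains $\|\mathcal{R}*f\|_{L^q(\R^n)}^q\le C\|f\|_{L^p(\R^n)}^{q-p}\|Mf\|_{L^p(\R^n)}^p\le C\|f\|_{L^p(\R^n)}^q$, which is~\eqref{P:RIE:4}.

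The main obstacle is the maximal function input. If the Hardy--Littlewood maximal inequality is not already available, I would prove it by standard means: a Vitali-type covering lemma gives the weak-$(1,1)$ bound for $M$, the estimate $\|Mf\|_{L^\infty}\le\|f\|_{L^\infty}$ is immediate, and Marcinkiewicz interpolation then yields the strong $(p,p)$ bound for every $p\in(1,+\infty]$. A secondary, purely bookkeeping, point is to verify the consistency of the exponents, namely $q-p>0$, $1-\tfrac pq\in(0,1)$ and $q\big(1-\tfrac pq\big)+p=q$, all of which follow from~\eqref{P:RIE:5} together with $n>2sp$; it is also worth observing that the borderline value $p=1$ gives only the weak-type estimate of Lemma~\ref{P:RIE:2}, in accordance with the failure of the strong maximal inequality at $p=1$.
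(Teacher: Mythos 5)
Your proof is correct, but it follows a genuinely different route from the one in the paper. The paper first establishes the weak-type $(p,q)$ estimate~\eqref{P:RIE:5p2-0} for all admissible $p$ by splitting the kernel as ${\mathcal{R}}_0={\mathcal{R}}\chi_{B_\ell}$ and ${\mathcal{R}}_\infty={\mathcal{R}}\chi_{\R^n\setminus B_\ell}$, controlling the near part via Young's Convolution Inequality plus Chebyshev and killing the far part by choosing $\ell$ (depending on $\lambda$ and $\|f\|_{L^p(\R^n)}$) so that $\|{\mathcal{R}}_\infty\|_{L^{p/(p-1)}}\|f\|_{L^p}\le\lambda/2$; it then upgrades weak to strong type by applying the Marcinkiewicz Interpolation Theorem between the pair $(1,\tfrac{n}{n-2s})$ of Lemma~\ref{P:RIE:2} and a second pair $(p_1,q_1)$, with $p_1$ tuned so that the interpolated exponents land exactly on $(p,q)$. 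You instead prove Hedberg's pointwise inequality $|{\mathcal{R}}*f(x)|\le C\|f\|_{L^p(\R^n)}^{1-p/q}\big(Mf(x)\big)^{p/q}$ by dyadic decomposition of the near part and H\"older on the far part, optimizing in $\rho$, and then conclude with the strong $(p,p)$ Hardy--Littlewood maximal inequality; all your exponent computations (finiteness of the far integral under $n>2sp$, the balancing choice of $\rho$, and the identity $\tfrac1q=\tfrac1p-\tfrac{2s}{n}$) check out. What each approach buys: the paper's argument recycles machinery it needs anyway (the weak-type bound of Lemma~\ref{P:RIE:2} and a quoted interpolation theorem) and makes the sharp failure at $p=1$ transparent within a single framework; your argument yields the stronger pointwise estimate, which is independently useful, at the cost of importing the maximal theorem --- note that its standard proof again rests on a covering lemma plus Marcinkiewicz interpolation, so interpolation is relocated rather than avoided, though only in the simpler diagonal case $(1,1)$--$(\infty,\infty)$.
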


In jargon, one can rephrase~\eqref{P:RIE:5p2} by saying that, if~$n>2s$ and~$q$ is as in~\eqref{P:RIE:5p1}, the convolution against the Riesz kernel is of weak type~$(1,q)$, according to the following notation:

\begin{definition} Let~$p\in[1,+\infty]$ and~$q\in[1,+\infty)$.
Let~$T$ be a map on some measurable class of functions.
We say that~$T$ is of weak type~$(p,q)$ \index{weak type~$(p,q)$} if, for every~$\lambda>0$, it satisfies an estimate of the form
$$ \left| \big\{ x\in\R^n {\mbox{ s.t. }} |Tf(x)|>\lambda\big\}\right|\leq C\left(\frac{\|f\|_{L^p(\R^n)}}{\lambda}\right)^q,~$$
for some~$C>0$.

We say that an operator~$T$ is of strong type~$(p,q)$ \index{strong type~$(p,q)$} if it satisfies an estimate of the form
$$ \|Tf\|_{L^q(\R^n)}\leq C\|f\|_{L^p(\R^n)},~$$
for some~$C>0$.
\end{definition}

Note that if~$T$ is of strong type~$(p,q)$ then it is of weak type~$(p,q)$,
thanks to the Chebyshev's Inequality.\medskip

We stress that the condition on~$q$ in~\eqref{P:RIE:5} of Lemma~\ref{P:RIE:3} is sharp. Namely, if~\eqref{P:RIE:4} holds true for every~$f\in L^p(\R^n)$
then necessarily~$q$ has the form requested in~\eqref{P:RIE:5}. This can be seen through a scaling argument, that is
by taking~$\vartheta>0$,~$f\in L^p(\R^n)$ and applying~\eqref{P:RIE:4} to~$f_\vartheta(x):=f(\vartheta x)$. This gives that
\begin{eqnarray*}&&
\frac{\| {\mathcal{R}}*f\|_{L^q(\R^n)}}{\vartheta^{n+\frac{2s}q}}=
\left(\frac{1}{\vartheta^{2s}}\int_{\R^n}\left| \frac1{\vartheta^n}\int_{\R^n} {\mathcal{R}}(X-Y)f(Y)\,dY
\right|^q\,dX\right)^{\frac1q}\\&&\qquad=
\left({\vartheta^{2s(q-1)-nq}}\int_{\R^n}\left| \frac1{\vartheta^n}\int_{\R^n} {\mathcal{R}}\left(\frac{X-Y}\vartheta\right)f(Y)\,dY
\right|^q\,dX\right)^{\frac1q}\\&&\qquad =
\left({\vartheta^{2s(q-1)-nq+n}}\int_{\R^n}\left| \int_{\R^n} {\mathcal{R}}(x-y)f(\vartheta y)\,dy
\right|^q\,dx\right)^{\frac1q}\\&&\qquad={\vartheta^{\frac{2s(q-1)-n(q-1)}{q}}}
\| {\mathcal{R}}*f_\vartheta\|_{L^q(\R^n)}\le C{\vartheta^{\frac{(2s-n)(q-1)}{q}}}
\|f_\vartheta\|_{L^p(\R^n)}=\frac{C{\vartheta^{\frac{(2s-n)(q-1)}{q}}}\|f\|_{L^p(\R^n)}}{\vartheta^{\frac{n}p}}.
\end{eqnarray*}
By taking the limits~$\vartheta\searrow0$ and~$\vartheta\nearrow+\infty$, it follows that necessarily~${n+\frac{2s}q}=
{\frac{n}p}+\frac{(n-2s)(q-1)}{q}$, which gives~\eqref{P:RIE:5}.
\medskip

We also note that condition~\eqref{P:RIE:5} naturally reduces to~\eqref{P:RIE:5p1} when~$p=1$.
In this sense, Lemma~\ref{P:RIE:2} can be considered the natural counterpart of
Lemma~\ref{P:RIE:3} when~$p=1$, also in terms of the corresponding exponent~$q$. Once again, condition~\eqref{P:RIE:5p1} cannot be removed from Lemma~\ref{P:RIE:2}: to see this, one can just assume that~\eqref{P:RIE:5p2} holds true and
apply it to~$f_\vartheta$, thus obtaining
\begin{eqnarray*}&&
\theta^{-n}\left| \big\{X\in\R^n{\mbox{ s.t. }}|{\mathcal{R}}*f(X)|>\lambda\theta^{2s}\big\}\right|=
\theta^{-n}\left| \left\{X\in\R^n{\mbox{ s.t. }}\left|\int_{\R^n}{\mathcal{R}}(X-Y)f(Y)\,dY\right|>\lambda\theta^{2s}\right\}\right|\\&&\qquad=
\left| \left\{x\in\R^n{\mbox{ s.t. }}\left|\int_{\R^n}{\mathcal{R}}(\theta x-Y)f(Y)\,dY\right|>\lambda\theta^{2s}\right\}\right|\\&&\qquad=
\left| \left\{x\in\R^n{\mbox{ s.t. }}\left|\int_{\R^n}{\mathcal{R}}\left(\frac{\theta x-Y}\theta\right)f(Y)\,dY\right|>\lambda\theta^n\right\}\right|\\&&\qquad=
\left| \left\{x\in\R^n{\mbox{ s.t. }}\left|\int_{\R^n}{\mathcal{R}}(x-y)f(\theta y)\,dy\right|>\lambda\right\}\right|\\&&\qquad=
\left| \big\{x\in\R^n{\mbox{ s.t. }}|{\mathcal{R}}*f_\theta(x)|>\lambda\big\}\right|\\&&\qquad\le C\left( \frac{\|f_\theta\|_{L^1(\R^n)}}{\lambda}\right)^q\\&&\qquad= C\left( \frac{\|f\|_{L^1(\R^n)}}{\lambda\theta^n}\right)^q.\end{eqnarray*}
Therefore, taking~$\mu$ such that~$\{|{\mathcal{R}}*f |>\mu\}\neq\varnothing$
and using the notation~$\lambda:=\mu/\theta^{2s}$,
$$ \left| \big\{X\in\R^n{\mbox{ s.t. }}|{\mathcal{R}}*f(X)|>\mu\big\}\right|\le
C\theta^{(2s-n)q+n}\|f\|_{L^1(\R^n)}^q.$$
By taking the limits~$\vartheta\searrow0$ and~$\vartheta\nearrow+\infty$, it follows that necessarily~$(2s-n)q+n=0$, which gives~\eqref{P:RIE:5p1}.
\medskip

We also remark that Lemma~\ref{P:RIE:3} does not hold when~$p=1$ (its natural replacement being the weaker result stated in Lemma~\ref{P:RIE:2}). Indeed, one could take~$f\in C^\infty_c(\R^n,[0,+\infty))$ with unit mass and, for every~$\e>0$, define~$f_\e(x):=\frac{1}{\e^n}f\left(\frac{x}{\e}\right)$.
In this setting, if~\eqref{P:RIE:4} held true for~$p=1$, using~\eqref{P:RIE:5} 
we would find that
\begin{eqnarray*} 
\left(\int_{\R^n} \left|
\int_{\R^n} {\mathcal{R}}(x-y)f_\e(y)\,dy\right|^{\frac{n}{n-2s}}\,dx
\right)^{\frac{n-2s}{n}}=
\| {\mathcal{R}}*f_\e\|_{L^{\frac{n}{n-2s}}(\R^n)}\le C\|f_\e\|_{L^1(\R^n)}=C.
\end{eqnarray*}
But this is impossible since, by Fatou's Lemma,
\begin{eqnarray*}&&
\lim_{\e\searrow0}
\int_{\R^n} \left|\int_{\R^n} |x-y|^{2s-n}f_\e(y)\,dy\right|^{\frac{n}{n-2s}}\,dx
=
\lim_{\e\searrow0}
\int_{\R^n} \left|\int_{\R^n} \frac{|x-y|^{2s-n}}{\e^n}f\left(\frac{y}\e\right)\,dy\right|^{\frac{n}{n-2s}}\,dx\\&&\qquad
=\lim_{\e\searrow0}
\int_{\R^n} \left|\int_{\R^n} |x-\e Y|^{2s-n}f(Y)\,dY\right|^{\frac{n}{n-2s}}\,dx\ge
\int_{\R^n} \left(\int_{\R^n} |x|^{2s-n}f(Y)\,dY\right)^{\frac{n}{n-2s}}\,dx\\&&\qquad=
\int_{\R^n} \left( |x|^{2s-n}\right)^{\frac{n}{n-2s}}\,dx=\int_{\R^n} \frac{dx}{|x|^n}=+\infty
,\end{eqnarray*}
thus showing that the case~$p=1$ must be excluded from
Lemma~\ref{P:RIE:3}.
\medskip

It is also worth to remark that Lemma~\ref{P:RIE:3} is violated when~$n\le2sp$. Indeed, when~$n<2sp$ the exponent~$q$ in~\eqref{P:RIE:5} becomes negative, making it impossible to define~$L^q(\R^n)$. When instead~$n=2sp$,
formally~\eqref{P:RIE:5} would return~$q=\infty$, but~\eqref{P:RIE:4} does not hold for~$q=\infty$. To check this,
take~$\e\in\left(0,1-\frac1p\right)$ and
$$ f(x):=\frac{\chi_{B_{1/2}}(x)}{|x|^{2s}\, \big|\ln|x|\big|^{\frac{2s}{n}+\e}}.$$
Using polar coordinates and substituting for~$t:=-\ln r$, using Fatou's Lemma we see that
\begin{eqnarray*}&&
\lim_{x\to0}\int_{B_{1/2}}\frac{ |x-y|^{2s-n}}{|y|^{2s}\, \big|\ln|y|\big|^{\frac{2s}{n}+\e}}\,dy\ge
\int_{B_{1/2}}\frac{ |y|^{2s-n}}{|y|^{2s}\, \big|\ln|y|\big|^{\frac{2s}{n}+\e}}\,dy\\&&\qquad=
{\mathcal{H}}^{n-1}(\partial B_1)
\int_0^{{1/2}}\frac{dr}{r\,|\ln r|^{\frac{1}{p}+\e}}=
{\mathcal{H}}^{n-1}(\partial B_1)\int_{\ln2}^{+\infty}\frac{dt}{t^{\frac{1}{p}+\e}}=+\infty.
\end{eqnarray*}
Since~$n=2sp>2s$, this gives that
$$\lim_{x\to0} {\mathcal{R}}*f(x)=+\infty
$$
and therefore, in this case,
\begin{equation}\label{P:RIE:200}
\| {\mathcal{R}}*f\|_{L^\infty(\R^n)}=+\infty.
\end{equation}

On the other hand,
\begin{eqnarray*}&&
\|f\|_{L^p(\R^n)}^p=
\int_{B_{1/2}}\frac{dx}{|x|^{2sp}\, \big|\ln|x|\big|^{\frac{2sp}{n}+\e p}}=
{\mathcal{H}}^{n-1}(\partial B_1)
\int_0^{{1/2}}\frac{dr}{r\, |\ln r|^{1+\e p}}\\&&\qquad\qquad\qquad=
{\mathcal{H}}^{n-1}(\partial B_1)
\int_{\ln2}^{+\infty} \frac{dt}{t^{1+\e p}}<+\infty.
\end{eqnarray*}
Combining this with~\eqref{P:RIE:200}, we see that Lemma~\ref{P:RIE:3} does not hold when~$n=2sp$.\medskip

We are now ready to prove the results stated in Lemmata~\ref{P:RIE:1},~\ref{P:RIE:2} and~\ref{P:RIE:3}.

\begin{proof}[Proof of Lemma~\ref{P:RIE:1}]
We define
\begin{equation}\label{P:RIE:3MA56}
{\mathcal{R}}_0:={\mathcal{R}}\chi_{B_1}\qquad{\mbox{and}}\qquad{\mathcal{R}}_\infty:={\mathcal{R}}\chi_{\R^n\setminus B_1}.\end{equation}
By construction, we have that~${\mathcal{R}}_0\in L^1(\R^n)$. Thus, we can consider the function
$$ \Phi(x):=\int_{\R^n} |{\mathcal{R}}_0(x-y)|\,|f(y)|\,dy =|{\mathcal{R}}|*|f|(x)$$
and note that, by Young's Convolution Inequality,
\begin{equation}\label{P:RIE:12J} \|\Phi\|_{L^p(\R^n)}\le \|{\mathcal{R}}_0\|_{L^1(\R^n)}\| f\|_{L^p(\R^n)}<+\infty.\end{equation}
As a consequence,
\begin{equation}\label{P:RIE:12}
{\mbox{the integral defining~${\mathcal{R}}_0*f(x)$ converges absolutely for a.e.~$x\in\R^n$.}}\end{equation}

Furthermore, since~$n>2s$ we have that the function~$\R^n\ni x\mapsto |x|^{2s-n}$ belongs to~$L^m(\R^n\setminus B_1)$,
and consequently~${\mathcal{R}}_\infty\in L^m(\R^n)$, for all~$m\in\left(\frac{n}{n-2s},+\infty\right]$.
Hence, we let~$m_p$ be the dual exponent of~$p$, namely
\begin{equation}\label{P:RIE:12J:01} 
m_p:=\frac{p}{p-1}.\end{equation}
We stress that the condition~$n>2sp$ in Lemma~\ref{P:RIE:1} is equivalent to the fact that~$m_p\in\left(\frac{n}{n-2s},+\infty\right]$.
Hence, we have that~${\mathcal{R}}_\infty\in L^{m_p}(\R^n)$ and accordingly the H\"older's Inequality entails that
\begin{equation}\label{P:RIE:12J:02} \int_{\R^n}|{\mathcal{R}}_\infty(x-y)|\,|f(y)|\,dy\le\|{\mathcal{R}}_\infty\|_{L^{m_p}(\R^n)}\|f\|_{L^p(\R^n)}.\end{equation}
The desired result in Lemma~\ref{P:RIE:1} now follows from this inequality,
\eqref{P:RIE:12} and the fact that~${\mathcal{R}}={\mathcal{R}}_0+{\mathcal{R}}_\infty$.
\end{proof}

\begin{proof}[Proof of Lemmata~\ref{P:RIE:2} and~\ref{P:RIE:3}]
A core computation is common for both the proofs of Lemmata~\ref{P:RIE:2} and~\ref{P:RIE:3} and goes as follows.
Let~$p\in[1,+\infty)$ and~$n>2sp$. Let also~$q$ be as in~\eqref{P:RIE:5}. We claim that, for every~$\lambda>0$,
\begin{equation}\label{P:RIE:5p2-0} \left| \big\{x\in\R^n{\mbox{ s.t. }}|{\mathcal{R}}*f(x)|>\lambda\big\}\right|\le C\left( \frac{\|f\|_{L^p(\R^n)}}{\lambda}\right)^q,\end{equation}
for some constant~$C>0$ depending only on~$n$,~$s$ and~$p$.

To prove this, we can assume, without loss of generality that
\begin{equation}\label{P:RIE:5p2-0-p}{\mbox{$f$ does not vanish identically,}}\end{equation}
otherwise we are done. The strategy to prove~\eqref{P:RIE:5p2-0} is to revisit in a more quantitative way the proof of Lemma~\ref{P:RIE:1}.
To this end, we slightly modify the notation in~\eqref{P:RIE:3MA56} by allowing ourself the additional freedom of a new parameter~$\ell>0$, to be conveniently chosen in dependence of the given~$\lambda>0$ in what follows. Namely, we set
\begin{equation*}
{\mathcal{R}}_0:={\mathcal{R}}\chi_{B_\ell}\qquad{\mbox{and}}\qquad{\mathcal{R}}_\infty:={\mathcal{R}}\chi_{\R^n\setminus B_\ell}.\end{equation*}
Since~${\mathcal{R}}={\mathcal{R}}_0+{\mathcal{R}}_\infty$, we see that
\begin{equation*}
\big\{x\in\R^n{\mbox{ s.t. }}|{\mathcal{R}}*f(x)|>\lambda\big\}\subseteq
\left\{x\in\R^n{\mbox{ s.t. }}|{\mathcal{R}}_0*f(x)|>\frac\lambda2\right\}\cup\left\{x\in\R^n{\mbox{ s.t. }}|{\mathcal{R}}_\infty*f(x)|>\frac\lambda2\right\}
\end{equation*}
and, as a consequence,
\begin{equation}\label{P:RIE:12Ja}\begin{split}&
\left| \big\{x\in\R^n{\mbox{ s.t. }}|{\mathcal{R}}*f(x)|>\lambda\big\}\right|\\&\qquad\le
\left| \left\{x\in\R^n{\mbox{ s.t. }}|{\mathcal{R}}_0*f(x)|>\frac\lambda2\right\}\right|+\left| \left\{x\in\R^n{\mbox{ s.t. }}|{\mathcal{R}}_\infty*f(x)|>\frac\lambda2\right\}\right|.\end{split}
\end{equation}

Furthermore, recalling the Young's Convolution Inequality in~\eqref{P:RIE:12J} and using the Chebyshev's Inequality, we find that
\begin{equation}\label{P:RIE:12Jb}
\begin{split}&
\|{\mathcal{R}}_0\|_{L^1(\R^n)}^p \| f\|_{L^p(\R^n)}^p\ge
\int_{\R^n}\left(\int_{\R^n} |{\mathcal{R}}_0(x-y)|\,|f(y)|\,dy\right)^p\,dx\ge
\int_{\R^n}\left|\int_{\R^n} {\mathcal{R}}_0(x-y) f(y)\,dy\right|^p\,dx
\\&\qquad=\int_{\R^n}|{\mathcal{R}}_0*f(x)|^p\,dx\ge\left(\frac\lambda2\right)^p\left| \left\{x\in\R^n{\mbox{ s.t. }}|{\mathcal{R}}_0*f(x)|>\frac\lambda2\right\}\right|.
\end{split}
\end{equation}

Now, we use the H\"older's Inequality (recall~\eqref{P:RIE:12J:01}
and~\eqref{P:RIE:12J:02}) and we obtain
\begin{eqnarray*}
&\|{\mathcal{R}}_\infty\|_{L^{\frac{p}{p-1}}(\R^n)}\|f\|_{L^p(\R^n)}\ge
|{\mathcal{R}}_\infty*f(x)|
\end{eqnarray*}
and, as a byproduct,
\begin{equation}\label{P:RIE:12J:0131}
\begin{split}&
\left| \left\{x\in\R^n{\mbox{ s.t. }}|{\mathcal{R}}_\infty*f(x)|>\frac\lambda2\right\}\right|\le
\left| \left\{x\in\R^n{\mbox{ s.t. }}\|{\mathcal{R}}_\infty\|_{L^{\frac{p}{p-1}}(\R^n)}\|f\|_{L^p(\R^n)}>\frac\lambda2\right\}\right|\\ &\qquad=\begin{dcases}
0 & {\mbox{ if }} \;\displaystyle\|{\mathcal{R}}_\infty\|_{L^{\frac{p}{p-1}}(\R^n)}\|f\|_{L^p(\R^n)}\le\frac\lambda2,\\
+\infty & {\mbox{ if }} \;\displaystyle\|{\mathcal{R}}_\infty\|_{L^{\frac{p}{p-1}}(\R^n)}\|f\|_{L^p(\R^n)}>\frac\lambda2.
\end{dcases}\end{split}
\end{equation}

Hence, it is now convenient to choose the free parameter~$\ell$ appropriately in order to tune the result of~\eqref{P:RIE:12J:0131}. Namely, since
$$ \|{\mathcal{R}}_\infty\|_{L^{\frac{p}{p-1}}(\R^n)}^{\frac{p}{p-1}}\le
C_1 \int_{\R^n\setminus B_\ell} |x|^{ {\frac{p(2s-n)}{p-1}}}\,dx
\le C_2 \ell^{\frac{n - 2 sp}{1 - p}},$$
for some constants~$C_1$ and~$C_2$, recalling~\eqref{P:RIE:5p2-0-p}, the choice
$$ \ell:= \left(\frac{2C_2^{\frac{p-1}p}\,\|f\|_{L^p(\R^n)} }{\lambda}\right)^{\frac{p}{n-2sp}}
$$
entails that
$$ \|{\mathcal{R}}_\infty\|_{L^{\frac{p}{p-1}}(\R^n)}\|f\|_{L^p(\R^n)}\le\frac\lambda2,$$
whence~\eqref{P:RIE:12J:0131} returns that
$$ \left| \left\{x\in\R^n{\mbox{ s.t. }}|{\mathcal{R}}_\infty*f(x)|>\frac\lambda2\right\}\right|=0.$$

Combining this information with~\eqref{P:RIE:12Ja} and~\eqref{P:RIE:12Jb} we find that
\begin{eqnarray*}
&&\left| \big\{x\in\R^n{\mbox{ s.t. }}|{\mathcal{R}}*f(x)|>\lambda\big\}\right|\le
\frac{2^p\|{\mathcal{R}}_0\|_{L^1(\R^n)}^p \| f\|_{L^p(\R^n)}^p}{\lambda^p}\\&&\qquad\le \frac{C_3\, \| f\|_{L^p(\R^n)}^p}{\lambda^p} \left(\int_{B_\ell} |x|^{2s-n}\,dx\right)^p =\frac{C_4\, \| f\|_{L^p(\R^n)}^p \,\ell^{2sp}}{\lambda^p} =\frac{C_5\, \| f\|_{L^p(\R^n)}^p }{\lambda^p} 
\left(\frac{\|f\|_{L^p(\R^n)} }{\lambda}\right)^{\frac{2sp^2}{n-2sp}}\\&&\qquad\le
C_6\left(\frac{\|f\|_{L^p(\R^n)}}{\lambda}\right)^{\frac{np}{n-2sp}},
\end{eqnarray*}
for some positive constants~$C_3$,~$C_4$,~$C_5$ and~$C_6$, which completes the proof of~\eqref{P:RIE:5p2-0}.

We observe that for~$p=1$,~\eqref{P:RIE:5p2-0} reduces to~\eqref{P:RIE:5p2} and the proof of Lemma~\ref{P:RIE:2} is thereby complete.

We now refine the previous estimates to establish Lemma~\ref{P:RIE:3} as well. For this, we define
$$  \left(1,\frac{n}{2s}\right)\ni \zeta\longmapsto \varphi(\zeta):=
\frac{(p-1)\zeta}{p(\zeta-1)}=\frac{p-1}{p}+\frac{p-1}{p(\zeta-1)}.$$
We point out that~$\varphi$ is decreasing in~$\zeta$. Besides, since~$p\in\left(1,\frac{n}{2s}\right)$,
$$ \lim_{\zeta\searrow1}\varphi(\zeta)=+\infty\qquad{\mbox{and}}\qquad\lim_{\zeta\nearrow n/2s}\varphi(\zeta)=
\frac{n}{n-2s}\left(1-\frac1p\right)\in(0,1).
$$
Therefore we can pick~$p_1\in \left(1,\frac{n}{2s}\right)$ such that~$\varphi(p_1)\in(0,1)$.
We let
$$ p_0:=1,\qquad q_0:=\frac{n}{n-2s}\quad{\mbox{ and }}\quad q_1:=\frac{np_1}{n-2sp_1}.$$
We observe that the convolution against the Riesz kernel is of weak type~$(p_0,q_0)$, thanks to Lemma~\ref{P:RIE:2},
and of weak type~$(p_1,q_1)$, thanks to~\eqref{P:RIE:5p2-0} (used here with~$p$ replaced by~$p_1$ and~$q$ replaced by~$q_1$,
which is possible since~$q_1$ respects the structure of~\eqref{P:RIE:5}).

As a consequence,
by the Marcinkiewicz Interpolation Theorem (see e.g.~\cite[Appendix~B]{MR0290095}),
for every~$\theta\in(0,1)$,
\begin{equation}\label{P:RIE:3MA}
{\mbox{the convolution against the Riesz kernel is of strong type~$(p_\theta,q_\theta)$,}}\end{equation}
where the exponents~$p_\theta$ and~$q_\theta$ are given by the relation
$$\frac1{p_\theta}=\frac{1-\theta}{p_0}+\frac\theta{p_1}\qquad{\mbox{and}}\qquad
\frac1{q_\theta}=\frac{1-\theta}{q_0}+\frac\theta{q_1}.$$

The choice~$\theta:=\varphi(p_1)$ corresponds to
\begin{eqnarray*}
\frac1{p_\theta}=1-\varphi(p_1)+\frac{\varphi(p_1)}{p_1}=1+\frac{\varphi(p_1)}{p_1}(1-p_1)=1+\frac{(p-1)}{p(p_1-1)}(1-p_1)=\frac1p
\end{eqnarray*}
and, by~\eqref{P:RIE:5},
\begin{eqnarray*}&&
\frac1{q_\theta}=\frac{(1-\varphi(p_1))(n-2s)}{n}+\frac{\varphi(p_1)\,(n-2sp_1)}{np_1}\\&&\qquad=
\frac{\left(1-\frac{(p-1)p_1}{p(p_1-1)}\right)(n-2s)}{n}+\frac{\frac{(p-1)p_1}{p(p_1-1)}(n-2sp_1)}{np_1}=\frac{n-2sp}{np}=\frac1q.
\end{eqnarray*}
Hence, with this choice, we have that~$(p_\theta,q_\theta)=(p,q)$
and, in light of~\eqref{P:RIE:3MA}, this completes the proof of Lemma~\ref{P:RIE:3}.
\end{proof}

We stress that the positive result in Lemma~\ref{P:RIE:3} is somewhat limited by the negative example showcased in Proposition~\ref{HSZUONDCCONT}.
More explicitly, the condition that~$f\in L^p(\R^n)$ does not guarantee that~${\mathcal{R}}*f\in L^p(\R^n)$, as illustrated here below:

\begin{proposition}\label{HSZUONDCCONT2}
Let~$p\in[1,+\infty)$ and~$\gamma\in(n,n+2sp]$.

Let
$$\R^n\ni x\longmapsto f(x):=\frac{1}{(1+|x|^2)^{\frac{\gamma}{2p}}}.$$
Then,~$f\in L^p(\R^n)$ but~${\mathcal{R}}*f\not\in L^p(\R^n)$.
\end{proposition}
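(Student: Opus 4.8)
The plan is to check the two assertions separately. That $f\in L^p(\R^n)$ is immediate: since $f(x)^p=(1+|x|^2)^{-\gamma/2}$ and $\gamma>n$, we get $\|f\|_{L^p(\R^n)}^p=\int_{\R^n}(1+|x|^2)^{-\gamma/2}\,dx<+\infty$. For the statement $\mathcal R*f\notin L^p(\R^n)$ I would establish a pointwise lower bound for $|\mathcal R*f(x)|$ when $|x|$ is large and then integrate it. To this end, introduce the nondecreasing function $F(R):=\int_{B_R}f(y)\,dy$; passing to polar coordinates one obtains the elementary asymptotics, as $R\to+\infty$,
\[
F(R)\asymp
\begin{cases}
R^{\,n-\gamma/p}, & \gamma<np,\\
\ln R, & \gamma=np,\\
1, & \gamma>np,
\end{cases}
\]
and in all cases $F(R)\ge F(1)>0$ for $R\ge1$. (The hypotheses do not force $n>2sp$, so $\mathcal R*f$ need not be absolutely convergent; whenever the defining integral is $\pm\infty$ on a set of positive measure the claim is trivial, so below I read $\mathcal R*f$ as that integral with values in $[-\infty,+\infty]$.)

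Consider first the cases in which the Riesz kernel has constant sign, namely $n>2s$ (where $\mathcal R(z)=c_*|z|^{2s-n}$ with $c_*>0$) and $n=1$, $s\in(1/2,1)$ (where $\mathcal R(z)=c_*|z|^{2s-1}$ with $c_*<0$). In both situations $|\mathcal R(z)|=|c_*|\,|z|^{2s-n}$, and for $|y|\le|x|/2$ one has $|x|/2\le|x-y|\le 3|x|/2$, whence $|x-y|^{2s-n}\ge c\,|x|^{2s-n}$ for a dimensional $c>0$ regardless of the sign of the exponent $2s-n$. Since $f\ge0$ and $\mathcal R$ has a sign,
\[
|\mathcal R*f(x)|=\int_{\R^n}|\mathcal R(x-y)|\,f(y)\,dy\ \ge\ |c_*|\,c\,|x|^{2s-n}\,F(|x|/2)\qquad(|x|\ge2).
\]
Raising to the power $p$ and passing to polar coordinates, $\int_{\R^n}|\mathcal R*f|^p$ dominates, up to constants, $\int_2^{+\infty}r^{(2s-n)p}\,F(r/2)^p\,r^{n-1}\,dr$; using the asymptotics of $F$, the exponent of $r$ (apart from a possible $(\ln r)^p$ factor, which occurs only when $\gamma=np$) equals $2sp-\gamma+n-1$ when $\gamma<np$, and $2sp-np+n-1$ when $\gamma\ge np$. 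Each of these is $\ge-1$ precisely because $\gamma\le n+2sp$ (together with $\gamma\le np$, resp.\ $\gamma>np$, in the corresponding cases), and $\int_2^{+\infty}r^a(\ln r)^b\,dr=+\infty$ whenever $a\ge-1$, $b\ge0$. Hence the integral diverges and $\mathcal R*f\notin L^p(\R^n)$ in these cases.

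The main difficulty is the borderline case $n=1$, $s=1/2$, where $\mathcal R(z)=-\tfrac1\pi\ln|z|$ changes sign and one cannot move absolute values inside the integral. Here, for $|x|\ge10$ and $|y|\le|x|/2$ we have $|x-y|\ge5$, so $\ln|x-y|\ge\ln(|x|/2)>0$ and this range contributes at most $-\tfrac1\pi\ln(|x|/2)\,F(|x|/2)$ to $\mathcal R*f(x)$. On $\{|y|>|x|/2\}$, the part $\{|x-y|\ge1\}$ has integrand $-\tfrac1\pi\ln|x-y|\le0$ (and if this part equals $+\infty$, which happens precisely when $\gamma\le p$, then $\mathcal R*f\equiv-\infty$ and we are done), while on $\{|x-y|<1\}$, where $f(y)\le C|x|^{-\gamma/p}$ since $|y|\ge|x|/2$, the contribution is $O(|x|^{-\gamma/p})$. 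Altogether, for $|x|$ large, $\mathcal R*f(x)\le-\tfrac{c}{\pi}\ln|x|\,F(|x|/2)$, i.e.\ $|\mathcal R*f(x)|\ge c\,\ln|x|\,F(|x|/2)$; and since for $n=1$, $\gamma\in(1,1+p]$ the only remaining sub-case is $\gamma>p$, in which $F(R)\asymp1$, we get $\int_{\R^n}|\mathcal R*f|^p\gtrsim\int_{|x|\ge M}(\ln|x|)^p\,dx=+\infty$. This completes the proof; the technical heart is the bookkeeping of signs in this last case, while everything else rests on the monotone lower bound for $F$ and the sharp balance $\gamma\le n+2sp$.
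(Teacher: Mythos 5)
Your argument is correct and follows essentially the same route as the paper's proof: a pointwise lower bound for $|\mathcal{R}*f(x)|$ for large $|x|$, obtained by exploiting the constant sign of the kernel when $n\neq 2s$ and a near-diagonal/far splitting in the logarithmic case $n=1$, $s=\frac12$ (where the possibly divergent negative tail is handled separately), followed by integration in polar coordinates using $\gamma\le n+2sp$. The only real difference is a matter of bookkeeping: you collect the mass of $f$ on the ball $B_{|x|/2}$, which forces the case analysis on $F$ according to $\gamma$ versus $np$, whereas the paper evaluates $f$ on the annulus $B_{2|x|}\setminus B_{|x|}$ and gets the unified lower bound $c\,|x|^{2s-\frac{\gamma}{p}}$ (times $\ln|x|$ when $n=1$, $s=\frac12$) in all cases.
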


\begin{proof} The fact that~$\gamma>n$ ensures that~$f\in L^p(\R^n)$.

The claim that~${\mathcal{R}}*f\not\in L^p(\R^n)$ will follow from the assumption~$\gamma\le n+2sp$ once we prove that there exists~$R>0$ such that, for every~$x\in\R^n\setminus B_R$,
\begin{equation}\label{HSZUONDCCONT3}
|{\mathcal{R}}*f(x)|\ge\frac{c \ell(x)}{|x|^{\frac{\gamma}p-2s}},
\end{equation}
for some constant~$c>0$, where
$$ \ell(x):=\begin{cases}
\ln|x| & {\mbox{ if~$n=1$ and }}\displaystyle s=\frac12,\\
1&{\mbox{ otherwise.}}
\end{cases}$$
To check~\eqref{HSZUONDCCONT3}, we first consider the case~$n\ne 2s$. We use that
in this case both~$f$ and~${\mathcal{R}}$ do not change sign to find that, for all~$x\in\R^n\setminus B_2$,
\begin{eqnarray*}&&
|{\mathcal{R}}*f(x)|\ge\int_{B_{2|x|}\setminus B_{|x|}} |{\mathcal{R}}(x-y)|\,f(y)\,dy\ge
c\int_{B_{2|x|}\setminus B_{|x|}} |x-y|^{2s-n}\,|y|^{-\frac\gamma{p}}\,dy\\&&\qquad
\ge c\int_{B_{2|x|}\setminus B_{|x|}} |x|^{2s-n}\,|x|^{-\frac\gamma{p}}\,dy=c|x|^{2s-\frac\gamma{p}}
\end{eqnarray*}
for some~$c>0$ possibly varying at each step of the calculation.

This proves~\eqref{HSZUONDCCONT3} when~$n\ne 2s$ and we can therefore now focus on the case~$n=2s$, that is~$n=1$ and~$s=\frac12$. In this situation, if~$x\in\R\setminus (-4,4)$
and~$|x-y|<1$, we have that~$|y|\ge|x|-1\ge\frac{|x|}2$ and therefore
\begin{eqnarray*}
\int_{\{|x-y|<1\}} \big|\ln|x-y|\big| f(y)\,dy\le
\frac{C}{|x|^{\frac{\gamma}p}} \int_{\{|x-y|<1\}} \big|\ln|x-y|\big|\,dy\le\frac{C}{|x|^{\frac{\gamma}p}},
\end{eqnarray*}
for some~$C>0$ possibly varying at each step of the calculation.

Furthermore, if~$|y|\in\left(\frac{|x|}4,\frac{|x|}2\right)$ then~$|x-y|\ge|x|-|y|\ge\frac{|x|}2\ge1$, therefore
\begin{eqnarray*}&&
\int_{\{|x-y|\ge1\}} \ln|x-y| \,f(y)\,dy\ge\int_{\{|y|\in(|x|/4,|x|/2)\}}\ln|x-y| \,f(y)\,dy\\&&\qquad
\ge c\int_{\{|y|\in(|x|/4,|x|/2)\}}\frac{\ln|x|}{|x|^{\frac\gamma{p}}}\,dy=
\frac{c|x|\ln|x|}{|x|^{\frac\gamma{p}}}.
\end{eqnarray*}

As a result, up to renaming constants,
$$ |{\mathcal{R}}*f(x)|\ge \frac{c|x|\ln|x|}{|x|^{\frac\gamma{p}}}-
\frac{C}{|x|^{\frac{\gamma}p}}.$$
This gives that~\eqref{HSZUONDCCONT3} holds true whenever~$|x|$ is sufficiently large,
as desired.
\end{proof}

It is also useful to combine the Liouville-type result in Theorem~\ref{BiknsoU98ikjmd9} and
the bound obtained in Lemma~\ref{P:RIE:3} to obtain an estimate in Lebesgue spaces for the equation~$(-\Delta)^su=f$
in~$\R^n$:

\begin{corollary}
Let~$p\in(1,+\infty)$,~$n> 2sp$ and~$f\in L^p(\R^n)$.
Set~$q:=\frac{np}{n-2sp}$.

Let~$u$ be a distributional solution of~$(-\Delta)^su=f$ in~$\R^n$, with
$$ \int_{\R^n}\frac{|u(x)|}{1+|x|^{n+2s}}\,dx<+\infty.$$
Then, there exists a function~$\ell$, which is affine when~$s\in\Big(\frac12,1\Big)$ and constant when~$s\in\Big(0,\frac12\Big]$,
such that
$$ \| u-\ell\|_{L^q(\R^n)}\le C\|f\|_{L^p(\R^n)}
$$
for some constant~$C>0$ depending only on~$n$,~$s$ and~$p$.
\end{corollary}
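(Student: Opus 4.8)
The plan is to combine the two main tools already established: the Liouville-type rigidity of Theorem~\ref{BiknsoU98ikjmd9} and the Riesz potential estimate of Lemma~\ref{P:RIE:3}. First I would construct a candidate solution directly from the datum: set $v:=\mathcal{R}*f$, which by Lemma~\ref{P:RIE:1} is well-defined a.e.\ (since $n>2sp$ implies $n>2s$), belongs to $L^q(\R^n)$ with $\|v\|_{L^q(\R^n)}\le C\|f\|_{L^p(\R^n)}$ by Lemma~\ref{P:RIE:3}, and is a distributional solution of $(-\Delta)^s v=f$ in $\R^n$ (this is the fundamental-solution property of $\mathcal{R}$ recalled in the text, extended from $C^\infty_c$ to $L^p$ by a density/approximation argument using the same weak-type bounds). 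In particular, since $v\in L^q(\R^n)$ and $q<+\infty$, one checks that $v$ satisfies the growth condition $\int_{\R^n}\frac{|v(x)|}{1+|x|^{n+2s}}\,dx<+\infty$; indeed splitting the integral over $B_1$ and its complement and applying H\"older's inequality with exponents $q$ and $q'$ on the exterior region reduces matters to the integrability of $(1+|x|)^{-(n+2s)q'}$ over $\R^n\setminus B_1$, which holds because $(n+2s)q'>n$.

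Next I would exploit uniqueness up to the Liouville obstruction. The function $w:=u-v$ satisfies $(-\Delta)^s w=f-f=0$ in $\R^n$ in the distributional sense, and it inherits the growth bound $\int_{\R^n}\frac{|w(x)|}{1+|x|^{n+2s}}\,dx<+\infty$ from the corresponding bounds on $u$ (hypothesis) and on $v$ (just established), by the triangle inequality. Theorem~\ref{BiknsoU98ikjmd9} then applies to $w$: it forces $w$ to be an affine function when $s\in(\tfrac12,1)$ and a constant when $s\in(0,\tfrac12]$. Calling this affine (resp.\ constant) function $\ell:=w=u-v$, we obtain $u-\ell=v=\mathcal{R}*f$, and hence
\[
\|u-\ell\|_{L^q(\R^n)}=\|\mathcal{R}*f\|_{L^q(\R^n)}\le C\|f\|_{L^p(\R^n)}
\]
with $C$ depending only on $n$, $s$, $p$ via Lemma~\ref{P:RIE:3}. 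This is exactly the assertion.

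The main obstacle I anticipate is the rigorous verification that $v=\mathcal{R}*f$ is genuinely a distributional solution of $(-\Delta)^s v=f$ when $f$ is merely in $L^p(\R^n)$ rather than in $C^\infty_c(\R^n)$. The heuristic is clear---in Fourier variables $\widehat{\mathcal{R}}(\xi)=(2\pi|\xi|)^{-2s}$ by Lemma~\ref{PRODpoikjhr3-2}, so convolution with $\mathcal{R}$ inverts $(-\Delta)^s$---but making this precise requires pairing $v$ against a test function $\varphi\in C^\infty_c(\R^n)$, writing $\langle v,(-\Delta)^s\varphi\rangle=\int f(y)\,\big(\mathcal{R}*(-\Delta)^s\varphi\big)(y)\,dy$ by Fubini (justified by the absolute convergence from Lemma~\ref{P:RIE:1} together with the rapid decay of $(-\Delta)^s\varphi$ and the local integrability of $\mathcal{R}$), and then identifying $\mathcal{R}*(-\Delta)^s\varphi=\varphi$ pointwise---the latter being the case $f=(-\Delta)^s\varphi$, or equivalently the statement that $\mathcal{R}$ is a fundamental solution, applied to the smooth compactly supported function $(-\Delta)^s\varphi$ (note $(-\Delta)^s\varphi$ is Schwartz though not compactly supported, so one uses the Fourier-side identity on Schwartz functions rather than $C^\infty_c$ directly). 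A secondary, more routine technicality is the approximation argument upgrading the $L^q$ bound from $C^\infty_c$ data (where $\mathcal{R}*f$ is classically defined) to general $L^p$ data; this follows by taking $f_k\in C^\infty_c(\R^n)$ with $f_k\to f$ in $L^p$, using Lemma~\ref{P:RIE:3} to get that $\mathcal{R}*f_k$ is Cauchy in $L^q$, and identifying the limit with $\mathcal{R}*f$ a.e.\ along a subsequence via Lemma~\ref{P:RIE:1}. Everything else is bookkeeping.
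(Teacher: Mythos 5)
Your proposal is correct and follows essentially the same route as the paper: set $v:={\mathcal{R}}*f$, use H\"older's Inequality together with the bound~\eqref{P:RIE:4} to check that $v$ (and hence $\ell:=u-v$) satisfies the weighted integrability condition, invoke Theorem~\ref{BiknsoU98ikjmd9} to conclude that $\ell$ is affine (constant for $s\le\frac12$), and then read off the $L^q$ estimate from Lemma~\ref{P:RIE:3}. The additional care you devote to verifying that ${\mathcal{R}}*f$ solves $(-\Delta)^s v=f$ distributionally for $f\in L^p(\R^n)$ is a reasonable supplement, but the paper treats this as already established via the fundamental-solution property of the Riesz kernel, so no new idea is involved.
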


\begin{proof} Let~$v:={\mathcal{R}}*f$ and~$\ell:=u-v$. Using H\"older's Inequality and~\eqref{P:RIE:4}, we note that
$$ \int_{\R^n}\frac{|v(x)|}{1+|x|^{n+2s}}\,dx \le\|v\|_{L^q(\R^n)}\left( \int_{\R^n}\frac{dx}{\left(1+|x|^{\frac{(n+2s)q}{q-1}}\right)}\right)^{\frac{q-1}q}<+\infty$$
and therefore
$$ \int_{\R^n}\frac{|\ell(x)|}{1+|x|^{n+2s}}\,dx <+\infty.$$
Since~$(-\Delta)^s\ell=0$, we thus deduce from Theorem~\ref{BiknsoU98ikjmd9} that~$\ell$ is affine, and constant whenever~$s\in\Big(0,\frac12 \Big]$.

The desired result thus follows from~\eqref{P:RIE:4}.
\end{proof}

See e.g.~\cite{MR0290095, MR0350027} for further information on the Riesz kernel.

\section{Bessel potential analysis}\label{bess:R:P:A:Sec}

The Riesz kernel presented in Section~\ref{R:P:A:SEC} has the interesting feature of being singular (at least when~$n\ge2s$, and non differentiable when~$n=2s$) at the origin
(also, it is locally integrable at the origin). This singular behavior is expected to be useful
in the development of a regularity theory (because if the convolution with a singular operator produces a nice function, necessarily the initial function needs to be ``even nicer''; conversely,
convolutions with smooth and compactly supported kernels always produce a smooth function,
hence smooth and compactly supported kernels have smoothing effects, but do not entail any regularity theory on the initial function).

On the other hand, the Riesz kernel is not integrable at infinity, producing somewhat inconvenient technical complications (as well as specific pathologies, as pointed out in Propositions~\ref{HSZUONDCCONT} and~\ref{HSZUONDCCONT2}).

To circumvent this difficulty, it is customary to consider, instead of the fractional Laplacian, the fractional operator obtained from the identity minus the Laplacian, that is~$(1-\Delta)^s$.
This operator\footnote{As a notational remark, we mention that the operator~$(1-\Delta)^s$ is denoted by~$\langle D_x\rangle^{2s}$ in~\cite[Definition~2.39]{MR2884718}.}
 is defined by its action in the Fourier space for all~$u\in C^\infty_c(\R^n)$ by
\begin{equation}\label{HSZUONDCCONT-FF} {\mathcal{F}}\Big( (1-\Delta)^s u\Big)(\xi)= \big(1+4\pi^2|\xi|^2\big)^{s}\widehat u(\xi).\end{equation}
Interestingly, in this section, we will not confine ourselves to the case~$s\in(0,1)$ and we will instead embrace the full range~$s\in(0,+\infty)$ (and sometimes even~$s\in\R$).
The Bessel kernel~${\mathcal{B}}$ \index{Bessel kerel} will thus be defined as the convolution operator\footnote{The function~${\mathcal{B}}$ appears in various forms in the literature and it is sometimes called the Bessel function, or the
Hankel function, see e.g.~\cite[page~260]{MR0209834}.}
induced by the fundamental solution of such a kernel.

To this end, we point out that: \index{fundamental solution}

\begin{lemma}\label{HSZUONDCCONT-FF-G} Let~$s\in(0,+\infty)$.
The fundamental solution of the operator~$(1-\Delta)^s$ can be written in the form
\begin{equation*}
{\mathcal{B}}(x)=
\frac1{(4\pi)^{\frac{n}2}\Gamma (s)}\int _{0}^{+\infty }\frac{e^{-\frac{ |x|^2}{4\tau}-\tau}}{\tau^{\frac{n}2+1-s} } \,d\tau,
\qquad\text{for }x\in\R^n.\end{equation*}
\end{lemma}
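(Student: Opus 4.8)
The goal is to identify the fundamental solution of $(1-\Delta)^s$, i.e. the function $\mathcal{B}$ whose Fourier transform is $\bigl(1+4\pi^2|\xi|^2\bigr)^{-s}$, and to rewrite it in the claimed integral form. The natural strategy is to mimic the ``Bochner subordination'' trick already used in this chapter for the heat semigroup (cf. the proof of~\eqref{HESE}): express the negative power $\lambda^{-s}$ of a positive quantity $\lambda$ as an integral against the Gamma function, then apply this with $\lambda=1+4\pi^2|\xi|^2$ and invert the Fourier transform term by term.

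\textbf{Step 1: the Gamma-function identity.} Using the definition of $\Gamma$ in~\eqref{gamma-def}, for every $\lambda>0$ and $s>0$ one has
\begin{align*}
\lambda^{-s}=\frac1{\Gamma(s)}\int_0^{+\infty}e^{-\lambda\tau}\,\tau^{s-1}\,d\tau,
\end{align*}
by the change of variables $\tau\mapsto\lambda\tau$. Applying this with $\lambda=1+4\pi^2|\xi|^2$ gives
\begin{align*}
\bigl(1+4\pi^2|\xi|^2\bigr)^{-s}=\frac1{\Gamma(s)}\int_0^{+\infty}e^{-\tau}\,e^{-4\pi^2|\xi|^2\tau}\,\tau^{s-1}\,d\tau.
\end{align*}

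\textbf{Step 2: inverting the Fourier transform.} The fundamental solution $\mathcal{B}$ is, by definition and by~\eqref{HSZUONDCCONT-FF}, the inverse Fourier transform of $\xi\mapsto\bigl(1+4\pi^2|\xi|^2\bigr)^{-s}$. One recalls the standard Gaussian Fourier pair: for $\tau>0$, the inverse Fourier transform of $\xi\mapsto e^{-4\pi^2|\xi|^2\tau}$ is $x\mapsto (4\pi\tau)^{-n/2}e^{-|x|^2/(4\tau)}$ (this is exactly the heat kernel normalization appearing in~\eqref{bochner-def00}, and can be verified via Lemma~\ref{lemmazero0000} together with the completing-the-square computation for Gaussian integrals). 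Interchanging the $\tau$-integration with the inverse Fourier transform — justified by Fubini/Tonelli once one checks absolute integrability of $e^{-\tau}\,(4\pi\tau)^{-n/2}e^{-|x|^2/(4\tau)}\,\tau^{s-1}$ in $(\tau,\cdot)$, which holds since near $\tau=0$ the Gaussian factor kills the $\tau^{s-1-n/2}$ singularity when $|x|\neq0$ and integrability near $\tau=0$ for $x=0$ follows from $s>0$ being compensated by testing against Schwartz functions, while near $\tau=+\infty$ the factor $e^{-\tau}$ ensures decay — yields
\begin{align*}
\mathcal{B}(x)=\frac1{\Gamma(s)}\int_0^{+\infty}e^{-\tau}\,\frac{e^{-|x|^2/(4\tau)}}{(4\pi\tau)^{n/2}}\,\tau^{s-1}\,d\tau
=\frac1{(4\pi)^{n/2}\Gamma(s)}\int_0^{+\infty}\frac{e^{-\frac{|x|^2}{4\tau}-\tau}}{\tau^{\frac n2+1-s}}\,d\tau,
\end{align*}
which is the desired formula.

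\textbf{Main obstacle.} The computation itself is routine; the delicate point is making the interchange of integrals rigorous in the regime where $\bigl(1+4\pi^2|\xi|^2\bigr)^{-s}$ is not integrable in $\xi$ (i.e. when $2s\le n$), so that ``inverse Fourier transform'' must be understood in the sense of tempered distributions rather than as an absolutely convergent integral. The clean way around this is to test against an arbitrary $\varphi\in C^\infty_c(\R^n)$ (or Schwartz $\varphi$): write $\int_{\R^n}\bigl(1+4\pi^2|\xi|^2\bigr)^{-s}\widehat\varphi(\xi)\,d\xi$, insert the Gamma-integral representation, apply Fubini (now legitimate because $\widehat\varphi$ is Schwartz and the combined integrand is absolutely integrable in $(\tau,\xi)$), use Plancherel/Parseval on the inner $\xi$-integral to turn $e^{-4\pi^2|\xi|^2\tau}$ against $\widehat\varphi$ into the Gaussian against $\varphi$, and recognize the result as $\int_{\R^n}\mathcal{B}(x)\varphi(x)\,dx$ with $\mathcal{B}$ as above. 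This also shows $\mathcal{B}\in L^1(\R^n)$ (integrate the formula in $x$ using $\int_{\R^n}(4\pi\tau)^{-n/2}e^{-|x|^2/(4\tau)}\,dx=1$ and $\int_0^\infty e^{-\tau}\tau^{s-1}\,d\tau=\Gamma(s)$, giving $\|\mathcal{B}\|_{L^1}=1$), so that the distributional identity in fact promotes to a genuine function identity.
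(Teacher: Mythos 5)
Your argument is correct and is essentially the paper's own proof: the paper also writes $\bigl(1+4\pi^2|\xi|^2\bigr)^{-s}=\frac1{\Gamma(s)}\int_0^{+\infty}\tau^{s-1}e^{-(1+4\pi^2|\xi|^2)\tau}\,d\tau$ via the rescaled Gamma integral and then pulls the inverse Fourier Transform of the Gaussian under the $\tau$-integral to land on the stated formula. The only difference is that you spell out the distributional justification (testing against $\varphi$, Fubini, Parseval) of the interchange, which the paper performs formally; this is a welcome but not essentially different addition.
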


\begin{proof} We start from the definition of Euler Gamma Function for~$z\in\C$ with~$\Re z>0$, namely
$$\Gamma (z)=\int _{0}^{+\infty }t^{z-1}e^{-t}\,dt.$$

Given~$b>0$, the substitution~$\tau:=\frac{t}b$ gives
$$ \Gamma (z)=b^z\int _{0}^{+\infty }\tau^{z-1}e^{-b\tau}\,d\tau.$$
Choosing~$z:=s$ and~$b:=1+4\pi^2|\xi|^2$, we find
$$ (1+4\pi^2|\xi|^2)^{-s}=\frac1{\Gamma (s)}\int _{0}^{+\infty }\tau^{s-1}e^{-(1+4\pi^2|\xi|^2)\tau}\,d\tau.$$
Thus, by~\eqref{HSZUONDCCONT-FF}, the fundamental solution~${\mathcal{B}}$ of the operator~$(1-\Delta)^s$ satisfies
\begin{equation}\label{HSZUONDCCONT-FFg} \widehat {\mathcal{B}}(\xi)=\big(1+4\pi^2|\xi|^2\big)^{-s}=\frac1{\Gamma (s)}\int _{0}^{+\infty }\tau^{s-1}e^{-(1+4\pi^2|\xi|^2)\tau}\,d\tau\end{equation}
and therefore, using the inverse Fourier Transform of the Gau{\ss}ian,
\begin{eqnarray*}
{\mathcal{B}}(x)=\frac1{\Gamma (s)}\int _{0}^{+\infty }\tau^{s-1} e^{-\tau} {\mathcal{F}^{-1}}\Big(e^{-4\pi^2\tau |\xi|^2}\Big)(x)\,d\tau
=\frac1{(4\pi)^{\frac{n}2}\Gamma (s)}\int _{0}^{+\infty }\tau^{s-1-\frac{n}2} e^{-\tau} e^{-\frac{ |x|^2}{4\tau}} \,d\tau,
\end{eqnarray*} as desired.
\end{proof}

For clarity, we make explicit the relation in~\eqref{HSZUONDCCONT-FFg},
also to stress its analogy with Lemma~\ref{PRODpoikjhr3-2}:

\begin{lemma}\label{PRODpoikjhr3-2:le-7}
If~$s\in(0,+\infty)$ then
\begin{equation*}
{\mbox{$\widehat{\mathcal{B}}(\xi)=\big(1+4\pi^2|\xi|^2\big)^{-s}$.}}\end{equation*}\end{lemma}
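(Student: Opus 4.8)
The statement to prove, Lemma~\ref{PRODpoikjhr3-2:le-7}, asserts that the Bessel kernel~${\mathcal{B}}$ has Fourier Transform~$\widehat{\mathcal{B}}(\xi)=(1+4\pi^2|\xi|^2)^{-s}$. This is essentially a matter of bookkeeping, since the relation was already derived en route to Lemma~\ref{HSZUONDCCONT-FF-G}: equation~\eqref{HSZUONDCCONT-FFg} records precisely that~$\widehat{\mathcal{B}}(\xi)=(1+4\pi^2|\xi|^2)^{-s}$, obtained from the definition of~${\mathcal{B}}$ as the fundamental solution of~$(1-\Delta)^s$ together with~\eqref{HSZUONDCCONT-FF}. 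So the plan is simply to extract this identity cleanly and present it as a standalone statement.

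Concretely, I would proceed as follows. First, recall that~${\mathcal{B}}$ is by definition the fundamental solution of~$(1-\Delta)^s$, i.e.~$(1-\Delta)^s{\mathcal{B}}=\delta_0$ in the sense of distributions. Applying the Fourier Transform to both sides and using the defining relation~\eqref{HSZUONDCCONT-FF} for~$(1-\Delta)^s$, namely~${\mathcal{F}}\big((1-\Delta)^s u\big)(\xi)=(1+4\pi^2|\xi|^2)^s\widehat u(\xi)$, together with~$\widehat{\delta_0}\equiv 1$, one obtains~$(1+4\pi^2|\xi|^2)^s\widehat{\mathcal{B}}(\xi)=1$. Dividing by the strictly positive, smooth symbol~$(1+4\pi^2|\xi|^2)^s$ yields the claim. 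Alternatively, and even more directly, one can quote the integral representation of~${\mathcal{B}}$ from Lemma~\ref{HSZUONDCCONT-FF-G}, take its Fourier Transform term by term using the fact that the Fourier Transform of the Gaußian~$e^{-|x|^2/(4\tau)}$ is~$(4\pi\tau)^{n/2}e^{-4\pi^2\tau|\xi|^2}$, and recognize the resulting~$\tau$-integral as the Gamma-function identity~$\Gamma(s)^{-1}\int_0^{+\infty}\tau^{s-1}e^{-(1+4\pi^2|\xi|^2)\tau}\,d\tau=(1+4\pi^2|\xi|^2)^{-s}$, which is exactly the computation already carried out in the proof of Lemma~\ref{HSZUONDCCONT-FF-G}.

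There is no real obstacle here; the only point requiring a modicum of care is the justification of interchanging the Fourier Transform with the~$\tau$-integral in the second approach (Fubini/Tonelli, valid because~$e^{-|x|^2/(4\tau)-\tau}\tau^{s-1-n/2}$ is absolutely integrable in~$(x,\tau)$ for~$s>0$), but this is already implicit in Lemma~\ref{HSZUONDCCONT-FF-G}. Given that~\eqref{HSZUONDCCONT-FFg} literally contains the desired formula, the cleanest exposition is to cite Lemma~\ref{HSZUONDCCONT-FF-G} and its proof and state that the identity~$\widehat{\mathcal{B}}(\xi)=(1+4\pi^2|\xi|^2)^{-s}$ was established therein, perhaps reproducing the one-line distributional argument above for completeness. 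The main ``work'' of this lemma is organizational: making explicit, and flagging the analogy with Lemma~\ref{PRODpoikjhr3-2} for the Riesz kernel, a relation that the earlier proof produced as a byproduct.
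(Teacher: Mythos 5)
Your proposal is correct and coincides with the paper's treatment: Lemma~\ref{PRODpoikjhr3-2:le-7} is stated there precisely to ``make explicit the relation in~\eqref{HSZUONDCCONT-FFg}'', i.e.\ the identity~$\widehat{\mathcal{B}}(\xi)=\big(1+4\pi^2|\xi|^2\big)^{-s}$ already obtained in the proof of Lemma~\ref{HSZUONDCCONT-FF-G} from the definition of~${\mathcal{B}}$ as the fundamental solution of~$(1-\Delta)^s$ via~\eqref{HSZUONDCCONT-FF} and the Gamma-function representation. Your extraction of that identity (and the optional one-line distributional argument) is exactly what the paper does.
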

 
Also, in analogy with Lemma~\ref{PRODpoikjhr3-2:le},
we have that the Bessel kernels enjoy a natural semigroup property with respect to the fractional parameter: 

\begin{lemma}\label{L912-xpsmv}
If~$s$,~$\sigma\in(0,+\infty)$, denoting by~${\mathcal{B}}^{(s)}$ the Bessel kernel with parameter~$s$
and by~${\mathcal{B}}^{(\sigma)}$ the Bessel kernel with parameter~$\sigma$, we have that
\begin{equation*}
{\mathcal{B}}^{(s)}*{\mathcal{B}}^{(\sigma)}=
{\mathcal{B}}^{(s+\sigma)}
\end{equation*}
in the sense of distributions.
\end{lemma}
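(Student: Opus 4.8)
The plan is to mimic exactly the proof of Lemma~\ref{PRODpoikjhr3-2:le}, which established the analogous semigroup property for the Riesz kernels, the only (minor) difference being that here we do not need the restriction on the dimension since the Bessel kernel is globally integrable. First I would invoke Lemma~\ref{PRODpoikjhr3-2:le-7} (equivalently, the computation~\eqref{HSZUONDCCONT-FFg} inside the proof of Lemma~\ref{HSZUONDCCONT-FF-G}), which tells us that, for any fractional parameter~$t\in(0,+\infty)$,
\begin{equation*}
\widehat{{\mathcal{B}}^{(t)}}(\xi)=\big(1+4\pi^2|\xi|^2\big)^{-t}
\qquad\text{for }\xi\in\R^n.
\end{equation*}
Since~${\mathcal{B}}^{(t)}\in L^1(\R^n)$ for every~$t>0$ (this follows from the explicit representation in Lemma~\ref{HSZUONDCCONT-FF-G} by Tonelli's Theorem, integrating first in~$x$ and using Lemma~\ref{lemmazero0000}), the Fourier Transform of~${\mathcal{B}}^{(t)}$ is a genuine bounded continuous function and the convolution~${\mathcal{B}}^{(s)}*{\mathcal{B}}^{(\sigma)}$ is well-defined as an~$L^1(\R^n)$ function, again by Young's Convolution Inequality.

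The key step is then the elementary identity
\begin{equation*}
\widehat{{\mathcal{B}}^{(s)}*{\mathcal{B}}^{(\sigma)}}(\xi)
=\widehat{{\mathcal{B}}^{(s)}}(\xi)\,\widehat{{\mathcal{B}}^{(\sigma)}}(\xi)
=\big(1+4\pi^2|\xi|^2\big)^{-s}\big(1+4\pi^2|\xi|^2\big)^{-\sigma}
=\big(1+4\pi^2|\xi|^2\big)^{-(s+\sigma)}
=\widehat{{\mathcal{B}}^{(s+\sigma)}}(\xi),
\end{equation*}
where the first equality is the standard fact that the Fourier Transform turns convolution into pointwise product (valid for~$L^1$ functions) and the last equality is Lemma~\ref{PRODpoikjhr3-2:le-7} applied with parameter~$s+\sigma$. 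Since two~$L^1(\R^n)$ functions with the same Fourier Transform coincide (a fortiori, they coincide in the sense of distributions), we conclude that~${\mathcal{B}}^{(s)}*{\mathcal{B}}^{(\sigma)}={\mathcal{B}}^{(s+\sigma)}$, as desired.

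There is no real obstacle here: unlike the Riesz case, where one had to carefully pass to the limit~$\e\searrow0$ in a regularized integrand in order to make sense of the non-integrable kernel~$|\xi|^{-2s}$ as a distribution, the Bessel kernels and their transforms are everywhere smooth and integrable, so the argument is a one-line consequence of the convolution theorem together with the elementary algebraic identity~$(1+4\pi^2|\xi|^2)^{-s}(1+4\pi^2|\xi|^2)^{-\sigma}=(1+4\pi^2|\xi|^2)^{-(s+\sigma)}$. The only point deserving a word of care is the justification that~${\mathcal{B}}^{(t)}\in L^1(\R^n)$ (so that all Fourier Transforms are taken in the classical sense and the convolution theorem applies without recourse to distributions), which is immediate from Lemma~\ref{HSZUONDCCONT-FF-G} and Tonelli's Theorem.
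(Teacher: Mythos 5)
Your proposal is correct and follows essentially the same route as the paper: apply Lemma~\ref{PRODpoikjhr3-2:le-7} to each kernel, use that the Fourier Transform turns convolution into a product, and identify the result with~$\widehat{\mathcal{B}}^{(s+\sigma)}$. The extra care you take in checking~${\mathcal{B}}^{(t)}\in L^1(\R^n)$ (which the paper records separately, with unit mass, in Lemma~\ref{L419}) is a welcome but not essentially different addition.
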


\begin{proof} By Lemma~\ref{PRODpoikjhr3-2:le-7},
\begin{equation*}\begin{split}&
{\mathcal{F}}\big({\mathcal{B}}^{(s)}*{\mathcal{B}}^{(\sigma)}\big)(\xi)=
\widehat{\mathcal{B}}^{(s)}(\xi)\widehat{\mathcal{B}}^{(\sigma)}(\xi)
=\big(1+4\pi^2|\xi|^2\big)^{-s}\big(1+4\pi^2|\xi|^2\big)^{-{\sigma}}\\&\qquad\qquad=\big(1+4\pi^2|\xi|^2\big)^{-{(s+\sigma)}}=
\widehat{\mathcal{B}}^{(s+\sigma)}(\xi)\end{split}
\end{equation*}
and the desired result follows.
\end{proof}

Differently from the case of Riesz potentials, we have that Bessel potentials have finite (in fact, unit) mass:

\begin{lemma}\label{L419} For all~$s\in(0,+\infty)$
it holds that
$$\int_{\R^n}{\mathcal{B}}(x)\,dx=1.$$
\end{lemma}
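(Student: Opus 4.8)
The cleanest route is to integrate over~$\R^n$ the explicit formula for~${\mathcal{B}}$ furnished by Lemma~\ref{HSZUONDCCONT-FF-G} and exchange the order of integration by the Tonelli Theorem, since the integrand is nonnegative. First I would write
\[
\int_{\R^n}{\mathcal{B}}(x)\,dx
=\frac1{(4\pi)^{n/2}\Gamma(s)}\int_{\R^n}\int_0^{+\infty}
\frac{e^{-|x|^2/(4\tau)-\tau}}{\tau^{n/2+1-s}}\,d\tau\,dx
=\frac1{(4\pi)^{n/2}\Gamma(s)}\int_0^{+\infty}
\frac{e^{-\tau}}{\tau^{n/2+1-s}}\left(\int_{\R^n}e^{-|x|^2/(4\tau)}\,dx\right)d\tau.
\]
The inner Gaussian integral is exactly the one computed in Lemma~\ref{lemmazero0000} (with~$t$ replaced by~$\tau$ and~$x=0$), which gives~$\int_{\R^n}e^{-|x|^2/(4\tau)}\,dx=(4\pi\tau)^{n/2}$. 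Substituting this back, the powers of~$\tau$ collapse and the factor~$(4\pi)^{n/2}$ cancels, leaving
\[
\int_{\R^n}{\mathcal{B}}(x)\,dx
=\frac1{\Gamma(s)}\int_0^{+\infty}\tau^{s-1}e^{-\tau}\,d\tau
=\frac{\Gamma(s)}{\Gamma(s)}=1,
\]
by the very definition of the Euler Gamma function recalled in the proof of Lemma~\ref{HSZUONDCCONT-FF-G}.

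The only point needing a word of justification is the interchange of the two integrals. Since the integrand~$(x,\tau)\mapsto \tau^{s-1-n/2}e^{-\tau}e^{-|x|^2/(4\tau)}$ is nonnegative and measurable, Tonelli's Theorem applies unconditionally and the iterated integrals are equal (to the double integral, possibly~$+\infty$ a priori); the computation above then shows the common value is the finite number~$1$, which retroactively confirms everything is finite. Alternatively, and perhaps even more transparently, one may simply invoke Lemma~\ref{PRODpoikjhr3-2:le-7}: since~$\widehat{\mathcal{B}}(\xi)=(1+4\pi^2|\xi|^2)^{-s}$ in the sense of distributions, and since~${\mathcal{B}}\in L^1(\R^n)$ (which follows from the finiteness just established, or directly from the integrability of the explicit kernel near~$0$ and its exponential decay at infinity), one has~$\int_{\R^n}{\mathcal{B}}(x)\,dx=\widehat{\mathcal{B}}(0)=(1+0)^{-s}=1$. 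I would present the direct computation as the main argument and mention this Fourier-side check as a remark.

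There is essentially no hard obstacle here: the statement is a normalization identity and the proof is a two-line Fubini--Tonelli computation resting on the Gaussian integral of Lemma~\ref{lemmazero0000} and the definition of~$\Gamma$. The only mild care required is to phrase the Tonelli step correctly (nonnegativity of the integrand) rather than appealing to Fubini, which would require a priori integrability. I would keep the write-up to a single short paragraph.
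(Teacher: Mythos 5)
Your proof is correct, but it follows a genuinely different route from the paper's. You integrate the explicit subordination formula of Lemma~\ref{HSZUONDCCONT-FF-G} directly: Tonelli (legitimate, as you note, by nonnegativity of the integrand), the Gaussian integral of Lemma~\ref{lemmazero0000}, and the definition~\eqref{gamma-def} of~$\Gamma$ give
$$\int_{\R^n}{\mathcal{B}}(x)\,dx=\frac1{(4\pi)^{n/2}\Gamma(s)}\int_0^{+\infty}\tau^{s-1-\frac{n}2}e^{-\tau}(4\pi\tau)^{n/2}\,d\tau=\frac{\Gamma(s)}{\Gamma(s)}=1,$$
and the finiteness of the result retroactively justifies everything, including~${\mathcal{B}}\in L^1(\R^n)$. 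The paper instead works entirely on the Fourier side: it takes a cutoff~$\varphi_\e(x)=\varphi(\e x)$ with~$\varphi\in C^\infty_c(B_2,[0,1])$, $\varphi=1$ on~$B_1$, uses Lemma~\ref{PRODpoikjhr3-2:le-7} to write~$\int{\mathcal{B}}\varphi_\e$ as a double integral of~$\big(1+4\e^2\pi^2|Y|^2\big)^{-s}\varphi(X)e^{2\pi iX\cdot Y}$ after rescaling, and lets~$\e\searrow0$ to obtain~$\int_{\R^n}\check\varphi(Y)\,dY=\varphi(0)=1$; this is essentially a rigorous version of the heuristic~$\int{\mathcal{B}}=\widehat{\mathcal{B}}(0)$ that you mention as a side remark, phrased so as not to evaluate a (a priori distributional) Fourier transform at a point or presuppose~${\mathcal{B}}\in L^1$. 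Your argument buys elementarity and self-containedness, producing~$L^1$ integrability of the kernel as a byproduct of an explicit computation; the paper's buys independence from the explicit formula for~${\mathcal{B}}$, relying only on the symbol~$\big(1+4\pi^2|\xi|^2\big)^{-s}$, at the cost of the cutoff-and-limit bookkeeping. Both are complete proofs; your secondary Fourier-side check is fine as stated precisely because you first establish~${\mathcal{B}}\in L^1(\R^n)$.
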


\begin{proof} Let~$\varphi\in C^\infty_c(B_2,[0,1])$ with~$\varphi=1$ in~$B_1$. Given~$\e>0$, let~$\varphi_\e(x):=\varphi(\e x)$. By Lemma~\ref{PRODpoikjhr3-2:le-7},
using the changes of variables~$X:=\e x$ and~$Y:=\frac\xi\e$,
\begin{equation*} \begin{split}&\int_{\R^n}{\mathcal{B}}(x)\,\varphi_\e(x)\,dx=
\int_{\R^n}\widehat{\mathcal{B}}(\xi)\,\check\varphi_\e(x)\,dx=
\int_{\R^n}\big(1+4\pi^2|\xi|^2\big)^{-{s}}\,\check\varphi_\e(\xi)\,d\xi\\&=\iint_{\R^n\times\R^n}\big(1+4\pi^2|\xi|^2\big)^{-s}\,\varphi_\e(x)\,e^{2\pi ix\cdot\xi}\,dx\,d\xi=\iint_{\R^n\times\R^n}\big(1+4\e^2\pi^2|Y|^2\big)^{-s}\,\varphi(X)\,e^{2\pi i X\cdot Y}\,dX\,dY.
\end{split}
\end{equation*}
{F}rom this, by taking the limit as~$\e\searrow0$, we arrive at
\begin{equation*} \begin{split}&\int_{\R^n}{\mathcal{B}}(x)\,dx=
\iint_{\R^n\times\R^n}\varphi(X)\,e^{2\pi i X\cdot Y}\,dX\,dY=
\int_{\R^n}\check\varphi(Y)\,dY=\varphi(0)=1,
\end{split}
\end{equation*}
as desired.
\end{proof}

\begin{corollary}\label{CO914athcal}
For all~$p\in[1,+\infty]$ and~$f\in L^p(\R^n)$,
$$\|{\mathcal{B}}*f\|_{L^p(\R^n)}\le\|f\|_{L^p(\R^n)}.$$
\end{corollary}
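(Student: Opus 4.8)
\textbf{Proof proposal for Corollary~\ref{CO914athcal}.}

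The plan is to deduce the $L^p$ bound on ${\mathcal{B}}*f$ directly from the two facts already established about the Bessel kernel, namely that ${\mathcal{B}}\ge0$ pointwise and that $\int_{\R^n}{\mathcal{B}}(x)\,dx=1$ (Lemma~\ref{L419}). The nonnegativity of ${\mathcal{B}}$ is visible from the representation in Lemma~\ref{HSZUONDCCONT-FF-G}, since the integrand $e^{-|x|^2/(4\tau)-\tau}\tau^{s-1-n/2}$ is positive for all $\tau>0$; in particular $\|{\mathcal{B}}\|_{L^1(\R^n)}=\int_{\R^n}{\mathcal{B}}=1$. Once we know that ${\mathcal{B}}\in L^1(\R^n)$ with unit norm, the statement is an instance of Young's convolution inequality: $\|{\mathcal{B}}*f\|_{L^p(\R^n)}\le\|{\mathcal{B}}\|_{L^1(\R^n)}\|f\|_{L^p(\R^n)}=\|f\|_{L^p(\R^n)}$, valid for every $p\in[1,+\infty]$.

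If one prefers a self-contained argument rather than invoking Young's inequality as a black box, I would carry it out as follows. For $p\in(1,+\infty)$, write ${\mathcal{B}}(x-y)\,|f(y)|={\mathcal{B}}(x-y)^{1/p'}\cdot{\mathcal{B}}(x-y)^{1/p}|f(y)|$ and apply Hölder's inequality with exponents $p'=\tfrac p{p-1}$ and $p$ in the $y$-variable, using that $\int{\mathcal{B}}(x-y)\,dy=1$; this yields
\[
|{\mathcal{B}}*f(x)|^p\le\left(\int_{\R^n}{\mathcal{B}}(x-y)\,dy\right)^{p/p'}\int_{\R^n}{\mathcal{B}}(x-y)\,|f(y)|^p\,dy
=\int_{\R^n}{\mathcal{B}}(x-y)\,|f(y)|^p\,dy.
\]
Integrating in $x$, applying Tonelli's theorem to exchange the order of integration, and using $\int{\mathcal{B}}(x-y)\,dx=1$ again gives $\|{\mathcal{B}}*f\|_{L^p(\R^n)}^p\le\|f\|_{L^p(\R^n)}^p$. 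The cases $p=1$ and $p=\infty$ are even simpler: for $p=1$ one uses Tonelli directly on $\iint{\mathcal{B}}(x-y)|f(y)|\,dy\,dx$, and for $p=\infty$ one bounds $|{\mathcal{B}}*f(x)|\le\|f\|_{L^\infty(\R^n)}\int{\mathcal{B}}(x-y)\,dy=\|f\|_{L^\infty(\R^n)}$.

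There is essentially no obstacle here: the only mild point worth stating cleanly is that the nonnegativity of ${\mathcal{B}}$ must be recorded (so that $\|{\mathcal{B}}\|_{L^1}=\int{\mathcal{B}}=1$ rather than merely $\|{\mathcal{B}}\|_{L^1}\ge|\int{\mathcal{B}}|$), and that the use of Tonelli is legitimate because the integrand is nonnegative. Everything else is the standard proof of Young's inequality in the special case of an $L^1$ kernel of unit mass.
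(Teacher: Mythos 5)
Your proposal is correct and follows essentially the same route as the paper, which deduces the estimate directly from Young's Convolution Inequality combined with the unit mass of the Bessel kernel from Lemma~\ref{L419}; your additional remark that the nonnegativity of~${\mathcal{B}}$ (visible from Lemma~\ref{HSZUONDCCONT-FF-G}) is what identifies $\|{\mathcal{B}}\|_{L^1(\R^n)}$ with $\int_{\R^n}{\mathcal{B}}$ is a sensible point that the paper leaves implicit.
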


\begin{proof} The claim follows from Young's Convolution Inequality and Lemma~\ref{L419}.
\end{proof}

One can also detect precisely the decay of the Bessel potential:

\begin{lemma}\label{HSZUONDCCONT-FF-G-DECA} Let~$s\in(0,+\infty)$.
We have that
$$ \lim_{|x|\to+\infty} |x|^{\frac{n+1}2-s}\,e^{|x|}\, {\mathcal{B}}(x)=
\frac{1}{2^{\frac{n-1}{2}+s}\,\pi^{\frac{n-1}{2}}\,\Gamma (s)}.$$
\end{lemma}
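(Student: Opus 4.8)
The plan is to start from the explicit integral representation of the Bessel kernel given in Lemma~\ref{HSZUONDCCONT-FF-G}, namely
\[
{\mathcal{B}}(x)=\frac1{(4\pi)^{n/2}\Gamma(s)}\int_0^{+\infty}\frac{e^{-|x|^2/(4\tau)-\tau}}{\tau^{n/2+1-s}}\,d\tau,
\]
and to analyse the asymptotics of this integral as~$r:=|x|\to+\infty$ by the Laplace (saddle point) method. First I would substitute~$\tau=r\sigma$ to make the large parameter explicit, obtaining
\[
{\mathcal{B}}(x)=\frac{r^{s-n/2}}{(4\pi)^{n/2}\Gamma(s)}\int_0^{+\infty}\sigma^{s-n/2-1}\,\exp\!\left(-r\Big(\frac1{4\sigma}+\sigma\Big)\right)\,d\sigma.
\]
The exponent~$\phi(\sigma):=\frac1{4\sigma}+\sigma$ has a unique minimum on~$(0,+\infty)$ at~$\sigma_0=\tfrac12$, with~$\phi(\sigma_0)=1$ and~$\phi''(\sigma_0)=\frac{1}{2\sigma_0^3}=4$. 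This already explains the factor~$e^{-|x|}$ and the power~$|x|^{s-n/2}$ coming out in front; the remaining power~$|x|^{-1/2}$ will be produced by the Gaussian width of the saddle.

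The core step is then the rigorous Laplace asymptotics: one shows
\[
\int_0^{+\infty} g(\sigma)\,e^{-r\phi(\sigma)}\,d\sigma
= g(\sigma_0)\,e^{-r\phi(\sigma_0)}\sqrt{\frac{2\pi}{r\,\phi''(\sigma_0)}}\,\big(1+o(1)\big)
\qquad\text{as }r\to+\infty,
\]
with~$g(\sigma)=\sigma^{s-n/2-1}$, which is smooth and positive near~$\sigma_0=\tfrac12$. I would carry this out by the standard localization argument: split the integral into the contribution from a small neighbourhood~$|\sigma-\sigma_0|<\delta$ and the complementary tail. On the tail one has~$\phi(\sigma)\geq 1+c(\delta)$ for some~$c(\delta)>0$ (using convexity of~$\phi$ and its behaviour at~$0$ and~$+\infty$), so that part is~$O(e^{-r(1+c(\delta))})$ times a convergent~$\sigma$-integral, hence exponentially negligible compared to the main term~$e^{-r}/\sqrt r$. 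On the central part one writes~$\phi(\sigma)=1+2(\sigma-\sigma_0)^2+O((\sigma-\sigma_0)^3)$, changes variables to~$u=\sqrt r\,(\sigma-\sigma_0)$, uses~$g(\sigma)\to g(\sigma_0)$ and the dominated-convergence/Gaussian-integral identity~$\int_{\R}e^{-2u^2}\,du=\sqrt{\pi/2}$ to extract the leading behaviour. Collecting constants: the prefactor is~$\frac{r^{s-n/2}}{(4\pi)^{n/2}\Gamma(s)}$, the Laplace main term contributes~$(\tfrac12)^{s-n/2-1}\,e^{-r}\sqrt{2\pi/(4r)}=2^{n/2-s+1}\,e^{-r}\sqrt{\pi/(2r)}$, and multiplying everything together gives
\[
{\mathcal{B}}(x)\sim \frac{r^{s-n/2-1/2}\,e^{-r}}{(4\pi)^{n/2}\Gamma(s)}\cdot 2^{n/2-s+1}\sqrt{\frac{\pi}{2}}
= \frac{|x|^{s-(n+1)/2}\,e^{-|x|}}{2^{(n-1)/2+s}\,\pi^{(n-1)/2}\,\Gamma(s)},
\]
which, after multiplying by~$|x|^{(n+1)/2-s}e^{|x|}$ and letting~$|x|\to+\infty$, is exactly the claimed limit~$\big(2^{(n-1)/2+s}\pi^{(n-1)/2}\Gamma(s)\big)^{-1}$.

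The main obstacle is making the saddle-point estimate fully rigorous with uniform control of the error term rather than merely a formal stationary-phase computation: one needs to check that the endpoint~$\sigma=0$, where~$g(\sigma)=\sigma^{s-n/2-1}$ may blow up when~$n/2+1-s>0$, does not spoil the tail bound (it does not, because~$e^{-r/(4\sigma)}$ decays superexponentially there and the full~$\tau$-integral converges for every fixed~$x\ne0$), and that the cubic remainder in the Taylor expansion of~$\phi$ genuinely contributes a lower-order correction after the rescaling~$u=\sqrt r(\sigma-\sigma_0)$. A clean way to sidestep delicate remainder tracking is to only prove the stated \emph{limit} (not a full asymptotic expansion): bound~$g(\sigma)$ above and below by~$g(\sigma_0)(1\pm\eta)$ on~$|\sigma-\sigma_0|<\delta(\eta)$, apply the exact Gaussian computation, and let first~$r\to+\infty$ and then~$\eta,\delta\to0$. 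Alternatively, one may recognize the integral as a modified Bessel function~$K_\nu$ of the second kind (with~$\nu=\tfrac n2-s$) and quote its classical asymptotics~$K_\nu(z)\sim\sqrt{\pi/(2z)}\,e^{-z}$ as~$z\to+\infty$; I would mention this as a remark but present the self-contained Laplace-method proof as the main argument, since the book aims to be elementary and self-contained.
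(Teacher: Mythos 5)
Your proof is correct, but it follows a genuinely different route from the paper. You run a full Laplace (saddle-point) analysis: rescale $\tau=|x|\sigma$, locate the minimum of $\phi(\sigma)=\frac1{4\sigma}+\sigma$ at $\sigma_0=\frac12$ with $\phi(\sigma_0)=1$, $\phi''(\sigma_0)=4$, localize near the saddle, and control the tails; your constants check out and reproduce the claimed limit. The paper instead avoids any localization or Taylor expansion of the phase by an exact completion of the square: with the substitution $t=\frac{|x|}{2\sqrt\tau}-\sqrt\tau$ (a decreasing bijection of $(0,+\infty)$ onto $\R$) one has $\frac{|x|^2}{4\tau}+\tau-|x|=t^2$, so that
\begin{equation*}
e^{|x|}\,{\mathcal{B}}(x)=\frac{2^{\frac{1-n}2-s}}{\pi^{\frac n2}\,\Gamma(s)\,|x|^{\frac{n+1}2-s}}\int_{-\infty}^{+\infty}\frac{e^{-t^2}}{\Bigl(1+\frac{t^2-t\sqrt{2|x|+t^2}}{|x|}\Bigr)^{\frac n2-s}\sqrt{1+\frac{t^2}{2|x|}}}\,dt,
\end{equation*}
and the statement follows from a single application of the Dominated Convergence Theorem together with $\int_\R e^{-t^2}\,dt=\sqrt\pi$. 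In effect the paper's change of variables performs your saddle-point computation exactly, trading your error estimates for one clever substitution; your route is longer but more systematic and generalizes to other phases, and your alternative via the modified Bessel function $K_{\frac n2-s}$ is precisely the identification the paper alludes to in a footnote. One small caveat in your write-up: the tail near $\sigma=0$ cannot be bounded as ``$e^{-r(1+c(\delta))}$ times a convergent $\sigma$-integral'' verbatim, since $\sigma^{s-\frac n2-1}$ is not integrable at $0$ when $s\le\frac n2$; the fix you yourself indicate (keep one factor $e^{-\phi(\sigma)}$, or equivalently use the superexponential decay of $e^{-r/(4\sigma)}$, and bound the rest by $e^{-(r-1)(1+c(\delta))}$) is the right one and should be written out.
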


\begin{proof} By Lemma~\ref{HSZUONDCCONT-FF-G}, and using the substitution~$t:=\frac{ |x|}{2\sqrt\tau}-\sqrt\tau$, with inverse
\[
\tau= \frac12 \left(|x| + t^2-t\sqrt{2 |x| + t^2}\right)>0,
\]
we find that
\begin{eqnarray*}
e^{|x|} \,{\mathcal{B}}(x)&=&
\frac1{(4\pi)^{\frac{n}2}\Gamma (s)}\int _{0}^{+\infty }\frac{e^{-\left(\frac{ |x|^2}{4\tau}+\tau-|x|\right)}}{\tau^{\frac{n}2+1-s} } \,d\tau
\\&=&
\frac1{(4\pi)^{\frac{n}2}\Gamma (s)}\int _{0}^{+\infty }\frac{e^{-\left(\frac{ |x|}{2\sqrt\tau}-\sqrt\tau\right)^2}}{\tau^{\frac{n}2+1-s} } \,d\tau\\&=&
\frac{2^{\frac{n}2+1-s}}{(4\pi)^{\frac{n}2}\Gamma (s)}\int _{-\infty}^{+\infty }\frac{e^{-t^2}}{\left(|x| + t^2-t\sqrt{2 |x| + t^2}\right)^{\frac{n}2-s} } 
\frac{ 1}{\sqrt{2 |x| + t^2}}\,dt
\\&=&
\frac{2^{\frac{1-n}2-s}}{\pi^{\frac{n}2}\,\Gamma (s)\,|x|^{\frac{n+1}2-s}}\int _{-\infty}^{+\infty }\frac{e^{-t^2}}{\left(1 + \frac{t^2-t\sqrt{2 |x| + t^2}}{|x|}\right)^{\frac{n}2-s} \,\sqrt{1 +\frac{ t^2}{2 |x|}}}\,dt
\end{eqnarray*}
and the Dominated Convergence Theorem yields the desired result.
\end{proof}

\begin{corollary}\label{210pkf0ikj23456789iytdxscvbnm-934098i7ytghbnfraCxfr}
Let~$s\in(0,+\infty)$.
Then, there exist positive constants~$R$ and~$C$, depending only on~$n$ and~$s$, such that for every~$x\in\R^n\setminus B_R$
$$ |\nabla {\mathcal{B}}(x)|\le \frac{C}{|x|^{\frac{n+1}2-s}\,e^{|x|}}.$$
\end{corollary}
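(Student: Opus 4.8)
The plan is to reduce the gradient bound to the decay estimate for the Bessel kernel already established in Lemma~\ref{HSZUONDCCONT-FF-G-DECA}, by combining the explicit integral representation of Lemma~\ref{HSZUONDCCONT-FF-G} with a dimension-shift trick. First I would note that, since the integrand in the formula for~${\mathcal{B}}$ depends on~$x$ only through~$|x|^2$ and, together with its~$x$-derivatives, is dominated by an integrable function of~$\tau$ uniformly for~$x$ in a compact subset of~$\R^n\setminus\{0\}$ (the factor~$e^{-|x|^2/(4\tau)}$ kills the singularity at~$\tau=0$, and~$e^{-\tau}$ controls~$\tau\to+\infty$), one may differentiate under the integral sign for~$x\neq0$, obtaining
\begin{equation*}
\nabla {\mathcal{B}}(x)=\frac{-x}{2(4\pi)^{\frac{n}2}\Gamma(s)}\int_0^{+\infty}\frac{e^{-\frac{|x|^2}{4\tau}-\tau}}{\tau^{\frac{n}2+2-s}}\,d\tau
\qquad\text{for }x\in\R^n\setminus\{0\}.
\end{equation*}

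Next I would recognize the integral on the right-hand side as, up to an explicit power of~$4\pi$, the Bessel kernel in dimension~$n+2$ with the \emph{same} parameter~$s$: writing~${\mathcal{B}}_{n+2}$ for the kernel of Lemma~\ref{HSZUONDCCONT-FF-G} with~$n$ replaced by~$n+2$ (legitimate since that lemma holds in every dimension), one has
\begin{equation*}
\int_0^{+\infty}\frac{e^{-\frac{|x|^2}{4\tau}-\tau}}{\tau^{\frac{n}2+2-s}}\,d\tau=(4\pi)^{\frac{n}2+1}\,\Gamma(s)\,{\mathcal{B}}_{n+2}(x),
\end{equation*}
so that~$|\nabla {\mathcal{B}}(x)|=2\pi\,|x|\,{\mathcal{B}}_{n+2}(x)$ for~$x\neq0$, where~${\mathcal{B}}_{n+2}(x)$ is read as the value of the~$(n+2)$-dimensional Bessel kernel at radius~$|x|$.

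Then I would invoke Lemma~\ref{HSZUONDCCONT-FF-G-DECA} \emph{applied in dimension~$n+2$}, which gives
\begin{equation*}
\lim_{|x|\to+\infty}|x|^{\frac{n+3}2-s}\,e^{|x|}\,{\mathcal{B}}_{n+2}(x)=\frac{1}{2^{\frac{n+1}2+s}\,\pi^{\frac{n+1}2}\,\Gamma(s)},
\end{equation*}
hence there exist~$R>0$ and~$C_0>0$, depending only on~$n$ and~$s$, with~${\mathcal{B}}_{n+2}(x)\le C_0\,|x|^{-\frac{n+3}2+s}\,e^{-|x|}$ for all~$|x|>R$. Multiplying by~$2\pi|x|$ and using~$\frac{n+3}2-1=\frac{n+1}2$ yields exactly the claimed inequality with~$C:=2\pi C_0$.

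I do not expect a genuine obstacle: the only delicate points are the (routine) justification of differentiation under the integral sign and the bookkeeping of the powers of~$4\pi$ in the dimension shift. An alternative, slightly longer route that avoids the dimension-shift observation would be to estimate the~$\tau$-integral above directly, using the same substitution~$t:=\frac{|x|}{2\sqrt\tau}-\sqrt\tau$ employed in the proof of Lemma~\ref{HSZUONDCCONT-FF-G-DECA}; the dimension-shift argument merely lets us quote that lemma verbatim.
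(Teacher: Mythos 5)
Your argument is correct and is essentially the same as the paper's proof: differentiating the representation of Lemma~\ref{HSZUONDCCONT-FF-G} produces a factor~$|x|$ times (a multiple of) the Bessel kernel in dimension~$n+2$ with the same parameter~$s$, and then Lemma~\ref{HSZUONDCCONT-FF-G-DECA} applied with~$n+2$ in place of~$n$ gives the claimed bound. Your version even pins down the clean identity~$|\nabla{\mathcal{B}}(x)|=2\pi|x|\,{\mathcal{B}}_{n+2}(x)$, where the paper is content with an inequality up to a constant.
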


\begin{proof} For the sake of clarity, we denote here by~${\mathcal{B}}^{(s,n)}$ the Bessel potential in~$\R^n$ with fractional parameter~$s$.
In light of Lemma~\ref{HSZUONDCCONT-FF-G}, we see that
\begin{equation}\label{HSZUONDCCONT-FF-G-kefmgEGSND-99}\begin{split}
|\partial_j{\mathcal{B}}^{(s,n)}(x)|&\le
C\,\left| \frac{\partial}{\partial x_j}\int _{0}^{+\infty }\frac{e^{-\frac{ |x|^2}{4\tau}-\tau}}{\tau^{\frac{n}2+1-s} } \,d\tau\right|\\&=
C\,\left|\int _{0}^{+\infty }\frac{x_j e^{-\frac{ |x|^2}{4\tau}-\tau}}{\tau^{\frac{n}2+2-s} } \,d\tau\right|\\&\le
C|x|\,\int _{0}^{+\infty }\frac{e^{-\frac{ |x|^2}{4\tau}-\tau}}{\tau^{\frac{n+2}2+1-s} } \,d\tau\\&\le
C|x|\,|{\mathcal{B}}^{(s,n+2)}(x)|,
\end{split}\end{equation}
up to renaming~$C$ line after line.

Also, by Lemma~\ref{HSZUONDCCONT-FF-G-DECA} (used here with~$n+2$ instead of~$n$) we can find~$R>0$ such that if~$|x|\ge R$ then
$$ |{\mathcal{B}}^{(s,n+2)}(x)|\le\frac{C}{|x|^{\frac{n+3}2-s}\,e^{|x|}}.$$
The desired result plainly follows from this estimate and~\eqref{HSZUONDCCONT-FF-G-kefmgEGSND-99}.
\end{proof}

While Lemma~\ref{HSZUONDCCONT-FF-G-DECA} and Corollary~\ref{210pkf0ikj23456789iytdxscvbnm-934098i7ytghbnfraCxfr}
take into account the behavior of the Bessel potential at infinity, the situation at the origin depends on the parameters. For instance, we point out this estimate: 

\begin{lemma} \label{HSZUONDCCONT-FF-G-0ojr236-201-ILLEM}
Let~$s\in(0,+\infty)$. For all~$x\in B_5\setminus\{0\}$,
\begin{equation}\label{HSZUONDCCONT-FF-G-0ojr236-201}
\big|{\mathcal{B}}^{(s)}(x)\big|\leq\begin{dcases} \displaystyle\frac{C}{|x|^{n-2s}} &{\mbox{ if~$n>2s$,}}\\
C(1+|\ln|x||)&{\mbox{ if~$n=2s$,}}\\ C&{\mbox{ if~$n<2s$.}}
\end{dcases}\end{equation}
\end{lemma}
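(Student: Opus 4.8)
The plan is to start from the integral representation of the Bessel kernel provided by Lemma~\ref{HSZUONDCCONT-FF-G}, namely
\[
{\mathcal{B}}^{(s)}(x)=\frac{1}{(4\pi)^{n/2}\Gamma(s)}\int_0^{+\infty}\frac{e^{-\frac{|x|^2}{4\tau}-\tau}}{\tau^{\frac n2+1-s}}\,d\tau,
\]
and to split the $\tau$-integral at a scale comparable to $|x|^2$ (or simply at $\tau=1$ after a rescaling $\tau=|x|^2\sigma$), since the two factors $e^{-|x|^2/(4\tau)}$ and $e^{-\tau}$ control the small-$\tau$ and large-$\tau$ regimes respectively. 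For $x\in B_5\setminus\{0\}$ the large-$\tau$ part is harmless: $e^{-|x|^2/(4\tau)}\le 1$ and $\int_1^{+\infty}\tau^{s-\frac n2-1}e^{-\tau}\,d\tau\le\Gamma(s)$ (or is just a finite constant), so that piece contributes at most a constant. The whole point is therefore the behavior of the small-$\tau$ part, $\int_0^{1}\tau^{s-\frac n2-1}e^{-|x|^2/(4\tau)}\,d\tau$, and this is where the three cases $n>2s$, $n=2s$, $n<2s$ separate.

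First I would rescale via $\tau=|x|^2\sigma$ in the small-$\tau$ integral, obtaining $|x|^{2s-n}\int_0^{1/|x|^2}\sigma^{s-\frac n2-1}e^{-1/(4\sigma)}\,d\sigma$; since $|x|\le 5$, the upper limit $1/|x|^2\ge 1/25$ is bounded below, so up to a constant we may as well integrate $\sigma$ over $(0,+\infty)$ when the integral converges there, i.e.\ when $s-\frac n2-1<-1$ fails only at $\sigma\to\infty$ — the exponential $e^{-1/(4\sigma)}\to 1$ there, so convergence at $+\infty$ requires $s-\frac n2-1<-1$, that is $n>2s$. In that case $\int_0^{+\infty}\sigma^{s-\frac n2-1}e^{-1/(4\sigma)}\,d\sigma$ is a finite constant (substitute $u=1/(4\sigma)$ to recognize a Gamma function $4^{\,n/2-s}\Gamma(\tfrac n2-s)$), yielding $|{\mathcal{B}}^{(s)}(x)|\le C|x|^{2s-n}$, which dominates the constant from the large-$\tau$ piece precisely because $|x|<5$ and $2s-n<0$. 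When $n=2s$ the integrand behaves like $\sigma^{-1}$ for large $\sigma$, so $\int_{1}^{1/|x|^2}\sigma^{-1}e^{-1/(4\sigma)}\,d\sigma\sim \ln(1/|x|^2)=2|\ln|x||$, while the part over $(0,1)$ is a finite constant; this gives the bound $C(1+|\ln|x||)$. When $n<2s$ the exponent $s-\frac n2-1$ may be $\ge -1$, the integral over $(0,1/|x|^2)$ is uniformly bounded by $\int_0^{\infty}\sigma^{s-\frac n2-1}e^{-1/(4\sigma)}\,d\sigma$ if $s-\frac n2-1>-1$ at $0$ — which it is, since near $\sigma=0$ the factor $e^{-1/(4\sigma)}$ kills everything — so the whole expression is bounded by a constant, after restoring the prefactor $|x|^{2s-n}$ which is itself bounded on $B_5$ (here $2s-n>0$).

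I expect the only mildly delicate point to be bookkeeping the interplay between the prefactor $|x|^{2s-n}$ coming from the rescaling and the range of integration $1/|x|^2$, making sure that in each case one correctly identifies the dominant contribution and does not lose a logarithm (the borderline case $n=2s$) or conversely overcount. Concretely, in the case $n<2s$ I would prefer \emph{not} to rescale but to directly bound $e^{-|x|^2/(4\tau)}\le 1$ and split $\int_0^{1}\tau^{s-\frac n2-1}e^{-|x|^2/(4\tau)}\,d\tau$: if $s>n/2$ then $\tau^{s-n/2-1}$ is integrable near $0$ as long as $s-n/2>0$, and near $\tau=0$ the Gaussian factor decays superpolynomially, so this is trivially bounded; this avoids the rescaling subtlety entirely. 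Similarly for $n=2s$ one writes $\int_0^1 \tau^{-1}e^{-|x|^2/(4\tau)}\,d\tau$ and substitutes $u=|x|^2/(4\tau)$, turning it into $\int_{|x|^2/4}^{+\infty}u^{-1}e^{-u}\,du$, which is $\le C(1+|\ln|x||)$ by the standard logarithmic divergence of the exponential integral at $0$. Assembling the three cases with the constant contribution of the tail gives exactly~\eqref{HSZUONDCCONT-FF-G-0ojr236-201}, and absorbing all numerical factors (including $(4\pi)^{-n/2}\Gamma(s)^{-1}$ and the various Gamma values) into a single constant $C=C(n,s)$ completes the proof.
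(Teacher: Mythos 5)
Your proposal is correct and follows essentially the paper's route: both start from the representation in Lemma~\ref{HSZUONDCCONT-FF-G} and split the $\tau$-integral (the paper at $\tau=|x|^2$, you at $\tau=1$), then use the same elementary substitutions (recognizing a Gamma function in the non-borderline cases and an exponential-integral logarithm when $n=2s$) and the same three-case bookkeeping, absorbing the bounded tail into $C|x|^{2s-n}$ when $n>2s$ since $|x|<5$. The only caveat is that in your first paragraph, for $n<2s$, the proposed comparison $\int_0^{1/|x|^2}\sigma^{s-\frac n2-1}e^{-1/(4\sigma)}\,d\sigma\le\int_0^{+\infty}\sigma^{s-\frac n2-1}e^{-1/(4\sigma)}\,d\sigma$ fails because the right-hand side diverges at infinity in that regime, but the direct bound you substitute in the second paragraph ($e^{-|x|^2/(4\tau)}\le 1$ together with $\int_0^1\tau^{s-\frac n2-1}\,d\tau<+\infty$) is correct, so the assembled argument is sound.
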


\begin{proof} We claim that, for all~$x\in B_5\setminus\{0\}$,~$m$,~$s>0$,
\begin{equation}\label{TEMP:ES:BES1}
\int _{0}^{+\infty }\frac{e^{-\frac{ |x|^2}{4\tau}-\tau}}{\tau^{\frac{m}2+1-s} } \,d\tau \le\begin{dcases} \displaystyle\frac{C}{|x|^{m-2s}} &{\mbox{ if~$m>2s$,}}\\
C(1+|\ln|x||)&{\mbox{ if~$m=2s$,}}\\ C&{\mbox{ if~$m<2s$.}}
\end{dcases}
\end{equation}
where~$C>0$ depends only on~$m$ and~$s$.

To check this, we use the change of variable~$\vartheta:=\frac{ |x|^2}{\tau}$ and we
observe that
\begin{equation}\label{TEMP:ES:BES1-PTRE}\begin{split}&
\int _{0}^{|x|^2}\frac{e^{-\frac{ |x|^2}{4\tau}-\tau}}{\tau^{\frac{m}2+1-s} } \,d\tau \le
\int _{0}^{|x|^2}\frac{e^{-\frac{ |x|^2}{4\tau}}}{\tau^{\frac{m}2+1-s} } \,d\tau\\&\qquad=
\frac1{|x|^{m-2s}}\int _{1}^{+\infty} \vartheta^{\frac{m}2-1-s}
e^{-\frac{ \vartheta}{4}}\,d\vartheta\le\frac{C}{|x|^{m-2s}}.
\end{split}\end{equation}
Besides,
\begin{equation}\label{TEMP:ES:BES10erqwtegh}
\int _{|x|^2}^{+\infty }\frac{e^{-\frac{ |x|^2}{4\tau}-\tau}}{\tau^{\frac{m}2+1-s} } \,d\tau \le 
\int _{|x|^2}^{+\infty }\frac{e^{-\tau}}{\tau^{\frac{m}2+1-s} } \,d\tau.
\end{equation}
Thus, when~$m=2s$
\begin{eqnarray*}
\int _{|x|^2}^{+\infty }\frac{e^{-\frac{ |x|^2}{4\tau}-\tau}}{\tau^{\frac{m}2+1-s} } \,d\tau \le\int _{|x|^2}^{+\infty }\frac{e^{-\tau}}{\tau } \,d\tau
\le\int _{|x|^2}^{25 }\frac{d\tau}{\tau } +\int _{25}^{+\infty } e^{-\tau} \,d\tau\le C(1+|\ln|x||).
\end{eqnarray*}
This and~\eqref{TEMP:ES:BES1-PTRE} give~\eqref{TEMP:ES:BES1} when~$m=2s$.

Additionally, if~$m<2s$, we infer from~\eqref{TEMP:ES:BES10erqwtegh}
that~$$\int _{|x|^2}^{+\infty }\frac{e^{-\frac{ |x|^2}{4\tau}-\tau}}{\tau^{\frac{m}2+1-s} } \,d\tau \le\int _{0}^{+\infty }\frac{e^{-\tau}}{\tau^{\frac{m}2+1-s} } \,d\tau
\le C,$$ which, together with~\eqref{TEMP:ES:BES1-PTRE}, establishes~\eqref{TEMP:ES:BES1}
in this case.

Furthermore, when~$m>2s$, employing~\eqref{TEMP:ES:BES10erqwtegh} we see that
\begin{eqnarray*}
\int _{|x|^2}^{+\infty }\frac{e^{-\frac{ |x|^2}{4\tau}-\tau}}{\tau^{\frac{m}2+1-s} } \,d\tau \le
\int _{|x|^2}^{25 }\frac{d\tau}{\tau^{\frac{m}2+1-s} }+
\int _{25}^{+\infty } e^{-\tau} \,d\tau\le C\left(\frac{1}{|x|^{m-2s}}+1\right)\le\frac{C}{|x|^{m-2s}}.
\end{eqnarray*} 
This and~\eqref{TEMP:ES:BES1-PTRE} yield~\eqref{TEMP:ES:BES1} also in this case, completing the proof of the desired result.

{F}rom Lemma~\ref{HSZUONDCCONT-FF-G} and~\eqref{TEMP:ES:BES1} (used here with~$m:=n$) we arrive at~\eqref{HSZUONDCCONT-FF-G-0ojr236-201}.\end{proof}

\begin{corollary}\label{HSdfbhncO0nTAktNZUONDCCONT-FF-G-0ojr236-201}
Let~$s\in(0,+\infty)$. For all~$x\in B_5\setminus\{0\}$,
$$ 
|\nabla{\mathcal{B}}^{(s,n)}(x)|\le
\begin{dcases} \displaystyle\frac{C}{|x|^{n+1-2s}} &{\mbox{ if~$n+2>2s$,}}\\
C|x|(1+|\ln|x||)&{\mbox{ if~$n+2=2s$,}}\\ C|x|&{\mbox{ if~$n+2<2s$.}}\end{dcases}$$
\end{corollary}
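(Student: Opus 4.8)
The plan is to follow closely the strategy already used in the proof of Corollary~\ref{210pkf0ikj23456789iytdxscvbnm-934098i7ytghbnfraCxfr}, reducing the gradient of the Bessel potential in dimension~$n$ to the Bessel potential itself in dimension~$n+2$, and then invoking the estimate near the origin obtained in Lemma~\ref{HSZUONDCCONT-FF-G-0ojr236-201-ILLEM}.

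Concretely, the first step is to differentiate under the integral sign in the representation of~$\mathcal{B}^{(s,n)}$ provided by Lemma~\ref{HSZUONDCCONT-FF-G}. For~$x$ ranging in a compact subset of~$\R^n\setminus\{0\}$, the factor~$e^{-|x|^2/(4\tau)}$ controls the singularity of the integrand at~$\tau=0$ while~$e^{-\tau}$ takes care of~$\tau\to+\infty$, so the~$x_j$-derivative of the integrand is dominated by a fixed integrable function and the Dominated Convergence Theorem applies. This is exactly the computation carried out in~\eqref{HSZUONDCCONT-FF-G-kefmgEGSND-99}, which gives
\begin{equation*}
|\nabla\mathcal{B}^{(s,n)}(x)|\le C|x|\,\big|\mathcal{B}^{(s,n+2)}(x)\big|
\qquad\text{for all }x\in\R^n\setminus\{0\},
\end{equation*}
for some~$C>0$ depending only on~$n$ and~$s$; here one simply observes that~$\tau^{n/2+2-s}=\tau^{(n+2)/2+1-s}$, so that the remaining~$\tau$-integral is precisely the one defining~$\mathcal{B}^{(s,n+2)}$ up to a normalizing constant.

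The second step is to feed into this inequality the pointwise bound of Lemma~\ref{HSZUONDCCONT-FF-G-0ojr236-201-ILLEM} applied with~$n+2$ in place of~$n$: for~$x\in B_5\setminus\{0\}$ this yields~$|\mathcal{B}^{(s,n+2)}(x)|\le C|x|^{2s-n-2}$ if~$n+2>2s$, $|\mathcal{B}^{(s,n+2)}(x)|\le C(1+|\ln|x||)$ if~$n+2=2s$, and~$|\mathcal{B}^{(s,n+2)}(x)|\le C$ if~$n+2<2s$. Multiplying each of these by the extra factor~$C|x|$ reproduces precisely the three cases in the statement, namely~$C|x|^{2s-n-1}=C|x|^{-(n+1-2s)}$, $C|x|(1+|\ln|x||)$ and~$C|x|$ respectively, which completes the argument.

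There is essentially no serious obstacle here: the only mild technical point is the justification of the differentiation under the integral sign, and even that is already contained in the proof of Corollary~\ref{210pkf0ikj23456789iytdxscvbnm-934098i7ytghbnfraCxfr}, so the whole proof amounts to combining~\eqref{HSZUONDCCONT-FF-G-kefmgEGSND-99} with Lemma~\ref{HSZUONDCCONT-FF-G-0ojr236-201-ILLEM} read in the shifted dimension~$n+2$.
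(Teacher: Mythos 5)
Your argument is correct and coincides with the paper's own proof: the paper likewise combines the bound~$|\nabla{\mathcal{B}}^{(s,n)}(x)|\le C|x|\,|{\mathcal{B}}^{(s,n+2)}(x)|$ from~\eqref{HSZUONDCCONT-FF-G-kefmgEGSND-99} with the near-origin estimate~\eqref{HSZUONDCCONT-FF-G-0ojr236-201} read in dimension~$n+2$. No gaps to report.
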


\begin{proof} We adopt here the notation~${\mathcal{B}}^{(s,n)}$ that was introduced in Corollary~\ref{210pkf0ikj23456789iytdxscvbnm-934098i7ytghbnfraCxfr}.
Thus, by~\eqref{HSZUONDCCONT-FF-G-kefmgEGSND-99} and~\eqref{HSZUONDCCONT-FF-G-0ojr236-201},
\begin{eqnarray*}
|\nabla{\mathcal{B}}^{(s,n)}(x)|\le
C|x|\,|{\mathcal{B}}^{(s,n+2)}(x)|\le
\begin{dcases} \displaystyle\frac{C}{|x|^{n+1-2s}} &{\mbox{ if~$n+2>2s$,}}\\
C|x|(1+|\ln|x||)&{\mbox{ if~$n+2=2s$,}}\\ C|x|&{\mbox{ if~$n+2<2s$,}}\end{dcases}\end{eqnarray*}
as desired.
\end{proof}

We now present a comparison between Riesz and Bessel potentials:

\begin{proposition}\label{HSZUONDCCONT-FF-GMMA1-2-Pr}
Let~$s\in(0,1)$. Then, there exist finite signed Radon measures~$M_1$ and~$M_2$ such that
\begin{equation}\label{HSZUONDCCONT-FF-GMMA1-2}
\begin{split}
& (2\pi|\xi|)^{2s}=(1+4\pi^2|\xi|^2)^s\, \widehat M_1(\xi)\\
{\mbox{and }}\qquad&(1+4\pi^2|\xi|^2)^s=\widehat M_2(\xi)+(2\pi|\xi|)^{2s} \,\widehat M_2(\xi).
\end{split}
\end{equation}
\end{proposition}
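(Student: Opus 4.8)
The plan is to exhibit $M_1$ and $M_2$ as explicit convolutions of Riesz and Bessel kernels (suitably regularized) and to verify the two Fourier-space identities in~\eqref{HSZUONDCCONT-FF-GMMA1-2} by direct computation using Lemmata~\ref{PRODpoikjhr3-2} and~\ref{PRODpoikjhr3-2:le-7}. The multipliers we need to realize as Fourier transforms of finite measures are
\[
m_1(\xi):=\frac{(2\pi|\xi|)^{2s}}{(1+4\pi^2|\xi|^2)^s}
\qquad\text{and}\qquad
m_2(\xi):=\frac{(1+4\pi^2|\xi|^2)^s}{1+(2\pi|\xi|)^{2s}}.
\]
Note that both $m_1$ and $m_2$ are bounded on $\R^n$ (indeed $m_1\in[0,1)$, and $m_2$ is continuous with a finite positive limit as $|\xi|\to\infty$), so each is at least the Fourier transform of a tempered distribution; the content of the proposition is that these distributions are in fact finite signed measures, equivalently that $\check m_1,\check m_2\in L^1(\R^n)$ plus possibly an atomic part.

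\textbf{Construction of $M_1$.} First I would write $m_1(\xi)=1-\dfrac{1}{(1+4\pi^2|\xi|^2)^s}\,\big((1+4\pi^2|\xi|^2)^s-(2\pi|\xi|)^{2s}\big)$, but a cleaner route is the following: set $g(\xi):=(2\pi|\xi|)^{2s}(1+4\pi^2|\xi|^2)^{-s}$ and observe $1-g(\xi)=(1+4\pi^2|\xi|^2)^{-s}\cdot\big((1+4\pi^2|\xi|^2)^s-(2\pi|\xi|)^{2s}\big)(1+4\pi^2|\xi|^2)^{-s}\cdot(1+4\pi^2|\xi|^2)^s$; since this is getting circular, the efficient approach is to recall the subordination/binomial expansion
\[
\frac{1}{(1+4\pi^2|\xi|^2)^s}=\sum_{k=0}^{N}\binom{-s}{k}(2\pi|\xi|)^{2k}(1+4\pi^2|\xi|^2)^{-s}\cdots
\]
Rather than chase this, the robust plan is: write $1-m_1(\xi)=\big(1+4\pi^2|\xi|^2\big)^{-s}\big(1+4\pi^2|\xi|^2-4\pi^2|\xi|^2\big)^{?}$ — no. The honest clean statement: $1-m_1(\xi)=(1+4\pi^2|\xi|^2)^{-s}\big[(1+4\pi^2|\xi|^2)^{s}-(2\pi|\xi|)^{2s}\big]$, and then one shows the bracketed function, multiplied by $(1+4\pi^2|\xi|^2)^{-s}$, is $\widehat{h}$ for some $h\in L^1$. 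Concretely, by Lemma~\ref{PRODpoikjhr3-2:le-7} the factor $(1+4\pi^2|\xi|^2)^{-s}=\widehat{\mathcal B}(\xi)$ with ${\mathcal B}\in L^1(\R^n)$ (Lemma~\ref{L419}), so it suffices to show that $\phi(\xi):=(1+4\pi^2|\xi|^2)^{s}-(2\pi|\xi|)^{2s}$ is the Fourier transform of a finite measure. For $|\xi|$ large, a Taylor expansion of $(1+t)^s$ at $t=4\pi^2|\xi|^2$ gives $\phi(\xi)=s(2\pi|\xi|)^{2s-2}+O(|\xi|^{2s-4})$, which decays; for $|\xi|$ small $\phi$ is bounded and smooth except for the $(2\pi|\xi|)^{2s}$ corner at the origin, which is exactly the (integrable-decay) symbol handled by the Riesz-type analysis of Section~\ref{R:P:A:SEC}. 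Putting $M_1:={\mathcal B}*\mu_\phi$ where $\widehat{\mu_\phi}=\phi$, one then checks $\widehat M_1=(1+4\pi^2|\xi|^2)^{-s}\phi(\xi)=1-m_1(\xi)$; adjusting by $\delta_0$ (the measure with $\widehat{\delta_0}=1$) if needed, $M_1:=\delta_0-{\mathcal B}*\mu_\phi$ gives $\widehat M_1=m_1$, and rearranging yields the first line of~\eqref{HSZUONDCCONT-FF-GMMA1-2}.

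\textbf{Construction of $M_2$ and the main obstacle.} For the second identity, solving formally gives $\widehat M_2(\xi)=(1+4\pi^2|\xi|^2)^s/(1+(2\pi|\xi|)^{2s})$; one writes this as $1+\big[(1+4\pi^2|\xi|^2)^s-1-(2\pi|\xi|)^{2s}\big]/(1+(2\pi|\xi|)^{2s})$, so that $M_2=\delta_0+\nu$ where $\widehat\nu$ is the second summand. The numerator $\psi(\xi):=(1+4\pi^2|\xi|^2)^s-1-(2\pi|\xi|)^{2s}$ is $O(|\xi|^{2s-2})$ at infinity and $O(|\xi|^2)$ near the origin, hence smooth and rapidly-enough decaying that $\check\psi$ is a nice (integrable) function; the denominator $(1+(2\pi|\xi|)^{2s})^{-1}$ is a bounded smooth symbol away from $0$ with an integrable corner at $0$, and I expect to show it is the Fourier transform of a finite measure by the same Riesz/Bessel comparison. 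The genuinely delicate point — and the step I expect to be the main obstacle — is controlling the low-frequency singularity at $\xi=0$: the functions $(2\pi|\xi|)^{2s}$ and $(1+(2\pi|\xi|)^{2s})^{-1}$ are not smooth at the origin, so one cannot simply invoke standard Mikhlin-type multiplier theorems, and one must instead argue that the non-smoothness is of the homogeneous Riesz type (so its inverse transform decays like $|x|^{-n-2s}$, which is integrable at infinity) while the compensating factors decay at infinity fast enough to kill any issue there. A careful way to organize this is to interpolate/expand in the binomial series $(1+(2\pi|\xi|)^{2s})^{-1}=\sum_{k\ge1}(-1)^{k-1}(2\pi|\xi|)^{2sk}$ for $|\xi|$ small and bound the tail, identifying each $(2\pi|\xi|)^{2sk}$-type term with a (derivative of a) Riesz kernel via Lemma~\ref{PRODpoikjhr3-2}; combined with the rapid decay of $\check\psi$ and ${\mathcal B}$ at infinity (Lemmata~\ref{HSZUONDCCONT-FF-G-DECA} and~\ref{P:RIE:1}), this yields $\nu$ as a finite signed measure and closes the argument.
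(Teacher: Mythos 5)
Your overall plan (peel off a Dirac mass and show that the remaining multipliers are Fourier transforms of $L^1$ functions) is in the right spirit, but the two steps on which everything hinges are not actually proved, and some of the supporting claims are false. For $M_1$, the whole argument reduces to the assertion that $\phi(\xi)=(1+4\pi^2|\xi|^2)^s-(2\pi|\xi|)^{2s}$ is the Fourier transform of a finite measure; neither Section~\ref{R:P:A:SEC} nor Lemma~\ref{PRODpoikjhr3-2} gives you this (that section concerns $L^p$ mapping properties of convolution with the Riesz kernel, and the lemma identifies $\widehat{\mathcal R}$ with the \emph{negative} power $(2\pi|\xi|)^{-2s}$), so the key integrability statement is left as a heuristic about ``corners'' and ``decay''. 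For $M_2$ the situation is worse: your function $\psi(\xi)=(1+4\pi^2|\xi|^2)^s-1-(2\pi|\xi|)^{2s}$ tends to $-1$ as $|\xi|\to+\infty$ (since $(1+4\pi^2|\xi|^2)^s-(2\pi|\xi|)^{2s}\to0$ for $s<1$), so it is not $O(|\xi|^{2s-2})$ at infinity and $\check\psi$ cannot be an integrable function (Riemann--Lebesgue); near the origin $\psi=-(2\pi|\xi|)^{2s}+O(|\xi|^2)$, so it is $O(|\xi|^{2s})$, not $O(|\xi|^2)$, and it is not smooth there. Moreover, the proposed expansion of $(1+(2\pi|\xi|)^{2s})^{-1}$ converges only where $(2\pi|\xi|)^{2s}<1$, and its terms $(2\pi|\xi|)^{2sk}$ are symbols of hypersingular operators, not Fourier transforms of finite measures (nor of Riesz kernels), so ``identifying each term with a (derivative of a) Riesz kernel'' is a step that would fail as stated.

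For comparison, the paper avoids all of this with two cleaner devices. For $M_1$ it observes that $\frac{(2\pi|\xi|)^{2s}}{(1+4\pi^2|\xi|^2)^s}=\bigl(1-t\bigr)^s$ with $t:=\frac1{1+4\pi^2|\xi|^2}$, expands by the Generalized Binomial Theorem $(1-t)^s=1-\sum_{k\ge1}c_kt^k$ with $c_k>0$ and $\sum_k c_k\le1$, and recognizes $t^k=\widehat{\mathcal B}^{(k)}(\xi)$, the Bessel kernel of integer parameter $k$; since each $\mathcal B^{(k)}$ has unit mass (Lemma~\ref{L419}), the measure $M_1=\delta_0-\sum_k c_k\mathcal B^{(k)}$ is manifestly finite. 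For $M_2$ it sets $\phi_1:={\mathcal B}^{(s)}-\sum_k c_k{\mathcal B}^{(k)}\in L^1(\R^n)$, notes that $\widehat\phi_1+1=\frac{1+(2\pi|\xi|)^{2s}}{(1+4\pi^2|\xi|^2)^s}$ is bounded away from zero, and invokes the Wiener-type Theorem~\ref{WI:TH:-0eirojeg} to produce $\phi_2\in L^1(\R^n)$ with $\widehat\phi_2=\frac{(1+4\pi^2|\xi|^2)^s}{1+(2\pi|\xi|)^{2s}}-1$, so that $M_2=\delta_0+\phi_2\,dx$. If you want to salvage your route, you would have to prove the $L^1$ bounds for the inverse transforms of $\phi$ and of $(1+(2\pi|\xi|)^{2s})^{-1}$ directly (for the latter, a subordination formula $\bigl(1+(2\pi|\xi|)^{2s}\bigr)^{-1}=\int_0^{+\infty}e^{-t}e^{-t(2\pi|\xi|)^{2s}}\,dt$ works), which is considerably more labor than the binomial-plus-Wiener argument of the paper.
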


\begin{proof} By the Generalized Binomial Theorem, for every~$t\in(0,1)$,
\begin{equation}\label{HSZUONDCCONT-FF-GMMA1}
(1-t)^s=1-\sum_{k=1}^{+\infty} c_k t^k,\qquad{\mbox{where }}\,
c_k:=-\frac1{k!}\prod_{j=0}^{k-1} (j-s) 
\end{equation}
and note that~$c_k>0$ for all~$k\ge1$.

As a result, for every~$N\in\N$, with~$N\ge1$,
\begin{equation*}1=\lim_{t\nearrow1}
\Big(1-(1-t)^s\Big)=\lim_{t\nearrow1}\sum_{k=1}^{+\infty} c_k t^k=\lim_{t\nearrow1}
\sum_{k=1}^{+\infty} |c_k| t^k\ge\lim_{t\nearrow1}\sum_{k=1}^{N} |c_k| t^k=\sum_{k=1}^{N} |c_k| 
\end{equation*}
and consequently
\begin{equation}\label{HSZUONDCCONT-FF-GMMA1.2} \sum_{k=1}^{+\infty} |c_k| \le1.\end{equation}

Now we apply~\eqref{HSZUONDCCONT-FF-GMMA1} with~$t:=\frac1{1+4\pi^2|\xi|^2}$ and obtain that
\begin{equation}\label{HSZUONDCCONT-FF-GMMA1.2-XCVB90} T(\xi):=\frac{(2\pi|\xi|)^{2s}}{(1+4\pi^2|\xi|^2)^s}=1-\sum_{k=1}^{+\infty} \frac{c_k}{(1+4\pi^2|\xi|^2)^k}=
1-\sum_{k=1}^{+\infty} c_k \widehat{\mathcal{B}}^{(k)}(\xi),\end{equation} where we have denoted by~${\mathcal{B}}^{(k)}$ the Bessel kernel with parameter~$k$ (recall Lemma~\ref{PRODpoikjhr3-2:le-7}).

Thus, utilizing~\eqref{HSZUONDCCONT-FF-GMMA1.2} to pass the Fourier Transform
under the series,
$$ T=\widehat M_1,\qquad{\mbox{ where }}\,M_1:=\delta_0-\sum_{k=1}^{+\infty} c_k {\mathcal{B}}^{(k)}.$$

We also remark that, for every~$f\in C^\infty_c(\R^n)$,
\begin{equation}\label{HSZUONDCCONT-FF-GMMA1.2.3}
\begin{split} 
|M_1(f)| &\le
|f(0)|+\sum_{k=1}^{+\infty} c_k |{\mathcal{B}}^{(k)}(f)|\le\|f\|_{L^\infty(\R^n)}+\sum_{k=1}^{+\infty} c_k
\int_{\R^n}|{\mathcal{B}}^{(k)}(x)|\,|f(x)|\,dx \\
&\leq \|f\|_{L^\infty(\R^n)}\left(1+\sum_{k=1}^{+\infty} c_k\right)\le2\|f\|_{L^\infty(\R^n)},
\end{split}\end{equation}
thanks to Lemma~\ref{L419} and~\eqref{HSZUONDCCONT-FF-GMMA1.2}. Therefore,~$M_1$ can be extended to a linear function on the space
of continuously and compacted supported functions. Hence, by
the Riesz-Markov-Kakutani Representation Theorem, we conclude that~$M_1$ can be identified with a signed Radon measure, with~$|M_1(\R^n)|\le2$, in view of~\eqref{HSZUONDCCONT-FF-GMMA1.2.3}.
This establishes the first claim in~\eqref{HSZUONDCCONT-FF-GMMA1-2}.

Now we define
$$ \phi_1:={\mathcal{B}}^{(s)}-\sum_{k=1}^{+\infty} c_k {\mathcal{B}}^{(k)}.$$
Note that~$\phi_1\in L^1(\R^n)$, due to Lemma~\ref{L419}
and~\eqref{HSZUONDCCONT-FF-GMMA1.2}.

Moreover, by Lemma~\ref{PRODpoikjhr3-2:le-7} and~\eqref{HSZUONDCCONT-FF-GMMA1.2-XCVB90},
$$ \widehat\phi_1(\xi)+1=\frac{1}{\big(1+4\pi^2|\xi|^2\big)^s}-\sum_{k=1}^{+\infty} c_k\widehat {\mathcal{B}}^{(k)}(\xi)+1=\frac{1+(2\pi|\xi|)^{2s}}{\big(1+4\pi^2|\xi|^2\big)^s},$$
whose infimum is bounded away from zero.

Therefore, by Theorem~\ref{WI:TH:-0eirojeg}, \label{WI:TH:-0eirojeg:page} we deduce that there exists~$\phi_2\in L^1(\R^n)$ such that
$$ \widehat\phi_2=\frac{1}{\widehat\phi_1+1}-1=
\frac{\big(1+4\pi^2|\xi|^2\big)^s}{1+(2\pi|\xi|)^{2s}}-1.$$
The second claim in~\eqref{HSZUONDCCONT-FF-GMMA1-2} is thereby proved by choosing
\begin{equation*} M_2(A):=\int_A \phi_2(x)\,dx+\chi_A(0).\qedhere\end{equation*}
\end{proof}

The following two results are useful consequences of the two identities in~\eqref{HSZUONDCCONT-FF-GMMA1-2}
(these results will be applied in the forthcoming Section~\ref{SEC:BPSPA} to prove
Lemma~\ref{HSZUONDCCONT-FF-GMMA1-2-LE}).

\begin{corollary}\label{PRODpoikjhr3-2:le-7-00-corol}
Let~$s\ge\frac12$,~$p\in(1,+\infty)$,~$f\in L^p(\R^n)$ and~$u:={\mathcal{B}}^{(s)}*f$.

Then, for each~$j\in\{1,\dots,n\}$,
\begin{equation} \label{PRODpoikjhr3-2:le-7-00-FA} \partial_ju ={\mathcal{B}}^{(s-1/2)}*f^{(j)},\end{equation}
where
\begin{equation} \label{PRODpoikjhr3-2:le-7-00}
f^{(j)}:={\mathcal{F}}^{-1}\left( \frac{2\pi i\xi_j}{\sqrt{ 1+4\pi^2|\xi|^2}}\,\widehat f(\xi)\right).\end{equation}

Furthermore,
\begin{equation}\label{SAMIHOLMS:kmsdf2}
\|f^{(j)}\|_{L^p(\R^n)} \le C\,\|f\|_{L^p(\R^n)},
\end{equation}
for some constant~$C>0$ depending only on~$n$,~$p$ and~$s$.
\end{corollary}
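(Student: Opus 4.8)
The plan is to verify the identity \eqref{PRODpoikjhr3-2:le-7-00-FA} at the level of Fourier transforms and then translate this into an $L^p$ statement using the multiplier theory already developed. First I would observe that, for $u={\mathcal{B}}^{(s)}*f$, Lemma~\ref{PRODpoikjhr3-2:le-7} gives $\widehat u(\xi)=(1+4\pi^2|\xi|^2)^{-s}\widehat f(\xi)$, so that
\[
\widehat{\partial_j u}(\xi)=2\pi i\xi_j\,(1+4\pi^2|\xi|^2)^{-s}\widehat f(\xi)
=(1+4\pi^2|\xi|^2)^{-(s-1/2)}\cdot\frac{2\pi i\xi_j}{\sqrt{1+4\pi^2|\xi|^2}}\,\widehat f(\xi)
=\widehat{\mathcal{B}}^{(s-1/2)}(\xi)\,\widehat{f^{(j)}}(\xi),
\]
which is exactly the Fourier-side form of \eqref{PRODpoikjhr3-2:le-7-00-FA} once $f^{(j)}$ is defined as in \eqref{PRODpoikjhr3-2:le-7-00}. (The hypothesis $s\ge\tfrac12$ guarantees that ${\mathcal{B}}^{(s-1/2)}$ is an honest Bessel kernel when $s>\tfrac12$, and reduces to the Dirac mass $\delta_0$ when $s=\tfrac12$, so the convolution on the right-hand side makes sense.) To make this rigorous for $f\in L^p(\R^n)$ rather than only for Schwartz functions, I would first establish the identity for $f\in C^\infty_c(\R^n)$, where all transforms are classical, and then extend by density, using the $L^p$-boundedness of the operators involved (which is what the remaining estimates provide).

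The substantive point is \eqref{SAMIHOLMS:kmsdf2}, i.e.\ that the operator $f\mapsto f^{(j)}$ is bounded on $L^p(\R^n)$ for $p\in(1,+\infty)$. The plan is to recognize its Fourier multiplier
\[
m_j(\xi):=\frac{2\pi i\xi_j}{\sqrt{1+4\pi^2|\xi|^2}}
\]
as a smooth, bounded function whose derivatives decay appropriately, so that a Mikhlin--H\"ormander type multiplier theorem applies. Concretely, $|m_j(\xi)|\le 1$ for all $\xi$, and one checks by direct differentiation that $|D^\alpha m_j(\xi)|\le C_\alpha|\xi|^{-|\alpha|}$ for every multi-index $\alpha$ (each derivative either lowers the degree of the numerator or brings down a factor from the denominator, preserving homogeneity $0$ at infinity and smoothness near the origin because of the $1$ inside the square root). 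This is precisely the Mikhlin condition, hence $f\mapsto{\mathcal{F}}^{-1}(m_j\widehat f)$ extends to a bounded operator on $L^p(\R^n)$ for all $p\in(1,+\infty)$, with norm depending only on $n$, $p$ and (through $s$, which does not actually enter $m_j$) nothing else; this gives \eqref{SAMIHOLMS:kmsdf2}.

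Alternatively, and perhaps more in the spirit of the surrounding text, one can avoid invoking a general multiplier theorem by splitting $m_j$ using Proposition~\ref{HSZUONDCCONT-FF-GMMA1-2-Pr}: writing $m_j(\xi)= \dfrac{2\pi i\xi_j}{2\pi|\xi|}\cdot\dfrac{2\pi|\xi|}{(1+4\pi^2|\xi|^2)^{1/2}} = R_j(\xi)\cdot\bigl((2\pi|\xi|)^{1}\widehat M_1^{(1/2)}(\xi)\bigr)^{?}$ — more carefully, one factors $f^{(j)}$ through a Riesz transform $R_j$ (bounded on $L^p$ for $1<p<\infty$ by classical Calder\'on--Zygmund theory) composed with convolution against the finite signed measure whose transform is $(2\pi|\xi|)/(1+4\pi^2|\xi|^2)^{1/2}$, the latter being bounded on $L^p$ by Young's inequality (Corollary~\ref{CO914athcal}-type reasoning) once one knows this is the transform of a finite measure, which follows from the $s=\tfrac12$ case of \eqref{HSZUONDCCONT-FF-GMMA1-2} rewritten as $(2\pi|\xi|)^{1}=(1+4\pi^2|\xi|^2)^{1/2}\widehat M_1(\xi)$ after squaring. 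The main obstacle is precisely this bookkeeping: one must be careful that the factorization produces only operators already known to be $L^p$-bounded (Riesz transforms and convolutions against finite measures), and that the endpoint value $p\in\{1,\infty\}$ is legitimately excluded because the Riesz transform fails there. Once the boundedness \eqref{SAMIHOLMS:kmsdf2} is in hand, combining it with the density argument of the first paragraph and with Corollary~\ref{CO914athcal} applied to ${\mathcal{B}}^{(s-1/2)}$ closes the proof.
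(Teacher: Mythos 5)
Your proposal is correct, but your primary route to the $L^p$ bound is genuinely different from the paper's. You apply the Mikhlin Multiplier Theorem (Theorem~\ref{Mikhlin Multiplier Theorem}, scalar case $X=Y=\C$) directly to the symbol $m_j(\xi)=2\pi i\xi_j\,(1+4\pi^2|\xi|^2)^{-1/2}$, which is smooth everywhere (including the origin) and satisfies $|D^\alpha m_j(\xi)|\le C_\alpha(1+|\xi|)^{-|\alpha|}$, so boundedness of $f\mapsto f^{(j)}$ on $L^p(\R^n)$ for $p\in(1,+\infty)$ follows in one stroke, with a constant visibly independent of $s$. The paper instead factors the multiplier as $\frac{i\xi_j}{|\xi|}\cdot\frac{2\pi|\xi|}{\sqrt{1+4\pi^2|\xi|^2}}=\frac{i\xi_j}{|\xi|}\,\widehat M_1(\xi)$, i.e.\ the $s=\frac12$ instance of Proposition~\ref{HSZUONDCCONT-FF-GMMA1-2-Pr}, and then combines the Calder\'on--Zygmund boundedness of the Riesz transform with Minkowski's Integral Inequality (Theorem~\ref{MLAerSM:ijfKKSMdf02}) applied to the convolution against the finite measure $M_1$; this is essentially the alternative you sketch in your last paragraph, so your second route coincides with the paper's argument (only note that your aside ``after squaring'' is unnecessary: taking $s=\frac12$ in the first identity of \eqref{HSZUONDCCONT-FF-GMMA1-2} already gives $2\pi|\xi|=(1+4\pi^2|\xi|^2)^{1/2}\,\widehat M_1(\xi)$). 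The Fourier-side verification of \eqref{PRODpoikjhr3-2:le-7-00-FA} for nice $f$ and the subsequent density argument are the same as in the paper; when passing to the limit, make explicit that the $L^p$ limit of $\partial_j u_k$ is identified with $\partial_j u$ in the weak sense, which is what the paper's estimate $\|\mathcal{B}^{(s-1/2)}*f^{(j)}-\partial_j u_k\|_{L^p(\R^n)}\le\|f-f_k\|_{L^p(\R^n)}$ accomplishes. As for what each approach buys: yours is shorter and self-evidently uniform in $s$, at the cost of invoking the multiplier theorem, whose proof the book only cites; the paper's argument stays within tools it develops and re-uses elsewhere (the measures $M_1$, $M_2$ and the Riesz transform, which reappear in Corollary~\ref{COK90FR:918} and Theorem~\ref{TH:9191}).
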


\begin{proof} Assume first that~$f$ is in the Schwartz space of smooth and rapidly decreasing functions. Then, by Lemma~\ref{PRODpoikjhr3-2:le-7}, for each~$j\in\{1,\dots,n\}$,
\begin{eqnarray*}&&
{\mathcal{F}}(\partial_j u)(\xi)=2\pi i\xi_j\widehat u=2\pi i\xi_j \widehat{\mathcal{B}}^{(s)}(\xi)\,\widehat f(\xi)=2\pi i\xi_j \big(1+4\pi^2|\xi|^2\big)^{-s}\widehat f(\xi)\\&&\qquad= \big(1+4\pi^2|\xi|^2\big)^{\frac12-s}{\mathcal{F}}( f^{(j)})(\xi)=\widehat{\mathcal{B}}^{(s-1/2)}(\xi){\mathcal{F}}( f^{(j)})(\xi).
\end{eqnarray*}
By taking the inversion of the Fourier Transform, we obtain~\eqref{PRODpoikjhr3-2:le-7-00-FA} in this case.

When~$f\in L^p(\R^n)$ we take a sequence of smooth and rapidly decreasing functions~$f_k$ such that~$f_k\to f$ in~$L^p(\R^n)$ as~$k\to+\infty$. Thus, if~$u_k:={\mathcal{B}}^{(s)}*f_k$
and~$f_k^{(j)}$ is as in~\eqref{PRODpoikjhr3-2:le-7-00} with~$f$ replaced by~$f_k$, we already know that
\begin{equation}\label{HSZUONDCCONT-FF-GMMA1-2-ij} \partial_ju_k ={\mathcal{B}}^{(s-1/2)}*f^{(j)}_k.\end{equation}
The objective is now to pass to the limit as~$k\to+\infty$. For this, we utilize the first identity in~\eqref{HSZUONDCCONT-FF-GMMA1-2} with~$s=\frac12$ and we see that, if~$h_k:=f-f_k$
and~$h^{(j)}_k$ is as in~\eqref{PRODpoikjhr3-2:le-7-00} with~$f$ replaced by~$h_k$, then
\begin{eqnarray*} h_k^{(j)}={\mathcal{F}}^{-1}\left(
\frac{2\pi i\xi_j}{\sqrt{ 1+4\pi^2|\xi|^2}}\,\widehat h_k(\xi)\right)={\mathcal{F}}^{-1}\left(
\frac{i\xi_j}{|\xi|}\,\frac{2\pi |\xi|}{\sqrt{ 1+4\pi^2|\xi|^2}}\,\widehat h_k(\xi)\right)
={\mathcal{F}}^{-1}\left(\frac{i\xi_j}{|\xi|}\,\widehat M_1(\xi)\,\widehat h_k(\xi)\right).
\end{eqnarray*}
Moreover, by the singular integral theory of Calder\'on-Zygmund type (see e.g.
Theorem~3 on page~39 in~\cite{MR0290095}
or Theorem~5.2.1 in~\cite{2021arXiv210107941D}), one has that
$$\left\| {\mathcal{F}}^{-1}\left(\frac{i\xi_j}{|\xi|}\right)*  h_k\right\|_{L^p(\R^n)}\le C\,\|h_k\|_{L^p(\R^n)}$$
for some~$C>0$.

{F}rom these observations and the fact that~$M_1$ has finite mass (recall Proposition~\ref{HSZUONDCCONT-FF-GMMA1-2-Pr}), using the notation~$\phi_{j,k}:={\mathcal{F}}^{-1}\left(\frac{i\xi_j}{|\xi|}\right)*  h_k$
and Minkowski's Integral Inequality (see Theorem~\ref{MLAerSM:ijfKKSMdf02}), we arrive at
\begin{equation}\label{SAMIHOLMS:kmsdf}
\begin{split}
& \|h_k^{(j)}\|_{L^p(\R^n)}=
\left\| {\mathcal{F}}^{-1}\left(\frac{i\xi_j}{|\xi|}\right)*  h_k*M_1\right\|_{L^p(\R^n)}
=\left\| \phi_{j,k}*M_1\right\|_{L^p(\R^n)}\\&\qquad=
\left(\int_{\R^n} \left| \int_{\R^n}\phi_{j,k}(x-y)\,dM_1(y)\right|^p\,dx\right)^{\frac1p}\le C
\int_{\R^n} \left(\int_{\R^n}|\phi_{j,k}(x-y)|^p\,dx\right)^{\frac1p}\,d|M_1|(y) \\ &\qquad=C\int_{\R^n} \|\phi_{j,k}
\|_{L^p(\R^n)}\,d|M_1|(y) =C\|\phi_{j,k}
\|_{L^p(\R^n)}\le C\,\|h_k\|_{L^p(\R^n)},\end{split}
\end{equation}
up to renaming~$C$ at each step of the computation.

This,~\eqref{HSZUONDCCONT-FF-GMMA1-2-ij}, Lemma~\ref{HSZUONDCCONT-FF-G-DECA}
and Young's Convolution Inequality yield that
\begin{eqnarray*} &&\|{\mathcal{B}}^{(s-1/2)}*f^{(j)}-\partial_j u_k \|_{L^p(\R^n)}
=\| {\mathcal{B}}^{(s-1/2)}*h^{(j)}_k\|_{L^p(\R^n)}
\le \|h_k^{(j)}\|_{L^p(\R^n)}
\le \|h_k\|_{L^p(\R^n)},
\end{eqnarray*}
which is infinitesimal as~$k\to+\infty$.

This ensures the desired convergence and gives~\eqref{PRODpoikjhr3-2:le-7-00-FA}.

The claim in~\eqref{SAMIHOLMS:kmsdf2} follows in the line of~\eqref{SAMIHOLMS:kmsdf}.
\end{proof}

\begin{corollary} \label{COK90FR:918}
Let~$p\in(1,+\infty)$ and~$u\in W^{1,p}(\R^n)$.
Then, there exists a locally integrable function~$f$ such that~$u={\mathcal{B}}^{(1/2)}*f$. 

Moreover, $f\in L^p(\R^n)$ and
$$ \|f\|_{L^p(\R^n)}\le C\, \|u\|_{W^{1,p}(\R^n)},$$
for some positive constant~$C$ depending only on~$n$ and~$p$.
\end{corollary}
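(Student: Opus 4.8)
The plan is to exhibit $f$ explicitly as
\[
f := \mathcal{B}^{(1/2)}*u - \sum_{j=1}^n g_j, \qquad g_j := \mathcal{F}^{-1}\!\left(\frac{2\pi i\,\xi_j}{\sqrt{1+4\pi^2|\xi|^2}}\,\widehat{\partial_j u}(\xi)\right),
\]
the summands $g_j$ being precisely the operator $f^{(j)}$ of Corollary~\ref{PRODpoikjhr3-2:le-7-00-corol}, see~\eqref{PRODpoikjhr3-2:le-7-00}, evaluated with $\partial_j u$ in place of $f$. The algebra behind this choice is the elementary splitting
\[
\sqrt{1+4\pi^2|\xi|^2} = \frac{1}{\sqrt{1+4\pi^2|\xi|^2}} + \frac{4\pi^2|\xi|^2}{\sqrt{1+4\pi^2|\xi|^2}},
\]
in which the first summand is the Fourier symbol of convolution with $\mathcal{B}^{(1/2)}$ (Lemma~\ref{PRODpoikjhr3-2:le-7}), while in the second one we write $4\pi^2|\xi|^2 = -\sum_{j}(2\pi i\xi_j)^2$ and use $\widehat{\partial_j u}(\xi) = 2\pi i\xi_j\,\widehat u(\xi)$, so that the second summand acting on $\widehat u$ equals $-\sum_j \widehat{g_j}$.

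First I would run the argument for $u\in C^\infty_c(\R^n)$, which is dense in $W^{1,p}(\R^n)$ since $p<+\infty$. For such $u$ all the functions above are Schwartz, so the Fourier computation is legitimate and yields $\widehat f(\xi) = \sqrt{1+4\pi^2|\xi|^2}\,\widehat u(\xi)$; multiplying by $\widehat{\mathcal{B}^{(1/2)}}(\xi) = (1+4\pi^2|\xi|^2)^{-1/2}$ (Lemma~\ref{PRODpoikjhr3-2:le-7}) gives $\widehat{\mathcal{B}^{(1/2)}*f} = \widehat u$, i.e. $\mathcal{B}^{(1/2)}*f = u$. For the norm, Corollary~\ref{CO914athcal} gives $\|\mathcal{B}^{(1/2)}*u\|_{L^p(\R^n)} \le \|u\|_{L^p(\R^n)}$, and the estimate~\eqref{SAMIHOLMS:kmsdf2}, applied to $\partial_j u$, gives $\|g_j\|_{L^p(\R^n)} \le C\|\partial_j u\|_{L^p(\R^n)}$; summing, $\|f\|_{L^p(\R^n)} \le C\|u\|_{W^{1,p}(\R^n)}$.

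Next I would remove the smoothness hypothesis by density. Given $u\in W^{1,p}(\R^n)$, take $u_k\in C^\infty_c(\R^n)$ with $u_k\to u$ in $W^{1,p}(\R^n)$ and let $f_k$ be the function built above from $u_k$. The linear bound, applied to $u_k-u_m$, shows $(f_k)_k$ is Cauchy in $L^p(\R^n)$; let $f$ be its limit, so that $\|f\|_{L^p(\R^n)}\le C\|u\|_{W^{1,p}(\R^n)}$. Since $\mathcal{B}^{(1/2)}\in L^1(\R^n)$ (Lemma~\ref{L419}), Young's Convolution Inequality gives $\mathcal{B}^{(1/2)}*f_k \to \mathcal{B}^{(1/2)}*f$ in $L^p(\R^n)$, while $\mathcal{B}^{(1/2)}*f_k = u_k\to u$; hence $\mathcal{B}^{(1/2)}*f = u$, and $f\in L^p(\R^n)\subseteq L^1_{\rm loc}(\R^n)$, as required.

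The only substantial ingredient is the $L^p$-continuity of the multiplier operator $h\mapsto\mathcal{F}^{-1}\!\big(\frac{2\pi i\xi_j}{\sqrt{1+4\pi^2|\xi|^2}}\widehat h\big)$ defining $g_j$, but this is already contained in the proof of Corollary~\ref{PRODpoikjhr3-2:le-7-00-corol}, where its symbol is factored as a Riesz-transform multiplier times the finite measure $M_1$ and Calder\'on-Zygmund theory is invoked. With that available the present argument is essentially bookkeeping; the only delicate points are the symbol identities in the splitting and the density passage that legitimizes the Fourier manipulations in the smooth case.
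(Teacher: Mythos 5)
Your argument is correct, and its skeleton coincides with the paper's: approximate $u$ by functions $u_k\in C^\infty_c(\R^n)$, prove for smooth data the bound $\|(1-\Delta)^{1/2}u_k\|_{L^p(\R^n)}\le C\|u_k\|_{W^{1,p}(\R^n)}$, deduce by linearity that the $f_k$'s form a Cauchy sequence in $L^p(\R^n)$, and identify the limit $f$ through $u_k={\mathcal{B}}^{(1/2)}*f_k$ and Corollary~\ref{CO914athcal}. Where you genuinely diverge is in the key multiplier estimate. The paper factors the symbol via the second identity in~\eqref{HSZUONDCCONT-FF-GMMA1-2}, namely $\sqrt{1+4\pi^2|\xi|^2}=(1+2\pi|\xi|)\,\widehat M_2(\xi)$, and then controls ${\mathcal{F}}^{-1}(2\pi|\xi|\widehat u_k)$ by Riesz transforms acting on $\partial_k u_k$; you instead split $\sqrt{1+4\pi^2|\xi|^2}=(1+4\pi^2|\xi|^2)^{-1/2}+4\pi^2|\xi|^2(1+4\pi^2|\xi|^2)^{-1/2}$, recognize the first piece as convolution with ${\mathcal{B}}^{(1/2)}$ (Lemma~\ref{PRODpoikjhr3-2:le-7}, bounded by Corollary~\ref{CO914athcal}) and the second as $-\sum_j f^{(j)}$ with $f:=\partial_j u$, so that~\eqref{SAMIHOLMS:kmsdf2} does the work. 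Your route thus bypasses the measure $M_2$ entirely and recycles an estimate already proved (which itself rests on $M_1$ and the Calder\'on--Zygmund theory), at the price of a slightly less self-contained statement of the key inequality; the paper's route keeps the proof independent of Corollary~\ref{PRODpoikjhr3-2:le-7-00-corol} but must invoke the second identity of Proposition~\ref{HSZUONDCCONT-FF-GMMA1-2-Pr}. One small presentational point: your opening display defines $f$ directly from $u\in W^{1,p}(\R^n)$, which for $p>2$ only makes sense distributionally; since your actual construction of $f$ is as the $L^p$-limit of the $f_k$'s built from smooth approximants, you should present that limit as the definition and treat the display as motivation only.
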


\begin{proof} Let~$\{u_j\}_{j\in\N}$ be a sequence of smooth and compactly supported functions converging to~$u$ in~$W^{1,p}(\R^n)$ as~$j\to+\infty$. Let also~$f_j:=(1-\Delta)^{1/2} u_j$. Then,~$\widehat f_j= \sqrt{1+4\pi^2|\xi|^2}\,\widehat u_j$.

Using the second identity in~\eqref{HSZUONDCCONT-FF-GMMA1-2} with~$s:=\frac12$, we have that
$$ \sqrt{1+4\pi^2|\xi|^2}=(1+2\pi|\xi|) \,\widehat M_2$$
and therefore
$$ f_j={\mathcal{F}}^{-1}\Big(\sqrt{1+4\pi^2|\xi|^2}\,\widehat u_j\Big)={\mathcal{F}}^{-1}\Big((1+2\pi|\xi|) \,\widehat M_2\,\widehat u_j\Big)={\mathcal{F}}^{-1}\Big((1+2\pi|\xi|)\widehat u_j\Big) *M_2.$$
Thus, since~$M_2$ has finite mass, using the H\"older's Inequality we obtain that
\begin{equation}\label{29ik3HSZUONDCCONT-FF-GMMA1-2}
\begin{split}
\|f_j\|_{L^p(\R^n)}&=\left(
\int_{\R^n}\left| {\mathcal{F}}^{-1}\Big((1+2\pi|\xi|)\widehat u_j\Big) *M_2(x)\right|^p\,dx
\right)^{\frac1p}\\&=\left(
\int_{\R^n}\left|\int_{\R^n} {\mathcal{F}}^{-1}\Big((1+2\pi|\xi|)\widehat u_j\Big) (x-y)\,dM_2(y)\right|^p\,dx
\right)^{\frac1p}\\&\le C\left(
\int_{\R^n}\int_{\R^n}\left| {\mathcal{F}}^{-1}\Big((1+2\pi|\xi|)\widehat u_j\Big) (x-y)\right|^p\,d|M_2(y)|\,dx
\right)^{\frac1p}\\&\le C\,\left\| {\mathcal{F}}^{-1}\Big((1+2\pi|\xi|)\widehat u_j\Big)\right\|_{L^p(\R^n)}\\
&\le C\,\left\|u_j+ {\mathcal{F}}^{-1}\big( 2\pi|\xi|\widehat u_j\big)\right\|_{L^p(\R^n)}\\&\le
C\left(\|u_j\|_{L^p(\R^n)}+ \left\|{\mathcal{F}}^{-1}\big( 2\pi|\xi|\widehat u_j\big)\right\|_{L^p(\R^n)}\right).
\end{split}\end{equation}
Since, for every~$k\in\N$, we have that~$\partial_k u_j={\mathcal{F}}^{-1}\big(2\pi i\xi_k\widehat u_j\big)$, we see that
\begin{eqnarray*}
&&{\mathcal{F}}^{-1}\big( 2\pi|\xi|\widehat u_j\big)=
\sum_{k=1}^n{\mathcal{F}}^{-1}\left(\frac{2\pi \xi_k^2}{|\xi|}\widehat u_j\right)=
\sum_{k=1}^n{\mathcal{F}}^{-1}\left(\frac{ \xi_k}{|\xi|}\right)*{\mathcal{F}}^{-1}\left(2\pi \xi_k\widehat u_j\right)=\frac1i\sum_{k=1}^n{\mathcal{F}}^{-1}\left(\frac{ \xi_k}{|\xi|}\right)*(\partial_ku_j)
\end{eqnarray*}
and therefore, by the singular integral theory of Calder\'on-Zygmund type (see e.g.
Theorem~3 on page~39 in~\cite{MR0290095} or Theorem~5.2.1 in~\cite{2021arXiv210107941D}),
\begin{eqnarray*}
\left\|{\mathcal{F}}^{-1}\big( 2\pi|\xi|\widehat u_j\big)\right\|_{L^p(\R^n)}
\le\sum_{k=1}^n\left\|{\mathcal{F}}^{-1}\left(\frac{ \xi_k}{|\xi|}\right)*(\partial_ku_j)\right\|_{L^p(\R^n)}\le
C\sum_{k=1}^n \| \partial_ku_j\|_{L^p(\R^n)}.
\end{eqnarray*}

Combining this estimate and~\eqref{29ik3HSZUONDCCONT-FF-GMMA1-2} we obtain that
\begin{equation}\label{832r789yf2fy23yfh823f}
\|f_j\|_{L^p(\R^n)}\le C\, \|u_j\|_{W^{1,p}(\R^n)}.
\end{equation}
Also, since~$f_j-f_m=(1-\Delta)^{1/2}(u_j-u_m)$, we have that
\[
\|f_j-f_m\|_{L^p(\R^n)}\le C\, \|u_j-u_m\|_{W^{1,p}(\R^n)},
\]
and therefore~$\{f_j\}_{j\in\N}$ is a Cauchy sequence in~$L^p(\R^n)$.
As a consequence, it admits a limit~$f\in L^p(\R^n)$.

Moreover, taking the limit as~$j\to+\infty$ in~\eqref{832r789yf2fy23yfh823f},
we obtain that
\[
\|f\|_{L^p(\R^n)}\le C\, \|u\|_{W^{1,p}(\R^n)}.
\]

Finally, since~$u_j={\mathcal{B}}^{(1/2)}*f_j$, by Corollary~\ref{CO914athcal} we have that
\begin{eqnarray*}
\|u-{\mathcal{B}}*f\|_{L^p(\R^n)}&\le&
\|u-u_j\|_{L^p(\R^n)}
+\|{\mathcal{B}}^{(1/2)}*f_j-{\mathcal{B}}^{(1/2)}*f\|_{L^p(\R^n)}\\&\le&
\|u-u_j\|_{L^p(\R^n)}+\|f_j-f\|_{L^p(\R^n)}
\\& \le& C\|u-u_j\|_{L^p(\R^n)}.
\end{eqnarray*}
Thus, taking the limit as~$j\to+\infty$,
we obtain that~$u={\mathcal{B}}^{(1/2)}*f$ and this concludes the proof.
\end{proof}

Another useful consequence of Proposition~\ref{HSZUONDCCONT-FF-GMMA1-2-Pr} consists in the following two bounds between
the norms of the two operators~$(-\Delta)^s$ and~$(1-\Delta)^s$:

\begin{theorem}\label{TH:9191} Let~$s\in(0,1)$ and~$p\in[1,+\infty)$. Then,
\begin{equation}\label{MAKlLMSTIuntYYTi}
\begin{split}&\|(-\Delta)^s u\|_{L^p(\R^n)}\le C\,\|(1-\Delta)^s u\|_{L^p(\R^n)}\\ {\mbox{and }}\quad&\|(1-\Delta)^s u\|_{L^p(\R^n)}\le C\Big(\| u\|_{L^p(\R^n)}+\|(-\Delta)^s u\|_{L^p(\R^n)}\Big),
\end{split}
\end{equation}
for some constant~$C>0$ depending only on~$n$,~$p$ and~$s$.
\end{theorem}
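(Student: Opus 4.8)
The plan is to exploit the two multiplier identities established in Proposition~\ref{HSZUONDCCONT-FF-GMMA1-2-Pr}, which express the symbol of one operator as a finite (Radon) measure convolved with the symbol of the other. The point is that convolution against a finite signed measure is bounded on every~$L^p(\R^n)$ with operator norm at most the total variation of the measure, by Minkowski's Integral Inequality (see Theorem~\ref{MLAerSM:ijfKKSMdf02}), exactly as used in the proof of Corollary~\ref{PRODpoikjhr3-2:le-7-00-corol}. Concretely, for the first inequality in~\eqref{MAKlLMSTIuntYYTi} I would argue as follows. Assume first that~$u$ lies in the Schwartz space. By the first identity in~\eqref{HSZUONDCCONT-FF-GMMA1-2}, taking Fourier transforms,
\[
{\mathcal{F}}\big((-\Delta)^s u\big)(\xi)=(2\pi|\xi|)^{2s}\widehat u(\xi)=(1+4\pi^2|\xi|^2)^s\,\widehat M_1(\xi)\,\widehat u(\xi)=\widehat M_1(\xi)\,{\mathcal{F}}\big((1-\Delta)^s u\big)(\xi),
\]
so that~$(-\Delta)^s u=M_1*\big((1-\Delta)^s u\big)$. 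Hence, writing~$g:=(1-\Delta)^s u$ and using that~$M_1$ has finite total variation,
\[
\|(-\Delta)^s u\|_{L^p(\R^n)}=\|M_1*g\|_{L^p(\R^n)}\le\left(\int_{\R^n}\left(\int_{\R^n}|g(x-y)|^p\,d|M_1|(y)\right)^{\!1/p}\cdots\right)\le |M_1|(\R^n)\,\|g\|_{L^p(\R^n)},
\]
which is the first claim with~$C:=|M_1|(\R^n)$, at least on Schwartz functions; a density argument then extends it to all~$u$ for which the right-hand side is finite.

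For the second inequality I would use the second identity in~\eqref{HSZUONDCCONT-FF-GMMA1-2}, namely~$(1+4\pi^2|\xi|^2)^s=\widehat M_2(\xi)+(2\pi|\xi|)^{2s}\widehat M_2(\xi)$. Taking Fourier transforms gives
\[
{\mathcal{F}}\big((1-\Delta)^s u\big)(\xi)=\widehat M_2(\xi)\,\widehat u(\xi)+\widehat M_2(\xi)\,(2\pi|\xi|)^{2s}\widehat u(\xi),
\]
that is,~$(1-\Delta)^s u=M_2*u+M_2*\big((-\Delta)^s u\big)$. Applying the triangle inequality in~$L^p$ and again the finite-mass bound on convolution against~$M_2$,
\[
\|(1-\Delta)^s u\|_{L^p(\R^n)}\le |M_2|(\R^n)\Big(\|u\|_{L^p(\R^n)}+\|(-\Delta)^s u\|_{L^p(\R^n)}\Big),
\]
which is the desired estimate. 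Once more this is first proved for Schwartz~$u$ and then extended by approximation, noting that the right-hand side controls everything.

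The routine points to be handled carefully are the density/approximation arguments: for the first inequality one wants the statement for a class of~$u$ large enough that the two operators make sense, so I would take a sequence of Schwartz (or~$C^\infty_c$) functions~$u_k\to u$ in a suitable sense, apply the Schwartz-level identity to each~$u_k$, and pass to the limit using the uniform bounds just derived, exactly in the spirit of the limiting argument in Corollary~\ref{PRODpoikjhr3-2:le-7-00-corol}. The only genuine subtlety — and the step I expect to be the main obstacle — is bookkeeping about what ``$(-\Delta)^s u$'' and ``$(1-\Delta)^s u$'' mean when~$u$ is merely such that one side of~\eqref{MAKlLMSTIuntYYTi} is finite (e.g.\ interpreting the operators distributionally, or on the Bessel potential space), and making sure the convolution identities~$(-\Delta)^s u=M_1*(1-\Delta)^s u$ and~$(1-\Delta)^s u=M_2*u+M_2*(-\Delta)^s u$ persist under the relevant limits; this is purely a matter of choosing the right functional-analytic framework and invoking continuity of convolution against a finite measure, so no new idea is needed beyond Proposition~\ref{HSZUONDCCONT-FF-GMMA1-2-Pr} and Minkowski's inequality.
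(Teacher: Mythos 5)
Your proposal is correct and follows essentially the same route as the paper: both inequalities are obtained by converting the two identities of Proposition~\ref{HSZUONDCCONT-FF-GMMA1-2-Pr} into the convolution formulas~$(-\Delta)^s u=M_1*\big((1-\Delta)^s u\big)$ and~$(1-\Delta)^s u=M_2*\big(u+(-\Delta)^s u\big)$, and then bounding the convolution against the finite signed measures via Minkowski's Integral Inequality (Theorem~\ref{MLAerSM:ijfKKSMdf02}). The density considerations you mention are harmless but not part of the paper's argument, which simply carries out the computation directly.
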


\begin{proof} By the first identity in~\eqref{HSZUONDCCONT-FF-GMMA1-2},
\begin{eqnarray*}&& (-\Delta)^s u ={\mathcal{F}}^{-1}\big( (2\pi|\xi|)^{2s}\widehat u\big)={\mathcal{F}}^{-1}\big( (1+4\pi^2|\xi|^2)^s\, \widehat M_1\widehat u\big)\\&&\qquad=M_1*{\mathcal{F}}^{-1}\big( (1+4\pi^2|\xi|^2)^s\widehat u\big)=M_1*((1-\Delta)^s u) .\end{eqnarray*}
{F}rom this and the Minkowski's Integral Inequality (see Theorem~\ref{MLAerSM:ijfKKSMdf02}), recalling that~$M_1$ has finite mass (recall Proposition~\ref{HSZUONDCCONT-FF-GMMA1-2-Pr}), we infer that
\begin{eqnarray*}
&&\|(-\Delta)^s u\|_{L^p(\R^n)}=\|M_1*((1-\Delta)^s u)\|_{L^p(\R^n)}
=\left(\int_{\R^n}\big|M_1*((1-\Delta)^s u)(x)\big|^p\,dx\right)^{\frac1p}\\&&\qquad=
\left(\int_{\R^n}\left|\int_{\R^n} (1-\Delta)^s u(x-y)\,dM_1(y)\right|^p\,dx\right)^{\frac1p}\\&&\qquad\le C
\int_{\R^n}\left(\int_{\R^n} \big|(1-\Delta)^s u(x-y)\big|^p\,dx\right)^{\frac1p}\,d|M_1|(y)=C\,|M_1(\R^n)|\,\|(1-\Delta)^s u\|_{L^p(\R^n)},
\end{eqnarray*}
which establishes the first estimate in~\eqref{MAKlLMSTIuntYYTi}.

Furthermore, owing to the second identity in~\eqref{HSZUONDCCONT-FF-GMMA1-2},
\begin{eqnarray*}&& (1-\Delta)^s u= {\mathcal{F}}^{-1}\big((1+4\pi^2|\xi|^2)^s\widehat u\big)=
{\mathcal{F}}^{-1}\big(\widehat M_2\widehat u+(2\pi|\xi|)^{2s} \,\widehat M_2\widehat u\big)\\&&\qquad=
M_2*u+M_2*{\mathcal{F}}^{-1}\big((2\pi|\xi|)^{2s} \widehat u\big)
=M_2*\big(u+(-\Delta)^su\big).\end{eqnarray*}
For this reason, if~$v:=u+(-\Delta)^su$, using the
Minkowski's Integral Inequality (see Theorem~\ref{MLAerSM:ijfKKSMdf02})
and the fact that~$M_2$ has finite mass
we find that
\begin{align*}
&
\|(1-\Delta)^s u\|_{L^p(\R^n)}=\|M_2*v\|_{L^p(\R^n)}=
\left(\int_{\R^n} \big|M_2*v(x)\big|^p\,dx\right)^{\frac1p}=\left(\int_{\R^n} \left|\int_{\R^n} v(x-y)\,dM_2(y)\right|^p\,dx\right)^{\frac1p} \\
&\qquad\le C\,
\int_{\R^n}\left( \int_{\R^n} |v(x-y)|^p\,dx\right)^{\frac1p}\,d|M_2|(y)=C\,|M_2(\R^n)|\,\|v\|_{L^p(\R^n)}.
\end{align*}
This proves the second estimate in~\eqref{MAKlLMSTIuntYYTi}.
\end{proof}

\medskip

See e.g.~\cite{MR143935, MR228702, MR0290095, MR0374877, MR698780, MR1349110, MR1411441} and the references therein for further information on the Bessel kernel.

\section{Bessel potential spaces}\label{SEC:BPSPA}

In light of~\eqref{HSZUONDCCONT-FF} and the Bessel kernel setting,
to understand solutions of~$ (1-\Delta)^s u=f$ it is useful to look at the space containing all the functions which can be written as~${\mathcal{B}}*f$.
Namely, given~$p\in[1,+\infty]$ we consider the Bessel potential space defined\footnote{For simplicity, we assumed in the definition~\eqref{HSZUONDCCONT-FF0iejdfujhnHSNDJNdik9238475tyhfj} of \label{VEONORL123r3FVSjjMDMFP}
Bessel potential space that the elements of the space are functions in~$L^p(\R^n)$.
One could also work with tempered distributions, instead of functions. These settings would be essentially equivalent when~$s\ge0$, since the Lebesgue norm can be reabsorbed into the
Bessel potential space  norm in~\eqref{ERFGHJN6789-09tftd90u8yhgiug8erhISB}, due to Lemma~\ref{VEONORLP}
and Young's Convolution Inequality.

We point out that the notation for
Bessel potential spaces is not uniform across the literature. For example, the space denoted here by~${\mathcal{L}}^p_{2s}(\R^n)$
corresponds to~${\mathcal{L}}^p_{\alpha}(\R^n)$
in~\cite[Section~3.3, Chapter~V]{MR0290095} with~$\alpha:=2s$
and to~$H^{2s}_p(\R^n)$ in~\cite[Definition~2.42]{MR2884718}.

The Bessel potential spaces are also seen as members of a larger family of spaces called Triebel-Lizorkin spaces, see~\cite{MR1328645} and~\cite[Remark~6.10]{MR2884718}.
See also~\cite[Section~2]{MR1044427} for a review of several functional
spaces commonly used in the analysis of fractional operators.

For the sake of simplicity, here
we deal with spaces with positive regularity parameter~$s$ but we mention that the case of negative exponents
can be considered too, also obtaining duality results and useful results for distribution solutions.}
as \index{Bessel potential space}
\begin{equation} \label{HSZUONDCCONT-FF0iejdfujhnHSNDJNdik9238475tyhfj}{\mathcal{L}}^p_{2s}(\R^n):=\Big\{ u\in L^p(\R^n) {\mbox{ s.t. }}u={\mathcal{B}}^{(s)}*f, {\mbox{ with }}f\in L^p(\R^n)\Big\},\end{equation}
where~${\mathcal{B}}^{(s)}$ is the Bessel kernel corresponding
to the parameter~$s>0$.

For notational convenience, we also define~${\mathcal{L}}^p_{2s}(\R^n):=L^p(\R^n)$ when~$s=0$
(which is consistent\footnote{This choice can be considered as the \label{234erV3E432ON24OR12t35uLP}
counterpart of defining the Sobolev space~$W^{\sigma,p}(\R^n)$ to be~$L^p(\R^n)$ when~$\sigma=0$.} with the choice of taking~${\mathcal{B}}^{(s)}$ to be the identity when~$s=0$,
to validate Lemma~\ref{L912-xpsmv} in this case too).

\begin{lemma}\label{VEONORLP}
If~$u\in{\mathcal{L}}^p_{2s}(\R^n)$ there exists a unique~$f_u\in L^p(\R^n)$ such that~$u={\mathcal{B}}^{(s)}*f_u$.\end{lemma}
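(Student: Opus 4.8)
The plan is to prove uniqueness, since existence is built into the definition~\eqref{HSZUONDCCONT-FF0iejdfujhnHSNDJNdik9238475tyhfj}. By linearity of the convolution, it suffices to show that if~$g\in L^p(\R^n)$ satisfies~${\mathcal{B}}^{(s)}*g=0$ (as an element of~$L^p(\R^n)$, hence a.e.), then~$g=0$ a.e. The natural tool is the Fourier Transform on tempered distributions: since~$g\in L^p(\R^n)$, it is a tempered distribution, and so is~${\mathcal{B}}^{(s)}*g$, with~${\mathcal{F}}({\mathcal{B}}^{(s)}*g)=\widehat{{\mathcal{B}}^{(s)}}\,\widehat g$.

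First I would recall from Lemma~\ref{PRODpoikjhr3-2:le-7} that~$\widehat{{\mathcal{B}}^{(s)}}(\xi)=(1+4\pi^2|\xi|^2)^{-s}$, which is a smooth, strictly positive function that, together with all its derivatives, has at most polynomial growth (in fact it decays); in particular multiplication by~$\widehat{{\mathcal{B}}^{(s)}}$ is a continuous operation on~$\mathcal{S}'(\R^n)$, and so is multiplication by its reciprocal~$(1+4\pi^2|\xi|^2)^{s}$, which is likewise smooth with polynomially bounded derivatives. Then from~${\mathcal{B}}^{(s)}*g=0$ we get~$\widehat{{\mathcal{B}}^{(s)}}\,\widehat g=0$ in~$\mathcal{S}'(\R^n)$, and multiplying by~$(1+4\pi^2|\xi|^2)^{s}$ yields~$\widehat g=0$ in~$\mathcal{S}'(\R^n)$. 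Applying the inverse Fourier Transform (a bijection on~$\mathcal{S}'(\R^n)$) gives~$g=0$ as a tempered distribution, hence as an~$L^p$ function.

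An alternative, slightly more hands-on route avoids invoking multiplier continuity on~$\mathcal{S}'$: test against~$\varphi\in C^\infty_c(\R^n)$ by writing, for~$f:=g$,
\[
\int_{\R^n} g(x)\,\psi(x)\,dx = 0 \qquad\text{for all }\psi:={\mathcal{B}}^{(s)}*\widetilde\varphi \text{ with }\widetilde\varphi\in\mathcal{S}(\R^n),
\]
using that convolution is associative and~${\mathcal{B}}^{(s)}*g=0$; then observe that as~$\widetilde\varphi$ ranges over the Schwartz space, $\psi$ ranges over a dense subset of~$L^{p'}(\R^n)$ (because~$\widehat\psi=(1+4\pi^2|\xi|^2)^{-s}\widehat{\widetilde\varphi}$ can be made to approximate any Schwartz function by choosing~$\widehat{\widetilde\varphi}=(1+4\pi^2|\xi|^2)^{s}\widehat\chi$ for~$\chi\in\mathcal{S}$), and conclude~$g=0$ in~$L^p(\R^n)$ by duality. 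I would present the first (Fourier-on-distributions) argument as the main line and mention this density variant only if the write-up needs to stay self-contained with respect to the distributional multiplier facts.

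The only genuinely delicate point is justifying that one may divide by~$\widehat{{\mathcal{B}}^{(s)}}$ in the space of tempered distributions: this requires knowing that~$1/\widehat{{\mathcal{B}}^{(s)}}(\xi)=(1+4\pi^2|\xi|^2)^{s}$ is a smooth function all of whose derivatives grow at most polynomially (so that it is a multiplier on~$\mathcal{S}(\R^n)$ and, by duality, on~$\mathcal{S}'(\R^n)$). For~$s>0$ this is a routine check — the derivatives of~$(1+4\pi^2|\xi|^2)^{s}$ are sums of terms~$(1+4\pi^2|\xi|^2)^{s-k}$ times polynomials — so the obstacle is merely bookkeeping rather than conceptual. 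Everything else (linearity, associativity of convolution with a Schwartz function, bijectivity of~${\mathcal{F}}$ on~$\mathcal{S}'$) is standard and can be cited from the distribution-theory appendix referenced in the paper.
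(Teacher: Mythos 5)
Your proof is correct and takes essentially the same approach as the paper: uniqueness is reduced to showing that ${\mathcal{B}}^{(s)}*f_3=0$ forces $f_3=0$, exploiting that $\widehat{\mathcal{B}}^{(s)}(\xi)=(1+4\pi^2|\xi|^2)^{-s}$ never vanishes and that its reciprocal has polynomially bounded derivatives. Indeed, your ``alternative'' route is precisely the paper's argument (for $\psi\in C^\infty_c(\R^n)$ one tests against $\varphi:={\mathcal{F}}^{-1}\big((1+4\pi^2|\xi|^2)^{s}\widehat\psi\big)$ to get $\int f_3\psi=0$), while your main line only rephrases this in the language of tempered distributions and smooth multipliers.
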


\begin{proof} The existence of~$f_u$ follows from the definition of~${\mathcal{L}}^p_{2s}(\R^n)$.
For its uniqueness, suppose that~${\mathcal{B}}^{(s)}*f_1={\mathcal{B}}^{(s)}*f_2$, for some~$f_1$,~$f_2\in L^p(\R^n)$.
Let~$f_3:=f_2-f_1$. Then, for all~$\varphi$ in the Schwartz space of smooth and rapidly decreasing functions, using the Plancherel Theorem and Lemma~\ref{PRODpoikjhr3-2:le-7} we see that
\begin{eqnarray*}
&&0=\int_{\R^n}{\mathcal{B}}^{(s)}*f_3(x)\,\varphi(x)\,dx=\int_{\R^n}{\mathcal{F}}\Big({\mathcal{B}}^{(s)}*f_3\Big)(\xi)\,\overline{\widehat\varphi(\xi)}\,d\xi\\&&\qquad=\int_{\R^n} \big(1+4\pi^2|\xi|^2\big)^{-s}\widehat f_3(\xi)\,\overline{\widehat\varphi(\xi)}\,d\xi
=\int_{\R^n} \widehat f_3(\xi)\,\overline{\big(1+4\pi^2|\xi|^2\big)^{-s}\widehat\varphi(\xi)}\,d\xi.
\end{eqnarray*}
Therefore, given~$\psi\in C^\infty_c(\R^n)$, we take~$\varphi:={\mathcal{F}}^{-1}\Big(\big(1+4\pi^2|\xi|^2\big)^{s}\widehat\psi \Big)$ and we conclude that
$$ 0=\int_{\R^n} \widehat f_3(\xi)\,\overline{\widehat\psi(\xi)}\,d\xi=\int_{\R^n} f_3(\xi)\,\psi(\xi)\,d\xi,$$
yielding that~$f_3$ vanishes identically and thus proving the desired uniqueness claim.
\end{proof}

\begin{corollary}\label{BANACHCOEL}
We have that
\begin{equation}\label{ERFGHJN6789-09tftd90u8yhgiug8erhISB} \|u\|_{ {\mathcal{L}}^p_{2s}(\R^n)}:=\| f_u\|_{L^p(\R^n)}\end{equation}
is a norm on~${\mathcal{L}}^p_{2s}(\R^n)$.

Endowed with this norm,~${\mathcal{L}}^p_{2s}(\R^n)$ is a Banach space.
\end{corollary}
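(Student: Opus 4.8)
The plan is to verify the norm axioms using the bijection $u \mapsto f_u$ established in Lemma~\ref{VEONORLP}, and then to transfer the completeness of $L^p(\R^n)$ through this bijection. First I would check that $\|\cdot\|_{{\mathcal{L}}^p_{2s}(\R^n)}$ is a norm. Homogeneity: given $\lambda \in \C$ (or $\R$) and $u \in {\mathcal{L}}^p_{2s}(\R^n)$, the function $\lambda u = {\mathcal{B}}^{(s)}*(\lambda f_u)$ lies in ${\mathcal{L}}^p_{2s}(\R^n)$, and by the uniqueness in Lemma~\ref{VEONORLP} we have $f_{\lambda u} = \lambda f_u$, so $\|\lambda u\|_{{\mathcal{L}}^p_{2s}(\R^n)} = \|\lambda f_u\|_{L^p(\R^n)} = |\lambda|\,\|u\|_{{\mathcal{L}}^p_{2s}(\R^n)}$. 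Triangle inequality: if $u, v \in {\mathcal{L}}^p_{2s}(\R^n)$, then $u+v = {\mathcal{B}}^{(s)}*(f_u + f_v)$, so again by uniqueness $f_{u+v} = f_u + f_v$, and hence $\|u+v\|_{{\mathcal{L}}^p_{2s}(\R^n)} = \|f_u+f_v\|_{L^p(\R^n)} \le \|f_u\|_{L^p(\R^n)} + \|f_v\|_{L^p(\R^n)} = \|u\|_{{\mathcal{L}}^p_{2s}(\R^n)} + \|v\|_{{\mathcal{L}}^p_{2s}(\R^n)}$. Positive definiteness: if $\|u\|_{{\mathcal{L}}^p_{2s}(\R^n)} = 0$ then $f_u = 0$ in $L^p(\R^n)$, whence $u = {\mathcal{B}}^{(s)}*f_u = 0$; conversely $u = 0$ forces $f_u = 0$ by the uniqueness statement. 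This shows $\|\cdot\|_{{\mathcal{L}}^p_{2s}(\R^n)}$ is genuinely a norm.

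For completeness, I would take a Cauchy sequence $\{u_k\}_{k\in\N}$ in ${\mathcal{L}}^p_{2s}(\R^n)$. By definition of the norm, $\{f_{u_k}\}_{k\in\N}$ is Cauchy in $L^p(\R^n)$, which is complete, so there exists $f \in L^p(\R^n)$ with $f_{u_k} \to f$ in $L^p(\R^n)$. Set $u := {\mathcal{B}}^{(s)}*f$; then $u \in {\mathcal{L}}^p_{2s}(\R^n)$ by construction, and $f_u = f$ by uniqueness. It remains to see that $u_k \to u$ in ${\mathcal{L}}^p_{2s}(\R^n)$, which is immediate since $\|u_k - u\|_{{\mathcal{L}}^p_{2s}(\R^n)} = \|f_{u_k} - f_u\|_{L^p(\R^n)} = \|f_{u_k} - f\|_{L^p(\R^n)} \to 0$ (using that $f_{u_k - u} = f_{u_k} - f_u$, again by uniqueness). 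Hence every Cauchy sequence converges and ${\mathcal{L}}^p_{2s}(\R^n)$ is a Banach space.

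I do not anticipate a serious obstacle here: the whole argument is a transport of structure along the linear bijection $f_u \leftrightarrow u$, and the only nontrivial input is the well-definedness and uniqueness of $f_u$, which is exactly the content of Lemma~\ref{VEONORLP} (relying on injectivity of convolution with ${\mathcal{B}}^{(s)}$, equivalently on $\widehat{{\mathcal{B}}}^{(s)}(\xi) = (1+4\pi^2|\xi|^2)^{-s}$ never vanishing, from Lemma~\ref{PRODpoikjhr3-2:le-7}). The one point requiring a line of care is the compatibility of the correspondence with the vector-space operations — i.e., that $f_{u+v} = f_u + f_v$ and $f_{\lambda u} = \lambda f_u$ — but this follows at once from the bilinearity of convolution together with the uniqueness clause of Lemma~\ref{VEONORLP}, so it is genuinely routine. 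Should one wish to include the case $s=0$, the statement holds trivially since then ${\mathcal{L}}^p_{2s}(\R^n) = L^p(\R^n)$ with its usual norm, by the convention fixed after~\eqref{HSZUONDCCONT-FF0iejdfujhnHSNDJNdik9238475tyhfj}.
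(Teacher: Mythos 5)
Your proposal is correct and follows essentially the same route as the paper: verify the norm axioms via the uniqueness statement of Lemma~\ref{VEONORLP} (the paper dismisses homogeneity and the triangle inequality as straightforward, exactly the linearity of $u\mapsto f_u$ you spell out), and prove completeness by transferring a Cauchy sequence to $L^p(\R^n)$, taking the limit $f$ there, setting $u:={\mathcal{B}}^{(s)}*f$, and identifying $f_u=f$ by uniqueness. No gaps; the argument matches the paper's proof.
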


\begin{proof} The fact that~$\|\cdot\|_{ {\mathcal{L}}^p_{2s}(\R^n)}$ is properly defined
follows from Lemma~\ref{VEONORLP}. Also, the triangle and homogeneity properties of~$\|\cdot\|_{ {\mathcal{L}}^p_{2s}(\R^n)}$ are straightforward.

Suppose now that~$\|u\|_{ {\mathcal{L}}^p_{2s}(\R^n)}=0$. Then,~$\| f_u\|_{L^p(\R^n)}=0$, whence~$f_u$ vanishes identically and thus~$u={\mathcal{B}}^{(s)}*f_u$ vanishes identically.

These observations show that~$\|\cdot\|_{ {\mathcal{L}}^p_{2s}(\R^n)}$ is a norm.

Let us now check that~${\mathcal{L}}^p_{2s}(\R^n)$ is complete.
For this, let us consider a Cauchy sequence~$\{u_j\}_{j\in\N}$ of functions in~${\mathcal{L}}^p_{2s}(\R^n)$ and let~$f_j:=f_{ u_j}$. We observe that~$f_j$ is a Cauchy sequence in~$L^p(\R^n)$, therefore there exists~$f\in L^p(\R^n)$ such that~$f_j\to f$ in~$L^p(\R^n)$.

We let~$u:={\mathcal{B}}^{(s)}*f$. Then, by construction,~$u\in{\mathcal{L}}^p_{2s}(\R^n)$. Moreover, by Lemma~\ref{VEONORLP}, we know that~$f=f_u$. Accordingly,
$$ \|u-u_j\|_{ {\mathcal{L}}^p_{2s}(\R^n)}=\| f_u-f_j\|_{L^p(\R^n)}
=\|f-f_j\|_{ L^p(\R^n)},$$
which is infinitesimal, thus showing the completeness of~${\mathcal{L}}^p_{2s}(\R^n)$.
\end{proof}

\begin{corollary}\label{09ijnuiathcaRn}
If~$\sigma\ge s\ge0$ and~$u\in{\mathcal{L}}^p_{2\sigma}(\R^n)$, then~$u\in{\mathcal{L}}^p_{2s}(\R^n)$ and
$$ \|u\|_{{\mathcal{L}}^p_{2s}(\R^n)}\le\|u\|_{{\mathcal{L}}^p_{2\sigma}(\R^n)}.$$
\end{corollary}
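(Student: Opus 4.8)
The plan is to reduce the inequality $\|u\|_{{\mathcal{L}}^p_{2s}(\R^n)}\le\|u\|_{{\mathcal{L}}^p_{2\sigma}(\R^n)}$ to the semigroup property of Bessel kernels (Lemma~\ref{L912-xpsmv}) together with the mass-one property (Lemma~\ref{L419}) and Young's Convolution Inequality. First I would dispose of the degenerate cases: if $s=0$ then ${\mathcal{L}}^p_{2s}(\R^n)=L^p(\R^n)$ by definition, and the statement reduces to $\|u\|_{L^p(\R^n)}\le\|u\|_{{\mathcal{L}}^p_{2\sigma}(\R^n)}$, which is exactly Corollary~\ref{CO914athcal} applied to $u={\mathcal{B}}^{(\sigma)}*f_u$; if $s=\sigma$ there is nothing to prove. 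So assume $\sigma>s>0$.

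The heart of the argument: let $u\in{\mathcal{L}}^p_{2\sigma}(\R^n)$, so by Lemma~\ref{VEONORLP} there is a unique $f_u\in L^p(\R^n)$ with $u={\mathcal{B}}^{(\sigma)}*f_u$ and $\|u\|_{{\mathcal{L}}^p_{2\sigma}(\R^n)}=\|f_u\|_{L^p(\R^n)}$. Write $\sigma=s+(\sigma-s)$ with $\sigma-s>0$. By Lemma~\ref{L912-xpsmv},
\[
{\mathcal{B}}^{(\sigma)}={\mathcal{B}}^{(s)}*{\mathcal{B}}^{(\sigma-s)}
\]
in the sense of distributions, hence
\[
u={\mathcal{B}}^{(\sigma)}*f_u={\mathcal{B}}^{(s)}*\big({\mathcal{B}}^{(\sigma-s)}*f_u\big).
\]
Set $g:={\mathcal{B}}^{(\sigma-s)}*f_u$. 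Since ${\mathcal{B}}^{(\sigma-s)}\in L^1(\R^n)$ with $\int_{\R^n}{\mathcal{B}}^{(\sigma-s)}=1$ (Lemma~\ref{L419}), Young's Convolution Inequality (equivalently Corollary~\ref{CO914athcal}) gives $g\in L^p(\R^n)$ with $\|g\|_{L^p(\R^n)}\le\|f_u\|_{L^p(\R^n)}$. Thus $u={\mathcal{B}}^{(s)}*g$ with $g\in L^p(\R^n)$, which by definition means $u\in{\mathcal{L}}^p_{2s}(\R^n)$. Finally, by the characterization of the norm (Lemma~\ref{VEONORLP} forces $g=f_u^{(s)}$, the representative for parameter $s$) and \eqref{ERFGHJN6789-09tftd90u8yhgiug8erhISB},
\[
\|u\|_{{\mathcal{L}}^p_{2s}(\R^n)}=\|g\|_{L^p(\R^n)}\le\|f_u\|_{L^p(\R^n)}=\|u\|_{{\mathcal{L}}^p_{2\sigma}(\R^n)},
\]
as desired.

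I do not expect a serious obstacle here; the only point requiring a little care is the identification of the $L^p$-representative of $u$ at level $s$ with the convolution $g$ just constructed, so that the norm $\|u\|_{{\mathcal{L}}^p_{2s}(\R^n)}$ is genuinely computed by $\|g\|_{L^p(\R^n)}$ rather than by some other function — this is precisely the uniqueness granted by Lemma~\ref{VEONORLP}. A secondary subtlety is making the convolution identity ${\mathcal{B}}^{(\sigma)}*f_u={\mathcal{B}}^{(s)}*({\mathcal{B}}^{(\sigma-s)}*f_u)$ rigorous for $f_u$ merely in $L^p$ (associativity of convolution): this is justified because both ${\mathcal{B}}^{(s)}$ and ${\mathcal{B}}^{(\sigma-s)}$ are in $L^1(\R^n)$, so Fubini applies after passing through Lemma~\ref{L912-xpsmv}, or alternatively one first verifies the identity on Schwartz functions and then extends by density using Corollary~\ref{CO914athcal}.
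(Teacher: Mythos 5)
Your proof is correct and follows essentially the same route as the paper's: factor ${\mathcal{B}}^{(\sigma)}={\mathcal{B}}^{(s)}*{\mathcal{B}}^{(\sigma-s)}$ via Lemma~\ref{L912-xpsmv}, set $g:={\mathcal{B}}^{(\sigma-s)}*f_u$, and bound $\|g\|_{L^p(\R^n)}\le\|f_u\|_{L^p(\R^n)}$ by Corollary~\ref{CO914athcal}. The extra remarks on the degenerate cases and on the uniqueness of the representative (Lemma~\ref{VEONORLP}) are fine but not needed beyond what the paper does.
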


\begin{proof} Let~$f\in L^p(\R^n)$ be such that~$u={\mathcal{B}}^{(\sigma)}*f$. By Lemma~\ref{L912-xpsmv} we know that~$u=
{\mathcal{B}}^{(s)}*{\mathcal{B}}^{(\sigma-s)}*f={\mathcal{B}}^{(s)}*g$, with~$g:={\mathcal{B}}^{(\sigma-s)}*f$.
In light of Corollary~\ref{CO914athcal} we also know that
$$\|g\|_{L^p(\R^n)}=\|{\mathcal{B}}^{(\sigma-s)}*f\|_{L^p(\R^n)}\le\|f\|_{L^p(\R^n)}$$
and therefore~$g\in L^p(\R^n)$.

These observations show that~$u\in{\mathcal{L}}^p_{2s}(\R^n)$ and that
\begin{equation*}\|u\|_{{\mathcal{L}}^p_{2s}(\R^n)}=\|g\|_{L^p(\R^n)}\le\|f\|_{L^p(\R^n)}=\|u\|_{{\mathcal{L}}^p_{2\sigma}(\R^n)}.\qedhere
\end{equation*}
\end{proof}

\begin{corollary}\label{XL912-xpsmv-qw-map}
Let~$\sigma\ge s\ge0$. Then, the map
\begin{equation}\label{XL912-xpsmv-qw} u \longmapsto{\mathcal{B}}^{(\sigma-s)}*u \end{equation}
is an isomorphism from~${\mathcal{L}}^p_{2s}(\R^n)$ to~${\mathcal{L}}^p_{2\sigma}(\R^n)$.
\end{corollary}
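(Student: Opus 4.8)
The plan is to show that the map in~\eqref{XL912-xpsmv-qw} is a linear bijection between the two Bessel potential spaces which is an isometry, so that in particular it is bounded with bounded inverse, hence an isomorphism. Linearity is immediate from the linearity of convolution, so the real work is to exhibit the inverse map and to identify the norms. The natural candidate for the inverse is convolution against~${\mathcal{B}}^{(\sigma-s)}$ going the other way is not available directly (there is no ``negative Bessel kernel'' among the objects defined so far), so instead I would describe the inverse at the level of the density functions~$f_u$ provided by Lemma~\ref{VEONORLP}.

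First I would unravel the definitions. If~$u\in{\mathcal{L}}^p_{2s}(\R^n)$, then by Lemma~\ref{VEONORLP} there is a unique~$f_u\in L^p(\R^n)$ with~$u={\mathcal{B}}^{(s)}*f_u$, and by definition~$\|u\|_{{\mathcal{L}}^p_{2s}(\R^n)}=\|f_u\|_{L^p(\R^n)}$. Set~$v:={\mathcal{B}}^{(\sigma-s)}*u$. Using the semigroup property of Lemma~\ref{L912-xpsmv} (valid also when one parameter is~$0$, by the conventions adopted right after that lemma), we get
\[
v={\mathcal{B}}^{(\sigma-s)}*{\mathcal{B}}^{(s)}*f_u={\mathcal{B}}^{(\sigma)}*f_u,
\]
so~$v$ is of the form~${\mathcal{B}}^{(\sigma)}*(\text{something in }L^p)$, i.e.~$v\in{\mathcal{L}}^p_{2\sigma}(\R^n)$, and moreover~$f_v=f_u$ by the uniqueness in Lemma~\ref{VEONORLP}. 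Hence
\[
\|v\|_{{\mathcal{L}}^p_{2\sigma}(\R^n)}=\|f_v\|_{L^p(\R^n)}=\|f_u\|_{L^p(\R^n)}=\|u\|_{{\mathcal{L}}^p_{2s}(\R^n)},
\]
which shows the map is a well-defined linear isometry into~${\mathcal{L}}^p_{2\sigma}(\R^n)$. Note one should also check~$v\in L^p(\R^n)$ so that it genuinely lies in the space as defined in footnote~\ref{VEONORL123r3FVSjjMDMFP}; this follows from Corollary~\ref{CO914athcal}, since~$\|v\|_{L^p(\R^n)}=\|{\mathcal{B}}^{(\sigma)}*f_u\|_{L^p(\R^n)}\le\|f_u\|_{L^p(\R^n)}$.

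For surjectivity, given~$w\in{\mathcal{L}}^p_{2\sigma}(\R^n)$ take its density~$f_w\in L^p(\R^n)$ with~$w={\mathcal{B}}^{(\sigma)}*f_w$, and set~$u:={\mathcal{B}}^{(s)}*f_w\in{\mathcal{L}}^p_{2s}(\R^n)$ (again in~$L^p(\R^n)$ by Corollary~\ref{CO914athcal}, or trivially when~$s=0$). Then~${\mathcal{B}}^{(\sigma-s)}*u={\mathcal{B}}^{(\sigma-s)}*{\mathcal{B}}^{(s)}*f_w={\mathcal{B}}^{(\sigma)}*f_w=w$ by Lemma~\ref{L912-xpsmv}, so the map hits~$w$. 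Combined with the isometry property (which gives injectivity and boundedness of the inverse), this proves the map is an isomorphism. The only mildly delicate points are the bookkeeping of the edge case~$\sigma=s$ or~$s=0$, where one invokes the convention that~${\mathcal{B}}^{(0)}$ is the identity and~${\mathcal{L}}^p_0(\R^n)=L^p(\R^n)$, and the repeated but routine appeal to associativity of convolution with~$L^1$ kernels to justify rearranging~${\mathcal{B}}^{(\sigma-s)}*{\mathcal{B}}^{(s)}*f$; neither is a real obstacle, so I expect the proof to be short, with the main conceptual step being the observation that the density function is preserved under the map, which is exactly what makes it an isometry.
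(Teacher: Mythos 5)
Your proof is correct and follows essentially the same route as the paper: the semigroup property of Lemma~\ref{L912-xpsmv} shows that the map sends ${\mathcal{B}}^{(s)}*f$ to ${\mathcal{B}}^{(\sigma)}*f$, giving the norm identity (hence injectivity and boundedness both ways), and surjectivity is obtained by the same construction $u:={\mathcal{B}}^{(s)}*g$ for $v={\mathcal{B}}^{(\sigma)}*g$. The extra checks you include (membership in $L^p(\R^n)$ via Corollary~\ref{CO914athcal} and uniqueness of the density via Lemma~\ref{VEONORLP}) are fine but not a departure from the paper's argument.
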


\begin{proof} Firstly, let us observe that
\begin{equation}\label{L912-xpsmv-qw}
{\mbox{if~$u\in{\mathcal{L}}^p_{2s}(\R^n)$ then~${\mathcal{B}}^{(\sigma-s)}*u \in{\mathcal{L}}^p_{2\sigma}(\R^n)$.}}\end{equation}
Indeed, let~$f\in L^p(\R^n)$ such that~$u={\mathcal{B}}^{(s)}*f~$.
Then, by Lemma~\ref{L912-xpsmv},~${\mathcal{B}}^{(\sigma-s)}*u={\mathcal{B}}^{(\sigma-s)}*
{\mathcal{B}}^{(s)}*f={\mathcal{B}}^{(\sigma)}*f$, proving~\eqref{L912-xpsmv-qw}.

This argument also shows that
$$ \|{\mathcal{B}}^{(\sigma-s)}*u\|_{ {\mathcal{L}}^p_{2\sigma}(\R^n)}=\| f\|_{L^p(\R^n)}=
\|u\|_{ {\mathcal{L}}^p_{2s}(\R^n)}.$$
The injectivity of the map in~\eqref{XL912-xpsmv-qw} is also guaranteed by this identity.

As for the surjectivity, given any~$v\in{\mathcal{L}}^p_{2\sigma}(\R^n)$, let~$g\in L^p(\R^n)$ be such that~$v={\mathcal{B}}^{(\sigma)}*g$. One defines~$u:={\mathcal{B}}^{(s)}*g$. In this way,~$u\in{\mathcal{L}}^p_{2s}(\R^n)$ and, by Lemma~\ref{L912-xpsmv},~$${\mathcal{B}}^{(\sigma-s)}*u={\mathcal{B}}^{(\sigma-s)}*{\mathcal{B}}^{(s)}*g=
{\mathcal{B}}^{(\sigma)}*g=v,$$
which establishes that the map in~\eqref{XL912-xpsmv-qw} is surjective.
\end{proof}

The main result of this section consists in relating Bessel potential spaces and Sobolev spaces for half-integer exponents.
This will be accomplished in Theorem~\ref{TH0-104-okn5KMD3} below. To this end, one needs an intermediate result
relating Bessel potential spaces and derivatives:

\begin{lemma}\label{HSZUONDCCONT-FF-GMMA1-2-LE}
Let~$s\ge\frac12$ and~$p\in(1,+\infty)$. Then, the following conditions are equivalent:
\begin{itemize}
\item~$u\in{\mathcal{L}}^p_{2s}(\R^n)$,
\item~$u$,~$\partial_1u$,~$\dots$,~$\partial_nu\in{\mathcal{L}}^p_{2s-1}(\R^n)$.
\end{itemize}

Moreover, the norm
\begin{equation} \label{ERFGHJN6789-09tftd90u8yhgiug8erhISB-eq}
\|u\|_{{\mathcal{L}}^p_{2s-1}(\R^n)}+\sum_{j=1}^n\|\partial_ju\|_{{\mathcal{L}}^p_{2s-1}(\R^n)}\end{equation}
is equivalent to the one in~\eqref{ERFGHJN6789-09tftd90u8yhgiug8erhISB}.
\end{lemma}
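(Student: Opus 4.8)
The strategy is to establish the two implications separately, using Corollaries~\ref{PRODpoikjhr3-2:le-7-00-corol} and~\ref{COK90FR:918} as the workhorses, together with the isomorphism property of convolution with Bessel kernels from Corollary~\ref{XL912-xpsmv-qw-map}. The key observation is that by that isomorphism, a function lies in~${\mathcal{L}}^p_{2s}(\R^n)$ (for~$s\ge1/2$) if and only if its convolution with~${\mathcal{B}}^{(s-1/2)}$ reduces matters to the case~$s=1/2$, i.e.\ to membership in~${\mathcal{L}}^p_1(\R^n)$; and by Corollaries~\ref{PRODpoikjhr3-2:le-7-00-corol} and~\ref{COK90FR:918}, membership in~${\mathcal{L}}^p_1(\R^n)$ is essentially equivalent to membership in~$W^{1,p}(\R^n)$, with comparable norms. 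So the whole argument can be bootstrapped to the half-integer-step case~$s-1/2$.

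First I would prove the implication~$u\in{\mathcal{L}}^p_{2s}(\R^n)\Rightarrow u,\partial_1 u,\dots,\partial_n u\in{\mathcal{L}}^p_{2s-1}(\R^n)$. Write~$u={\mathcal{B}}^{(s)}*f$ with~$f\in L^p(\R^n)$. By Corollary~\ref{09ijnuiathcaRn} (or directly by the semigroup property, Lemma~\ref{L912-xpsmv}, writing~${\mathcal{B}}^{(s)}={\mathcal{B}}^{(s-1/2)}*{\mathcal{B}}^{(1/2)}$ and using Corollary~\ref{CO914athcal}), we get~$u\in{\mathcal{L}}^p_{2s-1}(\R^n)$ with~$\|u\|_{{\mathcal{L}}^p_{2s-1}(\R^n)}\le\|u\|_{{\mathcal{L}}^p_{2s}(\R^n)}$. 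For the derivatives, apply Corollary~\ref{PRODpoikjhr3-2:le-7-00-corol}: it gives exactly~$\partial_j u={\mathcal{B}}^{(s-1/2)}*f^{(j)}$ with~$f^{(j)}$ defined by~\eqref{PRODpoikjhr3-2:le-7-00} and~$\|f^{(j)}\|_{L^p(\R^n)}\le C\|f\|_{L^p(\R^n)}$ by~\eqref{SAMIHOLMS:kmsdf2}. Hence~$\partial_j u\in{\mathcal{L}}^p_{2(s-1/2)}(\R^n)={\mathcal{L}}^p_{2s-1}(\R^n)$ with~$\|\partial_j u\|_{{\mathcal{L}}^p_{2s-1}(\R^n)}=\|f^{(j)}\|_{L^p(\R^n)}\le C\|f\|_{L^p(\R^n)}=C\|u\|_{{\mathcal{L}}^p_{2s}(\R^n)}$. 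Summing over~$j$ bounds the norm in~\eqref{ERFGHJN6789-09tftd90u8yhgiug8erhISB-eq} by a constant times~$\|u\|_{{\mathcal{L}}^p_{2s}(\R^n)}$.

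For the converse, suppose~$u,\partial_1 u,\dots,\partial_n u\in{\mathcal{L}}^p_{2s-1}(\R^n)$. Using the isomorphism of Corollary~\ref{XL912-xpsmv-qw-map} with exponents~$s-1/2$ and (when~$s\ge1$) reducing to the model step, the cleanest route is: let~$v:={\mathcal{B}}^{(-(s-1/2))}*u$ formally — more precisely, since~$u\in{\mathcal{L}}^p_{2s-1}(\R^n)$ write~$u={\mathcal{B}}^{(s-1/2)}*g$ with~$g\in L^p(\R^n)$, and likewise~$\partial_j u={\mathcal{B}}^{(s-1/2)}*g^{(j)}$ with~$g^{(j)}\in L^p(\R^n)$. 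Since convolution with~${\mathcal{B}}^{(s-1/2)}$ commutes with~$\partial_j$, we get~${\mathcal{B}}^{(s-1/2)}*(\partial_j g)={\mathcal{B}}^{(s-1/2)}*g^{(j)}$ in the distributional sense, hence~$\partial_j g=g^{(j)}\in L^p(\R^n)$ by the injectivity part of Lemma~\ref{VEONORLP}. Therefore~$g\in W^{1,p}(\R^n)$ with~$\|g\|_{W^{1,p}(\R^n)}$ controlled by~$\|u\|_{{\mathcal{L}}^p_{2s-1}(\R^n)}+\sum_j\|\partial_j u\|_{{\mathcal{L}}^p_{2s-1}(\R^n)}$. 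Now Corollary~\ref{COK90FR:918} applies to~$g$: there is~$h\in L^p(\R^n)$ with~$g={\mathcal{B}}^{(1/2)}*h$ and~$\|h\|_{L^p(\R^n)}\le C\|g\|_{W^{1,p}(\R^n)}$. Then~$u={\mathcal{B}}^{(s-1/2)}*g={\mathcal{B}}^{(s-1/2)}*{\mathcal{B}}^{(1/2)}*h={\mathcal{B}}^{(s)}*h$ by Lemma~\ref{L912-xpsmv}, so~$u\in{\mathcal{L}}^p_{2s}(\R^n)$ with~$\|u\|_{{\mathcal{L}}^p_{2s}(\R^n)}=\|h\|_{L^p(\R^n)}\le C\big(\|u\|_{{\mathcal{L}}^p_{2s-1}(\R^n)}+\sum_j\|\partial_j u\|_{{\mathcal{L}}^p_{2s-1}(\R^n)}\big)$. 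Combining the two inequalities gives the norm equivalence.

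\textbf{Main obstacle.} The delicate point is the justification, in the converse direction, that~$\partial_j g = g^{(j)}$ genuinely as~$L^p$ functions: one must argue that the distributional identity~${\mathcal{B}}^{(s-1/2)}*\partial_j g = {\mathcal{B}}^{(s-1/2)}*g^{(j)}$ — obtained by differentiating~$u={\mathcal{B}}^{(s-1/2)}*g$ and comparing with~$\partial_j u={\mathcal{B}}^{(s-1/2)}*g^{(j)}$ — forces~$\partial_j g=g^{(j)}$, which is exactly where the uniqueness statement of Lemma~\ref{VEONORLP} is invoked; care is needed to check that~$\partial_j g$ a priori makes sense and that the cancellation argument (testing against Schwartz functions and using that~${\mathcal{B}}^{(s-1/2)}$ has a nonvanishing Fourier symbol) is legitimate. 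Everything else is a bookkeeping of norm inequalities through the cited corollaries.
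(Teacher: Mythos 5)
Your proposal is correct and follows essentially the same route as the paper: the forward implication via Corollary~\ref{PRODpoikjhr3-2:le-7-00-corol} together with Corollary~\ref{09ijnuiathcaRn}, and the converse by writing $u={\mathcal{B}}^{(s-1/2)}*g$, $\partial_ju={\mathcal{B}}^{(s-1/2)}*g^{(j)}$, identifying $\partial_j g=g^{(j)}$ through the test-function argument with the nonvanishing Bessel symbol (precisely the fix you flag as the main obstacle, which is what the paper does, since Lemma~\ref{VEONORLP} alone does not apply to the distribution $\partial_jg$), and then upgrading $g\in W^{1,p}(\R^n)$ to $u={\mathcal{B}}^{(s)}*h$ via Lemma~\ref{L912-xpsmv}. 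The only difference is cosmetic: you apply Corollary~\ref{COK90FR:918} directly to $g$, whereas the paper re-runs an approximation by smooth compactly supported $g_\ell$ and a Cauchy-sequence argument in ${\mathcal{L}}^p_{2s}(\R^n)$; your shortcut is legitimate and slightly cleaner.
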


\begin{proof} Assume that~$u\in{\mathcal{L}}^p_{2s}(\R^n)$. Let~$f\in L^p(\R^n)$ be such that~$u={\mathcal{B}}^{(s)}*f$.
Then, by Corollary~\ref{09ijnuiathcaRn},
$u\in{\mathcal{L}}^p_{2s-1}(\R^n)$ and~$ \|u\|_{{\mathcal{L}}^p_{2s-1}(\R^n)}\le\|u\|_{{\mathcal{L}}^p_{2s}(\R^n)}$.

Additionally, by~\eqref{PRODpoikjhr3-2:le-7-00-FA} and~\eqref{PRODpoikjhr3-2:le-7-00},
$$ \partial_ju ={\mathcal{B}}^{(s-1/2)}*f^{(j)},\qquad{\mbox{ where }}\qquad f^{(j)}:=
{\mathcal{F}}^{-1}\left( \frac{2\pi i\xi_j}{\sqrt{ 1+4\pi^2|\xi|^2}}\,\widehat f(\xi)\right).$$
As a result, using~\eqref{SAMIHOLMS:kmsdf2} and~\eqref{ERFGHJN6789-09tftd90u8yhgiug8erhISB},
\begin{eqnarray*}&&\|
\partial_ju \|_{{\mathcal{L}}^p_{2s-1}(\R^n)}=
\|f^{(j)}\|_{L^p(\R^n)} \le C\,\|f\|_{L^p(\R^n)}=C\,
\|u \|_{{\mathcal{L}}^p_{2s}(\R^n)},\end{eqnarray*}
for some~$C>0$.

This shows that~$\partial_ju\in{\mathcal{L}}^p_{2s-1}(\R^n)$ for each~$j\in\{1,\dots,n\}$ and that the norm in~\eqref{ERFGHJN6789-09tftd90u8yhgiug8erhISB-eq} is bounded by the one in~\eqref{ERFGHJN6789-09tftd90u8yhgiug8erhISB}, up to a constant.

Now, the other way around, suppose instead that~$u$,~$\partial_1u$,~$\dots$,~$\partial_nu\in{\mathcal{L}}^p_{2s-1}(\R^n)$.
Let~$g\in L^p(\R^n)$ be such that
\begin{equation}\label{PRODpoikjhr3-2:le-7-00-c912ork-1}
u={\mathcal{B}}^{(s-1/2)}*g.\end{equation}
Moreover, we know that,
for each~$j\in\{1,\dots,n\}$,
\begin{equation}\label{ZXC-2u03r9ogu3ro656789rjUS} \partial_ju ={\mathcal{B}}^{(s-1/2)}*g^{(j)},\qquad{\mbox{with }} g^{(j)}\in L^p(\R^n).\end{equation}

Now we claim that, in the weak sense,
\begin{equation}\label{CO914athcal-e2p}
{\mbox{$\partial_jg=g^{(j)}\in L^p(\R^n)$.}}
\end{equation}
For this, we observe that, for every~$\phi$ in the Schwartz space of smooth and rapidly decreasing functions,
\begin{equation*}\begin{split}&
-\int_{\R^n}\partial_ju(x)\,\phi(x)\,dx=\int_{\R^n}u(x)\,\partial_j\phi(x)\,dx
=\int_{\R^n}({\mathcal{B}}^{(s-1/2)}*g)(x)\,\partial_j\phi(x)\,dx\\&\qquad
=\int_{\R^n}g(x)\,({\mathcal{B}}^{(s-1/2)}*\partial_j\phi)(x)\,dx=\int_{\R^n}g(x)\,\partial_j({\mathcal{B}}^{(s-1/2)}*\phi)(x)\,dx.\end{split}\end{equation*}
Combining this and~\eqref{ZXC-2u03r9ogu3ro656789rjUS} we see that
\begin{equation}\label{CO914athcal-e2pp}\begin{split}&
-\int_{\R^n}g^{(j)}(x)\,({\mathcal{B}}^{(s-1/2)}*\phi)(x)\,dx=
-\int_{\R^n}({\mathcal{B}}^{(s-1/2)}*g^{(j)})(x)\,\phi(x)\,dx\\&\qquad=-\int_{\R^n}\partial_ju(x)\,\phi(x)\,dx
=\int_{\R^n}g(x)\,\partial_j({\mathcal{B}}^{(s-1/2)}*\phi)(x)\,dx.\end{split}\end{equation}
Now, recalling Lemma~\ref{PRODpoikjhr3-2:le-7}, given~$\psi\in C^\infty_c(\R^n)$ we define
$$ \phi:={\mathcal{F}}^{-1}\left( \big(1+4\pi^2|\xi|^2\big)^{s-1/2}\,\widehat\psi\right) ,$$
which gives that
\begin{eqnarray*}
{\mathcal{B}}^{(s-1/2)}*\phi={\mathcal{F}}^{-1}\Big( \widehat{\mathcal{B}}^{(s-1/2)}\,\widehat\phi\Big)
={\mathcal{F}}^{-1}\Big( \big(1+4\pi^2|\xi|^2\big)^{1/2-s}\,\widehat\phi\Big)
={\mathcal{F}}^{-1}\big(\widehat\psi\big)=\psi.\end{eqnarray*}
This and~\eqref{CO914athcal-e2pp} yield that
$$ -\int_{\R^n}g^{(j)}(x)\,\psi(x)\,dx=\int_{\R^n}g(x)\,\partial_j\psi(x)\,dx,$$
whence~$\partial_jg=g^{(j)}$. This proves~\eqref{CO914athcal-e2p}, since~$g^{(j)}\in L^p(\R^n)$.

{F}rom~\eqref{CO914athcal-e2p} we infer that~$g\in W^{1,p}(\R^n)$ and therefore there exists a sequence of
smooth compactly supported functions~$g_\ell$ such that~$g_\ell\to g$ in~$W^{1,p}(\R^n)$ as~$\ell\to+\infty$.
We also let~$u_\ell:={\mathcal{B}}^{(s-1/2)}*g_\ell$.

In light of Lemma~\ref{PRODpoikjhr3-2:le-7} we define
$$h_\ell:={\mathcal{F}}^{-1}\Big(\sqrt{1+4\pi^2|\xi|^2}\,\widehat g_\ell\Big),$$
which entails that
$$ {\mathcal{B}}^{(1/2)}*h_\ell={\mathcal{F}}^{-1}\big(\widehat{\mathcal{B}}^{(1/2)}\,\widehat h_{\ell}\big)=
{\mathcal{F}}^{-1}\left(\frac{\widehat h_\ell}{\sqrt{1+4\pi^2|\xi|^2}}\right)=g_\ell.$$
Thus, by Lemma~\ref{L912-xpsmv},
\begin{equation*}u_\ell=
{\mathcal{B}}^{(s-1/2)}*g_\ell=
{\mathcal{B}}^{(s-1/2)}*{\mathcal{B}}^{(1/2)}*h_\ell=
{\mathcal{B}}^{(s)}*h_\ell.
\end{equation*}

For this reason, recalling~\eqref{ERFGHJN6789-09tftd90u8yhgiug8erhISB} and Corollary~\ref{COK90FR:918},
\begin{equation}\label{COK90FR:918aaoskvdb02A}
\|u_\ell\|_{ {\mathcal{L}}^p_{2s}(\R^n)}=\| h_\ell\|_{L^p(\R^n)}\le
C\, \|{\mathcal{B}}^{(1/2)}*h_\ell\|_{W^{1,p}(\R^n)}=C\, \|g_\ell\|_{W^{1,p}(\R^n)}.
\end{equation}
Similarly, for all~$\ell$,~$\ell'\in\N$,
\begin{eqnarray*}
&&\|u_\ell-u_{\ell'}\|_{ {\mathcal{L}}^p_{2s}(\R^n)}\le C\, \|g_\ell-g_{\ell'}\|_{W^{1,p}(\R^n)},
\end{eqnarray*}
giving that~$u_\ell$ is a Cauchy sequence in~${\mathcal{L}}^p_{2s}(\R^n)$.

Thus, by Corollary~\ref{BANACHCOEL}, we know that~$u_\ell$ converges to some~$u_\star$ in~${\mathcal{L}}^p_{2s}(\R^n)$ as~$\ell\to+\infty$. Accordingly, there exists~$h_\star\in L^p(\R^n)$ such that~$u_\star={\mathcal{B}}^{(s)}*h_\star$ and
$$ \lim_{\ell\to+\infty} \|h_\star-h_\ell\|_{L^p(\R^n)}=0.$$
Moreover, by Corollary~\ref{CO914athcal},
$$ \|u_\star-u_\ell\|_{L^p(\R^n)}
=\|{\mathcal{B}}^{(s)}*(h_\star-h_\ell)\|_{L^p(\R^n)}\le\|h_\star-h_\ell\|_{L^p(\R^n)}$$
and
$$ \|u-u_\ell\|_{L^p(\R^n)}
=\|{\mathcal{B}}^{(s-1/2)}*(g-g_\ell)\|_{L^p(\R^n)}\le\|g-g_\ell\|_{L^p(\R^n)}.$$
Therefore,
$$ \|u-u_\star\|_{L^p(\R^n)}\le
\lim_{\ell\to+\infty} \Big(\|u-u_\ell\|_{L^p(\R^n)}+\|u_\ell-u_\star\|_{L^p(\R^n)}\Big)
\le\lim_{\ell\to+\infty} \Big(\|g-g_\ell\|_{L^p(\R^n)}+\|h_\star-h_\ell\|_{L^p(\R^n)}\Big)=0,$$
from which we infer that~$u=u_\star\in {\mathcal{L}}^p_{2s}(\R^n)$.

Hence, to complete the proof of Lemma~\ref{HSZUONDCCONT-FF-GMMA1-2-LE}, it remains to check that
the norm in~\eqref{ERFGHJN6789-09tftd90u8yhgiug8erhISB} is bounded by the one in~\eqref{ERFGHJN6789-09tftd90u8yhgiug8erhISB-eq}, up to a constant.
To this end, we observe that
\begin{eqnarray*}&&
\|u\|_{ {\mathcal{L}}^p_{2s}(\R^n)}=\lim_{\ell\to+\infty}\|u_\ell\|_{ {\mathcal{L}}^p_{2s}(\R^n)}
\le C\,\lim_{\ell\to+\infty} \|g_\ell\|_{W^{1,p}(\R^n)}=C\, \|g\|_{W^{1,p}(\R^n)}\\&&\qquad\le
C\left(\|g\|_{L^p(\R^n)}+\sum_{j=1}^n\|\partial_jg\|_{L^p(\R^n)}\right)
\le C\left(\|u\|_{{\mathcal{L}}^p_{2s-1}(\R^n)}+\sum_{j=1}^n\|\partial_ju\|_{{\mathcal{L}}^p_{2s-1}(\R^n)}\right),
\end{eqnarray*}
thanks to~\eqref{PRODpoikjhr3-2:le-7-00-c912ork-1} and~\eqref{COK90FR:918aaoskvdb02A}, as desired.
\end{proof}

With this, we can prove the following\footnote{The functional setting of Sobolev spaces will be completed
by comparing Theorem~\ref{TH0-104-okn5KMD3} with the forthcoming Corollary~\ref{0pirj09365-4g4eZXCHJKLRFG-544-NO1-I2L3C2O25R213t4TY}, see footnote~\ref{sdcjnPwqdkjfcmvDCtVn3sd} on page~\pageref{sdcjnPwqdkjfcmvDCtVn3sd}.} structural result:

\begin{theorem}\label{TH0-104-okn5KMD3}
Let~$2s\in\N$ and~$p\in(1,+\infty)$. Then,~$ {\mathcal{L}}^p_{2s}(\R^n)=W^{2s,p}(\R^n)$ and the corresponding norms are equivalent.
\end{theorem}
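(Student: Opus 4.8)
The plan is to prove the identity $\mathcal{L}^p_{2s}(\R^n)=W^{2s,p}(\R^n)$ by induction on the integer $m:=2s\in\N$, using Lemma~\ref{HSZUONDCCONT-FF-GMMA1-2-LE} as the engine that steps the regularity parameter down by one each time. The base case is $m=0$, i.e.\ $s=0$, where both spaces are $L^p(\R^n)$ by definition (and by the convention recorded just before Lemma~\ref{VEONORLP}), so there is nothing to prove. For the case $m=1$, i.e.\ $s=\frac12$, one side is essentially Corollary~\ref{COK90FR:918}: if $u\in W^{1,p}(\R^n)$ then $u={\mathcal{B}}^{(1/2)}*f$ for some $f\in L^p(\R^n)$ with $\|f\|_{L^p}\le C\|u\|_{W^{1,p}}$, which says precisely $u\in\mathcal{L}^p_1(\R^n)$ with control of the norm. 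The reverse inclusion $\mathcal{L}^p_1(\R^n)\subseteq W^{1,p}(\R^n)$ follows from Lemma~\ref{HSZUONDCCONT-FF-GMMA1-2-LE} with $s=\frac12$: membership $u\in\mathcal{L}^p_1(\R^n)$ is equivalent to $u,\partial_1 u,\dots,\partial_n u\in\mathcal{L}^p_0(\R^n)=L^p(\R^n)$, which is exactly $u\in W^{1,p}(\R^n)$ with equivalent norms.

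The inductive step is the heart of the argument. Suppose the theorem holds for all half-integer exponents with $2s'\le m-1$, and let $2s=m\ge 2$, so in particular $s\ge 1\ge\frac12$. By Lemma~\ref{HSZUONDCCONT-FF-GMMA1-2-LE}, $u\in\mathcal{L}^p_{2s}(\R^n)$ if and only if $u,\partial_1 u,\dots,\partial_n u\in\mathcal{L}^p_{2s-1}(\R^n)$, with the norm in~\eqref{ERFGHJN6789-09tftd90u8yhgiug8erhISB} equivalent to the one in~\eqref{ERFGHJN6789-09tftd90u8yhgiug8erhISB-eq}. Since $2s-1=m-1\in\N$ and $m-1\le m-1$, the inductive hypothesis gives $\mathcal{L}^p_{2s-1}(\R^n)=W^{m-1,p}(\R^n)$ with equivalent norms. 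Therefore $u\in\mathcal{L}^p_{2s}(\R^n)$ if and only if $u,\partial_1 u,\dots,\partial_n u\in W^{m-1,p}(\R^n)$. The remaining purely Sobolev-space fact is that this last condition is equivalent to $u\in W^{m,p}(\R^n)$, with $\|u\|_{W^{m,p}(\R^n)}$ comparable to $\|u\|_{W^{m-1,p}(\R^n)}+\sum_{j=1}^n\|\partial_j u\|_{W^{m-1,p}(\R^n)}$; this is immediate from the definition of $W^{k,p}$ via weak derivatives in the footnote, since the multi-indices of length $\le m$ are exactly those of the form $\alpha$ with $|\alpha|\le m-1$ together with $\alpha=\beta+e_j$ for $|\beta|\le m-1$. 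Chaining the three equivalences (Lemma~\ref{HSZUONDCCONT-FF-GMMA1-2-LE}, the inductive hypothesis, and the Sobolev decomposition) yields $\mathcal{L}^p_{2s}(\R^n)=W^{m,p}(\R^n)$ with equivalent norms, completing the induction.

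The main obstacle is really bookkeeping rather than a deep difficulty: one must be careful that Lemma~\ref{HSZUONDCCONT-FF-GMMA1-2-LE} requires $s\ge\frac12$ and $p\in(1,+\infty)$, which is why the induction must start at $m=1$ (handled directly via Corollary~\ref{COK90FR:918} for one inclusion and Lemma~\ref{HSZUONDCCONT-FF-GMMA1-2-LE} for the other) and why the hypothesis $p\in(1,+\infty)$ is essential throughout. One also needs to track the equivalence of norms quantitatively through each step, but each step contributes only a multiplicative constant depending on $n$, $p$ and $s$, and there are finitely many steps ($m$ of them), so the final constants are harmless. A minor point to verify cleanly is that when $2s-1=0$, i.e.\ at the $s=\frac12$ level, the statement ``$u\in\mathcal{L}^p_0(\R^n)$'' should be read as ``$u\in L^p(\R^n)$'' per the convention, so that Lemma~\ref{HSZUONDCCONT-FF-GMMA1-2-LE} at $s=\frac12$ genuinely reduces to the definition of $W^{1,p}$; this is exactly the consistency remark made in the footnote on page~\pageref{234erV3E432ON24OR12t35uLP}. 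No new analytic input beyond the already-established Bessel-kernel lemmas and Calder\'on--Zygmund estimates is needed.
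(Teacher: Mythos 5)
Your proof is correct and follows essentially the same route as the paper: the base cases $s=0$ and $s=\tfrac12$, then induction on $2s$ via Lemma~\ref{HSZUONDCCONT-FF-GMMA1-2-LE} combined with the elementary decomposition of $W^{m,p}(\R^n)$ in terms of $u$ and its first derivatives in $W^{m-1,p}(\R^n)$. The only cosmetic difference is that at $s=\tfrac12$ you invoke Corollary~\ref{COK90FR:918} separately for one inclusion, whereas the equivalence stated in Lemma~\ref{HSZUONDCCONT-FF-GMMA1-2-LE} already yields both directions there, which is how the paper argues.
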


\begin{proof}
When~$s=0$, then both~${\mathcal{L}}^p_{2s}(\R^n)$ and~$W^{2s,p}(\R^n)$ reduce to~$L^p(\R^n)$
(recall footnote~\ref{234erV3E432ON24OR12t35uLP} on page~\pageref{234erV3E432ON24OR12t35uLP}), whence the desired result holds true.

When~$s=\frac12$, we have that~${\mathcal{L}}^p_{2s-1}(\R^n)$ reduces to~$L^p(\R^n)$ and accordingly the norm in~\eqref{ERFGHJN6789-09tftd90u8yhgiug8erhISB-eq} equivalently reduces to the norm in~$W^{1,p}(\R^n)$. Therefore,
in this case, the result follows from Lemma~\ref{HSZUONDCCONT-FF-GMMA1-2-LE}.

We now proceed by induction: we assume that the desired result is true when~$2s\in \N\cap[0,\dots,N-1]$ for some~$N\ge2$ and we prove it for~$2s=N$. To this end, we first use Lemma~\ref{HSZUONDCCONT-FF-GMMA1-2-LE} to deduce that, when~$2s=N$,
we have that~$u\in{\mathcal{L}}^p_{2s}(\R^n)$ if and only if~$u$,~$\partial_1u$,~$\dots$,~$\partial_nu\in{\mathcal{L}}^p_{N-1}(\R^n)$, with~$\|u\|_{{\mathcal{L}}^p_{2s}(\R^n)}$ being bounded from above and below by~$
\|u\|_{{\mathcal{L}}^p_{N-1}(\R^n)}+\sum_{j=1}^n\|\partial_ju\|_{{\mathcal{L}}^p_{N-1}(\R^n)}$, up to constants.

By inductive assumption, we know that~${\mathcal{L}}^p_{N-1}(\R^n)=W^{N-1,p}(\R^n)$
and the norm~$\|\cdot\|_{{\mathcal{L}}^p_{N-1}(\R^n)}$ is equivalent to~$\|\cdot\|_{W^{N-1,p}(\R^n)}$.

{F}rom these bits of information, we deduce that~$u\in{\mathcal{L}}^p_{2s}(\R^n)$ if and only if~$u$,~$\partial_1u$,~$\dots$,~$\partial_nu\in W^{N-1,p}(\R^n)$, with~$\|u\|_{{\mathcal{L}}^p_{2s}(\R^n)}$ being equivalent to~$
\|u\|_{W^{N-1,p}(\R^n)}+\sum_{j=1}^n\|\partial_ju\|_{W^{N-1,p}(\R^n)}$, which yields the desired result.
\end{proof}

We stress that Theorem~\ref{TH0-104-okn5KMD3} does not hold when~$p=1$ and when~$p=+\infty$,
but partial results hold true, see e.g. comment~6.6 on page~160 of~\cite{MR0290095} for these cases.

\section[Regularity theory in Bessel potential spaces 
and Sobolev spaces]{Regularity theory in Bessel potential spaces 
and in Sobolev spaces
for global solutions}\label{Ijsmiazme3sKJS03-krodgh}

The analysis of Bessel potential spaces is intimately related to the regularity theory of fractional operators, in view of the following
observations:

\begin{proposition}\label{BiknsoU98ikjmd9f4}
If~$s\ge0$,~$p\in[1,+\infty]$ and~$f\in L^p(\R^n)$, then~$u:={\mathcal{B}}^{(s)}*f$ is the unique distributional solution of~$(1-\Delta)^su=f$ in~$\R^n$.
\end{proposition}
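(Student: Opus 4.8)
The plan is to establish two things: first, that $u := {\mathcal{B}}^{(s)}*f$ is a distributional solution of $(1-\Delta)^s u = f$; second, that any two such distributional solutions coincide, which reduces (by linearity) to the statement that a distributional solution of $(1-\Delta)^s v = 0$ vanishes identically. The latter is precisely Theorem~\ref{BiknsoU98ikjmd9f3}, so the core of the work is the existence claim.

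For existence, I would first argue that $u = {\mathcal{B}}^{(s)}*f$ is well defined as a locally integrable function: since ${\mathcal{B}}^{(s)}\in L^1(\R^n)$ (Lemma~\ref{L419} gives unit mass, and the kernel is nonnegative, hence in $L^1$), Young's Convolution Inequality gives $u\in L^p(\R^n)$ with $\|u\|_{L^p(\R^n)}\le\|f\|_{L^p(\R^n)}$ (this is exactly Corollary~\ref{CO914athcal}), so in particular $u\in L^1_{\rm loc}(\R^n)$ and defines a distribution. Then, to check the equation in the distributional sense, I would test against an arbitrary $\varphi$ in the Schwartz space and compute
\begin{equation*}
\big\langle (1-\Delta)^s u,\varphi\big\rangle=\big\langle u,(1-\Delta)^s\varphi\big\rangle=\int_{\R^n}\big({\mathcal{B}}^{(s)}*f\big)(x)\,(1-\Delta)^s\varphi(x)\,dx.
\end{equation*}
Using Fubini (justified by ${\mathcal{B}}^{(s)}\in L^1$ and $f\in L^p$, with $(1-\Delta)^s\varphi$ rapidly decreasing) one moves the convolution onto the test function, obtaining $\int_{\R^n} f(x)\,\big({\mathcal{B}}^{(s)}*(1-\Delta)^s\varphi\big)(x)\,dx$; then the key point is that ${\mathcal{B}}^{(s)}*(1-\Delta)^s\varphi=\varphi$, which follows on the Fourier side from Lemma~\ref{PRODpoikjhr3-2:le-7}, since $\widehat{\mathcal{B}}^{(s)}(\xi)\,(1+4\pi^2|\xi|^2)^s\widehat\varphi(\xi)=\widehat\varphi(\xi)$ by \eqref{HSZUONDCCONT-FF}. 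Hence the pairing equals $\int_{\R^n}f\varphi$, i.e. $(1-\Delta)^s u=f$ in $\R^n$.

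For uniqueness, suppose $u_1$ and $u_2$ are distributional solutions with $u_i={\mathcal{B}}^{(s)}*f$ — more precisely, suppose $v$ is any distributional solution of $(1-\Delta)^s v=f$; then $v-u$ is a distributional solution of $(1-\Delta)^s(v-u)=0$, and it lies in $L^1_{\rm loc}(\R^n)$ provided $v$ does (if the ``solution'' is only assumed to be a distribution, one can still invoke Theorem~\ref{BiknsoU98ikjmd9f3} directly in the distributional category, whose proof only used the Schwartz-space duality and the multiplier structure). Applying Theorem~\ref{BiknsoU98ikjmd9f3} we get $v-u=0$, hence $v=u$.

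The main obstacle I anticipate is purely technical: justifying the interchange of integration (Fubini--Tonelli) and the transfer of the convolution onto the test function when $f$ is merely in $L^p$ for some $p\in[1,+\infty]$, including the endpoint $p=+\infty$ where one must be slightly careful (there ${\mathcal{B}}^{(s)}\in L^1$ still makes $u\in L^\infty$ well defined, and against a Schwartz test function everything is absolutely convergent). Beyond that, the identity ${\mathcal{B}}^{(s)}*(1-\Delta)^s\varphi=\varphi$ is a clean Fourier-multiplier computation using Lemma~\ref{PRODpoikjhr3-2:le-7}, and the uniqueness is an immediate citation of Theorem~\ref{BiknsoU98ikjmd9f3}, so no genuinely hard step remains.
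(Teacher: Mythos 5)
Your proposal is correct and follows essentially the same route as the paper: membership $u\in L^p(\R^n)\subseteq L^1_{\rm loc}(\R^n)$ via Corollary~\ref{CO914athcal}, the equation itself from the Fourier-multiplier definition of the Bessel potential (the paper simply says it ``follows from the definition below~\eqref{HSZUONDCCONT-FF}'', which is exactly your identity ${\mathcal{B}}^{(s)}*(1-\Delta)^s\varphi=\varphi$ spelled out by duality), and uniqueness by citing Theorem~\ref{BiknsoU98ikjmd9f3}. Your write-up merely fills in the Fubini and Schwartz-space details that the paper leaves implicit.
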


\begin{proof} In light of Corollary~\ref{CO914athcal} we note that~$u\in L^p(\R^n)\subseteq L^1_{\rm loc}(\R^n)$,
hence it defines a distribution. The fact that~$(1-\Delta)^su=f$ follows from the definition of Bessel potential below~\eqref{HSZUONDCCONT-FF}. The uniqueness of this solution is a consequence of Theorem~\ref{BiknsoU98ikjmd9f3}.
\end{proof}

\begin{theorem}\label{09ijnuiathcaRn-th}
Let~$s\ge0$,~$p\in(1,+\infty)$,~$f\in L^p(\R^n)$, and~$u$ be a distributional solution of~$(1-\Delta)^su=f$ in~$\R^n$.

Then,~$u\in {\mathcal{L}}^p_{2s}(\R^n)$ and
\begin{equation}\label{ERFGHJN6789-09tftd90u8yhgiug8erhISB-Xke} \|u\|_{ {\mathcal{L}}^p_{2s}(\R^n)}=\| f\|_{L^p(\R^n)}.\end{equation}
\end{theorem}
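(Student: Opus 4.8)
The plan is to show that any distributional solution $u$ of $(1-\Delta)^s u = f$ coincides with the Bessel potential $\mathcal{B}^{(s)}*f$, so that membership in $\mathcal{L}^p_{2s}(\R^n)$ and the norm identity both follow immediately from the definition of the space in~\eqref{HSZUONDCCONT-FF0iejdfujhnHSNDJNdik9238475tyhfj} together with Corollary~\ref{BANACHCOEL}. The uniqueness part is already available: by Proposition~\ref{BiknsoU98ikjmd9f4}, the function $v:=\mathcal{B}^{(s)}*f$ is \emph{the} unique distributional solution of $(1-\Delta)^s v = f$ in $\R^n$, and that proposition relies on the uniqueness statement in Theorem~\ref{BiknsoU98ikjmd9f3}. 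Since $u$ is by hypothesis a distributional solution of the same equation, we conclude $u = v = \mathcal{B}^{(s)}*f$.

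First I would spell out why $v=\mathcal{B}^{(s)}*f$ is a legitimate distribution: since $f\in L^p(\R^n)$ with $p\in(1,+\infty)$, Corollary~\ref{CO914athcal} gives $v\in L^p(\R^n)\subseteq L^1_{\rm loc}(\R^n)$, so $v$ defines a distribution. Next I would invoke Proposition~\ref{BiknsoU98ikjmd9f4} (with the range $p\in(1,+\infty)\subset[1,+\infty]$) to get that $v$ solves $(1-\Delta)^s v=f$ distributionally and is the \emph{only} such solution. Confronting this with the hypothesis on $u$ yields $u=v$ as distributions, hence as elements of $L^p(\R^n)$.

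Once $u = \mathcal{B}^{(s)}*f$ is established, the conclusion $u\in\mathcal{L}^p_{2s}(\R^n)$ is exactly the defining condition of that space, with $f$ playing the role of the representing function. Moreover, Lemma~\ref{VEONORLP} guarantees that the representing function is unique, so $f_u = f$, and therefore by~\eqref{ERFGHJN6789-09tftd90u8yhgiug8erhISB} we have $\|u\|_{\mathcal{L}^p_{2s}(\R^n)} = \|f_u\|_{L^p(\R^n)} = \|f\|_{L^p(\R^n)}$, which is~\eqref{ERFGHJN6789-09tftd90u8yhgiug8erhISB-Xke}. When $s=0$ one checks separately that everything degenerates correctly: $\mathcal{B}^{(0)}$ is the identity, $\mathcal{L}^p_0(\R^n)=L^p(\R^n)$ by the convention recalled in the text, $(1-\Delta)^0$ is the identity operator, so $u=f$ and the identity is trivial.

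The main obstacle, such as it is, is purely bookkeeping: making sure the uniqueness of distributional solutions really does apply here, i.e.\ that Theorem~\ref{BiknsoU98ikjmd9f3} covers the full range $s\ge0$ (it is stated for $s>0$, and the $s=0$ case is handled directly as above), and that the notion of ``distributional solution'' used in the statement matches the one used in Proposition~\ref{BiknsoU98ikjmd9f4}. No genuine analysis is needed beyond what has already been developed; the substance of the result is entirely contained in the Bessel kernel being the fundamental solution and in the uniqueness theorem for $(1-\Delta)^s$.
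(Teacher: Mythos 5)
Your proposal is correct and follows essentially the same route as the paper: invoke Proposition~\ref{BiknsoU98ikjmd9f4} to identify $u$ with ${\mathcal{B}}^{(s)}*f$, use Corollary~\ref{CO914athcal} to place $u$ in $L^p(\R^n)$, and then read off membership in ${\mathcal{L}}^p_{2s}(\R^n)$ and the norm identity from the definition in~\eqref{HSZUONDCCONT-FF0iejdfujhnHSNDJNdik9238475tyhfj} and~\eqref{ERFGHJN6789-09tftd90u8yhgiug8erhISB}. Your extra remarks on the $s=0$ case and on Lemma~\ref{VEONORLP} are harmless additions to the same argument.
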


\begin{proof} By Proposition~\ref{BiknsoU98ikjmd9f4}, we have that~$u={\mathcal{B}}^{(s)}*f$.
Also, on account of Corollary~\ref{CO914athcal}, we know that~$u\in L^p(\R^n)$, with~$\|u\|_{L^p(\R^n)}\le\|f\|_{L^p(\R^n)}$.
Consequently,~$u\in {\mathcal{L}}^p_{2s}(\R^n)$ and~\eqref{ERFGHJN6789-09tftd90u8yhgiug8erhISB-Xke} follows from~\eqref{ERFGHJN6789-09tftd90u8yhgiug8erhISB}.
\end{proof}

As a general comment, let us point out that, as a rule of thumb, there are two separate issues in connection with the regularity of solutions of an equation of the type~$Lu=f$:
\begin{eqnarray}&&\label{asd0scjovndskghjsdckvhrta9djcvmouosd-01} \nonumber
{\mbox{On the one side, to account for a variety of function spaces}}\\ &&{\mbox{and their interrelations (inclusion, interpolation, etc.),}}\\
\label{asd0scjovndskghjsdckvhrta9djcvmouosd-02}&&\nonumber{\mbox{On the other side, to get, as precisely as possible, the information}}
\\&&{\mbox{on where the solution~$u$ lies when~$f$ is in a given space.}} \end{eqnarray}
Ideally, the best information one can strive to get occurs when the information on~$u$ holds {\em if and only if} a certain information on~$f$ is known. This perfect equivalence is not always possible (for example,
for distributional solutions of~$\Delta u=f$, it is not true that the continuity of~$f$
is equivalent to the continuity of the second derivatives of~$u$, see e.g.~\cite[Section~4.1]{2021arXiv210107941D}).
For Bessel potential spaces however, the following complete characterization holds true:

\begin{corollary}
Let~$u$, $f\in L^1_{{\rm{loc}}}(\R^n)$ be such that~$(1-\Delta)^su=f$ in~$\R^n$, in the sense of distributions.

Then, $u\in{\mathcal{L}}^p_{2s}(\R^n)$ if and only if~$f\in L^p(\R^n)$.
\end{corollary}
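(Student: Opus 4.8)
The plan is to prove the two implications separately, using the results already established in this section. The forward direction is the routine one and the reverse direction is the one requiring a genuine argument.

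\textbf{The easy direction.} Suppose $f\in L^p(\R^n)$. Then by Proposition~\ref{BiknsoU98ikjmd9f4}, the function $v:={\mathcal{B}}^{(s)}*f$ is \emph{the} unique distributional solution of $(1-\Delta)^sv=f$ in $\R^n$; by construction $v\in{\mathcal{L}}^p_{2s}(\R^n)$, and moreover $v\in L^p(\R^n)\subseteq L^1_{\rm loc}(\R^n)$ by Corollary~\ref{CO914athcal}. Now $u$ is assumed to be a distributional solution of $(1-\Delta)^su=f$, hence $(1-\Delta)^s(u-v)=0$ in $\R^n$ in the sense of distributions, and $u-v\in L^1_{\rm loc}(\R^n)$. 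By Theorem~\ref{BiknsoU98ikjmd9f3}, $u-v$ vanishes identically, so $u=v\in{\mathcal{L}}^p_{2s}(\R^n)$. (Equivalently one can simply invoke Theorem~\ref{09ijnuiathcaRn-th}, which already packages exactly this implication together with the norm identity.)

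\textbf{The harder direction.} Suppose $u\in{\mathcal{L}}^p_{2s}(\R^n)$. By definition there is $g\in L^p(\R^n)$ with $u={\mathcal{B}}^{(s)}*g$. Then, by Proposition~\ref{BiknsoU98ikjmd9f4} applied to $g$, the function ${\mathcal{B}}^{(s)}*g$ is the unique distributional solution of $(1-\Delta)^s w=g$; that is, $(1-\Delta)^su=g$ in the sense of distributions. On the other hand, by hypothesis $(1-\Delta)^su=f$ in the sense of distributions. Since the distribution $(1-\Delta)^su$ is well-defined and single-valued, we conclude that $f=g$ as distributions, and since $g\in L^p(\R^n)$, this gives $f\in L^p(\R^n)$.

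\textbf{Where the subtlety lies.} The only point that needs care is the identification $f=g$: a priori $f$ is merely an $L^1_{\rm loc}$ function viewed as a distribution, and $g\in L^p(\R^n)$, and both are claimed to equal the distribution $(1-\Delta)^su$ tested against Schwartz functions. Concretely, for every $\varphi$ in the Schwartz space one has $\int_{\R^n} u\,(1-\Delta)^s\varphi\,dx=\int_{\R^n} f\,\varphi\,dx$ (the definition of distributional solution) and also $\int_{\R^n} u\,(1-\Delta)^s\varphi\,dx=\int_{\R^n} g\,\varphi\,dx$ (from $u={\mathcal{B}}^{(s)}*g$, moving the Bessel kernel onto $\varphi$ exactly as in the proof of Lemma~\ref{VEONORLP}). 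Hence $\int_{\R^n}(f-g)\varphi\,dx=0$ for all such $\varphi$, and since $f-g\in L^1_{\rm loc}(\R^n)$ this forces $f=g$ a.e.\ by the fundamental lemma of the calculus of variations. This is the main obstacle only in the bookkeeping sense — one must make sure the pairing $\langle u,(1-\Delta)^s\varphi\rangle$ makes sense and that $(1-\Delta)^s\varphi$ is an admissible test object, which is precisely what was done in Lemma~\ref{VEONORLP} and Proposition~\ref{BiknsoU98ikjmd9f4} — and once that is in place both implications close immediately.
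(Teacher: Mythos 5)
Your proof is correct and follows essentially the same route as the paper: the forward implication is exactly Theorem~\ref{09ijnuiathcaRn-th} (which you also note), and the reverse implication proceeds, as in the paper, by writing $u={\mathcal{B}}^{(s)}*g$ with $g\in L^p(\R^n)$ and invoking Proposition~\ref{BiknsoU98ikjmd9f4} to identify $f=g$. Your extra paragraph spelling out why $f=g$ a.e.\ is a harmless (and welcome) elaboration of a step the paper leaves implicit.
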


\begin{proof} On the one hand, if~$f\in L^p(\R^n)$, we have that~$u\in{\mathcal{L}}^p_{2s}(\R^n)$,
owing to Theorem~\ref{09ijnuiathcaRn-th}.

On the other hand, if~$u\in{\mathcal{L}}^p_{2s}(\R^n)$, the definition
of Bessel potential space in~\eqref{HSZUONDCCONT-FF0iejdfujhnHSNDJNdik9238475tyhfj}
guarantees the existence of a function~$g\in L^p(\R^n)$ such that~$u={\mathcal{B}}^{(s)}*g$.
Hence, by Proposition~\ref{BiknsoU98ikjmd9f4}, we have that~$(1-\Delta)^su=g$ in~$\R^n$, in the sense of distributions,
and this gives that~$f=g\in L^p(\R^n)$.
\end{proof}

The pieces of information in~\eqref{asd0scjovndskghjsdckvhrta9djcvmouosd-01} and~\eqref{asd0scjovndskghjsdckvhrta9djcvmouosd-02} can also fruitfully intertwine: for example,
after showing the best possible regularity result
according to~\eqref{asd0scjovndskghjsdckvhrta9djcvmouosd-02},
one can employ the knowledge of the functional spaces set up in~\eqref{asd0scjovndskghjsdckvhrta9djcvmouosd-01}
to draw consequences on the solvability and regularity in other spaces. In the case under consideration,
for example, one can combine the regularity theory
in Bessel potential spaces and the knowledge of how these spaces interplay with Sobolev spaces and obtain:

\begin{theorem}\label{TH0-104-okn5KMD3-09}
Let~$k\in\N$,~$s\ge \frac{k}2$,~$p\in(1,+\infty)$,~$f\in L^p(\R^n)$, and~$u$ be a distributional solution of~$(1-\Delta)^su=f$ in~$\R^n$.

Then,~$u\in W^{k,p}(\R^n)$ and
\begin{equation*} \|u\|_{ W^{k,p}(\R^n)}\le C\,\| f\|_{L^p(\R^n)}\end{equation*}
for some positive constant~$C$ depending only on~$n$,~$p$,~$s$ and~$k$.
\end{theorem}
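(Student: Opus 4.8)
The plan is to chain together the two structural results already established. First I would observe that, by Theorem~\ref{09ijnuiathcaRn-th}, the distributional solution~$u$ of~$(1-\Delta)^s u = f$ with~$f\in L^p(\R^n)$ automatically lies in the Bessel potential space~${\mathcal{L}}^p_{2s}(\R^n)$, with the exact norm identity~$\|u\|_{{\mathcal{L}}^p_{2s}(\R^n)}=\|f\|_{L^p(\R^n)}$. This is the ``step~\eqref{asd0scjovndskghjsdckvhrta9djcvmouosd-02}'' part of the argument, and it is essentially immediate from the uniqueness in Proposition~\ref{BiknsoU98ikjmd9f4} together with the definition of the Bessel norm.

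The second step is to descend from~${\mathcal{L}}^p_{2s}(\R^n)$ to~${\mathcal{L}}^p_{k}(\R^n)$. Since~$s\ge k/2$, i.e.~$2s\ge k$, Corollary~\ref{09ijnuiathcaRn} (monotonicity of Bessel spaces in the regularity index) gives~$u\in{\mathcal{L}}^p_{k}(\R^n)$ together with the bound~$\|u\|_{{\mathcal{L}}^p_{k}(\R^n)}\le\|u\|_{{\mathcal{L}}^p_{2s}(\R^n)}$. Combining with the first step yields~$\|u\|_{{\mathcal{L}}^p_{k}(\R^n)}\le\|f\|_{L^p(\R^n)}$.

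The third step is to identify the half-integer Bessel potential space with the corresponding Sobolev space. Since~$k\in\N$, the index~$k$ here plays the role of ``$2s$'' in Theorem~\ref{TH0-104-okn5KMD3} (which asserts~${\mathcal{L}}^p_{2s}(\R^n)=W^{2s,p}(\R^n)$ with equivalent norms precisely when~$2s\in\N$ and~$p\in(1,+\infty)$). Applying that theorem with its ``$2s$'' set equal to~$k$, we obtain~${\mathcal{L}}^p_{k}(\R^n)=W^{k,p}(\R^n)$ with~$\|u\|_{W^{k,p}(\R^n)}\le C\|u\|_{{\mathcal{L}}^p_{k}(\R^n)}$ for a constant~$C=C(n,p,k)$. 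Chaining the three inequalities gives~$u\in W^{k,p}(\R^n)$ and~$\|u\|_{W^{k,p}(\R^n)}\le C\|f\|_{L^p(\R^n)}$, which is the claim; the constant depends only on~$n$, $p$, $s$, $k$ as stated (in fact only on~$n$, $p$, $k$, but the weaker dependence is harmless).

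There is no real obstacle here since every ingredient is already proved in the excerpt; the only point that needs a word of care is the bookkeeping of indices — making sure that ``$k$'' in the present statement matches the ``$2s$'' slot of Theorem~\ref{TH0-104-okn5KMD3} and the ``$2s$'' slot of Corollary~\ref{09ijnuiathcaRn}, and that the hypothesis~$s\ge k/2$ is exactly what makes the monotonicity step applicable. I would write this out as a three-line proof, citing Theorem~\ref{09ijnuiathcaRn-th}, Corollary~\ref{09ijnuiathcaRn}, and Theorem~\ref{TH0-104-okn5KMD3} in turn.
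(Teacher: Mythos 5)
Your proposal is correct and follows the same route as the paper: first place~$u$ in~${\mathcal{L}}^p_{2s}(\R^n)$ via Theorem~\ref{09ijnuiathcaRn-th}, then use the monotonicity of Corollary~\ref{09ijnuiathcaRn} (valid since~$2s\ge k$) to pass to~${\mathcal{L}}^p_{k}(\R^n)$, and finally invoke Theorem~\ref{TH0-104-okn5KMD3} with its half-integer index equal to~$k$ to conclude~$\|u\|_{W^{k,p}(\R^n)}\le C\|f\|_{L^p(\R^n)}$. The index bookkeeping you flag is exactly the point that makes the chain work, and there is nothing to add.
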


\begin{proof} In virtue of Corollary~\ref{09ijnuiathcaRn} and Theorem~\ref{09ijnuiathcaRn-th}, we know that
$$ \|u\|_{ {\mathcal{L}}^p_{k}(\R^n)}\le\|u\|_{ {\mathcal{L}}^p_{2s}(\R^n)}=\| f\|_{L^p(\R^n)}.$$
The desired result thus follows from Theorem~\ref{TH0-104-okn5KMD3}.
\end{proof}

\begin{corollary}\label{ILCOLPSQRTPASKLX}
Let~$p\in(1,+\infty)$,~$f\in L^p(\R^n)$, and~$u$ be a distributional solution of~$\sqrt{1-\Delta}\,u=f$ in~$\R^n$.

Then,~$u\in W^{1,p}(\R^n)$ and
\begin{equation*} \|u\|_{ W^{1,p}(\R^n)}\le C\,\| f\|_{L^p(\R^n)}\end{equation*}
for some positive constant~$C$ depending only on~$n$ and~$p$.
\end{corollary}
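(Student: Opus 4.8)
The plan is to derive Corollary~\ref{ILCOLPSQRTPASKLX} as an immediate special case of Theorem~\ref{TH0-104-okn5KMD3-09}. The operator $\sqrt{1-\Delta}$ is precisely $(1-\Delta)^s$ with $s=\frac12$, so the hypotheses of Theorem~\ref{TH0-104-okn5KMD3-09} are met with the choices $s=\frac12$ and $k=1$; indeed, the requirement $s\ge\frac k2$ becomes the equality $\frac12\ge\frac12$, which holds. Thus, given $f\in L^p(\R^n)$ with $p\in(1,+\infty)$ and a distributional solution $u$ of $\sqrt{1-\Delta}\,u=f$ in $\R^n$, Theorem~\ref{TH0-104-okn5KMD3-09} applies verbatim.

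Concretely, I would first invoke Proposition~\ref{BiknsoU98ikjmd9f4} (with $s=\frac12$) to identify $u={\mathcal{B}}^{(1/2)}*f$, which in particular guarantees uniqueness of the distributional solution and that $u\in L^p(\R^n)$ via Corollary~\ref{CO914athcal}. Then Theorem~\ref{09ijnuiathcaRn-th} places $u\in{\mathcal{L}}^p_{1}(\R^n)$ with $\|u\|_{{\mathcal{L}}^p_{1}(\R^n)}=\|f\|_{L^p(\R^n)}$. Finally, Theorem~\ref{TH0-104-okn5KMD3} (applicable since $2s=1\in\N$ and $p\in(1,+\infty)$) yields ${\mathcal{L}}^p_{1}(\R^n)=W^{1,p}(\R^n)$ with equivalent norms, so that $u\in W^{1,p}(\R^n)$ and $\|u\|_{W^{1,p}(\R^n)}\le C\|u\|_{{\mathcal{L}}^p_{1}(\R^n)}=C\|f\|_{L^p(\R^n)}$ for a constant $C>0$ depending only on $n$ and $p$ (the dependence on $s$ and $k$ in Theorem~\ref{TH0-104-okn5KMD3-09} disappears since both are now fixed numerical values).

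Honestly, there is no real obstacle here: the statement is a corollary in the strict sense, a specialization of the already-proved Theorem~\ref{TH0-104-okn5KMD3-09} to the single most classical case $s=\frac12$, which is worth recording separately because $\sqrt{1-\Delta}$ is a ubiquitous operator and the conclusion ($L^p$ datum $\Rightarrow$ $W^{1,p}$ solution) is the cleanest possible gain-of-one-derivative estimate. The only thing to be slightly careful about is making sure the boundary case of the inequality $s\ge\frac k2$ is indeed allowed in Theorem~\ref{TH0-104-okn5KMD3-09} — it is, since the theorem is stated with ``$s\ge\frac k2$'' and the chain of results it relies on (Corollary~\ref{09ijnuiathcaRn} for the inclusion ${\mathcal{L}}^p_{2s}\subseteq{\mathcal{L}}^p_{k}$ when $2s\ge k$, here with equality) handles equality without issue. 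So the proof is genuinely a two-line deduction, and I would present it as such.

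\begin{proof}
This is the particular case of Theorem~\ref{TH0-104-okn5KMD3-09} corresponding to the choices~$s:=\frac12$ and~$k:=1$, for which the assumption~$s\ge\frac k2$ holds with equality.
\end{proof}
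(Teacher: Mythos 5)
Your proof is correct and is exactly the paper's argument: Corollary~\ref{ILCOLPSQRTPASKLX} is obtained as the special case of Theorem~\ref{TH0-104-okn5KMD3-09} with~$s:=\frac12$ and~$k:=1$. The extra tracing through Proposition~\ref{BiknsoU98ikjmd9f4}, Theorem~\ref{09ijnuiathcaRn-th} and Theorem~\ref{TH0-104-okn5KMD3} is accurate but not needed beyond the one-line specialization.
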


\begin{proof} This is a particular case of Theorem~\ref{TH0-104-okn5KMD3-09}, picking~$s:=\frac12$ and~$k:=1$ there.
\end{proof}

\begin{theorem}\label{6.33THCORNSTNE}
Let~$s\in(0,1)$,~$p\in(1,+\infty)$,~$f\in L^p(\R^n)$, and~$u$ be a distributional solution of~$(-\Delta)^su=f$ in~$\R^n$.

Assume that~$u\in L^p(\R^n)$.

Then,~$u\in {\mathcal{L}}^p_{2s}(\R^n)$ and
\begin{equation*} \|u\|_{ {\mathcal{L}}^p_{2s}(\R^n)}\le
C\,\Big(\|u\|_{L^p(\R^n)}+\| f\|_{L^p(\R^n)}\Big)\end{equation*}
for some positive constant~$C$ depending only on~$n$,~$p$ and~$s$. 
\end{theorem}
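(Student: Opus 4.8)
The plan is to reduce the statement about $(-\Delta)^s$ to the already-established regularity theory for $(1-\Delta)^s$ via the norm comparison in Theorem~\ref{TH:9191}. The key point is that the hypothesis splits naturally: we are told $u\in L^p(\R^n)$ and $(-\Delta)^s u=f\in L^p(\R^n)$, and the second estimate in~\eqref{MAKlLMSTIuntYYTi} says precisely that then $(1-\Delta)^s u\in L^p(\R^n)$ with $\|(1-\Delta)^s u\|_{L^p(\R^n)}\le C(\|u\|_{L^p(\R^n)}+\|(-\Delta)^s u\|_{L^p(\R^n)})=C(\|u\|_{L^p(\R^n)}+\|f\|_{L^p(\R^n)})$. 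Once we know $(1-\Delta)^s u=:g\in L^p(\R^n)$ holds in the distributional sense, Theorem~\ref{09ijnuiathcaRn-th} applies to give $u\in{\mathcal{L}}^p_{2s}(\R^n)$ with $\|u\|_{{\mathcal{L}}^p_{2s}(\R^n)}=\|g\|_{L^p(\R^n)}$, and chaining the two bounds yields the claim with the asserted constant.

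First I would make precise what ``$(-\Delta)^s u = f$ distributionally'' combined with $u\in L^p(\R^n)$ buys us at the level of $(1-\Delta)^s u$. Since $u\in L^p(\R^n)\subseteq L^1_{\rm loc}(\R^n)$ and is tempered, $(1-\Delta)^s u$ is a well-defined tempered distribution, and for every $\varphi$ in the Schwartz space one has $\langle (1-\Delta)^s u,\varphi\rangle=\langle u,(1-\Delta)^s\varphi\rangle$ and similarly $\langle (-\Delta)^s u,\varphi\rangle=\langle u,(-\Delta)^s\varphi\rangle$. Using the operator identity underlying the second line of~\eqref{HSZUONDCCONT-FF-GMMA1-2}, namely $(1-\Delta)^s w=M_2*(w+(-\Delta)^s w)$ for Schwartz $w$ (exactly the computation carried out in the proof of Theorem~\ref{TH:9191}), one transfers this to $u$ in the distributional pairing: $(1-\Delta)^s u=M_2*(u+f)$ as distributions. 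Since $M_2$ is a finite signed Radon measure and $u+f\in L^p(\R^n)$, the convolution $M_2*(u+f)$ is a genuine $L^p$ function, with $\|M_2*(u+f)\|_{L^p(\R^n)}\le |M_2|(\R^n)\,\|u+f\|_{L^p(\R^n)}$ by Minkowski's integral inequality (Theorem~\ref{MLAerSM:ijfKKSMdf02}), just as in the proof of Theorem~\ref{TH:9191}. Call this function $g$; then $(1-\Delta)^s u=g$ holds distributionally and $g\in L^p(\R^n)$.

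Next I would invoke Theorem~\ref{09ijnuiathcaRn-th} with the datum $g$: since $u$ is a distributional solution of $(1-\Delta)^s u=g$ with $g\in L^p(\R^n)$ and $p\in(1,+\infty)$, we conclude $u\in{\mathcal{L}}^p_{2s}(\R^n)$ and, by~\eqref{ERFGHJN6789-09tftd90u8yhgiug8erhISB-Xke}, $\|u\|_{{\mathcal{L}}^p_{2s}(\R^n)}=\|g\|_{L^p(\R^n)}$. Combining with the bound on $\|g\|_{L^p(\R^n)}$ from the previous step gives
\[
\|u\|_{{\mathcal{L}}^p_{2s}(\R^n)}=\|g\|_{L^p(\R^n)}\le |M_2|(\R^n)\,\|u+f\|_{L^p(\R^n)}\le C\big(\|u\|_{L^p(\R^n)}+\|f\|_{L^p(\R^n)}\big),
\]
which is the asserted estimate. (Alternatively, one can phrase the middle step purely through Theorem~\ref{TH:9191} without reopening its proof, provided one first checks that $(-\Delta)^s u\in L^p$ and $u\in L^p$ force $(1-\Delta)^s u\in L^p$ — but since Theorem~\ref{TH:9191} as stated is phrased for functions $u$ on which both operators act classically, the cleanest rigorous route is the distributional convolution identity above.)

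The main obstacle I expect is precisely this last subtlety: Theorem~\ref{TH:9191} is stated for (sufficiently smooth) $u$, whereas here $u$ is only a distributional solution in $L^p$, so one must be careful to justify that the operator identity $(1-\Delta)^s u = M_2*(u+(-\Delta)^s u)$ persists in the distributional setting and that the right-hand side is an honest $L^p$ function rather than merely a distribution. This is handled by testing against Schwartz functions, using that $M_2$ has finite total variation, and applying Minkowski's integral inequality exactly as in the proof of Theorem~\ref{TH:9191}; the density of Schwartz functions and the fact that convolution with a finite measure is continuous on $L^p$ close the argument. Everything else — the appeal to Theorem~\ref{09ijnuiathcaRn-th} and the bookkeeping of constants — is routine.
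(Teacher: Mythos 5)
Your proposal is correct and follows essentially the same route as the paper: set $g:=(1-\Delta)^s u$, bound $\|g\|_{L^p(\R^n)}$ via the second estimate of Theorem~\ref{TH:9191}, and then apply Theorem~\ref{09ijnuiathcaRn-th} together with~\eqref{ERFGHJN6789-09tftd90u8yhgiug8erhISB-Xke}. Your additional care in justifying the identity $(1-\Delta)^s u=M_2*(u+f)$ in the distributional setting is a reasonable elaboration of a step the paper takes implicitly, but it does not change the argument.
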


\begin{proof} Let~$g:=(1-\Delta)^s u$.
By virtue of Theorem~\ref{TH:9191},
\begin{equation}\label{PK:S10uojr321912} \begin{split}&\|g\|_{L^p(\R^n)}=\|(1-\Delta)^s u\|_{L^p(\R^n)}\le C\Big(\| u\|_{L^p(\R^n)}+\|(-\Delta)^su\|_{L^p(\R^n)}\Big)\\&\qquad\qquad\qquad\qquad\le C\Big(\| u\|_{L^p(\R^n)}+\|f\|_{L^p(\R^n)}\Big).\end{split}\end{equation}
Accordingly, using Theorem~\ref{09ijnuiathcaRn-th},
$u\in {\mathcal{L}}^p_{2s}(\R^n)$ and
\begin{equation*} \|u\|_{ {\mathcal{L}}^p_{2s}(\R^n)}=\| g\|_{L^p(\R^n)}\le C\Big(\| u\|_{L^p(\R^n)}+\|f\|_{L^p(\R^n)}\Big).\qedhere\end{equation*}
\end{proof}

\begin{theorem}\label{PKJSM0-COSK1424654856ij45gf}
Let~$s\in\left[ \frac{1}2,1\right)$,~$p\in(1,+\infty)$,~$f\in L^p(\R^n)$, and~$u$ be a distributional solution of~$(-\Delta)^su=f$ in~$\R^n$.

Assume that~$u\in L^p(\R^n)$.

Then,~$u\in W^{1,p}(\R^n)$ and
\begin{equation*} \|u\|_{ W^{1,p}(\R^n)}\le C\,\Big(\|u\|_{L^p(\R^n)}+\| f\|_{L^p(\R^n)}\Big)\end{equation*}
for some positive constant~$C$ depending only on~$n$,~$p$ and~$s$.
\end{theorem}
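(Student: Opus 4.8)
The plan is to combine the two previous results in a straightforward way: Theorem~\ref{6.33THCORNSTNE} places~$u$ in the Bessel potential space~${\mathcal{L}}^p_{2s}(\R^n)$ with the expected norm bound, and then we only need to know that, for~$s\in\left[\frac12,1\right)$, membership in~${\mathcal{L}}^p_{2s}(\R^n)$ forces membership in~$W^{1,p}(\R^n)$ with a controlled norm. First I would invoke Theorem~\ref{6.33THCORNSTNE} (whose hypotheses are exactly those assumed here) to obtain~$u\in {\mathcal{L}}^p_{2s}(\R^n)$ together with
\[
\|u\|_{{\mathcal{L}}^p_{2s}(\R^n)}\le C\,\Big(\|u\|_{L^p(\R^n)}+\|f\|_{L^p(\R^n)}\Big).
\]

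Next I would pass from~${\mathcal{L}}^p_{2s}(\R^n)$ to~${\mathcal{L}}^p_{1}(\R^n)$. If~$s=\frac12$ this is immediate since~$2s=1$; if~$s\in\left(\frac12,1\right)$ then~$2s\in(1,2)$, so~$2s\ge 1$ and Corollary~\ref{09ijnuiathcaRn} gives~$u\in {\mathcal{L}}^p_1(\R^n)$ with~$\|u\|_{{\mathcal{L}}^p_1(\R^n)}\le \|u\|_{{\mathcal{L}}^p_{2s}(\R^n)}$. (Here the case~$2s=1$ is covered by the same corollary with~$\sigma=s=\frac12$.) Then I would apply Theorem~\ref{TH0-104-okn5KMD3} with the half-integer exponent~$\frac{k}2=\frac12$, i.e.~$2s=1\in\N$, which identifies~${\mathcal{L}}^p_1(\R^n)=W^{1,p}(\R^n)$ with equivalence of norms. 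Chaining these inequalities yields
\[
\|u\|_{W^{1,p}(\R^n)}\le C\,\|u\|_{{\mathcal{L}}^p_1(\R^n)}\le C\,\|u\|_{{\mathcal{L}}^p_{2s}(\R^n)}\le C\,\Big(\|u\|_{L^p(\R^n)}+\|f\|_{L^p(\R^n)}\Big),
\]
with~$C$ depending only on~$n$,~$p$ and~$s$, which is the claim.

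There is essentially no hard step here: the statement is a corollary-type assembly of Theorems~\ref{6.33THCORNSTNE} and~\ref{TH0-104-okn5KMD3} via the embedding Corollary~\ref{09ijnuiathcaRn}. The only point demanding a little care is the bookkeeping of exponents, namely checking that the hypothesis~$s\ge\frac12$ guarantees~$2s\ge 1$ so that Corollary~\ref{09ijnuiathcaRn} applies with~$(\sigma,s)$ there taken to be~$(s,\tfrac12)$ in the present notation, and that Theorem~\ref{TH0-104-okn5KMD3} is being used precisely at the admissible half-integer value~$2s=1$. One should also note that Theorem~\ref{6.33THCORNSTNE} is stated for~$s\in(0,1)$, so the restriction~$s\in\left[\frac12,1\right)$ of the present statement is inside its range of validity, and that~$p\in(1,+\infty)$ is exactly the range in which both Theorems~\ref{TH0-104-okn5KMD3} and~\ref{6.33THCORNSTNE} hold; I would flag these compatibility checks explicitly in the write-up so the reader sees nothing is being used outside its stated hypotheses.
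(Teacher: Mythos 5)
Your proposal is correct and follows essentially the same route as the paper: the paper's own proof simply packages the same chain differently, bounding $g:=(1-\Delta)^s u$ in $L^p(\R^n)$ via~\eqref{PK:S10uojr321912} and then invoking Theorem~\ref{TH0-104-okn5KMD3-09} with~$k=1$, which internally unrolls into exactly your steps (Theorem~\ref{09ijnuiathcaRn-th}, Corollary~\ref{09ijnuiathcaRn} and Theorem~\ref{TH0-104-okn5KMD3} at the half-integer~$2s=1$). Your exponent bookkeeping, including the use of~$s\ge\frac12$ to guarantee~$2s\ge1$, is accurate, so the argument stands as written.
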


\begin{proof} Let~$g:=(1-\Delta)^s u$ and recall~\eqref{PK:S10uojr321912}.
We infer from Theorem~\ref{TH0-104-okn5KMD3-09} (used here with~$k:=1$) that~$u\in W^{1,p}(\R^n)$ and
\begin{equation*} \|u\|_{ W^{1,p}(\R^n)}\le C\,\| g\|_{L^p(\R^n)}\le C\,\Big(\|u\|_{L^p(\R^n)}+\| f\|_{L^p(\R^n)}\Big).\qedhere\end{equation*}
\end{proof}

The result in Theorem~\ref{PKJSM0-COSK1424654856ij45gf} will be sharpened in the forthcoming Theorem~\ref{PKJSM0-COSK1424654856ij45gf-SHA}.

\begin{corollary}\label{0oijnhb8ygf6tfed35r7268wq-2oiejh09i28ytnbbVzUJDO03-1i40-1jhtrh9c6hv76b8706bhvc870}
Let~$p\in(1,+\infty)$,~$f\in L^p(\R^n)$, and~$u$ be a distributional solution of~$\sqrt{-\Delta}\,u=f$ in~$\R^n$.

Assume that~$u\in L^p(\R^n)$.

Then,~$u\in W^{1,p}(\R^n)$ and
\begin{equation*} \|u\|_{ W^{1,p}(\R^n)}\le C\,\Big(\|u\|_{L^p(\R^n)}+\| f\|_{L^p(\R^n)}\Big)\end{equation*}
for some positive constant~$C$ depending only on~$n$ and~$p$.
\end{corollary}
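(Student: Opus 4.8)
The plan is to recognize that this corollary is nothing more than the special case $s=\frac12$ of Theorem~\ref{PKJSM0-COSK1424654856ij45gf}. First I would observe that $\sqrt{-\Delta}$ is precisely the operator $(-\Delta)^s$ with $s=\frac12$ (by~\eqref{FORi} it is the Fourier multiplier with symbol $2\pi|\xi|$), and that $\tfrac12\in\left[\tfrac12,1\right)$, so the hypotheses of Theorem~\ref{PKJSM0-COSK1424654856ij45gf} are met. Then, for $p\in(1,+\infty)$, $f\in L^p(\R^n)$, and a distributional solution $u\in L^p(\R^n)$ of $(-\Delta)^{1/2}u=f$, Theorem~\ref{PKJSM0-COSK1424654856ij45gf} immediately gives $u\in W^{1,p}(\R^n)$ together with $\|u\|_{W^{1,p}(\R^n)}\le C\bigl(\|u\|_{L^p(\R^n)}+\|f\|_{L^p(\R^n)}\bigr)$ with $C=C(n,p,s)=C(n,p)$, which is exactly the assertion. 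This is in exact analogy with the way Corollary~\ref{ILCOLPSQRTPASKLX} is deduced from Theorem~\ref{TH0-104-okn5KMD3-09}.

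For the sake of a self-contained reading, I would also spell out the underlying chain of reasoning in this instance: set $g:=(1-\Delta)^{1/2}u=\sqrt{1-\Delta}\,u$; the second inequality in~\eqref{MAKlLMSTIuntYYTi} of Theorem~\ref{TH:9191} yields $\|g\|_{L^p(\R^n)}\le C\bigl(\|u\|_{L^p(\R^n)}+\|(-\Delta)^{1/2}u\|_{L^p(\R^n)}\bigr)=C\bigl(\|u\|_{L^p(\R^n)}+\|f\|_{L^p(\R^n)}\bigr)$; then Theorem~\ref{TH0-104-okn5KMD3-09} with $k=1$ and $s=\tfrac12$ (equivalently Corollary~\ref{ILCOLPSQRTPASKLX}) places the distributional solution of $\sqrt{1-\Delta}\,u=g$ in $W^{1,p}(\R^n)$ with $\|u\|_{W^{1,p}(\R^n)}\le C\|g\|_{L^p(\R^n)}$, and combining the two estimates gives the result.

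There is essentially no obstacle here; the only point deserving a moment's care is checking that the endpoint value $s=\tfrac12$ is genuinely within scope: the hypothesis of Theorem~\ref{PKJSM0-COSK1424654856ij45gf} is $s\in\left[\tfrac12,1\right)$, a closed left endpoint, and the hypothesis $s\ge\frac k2$ of Theorem~\ref{TH0-104-okn5KMD3-09} holds with equality when $k=1$, so both applications are legitimate and the corollary follows verbatim by specialization.
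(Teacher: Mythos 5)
Your proposal is correct and follows exactly the paper's route: the corollary is obtained by specializing Theorem~\ref{PKJSM0-COSK1424654856ij45gf} to $s=\tfrac12$, which lies in the admissible range $\left[\tfrac12,1\right)$. Your additional unwinding of that theorem's proof (via Theorem~\ref{TH:9191} and Theorem~\ref{TH0-104-okn5KMD3-09}) is accurate but not needed beyond the one-line specialization.
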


\begin{proof} This is a special case of Theorem~\ref{PKJSM0-COSK1424654856ij45gf} when~$s=\frac12$.\end{proof}

\section{Besov spaces}

Given~$s>0$ and~$p$,~$q\in[1,+\infty)$, it comes in handy to introduce\footnote{In jargon~$p$ is often called
the ``integration exponent'' and~$q$ the ``summation exponent''.
The reason for the latter name will be more transparent
in the forthcoming formula~\eqref{0pirj09365-4g4eZXCHJKLRFG-544-NO3}. One could also consider the case~$p=\infty$ and/or~$q=\infty$ with appropriate modifications
in the statements and in the proofs of the main results, but we restrict ourselves to the case of finite
integration and summation exponents for the sake of simplicity.

Also, one can define in a similar way Besov spaces corresponding to values of~$s$ in~$(-\infty,0]$, by considering tempered distributions instead of functions, which is also useful to discuss dual spaces, but we will not enter in this detail here, see e.g.~\cite{MR0461123, MR2884718} and the references therein for further information.

Similarly, we will only consider here the case of ``inhomogeneous'' Besov spaces, in which low frequencies
are treated separately (they would correspond to the initial bump~$\varphi_0$ in the dyadic decomposition
of Lemma~\ref{0pirj09365-4g4eZXCHJKLRFG-544-NO1-LL}). This choice is motivated by the heuristic fact that
high frequencies are the ones usually creating more troubles in terms of regularity, hence, in a sense,
these spaces are more naturally related to other functional spaces.

We mention that it is also possible to study ``homogeneous'' Besov spaces in which the frequency
decomposition is made indistinctly over the whole space, with the aim of better preserving  the scaling invariance of some problem, see e.g.~\cite{MR0461123} and the references therein.

As a notational remark, we mention that
the Besov space~$B^{s,p,q}(\R^n)$ is denoted by~$\Lambda^{p,q}_\alpha$, with~$\alpha=s$, in~\cite[Section~5, Chapter~V]{MR0290095}.} the \index{Besov space} Besov spaces~$B^{s,p,q}(\R^n)$.

We warn the reader that the treatment of Besov spaces in this pages is far\footnote{Besides, as a historical remark,
we point out that we will define Besov
spaces by partitions of unity in the frequency space on page~\pageref{DEFBEScsovkgRFVblop0-23}, linking them with the original
setting based on translations by Besov later, with several equivalent approaches outlined in Propositions~\ref{F58uuRS-2CALEREG:LEAbc-CC} and~\ref{0pirj09365-4g4eZXCHJKLRFG-544-NOP},
and Corollaries~\ref{corod34958834y689u2eryue9wighrejhgfhdjehfdjkjfhj}
and~\ref{0pirj09365-4g4eZXCHJKLRFG-544-NO3-COR}).} from being comprehensive: for a detailed theory of these spaces, see~\cite{MR0290095, MR0461123, MR0482275, MR891189, MR1163193, MR1419319, MR2768550, MR2884718, MR3308364, MR3726909, MR3771838, MR3839617}.

\begin{figure}
\centering
\begin{tikzpicture}[
    declare function= { gin(\x)     = exp(1/(\x*\x-1)+1);
                        myfun(\x)   = 3*gin(\x)*(1-gin(1-\x));
                      },
                   ]
    \draw[thick,->] (0,0) -- (10,0);
    \draw[thick,->] (0,0) -- (0,3.6);
    \draw (.1,1.5) -- (-.1,1.5)  node[left] {$\frac12$};
    \draw (.1,3) -- (-.1,3)  node[left] {$1$};
    \draw[densely dotted] (10,1.5) -- (-.1,1.5);
    \draw[densely dotted] (10,3) -- (-.1,3);
    \filldraw (0,0) circle (1.5pt);
    \node at (0,-.3) {$0$};
    \filldraw (.5,0) circle (1.5pt);
    \node at (.5,-.3) {$1$};
    \filldraw (1,0) circle (1.5pt);
    \node at (1,-.3) {$2$};
    \filldraw (2,0) circle (1.5pt);
    \node at (2,-.3) {$2^2$};
    \filldraw (4,0) circle (1.5pt);
    \node at (4,-.3) {$2^3$};
    \filldraw (8,0) circle (1.5pt);
    \node at (8,-.3) {$2^4$};
    \draw[very thick,red] (0,3) -- (.5,3);
    \draw[very thick,red] plot[domain=.501:1] (\x,{myfun(2*\x-1)});
    \node at (.3,3.3) {\color{red}~$\varphi_0$};
    \draw[very thick,blue] plot[domain=.501:1] (\x,{3-myfun(2*\x-1)});
    \draw[very thick,blue] plot[domain=1.01:2] (\x,{myfun(\x-1)});
    \node at (1,3.3) {\color{blue}~$\varphi_1$};
    \draw[very thick,magenta] plot[domain=1.01:2] (\x,{3-myfun(\x-1)});
    \draw[very thick,magenta] plot[domain=2.02:4] (\x,{myfun(\x/2-1)});
    \node at (2,3.3) {\color{magenta}~$\varphi_2$};
    \draw[very thick,teal] plot[domain=2.02:4] (\x,{3-myfun(\x/2-1)});
    \draw[very thick,teal] plot[domain=4.01:8] (\x,{myfun(\x/4-1)});
    \node at (4,3.3) {\color{teal}~$\varphi_3$};
    \draw[very thick,orange] plot[domain=4.01:8] (\x,{3-myfun(\x/4-1)});
    \draw[very thick,orange] plot[domain=8.01:10] (\x,{myfun(\x/8-1)});
    \node at (8,3.3) {\color{orange}~$\varphi_4$};
    \draw[very thick,brown] plot[domain=8.01:10] (\x,{3-myfun(\x/8-1)}) node[right] {\color{brown}~$\varphi_5$};
\end{tikzpicture}
\caption{A sketch of the dyadic partition of unity used in Lemma~\ref{0pirj09365-4g4eZXCHJKLRFG-544-NO1-LL}.}\label{fig:sub2}
\end{figure}


To define Besov spaces, it is helpful to introduce a dyadic partition of unity
in the frequency space, which will somewhat detect the fine regularity of a given function at different scales through a decomposition via bump  functions that are spectrally supported\footnote{This kind of partitions
of unity are quite useful in harmonic analysis. For example, they are also employed in the proof of the
Mikhlin Multiplier Theorem, see e.g.~\cite[Section~4.6]{MR2884718}.} in dyadic shells
(see Figure~\ref{fig:sub2} for a very rough diagram of a dyadic partition of unity). For this target, we point out that:

\begin{lemma}\label{0pirj09365-4g4eZXCHJKLRFG-544-NO1-LL}
For all~$j\in\N$ there exists a radially symmetric function~$\varphi_j:\R^n\to[0,1]$ such that
\begin{equation}\label{0pirj09365-4g4eZXCHJKLRFG-544-NO1-LL-eq1}
\sum_{j=0}^{+\infty}\varphi_j(\xi)=1\quad {\mbox{ for all~$\xi\in\R^n$, }}
\end{equation}
with~$\varphi_0\in C^\infty_c(B_2)$
and, for all~$j\ge1$,
\begin{equation}\label{0pirj09365-4g4eZXCHJKLRFG-544-NO1-LL-eq2}
\varphi_j\in C^\infty_c(B_{2^{j+1}}\setminus B_{2^{j-1}}) .
\end{equation}

Moreover,  for all~$j\ge1$
\begin{equation}\label{0pirj09365-4g4eZXCHJKLRFG-544-NO1-LL-eq4-09}
\varphi_j(\xi)=\varphi_1(2^{1-j}\xi),
\end{equation}
\begin{equation}\label{0pirj09365-4g4eZXCHJKLRFG-544-NO1-LL-eq4-09-TRIS}
\check\varphi_j(x)=2^{(j-1)n}\check \varphi_1(2^{j-1}x),\end{equation}
\begin{equation}\label{0pirj09365-4g4eZXCHJKLRFG-544-NO1-LL-eq4-09-BIS}
\|\check\varphi_j\|_{L^1(\R^n)}=\|\check\varphi_1\|_{L^1(\R^n)},
\end{equation}
and, given~$\alpha\in\N^n$,
\begin{equation}\label{0pirj09365-4g4eZXCHJKLRFG-544-NO1-LL-eq4}
\|D^\alpha\varphi_j\|_{L^\infty(\R^n)}\le \frac{C}{2^{|\alpha|j}} \|D^\alpha\varphi_1\|_{L^\infty(\R^n)}
\end{equation}
for a positive constant~$C$ depending only on~$\alpha$.


Additionally, for every~$j\ge1$ and~$\alpha\in\N^n$,
\begin{equation}\label{UTILTAYMAA8ikjf}
\int_{\R^n} \check\varphi_j(y)\,y^\alpha\,dy=0.
\end{equation}

Furthermore, if, for~$k\ge1$, we define
\begin{equation}\label{UTILTAYMAA8ikjf-x589} \Phi_k(\xi):=1-\sum_{j=0}^k\varphi_j(\xi),\end{equation}
we have that
\begin{equation}\label{UTILTAYMAA8ikjf-x5}\begin{split}
&\Phi_k\in C^\infty(\R^n,[0,1]),\\ &{\mbox{$\Phi_k=0$ in~$B_{2^k}$,}}\\
&{\mbox{$\Phi_k=1$ in~$\R^n\setminus B_{2^{k+1}}$}}\\ {\mbox{and, for all~$\alpha\in\N^n$, }}\,&
\|D^\alpha\Phi_k\|_{L^\infty(\R^n)}\le C,\end{split}
\end{equation}
for a positive constant~$C$ depending only on~$n$ and~$\alpha$ (in particular, independent of~$k$).

Finally, if we set~$\varphi_{-1}$ to be the null function and, for all~$j\in\N$,
\begin{equation}\label{UTILTAYMAA8ikjf-x5-LAPSIN} \psi_j:=\varphi_{j-1}+\varphi_{j}+\varphi_{j+1},\end{equation}
we have that
\begin{equation}\label{UTILTAYMAA8ikjf-x5-LAPSIN2} \psi_j\varphi_j=\varphi_{j}.\end{equation}
\end{lemma}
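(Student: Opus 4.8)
The plan is to construct the functions $\varphi_j$ explicitly from a single smooth bump, verify the dyadic scaling properties by direct substitution, and then deduce the remaining claims about derivatives, moments, the tails $\Phi_k$, and the fattened cutoffs $\psi_j$ as routine corollaries.

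First I would fix an auxiliary smooth function. Let $g\in C^\infty(\R,[0,1])$ be nondecreasing with $g\equiv 0$ on $(-\infty,0]$ and $g\equiv 1$ on $[1,+\infty)$ (such a $g$ exists by a standard mollification of a cutoff). Define a radial profile $\chi:\R^n\to[0,1]$ by
\begin{align*}
\chi(\xi):=g(2-|\xi|),
\end{align*}
so that $\chi\in C^\infty_c(\R^n,[0,1])$, $\chi\equiv 1$ on $B_1$, and $\operatorname{supp}\chi\subseteq \overline{B_2}$; one should actually take $\chi$ supported in the open ball $B_2$ by a harmless rescaling of the argument, e.g. $g(3(2-|\xi|))$, to match the requirement $\varphi_0\in C^\infty_c(B_2)$. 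Then set $\varphi_0:=\chi$ and, for $j\ge1$,
\begin{align*}
\varphi_j(\xi):=\chi(2^{1-j}\xi)-\chi(2^{2-j}\xi).
\end{align*}
By telescoping, $\sum_{j=0}^{k}\varphi_j(\xi)=\chi(2^{-k}\xi)$, which tends to $1$ pointwise (indeed equals $1$ as soon as $|\xi|\le 2^k$), giving~\eqref{0pirj09365-4g4eZXCHJKLRFG-544-NO1-LL-eq1}. For the support claim~\eqref{0pirj09365-4g4eZXCHJKLRFG-544-NO1-LL-eq2}, note $\chi(2^{1-j}\xi)\ne 0$ forces $|\xi|<2^{j+1}$ while $1-\chi(2^{2-j}\xi)\ne 0$ forces $|\xi|>2^{j-1}$ (using that $\chi\equiv1$ on $B_1$); thus $\operatorname{supp}\varphi_j\subseteq \overline{B_{2^{j+1}}\setminus B_{2^{j-1}}}$, and after the small rescaling above this sits inside the open shell. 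Each $\varphi_j$ is radial, smooth, compactly supported, and valued in $[0,1]$ because it is a difference of two nested radial cutoffs that are ordered $\chi(2^{2-j}\xi)\le\chi(2^{1-j}\xi)$.

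Next I would record the scaling identities. Directly from the definition, for $j\ge1$,
\begin{align*}
\varphi_j(\xi)=\chi(2^{1-j}\xi)-\chi(2^{2-j}\xi)=\varphi_1(2^{1-j}\xi),
\end{align*}
which is~\eqref{0pirj09365-4g4eZXCHJKLRFG-544-NO1-LL-eq4-09}. Taking the inverse Fourier transform and using the dilation rule $\widehat{f(\lambda\cdot)}^{\vee}$ — more precisely $\mathcal F^{-1}[h(\lambda\,\cdot)](x)=\lambda^{-n}\mathcal F^{-1}[h](x/\lambda)$ — with $\lambda=2^{1-j}$ yields $\check\varphi_j(x)=2^{(j-1)n}\check\varphi_1(2^{j-1}x)$, i.e.~\eqref{0pirj09365-4g4eZXCHJKLRFG-544-NO1-LL-eq4-09-TRIS}; integrating the absolute value and changing variables gives~\eqref{0pirj09365-4g4eZXCHJKLRFG-544-NO1-LL-eq4-09-BIS}. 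For~\eqref{0pirj09365-4g4eZXCHJKLRFG-544-NO1-LL-eq4} I would differentiate~\eqref{0pirj09365-4g4eZXCHJKLRFG-544-NO1-LL-eq4-09}: $D^\alpha\varphi_j(\xi)=2^{(1-j)|\alpha|}(D^\alpha\varphi_1)(2^{1-j}\xi)$, so $\|D^\alpha\varphi_j\|_{L^\infty}=2^{(1-j)|\alpha|}\|D^\alpha\varphi_1\|_{L^\infty}\le 2^{|\alpha|}2^{-|\alpha|j}\|D^\alpha\varphi_1\|_{L^\infty}$, with $C=2^{|\alpha|}$. The moment condition~\eqref{UTILTAYMAA8ikjf} follows because $\varphi_j$ vanishes on a neighborhood of the origin for $j\ge1$ (as $0\notin\operatorname{supp}\varphi_j$), hence $D^\beta\varphi_j(0)=0$ for every multi-index $\beta$; since $\int_{\R^n}\check\varphi_j(y)\,y^\alpha\,dy$ equals, up to the constant $(2\pi i)^{-|\alpha|}$ or its conjugate, the value $D^\alpha\varphi_j(0)$ by the standard ``polynomial weight $\leftrightarrow$ derivative of the transform'' identity, it is zero.

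Finally, for the tail functions, $\Phi_k=1-\sum_{j=0}^k\varphi_j=1-\chi(2^{-k}\xi)$ by the telescoping computed above; hence $\Phi_k\in C^\infty(\R^n,[0,1])$, $\Phi_k\equiv 0$ on $B_{2^k}$ (where $\chi(2^{-k}\cdot)=1$), $\Phi_k\equiv 1$ on $\R^n\setminus B_{2^{k+1}}$ (where $\chi(2^{-k}\cdot)=0$, again after the harmless rescaling), and $\|D^\alpha\Phi_k\|_{L^\infty}=2^{-k|\alpha|}\|D^\alpha\chi\|_{L^\infty}\le\|D^\alpha\chi\|_{L^\infty}$ for $|\alpha|\ge1$ and $\le1$ for $\alpha=0$, a bound independent of $k$, giving~\eqref{UTILTAYMAA8ikjf-x5}. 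For~\eqref{UTILTAYMAA8ikjf-x5-LAPSIN2}, observe that on $\operatorname{supp}\varphi_j$ one has $2^{j-1}\le|\xi|\le 2^{j+1}$, and by~\eqref{0pirj09365-4g4eZXCHJKLRFG-544-NO1-LL-eq1} and~\eqref{0pirj09365-4g4eZXCHJKLRFG-544-NO1-LL-eq2} the only terms of $\sum_{i}\varphi_i$ that can be nonzero on this annulus are $\varphi_{j-1},\varphi_j,\varphi_{j+1}$ (since $\varphi_i$ is supported in $\overline{B_{2^{i+1}}\setminus B_{2^{i-1}}}$, which is disjoint from $\{2^{j-1}\le|\xi|\le2^{j+1}\}$ once $|i-j|\ge2$); therefore $\psi_j=\varphi_{j-1}+\varphi_j+\varphi_{j+1}=\sum_i\varphi_i=1$ on $\operatorname{supp}\varphi_j$, and multiplying by $\varphi_j$ gives $\psi_j\varphi_j=\varphi_j$. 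The only mildly delicate point in the whole argument is bookkeeping the precise supports so that the closed shells are genuinely contained in the open shells $B_{2^{j+1}}\setminus B_{2^{j-1}}$; this is handled once and for all by choosing the base bump $\chi$ to transition on a slightly shrunk interval, after which every subsequent claim is a direct substitution.
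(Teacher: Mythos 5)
Your overall strategy is the same as the paper's (build the $\varphi_j$ as differences of dyadically rescaled copies of a single radial bump, telescope, and read off the scaling, support, derivative, moment, $\Phi_k$ and $\psi_j$ claims), but the dyadic scaling in your definition is off by one factor of $2$, and this makes several of the asserted conclusions false as written. With $\varphi_0:=\chi$ and $\varphi_j(\xi):=\chi(2^{1-j}\xi)-\chi(2^{2-j}\xi)$, the sum $\sum_{j=1}^{k}\varphi_j$ telescopes to $\chi(2^{1-k}\xi)-\chi(2\xi)$, so $\sum_{j=0}^{k}\varphi_j(\xi)=\chi(\xi)+\chi(2^{1-k}\xi)-\chi(2\xi)$, which is \emph{not} $\chi(2^{-k}\xi)$; for instance, on the annulus where $\chi(\xi)=1$ but $\chi(2\xi)<1$ (take $|\xi|$ slightly above $1/2$) the limit of the sum exceeds $1$, so \eqref{0pirj09365-4g4eZXCHJKLRFG-544-NO1-LL-eq1} fails and the $\varphi_j$ do not form a partition of unity. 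The support claim fails for the same reason: $\chi(2^{2-j}\xi)=1$ only forces $|\xi|\le 2^{j-2}$ (not $2^{j-1}$), and $\chi(2^{1-j}\xi)\ne0$ only allows $|\xi|\le 2^{j}$, so your $\varphi_j$ lives in the shell $\{2^{j-2}\le|\xi|\le 2^{j}\}$ rather than in $B_{2^{j+1}}\setminus B_{2^{j-1}}$ as required by \eqref{0pirj09365-4g4eZXCHJKLRFG-544-NO1-LL-eq2}. The repair is the definition the paper uses, $\varphi_j(\xi):=\varphi_0(2^{-j}\xi)-\varphi_0(2^{-j+1}\xi)$, for which the telescoping gives $\sum_{j=0}^{k}\varphi_j=\varphi_0(2^{-k}\xi)$ and the supports land in the correct shells; once this is fixed, your remaining steps (the scaling identity, the Fourier dilation giving \eqref{0pirj09365-4g4eZXCHJKLRFG-544-NO1-LL-eq4-09-TRIS}--\eqref{0pirj09365-4g4eZXCHJKLRFG-544-NO1-LL-eq4}, the vanishing moments via $D^\alpha\varphi_j(0)=0$, the identity $\Phi_k=1-\varphi_0(2^{-k}\xi)$ with its $k$-uniform derivative bounds, and the support argument for $\psi_j\varphi_j=\varphi_j$) go through and essentially reproduce the paper's proof.

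A second, smaller point: your "harmless rescaling" $\chi(\xi)=g(3(2-|\xi|))$ still has support equal to $\overline{B_2}$ (the function is nonzero on all of $\{|\xi|<2\}$), so it does not by itself give $\varphi_0\in C^\infty_c(B_2)$ nor keep the outer boundary of the shells strictly inside $B_{2^{j+1}}$. You need the transition to \emph{finish} strictly before radius $2$, e.g. $\chi=1$ on $B_{11/10}$ and $\chi=0$ outside $B_{19/10}$, which is exactly the choice made in the paper; your closing remark shows you are aware of the issue, but the formula you propose does not implement it.
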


\begin{proof} Let~$\varphi_0\in C^\infty_c(B_{19/10},[0,1])$ be radial and radially nonincreasing and such that~$\varphi_0=1$ in~$B_{11/10}$.
Then, for all~$j\ge1$, let
\begin{equation}\label{0984ujr0-3uthgGSBDEFIj}\varphi_j(\xi):=\varphi_0(2^{-j}\xi)-\varphi_0(2^{-j+1}\xi).\end{equation}

Let us check that
\begin{equation}\label{0984ujr0-3uthgGSBDEFIj2}
{\mbox{$\varphi_j$ takes values in~$[0,1]$.}}\end{equation} Since this is obvious for~$j=0$ let us assume that~$j\ge1$.
Thus, since~$\varphi_0\ge0$, it follows from~\eqref{0984ujr0-3uthgGSBDEFIj} that~$\varphi_j\le1$.
Additionally, since~$\varphi_0$ is radially nonincreasing, we have that~$\varphi_0(2^{-j}\xi)\ge\varphi_0(2^{-j+1}\xi)$, which, together with~\eqref{0984ujr0-3uthgGSBDEFIj}, gives that~$\varphi_j\ge0$.
The proof of~\eqref{0984ujr0-3uthgGSBDEFIj2} is thereby complete.
 
Now we check the validity of~\eqref{0pirj09365-4g4eZXCHJKLRFG-544-NO1-LL-eq2}.
Firstly, if~$|\xi|\le\frac{11\;2^{j-1}}{10}$ we have that~$
|2^{-j}\xi|\le|2^{-j+1}\xi|\leq\frac{11}{10}$, therefore~$\varphi_0(2^{-j}\xi)=1=\varphi_0(2^{-j+1}\xi)$, whence~$\varphi_j(\xi)=0$.

Similarly, if~$|\xi|\ge\frac{19\;2^{j+1}}{20}$ it follows that~$|2^{-j+1}\xi|\ge|2^{-j}\xi|\ge\frac{19}{10}$, yielding that~$\varphi_0(2^{-j}\xi)=0=\varphi_0(2^{-j+1}\xi)$, and accordingly~$\varphi_j(\xi)=0$.

These observations establish~\eqref{0pirj09365-4g4eZXCHJKLRFG-544-NO1-LL-eq2}.

We now prove~\eqref{0pirj09365-4g4eZXCHJKLRFG-544-NO1-LL-eq1}. 
For this, we employ a telescopic sum generated by~\eqref{0984ujr0-3uthgGSBDEFIj}
\[
\sum_{j=0}^{+\infty}\varphi_j(\xi)
=\lim_{N\nearrow+\infty}\sum_{j=0}^{N}\varphi_j(\xi)
=\lim_{N\nearrow+\infty}\varphi_0(2^{-N}\xi)=\varphi_0(0)=1
\]

Furthermore, if~$j\ge1$ we can use~\eqref{0984ujr0-3uthgGSBDEFIj} to find that
\begin{eqnarray*}
\varphi_j(\xi)-\varphi_1(2^{1-j}\xi)=\big(\varphi_0(2^{-j}\xi)-\varphi_0(2^{-j+1}\xi)\big)-
\big(\varphi_0(2^{-1}\,2^{1-j}\xi)-\varphi_0(2^{1-j}\xi)\big)=0,
\end{eqnarray*}
which establishes~\eqref{0pirj09365-4g4eZXCHJKLRFG-544-NO1-LL-eq4-09}.

As a consequence of~\eqref{0pirj09365-4g4eZXCHJKLRFG-544-NO1-LL-eq4-09} we also have that
$$ \check\varphi_j(x)=\int_{\R^n}\varphi_1(2^{1-j}\xi)\,e^{2\pi ix\cdot\xi}\,d\xi=
2^{(j-1)n}\int_{\R^n}\varphi_1(\eta)\,e^{2\pi i2^{j-1}x\cdot\eta}\,d\eta=
2^{(j-1)n}\check\varphi_1(2^{j-1}x),$$
which is~\eqref{0pirj09365-4g4eZXCHJKLRFG-544-NO1-LL-eq4-09-TRIS}, and accordingly, changing variables~$y:=2^{j-1}x$,
\begin{eqnarray*}
\|\check\varphi_j\|_{L^1(\R^n)}=2^{(j-1)n}\int_{\R^n}|\check\varphi_1(2^{j-1}x)|\,dx
=\|\check\varphi_1\|_{L^1(\R^n)},
\end{eqnarray*}
which is~\eqref{0pirj09365-4g4eZXCHJKLRFG-544-NO1-LL-eq4-09-BIS}.

Additionally, taking derivatives of~\eqref{0pirj09365-4g4eZXCHJKLRFG-544-NO1-LL-eq4-09}
we conclude that, for all~$j\ge1$,~$\alpha\in\N^n$ and~$\xi\in\R^n$,
\begin{eqnarray*}
|D^\alpha\varphi_j(\xi)|&=&2^{(1-j)|\alpha|}|D^\alpha\varphi_1(2^{1-j}\xi)|
\end{eqnarray*}
from which the claim in~\eqref{0pirj09365-4g4eZXCHJKLRFG-544-NO1-LL-eq4} plainly follows.


Besides, we use~\eqref{0pirj09365-4g4eZXCHJKLRFG-544-NO1-LL-eq2} and~\eqref{0pirj09365-4g4eZXCHJKLRFG-544-NO1-LL-eq4-09}, together with the change of variable~$(\eta,z):=(2^{j-1}\xi,2^{1-j}y)$, and observe that, for every~$j\ge1$ and~$\alpha\in\N^n$,
\begin{eqnarray*}
0&=& D^\alpha\varphi_1(0)\\
&=& \int_{\R^n} {\mathcal{F}}^{-1} (D^\alpha\varphi_1) (y)\,dy
\\&=&\iint_{\R^n\times\R^n} D^\alpha\varphi_1(\xi)\, e^{2\pi i\xi\cdot y}\,d\xi\,dy
\\&=&(-1)^{|\alpha|}\iint_{\R^n\times\R^n} \varphi_1(\xi)\, D^\alpha_\xi e^{2\pi i\xi\cdot y}\,d\xi\,dy
\\&=& (-2\pi i)^{|\alpha|}\iint_{\R^n\times\R^n} \varphi_1(\xi)\,y^\alpha\, e^{2\pi i\xi\cdot y}\,d\xi\,dy
\\&=& (-2^j\pi i)^{|\alpha|}\iint_{\R^n\times\R^n} \varphi_1(2^{1-j}\eta)\,z^\alpha\, e^{2\pi i\eta\cdot z}\,d\eta\,dz
\\&=& (-2^j\pi i)^{|\alpha|}\iint_{\R^n\times\R^n} \varphi_j(\eta)\,z^\alpha\, e^{2\pi i\eta\cdot z}\,d\eta\,dz
\\&=& (-2^j\pi i)^{|\alpha|}\int_{\R^n} \check\varphi_j(z)\,z^\alpha\,dz
,\end{eqnarray*}
which proves~\eqref{UTILTAYMAA8ikjf}.

Now we prove~\eqref{UTILTAYMAA8ikjf-x5}.
For this, we use the fact that~$\varphi_j\ge0$ to deduce that~$\Phi_k\le1$. Similarly, by~\eqref{0pirj09365-4g4eZXCHJKLRFG-544-NO1-LL-eq1},
\begin{equation} \label{UTILTAYMAA8ikjf-x5-098uhn3oefnX}\Phi_k=\sum_{k+1}^{+\infty}\varphi_j\ge0.\end{equation}
Let us now pick~$\xi\in\R^n\setminus B_{2^{k+1}}$. Then~$|\xi|\ge 2^{j+1}$, and thus~$\varphi_j(\xi)=0$
owing to~\eqref{0pirj09365-4g4eZXCHJKLRFG-544-NO1-LL-eq2}, for all~$j\in\{0,\dots,k\}$. This and~\eqref{UTILTAYMAA8ikjf-x589} give that~$\Phi_k(\xi)=1$.

Similarly, if we pick~$\xi\in B_{2^k}$, we have that~$|\xi|\le 2^{j-1}$, and thus~$\varphi_j(\xi)=0$
owing to~\eqref{0pirj09365-4g4eZXCHJKLRFG-544-NO1-LL-eq2},
for all~$j\ge k+1$. This and~\eqref{UTILTAYMAA8ikjf-x5-098uhn3oefnX} yield that~$\Phi_k(\xi)=0$.

Moreover, we have that~$\|\Phi_k\|_{L^\infty(\R^n)}\le1$ and,
recalling~\eqref{0pirj09365-4g4eZXCHJKLRFG-544-NO1-LL-eq4}, for all~$\alpha\in\N^n\setminus\{0\}$,
\begin{eqnarray*}
|D^\alpha\Phi_k|=\left|\sum_{j=0}^k D^\alpha\varphi_j\right|\le\sum_{j=0}^k\frac{C}{2^{|\alpha|j}} \|D^\alpha\varphi_1\|_{L^\infty(\R^n)}\le\sum_{j=0}^{+\infty}\frac{C}{2^{|\alpha|j}} \|D^\alpha\varphi_1\|_{L^\infty(\R^n)}\le C,
\end{eqnarray*}
up to renaming~$C$.
These observations establish~\eqref{UTILTAYMAA8ikjf-x5}.

Now we deal with the setting in~\eqref{UTILTAYMAA8ikjf-x5-LAPSIN}.
We observe that~\eqref{UTILTAYMAA8ikjf-x5-LAPSIN2} is obvious at all points outside the support of~$\varphi_j$, since both sides vanish there. Hence, to prove~\eqref{UTILTAYMAA8ikjf-x5-LAPSIN2},
we pick~$\xi$ in the support of~$\varphi_j$. 

This gives that~$\varphi_\ell(\xi)=0$ unless~$\ell\in\{j-1,j,j+1\}$, thanks to~\eqref{0pirj09365-4g4eZXCHJKLRFG-544-NO1-LL-eq2}.

As a result, by~\eqref{0pirj09365-4g4eZXCHJKLRFG-544-NO1-LL-eq1},
$$ 1=\sum_{\ell=0}^{+\infty}\varphi_\ell(\xi)=\sum_{\ell=j-1}^{j+1}\varphi_\ell(\xi)=\psi_j(\xi),$$
from which we obtain~\eqref{UTILTAYMAA8ikjf-x5-LAPSIN2}, as desired.
\end{proof}

The dyadic partition of unity presented in Lemma~\ref{0pirj09365-4g4eZXCHJKLRFG-544-NO1-LL}
allows us to introduce the Besov spaces via the following definition. \label{DEFBEScsovkgRFVblop0-23}
Given~$s>0$ and~$p$,~$q\in[1,+\infty)$, we define~$B^{s,p,q}(\R^n)$ as the space of\footnote{Alternatively, rather than considering functions,
one can also work with spaces of tempered distributions (a similar remark was pointed out in footnote~\ref{VEONORL123r3FVSjjMDMFP} on page~\pageref{VEONORL123r3FVSjjMDMFP} for Bessel potential spaces). The distributional settings would be equivalent to our framework, since, as showcased in
Corollary~\ref{COROLEEP2r34t5},~$B^{s,p,q}(\R^n)\subseteq L^p(\R^n)$ for all positive values of~$s$, and only minor notational modifications would be needed in the proofs presented here.} functions~$u\in L^p(\R^n)$ for which
$$\sum_{j=0}^{+\infty}2^{jsq}\|\check{\varphi}_j*u\|_{L^p(\R^n)}^q<+\infty.$$
This space is equipped with the norm
\begin{equation}\label{BENOGIU}\|u\|_{B^{s,p,q}(\R^n)}:=
\left(\sum_{j=0}^{+\infty}2^{jsq}\|\check{\varphi}_j*u\|_{L^p(\R^n)}^q\right)^{\frac1q}.\end{equation}

For our purposes, Besov spaces are important because they constitute an interesting bridge between Bessel potential spaces and Sobolev spaces, somewhat allowing us to extend, in a suitably modified form, Theorem~\ref{TH0-104-okn5KMD3} beyond the case of half-integers (see e.g.
Corollaries~\ref{LE637:l},~\ref{COROLEEP2r34t5} and~\ref{0pirj09365-4g4eZXCHJKLRFG-544-NO1-I2L3C2O25R213t4TY},
and Theorem~\ref{THPS2} for precise statements).

Interestingly, one can directly observe that the exponents~$s$ and~$q$ in~$B^{s,p,q}(\R^n)$ somewhat measure a ``degree of regularity''. Indeed, we have:

\begin{lemma} \label{F58uuRS-2CALEREG:LE} Let~$s>0$ and~$p$,~$q\in[1,+\infty)$. Let also~$a>0$.

Then,
\begin{equation}\label{F58uuRS-2CALEREG-0} B^{s,p,q+a}(\R^n)\supseteq B^{s,p,q}(\R^n),\end{equation}
with continuous embedding.

Moreover, for all~$q_1$,~$q_2\in[1,+\infty)$,
\begin{equation}\label{F58uuRS-2CALEREG}
B^{s+a,p,q_1}(\R^n)\subseteq B^{s,p,q_2}(\R^n),\end{equation}with continuous embedding.
\end{lemma}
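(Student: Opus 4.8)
The plan is to prove the two embeddings in Lemma~\ref{F58uuRS-2CALEREG:LE} directly from the definition of the Besov norm in~\eqref{BENOGIU}, treating each claim separately; both are elementary consequences of monotonicity properties of~$\ell^q$-norms and of a summation-of-a-geometric-series trick.

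\textbf{Proof of~\eqref{F58uuRS-2CALEREG-0}.} First I would fix~$u\in B^{s,p,q}(\R^n)$ and set~$a_j:=2^{js}\|\check\varphi_j*u\|_{L^p(\R^n)}\ge0$. The claim is exactly the classical inclusion~$\ell^q\hookrightarrow\ell^{q+a}$ with norm at most~$1$. I would recall the short argument: if~$\|(a_j)\|_{\ell^q}=0$ the claim is trivial, and otherwise, after normalizing so that~$\|(a_j)\|_{\ell^q}=1$, one has~$a_j\le1$ for every~$j$, hence~$a_j^{q+a}\le a_j^q$, so~$\sum_j a_j^{q+a}\le\sum_j a_j^q=1$, which gives~$\|(a_j)\|_{\ell^{q+a}}\le1=\|(a_j)\|_{\ell^q}$. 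Undoing the normalization yields~$\|u\|_{B^{s,p,q+a}(\R^n)}\le\|u\|_{B^{s,p,q}(\R^n)}$, proving both the inclusion and the continuity of the embedding.

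\textbf{Proof of~\eqref{F58uuRS-2CALEREG}.} Now I would fix~$u\in B^{s+a,p,q_1}(\R^n)$ and write~$b_j:=\|\check\varphi_j*u\|_{L^p(\R^n)}$. Since~$a>0$, the factor~$2^{-jaq_2}$ is summable, and I would estimate
\begin{equation*}
\|u\|_{B^{s,p,q_2}(\R^n)}^{q_2}=\sum_{j=0}^{+\infty}2^{jsq_2}b_j^{q_2}
=\sum_{j=0}^{+\infty}2^{-jaq_2}\,\big(2^{j(s+a)}b_j\big)^{q_2}.
\end{equation*}
By~\eqref{F58uuRS-2CALEREG-0} (applied with exponent~$s+a$ and the pair~$q_1$, then possibly~$\max\{q_1,q_2\}$) or, more self-containedly, by the boundedness of the sequence~$2^{j(s+a)}b_j$, which follows from~$u\in B^{s+a,p,q_1}(\R^n)$ since each term of a convergent series is bounded, one has~$\sup_j 2^{j(s+a)}b_j\le C\|u\|_{B^{s+a,p,q_1}(\R^n)}$. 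Substituting this bound into the display gives
\begin{equation*}
\|u\|_{B^{s,p,q_2}(\R^n)}^{q_2}\le C^{q_2}\|u\|_{B^{s+a,p,q_1}(\R^n)}^{q_2}\sum_{j=0}^{+\infty}2^{-jaq_2}
=\frac{C^{q_2}}{1-2^{-aq_2}}\,\|u\|_{B^{s+a,p,q_1}(\R^n)}^{q_2},
\end{equation*}
whence~$\|u\|_{B^{s,p,q_2}(\R^n)}\le C'\|u\|_{B^{s+a,p,q_1}(\R^n)}$ for a constant~$C'$ depending only on~$a$,~$q_1$ and~$q_2$. This establishes~\eqref{F58uuRS-2CALEREG} with continuous embedding.

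I do not expect a genuine obstacle here: both statements are soft facts about weighted~$\ell^q$ sequence spaces, and the only point requiring a little care is making the constant in~\eqref{F58uuRS-2CALEREG} uniform in~$q_1,q_2$, which is handled by bounding~$\sup_j 2^{j(s+a)}b_j$ by the~$\ell^{q_1}$-norm (valid since~$\ell^{q_1}\hookrightarrow\ell^\infty$) and then summing the geometric series~$\sum_j 2^{-jaq_2}$. If one prefers to avoid invoking~$\ell^{q_1}\hookrightarrow\ell^\infty$ explicitly, the same effect is obtained by first passing through~\eqref{F58uuRS-2CALEREG-0} to reduce to the case~$q_1=q_2=:q$ and then running the geometric-series estimate above with~$q$ in place of~$q_2$.
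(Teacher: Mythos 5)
Your argument is correct. For \eqref{F58uuRS-2CALEREG-0} your normalization argument is the same standard $\ell^q\subseteq\ell^{q+a}$ estimate the paper uses (the paper writes it without normalizing, via $\kappa_j^a\le\big(\sum_m\kappa_m^q\big)^{a/q}$), yielding the same norm-one bound. For \eqref{F58uuRS-2CALEREG} you take a mildly different route: you bound the sequence $2^{j(s+a)}\|\check\varphi_j*u\|_{L^p(\R^n)}$ in $\ell^\infty$ by the $B^{s+a,p,q_1}(\R^n)$ norm and then sum the geometric series $\sum_j 2^{-jaq_2}$, i.e.\ you pivot through the weakest summation exponent; the paper instead pivots through the strongest one, first proving $B^{s+a,p,q_1}(\R^n)\subseteq B^{s,p,1}(\R^n)$ by H\"older's inequality with exponents $\frac{q_1}{q_1-1}$ and $q_1$ applied to $2^{-ja}\cdot 2^{j(s+a)}\|\check\varphi_j*u\|_{L^p(\R^n)}$, and then concatenating with \eqref{F58uuRS-2CALEREG-0}. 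Both proofs exploit the same mechanism, namely the summable gain $2^{-ja}$ coming from $a>0$; yours is marginally more direct, while the paper's records the intermediate embedding into $B^{s,p,1}(\R^n)$, the sharpest target for fixed smoothness $s$. One small caveat: your closing alternative (reducing to $q_1=q_2$ via \eqref{F58uuRS-2CALEREG-0}) only works when $q_1\le q_2$, because \eqref{F58uuRS-2CALEREG-0} can only increase the summation exponent; when $q_1>q_2$ some form of the sup bound or the H\"older step remains necessary. Your main argument does not rely on that remark, so correctness is unaffected.
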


We point out the ``scale of regularity'' detected by the different
exponents in Besov spaces: specifically, we stress that the exponent~$q$ measures the regularity of a function on a ``finer scale'' than~$s$, in the sense given by~\eqref{F58uuRS-2CALEREG}.
Moreover, it can be shown that the inclusions in~\eqref{F58uuRS-2CALEREG-0} and~\eqref{F58uuRS-2CALEREG}
are sharp and cannot be improved, see~\cite[Lemmata~22 and~24]{MR163159}.

\begin{proof}[Proof of Lemma~\ref{F58uuRS-2CALEREG:LE}] Here is a general observation:
if~$\kappa_j\ge0$ and~$a\ge0$, then
$$ \kappa_j^a\le \left(\sum_{m=0}^{+\infty}\kappa_m^q\right)^{\frac{a}q}$$and therefore
\begin{equation}\label{F58uuRS-2CALEREG:LE2o-3rkpjtgr94ejrfmXXs}
\sum_{j=0}^{+\infty}\kappa_j^{q+a}\le
\sum_{j=0}^{+\infty}\kappa_j^{q}\left(\sum_{m=0}^{+\infty}\kappa_m^q\right)^{\frac{a}q}=
\left(\sum_{j=0}^{+\infty}\kappa_j^{q}\right)^{\frac{q+a}q}.
\end{equation}

Thus, to prove~\eqref{F58uuRS-2CALEREG-0},
we pick~$u\in B^{s,p,q}(\R^n)$ and use~\eqref{F58uuRS-2CALEREG:LE2o-3rkpjtgr94ejrfmXXs} with~$\kappa_j:=2^{js}\|\check{\varphi}_j*u\|_{L^p(\R^n)}$ to find that
\begin{eqnarray*}
\|u\|_{B^{s,p,q+a}(\R^n)}=
\left(\sum_{j=0}^{+\infty}2^{js(q+a)}\|\check{\varphi}_j*u\|_{L^p(\R^n)}^{q+a}\right)^{\frac1{q+a}}\le
\left(\sum_{j=0}^{+\infty}2^{jsq}\|\check{\varphi}_j*u\|_{L^p(\R^n)}^{q}\right)^{\frac1q}=\|u\|_{B^{s,p,q}(\R^n)}.
\end{eqnarray*}
This proves~\eqref{F58uuRS-2CALEREG-0} and we now deal with the proof of~\eqref{F58uuRS-2CALEREG}.

To this end, we first show that
\begin{equation}\label{F58uuRS-2CALEREG-p1e27634563r5erew1}
B^{s+a,p,q_1}(\R^n)\subseteq B^{s,p,1}(\R^n),\end{equation}with continuous embedding.

For this, we pick~$u\in B^{s+a,p,q_1}(\R^n)$ and we distinguish two cases. If~$q_1=1$, then we notice that
$$ \|u\|_{B^{s,p,1}(\R^n)}=\sum_{j=0}^{+\infty}2^{js}\|\check{\varphi}_j*u\|_{L^p(\R^n)}\le
\sum_{j=0}^{+\infty}2^{j(s+a)}\|\check{\varphi}_j*u\|_{L^p(\R^n)}=\|u\|_{B^{s+a,p,1}(\R^n)},$$
giving~\eqref{F58uuRS-2CALEREG-p1e27634563r5erew1} in this case.

If instead~$q_1>1$ we use H\"older's Inequality with exponents~$\frac{q_1}{q_1-1}$ and~$q_1$ to observe that
\begin{eqnarray*}
\|u\|_{B^{s,p,1}(\R^n)}&=&\sum_{j=0}^{+\infty}2^{js}\|\check{\varphi}_j*u\|_{L^p(\R^n)}\\
&=&\sum_{j=0}^{+\infty}2^{-ja} 2^{j(s+a)}\|\check{\varphi}_j*u\|_{L^p(\R^n)}
\\&\le&\left(\sum_{j=0}^{+\infty}2^{-\frac{jaq_1}{q_1-1}} \right)^{\frac{q_1-1}{q_1}}
\left(\sum_{j=0}^{+\infty} 2^{j(s+a)q_1}\|\check{\varphi}_j*u\|_{L^p(\R^n)}^{q_1}\right)^{\frac1{q_1}}\\&=&C\|u\|_{B^{s+a,p,q_1}(\R^n)},
\end{eqnarray*}
which establishes~\eqref{F58uuRS-2CALEREG-p1e27634563r5erew1}.

The proof of~\eqref{F58uuRS-2CALEREG} then follows from~\eqref{F58uuRS-2CALEREG-0}
and~\eqref{F58uuRS-2CALEREG-p1e27634563r5erew1}, observing that~$B^{s+a,p,q_1}(\R^n)\subseteq B^{s,p,1}(\R^n)\subseteq B^{s,p,q_2}(\R^n)$, with continuous embedding.
\end{proof}

A useful result when dealing with Besov spaces consists in detecting the link
between differentiation and multiplication for functions
spectrally supported in a shell or in a ball. The set of inequalities obtained in this way is sometimes called
Bernstein's Lemma \index{Bernstein's Lemma} (dating back in its original formulation to~\cite{BERN1912})
and goes as follows:

\begin{lemma}\label{BERLEMMAUT}
Let~$C_1$,~$C_2\in(0,+\infty)$, with~$C_2>C_1$.

There exists~$C\ge1$, depending only on~$n$,~$C_1$ and~$C_2$, for which,
for all~$a$,~$b\ge1$ with~$a\le b$, all~$k\in\N$, all~$\lambda>0$, and all functions~$v$, the following statements hold true.

If the support of the distribution~$\widehat v$ is contained in the ball~$B_{C_1\lambda}$, then
\begin{equation}\label{BERLEMMAUT-1}
\sup_{{\alpha\in\N^n}\atop{|\alpha|=k}}\|D^\alpha v\|_{L^b(\R^n)}\le C^k\lambda^{k+n\left(\frac{1}{a}-\frac{1}b\right)} \|v\|_{L^a(\R^n)}.
\end{equation}

If the support of the distribution~$\widehat v$ is contained in the shell~$B_{C_2\lambda}\setminus B_{C_1\lambda}$, then
\begin{equation}\label{BERLEMMAUT-2}
\frac{\lambda^{k} }{C^k}\|v\|_{L^a(\R^n)}\le
\sup_{{\alpha\in\N^n}\atop{|\alpha|=k}}\|D^\alpha v\|_{L^a(\R^n)}\le C^k\lambda^{k} \|v\|_{L^a(\R^n)}.
\end{equation}
\end{lemma}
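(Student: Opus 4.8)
\textbf{Proof proposal for Lemma~\ref{BERLEMMAUT} (Bernstein's inequalities).}

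The plan is to reduce both statements to convolution estimates with suitable Schwartz kernels, using Young's Convolution Inequality as the main workhorse, together with a scaling argument to track the dependence on~$\lambda$. First I would fix an auxiliary radial cutoff~$\chi\in C^\infty_c(\R^n)$ with~$\chi\equiv1$ on~$B_{C_1}$ and~$\chi\equiv0$ outside~$B_{2C_1}$ (or outside~$B_{(C_1+C_2)/2}$, whichever is more convenient), and a second cutoff~$\widetilde\chi\in C^\infty_c(B_{2C_2}\setminus B_{C_1/2})$ with~$\widetilde\chi\equiv1$ on the shell~$B_{C_2}\setminus B_{C_1}$. Set~$\chi_\lambda(\xi):=\chi(\xi/\lambda)$ and~$\widetilde\chi_\lambda(\xi):=\widetilde\chi(\xi/\lambda)$. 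If~$\widehat v$ is supported in~$B_{C_1\lambda}$, then~$\chi_\lambda\widehat v=\widehat v$, so for any multi-index~$\alpha$ with~$|\alpha|=k$ we have
\[
D^\alpha v={\mathcal{F}}^{-1}\big((2\pi i\xi)^\alpha\widehat v\big)
={\mathcal{F}}^{-1}\big((2\pi i\xi)^\alpha\chi_\lambda(\xi)\big)*v
=:K^{(\alpha)}_\lambda*v.
\]
A change of variables shows~$K^{(\alpha)}_\lambda(x)=\lambda^{k+n}K^{(\alpha)}_1(\lambda x)$, hence~$\|K^{(\alpha)}_\lambda\|_{L^r(\R^n)}=\lambda^{k+n(1-1/r)}\|K^{(\alpha)}_1\|_{L^r(\R^n)}$ for any~$r\in[1,+\infty]$. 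Since~$(2\pi i\xi)^\alpha\chi_1(\xi)$ is smooth and compactly supported, $K^{(\alpha)}_1$ is Schwartz, so its~$L^r$ norm is finite for every~$r$; moreover, because the symbol is a homogeneous polynomial of degree~$k$ times a fixed bump, one can bound~$\|K^{(\alpha)}_1\|_{L^r(\R^n)}\le C^k$ uniformly in~$\alpha$ with~$|\alpha|=k$ (each derivative of the symbol gains a factor controlled by~$C$, and the support is fixed). Taking~$r$ the exponent with~$1+\tfrac1b=\tfrac1r+\tfrac1a$ in Young's inequality, namely~$\tfrac1r=1-\tfrac1a+\tfrac1b$, we get
\[
\|D^\alpha v\|_{L^b(\R^n)}=\|K^{(\alpha)}_\lambda*v\|_{L^b(\R^n)}
\le\|K^{(\alpha)}_\lambda\|_{L^r(\R^n)}\|v\|_{L^a(\R^n)}
\le C^k\lambda^{k+n(1/a-1/b)}\|v\|_{L^a(\R^n)},
\]
which is exactly~\eqref{BERLEMMAUT-1} (here~$n(1-1/r)=n(1/a-1/b)$, using~$a\le b$ so~$r\ge1$).

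For~\eqref{BERLEMMAUT-2}, the upper bound is the same argument with~$b=a$: write~$D^\alpha v={\mathcal{F}}^{-1}((2\pi i\xi)^\alpha\widetilde\chi_\lambda(\xi))*v$ using that~$\widetilde\chi_\lambda\widehat v=\widehat v$ on the shell support, and apply Young's inequality with the~$L^1$ norm of the kernel, which scales as~$\lambda^k$ and is bounded by~$C^k\lambda^k$ by the same Schwartz estimate. The lower bound is the point requiring a small trick: on the shell~$B_{C_2\lambda}\setminus B_{C_1\lambda}$ the symbol~$|\xi|^{2}$ is bounded below by~$(C_1\lambda)^2>0$, so~$1/|\xi|^{2k}$ times~$\widetilde\chi_\lambda$ is a well-defined smooth compactly supported function, of the order~$\lambda^{-2k}$ with~$k$-th derivatives controlled by~$C^k$. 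One recovers~$v$ from its~$k$-th derivatives by writing, for instance,
\[
v={\mathcal{F}}^{-1}\!\left(\frac{\widetilde\chi_\lambda(\xi)}{(2\pi i|\xi|^2)^k}\,\sum_{j=1}^n(2\pi i\xi_j)^{2k}\,\widehat v(\xi)\right)
=\sum_{j=1}^n L^{(j)}_\lambda*\partial_j^{2k}v,
\]
where~$L^{(j)}_\lambda(x):={\mathcal{F}}^{-1}\big(\widetilde\chi_\lambda(\xi)(2\pi i\xi_j)^{2k}(2\pi i|\xi|^2)^{-k}\big)(x)=\lambda^{-k+n}L^{(j)}_1(\lambda x)$ has~$\|L^{(j)}_\lambda\|_{L^1(\R^n)}=\lambda^{-k}\|L^{(j)}_1\|_{L^1(\R^n)}\le C^k\lambda^{-k}$. (The identity~$\sum_j\xi_j^{2k}$ is not literally~$|\xi|^{2k}$, but it is comparable to it on the shell, or one may alternatively use the multinomial expansion of~$|\xi|^{2k}=\big(\sum_j\xi_j^2\big)^k$ and split into monomials~$\xi^\beta$ with~$|\beta|=2k$, each of which is~$\le$ a product of~$k$-th derivatives after a further factorization; this bookkeeping is routine.) Applying Young's inequality gives~$\|v\|_{L^a}\le C^k\lambda^{-k}\sup_{|\alpha|=k}\|D^\alpha v\|_{L^a}$ after collecting the~$n$ terms into the constant~$C$, which is the left inequality in~\eqref{BERLEMMAUT-2}.

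The main obstacle is not any single estimate but the uniform bookkeeping of the constant: one must check that the~$L^r$ (resp.~$L^1$) norms of the rescaled kernels~$K^{(\alpha)}_1$, $L^{(j)}_1$ grow at most geometrically in~$k$, i.e.~like~$C^k$ with~$C$ depending only on~$n,C_1,C_2$ and not on~$k,\alpha,a,b,\lambda$. The cleanest way is to note that each such kernel is the inverse Fourier transform of a symbol of the form~$P(\xi)\,\chi(\xi)$ with~$P$ a polynomial of degree~$\le 2k$ and~$\chi$ a fixed smooth bump supported where~$P$'s denominator stays away from zero; integrating by parts~$N$ times (for~$N$ large, fixed depending on~$n$) produces a decay factor~$(1+|x|)^{-N}$ and a constant bounded by~$\sup_{|\gamma|\le N}\|D^\gamma(P\chi)\|_{L^1}$, and a direct estimate of these derivatives by the Leibniz rule, using that each factor in~$P$ contributes a bounded multiplicative constant on the fixed support, yields the~$C^k$ bound. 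The rest — the scaling identities, the choice of Young exponents, the passage between~$|\xi|^{2k}$ and monomials — is straightforward and I would not grind through it in detail.
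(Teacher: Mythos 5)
Your handling of \eqref{BERLEMMAUT-1} and of the right-hand inequality in \eqref{BERLEMMAUT-2} is correct and is essentially the paper's own argument: since $\widehat v$ lives where a fixed rescaled bump equals one, $D^\alpha v$ is the convolution of $v$ with ${\mathcal{F}}^{-1}\big((2\pi i\xi)^\alpha\chi(\xi/\lambda)\big)$, whose $L^r$ norm scales like $\lambda^{k+n(1-1/r)}$ and is bounded by $C^k$ at $\lambda=1$ via integration by parts (the paper gets the uniform-in-$r$ bound by interpolating the $L^1$ and $L^\infty$ estimates of $D^\alpha\check\tau$), and Young's inequality with $\frac1r=1-\frac1a+\frac1b$ concludes; your direct rescaling of the kernel replaces the paper's preliminary reduction to $\lambda=1$, which is equivalent.

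The lower bound in \eqref{BERLEMMAUT-2}, however, is not established by what you wrote, and that is where the only real work of the lemma sits. Your displayed reproduction formula is false, since $\sum_j\xi_j^{2k}\neq|\xi|^{2k}$ for $k\ge2$, $n\ge2$ (as you acknowledge); moreover the scaling you assign to $L^{(j)}_\lambda$ is miscomputed, because its symbol is, up to constants, $\widetilde\chi(\xi/\lambda)\,\xi_j^{2k}/|\xi|^{2k}$, which is the rescaling of a fixed degree-zero--homogeneous profile, so $\|L^{(j)}_\lambda\|_{L^1(\R^n)}$ is independent of $\lambda$ rather than of size $\lambda^{-k}$; and your decomposition pairs $v$ with derivatives of order $2k$, while the claim concerns order $k$, so even the corrected variant obtained by dividing by $\sum_l\xi_l^{2k}$ only yields $\|v\|_{L^a(\R^n)}\le C^k\lambda^{-2k}\sup_{|\alpha|=2k}\|D^\alpha v\|_{L^a(\R^n)}$ and would still need a further application of the already-proved upper bound to descend to order $k$. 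The ``routine bookkeeping'' you defer is precisely the missing argument, and it is the paper's proof: one shows by induction that $|\xi|^{2k}=\sum_{|\alpha|=k}a_\alpha(i\xi)^\alpha(-i\xi)^\alpha$ with $|a_\alpha|\le n^k$, writes $v=\sum_{|\alpha|=k}\Theta_\alpha*D^\alpha v$ with $\Theta_\alpha:={\mathcal{F}}^{-1}\big(a_\alpha(i\xi)^\alpha\theta(\xi)/((2\pi i)^k|\xi|^{2k})\big)$, where $\theta$ is a cutoff equal to one on the shell, and then proves the uniform bound $\|\Theta_\alpha\|_{L^1(\R^n)}\le C^k$ by integration by parts; this last estimate is not a fixed-bump statement, since the symbol depends on $k$ through $\xi^\alpha/|\xi|^{2k}$ and through $a_\alpha$, so the geometric-in-$k$ growth genuinely has to be checked. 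As written, your proposal proves \eqref{BERLEMMAUT-1} and the upper half of \eqref{BERLEMMAUT-2}, but leaves the lower bound unproved.
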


\begin{proof} First of all, we observe that 
\begin{equation}\label{LAMBDABERLEMMAUT-2}
{\mbox{it suffices to prove the desired claims when~$\lambda=1$.}}\end{equation}
Indeed, if~\eqref{BERLEMMAUT-1} and~\eqref{BERLEMMAUT-2} hold true when~$\lambda=1$,
we define~$w(x):=\frac1{\lambda^n} v\left(\frac{x}\lambda \right)$ and observe that~$\widehat w(\xi)=\widehat v(\lambda\xi)$.
In this way, the support of~$\widehat w$ satisfies the same hypotheses as that of~$\widehat v$ but with~$\lambda=1$. To recover the appropriate scaling for~$v$ it is thus sufficient to observe that, for all~$\beta\in\R^n$ and~$m\ge1$,
\begin{eqnarray*}
\| D^\beta w\|_{L^m(\R^n)}= \lambda^{-n-|\beta|}\left(\int_{\R^n}
\left|D^\beta v\left(\frac{x}\lambda \right)\right|^m\,dx\right)^{\frac1m}=\lambda^{-n-|\beta|+\frac{n}{m}}\,\| D^\beta v\|_{L^m(\R^n)}.
\end{eqnarray*}
This proves~\eqref{LAMBDABERLEMMAUT-2}.

Now, in light of~\eqref{LAMBDABERLEMMAUT-2}, we assume that~$\lambda=1$ and we carry on with the main argument.
For this, the idea is to multiply~$v$ by a cutoff function in the frequency space and use the interplay between multiplication and convolution under Fourier Transform. 

Namely, let~$\tau\in C^\infty_c(B_{2C_1})$ be such that~$\tau=1$ in~$B_{C_1}$.
Hence, if the support of the distribution~$\widehat v$ is contained
in the ball~$B_{C_1}$, we have that~$\widehat v= \tau\,\widehat v$.

As a consequence, \begin{equation}\label{RnleCk1012oerk00otgb3edcfgyan7y7u} v=\check\tau* v.\end{equation}

Moreover, for all~$\alpha\in\N^n$ with~$|\alpha|=k$,
\begin{eqnarray*} D^\alpha\check\tau(x)=D^\alpha_x\left(\int_{\R^n}\tau(\xi)\,e^{2\pi ix\cdot\xi}\,d\xi\right)=\int_{B_{2C_1}}(2\pi i\xi)^\alpha \tau(\xi)\,e^{2\pi ix\cdot\xi}\,d\xi.
\end{eqnarray*}
On this account,
\begin{equation}\label{USHNDcTRnKrijdf9ijN9ijn3edft76yujikmnb} \|D^\alpha\check\tau\|_{L^\infty(\R^n)}\le
(2\pi)^k (2C_1)^{k}\int_{B_{2C_1}}| \tau(\xi)|\,d\xi\le C^{k}
.\end{equation}

Also, letting
\begin{equation} \label{USHNDcTRnKrijdf9ijN9ijn3edft76yujikmnbQ4567AW}{\mathcal{D}}_m:=\Big\{x\in\R^n\setminus B_1 {\mbox{ s.t. }}|x_m|=\max_{\ell\in\{1,\dots,n\}}|x_\ell|\Big\},\end{equation}
we have that
\begin{eqnarray*} \|D^\alpha\check\tau\|_{L^1(\R^n\setminus B_1)}
&\le&
(2\pi)^k 
\int_{\R^n\setminus B_1}\left|\int_{B_{2C_1}} \xi^\alpha \tau(\xi)\,e^{2\pi i x\cdot\xi}\,d\xi\right|\,dx\\&\le&
(2\pi)^k
\sum_{m=1}^n\int_{{\mathcal{D}}_m}
\left|\int_{B_{2C_1}} \xi^\alpha \tau(\xi)\,e^{2\pi ix\cdot\xi}\,d\xi\right|\,dx\\&=&
(2\pi)^k
\sum_{m=1}^n\int_{{\mathcal{D}}_m}
\left|\int_{B_{2C_1}} \xi^\alpha \tau(\xi)\,\partial_{\xi_m}^{n+1}\left(\frac{e^{2\pi ix\cdot\xi}}{(2\pi i x_m)^{n+1}}\right)\,d\xi\right|\,dx\\&=&
(2\pi)^k 
\sum_{m=1}^n\int_{{\mathcal{D}}_m}
\left|\int_{B_{2C_1}} \partial_{\xi_m}^{n+1}\big(\xi^\alpha \tau(\xi)\big)\,
\frac{e^{2\pi i\cdot\xi}}{(2\pi i x_m)^{n+1}}\,d\xi\right|\,dx\\
&\le&
(2\pi)^{k-n-1}
\sum_{m=1}^n
\iint_{{\mathcal{D}}_m\times B_{2C_1}} \big|\partial_{\xi_m}^{n+1}\big(\xi^\alpha \tau(\xi)\big)\big|\,
\frac{dx}{| x_m|^{n+1}}\,d\xi\\&\le&C\,
(2\pi)^{k-n-1}
\iint_{(\R^n\setminus B_1)\times B_{2C_1}} \big|\partial_{\xi_m}^{n+1}\big(\xi^\alpha \tau(\xi)\big)\big|\,
\frac{dx}{| x|^{n+1}}\,d\xi\\&\le&C\,(2\pi)^{k-n-1}
\int_{B_{2C_1}} \big|\partial_{\xi_m}^{n+1}\big(\xi^\alpha \tau(\xi)\big)\big|\,d\xi
\\&\le&C^{k},
\end{eqnarray*}
up to renaming~$C>0$.

This estimate and~\eqref{USHNDcTRnKrijdf9ijN9ijn3edft76yujikmnb} entail that
\begin{equation}\label{USHNDcTRnKrijdf9ijN9ijn3edft76yujikmnb2} \|D^\alpha\check\tau\|_{L^1(\R^n)}\le C^{k}
,\end{equation}
up to renaming constants.

Now we observe that, calling~$B$ the ball centered at the origin with unit volume, for every function~$f$ and every~$m\ge1$, one has that
\begin{equation}\label{USHNDcTRnKrijdf9ijN9ijn3edft76yujikmnb201o2uerjhfgVVXDIKe78}
\|f\|_{L^m(\R^n)}=\left(\int_{\R^n}|f(x)|^m\,dx\right)^{\frac{1}m}
\leq \|f\|_{L^\infty(\R^n)}^{\frac{m-1}m}\left(\int_{\R^n}|f(x)|\,dx\right)^{\frac{1}m}
=\|f\|_{L^\infty(\R^n)}^{\frac{m-1}m} \|f\|_{L^1(\R^n)}^{\frac1m}.
\end{equation}

Hence, we deduce from~\eqref{USHNDcTRnKrijdf9ijN9ijn3edft76yujikmnb},~\eqref{USHNDcTRnKrijdf9ijN9ijn3edft76yujikmnb2},
and~\eqref{USHNDcTRnKrijdf9ijN9ijn3edft76yujikmnb201o2uerjhfgVVXDIKe78} that, for all~$m\ge1$,
$$ \|D^\alpha\check\tau\|_{L^m(\R^n)}\le C^k.$$
Interestingly,~$C$ does not depend on~$m$,~$\alpha$ or~$k$.

In this way, retaking~\eqref{RnleCk1012oerk00otgb3edcfgyan7y7u}, for all~$a$,~$b\ge1$ with~$a\le b$,
all~$k\in\N^n$ and all~$\alpha\in\N^n$ with~$|\alpha|=k$,
\begin{eqnarray*}
\|D^\alpha v\|_{L^b(\R^n)}=\|D^\alpha \check\tau* v\|_{L^b(\R^n)}\le
\|D^\alpha \check\tau\|_{L^m(\R^n)}\| v\|_{L^a(\R^n)}\le C^k\| v\|_{L^a(\R^n)},
\end{eqnarray*}
where we have used Young's Convolution Inequality with exponent~$m:=\frac{ab}{ab+a-b}$
(and we stress that~$m\ge1$ since~$a\le b$). The proof of~\eqref{BERLEMMAUT-1} is thereby complete.

We now prove~\eqref{BERLEMMAUT-2}. Actually, the second inequality in~\eqref{BERLEMMAUT-2} is a byproduct of~\eqref{BERLEMMAUT-1} with~$b:=a$, hence it only remains to prove the first inequality in~\eqref{BERLEMMAUT-2}.

To this end, we take~$\theta\in C^\infty_c(\R^n)$ such that~$\theta=1$ in~$B_{C_2}\setminus B_{C_1}$.
Thus, if~$\widehat v$ is supported in~$B_{C_2}\setminus B_{C_1}$, it follows that
\begin{equation}\label{BERLEMMAUT-2COMBA0}
\widehat v=\theta\,\widehat v.\end{equation}

It is now useful to observe that, for all~$k\in\N$ and~$\alpha\in\N^n$ with~$|\alpha|=k$, there exists
a set of coefficients~$a_\alpha\in\R$ such that~$|a_\alpha|\le n^k$ and, for every~$\xi\in\R^n$,
\begin{equation}\label{BERLEMMAUT-2COMBA}
\sum_{{\alpha\in\N^n}\atop{|\alpha|=k}}a_\alpha \,(i \xi)^\alpha(-i \xi)^\alpha=
|\xi|^{2k}.\end{equation}
To check this, we can argue by induction over~$k$. When~$k=0$, the claim is obvious, since both sides of~\eqref{BERLEMMAUT-2COMBA} equal to~$1$, and we can take~$a_0:=1$.

Also, when~$k=1$, we can take~$a_\alpha:=1$ for all~$\alpha\in\N^n$ with~$|\alpha|=1$ and arrive at
$$ \sum_{{\alpha\in\N^n}\atop{|\alpha|=1}}a_\alpha \,(i \xi)^\alpha(-i \xi)^\alpha=
\sum_{{\alpha\in\N^n}\atop{|\alpha|=1}}\xi^{2\alpha}=
\sum_{j=1}^n \xi_j^{2}
=|\xi|^{2},$$
which is the desired result in this case.

Let us now suppose that~\eqref{BERLEMMAUT-2COMBA} holds true for~$k\ge1$ and let us establish it for~$k+1$.
To this end, to stress their dependence on~$k$, we denote the coefficients as~$a_{\alpha,k}$ and
we use the inductive hypothesis to calculate that
\begin{eqnarray*}&&
|\xi|^{2(k+1)}=|\xi|^2|\xi|^{2k}=\left(\sum_{j=1}^n\xi_j^2\right)\left(
\sum_{{\alpha\in\N^n}\atop{|\alpha|=k}}a_{\alpha,k} (i \xi)^\alpha(-i \xi)^\alpha\right)\\&&\qquad=
\sum_{{{\alpha\in\N^n}\atop{|\alpha|=k}}\atop{1\le j\le n}} a_{\alpha,k}\, \xi^{2\alpha}\xi_j^2=\sum_{{{\alpha\in\N^n}\atop{|\alpha|=k}}\atop{1\le j\le n}} a_{\alpha,k}\, \xi^{2(\alpha+e_j)}=\sum_{{{\beta\in\N^n}\atop{|\beta|=k+1}}\atop{1\le j\le n}} a_{\beta-e_j,k}\, \xi^{2\beta}\\&&\qquad
=\sum_{{{\beta\in\N^n}\atop{|\beta|=k+1}}} a_{\beta,k+1}\, \xi^{2\beta}=\sum_{{{\beta\in\N^n}\atop{|\beta|=k+1}}} a_{\beta,k+1}(i\xi)^{\beta}(-i\xi)^{\beta}
,\end{eqnarray*}
where
$$ a_{\beta,k+1}:=\sum_{j=1}^n a_{\beta-e_j,k}.$$
Notice that
$$ |a_{\beta,k+1}|\le n\sup_{{\gamma\in\N^n}\atop{|\gamma|=k}}| a_{\beta-e_j,k}|\le n\,n^{k}\le n^{k+1}.$$
This completes the inductive step and provides the proof of~\eqref{BERLEMMAUT-2COMBA}.

Therefore, if
$$ \Theta_\alpha:={\mathcal{F}}^{-1}\left(\frac{a_\alpha\,(i \xi)^\alpha}{(2\pi i)^k|\xi|^{2k}}\theta(\xi)\right),$$
we obtain from~\eqref{BERLEMMAUT-2COMBA0} and~\eqref{BERLEMMAUT-2COMBA} that
\begin{eqnarray*}&&
{\mathcal{F}}\left(\sum_{{\alpha\in\N^n}\atop{|\alpha|=k}}\Theta_\alpha*Dv^\alpha v(x)\right)(\xi)=
\sum_{{\alpha\in\N^n}\atop{|\alpha|=k}}\widehat\Theta_\alpha(\xi) {\mathcal{F}}(D^\alpha v)(\xi)
=\sum_{{\alpha\in\N^n}\atop{|\alpha|=k}}\frac{a_\alpha\,(i \xi)^\alpha}{(2\pi i)^k|\xi|^{2k}}
\,(-2\pi i\xi)^\alpha\,\widehat v(\xi)\\&&\qquad
=\sum_{{\alpha\in\N^n}\atop{|\alpha|=k}}\frac{a_\alpha\,(i \xi)^\alpha (-i\xi)^\alpha}{|\xi|^{2k}}
\,\widehat v(\xi) =\widehat v(\xi)
\end{eqnarray*}
and accordingly
$$ \sum_{{\alpha\in\N^n}\atop{|\alpha|=k}}\Theta_\alpha*D^\alpha v(x)=v(x).$$
This and Young's Convolution Inequality lead to
\begin{equation}\label{BERLEMMAUT-2COMBA56} \begin{split}&\|v\|_{L^a(\R^n)}\le
\sum_{{\alpha\in\N^n}\atop{|\alpha|=k}}\|\Theta_\alpha*D^\alpha v\|_{L^a(\R^n)}\le
\sum_{{\alpha\in\N^n}\atop{|\alpha|=k}}\|\Theta_\alpha\|_{L^1(\R^n)}\|D^\alpha v\|_{L^a(\R^n)}\\&\qquad\le
Ck^{n-1}\sup_{{\alpha\in\N^n}\atop{|\alpha|=k}}\Big(
\|\Theta_\alpha\|_{L^1(\R^n)}
\|D^\alpha v\|_{L^a(\R^n)}\Big)\le C^k\sup_{{\alpha\in\N^n}\atop{|\alpha|=k}}
\Big(\|\Theta_\alpha\|_{L^1(\R^n)}\|D^\alpha v\|_{L^a(\R^n)}\Big)
.\end{split}\end{equation}

We also remark that, using the notation in~\eqref{USHNDcTRnKrijdf9ijN9ijn3edft76yujikmnbQ4567AW},
\begin{eqnarray*}
\|\Theta_\alpha\|_{L^1(\R^n)}&=&\int_{\R^n}\left|\int_{B_{C_2}\setminus B_{C_1}}
\frac{a_\alpha\,(i \xi)^\alpha}{(2\pi i)^k|\xi|^{2k}}\theta(\xi)\,e^{2\pi ix\cdot\xi}\,d\xi\right|\,dx
\\&\le&n^k\iint_{B_1\times(B_{C_2}\setminus B_{C_1})}\frac{|\theta(\xi)|}{|\xi|^k}\,dx\,d\xi+
\int_{\R^n\setminus B_1}\left|\int_{B_{C_2}\setminus B_{C_1}}
\frac{a_\alpha\,\xi^\alpha}{|\xi|^{2k}}\theta(\xi)\,e^{2\pi ix\cdot\xi}\,d\xi\right|\,dx\\&\le&C^k+\sum_{m=1}^n
\int_{{\mathcal{D}}_m}\left|\int_{B_{C_2}\setminus B_{C_1}}
\frac{a_\alpha\,\xi^\alpha}{|\xi|^{2k}}\theta(\xi)\,\partial_{\xi_m}^{n+1}\left(\frac{e^{2\pi ix\cdot\xi}}{(2\pi ix_m)^{n+1}}\right)\,d\xi\right|\,dx\\&=&C^k+\sum_{m=1}^n
\int_{{\mathcal{D}}_m}\left|\int_{B_{C_2}\setminus B_{C_1}}
\partial_{\xi_m}^{n+1}\left(\frac{a_\alpha\,\xi^\alpha}{|\xi|^{2k}}\theta(\xi)\right)\,
\frac{e^{2\pi ix\cdot\xi}}{(2\pi ix_m)^{n+1}}\,d\xi\right|\,dx\\&\le&C^k+C
\iint_{(\R^n\setminus B_1)\times(B_{C_2}\setminus B_{C_1})}
\left|\partial_{\xi_m}^{n+1}\left(\frac{a_\alpha\,\xi^\alpha}{|\xi|^{2k}}\theta(\xi)\right)\right|\,\frac{dx\,d\xi}{|x|^{n+1}}
\\&\le&C^k+C^k
\int_{B_{C_2}\setminus B_{C_1}}
\left|\partial_{\xi_m}^{n+1}\left(\frac{\xi^\alpha}{|\xi|^{2k}}\theta(\xi)\right)\right|\,d\xi\\&\le&C^k,
\end{eqnarray*}
up to freely renaming~$C$ (and note that~$C$ can be taken as independent of~$\alpha$ and~$k$).

{F}rom this and~\eqref{BERLEMMAUT-2COMBA56} we obtain the first inequality in~\eqref{BERLEMMAUT-2}
and the proof of the desired result is thereby complete.
\end{proof}

For our purposes, Lemma~\ref{BERLEMMAUT} is particularly useful when applied to the dyadic partition of unity produced in Lemma~\ref{0pirj09365-4g4eZXCHJKLRFG-544-NO1-LL}:

\begin{corollary} For all~$p\ge1$,~$j\ge1$ and~$k\in\N$, we have that
\begin{equation}\label{BERLEMMAUT-3} \frac{2^{kj}}{C^k} \|\check\varphi_j*u\|_{L^p(\R^n)}\le
\sup_{{\alpha\in\N^n}\atop{|\alpha|=k}}\|D^\alpha (\check\varphi_j*u)\|_{L^p(\R^n)}\le C^k 2^{kj} \|\check\varphi_j*u\|_{L^p(\R^n)},\end{equation}
where~$C\ge1$ depends only on~$n$.

Moreover,
\begin{equation}\label{BERLEMMAUT-4}
\|\check\varphi_j*u\|_{L^p(\R^n)}\le\frac{C^{k}}{2^{kj}}\sup_{{\alpha\in\N^n}\atop{|\alpha|=k}}\|D^\alpha u\|_{L^p(\R^n)},\end{equation}
where~$C\ge1$ depends only on~$n$ (and on~$\varphi_1$).
\end{corollary}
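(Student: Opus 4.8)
The plan is to derive both inequalities in~\eqref{BERLEMMAUT-3} as direct applications of Bernstein's Lemma (Lemma~\ref{BERLEMMAUT}) to the function $v:=\check\varphi_j*u$, and then to obtain~\eqref{BERLEMMAUT-4} by inserting a suitable auxiliary cutoff from the dyadic partition and using~\eqref{BERLEMMAUT-1}.

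First I would set $v:=\check\varphi_j*u$ and observe that $\widehat v=\varphi_j\,\widehat u$, so by~\eqref{0pirj09365-4g4eZXCHJKLRFG-544-NO1-LL-eq2} the support of $\widehat v$ is contained in the shell $B_{2^{j+1}}\setminus B_{2^{j-1}}$. This is precisely the hypothesis of the second part of Lemma~\ref{BERLEMMAUT}: taking $\lambda:=2^{j-1}$, $C_1:=1$, $C_2:=4$, and $a:=p$, estimate~\eqref{BERLEMMAUT-2} yields
\[
\frac{2^{(j-1)k}}{C_0^k}\|v\|_{L^p(\R^n)}\le
\sup_{{\alpha\in\N^n}\atop{|\alpha|=k}}\|D^\alpha v\|_{L^p(\R^n)}\le C_0^k 2^{(j-1)k}\|v\|_{L^p(\R^n)},
\]
for some $C_0\ge1$ depending only on $n$. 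Absorbing the factor $2^{-k}$ into a renamed constant $C\ge1$ (still depending only on $n$) gives~\eqref{BERLEMMAUT-3}.

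For~\eqref{BERLEMMAUT-4}, the idea is that $\check\varphi_j*u$ is itself spectrally localized, so we can ``steal'' $k$ derivatives from $u$ at the cost of a factor $2^{-kj}$. Concretely, recall from~\eqref{UTILTAYMAA8ikjf-x5-LAPSIN} and~\eqref{UTILTAYMAA8ikjf-x5-LAPSIN2} that $\psi_j:=\varphi_{j-1}+\varphi_j+\varphi_{j+1}$ satisfies $\psi_j\varphi_j=\varphi_j$, hence $\check\varphi_j*u=\check\varphi_j*(\check\psi_j*u)$ and, more usefully, $\check\varphi_j*u=\check\varphi_j*\check\psi_j*u$ with $\widehat{\check\psi_j*u}=\psi_j\widehat u$ supported in $B_{2^{j+2}}\setminus B_{2^{j-2}}$. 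Writing $w:=\check\psi_j*u$ and applying the first inequality in~\eqref{BERLEMMAUT-3}-type reasoning to $\check\varphi_j*w$: since $D^\alpha(\check\varphi_j*w)=(D^\alpha\check\varphi_j)*w$ is still spectrally supported in the shell, using~\eqref{BERLEMMAUT-3} backwards,
\[
\|\check\varphi_j*u\|_{L^p(\R^n)}=\|\check\varphi_j*w\|_{L^p(\R^n)}\le\frac{C^k}{2^{kj}}\sup_{{\alpha\in\N^n}\atop{|\alpha|=k}}\|D^\alpha(\check\varphi_j*w)\|_{L^p(\R^n)}.
\]
Now $D^\alpha(\check\varphi_j*w)=\check\varphi_j*(D^\alpha w)$, so by Young's Convolution Inequality and~\eqref{0pirj09365-4g4eZXCHJKLRFG-544-NO1-LL-eq4-09-BIS} (which gives $\|\check\varphi_j\|_{L^1}=\|\check\varphi_1\|_{L^1}$, independent of $j$), we bound this by $\|\check\varphi_1\|_{L^1(\R^n)}\sup_{|\alpha|=k}\|D^\alpha w\|_{L^p(\R^n)}$, and finally $D^\alpha w=D^\alpha(\check\psi_j*u)=\check\psi_j*(D^\alpha u)$ is controlled, again via Young and the uniform bound $\|\check\psi_j\|_{L^1}\le 3\|\check\varphi_1\|_{L^1}$ (from~\eqref{0pirj09365-4g4eZXCHJKLRFG-544-NO1-LL-eq4-09-BIS}, noting $\check\varphi_0$ also has finite $L^1$ norm), by $\sup_{|\alpha|=k}\|D^\alpha u\|_{L^p(\R^n)}$. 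Renaming $C$ to absorb the $j$-independent factors $\|\check\varphi_1\|_{L^1}$ completes~\eqref{BERLEMMAUT-4}.

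The main obstacle I anticipate is bookkeeping rather than conceptual: one must make sure the shell constants $C_1,C_2$ chosen for each application of Lemma~\ref{BERLEMMAUT} genuinely contain the relevant spectral supports (the supports of $\varphi_j$, $\psi_j$, and of the convolved objects), and that all the constants $C$ picked up along the way — from~\eqref{BERLEMMAUT-2}, from Young's inequality, from the uniform $L^1$ bounds on $\check\varphi_j$ and $\check\psi_j$ — remain independent of $j$ (and of $u$), depending only on $n$ and on the fixed profile $\varphi_1$. A secondary subtlety is that for~\eqref{BERLEMMAUT-4} one genuinely needs the intermediate localization $\psi_j$: applying Bernstein directly to $\check\varphi_j*u$ would only relate its derivatives to itself, not to derivatives of $u$, so the two-step convolution $\check\varphi_j*\check\psi_j*u$ with the identity $\psi_j\varphi_j=\varphi_j$ is the crucial device that lets the derivatives land on $u$.
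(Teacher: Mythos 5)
Your proof is correct and, for \eqref{BERLEMMAUT-3}, it is exactly the paper's argument: apply \eqref{BERLEMMAUT-2} to $v:=\check\varphi_j*u$, whose Fourier transform is supported in the shell $B_{2^{j+1}}\setminus B_{2^{j-1}}$ (the paper takes $\lambda:=2^j$, $C_1:=1/2$, $C_2:=2$; your choice $\lambda:=2^{j-1}$, $C_1:=1$, $C_2:=4$ is the same shell, and absorbing $2^{\pm k}$ into $C^k$ is fine).

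For \eqref{BERLEMMAUT-4} your argument also works, but the detour through $\psi_j$ is unnecessary, and your closing claim that it is ``the crucial device that lets the derivatives land on $u$'' is not correct. The elementary identity $D^\alpha(\check\varphi_j*u)=\check\varphi_j*(D^\alpha u)$ (which you yourself invoke, only with $w$ in place of $u$) already transfers the derivatives onto $u$; this is what the paper does: first inequality in \eqref{BERLEMMAUT-3}, then $D^\alpha(\check\varphi_j*u)=\check\varphi_j*D^\alpha u$, then Young's Convolution Inequality together with \eqref{0pirj09365-4g4eZXCHJKLRFG-544-NO1-LL-eq4-09-BIS}. Your extra convolution with $\check\psi_j$ costs a second application of Young and an extra (harmless, but $\varphi_0$-dependent) bound on $\|\check\psi_j\|_{L^1(\R^n)}$, and buys nothing; also note that $\|\check\psi_j\|_{L^1(\R^n)}\le 3\|\check\varphi_1\|_{L^1(\R^n)}$ is not literally justified for $j=1$, where one needs $\|\check\varphi_0\|_{L^1(\R^n)}$ as well, though this only affects the constant.
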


\begin{proof}
The gist of the proof is to apply Lemma~\ref{BERLEMMAUT} to the function~$v:=\check\varphi_j*u$, noticing that~$\widehat v=\varphi_j\widehat u$.
In this way, recalling Lemma~\ref{0pirj09365-4g4eZXCHJKLRFG-544-NO1-LL}, we see that
the support of~$\widehat v$ is contained in~$B_{2^{j+1}}\setminus B_{2^{j-1}}$.

On this account, we can employ~\eqref{BERLEMMAUT-2} with~$C_1:=1/2$,~$C_2:=2$,~$\lambda:=2^j$ and~$a:=p$, obtaining the desired result in~\eqref{BERLEMMAUT-3}.

Furthermore, using Young's Convolution Inequality,~\eqref{0pirj09365-4g4eZXCHJKLRFG-544-NO1-LL-eq4-09-BIS} and~\eqref{BERLEMMAUT-3},
\begin{eqnarray*}&& \|\check\varphi_j*u\|_{L^p(\R^n)}\le\frac{C^k}{2^{kj}}
\sup_{{\alpha\in\N^n}\atop{|\alpha|=k}}\|D^\alpha (\check\varphi_j*u)\|_{L^p(\R^n)}
=\frac{C^k}{2^{kj}}\sup_{{\alpha\in\N^n}\atop{|\alpha|=k}}\|\check\varphi_j*D^\alpha u\|_{L^p(\R^n)}\\&&\qquad
\le\frac{C^k}{2^{kj}}\sup_{{\alpha\in\N^n}\atop{|\alpha|=k}}\|D^\alpha u\|_{L^p(\R^n)}\|\check\varphi_j\|_{L^1(\R^n)}=\frac{C^k\|\check\varphi_1\|_{L^1(\R^n)}}{2^{kj}}\sup_{{\alpha\in\N^n}\atop{|\alpha|=k}}\|D^\alpha u\|_{L^p(\R^n)},\end{eqnarray*}
which is~\eqref{BERLEMMAUT-4} up to renaming~$C$.
\end{proof}

\begin{corollary}\label{LE637:l}
We have that~$C^\infty_c(\R^n)\subseteq B^{s,p,q}(\R^n)$.

More precisely, we have that if~$k\in\N\cap(s,+\infty)$ then~$W^{k,p}(\R^n)\subseteq B^{s,p,q}(\R^n)$,
with continuous embedding.
\end{corollary}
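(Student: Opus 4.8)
The statement has two parts: first, that $C^\infty_c(\R^n)\subseteq B^{s,p,q}(\R^n)$, and second the more quantitative claim that $W^{k,p}(\R^n)\subseteq B^{s,p,q}(\R^n)$ with continuous embedding whenever $k\in\N$ and $k>s$. Since $C^\infty_c(\R^n)\subseteq W^{k,p}(\R^n)$ for every $k\in\N$, the first assertion follows from the second (choosing any integer $k>s$), so the plan is to concentrate on proving $W^{k,p}(\R^n)\hookrightarrow B^{s,p,q}(\R^n)$.

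The core of the argument is to estimate the dyadic pieces $\|\check\varphi_j*u\|_{L^p(\R^n)}$ appearing in the norm~\eqref{BENOGIU}. The plan is to split the sum over $j$ into the low-frequency term $j=0$ and the high-frequency terms $j\ge1$. For $j=0$, Young's Convolution Inequality together with the fact that $\check\varphi_0\in L^1(\R^n)$ gives $\|\check\varphi_0*u\|_{L^p(\R^n)}\le \|\check\varphi_0\|_{L^1(\R^n)}\|u\|_{L^p(\R^n)}\le C\|u\|_{W^{k,p}(\R^n)}$. For $j\ge1$, the key is Bernstein's Lemma applied to the dyadic partition of unity, i.e.\ estimate~\eqref{BERLEMMAUT-4}: it yields
\[
\|\check\varphi_j*u\|_{L^p(\R^n)}\le \frac{C^{k}}{2^{kj}}\sup_{{\alpha\in\N^n}\atop{|\alpha|=k}}\|D^\alpha u\|_{L^p(\R^n)}\le \frac{C^{k}}{2^{kj}}\|u\|_{W^{k,p}(\R^n)},
\]
where $C$ depends only on $n$ and $\varphi_1$. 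Inserting this into the defining sum,
\[
\sum_{j=1}^{+\infty}2^{jsq}\|\check\varphi_j*u\|_{L^p(\R^n)}^q
\le C^{kq}\|u\|_{W^{k,p}(\R^n)}^q\sum_{j=1}^{+\infty}2^{jsq}2^{-kjq}
= C^{kq}\|u\|_{W^{k,p}(\R^n)}^q\sum_{j=1}^{+\infty}2^{-(k-s)jq},
\]
and the geometric series converges precisely because $k>s$ (so that $(k-s)q>0$). Combining the $j=0$ term and the tail gives
\[
\|u\|_{B^{s,p,q}(\R^n)}\le C\,\|u\|_{W^{k,p}(\R^n)}
\]
for a constant $C$ depending only on $n$, $p$, $q$, $s$ and $k$, which is exactly the asserted continuous embedding. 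In particular, any $u\in C^\infty_c(\R^n)$ lies in $W^{k,p}(\R^n)$ for the chosen $k$, hence in $B^{s,p,q}(\R^n)$, proving the first inclusion.

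I do not anticipate a serious obstacle: the whole argument is essentially bookkeeping once~\eqref{BERLEMMAUT-4} is available, and that estimate has already been established in the excerpt. The only point requiring a small amount of care is the convergence of the geometric series, which is where the hypothesis $k>s$ is used in an essential way — if $k\le s$ the tail would diverge and the embedding would fail. One should also note that $u\in W^{k,p}(\R^n)$ in particular implies $u\in L^p(\R^n)$, so $u$ indeed qualifies as a candidate element of $B^{s,p,q}(\R^n)$ (which by definition consists of $L^p$ functions), and the computation above shows the defining series is finite, completing the verification.
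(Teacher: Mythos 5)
Your proof is correct and follows essentially the same route as the paper: Young's Convolution Inequality for the $j=0$ block, the Bernstein-type bound~\eqref{BERLEMMAUT-4} for $j\ge1$, and the geometric series converging precisely because $k>s$. No issues.
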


\begin{proof} Let~$u\in W^{k,p}(\R^n)$ with~$k\in\N\cap(s,+\infty)$.
By Young's Convolution Inequality we have that
\begin{equation*}
\|\check{\varphi}_0*u\|_{L^p(\R^n)}\le \|\check{\varphi}_0\|_{L^1(\R^n)}\,\|u\|_{L^p(\R^n)}\le C\|u\|_{L^p(\R^n)}.
\end{equation*}
Also, by~\eqref{BERLEMMAUT-4}, for all~$j\ge1$,
$$\|\check\varphi_j*u\|_{L^p(\R^n)}\le\frac{C}{2^{kj}} \|u\|_{W^{k,p}(\R^n)},$$
with~$C>0$ now depending also on~$k$.

As a consequence, using the Besov norm in~\eqref{BENOGIU}, allowing~$C$ to depend on~$q$ as well, and possibly renaming~$C$ at each stage of the calculation,
\begin{eqnarray*}\|u\|_{B^{s,p,q}(\R^n)}^q\le
C\|u\|_{L^p(\R^n)}^q+
\sum_{j=1}^{+\infty}2^{jq(s-k)} \|u\|_{W^{k,p}(\R^n)}^q\le C\|u\|_{W^{k,p}(\R^n)}^q,\end{eqnarray*}
as desired.
\end{proof}

Functions in Besov spaces can be nicely approximated by functions in~$C^\infty_c(\R^n)$, as the next result points out:

\begin{lemma}\label{0oijvn5TSndgrMSKdmfKTGK3m5tgDksP234-LEMM}
For all~$s>0$ and~$p$,~$q\in[1,+\infty)$, we have that~$C^\infty_c(\R^n)$ is dense in~$B^{s,p,q}(\R^n)$.
\end{lemma}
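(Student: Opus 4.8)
The strategy is a standard two-step density argument: first approximate an arbitrary $u \in B^{s,p,q}(\R^n)$ by a compactly supported function (in the Besov norm), then mollify. Let me describe each step.

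\emph{Step 1: truncation of high frequencies.} Given $u \in B^{s,p,q}(\R^n)$, for each $N \ge 1$ consider the partial reconstruction $u_N := \sum_{j=0}^{N} \check\varphi_j * u$, which by Lemma~\ref{0pirj09365-4g4eZXCHJKLRFG-544-NO1-LL} (in particular \eqref{0pirj09365-4g4eZXCHJKLRFG-544-NO1-LL-eq1}) is a ``spectral truncation'' of $u$. The key observation is that $\check\varphi_j * (u - u_N) = \check\varphi_j * u$ for $j \ge N+2$ and vanishes for $j \le N-1$ (using \eqref{0pirj09365-4g4eZXCHJKLRFG-544-NO1-LL-eq2}, since the supports $\varphi_j$ for $j\le N$ and $\varphi_\ell$ for $\ell \ge N+2$ are disjoint), so only the boundary terms $j \in \{N, N+1\}$ require care. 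Hence
\[
\|u - u_N\|_{B^{s,p,q}(\R^n)}^q \le C\sum_{j=N}^{+\infty} 2^{jsq}\|\check\varphi_j * u\|_{L^p(\R^n)}^q,
\]
which is the tail of the convergent series defining $\|u\|_{B^{s,p,q}(\R^n)}^q$, and therefore tends to $0$ as $N \to +\infty$. This reduces matters to approximating each $u_N$, a function whose Fourier transform is supported in a fixed ball $B_{2^{N+1}}$.

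\emph{Step 2: spatial truncation and mollification.} A function with compactly supported Fourier transform need not itself be compactly supported, so I would next show that any $v$ with $\widehat v$ supported in a ball can be approximated in $B^{s,p,q}(\R^n)$ by $C^\infty_c(\R^n)$ functions. By Corollary~\ref{LE637:l} it suffices to approximate $v$ in $W^{k,p}(\R^n)$ for some integer $k > s$, and by Bernstein's Lemma (Lemma~\ref{BERLEMMAUT}, estimate~\eqref{BERLEMMAUT-1}) such a $v$ lies in $W^{k,p}(\R^n)$ with all derivatives controlled. One then uses the standard fact that $C^\infty_c(\R^n)$ is dense in $W^{k,p}(\R^n)$: take a cutoff $\chi_R(x) := \chi(x/R)$ with $\chi \in C^\infty_c(B_2,[0,1])$, $\chi = 1$ on $B_1$, and mollify $\chi_R v$. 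The derivatives of $\chi_R$ scale like $R^{-|\alpha|}$, so $\|v - \chi_R v\|_{W^{k,p}} \to 0$ as $R \to +\infty$ because $v$ and its derivatives are in $L^p(\R^n)$; mollification then introduces an arbitrarily small $W^{k,p}$ error while producing a $C^\infty_c$ function. Composing with Step 1 and using the triangle inequality in $B^{s,p,q}(\R^n)$ gives the claim.

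\emph{Main obstacle.} The routine part is the mollification; the genuine content is the first step, and within it the boundary-term estimate. One must be careful that $u_N \in L^p(\R^n)$ and in fact $u_N \in B^{s,p,q}(\R^n)$ so that the Besov-norm difference $\|u - u_N\|_{B^{s,p,q}}$ even makes sense — this follows from Young's Convolution Inequality together with $\|\check\varphi_j\|_{L^1(\R^n)} = \|\check\varphi_1\|_{L^1(\R^n)}$ (by \eqref{0pirj09365-4g4eZXCHJKLRFG-544-NO1-LL-eq4-09-BIS}) and the fact that $u_N$ has only finitely many nonzero frequency pieces. The other subtlety is that, after truncating frequencies, one still has a non-compactly-supported function in physical space, which is exactly why Step 2 is needed rather than being able to conclude immediately; invoking Corollary~\ref{LE637:l} to pass from a $W^{k,p}$ approximation to a $B^{s,p,q}$ approximation is the clean way around this, and it is where the hypothesis $q < +\infty$ (needed for the convergence of the tail series, and implicit in Corollary~\ref{LE637:l}) is genuinely used.
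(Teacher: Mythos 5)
Your argument is correct, but it is organized differently from the proof in the text. You truncate in frequency first, writing $u_N:=\sum_{j=0}^N\check\varphi_j*u$ and estimating $\|u-u_N\|_{B^{s,p,q}(\R^n)}$ by a tail of the Besov series (the almost-orthogonality $\varphi_j\varphi_\ell=0$ for $|j-\ell|\ge2$, together with~\eqref{UTILTAYMAA8ikjf-x5-LAPSIN2} for the vanishing when $j\le N-1$ and Young's inequality for the two boundary indices, makes this work — note that the vanishing for $j\le N-1$ really rests on the partition-of-unity identity~\eqref{0pirj09365-4g4eZXCHJKLRFG-544-NO1-LL-eq1}, not merely on support disjointness); then you use Bernstein's estimate~\eqref{BERLEMMAUT-1} (or~\eqref{BERLEMMAUT-3} termwise) to place $u_N$ in $W^{k,p}(\R^n)$ for $k>s$, invoke the classical density of $C^\infty_c(\R^n)$ in $W^{k,p}(\R^n)$, and transfer the approximation to the Besov norm through the continuous embedding of Corollary~\ref{LE637:l}. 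The paper instead never truncates frequencies: it first mollifies, proving that $C^\infty(\R^n)\cap W^{k,p}(\R^n)$ is dense in $B^{s,p,q}(\R^n)$ by splitting the Besov series at a level $J_\mu$ chosen from the tail of $\|u\|_{B^{s,p,q}}$ and using $\|\check\varphi_j*u_\e\|_{L^p}\le\|\check\varphi_j*u\|_{L^p}$, and then performs the spatial cutoff $\tau_Rv$ directly at the Besov level, controlling $\|\check\varphi_j*\big((1-\tau_R)v\big)\|_{L^p}$ via~\eqref{BERLEMMAUT-4} and the Leibniz rule. Your route is somewhat slicker in that it recycles Corollary~\ref{LE637:l} and standard Sobolev density instead of redoing the cutoff estimate inside the Besov norm, at the price of the boundary-term bookkeeping in the spectral truncation; the paper's route stays entirely within Besov norms and yields, as a by-product, the intermediate density of $C^\infty\cap W^{k,p}$ in $B^{s,p,q}$. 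Both correctly exploit $q<+\infty$ through the convergence of the tail of the defining series.
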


\begin{proof} First of all, we show that, for all~$k\in\N$,
\begin{equation}\label{0oihdjwe9ryihYCokergIImsd76SdtVilpDGBMKACBNAMS034irjtnoiuhg27}
{\mbox{$C^\infty(\R^n)\cap W^{k,p}(\R^n)$ is dense in~$B^{s,p,q}(\R^n)$.}}
\end{equation}
To this end, let~$u\in B^{s,p,q}(\R^n)$. Let also~$\eta_\e\in C^\infty_c(\R^n)$ be a standard mollifier (see e.g.~\cite[Chapter~9]{MR3381284}) and define~$u_\e:=u*\eta_\e$. In this way, since~$u\in L^p(\R^n)$,
we already know that~$u_\e\to u$ in~$L^p(\R^n)$ (see e.g.~\cite[Theorem~9.6]{MR3381284}) and therefore, by Young's Convolution Inequality, for all~$j\in\N$,
\begin{equation}\label{0oihdjwe9ryihYCokergIImsd76SdtVilpDGBMKACBNAMS034irjtnoiuhg2}
\lim_{\e\searrow0} \|\check\varphi_j* (u_\e-u)\|_{L^p(\R^n)}\le
\lim_{\e\searrow0} \|\check\varphi_j\|_{L^1(\R^n)}\|u_\e-u\|_{L^p(\R^n)}=0.
\end{equation}
Young's Convolution Inequality provides in addition that, for every~$\e>0$,
\begin{equation}\label{0oihdjwe9ryihYCokergIImsd76SdtVilpDGBMKACBNAMS034irjtnoiuhg}
\|\check\varphi_j* u_\e\|_{L^p(\R^n)}=\|(\check\varphi_j* u)*\eta_\e\|_{L^p(\R^n)}\le
\|\check\varphi_j* u\|_{L^p(\R^n)}\|\eta_\e\|_{L^1(\R^n)}
=\| \check\varphi_j* u\|_{L^p(\R^n)}.
\end{equation}

We stress that not only~$u_\e\in C^\infty(\R^n)$ but also that~$u_\e\in W^{k,p}(\R^n)$ for all~$k\in\N$, because, by Young's Convolution Inequality, for every~$\alpha\in\N^n$,
$$\|D^\alpha u_\e\|_{L^p(\R^n)}=\|u*(D^\alpha \eta_\e)\|_{L^p(\R^n)}\le
\|u\|_{L^p(\R^n)}\|D^\alpha \eta_\e\|_{L^1(\R^n)}<+\infty.$$

Thus, given any~$\mu>0$, we recall the Besov norm in~\eqref{BENOGIU} and
use the assumption that~$u\in B^{s,p,q}(\R^n)$ to find~$J_\mu\in\N$ large enough such that
$$ \sum_{j=J_\mu}^{+\infty}2^{jsq}\|\check{\varphi}_j*u\|_{L^p(\R^n)}^q\le\mu.$$
This and~\eqref{0oihdjwe9ryihYCokergIImsd76SdtVilpDGBMKACBNAMS034irjtnoiuhg} give that, for all~$\e>0$,
\begin{equation*}\sum_{j=J_\mu}^{+\infty}2^{jsq}\|\check{\varphi}_j*u_\e\|_{L^p(\R^n)}^q\le\mu.
\end{equation*}
Therefore, by Triangle Inequality,
\begin{equation}\label{0oihdjwe9ryihYCokergIImsd76SdtVilpDGBMKACBNAMS034irjtnoiuhg22}\begin{split}&\sum_{j=J_\mu}^{+\infty}2^{jsq}\|\check{\varphi}_j*(u_\e-u)\|_{L^p(\R^n)}^q\le
\sum_{j=J_\mu}^{+\infty}2^{jsq}\Big( \|\check{\varphi}_j*u_\e\|_{L^p(\R^n)}+\|\check{\varphi}_j*u\|_{L^p(\R^n)}\Big)^q\\&\qquad\le2^q
\sum_{j=J_\mu}^{+\infty}2^{jsq}\Big( \|\check{\varphi}_j*u_\e\|_{L^p(\R^n)}^q+\|\check{\varphi}_j*u\|_{L^p(\R^n)}^q\Big)\le2^{q+1}\mu.
\end{split}
\end{equation}

Moreover, by~\eqref{0oihdjwe9ryihYCokergIImsd76SdtVilpDGBMKACBNAMS034irjtnoiuhg2},
$$ \lim_{\e\searrow0}\sum_{j=0}^{J_\mu-1}2^{jsq}\|\check{\varphi}_j*(u_\e-u)\|_{L^p(\R^n)}^q=0.$$
On this account, we can find~$\e_\mu>0$ such that, for all~$\e\in(0,\e_\mu)$,
$$ \sum_{j=0}^{J_\mu-1}2^{jsq}\|\check{\varphi}_j*(u_\e-u)\|_{L^p(\R^n)}^q\le 2^{q+1}\mu.$$
The latter inequality and~\eqref{0oihdjwe9ryihYCokergIImsd76SdtVilpDGBMKACBNAMS034irjtnoiuhg22} yield that, for all~$\e\in(0,\e_\mu)$,
$$ \|u_\e-u\|_{B^{s,p,q}(\R^n)}^q=\sum_{j=0}^{+\infty}2^{jsq}\|\check{\varphi}_j*(u_\e-u)\|_{L^p(\R^n)}^q\le2^{q+2}\mu,$$
which, since~$\mu$ is arbitrary, completes the proof of~\eqref{0oihdjwe9ryihYCokergIImsd76SdtVilpDGBMKACBNAMS034irjtnoiuhg27}.

Now we consider~$u\in B^{s,p,q}(\R^n)$ and~$\mu>0$. We take~$k\in\N\cap (s,s+1]$ use~\eqref{0oihdjwe9ryihYCokergIImsd76SdtVilpDGBMKACBNAMS034irjtnoiuhg27} to find~$v\in C^\infty(\R^n)\cap W^{k,p}(\R^n)$ such that
\begin{equation}\label{0oihdjwe9ryihYCokergIImsd76SdtVilpDGBMKACBNAMS034irjtnoiuhg27-102eo3rikjgh-0304b}\|u-v\|_{B^{s,p,q}(\R^n)}\le\mu.\end{equation}
We pick~$\tau\in C^\infty_c(B_2,[0,1])$ with~$\tau=1$ in~$B_1$ and, for all~$R\ge1$, let~$\tau_R(x):=\tau\left(\frac{x}R\right)$.
We define~$u_R:=\tau_R v$ and note that~$u_R\in C^\infty_c(\R^n)$.

Let also~$w_R:=v-u_R=(1-\tau_R )v$. We see that, for all~$j\ge1$,
\begin{equation}\label{0oihdjwe9ryihYCokergIImsd76SdtVilpDGBMKACBNAMS034irjtnoiuhg27-102eo3rikjgh-0304}
\|\check\varphi_j* (v-u_R)\|_{L^p(\R^n)}=\|\check\varphi_j *w_R\|_{L^p(\R^n)}
\le\frac{C}{2^{kj}}\sup_{{\alpha\in\N^n}\atop{|\alpha|=k}}\|D^\alpha w_R\|_{L^p(\R^n)},
\end{equation}
due to~\eqref{BERLEMMAUT-4}.

Besides, using the Leibniz Product Rule, for all~$\alpha\in\R^n$ with~$|\alpha|=k$ we have that\footnote{Here,~$\beta\le\alpha$
(or, equivalently, $\alpha\ge\beta$)
means that if~$\alpha=(\alpha_1,\dots,\alpha_n)$ and~$\beta=(\beta_1,\dots,\beta_n)$ then~$\beta_i\le\alpha_i$
for all~$i\in\{1,\dots,n\}$.

Also, as customary, if~$\alpha$, $\beta\in\N$, with~$\alpha\ge\beta$, the binomial coefficient is defined by
$${ {\alpha}\choose{\beta}}:=\frac{\alpha!}{\beta!\,(\alpha-\beta)!}$$
and, if~$\alpha=(\alpha_1,\dots,\alpha_n)$ and~$\beta=(\beta_1,\dots,\beta_n)\in\N^n$, with~$\alpha\ge\beta$, by
$$ {{\alpha}\choose{\beta}}:=\prod_{i=1}^n {{\alpha_i}\choose{\beta_i}}.$$}
\begin{eqnarray*}
|D^\alpha w_R(x)|&=&\left|\sum_{{\beta\in\N^n}\atop{\beta \leq \alpha}}{\alpha \choose \beta }
D^{\beta }(1-\tau_R(x) ) D^{\alpha -\beta}v(x)\right|\\
&\le&(1-\tau_R(x) ) |D^{\alpha}v(x)|+
\sum_{{\beta\in\N^n}\atop{0\ne\beta \leq \alpha}}{\alpha \choose \beta }|D^{\beta }\tau_R|\,| D^{\alpha -\beta}v(x)|\\&\le&C\chi_{\R^n\setminus B_R}(x) |D^\alpha v(x)|+C\chi_{B_{2R}\setminus B_R}(x)
\sum_{{\beta\in\N^n}\atop{0\ne\beta \leq \alpha}}{\alpha \choose \beta }R^{-|\beta|}\,| D^{\alpha -\beta}v(x)|
\\&\le& C\chi_{\R^n\setminus B_R}(x)\sum_{{\beta\in\N^n}\atop{|\beta|\le k}}|D^\beta v(x)|
\end{eqnarray*}
and therefore
$$ \lim_{R\to+\infty}\|D^\alpha w_R\|_{L^p(\R^n)}\le C\lim_{R\to+\infty}
\sum_{{\beta\in\N^n}\atop{|\beta|\le k}}\|D^\beta v\|_{L^p(\R^n\setminus B_R)}=0.$$
Notice that here we are using the fact that~$v\in W^{k,p}(\R^n)$. Thus, we fix~$R>0$ sufficiently large such that
$$ \|D^\alpha w_R\|_{L^p(\R^n)}\le\mu$$
and that
$$ \|\check{\varphi}_0*w_R\|_{L^p(\R^n)}\le
\|\check{\varphi}_0\|_{L^1(\R^n)}\|w_R\|_{L^p(\R^n)}\le\|\check{\varphi}_0\|_{L^1(\R^n)}\|v\|_{L^p(\R^n\setminus B_R)}\le\mu.$$

On the account of these observations and~\eqref{0oihdjwe9ryihYCokergIImsd76SdtVilpDGBMKACBNAMS034irjtnoiuhg27-102eo3rikjgh-0304}, we deduce that
\begin{eqnarray*}\|v-u_R\|_{B^{s,p,q}(\R^n)}^q&=&
\sum_{j=0}^{+\infty}2^{jsq}\|\check{\varphi}_j*w_R\|_{L^p(\R^n)}^q\\&\le&
\mu^q+C
\sum_{j=1}^{+\infty}2^{jq(s-k)}
\sup_{{\alpha\in\N^n}\atop{|\alpha|=k}}\|D^\alpha w_R\|_{L^p(\R^n)}^q\\&\le&
\mu^q+C\mu^q
\sum_{j=1}^{+\infty}2^{jq(s-k)}\\ &\le&C\mu^q
.\end{eqnarray*}
Therefore, in light of~\eqref{0oihdjwe9ryihYCokergIImsd76SdtVilpDGBMKACBNAMS034irjtnoiuhg27-102eo3rikjgh-0304b},
we conclude that~$\|u-u_R\|_{B^{s,p,q}(\R^n)}\le C\mu$, up to renaming~$C$, and, since~$\mu$ is arbitrary,
the proof of Lemma~\ref{0oijvn5TSndgrMSKdmfKTGK3m5tgDksP234-LEMM} is complete.
\end{proof}

Let us now further clarify the role of the partition of unity in the Besov spaces setting.
Roughly speaking, by formally taking the Fourier Transform of~\eqref{0pirj09365-4g4eZXCHJKLRFG-544-NO1-LL-eq1},
we may expect that
\begin{equation}\label{LEHDNfcitjn034ty} \sum_{j=0}^{+\infty}\check\varphi_j=
{\mathcal{F}}\left(\sum_{j=0}^{+\infty}\varphi_j\right)={\mathcal{F}}(1)=\delta,\end{equation}
where~$\delta$ is the Dirac Delta distribution centred at the origin, but one may be suspicious
about switching the Fourier Transform and an infinite series without precise hypotheses.
Thus, to formalize this idea, we present the following result:

\begin{lemma}\label{LEHDNfcitjn034ty-Leapp}
If~$p\in[1,+\infty)$ and~$u\in L^p(\R^n)$, then
$$ \sum_{j=0}^{k}\check\varphi_j*u \,{\mbox{ converges to~$u$ in~$L^p(\R^n)$ as~$k\to+\infty$.}}$$
\end{lemma}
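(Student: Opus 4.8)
The core identity to exploit is the telescoping structure of the partition of unity. Recalling the construction in Lemma~\ref{0pirj09365-4g4eZXCHJKLRFG-544-NO1-LL}, and in particular formula~\eqref{UTILTAYMAA8ikjf-x589}, the partial sum telescopes so that
\[
\sum_{j=0}^{k}\varphi_j(\xi) = 1-\Phi_k(\xi),
\]
where $\Phi_k$ is the smooth function described in~\eqref{UTILTAYMAA8ikjf-x5}: it takes values in $[0,1]$, it vanishes on $B_{2^k}$, equals $1$ outside $B_{2^{k+1}}$, and has derivatives bounded uniformly in $k$. Taking inverse Fourier Transforms, this translates into
\[
\sum_{j=0}^{k}\check\varphi_j*u = u - \check\Phi_k * u,
\]
at least for $u$ Schwartz, hence it suffices to show $\check\Phi_k*u\to 0$ in $L^p(\R^n)$ as $k\to+\infty$. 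The plan is therefore: first establish this for a dense class of nice functions, then extend by density using a uniform bound.

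First I would treat the dense class. Take $u\in C^\infty_c(\R^n)$. Since $\Phi_k$ vanishes on $B_{2^k}$ and $\widehat u$ is Schwartz, one has $\Phi_k\widehat u\to 0$ fast in every Schwartz seminorm; concretely, using that $\widehat u$ decays faster than any polynomial and that $\Phi_k$ is supported in $\R^n\setminus B_{2^k}$ with $k$-independent bounds on its derivatives, one shows $\|\check\Phi_k*u\|_{L^p(\R^n)} = \|{\mathcal F}^{-1}(\Phi_k\widehat u)\|_{L^p(\R^n)}\to 0$. The cleanest route is to bound the $L^p$ norm by an $L^\infty$ plus $L^1$ estimate on ${\mathcal F}^{-1}(\Phi_k\widehat u)$: the $L^\infty$ norm is at most $\|\Phi_k\widehat u\|_{L^1(\R^n)}$, which is controlled by $\int_{\R^n\setminus B_{2^k}}|\widehat u|\,d\xi\to 0$, while the $L^1$ norm of ${\mathcal F}^{-1}(\Phi_k\widehat u)$ is handled by writing ${\mathcal F}^{-1}(\Phi_k\widehat u)(x) = |x|^{-2(n+1)}{\mathcal F}^{-1}(\Delta^{n+1}(\Phi_k\widehat u))(x)$ away from the origin and using the uniform derivative bounds on $\Phi_k$ together with the rapid decay of $\widehat u$ and its derivatives. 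Either way, one gets $\|\check\Phi_k*u\|_{L^p(\R^n)}\to0$ for $u\in C^\infty_c(\R^n)$.

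Next comes the density step. Fix $u\in L^p(\R^n)$ and $\varepsilon>0$, and pick $v\in C^\infty_c(\R^n)$ with $\|u-v\|_{L^p(\R^n)}<\varepsilon$. Write
\[
\left\|u - \sum_{j=0}^k\check\varphi_j*u\right\|_{L^p(\R^n)}
\le \left\|u-v\right\|_{L^p(\R^n)}
+\left\|v - \sum_{j=0}^k\check\varphi_j*v\right\|_{L^p(\R^n)}
+\left\|\sum_{j=0}^k\check\varphi_j*(v-u)\right\|_{L^p(\R^n)}.
\]
The middle term equals $\|\check\Phi_k*v\|_{L^p(\R^n)}$ and tends to $0$ by the previous step. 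For the last term, I would use $\sum_{j=0}^k\check\varphi_j*(v-u) = (v-u) - \check\Phi_k*(v-u)$ and the fact that $\|\check\Phi_k*w\|_{L^p(\R^n)}\le C\|w\|_{L^p(\R^n)}$ uniformly in $k$ --- this uniform bound follows because $\Phi_k$ is, by the second identity in the multiplier-type analysis of Lemma~\ref{0pirj09365-4g4eZXCHJKLRFG-544-NO1-LL}, a Fourier multiplier whose symbol has $k$-independent bounds on all derivatives, so a Mikhlin-type argument (or directly the bound $\|\check\Phi_k\|_{L^1}\le C$ obtained exactly as in the $L^1$ estimate above, now uniform in $k$ thanks to the uniform derivative bounds in~\eqref{UTILTAYMAA8ikjf-x5}) gives the claim via Young's Convolution Inequality. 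Hence the last term is $\le (1+C)\varepsilon$, and letting $k\to+\infty$ and then $\varepsilon\searrow 0$ yields the conclusion.

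\textbf{Main obstacle.} The delicate point is obtaining the $k$-uniform $L^p$-boundedness of convolution against $\check\Phi_k$ (equivalently, a $k$-uniform bound on $\|\check\Phi_k\|_{L^1(\R^n)}$). This is where the full strength of~\eqref{UTILTAYMAA8ikjf-x5} --- that $\|D^\alpha\Phi_k\|_{L^\infty(\R^n)}\le C$ with $C$ independent of $k$ --- is essential: the naive triangle-inequality bound $\|\check\Phi_k\|_{L^1}\le\sum_{j>k}\|\check\varphi_j\|_{L^1}$ diverges, so one genuinely needs the integration-by-parts estimate on ${\mathcal F}^{-1}(\Phi_k)$ exploiting the uniform bounds on the derivatives of the symbol and its support structure (support in an annulus-type region scaling like $2^k$, which after rescaling gives $k$-independent constants). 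Once this uniform bound is in hand, the rest is a routine density argument.
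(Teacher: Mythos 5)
Your proposal follows essentially the same route as the paper's proof: the telescoping identity behind $\sum_{j=0}^k\varphi_j=1-\Phi_k$, the reduction to showing $\check\Phi_k*u\to0$ in $L^p(\R^n)$, the $L^\infty$-plus-$L^1$ estimate for smooth compactly supported $u$ exploiting that $\Phi_k$ vanishes on $B_{2^k}$ and has $k$-uniform derivative bounds (the paper uses $\partial_{\xi_m}^{n+1}$ where you use $\Delta^{n+1}$, a cosmetic difference), and then a density argument. If anything, your density step is spelled out more carefully than the paper's, which estimates $u-\sum_{j\le k}\check\varphi_j*u_\e$ and leaves the transfer to $u-\sum_{j\le k}\check\varphi_j*u$ implicit; the uniform operator bound you insist on is exactly what fills that in.

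One point in your ``main obstacle'' paragraph needs correcting. Since $\Phi_k\equiv1$ outside $B_{2^{k+1}}$, the inverse Fourier transform $\check\Phi_k$ is not an $L^1$ function, so the claim that $\|\check\Phi_k\|_{L^1(\R^n)}\le C$ uniformly in $k$ is not literally meaningful; moreover the Mikhlin-type fallback you mention only covers $p\in(1,+\infty)$, whereas the statement includes $p=1$. The repair is immediate and stays inside your own framework: by~\eqref{0984ujr0-3uthgGSBDEFIj} the partial sum telescopes to $\sum_{j=0}^k\varphi_j(\xi)=\varphi_0(2^{-k}\xi)$, hence $\sum_{j=0}^k\check\varphi_j(x)=2^{kn}\check\varphi_0(2^kx)$, whose $L^1$ norm equals $\|\check\varphi_0\|_{L^1(\R^n)}$ for every $k$ (the same scaling computation as in~\eqref{0pirj09365-4g4eZXCHJKLRFG-544-NO1-LL-eq4-09-BIS}). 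Young's Convolution Inequality then gives $\bigl\|\sum_{j=0}^k\check\varphi_j*(u-v)\bigr\|_{L^p(\R^n)}\le\|\check\varphi_0\|_{L^1(\R^n)}\,\|u-v\|_{L^p(\R^n)}$ uniformly in $k$; equivalently, $\check\Phi_k=\delta_0-2^{kn}\check\varphi_0(2^k\cdot)$ is a finite measure with $k$-independent total variation. This is all your density argument requires, and it works for every $p\in[1,+\infty)$, so with this adjustment the proof is complete.
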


\begin{proof} First of all, we notice that it suffices to prove the desired claim when~$u\in C^\infty_c(\R^n)$.
Indeed, if~$u\in L^p(\R^n)$ and~$\e>0$,
we take~$u_\e\in C^\infty_c(\R^n)$ such that~$\|u-u_\e\|_{L^p(\R^n)}\le\e$.
Also, if the desired result holds true in~$C^\infty_c(\R^n)$ we can pick~$k_\e\in\N$ large enough such that whenever~$k\ge k_\e$ it holds that
$$ \left\| u_\e-\sum_{j=0}^{k}\check\varphi_j*u_\e\right\|_{L^p(\R^n)}\le\e.$$
In this way, it follows that
$$ \left\| u-\sum_{j=0}^{k}\check\varphi_j*u_\e\right\|_{L^p(\R^n)}\le
\|u-u_\e\|_{L^p(\R^n)}+\left\| u_\e-\sum_{j=0}^{k}\check\varphi_j*u_\e\right\|_{L^p(\R^n)}\le2\e,$$
giving the desired result for~$u\in L^p(\R^n)$.

Hence, owing to this observation, we now focus on proving the desired result when~$u\in C^\infty_c(\R^n)$.
For this, recalling the notation in~\eqref{UTILTAYMAA8ikjf-x589}, we have that$$\check \Phi_k=\delta-\sum_{j=0}^k\check\varphi_j,$$ where~$\delta$ is the Dirac Delta Function at the origin,
and note that this computation is legitimate since, differently from~\eqref{LEHDNfcitjn034ty},
here we only deal with a finite sum.
We also stress that~$\Phi_k\widehat u$ is in the Schwartz space of smooth and rapidly decreasing functions,
hence so is~$\check\Phi_k* u={\mathcal{F}}^{-1}(\Phi_k\widehat u)$.

Therefore, 
\begin{equation*}
u-\sum_{j=0}^{k}\check\varphi_j*u=\left(\delta-\sum_{j=0}^k\check\varphi_j\right)*u=\check \Phi_k*u
\end{equation*}
and thus, to prove the desired result, we need to show that
\begin{equation}\label{22d3scfv3456tyhbEHDNfcitjn034ty023rutogh004yiuhjbbBNcfv}
\lim_{k\to+\infty}\|\check \Phi_k*u\|_{L^p(\R^n)}=0.
\end{equation}

To this end, let~$R>0$ be such that the support of~$u$ is contained in~$B_R$.
Then, recalling the definition of~${\mathcal{D}}_m$ in~\eqref{USHNDcTRnKrijdf9ijN9ijn3edft76yujikmnbQ4567AW},
\begin{eqnarray*}
|\check\Phi_k* u(x)|&=&
\left| \iint_{B_R\times\R^n} \Phi_k(\xi)\, u(y)\,e^{2\pi i(x-y)\cdot\xi}\,dy\,d\xi\right|\\&\le&
\sum_{m=1}^n
\left| \iint_{B_R\times{\mathcal{D}}_m} \Phi_k(\xi)\, u(y)\,e^{2\pi i(x-y)\cdot\xi}\,dy\,d\xi\right|\\
&=&\sum_{m=1}^n
\left| \iint_{B_R\times{\mathcal{D}}_m} \Phi_k(\xi)\, u(y)\,\partial^{n+1}_{y_r}\left(\frac{e^{2\pi i(x-y)\cdot\xi}}{(2\pi i \xi_r)^{n+1}}\right)\,dy\,d\xi\right|\\&=&\sum_{m=1}^n
\left| \iint_{B_R\times{\mathcal{D}}_m} \Phi_k(\xi)\, \partial^{n+1}_{y_r} u(y)\,\frac{e^{2\pi i(x-y)\cdot\xi}}{(2\pi i \xi_r)^{n+1}}\,dy\,d\xi\right|.
\end{eqnarray*}
In particular, for all~$x\in\R^n$,
\begin{equation}\label{s0oiujhbv09iuy6trfgvbgbUTILTAYMAA8ikjf-x5}
\begin{split}&
|\check\Phi_k* u(x)|\le C_u \sum_{m=1}^n
\iint_{B_R\times{\mathcal{D}}_m} |\Phi_k(\xi)|\frac{dy\,d\xi}{|\xi_r|^{n+1}}
\le C_u \sum_{m=1}^n
\int_{{\mathcal{D}}_m} |\Phi_k(\xi)|\frac{d\xi}{|\xi|^{n+1}}\\&\qquad\qquad
\le C_u 
\int_{\R^n\setminus B_{2^k}}\frac{d\xi}{|\xi|^{n+1}}
\le \frac{C_u}{2^k}.
\end{split}\end{equation}
Notice that~$C_u>0$ here is allowed to depend on~$u$.

Also, setting~$w:=\check\Phi_k* u$, we have that, for each~$m\in\{1,\dots,n\}$,
$$ \partial^{n+1}_{\xi_m} \widehat w(\xi)={\mathcal{F}}\Big( (-2\pi i x_m)^{n+1} w(x)\Big)(\xi)$$
and therefore, recalling the uniform derivative bound in~\eqref{UTILTAYMAA8ikjf-x5},
\begin{eqnarray*}&&  (2\pi |x_m|)^{n+1} |w(x)|=|(-2\pi i x_m)^{n+1} w(x)|=
\Big|{\mathcal{F}}^{-1}\Big(\partial^{n+1}_{\xi_m} \widehat w(\xi)\Big)(x)\Big|\\&&\qquad=
\Big|{\mathcal{F}}^{-1}\Big(\partial^{n+1}_{\xi_m} \big( 
\Phi_k(\xi)\widehat u(\xi)\big)\Big)(x)\Big|=
\left|\int_{\R^n} \partial^{n+1}_{\xi_m} \big( 
\Phi_k(\xi)\widehat u(\xi)\big) e^{2\pi ix\cdot\xi}\,d\xi\right|\\
&&\qquad\le\int_{\R^n} \Big|\partial^{n+1}_{\xi_m} \big( 
\Phi_k(\xi)\widehat u(\xi)\big)\Big|\,d\xi
\le C_u\int_{\R^n\setminus B_{2^k}} \sum_{|\alpha|\le n+1}
|D^{\alpha}\widehat u(\xi) |\,d\xi\\
&&\qquad\le C_u\int_{\R^n\setminus B_{2^k}} \frac{d\xi}{|\xi|^{n+1}}\le\frac{C_u}{2^k}.
\end{eqnarray*}
For this reason, and utilizing the notation in~\eqref{USHNDcTRnKrijdf9ijN9ijn3edft76yujikmnbQ4567AW}
and the estimate in~\eqref{s0oiujhbv09iuy6trfgvbgbUTILTAYMAA8ikjf-x5},
\begin{eqnarray*}&&
\|\check\Phi_k* u\|_{L^1(\R^n)}\le
\int_{\R^n\setminus B_1}|w(x)|\,dx+ \frac{C_u}{2^k}
\le\sum_{m=1}^n
\int_{{\mathcal{D}}_m}|w(x)|\,dx+ \frac{C_u}{2^k}
\\&&\qquad\le \frac{C_u}{2^k}\left(\sum_{m=1}^n\int_{{\mathcal{D}}_m}\frac{dx}{|x_m|^{n+1}}+ 1\right)
\le \frac{C_u}{2^k}\left(\sum_{m=1}^n\int_{\R^n\setminus B_1}\frac{dx}{|x|^{n+1}}+ 1\right)
\le\frac{C_u}{2^k}.
\end{eqnarray*}
This,~\eqref{USHNDcTRnKrijdf9ijN9ijn3edft76yujikmnb201o2uerjhfgVVXDIKe78} and~\eqref{s0oiujhbv09iuy6trfgvbgbUTILTAYMAA8ikjf-x5} entail that
\begin{equation*} \|\check\Phi_k* u\|_{L^p(\R^n)}\le\frac{C_u}{2^k},\end{equation*}
which gives~\eqref{22d3scfv3456tyhbEHDNfcitjn034ty023rutogh004yiuhjbbBNcfv}, as desired.
\end{proof}

\begin{corollary}\label{LEHDNfcitjn034ty-LeappCOR}
If~$p\in[1,+\infty)$ then, for any~$u\in L^p(\R^n)$,
\begin{equation}\label{COROLEEP2r34t5023opjgrh5tgbhuiuhgsdhj}
\|u\|_{L^p(\R^n)}\le\sum_{j=0}^{+\infty}\|\check\varphi_j*u\|_{L^p(\R^n)}.\end{equation}
\end{corollary}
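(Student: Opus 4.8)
The plan is to deduce Corollary~\ref{LEHDNfcitjn034ty-LeappCOR} directly from Lemma~\ref{LEHDNfcitjn034ty-Leapp}. The statement~\eqref{COROLEEP2r34t5023opjgrh5tgbhuiuhgsdhj} is a triangle-inequality estimate, so the strategy is to bound~$\|u\|_{L^p(\R^n)}$ by the norm of the partial sums~$\sum_{j=0}^{k}\check\varphi_j*u$ and pass to the limit.

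First I would fix~$u\in L^p(\R^n)$ and~$k\in\N$ and apply the triangle inequality for the~$L^p$ norm to the finite sum, obtaining
\[
\left\|\sum_{j=0}^{k}\check\varphi_j*u\right\|_{L^p(\R^n)}\le\sum_{j=0}^{k}\|\check\varphi_j*u\|_{L^p(\R^n)}\le\sum_{j=0}^{+\infty}\|\check\varphi_j*u\|_{L^p(\R^n)}.
\]
Then, by Lemma~\ref{LEHDNfcitjn034ty-Leapp}, the partial sums~$\sum_{j=0}^{k}\check\varphi_j*u$ converge to~$u$ in~$L^p(\R^n)$ as~$k\to+\infty$; since the~$L^p$ norm is continuous with respect to~$L^p$ convergence (it is~$1$-Lipschitz by the reverse triangle inequality), we get
\[
\|u\|_{L^p(\R^n)}=\lim_{k\to+\infty}\left\|\sum_{j=0}^{k}\check\varphi_j*u\right\|_{L^p(\R^n)}\le\sum_{j=0}^{+\infty}\|\check\varphi_j*u\|_{L^p(\R^n)},
\]
which is exactly~\eqref{COROLEEP2r34t5023opjgrh5tgbhuiuhgsdhj}.

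There is essentially no obstacle here: the only point worth a line of comment is that the right-hand side may be~$+\infty$, in which case the inequality is trivially true, and otherwise the argument above applies verbatim. So the whole proof is two inequalities chained together plus an appeal to Lemma~\ref{LEHDNfcitjn034ty-Leapp}; I would write it in three or four lines.
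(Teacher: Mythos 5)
Your proof is correct and follows essentially the same route as the paper: both arguments combine Lemma~\ref{LEHDNfcitjn034ty-Leapp} with the triangle inequality for the finite partial sums and the (reverse) triangle inequality to pass the estimate to the limit~$u$, the paper merely phrasing the continuity of the norm via an explicit~$\e$-argument. Your remark that the right-hand side may be~$+\infty$ is a harmless extra precaution; nothing is missing.
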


\begin{proof}
Let~$\e>0$. By Lemma~\ref{LEHDNfcitjn034ty-Leapp}, there exists~$k_\e\in\N$ such that, for all~$k\ge k_\e$,
$$ \e\ge\left\|u-\sum_{j=0}^{k}\check\varphi_j*u\right\|_{L^p(\R^n)}\ge
\|u\|_{L^p(\R^n)}-\left\|\sum_{j=0}^{k}\check\varphi_j*u\right\|_{L^p(\R^n)}.$$
Therefore,
\begin{eqnarray*}&&
\|u\|_{L^p(\R^n)}\le\e+\lim_{k\to+\infty}\left\|\sum_{j=0}^{k}\check\varphi_j*u\right\|_{L^p(\R^n)}\\&&\qquad\le \e+\lim_{k\to+\infty}\sum_{j=0}^{k}\|\check\varphi_j*u\|_{L^p(\R^n)}=\e+\sum_{j=0}^{+\infty}\|\check\varphi_j*u\|_{L^p(\R^n)},
\end{eqnarray*}
which, since~$\e$ is arbitrary, leads to the desired result.
\end{proof}

A counterpart of the inclusion in Corollary~\ref{LE637:l} is given in the following result (where, as customary,
the Sobolev space~$W^{0,p}(\R^n)$ is simply~$L^p(\R^n)$).

\begin{corollary}\label{COROLEEP2r34t5}
For all~$s>0$ and~$p$,~$q\in[1,+\infty)$ we have that~$B^{s,p,q}(\R^n)\subseteq L^p(\R^n)$, with continuous embedding.

More precisely, we have that if either~$k\in\N\cap[0,s)$ or~$(k,q)=(s,1)$
then~$B^{s,p,q}(\R^n)\subseteq W^{k,p}(\R^n)$, with continuous embedding.
\end{corollary}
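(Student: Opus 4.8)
The statement to prove is Corollary~\ref{COROLEEP2r34t5}: for $s>0$ and $p,q\in[1,+\infty)$, $B^{s,p,q}(\R^n)\subseteq L^p(\R^n)$ with continuous embedding, and more precisely $B^{s,p,q}(\R^n)\subseteq W^{k,p}(\R^n)$ with continuous embedding when $k\in\N\cap[0,s)$ or $(k,q)=(s,1)$. The overall strategy is to combine the dyadic reconstruction estimate in Corollary~\ref{LEHDNfcitjn034ty-LeappCOR} with Bernstein-type bounds from~\eqref{BERLEMMAUT-3} and with H\"older's Inequality in the summation variable $j$, exactly in the spirit of the proof of Lemma~\ref{F58uuRS-2CALEREG:LE}.

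\textbf{Step 1: the $L^p$ inclusion.} Let $u\in B^{s,p,q}(\R^n)$. By Corollary~\ref{LEHDNfcitjn034ty-LeappCOR},
\[
\|u\|_{L^p(\R^n)}\le\sum_{j=0}^{+\infty}\|\check\varphi_j*u\|_{L^p(\R^n)}
=\sum_{j=0}^{+\infty}2^{-js}\,2^{js}\|\check\varphi_j*u\|_{L^p(\R^n)}.
\]
If $q=1$ this is already $\le\|u\|_{B^{s,1,\cdot}}$ style bound; in general, applying H\"older's Inequality with conjugate exponents $q$ and $q/(q-1)$ gives
\[
\|u\|_{L^p(\R^n)}\le\left(\sum_{j=0}^{+\infty}2^{-\frac{jsq}{q-1}}\right)^{\frac{q-1}{q}}
\left(\sum_{j=0}^{+\infty}2^{jsq}\|\check\varphi_j*u\|_{L^p(\R^n)}^q\right)^{\frac1q}=C\,\|u\|_{B^{s,p,q}(\R^n)},
\]
where the geometric series converges because $s>0$. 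This proves the first assertion (for $q=1$ no H\"older is needed, the bound being immediate).

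\textbf{Step 2: the Sobolev inclusions.} For the refined statement, fix $k\in\N\cap[0,s)$ (the case $k=0$ being Step~1). For a multi-index $\alpha$ with $|\alpha|=k$ I would write, formally, $D^\alpha u=\sum_{j\ge0}D^\alpha(\check\varphi_j*u)$ and estimate the $L^p$ norm of each term. For $j=0$, Young's Convolution Inequality gives $\|D^\alpha(\check\varphi_0*u)\|_{L^p}=\|u*(D^\alpha\check\varphi_0)\|_{L^p}\le\|D^\alpha\check\varphi_0\|_{L^1}\|u\|_{L^p}\le C\|u\|_{L^p}$, which by Step~1 is controlled by $\|u\|_{B^{s,p,q}}$. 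For $j\ge1$, the right-hand inequality in~\eqref{BERLEMMAUT-3} yields
\[
\|D^\alpha(\check\varphi_j*u)\|_{L^p(\R^n)}\le C^k\,2^{kj}\|\check\varphi_j*u\|_{L^p(\R^n)}.
\]
Hence $\sum_{j\ge0}\|D^\alpha(\check\varphi_j*u)\|_{L^p}\le C\|u\|_{L^p}+C\sum_{j\ge1}2^{kj}\|\check\varphi_j*u\|_{L^p}$, and I bound the series by writing $2^{kj}=2^{-j(s-k)}2^{js}$ and applying H\"older with exponents $q,q/(q-1)$ as in Step~1 (the geometric factor $\sum_j 2^{-\frac{j(s-k)q}{q-1}}$ converges since $s-k>0$; when $q=1$ and $k=s$, i.e. the boundary case $(k,q)=(s,1)$, the factor $2^{-j(s-k)}=1$ but summation exponent $1$ already makes $\sum_j 2^{js}\|\check\varphi_j*u\|_{L^p}=\|u\|_{B^{s,p,1}}<+\infty$ directly, so no decay is needed). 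This shows $\sum_j D^\alpha(\check\varphi_j*u)$ converges absolutely in $L^p(\R^n)$; combined with Step~1 (which gives $\sum_j\check\varphi_j*u=u$ in $L^p$, by Lemma~\ref{LEHDNfcitjn034ty-Leapp}), a standard argument identifies the $L^p$-limit $\sum_j D^\alpha(\check\varphi_j*u)$ with the distributional derivative $D^\alpha u$, so $D^\alpha u\in L^p(\R^n)$ with $\|D^\alpha u\|_{L^p}\le C\|u\|_{B^{s,p,q}}$. Summing over $|\alpha|\le k$ gives $u\in W^{k,p}(\R^n)$ with continuous embedding.

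\textbf{Main obstacle.} The only genuinely delicate point is Step~2's identification of the termwise-differentiated series with the weak derivative $D^\alpha u$: one must justify passing $D^\alpha$ through the infinite sum in the sense of distributions. The clean way is to test against $\phi\in C^\infty_c(\R^n)$: $\int u\,D^\alpha\phi=\lim_k\int(\sum_{j\le k}\check\varphi_j*u)D^\alpha\phi=\lim_k(-1)^{|\alpha|}\int(\sum_{j\le k}D^\alpha(\check\varphi_j*u))\phi=(-1)^{|\alpha|}\int(\sum_j D^\alpha(\check\varphi_j*u))\phi$, where the first and last limits use $L^p\to L^p$ convergence (Lemma~\ref{LEHDNfcitjn034ty-Leapp} and the absolute convergence just established) together with $\phi\in L^{p'}$. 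Everything else is the routine H\"older-plus-Bernstein bookkeeping already rehearsed in Lemma~\ref{F58uuRS-2CALEREG:LE} and Corollary~\ref{LE637:l}.
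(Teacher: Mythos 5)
Your argument is correct, and it uses the same two core ingredients as the paper (the reconstruction bound of Corollary~\ref{LEHDNfcitjn034ty-LeappCOR} and the Bernstein estimates~\eqref{BERLEMMAUT-1}--\eqref{BERLEMMAUT-3}), but it is organized differently. The paper first reduces the whole statement to the case~$q=1$ by invoking the embedding~\eqref{F58uuRS-2CALEREG} between Besov spaces with different summation exponents (so the H\"older-in-$j$ step is outsourced to Lemma~\ref{F58uuRS-2CALEREG:LE}), then proves the estimate~$\|D^\alpha u\|_{L^p}\le C\|u\|_{B^{s,p,1}}$ only for~$u\in C^\infty_c(\R^n)$ and extends it to all of~$B^{s,p,1}(\R^n)$ via the density result of Lemma~\ref{0oijvn5TSndgrMSKdmfKTGK3m5tgDksP234-LEMM}. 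You instead keep a general~$q$ throughout and perform the H\"older bookkeeping in the summation variable inline (which is essentially the same computation, just not modularized), and, more substantively, you avoid the density argument altogether: you work directly with an arbitrary~$u\in B^{s,p,q}(\R^n)$, show that~$\sum_j D^\alpha(\check\varphi_j*u)$ converges absolutely in~$L^p(\R^n)$, and identify the limit with the distributional derivative~$D^\alpha u$ by testing against~$\phi\in C^\infty_c(\R^n)$ and using Lemma~\ref{LEHDNfcitjn034ty-Leapp}. This buys a more self-contained proof that does not rely on Lemma~\ref{0oijvn5TSndgrMSKdmfKTGK3m5tgDksP234-LEMM} (nor on~\eqref{F58uuRS-2CALEREG}), at the price of having to carry out the distributional identification explicitly, which you do correctly since each~$\check\varphi_j*u$ is smooth with~$D^\alpha(\check\varphi_j*u)=(D^\alpha\check\varphi_j)*u\in L^p(\R^n)$ and~$D^\alpha\phi\in L^{p'}(\R^n)$. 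Two cosmetic remarks: you only spell out the estimate for multi-indices with~$|\alpha|=k$, whereas the~$W^{k,p}$ norm requires all~$|\alpha|\le k$; the identical argument (with~$|\alpha|$ in place of~$k$, and even better geometric decay) covers the lower-order derivatives, but you should say so. Also, your claim in Step~1 that the set-theoretic inclusion~$B^{s,p,q}(\R^n)\subseteq L^p(\R^n)$ needs proof is moot given the paper's definition of~$B^{s,p,q}(\R^n)$ as a subset of~$L^p(\R^n)$; the content there is only the norm estimate, which you do establish.
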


\begin{proof} First of all, we observe that, without loss of generality,
\begin{equation}\label{COROLEEP2r34t5-redq1}
{\mbox{we can assume that~$q=1$.}}
\end{equation}
Indeed, suppose that the desired result is true for~$q=1$, i.e.
that
\begin{equation}\label{COROLEEP2r34t5-redq2}
{\mbox{$B^{\sigma,p,1}(\R^n)\subseteq W^{\kappa,p}(\R^n)$, with continuous embedding, whenever~$\sigma>0$ and~$\kappa\in[0,\sigma]$.}}
\end{equation}
Let now~$s>0$ and~$k\in\N\cap[0,s)$ or~$(k,q)=(s,1)$.
If~$k=s$, then~$q=1$, therefore~\eqref{COROLEEP2r34t5-redq2}, used here with~$\sigma:=s$ and~$\kappa:=s=k$, gives that~$B^{s,p,q}(\R^n)=B^{s,p,1}(\R^n)\subseteq W^{k,p}(\R^n)$
with continuous embedding, as desired.

If instead~$k\in(0, s)$, then we can define~$\sigma:=\frac{s+k}{2}>0$ and~$a:=s-\sigma=\frac{s-k}{2}>0$
and use~\eqref{F58uuRS-2CALEREG} to see that~$B^{s,p,q}(\R^n)=B^{\sigma+a,p,q}(\R^n)\subseteq B^{\sigma,p,1}(\R^n)$ with continuous embedding. Thus, employing~\eqref{COROLEEP2r34t5-redq2} with~$\kappa:=k=\sigma-a\in(0,\sigma)$, we obtain that~$
B^{s,p,q}(\R^n)\subseteq B^{\sigma,p,1}(\R^n)\subseteq W^{k,p}(\R^n)$
with continuous embeddings, and we are done.

These observations show~\eqref{COROLEEP2r34t5-redq1}, hence we now suppose additionally that~$q=1$ and proceed with the proof of Corollary~\ref{COROLEEP2r34t5}. For this, let~$\alpha\in\N^n$ with~$|\alpha|\le k$
and~$u\in C^\infty_c(\R^n)$.

Then, by Corollary~\ref{LEHDNfcitjn034ty-LeappCOR},
\begin{equation}\label{LEHDNfcitjn034ty-LeappCOReqyi0} \|D^\alpha u\|_{L^p(\R^n)}\le\sum_{j=0}^{+\infty}\|D^\alpha(\check\varphi_j*u)\|_{L^p(\R^n)}.\end{equation}
We also utilize~\eqref{BERLEMMAUT-3} to see that, for all~$j\ge1$,
\begin{equation}\label{LEHDNfcitjn034ty-LeappCOReqyi1} \|D^\alpha(\check\varphi_j*u)\|_{L^p(\R^n)}\le2^{j(s-|\alpha|)}\|D^\alpha (\check\varphi_j*u)\|_{L^p(\R^n)}\le C 2^{js} \|\check\varphi_j*u\|_{L^p(\R^n)},\end{equation}
with~$C$ now possibly renamed and depending on~$k$ as well.

Besides, by~\eqref{BERLEMMAUT-1},
$$ \|D^\alpha (\check\varphi_0*u)\|_{L^p(\R^n)}\le C\|\check\varphi_0*u\|_{L^p(\R^n)},$$
showing that~\eqref{LEHDNfcitjn034ty-LeappCOReqyi1} is valid also when~$j=0$.

Therefore, using~\eqref{LEHDNfcitjn034ty-LeappCOReqyi0} and recalling the Besov space norm in~\eqref{BENOGIU},
\begin{eqnarray*}
\|D^\alpha u\|_{L^p(\R^n)}\le C\sum_{j=0}^{+\infty}2^{js} \|\check\varphi_j*u\|_{L^p(\R^n)}=C\|u\|_{B^{s,p,1}(\R^n)}.
\end{eqnarray*}
This inequality, obtained for~$u\in C^\infty_c(\R^n)$, extends to any~$u\in B^{s,p,1}(\R^n)$, due to the density result in Lemma~\ref{0oijvn5TSndgrMSKdmfKTGK3m5tgDksP234-LEMM} (and the fact that~$C$ is independent of~$u$).

Since this inequality holds true for all~$\alpha\in\N^n$ with~$|\alpha|\le k$, we obtain that~$\|u\|_{W^{k,p}(\R^n)}
\le C\|u\|_{B^{s,p,1}(\R^n)}$ and
the desired result is established.
\end{proof}

\begin{corollary} \label{DERIBEVS}
Let~$s>0$ and~$p,q\ge1$. Then,
$$ \frac{\| u\|_{B^{s+1,p,q}(\R^n)}}C\le\|u\|_{L^p(\R^n)}+\|\nabla u\|_{B^{s,p,q}(\R^n)}\le C\| u\|_{B^{s+1,p,q}(\R^n)},$$
with~$C\ge1$ depending only on~$n$,~$s$,~$p$ and~$q$.
\end{corollary}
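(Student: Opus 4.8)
The plan is to prove the two-sided estimate by a direct comparison of the Besov norms, using the dyadic partition of unity and the Bernstein-type inequalities established in \eqref{BERLEMMAUT-3} and \eqref{BERLEMMAUT-4}. The key point is that for each $j\ge1$ the function $\check\varphi_j*u$ is spectrally supported in the shell $B_{2^{j+1}}\setminus B_{2^{j-1}}$, so a single derivative costs (or saves) exactly a factor comparable to $2^{j}$ in the $L^p$ norm, which matches precisely the shift $s\mapsto s+1$ in the Besov scale.

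First I would prove the right-hand inequality, i.e.\ $\|u\|_{L^p(\R^n)}+\|\nabla u\|_{B^{s,p,q}(\R^n)}\le C\|u\|_{B^{s+1,p,q}(\R^n)}$. The bound $\|u\|_{L^p(\R^n)}\le C\|u\|_{B^{s+1,p,q}(\R^n)}$ is immediate from Corollary~\ref{COROLEEP2r34t5} (continuous embedding $B^{s+1,p,q}(\R^n)\subseteq L^p(\R^n)$). For the gradient term, fix $m\in\{1,\dots,n\}$ and note that $\check\varphi_j*(\partial_m u)=\partial_m(\check\varphi_j*u)$; by the second inequality in \eqref{BERLEMMAUT-3} with $k=1$ we get $\|\check\varphi_j*(\partial_m u)\|_{L^p(\R^n)}\le C\,2^{j}\|\check\varphi_j*u\|_{L^p(\R^n)}$ for $j\ge1$, and for $j=0$ one uses \eqref{BERLEMMAUT-1} (with $a=b=p$, $k=1$, $\lambda=2$, since $\widehat{\check\varphi_0*u}$ is supported in $B_2$) to get $\|\partial_m(\check\varphi_0*u)\|_{L^p(\R^n)}\le C\|\check\varphi_0*u\|_{L^p(\R^n)}$. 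Raising to the $q$-th power, multiplying by $2^{jsq}$ and summing gives
\[
\sum_{j=0}^{+\infty}2^{jsq}\|\check\varphi_j*(\partial_m u)\|_{L^p(\R^n)}^q\le C\sum_{j=0}^{+\infty}2^{j(s+1)q}\|\check\varphi_j*u\|_{L^p(\R^n)}^q=C\|u\|_{B^{s+1,p,q}(\R^n)}^q,
\]
so $\|\partial_m u\|_{B^{s,p,q}(\R^n)}\le C\|u\|_{B^{s+1,p,q}(\R^n)}$, and summing over $m$ concludes.

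For the left-hand inequality, $\|u\|_{B^{s+1,p,q}(\R^n)}\le C\big(\|u\|_{L^p(\R^n)}+\|\nabla u\|_{B^{s,p,q}(\R^n)}\big)$, I would split the Besov norm into the $j=0$ term and the $j\ge1$ terms. The $j=0$ term is controlled by $\|\check\varphi_0*u\|_{L^p(\R^n)}\le\|\check\varphi_0\|_{L^1(\R^n)}\|u\|_{L^p(\R^n)}$ via Young's Convolution Inequality. For $j\ge1$, I apply the first inequality in \eqref{BERLEMMAUT-3} with $k=1$: $\|\check\varphi_j*u\|_{L^p(\R^n)}\le C\,2^{-j}\sup_{|\alpha|=1}\|D^\alpha(\check\varphi_j*u)\|_{L^p(\R^n)}\le C\,2^{-j}\sum_{m=1}^n\|\check\varphi_j*(\partial_m u)\|_{L^p(\R^n)}$. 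Hence $2^{j(s+1)}\|\check\varphi_j*u\|_{L^p(\R^n)}\le C\sum_{m=1}^n 2^{js}\|\check\varphi_j*(\partial_m u)\|_{L^p(\R^n)}$; taking $\ell^q$ norms in $j\ge1$ (using the finite-dimensional sum over $m$ together with the elementary inequality $\big(\sum_{m=1}^n a_m\big)^q\le n^{q-1}\sum_{m=1}^n a_m^q$) yields the bound by $C\sum_{m=1}^n\|\partial_m u\|_{B^{s,p,q}(\R^n)}\le C\|\nabla u\|_{B^{s,p,q}(\R^n)}$. Combining the two pieces gives the claim.

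I do not expect a serious obstacle here: everything reduces to the Bernstein inequalities of \eqref{BERLEMMAUT-3}–\eqref{BERLEMMAUT-4} applied at order $k=1$ and to Young's inequality for the zeroth block, plus bookkeeping with the $\ell^q$ sums. The only mild care needed is the handling of the $j=0$ frequency block separately (where $\widehat{\check\varphi_0*u}$ lives in a ball rather than a shell, so one invokes \eqref{BERLEMMAUT-1} rather than \eqref{BERLEMMAUT-2}), and keeping track that all constants $C$ are independent of $u$ so that the estimates, first obtained conveniently for $u\in C^\infty_c(\R^n)$, extend to all of $B^{s+1,p,q}(\R^n)$ by the density result of Lemma~\ref{0oijvn5TSndgrMSKdmfKTGK3m5tgDksP234-LEMM}.
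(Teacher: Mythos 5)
Your argument is correct and follows essentially the same route as the paper's proof: blockwise application of the Bernstein inequalities \eqref{BERLEMMAUT-3} (and \eqref{BERLEMMAUT-1} for the $j=0$ block) at order $k=1$, Young's inequality for the zeroth frequency block, and Corollary~\ref{COROLEEP2r34t5} for the term $\|u\|_{L^p(\R^n)}\le C\|u\|_{B^{s+1,p,q}(\R^n)}$. The paper simply states one direction and notes the other is similar, so your write-up is just a more detailed version of the same argument.
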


\begin{proof} In light of~\eqref{BENOGIU} and~\eqref{BERLEMMAUT-3},
\begin{eqnarray*}
\|u\|_{B^{s+1,p,q}(\R^n)}^q&=&
\sum_{j=0}^{+\infty}2^{j(s+1)q}\|\check{\varphi}_j*u\|_{L^p(\R^n)}^q\\&\le&
\|\check{\varphi}_0*u\|_{L^p(\R^n)}^q
+C\sum_{j=1}^{+\infty}2^{jsq}\|\nabla(\check{\varphi}_j*u)\|_{L^p(\R^n)}^q
\\&\le&C\Big(\|u\|_{L^p(\R^n)}^q+\|\nabla u\|_{B^{s,p,q}(\R^n)}^q\Big),\end{eqnarray*}and the other inequality is similar (using also Corollary~\ref{COROLEEP2r34t5} to see that~$\|u\|_{L^p(\R^n)}\le C\| u\|_{B^{s+1,p,q}(\R^n)}$).
\end{proof}

Now, a natural structural property enjoyed by Besov spaces:

\begin{lemma} 
$B^{s,p,q}(\R^n)$ is a Banach space.
\end{lemma}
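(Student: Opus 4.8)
The plan is to verify first that $\|\cdot\|_{B^{s,p,q}(\R^n)}$ is a norm, and then to establish completeness by the standard extraction argument. Homogeneity $\|\lambda u\|_{B^{s,p,q}(\R^n)}=|\lambda|\,\|u\|_{B^{s,p,q}(\R^n)}$ is immediate from~\eqref{BENOGIU}. For the triangle inequality I would combine the triangle inequality in $L^p(\R^n)$, which gives $\|\check\varphi_j*(u+v)\|_{L^p(\R^n)}\le\|\check\varphi_j*u\|_{L^p(\R^n)}+\|\check\varphi_j*v\|_{L^p(\R^n)}$ for every $j$, with Minkowski's inequality in $\ell^q$ applied to the two sequences $\big(2^{js}\|\check\varphi_j*u\|_{L^p(\R^n)}\big)_{j\ge0}$ and $\big(2^{js}\|\check\varphi_j*v\|_{L^p(\R^n)}\big)_{j\ge0}$ (using that the $\ell^q$-quasinorm is monotone and, for $q\ge1$, a genuine norm). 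Finally, if $\|u\|_{B^{s,p,q}(\R^n)}=0$ then $\check\varphi_j*u=0$ in $L^p(\R^n)$ for every $j$, so Corollary~\ref{LEHDNfcitjn034ty-LeappCOR} forces $\|u\|_{L^p(\R^n)}=0$, i.e.\ $u=0$; hence the expression is a norm.

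For completeness, let $\{u_m\}_{m\in\N}$ be a Cauchy sequence in $B^{s,p,q}(\R^n)$. The first step is to produce a candidate limit in $L^p(\R^n)$. Since $s>0$, either $q=1$ or, denoting by $q'$ the conjugate exponent of $q$, the series $\sum_{j\ge0}2^{-jsq'}$ converges; in either case Hölder's inequality (trivial when $q=1$) together with Corollary~\ref{LEHDNfcitjn034ty-LeappCOR} yields
\[
\|v\|_{L^p(\R^n)}\le\sum_{j=0}^{+\infty}2^{-js}\,2^{js}\|\check\varphi_j*v\|_{L^p(\R^n)}\le C\,\|v\|_{B^{s,p,q}(\R^n)}
\]
for all $v\in B^{s,p,q}(\R^n)$. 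Applying this to $v=u_m-u_{m'}$ shows that $\{u_m\}$ is Cauchy in $L^p(\R^n)$, hence converges to some $u\in L^p(\R^n)$. By Young's Convolution Inequality and the fact that each $\check\varphi_j\in L^1(\R^n)$, we also obtain $\check\varphi_j*u_m\to\check\varphi_j*u$ in $L^p(\R^n)$ for every fixed $j$.

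The remaining step is to upgrade this to convergence in $B^{s,p,q}(\R^n)$ and, in particular, to check that $u\in B^{s,p,q}(\R^n)$. I would argue by a lower-semicontinuity (Fatou-type) estimate on finite partial sums: for every $N\in\N$,
\[
\sum_{j=0}^{N}2^{jsq}\|\check\varphi_j*u\|_{L^p(\R^n)}^q=\lim_{m\to+\infty}\sum_{j=0}^{N}2^{jsq}\|\check\varphi_j*u_m\|_{L^p(\R^n)}^q\le\sup_{m\in\N}\|u_m\|_{B^{s,p,q}(\R^n)}^q<+\infty,
\]
where the interchange of limit and (finite) sum uses the $L^p$-convergence of $\check\varphi_j*u_m$, and the last bound uses that Cauchy sequences are bounded. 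Letting $N\to+\infty$ gives $u\in B^{s,p,q}(\R^n)$. Likewise, fixing $\e>0$ and $M$ with $\|u_m-u_{m'}\|_{B^{s,p,q}(\R^n)}<\e$ for all $m,m'\ge M$, for every $N$ and every $m\ge M$ one has
\[
\sum_{j=0}^{N}2^{jsq}\|\check\varphi_j*(u_m-u)\|_{L^p(\R^n)}^q=\lim_{m'\to+\infty}\sum_{j=0}^{N}2^{jsq}\|\check\varphi_j*(u_m-u_{m'})\|_{L^p(\R^n)}^q\le\e^q,
\]
so letting $N\to+\infty$ yields $\|u_m-u\|_{B^{s,p,q}(\R^n)}\le\e$ for $m\ge M$, i.e.\ $u_m\to u$ in $B^{s,p,q}(\R^n)$. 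I expect the only mildly delicate points to be the passage $\check\varphi_j*u_m\to\check\varphi_j*u$ in $L^p(\R^n)$ and the interchanges of limit and finite sum in the two Fatou-type estimates; both are routine given Young's inequality and the finiteness of the partial sums involved, so I do not anticipate a genuine obstacle.
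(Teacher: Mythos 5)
Your proof is correct, and its overall architecture matches the paper's (embed the Besov space into $L^p(\R^n)$, extract an $L^p$-limit of the Cauchy sequence, then pass to the limit in finite partial sums of the norm in~\eqref{BENOGIU} and let the truncation parameter go to infinity), but two steps are handled by genuinely different means. First, for nondegeneracy you invoke Corollary~\ref{LEHDNfcitjn034ty-LeappCOR} together with the fact that elements of $B^{s,p,q}(\R^n)$ are by definition in $L^p(\R^n)$, whereas the paper argues at the level of Fourier transforms, summing the identities $\varphi_j\widehat u=0$ via~\eqref{0pirj09365-4g4eZXCHJKLRFG-544-NO1-LL-eq1} to conclude that $\widehat u$, hence $u$, is the null distribution; both are valid, and yours is slightly more economical given the function-space (rather than distributional) definition adopted here. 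Second, in the completeness argument the paper passes to an a.e.-convergent subsequence, uses H\"older's Inequality to get pointwise convergence of $\check\varphi_j*(u_{k_\ell}-u_{k_m})$, applies Fatou's Lemma to obtain a liminf \emph{inequality} for each term, and only at the end upgrades from the subsequence to the full sequence; you instead observe that $\check\varphi_j\in L^1(\R^n)$ (cf.~\eqref{0pirj09365-4g4eZXCHJKLRFG-544-NO1-LL-eq4-09-BIS}), so by Young's Convolution Inequality $v\mapsto\|\check\varphi_j*v\|_{L^p(\R^n)}$ is continuous on $L^p(\R^n)$, which turns the limit interchange in the finite sums into an exact equality and lets you work with the whole sequence directly, dispensing with both Fatou and the subsequence step. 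What the paper's route buys is robustness (it would survive with only a.e. convergence in hand); what yours buys is a shorter and cleaner argument. Your explicit verification of the triangle inequality via Minkowski in $\ell^q$ is also fine, where the paper simply remarks that these properties follow from the corresponding ones in $L^p(\R^n)$.
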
 
\begin{proof}
We focus on the nondegeneracy and completeness proofs, since the other properties follow from the analogous ones of~$L^p(\R^n)$.

For the nondegeneracy, suppose that~$\|u\|_{B^{s,p,q}(\R^n)}=0$.
Then, for all~$j\in\N$, we have that~$\|\check\varphi_j*u\|_{L^p(\R^n)}=0$ and therefore~$\check\varphi_j*u(x)=0$ for a.e.~$x\in\R^n$.
This gives that~$\varphi_j \widehat u$ is the null distribution, and thus so is
$$ \sum_{j=0}^{+\infty}\varphi_j\widehat u=\widehat u,$$
due to~\eqref{0pirj09365-4g4eZXCHJKLRFG-544-NO1-LL-eq1}.
This says that~$\widehat u$ is null, and thus so is~$u$, proving the nondegeneracy of the Besov norm in~\eqref{BENOGIU}.

Now let us discuss the completeness property.
Let~$u_k$ be a Cauchy sequence in~$B^{s,p,q}(\R^n)$. 
By Corollary~\ref{COROLEEP2r34t5}, we have that~$u_k$ is also a Cauchy sequence in~$L^p(\R^n)$
and therefore there exists a function~$u$ such that~$u_k\to u$ in~$L^p(\R^n)$ as~$k\to+\infty$,
and a subsequence~$u_{k_\ell}$ such that~$u_{k_\ell}\to u$ a.e. in~$\R^n$ as~$\ell\to+\infty$.

Thus, for every~$j$,~$\ell\in\N$ and~$x\in\R^n$, using H\"older's Inequality with exponents~$p$ and~$\frac{p}{p-1}$,
\begin{eqnarray*}&&\lim_{m\to+\infty}
\left| \check\varphi_j*(u_{k_\ell}-u_{k_m})(x) - \check\varphi_j*(u_{k_\ell}-u)(x)\right|^p=\lim_{m\to+\infty}
\left| \check\varphi_j*(u-u_{k_m})(x)\right|^p\\
&&\qquad\le\lim_{m\to+\infty}\left(\int_{\R^n} |\check\varphi_j(x-y)|\,|(u-u_{k_m})(y)|\,dy\right)^p
\le \lim_{m\to+\infty} \|\check\varphi_j\|_{L^{\frac{p}{p-1}}(\R^n)}^p\,\|u-u_{k_m}\|_{L^p(\R^n)}^p=0,
\end{eqnarray*}
that is
$$ \lim_{m\to+\infty} \check\varphi_j*(u_{k_\ell}-u_{k_m})(x) = \check\varphi_j*(u_{k_\ell}-u)(x).$$
On this account, by Fatou's Lemma, for every~$j$,~$\ell\in\N$,
\begin{equation}\label{BENOGIU02o-eprjegk9j2erjXXasdf-323}
\begin{split}&
\liminf_{m\to+\infty}\|\check\varphi_j*(u_{k_\ell}-u_{k_m})\|_{L^p(\R^n)}^q=
\left(\liminf_{m\to+\infty}\int_{\R^n}|\check\varphi_j*(u_{k_\ell}-u_{k_m})(x)|^p\,dx \right)^{\frac{q}p}\\&\qquad\ge
\left(\int_{\R^n}\liminf_{m\to+\infty}|\check\varphi_j*(u_{k_\ell}-u_{k_m})(x)|^p\,dx \right)^{\frac{q}p}=
\left(\int_{\R^n}|\check\varphi_j*(u_{k_\ell}-u)(x)|^p\,dx \right)^{\frac{q}p}\\&\qquad=\|\check\varphi_j*(u_{k_\ell}-u)\|_{L^p(\R^n)}^q.
\end{split}\end{equation}

Now, recalling the Besov norm in~\eqref{BENOGIU},
given~$\e>0$, we use the Cauchy property of the sequence~$u_k$ (and thus of the subsequence~$u_{k_\ell}$) to find~$L_\e\in\N$ sufficiently large such that, if~$\ell$,~$m\ge L_\e$, then, for every~$N\in\N$,
\begin{eqnarray*}
\e^q\ge \|u_{k_\ell}-u_{k_m}\|_{B^{s,p,q}(\R^n)}^q\ge
\sum_{j=0}^{N}2^{jsq}\|\check{\varphi}_j*(u_{k_\ell}-u_{k_m})\|_{L^p(\R^n)}^q.
\end{eqnarray*}
Therefore, taking the limit as~$m\to+\infty$ according to~\eqref{BENOGIU02o-eprjegk9j2erjXXasdf-323},
\begin{eqnarray*}
\e^q\ge\sum_{j=0}^{N}2^{jsq}\|\check{\varphi}_j*(u_{k_\ell}-u)\|_{L^p(\R^n)}^q.
\end{eqnarray*}
Taking now the limit as~$N\to+\infty$, we conclude that
\begin{eqnarray*}
\e^q\ge\sum_{j=0}^{+\infty}2^{jsq}\|\check{\varphi}_j*(u_{k_\ell}-u)\|_{L^p(\R^n)}^q= \|u_{k_\ell}-u\|_{B^{s,p,q}(\R^n)}^q.
\end{eqnarray*}

This shows that~$u_{k_\ell}\to u$ as~$\ell\to+\infty$ in~$B^{s,p,q}(\R^n)$. 
To complete the proof of the desired result, we have to show that~$u_k\to u$ as~$k\to+\infty$ in~$B^{s,p,q}(\R^n)$. For this, pick~$\e>0$ and use the Cauchy property of~$u_k$ to find~$K_\e\in\N$ so large that, for all~$k$,~$h\ge K_\e$,
$$ \|u_k-u_h\|_{B^{s,p,q}(\R^n)}\le \e.$$
Let now choose~$\ell_\e\in\N$ so large that~$k_{\ell_\e}\ge K_\e$.
This gives that whenever~$\ell\ge\ell_\e$ we have that~$k_\ell\ge k_{\ell_\e}\ge K_\e$ and, as a consequence,
$$ \|u_k-u_{k_\ell}\|_{B^{s,p,q}(\R^n)}\le \e.$$
In particular,
$$ \|u_k-u\|_{B^{s,p,q}(\R^n)}=\lim_{\ell\to+\infty}\|u_k-u_{k_\ell}\|_{B^{s,p,q}(\R^n)}\le \e$$
and therefore~$u_k$ converges to~$u$ in~$B^{s,p,q}(\R^n)$, as desired.
\end{proof}

Following is a useful estimate to relate the Besov spaces and the Bessel potential spaces\footnote{Linking
Bessel potential spaces to Besov spaces is a key step to the regularity theory of nonlocal equations:
indeed, while Bessel potential spaces, as introduced in Section~\ref{SEC:BPSPA}, mainly describe the space of solutions of certain nonlocal equations (say,~$ (1-\Delta)^su=f$ in~$\R^n$), we have that Besov spaces
measure, to some extent, the regularity of a function (see e.g. Corollary~\ref{COROLEEP2r34t5}).} presented in Section~\ref{SEC:BPSPA}.
A pivotal role in this setting is played by the Mikhlin Multiplier Theorem, which we now briefly recall.

As customary, given two Banach spaces~$X$ and~$Y$, we will also denote by~$L(X,Y)$ the space of bounded linear operators from~$X$ to~$Y$.
Given~$m:\R^n\setminus\{0\}\to L(X,Y)$, we denote\footnote{The notation relating the symbol in the Fourier space
to the corresponding operator is not uniform in the literature. Here is a (largely incomplete)
table comparing different notations.

\begin{center}
\begin{tabular}{ |p{2cm}||p{4cm}|p{4cm}| }
 \hline
&  Fourier multiplier or symbol&(pseudo)differential operator\\
\hline
 \hline
This book &$ m$& $T_m$\\
\hline
\cite{MR2884718}& $p(\xi)$ & $P$\\
\hline
\cite{MR3170202}& $p(x,\xi)$ & $P$, $p(x,D_x)$, ${\operatorname{OP}}(p)$\\
\hline
\cite{MR2244530}& $L_0(\xi)$, $\sigma_{D_\omega}$ & $L_0(\partial)$,
$D_\omega$\\
 \hline
\cite{MR1996120}& $\sigma$&$T_\sigma$\\
 \hline
\cite{MR1269107}& $\sigma_Q$&$Q$\\
\hline
\cite{MR1385196}& $a(x,\xi)$, $p$ & $A(x,D_x)$, ${\operatorname{Op}}(p)$\\
\hline
\cite{MR2453959}& $p(\xi)$ & $P(D)$, ${\operatorname{Op}}(p)$\\
 \hline
\cite{MR4499500}& $a(x,\xi)$ &  $T_a$\\
 \hline
\cite{MR1996773}& $p(x,\xi)$ & $P(x,\partial)$\\
 \hline
 \cite{RUZPR14}& $a(x,\xi)$ & $T_a$, $a(x,D)$\\
 \hline
\cite{MR2567604} & $a$ & ${\operatorname{Op}}(a)$, $a(x, D)$, $T_a$\\
 \hline
 \cite{MR1211419}& $a(x,\xi)$ & $a(x,D)$\\
 \hline
 \cite{MR1852334}& $\sigma_A$ & $A$\\
 \hline
 \cite{MR2587583}& $\sigma$ & $T_\sigma$\\
 \hline  
 \cite{MR597144}& $P(\xi)$ & $P(D)$\\
 \hline 
\cite{MR4436039}& $a$ & $A$, ${\operatorname{Op}}\,a$\\
 \hline
\end{tabular}\end{center}

The notation adopted here tries to be streamlined from the typographical point of view
and avoid the use of multiple letters
(recalling Feynman's comment~\cite{FEYNM}:
``While I was doing all this trigonometry, I didn't like the symbols for sine, cosine, tangent, and so on. To me, $\sin \phi$ looked like $s$ times $i$ times $n$ times $\phi$!'').}
by~$T_m$ the operator~${\mathcal{F}}^{-1}\circ m\circ{\mathcal{F}}$, that is
\begin{equation}\label{DEFTMMIH} T_m g:={\mathcal{F}}^{-1}\Big( m \widehat{g}\Big).\end{equation}
Roughly speaking,~$m$ acts as a multiplication operator in frequency space
and~$T_m$ is the corresponding operator in the usual space.

Though more general statements are available in the literature, the kind of result that we use here goes as follows:

\begin{theorem}[Mikhlin Multiplier Theorem]\label{Mikhlin Multiplier Theorem}
Let~$p\in(1,+\infty)$.
Let~$X$,~$Y$ be Hilbert spaces.

Assume that~$m\in C^{n+2}(\R^n\setminus\{0\}, \,L(X,Y))$, with
\begin{equation}\label{Mikhlin Multiplier Theorem-ASSUN}
A:= \sup_{{\xi\in\R^n\setminus\{0\}}\atop{{\alpha\in\N^n}\atop{|\alpha|\le n+2}}}|\xi|^{|\alpha|} \|D^\alpha m(\xi)\|_{L(X,Y)}<+\infty.
\end{equation}
Then,
$$ \| T_m \|_{L(L^p(\R^n,X),\,L^p(\R^n,Y))}\le CA,$$
for a suitable~$C>0$ depending only on~$n$,~$p$,~$X$, and~$Y$.\end{theorem}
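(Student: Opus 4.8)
The plan is to prove the Mikhlin Multiplier Theorem by the standard Calder\'on--Zygmund route: show that the convolution kernel associated with $m$ satisfies a H\"ormander-type integral condition, deduce weak type $(1,1)$ boundedness, then interpolate with the trivial $L^2$ bound coming from Plancherel, and finally handle $p\in(2,+\infty)$ by a duality argument. Throughout, one works with operators valued in Hilbert spaces, so all the classical scalar estimates go through verbatim once $|\cdot|$ is replaced by the relevant Hilbert space norms and $\overline{\,\cdot\,}$ by the appropriate inner products.

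First I would record the $L^2$ estimate: by Plancherel (available since $X$, $Y$ are Hilbert spaces), for $g$ in the Schwartz class with values in $X$,
\begin{equation*}
\|T_m g\|_{L^2(\R^n,Y)} = \|m\widehat g\|_{L^2(\R^n,Y)} \le \Big(\sup_{\xi\ne0}\|m(\xi)\|_{L(X,Y)}\Big)\,\|\widehat g\|_{L^2(\R^n,X)} \le A\,\|g\|_{L^2(\R^n,X)},
\end{equation*}
using the $|\alpha|=0$ case of \eqref{Mikhlin Multiplier Theorem-ASSUN}. Next I would introduce the dyadic decomposition $m = \sum_{j\in\Z} m\,\varphi_j^{\natural}$, where $\{\varphi_j^{\natural}\}$ is a partition of unity on $\R^n\setminus\{0\}$ into dyadic shells (the analogue of Lemma~\ref{0pirj09365-4g4eZXCHJKLRFG-544-NO1-LL}, now indexed over all of $\Z$ since we are away from the origin), and set $K_j := \check{(m\varphi_j^{\natural})}$ and $K := \sum_j K_j$, the kernel of $T_m$ away from the diagonal. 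The Mikhlin bounds \eqref{Mikhlin Multiplier Theorem-ASSUN} on derivatives of $m$ up to order $n+2$, combined with Bernstein-type scaling as in Lemma~\ref{BERLEMMAUT}, give the pointwise estimates $\|K_j(x)\|_{L(X,Y)}\le C A\,2^{jn}(1+2^j|x|)^{-n-2}$ and $\|\nabla K_j(x)\|_{L(X,Y)}\le CA\,2^{j(n+1)}(1+2^j|x|)^{-n-2}$; summing over $j$ yields the H\"ormander condition
\begin{equation*}
\sup_{y\ne0}\int_{|x|\ge2|y|}\|K(x-y)-K(x)\|_{L(X,Y)}\,dx \le CA.
\end{equation*}

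With the $L^2$ bound and the H\"ormander condition in hand, the Calder\'on--Zygmund decomposition argument (the vector-valued version; see the references cited in the footnote to \eqref{DEFTMMIH}, or Theorem~3 on page~39 of~\cite{MR0290095} in the scalar case) gives that $T_m$ is of weak type $(1,1)$ with constant $\le CA$. The Marcinkiewicz Interpolation Theorem then yields strong type $(p,p)$ with constant $\le C_p A$ for all $p\in(1,2]$. For $p\in(2,+\infty)$ I would argue by duality: the adjoint of $T_m$ is $T_{m^*}$, where $m^*(\xi)\in L(Y,X)$ is the Hilbert space adjoint of $m(\xi)$, and $m^*$ satisfies \eqref{Mikhlin Multiplier Theorem-ASSUN} with the same constant $A$; hence $\|T_{m^*}\|_{L(L^{p'}(\R^n,Y),\,L^{p'}(\R^n,X))}\le C_{p'}A$ for $p'\in(1,2)$, and taking adjoints back gives the bound for $T_m$ on $L^p(\R^n,X)$. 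A density argument from the Schwartz class to all of $L^p(\R^n,X)$ completes the proof.

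The main obstacle I anticipate is twofold. First, establishing the H\"ormander integral condition cleanly: one must extract, from the finitely many derivative bounds in \eqref{Mikhlin Multiplier Theorem-ASSUN}, decay of $K_j$ and of $\nabla K_j$ that is summable in $j$ after the mean-value theorem is applied on the region $|x|\ge2|y|$ — this is where the precise number $n+2$ of derivatives enters, and it requires the careful stationary-phase/integration-by-parts bookkeeping typical of Bernstein's lemma (Lemma~\ref{BERLEMMAUT}), adapted to the non-compactly-supported but spectrally-localized pieces $m\varphi_j^{\natural}$. Second, checking that the scalar Calder\'on--Zygmund machinery genuinely transfers to the Hilbert-space-valued setting: the Calder\'on--Zygmund decomposition of an $X$-valued $L^1$ function, the good-$\lambda$ or direct weak-$(1,1)$ estimate, and the interpolation theorem all have vector-valued analogues, but one should state them with the hypothesis that $X$, $Y$ are Hilbert (or at least UMD) spaces so that Plancherel and the duality $m\mapsto m^*$ are available; I would cite these vector-valued versions rather than reprove them.
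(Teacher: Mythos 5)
You should note at the outset that the paper does not prove Theorem~\ref{Mikhlin Multiplier Theorem} at all: immediately after the statement it defers to the literature (Abels, Hyt\"onen--van Neerven--Veraar--Weis, Amann, and the Koike et al.\ volume), so there is no internal argument to measure your proposal against. That said, your sketch is correct and is essentially the classical proof found in those references: the $L^2$ bound from Plancherel (valid because $X$ and $Y$ are Hilbert), a dyadic decomposition of $m$ over shells indexed by $\Z$, integration by parts using the $n+2$ derivative bounds in~\eqref{Mikhlin Multiplier Theorem-ASSUN} to get $\|K_j(x)\|_{L(X,Y)}\le CA\,2^{jn}(1+2^j|x|)^{-n-2}$ and the analogous gradient bound, summation in $j$ (splitting at $2^j\sim|y|^{-1}$) to obtain the H\"ormander condition, the vector-valued Calder\'on--Zygmund decomposition for weak type $(1,1)$, Marcinkiewicz interpolation for $1<p\le2$, and duality via $m(\xi)^*$ for $p>2$. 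Two small points deserve explicit care if you were to write this out in full: first, the Calder\'on--Zygmund argument needs the off-support kernel representation $T_mf(x)=\int K(x-y)f(y)\,dy$ for $x$ outside the support of $f$, which requires a short limiting argument since $K=\sum_j K_j$ is only defined away from the diagonal; second, in the duality step the adjoint multiplier is $\xi\mapsto m(\xi)^*$ (up to the sign convention in the Fourier transform), and one should observe that taking Hilbert-space adjoints preserves the constant $A$ in~\eqref{Mikhlin Multiplier Theorem-ASSUN}. You might also remark that $n+2$ derivatives is generous — the argument as you set it up already closes with $n+1$, and the sharper H\"ormander--Mikhlin form needs only $\lfloor n/2\rfloor+1$ in an $L^2$-averaged sense — but this is consistent with the statement as given and costs nothing.
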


We will not give the proof of this result here:
for more details (and proofs), see e.g.~\cite[Theorem~5.8]{MR2884718},
\cite[Proposition~4.2.14 and Theorem~5.3.18]{MR3617205}, \cite[Theorem~4.7.2]{MR3930629}, or~\cite[Remark~3.26 and Theorem~3.27]{MR4249415}.

With this, we can now state and prove a careful estimate\footnote{Estimates of this type somewhat aim at extending methods and results about functions in~$L^2(\R^n)$ (or spaces modeled on~$L^2(\R^n)$) to~$L^p (\R^n)$ and they sit in the so-called \index{Littlewood-Paley Theory} Littlewood-Paley Theory.
The link between spaces with integration indexes~$p$ and~$2$ becomes also more apparent in~\eqref{MikhlinEQWF} if one writes
$$ \left(
\sum_{j=0}^{+\infty} 2^{2js}|\check\varphi_j*u|^2\right)^{\frac12}=
\left\|\big\{
2^{js}|\check\varphi_j*u|\big\}_{j\in\N}
\right\|_{\ell^2(\N)}
.$$

To show the power of the estimates in~\eqref{MikhlinEQWF}, we provide in Appendix~\ref{APPE:INTERPLEMM} an interpolation
inequality in the Bessel potential spaces setting, with a direct proof relying on Theorem~\ref{0pirj09365-4g4eZXCHJKLRFG-544-NO3-COR-0oerkCC}
(other approaches are possible through complex interpolation theory).}
relating Besov and Bessel potential spaces:

\begin{theorem}\label{0pirj09365-4g4eZXCHJKLRFG-544-NO3-COR-0oerkCC}
For all~$s>0$ and~$p\ge1$ there exists a constant~$C\ge1$, depending only on~$n$,~$s$ and~$p$, such that, for all locally integrable functions~$u$,
\begin{equation}\label{MikhlinEQWF}
\frac{ \|u\|_{{\mathcal{L}}^p_{s} (\R^n)}}{C}\le\left\|\,\left(
\sum_{j=0}^{+\infty} 2^{2js}|\check\varphi_j*u|^2\right)^{\frac12} \,\right\|_{L^p(\R^n)}\le C\|u\|_{{\mathcal{L}}^p_{s}(\R^n)}.
\end{equation}
\end{theorem}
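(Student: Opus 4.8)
The statement is a Littlewood--Paley characterization of the Bessel potential norm, so the natural strategy is to realize the passage between $u$ and the square function $S_su:=\bigl(\sum_{j\ge0}2^{2js}|\check\varphi_j*u|^2\bigr)^{1/2}$ as the action of an operator-valued Fourier multiplier, and then invoke the Mikhlin Multiplier Theorem (Theorem~\ref{Mikhlin Multiplier Theorem}) with $X$ and $Y$ suitable (weighted) $\ell^2$ Hilbert spaces. Recall that, writing $u={\mathcal B}^{(s/2)}*f$ with $f\in L^p(\R^n)$ (so that $\|u\|_{{\mathcal L}^p_s(\R^n)}=\|f\|_{L^p(\R^n)}$ by Corollary~\ref{BANACHCOEL}), one has $\widehat u(\xi)=(1+4\pi^2|\xi|^2)^{-s/2}\widehat f(\xi)$ by Lemma~\ref{PRODpoikjhr3-2:le-7}. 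The plan is therefore to produce two Mikhlin-type bounds: one realizing $\{2^{js}\check\varphi_j*u\}_{j\in\N}$ as $T_{m_1}f$ for an $m_1\colon\R^n\setminus\{0\}\to L(\C,\ell^2(\N))$, and a converse one reconstructing $u$ (equivalently $f$) from the sequence $\{2^{js}\check\varphi_j*u\}_j$ via $T_{m_2}$ for $m_2\colon\R^n\setminus\{0\}\to L(\ell^2(\N),\C)$.

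For the first inequality, set $m_1(\xi):=\bigl(2^{js}\varphi_j(\xi)(1+4\pi^2|\xi|^2)^{-s/2}\bigr)_{j\in\N}$, so that $T_{m_1}f$ is precisely the sequence $(2^{js}\check\varphi_j*u)_j$, and $\|T_{m_1}f\|_{L^p(\R^n,\ell^2(\N))}=\|S_su\|_{L^p(\R^n)}$. One must verify hypothesis~\eqref{Mikhlin Multiplier Theorem-ASSUN}: for each multi-index $\alpha$ with $|\alpha|\le n+2$, estimate $|\xi|^{|\alpha|}\,\|D^\alpha m_1(\xi)\|_{\ell^2}$. The point is that for a fixed $\xi$ only the $\varphi_j$ with $2^{j-1}\le|\xi|\le 2^{j+1}$ are nonzero (Lemma~\ref{0pirj09365-4g4eZXCHJKLRFG-544-NO1-LL}, equation~\eqref{0pirj09365-4g4eZXCHJKLRFG-544-NO1-LL-eq2}), hence the $\ell^2$ sum has at most three terms; moreover on the support of $\varphi_j$ one has $2^{js}(1+4\pi^2|\xi|^2)^{-s/2}\approx 1$, and the derivative bounds~\eqref{0pirj09365-4g4eZXCHJKLRFG-544-NO1-LL-eq4}, $\|D^\alpha\varphi_j\|_{L^\infty}\le C\,2^{-|\alpha|j}\le C\,|\xi|^{-|\alpha|}$ there, together with the elementary bound $|\xi|^{|\alpha|}|D^\alpha(1+4\pi^2|\xi|^2)^{-s/2}|\le C$, give $|\xi|^{|\alpha|}\|D^\alpha m_1(\xi)\|_{\ell^2}\le C$ uniformly in $\xi$. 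Theorem~\ref{Mikhlin Multiplier Theorem} then yields $\|S_su\|_{L^p(\R^n)}\le C\|f\|_{L^p(\R^n)}=C\|u\|_{{\mathcal L}^p_s(\R^n)}$, which is the right-hand inequality in~\eqref{MikhlinEQWF}.

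For the converse, the idea is to write $f={\mathcal F}^{-1}\bigl((1+4\pi^2|\xi|^2)^{s/2}\widehat u\bigr)$ and use the almost-orthogonality relation $\psi_j\varphi_j=\varphi_j$ from~\eqref{UTILTAYMAA8ikjf-x5-LAPSIN2}, with $\psi_j=\varphi_{j-1}+\varphi_j+\varphi_{j+1}$, to recover $\widehat u=\sum_j\varphi_j\widehat u=\sum_j\psi_j\varphi_j\widehat u$. Thus define $m_2\colon\R^n\setminus\{0\}\to L(\ell^2(\N),\C)$ by $m_2(\xi)\bigl((a_j)_j\bigr):=\sum_{j}2^{-js}(1+4\pi^2|\xi|^2)^{s/2}\psi_j(\xi)\,a_j$, so that $T_{m_2}$ applied to the sequence $(2^{js}\check\varphi_j*u)_j=\bigl({\mathcal F}^{-1}(2^{js}\varphi_j\widehat u)\bigr)_j$ returns ${\mathcal F}^{-1}\bigl((1+4\pi^2|\xi|^2)^{s/2}\sum_j\psi_j\varphi_j\widehat u\bigr)={\mathcal F}^{-1}\bigl((1+4\pi^2|\xi|^2)^{s/2}\widehat u\bigr)=f$. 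Again the hypothesis~\eqref{Mikhlin Multiplier Theorem-ASSUN} is checked: $m_2(\xi)$ is, as a functional on $\ell^2(\N)$, given by the sequence $\bigl(2^{-js}(1+4\pi^2|\xi|^2)^{s/2}\psi_j(\xi)\bigr)_j$, which again has at most finitely many nonzero entries for fixed $\xi$ (those with $2^{j-2}\le|\xi|\le 2^{j+2}$), each of size $O(1)$, with derivatives controlled exactly as before by~\eqref{0pirj09365-4g4eZXCHJKLRFG-544-NO1-LL-eq4}; hence $\|m_2\|$ and its scaled derivatives are bounded. Theorem~\ref{Mikhlin Multiplier Theorem} then gives $\|f\|_{L^p(\R^n)}=\|T_{m_2}\bigl((2^{js}\check\varphi_j*u)_j\bigr)\|_{L^p(\R^n)}\le C\|S_su\|_{L^p(\R^n)}$, i.e.\ the left-hand inequality. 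A density remark (Lemma~\ref{0oijvn5TSndgrMSKdmfKTGK3m5tgDksP234-LEMM}, or directly approximating by Schwartz functions) handles the passage from smooth functions, where all manipulations of $\widehat u$ are literal, to general $u\in{\mathcal L}^p_s(\R^n)$; for a generic locally integrable $u$ the statement is interpreted with the understanding that both sides are simultaneously finite or infinite, which follows from the two bounds just established applied to the truncations.

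\textbf{Main obstacle.} The routine part is the derivative bookkeeping in~\eqref{Mikhlin Multiplier Theorem-ASSUN}; the genuinely delicate point is the correct choice of the operator-valued symbols and, in particular, the use of the relation $\psi_j\varphi_j=\varphi_j$ to make the reconstruction multiplier $m_2$ well defined and bounded --- one must resist the temptation to invert $\varphi_j$ directly (which is impossible since $\varphi_j$ vanishes on part of its shell overlap) and instead exploit the fattened partition $\psi_j$. A secondary subtlety is that Theorem~\ref{Mikhlin Multiplier Theorem} as stated requires $X,Y$ to be \emph{Hilbert} spaces, so one must be careful that $\ell^2(\N)$ (not $\ell^q$) is the space appearing on both ends, which is exactly why the square function, rather than an $\ell^q$ sum, is the object characterized here; this is also why the argument does not directly give the Besov-norm characterization, only the Bessel one.
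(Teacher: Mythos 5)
Your proposal is correct and follows essentially the same route as the paper: the paper also applies the Mikhlin Multiplier Theorem twice, first to the operator-valued symbol $\xi\mapsto\{2^{js}\varphi_j(\xi)(1+4\pi^2|\xi|^2)^{-s/2}\}_{j\in\N}$ from $\C$ to $\ell^2(\N)$ acting on $f_u$, and then to the reconstruction symbol $\{a_j\}\mapsto\sum_j 2^{-js}\psi_j(\xi)(1+4\pi^2|\xi|^2)^{s/2}a_j$ built on the fattened partition $\psi_j=\varphi_{j-1}+\varphi_j+\varphi_{j+1}$ via the identity $\psi_j\varphi_j=\varphi_j$, exactly as you do. The only cosmetic difference is that the paper verifies the symbol estimates through the explicit homogeneity bound \eqref{0pirj09365-4g4eZXCHJKLRFG-544-NO1-LL-eq49-09i23w} and the Leibniz computation \eqref{PKJMDS0pew90g43yt980hf67n324vct8SDnt876nv-hgr}, which is the same bookkeeping you sketch.
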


\begin{proof} We recall that, by~\eqref{ERFGHJN6789-09tftd90u8yhgiug8erhISB},
the norm on the Bessel potential spaces is given by
$$ \|u\|_{ {\mathcal{L}}^p_{s}(\R^n)}=\| f_u\|_{L^p(\R^n)}, \qquad{\mbox{with}}\qquad u={\mathcal{B}}^{(s/2)}*f_u.$$
It is also convenient to use the short notation
$$ \langle\xi\rangle:=\sqrt{1+4\pi^2|\xi|^2},$$
so that, by Lemma~\ref{PRODpoikjhr3-2:le-7},
$$ \widehat{\mathcal{B}}^{(s/2)}=\langle\xi\rangle^{-s}.$$

Given~$\xi\in\R^n$, we now define the multiplier
\begin{eqnarray*}
m(\xi):\C&\rightarrow&\ell^2(\N)\\
z&\longmapsto& \Big\{ 2^{js}\varphi_j(\xi)\langle\xi\rangle^{-s} z\Big\}_{j\in\N}
\end{eqnarray*}
and we claim that, for all~$N\in\N$,
\begin{equation}\label{LASTIPERUSAMI1}
\sup_{{\xi\in\R^n\setminus\{0\}}\atop{{\alpha\in\N^n}\atop{|\alpha|\le N}}}|\xi|^{|\alpha|} \|D^\alpha m(\xi)\|_{L(\C,\ell^2(\N))}<+\infty.
\end{equation}
Let us postpone the proof of~\eqref{LASTIPERUSAMI1}.

Similarly, we define the multiplier
\begin{eqnarray*}
\mu(\xi):\ell^2(\N)&\rightarrow&\C\\
\big\{a_j\big\}_{j\in\N}&\longmapsto& \sum_{j=0}^{+\infty} 2^{-js}\psi_j(\xi)\langle\xi\rangle^{s}\,a_j
\end{eqnarray*}
where~$\psi_j$ is as in~\eqref{UTILTAYMAA8ikjf-x5-LAPSIN},
and we claim that, for all~$N\in\N$,
\begin{equation}\label{LASTIPERUSAMI2}
 \sup_{{\xi\in\R^n\setminus\{0\}}\atop{{\alpha\in\N^n}\atop{|\alpha|\le N}}}|\xi|^{|\alpha|} \|D^\alpha \mu(\xi)\|_{L(\ell^2(\N),\C)}<+\infty.
\end{equation}
Let us postpone the proof of~\eqref{LASTIPERUSAMI2} as well. Instead, let us see how~\eqref{LASTIPERUSAMI1}
and~\eqref{LASTIPERUSAMI2} yield the desired result owing to Theorem~\ref{Mikhlin Multiplier Theorem}.

For this aim, we observe that~\eqref{LASTIPERUSAMI1} entails assumption~\eqref{Mikhlin Multiplier Theorem-ASSUN}
with~$X:=\C$ and~$Y:=\ell^2(\N)$, hence Theorem~\ref{Mikhlin Multiplier Theorem} yields that
$$ \| T_m \|_{L(L^p(\R^n,\C),\,L^p(\R^n,\ell^2(\N)))}\le C,$$
up to renaming~$C$ (and writing here explicitly~$L^p(\R^n,\C)$, instead of the short notation~$L^p(\R^n)$,
for consistency with the general notation in Theorem~\ref{Mikhlin Multiplier Theorem}).

As a result, recalling the definition of~$T_m$ in~\eqref{DEFTMMIH}, for
functions~$g:\R^n\to\C$,
\begin{equation}\label{DEFTMMIH0owfjlg9ijrfvgb6yu}\begin{split}&
\left(\int_{\R^n}\left( \sum_{j=0}^{+\infty}\Big| {\mathcal{F}}^{-1}\Big( 2^{js}\varphi_j(\xi)\langle\xi\rangle^{-s} \widehat{g}(\xi)\Big)\Big|^2(x)
\right)^{\frac{p}2}\,dx\right)^{\frac1p}=
\left(\int_{\R^n}\left( \sum_{j=0}^{+\infty}\Big| {\mathcal{F}}^{-1}\Big( m_j \widehat{g}\Big)(x)\Big|^2\right)^{\frac{p}2}\,dx\right)^{\frac1p}\\&\qquad=
\left(\int_{\R^n} \Big\| {\mathcal{F}}^{-1}\Big( m \widehat{g}\Big)(x)\Big\|_{\ell^2(\N)}^p\,dx\right)^{\frac1p}=
\Big\| {\mathcal{F}}^{-1}\Big( m \widehat{g}\Big)\Big\|_{L^p(\R^n,\ell^2(\N))}
=\| T_m g\|_{L^p(\R^n,\ell^2(\N))}\\&\qquad\le
\| T_m \|_{L(L^p(\R^n,\C),\,L^p(\R^n,\ell^2(\N)))}\,\|g\|_{L^p(\R^n,\C)}\le C\|g\|_{L^p(\R^n,\C)}.\end{split}
\end{equation}
In particular, choosing~$g:=f_u$ we have that
$$\langle\xi\rangle^{-s} \widehat{g}=\widehat{\mathcal{B}}^{(s/2)} \widehat{f_u}=
{\mathcal{F}}\big( {\mathcal{B}}^{(s/2)}*f_u\big)={\mathcal{F}}(u)=\widehat u$$
and thus~\eqref{DEFTMMIH0owfjlg9ijrfvgb6yu} returns that
\begin{eqnarray*}C
\|u\|_{ {\mathcal{L}}^p_{s}(\R^n)}&=&
C\|f_u\|_{L^p(\R^n,\C)}\\&\ge&
\left(\int_{\R^n}\left( \sum_{j=0}^{+\infty}\Big| {\mathcal{F}}^{-1}\Big( 2^{js}\varphi_j(\xi)\widehat u(\xi)\Big)\Big|^2\right)^{\frac{p}2}\,dx\right)^{\frac1p}\\&=&
\left(\int_{\R^n}\left( \sum_{j=0}^{+\infty}2^{2js} | \check \varphi_j* u(x)|^2\right)^{\frac{p}2}\,dx\right)^{\frac1p}
,\end{eqnarray*}
which proves the second inequality in~\eqref{MikhlinEQWF}.

Similarly, we see that~\eqref{LASTIPERUSAMI2} guarantees the validity of~\eqref{Mikhlin Multiplier Theorem-ASSUN}
with~$X:=\ell^2(\N)$ and~$Y:=\C$ (and~$m$ renamed as~$\mu$). Therefore, by Theorem~\ref{Mikhlin Multiplier Theorem},
$$ \| T_\mu \|_{L(L^p(\R^n,\ell^2(\N)),\,L^p(\R^n,\C))}\le C$$
and accordingly, for functions~$h:\R^n\to\ell^2(\N)$,
\begin{equation}\label{LASTIPERUSAMI281uifhnRefTGBHugrt12rq3tw4y}\begin{split}&
\left(\int_{\R^n} \left| {\mathcal{F}}^{-1}\left( \sum_{j=0}^{+\infty} 2^{-js}\psi_j(\xi)\langle\xi\rangle^{s}\,\widehat h_j(\xi)\right)(x)\right|^p\,dx\right)^{\frac1p}
=\left(\int_{\R^n} \Big| {\mathcal{F}}^{-1}\Big( \mu \widehat{h}\Big)(x)\Big|^p\,dx\right)^{\frac1p}\\&\qquad
=\left(\int_{\R^n} |T_\mu h(x)|^p\,dx\right)^{\frac1p}=
\| T_\mu h\|_{L^p(\R^n,\C)}\\&\qquad\le \| T_\mu \|_{L(L^p(\R^n,\ell^2(\N)),\,L^p(\R^n,\C))}\,\|h\|_{L^p(\R^n,\ell^2(\N))}\\&\qquad\le
C\|h\|_{L^p(\R^n,\ell^2(\N))}=C\left(\int_{\R^n} \|h(x)\|^p_{\ell^2(\N)}\,dx\right)^{\frac1p}\\&\qquad
=C\left(\int_{\R^n} \left( \sum_{j=0}^{+\infty} |h_j(x)|^2\right)^{\frac{p}2}\,dx\right)^{\frac1p}.
\end{split}\end{equation}
In particular, choosing~$h_j:=2^{js} \check \varphi_j* u$ and making use of~\eqref{UTILTAYMAA8ikjf-x5-LAPSIN2}, we see that
\begin{eqnarray*}
&& 2^{-js}\psi_j(\xi)\langle\xi\rangle^{s}\,\widehat h_j(\xi)=
\psi_j(\xi)\langle\xi\rangle^{s}\,\varphi_j(\xi)\widehat u(\xi)
=\langle\xi\rangle^{s}\,\varphi_j(\xi)\widehat u(\xi)=\varphi_j(\xi)\widehat f_u(\xi)
\end{eqnarray*}
and therefore (using Lemma~\ref{LEHDNfcitjn034ty-Leapp} to swap series and Fourier Transforms)
$$ {\mathcal{F}}^{-1}\left( \sum_{j=0}^{+\infty} 2^{-js}\psi_j(\xi)\langle\xi\rangle^{s}\,\widehat h_j(\xi)\right)(x)=
\sum_{j=0}^{+\infty}\check\varphi_j*f_u(x)=f_u(x).$$
Consequently, we deduce from~\eqref{LASTIPERUSAMI281uifhnRefTGBHugrt12rq3tw4y} that
\begin{eqnarray*}
\|u\|_{ {\mathcal{L}}^p_{s}(\R^n)}=\|f_u\|_{L^p(\R^n,\C)}=
\left(\int_{\R^n} |f_u(x)|^p\,dx\right)^{\frac1p}\le C
\left(\int_{\R^n} \left( \sum_{j=0}^{+\infty} 2^{2js}| \check \varphi_j* u(x)|^2\right)^{\frac{p}2}\,dx\right)^{\frac1p}.
\end{eqnarray*}
This establishes the first inequality in~\eqref{MikhlinEQWF} and proves the desired result in
Theorem~\ref{0pirj09365-4g4eZXCHJKLRFG-544-NO3-COR-0oerkCC}.

It remains however to check that~\eqref{LASTIPERUSAMI1}
and~\eqref{LASTIPERUSAMI2} hold true. To attain this goal, we claim that, for all~$\alpha\in\N^n$ and~$b\in\R$,
\begin{equation}\label{0pirj09365-4g4eZXCHJKLRFG-544-NO1-LL-eq49-09i23w}
\begin{split}
|D^{\alpha} \langle\xi\rangle^{b}|&\le C\langle\xi\rangle^{b-|\alpha|},
\end{split}
\end{equation}
with~$C>0$ depending only on~$n$,~$\alpha$ and~$b$.

To check this, let~$X:=(t,\xi)\in\R\times\R^{n}$ and
$$ F(X)=F(t,\xi):=(t^2+4\pi^2|\xi|^2)^{\frac{b}2}.$$
Since~$F$ is positively homogeneous of degree~$b$, we see that, for all~$\lambda>0$ and~$\beta\in\N^{n+1}$,
$$ \lambda^{|\beta|} D^\beta F(\lambda X)=
D^\beta\big(F(\lambda X)\big)=D^\beta\big(\lambda^{b} F(X)\big)=\lambda^{b} D^\beta F(X).$$
Choosing~$\beta:=(0,\alpha)$,~$\lambda:=|X|^{-1}$, with~$X:=(1,\xi)$, we thereby conclude that
$$ (1+|\xi|^2)^{\frac{b-|\alpha|}2}\sup_{{{Y\in\R^{n+1}}\atop{|Y|=1}}\atop{{\gamma\in\N^n}\atop{|\gamma|=|\alpha|}}} |D^\gamma F(Y)|\ge
|X|^{b-|\alpha|} \left|D^{(0,\alpha)} F\left(\frac{X}{|X|}\right)\right|= \big|D^\alpha (1+4\pi^2|\xi|^2)^{\frac{b}2}\big|,$$
which establishes~\eqref{0pirj09365-4g4eZXCHJKLRFG-544-NO1-LL-eq49-09i23w}.

We also recall that, for all~$j\ge1$,
the function~$\varphi_j$ is supported in~$B_{2^{j+1}}\setminus B_{2^{j-1}}$,
thanks to~\eqref{0pirj09365-4g4eZXCHJKLRFG-544-NO1-LL-eq2}.
Thus, making use of the Leibniz Product Rule and~\eqref{0pirj09365-4g4eZXCHJKLRFG-544-NO1-LL-eq4},
for all~$b\in\R$,
\begin{equation}\label{PKJMDS0pew90g43yt980hf67n324vct8SDnt876nv-hgr}
\begin{split}&
\Big|D^{\alpha}_\xi\Big(\langle\xi\rangle^{b}\,\varphi_j(\xi)\Big)\Big|
=\left|\sum_{{\beta \in\N^n}\atop{\beta \leq \alpha}}{\alpha \choose \beta }\,
D^{\beta }\langle\xi\rangle^{b}\, D^{\alpha -\beta }\varphi_j(\xi)\right|\\&\qquad
\le C
\sum_{{\beta \in\N^n}\atop{\beta \leq \alpha}}
\frac{\langle\xi\rangle^{{b-|\beta|}}\,\chi_{B_{2^{j+1}}\setminus B_{2^{j-1}}}(\xi)}{2^{|\alpha-\beta|j}} \le C
\sum_{{\beta \in\N^n}\atop{\beta \leq \alpha}}
\frac{\langle\xi\rangle^{{b-|\beta|}}\,\chi_{B_{2^{j+1}}\setminus B_{2^{j-1}}}(\xi)}{|\xi|^{|\alpha|-|\beta|}}\\&\qquad \le C
\sum_{{\beta \in\N^n}\atop{\beta \leq \alpha}}
\frac{\langle\xi\rangle^{{b-|\beta|}}\,\chi_{B_{2^{j+1}}\setminus B_{2^{j-1}}}(\xi)}{\langle\xi\rangle^{{|\alpha|-|\beta|}}}=C\langle\xi\rangle^{{b-|\alpha|}}\,\chi_{B_{2^{j+1}}\setminus B_{2^{j-1}}}(\xi)\\&\qquad
\le C 2^{j b}\langle\xi\rangle^{{-|\alpha|}}\,\chi_{B_{2^{j+1}}\setminus B_{2^{j-1}}}(\xi)
\end{split}
\end{equation}
for a positive constant~$C$, varying from line to line, and depending only on~$n$,~$\alpha$ and~$b$.

For this reason, choosing~$b:=-s$, we obtain that, for every~$z\in\C$,
\begin{eqnarray*}&&
\|D^\alpha m(\xi) z\|_{\ell^2(\N)}^2
= |z|^2\sum_{j=0}^{+\infty}\Big|D^\alpha \Big( 2^{js}\varphi_j(\xi)\langle\xi\rangle^{-s} \Big)\Big|^2\\&&\qquad\qquad \le
C|z|^2\left(\chi_{B_2}(\xi)+\sum_{j=1}^{+\infty}
\langle\xi\rangle^{-2|\alpha|} \chi_{B_{2^{j+1}}\setminus B_{2^{j-1}}}(\xi)\right)\\&&\qquad\qquad\le\frac{
C|z|^2}{|\xi|^{2\alpha}}\left(1+\sum_{(\log_2|\xi|-1)_+\le j\le\log_2|\xi|+1}1\right)=\frac{
C\,\|z\|_{\C}^2}{|\xi|^{2\alpha}}
,\end{eqnarray*}
which establishes~\eqref{LASTIPERUSAMI1}, as desired.

Let us now check the validity of~\eqref{LASTIPERUSAMI2}. To this end, we make use of~\eqref{PKJMDS0pew90g43yt980hf67n324vct8SDnt876nv-hgr} with~$b:=s$ 
(and~$\psi_j$ replacing~$\varphi_j$, which only slightly changes the support)
and calculate that, for~$a\in\ell^2(\N)$,
\begin{eqnarray*}
\|D^\alpha \mu(\xi)a\|_{\C}^2&=&\left|D^\alpha\left(\sum_{j=0}^{+\infty} 2^{-js}\psi_j(\xi)\langle\xi\rangle^{s}\,a_j\right)\right|^2\\&\le&
C\left(|a_0|\chi_{B_4}(\xi)+|a_1|\chi_{B_8}(\xi)+\sum_{j=2}^{+\infty}
\langle\xi\rangle^{{-|\alpha|}}\,\chi_{B_{2^{j+2}}\setminus B_{2^{j-2}}}(\xi)\,|a_j|\right)^2\\&\le&
\frac{C}{|\xi|^{2\alpha}}\left(|a_0|+|a_1|+\sum_{ (\log_2|\xi|-2)_+\le j\le \log_2|\xi|+2}|a_j|\right)^2\\&\le&
\frac{C}{|\xi|^{2\alpha}}\left[|a_0|^2+|a_1|^2+\left(\sum_{ (\log_2|\xi|-2)_+\le j\le \log_2|\xi|+2}|a_j|\right)^2\right].
\end{eqnarray*}
Hence, since
\begin{eqnarray*}&& \left(\sum_{ (\log_2|\xi|-2)_+\le j\le \log_2|\xi|+2}|a_j|\right)^2\le
\sum_{ (\log_2|\xi|-2)_+\le j\le \log_2|\xi|+2}|a_j|^2
\sum_{ (\log_2|\xi|-2)_+\le j\le \log_2|\xi|+2}1\\&&\qquad\qquad
\le C\sum_{ (\log_2|\xi|-2)_+\le j\le \log_2|\xi|+2}|a_j|^2
\le C\sum_{ j=0}^{+\infty}|a_j|^2=\|a\|^2_{\ell^2(\N)},
\end{eqnarray*}
we infer that~$\|D^\alpha \mu(\xi)a\|_{\C}^2\le C|\xi|^{-2\alpha}\|a\|^2_{\ell^2(\N)}$. The proof of~\eqref{LASTIPERUSAMI2} is thereby complete.
\end{proof}

We remark that one of the difficulties of dealing with Besov spaces comes from the interplay between Fourier Transform
of Lebesgue spaces (which enjoys several simplifications in~$L^2(\R^n)$ which are not available in general).
When~$s\in(0,1)$, however, there are equivalent definitions of Besov spaces which get around such a complication.
To show some of these equivalent formulations, we use the notation
\begin{equation}\label{TAUNOTA}\tau_y u(x):=u(x+y)\end{equation}
and we point out that:

\begin{proposition}\label{F58uuRS-2CALEREG:LEAbc-CC}
If~$s\in(0,1)$ then the Besov space norm in~\eqref{BENOGIU}
is equivalent to 
\begin{equation}\label{0pirj09365-4g4eZXCHJKLRFG-544-NO1}
\|u\|_{L^p(\R^n)}+\left(\int_{\R^n}\frac{\| \tau_y u-u \|_{L^p(\R^n)}^q}{|y|^{n+sq}}\,dy\right)^{\frac1q}.\end{equation}
\end{proposition}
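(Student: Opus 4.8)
The plan is to establish the equivalence of the two norms by a two-sided estimate, exploiting the dyadic Littlewood--Paley decomposition of Lemma~\ref{0pirj09365-4g4eZXCHJKLRFG-544-NO1-LL} as the bridge between the frequency-side definition~\eqref{BENOGIU} and the difference-quotient expression~\eqref{0pirj09365-4g4eZXCHJKLRFG-544-NO1}. Throughout, $s\in(0,1)$ is fixed, and it suffices by density (Lemma~\ref{0oijvn5TSndgrMSKdmfKTGK3m5tgDksP234-LEMM}) to argue for $u\in C^\infty_c(\R^n)$, provided all constants are independent of $u$, and then pass to the limit; one should double-check that both quantities are lower semicontinuous or continuous along the approximating sequence, which follows from Fatou's Lemma and Minkowski's Integral Inequality (Theorem~\ref{MLAerSM:ijfKKSMdf02}).

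First I would prove that the quantity in~\eqref{0pirj09365-4g4eZXCHJKLRFG-544-NO1} controls the Besov norm. The key point is that, for $j\ge1$, the vanishing moment property~\eqref{UTILTAYMAA8ikjf} gives $\int_{\R^n}\check\varphi_j(y)\,dy=0$, so $\check\varphi_j*u(x)=\int_{\R^n}\check\varphi_j(y)\big(\tau_{-y}u(x)-u(x)\big)\,dy$ (after a change of sign in the convolution variable). Taking $L^p$ norms in $x$ and using Minkowski's Integral Inequality yields $\|\check\varphi_j*u\|_{L^p(\R^n)}\le\int_{\R^n}|\check\varphi_j(y)|\,\|\tau_{-y}u-u\|_{L^p(\R^n)}\,dy$. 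Now one inserts a cutoff at scale $2^{-j}$: split the $y$-integral into $|y|\le 2^{-j}$ and $|y|>2^{-j}$. For the inner region use the scaling~\eqref{0pirj09365-4g4eZXCHJKLRFG-544-NO1-LL-eq4-09-TRIS} to change variables and the fact that $\check\varphi_1$ is Schwartz (hence $|\check\varphi_j(y)|\le C2^{jn}(1+2^j|y|)^{-M}$ for any $M$); for the outer region the rapid decay of $\check\varphi_j$ wins. The standard move is then to dominate $2^{js}\|\check\varphi_j*u\|_{L^p}$ by an average of $\|\tau_y u-u\|_{L^p}/|y|^{s}$ against a suitable kernel, take $\ell^q(\N)$ norms, and recognize the result as a discretization of the integral in~\eqref{0pirj09365-4g4eZXCHJKLRFG-544-NO1}; Hardy-type or Schur-test estimates on the resulting doubly-indexed sum/integral close this direction. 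The $\|u\|_{L^p}$ term is handled by Young's inequality applied to $\check\varphi_0*u$ together with Corollary~\ref{LEHDNfcitjn034ty-LeappCOR}.

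Next I would prove the reverse inequality: the Besov norm controls~\eqref{0pirj09365-4g4eZXCHJKLRFG-544-NO1}. Write $u=\sum_{j\ge0}\check\varphi_j*u$ (convergence in $L^p$ by Lemma~\ref{LEHDNfcitjn034ty-Leapp}), so $\tau_y u-u=\sum_{j\ge0}(\tau_y-\mathrm{id})(\check\varphi_j*u)$. For the high-frequency blocks (those with $2^j|y|\ge1$) one simply bounds $\|(\tau_y-\mathrm{id})(\check\varphi_j*u)\|_{L^p}\le 2\|\check\varphi_j*u\|_{L^p}$. For the low-frequency blocks ($2^j|y|<1$) one uses a first-order Taylor estimate $\|(\tau_y-\mathrm{id})v\|_{L^p}\le|y|\,\|\nabla v\|_{L^p}$ together with Bernstein's inequality~\eqref{BERLEMMAUT-3}, which gives $\|\nabla(\check\varphi_j*u)\|_{L^p}\le C2^j\|\check\varphi_j*u\|_{L^p}$; hence this block contributes $C2^j|y|\,\|\check\varphi_j*u\|_{L^p}$. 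Summing, $\|\tau_y u-u\|_{L^p}\le C\sum_j\min\{1,2^j|y|\}\,\|\check\varphi_j*u\|_{L^p}$. Then divide by $|y|^{s}$, take the $L^q(|y|^{-n}\,dy)$ norm, and again reduce to a discrete Hardy/Schur inequality for the kernel $\min\{1,2^j|y|\}|y|^{-s}$ tested against the sequence $\{\|\check\varphi_j*u\|_{L^p}\}$; the condition $0<s<1$ is exactly what makes both the small-$|y|$ and large-$|y|$ integrations of $\min\{1,2^j|y|\}^q|y|^{-sq-n}\,dy$ converge, producing a geometric factor $2^{jsq}$ and hence the Besov norm raised to the $q$-th power. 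The $L^p$ term in~\eqref{0pirj09365-4g4eZXCHJKLRFG-544-NO1} is bounded by the Besov norm via Corollary~\ref{COROLEEP2r34t5}.

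The main obstacle, I expect, is the bookkeeping in the mixed continuous/discrete convolution estimates: making the interchange of summation, integration and $L^p$-norm rigorous (uniformly in $u$), and verifying that the Schur-type kernel bounds hold with constants depending only on $n,s,p,q$ — in particular carefully tracking the $q$-exponents so that the passage from $\min\{1,2^j|y|\}$ to the geometric weight $2^{jsq}$ is clean, and confirming that the borderline behavior as $s\to0^+$ or $s\to1^-$ is correctly excluded. A secondary technical care point is the vanishing-moment manipulation in the first direction: one must make sure that $\check\varphi_j*u(x)=\int\check\varphi_j(y)(\tau_{-y}u(x)-u(x))\,dy$ is justified (absolute convergence for $u\in C^\infty_c$, using the Schwartz decay of $\check\varphi_j$) and that the sign conventions in~\eqref{TAUNOTA} match. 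Once these are in place, combining the two inequalities yields the claimed equivalence of~\eqref{BENOGIU} and~\eqref{0pirj09365-4g4eZXCHJKLRFG-544-NO1}.
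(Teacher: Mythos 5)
Your plan is correct, and it rests on the same Littlewood--Paley skeleton as the paper's proof: both directions hinge on the dyadic decomposition of Lemma~\ref{0pirj09365-4g4eZXCHJKLRFG-544-NO1-LL}, the cancellation~\eqref{UTILTAYMAA8ikjf} together with Minkowski's Integral Inequality for the bound of the Besov norm by~\eqref{0pirj09365-4g4eZXCHJKLRFG-544-NO1}, and a gradient-type bound on the blocks (your Bernstein estimate~\eqref{BERLEMMAUT-3}, the paper's bound $\|\nabla\check\varphi_j\|_{L^1}\le C2^j$ applied through the fattened cutoffs $\psi_j$) for the converse. Where you genuinely diverge is in how each half is closed. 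For the converse direction the paper keeps only the linear bound $C\,2^j|y|\,\|\check\psi_j*u\|_{L^p}$, sums via Corollary~\ref{LEHDNfcitjn034ty-LeappCOR}, restricts to $B_1$ (treating $|y|\ge1$ trivially with the $L^p$ term) and concludes by a discrete Young inequality on dyadic shells with weights $2^{(s-1)j}$; your $\min\{1,2^j|y|\}$ kernel and the Schur/Hardy test over all of $\R^n$ is the cleaner textbook variant, and it makes transparent that $\ell^1$-summability of the kernel is exactly the condition $0<s<1$. For the other direction the paper does not work with $\|\tau_yu-u\|_{L^p}$ directly: it introduces the nondecreasing modulus $\omega(r)=\sup_{y\in B_r}\|\tau_yu-u\|_{L^p}$, bounds the block norms by integrals of $\omega^q(\theta)/\theta^{1+sq}$, and then needs an extra averaging/Fubini trick to convert the sup-based integral back into~\eqref{0pirj09365-4g4eZXCHJKLRFG-544-NO1}; your continuous-to-discrete Schur test on the kernel $2^{js}|\check\varphi_j(y)|\,|y|^s$ (using the Schwartz decay of $\check\varphi_1$ and the scaling~\eqref{0pirj09365-4g4eZXCHJKLRFG-544-NO1-LL-eq4-09-TRIS}) bypasses that step entirely. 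What the paper buys with the detour is that the same $\omega$-estimate is immediately recycled to prove the further equivalent norms of Proposition~\ref{0pirj09365-4g4eZXCHJKLRFG-544-NOP} and Corollaries~\ref{corod34958834y689u2eryue9wighrejhgfhdjehfdjkjfhj} and~\ref{0pirj09365-4g4eZXCHJKLRFG-544-NO3-COR}, which your route would have to re-derive. Two small cosmetic points: the reduction to $u\in C^\infty_c(\R^n)$ is unnecessary (all your estimates apply verbatim to any $u\in L^p(\R^n)$, which is how the paper argues), and Corollary~\ref{LEHDNfcitjn034ty-LeappCOR} belongs to the converse direction, not to the $j=0$ term, which only needs Young's inequality.
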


The advantage of this formulation is indeed that no Fourier Transform, and no partition of unity, appears in~\eqref{0pirj09365-4g4eZXCHJKLRFG-544-NO1}. In this way, when~$s\in(0,1)$, the Besov space~$B^{s,p,q}(\R^n)$
can be equivalently defined as the space of locally integrable functions for which~\eqref{0pirj09365-4g4eZXCHJKLRFG-544-NO1} is finite.

\begin{proof}[Proof of Proposition~\ref{F58uuRS-2CALEREG:LEAbc-CC}] By Corollary~\ref{COROLEEP2r34t5},
we know that the term in~$L^p(\R^n)$ is already included in the Besov space norm in~\eqref{BENOGIU},
therefore the core of the proof of Proposition~\ref{F58uuRS-2CALEREG:LEAbc-CC} consists in bounding
the integral term in~\eqref{0pirj09365-4g4eZXCHJKLRFG-544-NO1} from above and from below in terms of the Besov space norm in~\eqref{BENOGIU}.

For this, we start by bounding~\eqref{0pirj09365-4g4eZXCHJKLRFG-544-NO1} from above
in terms of~\eqref{BENOGIU}.
We observe that, for any function~$v$,
\begin{eqnarray*}&&
\|\tau_y v-v\|_{L^p(\R^n)}^p=
\int_{\R^n} |v(x+y)-v(x)|^p\,dx\le
\int_{\R^n} \left|\int_0^1 \nabla v(x+ty)\cdot y \,dt\right|^p\,dx\\&&\qquad\le 
\iint_{\R^n\times(0,1)} |\nabla v(x+ty)|^p |y|^p \,dx\,dt=\iint_{\R^n\times(0,1)} |\nabla v(z)|^p |y|^p \,dz\,dt
=\|\nabla v\|_{L^p(\R^n)}^p\,|y|^p.
\end{eqnarray*}
Hence, taking~$u\in B^{s,p,q}(\R^n)$, using the notation in~\eqref{UTILTAYMAA8ikjf-x5-LAPSIN} and the result in~\eqref{UTILTAYMAA8ikjf-x5-LAPSIN2}, and choosing, for all~$j\in\N$,
$$ v:=\check\varphi_j*u={\mathcal{F}}^{-1}(\psi_j\varphi_j)*u=\check\psi_j*\check\varphi_j*u,$$
we see that
\begin{eqnarray*}&&
\|\tau_y (\check\varphi_j*u)-\check\varphi_j*u\|_{L^p(\R^n)}\le\|\nabla (\check\psi_j*\check\varphi_j*u)\|_{L^p(\R^n)}\,|y|.
\end{eqnarray*}
Since~$\tau_y(f*g)=\tau_yf*g=g*\tau_yf$, we obtain that
\begin{eqnarray*}&&
\| \check\varphi_j*\tau_y u-\check\varphi_j*u\|_{L^p(\R^n)}\le\|\nabla\check\varphi_j*\check\psi_j*u\|_{L^p(\R^n)}\,|y|\le
\|\nabla\check\varphi_j\|_{L^1(\R^n)}\|\check\psi_j*u\|_{L^p(\R^n)}\,|y|.
\end{eqnarray*}
We observe that
$$ \|\nabla\check\varphi_0\|_{L^1(\R^n)}\le C,$$
and also,
recalling~\eqref{0pirj09365-4g4eZXCHJKLRFG-544-NO1-LL-eq4-09-TRIS},
for all~$j\ge1$,
$$ \|\nabla\check\varphi_j\|_{L^1(\R^n)}=2^{(j-1)(n+1)}\int_{\R^n}|\nabla\check \varphi_1(2^{j-1}x)|\,dx=
2^{j-1}\int_{\R^n}|\nabla\check \varphi_1(z)|\,dz
\le C\,2^j.$$

Therefore, we conclude that, for all~$j\in\N$,
\begin{eqnarray*}&&
\| \check\varphi_j*\tau_y u-\check\varphi_j*u\|_{L^p(\R^n)}\le C\,2^j\|\check\psi_j*u\|_{L^p(\R^n)}\,|y|.
\end{eqnarray*}
{F}rom this we arrive at
\begin{eqnarray*}
\frac{\| \check\varphi_j*(\tau_y u-u)\|_{L^p(\R^n)}}{|y|}
&\le& C\,2^j\|\check\psi_j*u\|_{L^p(\R^n)}
\\&\le& C\,2^j\Big(\|\check\varphi_{j-1}*u\|_{L^p(\R^n)}+\|\check\varphi_{j}*u\|_{L^p(\R^n)}+\|\check\varphi_{j+1}*u\|_{L^p(\R^n)}\Big)
\\&\le& C\Big(2^{j-1}\|\check\varphi_{j-1}*u\|_{L^p(\R^n)}+2^j\|\check\varphi_{j}*u\|_{L^p(\R^n)}+2^{j+1}\|\check\varphi_{j+1}*u\|_{L^p(\R^n)}\Big)
\end{eqnarray*}
and thus, by Corollary~\ref{LEHDNfcitjn034ty-LeappCOR},
\begin{eqnarray*}
\frac{\|\tau_y u-u\|_{L^p(\R^n)}}{|y|}&\le&
\sum_{j=0}^{+\infty}\frac{\| \check\varphi_j*(\tau_y u-u)\|_{L^p(\R^n)}}{|y|}\\&\le&C\sum_{j=0}^{+\infty}
\Big(2^{j-1}\|\check\varphi_{j-1}*u\|_{L^p(\R^n)}+2^j\|\check\varphi_{j}*u\|_{L^p(\R^n)}+2^{j+1}\|\check\varphi_{j+1}*u\|_{L^p(\R^n)}\Big)\\&\le&C\sum_{j=0}^{+\infty}2^j\|\check\varphi_{j}*u\|_{L^p(\R^n)}.
\end{eqnarray*}

As a consequence,
\begin{equation}\label{C823eirjfg0987yghvdc3erfvghnmlpoEP2r34t5}
\begin{split}
\int_{B_1}\frac{\| \tau_y u-u \|_{L^p(\R^n)}^q}{|y|^{n+sq}}\,dy&= \sum_{m=0}^{+\infty}
\int_{B_{2^{-m}}\setminus B_{2^{-m-1}}}\frac{\| \tau_y u-u \|_{L^p(\R^n)}^q}{|y|^{n+sq}}\,dy\\&\le C\sum_{m=0}^{+\infty}
\int_{B_{2^{-m}}\setminus B_{2^{-m-1}}} \left(\sum_{j=0}^{+\infty}2^j\|\check\varphi_{j}*u\|_{L^p(\R^n)}\right)^q\frac{dy}{|y|^{n-(1-s)q}}\\&\le
C\sum_{m=0}^{+\infty}
\left(\sum_{j=0}^{+\infty}2^{j-(1-s)m}\|\check\varphi_{j}*u\|_{L^p(\R^n)}\right)^q\\&=C\sum_{m=0}^{+\infty}
\left(\sum_{j=0}^{+\infty}a_{m-j}\,b_j\right)^q,
\end{split}\end{equation}
where
$$ a_j:=2^{(s-1)j}\qquad{\mbox{and}}\qquad b_j:=2^{js}\|\check\varphi_{j}*u\|_{L^p(\R^n)} .$$

It is therewith convenient to use the discrete convolution notation and the corresponding Young's Convolution Inequality
to see that, since~$a_j$,~$b_j\ge0$,
\begin{eqnarray*}&&
\left[\sum_{m=0}^{+\infty}\left(\sum_{j=0}^{+\infty}a_{m-j}\,b_j\right)^q\right]^{\frac1q}=\left[\sum_{m=0}^{+\infty}\left|\sum_{j=0}^{+\infty}a_{m-j}\,b_j\right|^q\right]^{\frac1q}=
\left[\sum_{m=0}^{+\infty} |a*b|^q\right]^{\frac1q}=\|a*b\|_{\ell^q(\N)}\\&&\qquad\le
\|a\|_{\ell^1(\N)}\|b\|_{\ell^q(\N)}=\left(
\sum_{j=0}^{+\infty}2^{(s-1)j}\right)\left(\sum_{j=0}^{+\infty}2^{jsq}\|\check\varphi_{j}*u\|^q_{L^p(\R^n)}\right)^{\frac1q}\le C
\|u\|_{B^{s,p,q}(\R^n)}
,\end{eqnarray*}
where in the last step we have used the Besov space norm in~\eqref{BENOGIU}
and the fact that~$s\in(0,1)$.

Plugging this information into~\eqref{C823eirjfg0987yghvdc3erfvghnmlpoEP2r34t5} and recalling Corollary~\ref{COROLEEP2r34t5}, we conclude that
\begin{eqnarray*}&&
\|u\|_{L^p(\R^n)}+\left(\int_{\R^n}\frac{\| \tau_y u-u \|_{L^p(\R^n)}^q}{|y|^{n+sq}}\,dy\right)^{\frac1q}\\&
\le& \|u\|_{L^p(\R^n)}+
\left(\int_{\R^n\setminus B_1}\frac{\big(\| \tau_y u\|_{L^p(\R^n)}+\|u \|_{L^p(\R^n)}\big)^q}{|y|^{n+sq}}\,dy+
\int_{B_1}\frac{\| \tau_y u-u \|_{L^p(\R^n)}^q}{|y|^{n+sq}}\,dy\right)^{\frac1q}\\&\le&\|u\|_{L^p(\R^n)}+C
\left(\|u\|_{L^p(\R^n)}^q+\sum_{m=0}^{+\infty}
\left(\sum_{j=0}^{+\infty}a_{m-j}\,b_j\right)^q\right)^{\frac1q}\\&\le&\|u\|_{L^p(\R^n)}+C
\left(\|u\|_{L^p(\R^n)}^q+\|u\|_{B^{s,p,q}(\R^n)}^q\right)^{\frac1q}\\&\le&C
\big(\|u\|_{L^p(\R^n)}+\|u\|_{B^{s,p,q}(\R^n)}\big)\\&\le& C\|u\|_{B^{s,p,q}(\R^n)},
\end{eqnarray*}
which provides the desired bound from above of~\eqref{0pirj09365-4g4eZXCHJKLRFG-544-NO1}
in terms of~\eqref{BENOGIU}.

We now address the bound from below, to complete the proof of Proposition~\ref{F58uuRS-2CALEREG:LEAbc-CC}.
To this end, we use~\eqref{UTILTAYMAA8ikjf} with~$\alpha:=0$ and we see that, for every~$x\in\R^n$,
$$ \int_{\R^n} \check\varphi_j(y)\big(u(x-y)- u(x)\big)\,dy=
\int_{\R^n} \check\varphi_j(y) u(x-y)\,dy=\check\varphi_j*u(x).$$
This, Minkowski's Integral Inequality (see Theorem~\ref{MLAerSM:ijfKKSMdf02})
and~\eqref{0pirj09365-4g4eZXCHJKLRFG-544-NO1-LL-eq4-09-TRIS} yield that
\begin{eqnarray*}
\|\check\varphi_j*u\|_{L^p(\R^n)}&=&\left(\int_{\R^n}\left| \int_{\R^n} \check\varphi_j(y)\big(u(x-y)- u(x)\big)\,dy\right|^p\,dx\right)^{\frac1p}\\&\le&\int_{\R^n} 
\left(\int_{\R^n}\left| \check\varphi_j(y)\big(u(x-y)- u(x)\big)\right|^p\,dx\right)^{\frac1p}\,dy\\&=&2^{(j-1)n}\int_{\R^n} 
\left(\int_{\R^n}\left| \check \varphi_1(2^{j-1}y)\big(u(x-y)- u(x)\big)\right|^p\,dx\right)^{\frac1p}\,dy\\&=&2^{(j-1)n}\int_{\R^n} |\check \varphi_1(2^{j-1}y)|\,\|\tau_{-y}u-u\|_{L^p(\R^n)}\,dy\\&\le&2^{(j-1)n}\int_{\R^n} |\check \varphi_1(2^{j-1}y)|\,\omega(|y|)\,dy,
\end{eqnarray*}
where
\begin{equation}\label{eqomehgaokmOMSDFn} \omega(r):=\sup_{y\in B_r}\|\tau_{y}u-u\|_{L^p(\R^n)}.\end{equation}

Therefore, using the change of variables~$z:=2^{j-1}y$ and Minkowski's Integral Inequality,
\begin{equation}\label{0pirj09365-4g4eZXCHJKLRFG-544-NO1-LL-eq4-09-x34rt-012rutj}\begin{split}
\sum_{j=1}^{+\infty} 2^{jsq}\|\check\varphi_{j}*u\|^q_{L^p(\R^n)}&\le
\sum_{j=1}^{+\infty} \left(
2^{js+(j-1)n}\int_{\R^n} |\check \varphi_1(2^{j-1}y)|\,\omega(|y|)\,dy
\right)^q\\&=
\sum_{j=1}^{+\infty}\left(
\int_{\R^n} 2^{js}|\check \varphi_1(z)|\,\omega(2^{1-j}|z|)\,dz
\right)^q\\&\le\left[ \int_{\R^n} \left(
\sum_{j=1}^{+\infty}
2^{jsq}|\check \varphi_1(z)|^q\,\omega^q(2^{1-j}|z|)
\right)^{\frac1q}\,dz\right]^q.
\end{split}\end{equation}

Notice now that~$\omega$ is nondecreasing, whence for all~$t\in[2^{1-j},2^{2-j}]$ we have that~$\omega(2^{1-j}|z|)\le
\omega(t|z|)$. For this reason,
$$ 2^{1-j}\omega^q(2^{1-j}|z|)=\int_{2^{1-j}}^{2^{2-j}}\omega^q(2^{1-j}|z|)\,dt\le\int_{2^{1-j}}^{2^{2-j}}\omega^q(t|z|)\,dt$$
and accordingly
\begin{eqnarray*}&&
\sum_{j=1}^{+\infty} 2^{jsq} \omega^q(2^{1-j}|z|)\le C\sum_{j=1}^{+\infty} 2^{j(1+sq)} \int_{2^{1-j}}^{2^{2-j}}\omega^q(t|z|)\,dt\\&&\qquad\le C\sum_{j=1}^{+\infty} \int_{2^{1-j}}^{2^{2-j}}\frac{\omega^q(t|z|)}{t^{1+sq}}\,dt=C \int_0^{1} \frac{\omega^q(t|z|)}{t^{1+sq}}\,dt.
\end{eqnarray*}

{F}rom this observation and~\eqref{0pirj09365-4g4eZXCHJKLRFG-544-NO1-LL-eq4-09-x34rt-012rutj}, using the change of variable~$\theta:=t|z|$, we arrive at
\begin{equation}\label{eqomq23424wsehgaokmOMSDFn-a0iqouwjdfweio7uyhbRFGSDHUJN0aosidjh-0}
\begin{split}
\sum_{j=1}^{+\infty} 2^{jsq}\|\check\varphi_{j}*u\|^q_{L^p(\R^n)}&\le
\left[ \int_{\R^n} |\check \varphi_1(z)|\left(
\sum_{j=1}^{+\infty}
2^{jsq}\omega^q(2^{1-j}|z|)
\right)^{\frac1q}\,dz
\right]^q\\&\le C
\left[ \int_{\R^n} |\check \varphi_1(z)|\left(
\int_0^{1} \frac{\omega^q(t|z|)}{t^{1+sq}}\,dt\right)^{\frac1q}\,dz
\right]^q\\&= C\left[ \int_{\R^n} |\check \varphi_1(z)|\,|z|^s\left(
\int_0^{|z|} \frac{\omega^q(\theta)}{\theta^{1+sq}}\,d\theta\right)^{\frac1q}\,dz
\right]^q\\&\le C \int_{0}^{+\infty}\frac{\omega^q(\theta)}{\theta^{1+sq}}\,d\theta,
\end{split}\end{equation}
up to renaming~$C$.

Let us now have a further look at the function~$\omega$ in~\eqref{eqomehgaokmOMSDFn}.
We notice that, for all~$y$,~$z\in\R^n$,
\begin{equation}\label{9ikm23-12oi3rktrmm}\begin{split}&
\|\tau_{y+z} u-u\|_{L^p(\R^n)}\le\|\tau_{y+z} u-\tau_y u\|_{L^p(\R^n)}+\|\tau_{y} u-u\|_{L^p(\R^n)}\\&\qquad=\|\tau_{z} u- u\|_{L^p(\R^n)}+\|\tau_{y} u-u\|_{L^p(\R^n)},\end{split}
\end{equation}
where the translation invariance of the Lebesgue norm has been used.

As a result, writing~$y=\left(\frac{y}{2}+z\right)+\left(\frac{y}{2}-z\right)$,
\begin{equation*}\begin{split}&
\|\tau_{y} u-u\|_{L^p(\R^n)}\le\|\tau_{\frac{y}{2}+z} u- u\|_{L^p(\R^n)}+\|\tau_{\frac{y}{2}-z} u-u\|_{L^p(\R^n)},\end{split}\end{equation*}
and therefore
$$ \|\tau_{y} u-u\|_{L^p(\R^n)}^q\le C\left(\|
\tau_{\frac{y}{2}+z} u- u\|^q_{L^p(\R^n)}+\|\tau_{\frac{y}{2}-z} u-u\|^q_{L^p(\R^n)}\right).~$$
On that account, for all~$r>0$,
\begin{equation}\label{0pirj09365-4g4eZXCHJKLRFG-544}\begin{split}&
\|\tau_{y} u-u\|^q_{L^p(\R^n)}\le C\fint_{B_{r/2}}
\Big(\|\tau_{\frac{y}{2}+z} u- u\|^q_{L^p(\R^n)}+\|\tau_{\frac{y}{2}-z} u-u\|^q_{L^p(\R^n)}\Big)\,dz.\end{split}
\end{equation}
Also, if~$y\in B_r$ and~$z\in B_{r/2}$, we have that~$\left|\frac{y}{2}\pm z\right|\le\frac{|y|}{2}+| z|<r$, whence~$\frac{y}{2}\pm z\in B_r$. {F}rom this and~\eqref{0pirj09365-4g4eZXCHJKLRFG-544} it follows that
\begin{equation*}\begin{split}
\sup_{y\in B_r}\|\tau_{y} u-u\|_{L^p(\R^n)}^q&\le \frac{C}{|B_{r/2}|}\sup_{y\in B_r}\int_{ B_{r/2}}
\Big(\|\tau_{\frac{y}{2}+z} u- u\|^q_{L^p(\R^n)}+\|\tau_{\frac{y}{2}-z} u-u\|^q_{L^p(\R^n)}\Big)\,dz\\&\le
\frac{C}{|B_{r/2}|}\sup_{y\in B_r}\int_{B_{r}}
\|\tau_{w} u- u\|^q_{L^p(\R^n)}\,dw\\&=
\frac{C}{r^n} \int_{B_r}\|\tau_{w} u- u\|^q_{L^p(\R^n)}\,dw.
\end{split}
\end{equation*}

For that reason, using Fubini's Theorem, and possibly renaming constants line after line, we see that
\begin{equation}\label{eqomq23424wsehgaokmOMSDFn-a0iqouwjdfweio7uyhbRFGSDHUJN0aosidjh}
\begin{split}
&\int_0^{+\infty}\frac{\displaystyle\sup_{y\in B_r}\| \tau_y u-u \|_{L^p(\R^n)}^q}{r^{1+sq}}\,dr
\le C\int_0^{+\infty}\left[ \int_{B_r}\frac{ \| \tau_w u-u \|_{L^p(\R^n)}^q}{r^{n+1+sq}}\,dw\right]\,dr
\\&\qquad=C\int_{\R^n}
\left[ \int_{|w|}^{+\infty}\frac{ \| \tau_w u-u \|_{L^p(\R^n)}^q}{r^{n+1+sq}}\,dr\right]\,dw=
C\int_{\R^n}\frac{ \| \tau_w u-u \|_{L^p(\R^n)}^q}{|w|^{n+sq}}\,dw.
\end{split}
\end{equation}

We plug this information into~\eqref{eqomq23424wsehgaokmOMSDFn-a0iqouwjdfweio7uyhbRFGSDHUJN0aosidjh-0} and we conclude that
\begin{equation*}
\begin{split}
\sum_{j=1}^{+\infty} 2^{jsq}\|\check\varphi_{j}*u\|^q_{L^p(\R^n)}\le
C \int_{0}^{+\infty}\frac{\displaystyle\sup_{y\in B_\theta}\|\tau_{y}u-u\|_{L^p(\R^n)}^q}{\theta^{1+sq}}\,d\theta\le
C\int_{\R^n}\frac{ \| \tau_w u-u \|_{L^p(\R^n)}^q}{|w|^{n+sq}}\,dw.
\end{split}\end{equation*}
This and Young's Convolution Inequality give that
$$ \|u\|_{B^{s,p,q}(\R^n)}^q=\sum_{j=0}^{+\infty} 2^{jsq}\|\check\varphi_{j}*u\|^q_{L^p(\R^n)}\le
C\left(\|u\|_{L^p(\R^n)}^q+\int_{\R^n}\frac{ \| \tau_w u-u \|_{L^p(\R^n)}^q}{|w|^{n+sq}}\,dw\right).$$
In this way, we have controlled the norm in~\eqref{0pirj09365-4g4eZXCHJKLRFG-544-NO1} from below by the Besov space norm in~\eqref{BENOGIU}. In doing so, we have completed
the proof of Proposition~\ref{F58uuRS-2CALEREG:LEAbc-CC}.
\end{proof}

We now relate\footnote{In light of Theorem~\ref{TH0-104-okn5KMD3} and
and Corollary~\ref{0pirj09365-4g4eZXCHJKLRFG-544-NO1-I2L3C2O25R213t4TY}, we see that~$W^{s,p}(\R^n)$ has the strange feature that it is a Besov space when~$s$ \label{sdcjnPwqdkjfcmvDCtVn3sd}
is non-integer and a Bessel potential space when~$s$ is integer.

As a notational remark, we point out that, in the literature, the space~$B^{s,p,p}$ is sometimes denoted, for short, either~$B^{s,p}$ or~$B^s_p$.} fractional Sobolev spaces and Besov spaces.

\begin{corollary}\label{0pirj09365-4g4eZXCHJKLRFG-544-NO1-I2L3C2O25R213t4TY}
If~$s\in(0,+\infty)\setminus\N$ and~$p\ge1$, then
$$ B^{s,p,p}(\R^n)=W^{s,p}(\R^n).$$
\end{corollary}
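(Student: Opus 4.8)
The plan is to treat separately the two cases $s\in(0,1)$ and $s\ge 1$ (with $s\notin\N$), reducing the latter to the former by peeling off derivatives. The key tools are the equivalent ``translation'' characterization of Besov spaces $B^{s,p,p}(\R^n)$ for $s\in(0,1)$ given in Proposition~\ref{F58uuRS-2CALEREG:LEAbc-CC}, together with the inductive structure of both scales: Lemma~\ref{HSZUONDCCONT-FF-GMMA1-2-LE}/Theorem~\ref{TH0-104-okn5KMD3} on one side (but here we need the Besov counterpart, namely Corollary~\ref{DERIBEVS}) and the usual definition of $W^{s,p}(\R^n)$ for non-integer $s$ via the Gagliardo seminorm on the fractional part, iterated on integer derivatives.

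For $s\in(0,1)$, the statement is essentially immediate: by Proposition~\ref{F58uuRS-2CALEREG:LEAbc-CC} with $q=p$, the norm $\|u\|_{B^{s,p,p}(\R^n)}$ is equivalent to
\[
\|u\|_{L^p(\R^n)}+\left(\int_{\R^n}\frac{\|\tau_y u-u\|_{L^p(\R^n)}^p}{|y|^{n+sp}}\,dy\right)^{\frac1p}.
\]
On the other hand, by Fubini's Theorem (Tonelli, as everything is nonnegative),
\[
\int_{\R^n}\frac{\|\tau_y u-u\|_{L^p(\R^n)}^p}{|y|^{n+sp}}\,dy
=\int_{\R^n}\int_{\R^n}\frac{|u(x+y)-u(x)|^p}{|y|^{n+sp}}\,dx\,dy
=\int_{\R^n}\int_{\R^n}\frac{|u(x)-u(z)|^p}{|x-z|^{n+sp}}\,dx\,dz,
\]
after the change of variable $z:=x+y$. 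The right-hand side is exactly the Gagliardo seminorm entering $\|u\|_{W^{s,p}(\R^n)}$ (as recalled in the footnote on page defining $W^{s,p}$). Hence $B^{s,p,p}(\R^n)$ and $W^{s,p}(\R^n)$ consist of the same functions with equivalent norms.

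For $s>1$ non-integer, write $s=k+\sigma$ with $k:=\lfloor s\rfloor\in\N\setminus\{0\}$ and $\sigma:=s-k\in(0,1)$. On the Besov side, iterating Corollary~\ref{DERIBEVS} $k$ times gives that $\|u\|_{B^{s,p,p}(\R^n)}$ is equivalent to $\sum_{|\alpha|\le k}\|D^\alpha u\|_{B^{\sigma,p,p}(\R^n)}$ (one needs to check that the ``lower order'' $L^p$ terms produced at each step are absorbed, using Corollary~\ref{COROLEEP2r34t5} to bound $\|D^\beta u\|_{L^p}\le C\|u\|_{B^{s,p,p}}$ for $|\beta|<k$; a clean way is to induct, showing $u\in B^{s,p,p}$ iff $u,\partial_1u,\dots,\partial_nu\in B^{s-1,p,p}$ with equivalent norms, exactly in the spirit of Lemma~\ref{HSZUONDCCONT-FF-GMMA1-2-LE}). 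On the Sobolev side, the standard definition of $W^{s,p}(\R^n)$ for non-integer $s$ is $\{u\in W^{k,p}(\R^n):\ D^\alpha u\in W^{\sigma,p}(\R^n)\text{ for }|\alpha|=k\}$, with norm equivalent to $\sum_{|\alpha|\le k}\|D^\alpha u\|_{L^p}+\sum_{|\alpha|=k}[D^\alpha u]_{W^{\sigma,p}}$. Combining this with the $s\in(0,1)$ case applied to each $D^\alpha u$ with $|\alpha|= k$, and noting $W^{k,p}$-membership is equivalent to all $D^\alpha u\in L^p$ for $|\alpha|\le k$, yields $B^{s,p,p}(\R^n)=W^{s,p}(\R^n)$ with equivalent norms.

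The main obstacle I anticipate is the bookkeeping in the inductive step for $s>1$: namely verifying that the characterization ``$u\in B^{s,p,p}$ iff $u,\nabla u\in B^{s-1,p,p}$'' holds with fully controlled constants when $s-1$ may itself be non-integer (so one cannot directly invoke Lemma~\ref{HSZUONDCCONT-FF-GMMA1-2-LE}, which is stated for Bessel potential spaces and half-integer regularity). This is precisely what Corollary~\ref{DERIBEVS} provides at the level of norms, so the real work is just to run the induction carefully, at each stage using Corollary~\ref{COROLEEP2r34t5} to reabsorb the $\|u\|_{L^p}$ remainder and the definition of $W^{s,p}$ via the Gagliardo seminorm on top derivatives; once the base case $s\in(0,1)$ is in hand via the Fubini computation above, the rest is routine but must be written out to keep the constants honest.
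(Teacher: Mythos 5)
Your proposal is correct and follows essentially the same route as the paper: the base case $s\in(0,1)$ is handled by identifying the translation characterization of Proposition~\ref{F58uuRS-2CALEREG:LEAbc-CC} (with $q=p$) with the Gagliardo norm of $W^{s,p}(\R^n)$, and the case $s>1$ is handled by induction on the integer part of $s$, using Corollary~\ref{DERIBEVS} to pass between $B^{s,p,p}$ and $\{u\in L^p:\ \partial_j u\in B^{s-1,p,p}\}$ with equivalent norms. The ``obstacle'' you flag is not really one, since Corollary~\ref{DERIBEVS} is stated for all $s>0$ (integer or not), so the one-step inductive characterization goes through exactly as in the paper.
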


\begin{proof} We write~$s=m+\sigma$, with~$m\in\N$ and~$\sigma\in(0,1)$.
We argue by induction over~$m$. If~$m=0$, then~$s=\sigma\in(0,1)$. Thus, in view of~\eqref{0pirj09365-4g4eZXCHJKLRFG-544-NO1}
we can equivalently identify~$\|u\|_{B^{s,p,p}(\R^n)}$ with
$$ \|u\|_{L^p(\R^n)}+\left(\int_{\R^n}\frac{\| \tau_y u-u \|_{L^p(\R^n)}^p}{|y|^{n+sp}}\,dy\right)^{\frac1p}=
\|u\|_{L^p(\R^n)}+\left(\iint_{\R^n\times\R^n}\frac{|u(x+y)-u(x)|^p}{|y|^{n+sp}}\,dx\,dy\right)^{\frac1p},
$$ which is a norm for~$W^{s,p}(\R^n)$, proving the desired result in this case.

Let us now proceed recursively, assuming the result valid up to~$m-1$.
Then, by Corollary~\ref{DERIBEVS}, a function~$u$ belongs to~$B^{s,p,p}(\R^n)$ if and only if~$u\in L^p(\R^n)$ and~$\partial_ju\in
B^{s-1,p,p}(\R^n)$ for all~$j\in\{1,\dots,n\}$.

Now, notice that~$s-1=m-1+\sigma$, and hence, by inductive hypothesis, we know that~$B^{s-1,p,p}(\R^n)=W^{s-1,p}(\R^n)$.
All in all, we have that~$u$ belongs to~$B^{s,p,p}(\R^n)$ if and only if~$u\in L^p(\R^n)$ and~$\partial_ju\in
W^{s-1,p}(\R^n)$ for all~$j\in\{1,\dots,n\}$, i.e., if and only if~$u\in W^{s,p}(\R^n)$, which completes the inductive step.
\end{proof}

When~$s\in(0,1)$, the result in Proposition~\ref{F58uuRS-2CALEREG:LEAbc-CC} can be rephrased into several equivalent norms.
For instance, one can give explicit relevance to the quantity introduced in~\eqref{eqomehgaokmOMSDFn}:

\begin{proposition}\label{0pirj09365-4g4eZXCHJKLRFG-544-NOP}
If~$s\in(0,1)$ then the Besov space norm in~\eqref{0pirj09365-4g4eZXCHJKLRFG-544-NO1} is equivalent to
\begin{equation}\label{0pirj09365-4g4eZXCHJKLRFG-544-NO2}
\|u\|_{L^p(\R^n)}+\left(\int_0^{+\infty}\frac{\displaystyle\sup_{y\in B_r}\| \tau_y u-u \|_{L^p(\R^n)}^q}{r^{1+sq}}\,dr\right)^{\frac1q}.\end{equation}
\end{proposition}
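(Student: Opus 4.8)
\textbf{Proof strategy for Proposition~\ref{0pirj09365-4g4eZXCHJKLRFG-544-NOP}.}
The plan is to show that the two norms in~\eqref{0pirj09365-4g4eZXCHJKLRFG-544-NO1} and~\eqref{0pirj09365-4g4eZXCHJKLRFG-544-NO2} control each other, which amounts to comparing the integral quantities
\[
I_1(u):=\int_{\R^n}\frac{\| \tau_y u-u \|_{L^p(\R^n)}^q}{|y|^{n+sq}}\,dy
\qquad\text{and}\qquad
I_2(u):=\int_0^{+\infty}\frac{\omega(r)^q}{r^{1+sq}}\,dr,
\]
where~$\omega(r):=\sup_{y\in B_r}\|\tau_y u-u\|_{L^p(\R^n)}$ is exactly the modulus of continuity introduced in~\eqref{eqomehgaokmOMSDFn}. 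The bound~$I_1(u)\le C\,I_2(u)$ is the easy direction: since~$\|\tau_y u-u\|_{L^p(\R^n)}\le\omega(|y|)$ for every~$y$, passing to polar coordinates gives
\[
I_1(u)=|\mathbb{S}^{n-1}|\int_0^{+\infty}\frac{1}{r^{1+sq}}
\left(\fint_{\partial B_1}\|\tau_{r\theta}u-u\|_{L^p(\R^n)}^q\,d\theta\right)\,dr
\le |\mathbb{S}^{n-1}|\int_0^{+\infty}\frac{\omega(r)^q}{r^{1+sq}}\,dr,
\]
so~$I_1(u)\le C I_2(u)$. The term~$\|u\|_{L^p(\R^n)}$ is common to both norms, hence one inequality between the two norms follows at once.

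The reverse bound~$I_2(u)\le C\big(\|u\|_{L^p(\R^n)}^q+I_1(u)\big)$ is precisely the content of the chain of estimates~\eqref{9ikm23-12oi3rktrmm}--\eqref{eqomq23424wsehgaokmOMSDFn-a0iqouwjdfweio7uyhbRFGSDHUJN0aosidjh} already carried out inside the proof of Proposition~\ref{F58uuRS-2CALEREG:LEAbc-CC}. Concretely, I would reuse the averaging trick: by the triangle inequality and translation invariance of the Lebesgue norm, writing~$y=\big(\tfrac{y}{2}+z\big)+\big(\tfrac{y}{2}-z\big)$, one gets for~$y\in B_r$ and~$z\in B_{r/2}$ that~$\tfrac{y}{2}\pm z\in B_r$, hence
\[
\|\tau_y u-u\|_{L^p(\R^n)}^q\le \frac{C}{r^n}\int_{B_r}\|\tau_w u-u\|_{L^p(\R^n)}^q\,dw ,
\]
and therefore, taking the supremum over~$y\in B_r$, dividing by~$r^{1+sq}$ and integrating, Fubini's Theorem yields exactly
\[
I_2(u)=\int_0^{+\infty}\frac{\omega(r)^q}{r^{1+sq}}\,dr
\le C\int_{\R^n}\left(\int_{|w|}^{+\infty}\frac{dr}{r^{n+1+sq}}\right)\|\tau_w u-u\|_{L^p(\R^n)}^q\,dw
= C\, I_1(u),
\]
as in~\eqref{eqomq23424wsehgaokmOMSDFn-a0iqouwjdfweio7uyhbRFGSDHUJN0aosidjh}. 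Adding~$\|u\|_{L^p(\R^n)}$ to both sides gives that~\eqref{0pirj09365-4g4eZXCHJKLRFG-544-NO2} is bounded above by~\eqref{0pirj09365-4g4eZXCHJKLRFG-544-NO1}, and combining with the easy direction completes the equivalence.

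There is essentially no serious obstacle here, since all the analytic work has been done en route to Proposition~\ref{F58uuRS-2CALEREG:LEAbc-CC}; the only point requiring a little care is to make sure the estimates are stated for the ``sup over balls'' quantity~$\omega(r)$ rather than for the pointwise quantity~$\|\tau_y u-u\|_{L^p(\R^n)}$, and to verify that~$\omega$ is finite (equivalently, nondecreasing and dominated by the Besov norm) so that all the integrals manipulated are meaningful — but this is immediate once~$u\in B^{s,p,q}(\R^n)$, again by Proposition~\ref{F58uuRS-2CALEREG:LEAbc-CC}. One might additionally remark that~$\omega$ is measurable as a nondecreasing function, so that the polar-coordinate and Fubini steps are legitimate. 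With those minor checks in place, the proof is a direct two-sided comparison.
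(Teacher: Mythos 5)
Your proposal is correct and follows essentially the same route as the paper: one direction is the polar-coordinate/monotonicity bound, and the other is exactly the averaging-plus-Fubini estimate~\eqref{eqomq23424wsehgaokmOMSDFn-a0iqouwjdfweio7uyhbRFGSDHUJN0aosidjh} already established in the proof of Proposition~\ref{F58uuRS-2CALEREG:LEAbc-CC}, which the paper simply invokes. The only cosmetic difference is that the paper bounds~$\|\tau_{\rho\vartheta}u-u\|_{L^p(\R^n)}$ by the supremum over~$B_{2\rho}$ (picking up a harmless factor~$2^{sq}$) rather than over~$B_\rho$, thereby sidestepping the boundary point~$|y|=\rho$ of the open ball that your pointwise bound~$\|\tau_y u-u\|_{L^p(\R^n)}\le\omega(|y|)$ quietly glosses over.
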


The relevant contribution to the integral in~\eqref{0pirj09365-4g4eZXCHJKLRFG-544-NO2} is actually the one
close to the origin, as it will be more apparent in~\eqref{0pirj09365-4g4eZXCHJKLRFG-544-NO2bis} below.

\begin{proof}[Proof of Proposition~\ref{0pirj09365-4g4eZXCHJKLRFG-544-NOP}] 
The estimate in~\eqref{eqomq23424wsehgaokmOMSDFn-a0iqouwjdfweio7uyhbRFGSDHUJN0aosidjh} shows that the norm in~\eqref{0pirj09365-4g4eZXCHJKLRFG-544-NO1} controls from above the norm in~\eqref{0pirj09365-4g4eZXCHJKLRFG-544-NO2}.

To prove that, conversely, the norm in~\eqref{0pirj09365-4g4eZXCHJKLRFG-544-NO2} controls from above the norm in~\eqref{0pirj09365-4g4eZXCHJKLRFG-544-NO1}, we notice that, by polar coordinates,
\begin{eqnarray*}&&
\int_{\R^n}\frac{\| \tau_y u-u \|_{L^p(\R^n)}^q}{|y|^{n+sq}}\,dy=
\int_0^{+\infty}\int_{\partial B_1}\frac{\| \tau_{\rho\vartheta} u-u \|_{L^p(\R^n)}^q}{\rho^{1+sq}}\,d{\mathcal{H}}^{n-1}_\vartheta\,d\rho\\&&\qquad\le
\int_0^{+\infty}\int_{\partial B_1}\frac{\displaystyle\sup_{z\in B_{2\rho}}\| \tau_{z} u-u \|_{L^p(\R^n)}^q}{\rho^{1+sq}}\,d{\mathcal{H}}^{n-1}_\vartheta\,d\rho\\
&&\qquad={\mathcal{H}}^{n-1}(\partial B_1)
\int_{0}^{+\infty}\frac{\displaystyle\sup_{z\in B_{2\rho}}\| \tau_{z} u-u \|_{L^p(\R^n)}^q}{\rho^{1+sq}}\,d\rho\\
&&\qquad=2^{sq}\,{\mathcal{H}}^{n-1}(\partial B_1)
\int_{0}^{+\infty}\frac{\displaystyle\sup_{z\in B_{r}}\| \tau_{z} u-u \|_{L^p(\R^n)}^q}{r^{1+sq}}\,dr,
\end{eqnarray*}
which completes the proof of the desired result.
\end{proof}

\begin{corollary}\label{corod34958834y689u2eryue9wighrejhgfhdjehfdjkjfhj}
If~$s\in(0,1)$ then the Besov space norm in~\eqref{0pirj09365-4g4eZXCHJKLRFG-544-NO1} is equivalent to
\begin{equation}\label{0pirj09365-4g4eZXCHJKLRFG-544-NO2bis}
\|u\|_{L^p(\R^n)}+\left(\int_0^1\frac{\displaystyle\sup_{y\in B_r}\| \tau_y u-u \|_{L^p(\R^n)}^q}{r^{1+sq}}\,dr\right)^{\frac1q}.\end{equation}
\end{corollary}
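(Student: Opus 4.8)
The plan is to derive this directly from Proposition~\ref{0pirj09365-4g4eZXCHJKLRFG-544-NOP}, which already shows that the Besov norm in~\eqref{0pirj09365-4g4eZXCHJKLRFG-544-NO1} is equivalent to the one in~\eqref{0pirj09365-4g4eZXCHJKLRFG-544-NO2}. Hence it suffices to prove that the quantities in~\eqref{0pirj09365-4g4eZXCHJKLRFG-544-NO2} and in~\eqref{0pirj09365-4g4eZXCHJKLRFG-544-NO2bis} are equivalent, i.e. that truncating the radial integral at~$r=1$ only affects the norm by a multiplicative constant. One inequality is immediate: the integrand is nonnegative, so
\[
\int_0^1\frac{\displaystyle\sup_{y\in B_r}\| \tau_y u-u \|_{L^p(\R^n)}^q}{r^{1+sq}}\,dr
\le
\int_0^{+\infty}\frac{\displaystyle\sup_{y\in B_r}\| \tau_y u-u \|_{L^p(\R^n)}^q}{r^{1+sq}}\,dr,
\]
which shows that the norm in~\eqref{0pirj09365-4g4eZXCHJKLRFG-544-NO2bis} is bounded above by the one in~\eqref{0pirj09365-4g4eZXCHJKLRFG-544-NO2}.

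For the reverse bound I would estimate the tail of the integral over~$(1,+\infty)$ by the Lebesgue norm alone. Recalling the notation~\eqref{TAUNOTA}, for every~$y\in\R^n$ the triangle inequality together with the translation invariance of the Lebesgue norm yields $\| \tau_y u-u \|_{L^p(\R^n)}\le \| \tau_y u\|_{L^p(\R^n)}+\| u\|_{L^p(\R^n)}=2\| u\|_{L^p(\R^n)}$. Therefore, since~$sq>0$,
\[
\int_1^{+\infty}\frac{\displaystyle\sup_{y\in B_r}\| \tau_y u-u \|_{L^p(\R^n)}^q}{r^{1+sq}}\,dr
\le 2^q\,\| u\|_{L^p(\R^n)}^q\int_1^{+\infty}\frac{dr}{r^{1+sq}}
=\frac{2^q}{sq}\,\| u\|_{L^p(\R^n)}^q .
\]
Adding back the integral over~$(0,1)$ and taking~$q$-th roots, the extra contribution~$\| u\|_{L^p(\R^n)}$ is absorbed into the first summand of~\eqref{0pirj09365-4g4eZXCHJKLRFG-544-NO2bis}, so the norm in~\eqref{0pirj09365-4g4eZXCHJKLRFG-544-NO2} is controlled, up to a constant depending only on~$s$ and~$q$, by the norm in~\eqref{0pirj09365-4g4eZXCHJKLRFG-544-NO2bis}.

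Combining the two estimates gives the equivalence of~\eqref{0pirj09365-4g4eZXCHJKLRFG-544-NO2} and~\eqref{0pirj09365-4g4eZXCHJKLRFG-544-NO2bis}, and chaining this with Proposition~\ref{0pirj09365-4g4eZXCHJKLRFG-544-NOP} proves the corollary. I do not expect any genuine obstacle: the argument is elementary, and the only point requiring a little care is bookkeeping which terms get reabsorbed into the~$\| u\|_{L^p(\R^n)}$ summand, together with the (harmless) use of the crude bound~$\| \tau_y u-u \|_{L^p(\R^n)}\le 2\| u\|_{L^p(\R^n)}$ for large translations.
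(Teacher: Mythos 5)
Your proposal is correct and follows the same route as the paper: both reduce to Proposition~\ref{0pirj09365-4g4eZXCHJKLRFG-544-NOP}, note that the norm in~\eqref{0pirj09365-4g4eZXCHJKLRFG-544-NO2} trivially dominates that in~\eqref{0pirj09365-4g4eZXCHJKLRFG-544-NO2bis}, and control the tail integral over~$(1,+\infty)$ by the crude bound~$\|\tau_y u-u\|_{L^p(\R^n)}\le 2\|u\|_{L^p(\R^n)}$ together with the convergence of~$\int_1^{+\infty}r^{-1-sq}\,dr$. No gaps.
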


\begin{proof} Owing to Proposition~\ref{0pirj09365-4g4eZXCHJKLRFG-544-NOP}, it suffices to show the equivalence between the norms in~\eqref{0pirj09365-4g4eZXCHJKLRFG-544-NO2} and~\eqref{0pirj09365-4g4eZXCHJKLRFG-544-NO2bis}.

Actually, the norm in~\eqref{0pirj09365-4g4eZXCHJKLRFG-544-NO2} is obviously larger than the one in~\eqref{0pirj09365-4g4eZXCHJKLRFG-544-NO2bis}, therefore we only need to check that the norm in~\eqref{0pirj09365-4g4eZXCHJKLRFG-544-NO2} can be controlled from above by that in~\eqref{0pirj09365-4g4eZXCHJKLRFG-544-NO2bis}, up to a multiplicative constant.

To this end, we observe that
\begin{eqnarray*}&&
\int_1^{+\infty}\frac{\displaystyle\sup_{y\in B_r}\| \tau_y u-u \|_{L^p(\R^n)}^q}{r^{1+sq}}\,dr\le
\int_1^{+\infty}\frac{\displaystyle\sup_{y\in B_r}\big(\| \tau_y u\|_{L^p(\R^n)}+\|u \|_{L^p(\R^n)}\big)^q}{r^{1+sq}}\,dr\\&&\qquad\qquad\qquad
=2^q\|u \|_{L^p(\R^n)}^q\int_1^{+\infty}\frac{dr}{r^{1+sq}}=C\|u \|_{L^p(\R^n)}^q,
\end{eqnarray*}
from which the desired result plainly follows.
\end{proof}

\begin{corollary}\label{0pirj09365-4g4eZXCHJKLRFG-544-NO3-COR} If~$s\in(0,1)$ then the Besov space norm in~\eqref{0pirj09365-4g4eZXCHJKLRFG-544-NO1} is equivalent to
\begin{equation}\label{0pirj09365-4g4eZXCHJKLRFG-544-NO3}
\|u\|_{L^p(\R^n)}+\left(\sum_{k=0}^{+\infty}\left(2^{k sq}\sup_{y\in B_{1/2^k}}\| \tau_y u-u \|_{L^p(\R^n)}^q\right) \right)^{\frac1q}.\end{equation}
\end{corollary}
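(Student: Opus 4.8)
The plan is to show the equivalence between the discretized norm in~\eqref{0pirj09365-4g4eZXCHJKLRFG-544-NO3} and the continuous one in~\eqref{0pirj09365-4g4eZXCHJKLRFG-544-NO2bis}, which by Corollary~\ref{corod34958834y689u2eryue9wighrejhgfhdjehfdjkjfhj} is already known to be equivalent to the Besov norm in~\eqref{0pirj09365-4g4eZXCHJKLRFG-544-NO1}. To this end, I would introduce the monotone modulus $\omega(r):=\sup_{y\in B_r}\|\tau_y u-u\|_{L^p(\R^n)}$ (the same quantity as in~\eqref{eqomehgaokmOMSDFn}) and observe that the term appearing in~\eqref{0pirj09365-4g4eZXCHJKLRFG-544-NO3} is $2^{ksq}\,\omega(2^{-k})^q$, whereas the integral in~\eqref{0pirj09365-4g4eZXCHJKLRFG-544-NO2bis} is $\int_0^1 r^{-1-sq}\omega(r)^q\,dr$. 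The whole matter thus reduces to comparing the series $\sum_{k\ge0}2^{ksq}\omega(2^{-k})^q$ with the integral $\int_0^1 r^{-1-sq}\omega(r)^q\,dr$, which is a completely standard dyadic decomposition argument once one exploits that $\omega$ is nondecreasing.

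The key steps, in order, are as follows. First, split $\int_0^1 r^{-1-sq}\omega(r)^q\,dr=\sum_{k=0}^{+\infty}\int_{2^{-k-1}}^{2^{-k}}r^{-1-sq}\omega(r)^q\,dr$. On each dyadic annulus $r\in[2^{-k-1},2^{-k}]$ we have $\omega(r)\le\omega(2^{-k})$ by monotonicity and $r^{-1-sq}\le 2^{(k+1)(1+sq)}$, while the length of the interval is $2^{-k-1}$; hence each summand is bounded above by $C\,2^{ksq}\omega(2^{-k})^q$, giving that the integral in~\eqref{0pirj09365-4g4eZXCHJKLRFG-544-NO2bis} is controlled by the series in~\eqref{0pirj09365-4g4eZXCHJKLRFG-544-NO3}. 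Second, for the reverse bound, again use monotonicity: for $r\in[2^{-k-1},2^{-k}]$ we have $\omega(r)\ge\omega(2^{-k-1})$ and $r^{-1-sq}\ge 2^{k(1+sq)}$, so $\int_{2^{-k-1}}^{2^{-k}}r^{-1-sq}\omega(r)^q\,dr\ge c\,2^{ksq}\omega(2^{-k-1})^q$; summing over $k$ and reindexing recovers $\sum_{k\ge1}2^{ksq}\omega(2^{-k})^q$, and the $k=0$ term is harmlessly absorbed into $\|u\|_{L^p(\R^n)}$ since $\omega(1)\le 2\|u\|_{L^p(\R^n)}$. Third, add $\|u\|_{L^p(\R^n)}$ to both sides to conclude that the norm in~\eqref{0pirj09365-4g4eZXCHJKLRFG-544-NO3} is equivalent to the one in~\eqref{0pirj09365-4g4eZXCHJKLRFG-544-NO2bis}, and then invoke Corollary~\ref{corod34958834y689u2eryue9wighrejhgfhdjehfdjkjfhj} and Proposition~\ref{F58uuRS-2CALEREG:LEAbc-CC} to chain the equivalences back to~\eqref{BENOGIU}.

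I do not expect any serious obstacle here: the argument is purely a dyadic comparison of a sum and an integral of a monotone function, requiring only the elementary observations above plus the already-established chain of equivalences. The only minor points to be careful about are the treatment of the boundary term at $k=0$ (handled by the $L^p$ piece, exactly as in the proof of Corollary~\ref{corod34958834y689u2eryue9wighrejhgfhdjehfdjkjfhj}) and making sure the reindexing in the lower bound does not lose a fixed multiplicative constant depending only on $s$ and $q$ — which it does not, since $\sum_{k\ge1}2^{ksq}\omega(2^{-k})^q$ and $\sum_{k\ge0}2^{ksq}\omega(2^{-k})^q$ differ precisely by the $k=0$ term. Thus the proof is short and the write-up should amount to little more than displaying the dyadic splitting and the two monotonicity estimates.
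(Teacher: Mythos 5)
Your proposal is correct and follows essentially the same route as the paper: the paper's proof also splits the integral in~\eqref{0pirj09365-4g4eZXCHJKLRFG-544-NO2bis} over the dyadic intervals~$[2^{-k-1},2^{-k}]$, uses the monotonicity of~$r\mapsto\sup_{y\in B_r}\|\tau_y u-u\|_{L^p(\R^n)}$ on each piece, absorbs the~$k=0$/reindexing term via~$\sup_{y\in B_1}\|\tau_y u-u\|_{L^p(\R^n)}\le 2\|u\|_{L^p(\R^n)}$, and then invokes Corollary~\ref{corod34958834y689u2eryue9wighrejhgfhdjehfdjkjfhj} to chain back to~\eqref{0pirj09365-4g4eZXCHJKLRFG-544-NO1}. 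No gaps.
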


\begin{proof} We notice that
\begin{equation}\label{0oKSIIMmslRl7u3398j-1}
\int_0^{1}\frac{\displaystyle\sup_{y\in B_r}\| \tau_y u-u \|_{L^p(\R^n)}^q}{r^{1+sq}}\,dr=
\sum_{k=0}^{+\infty} \int^{1/2^{k}}_{1/2^{k+1}}\frac{\displaystyle\sup_{y\in B_r}\| \tau_y u-u \|_{L^p(\R^n)}^q}{r^{1+sq}}\,dr.\end{equation}
On that account,
\begin{eqnarray*}
&& \int_0^{1}\frac{\displaystyle\sup_{y\in B_r}\| \tau_y u-u \|_{L^p(\R^n)}^q}{r^{1+sq}}\,dr\ge
\sum_{k=0}^{+\infty} \int^{1/2^{k}}_{1/2^{k+1}}\frac{\displaystyle\sup_{y\in B_{1/2^{k+1}}}\| \tau_y u-u \|_{L^p(\R^n)}^q}{r^{1+sq}}\,dr\\&&\qquad=
\frac{2^{sq}-1}{sq}
\sum_{k=0}^{+\infty}\left(2^{ksq}\sup_{y\in B_{1/2^{k+1}}}\| \tau_y u-u \|_{L^p(\R^n)}^q\right)\\
&&\qquad=\frac{2^{sq}-1}{sq\,2^{sq}}
\sum_{j=1}^{+\infty}\left(2^{jsq}\sup_{y\in B_{1/2^{j}}}\| \tau_y u-u \|_{L^p(\R^n)}^q\right).
\end{eqnarray*}
As for the term with~$j=0$, we have that
\begin{eqnarray*}
\sup_{y\in B_{1}}\| \tau_y u-u \|_{L^p(\R^n)}^q\le\sup_{y\in B_{1}}\big(\| \tau_y u\|_{L^p(\R^n)}+\|u \|_{L^p(\R^n)}\big)^q=2^q\|u\|_{L^p(\R^n)}^q.
\end{eqnarray*}
These observations show that the norm in~\eqref{0pirj09365-4g4eZXCHJKLRFG-544-NO3} is controlled from above by the norm
in~\eqref{0pirj09365-4g4eZXCHJKLRFG-544-NO2bis}, and therefore by the norm in~\eqref{0pirj09365-4g4eZXCHJKLRFG-544-NO1},
owing to Corollary~\ref{corod34958834y689u2eryue9wighrejhgfhdjehfdjkjfhj}.

Similarly, by~\eqref{0oKSIIMmslRl7u3398j-1},
\begin{eqnarray*}&&\int_0^{1}\frac{\displaystyle\sup_{y\in B_r}\| \tau_y u-u \|_{L^p(\R^n)}^q}{r^{1+sq}}\,dr\leq
\sum_{k=0}^{+\infty} \int^{1/2^{k}}_{1/2^{k+1}}\frac{\displaystyle\sup_{y\in B_{1/2^k}}\| \tau_y u-u \|_{L^p(\R^n)}^q}{r^{1+sq}}\,dr\\
&&\qquad=\frac{2^{sq}-1}{sq}
\sum_{k=0}^{+\infty} \left(2^{ksq}\sup_{y\in B_{1/2^k}}\| \tau_y u-u \|_{L^p(\R^n)}^q\right),
\end{eqnarray*}
showing that the norm in~\eqref{0pirj09365-4g4eZXCHJKLRFG-544-NO3} is controlled from below by the norm
in~\eqref{0pirj09365-4g4eZXCHJKLRFG-544-NO2bis}, and therefore by the norm in~\eqref{0pirj09365-4g4eZXCHJKLRFG-544-NO1},
thanks to Corollary~\ref{corod34958834y689u2eryue9wighrejhgfhdjehfdjkjfhj}.
\end{proof}

We now put forth an interesting relation between
Bessel potential spaces and Besov spaces (for more comprehensive results, see Theorem~5 on page~155
of~\cite{MR0290095} and the references therein).
Roughly speaking, the Bessel potential space~${\mathcal{L}}^p_{s}(\R^n)$ always sits between~$B^{s,p,2}(\R^n)$ and~$B^{s,p,p}(\R^n)$,
but the direction of the inclusion depends on whether~$p\le2$ or~$p\ge2$.

\begin{theorem}\label{THPS2} Let~$s>0$.
If~$p\in(1,2]$, then
$$ B^{s,p,p}(\R^n)\subseteq {\mathcal{L}}^p_{s}(\R^n)\subseteq B^{s,p,2}(\R^n),$$
with continuous embeddings.

Also, if~$p\in[2,+\infty)$, then
$$ B^{s,p,2}(\R^n)\subseteq {\mathcal{L}}^p_{s}(\R^n) \subseteq B^{s,p,p}(\R^n),$$
with continuous embeddings.
\end{theorem}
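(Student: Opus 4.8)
The plan is to exploit the characterization of the Bessel potential norm via Littlewood--Paley square functions obtained in Theorem~\ref{0pirj09365-4g4eZXCHJKLRFG-544-NO3-COR-0oerkCC}, which already does the heavy lifting: it tells us that, up to constants depending only on $n$, $s$ and $p$,
\begin{equation*}
\|u\|_{{\mathcal{L}}^p_{s}(\R^n)}\approx\left\|\,\Big(\sum_{j=0}^{+\infty}2^{2js}|\check\varphi_j*u|^2\Big)^{\frac12}\,\right\|_{L^p(\R^n)}.
\end{equation*}
On the other hand, unravelling the Besov norm~\eqref{BENOGIU}, we have
\begin{equation*}
\|u\|_{B^{s,p,q}(\R^n)}=\left(\sum_{j=0}^{+\infty}\Big(2^{js}\|\check\varphi_j*u\|_{L^p(\R^n)}\Big)^q\right)^{\frac1q},
\end{equation*}
so the whole theorem reduces to comparing, for the sequence of nonnegative functions $g_j:=2^{js}|\check\varphi_j*u|$, the two quantities $\big\|\|g_j\|_{\ell^q(\N)}\big\|_{L^p(\R^n)}$ (essentially the Besov norm, with $q=p$) and $\big\|\|g_j\|_{\ell^2(\N)}\big\|_{L^p(\R^n)}$ (the Bessel norm). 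This is precisely a question about the order of the mixed norms $L^p(\ell^q)$ versus $L^p(\ell^2)$.

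\textbf{Key steps.} First I would record the elementary sequence-space inclusion: for nonnegative reals $\{a_j\}$, one has $\|a\|_{\ell^{q_2}}\le\|a\|_{\ell^{q_1}}$ whenever $q_1\le q_2$ (this was already used in~\eqref{F58uuRS-2CALEREG:LE2o-3rkpjtgr94ejrfmXXs}). Applying this pointwise in $x$ to $g_j(x)=2^{js}|\check\varphi_j*u(x)|$ gives the two pointwise comparisons: when $p\le 2$, $\|g_j(x)\|_{\ell^2}\le\|g_j(x)\|_{\ell^p}$, and when $p\ge 2$, $\|g_j(x)\|_{\ell^p}\le\|g_j(x)\|_{\ell^2}$. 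Second, I would integrate these inequalities in $L^p(\R^n)$ and use Minkowski's integral inequality in the one direction where the $\ell$-norm sits outside the $L^p$-norm but needs to be brought inside, namely: for $p\le 2$ we need $\big\|\|g_j\|_{\ell^2}\big\|_{L^p}\le\big\|\|g_j\|_{\ell^p}\big\|_{L^p}=\big(\sum_j\|g_j\|_{L^p}^p\big)^{1/p}$, where the last equality is just Fubini/Tonelli since $q=p$; this chain gives ${\mathcal{L}}^p_{s}\subseteq B^{s,p,p}$... wait, let me be careful about directions. For $p\le 2$: the pointwise bound $\|g_j(x)\|_{\ell^p}\ge\|g_j(x)\|_{\ell^2}$ combined with $\big\|\|g_j\|_{\ell^p}\big\|_{L^p}=\|u\|_{B^{s,p,p}}$ yields $\|u\|_{{\mathcal{L}}^p_s}\le C\|u\|_{B^{s,p,p}}$, i.e.\ $B^{s,p,p}\subseteq{\mathcal{L}}^p_s$; and for the other inclusion ${\mathcal{L}}^p_s\subseteq B^{s,p,2}$ when $p\le 2$ one needs Minkowski's integral inequality with exponent $2/p\ge 1$ to pull the $\ell^2$ sum out of the $L^{p}$ integral: $\big(\sum_j\|g_j\|_{L^p}^2\big)^{1/2}=\big\|\sum_j\|g_j\|_{L^p}^2\big\|^{1/2}\le\big\|\,\|\,\{g_j\}\,\|_{\ell^2}\,\big\|_{L^p}$ by the triangle inequality in $L^{p/2}\!\!\cdot\!$... more precisely by Minkowski in $L^{p/2}$ applied to the functions $g_j^2$. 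Third, I would run the mirror-image argument for $p\ge 2$, where the roles of $\ell^2$ and $\ell^p$ swap and Minkowski is applied with exponent $p/2\ge 1$ to the functions $g_j^2$ to get $B^{s,p,2}\subseteq{\mathcal{L}}^p_s$, while the trivial sequence inclusion $\ell^2\supseteq\ell^p$ (pointwise in $x$) together with Fubini gives ${\mathcal{L}}^p_s\subseteq B^{s,p,p}$. In every case the constant $C$ depends only on $n$, $s$, $p$, so the embeddings are continuous; combining with Theorem~\ref{0pirj09365-4g4eZXCHJKLRFG-544-NO3-COR-0oerkCC} to replace the square-function norm by $\|\cdot\|_{{\mathcal{L}}^p_s}$ finishes the proof.

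\textbf{Main obstacle.} The only genuinely delicate point is the correct bookkeeping of which direction of the mixed-norm inequality is ``free'' (pointwise sequence inclusion plus Fubini) and which requires Minkowski's integral inequality, and making sure the exponent $p/2$ or $2/p$ fed to Minkowski is $\ge 1$ in each regime; this is exactly why the direction of the inclusions flips at $p=2$. Concretely, the subtle inequality is: for nonnegative measurable $\{g_j\}$ and $r\ge 1$,
\begin{equation*}
\left(\sum_{j}\|g_j\|_{L^p(\R^n)}^{2r}\right)^{\frac1{2r}}\le\left\|\Big(\sum_j g_j^{2}\Big)^{\frac12}\right\|_{L^{2pr}(\R^n)}^{?}
\end{equation*}
type manipulations---one has to invoke Minkowski's integral inequality (Theorem~\ref{MLAerSM:ijfKKSMdf02}) in the form $\big\|\sum_j h_j\big\|_{L^m}\le\sum_j\|h_j\|_{L^m}$ applied with $m:=p/2$ (valid since $p\ge 2$) to $h_j:=g_j^2$, and dually embed $\ell^1\hookrightarrow\ell^{p/2}$ when $p\le 2$. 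Once this interchange is organized cleanly, the rest is bookkeeping, and I would present it as two short symmetric paragraphs (case $p\in(1,2]$ and case $p\in[2,+\infty)$), each containing one application of the pointwise sequence inclusion, one application of Minkowski's integral inequality, and one appeal to Theorem~\ref{0pirj09365-4g4eZXCHJKLRFG-544-NO3-COR-0oerkCC}.
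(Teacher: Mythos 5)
Your proposal is correct and follows essentially the same route as the paper's proof: the Littlewood--Paley characterization of Theorem~\ref{0pirj09365-4g4eZXCHJKLRFG-544-NO3-COR-0oerkCC}, the pointwise $\ell^q$-monotonicity~\eqref{F58uuRS-2CALEREG:LE2o-3rkpjtgr94ejrfmXXs} applied to $\kappa_j=2^{js}|\check\varphi_j*u|$, and Minkowski's integral inequality (Theorem~\ref{MLAerSM:ijfKKSMdf02}) with a discrete measure and exponent $p/2\ge1$ when $p\ge2$, respectively $2/p\ge1$ when $p\le2$. The only wording slip is in the case $p\le2$: there you should apply Minkowski with exponent $2/p$ to the functions $g_j^p$ (as your earlier phrasing correctly indicates), not the ``triangle inequality in $L^{p/2}$'' applied to $g_j^2$, which would fail since $p/2<1$.
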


One can prove that the inclusions presented in Theorem~\ref{THPS2} are optimal and cannot be
improved, see~\cite[Theorem~20]{MR163159} and~\cite[Exercise~6.8]{MR0290095}.

Also, as a particular case,
one immediately infers from Theorem~\ref{THPS2} that:

\begin{corollary} \label{THPS2CO} For all~$s>0$,
$$  B^{s,2,2}(\R^n)=W^{s,2}(\R^n)={\mathcal{L}}^2_{s}(\R^n).$$\end{corollary}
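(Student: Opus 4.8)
The plan is to read off Corollary~\ref{THPS2CO} from the three structural results already at our disposal, namely Theorem~\ref{THPS2}, Corollary~\ref{0pirj09365-4g4eZXCHJKLRFG-544-NO1-I2L3C2O25R213t4TY} and Theorem~\ref{TH0-104-okn5KMD3}, at the price of a short case distinction according to whether~$s$ is an integer. The only point that requires a bit of care is the bookkeeping of the index conventions: the subscript of~${\mathcal{L}}^p_{\,\cdot}(\R^n)$ is \emph{twice} the parameter of the Bessel kernel, so that~${\mathcal{L}}^2_s(\R^n)$ is the space modelled on~${\mathcal{B}}^{(s/2)}$, and accordingly the abstract ``$s$'' appearing in Theorem~\ref{TH0-104-okn5KMD3} must be taken to be~$s/2$ when one wants information on~${\mathcal{L}}^2_s(\R^n)$.

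First I would specialize Theorem~\ref{THPS2} to~$p=2$. Since~$2\in(1,2]$, its first chain of continuous embeddings reads~$B^{s,2,2}(\R^n)\subseteq{\mathcal{L}}^2_s(\R^n)\subseteq B^{s,2,2}(\R^n)$; since also~$2\in[2,+\infty)$, its second chain gives the very same two embeddings in the opposite order. In particular the natural inclusion~${\mathcal{L}}^2_s(\R^n)\hookrightarrow B^{s,2,2}(\R^n)$ is bounded and has bounded inverse, so~${\mathcal{L}}^2_s(\R^n)=B^{s,2,2}(\R^n)$ as sets, with equivalent norms.

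Next I would establish~$B^{s,2,2}(\R^n)=W^{s,2}(\R^n)$. If~$s\in(0,+\infty)\setminus\N$, this is precisely the content of Corollary~\ref{0pirj09365-4g4eZXCHJKLRFG-544-NO1-I2L3C2O25R213t4TY} with~$p=2$. If instead~$s\in\N$, I would combine the identity~${\mathcal{L}}^2_s(\R^n)=B^{s,2,2}(\R^n)$ just obtained with Theorem~\ref{TH0-104-okn5KMD3}: applying the latter with~$s/2$ in place of its parameter~``$s$'' (which is legitimate since~$2\cdot(s/2)=s\in\N$) and with~$p=2$, one gets~${\mathcal{L}}^2_s(\R^n)=W^{s,2}(\R^n)$ with equivalent norms. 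As~$\big((0,+\infty)\setminus\N\big)\cup\N\supseteq(0,+\infty)$, these two cases exhaust all~$s>0$.

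Finally I would chain the two identifications~$B^{s,2,2}(\R^n)=W^{s,2}(\R^n)$ and~$B^{s,2,2}(\R^n)={\mathcal{L}}^2_s(\R^n)$, noting that all the embeddings invoked along the way are continuous, so that the equalities hold as Banach spaces with mutually equivalent norms; this is exactly the assertion of Corollary~\ref{THPS2CO}. I do not expect a genuine obstacle here: the argument is purely an assembly of earlier theorems, and the only thing to watch is that the integer/non-integer split invokes the correct previous result in each case and that the index conventions are matched consistently.
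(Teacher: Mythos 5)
Your argument is correct, and there is no circularity: Theorem~\ref{THPS2} is proved in the paper from the Littlewood--Paley estimate of Theorem~\ref{0pirj09365-4g4eZXCHJKLRFG-544-NO3-COR-0oerkCC}, not from the corollary itself, and you have matched the index conventions correctly (the substitution~$s\mapsto s/2$ in Theorem~\ref{TH0-104-okn5KMD3} is exactly what is needed, since~${\mathcal{L}}^p_{2\sigma}$ is built on~${\mathcal{B}}^{(\sigma)}$). However, your route is not the one the paper takes. The text does remark that the corollary can be ``immediately inferred'' from Theorem~\ref{THPS2}, which is essentially your assembly: specialize Theorem~\ref{THPS2} to~$p=2$ to get~$B^{s,2,2}(\R^n)={\mathcal{L}}^2_s(\R^n)$, then identify this with~$W^{s,2}(\R^n)$ via Corollary~\ref{0pirj09365-4g4eZXCHJKLRFG-544-NO1-I2L3C2O25R213t4TY} when~$s\notin\N$ and via Theorem~\ref{TH0-104-okn5KMD3} when~$s\in\N$. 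The paper instead deliberately gives a direct, self-contained proof that bypasses Theorem~\ref{THPS2} altogether (and is placed before its proof): it takes the Fourier-side norm~$\big(\int_{\R^n}(1+|\xi|^{2s})|\widehat u(\xi)|^2\,d\xi\big)^{1/2}$ on~$W^{s,2}(\R^n)$, uses Plancherel and Lemma~\ref{PRODpoikjhr3-2:le-7} to see that~$\|u\|_{{\mathcal{L}}^2_s(\R^n)}=\|(1+4\pi^2|\xi|^2)^{s/2}\widehat u\|_{L^2(\R^n)}$, and then compares~$\sum_j 2^{2sj}\|\varphi_j\widehat u\|_{L^2}^2$ with that quantity directly, exploiting the dyadic supports of the~$\varphi_j$. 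What each approach buys: yours is shorter and purely an assembly of stated results, but it imports the Mikhlin-multiplier machinery hidden in Theorem~\ref{THPS2} and needs the integer/non-integer case split; the paper's proof is more elementary for~$p=2$, where Plancherel makes everything explicit, uniform in~$s>0$, and independent of the heavier interpolation-type argument.
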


Since one can obtain Corollary~\ref{THPS2CO} also directly, without relying on Theorem~\ref{THPS2},
before giving the more delicate argument needed to establish Theorem~\ref{THPS2} in its general form, let us present a more immediate proof of Corollary~\ref{THPS2CO}.

\begin{proof}[Proof of Corollary~\ref{THPS2CO}] As a norm in~$W^{s,2}(\R^n)$ we can take
$$ \|u\|_{W^{s,2}(\R^n)}:=\sqrt{\int_{\R^n} \big(1+|\xi|^{2s}\big)\,|\widehat u(\xi)|^2\,d\xi},$$
see e.g.~\cite[Section~3.1]{MR2944369}.

Also, if~$u={\mathcal{B}}^{(s/2)}*f_u$, we know by~\eqref{ERFGHJN6789-09tftd90u8yhgiug8erhISB}, Lemma~\ref{PRODpoikjhr3-2:le-7}
and the Plancherel Theorem that
$$ \|u\|_{ {\mathcal{L}}^2_{s}(\R^n)}=\| f_u\|_{L^2(\R^n)}=\| \widehat f_u\|_{L^2(\R^n)}=
\left\|  \frac{\widehat u}{ \widehat{\mathcal{B}}^{(s/2)}}\right\|_{L^2(\R^n)}=
\left\|  \big(1+4\pi^2|\xi|^2\big)^{\frac{s}2}\,\widehat u\right\|_{L^2(\R^n)},$$
which is equivalent to~$\|u\|_{W^{s,2}(\R^n)}$.

Furthermore, by~\eqref{BENOGIU} and the Plancherel Theorem,
\begin{equation} \label{0pirj09365-4g4eZXCHJKLRFG-544-NO1-LL-eq2BEMNofwed}\|u\|_{B^{s,2,2}(\R^n)}^2=
\sum_{j=0}^{+\infty}2^{2sj}\|\check{\varphi}_j*u\|_{L^2(\R^n)}^2=
\sum_{j=0}^{+\infty}2^{2sj}\| \varphi_j\,\widehat u\|_{L^2(\R^n)}^2
.\end{equation}

Now, on the one hand, by~\eqref{0pirj09365-4g4eZXCHJKLRFG-544-NO1-LL-eq2}, we know that, for all~$j\ge1$,~$$
0\le\varphi_j\le\chi_{B_{2^{j+1}}\setminus B_{2^{j-1}}}$$
and therefore
$$ \| \varphi_j\,\widehat u\|_{L^2(\R^n)}^2\le\int_{B_{2^{j+1}}\setminus B_{2^{j-1}}}|\widehat u(\xi)|^2\,d\xi.$$
Plugging this information into~\eqref{0pirj09365-4g4eZXCHJKLRFG-544-NO1-LL-eq2BEMNofwed} we deduce that
\begin{equation}\label{0pirj09365-4g4eZXCHJKLRFG-544-NO1-LL-eq2BEMNofwed22}
\begin{split}
\|u\|_{B^{s,2,2}(\R^n)}^2&\le
\| \varphi_0\,\widehat u\|_{L^2(\R^n)}^2+
\sum_{j=1}^{+\infty}2^{2sj} \int_{B_{2^{j+1}}\setminus B_{2^{j-1}}}|\widehat u(\xi)|^2\,d\xi\\&\le
\| \widehat u\|_{L^2(\R^n)}^2+2^{2s}
\sum_{j=1}^{+\infty} \int_{B_{2^{j+1}}\setminus B_{2^{j-1}}}|\xi|^{2s}|\widehat u(\xi)|^2\,d\xi
\\&\le2^{2s}\left(
\| \widehat u\|_{L^2(\R^n)}^2+\int_{\R^n}|\xi|^{2s}|\widehat u(\xi)|^2\,d\xi
\right)\\&=2^{2s}\|u\|_{W^{s,2}(\R^n)}^2.
\end{split}\end{equation}

On the other hand, by~\eqref{0pirj09365-4g4eZXCHJKLRFG-544-NO1-LL-eq1} and~\eqref{0pirj09365-4g4eZXCHJKLRFG-544-NO1-LL-eq2},
\begin{eqnarray*}
|\xi|^s &=&|\xi|^s\varphi_0(\xi)+\sum_{j=1}^{+\infty}|\xi|^s\varphi_j(\xi)\\
&=&|\xi|^s\varphi_0(\xi)+\sum_{j=1}^{+\infty}\chi_{B_{2^{j+1}}\setminus B_{2^{j-1}}}(\xi)\,|\xi|^s\varphi_j(\xi)\\&\le&
|\xi|^s\varphi_0(\xi)+\sqrt{\sum_{j=1}^{+\infty}\chi_{B_{2^{j+1}}\setminus B_{2^{j-1}}}(\xi)}\;
\sqrt{\sum_{j=1}^{+\infty}|\xi|^{2s}\varphi_j^2(\xi)}\\&\le&
|\xi|^s\varphi_0(\xi)+\sqrt{\sum_{j=1}^{+\infty}\chi_{B_{2^{j+1}}\setminus B_{2^{j-1}}}(\xi)}\;
\sqrt{\sum_{j=1}^{+\infty}\chi_{B_{2^{j+1}}\setminus B_{2^{j-1}}}(\xi)\,|\xi|^{2s}\varphi_j^2(\xi)}\\&\le&
|\xi|^s\varphi_0(\xi)+\sqrt{\sum_{{j\in\N}\atop{\log_2|\xi|-1\le j\le\log_2|\xi|+1}}1}\;
\sqrt{\sum_{j=1}^{+\infty}(2^{j+1})^{2s}\varphi_j^2(\xi)}\\&\le&
|\xi|^s\varphi_0(\xi)+\sqrt{3}\;
\sqrt{2^{2s}\sum_{j=1}^{+\infty}2^{2sj}\varphi_j^2(\xi)}
\end{eqnarray*}
and consequently
$$ |\xi|^{2s}\le C\left( 
|\xi|^{2s}\varphi_0^2(\xi)+\sum_{j=1}^{+\infty}2^{2sj}\varphi_j^2(\xi)\right)\le C\sum_{j=0}^{+\infty}2^{2sj}\varphi_j^2(\xi).
$$

For this reason, recalling~\eqref{0pirj09365-4g4eZXCHJKLRFG-544-NO1-LL-eq2BEMNofwed} and renaming~$C$ over and over,
\begin{eqnarray*}
\|u\|_{W^{s,2}(\R^n)}&=&\int_{\R^n} |\widehat u(\xi)|^2\,d\xi+
\int_{\R^n} |\xi|^{2s}\,|\widehat u(\xi)|^2\,d\xi
\\&\le&\int_{B_1} |\widehat u(\xi)|^2\,d\xi+
2\int_{\R^n} |\xi|^{2s}\,|\widehat u(\xi)|^2\,d\xi\\&\le&\int_{B_1}\varphi_0^2(\xi) |\widehat u(\xi)|^2\,d\xi+C\sum_{j=0}^{+\infty}\int_{\R^n} 2^{2sj}\varphi_j^2(\xi)\,|\widehat u(\xi)|^2\,d\xi\\&\le&
C\sum_{j=0}^{+\infty}\int_{\R^n} 2^{2sj}\varphi_j^2(\xi)\,|\widehat u(\xi)|^2\,d\xi\\&=&C\sum_{j=0}^{+\infty}2^{2sj}\| \varphi_j\,\widehat u\|_{L^2(\R^n)}^2\\&=&C\|u\|_{B^{s,2,2}(\R^n)}^2,
\end{eqnarray*}
which, in combination with~\eqref{0pirj09365-4g4eZXCHJKLRFG-544-NO1-LL-eq2BEMNofwed22}, yields the desired result.
\end{proof}

\begin{proof}[Proof of Theorem~\ref{THPS2}] The argument relies on
Theorem~\ref{0pirj09365-4g4eZXCHJKLRFG-544-NO3-COR-0oerkCC}.

Indeed, if~$q:=\min\{p,2\}$ and~$a:=\max\{p,2\}-q$, applying~\eqref{F58uuRS-2CALEREG:LE2o-3rkpjtgr94ejrfmXXs}, for all~$\kappa_j\ge0$, we see that 
\begin{equation}\label{0pirj09365-4g4eZXCHJKLRFG-544-NO3-COR-0oerkCC-00}\left(
\sum_{j=0}^{+\infty}\kappa_j^{\max\{p,2\}}\right)^{\min\{p,2\}}\le
\left(\sum_{j=0}^{+\infty}\kappa_j^{\min\{p,2\}}\right)^{\max\{p,2\}}.
\end{equation}

We will take~$\kappa_j:=2^{js}|\check\varphi_j*u|$.
In this way, when~$p\ge2$, we infer from~\eqref{0pirj09365-4g4eZXCHJKLRFG-544-NO3-COR-0oerkCC-00} that
$$ \left(\sum_{j=0}^{+\infty} 2^{jsp}|\check\varphi_j*u|^{p}\right)^{2}\le
\left(\sum_{j=0}^{+\infty} 2^{2js}|\check\varphi_j*u|^{2}\right)^{p}
$$
and therefore
\begin{eqnarray*}&&
\|u\|_{B^{s,p,p}(\R^n)}^p=\sum_{j=0}^{+\infty} 2^{jsp}\|\check\varphi_j*u\|_{L^p(\R^n)}^{p}=
\int_{\R^n}\sum_{j=0}^{+\infty} 2^{jsp}|\check\varphi_j*u(x)|^{p}\,dx\\&&\qquad\le\int_{\R^n}
\left(\sum_{j=0}^{+\infty} 2^{2js}|\check\varphi_j*u(x)|^{2}\right)^{\frac{p}2}\,dx=
\left\|\,\left(
\sum_{j=0}^{+\infty} 2^{2js}|\check\varphi_j*u|^2\right)^{\frac12} \,\right\|_{L^p(\R^n)}^p.
\end{eqnarray*}
This and the second inequality in Theorem~\ref{0pirj09365-4g4eZXCHJKLRFG-544-NO3-COR-0oerkCC} lead to
\begin{equation}\label{0pirj09365-4g4eZXCHJKLRFG-544-NO3-COR-0oerkCC-la1}
{\mbox{when~$p\ge2$, we have that~${\mathcal{L}}^p_{s} (\R^n)\subseteq B^{s,p,p}(\R^n)$
with continuous embedding.}}
\end{equation}

Similarly, if~$p\le2$, we deduce from~\eqref{0pirj09365-4g4eZXCHJKLRFG-544-NO3-COR-0oerkCC-00} that
$$ \left(\sum_{j=0}^{+\infty} 2^{2js}|\check\varphi_j*u|^{2}\right)^{p}\le
\left(\sum_{j=0}^{+\infty} 2^{jsp}|\check\varphi_j*u|^{p}\right)^{2}.
$$
On this account,
\begin{eqnarray*}&&\left\|\,\left(
\sum_{j=0}^{+\infty} 2^{2js}|\check\varphi_j*u|^2\right)^{\frac12} \,\right\|_{L^p(\R^n)}^p=
\int_{\R^n}
\left(\sum_{j=0}^{+\infty} 2^{2js}|\check\varphi_j*u(x)|^{2}\right)^{\frac{p}2}\,dx\\&&\qquad\le
\int_{\R^n}\sum_{j=0}^{+\infty} 2^{jsp}|\check\varphi_j*u(x)|^{p}\,dx=
\sum_{j=0}^{+\infty} 2^{jsp}\|\check\varphi_j*u\|_{L^p(\R^n)}^{p}=
\|u\|_{B^{s,p,p}(\R^n)}^p.
\end{eqnarray*}
Combining this estimate with the first inequality in Theorem~\ref{0pirj09365-4g4eZXCHJKLRFG-544-NO3-COR-0oerkCC}, we conclude that
\begin{equation}\label{0pirj09365-4g4eZXCHJKLRFG-544-NO3-COR-0oerkCC-la2}
{\mbox{when~$p\le2$, we have that~$B^{s,p,p}(\R^n)\subseteq {\mathcal{L}}^p_{s} (\R^n)$
with continuous embedding.}}
\end{equation}

Moreover, if~$p\ge2$ we utilize the Minkowski's Integral Inequality (see Theorem~\ref{MLAerSM:ijfKKSMdf02}, used here with a discrete measure~$\mu_1$ and exponent~$\frac{p}2\ge1$) to see that
\begin{eqnarray*}&&\left\|\,\left(\sum_{j=0}^{+\infty} 2^{2js}|\check\varphi_j*u|^2\right)^{\frac12} \,\right\|_{L^p(\R^n)}^p=\int_{\R^n} \left(\sum_{j=0}^{+\infty} 2^{2js}|\check\varphi_j*u(x)|^2\right)^{\frac{p}2}\,dx\\&&\qquad
\le\left[\sum_{j=0}^{+\infty}\left(\int_{\R^n}
2^{pjs}|\check\varphi_j*u(x)|^p
\,dx\right)^{\frac2p}\right]^{\frac{p}2}
=\left[
\sum_{j=0}^{+\infty}\left(
2^{pjs}\|\check\varphi_j*u\|^p_{L^p(\R^n)}\right)^{\frac2p}\right]^{\frac{p}2}\\&&\qquad=\left[
\sum_{j=0}^{+\infty}2^{2js}\|\check\varphi_j*u\|^2_{L^p(\R^n)}\right]^{\frac{p}2}=\|u\|_{B^{s,p,2}(\R^n)}^p.
\end{eqnarray*}
Combining this and
the first inequality in Theorem~\ref{0pirj09365-4g4eZXCHJKLRFG-544-NO3-COR-0oerkCC}, we obtain that
\begin{equation}\label{0pirj09365-4g4eZXCHJKLRFG-544-NO3-COR-0oerkCC-la3}
{\mbox{when~$p\ge2$, we have that~${\mathcal{L}}^p_{s} (\R^n)\subseteq B^{s,p,2}(\R^n)$
with continuous embedding.}}
\end{equation}

In addition, when~$2\ge p$, we use the Minkowski's Integral Inequality (see Theorem~\ref{MLAerSM:ijfKKSMdf02}, employed here with a discrete measure~$\mu_2$ and exponent~$\frac{2}p\ge1$) to see that
\begin{eqnarray*}&&
\|u\|_{B^{s,p,2}(\R^n)}^p=\left[\sum_{j=0}^{+\infty}2^{2js}\|\check\varphi_j*u\|^2_{L^p(\R^n)}\right]^{\frac{p}2}=\left[
\sum_{j=0}^{+\infty}\left(
2^{pjs}\|\check\varphi_j*u\|^p_{L^p(\R^n)}\right)^{\frac2p}\right]^{\frac{p}2}
\\&&\qquad=\left[
\sum_{j=0}^{+\infty}\left(\int_{\R^n}
|2^{js} \check\varphi_j*u(x)|^p\,dx\right)^{\frac2p}\right]^{\frac{p}2}
\le\int_{\R^n}\left( \sum_{j=0}^{+\infty}|2^{js} \check\varphi_j*u(x)|^2 \right)^{\frac{p}2}\,dx\\&&\qquad=\left\|\,\left(\sum_{j=0}^{+\infty} 2^{2js}|\check\varphi_j*u|^2\right)^{\frac12} \,\right\|_{L^p(\R^n)}^p.
\end{eqnarray*}
Hence, by
the second inequality in Theorem~\ref{0pirj09365-4g4eZXCHJKLRFG-544-NO3-COR-0oerkCC},
we conclude that in this case~$B^{s,p,2}(\R^n)\subseteq{\mathcal{L}}^p_{s} (\R^n)$
with continuous embedding.

The combination of this fact,~\eqref{0pirj09365-4g4eZXCHJKLRFG-544-NO3-COR-0oerkCC-la1},
\eqref{0pirj09365-4g4eZXCHJKLRFG-544-NO3-COR-0oerkCC-la2} and~\eqref{0pirj09365-4g4eZXCHJKLRFG-544-NO3-COR-0oerkCC-la3} yield the desired result.
\end{proof}

\begin{corollary}\label{0pirj09365-4g4eZXCHJKLRFG-544-NO1-I2L3C2O25R213t4TYFGHSJDFRAMISJMFG}
If~$s\in(0,+\infty)\setminus\N$ and~$p\in[2,+\infty)$, then
$${\mathcal{L}}^p_{s}(\R^n)\subseteq W^{s,p}(\R^n)$$
with continuous embedding.
\end{corollary}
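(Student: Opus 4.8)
The plan is to derive Corollary~\ref{0pirj09365-4g4eZXCHJKLRFG-544-NO1-I2L3C2O25R213t4TYFGHSJDFRAMISJMFG} by simply chaining together two results that are already available in the excerpt. The key observation is that when $p\in[2,+\infty)$, the second part of Theorem~\ref{THPS2} gives the continuous embedding ${\mathcal{L}}^p_{s}(\R^n)\subseteq B^{s,p,p}(\R^n)$, and when $s\in(0,+\infty)\setminus\N$, Corollary~\ref{0pirj09365-4g4eZXCHJKLRFG-544-NO1-I2L3C2O25R213t4TY} identifies $B^{s,p,p}(\R^n)=W^{s,p}(\R^n)$ (with equivalent norms). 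Composing these two facts yields ${\mathcal{L}}^p_{s}(\R^n)\subseteq B^{s,p,p}(\R^n)=W^{s,p}(\R^n)$, with the embedding constant being the product of the two embedding constants, hence finite and depending only on $n$, $s$, $p$.

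Concretely, the steps I would carry out are the following. First, fix $s\in(0,+\infty)\setminus\N$ and $p\in[2,+\infty)$, and take an arbitrary $u\in{\mathcal{L}}^p_{s}(\R^n)$. Second, invoke Theorem~\ref{THPS2} (the case $p\ge2$) to conclude that $u\in B^{s,p,p}(\R^n)$ together with the estimate $\|u\|_{B^{s,p,p}(\R^n)}\le C_1\,\|u\|_{{\mathcal{L}}^p_{s}(\R^n)}$, where $C_1$ depends only on $n$, $s$, $p$. Third, apply Corollary~\ref{0pirj09365-4g4eZXCHJKLRFG-544-NO1-I2L3C2O25R213t4TY}, which since $s\notin\N$ gives $B^{s,p,p}(\R^n)=W^{s,p}(\R^n)$ as sets, with equivalence of norms, so $\|u\|_{W^{s,p}(\R^n)}\le C_2\,\|u\|_{B^{s,p,p}(\R^n)}$ for a constant $C_2$ depending only on $n$, $s$, $p$ (the precise form of the $W^{s,p}$ norm being immaterial, as any two of the standard equivalent norms differ by such a constant). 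Fourth, combine the two inequalities to get $\|u\|_{W^{s,p}(\R^n)}\le C_1 C_2\,\|u\|_{{\mathcal{L}}^p_{s}(\R^n)}$, which is exactly the claimed continuous embedding.

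I do not anticipate any genuine obstacle here: the statement is designed as a ``particular case'' derived from the more substantial Theorems~\ref{THPS2} and the structural Corollary~\ref{0pirj09365-4g4eZXCHJKLRFG-544-NO1-I2L3C2O25R213t4TY}, both of which carry the real analytic content (the Littlewood--Paley square function estimates via the Mikhlin Multiplier Theorem, and the inductive identification of Besov and fractional Sobolev spaces, respectively). The only point requiring a sentence of care is that Corollary~\ref{0pirj09365-4g4eZXCHJKLRFG-544-NO1-I2L3C2O25R213t4TY} is stated as an identity of spaces rather than explicitly recording the norm equivalence constant; I would note that the inductive proof given there in fact produces equivalent norms at each step (it repeatedly uses Corollary~\ref{DERIBEVS}, whose constants are uniform), so the composite embedding constant is under control and depends only on $n$, $s$, $p$. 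With that remark in place, the proof is a two-line concatenation, and I would present it as such.

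\begin{proof}
Let~$s\in(0,+\infty)\setminus\N$ and~$p\in[2,+\infty)$. By the second part of Theorem~\ref{THPS2}, the inclusion~${\mathcal{L}}^p_{s}(\R^n)\subseteq B^{s,p,p}(\R^n)$ holds with continuous embedding. Moreover, since~$s\not\in\N$, Corollary~\ref{0pirj09365-4g4eZXCHJKLRFG-544-NO1-I2L3C2O25R213t4TY} gives that~$B^{s,p,p}(\R^n)=W^{s,p}(\R^n)$ with equivalent norms. Composing these two facts, we obtain~${\mathcal{L}}^p_{s}(\R^n)\subseteq W^{s,p}(\R^n)$ with continuous embedding, as desired.
\end{proof}
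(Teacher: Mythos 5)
Your proposal is correct and follows exactly the route the paper takes: the paper's own proof is a one-line deduction combining Theorem~\ref{THPS2} (the case~$p\ge2$) with Corollary~\ref{0pirj09365-4g4eZXCHJKLRFG-544-NO1-I2L3C2O25R213t4TY}. Your additional remark about tracking the norm-equivalence constants is fine but not needed beyond what the cited results already provide.
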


\begin{proof}The claim is a direct consequence of Corollary~\ref{0pirj09365-4g4eZXCHJKLRFG-544-NO1-I2L3C2O25R213t4TY} and Theorem~\ref{THPS2}.
\end{proof}

We stress that the result in Corollary~\ref{0pirj09365-4g4eZXCHJKLRFG-544-NO1-I2L3C2O25R213t4TYFGHSJDFRAMISJMFG}
does not hold true when~$p\in[1,2)$ (due to
Corollary~\ref{0pirj09365-4g4eZXCHJKLRFG-544-NO1-I2L3C2O25R213t4TY} and the remark
following Theorem~\ref{THPS2} about the optimality of
the inclusion).

\section[Regularity theory in Sobolev spaces and Besov spaces]{Regularity theory in Sobolev spaces and in Besov spaces for global solutions}\label{CHAP:INT-REG-LEB-SPA}

We can now combine the regularity theory in Bessel potential spaces
put forth in Section~\ref{Ijsmiazme3sKJS03-krodgh} and the link between Bessel potential spaces and Besov spaces
to state explicitly some more regularity results.

\begin{theorem}\label{jodncla-pjdmfdmc3r}
Let~$s\in(0,1)\setminus\left\{\frac12\right\}$ and~$p\in(1, 2]$.

Let~$f\in L^p(\R^n)$, and~$u$ be a distributional solution of~$(-\Delta)^su=f$ in~$\R^n$.

Assume that~$u\in L^p(\R^n)$.

Then,~$u\in B^{2s,p,2}(\R^n)$ and
\begin{equation*} \|u\|_{ B^{2s,p,2}(\R^n)}\le
C\,\Big(\|u\|_{L^p(\R^n)}+\| f\|_{L^p(\R^n)}\Big)\end{equation*}
for some positive constant~$C$ depending only on~$n$,~$p$ and~$s$. 
\end{theorem}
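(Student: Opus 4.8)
The plan is to chain together the Bessel-potential regularity theory of Section~\ref{Ijsmiazme3sKJS03-krodgh} with the Bessel--Besov comparison of Section~\ref{SEC:BPSPA}. First I would invoke Theorem~\ref{6.33THCORNSTNE}: since $s\in(0,1)$, $p\in(1,+\infty)$, $f\in L^p(\R^n)$, $u$ is a distributional solution of $(-\Delta)^su=f$ in $\R^n$, and $u\in L^p(\R^n)$, we conclude that $u\in{\mathcal{L}}^p_{2s}(\R^n)$ together with the estimate
\begin{equation*}
\|u\|_{{\mathcal{L}}^p_{2s}(\R^n)}\le C\Big(\|u\|_{L^p(\R^n)}+\|f\|_{L^p(\R^n)}\Big).
\end{equation*}
At this stage the restriction $s\neq\frac12$ has not yet been used; it enters only in the last step.

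Next I would feed this into Theorem~\ref{THPS2}. Since $p\in(1,2]$ and $2s>0$, that theorem (applied with the regularity parameter $2s$ in place of $s$) gives the continuous embedding ${\mathcal{L}}^p_{2s}(\R^n)\subseteq B^{2s,p,2}(\R^n)$. Hence $u\in B^{2s,p,2}(\R^n)$ and, combining the embedding constant with the previous display,
\begin{equation*}
\|u\|_{B^{2s,p,2}(\R^n)}\le C\,\|u\|_{{\mathcal{L}}^p_{2s}(\R^n)}\le C\Big(\|u\|_{L^p(\R^n)}+\|f\|_{L^p(\R^n)}\Big),
\end{equation*}
which is exactly the claimed bound. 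So the proof is essentially a two-line composition of two black boxes.

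One technical point I would double-check: Theorem~\ref{THPS2} is stated for a parameter $s>0$, and here the relevant parameter is $2s$; since $s\in(0,1)\setminus\{\frac12\}$ we have $2s\in(0,2)\setminus\{1\}$, so $2s>0$ and the theorem applies verbatim. The hypothesis $s\neq\frac12$ guarantees $2s\neq1$, i.e.\ $2s$ is not an integer; this is what makes the space $B^{2s,p,2}(\R^n)$ the ``genuinely fractional'' object rather than coinciding with a Bessel potential / Sobolev space (compare Corollary~\ref{0pirj09365-4g4eZXCHJKLRFG-544-NO1-I2L3C2O25R213t4TY} and Theorem~\ref{TH0-104-okn5KMD3}), so it is natural to exclude it from the statement, though strictly speaking the argument above would still produce a valid (if less sharp) conclusion at $s=\frac12$.

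The main obstacle is really not in the proof itself but in making sure the cited results are applied in the correct regime: Theorem~\ref{6.33THCORNSTNE} crucially needs the \emph{a priori} assumption $u\in L^p(\R^n)$ (recall Proposition~\ref{HSZUONDCCONT} shows it cannot be dropped), and Theorem~\ref{THPS2} needs $p\le2$ for the inclusion to point in the direction ${\mathcal{L}}^p_{2s}\subseteq B^{2s,p,2}$ — for $p\ge2$ the inclusion reverses, which is precisely why the statement restricts to $p\in(1,2]$. Once those two hypotheses are in place, the composition is immediate and no further estimates are needed.
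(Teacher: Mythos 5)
Your proof is correct and follows exactly the paper's argument: apply Theorem~\ref{6.33THCORNSTNE} to place~$u$ in~${\mathcal{L}}^p_{2s}(\R^n)$ with the stated bound, then use the embedding~${\mathcal{L}}^p_{2s}(\R^n)\subseteq B^{2s,p,2}(\R^n)$ from Theorem~\ref{THPS2} (valid since~$p\in(1,2]$). Your side remark that the hypothesis~$s\neq\frac12$ is never actually used in this chain of estimates is also accurate.
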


\begin{proof} In this range of exponents~$p$, by Theorem~\ref{THPS2} we know that~$ \|u\|_{B^{2s,p,2}(\R^n)}
\le C \|u\|_{ {\mathcal{L}}^p_{2s}(\R^n)}$.
Hence the desired result follows from Theorem~\ref{6.33THCORNSTNE}.
\end{proof}

On the one hand, we stress that, in the range~$p\in(1,2)$, the result in Theorem~\ref{jodncla-pjdmfdmc3r}
is false\footnote{Indeed, by the remark after
Corollary~\ref{0pirj09365-4g4eZXCHJKLRFG-544-NO1-I2L3C2O25R213t4TYFGHSJDFRAMISJMFG},
we have that when~$p\in[1,2)$ there exists~$u_\star\in
{\mathcal{L}}^p_{2s}(\R^n)\setminus W^{2s,p}(\R^n)$. Thus, by~\eqref{ERFGHJN6789-09tftd90u8yhgiug8erhISB},
we have that~$(1-\Delta)^s u_\star\in L^p(\R^n)$ and then, by~\eqref{MAKlLMSTIuntYYTi},
also~$(-\Delta)^s u_\star\in {L^p(\R^n)}$ (yet,~$u_\star\not\in W^{2s,p}(\R^n)$).} if we replace the Besov space~$B^{2s,p,2}(\R^n)$ by the Sobolev space~$W^{2s,p}(\R^n)$.

On the other hand, when~$p\in[2,+\infty)$, we can also improve the Sobolev regularity theory presented in
Theorem~\ref{PKJSM0-COSK1424654856ij45gf}.

\begin{theorem}\label{PKJSM0-COSK1424654856ij45gf-SHA}
Let~$s\in(0,1)$ and~$p\in[2,+\infty)$. Let~$f\in L^p(\R^n)$, and~$u$ be a distributional solution of~$(-\Delta)^su=f$ in~$\R^n$.

Assume that~$u\in L^p(\R^n)$.

Then,~$u \in W^{2s,p}(\R^n)$ and 
\begin{equation*} \|u\|_{ W^{2s,p}(\R^n)}\le
C\,\Big(\|u\|_{L^p(\R^n)}+\| f\|_{L^p(\R^n)}\Big)\end{equation*}
for some positive constant~$C$ depending only on~$n$,~$p$ and~$s$. 
\end{theorem}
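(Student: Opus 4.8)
The plan is to deduce this from the Bessel-potential regularity already established, exactly as in the proof of Theorem~\ref{jodncla-pjdmfdmc3r}, but using the other half of the Besov/Bessel comparison in Theorem~\ref{THPS2}. First I would set $g:=(1-\Delta)^s u$ and invoke Theorem~\ref{6.33THCORNSTNE}: since $u\in L^p(\R^n)$ and $(-\Delta)^su=f\in L^p(\R^n)$, we get $u\in{\mathcal{L}}^p_{2s}(\R^n)$ together with the bound
\begin{equation*}
\|u\|_{{\mathcal{L}}^p_{2s}(\R^n)}\le C\Big(\|u\|_{L^p(\R^n)}+\|f\|_{L^p(\R^n)}\Big).
\end{equation*}
(Alternatively one can quote Theorem~\ref{6.33THCORNSTNE} via the two estimates in Theorem~\ref{TH:9191} applied to $g$, but Theorem~\ref{6.33THCORNSTNE} already packages this.)

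Next I would use the comparison between Bessel potential spaces and Besov spaces for the summation exponent equal to the integration exponent. Since $p\in[2,+\infty)$, Theorem~\ref{THPS2} gives the continuous embedding ${\mathcal{L}}^p_{2s}(\R^n)\subseteq B^{2s,p,p}(\R^n)$, so
\begin{equation*}
\|u\|_{B^{2s,p,p}(\R^n)}\le C\,\|u\|_{{\mathcal{L}}^p_{2s}(\R^n)}.
\end{equation*}
Then I would pass from the Besov space to the Sobolev space: because $s\in(0,1)$ we have $2s\in(0,2)$, and if $2s\notin\N$ Corollary~\ref{0pirj09365-4g4eZXCHJKLRFG-544-NO1-I2L3C2O25R213t4TY} identifies $B^{2s,p,p}(\R^n)$ with $W^{2s,p}(\R^n)$ (with equivalent norms), which immediately yields $u\in W^{2s,p}(\R^n)$ with the desired estimate. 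Chaining the three inequalities gives
\begin{equation*}
\|u\|_{W^{2s,p}(\R^n)}\le C\Big(\|u\|_{L^p(\R^n)}+\|f\|_{L^p(\R^n)}\Big).
\end{equation*}

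The one genuinely delicate point is the borderline case $2s=1$, i.e. $s=\tfrac12$, which is excluded from Corollary~\ref{0pirj09365-4g4eZXCHJKLRFG-544-NO1-I2L3C2O25R213t4TY} (that statement is for $s\in(0,+\infty)\setminus\N$). Here I would instead argue directly through Theorem~\ref{TH0-104-okn5KMD3}: with $2s=1\in\N$ one has ${\mathcal{L}}^p_{1}(\R^n)=W^{1,p}(\R^n)$ with equivalent norms, so the conclusion $u\in W^{1,p}(\R^n)=W^{2s,p}(\R^n)$ follows already from $u\in{\mathcal{L}}^p_{2s}(\R^n)$ without passing through Besov spaces at all — this is in fact the content of Theorem~\ref{PKJSM0-COSK1424654856ij45gf} restricted to $p\ge2$. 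Thus the proof splits into the non-integer case ($2s\ne1$), handled via Theorem~\ref{THPS2} and Corollary~\ref{0pirj09365-4g4eZXCHJKLRFG-544-NO1-I2L3C2O25R213t4TY}, and the integer case ($2s=1$), handled via Theorem~\ref{TH0-104-okn5KMD3}. I expect the main conceptual obstacle to be nothing more than correctly invoking the $p\ge2$ branch of Theorem~\ref{THPS2}: it is precisely the restriction $p\in[2,+\infty)$ that makes ${\mathcal{L}}^p_{2s}\subseteq B^{2s,p,p}$ (for $p<2$ the inclusion reverses, which is why the earlier Theorem~\ref{jodncla-pjdmfdmc3r} could only land in $B^{2s,p,2}$), so the hypothesis $p\ge2$ must be used in exactly that spot and nowhere can it be weakened.
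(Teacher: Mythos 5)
Your proposal is correct and follows essentially the same route as the paper: Theorem~\ref{6.33THCORNSTNE} to land in~${\mathcal{L}}^p_{2s}(\R^n)$, the~$p\ge2$ branch of Theorem~\ref{THPS2} to pass to~$B^{2s,p,p}(\R^n)$, and Corollary~\ref{0pirj09365-4g4eZXCHJKLRFG-544-NO1-I2L3C2O25R213t4TY} to identify this with~$W^{2s,p}(\R^n)$ when~$s\ne\frac12$. Your treatment of the borderline case~$s=\frac12$ via Theorem~\ref{TH0-104-okn5KMD3} (equivalently, Theorem~\ref{PKJSM0-COSK1424654856ij45gf}) coincides in substance with the paper, which simply quotes Corollary~\ref{0oijnhb8ygf6tfed35r7268wq-2oiejh09i28ytnbbVzUJDO03-1i40-1jhtrh9c6hv76b8706bhvc870} for that case.
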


\begin{proof} We already know that this result holds true when~$s=\frac12$, owing to Corollary~\ref{0oijnhb8ygf6tfed35r7268wq-2oiejh09i28ytnbbVzUJDO03-1i40-1jhtrh9c6hv76b8706bhvc870}, hence we can suppose that~$s\ne\frac12$.
This allows us to exploit Corollary~\ref{0pirj09365-4g4eZXCHJKLRFG-544-NO1-I2L3C2O25R213t4TY}
and find that~$\|u\|_{B^{2s,p,p}(\R^n)}$ is equivalent to~$\|u\|_{W^{2s,p}(\R^n)}$.

Moreover, in the range of exponents~$p$ considered here,
Theorem~\ref{THPS2} guarantees that~$\|u\|_{B^{2s,p,p}(\R^n)}\le C\|u\|_{ {\mathcal{L}}^p_{2s}(\R^n)}$.

These observations show that
\begin{equation}\label{MojfengTNSDSS9iekrfmgR-4g4eZXCHJKLRFG-544-NO1-I2L3C2O25R213t4TY}
\|u\|_{ W^{2s,p}(\R^n)}\le C \|u\|_{ {\mathcal{L}}^p_{2s}(\R^n)}.\end{equation}
Hence the desired result follows again from Theorem~\ref{6.33THCORNSTNE}.
\end{proof}

Further results can be obtained for negative exponents and for full scales~$s\in\R$, by going deeper into distribution theory.

Also, for sharp regularity results in situations when~$f$ is not in a Lebesgue space,
and for a full-blown application of the knowledge of the 
scales of Besov-Triebel-Lizorkin spaces, see~\cite{MR3293447} and the references therein.

\chapter{Interior regularity theory in Lebesgue spaces}\label{CHAP:INTER-REG-LEB}

We now use the global regularity results developed in Chapter~\ref{CHAP6} and some appropriate cutoff argument to
present a basic interior regularity theory for fractional equations in Lebesgue spaces.
Our treatment of this theory is far from being exhaustive and we refer the interested reader to~\cite{MR2863859, MR3393266, MR3624965, MR3709046, MR3917708, MR4046549, MR4233278, MR4358140, MR4530901}
and the references therein for a thorough panorama.
\medskip

To address the local regularity theory of fractional equations in Bessel potential spaces,
the theory of pseudodifferential operators comes in very handy (a special thank to Gerd Grubb for showing us
some of the beautiful features of this remarkable methodology). 

We introduce a ``localized'' analogue of Bessel potential spaces, for which we proceed as follows.
If~$\Omega$ is an open subset of~$\R^n$, we consider~${\mathcal{L}}^p_s(\Omega)$ to be the space
of all functions~$u:\Omega\to\R$ such that there exists~$\widetilde u:\R^n\to\R$ with~$\widetilde u\in {\mathcal{L}}^p_s(\R^n)$
and~$\widetilde u=u$ in~$\Omega$ (that is, the localized Bessel potential space in~$\Omega$ collects all the restrictions
to~$\Omega$ of the functions in the Bessel potential space).

In this setting, we define
\begin{equation}\label{GSHDJKUNXRTICONSDMUHNShasnd} \|u\|_{{\mathcal{L}}^p_s(\Omega)}:=\inf_{{\widetilde u\in{\mathcal{L}}^p_s(\R^n)}\atop{\tiny{\mbox{$\widetilde u=u$ in~$\Omega$}}}}\|\widetilde u\|_{{\mathcal{L}}^p_s(\R^n)}\end{equation}
and we have:

\begin{theorem} \label{09iuhgf8i1-Deltas023o4irtTH}
Let~$\Omega$ be an open and bounded subset of~$\R^n$ and~$\Omega'\Subset\Omega$.
Let~$s\in(0,1)$,~$p\in(1,+\infty)$ and~$f\in L^p(\Omega)$.

Let~$u\in L^p(\R^n)$ be a distributional solution of
\begin{equation} \label{09iuhgf8i1-Deltas023o4irtTH-equya}
(1-\Delta)^su=f \qquad{\mbox{in~$\Omega$.}}\end{equation}

Then,~$u\in {\mathcal{L}}^p_{2s}(\Omega')$ and\footnote{Strictly speaking, an abuse of notation has been used in Theorem~\ref{09iuhgf8i1-Deltas023o4irtTH}. Namely, in principle the function~$u$ is defined in the whole of~$\R^n$,
so when we write~``$u\in {\mathcal{L}}^p_{2s}(\Omega')$''
what we actually mean is that the restriction of~$u$ to~$\Omega'$
belongs to~${\mathcal{L}}^p_{2s}(\Omega')$.
Thus, in principle it would be more correct to write~``$u\big|_{\Omega'}\in {\mathcal{L}}^p_{2s}(\Omega')$'',
but (albeit the distinction between~$u$ and its restriction~$u\big|_{\Omega'}$ is important, especially when dealing
with distributions) we used the simplified, and somewhat sloppy,
notation for the sake of simplicity.

On a related note, the meaning of~\eqref{09iuhgf8i1-Deltas023o4irtTH-equya} is that its left-hand side,
when restricted to~$\Omega$, coincides with~$f$.
Hence, likewise, a more thorough statement for~\eqref{09iuhgf8i1-Deltas023o4irtTH-equya} would be~``$\big((1-\Delta)^su\big)\big|_\Omega=f$'', but we decided to ease the notation, hoping that
this does not create any confusion.

The restriction notation~$u\big|_\Omega$ is sometimes also denoted by~$r^+u$ and~$r_\Omega u$ in the existing literature, but we will not make use of this setting here.}
\begin{equation*} \|u\|_{ {\mathcal{L}}^p_{2s}(\Omega')}\le C\Big(\| f\|_{L^p(\Omega)}+\|u\|_{L^p(\R^n)}\Big),\end{equation*} for a positive constant~$C$ depending only on~$n$,~$s$,~$p$,~$\Omega$ and~$\Omega'$. 
\end{theorem}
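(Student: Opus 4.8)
The plan is to reduce the interior estimate for $(1-\Delta)^s u = f$ in $\Omega$ to the global theory of Theorem~\ref{09ijnuiathcaRn-th} by means of a cutoff and a careful analysis of the commutator between $(1-\Delta)^s$ and the multiplication by a cutoff. First I would fix a smooth cutoff $\eta \in C^\infty_c(\Omega)$ with $\eta \equiv 1$ on a neighbourhood of $\Omega'$; more precisely, I would choose an intermediate open set $\Omega''$ with $\Omega' \Subset \Omega'' \Subset \Omega$ and take $\eta \equiv 1$ on $\Omega''$. The function $v := \eta u$ lies in $L^p(\R^n)$ (since $u \in L^p(\R^n)$ and $\eta$ is bounded with compact support) and is compactly supported. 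I would then compute, in the distributional sense,
\[
(1-\Delta)^s v = \eta\,(1-\Delta)^s u + \big[(1-\Delta)^s,\eta\big]u = \eta f + R u \qquad \text{in }\R^n,
\]
where $[(1-\Delta)^s,\eta]$ denotes the commutator and $Ru := (1-\Delta)^s(\eta u) - \eta (1-\Delta)^s u$; note that $\eta f$ makes sense as an element of $L^p(\R^n)$ because $\eta$ is supported in $\Omega$ where $f \in L^p(\Omega)$.

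The heart of the matter is to show that the commutator $R$, viewed as an operator acting on $u$ and localized away from $\partial\Omega''$, maps $u$ into $L^p(\R^n)$ with the quantitative bound $\|Ru\|_{L^p(\R^n)} \le C\|u\|_{L^p(\R^n)}$. This is precisely where the theory of pseudodifferential operators enters: $(1-\Delta)^s$ is a classical $\Psi$DO of order $2s$ with symbol $(1+4\pi^2|\xi|^2)^s$, and the commutator of a $\Psi$DO of order $2s$ with multiplication by a smooth compactly supported function is a $\Psi$DO of order $2s-1$; iterating, one can peel off derivatives. However, for the clean $L^p$ bound I want, the key observation is that $Ru$ only involves $u$ through its values: since $\eta \equiv 1$ on $\Omega''$, on that set the symbol manipulations simplify, and the ``bad'' part of $Ru$ is an integral operator with a kernel that is smooth off the diagonal and decays, so that $Ru$ restricted to a neighbourhood of $\Omega'$ is controlled by $\|u\|_{L^p(\R^n)}$ (here one crucially uses that $u \in L^p$ globally, not just locally, to bound the far-field contribution of the nonlocal operator — this is the nonlocal analogue of the phenomenon highlighted in Propositions~\ref{HSZUONDCCONT} and~\ref{HSZUONDCCONT2}). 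I would carry this out by splitting the kernel of $R$ into a near-diagonal piece, handled by Calderón–Zygmund / Mikhlin-type bounds (Theorem~\ref{Mikhlin Multiplier Theorem}) as in the proofs of Corollaries~\ref{PRODpoikjhr3-2:le-7-00-corol} and~\ref{COK90FR:918}, and a far-diagonal piece, which is a bounded integral operator directly estimated by Young's and Hölder's inequalities.

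Once $g := \eta f + Ru \in L^p(\R^n)$ with $\|g\|_{L^p(\R^n)} \le C(\|f\|_{L^p(\Omega)} + \|u\|_{L^p(\R^n)})$ is established, the conclusion follows quickly: by Theorem~\ref{09ijnuiathcaRn-th} applied to the global equation $(1-\Delta)^s v = g$ in $\R^n$, we get $v = \eta u \in {\mathcal{L}}^p_{2s}(\R^n)$ with
\[
\|\eta u\|_{{\mathcal{L}}^p_{2s}(\R^n)} = \|g\|_{L^p(\R^n)} \le C\Big(\|f\|_{L^p(\Omega)} + \|u\|_{L^p(\R^n)}\Big).
\]
Since $\eta u = u$ on $\Omega' \subseteq \Omega''$, the function $\eta u$ is an admissible extension of $u|_{\Omega'}$, so by the definition~\eqref{GSHDJKUNXRTICONSDMUHNShasnd} of the localized norm we obtain $u \in {\mathcal{L}}^p_{2s}(\Omega')$ with $\|u\|_{{\mathcal{L}}^p_{2s}(\Omega')} \le \|\eta u\|_{{\mathcal{L}}^p_{2s}(\R^n)} \le C(\|f\|_{L^p(\Omega)} + \|u\|_{L^p(\R^n)})$, which is the claim. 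The main obstacle, as indicated, is the rigorous treatment of the commutator term $Ru$: establishing that it is a genuinely lower-order (hence $L^p$-bounded after localization) operator requires either invoking the symbolic calculus of pseudodifferential operators from the appendix or, alternatively, a direct hands-on kernel estimate for $(1-\Delta)^s$ built from the explicit Bessel-kernel representation of Lemma~\ref{HSZUONDCCONT-FF-G} together with the decay and singularity bounds of Lemma~\ref{HSZUONDCCONT-FF-G-DECA}, Corollary~\ref{210pkf0ikj23456789iytdxscvbnm-934098i7ytghbnfraCxfr}, Lemma~\ref{HSZUONDCCONT-FF-G-0ojr236-201-ILLEM} and Corollary~\ref{HSdfbhncO0nTAktNZUONDCCONT-FF-G-0ojr236-201}; I would favour the latter route to keep the argument self-contained, accepting that it involves a somewhat lengthy but routine kernel splitting.
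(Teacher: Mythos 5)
There is a genuine gap at the heart of your argument: the claimed bound $\|Ru\|_{L^p(\R^n)}\le C\|u\|_{L^p(\R^n)}$ for the commutator $R=[(1-\Delta)^s,\eta]$ is false in general when~$s\in\left(\frac12,1\right)$. The commutator is a pseudodifferential operator of order~$2s-1$, which is strictly positive in that range, and an operator of positive order does not map $L^p(\R^n)$ into $L^p(\R^n)$. Concretely, $Ru(x)=\mathrm{p.v.}\int(\eta(x)-\eta(y))\,u(y)\,K(x-y)\,dy$, where $K$ is the kernel of $(1-\Delta)^s$, so near the diagonal the kernel of $R$ is only of size $|x-y|^{1-n-2s}$; your off-diagonal smoothness argument does control $Ru$ on a neighbourhood of~$\Omega'$ (where $\eta$ is constant and the kernel is nonsingular), but to invoke the global Theorem~\ref{09ijnuiathcaRn-th} for $(1-\Delta)^s(\eta u)=\eta f+Ru$ in~$\R^n$ you need $Ru\in L^p(\R^n)$ \emph{everywhere}, including on the transition region where $\nabla\eta\neq0$ and $u$ is only known to be in~$L^p$. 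There the estimate requires $2s-1$ derivatives of $u$ in the Bessel sense, which is precisely what is not yet available; the ``hands-on kernel splitting'' you suggest as an alternative runs into the same obstruction, since the near-diagonal piece is of positive order. Your argument is essentially correct only in the regime $s\in\left(0,\frac12\right]$, where $2s-1\le0$ and the commutator is indeed $L^p$-bounded (Theorem~\ref{TRAINVA-TH}, Corollary~\ref{COMPOTHPS-q0woeirjUJSDNIie00}); this coincides with the easy case of the paper's proof.

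The missing idea, which is how the paper proceeds for $s>\frac12$, is a two-cutoff bootstrap using the exact inverse $L^{-1}$ of $L:=(1-\Delta)^s$ (convolution with~${\mathcal{B}}^{(s)}$). One introduces a second cutoff $\widetilde\psi$ equal to~$1$ on a larger intermediate set and writes $L(\widetilde\psi u)=\widetilde T u$ with $\widetilde T:=L\circ T_{\widetilde\psi}-T_{\widetilde\psi}\circ L$ of order $2s-1$; since $L^{-1}\circ\widetilde T$ has order $-1$, one first obtains the intermediate gain $\|\widetilde\psi u\|_{{\mathcal{L}}^p_1(\R^n)}\le C\|u\|_{L^p(\R^n)}$. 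Only then is the order-$(2s-1)$ commutator attached to the inner cutoff $\psi$ harmless, because it is applied to $T_{\widetilde\psi}u\in{\mathcal{L}}^p_{2s-1}(\R^n)$ rather than to the raw $L^p$ function $u$; the remaining interaction between $T_\psi$ and the region where $\widetilde\psi\neq1$ is an operator of order $-\infty$ thanks to the disjoint supports of $1-\widetilde\psi$ and~$\psi$ (Corollary~\ref{COMPOTHPS-q0woeirjUJSDNIie00-cr5tgd}), together with a zero-order commutator. Without this intermediate regularity step (or some substitute for it), your single-cutoff reduction cannot close for $s\in\left(\frac12,1\right)$.
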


\begin{proof} For all~$s\in\R$, the operator~$(1-\Delta)^s$ is a pseudodifferential operator with symbol~$\big(1+4\pi^2|\xi|^2\big)^{s}$ and order~$2s$ (see
the discussion on page~\pageref{0pirj09365-4g4eZXCHJKLRFG-544-NO1-LL-eq49-09i23w-PAG-t2e32r4s}).
Actually, we will prove this result in a more general setting, possibly
\begin{equation}\label{REOLAKJSHDB2wskjdf2Shdnf09iuyhgf0e9ryfg}
{\mbox{replacing~$(1-\Delta)^s$ by any pseudodifferential operator~$L$ of order~$2s$.}}
\end{equation}
We observe that
\begin{equation}\label{bess:R:P:A:ZEff}
{\mbox{it suffices to prove the desired result with~$f$ vanishing identically.}}
\end{equation}
Indeed, suppose that the desired result holds true
for~$f:=0$ and let us see how the general result follows.
We define
$$ \widetilde{f}(x):=\begin{dcases}
f(x) & {\mbox{ if }}x\in\Omega,\\
0 &{\mbox{ otherwise}}
\end{dcases}$$
and, in the Bessel potential notation of Section~\ref{bess:R:P:A:Sec},~$$w:={\mathcal{B}}^{(s)}*\widetilde{f}.$$
In this way,~$Lw=\widetilde{f}$ in~$\R^n$ and therefore, if~$v:=u-w$, we have that
$$L v=f-\widetilde{f}=0\qquad{\mbox{in }}\Omega.$$
Consequently, since the desired result holds for~$v$, we conclude that~$\|v\|_{ {\mathcal{L}}^p_{2s}(\Omega')}\le C \|v\|_{L^p(\R^n)}$. 

Thus, recalling Corollary~\ref{CO914athcal} and the Bessel potential norm in~\eqref{ERFGHJN6789-09tftd90u8yhgiug8erhISB},
\begin{eqnarray*}
\|u\|_{ {\mathcal{L}}^p_{2s}(\Omega')}&\le& \|v\|_{ {\mathcal{L}}^p_{2s}(\Omega')}+\|w\|_{ {\mathcal{L}}^p_{2s}(\Omega')}\\&
\le& C \|v\|_{L^p(\R^n)}+\|Lw\|_{L^p(\R^n)}\\&
\le &C \Big(\|u\|_{L^p(\R^n)}+\|w\|_{L^p(\R^n)}\Big)+\|\widetilde{f}\|_{L^p(\R^n)}\\&\le&C \Big(\|u\|_{L^p(\R^n)}+\|\widetilde f\|_{L^p(\R^n)}\Big)+\|\widetilde{f}\|_{L^p(\R^n)}
\\&\le&C \Big(\|u\|_{L^p(\R^n)}+\|f\|_{L^p(\Omega)}\Big).
\end{eqnarray*}
These considerations establish~\eqref{bess:R:P:A:ZEff} and, as a consequence, we can now focus
on the proof of Theorem~\ref{09iuhgf8i1-Deltas023o4irtTH} under the additional, not restrictive, assumption that~$f:=0$.
 
To this end,
we consider open sets~$\Omega_1$,~$\Omega_2$ and~$\Omega_3$ such that
$$ \Omega'\Subset\Omega_1\Subset\Omega_2\Subset\Omega_3\Subset\Omega.$$
We take~$\psi\in C^\infty_c(\Omega_1)$ with~$\psi=1$ in~$\Omega'$ and~$\widetilde\psi\in C^\infty_c(\Omega_3)$ with~$\widetilde\psi=1$
in~$\Omega_2$. 

It is now convenient to treat the multiplication by~$\widetilde\psi$ and~$\psi$ as a pseudodifferential operator
of order zero, denoted, in the notation of~\eqref{eqTA6DE4D3DF}, by~$T_{\widetilde\psi}$ and~$T_\psi$ respectively (see Lemma~\ref{98iujrt4grLEetaom90iuy65rf-1kd-1}).

We remark that, since~$\psi$ is supported in~$\Omega$, we deduce from~\eqref{09iuhgf8i1-Deltas023o4irtTH-equya} and~\eqref{bess:R:P:A:ZEff} that
\begin{equation}\label{0pirj09365-4PTTRAadHEwgismhe0pirj09365-4}
T_{\psi}( L u)=0.
\end{equation}
Similarly, considering~$\widetilde\psi$ in lieu of~$\psi$,
\begin{equation}\label{0pirj09365-4PTTRAadHEwgismhe0pirj09365-4-bis}
T_{\widetilde\psi}( L u)=0.
\end{equation}

In light of Theorem~\ref{COMPOTHPSDO-COMMTU215ER}, the commutator
\begin{equation}\label{NSDHN-GSHJDGBGGDBJYUKMPA-23ok-1021}
T:= L\circ T_{\psi}- T_{\psi}\circ L\end{equation} is a pseudodifferential operator of order~$2s-1$ (and in particular of order zero whenever~$s\le\frac12$,
see footnote~\ref{COM78REDU-912ieyrhf-001-IMANSWJh} on page~\pageref{COM78REDU-912ieyrhf-001-IMANSWJh}).
That is, we can say that~$T$ is of order~$\sigma:=(2s-1)_+$.

This and Corollary~\ref{COMPOTHPSDO-CORCON} entail that, for every function~$\zeta$,
\begin{equation}\label{TRAINVAOMMTU215ER0op2jrf-2kkjt-0102-1} \|T \zeta\|_{L^p(\R^n)}\le C\|\zeta\|_{ {\mathcal{L}}^p_{\sigma}(\R^n)}.\end{equation}

Similarly, replacing~$\psi$ by~$\widetilde\psi$, we have that the pseudodifferential operator
$$ \widetilde{T}:= L\circ T_{\widetilde\psi}- T_{\widetilde\psi}\circ L$$
has order~$2s-1$.

Also, by~\eqref{0pirj09365-4PTTRAadHEwgismhe0pirj09365-4-bis},
\begin{equation}\label{KAJSN:jsmdLAJNS01293847r} L(\widetilde\psi u)=L(T_{\widetilde\psi}u)=
\widetilde{T}u- T_{\widetilde\psi}( Lu)=\widetilde{T}u.\end{equation}
Accordingly, a convolution with~${\mathcal{B}}^{(s)}$ (which inverts the operator~$L$, due to Lemma~\ref{L912-xpsmv}) yields that
\begin{equation}\label{KAJSN:jsmdLAJNS01293847r-bis}
\widetilde\psi u=L^{-1}(\widetilde{T}u).\end{equation}

Hence, we now distinguish two cases. When~$s\in\left(0,\frac12\right]$, we have that~$ \widetilde{T}$
has order~$2s-1\le0$.
{F}rom this and Corollary~\ref{COMPOTHPS-q0woeirjUJSDNIie00},
$$ \|\widetilde{T}\zeta\|_{{\mathcal{L}}^p_{1-2s}(\R^n)}\le C\|\zeta\|_{L^p(\R^n)}.$$
We obtain from this inequality,
the Bessel potential norm in~\eqref{ERFGHJN6789-09tftd90u8yhgiug8erhISB} and~\eqref{KAJSN:jsmdLAJNS01293847r} that
\begin{eqnarray*}
\|\widetilde\psi u\|_{ {\mathcal{L}}^p_{2s}(\R^n)}=\|L({\widetilde\psi} u)\|_{L^p(\R^n)}=\| \widetilde{T} u\|_{L^p(\R^n)}\le C\|u\|_{L^p(\R^n)}.
\end{eqnarray*}
This and the definition of the local norm in~\eqref{GSHDJKUNXRTICONSDMUHNShasnd} yield to the desired result in this case, therefore we focus now on the case~$s\in\left(\frac12,1\right)$.

In this situation, we have that~$ \widetilde{T}$
has order~$2s-1$ and therefore~$L^{-1}\circ\widetilde{T}$ has order~$-2s+(2s-1)=-1$, thanks to
Theorem~\ref{COMPOTHPSDO}.
Using this, Corollary~\ref{COMPOTHPS-q0woeirjUJSDNIie00} and~\eqref{KAJSN:jsmdLAJNS01293847r-bis} we arrive at
\begin{equation}\label{09ijhbe8rfiuj34rtfghiorguhbnPOTHPSDO}
\| T_{\widetilde{\psi} }u\|_{{\mathcal{L}}^p_{1}(\R^n)}=
\| \widetilde{\psi} u\|_{{\mathcal{L}}^p_{1}(\R^n)}=
\| L^{-1}(\widetilde{T}u)\|_{{\mathcal{L}}^p_{1}(\R^n)}\le C\|u\|_{L^p(\R^n)}.
\end{equation}

We also observe that~$\sigma=2s-1\le1$ and therefore, by Corollary~\ref{CO914athcal},
$$\|(1-\Delta)^{\frac{\sigma-1}2}\zeta\|_{L^p(\R^n)}=
\|{\mathcal{B}}^{(1-\sigma)/2}*\zeta\|_{L^p(\R^n)}\le\|\zeta\|_{L^p(\R^n)}.$$
On this account and~\eqref{09ijhbe8rfiuj34rtfghiorguhbnPOTHPSDO},
\begin{equation}\label{09ijhbe8rfiuj34rtfghiorguhbnPOTHPSDO-09oikjrfYHNmasdfk02}\begin{split}&
\| T_{\widetilde{\psi} }u\|_{{\mathcal{L}}^p_{\sigma}(\R^n)}=
\| (1-\Delta)^{\frac\sigma2}(T_{\widetilde{\psi} }u)\|_{L^p(\R^n)}=
\| (1-\Delta)^{\frac{\sigma-1}2}(1-\Delta)^{\frac12}(T_{\widetilde{\psi} }u)\|_{L^p(\R^n)}\\&\qquad\le C\| (1-\Delta)^{\frac12}(T_{\widetilde{\psi} }u)\|_{L^p(\R^n)}=C\| T_{\widetilde{\psi} }u\|_{{\mathcal{L}}^p_{1}(\R^n)}\le C\|u\|_{L^p(\R^n)},
\end{split}
\end{equation}
up to renaming~$C$.

Additionally, by Corollary~\ref{COMPOTHPS-q0woeirjUJSDNIie00-cr5tgd},
since~$1-{\widetilde\psi}$ and~$\psi$ have disjoint supports,
we know that the pseudodifferential operator$$S:=T_{1-{\widetilde\psi}}\circ L\circ T_\psi$$ has order~$-\infty$.

Consequently, by Corollary~\ref{COMPOTHPS-q0woeirjUJSDNIie00},
\begin{equation}\label{TRAINVAOMMTU215ER0op2jrf-2kkjt-0102-2} \|S\zeta\|_{L^p(\R^n)}\le C\|\zeta\|_{L^p(\R^n)}.\end{equation}

Moreover, by~\eqref{0pirj09365-4PTTRAadHEwgismhe0pirj09365-4} and~\eqref{NSDHN-GSHJDGBGGDBJYUKMPA-23ok-1021},
\begin{equation*}
L\big( T_{\psi} u\big)=
Tu+T_{\psi}\big( Lu\big)=Tu
\end{equation*}
and, as a result,
\begin{equation}\label{TRAINVAOMMTU215ER0op2jrf-2kkjt-0102-3}\begin{split}
L\big( T_{\psi} u\big)&= (1-\widetilde\psi)L\big( T_{\psi} u\big)+\widetilde\psi L\big( T_{\psi} u\big)
\\&=T_{1-\widetilde\psi}\big(L\big( T_{\psi} u\big)\big)+T_{\widetilde\psi}\big( L\big( T_{\psi} u\big)\big)
\\&=Su+T_{\widetilde\psi}\big( Tu\big)
\\&=Su+Nu+T\big( T_{\widetilde\psi}u\big)
,\end{split}
\end{equation}
where we introduced the commutator
$$ N:=T_{\widetilde\psi}\circ T-T\circ T_{\widetilde\psi}.$$

Furthermore, using again Theorem~\ref{COMPOTHPSDO-COMMTU215ER}, we know that the commutator~$N$ is of order~$\sigma-1\le0$. This, Corollary~\ref{09ijnuiathcaRn} and Corollary~\ref{COMPOTHPS-q0woeirjUJSDNIie00} entail that
\begin{equation*}
\|N\zeta\|_{L^p(\R^n)}\le
\|N \zeta\|_{{\mathcal{L}}^p_{1-\sigma}(\R^n)}\le C\|\zeta\|_{L^p(\R^n)}.
\end{equation*}

We use this inequality,~\eqref{TRAINVAOMMTU215ER0op2jrf-2kkjt-0102-1},~\eqref{TRAINVAOMMTU215ER0op2jrf-2kkjt-0102-2} and~\eqref{TRAINVAOMMTU215ER0op2jrf-2kkjt-0102-3} to find that
\begin{equation*}\begin{split}
\big\|L\big( T_{\psi} u\big)\big\|_{L^p(\R^n)}&\le C\Big( \|Su\|_{L^p(\R^n)}+ \|
Nu\|_{L^p(\R^n)}+ \|T( T_{\widetilde\psi}u)\|_{L^p(\R^n)}\Big)\\&\le C\Big( \|u\|_{L^p(\R^n)}+ \|T_{\widetilde\psi}u\|_{ {\mathcal{L}}^p_{\sigma}(\R^n)}\Big)
.\end{split}\end{equation*}
This and~\eqref{09ijhbe8rfiuj34rtfghiorguhbnPOTHPSDO-09oikjrfYHNmasdfk02} give that
$$ \|T_\psi u\|_{{\mathcal{L}}^p_{2s}(\R^n)}=\big\|L\big( T_{\psi} u\big)\big\|_{L^p(\R^n)}\le
C \|u\|_{L^p(\R^n)},$$
and the proof of the desired result is thereby complete, recalling
the definition of the local norm in~\eqref{GSHDJKUNXRTICONSDMUHNShasnd}.
\end{proof}

{F}rom Theorem~\ref{09iuhgf8i1-Deltas023o4irtTH} one can also deduce an interior regularity theory
when the fractional operator is~$(-\Delta)^s$ instead of~$(1-\Delta)^s$:

\begin{theorem}\label{09o2j3rhngrsdTh09iuhgf8i1-Deltas023o4irtTH}
Let~$\Omega$ be an open and bounded subset of~$\R^n$ and~$\Omega'\Subset\Omega$.
Let~$s\in(0,1)$,~$p\in(1,+\infty)$ and~$f\in L^p(\Omega)$.

Let~$u\in L^p(\R^n)$ be a distributional solution of
\begin{equation*}
(-\Delta)^su=f \qquad{\mbox{in~$\Omega$.}}\end{equation*}

Then,~$u\in {\mathcal{L}}^p_{2s}(\Omega')$ and
\begin{equation}\label{mngtr4AndfDGHNSHYBNSMDINmnhgrISKJND-0} \|u\|_{ {\mathcal{L}}^p_{2s}(\Omega')}\le C\Big(\| f\|_{L^p(\Omega)}+\|u\|_{L^p(\R^n)}\Big),\end{equation} for a positive constant~$C$ depending only on~$n$,~$s$,~$p$,~$\Omega$ and~$\Omega'$. 
\end{theorem}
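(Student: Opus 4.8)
The plan is to reduce Theorem~\ref{09o2j3rhngrsdTh09iuhgf8i1-Deltas023o4irtTH} to the already-established interior regularity result for $(1-\Delta)^s$, namely Theorem~\ref{09iuhgf8i1-Deltas023o4irtTH}, exactly as the global results in Section~\ref{Ijsmiazme3sKJS03-krodgh} deduce statements about $(-\Delta)^s$ from statements about $(1-\Delta)^s$. The bridge is the identity $(1-\Delta)^s u = u + M_1*\big((-\Delta)^s u\big)$ in disguise: more precisely, one writes $(1-\Delta)^s u = u + \big((1-\Delta)^s - 1\big)u$, and the operator $(1-\Delta)^s - 1$ should be treated as a pseudodifferential operator. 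First I would observe that since $u\in L^p(\R^n)$ and $(-\Delta)^su=f\in L^p(\Omega)$, we may set $g:=u+f$ on $\Omega$ and check that $(1-\Delta)^su=g$ in $\Omega$ in the distributional sense; the point is that $(1-\Delta)^s$ and $(-\Delta)^s$ differ, on $\R^n$, by the zeroth-order-plus-lower operator $1+\big[(1-\Delta)^s-(-\Delta)^s\big]$, and one can try to show that $\big[(1-\Delta)^s-(-\Delta)^s\big]u$ is a well-defined $L^p$ function.

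The cleanest route avoids delicate symbol bookkeeping by using Theorem~\ref{TH:9191} together with the structure already exploited in the proof of Theorem~\ref{6.33THCORNSTNE}. Concretely: let $h:=(1-\Delta)^s u$, viewed a priori only as a distribution. By the first identity in~\eqref{HSZUONDCCONT-FF-GMMA1-2} we have, on $\R^n$,
\begin{equation*}
(-\Delta)^s u = M_1*\big((1-\Delta)^s u\big),
\end{equation*}
and by the second identity
\begin{equation*}
(1-\Delta)^s u = M_2*u + M_2*\big((-\Delta)^s u\big).
\end{equation*}
The second of these displays is the key: it expresses $(1-\Delta)^su$ as a convolution of the finite signed Radon measure $M_2$ against the locally integrable function $u+(-\Delta)^su$. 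On $\Omega$ the function $(-\Delta)^su$ equals $f\in L^p(\Omega)$, but $M_2$ is not compactly supported, so the convolution genuinely feels the behavior of $(-\Delta)^su$ outside $\Omega$; hence one cannot simply say $(1-\Delta)^su\in L^p(\Omega)$. Instead I would run the localization at the level of $(-\Delta)^s$ first: fix $\Omega'\Subset\Omega_1\Subset\Omega$, take $\chi\in C^\infty_c(\Omega_1)$ with $\chi=1$ on $\Omega'$, and compare $(-\Delta)^s(\chi u)$ with $\chi(-\Delta)^su$. The commutator $[\,(-\Delta)^s,\chi\,]$ is a pseudodifferential operator of order $2s-1$ (this is the same commutator estimate used inside the proof of Theorem~\ref{09iuhgf8i1-Deltas023o4irtTH}, via Theorem~\ref{COMPOTHPSDO-COMMTU215ER}), so $(-\Delta)^s(\chi u) = \chi f + T u$ globally on $\R^n$ with $T$ of order $(2s-1)_+$; since $u\in L^p(\R^n)$, Corollary~\ref{COMPOTHPS-q0woeirjUJSDNIie00} gives $Tu\in {\mathcal L}^p_{\max\{0,1-2s\}}(\R^n)\subseteq L^p(\R^n)$, whence $(-\Delta)^s(\chi u)\in L^p(\R^n)$ with the right bound. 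Now $\chi u\in L^p(\R^n)$ solves $(-\Delta)^s(\chi u) = F$ on all of $\R^n$ with $F:=\chi f + Tu\in L^p(\R^n)$, so Theorem~\ref{6.33THCORNSTNE} applies directly: $\chi u\in {\mathcal L}^p_{2s}(\R^n)$ with $\|\chi u\|_{{\mathcal L}^p_{2s}(\R^n)}\le C(\|\chi u\|_{L^p(\R^n)}+\|F\|_{L^p(\R^n)})\le C(\|u\|_{L^p(\R^n)}+\|f\|_{L^p(\Omega)})$. Since $\chi=1$ on $\Omega'$, this function is an admissible extension in the infimum defining $\|u\|_{{\mathcal L}^p_{2s}(\Omega')}$, giving~\eqref{mngtr4AndfDGHNSHYBNSMDINmnhgrISKJND-0}.

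Alternatively, and perhaps more in the spirit of the preceding theorem's proof, one can simply invoke Theorem~\ref{09iuhgf8i1-Deltas023o4irtTH} in its stated generality: its proof is carried out for an arbitrary pseudodifferential operator $L$ of order $2s$ (see~\eqref{REOLAKJSHDB2wskjdf2Shdnf09iuyhgf0e9ryfg}), and $(-\Delta)^s$ is such an operator away from the frequency origin — more precisely $(-\Delta)^s = (1-\Delta)^s - R$ where $R$ has symbol $(1+4\pi^2|\xi|^2)^s-(2\pi|\xi|)^{2s}$, which is a classical symbol of order $2s-2$. Thus the same commutator-and-parametrix machinery (Theorems~\ref{COMPOTHPSDO}, \ref{COMPOTHPSDO-COMMTU215ER} and Corollaries~\ref{COMPOTHPS-q0woeirjUJSDNIie00}, \ref{COMPOTHPS-q0woeirjUJSDNIie00-cr5tgd}) goes through verbatim with $L=(-\Delta)^s$, since all that is used there is that $L$ has order $2s$ and admits a two-sided parametrix of order $-2s$ on the relevant region — which Theorem~\ref{09iuhgf8i1-Deltas023o4irtTH} provides for $(1-\Delta)^s$ and which is stable under the order $2s-2$ perturbation $R$. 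The main obstacle, and the one I would be most careful about, is the genuinely nonlocal subtlety: $(-\Delta)^s$ does not have a nice inverse convolution kernel that is integrable at infinity (this is exactly why Proposition~\ref{HSZUONDCCONT} and the Riesz-kernel pathologies arise), so one cannot literally repeat the step "$\widetilde\psi u = L^{-1}(\widetilde T u)$" with $L^{-1}$ a convolution against an $L^1$-type kernel. The fix is to do the inversion against ${\mathcal B}^{(s)}$ after first absorbing the difference $R$ into the source term — i.e. one solves $(1-\Delta)^s(\chi u) = \chi f + Tu + R(\chi u)$ and checks $R(\chi u)\in L^p$ using that $R$ has negative order and $\chi u\in L^p(\R^n)$ — so that the clean pseudodifferential argument of Theorem~\ref{09iuhgf8i1-Deltas023o4irtTH}, or equivalently the Bessel-potential argument of Theorem~\ref{6.33THCORNSTNE}, can be applied without modification. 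I expect the first (fully self-contained) route via Theorem~\ref{6.33THCORNSTNE} and the commutator estimate to be the shortest to write down rigorously.
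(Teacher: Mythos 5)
Your overall strategy --- localize and then feed the problem into the $(1-\Delta)^s$/Bessel-potential machinery --- is the right one and is close in spirit to the paper's proof (which first cuts $u$ off so that it is supported in $\Omega$, then splits $(-\Delta)^s=T_a+S$ via Theorem~\ref{SQUEZ}, and finally invokes Theorem~\ref{09iuhgf8i1-Deltas023o4irtTH} for the pseudodifferential part $T_a$). But your main route has a genuine gap when $s\in\left(\frac12,1\right)$. The commutator $T=[(-\Delta)^s,\chi]$ has order $2s-1$, which is \emph{positive} in that range, and Corollary~\ref{COMPOTHPS-q0woeirjUJSDNIie00} applies only to operators of non-positive order; so the step ``$u\in L^p(\R^n)$ implies $Tu\in{\mathcal{L}}^p_{\max\{0,1-2s\}}(\R^n)\subseteq L^p(\R^n)$'' is unjustified, and in fact false in general: an operator of positive order does not map $L^p(\R^n)$ into $L^p(\R^n)$ (concretely, the commutator kernel $(\chi(x)-\chi(y))\,|x-y|^{-n-2s}$ is no longer locally integrable once $2s\ge 1$, and to conclude $Tu\in L^p$ one would need $u\in{\mathcal{L}}^p_{2s-1}(\R^n)$, which is not available a priori). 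This is exactly the obstruction that the proof of Theorem~\ref{09iuhgf8i1-Deltas023o4irtTH} spends its second half on, via the auxiliary cutoff $\widetilde\psi$, the inversion $\widetilde\psi u=L^{-1}(\widetilde{T}u)$ and the intermediate ${\mathcal{L}}^p_{1}$ bound obtained before the order-$(2s-1)$ commutator is ever applied; your plan to jump directly to the global Theorem~\ref{6.33THCORNSTNE} for $\chi u$ skips that bootstrap and does not close for $s>\frac12$ (for $s<\frac12$ the argument does close, modulo the point below).

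A second, smaller issue: within this book's framework $(-\Delta)^s$ is not literally a pseudodifferential operator (its symbol is not smooth at $\xi=0$, see the discussion preceding Theorem~\ref{SQUEZ}), so Theorem~\ref{COMPOTHPSDO-COMMTU215ER} cannot be applied to $[(-\Delta)^s,\chi]$ as such; one must first write $(-\Delta)^s=T_a+S$ and treat the low-frequency piece $S$ separately. This is also why the paper begins by reducing to $u$ supported in $\Omega$, through the elementary bilinear estimate for $(-\Delta)^s(\phi u)$: once $u$ is compactly supported it lies in $L^1(\R^n)$, so $Su$ is well defined and $\|Su\|_{L^p(\Omega)}\le C\|u\|_{L^p(\Omega)}$, whereas for a general $u\in L^p(\R^n)$ with $p>2$ the expression ${\mathcal{F}}^{-1}(\eta\,\widehat u)$ is not even meaningful in this setting, $\eta$ being merely continuous. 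Your ``alternative'' paragraph points in the right direction --- it is essentially the paper's route: reduce to compact support, absorb the low-frequency correction into the data, then apply the interior theorem for a genuine pseudodifferential operator $L$ of order $2s$ --- but as written both of your routes still hinge on the unjustified $L^p$-boundedness of the positive-order commutator, so the case $s\in\left(\frac12,1\right)$ remains open in your argument.
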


\begin{proof} Up to a covering argument, we may suppose e.g. that~$\Omega=B_3$ and~$\Omega'=B_1$
(or any other concentric balls).
Also, without loss of generality, we may assume that
\begin{equation}\label{mngtr4AndfDGHNSHYBNSMDINmnhgrISKJND}
{\mbox{$u=0\,$ in~$\,\R^n\setminus\Omega$.}}
\end{equation}
Indeed, suppose that the desired result holds true under this additional assumption and let~$\phi\in C^\infty_c(B_3,[0,1])$
with~$\phi=1$ in~$B_2$.

We observe that (possibly in the sense of principal values and distributions)
\begin{eqnarray*}&&
\int_{\R^n} \frac{\big(v(x)-v(y)\big)\big( w(x)-w(y)\big)}{|x-y|^{n+2s}}\,dy=
\int_{\R^n} \frac{v(x)w(x)-v(x)w(y)+v(y)w(y)-v(y)w(x)}{|x-y|^{n+2s}}\,dy\\&&\qquad=
v(x)\int_{\R^n} \frac{w(x)-w(y)}{|x-y|^{n+2s}}\,dy
+w(x)
\int_{\R^n} \frac{v(x)-v(y)}{|x-y|^{n+2s}}\,dy
+
\int_{\R^n} \frac{v(y)w(y)-v(x)w(x)}{|x-y|^{n+2s}}\,dy
\end{eqnarray*}
and therefore
$$ \Big| (-\Delta)^s(vw)(x)-v(x)(-\Delta)^sw(x)-w(x)(-\Delta)^sv(x)\Big|\le
C\int_{\R^n} \frac{\big|v(x)-v(y)\big|\,\big| w(x)-w(y)\big|}{|x-y|^{n+2s}}\,dy.$$

This observation\footnote{The calculation in~\eqref{mngtr4AndfDGHNSHYBNSMDINmnhgrISKJNDpkedfmv-2dfvb}
also allows one to replace the term~$\|u\|_{L^p(\R^n)}$
in the right-hand side of~\eqref{mngtr4AndfDGHNSHYBNSMDINmnhgrISKJND-0} with other norms, such as
$$ \|u\|_{L^p(\Omega)}+\int_{\R^n}\frac{|u(y)|}{1+|y|^{n+2s}}\,dy.$$
For this, it suffices first to prove~\eqref{mngtr4AndfDGHNSHYBNSMDINmnhgrISKJND-0} as it is and then to apply a cutoff
as in~\eqref{mngtr4AndfDGHNSHYBNSMDINmnhgrISKJNDpkedfmv-2dfvb}. Here, for the sake of simplicity,
we stick to the simpler global norm of Lebesgue type.}
gives that, for every~$x\in B_{3/2}$,
\begin{equation}\label{mngtr4AndfDGHNSHYBNSMDINmnhgrISKJNDpkedfmv-2dfvb}
\begin{split}
\big|(-\Delta)^s (\phi u)(x)\big|&\le |(-\Delta)^s\phi(x)|\,|u(x)|+\phi(x)|f(x)|+
C\int_{\R^n} \frac{\big|u(x)-u(y)\big|\,\big| \phi(x)-\phi(y)\big|}{|x-y|^{n+2s}}\,dy\\
&\le C\left(|u(x)|+|f(x)|+\int_{\R^n\setminus B_2} \frac{\big|u(x)-u(y)\big|\,\big| 1-\phi(y)\big|}{|x-y|^{n+2s}}\,dy\right)\\
&\le C\left(|u(x)|+|f(x)|+\int_{\R^n\setminus B_2} \frac{|u(x)|+|u(y)|}{|y|^{n+2s}}\,dy\right).
\end{split}\end{equation}

On this account,
\begin{eqnarray*}&&
\|(-\Delta)^s (\phi u)\|_{L^p(B_{3/2})}\\&\le& 
C\left[\|u\|_{L^p(B_{3/2})}+\|f\|_{L^p(B_{3/2})}+
C\left( \int_{B_{3/2}}\left(\int_{\R^n\setminus B_2} \frac{|u(x)|+|u(y)|}{|y|^{n+2s}}\,dy\right)^p \,dx
\right)^{\frac1p}
\right]\\&\le& 
C\left[\|u\|_{L^p(B_{3/2})}+\|f\|_{L^p(B_{3/2})}+
C\left( \int_{B_{3/2}}\left(|u(x)|+\int_{\R^n\setminus B_2} \frac{|u(y)|}{|y|^{n+2s}}\,dy\right)^p \,dx
\right)^{\frac1p}
\right]\\&\le& 
C\left[\|u\|_{L^p(B_{3/2})}+\|f\|_{L^p(B_{3/2})}+
C\left( \int_{B_{3/2}}\left(|u(x)|^p+\int_{\R^n\setminus B_2} |u(y)|^p\,dy\right) \,dx
\right)^{\frac1p}
\right]\\&\le& 
C\Big(\|f\|_{L^p(B_{3/2})}+\|u\|_{L^p(\R^n)}\Big).
\end{eqnarray*}
Hence, if the desired result is true under the additional hypothesis in~\eqref{mngtr4AndfDGHNSHYBNSMDINmnhgrISKJND}, we can apply it to~$\phi u$ and deduce that
$$ \|\phi u\|_{ {\mathcal{L}}^p_{2s}(B_1)}\le C\Big(\| (-\Delta)^s(\phi u)\|_{L^p(B_{3/2})}+\|u\|_{L^p(\R^n)}\Big)
\le C\Big(\| f\|_{L^p(B_{3/2})}+\|u\|_{L^p(\R^n)}\Big).$$
Since, by~\eqref{GSHDJKUNXRTICONSDMUHNShasnd}, we know that~$\|u\|_{ {\mathcal{L}}^p_{2s}(B_1)}\le\|\phi u\|_{ {\mathcal{L}}^p_{2s}(B_1)}$, we have obtained the desired result for~$u$.

In view of these considerations, we can now focus on the proof of Theorem~\ref{09o2j3rhngrsdTh09iuhgf8i1-Deltas023o4irtTH} under the additional assumption in~\eqref{mngtr4AndfDGHNSHYBNSMDINmnhgrISKJND}.

Thus, now we apply Theorem~\ref{SQUEZ} and we follow the notation there.
In particular,
\begin{eqnarray*}
\|Su\|_{L^p(\Omega)}\le C\|Su\|_{L^\infty(\R^n)}\le C\|u\|_{L^1(\R^n)}=C\|u\|_{L^1(\Omega)}\le
C\|u\|_{L^p(\Omega)},
\end{eqnarray*}
due to~\eqref{mngtr4AndfDGHNSHYBNSMDINmnhgrISKJND}.

Hence, by Theorem~\ref{09iuhgf8i1-Deltas023o4irtTH} and~\eqref{REOLAKJSHDB2wskjdf2Shdnf09iuyhgf0e9ryfg}, we know that
\begin{equation*}
\begin{split}
\| u\|_{ {\mathcal{L}}^p_{2s}(\Omega')}&\le C\Big(\| T_a u\|_{L^p(\Omega)}+\|u\|_{L^p(\R^n)}\Big)
\\&= C\Big(\| (-\Delta)^s u-Su\|_{L^p(\Omega)}+\|u\|_{L^p(\R^n)}\Big)\\&\le
C\Big(\| (-\Delta)^s u\|_{L^p(\Omega)}+\|Su\|_{L^p(\Omega)}+\|u\|_{L^p(\R^n)}\Big)
\\&\le C\Big(\|f\|_{L^p(\Omega)}+\|u\|_{L^p(\R^n)}\Big).\qedhere
\end{split}\end{equation*}
\end{proof}

\begin{corollary}\label{ILCOLPSQRTPASKLX-k2eojr3lgrbbb9wiefh}
Let~$\Omega$ be an open and bounded subset of~$\R^n$ and~$\Omega'\Subset\Omega$.
Let~$p\in(1,+\infty)$,~$f\in L^p(\Omega)$
and~$u\in L^p(\R^n)$ be a distributional solution of~$\sqrt{1-\Delta}\,u=f$ in~$\Omega$.

Then,~$u\in W^{1,p}(\Omega')$ and
\begin{equation*} \|u\|_{ W^{1,p}(\Omega')}\le C\Big(\| f\|_{L^p(\Omega)}+\|u\|_{L^p(\R^n)}\Big),\end{equation*}
for some positive constant~$C$ depending only on~$n$,~$p$,~$\Omega$ and~$\Omega'$.
\end{corollary}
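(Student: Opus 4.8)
The plan is to reduce the statement to the already-established interior regularity in Bessel potential spaces for the operator $(1-\Delta)^s$, combined with the identification of Bessel potential spaces with integer-order Sobolev spaces. Since $\sqrt{1-\Delta}=(1-\Delta)^{1/2}$, we are in the case $s=\frac12$, so that $2s=1\in\N$; this is precisely a case in which Theorem~\ref{TH0-104-okn5KMD3} applies, and the rest is bookkeeping between the various notions of "local" norm.

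First I would invoke Theorem~\ref{09iuhgf8i1-Deltas023o4irtTH} with $s:=\frac12$. Given the distributional solution $u\in L^p(\R^n)$ of $(1-\Delta)^{1/2}u=f$ in $\Omega$ with $f\in L^p(\Omega)$, this immediately yields $u\in{\mathcal{L}}^p_{1}(\Omega')$ together with the bound
\[
\|u\|_{{\mathcal{L}}^p_{1}(\Omega')}\le C\Big(\|f\|_{L^p(\Omega)}+\|u\|_{L^p(\R^n)}\Big),
\]
for a constant $C$ depending only on $n$, $p$, $\Omega$ and $\Omega'$.

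Next I would upgrade the conclusion from the localized Bessel potential space ${\mathcal{L}}^p_{1}(\Omega')$ to the Sobolev space $W^{1,p}(\Omega')$. By the definition of the local norm in~\eqref{GSHDJKUNXRTICONSDMUHNShasnd}, for any $\delta>0$ there exists an extension $\widetilde u\in{\mathcal{L}}^p_{1}(\R^n)$ with $\widetilde u=u$ in $\Omega'$ and $\|\widetilde u\|_{{\mathcal{L}}^p_1(\R^n)}\le \|u\|_{{\mathcal{L}}^p_1(\Omega')}+\delta$. By Theorem~\ref{TH0-104-okn5KMD3} applied with $2s=1$, we have ${\mathcal{L}}^p_1(\R^n)=W^{1,p}(\R^n)$ with equivalent norms, hence $\widetilde u\in W^{1,p}(\R^n)$ and $\|\widetilde u\|_{W^{1,p}(\R^n)}\le C\|\widetilde u\|_{{\mathcal{L}}^p_1(\R^n)}$. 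Restricting to $\Omega'$ gives $u\in W^{1,p}(\Omega')$ with $\|u\|_{W^{1,p}(\Omega')}\le\|\widetilde u\|_{W^{1,p}(\R^n)}$, and letting $\delta\searrow0$ and chaining the inequalities produces the desired estimate.

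There is essentially no serious obstacle here: the statement is a corollary in the strict sense, obtained by specializing $s=\frac12$ and exploiting that $2s$ is then an integer. The only point requiring a little care is the transition between the two "local" norms — the infimum-over-extensions norm defining ${\mathcal{L}}^p_1(\Omega')$ and the restriction-to-$\Omega'$ viewpoint for $W^{1,p}(\Omega')$ — which is handled exactly by the extension argument above together with the global norm equivalence of Theorem~\ref{TH0-104-okn5KMD3}. It is also worth noting that $p\in(1,+\infty)$ is precisely the range in which that norm equivalence holds, so no borderline difficulty at $p=1$ or $p=+\infty$ enters.
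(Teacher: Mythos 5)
Your proposal is correct and follows essentially the same route as the paper: both apply Theorem~\ref{09iuhgf8i1-Deltas023o4irtTH} with~$s=\frac12$ and then pass from~${\mathcal{L}}^p_{1}$ to~$W^{1,p}$ via the global norm equivalence of Theorem~\ref{TH0-104-okn5KMD3}, handling the localized norms through the infimum-over-extensions definition in~\eqref{GSHDJKUNXRTICONSDMUHNShasnd}. Your explicit $\delta$-approximate extension is just a spelled-out version of the paper's chain of inequalities between infima, so the two arguments coincide.
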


\begin{proof} By Theorem~\ref{TH0-104-okn5KMD3}, used here with~$s:=\frac12$,
the norms in~$ {\mathcal{L}}^p_{1}(\R^n)$ and~$W^{1,p}(\R^n)$ are equivalent.

Thus, by~\eqref{GSHDJKUNXRTICONSDMUHNShasnd} and Theorem~\ref{09iuhgf8i1-Deltas023o4irtTH},
\begin{equation*}\begin{split}&
\| u\|_{W^{1,p}(\Omega)}=
\inf_{{\widetilde u\in W^{1,p}(\R^n)}\atop{\tiny{\mbox{$\widetilde u=u$ in~$\Omega'$}}}}\|\widetilde u\|_{W^{1,p}(\Omega)}\le
\inf_{{\widetilde u\in W^{1,p}(\R^n)}\atop{\tiny{\mbox{$\widetilde u=u$ in~$\Omega'$}}}}\|\widetilde u\|_{W^{1,p}(\R^n)}\le
C\inf_{{\widetilde u\in{\mathcal{L}}^p_{1}(\R^n)}\atop{\tiny{\mbox{$\widetilde u=u$ in~$\Omega'$}}}}\|\widetilde u\|_{{\mathcal{L}}^p_{1}(\R^n)}
\\&\qquad\qquad\qquad\qquad\qquad=
C\|u\|_{ {\mathcal{L}}^p_{1}(\Omega')}\le C\Big(\| f\|_{L^p(\Omega)}+\|u\|_{L^p(\R^n)}\Big).\qedhere\end{split}\end{equation*}
\end{proof}

\begin{corollary}\label{VRIVAJMSALCNMSDCOSMCUIJE5678}
Let~$\Omega$ be an open and bounded subset of~$\R^n$ and~$\Omega'\Subset\Omega$.
Let~$p\in(1,+\infty)$,~$f\in L^p(\Omega)$
and~$u\in L^p(\R^n)$ be a distributional solution of~$\sqrt{-\Delta}\,u=f$ in~$\Omega$.

Then,~$u\in W^{1,p}(\Omega')$ and
\begin{equation*} \|u\|_{ W^{1,p}(\Omega')}\le C\Big(\| f\|_{L^p(\Omega)}+\|u\|_{L^p(\R^n)}\Big),\end{equation*}
for some positive constant~$C$ depending only on~$n$,~$p$,~$\Omega$ and~$\Omega'$.
\end{corollary}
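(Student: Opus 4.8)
The goal is to deduce the interior regularity statement for $\sqrt{-\Delta}\,u=f$ in $\Omega$ from the corresponding statement for $\sqrt{1-\Delta}\,u=f$ in $\Omega$, which is Corollary~\ref{ILCOLPSQRTPASKLX-k2eojr3lgrbbb9wiefh}, exactly in the same spirit in which Theorem~\ref{09o2j3rhngrsdTh09iuhgf8i1-Deltas023o4irtTH} was deduced from Theorem~\ref{09iuhgf8i1-Deltas023o4irtTH}. So the plan is to pass from $(-\Delta)^{1/2}$ to $(1-\Delta)^{1/2}$ at the price of a controllable lower-order error term, and then invoke the already-established local $W^{1,p}$ theory for the Bessel-type operator.

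Here is how I would carry it out. First I would invoke Theorem~\ref{09o2j3rhngrsdTh09iuhgf8i1-Deltas023o4irtTH} directly with $s=\frac12$: this already gives $u\in{\mathcal{L}}^p_{1}(\Omega')$ together with the estimate
\[
\|u\|_{{\mathcal{L}}^p_{1}(\Omega')}\le C\Big(\|f\|_{L^p(\Omega)}+\|u\|_{L^p(\R^n)}\Big).
\]
Then I would use Theorem~\ref{TH0-104-okn5KMD3}, applied with $s=\frac12$ (so $2s=1\in\N$ and $p\in(1,+\infty)$), which tells us that ${\mathcal{L}}^p_{1}(\R^n)=W^{1,p}(\R^n)$ with equivalent norms. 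Passing to the localized spaces through the definition in~\eqref{GSHDJKUNXRTICONSDMUHNShasnd}, exactly as done at the end of the proof of Corollary~\ref{ILCOLPSQRTPASKLX-k2eojr3lgrbbb9wiefh}, one gets
\[
\|u\|_{W^{1,p}(\Omega')}=\inf_{{\widetilde u\in W^{1,p}(\R^n)}\atop{\widetilde u=u\text{ in }\Omega'}}\|\widetilde u\|_{W^{1,p}(\Omega')}\le C\inf_{{\widetilde u\in{\mathcal{L}}^p_{1}(\R^n)}\atop{\widetilde u=u\text{ in }\Omega'}}\|\widetilde u\|_{{\mathcal{L}}^p_{1}(\R^n)}=C\,\|u\|_{{\mathcal{L}}^p_{1}(\Omega')},
\]
and combining the two displays yields the desired bound.

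In fact the statement is literally the special case $s=\frac12$ of Theorem~\ref{09o2j3rhngrsdTh09iuhgf8i1-Deltas023o4irtTH} combined with the norm equivalence of Theorem~\ref{TH0-104-okn5KMD3}, so the cleanest proof is a two-line reduction: ``This is the case $s=\frac12$ of Theorem~\ref{09o2j3rhngrsdTh09iuhgf8i1-Deltas023o4irtTH}, together with Theorem~\ref{TH0-104-okn5KMD3} to replace ${\mathcal{L}}^p_{1}$ by $W^{1,p}$; the passage to localized spaces is as in the proof of Corollary~\ref{ILCOLPSQRTPASKLX-k2eojr3lgrbbb9wiefh}.'' There is essentially no obstacle here, since all the heavy lifting — the cutoff argument that reduces $(-\Delta)^s$ to $(1-\Delta)^s$ modulo a smoothing operator $S$ via Theorem~\ref{SQUEZ}, and the pseudodifferential commutator analysis behind Theorem~\ref{09iuhgf8i1-Deltas023o4irtTH} — has already been done in the proofs of the preceding theorems. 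The only mild point to be careful about is that Theorem~\ref{TH0-104-okn5KMD3} requires $2s\in\N$, which holds precisely because $s=\frac12$, and that it excludes $p=1,+\infty$, which is consistent with the hypothesis $p\in(1,+\infty)$; so the argument is self-contained within the stated range.
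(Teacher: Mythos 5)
Your proof is correct and follows the paper's own argument: the paper likewise proves this corollary by running the proof of Corollary~\ref{ILCOLPSQRTPASKLX-k2eojr3lgrbbb9wiefh} with Theorem~\ref{09o2j3rhngrsdTh09iuhgf8i1-Deltas023o4irtTH} (at $s=\frac12$) in place of Theorem~\ref{09iuhgf8i1-Deltas023o4irtTH}, combined with the norm equivalence of Theorem~\ref{TH0-104-okn5KMD3} and the localized norm in~\eqref{GSHDJKUNXRTICONSDMUHNShasnd}. Nothing is missing.
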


\begin{proof} The argument is identical to that presented in the proof of Corollary~\ref{ILCOLPSQRTPASKLX-k2eojr3lgrbbb9wiefh}, replacing the use of Theorem~\ref{09iuhgf8i1-Deltas023o4irtTH} there with
that of Theorem~\ref{09o2j3rhngrsdTh09iuhgf8i1-Deltas023o4irtTH} here.
\end{proof}

As in~\eqref{GSHDJKUNXRTICONSDMUHNShasnd}, we define a localized norm for Besov spaces by the relation
\begin{equation*} \|u\|_{B^{s,p,q}(\Omega)}:=\inf_{{\widetilde u\in B^{s,p,q}(\R^n)}\atop{\tiny{\mbox{$\widetilde u=u$ in~$\Omega$}}}}\|\widetilde u\|_{B^{s,p,q}(\R^n)}\end{equation*}
and we have a local regularity theory in Besov spaces as follows:

\begin{theorem}
Let~$\Omega$ be an open and bounded subset of~$\R^n$ and~$\Omega'\Subset\Omega$.

Let~$s\in(0,1)\setminus\left\{\frac12\right\}$ and~$p\in(1, 2]$.

Let~$f\in L^p(\Omega)$, and~$u\in L^p(\R^n)$ be a distributional solution of~$(-\Delta)^su=f$ in~$\Omega$.

Then,~$u\in B^{2s,p,2}(\Omega')$ and
\begin{equation*} \|u\|_{ B^{2s,p,2}(\Omega')}\le
C\,\Big(\|f\|_{L^p(\Omega)}+\|u\|_{L^p(\R^n)}\Big),\end{equation*}
for some positive constant~$C$ depending only on~$n$,~$p$,~$s$,~$\Omega$ and~$\Omega'$. 
\end{theorem}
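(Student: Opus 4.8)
The plan is to derive this statement as the local counterpart of Theorem~\ref{jodncla-pjdmfdmc3r}, by combining the interior Bessel-potential regularity of Theorem~\ref{09o2j3rhngrsdTh09iuhgf8i1-Deltas023o4irtTH} with the global embedding ${\mathcal{L}}^p_{2s}(\R^n)\subseteq B^{2s,p,2}(\R^n)$ furnished by Theorem~\ref{THPS2}, the point being to check that such an embedding descends to the localized (restriction) norms.

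First I would apply Theorem~\ref{09o2j3rhngrsdTh09iuhgf8i1-Deltas023o4irtTH} directly to the given pair $\Omega'\Subset\Omega$. Since $u\in L^p(\R^n)$ is a distributional solution of $(-\Delta)^su=f$ in $\Omega$ with $f\in L^p(\Omega)$, this yields $u\in {\mathcal{L}}^p_{2s}(\Omega')$ together with
\[
\|u\|_{{\mathcal{L}}^p_{2s}(\Omega')}\le C\big(\|f\|_{L^p(\Omega)}+\|u\|_{L^p(\R^n)}\big),
\]
for a constant $C$ depending only on $n$, $s$, $p$, $\Omega$ and $\Omega'$. Here I recall that, by the very definition in~\eqref{GSHDJKUNXRTICONSDMUHNShasnd}, this means that there is an extension $\widetilde u\in {\mathcal{L}}^p_{2s}(\R^n)$ with $\widetilde u=u$ in $\Omega'$ and $\|\widetilde u\|_{{\mathcal{L}}^p_{2s}(\R^n)}$ as close as desired to $\|u\|_{{\mathcal{L}}^p_{2s}(\Omega')}$.

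Next I would upgrade the Bessel-potential information to Besov information at the localized level. Given any extension $\widetilde u\in {\mathcal{L}}^p_{2s}(\R^n)$ with $\widetilde u=u$ in $\Omega'$, Theorem~\ref{THPS2} — applied in the range $p\in(1,2]$, which is exactly the one assumed here, with the exponent parameter there equal to $2s$ — gives $\widetilde u\in B^{2s,p,2}(\R^n)$ and $\|\widetilde u\|_{B^{2s,p,2}(\R^n)}\le C\|\widetilde u\|_{{\mathcal{L}}^p_{2s}(\R^n)}$. Since $\widetilde u=u$ in $\Omega'$, this $\widetilde u$ is an admissible competitor in the infimum defining the localized Besov norm $\|u\|_{B^{2s,p,2}(\Omega')}$, whence $\|u\|_{B^{2s,p,2}(\Omega')}\le C\|\widetilde u\|_{{\mathcal{L}}^p_{2s}(\R^n)}$. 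Taking the infimum over all such $\widetilde u$ gives $\|u\|_{B^{2s,p,2}(\Omega')}\le C\|u\|_{{\mathcal{L}}^p_{2s}(\Omega')}$, and in particular $u\in B^{2s,p,2}(\Omega')$. Chaining this with the estimate of the previous paragraph yields the desired inequality.

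There is no genuine analytic obstacle here, since both building blocks are already established; the only step requiring a moment of care is the bookkeeping with the localized norms, i.e. the observation that a continuous embedding of the ambient spaces ${\mathcal{L}}^p_{2s}(\R^n)\hookrightarrow B^{2s,p,2}(\R^n)$ automatically passes to a continuous embedding of the corresponding restriction spaces on $\Omega'$, precisely because the two defining infima run over the same family of extensions. If one prefers a slightly more pedestrian presentation, one may interpose an intermediate domain $\Omega'\Subset\Omega''\Subset\Omega$ and argue with a cutoff, but this is not needed, as Theorem~\ref{09o2j3rhngrsdTh09iuhgf8i1-Deltas023o4irtTH} already produces a bona fide extension in ${\mathcal{L}}^p_{2s}(\R^n)$ with controlled norm. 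Finally, the hypothesis $s\ne\frac12$ is inherited from the Besov target but is not actively used in this argument.
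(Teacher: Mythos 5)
Your argument is correct and is essentially the paper's own proof: both apply the interior Bessel-potential estimate of Theorem~\ref{09o2j3rhngrsdTh09iuhgf8i1-Deltas023o4irtTH} and then transfer the global embedding~${\mathcal{L}}^p_{2s}(\R^n)\subseteq B^{2s,p,2}(\R^n)$ from Theorem~\ref{THPS2} to the localized norms by comparing the infima over the same family of extensions. Your competitor-based bookkeeping for the restriction norms is exactly the inequality between infima written in the paper's display, so there is nothing to change.
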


\begin{proof} In this range of exponents~$p$, by Theorem~\ref{THPS2} we know that~$ \|\zeta\|_{B^{2s,p,2}(\R^n)}
\le C \|\zeta\|_{ {\mathcal{L}}^p_{2s}(\R^n)}$, for all functions~$\zeta$.

Consequently, by Theorem~\ref{09o2j3rhngrsdTh09iuhgf8i1-Deltas023o4irtTH},
\begin{equation*} \begin{split}&\|u\|_{B^{2s,p,2}(\Omega')}=
\inf_{{\widetilde u\in B^{2s,p,2}(\R^n)}\atop{\tiny{\mbox{$\widetilde u=u$ in~$\Omega'$}}}}
\|\widetilde u\|_{B^{2s,p,2}(\R^n)}\le
C\inf_{{\widetilde u\in{\mathcal{L}}^p_{2s}(\R^n)}\atop{\tiny{\mbox{$\widetilde u=u$ in~$\Omega'$}}}}\|\widetilde u\|_{{\mathcal{L}}^p_{2s}(\R^n)}
\\&\qquad\qquad=C\|u\|_{{\mathcal{L}}^p_{2s}(\Omega')}\le C\Big(\| f\|_{L^p(\Omega)}+\|u\|_{L^p(\R^n)}\Big).
\qedhere
\end{split}\end{equation*}
\end{proof}

\begin{theorem}
Let~$\Omega$ be an open and bounded subset of~$\R^n$ and~$\Omega'\Subset\Omega$.

Let~$s\in(0,1)$ and~$p\in[2,+\infty)$. Let~$f\in L^p(\Omega)$, and~$u\in L^p(\R^n)$ be a distributional solution of~$(-\Delta)^su=f$ in~$\Omega$.

Then,~$u \in W^{2s,p}(\R^n)$ and 
\begin{equation*} \|u\|_{ W^{2s,p}(\Omega')}\le
C\,\Big(\|u\|_{L^p(\R^n)}+\| f\|_{L^p(\Omega)}\Big)\end{equation*}
for some positive constant~$C$ depending only on~$n$,~$p$,~$s$,~$\Omega$ and~$\Omega'$. 
\end{theorem}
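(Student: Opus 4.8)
The plan is to reduce the statement to the interior Bessel-potential estimate of Theorem~\ref{09o2j3rhngrsdTh09iuhgf8i1-Deltas023o4irtTH} and then to convert the information ``$u\in{\mathcal{L}}^p_{2s}(\Omega')$'' into ``$u\in W^{2s,p}(\Omega')$'' by means of the inclusions between Bessel potential, Besov and Sobolev spaces established in the previous sections. In other words, this is the localized counterpart of the argument used to prove Theorem~\ref{PKJSM0-COSK1424654856ij45gf-SHA}.

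First I would record the global, function-by-function comparison
\[
\|\zeta\|_{W^{2s,p}(\R^n)}\le C\,\|\zeta\|_{{\mathcal{L}}^p_{2s}(\R^n)},
\]
valid for every~$\zeta\in{\mathcal{L}}^p_{2s}(\R^n)$, with~$C$ depending only on~$n$,~$p$,~$s$ (this is exactly~\eqref{MojfengTNSDSS9iekrfmgR-4g4eZXCHJKLRFG-544-NO1-I2L3C2O25R213t4TY}). To obtain it I would split into two cases. If~$s=\frac12$, then~$2s=1\in\N$, so Theorem~\ref{TH0-104-okn5KMD3} gives~${\mathcal{L}}^p_{1}(\R^n)=W^{1,p}(\R^n)$ with equivalent norms and the bound is immediate; note that here one cannot route through Besov spaces, since~$B^{1,p,p}(\R^n)$ does not coincide with~$W^{1,p}(\R^n)$. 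If~$s\ne\frac12$, then~$2s\notin\N$, so Corollary~\ref{0pirj09365-4g4eZXCHJKLRFG-544-NO1-I2L3C2O25R213t4TY} identifies~$B^{2s,p,p}(\R^n)$ with~$W^{2s,p}(\R^n)$ (with equivalent norms), while Theorem~\ref{THPS2}, in the range~$p\in[2,+\infty)$, provides the continuous embedding~${\mathcal{L}}^p_{2s}(\R^n)\subseteq B^{2s,p,p}(\R^n)$; chaining the two gives the displayed inequality.

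Next I would transfer this to localized norms exactly as in the proof of the preceding theorem: by the definition of the local Sobolev norm as an infimum over extensions and by the analogous definition~\eqref{GSHDJKUNXRTICONSDMUHNShasnd} of the local Bessel potential norm,
\[
\|u\|_{W^{2s,p}(\Omega')}=\inf_{\substack{\widetilde u\in W^{2s,p}(\R^n)\\ \widetilde u=u\text{ in }\Omega'}}\|\widetilde u\|_{W^{2s,p}(\R^n)}\le C\inf_{\substack{\widetilde u\in{\mathcal{L}}^p_{2s}(\R^n)\\ \widetilde u=u\text{ in }\Omega'}}\|\widetilde u\|_{{\mathcal{L}}^p_{2s}(\R^n)}=C\,\|u\|_{{\mathcal{L}}^p_{2s}(\Omega')}.
\]
Finally I would invoke Theorem~\ref{09o2j3rhngrsdTh09iuhgf8i1-Deltas023o4irtTH}, which under the present hypotheses yields~$u\in{\mathcal{L}}^p_{2s}(\Omega')$ together with~$\|u\|_{{\mathcal{L}}^p_{2s}(\Omega')}\le C\big(\|f\|_{L^p(\Omega)}+\|u\|_{L^p(\R^n)}\big)$, and combine the two displays to conclude.

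I do not expect a real obstruction: the argument is essentially bookkeeping on top of results already in place. The two points deserving attention are (i) the case split at~$s=\frac12$, where the integer-exponent identification~${\mathcal{L}}^p_1=W^{1,p}$ must replace the Besov chain, and (ii) the restriction~$p\ge2$, which is genuinely needed: for~$p\in[1,2)$ the embedding~${\mathcal{L}}^p_{2s}(\R^n)\subseteq W^{2s,p}(\R^n)$ fails, and only the weaker Besov statement of the previous theorem remains available.
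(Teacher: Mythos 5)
Your proposal is correct and follows essentially the same route as the paper: the local Bessel estimate of Theorem~\ref{09o2j3rhngrsdTh09iuhgf8i1-Deltas023o4irtTH}, the global bound~\eqref{MojfengTNSDSS9iekrfmgR-4g4eZXCHJKLRFG-544-NO1-I2L3C2O25R213t4TY} via Corollary~\ref{0pirj09365-4g4eZXCHJKLRFG-544-NO1-I2L3C2O25R213t4TY} and Theorem~\ref{THPS2}, and the infimum-over-extensions comparison of the localized norms. The only cosmetic difference is at~$s=\frac12$, where the paper simply cites Corollary~\ref{VRIVAJMSALCNMSDCOSMCUIJE5678} while you inline the same argument through Theorem~\ref{TH0-104-okn5KMD3}; your remark that the Besov chain is unavailable there is also accurate.
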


\begin{proof} We already know that this result holds true when~$s=\frac12$, owing to Corollary~\ref{VRIVAJMSALCNMSDCOSMCUIJE5678}, hence we can suppose that~$s\ne\frac12$.

Moreover, in the range of exponents~$p$ considered here,
we can apply~\eqref{MojfengTNSDSS9iekrfmgR-4g4eZXCHJKLRFG-544-NO1-I2L3C2O25R213t4TY}, which,
together with Theorem~\ref{09o2j3rhngrsdTh09iuhgf8i1-Deltas023o4irtTH}, entails that
\begin{equation*} \begin{split}&
\|u\|_{W^{2s,p}(\Omega')}=
\inf_{{\widetilde u\in W^{2s,p}(\R^n)}\atop{\tiny{\mbox{$\widetilde u=u$ in~$\Omega'$}}}}\|\widetilde u\|_{W^{2s,p}(\Omega')}
\le \inf_{{\widetilde u\in W^{2s,p}(\R^n)}\atop{\tiny{\mbox{$\widetilde u=u$ in~$\Omega'$}}}}\|\widetilde u\|_{W^{2s,p}(\R^n)}
\le
C\inf_{{\widetilde u\in{\mathcal{L}}^p_{2s}(\R^n)}\atop{\tiny{\mbox{$\widetilde u=u$ in~$\Omega'$}}}}\|\widetilde u\|_{{\mathcal{L}}^p_{2s}(\R^n)}
\\&\qquad\qquad\qquad\qquad=C\|u\|_{ {\mathcal{L}}^p_{2s}(\Omega')}\le C\Big(\| f\|_{L^p(\Omega)}+\|u\|_{L^p(\R^n)}\Big).\qedhere\end{split}\end{equation*}
\end{proof}

\begin{appendix}

\chapter{Minkowski's Integral Inequality}

We recall here a useful variation of the classical Minkowski's Inequality:

\begin{theorem}\label{MLAerSM:ijfKKSMdf02} Let~$p\ge1$. For~$j\in\{1,2\}$, let~$\mu_j$ be
a~$\sigma$-finite measure on the space~$S_j$.

Let~$F: S_1 \times S_2\to \R$ be a measurable function.

Then,
\begin{equation}\label{MLAerSM:ijfKKSMdf02:RHDN} \left(\int _{S_{2}}\left|\int _{S_{1}}F(x,y)\,d\mu _{1}(x)\right|^{p}\,d\mu _{2}(y)\right)^{\frac{1}{p}}\leq \int _{S_{1}}\left(\int _{S_{2}}|F(x,y)|^{p}\,d\mu _{2}(y)\right)^{\frac{1}{p}}d\mu _{1}(x).\end{equation}
\end{theorem}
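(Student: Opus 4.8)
The plan is to establish Minkowski's Integral Inequality by a duality argument, reducing it to the ordinary Hölder's Inequality applied in the variable $y$. First I would dispose of the trivial case $p=1$, where the claimed inequality is actually an equality (up to the absolute value inside) and follows immediately from Tonelli's Theorem, since both sides equal $\int_{S_1}\int_{S_2}|F(x,y)|\,d\mu_2(y)\,d\mu_1(x)$ after moving the absolute value inside. So the substantive case is $p>1$. I would also remark at the outset that we may assume the right-hand side of~\eqref{MLAerSM:ijfKKSMdf02:RHDN} is finite, otherwise there is nothing to prove; and we may assume $F\ge0$, since replacing $F$ by $|F|$ only increases the left-hand side while leaving the right-hand side unchanged.

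Next I would set $G(y):=\int_{S_1}F(x,y)\,d\mu_1(x)$ and let $q:=p/(p-1)$ be the conjugate exponent. The core step is the standard characterization of the $L^p(\mu_2)$ norm by duality: for a nonnegative measurable $G$,
\[
\|G\|_{L^p(\mu_2)}=\sup\left\{\int_{S_2}G(y)\,h(y)\,d\mu_2(y)\;:\;h\ge0,\ \|h\|_{L^q(\mu_2)}\le1\right\}.
\]
Then for any such test function $h$, I would use Tonelli's Theorem to swap the order of integration and then Hölder's Inequality in the $y$ variable:
\[
\int_{S_2}G(y)h(y)\,d\mu_2(y)=\int_{S_1}\left(\int_{S_2}F(x,y)h(y)\,d\mu_2(y)\right)d\mu_1(x)
\le\int_{S_1}\left(\int_{S_2}F(x,y)^p\,d\mu_2(y)\right)^{1/p}\|h\|_{L^q(\mu_2)}\,d\mu_1(x),
\]
and since $\|h\|_{L^q(\mu_2)}\le1$ this last quantity is bounded by the right-hand side of~\eqref{MLAerSM:ijfKKSMdf02:RHDN}. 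Taking the supremum over $h$ yields the claim.

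The main obstacle is making the duality characterization of the $L^p$ norm fully rigorous in the generality of an arbitrary $\sigma$-finite measure $\mu_2$, and ensuring all the applications of Tonelli are legitimate. For the duality step, the clean route is: if $\|G\|_{L^p(\mu_2)}<\infty$, take $h:=G^{p-1}/\|G\|_{L^p(\mu_2)}^{p-1}$, which satisfies $\|h\|_{L^q(\mu_2)}=1$ and $\int Gh\,d\mu_2=\|G\|_{L^p(\mu_2)}$, so equality is attained. If instead $\|G\|_{L^p(\mu_2)}=\infty$, one truncates: using $\sigma$-finiteness, write $S_2=\bigcup_k E_k$ with $\mu_2(E_k)<\infty$ and $E_k$ increasing, set $G_N:=\min\{G,N\}\chi_{E_N}$, apply the finite case to $h_N:=G_N^{p-1}/\|G_N\|_{L^p(\mu_2)}^{p-1}$ (legitimate since $0<\|G_N\|_{L^p(\mu_2)}<\infty$ for $N$ large, unless $G\equiv0$ a.e. in which case the inequality is trivial), obtain $\|G_N\|_{L^p(\mu_2)}\le\text{RHS}$, and let $N\to\infty$ invoking the Monotone Convergence Theorem on the left to conclude $\|G\|_{L^p(\mu_2)}\le\text{RHS}<\infty$, contradicting the assumption. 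The applications of Tonelli are unproblematic because all integrands are nonnegative and measurable on the product space; the joint measurability of $(x,y)\mapsto F(x,y)h(y)$ follows from that of $F$ and $h$. I would present the truncation argument compactly as it is the only genuinely technical point.
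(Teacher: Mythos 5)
Your proof is correct and takes essentially the same route as the paper's: both reduce the inequality to the duality characterization of the $L^p(\mu_2)$ norm, followed by an interchange of integrals (Tonelli) and H\"older's Inequality in the $y$ variable. The only difference is bookkeeping: you reduce to $F\ge0$ and treat a possibly infinite norm by truncation using $\sigma$-finiteness, whereas the paper keeps $F$ signed, assumes the right-hand side finite, and verifies absolute integrability to justify the interchange.
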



\begin{proof} If~$p=1$ the claim follows from the Triangle Inequality and Tonelli's Theorem,
hence we can suppose that~$p>1$.
We use the dual approach to Lebesgue spaces, namely the fact that
$$ \|f\|_{L^p(S_j,\mu_j)}=\sup_{{g\in L^q(S_j,\mu_j)}\atop{\|g\|_{L^q(S_j,\mu_j)}=1}}\int_{S_j}f(\zeta)\,g(\zeta)\,d\mu_j(\zeta),\qquad{\mbox{ where }}\;q:=\frac{p}{p-1} .$$
Therefore,
\begin{equation}\label{MLAerSM:ijfKKSMdf02:RHDN-CSBN}
\begin{split}&
\left(\int _{S_{2}}\left|\int _{S_{1}}F(x,y)\,d\mu _{1}(x)\right|^{p}\,d\mu _{2}(y)\right)^{\frac{1}{p}}=
\left\| \int _{S_{1}}F(x,\cdot)\,d\mu _{1}(x)\right\|_{L^p(S_2,\mu_2)}\\&\qquad=
\sup_{{g\in L^q(S_2,\mu_2)}\atop{\|g\|_{L^q(S_2,\mu_2)}=1}}\int_{S_2}
\left(\int _{S_{1}}F(x,y)\,d\mu _{1}(x)\right)\,g(y)\,d\mu_2(y).
\end{split}\end{equation}

We can also assume that the right-hand side of~\eqref{MLAerSM:ijfKKSMdf02:RHDN} is finite, otherwise we are done. As a consequence, if~$g\in L^q(S_2,\mu_2)$, H\"older's Inequality yields that
\begin{eqnarray*}&&\int_{S_1}
\left(\int _{S_2}|F(x,y)|\,|g(y)|\,d\mu_2(y)\right)\,d\mu _{1}(x)\\&&\qquad\le\int_{S_1}
\left(
\left(\int _{S_2}|F(x,y)|^p\,d\mu_2(y)\right)^{\frac1p}
\left(\int _{S_2}|g(y)|^q\,d\mu_2(y)\right)^{\frac1q}
\right)\,d\mu _{1}(x)\\&&\qquad\le\|g\|_{L^q(S_2,\mu_2)}
\int _{S_{1}}\left(\int _{S_{2}}|F(x,y)|^{p}\,d\mu _{2}(y)\right)^{\frac{1}{p}}d\mu _{1}(x),
\end{eqnarray*}
which is finite.

We are thus allowed to use Tonelli's Theorem in the last term of~\eqref{MLAerSM:ijfKKSMdf02:RHDN-CSBN}
and obtain
\begin{eqnarray*}&&
\left(\int _{S_{2}}\left|\int _{S_{1}}F(x,y)\,d\mu _{1}(x)\right|^{p}\,d\mu _{2}(y)\right)^{\frac{1}{p}}=
\sup_{{g\in L^q(S_2,\mu_2)}\atop{\|g\|_{L^q(S_2,\mu_2)}=1}}\int_{S_1}
\left(\int _{S_2}F(x,y)\,g(y)\,d\mu_2(y)\right)\,d\mu _{1}(x)\\&&\qquad
\le\sup_{{g\in L^q(S_2,\mu_2)}\atop{\|g\|_{L^q(S_2,\mu_2)}=1}}\int_{S_1}
\left(\int _{S_2}|F(x,y)|^p\,d\mu_2(y)\right)^{\frac1p}
\left(\int _{S_2}|g(y)|^q\,d\mu_2(y)\right)^{\frac1q}
\,d\mu _{1}(x)\\&&\qquad=
\int _{S_{1}}\left(\int _{S_{2}}|F(x,y)|^{p}\,d\mu _{2}(y)\right)^{\frac{1}{p}}d\mu _{1}(x),
\end{eqnarray*}which establishes~\eqref{MLAerSM:ijfKKSMdf02:RHDN}.
\end{proof}

We observe that the~$\sigma$-finiteness assumption in Theorem~\ref{MLAerSM:ijfKKSMdf02} is
used in its proof to rely on Tonelli's Theorem
and cannot be, in general, removed: for instance, if~$S_1=S_2:=\R$,~$\mu_1$ is the counting measure and~$\mu_2$ is the Lebesgue measure, taking
$$ F(x,y):=\begin{dcases} 1&{\mbox{ if }}x=y,\\ 0&{\mbox{ otherwise,}}\end{dcases}$$
we have that, for every~$y\in\R$,
$$ \int _{S_{1}}F(x,y)\,d\mu _{1}(x)=\int _{\{y\}}\,d\mu _{1}(x)=1$$
and therefore
$$ \int _{S_{2}}\left|\int _{S_{1}}F(x,y)\,d\mu _{1}(x)\right|^{p}\,d\mu _{2}(y)=
\int _{\R} \,dy=+\infty.$$
However, for every~$x\in\R$,
$$ \int _{S_{2}}|F(x,y)|^{p}\,d\mu _{2}(y)=\int _{\{x\}}\,dy=0$$
and thus
$$ \int _{S_{1}}\left(\int _{S_{2}}|F(x,y)|^{p}\,d\mu _{2}(y)\right)^{\frac{1}{p}}d\mu _{1}(x)=0,$$
highlighting the importance of the~$\sigma$-finiteness assumption in Theorem~\ref{MLAerSM:ijfKKSMdf02}.

For further observations on the~$\sigma$-finiteness assumption in classical results of measure theory
and for different possible relaxations of such hypothesis, see~\cite{MR313470, MR328016}.

See also~\cite{MR1681462} for further information on the Minkowski's Integral Inequality.

\chapter{Combinatorial identities}

In this appendix we collect some useful combinatorial identities.

\begin{lemma}[Pascal's triangle]
For every~$j$, $N\in\N$ with~$j\leq N$ it holds that
\begin{align}\label{903ur2jipof}
\binom{N}{j}=\binom{N-1}{j-1}+\binom{N-1}{j}.
\end{align}
\end{lemma}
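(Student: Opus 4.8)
The plan is to reduce everything to the factorial expression for binomial coefficients, treating the two extreme values $j\in\{0,N\}$ separately. Recall that for $0\le j\le N$ one has $\binom{N}{j}=\frac{N!}{j!\,(N-j)!}$, together with the standard conventions $\binom{m}{-1}=0$ and $\binom{m}{m+1}=0$ for $m\in\N$.

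First I would dispatch the boundary cases. If $j=0$, then the right-hand side of~\eqref{903ur2jipof} equals $\binom{N-1}{-1}+\binom{N-1}{0}=0+1=1=\binom{N}{0}$, so the identity holds (here one tacitly assumes $N\ge1$; for $N=0$ the statement is vacuous or handled by convention). Symmetrically, if $j=N\ge1$, the right-hand side is $\binom{N-1}{N-1}+\binom{N-1}{N}=1+0=1=\binom{N}{N}$.

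Next, for the generic range $1\le j\le N-1$, I would simply compute, writing both terms on the right over the common denominator $j!\,(N-j)!$:
\begin{align*}
\binom{N-1}{j-1}+\binom{N-1}{j}
&=\frac{(N-1)!}{(j-1)!\,(N-j)!}+\frac{(N-1)!}{j!\,(N-1-j)!}\\
&=\frac{j\,(N-1)!}{j!\,(N-j)!}+\frac{(N-j)\,(N-1)!}{j!\,(N-j)!}\\
&=\frac{\bigl(j+(N-j)\bigr)(N-1)!}{j!\,(N-j)!}
=\frac{N!}{j!\,(N-j)!}=\binom{N}{j},
\end{align*}
which is exactly~\eqref{903ur2jipof}.

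There is no serious obstacle here: the identity is completely elementary, and the only point demanding a little care is the consistent use of the conventions $\binom{m}{-1}=0$ and $\binom{m}{m+1}=0$, so that the stated formula remains valid at the endpoints $j=0$ and $j=N$. As an alternative one-line argument, I could instead give the purely combinatorial proof: $\binom{N}{j}$ counts the $j$-element subsets of $\{1,\dots,N\}$, and partitioning these according to whether they contain the element $N$ yields $\binom{N-1}{j-1}$ subsets that do and $\binom{N-1}{j}$ that do not, reproving~\eqref{903ur2jipof} directly and uniformly in $j$.
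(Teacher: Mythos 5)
Your proof is correct and follows essentially the same route as the paper: the paper multiplies the identity $\frac{N}{j(N-j)}=\frac{1}{j}+\frac{1}{N-j}$ by $(N-1)!/\big((j-1)!\,(N-j-1)!\big)$, which is exactly your common-denominator computation in disguise. If anything, you are slightly more careful than the paper in treating the endpoint cases $j=0$ and $j=N$ via the convention $\binom{m}{-1}=\binom{m}{m+1}=0$, which the paper's argument (dividing by $j$ and $N-j$) tacitly skips.
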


\begin{proof}
Multiplying both sides of the following identity
\begin{align*}
\frac{N}{(N-j)j}=\frac1{N-j}+\frac1j
\end{align*}
by~$(N-1)!/((N-j-1)!(j-1)!)$ gives the desired result.
\end{proof}

\begin{lemma}
For every~$m\in\N\cap[2,+\infty)$ and~$k\in\{-m+2,\ldots,m-2\}$ it holds that
\begin{align}\label{fihenklwc}
\binom{2m}{m-k}=
\binom{2m-2}{m-2-k}+2\binom{2m-2}{m-1-k}+\binom{2m-2}{m-k}.
\end{align}
\end{lemma}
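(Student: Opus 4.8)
The plan is to derive \eqref{fihenklwc} directly from two applications of Pascal's rule \eqref{903ur2jipof}. First I would record that the hypothesis $k\in\{-m+2,\ldots,m-2\}$ guarantees $m-k\in\{2,\ldots,2m-2\}$, so that $m-k$, $m-k-1=m-1-k$ and $m-k-2=m-2-k$ all lie in $\{0,\ldots,2m-2\}$; this ensures every binomial coefficient appearing below is a genuine entry of a Pascal triangle and that \eqref{903ur2jipof} applies at each step with no boundary pathology.

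The computation itself runs as follows. Applying \eqref{903ur2jipof} with $N=2m$ and $j=m-k$ gives
\begin{align*}
\binom{2m}{m-k}=\binom{2m-1}{m-k-1}+\binom{2m-1}{m-k}.
\end{align*}
Applying \eqref{903ur2jipof} once more to each summand (with $N=2m-1$ and $j=m-k-1$, respectively $j=m-k$) yields
\begin{align*}
\binom{2m-1}{m-k-1}&=\binom{2m-2}{m-k-2}+\binom{2m-2}{m-k-1},\\
\binom{2m-1}{m-k}&=\binom{2m-2}{m-k-1}+\binom{2m-2}{m-k}.
\end{align*}
Summing these two lines, the two copies of $\binom{2m-2}{m-k-1}$ combine into $2\binom{2m-2}{m-k-1}$, and rewriting $m-k-2=m-2-k$ and $m-k-1=m-1-k$ produces exactly the right-hand side of \eqref{fihenklwc}.

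I do not anticipate any genuine obstacle here: the identity is a purely mechanical telescoping of Pascal's rule, and the only point requiring a word of care is the index bookkeeping, which the stated range of $k$ handles cleanly. If one wished, one could alternatively give a combinatorial proof by splitting the $(m-k)$-subsets of a $2m$-element set according to how many of two distinguished elements they contain, but the algebraic route via \eqref{903ur2jipof} is shorter and self-contained within the material already developed.
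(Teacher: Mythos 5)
Your proof is correct, and it takes a genuinely different route from the paper's. You derive \eqref{fihenklwc} by iterating the already-established Pascal's rule \eqref{903ur2jipof}: one application at level $2m$ and two at level $2m-1$, after which the two copies of $\binom{2m-2}{m-1-k}$ merge into the coefficient $2$. Your index bookkeeping is sound: the hypothesis $k\in\{-m+2,\ldots,m-2\}$ gives $m-k\in\{2,\ldots,2m-2\}$, so every application of \eqref{903ur2jipof} you make has its lower index strictly between $0$ and the upper index, which is exactly the regime in which the paper's proof of Pascal's rule (which divides by $(j-1)!$ and $(N-j-1)!$) is valid. The paper instead proves the identity in the same style as its proof of \eqref{903ur2jipof}: it divides through by the common factor $(2m-2)!/\big((m+k-2)!\,(m-k-2)!\big)$, reduces the claim to an identity among rational functions of $m$ and $k$, verifies that identity, and multiplies back. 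Your approach buys economy and transparency: it reuses a lemma already in hand, makes the $1,2,1$ ``Pascal squared'' structure visible, and avoids any fresh factorial algebra — which is a real advantage here, since the rational-function identity displayed in the paper's proof is exactly the kind of step where a slip in a factor such as $(m+k-1)$ versus $(m+k+1)$ can creep in. The paper's computation, on the other hand, is a self-contained one-line verification that does not depend on the earlier lemma and generalizes mechanically to analogous higher-order decompositions. Either argument is acceptable; yours is complete as written.
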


\begin{proof}
Multiplying both sides of the following identity 
\begin{align*}
& \frac{2m(2m-1)}{(m+k+1)(m+k)(m-k)(m-k-1)}\\
& =\frac1{(m+k)(m+k-1)}+\frac2{(m+k-1)(m-k-1)}+\frac1{(m-k)(m-k-1)}
\end{align*}
by~$(2m-2)!/((m+k-2)!(m-k-2)!)$ gives the desired result.
\end{proof}

\begin{lemma}[Binomial formula]
For every~$a$, $b\in\R$ and~$N\in\N$ it holds that
\begin{align}\label{binom-form}
(a+b)^N=\sum_{j=0}^N\binom{N}{j}a^jb^{N-j}.
\end{align}
\end{lemma}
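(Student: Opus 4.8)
\textbf{Proof proposal for~\eqref{binom-form}.} The plan is to argue by induction on~$N$, using the Pascal's triangle identity~\eqref{903ur2jipof} already established.

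For the base case~$N=0$, both sides equal~$1$, since~$(a+b)^0=1$ and~$\sum_{j=0}^0\binom{0}{j}a^jb^{-j+0}=\binom00 a^0b^0=1$. (One can equally well start the induction at~$N=1$, where the right-hand side is~$\binom10 b+\binom11 a=a+b$.)

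For the inductive step, suppose that~\eqref{binom-form} holds for some~$N\in\N$. Then I would write
\begin{align*}
(a+b)^{N+1}&=(a+b)\sum_{j=0}^N\binom{N}{j}a^jb^{N-j}
=\sum_{j=0}^N\binom{N}{j}a^{j+1}b^{N-j}+\sum_{j=0}^N\binom{N}{j}a^{j}b^{N+1-j}.
\end{align*}
In the first sum I would reindex via~$i:=j+1$, obtaining~$\sum_{i=1}^{N+1}\binom{N}{i-1}a^{i}b^{N+1-i}$, while the second sum is~$\sum_{i=0}^{N}\binom{N}{i}a^{i}b^{N+1-i}$. Splitting off the term~$i=N+1$ from the first sum and the term~$i=0$ from the second, and matching the remaining ranges~$i\in\{1,\dots,N\}$, the combined coefficient of~$a^ib^{N+1-i}$ is~$\binom{N}{i-1}+\binom{N}{i}$, which equals~$\binom{N+1}{i}$ by~\eqref{903ur2jipof}. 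Since~$\binom{N}{N}=1=\binom{N+1}{N+1}$ and~$\binom{N}{0}=1=\binom{N+1}{0}$, the endpoint terms also fit the pattern, so
\[
(a+b)^{N+1}=\sum_{i=0}^{N+1}\binom{N+1}{i}a^ib^{N+1-i},
\]
completing the induction.

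I do not expect any genuine obstacle here: the only thing requiring care is the index shift and the separate treatment of the two endpoint terms~$i=0$ and~$i=N+1$, so that the Pascal identity~\eqref{903ur2jipof} can be applied uniformly on the overlapping range. An alternative, purely combinatorial proof (expanding the product~$(a+b)\cdots(a+b)$ of~$N$ factors and counting the subsets of factors contributing~$a$) is also available, but the inductive argument is cleaner to write and reuses the identity already proven in this appendix.
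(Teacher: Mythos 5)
Your proof is correct and follows essentially the same route as the paper: induction on~$N$, expanding~$(a+b)\cdot(a+b)^{N}$, shifting the index in one sum, peeling off the two endpoint terms, and applying the Pascal identity~\eqref{903ur2jipof} on the overlapping range. The only cosmetic difference is that the paper phrases the step as passing from~$N-1$ to~$N$ rather than from~$N$ to~$N+1$.
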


\begin{proof}
We will prove formula~\eqref{binom-form} by induction on~$N$.
For~$N=0,1$, the claim is obvious. 

For~$N\geq 2$, we suppose that the claim is true for~$N-1$ and we prove it for~$N$. The precise computations are as follows:
we use the inductive hypothesis to write that
\begin{align*}
&(a+b)^N =
(a+b)(a+b)^{N-1}=(a+b)\sum_{j=0}^{N-1}\binom{N-1}{j}a^jb^{N-1-j} \\
&\qquad=
\sum_{j=0}^{N-1}\binom{N-1}{j}a^{j+1}b^{N-1-j}+\sum_{j=0}^{N-1}\binom{N-1}{j}a^jb^{N-j} \\
&\qquad=
\sum_{j=1}^{N}\binom{N-1}{j-1}a^jb^{N-j}+\sum_{j=0}^{N-1}\binom{N-1}{j}a^jb^{N-j} \\
&\qquad=
a^N+b^N+
\sum_{j=1}^{N-1}\left[
\binom{N-1}{j-1}+\binom{N-1}{j}\right]a^jb^{N-j}.
\end{align*}
Then, it suffices to use~\eqref{903ur2jipof} to conclude the induction step.
\end{proof}

\begin{proposition}
For every~$m\in\N$ it holds that
\begin{align}\label{foot-combino}
\sum_{k=-m}^m(-1)^k\binom{2m}{m-k}=0.
\end{align}
\end{proposition}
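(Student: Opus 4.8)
The statement to prove is the combinatorial identity
\[
\sum_{k=-m}^m(-1)^k\binom{2m}{m-k}=0,
\]
which is really the vanishing of an alternating sum of binomial coefficients. The plan is to recognise this as a reindexed instance of the classical fact that $\sum_{j=0}^{N}(-1)^j\binom{N}{j}=0$ for $N\ge 1$, applied with $N=2m$.

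First I would perform the substitution $j=m-k$. As $k$ runs over $\{-m,\dots,m\}$, the index $j$ runs over $\{0,1,\dots,2m\}$, and $(-1)^k=(-1)^{m-j}=(-1)^m(-1)^{-j}=(-1)^m(-1)^j$. Hence
\[
\sum_{k=-m}^m(-1)^k\binom{2m}{m-k}=(-1)^m\sum_{j=0}^{2m}(-1)^j\binom{2m}{j}.
\]
So the claim reduces to showing $\sum_{j=0}^{2m}(-1)^j\binom{2m}{j}=0$. For this I would invoke the Binomial formula~\eqref{binom-form} with $a:=-1$, $b:=1$ and $N:=2m$, which gives
\[
0=(-1+1)^{2m}=\sum_{j=0}^{2m}\binom{2m}{j}(-1)^j\,1^{2m-j}=\sum_{j=0}^{2m}(-1)^j\binom{2m}{j},
\]
using here that $2m\ge 1$ so that $0^{2m}=0$ (if one is worried about the degenerate case $m=0$, note the sum then has the single term $k=0$ with value $\binom00=1$—wait, this would not vanish, so one must either stipulate $m\ge1$ in the statement or observe that the convention $0^0=1$ is irrelevant because the intended range is $m\in\N\setminus\{0\}$ as in~\eqref{HI6}; I would simply remark that the identity is stated and used for $m\ge 1$).

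Combining the two displays, $\sum_{k=-m}^m(-1)^k\binom{2m}{m-k}=(-1)^m\cdot 0=0$, which is the desired conclusion. There is essentially no obstacle here: the only mild subtlety is the bookkeeping of the sign $(-1)^k$ under the reindexing and the (harmless) boundary case $m=0$, both of which are routine. An alternative, if one prefers to avoid even mentioning the $m=0$ case, is to pair each term $k$ with $-k$ and each $k$ with $k\pm 1$ via Pascal's rule~\eqref{903ur2jipof}, producing a telescoping cancellation, but the Binomial formula route is shorter and cleaner, so that is the one I would write out.
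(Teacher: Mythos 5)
Your proof is correct and follows essentially the same route as the paper: reindex via $j=m-k$ and apply the binomial formula~\eqref{binom-form} to $(1-1)^{2m}$. Your parenthetical about $m=0$ is a fair observation — the identity indeed fails there (the sum equals $1$), and the paper's ``for every $m\in\N$'' silently presumes $m\ge 1$ — but aside from flagging that edge case your argument matches the paper's.
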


\begin{proof}
This can be proved by simply using the binomial formula~\eqref{binom-form}
after a translation of indices. Precisely,
\begin{eqnarray*}
&&\sum_{k=-m}^m(-1)^k\binom{2m}{m-k}=\sum_{j=0}^{2m}\binom{2m}{j}(-1)^{m-j}
\\&&\qquad=(-1)^m\sum_{j=0}^{2m}\binom{2m}{j}(-1)^{2m-j}
=(-1)^m(1-1)^{2m}=0,
\end{eqnarray*} as desired.
\end{proof}

\begin{lemma}\label{lemma:B5peotrugb}
Let~$m$, $N\in\N$, with~$N$ even and such that~$N\leq 2m-1$.

Let
\begin{equation*}
L(m,N):=\sum_{k=-m}^m(-1)^k\binom{2m}{m-k}k^N.
\end{equation*}

Then,
$$ L(m,N)=-2\sum_{j=1}^{N/2}\binom{N}{2j} L(m-1, N-2j).$$
\end{lemma}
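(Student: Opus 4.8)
The plan is to apply the three-term Pascal-type identity~\eqref{fihenklwc} to each coefficient $\binom{2m}{m-k}$ appearing in $L(m,N)$ and then to reorganize the resulting sums so that they reassemble into copies of $L(m-1,\cdot)$. The first step is to adopt the standard convention $\binom{p}{q}=0$ whenever $q<0$ or $q>p$, and to observe that with this convention~\eqref{fihenklwc} is valid for \emph{every} $k\in\{-m,\dots,m\}$, not merely for $k\in\{-m+2,\dots,m-2\}$. Indeed, the four boundary cases are checked directly: for $k=m$ both sides equal $\binom{2m}{0}=1$, for $k=m-1$ both sides equal $\binom{2m}{1}=2m$, and the cases $k=-m$, $k=-(m-1)$ follow by the symmetry $\binom{2m}{m-k}=\binom{2m}{m+k}$. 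We may assume $N\ge 2$: when $N=0$ the right-hand side of the asserted identity is an empty sum and $L(m,0)=0$ is exactly~\eqref{foot-combino}. Note that $N\ge 2$ together with $N\le 2m-1$ forces $m\ge 2$, so~\eqref{fihenklwc} is indeed available.

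Next I would substitute~\eqref{fihenklwc} into the definition of $L(m,N)$, writing $L(m,N)=A+2B+C$ with
\[
A=\sum_{k=-m}^m(-1)^k k^N\binom{2m-2}{m-2-k},\quad
B=\sum_{k=-m}^m(-1)^k k^N\binom{2m-2}{m-1-k},\quad
C=\sum_{k=-m}^m(-1)^k k^N\binom{2m-2}{m-k}.
\]
In $A$ I would change the summation index by $l=k+1$, in $C$ by $l=k-1$, and leave $B$ unchanged with $l:=k$. Because the binomial $\binom{2m-2}{m-1-l}$ vanishes unless $-(m-1)\le l\le m-1$, in each of the three resulting sums only the indices $l\in\{-(m-1),\dots,m-1\}$ contribute — the spurious boundary indices produced by the shifts ($l=\pm m$, $l=\pm(m+1)$) all carry a zero coefficient — so all three sums run over exactly the range occurring in $L(m-1,\cdot)$. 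Collecting over the common index $l$ (the shifts turn $(-1)^k$ into $-(-1)^l$ in $A$ and $C$) gives
\[
L(m,N)=-\sum_{l=-(m-1)}^{m-1}(-1)^l\binom{2m-2}{m-1-l}\Bigl[(l-1)^N-2l^N+(l+1)^N\Bigr].
\]

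Then I would expand the bracket using the binomial formula~\eqref{binom-form}: since $(l+1)^N+(l-1)^N=2\sum_{i\ \mathrm{even}}\binom{N}{i}l^{N-i}$ and $N$ is even, subtracting the $i=0$ term $2l^N$ leaves
\[
(l-1)^N-2l^N+(l+1)^N=2\sum_{j=1}^{N/2}\binom{N}{2j}\,l^{N-2j}.
\]
Substituting this identity and interchanging the two finite sums yields
\[
L(m,N)=-2\sum_{j=1}^{N/2}\binom{N}{2j}\sum_{l=-(m-1)}^{m-1}(-1)^l\binom{2m-2}{m-1-l}\,l^{N-2j}
=-2\sum_{j=1}^{N/2}\binom{N}{2j}\,L(m-1,N-2j),
\]
which is the desired formula. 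The only genuinely delicate point is the index bookkeeping in the second step — ensuring that after the shifts $l=k+1$, $l=k$, $l=k-1$ the three sums share precisely the range $\{-(m-1),\dots,m-1\}$ and that no nonzero boundary term is lost — but this is entirely controlled by the extended validity of~\eqref{fihenklwc} established at the outset, so I expect no real difficulty beyond careful casework.
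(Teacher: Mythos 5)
Your proof is correct and follows essentially the same route as the paper's: apply the three-term identity~\eqref{fihenklwc}, shift indices to align the three sums on the range of $L(m-1,\cdot)$, and expand $(l\pm1)^N$ by the binomial formula to produce the factor $-2\sum_{j\ge1}\binom{N}{2j}l^{N-2j}$. The only difference is cosmetic bookkeeping: you extend~\eqref{fihenklwc} to the boundary indices $k=\pm m,\pm(m-1)$ via the zero-binomial convention, whereas the paper treats those four boundary terms explicitly before recombining the sums.
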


\begin{proof} We exploit~\eqref{fihenklwc} to see that
\begin{align*}
& \sum_{k=-m}^m(-1)^k\binom{2m}{m-k}k^N \\
&=
(-1)^{-m}(-m)^N+(-1)^{-m+1}2m(-m+1)^N+(-1)^{m-1}2m(m-1)^N+(-1)^m m^N \\
&\qquad
+\sum_{k=-m+2}^{m-2}(-1)^k\left[\binom{2m-2}{m-2-k}+2\binom{2m-2}{m-1-k}+\binom{2m-2}{m-k}\right]k^N\\
&=
\sum_{k=-m}^{m-2}(-1)^k\binom{2m-2}{m-2-k}k^N
+2\sum_{k=-m+1}^{m-1}(-1)^k\binom{2m-2}{m-1-k}k^N 
+\sum_{k=-m+2}^{m}(-1)^k\binom{2m-2}{m-k}k^N .
\end{align*}
Thus, a translation of indices gives that
\begin{equation}\label{ufewi8743qazxertgr4i3yt80}\begin{split}
& \sum_{k=-m}^m(-1)^k\binom{2m}{m-k}k^N \\
&=
-\sum_{k=-m+1}^{m-1}(-1)^k\binom{2m-2}{m-1-k}(k-1)^N
+2\sum_{k=-m+1}^{m-1}(-1)^k\binom{2m-2}{m-1-k}k^N \\
&\qquad
-\sum_{k=-m+1}^{m-1}(-1)^k\binom{2m-2}{m-1-k}(k+1)^N
.
\end{split}\end{equation}

We now use the Binomial Theorem to write that
\begin{eqnarray*}&& (k-1)^N= \sum_{j=0}^N \binom{N}{j}k^{N-j}(-1)^j 
\\{\mbox{and }}&& (k+1)^N= \sum_{j=0}^N \binom{N}{j}k^{N-j} .
\end{eqnarray*}
Consequently,
\begin{eqnarray*}
(k-1)^N+(k+1)^N&=&\sum_{j=0}^N \binom{N}{j}k^{N-j}\Big((-1)^j+1\Big)\\&=&2
\sum_{j=0}^{N/2} \binom{N}{2j}k^{N-2j}.
\end{eqnarray*}
As a result of this, we find that
$$ 2k^N -(k-1)^N-(k+1)^N= -2\sum_{j=1}^{N/2} \binom{N}{2j}k^{N-2j}.
$$

Plugging this information into~\eqref{ufewi8743qazxertgr4i3yt80}, we conclude that
\begin{equation*}\begin{split}
& \sum_{k=-m}^m(-1)^k\binom{2m}{m-k}k^N \\
&=-2 \sum_{j=1}^{N/2} \binom{N}{2j} \sum_{k=-m+1}^{m-1}(-1)^k\binom{2m-2}{m-1-k}k^{N-2j},
\end{split}\end{equation*}
which gives the desired result.
\end{proof}

\begin{theorem}
Let~$m$, $N\in\N$.

If either~$N$ is odds or~$N$ is even and~$N\leq 2m-1$, it holds that
\begin{align}\label{app-combinissima}
\sum_{k=-m}^m(-1)^k\binom{2m}{m-k}k^N=0.
\end{align}
\end{theorem}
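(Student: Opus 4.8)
The plan is to prove the combinatorial identity~\eqref{app-combinissima} by splitting into the two cases indicated in the statement and treating the even case by an induction on $N$ that descends via Lemma~\ref{lemma:B5peotrugb}. First I would dispose of the odd case: when $N$ is odd, the map $k\mapsto -k$ sends the summand $(-1)^k\binom{2m}{m-k}k^N$ to $(-1)^{-k}\binom{2m}{m+k}(-k)^N=-(-1)^k\binom{2m}{m-k}k^N$, using the symmetry $\binom{2m}{m-k}=\binom{2m}{m+k}$ already recorded in the excerpt. Hence the sum equals its own negative and must vanish. This handles all odd $N$ with no restriction on $m$.

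For the even case, write $N=2M$ and argue by strong induction on $M$ (equivalently on $N$), the inductive statement being: for all $m\in\N$ with $2M\le 2m-1$, one has $L(m,2M)=0$, where $L(m,N)$ is the quantity defined in Lemma~\ref{lemma:B5peotrugb}. The base case $M=0$, i.e. $N=0$, is precisely the identity~\eqref{foot-combino}, which is already proved in the excerpt: $\sum_{k=-m}^m(-1)^k\binom{2m}{m-k}=0$. For the inductive step, suppose the claim holds for all even values strictly less than $N=2M$ (and all admissible $m$). Fix $m$ with $2M\le 2m-1$, equivalently $M\le m-1$, so that in particular $m-1\ge 1$. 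Lemma~\ref{lemma:B5peotrugb} gives
\begin{equation*}
L(m,N)=-2\sum_{j=1}^{N/2}\binom{N}{2j}\,L(m-1,N-2j).
\end{equation*}
For each $j\in\{1,\dots,N/2\}$, the argument $N-2j$ is a nonnegative even integer strictly less than $N$, and the constraint for applying the inductive hypothesis to $L(m-1,N-2j)$ is $N-2j\le 2(m-1)-1=2m-3$; since $N-2j\le N-2\le 2m-3$ (using $N\le 2m-1$), this constraint is met. Therefore every term $L(m-1,N-2j)$ vanishes by the inductive hypothesis, whence $L(m,N)=0$, completing the induction.

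Finally I would combine the two cases: if $N$ is odd, the symmetry argument gives~\eqref{app-combinissima}; if $N$ is even with $N\le 2m-1$, the induction gives it. I do not expect a genuine obstacle here, since the structural recursion is handed to us by Lemma~\ref{lemma:B5peotrugb} and the base case by~\eqref{foot-combino}; the only point requiring a little care is the bookkeeping of the admissibility condition $N-2j\le 2(m-1)-1$ in the inductive step, making sure it follows from $N\le 2m-1$ and $j\ge 1$, which it does. One should also note that Lemma~\ref{lemma:B5peotrugb} is only stated for even $N\le 2m-1$, so it is important that the induction stays within that regime — which it does, precisely because of the hypothesis in the theorem — and that the odd case is handled separately rather than through the recursion.
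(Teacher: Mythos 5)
Your proof is correct and follows essentially the same route as the paper: the odd case is dispatched by the symmetry $\binom{2m}{m-k}=\binom{2m}{m+k}$, and the even case by induction on $N$ using the recursion of Lemma~\ref{lemma:B5peotrugb} with base case~\eqref{foot-combino}. Your explicit check that $N-2j\le 2(m-1)-1$ follows from $N\le 2m-1$ and $j\ge1$ is exactly the bookkeeping the paper's terser inductive step relies on, so nothing is missing.
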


\begin{proof}
If~$N\in\N$ is odd, we use the following observation
\begin{align*}
\binom{2m}{m+j}=\binom{2m}{m-j}
\qquad\text{for any }m\in\N \text{ and }j\in\{-m,\ldots,m\}
\end{align*}
to conclude that
\begin{eqnarray*}&&
-\sum_{k=-m}^m(-1)^k\binom{2m}{m-k}k^N
=
\sum_{k=-m}^m(-1)^k\binom{2m}{m-k}(-k)^N
\\&&\qquad=
\sum_{j=-m}^m(-1)^j\binom{2m}{m+j}j^N
=
\sum_{j=-m}^m(-1)^j\binom{2m}{m-j}j^N,
\end{eqnarray*}
which gives the desired result in this case.

If~$N\in\N$ is even and~$N\leq 2m-1$, 
we argue by induction on~$N$. If~$N=0$, the desired formula follows from~\eqref{foot-combino}.

If~$N\ge2$, we suppose that the formula holds true up to~$N-2$ and we prove it for~$N$. For this,
we exploit Lemma~\ref{lemma:B5peotrugb}
and we see that
\begin{eqnarray*}
\sum_{k=-m}^m(-1)^k\binom{2m}{m-k}k^N&=&L(m,N)
\\& =&-2\sum_{j=1}^{N/2}\binom{N}{2j} L(m-1, N-2j)\\&
=& -2\sum_{j=1}^{N/2}\binom{N}{2j} \sum_{k=-m+1}^{m-1}(-1)^k\binom{2m-2}{m-1-k}k^{N-2j},
\end{eqnarray*} which equals zero, thanks to the inductive hypothesis, as desired.
\end{proof}

\chapter{Special functions}

In this appendix we collect some useful results related to the Gamma function and the hypergeometric function, which belong to the class
of ``special functions''. We will limit ourselves to definitions and properties that are used throughout this monograph
and we refer the reader to~\cite{MR1688958} for historical insights, a thorough treatment and references on this topic.

\section{The Gamma function}\label{sec:gamma}

The Gamma function is the most common extension of the factorial function to complex numbers.
This problem has caught the attention of
several prominent mathematicians, including Bernoulli, Goldbach, Stirling, Euler, Gauss, Weierstrass and Legendre
(and it seems that the notation~$\Gamma$ is due to Legendre).
As such, the Gamma function has a long history and,
in the words of Philip J. Davis, ``each generation has found something of interest to say about the gamma function. Perhaps the next generation will also''.
See~\cite{MR106810} for a historical introduction to the Gamma function.
\medskip

For our purposes, we will only need the definition of the Gamma function on positive real numbers, therefore
we will not delve into the complicated aspects related to its extension to complex numbers.

\begin{definition}
The function~$\Gamma:(0,+\infty)\to \R$ defined as
\begin{equation}\label{gamma-def}
\Gamma(a):=\int_0^{+\infty}e^{-t}t^{a-1}\;dt
\end{equation}
is called the (Euler) Gamma function.
\end{definition}

One of the main features of the Gamma function is that it extends the factorial to positive real numbers, according to the
following observation:

\begin{lemma}
We have that
\begin{eqnarray}&&\Gamma(a+1) = a\Gamma(a) \qquad \text{for any }a>0\label{gamma-recursive}
 \\
{\mbox{and }}&&\Gamma(a) = (a-1)!\qquad \text{for any }a\in\N. \label{gamma-integer}
\end{eqnarray}
\end{lemma}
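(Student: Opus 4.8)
\textbf{Proof plan for~\eqref{gamma-recursive} and~\eqref{gamma-integer}.}

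The plan is to prove~\eqref{gamma-recursive} directly from the integral definition~\eqref{gamma-def} via integration by parts, and then to deduce~\eqref{gamma-integer} by induction on~$a$. First I would fix~$a>0$ and write~$\Gamma(a+1)=\int_0^{+\infty}e^{-t}t^a\;dt$. Integrating by parts with the choice~$u:=t^a$ and~$dv:=e^{-t}\,dt$ (so that~$du=at^{a-1}\,dt$ and~$v:=-e^{-t}$) gives
\begin{align*}
\Gamma(a+1)=\Big[-e^{-t}t^a\Big]_{t=0}^{+\infty}+a\int_0^{+\infty}e^{-t}t^{a-1}\;dt.
\end{align*}
The boundary term vanishes at both endpoints: at~$t\to+\infty$ because~$e^{-t}t^a\to0$ (the exponential dominates any power), and at~$t=0$ because~$a>0$ forces~$t^a\to0$ while~$e^{-t}\to1$. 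The remaining integral is exactly~$a\,\Gamma(a)$ by~\eqref{gamma-def}, which establishes~\eqref{gamma-recursive}. One should note that for~$a\in(0,1)$ the integrand~$e^{-t}t^{a-1}$ is singular at the origin but still integrable there, so~$\Gamma(a)$ is finite and the manipulation is legitimate; this is the only point requiring a small amount of care.

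For~\eqref{gamma-integer} I would argue by induction on~$a\in\N$. The base case is~$a=1$: directly from~\eqref{gamma-def},
\begin{align*}
\Gamma(1)=\int_0^{+\infty}e^{-t}\;dt=\Big[-e^{-t}\Big]_{t=0}^{+\infty}=1=0!,
\end{align*}
which matches~$(a-1)!$ for~$a=1$. For the inductive step, assume~$\Gamma(a)=(a-1)!$ for some~$a\in\N$ with~$a\ge1$; then by~\eqref{gamma-recursive} we have~$\Gamma(a+1)=a\,\Gamma(a)=a\cdot(a-1)!=a!$, which is~$((a+1)-1)!$, completing the induction.

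I do not expect any serious obstacle here: the only substantive analytic inputs are the integrability of~$e^{-t}t^{a-1}$ near~$t=0$ for~$a>0$ and the decay of~$e^{-t}t^a$ at infinity, both of which are elementary. The mild subtlety worth flagging is that~\eqref{gamma-integer} as stated concerns~$a\in\N$, and one should be explicit about whether~$0\in\N$ in the paper's convention; if so, the case~$a=0$ is excluded from~\eqref{gamma-integer} anyway since~$\Gamma$ is defined only on~$(0,+\infty)$, so no additional comment is needed.
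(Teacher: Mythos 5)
Your proposal is correct and follows essentially the same route as the paper: integration by parts on~$\int_0^{+\infty}e^{-t}t^a\,dt$ to get~\eqref{gamma-recursive}, then~$\Gamma(1)=1$ combined with the recursion to obtain~\eqref{gamma-integer} (you merely spell out the induction that the paper leaves implicit). No issues.
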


\begin{proof}
The recursive property expressed by~\eqref{gamma-recursive}
follows from an integration by parts:
\begin{align*}
\Gamma(a+1)=\int_0^{+\infty}e^{-t}t^a\;dt=-e^{-t}t^a\Big|_{t=0}^{+\infty}+a\int_0^{+\infty}e^{-t}t^{a-1}\;dt=0+a\Gamma(a).
\end{align*}

Also, we point out that
$$ \Gamma(1)=\int_0^{+\infty}e^{-t}\;dt=1. $$
This fact, together with~\eqref{gamma-recursive}, gives
the identity in~\eqref{gamma-integer}.
\end{proof}

Moreover,~\eqref{gamma-recursive} can also be used to extend the definition of~$\Gamma$ 
to negative non-integer numbers.

\begin{definition}
Given~$a>0$, with~$a\not\in\N$, we define
\begin{align}\label{gamma-recursive2}
\Gamma(-a):=\frac{\Gamma(k-a)}{(k-a-1)(k-a-2)\cdots(1-a)(-a)}
\qquad\text{for }k\in\N,\ k>a.
\end{align}
\end{definition}

The function~$\Gamma$ turns out to be useful to give a concise representation of some integrals:
we give in particular the following examples.

\begin{lemma}
For any~$a$, $b>0$ it holds that
\begin{align}\label{beta-identity}
\frac{\Gamma(a)\,\Gamma(b)}{\Gamma(a+b)}=\int_0^1 v^{a-1}{(1-v)}^{b-1}\;dv.
\end{align}
In particular,
\begin{align}\label{gamma-one-half}
\Gamma\left(\frac12\right)=\sqrt\pi.
\end{align}
\end{lemma}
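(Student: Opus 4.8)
\textbf{Proof plan for the Beta identity \eqref{beta-identity} and the special value \eqref{gamma-one-half}.}

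The plan is to establish \eqref{beta-identity} by writing the product $\Gamma(a)\Gamma(b)$ as a double integral over the first quadrant and then switching to suitable coordinates. First I would start from the definition \eqref{gamma-def}, so that
\[
\Gamma(a)\,\Gamma(b)=\int_0^{+\infty}\int_0^{+\infty}e^{-(s+t)}\,s^{a-1}\,t^{b-1}\;ds\,dt,
\]
which is a legitimate product of absolutely convergent integrals (both $a,b>0$), hence Tonelli's Theorem applies. The natural change of variables is $s=uv$, $t=u(1-v)$ with $u\in(0,+\infty)$ and $v\in(0,1)$, whose Jacobian is $u$; under this substitution $s+t=u$ and $s^{a-1}t^{b-1}=u^{a+b-2}v^{a-1}(1-v)^{b-1}$, so the integral factors as
\[
\Gamma(a)\,\Gamma(b)=\left(\int_0^{+\infty}e^{-u}u^{a+b-1}\;du\right)\left(\int_0^1 v^{a-1}(1-v)^{b-1}\;dv\right)=\Gamma(a+b)\int_0^1 v^{a-1}(1-v)^{b-1}\;dv,
\]
where the first factor is $\Gamma(a+b)$ again by \eqref{gamma-def}. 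Dividing by $\Gamma(a+b)>0$ yields \eqref{beta-identity}.

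For \eqref{gamma-one-half} I would apply \eqref{beta-identity} with $a=b=\tfrac12$, obtaining
\[
\frac{\Gamma(\tfrac12)^2}{\Gamma(1)}=\int_0^1 v^{-1/2}(1-v)^{-1/2}\;dv.
\]
Since $\Gamma(1)=1$ (computed directly in the excerpt), it remains to evaluate the last integral: the substitution $v=\sin^2\theta$, $dv=2\sin\theta\cos\theta\,d\theta$, turns it into $\int_0^{\pi/2}2\,d\theta=\pi$. Hence $\Gamma(\tfrac12)^2=\pi$, and since $\Gamma(\tfrac12)>0$ by positivity of the integrand in \eqref{gamma-def}, we get $\Gamma(\tfrac12)=\sqrt\pi$.

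The only genuinely delicate point is the justification of Tonelli/Fubini for the double integral and the smooth invertibility of the change of variables on the open region $(0,+\infty)\times(0,1)$ (the boundary being a null set causes no trouble). Everything else is routine, so I expect this step to be the main — though very mild — obstacle; it can be dispatched by noting that the integrand $e^{-(s+t)}s^{a-1}t^{b-1}$ is nonnegative and measurable, so Tonelli applies unconditionally, and the map $(u,v)\mapsto(uv,u(1-v))$ is a $C^\infty$ diffeomorphism from $(0,+\infty)\times(0,1)$ onto the open first quadrant with Jacobian $u\ne 0$.
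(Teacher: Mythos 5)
Your proof is correct and follows essentially the same route as the paper: write $\Gamma(a)\Gamma(b)$ as a double integral and apply the change of variables $(s,t)=(uv,u(1-v))$ with Jacobian $u$, then specialize to $a=b=\tfrac12$. The only cosmetic difference is that the paper evaluates $\int_0^1 v^{-1/2}(1-v)^{-1/2}\,dv$ via the antiderivative $-2\arcsin\sqrt{1-v}$ instead of your substitution $v=\sin^2\theta$, which amounts to the same computation.
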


\begin{proof}
Let us start from the definition of~$\Gamma$ by writing
\begin{align*}
\Gamma(a)\Gamma(b)=
\left(\int_0^{+\infty} e^{-t}t^{a-1}\;dt\right)
\left(\int_0^{+\infty}e^{-r}r^{b-1}\;dr\right)
=\int_0^{+\infty}\int_0^{+\infty}e^{-t-r}t^{a-1}r^{b-1}\;dt\;dr.
\end{align*}
We now apply the change of variables
\[
(t,r):=\Big(uv,u(1-v)\Big),\qquad {\mbox{with}}\quad
D(t,r)=\left(\begin{matrix}
v & u \\ 
1-v & -u
\end{matrix}\right)
\]
which yields
\begin{align*}
\Gamma(a)\Gamma(b)=\int_0^{+\infty}\int_0^1 e^{-u}u^{a+b-2}v^{a-1}{(1-v)}^{b-1} \;dv\;u\;du
=\Gamma(a+b)\int_0^1 v^{a-1}{(1-v)}^{b-1}\;dv,
\end{align*}
which establishes~\eqref{beta-identity}.

A consequence of~\eqref{beta-identity} is obtained by choosing~$a=b=\frac12$ and recalling that~$\Gamma(1)=1$.
In this way, we see that
\begin{align*}
\left(\Gamma\left(\frac12\right)\right)^2=\int_0^1\frac{dv}{\sqrt{v(1-v)}}=-2\arcsin\sqrt{1-v}\,\Big|_{v=0}^1=\pi,
\end{align*} thus proving~\eqref{gamma-one-half}.
\end{proof}

Another consequence of~\eqref{beta-identity} is the so-called Legendre duplication formula.
\begin{lemma}
For any~$a>0$, it holds that
\begin{align}\label{gamma-dupli}
\Gamma(a)\Gamma\left(a+\frac12\right)=2^{1-2a}\sqrt\pi\,\Gamma(2a).
\end{align}
\end{lemma}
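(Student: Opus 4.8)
\textbf{Plan of proof for the Legendre duplication formula \eqref{gamma-dupli}.}

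The plan is to derive \eqref{gamma-dupli} from the Beta integral identity \eqref{beta-identity}, which is the only nontrivial input available at this point in the text. First I would record the special evaluation of \eqref{beta-identity} that does the work: taking $b:=a$ gives
\[
\frac{\Gamma(a)^2}{\Gamma(2a)}=\int_0^1 v^{a-1}(1-v)^{a-1}\,dv=\int_0^1\big(v(1-v)\big)^{a-1}\,dv.
\]
The right-hand side is symmetric about $v=\tfrac12$, so the substitution $v=\tfrac12(1+w)$, $dv=\tfrac12\,dw$, turns $v(1-v)=\tfrac14(1-w^2)$ and yields
\[
\int_0^1\big(v(1-v)\big)^{a-1}\,dv=\frac{1}{2^{2a-1}}\int_0^1(1-w^2)^{a-1}\,dw,
\]
where I have used evenness of the integrand to fold $[-1,1]$ onto $[0,1]$ with a factor $2$, which cancels the $\tfrac12$ from $dv$ and leaves the displayed constant.

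Next I would evaluate $\int_0^1(1-w^2)^{a-1}\,dw$ via the substitution $u:=w^2$, $dw=\tfrac12 u^{-1/2}\,du$, giving
\[
\int_0^1(1-w^2)^{a-1}\,dw=\frac12\int_0^1 u^{-1/2}(1-u)^{a-1}\,du
=\frac12\,\frac{\Gamma(\tfrac12)\,\Gamma(a)}{\Gamma(a+\tfrac12)},
\]
by another application of \eqref{beta-identity} with the parameters $\tfrac12$ and $a$. Now I recall $\Gamma(\tfrac12)=\sqrt\pi$ from \eqref{gamma-one-half}, so the last expression equals $\dfrac{\sqrt\pi\,\Gamma(a)}{2\,\Gamma(a+\tfrac12)}$.

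Finally I would assemble the chain of equalities:
\[
\frac{\Gamma(a)^2}{\Gamma(2a)}=\frac{1}{2^{2a-1}}\cdot\frac{\sqrt\pi\,\Gamma(a)}{2\,\Gamma(a+\tfrac12)}
=\frac{\sqrt\pi\,\Gamma(a)}{2^{2a}\,\Gamma(a+\tfrac12)},
\]
and cross-multiplying gives $\Gamma(a)\,\Gamma(a+\tfrac12)=2^{1-2a}\sqrt\pi\,\Gamma(2a)$, which is exactly \eqref{gamma-dupli}. There is no serious obstacle here; the only point requiring a little care is bookkeeping the powers of $2$ across the two substitutions (the fold of $[-1,1]$ onto $[0,1]$ and the $u=w^2$ change of variable), so I would double-check that the constant comes out as $2^{1-2a}$ rather than an off-by-a-power version. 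Since $a>0$ all integrals above converge absolutely, so Fubini/Tonelli issues do not arise and the manipulations are fully justified.
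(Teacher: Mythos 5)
Your strategy is essentially the same as the paper's: both proofs rest on two applications of \eqref{beta-identity} (once with parameters $(a,a)$, once bringing in $\Gamma(\tfrac12)=\sqrt\pi$ via \eqref{gamma-one-half}) and a quadratic change of variables that reduces everything to the integral $\int_0^1(1-w^2)^{a-1}\,dw$; the paper does this by substituting $v=\tfrac{1+u}2$ in $B(a,a)$ and $v=1-u^2$ in $B(a,\tfrac12)$ and comparing, while you evaluate the second integral by $u=w^2$ instead, which is the same computation in different clothing.

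However, the power-of-two bookkeeping that you yourself flagged as the delicate point is in fact off in your write-up. After $v=\tfrac12(1+w)$ one has $v(1-v)=\tfrac14(1-w^2)$ and $dv=\tfrac12\,dw$, so
\[
\int_0^1\big(v(1-v)\big)^{a-1}\,dv=\frac{1}{2^{2a-1}}\int_{-1}^{1}(1-w^2)^{a-1}\,dw=2^{2-2a}\int_0^1(1-w^2)^{a-1}\,dw,
\]
whereas you recorded the constant as $2^{1-2a}$ with the integral already folded onto $[0,1]$ (test $a=1$: the left-hand side is $1$, your right-hand side is $\tfrac12$). The slip then propagates: from your displayed identity $\frac{\Gamma(a)^2}{\Gamma(2a)}=\frac{\sqrt\pi\,\Gamma(a)}{2^{2a}\,\Gamma(a+\frac12)}$ cross-multiplication gives $\Gamma(a)\Gamma\big(a+\tfrac12\big)=2^{-2a}\sqrt\pi\,\Gamma(2a)$, not \eqref{gamma-dupli}; your last line silently reinstates the missing factor of $2$, so the two errors cancel and you land on the correct formula without the chain of equalities actually supporting it. With the corrected constant $2^{2-2a}$ the argument reads
\[
\frac{\Gamma(a)^2}{\Gamma(2a)}=2^{2-2a}\cdot\frac{\sqrt\pi\,\Gamma(a)}{2\,\Gamma(a+\frac12)}=\frac{2^{1-2a}\sqrt\pi\,\Gamma(a)}{\Gamma(a+\frac12)},
\]
which rearranges exactly to \eqref{gamma-dupli}; so the idea is sound and no ingredient is missing, but the displayed computation needs this repair to be a proof.
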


\begin{proof}
We exploit~\eqref{beta-identity} twice, with~$a=b$ and~$b=\frac12$, to write that
\begin{eqnarray*}&&
\big(\Gamma(a)\big)^2=\Gamma(2a)\int_0^1 v^{a-1}(1-v)^{a-1}\;dv\\
\text{and}&&
\Gamma(a)\Gamma\left(\frac12\right)=\Gamma\left(a+\frac12\right)\int_0^1 v^{a-1}(1-v)^{-1/2}\;dv.
\end{eqnarray*}
In the first identity we perform the change of variable~$v:=\frac{1+u}2$ 
and we obtain that
\begin{align*}
\big(\Gamma(a)\big)^2=\Gamma(2a)2^{1-2a}\int_{-1}^1\big(1-u^2\big)^{a-1}\;du,
\end{align*}
while in the second identity we change variable~$v:=1-u^2$ to find that
$$ \Gamma(a)\Gamma\left(\frac12\right)=2\Gamma\left(a+\frac12\right)\int_0^1 \big(1-u^2\big)^{a-1}\;dv.$$
Therefore, we deduce that
\begin{align*}
\Gamma(a)\Gamma\left(\frac12\right)=\Gamma\left(a+\frac12\right)\frac{\big(\Gamma(a)\big)^2}{\Gamma(2a)2^{1-2a}}
\end{align*}
which, recalling also~\eqref{gamma-one-half}, entails~\eqref{gamma-dupli}.
\end{proof}

We also recall the Euler's reflection formula
\begin{equation}\label{euler-reflection}
\Gamma(a)\Gamma(1-a)=\frac{\pi}{\sin(\pi a)}, \qquad {\mbox{ for all }} a\in\R\setminus\Z.
\end{equation}
Several proofs of this formula are available in the literature and some of them have been put forth by Dirichlet, Dedekind and Gauss,
see e.g.~\cite{MR2281927, MR3012681}. See also~\cite[Exercise~2.10.11]{DVfourier}
for a proof based on complex analysis and contour integration.
\medskip

Finally, the~$\Gamma$ function also provides short representations 
for the expression of the measures of balls and spheres, as the next two lemmata highlight.

\begin{lemma}
The measure~$V_n$ of the~$n$-dimensional unitary ball~$B_1\subset\R^n$ satisfies the recurrence relation
\begin{align}\label{measure-n-ball-rec}
V_n=\frac{\sqrt{\pi}\,\Gamma(\frac{n+1}2)}{\Gamma(\frac{n}2+1)}\,V_{n-1}
\end{align}
and therefore, in a closed form,
\begin{align}\label{measure-n-ball}
V_n=\frac{2\pi^{n/2}}{n\Gamma(\frac{n}2)}.
\end{align}
\end{lemma}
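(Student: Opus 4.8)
The plan is to establish the recurrence relation~\eqref{measure-n-ball-rec} first, and then to iterate it down to a base case in order to extract the closed form~\eqref{measure-n-ball}. The recurrence is the heart of the matter, and the natural way to get it is by slicing the $n$-dimensional ball along one coordinate axis. Writing a point of $\R^n$ as $(x_1,x')$ with $x'\in\R^{n-1}$, I would use Fubini's theorem to write
\begin{align*}
V_n=\int_{-1}^{1}\left|\{x'\in\R^{n-1}\ :\ |x'|^2<1-x_1^2\}\right|\,dx_1
=\int_{-1}^{1}(1-x_1^2)^{\frac{n-1}2}\,V_{n-1}\,dx_1,
\end{align*}
where the last equality uses the scaling of Lebesgue measure: the slice at height $x_1$ is a ball of radius $\sqrt{1-x_1^2}$ in $\R^{n-1}$, whose measure is $(1-x_1^2)^{(n-1)/2}V_{n-1}$.

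Next I would evaluate the remaining one-dimensional integral $\int_{-1}^{1}(1-x_1^2)^{(n-1)/2}\,dx_1$. By symmetry this equals $2\int_0^1(1-x_1^2)^{(n-1)/2}\,dx_1$, and the substitution $t:=x_1^2$ turns it into $\int_0^1 t^{-1/2}(1-t)^{(n-1)/2}\,dt$. This is a Beta integral, so by~\eqref{beta-identity} with $a:=\frac12$ and $b:=\frac{n+1}2$ it equals $\dfrac{\Gamma(\frac12)\,\Gamma(\frac{n+1}2)}{\Gamma(\frac{n}2+1)}$, which by~\eqref{gamma-one-half} is $\dfrac{\sqrt\pi\,\Gamma(\frac{n+1}2)}{\Gamma(\frac{n}2+1)}$. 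Combining this with the slicing identity gives precisely~\eqref{measure-n-ball-rec}.

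Finally, to obtain~\eqref{measure-n-ball} I would iterate~\eqref{measure-n-ball-rec}. Starting from $V_1=2$ (the length of $[-1,1]$), a direct induction on $n$ shows that $V_n=\dfrac{2\pi^{n/2}}{n\,\Gamma(\frac{n}2)}$: one checks the base case $n=1$ using $\Gamma(\frac12)=\sqrt\pi$ from~\eqref{gamma-one-half}, and for the inductive step one multiplies the claimed formula for $V_{n-1}$ by the factor $\dfrac{\sqrt\pi\,\Gamma(\frac{n+1}2)}{\Gamma(\frac{n}2+1)}$ and simplifies using the recursion $\Gamma(\frac{n}2+1)=\frac{n}2\Gamma(\frac{n}2)$ from~\eqref{gamma-recursive}, together with the duplication formula~\eqref{gamma-dupli} to absorb the products of half-integer Gamma values. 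I do not expect any genuine obstacle here; the only mildly delicate point is bookkeeping the Gamma-function identities in the inductive step so that the $\Gamma(\frac{n+1}2)$ terms cancel cleanly, and making sure the base case is consistent with the convention for $V_1$.
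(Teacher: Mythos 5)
Your proposal is correct and follows essentially the same route as the paper: slicing the ball via Fubini to get $V_n=V_{n-1}\int_{-1}^1(1-x_1^2)^{(n-1)/2}\,dx_1$, evaluating the one-dimensional integral through the Beta identity~\eqref{beta-identity}, and proving the closed form by induction from $V_1=2$. The only cosmetic difference is your substitution $t:=x_1^2$ (the paper uses $x_1:=2t-1$ and then needs the duplication formula~\eqref{gamma-dupli} to simplify $\Gamma(\frac{n+1}2)^2/\Gamma(n+1)$); also, in your inductive step the recursion~\eqref{gamma-recursive} alone suffices, so the duplication formula you invoke there is not actually needed.
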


\begin{proof}
For any~$r>0$, we denote by~$B'_r$ the~$(n-1)$-dimensional ball of~$\R^{n-1}$ of radius~$r$. With this notation,
we have that
\begin{align*}
V_n=\int_{B_1}dx=\int_{-1}^1\int_{B_{\sqrt{1-x_1^2}}'}dx'\;dx_1
=V_{n-1}\int_{-1}^1\big(1-x_1^2\big)^{(n-1)/2}\;dx_1.
\end{align*}
Thus, using the change of variables~$x_1:=2t-1$ and~\eqref{beta-identity} with~$a=b=(n+1)/2$,
\begin{align*}
V_n=V_{n-1}\,2^n\int_0^1 t^{(n-1)/2}(1-t)^{(n-1)/2}\;dt=V_{n-1}\,2^n\,\frac{\big( \Gamma(\frac{n+1}2)\big)^2}{\Gamma(n+1)}
.\end{align*}
Moreover, by applying~\eqref{gamma-recursive} and~\eqref{gamma-dupli}, we obtain that
\begin{eqnarray*}
&&\frac{\Gamma(\frac{n+1}2)}{\Gamma(n+1)}=
\frac{2^{1-n}\sqrt\pi\,\Gamma(n)}{\Gamma(\frac{n}2)\,\Gamma(n+1)}
=\frac{2^{1-n}\sqrt\pi}{n\,\Gamma(\frac{n}2)}.
\end{eqnarray*}
The last two displays give~\eqref{measure-n-ball-rec}.

Identity~\eqref{measure-n-ball} can now be verified by induction since, owing to~\eqref{gamma-one-half},
\begin{align*}
V_1=2=\frac{2\sqrt\pi}{\Gamma(\frac12)}
\end{align*}
and, owing to~\eqref{gamma-recursive},
\begin{align*}
V_n=\frac{\sqrt{\pi}\,\Gamma(\frac{n+1}2)}{\Gamma(\frac{n}2+1)}\,V_{n-1}
=\frac{\sqrt{\pi}\,\Gamma(\frac{n+1}2)}{\Gamma(\frac{n}2+1)}\,\frac{2\pi^{(n-1)/2}}{(n-1)\Gamma(\frac{n-1}2)}
=\frac{\pi^{n/2}}{\Gamma(\frac{n}2+1)}
=\frac{2\pi^{n/2}}{n\,\Gamma(\frac{n}2)},
\end{align*}as desired.
\end{proof}

\begin{lemma}
The surface measure~$S_{n-1}$ of the~$(n-1)$-dimensional sphere~$\mathbb{S}^{n-1}=\partial B_1\subset\R^n$ satisfies
\begin{align}\label{measure-n-sphere-vs-ball}
S_{n-1}=n\,V_n
\end{align}
and therefore, in a closed form,
\begin{align}\label{measure-n-sphere}
S_{n-1}=\frac{2\pi^{n/2}}{\Gamma(\frac{n}2)}.
\end{align}
\end{lemma}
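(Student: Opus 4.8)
\textbf{Proof proposal for the sphere measure identities \eqref{measure-n-sphere-vs-ball} and \eqref{measure-n-sphere}.}

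The plan is to obtain \eqref{measure-n-sphere-vs-ball} by differentiating the volume of balls with respect to the radius, and then to plug in the closed form \eqref{measure-n-ball} for $V_n$ already established. First I would recall that, by scaling, the ball of radius $r$ has volume $V_n r^n$, since the dilation $x\mapsto rx$ has Jacobian $r^n$. Then, writing this volume in polar coordinates around the origin, one has
\[
V_n r^n = \int_{B_r} dx = \int_0^r S_{n-1}\,\rho^{n-1}\,d\rho,
\]
where $S_{n-1}=\mathcal{H}^{n-1}(\mathbb{S}^{n-1})$ is the surface measure of the unit sphere; here I use that the surface measure of the sphere of radius $\rho$ equals $S_{n-1}\rho^{n-1}$, again by scaling. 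Differentiating both sides in $r$ via the Fundamental Theorem of Calculus yields
\[
n V_n r^{n-1} = S_{n-1} r^{n-1},
\]
and choosing $r=1$ (or simply dividing by $r^{n-1}$) gives $S_{n-1}=n V_n$, which is \eqref{measure-n-sphere-vs-ball}.

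Next, to get the closed form \eqref{measure-n-sphere}, I would substitute the expression \eqref{measure-n-ball}, namely $V_n=\dfrac{2\pi^{n/2}}{n\,\Gamma(n/2)}$, into $S_{n-1}=n V_n$. This immediately cancels the factor $n$ and produces
\[
S_{n-1}= n\cdot \frac{2\pi^{n/2}}{n\,\Gamma\!\left(\frac{n}{2}\right)} = \frac{2\pi^{n/2}}{\Gamma\!\left(\frac{n}{2}\right)},
\]
as claimed.

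The only genuine subtlety — and the step I would present most carefully — is the validity of the polar-coordinates identity $\int_{B_r}dx=\int_0^r S_{n-1}\rho^{n-1}\,d\rho$ together with the scaling behavior of the $(n-1)$-dimensional Hausdorff measure on concentric spheres; this is the coarea formula applied to the function $x\mapsto|x|$, whose gradient has unit norm away from the origin. Alternatively, to keep the argument self-contained one can bypass Hausdorff measure entirely and simply \emph{define} $S_{n-1}$ by the relation \eqref{measure-n-sphere-vs-ball}, i.e. as $n V_n$, in which case \eqref{measure-n-sphere} is a pure computation from \eqref{measure-n-ball}, and \eqref{measure-n-sphere-vs-ball} is the definition; this is consistent with how the sphere measure $|\mathbb{S}^{n-1}|$ is used throughout the monograph (e.g. in the polar-coordinate computations of Chapter~\ref{CHAPT-EQUIV}). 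Either way, no hard estimates are needed and the proof is essentially a one-line consequence of the already proved formula for $V_n$.
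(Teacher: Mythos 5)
Your proof is correct and follows essentially the same route as the paper: both arguments use polar coordinates to obtain $S_{n-1}=n\,V_n$ (the paper inserts the trivial identity $n\int_0^1 r^{n-1}\,dr=1$ rather than differentiating the volume in the radius, which is a cosmetic difference) and then substitute the closed form \eqref{measure-n-ball} for $V_n$.
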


\begin{proof}
By an application of polar coordinates, we have that
\begin{align*}
S_{n-1}=\int_{\mathbb{S}^{n-1}}d\theta=n\int_{\mathbb{S}^{n-1}}\int_0^1r^{n-1}\;dr\;d\theta
=n\int_{B_1}dx=n\,V_n
\end{align*}
which proves~\eqref{measure-n-sphere-vs-ball}.

Identity~\eqref{measure-n-sphere} is a direct consequence of~\eqref{measure-n-ball} and~\eqref{measure-n-sphere-vs-ball}.
\end{proof}

Finally, we prove here an integral identity which is needed in Section~\ref{fractsection:3}.

\begin{lemma}\label{lem:coseno}
For any~$s\in(0,1)$ it holds that
\[
\int_0^{+\infty}\frac{1-\cos y}{y^{1+2s}}\;dy=\frac{\Gamma(s)\Gamma(1-s)}{2\Gamma(1+2s)}.
\]
\end{lemma}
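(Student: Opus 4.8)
The plan is to use the standard device of representing $\Gamma(1+2s)$ as an Euler integral and interchanging the order of integration, just as in the proofs of~\eqref{9u23rijfp1jfc1i230dcjkwpo0} and~\eqref{r8y23horir238ryh23ohir}. First I would record that the integral on the left-hand side converges: near the origin one has $1-\cos y=\frac{y^2}{2}+o(y^2)$, so the integrand behaves like $\frac{y^{1-2s}}{2}$, which is integrable since $1-2s>-1$; near infinity $0\le\frac{1-\cos y}{y^{1+2s}}\le\frac{2}{y^{1+2s}}$, which is integrable since $1+2s>1$. In particular the left-hand side is a well-defined finite number.

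Next, writing $\Gamma(1+2s)=\int_0^{+\infty}e^{-r}r^{2s}\,dr$ via~\eqref{gamma-def} and applying Tonelli's Theorem (legitimate because the integrand $e^{-r}r^{2s}\frac{1-\cos y}{y^{1+2s}}$ is nonnegative), I would obtain
\[
\Gamma(1+2s)\int_0^{+\infty}\frac{1-\cos y}{y^{1+2s}}\;dy=\int_0^{+\infty}\frac{1}{y^{1+2s}}\left(\int_0^{+\infty}e^{-r}r^{2s}\,dr\right)(1-\cos y)\;dy,
\]
and then, after the substitution $y:=r\tau$ in the variable carried over (equivalently, swapping the roles and rescaling), this equals
\[
\int_0^{+\infty}\frac{1}{\tau^{1+2s}}\left(\int_0^{+\infty}e^{-r}\big(1-\cos(r\tau)\big)\;dr\right)\;d\tau.
\]
Since $\int_0^{+\infty}e^{-r}\,dr=1$ and $\int_0^{+\infty}e^{-r}\cos(r\tau)\,dr=\frac{1}{1+\tau^2}$ (by integrating by parts twice, as in the derivation of~\eqref{laplasine}, or as the real part of $\int_0^{+\infty}e^{-(1-i\tau)r}\,dr$), the inner integral is $\frac{\tau^2}{1+\tau^2}$, so
\[
\Gamma(1+2s)\int_0^{+\infty}\frac{1-\cos y}{y^{1+2s}}\;dy=\int_0^{+\infty}\frac{\tau^{1-2s}}{1+\tau^2}\;d\tau.
\]

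Finally, I would evaluate the last integral by the change of variable $v:=(1+\tau^2)^{-1}$, which turns it into $\frac12\int_0^1 v^{s-1}(1-v)^{-s}\,dv$; by~\eqref{beta-identity} with $a:=s$ and $b:=1-s$ (both positive since $s\in(0,1)$, so the beta integral converges) this is $\frac{\Gamma(s)\Gamma(1-s)}{2\,\Gamma(1)}=\frac12\Gamma(s)\Gamma(1-s)$. Dividing through by $\Gamma(1+2s)$ gives the claimed identity; alternatively one could identify $\int_0^{+\infty}\frac{\tau^{1-2s}}{1+\tau^2}\,d\tau=\frac{\pi}{2\sin(\pi s)}$ and invoke Euler's reflection formula~\eqref{euler-reflection}. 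I do not expect any real obstacle here: the only points needing care are the convergence bookkeeping and the justification of the interchange of integrals, and since every integrand involved is nonnegative, Tonelli's Theorem disposes of the latter immediately; the computation of $\int_0^{+\infty}e^{-r}\cos(r\tau)\,dr$ and the final beta-integral substitution are the only genuine calculations.
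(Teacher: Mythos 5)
Your argument is correct and follows essentially the same route as the paper's proof: represent $\Gamma(1+2s)$ as an Euler integral, interchange and rescale (the paper uses $w:=y/t$, you use $y:=r\tau$), compute $\int_0^{+\infty}e^{-r}\cos(r\tau)\,dr=\frac{1}{1+\tau^2}$ by double integration by parts, and finish with the substitution $v:=(1+\tau^2)^{-1}$ and~\eqref{beta-identity}. The only additions are your explicit convergence check and appeal to Tonelli's Theorem, which the paper leaves implicit, so there is nothing to change.
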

\begin{proof}
We use the definition of~$\Gamma$ in~\eqref{gamma-def} in order to write
\begin{align*}
\Gamma(1+2s)\int_0^{+\infty}\frac{1-\cos y}{y^{1+2s}}\;dy=
\int_0^{+\infty}\int_0^{+\infty}e^{-t}t^{2s}\frac{1-\cos y}{y^{1+2s}}\;dy\;dt.
\end{align*}
With the change of variables~$w:=y/t$ we obtain that
\begin{align*}
\Gamma(1+2s)\int_0^{+\infty}\frac{1-\cos y}{y^{1+2s}}\;dy &=
\int_0^{+\infty}\int_0^{+\infty}e^{-t}\frac{1-\cos(wt)}{w^{1+2s}}\;dw\;dt \\
&=
\int_0^{+\infty}\frac1{w^{1+2s}}\int_0^{+\infty}e^{-t}\left(1-\cos(wt)\right)\;dt\;dw \\
&=
\int_0^{+\infty}\frac1{w^{1+2s}}\left(1-\int_0^{+\infty}e^{-t}\cos(wt)\;dt\right)\;dw.
\end{align*}

Now, via a double integration by parts, we see that
\begin{align*}
\int_0^{+\infty}e^{-t}\cos(wt)\;dt=1-w\int_0^{+\infty}e^{-t}\sin(wt)\;dt
=1-w^2\int_0^{+\infty}e^{-t}\cos(wt)\;dt
\end{align*}
and thus
\[
\int_0^{+\infty}e^{-t}\cos(wt)\;dt=\frac1{1+w^2}.
\]
We therefore have
\begin{align*}
\Gamma(1+2s)\int_0^{+\infty}\frac{1-\cos y}{y^{1+2s}}\;dy=
\int_0^{+\infty}\frac1{w^{1+2s}}\left(1-\frac1{1+w^2}\right)\;dw=
\int_0^{+\infty}\frac{w^{1-2s}}{1+w^2}\;dw.
\end{align*}
We now perform one final change of variables, i.e.,~$v:=1/(1+w^2)$, and use~\eqref{beta-identity} to obtain that
\begin{align*}
\int_0^{+\infty}\frac{w^{1-2s}}{1+w^2}\;dw=
\frac12\int_0^1 v\left(\frac1v-1\right)^{-s}\frac{dv}{v^2}=
\frac12\int_0^1v^{s-1}(1-v)^{-s}\;dv=\frac{\Gamma(s)\Gamma(1-s)}{2}
,\end{align*}
which completes the proof of the desired formula. 
\end{proof}

\section{The hypergeometric function}\label{sec:hypergeometric}

The Gaussian hypergeometric function is a special function represented by hypergeometric series, as follows:

\begin{definition}\label{hyp:par}
Given~$a$, $b$, $c\in\R$ satisfying~$a+b<c$, the (Gaussian or ordinary) hypergeometric function~$\hf\big(a,b;c|\cdot\big):(-1,1) \to\R$
is defined, for all~$x\in(-1,1)$, as 
\begin{align}\label{def:hyp}
\hf\big(a,b;c|x\big):=\sum_{k=0}^{+\infty}\frac{(a)_k\,(b)_k}{(c)_k}\,\frac{x^k}{k!}
\end{align}
where, for~$q\in\R$, we have denoted by~$(q)_k$ the (rising) Pochhammer symbol
\begin{align}\label{pochhammer}
(q)_k:=q(q+1)\cdots(q+k-1)=\frac{\Gamma(q+k)}{\Gamma(q)}.
\end{align}
\end{definition}

Let us collect some features which are direct consequences of the definition:
\begin{enumerate}
\item For any~$a$, $b$, $c$ and~$x$ as in Definition~\ref{hyp:par}, we have that
\begin{align}\label{propsym75655535}
\hf\big(a,b;c|x\big)=\hf\big(b,a;c|x\big);
\end{align}
\item An alternative definition for~$\hf$, avoiding the use of Pochhammer symbols and in light of~\eqref{pochhammer}, is 
\begin{align}\label{def2:hyp}
\hf\big(a,b;c|x\big)=\frac{\Gamma(c)}{\Gamma(a)\Gamma(b)}\sum_{k=0}^\infty\frac{\Gamma(a+k)\Gamma(b+k)}{\Gamma(c+k)}\,\frac{x^k}{k!};
\end{align}
\item If either~$a$ or~$b$ are non-positive integers, the series defining~$\hf$ reduces to a finite sum
(therefore to a polynomial), as the associated Pochhammer symbol is eventually constant to~$0$.

For instance, if, say, $a:=-m$ with~$m\in\N$, then~$(-m)_k=0$ for any~$k\geq m+1$, and therefore
\begin{align}\label{hyp-poly}
\hf\big(-m,b;c|x\big)=\sum_{k=0}^m\frac{(-m)_k\,(b)_k}{(c)_k}\,\frac{x^k}{k!}
=\sum_{k=0}^m(-1)^k\binom{m}{k}\frac{(b)_k}{(c)_k}\,x^k.
\end{align}
Particular examples are: 
\begin{align}
\hf\big(0,b;c|x\big) &= 1, \label{hyp(0)} \\
\hf\big(-1,b;c|x\big) &= 1-\frac{b}{c}\,x, \notag \\
\hf\big(-2,b;c|x\big) &= 1-\frac{2b}{c}\,x+\frac{b(b+1)}{c(c+1)}\,x^2; \notag
\end{align}
\item The derivatives of~$\hf$ behave as follows:
\begin{align}
\frac\partial{\partial x}\hf\big(a,b;c|x\big) &=
\sum_{k=1}^{+\infty}\frac{(a)_k\,(b)_k}{(c)_k}\,\frac{x^{k-1}}{(k-1)!}\\&=
\sum_{k=0}^{+\infty}\frac{a(a+1)_k\,b(b+1)_k}{c(c+1)_k}\,\frac{x^k}{k!} \nonumber \\
&=\frac{ab}c\,\hf\big(a+1,b+1;c+1|x\big), \label{hyp'} \\
\frac{\partial^2}{\partial x^2}\hf\big(a,b;c|x\big) &=
\frac{a(a+1)b(b+1)}{c(c+1)}\,\hf\big(a+2,b+2;c+2|x\big), \label{hyp''} \\
\frac{\partial^j}{\partial x^j}\hf\big(a,b;c|x\big) &=
\frac{(a)_j\,(b)_j}{(c)_j}\,\hf\big(a+j,b+j;c+j|x\big), \label{hyp^j}
\end{align}
\end{enumerate}

Another equivalent representation for~$\hf$ is an integral one:

\begin{lemma}\label{yturidsyrue21wqsd48765960uglihfieworlemma}
For~$a$, $b$, $c$, $x$ as in Definition~\ref{hyp:par} with~$c>b>0$, it holds that
\begin{align}
\hf\big(a,b;c|x\big) &=
\frac{\Gamma(c)}{\Gamma(b)\,\Gamma(c-b)}\int_0^1
t^{b-1}{(1-t)}^{c-b-1}{(1-xt)}^{-a}\;dt \label{hyp:int1} \\
&=
\frac{\Gamma(c)}{\Gamma(b)\,\Gamma(c-b)}\int_0^{+\infty}
\tau^{b-1}{(1+\tau)}^{a-c}{(1+\tau-x\tau)}^{-a}\;d\tau \label{hyp:int2} \\
&=
\frac{\Gamma(c)}{\Gamma(b)\,\Gamma(c-b)}\int_0^{+\infty}
\eta^{c-b-1}{(1+\eta)}^{a-c}{(\eta+1-x)}^{-a}\;d\eta \label{hyp:int3}.
\end{align}
Moreover,
\begin{align}\label{hyp(1)}
\lim_{x\nearrow 1}\hf\big(a,b;c|x\big)=\frac{\Gamma(c)\,\Gamma(c-b-a)}{\Gamma(c-a)\,\Gamma(c-b)}.
\end{align}
\end{lemma}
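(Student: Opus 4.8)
The statement is Lemma~\ref{yturidsyrue21wqsd48765960uglihfieworlemma}, comprising three integral representations~\eqref{hyp:int1}, \eqref{hyp:int2}, \eqref{hyp:int3} for~$\hf$ together with the limiting value~\eqref{hyp(1)}. The plan is to establish~\eqref{hyp:int1} first directly from the series definition~\eqref{def2:hyp}, then obtain~\eqref{hyp:int2} and~\eqref{hyp:int3} by elementary changes of variables, and finally deduce~\eqref{hyp(1)} from~\eqref{hyp:int1} by a dominated convergence argument.

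\emph{Step 1: proof of~\eqref{hyp:int1}.} Starting from the right-hand side, I would expand the factor~$(1-xt)^{-a}$ in its binomial (Newton) series, valid since~$|xt|<1$ on the domain of integration:
\[
(1-xt)^{-a}=\sum_{k=0}^{+\infty}\frac{(a)_k}{k!}\,x^kt^k.
\]
Exchanging the sum and the integral (justified, for~$|x|<1$ and~$c>b>0$, by absolute convergence and Tonelli/Fubini, possibly after restricting to~$x\in[0,1)$ and then noting analyticity) gives
\[
\frac{\Gamma(c)}{\Gamma(b)\,\Gamma(c-b)}\sum_{k=0}^{+\infty}\frac{(a)_k}{k!}\,x^k\int_0^1 t^{b+k-1}(1-t)^{c-b-1}\,dt.
\]
By the Beta identity~\eqref{beta-identity} with~$a:=b+k$ and~$b:=c-b$, the inner integral equals~$\dfrac{\Gamma(b+k)\,\Gamma(c-b)}{\Gamma(c+k)}$. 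Substituting and using~$\Gamma(b+k)=(b)_k\,\Gamma(b)$ (from~\eqref{pochhammer}) together with~$\Gamma(c+k)=(c)_k\,\Gamma(c)$, everything collapses to~$\sum_k \dfrac{(a)_k(b)_k}{(c)_k}\dfrac{x^k}{k!}=\hf(a,b;c|x)$, which is~\eqref{def:hyp}. This proves~\eqref{hyp:int1}.

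\emph{Step 2: proofs of~\eqref{hyp:int2} and~\eqref{hyp:int3}.} In~\eqref{hyp:int1} I would substitute~$t:=\dfrac{\tau}{1+\tau}$, so that~$1-t=\dfrac{1}{1+\tau}$, $1-xt=\dfrac{1+\tau-x\tau}{1+\tau}$ and~$dt=\dfrac{d\tau}{(1+\tau)^2}$, with~$\tau$ running over~$(0,+\infty)$. Collecting the powers of~$(1+\tau)$ gives exponent~$-(b-1)-(c-b-1)-2+a=a-c$, yielding exactly~\eqref{hyp:int2}. For~\eqref{hyp:int3}, I would instead start from~\eqref{hyp:int1} and use the reflection~$t\mapsto 1-t$ (which, by~\eqref{propsym75655535}-type symmetry of the Beta integrand, amounts to swapping the roles of~$b$ and~$c-b$), and then apply the same substitution~$1-t=\dfrac{1}{1+\eta}$; alternatively one can pass from~\eqref{hyp:int2} to~\eqref{hyp:int3} by the substitution~$\eta:=1/\tau$. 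Either route is a routine bookkeeping of exponents.

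\emph{Step 3: proof of~\eqref{hyp(1)}.} Here I would work from~\eqref{hyp:int1} and let~$x\nearrow1$. Pointwise in~$t\in(0,1)$, $(1-xt)^{-a}\to(1-t)^{-a}$. To pass to the limit under the integral sign I need a dominating function: if~$a\le 0$ then~$(1-xt)^{-a}\le 1$ is bounded; if~$a>0$ then~$(1-xt)^{-a}\le(1-t)^{-a}$ for~$x\in[0,1)$, and the integrand is dominated by~$t^{b-1}(1-t)^{c-b-1-a}$, which is integrable on~$(0,1)$ precisely because~$a+b<c$ (so~$c-b-a>0$) and~$b>0$. By dominated convergence,
\[
\lim_{x\nearrow1}\hf(a,b;c|x)=\frac{\Gamma(c)}{\Gamma(b)\,\Gamma(c-b)}\int_0^1 t^{b-1}(1-t)^{c-b-a-1}\,dt
=\frac{\Gamma(c)}{\Gamma(b)\,\Gamma(c-b)}\cdot\frac{\Gamma(b)\,\Gamma(c-b-a)}{\Gamma(c-a)},
\]
using~\eqref{beta-identity} with parameters~$b$ and~$c-b-a$. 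Cancelling~$\Gamma(b)$ gives~\eqref{hyp(1)}.

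\emph{Main obstacle.} The only genuinely delicate point is the interchange of summation and integration in Step~1, and more pedantically the interchange of limit and integral in Step~3, both of which hinge on having an integrable dominating function — hence on the standing hypotheses~$c>b>0$ and~$a+b<c$. I would handle this by first proving the identities for~$x\in[0,1)$ (where all the terms are nonnegative or uniformly bounded, so Tonelli and monotone/dominated convergence apply cleanly) and then extending to all~$x\in(-1,1)$ by observing that both sides are real-analytic in~$x$ on~$(-1,1)$ and agree on~$[0,1)$. Everything else is a sequence of substitutions and applications of~\eqref{beta-identity} and~\eqref{pochhammer}.
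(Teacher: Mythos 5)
Your proposal is correct and follows essentially the same route as the paper's proof: expansion of $(1-xt)^{-a}$ in its binomial series combined with the Beta identity~\eqref{beta-identity} and the Pochhammer relations to get~\eqref{hyp:int1}, the substitution $\tau=t/(1-t)$ (equivalently $t=\tau/(1+\tau)$) for~\eqref{hyp:int2} and $\eta=1/\tau$ for~\eqref{hyp:int3}, and the limit $x\nearrow1$ under the integral in~\eqref{hyp:int1} followed by~\eqref{beta-identity} for~\eqref{hyp(1)}. Your explicit dominated-convergence justification in Step~3 (and the Fubini remark in Step~1) is slightly more detailed than the paper's, which states these passages without comment, but the argument is the same.
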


\begin{proof} We have that
\begin{align*}
{(1-xt)}^{-a}=\sum_{k=0}^{+\infty}\binom{-a}{k}{(-xt)}^k
\end{align*}
where, by~\eqref{gamma-recursive},
\begin{align*}
\binom{-a}{k}=\frac{-a(-a-1)\cdots(-a-k+1)}{k!}=(-1)^k\frac{\Gamma(a+k)}{k!\,\Gamma(a)}.
\end{align*}
Thus, exploiting this fact, \eqref{beta-identity} and~\eqref{def2:hyp},
\begin{align*}
\int_0^1t^{b-1}{(1-t)}^{c-b-1}{(1-xt)}^{-a}\;dt
&=
\frac1{\Gamma(a)}\sum_{k=0}^{+\infty}\Gamma(a+k)\int_0^1t^{b+k-1}{(1-t)}^{c-b-1}\;dt\,\frac{x^k}{k!} \\
&=
\frac1{\Gamma(a)}\sum_{k=0}^{+\infty}\Gamma(a+k)\,\frac{\Gamma(b+k)\,\Gamma(c-b)}{\Gamma(c+k)}\,\frac{x^k}{k!} \\
&=
\frac{\Gamma(b)\,\Gamma(c-b)}{\Gamma(c)}\hf(a,b;c|x).
\end{align*}
This proves~\eqref{hyp:int1}.

Via the change of variables~$\tau:=t/(1-t)$, we obtain the representation in~\eqref{hyp:int2} from~\eqref{hyp:int1}. Also, via the change of variables~$\tau:=\eta^{-1}$, representation~\eqref{hyp:int2} is equivalent to~\eqref{hyp:int3}.

Finally, from~\eqref{hyp:int1}, we see that
\begin{align*}
\lim_{x\nearrow 1}\hf(a,b;c|x)=\frac{\Gamma(c)}{\Gamma(b)\,\Gamma(c-b)}\int_0^1t^{b-1}{(1-t)}^{c-b-a-1}\;dt=\frac{\Gamma(c)\,\Gamma(c-b-a)}{\Gamma(c-a)\,\Gamma(c-b)}
\end{align*}
in view of~\eqref{beta-identity}.
\end{proof}

The integral representations of~$\hf$ in the Lemma~\ref{yturidsyrue21wqsd48765960uglihfieworlemma} enable to give a couple of
transformation formulas.

\begin{lemma}\label{ytufrieds543769078-4w87u0-4w}
For~$a$, $b$, $c$, $x$ as in Definition~\ref{hyp:par} with~$c>b>0$, it holds that
\begin{align}
\hf\big(a,b;c|x\big) &= {(1-x)}^{c-a-b}\hf\big(c-a,c-b;c|x\big) \label{hyp-transf1} \\
&=
(1-x)^{-b}\hf\left(c-a,b;c\bigg|-\frac{x}{1-x}\right). \label{hyp-transf3}
\end{align}
\end{lemma}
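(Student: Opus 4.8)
The statement to be established consists of the two hypergeometric transformation identities \eqref{hyp-transf1} and \eqref{hyp-transf3}, valid for $a,b,c,x$ as in Definition~\ref{hyp:par} with $c>b>0$. The natural route is to exploit the integral representations already proved in Lemma~\ref{yturidsyrue21wqsd48765960uglihfieworlemma}, since in each case the identity amounts to a change of variables in a beta-type integral together with an application of~\eqref{propsym75655535} to symmetrize in the first two parameters. First I would prove~\eqref{hyp-transf3}, which is the more elementary one, and then derive~\eqref{hyp-transf1} from it by composition.

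\textbf{Step 1: proof of \eqref{hyp-transf3}.} Start from the integral representation~\eqref{hyp:int1},
\[
\hf\big(a,b;c|x\big)=\frac{\Gamma(c)}{\Gamma(b)\,\Gamma(c-b)}\int_0^1 t^{b-1}{(1-t)}^{c-b-1}{(1-xt)}^{-a}\;dt,
\]
and perform the substitution $t=1-u$, $dt=-du$, which maps $[0,1]$ to itself and exchanges the roles of the powers $t^{b-1}$ and $(1-t)^{c-b-1}$. One gets
\[
\hf\big(a,b;c|x\big)=\frac{\Gamma(c)}{\Gamma(b)\,\Gamma(c-b)}\int_0^1 (1-u)^{b-1}u^{c-b-1}{\big(1-x(1-u)\big)}^{-a}\;du.
\]
Now factor $1-x(1-u)=(1-x)\big(1+\frac{x}{1-x}u\big)=(1-x)\big(1-\frac{-x}{1-x}u\big)$, which pulls out $(1-x)^{-a}$ and leaves an integral of the form $\int_0^1 u^{c-b-1}(1-u)^{b-1}(1-yu)^{-a}\,du$ with $y=-x/(1-x)$; comparing with~\eqref{hyp:int1} applied to parameters $\big(a,\,c-b,\,c\big)$ (note $c>c-b>0$ since $b>0$), this integral equals $\frac{\Gamma(b)\Gamma(c-b)}{\Gamma(c)}\hf\!\big(a,c-b;c\,\big|\,y\big)$. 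Assembling and using $\hf(a,c-b;c|y)=\hf(c-b,a;c|y)$ by~\eqref{propsym75655535}, one concludes
\[
\hf\big(a,b;c|x\big)=(1-x)^{-a}\,\hf\!\left(c-b,a;c\;\bigg|\;-\frac{x}{1-x}\right).
\]
This is~\eqref{hyp-transf3} with the first argument written in the symmetric position; to match the stated form one applies~\eqref{propsym75655535} once more in the second hypergeometric, swapping $c-a$ and $b$ — actually here I would be careful: the displayed~\eqref{hyp-transf3} has exponent $(1-x)^{-b}$ and inner parameters $(c-a,b)$, so the bookkeeping should start instead from the symmetric representation $\hf(a,b;c|x)=\hf(b,a;c|x)$, apply the above argument with $a$ and $b$ interchanged throughout, and then simplify; I would carry out exactly this interchange so that the factor becomes $(1-x)^{-b}$ and the remaining hypergeometric has parameters $(c-a,b;c)$.

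\textbf{Step 2: proof of \eqref{hyp-transf1}.} The cleanest derivation is to apply~\eqref{hyp-transf3} twice. Writing $\xi:=-x/(1-x)$, one has $1-\xi=1/(1-x)$ and $-\xi/(1-\xi)=x$, so a second application of the (symmetric form of) the transformation~\eqref{hyp-transf3} to $\hf(c-a,b;c|\xi)$ brings the variable back to $x$ while again shifting the parameters; keeping track of the accumulated prefactor $(1-x)^{-b}\cdot(1-\xi)^{-(c-b)}=(1-x)^{-b}(1-x)^{c-b}=(1-x)^{c-2b}$ — and here one must also use the relation between the parameters that results from the double shift — yields, after using~\eqref{propsym75655535} to put the parameters in the order $(c-a,c-b;c)$, the identity $\hf(a,b;c|x)=(1-x)^{c-a-b}\hf(c-a,c-b;c|x)$. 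Alternatively, \eqref{hyp-transf1} can be obtained directly from~\eqref{hyp:int1} by the substitution $t=1-u$ followed by factoring $(1-x)$ out of $(1-xt)$ more symmetrically; I would present whichever of the two is shorter once the constants are checked.

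\textbf{Main obstacle.} There is no deep difficulty here: the transformations are classical and the proof is a sequence of substitutions in beta integrals. The real hazard is purely bookkeeping — correctly tracking which of $a,b$ sits in which slot after each use of~\eqref{propsym75655535}, correctly identifying the auxiliary parameter triple to which~\eqref{hyp:int1} is applied (and checking that it still satisfies the constraints $c>b>0$ and $a+b<c$ needed for convergence), and accumulating the powers of $(1-x)$ without sign or exponent errors. The double-application route for~\eqref{hyp-transf1} is especially error-prone in this respect, so I would double-check it against the $x\to 0$ value (both sides equal $1$ by~\eqref{hyp(0)}-type normalization) and against the limit $x\nearrow 1$ using~\eqref{hyp(1)} as consistency checks.
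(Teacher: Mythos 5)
Your Step 1 is sound as far as it goes: the reflection $t\mapsto 1-t$ in \eqref{hyp:int1}, together with the factorization $1-x(1-u)=(1-x)\bigl(1-\tfrac{-x}{1-x}\,u\bigr)$ and the symmetry \eqref{propsym75655535}, correctly yields $\hf\bigl(a,b;c|x\bigr)=(1-x)^{-a}\,\hf\bigl(a,c-b;c\big|-\tfrac{x}{1-x}\bigr)$, and both invocations of \eqref{hyp:int1} only require $c>b>0$. The genuine gap is in passing from this to the stated form \eqref{hyp-transf3}: your plan is to rerun the argument with $a$ and $b$ interchanged, but then \eqref{hyp:int1} must be applied to $\hf(b,a;c|\cdot)$ and to $\hf(b,c-a;c|\cdot)$, i.e.\ with $a$ (respectively $c-a$) in the slot that must satisfy the positivity constraint, so you need $0<a<c$. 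The hypotheses of the lemma only give $c>b>0$ and $a+b<c$, so $a$ may be zero or negative, and the Euler integral genuinely diverges at $t=0$ in that case; this is not a corner case, since the lemma is later applied with $a=-s<0$ (e.g.\ in the derivation of \eqref{c0c0c0c0c0c0c0c}). Without an additional argument (say, analytic continuation in the parameter $a$, which you do not invoke), the stated \eqref{hyp-transf3} is not established in the claimed generality. A secondary slip is in Step 2: the accumulated prefactor in the double application should be $(1-x)^{-b}(1-\xi)^{-(c-a)}=(1-x)^{c-a-b}$, not $(1-x)^{-b}(1-\xi)^{-(c-b)}=(1-x)^{c-2b}$ (you flag the uncertainty, but the displayed bookkeeping is wrong), and your ``alternative'' route via $t=1-u$ in \eqref{hyp:int1} only reproduces the Step 1 identity, not \eqref{hyp-transf1}; the substitution on $[0,1]$ that gives Euler directly is $t=(1-u)/(1-xu)$.

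For comparison, the paper takes a different and cleaner route that sidesteps all of this: \eqref{hyp-transf1} is obtained from the representation \eqref{hyp:int3} by the scaling $\eta=(1-x)\theta$ and re-identification through \eqref{hyp:int2}, and \eqref{hyp-transf3} from \eqref{hyp:int2} by $\tau=\theta/(1-x)$; in both cases the integral representation is only ever used with second parameter $b$ or $c-b$, so $c>b>0$ suffices and no sign condition on $a$ ever enters. Your argument can be repaired in the same spirit without leaving \eqref{hyp:int1}: first prove \eqref{hyp-transf1} via $t=(1-u)/(1-xu)$ (this uses the representation only for the pairs $(b,c-b)$), and then deduce \eqref{hyp-transf3} by applying your Step 1 identity to $\hf(c-a,c-b;c|x)$, which again only requires $c>c-b>0$, i.e.\ $0<b<c$.
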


The transformations leading to~\eqref{hyp-transf1} and~\eqref{hyp-transf3} are called
Euler Transformation and Pfaff Transformation, respectively.

\begin{proof}[Proof of Lemma~\ref{ytufrieds543769078-4w87u0-4w}]
Starting from~\eqref{hyp:int3} and applying the change of variables~$\eta:=(1-x)\theta$,
one obtains
\begin{align*}
\hf\big(a,b;c|x\big)=\frac{\Gamma(c)}{\Gamma(b)\,\Gamma(c-b)}{(1-x)}^{c-a-b}\int_0^{+\infty}
\theta^{c-b-1}{(1+\theta)}^{-a}{(1+\theta-x\theta)}^{a-c}\;d\theta
\end{align*}
which, using~\eqref{hyp:int2}, amounts to~\eqref{hyp-transf1}.

Starting from~\eqref{hyp:int2}, 
applying the change of variables~$\tau:=\theta/(1-x)$, we also find that
\begin{align*}
\hf\big(a,b;c|x\big) &=
(1-x)^{-b}\frac{\Gamma(c)}{\Gamma(b)\,\Gamma(c-b)}\int_0^{+\infty}
\theta^{b-1}{\left(1+\frac\theta{1-x}\right)}^{a-c}{(1+\theta)}^{-a}\;d\theta \\
&=
(1-x)^{-b}\frac{\Gamma(c)}{\Gamma(b)\,\Gamma(c-b)}\int_0^{+\infty}
\theta^{b-1}{(1+\theta)}^{-a}{\left(
1+\theta+\frac{x}{1-x}\theta\right)}^{a-c}\;d\theta,
\end{align*}which gives~\eqref{hyp-transf3}, as desired.
\end{proof}

\chapter{Fourier analysis, harmonic polynomials, fundamental solutions}

We collect here some auxiliary results about the Fourier Transform of harmonic polynomials, essentially borrowed from~\cite{MR0290095, MR3640641},
which come in handy for the construction of explicit examples showcased in Chapter~\ref{CH:EXAMPLES}.

A first useful result is a generalization of the fact that the Fourier Transform of a Gau{\ss}ian remains a Gau{\ss}ian. This property remains valid even when the Gau{\ss}ian is multiplied by a homogeneous harmonic polynomial, up to a multiplication factor that is a multiple of the imaginary unit:

\begin{theorem}\label{ILP:DS}
Let~$P$ be a homogeneous harmonic polynomial in~$\R^n$ of degree~$k$ and
$$f(x):=P(x) e^{-\pi|x|^2}.$$
Then,
$$ \widehat f(\xi)= (-i)^k\,f(\xi).$$
\end{theorem}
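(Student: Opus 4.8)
The strategy is to reduce the statement to a one-dimensional computation via the structure of harmonic polynomials. First I would recall the classical Bochner identity in disguise: it suffices to check the claim for $P$ of the special form $P(x) = (a\cdot x)^k$ where $a\in\C^n$ is an \emph{isotropic} vector, i.e. $a\cdot a = \sum_j a_j^2 = 0$. The point is that such polynomials $(a\cdot x)^k$ with $a$ isotropic span the space of homogeneous harmonic polynomials of degree $k$ (a standard fact: applying $\Delta$ to $(a\cdot x)^k$ gives $k(k-1)(a\cdot a)(a\cdot x)^{k-2}=0$, and a dimension count shows these span). Since both sides of the asserted identity $\widehat f(\xi)=(-i)^k f(\xi)$ are linear in $P$, it is enough to prove it for $P(x)=(a\cdot x)^k$.

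\textbf{Key steps.} With $P(x) = (a\cdot x)^k$ and $a\cdot a=0$, I would introduce the auxiliary parameter $t\in\R$ and consider the Gau{\ss}ian shift
\[
g_t(x) := e^{-\pi|x|^2 + 2\pi t\, a\cdot x}.
\]
Completing the square (legitimately, even for complex $a$, by analytic continuation from real $a$ and using $a\cdot a=0$ so that no $t^2$ term survives) gives
\[
g_t(x) = e^{-\pi|x - ta|^2},
\]
whose Fourier Transform is computed by the translation rule and the known transform of the standard Gau{\ss}ian $e^{-\pi|x|^2}$ (which is its own Fourier Transform under the unitary convention used in this book): namely
\[
\widehat{g_t}(\xi) = e^{-2\pi i t\, a\cdot\xi}\, e^{-\pi|\xi|^2}\cdot e^{-\pi t^2 a\cdot a} = e^{-\pi|\xi|^2 - 2\pi i t\, a\cdot\xi},
\]
again using $a\cdot a = 0$. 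Now differentiate both sides $k$ times with respect to $t$ and evaluate at $t=0$. On the left, $\partial_t^k g_t(x)\big|_{t=0} = (2\pi a\cdot x)^k e^{-\pi|x|^2} = (2\pi)^k f(x)$; on the right, $\partial_t^k \widehat{g_t}(\xi)\big|_{t=0} = (-2\pi i\, a\cdot\xi)^k e^{-\pi|\xi|^2} = (-2\pi i)^k (a\cdot\xi)^k e^{-\pi|\xi|^2} = (-2\pi i)^k f(\xi)$. Since differentiation under the integral sign in $t$ is justified by the rapid decay of $g_t$ and its $t$-derivatives (dominated convergence with a Gau{\ss}ian majorant uniformly for $t$ in a bounded interval), we obtain $(2\pi)^k \widehat f(\xi) = (-2\pi i)^k f(\xi)$, i.e. $\widehat f(\xi) = (-i)^k f(\xi)$, as claimed.

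\textbf{Main obstacle.} The only genuinely delicate point is the justification of working with \emph{complex} isotropic vectors $a$: completing the square and the Gau{\ss}ian Fourier Transform formula are initially clear only for real arguments. I would handle this by first doing the computation for real $a$ with $|a|$ arbitrary (where everything is classical), observing that both $\widehat{g_t}(\xi)$ and the claimed closed form are entire functions of $(a_1,\dots,a_n)\in\C^n$ (the Fourier integral converges locally uniformly in complex $a$ because $\mathrm{Re}(-\pi|x|^2 + 2\pi t\,a\cdot x)\to-\infty$ quadratically), and then invoking the identity theorem for holomorphic functions of several variables to extend the identity to all $a\in\C^n$, in particular to isotropic $a$. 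Once that is in place, the spanning statement for harmonic polynomials and linearity finish the proof; alternatively, if one prefers to avoid complexification, one can argue directly that the map $P\mapsto \widehat{P e^{-\pi|\cdot|^2}}$ commutes with the Laplacian-preserving action and use the known decomposition of polynomials into harmonic components, but the isotropic-vector route is the cleanest.
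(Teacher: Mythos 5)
Your argument is correct, but it takes a genuinely different route from the paper's. The paper never decomposes $P$: it sets $Q(z):=\int_{\R^n}P(x)\,e^{-\pi|x-iz|^2}\,dx$, completes the square in the Fourier integral to write $\widehat f(\xi)=e^{-\pi|\xi|^2}\int_{\R^n}P(x)\,e^{-\pi|x+i\xi|^2}\,dx$, and evaluates the shifted Gaussian integral using the mean value property of the harmonic polynomial $P$ (the spherical average of $P(z+\rho\omega)$ is $P(z)$, and the Gaussian has unit mass), so the integral equals $P(-i\xi)=(-i)^kP(\xi)$ by homogeneity. You instead invoke the Hecke-type spanning lemma to reduce to $P(x)=(a\cdot x)^k$ with $a\in\C^n$ isotropic, and extract the identity by differentiating the translated-Gaussian generating function $g_t$ $k$ times in $t$. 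The paper's route treats all harmonic $P$ at once and needs only the mean value property (the extension of $Q(-iz)=P(z)$ to complex arguments is automatic, both sides being polynomials in $z$), while yours trades harmonicity for the purely algebraic relation $a\cdot a=0$ and a one-parameter Gaussian computation, at the cost of the spanning lemma and the analytic continuation in $a$, which you do justify correctly.

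Two small points to tidy up. First, the spanning statement via isotropic vectors requires $n\ge 2$: in dimension $n=1$ the only isotropic vector is $a=0$, so the case $n=1$, $k=1$ (i.e. $f(x)=x\,e^{-\pi x^2}$) is not covered by your reduction and should be checked directly, which takes one line. Second, the intermediate factor in your formula for $\widehat{g_t}$ should be $e^{+\pi t^2\,a\cdot a}$ rather than $e^{-\pi t^2\,a\cdot a}$, as one sees by completing the square for real $a$; this is harmless here since the factor is $1$ for isotropic $a$, but the correct sign is the one you need when matching the closed form for real $a$ before applying the identity theorem.
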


\begin{proof} For every~$z\in\C$, let
$$ Q(z):=\int_{\R^n} P(x) e^{-\pi|x-iz|^2}\,dx.$$
We observe that
\begin{eqnarray*}
Q(-iz)&=&\int_{\R^n} P(x) e^{-\pi|x-z|^2}\,dx\\
&=&\int_{\R^n} P(z+y) e^{-\pi|y|^2}\,dy\\
&=&\iint_{(0,+\infty)\times\partial B_1} P(z+\rho\omega) e^{-\pi \rho^2}\,\rho^{n-1}\,d\rho\,d{\mathcal{H}}^{n-1}_\omega.
\end{eqnarray*}
Also, since~$P$ is harmonic, the Mean Value Formula gives that
$$ \fint_{\partial B_1} P(z+\rho\omega)\,d{\mathcal{H}}^{n-1}_\omega
=P(z),$$
from which we infer that
\begin{eqnarray*}
Q(-iz)&=& {\mathcal{H}}^{n-1}(\partial B_1)
\int_{0}^{+\infty} P(z) e^{-\pi \rho^2}\,\rho^{n-1}\,d\rho\\
&=& P(z)\int_{\R^n} e^{-\pi|x|^2}\,dx\\
&=&P(z).
\end{eqnarray*}
For this reason,
\begin{eqnarray*} 
\widehat f(\xi)&=&\int_{\R^n} P(x) e^{-\pi|x|^2}\,e^{-2\pi i x\cdot\xi}\,dx
\\&=& e^{-\pi|\xi|^2}\,\int_{\R^n} P(x) e^{-\pi|x+i\xi|^2}\,dx
\\ &=& e^{-\pi|\xi|^2}\,Q(-\xi)\\
 &=& e^{-\pi|\xi|^2}\,P(-i\xi)
\\&=&(-i)^k e^{-\pi|\xi|^2}\,P(\xi)\\&=& 
 (-i)^k\,f(\xi),
\end{eqnarray*}
as desired.
\end{proof}

We consider the space~$C^\infty_{00}(\R)$ of all the functions~$ f\in C^{\infty}_0(\R)$ whose derivatives of any order
vanish at the origin. Then, we obtain the following result:

\begin{corollary}\label{COR:LO}
Let~$P$ be a homogeneous harmonic polynomial in~$\R^n$ of degree~$k$ and~$\phi_0\in C^\infty_{00}(\R)$.

Consider the radial functions
$$\R^n\ni x\mapsto \phi(x):=\phi_0(|x|)\qquad{\mbox{and}}\qquad
\R^{n+2k}\ni x_\star\mapsto \psi(x_\star):=\phi_0(|x_\star|).$$
Let also~$\Phi:=P\phi$.

Then, if~$\xi\in\R^n$ and~$\xi_\star\in\R^{n+2k}$ are such that~$|\xi|=|\xi_\star|$, we have that
$$ \widehat\Phi(\xi)=(-i)^k P(\xi)\,\widehat\psi(\xi_\star).$$
\end{corollary}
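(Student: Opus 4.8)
The statement relates the Fourier transform of a radially modulated harmonic polynomial in $\R^n$ to the Fourier transform of the corresponding radial function in the higher-dimensional space $\R^{n+2k}$. The natural approach is to reduce everything to Theorem~\ref{ILP:DS} by expressing the radial profile $\phi_0$ as a suitable superposition of Gaussians, using the hypothesis that $\phi_0\in C^\infty_{00}(\R)$ to make this decomposition legitimate. Concretely, the plan is the following.

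\emph{Step 1: Gaussian superposition.} First I would observe that since $\phi_0\in C^\infty_{00}(\R)$, the function $t\mapsto\phi_0(\sqrt{t})$ extends to a smooth, rapidly decreasing function on $[0,+\infty)$ all of whose derivatives vanish at $0$, and hence (extending by $0$ to $t<0$) it belongs to a Schwartz-type class. One can then write, for some integrable (or rapidly decreasing) weight $m(\lambda)$,
\begin{equation*}
\phi_0(r)=\int_0^{+\infty} m(\lambda)\, e^{-\pi\lambda r^2}\,d\lambda
\qquad\text{for all }r\ge0,
\end{equation*}
e.g. by a Laplace-transform inversion applied to $t\mapsto\phi_0(\sqrt t)$, where the condition that all derivatives vanish at the origin guarantees enough decay of $m$ at infinity for the representation and the interchanges below to be valid. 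The point is that this same $m$ works simultaneously in every dimension, because it only depends on the one-variable profile $\phi_0$.

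\emph{Step 2: transform in each dimension via Theorem~\ref{ILP:DS}.} With this representation, $\Phi(x)=P(x)\phi_0(|x|)=\int_0^{+\infty} m(\lambda) P(x) e^{-\pi\lambda|x|^2}\,d\lambda$. For fixed $\lambda>0$, a scaling $x\mapsto \lambda^{1/2}x$ together with the homogeneity $P(\lambda^{1/2}x)=\lambda^{k/2}P(x)$ and the fact that $P(\lambda^{1/2}\cdot)$ is still a homogeneous harmonic polynomial of degree $k$ reduces the Fourier transform of $P(x)e^{-\pi\lambda|x|^2}$ to the normalized case treated in Theorem~\ref{ILP:DS}; this yields
\begin{equation*}
{\mathcal F}\big[P(\cdot)e^{-\pi\lambda|\cdot|^2}\big](\xi)=(-i)^k\,\lambda^{-\frac n2-k}\,P(\xi)\,e^{-\pi|\xi|^2/\lambda}.
\end{equation*}
Integrating against $m(\lambda)$ and interchanging (justified by the decay from Step~1) gives
$\widehat\Phi(\xi)=(-i)^k P(\xi)\int_0^{+\infty} m(\lambda)\lambda^{-\frac n2-k} e^{-\pi|\xi|^2/\lambda}\,d\lambda$. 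Doing the same computation in $\R^{n+2k}$ with $P\equiv1$ (the degree-zero harmonic polynomial) gives $\widehat\psi(\xi_\star)=\int_0^{+\infty} m(\lambda)\lambda^{-\frac{n+2k}{2}} e^{-\pi|\xi_\star|^2/\lambda}\,d\lambda$, and since $-\frac n2-k=-\frac{n+2k}2$ and $|\xi|=|\xi_\star|$, the two radial integrals coincide, giving $\widehat\Phi(\xi)=(-i)^k P(\xi)\widehat\psi(\xi_\star)$, as desired.

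\emph{Main obstacle.} The delicate point is Step~1: producing the Gaussian superposition with a single $\lambda$-weight $m$ that decays fast enough at both ends to justify (a) the pointwise identity $\phi_0(r)=\int m(\lambda)e^{-\pi\lambda r^2}d\lambda$ and (b) the Fubini interchanges in both $\R^n$ and $\R^{n+2k}$. The cleanest route is to invoke that $t\mapsto\phi_0(\sqrt t)$, extended by zero on $(-\infty,0)$, lies in the Schwartz class on $\R$ (this is precisely where $\phi_0\in C^\infty_{00}$ is used, since vanishing of all derivatives at $0$ is what makes the even extension smooth), so it is the Fourier/Laplace transform of a Schwartz function $m$; alternatively one can argue by density, approximating $\phi_0$ by finite linear combinations of Gaussians $e^{-\pi\lambda_j r^2}$ in an appropriate norm and passing to the limit, using continuity of the Fourier transform. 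If a fully rigorous superposition seems heavy, a viable fallback is to first prove the identity for $\phi_0(r)=e^{-\pi\lambda r^2}$ directly from Theorem~\ref{ILP:DS} and scaling, and then extend to general $\phi_0\in C^\infty_{00}$ by a density/approximation argument in $\mathcal S(\R^n)$ and $\mathcal S(\R^{n+2k})$ respectively. Everything else is bookkeeping with homogeneity, the relation $n+2k$, and the hypothesis $|\xi|=|\xi_\star|$.
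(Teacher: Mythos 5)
Your proposal is correct and follows essentially the same route as the paper: the paper also writes $\phi(x)=\int_0^{+\infty}\mu(t)\,e^{-\pi t|x|^2}\,dt$ with $\mu$ the inverse Laplace transform of $r\mapsto\phi_0(\sqrt r)$, computes $\widehat{\Upsilon_t}(\xi)=(-i)^k t^{-\frac{n+2k}2}P(\xi)e^{-\pi|\xi|^2/t}$ by scaling from Theorem~\ref{ILP:DS}, and concludes by matching the radial $t$-integrals in $\R^n$ and $\R^{n+2k}$ using $-\frac n2-k=-\frac{n+2k}2$ and $|\xi|=|\xi_\star|$. Your extra care about justifying the Gaussian superposition (where $\phi_0\in C^\infty_{00}$ enters) only makes explicit a point the paper treats tersely.
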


\begin{proof} For every~$t>0$, let~$\Upsilon_t(x):=P(x) e^{-\pi t|x|^2}$.
Thus, from Theorem~\ref{ILP:DS} we deduce that
$$ \widehat\Upsilon_1(\xi)=(-i)^k P(\xi) e^{-\pi|\xi|^2}$$
and consequently
\begin{equation}\label{COR:LO99}\begin{split}
\widehat\Upsilon_t(\xi)&=\int_{\R^n}P(x) e^{-\pi t|x|^2-2\pi i x\cdot\xi}\,dx\\
&=t^{-\frac{n}2}\int_{\R^n}P({t}^{-1/2} y) e^{-\pi|y|^2-2\pi i y\cdot{t}^{-1/2}\xi}\,dy\\
&=t^{-\frac{n+k}2}\int_{\R^n}P(y) e^{-\pi|y|^2-2\pi i y\cdot{t}^{-1/2}\xi}\,dy\\
&={t}^{-\frac{n+k}2} \,\widehat\Upsilon_1({t}^{-1/2}\xi)\\&=
(-i)^k {t}^{-\frac{n+k}2}\, P({t}^{-1/2}\xi) e^{-\frac{\pi|\xi|^2}t}\\&=
(-i)^k {t}^{-\frac{n+2k}2} \,P(\xi) e^{-\frac{\pi |\xi|^2}t}
.\end{split}\end{equation}

This is the desired result when~$\phi$ is a Gau{\ss}ian and we now deal with the general case by considering the Laplace Transform.
To this end, for every~$r>0$, we let~$\phi_1(r):=\phi_0(\sqrt{r})$ and~$\mu$ be the Inverse Laplace Transform of~$\phi_1$.

In this setting, we see that
$$\phi_0(\sqrt{r})=\phi_1(r)=\int_0^{+\infty} \mu(t) \, e^{-\pi tr}\,dt~$$
and accordingly
$$\phi(x)=\phi_0(|x|)=\int_0^{+\infty} \mu(t) \, e^{-\pi t |x|^2}\,dt .$$
Thus, we deduce from~\eqref{COR:LO99} that
\begin{equation}\label{LKTRDF}\begin{split}
\widehat\Phi(\xi)&= 
\int_{\R^n} \Phi(x) e^{-2\pi i x\cdot\xi}\,dx\\&=
\int_{\R^n}P(x) \phi(x) e^{-2\pi i x\cdot\xi}\,dx\\& =
\iint_{(0,+\infty)\times\R^n}P(x) \mu({t}) e^{-\pi{t}|x|^2-2\pi i x\cdot\xi}\,dt\,dx\\&=
(-i)^k P(\xi) \int_0^{+\infty} t^{-\frac{n+2k}2} \mu(t) e^{-\frac{\pi |\xi|^2}t} \,dt.\end{split}\end{equation}

Furthermore, if~$\xi_\star\in\R^{n+2k}$ and~$|\xi_\star|=|\xi|$, we see that
\begin{eqnarray*}
\widehat\psi(\xi_\star)&=&\int_{\R^{n+2k}} \phi_0(|x_\star|)\,e^{-2\pi ix_\star\cdot\xi_\star}\,dx_\star\\
&=&\iint_{(0,+\infty)\times\R^{n+2k}} \mu(t)\,e^{-2\pi ix_\star\cdot\xi_\star-\pi t|x_\star|^2}\,dt\,dx_\star.
\end{eqnarray*}
Exploiting the Fourier Transform of the Gau{\ss}ian in~$\R^{n+2k}$, this gives that
$$\widehat\psi(\xi_\star)=\int_{0}^{+\infty} t^{-\frac{n+2k}{2}}\mu(t)\,e^{-\frac{\pi |\xi_\star|^2}{t}}\,dt.
$$
{F}rom this and~\eqref{LKTRDF} we obtain the desired result.
\end{proof}

\begin{corollary}\label{KMDPL3D0IKD(YHNF)-YUK}
Let~$P$ be a homogeneous harmonic polynomial in~$\R^n$ of degree~$k$.
Let~$f_0$,~$\phi_0\in C^\infty_{00}(\R)$.

Consider the radial functions
\begin{eqnarray*}\R^n\ni x\mapsto f(x):=f_0(|x|),\qquad&&
\R^{n+2k}\ni x_\star\mapsto g(x_\star):=f_0(|x_\star|),\\
 \R^n\ni x\mapsto \phi(x):=\phi_0(|x|)\qquad{\mbox{and}}\qquad&&
\R^{n+2k}\ni x_\star\mapsto \psi(x_\star):=\phi_0(|x_\star|)
.\end{eqnarray*}

Then, if~$x\in\R^n$ and~$x_\star\in\R^{n+2k}$ are such that~$|x|=|x_\star|$, we have that
$$ \check\phi * (Pf)(x)=P(x)\,\check\psi*g(x_\star).$$
\end{corollary}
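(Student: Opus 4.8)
The statement to prove, Corollary~\ref{KMDPL3D0IKD(YHNF)-YUK}, says that convolution against a radial kernel, when applied to the product of a homogeneous harmonic polynomial $P$ and a radial function $f$, can be ``lifted'' to a purely radial convolution in the higher-dimensional space $\R^{n+2k}$, up to the factor $P(x)$. The natural strategy is to compute the Fourier Transform of $\check\phi*(Pf)$ and compare it with the Fourier Transform of $P\cdot(\check\psi*g)$, using Corollary~\ref{COR:LO} as the main engine. Concretely, $\check\phi*(Pf) = {\mathcal{F}}^{-1}(\phi\cdot\widehat{Pf})$, so I first need to understand $\widehat{Pf}$.

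\textbf{Step 1: identify the Fourier Transforms of the relevant products.} Apply Corollary~\ref{COR:LO} with the harmonic polynomial $P$ and the radial profile $f_0$ in place of $\phi_0$: setting $\Phi := P f$ and $g(x_\star)=f_0(|x_\star|)$ the radial function on $\R^{n+2k}$, we get that $\widehat{Pf}(\xi) = (-i)^k P(\xi)\,\widehat g(\xi_\star)$ whenever $|\xi|=|\xi_\star|$. Similarly, applying Corollary~\ref{COR:LO} once more with $P$ and the profile $\phi_0$, I have $\widehat{P\zeta}(\xi)=(-i)^k P(\xi)\,\widehat\psi(\xi_\star)$ for any radial $\zeta$ with the given profile. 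The point is that both identities express the Fourier Transform of ``$P$ times radial'' in terms of $P(\xi)$ times a genuinely radial function of $|\xi|$, namely a Fourier Transform computed in the enlarged dimension $n+2k$.

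\textbf{Step 2: assemble the convolution identity via Fourier inversion.} Using the convolution theorem, $\widehat{\check\phi*(Pf)}(\xi) = \widehat{\check\phi}(\xi)\,\widehat{Pf}(\xi)$; since $\widehat{\check\phi}=\phi$ (the Fourier and inverse Fourier transforms being each other's adjoints up to the sign in the exponent, as recalled in the excerpt), this equals $\phi(\xi)\cdot(-i)^k P(\xi)\,\widehat g(\xi_\star)$. On the other hand, $P(x)\,\check\psi*g(x_\star)$, viewed in the variable $x\in\R^n$ but with the understanding that the radial factor lives in $\R^{n+2k}$, should have a Fourier-side description matching this; the cleanest route is to write $\check\psi * g = {\mathcal{F}}^{-1}_{\R^{n+2k}}(\psi\cdot\widehat g)$ in $\R^{n+2k}$ and then use Corollary~\ref{COR:LO} in reverse to convert the radial object $\psi\widehat g$ back down to dimension $n$, multiplying by $P$. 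Taking inverse Fourier Transforms and invoking the hypothesis $|x|=|x_\star|$ together with the fact that $\phi(\xi)=\phi_0(|\xi|)$ depends only on $|\xi|$, the two expressions coincide. One must be a little careful that $\phi_0,f_0\in C^\infty_{00}(\R)$ guarantees all the integrals and transforms involved are absolutely convergent and that the inversion is legitimate; this is where the strong decay and flatness-at-origin hypotheses are used, and they are exactly the hypotheses under which Corollary~\ref{COR:LO} was proved.

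\textbf{Main obstacle.} The principal subtlety is bookkeeping the dimension shift: the convolution on the left lives in $\R^n$ while the radial convolution on the right is naturally an object in $\R^{n+2k}$, and one must state precisely what ``$\check\psi*g$ evaluated at $x_\star$ with $|x_\star|=|x|$'' means and check that Corollary~\ref{COR:LO} can be applied both to the kernel-times-polynomial and, in the lifted form, to turn the $(n+2k)$-dimensional radial convolution $\psi\widehat g$ back into $P(x)$ times something on $\R^n$. I expect the cleanest execution is: (a) Fourier transform the left side to get $(-i)^k\,\phi_0(|\xi|)\,P(\xi)\,\widehat g(\xi_\star)$; (b) observe that $\phi_0(|\xi_\star|)\widehat g(\xi_\star)$ is the Fourier Transform in $\R^{n+2k}$ of $\check\psi * g$; (c) apply Corollary~\ref{COR:LO} with the radial profile equal to this last function's profile to conclude that its ``descent'' to $\R^n$, multiplied by $P$, has Fourier Transform exactly $(-i)^k P(\xi)\phi_0(|\xi|)\widehat g(\xi_\star)$; (d) equate and invert. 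No genuinely hard analysis is needed beyond what Corollaries~\ref{COR:LO} and~\ref{ILP:DS} already supply — the work is all in the careful matching of variables and dimensions.
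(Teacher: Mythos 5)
Your overall strategy is the same as the paper's: first use Corollary~\ref{COR:LO} to write $\mathcal{F}\big(\check\phi*(Pf)\big)(\xi)=\phi(\xi)\widehat{Pf}(\xi)=(-i)^k\phi_0(|\xi|)P(\xi)\widehat g(\xi_\star)$ (your step (a) is exactly the paper's first display), and then recognize this as the Fourier Transform of $P$ times a radial function coming from a second application of Corollary~\ref{COR:LO}. The gap is in step (c). As you describe it, the second application of Corollary~\ref{COR:LO} uses the radial profile of $\check\psi*g$, so that its descent to $\R^n$, multiplied by $P$, would have Fourier Transform $(-i)^kP(\xi)\,\psi(\xi_\star)\widehat g(\xi_\star)$. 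But Corollary~\ref{COR:LO} requires the profile to lie in $C^\infty_{00}(\R)$, i.e.\ to vanish together with all its derivatives at the origin, and the profile of $\check\psi*g$ does not: generically $\check\psi*g(0)=\int_{\R^{n+2k}}\psi\,\widehat g\,d\xi_\star\neq0$, and the even-order derivatives of the profile at $0$ need not vanish either. So the corollary cannot be invoked there as a black box. (Under the other possible reading of your step (c), in which the profile used is that of $h_\star:=\psi\,\widehat g$, the hypotheses are satisfied but the asserted transform is wrong: Corollary~\ref{COR:LO} then gives $(-i)^kP\,\widehat{h_\star}$, not $(-i)^kP\,h_\star$.)

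The paper's proof is organized precisely to avoid this obstruction: it applies Corollary~\ref{COR:LO} the second time to $Ph$, where $h$ is the $n$-dimensional descent of $h_\star=\psi\,\widehat g$, whose profile is $\phi_0$ times the (smooth, radial) profile of $\widehat g$ and hence does belong to $C^\infty_{00}(\R)$, because $\phi_0$ is flat at the origin. This yields $\mathcal{F}(Ph)=(-i)^kP\,\widehat{h_\star}$, and the remaining mismatch --- one needs $\check h_\star=\check\psi*g$ rather than $\widehat{h_\star}$ --- is resolved by a reality/parity argument: since $Ph$ is real, $\mathcal{F}^{-1}(Ph)=\overline{\mathcal{F}(Ph)}$, and since $h_\star$ is real and radial, $\overline{\widehat{h_\star}}=\check h_\star=\check\psi*g$; the factors $(-i)^k$ and $i^k$ then cancel. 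Your plan is missing this step (equivalently, the observation that forward and inverse transforms coincide on real radial functions). Once you reroute the second application of Corollary~\ref{COR:LO} through $\psi\,\widehat g$ instead of $\check\psi*g$ and add the conjugation/evenness argument, your proof coincides with the paper's.
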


\begin{proof} For all~$\xi_\star\in\R^{n+2k}$ we let
$$h_\star(\xi_\star):=\psi(\xi_\star)\,\widehat{g}(\xi_\star)=
\phi_0(|\xi_\star|)\,\widehat{g}(\xi_\star).$$
Notice that~$\widehat{g}$ is a radial function, since so is~$g$,
therefore~$h_\star$ is also a radial function. As a result, there exists~$h_0:\R\to\R$ such that~$h_\star(\xi_\star)=h_0(|\xi_\star|)$.

For all~$\xi\in\R^{n}$ we let~$h(\xi):=h_0(|\xi|)$. Thus, by Corollary~\ref{COR:LO},
if~$\xi\in\R^n$ and~$\xi_\star\in\R^{n+2k}$ are such that~$|\xi|=|\xi_\star|$, then
\begin{equation*} \begin{split}&\phi(\xi)\widehat{Pf}(\xi)=(-i)^k \phi(\xi) P(\xi)\,\widehat{g}(\xi_\star)=(-i)^k \phi_0(|\xi|) P(\xi)\,\widehat{g}(\xi_\star)=(-i)^k \phi_0(|\xi_\star|) P(\xi)\,\widehat{g}(\xi_\star)\\&\qquad=(-i)^k P(\xi)\,h_\star(\xi_\star)=(-i)^k P(\xi)\,h_0(|\xi_\star|)=(-i)^k P(\xi)\,h(\xi)
.\end{split}\end{equation*}
We apply the Inverse Fourier Transform to this identity and we deduce that
\begin{equation*} \begin{split}&\check\phi*(Pf)(x)=(-i)^k {\mathcal{F}}^{-1}(Ph)(x)
.\end{split}\end{equation*}
Since the function~$Ph$ is real-valued, this identity can be written by using complex conjugation as
\begin{equation}\label{COR:LO78} \begin{split}&\check\phi*(Pf)(x)=(-i)^k\, \overline{ {\mathcal{F}}(Ph)(x)}
.\end{split}\end{equation}
Using again Corollary~\ref{COR:LO}, we see that
$${\mathcal{F}}(Ph)(\xi)=(-i)^k P(\xi)\,\widehat{h_\star}(\xi_\star)$$
and therefore
$$\overline{{\mathcal{F}}(Ph)(\xi)}=i^k P(\xi)\,\overline{\widehat{h_\star}(\xi_\star)}=
i^k P(\xi)\,{\mathcal{F}}^{-1}({ {h_\star}})(\xi_\star)=
i^k P(\xi)\,\check\psi* {g}(\xi_\star).
$$
{F}rom this and~\eqref{COR:LO78} we arrive at
$$\check\phi*(Pf)(x)=P(x)\,\check\psi* {g}(x_\star)
,$$
as desired.
\end{proof}

\begin{corollary}
Let~$s\in[0,1]$.
Let~$P$ be a homogeneous harmonic polynomial in~$\R^n$ of degree~$k$.
Let~$f_0\in C^\infty_{00}(\R)$.

Consider the radial functions
\begin{eqnarray*}&&\R^n\ni x\mapsto f(x):=f_0(|x|),\qquad{\mbox{and}}\qquad
\R^{n+2k}\ni x_\star\mapsto g(x_\star):=f_0(|x_\star|)
.\end{eqnarray*}

Then, if~$x\in\R^n$ and~$x_\star\in\R^{n+2k}$ are such that~$|x|=|x_\star|$, we have that
$$ (-\Delta)^s (Pf)(x)=P(x)\,(-\Delta)^s g(x_\star).$$
\end{corollary}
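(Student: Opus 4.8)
The plan is to combine the multiplier/Fourier-analytic machinery built in the previous corollary (in particular Corollary~\ref{KMDPL3D0IKD(YHNF)-YUK}) with the Bochner-integral representation of the fractional Laplacian given in~\eqref{HESE}. The reason to prefer the Bochner (heat semigroup) representation is that it is built entirely out of convolutions against radial Gaussian kernels, so the dimension-lifting identity from Corollary~\ref{KMDPL3D0IKD(YHNF)-YUK} applies term by term inside the integral. The strategy is: (i) reduce the statement to an identity about convolutions with a fixed radial function and then pass to the limit, (ii) identify the relevant radial profile coming from the heat kernel, (iii) apply Corollary~\ref{KMDPL3D0IKD(YHNF)-YUK} pointwise in the time variable $t$, and (iv) integrate in $t$ and recognize the resulting expressions as $(-\Delta)^s(Pf)$ in $\R^n$ and $(-\Delta)^s g$ in $\R^{n+2k}$ respectively.

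First I would fix $x\in\R^n$ and $x_\star\in\R^{n+2k}$ with $|x|=|x_\star|$, and write, using~\eqref{HESE} and~\eqref{bochner-def00},
\[
(-\Delta)^s(Pf)(x)=\frac{s}{\Gamma(1-s)}\int_0^{+\infty}\frac{Pf(x)-\big[e^{t\Delta}(Pf)\big](x)}{t^{1+s}}\,dt,
\]
where $\big[e^{t\Delta}(Pf)\big](x)=\big((4\pi t)^{-n/2}e^{-|\cdot|^2/(4t)}\big)*(Pf)(x)$. For each fixed $t>0$ the Gaussian $x\mapsto (4\pi t)^{-n/2}e^{-|x|^2/(4t)}$ is radial; I would realize it as $\check\phi$ for an appropriate $\phi_0$ (here one uses that the Gaussian is, up to normalization, its own Fourier transform, so $\phi_0$ is again a Gaussian profile and in particular lies in $C^\infty_{00}(\R)$ after a harmless truncation/approximation argument — or, more cleanly, one extends Corollary~\ref{KMDPL3D0IKD(YHNF)-YUK} to Schwartz radial profiles, which the proof there allows since it only uses the Laplace-transform representation). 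The key point is that the normalization $(4\pi t)^{-n/2}$ in dimension $n$ becomes $(4\pi t)^{-(n+2k)/2}$ in dimension $n+2k$, and this is precisely the normalization making $e^{t\Delta}$ the heat semigroup in $\R^{n+2k}$. Thus Corollary~\ref{KMDPL3D0IKD(YHNF)-YUK} (or its Schwartz-profile version) yields, for each $t>0$,
\[
\big[e^{t\Delta}(Pf)\big](x)=P(x)\,\big[e^{t\Delta}g\big](x_\star),
\]
with $e^{t\Delta}g$ the heat semigroup acting in $\R^{n+2k}$; combined with the trivial identity $Pf(x)=P(x)g(x_\star)$ (since $f_0(|x|)=f_0(|x_\star|)$), the integrand in the Bochner formula factors as $P(x)$ times the corresponding integrand for $g$.

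Then I would integrate in $t$ and invoke~\eqref{HESE} again, now in dimension $n+2k$, to conclude
\[
(-\Delta)^s(Pf)(x)=P(x)\,\frac{s}{\Gamma(1-s)}\int_0^{+\infty}\frac{g(x_\star)-\big[e^{t\Delta}g\big](x_\star)}{t^{1+s}}\,dt=P(x)\,(-\Delta)^s g(x_\star),
\]
which is the claim. Here the values $s=0$ and $s=1$ are handled separately: for $s=1$ the identity $\Delta(Pf)(x)=P(x)\,\Delta g(x_\star)$ is the classical fact behind the harmonic-polynomial dimension shift (differentiate $f_0(|x|)=f_0(|x_\star|)$ twice and use $\Delta(Pf)=P\Delta f+2\nabla P\cdot\nabla f$ together with the radial Laplacian in dimensions $n$ and $n+2k$ and harmonicity of $P$), and for $s=0$ both sides reduce to $Pf(x)=P(x)g(x_\star)$.

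\textbf{Main obstacle.} The delicate point is justifying the interchange of the $t$-integral with the pointwise application of Corollary~\ref{KMDPL3D0IKD(YHNF)-YUK}, i.e.\ that everything is absolutely convergent and that the regularity/decay hypotheses needed for the representation~\eqref{HESE} hold for $Pf$ in $\R^n$ and simultaneously for $g$ in $\R^{n+2k}$. Since $f_0\in C^\infty_{00}(\R)$, both $Pf$ and $g$ are smooth and rapidly decreasing (polynomial times a fast-decaying radial profile), so they lie in the Schwartz class in their respective dimensions, and the weighted integrability condition~\eqref{w-cond-infty} is trivially satisfied; the singularity of $t^{-1-s}$ at $t=0$ is absorbed by the estimate $\big|u(x)-e^{t\Delta}u(x)\big|\le C t$ coming from smoothness, uniformly on compact sets, exactly as in the proof that~\eqref{symmetric-def} and~\eqref{bochner-def} coincide. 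The only genuinely new bookkeeping is checking that the profile $\phi_0$ extracted from the heat kernel is admissible for Corollary~\ref{KMDPL3D0IKD(YHNF)-YUK}; if one is uncomfortable extending that corollary beyond $C^\infty_{00}(\R)$, the cleanest fix is to approximate the Gaussian in $\R^n$ and $\R^{n+2k}$ simultaneously by profiles in $C^\infty_{00}$ and pass to the limit using dominated convergence, which is routine given the Schwartz decay of $Pf$ and $g$.
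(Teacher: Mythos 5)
Your proof is correct, but it takes a genuinely different route from the paper's. The paper also dispatches $s=0$ trivially and $s=1$ by the very computation you sketch (using $\nabla P(x)\cdot x=kP(x)$ and the radial Laplacian in dimensions $n$ and $n+2k$), but for $s\in(0,1)$ it does not pass through the heat semigroup: it applies Corollary~\ref{KMDPL3D0IKD(YHNF)-YUK} with the frequency-space profiles $\phi_\e(\xi)=(\e^2+|\xi|^2)^s e^{-\e|\xi|^2}$ and the analogous $\psi_\e$ in $\R^{n+2k}$, and then lets $\e\searrow0$; by the Fourier representation, $\check\phi_\e*(Pf)(x)$ and $\check\psi_\e*g(x_\star,0)$ converge to $(2\pi)^{-2s}(-\Delta)^s(Pf)(x)$ and $(2\pi)^{-2s}(-\Delta)^s g$ respectively, so the identity of Corollary~\ref{KMDPL3D0IKD(YHNF)-YUK} passes to the limit. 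Your argument replaces this one-parameter approximation of the multiplier $(2\pi|\xi|)^{2s}$ by the Bochner representation~\eqref{HESE}, applying the dimension-lifting identity at each fixed $t$ to the Gaussian heat kernel: this buys you that the convolution identity you need is exactly the Gaussian case, i.e.\ essentially Theorem~\ref{ILP:DS} (so the admissibility worry you raise about the Gaussian profile not lying in $C^\infty_{00}(\R)$ is harmless; note that the paper's own $\phi_\e$ also fails that hypothesis, so both arguments implicitly use the fact that the proof of Corollary~\ref{COR:LO} extends to such profiles), and the constant $s/\Gamma(1-s)$ in~\eqref{HESE} is dimension-independent, so no normalization mismatch arises. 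The price is that you must invoke the equivalence of~\eqref{symmetric-def} and~\eqref{bochner-def} for $Pf$ in $\R^n$ and for $g$ in $\R^{n+2k}$, plus the routine dominated-convergence/Fubini check in $t$ that you correctly identify (the $t\searrow0$ singularity controlled by smoothness, the tail by decay); the paper instead needs the analogous limit interchange in $\e$. Both routes require the same mild decay bookkeeping on $Pf$ and $g$, so neither is more demanding on that score, and your write-up handles the delicate points adequately.
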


\begin{proof} If~$s=0$, we have that~$(-\Delta)^s$ is the identity operator and the result is obvious.
Also, if~$s=1$, we observe that
$$ \Delta (Pf)(x)=P(x)\Delta f(x)+2\nabla P(x)\cdot\nabla f(x)=
P(x)\left( f_0''(|x|)+\frac{n-1}{|x|}f_0'(|x|)\right)
+\frac{2\nabla P(x)\cdot x}{|x|} f'_0(x).$$
Moreover,
$$ k P(x)=\left.\frac{d}{dt}(t^k P(x))\right|_{t=1}=
\left.\frac{d}{dt} P(tx)\right|_{t=1}= \nabla P(x)\cdot x,$$
from which we infer that
\begin{equation}\label{COR:LO77} \Delta (Pf)(x)=
P(x)\left( f_0''(|x|)+\frac{n+2k-1}{|x|}f_0'(|x|)\right).\end{equation}
We also have that
$$\Delta g(x_\star)
=f_0''(|x_\star|)+\frac{n+2k-1}{|x_\star|}f_0'(|x_\star|).
$$
Combining this and~\eqref{COR:LO77} we obtain the desired result when~$s=1$.

Thus, in the rest of this argument, we may suppose that~$s\in(0,1)$. We take~$\e>0$ and define
\begin{eqnarray*}&& \R^n\ni x\mapsto\phi_\e(x):= (\e^2+|x|^2)^{s} e^{-\e|x|^2}\\{\mbox{and}}\quad &&
\R^{n+2k}\ni x_\star\mapsto\psi_\e(x_\star):= (\e^2+|x_\star|^2)^{s} e^{-\e|x_\star|^2}.\end{eqnarray*}
For all~$x\in\R^n$, let also
$$ G_\e(x):=\check\psi_\e*g(x,0),$$
where~$(x,0)\in\R^n\times\R^{2k}$.

Notice that
\begin{eqnarray*}
G_\e(x)&=&\iint_{\R^n\times\R^{2k}}\check\psi_\e(\eta',\eta'') g(x-\eta',-\eta'')\,d\eta'\,d\eta''\\&=&
\iiiint_{\R^n\times\R^{2k}\times\R^n\times\R^{2k}} \psi_\e(\beta',\beta'') g(x-\eta',-\eta'')
e^{2\pi i(\beta'\cdot\eta'+\beta''\cdot\eta'')}\,d\eta'\,d\eta''\,d\beta'\,d\beta''\\&=&
\iiiint_{\R^n\times\R^{2k}\times\R^n\times\R^{2k}} \psi_\e(\beta',\beta'') g(\alpha',\alpha'')
e^{2\pi i(\beta'\cdot(x-\alpha')-\beta''\cdot\alpha'')}\,d\alpha'\,d\alpha''\,d\beta'\,d\beta''\\&=&
\iint_{\R^{n+2k}\times\R^{n+2k}} \psi_\e(\beta) g(\alpha) e^{-2\pi i\beta\cdot\alpha}
e^{2\pi i \beta\cdot(x,0)}\,d\alpha\,d\beta\\&=&\int_{\R^{n+2k}} \psi_\e(\beta) \widehat g(\beta)
e^{2\pi i \beta\cdot(x,0)}\,d\beta
\end{eqnarray*}
and therefore
\begin{equation}\label{0-0-09iuwjhegvf-1}
\lim_{\e\searrow0}G_\e(x)=\int_{\R^{n+2k}}|\beta|^{2s} \widehat g(\beta)
e^{2\pi i \beta\cdot(x,0)}\,d\beta={\mathcal{F}}^{-1}\big( |\beta|^{2s} \widehat g(\beta)\big)(x,0)=\frac{
(-\Delta)^s g(x,0)}{(2\pi)^{2s}}.
\end{equation}

In addition, for all~$x\in\R^n$,
\begin{equation}\label{0-0-09iuwjhegvf-2}\begin{split}
\lim_{\e\searrow0}\check\phi_\e * (Pf)(x)=\;&\lim_{\e\searrow0}\int_{\R^n}\check\phi_\e(y)\,Pf(x-y)\,dy
\\=\;&\lim_{\e\searrow0}\iint_{\R^n\times\R^n} \phi_\e(\vartheta)\,Pf(x-y)\,e^{2\pi i\vartheta\cdot y}\,dy\,d\vartheta\\=\;&\lim_{\e\searrow0}\iint_{\R^n\times\R^n} \phi_\e(\vartheta)\,Pf(\alpha)\,e^{-2\pi i\vartheta\cdot \alpha}\,e^{2\pi i\vartheta\cdot x}\,d\alpha\,d\vartheta
\\=\;&\lim_{\e\searrow0}\int_{\R^n} \phi_\e(\vartheta)\,{\mathcal{F}}(Pf)(\vartheta)\,e^{2\pi i\vartheta\cdot x}\,d\vartheta\\ =\;&\int_{\R^n} |\vartheta|^{2s}\,{\mathcal{F}}(Pf)(\vartheta)\,e^{2\pi i\vartheta\cdot x}\,d\vartheta \\
=\;&{\mathcal{F}}^{-1}\big( |\vartheta|^{2s}\,{\mathcal{F}}(Pf)(\vartheta)\big)(x)\\=\;&\frac{(-\Delta)^s (Pf)(x)}{(2\pi)^{2s}}.
\end{split}
\end{equation}

Also, by Corollary~\ref{KMDPL3D0IKD(YHNF)-YUK} (applied with~$x_\star:=(x,0)$),
$$ \check\phi_\e * (Pf)(x)=P(x)\,\check\psi_\e*g(x,0)=P(x) G_\e(x).$$
{F}rom this,~\eqref{0-0-09iuwjhegvf-1} and~\eqref{0-0-09iuwjhegvf-2} we arrive at
$$ (-\Delta)^s (Pf)(x)=(2\pi)^{2s}
\lim_{\e\searrow0}\check\phi_\e * (Pf)(x)=(2\pi)^{2s}\lim_{\e\searrow0}P(x) G_\e(x)
=P(x)(-\Delta)^s g(x,0).$$
The desired result now follows, since~$g$ is rotationally invariant, thus so is~$(-\Delta)^s g$,
whence~$(-\Delta)^s g(x,0)=(-\Delta)^s g(x_\star)$ whenever~$|x|=|x_\star|$.
\end{proof}

For more information about harmonic polynomials, see e.g.~\cite[Sections~2.21--2.22]{2021arXiv210107941D}.
For additional results relating harmonic polynomials, Fourier analysis and nonlocal operators, see~\cite{MR0350027, MR0290095, MR2974318, MR3640641}.\medskip

We now deduce from Corollary~\ref{COR:LO} an additional result, which is useful to discuss the fundamental solution \index{fundamental solution}
of the fractional Laplacian:

\begin{corollary}\label{CAL:GA:099}
Let~$\alpha\in(0,n)$.
Let~$P$ be a homogeneous harmonic polynomial in~$\R^n$ of degree~$k$. Then, for every~$\eta\in C^\infty_c(\R^n)$,
$$ \int_{\R^n} |x|^{\alpha-n-k} P(x)\,\widehat\eta(x)\,dx=c_{n,k,\alpha}
\int_{\R^n} |x|^{-k-\alpha} P(x)\,\eta(x)\,dx,$$
where
$$ c_{n,k,\alpha}:=\frac{i^k \pi^{\frac{n-2\alpha}2}\,\Gamma\left(\displaystyle\frac{k+\alpha}{2} \right)}{\Gamma\left(\displaystyle\frac{n+k-\alpha}{2} \right)},$$
being~$\Gamma$ the Euler Gamma Function.
\end{corollary}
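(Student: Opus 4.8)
The plan is to reduce this statement to an application of Corollary~\ref{COR:LO} by inserting a Gaussian regularization and then letting a parameter degenerate, exactly in the spirit of the proofs of the previous corollaries in this appendix. First I would fix~$\eta\in C^\infty_c(\R^n)$ and introduce, for~$t>0$, the family of radial-times-polynomial functions~$\Phi_t(x):=P(x)\phi_{0,t}(|x|)$ and~$\psi_t(x_\star):=\phi_{0,t}(|x_\star|)$ on~$\R^{n+2k}$, where~$\phi_{0,t}\in C^\infty_{00}(\R)$ is a cutoff-modified power such as~$\phi_{0,t}(r):=(t+r^2)^{(\alpha-n-k)/2}e^{-tr^2}$ (or a Laplace-transform representation thereof, so that the hypotheses of Corollary~\ref{COR:LO} and Corollary~\ref{KMDPL3D0IKD(YHNF)-YUK} are literally met). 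Then I would use Corollary~\ref{COR:LO} to write~$\widehat{\Phi_t}(\xi)=(-i)^k P(\xi)\,\widehat{\psi_t}(\xi_\star)$ whenever~$|\xi|=|\xi_\star|$, and compute~$\widehat{\psi_t}$ explicitly as a radial function of~$|\xi_\star|$ by passing to polar coordinates in~$\R^{n+2k}$; in the limit~$t\searrow0$ this should collapse to a constant (depending on~$n$, $k$, $\alpha$) times~$|\xi|^{-(k+\alpha)}$, with the constant being a ratio of Gamma functions produced by the familiar formula~\eqref{gamma-def} for~$\Gamma$ applied to the radial integral.

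The second step is to transfer this pointwise Fourier identity into the desired pairing against~$\widehat\eta$. Writing~$\int_{\R^n}|x|^{\alpha-n-k}P(x)\widehat\eta(x)\,dx=\lim_{t\searrow0}\int_{\R^n}\Phi_t(x)\widehat\eta(x)\,dx$ (justified by dominated convergence, using~$\alpha\in(0,n)$ to guarantee local integrability of~$|x|^{\alpha-n-k}$ near the origin together with the decay of~$\widehat\eta$ at infinity, recalling~$\widehat\eta$ is Schwartz), I would move the Fourier transform onto~$\eta$ via the Parseval/multiplication formula~$\int \Phi_t\,\widehat\eta=\int\widehat{\Phi_t}\,\eta$, then substitute~$\widehat{\Phi_t}(\xi)=(-i)^kP(\xi)\widehat{\psi_t}(\xi_\star)$ and pass to the limit~$t\searrow0$ on the right-hand side. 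The limit of~$\widehat{\psi_t}$ computed in the first step yields~$|\xi|^{-(k+\alpha)}$ times the Gamma-ratio constant, and collecting the factor~$(-i)^k$ gives precisely~$c_{n,k,\alpha}\int_{\R^n}|x|^{-k-\alpha}P(x)\eta(x)\,dx$; here again one needs~$\alpha\in(0,n)$, now to ensure~$|x|^{-k-\alpha}$ is integrable against the compactly supported smooth~$\eta$ away from where~$P$ might otherwise help (near the origin~$P$ is~$O(|x|^k)$, so the singularity is~$|x|^{-\alpha}$ with~$\alpha<n$, which is fine).

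The bookkeeping of the constant is where care is required: I would compute~$\widehat{\psi_t}(\xi_\star)$ for the Laplace-transform representation~$\phi_{0,t}(r)=\int_0^\infty \mu_t(\tau)e^{-\pi\tau r^2}\,d\tau$, use the Gaussian Fourier transform in dimension~$n+2k$ to get~$\widehat{\psi_t}(\xi_\star)=\int_0^\infty \tau^{-(n+2k)/2}\mu_t(\tau)e^{-\pi|\xi_\star|^2/\tau}\,d\tau$, and then identify the~$t\searrow0$ limit of this against the integral representation of~$|\xi|^{-(k+\alpha)}$ that comes from writing~$|\xi|^{-(k+\alpha)}=\frac{1}{\Gamma((k+\alpha)/2)}\int_0^\infty \tau^{(k+\alpha)/2-1}e^{-\tau|\xi|^2}\,d\tau$; matching the two~$\tau$-integrals produces the ratio~$\Gamma((k+\alpha)/2)/\Gamma((n+k-\alpha)/2)$ and the power of~$\pi$. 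The main obstacle I anticipate is precisely this constant-chasing — getting every power of~$2\pi$, every factor from the~$(n+2k)$-dimensional Gaussian normalization, and the~$\Gamma$-function duplication/reflection identities~\eqref{gamma-recursive}--\eqref{euler-reflection} to line up so that the answer is exactly~$c_{n,k,\alpha}=i^k\pi^{(n-2\alpha)/2}\Gamma((k+\alpha)/2)/\Gamma((n+k-\alpha)/2)$, rather than off by a power of~$2$ or~$\pi$. A useful sanity check along the way will be the case~$k=0$, $\alpha=n-2+2s$, which must reproduce the Riesz-potential constant already used (via Lemma~\ref{CAL:GA:099}) in the proof of~\eqref{FORi}.
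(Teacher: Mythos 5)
Your plan is sound and it is, at its core, the same route the paper takes: the whole weight of the argument rests on Corollary~\ref{COR:LO} (lifting $P\times(\text{radial})$ to a purely radial Fourier computation in dimension~$n+2k$), followed by Gaussian/Gamma-function bookkeeping and the multiplication formula $\int\Phi\,\widehat\eta=\int\widehat\Phi\,\eta$, exactly as you propose. The only real difference is in execution: instead of regularizing the power kernel by a family such as $(t+r^2)^{(\alpha-n-k)/2}e^{-tr^2}$ and passing to the limit $t\searrow0$, the paper applies Corollary~\ref{COR:LO} to the pure Gaussians $\phi_{0,\e}(r)=e^{-\pi\e r^2}$ and then multiplies the resulting identity by $\e^{\beta-1}$ and integrates over $\e\in(0,+\infty)$, choosing $\beta=\frac{n-\alpha+k}{2}$; the Gamma integral $\int_0^{+\infty}\e^{\beta-1}e^{-\pi\e|\xi|^2}\,d\e=\Gamma(\beta)\pi^{-\beta}|\xi|^{-2\beta}$ (i.e.\ \eqref{gamma-def}) then produces both power laws and both Gamma factors in one stroke, with only Fubini to justify. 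This buys two things your version has to pay for separately: there is no limiting procedure (so no dominated/monotone convergence argument for $\widehat{\psi_t}$ near $\xi=0$ and at infinity), and there is no need to check that your modified-power profile fits, or can be reduced via its Laplace representation to, the class $C^\infty_{00}$ required in Corollary~\ref{COR:LO} (the invocation of Corollary~\ref{KMDPL3D0IKD(YHNF)-YUK} is not needed at all). Your constant-chasing worry is legitimate but resolves the same way in both arguments: the dimension-$(n+2k)$ Fourier transform of the power gives $\pi^{\frac{n-2\alpha}{2}}\Gamma\big(\frac{k+\alpha}{2}\big)/\Gamma\big(\frac{n+k-\alpha}{2}\big)$ times $|\xi|^{-k-\alpha}$, and the only delicate residue is the $(\pm i)^k$ bookkeeping coming from Corollary~\ref{COR:LO}, which is identical in either route (and invisible in the application with $P\equiv1$, $k=0$).
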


\begin{proof} Given~$\e>0$ we define~$\phi_{0,\e}(t):=e^{-\pi \e t^2}$,
and consider the radial functions
$$\R^n\ni x\mapsto \phi_\e(x):=\phi_{0,\e}(|x|)\qquad{\mbox{and}}\qquad
\R^{n+2k}\ni x_\star\mapsto \psi_\e(x_\star):=\phi_{0,\e}(|x_\star|).$$
Let also~$\Phi_\e:=P\phi_\e$.
Then, we apply Corollary~\ref{COR:LO} to infer that, for all~$\xi\in\R^n$,
\begin{equation}\label{CAL:GA:090}
\widehat\Phi_\e(\xi)=(-i)^k P(\xi)\,\widehat\psi_\e(\xi,0),\end{equation}
where~$(\xi,0)\in\R^n\times\R^{2k}$.

We point out that, given~$\beta\in\left(0,\frac{n+2k}2\right)$, to be conveniently chosen in what follows,
using the substitution~$\tau:=\frac{\pi |\xi|^2}{\e}$, we see that
\begin{equation*}
\begin{split}
&\int_0^{+\infty}\e^{\beta-1}P(\xi)\,\widehat\psi_\e(\xi,0)\,d\e=\int_0^{+\infty} \e^{\frac{2\beta-n-2k}{2}-1} P(\xi)\,e^{-\frac{\pi |\xi|^2}{\e}}\,d\e\\&\qquad=\pi^{\frac{2\beta-n-2k}2} |\xi|^{ 2\beta-n-2k}
P(\xi)\int_0^{+\infty} \tau^{\frac{n-2\beta+2k}{2}-1} e^{-\tau}\,d\tau=
\Gamma\left(\frac{n-2\beta+2k}{2} \right)\pi^{\frac{2\beta-n-2k}2} |\xi|^{ 2\beta-n-2k}
P(\xi).
\end{split}\end{equation*}
Therefore,
\begin{equation}\label{CQJJNS:02}
\begin{split}
\iint_{\R^n\times(0,+\infty)}\e^{\beta-1}P(\xi)\,\widehat\psi_\e(\xi,0)\,\eta(\xi)\,d\xi\,d\e=
\Gamma\left(\frac{n-2\beta+2k}{2} \right)\pi^{\frac{2\beta-n-2k}2} \int_{\R^n}|\xi|^{ 2\beta-n-2k}
P(\xi)\,\eta(\xi)\,d\xi.
\end{split}\end{equation}

Additionally, by Plancherel Theorem,
\begin{equation*}
\begin{split}
\int_{\R^n} \widehat\Phi_\e(\xi)\,\eta(\xi)\,d\xi=
\int_{\R^n} \Phi_\e(\xi)\,\widehat\eta(\xi)\,d\xi=\int_{\R^n} P(\xi)\,e^{-\pi\e|\xi|^2}\widehat\eta(\xi)\,d\xi
\end{split}\end{equation*}
and consequently, substituting for~$\tau:=\pi\e|\xi|^2$,
\begin{equation}\label{CQJJNS:03}
\begin{split}&
\iint_{\R^n\times(0,+\infty)} \e^{\beta-1}\widehat\Phi_\e(\xi)\,\eta(\xi)\,d\xi\,d\e=
\iint_{\R^n\times(0,+\infty)} \e^{\beta-1} P(\xi)\,e^{-\pi\e|\xi|^2}\widehat\eta(\xi)\,d\xi\,d\e\\
&\qquad=\iint_{\R^n\times(0,+\infty)} \tau^{\beta-1} \big(\pi|\xi|^2\big)^{-\beta} P(\xi)\,e^{-\tau}\widehat\eta(\xi)\,d\xi\,d\tau=
\frac{\Gamma(\beta)}{\pi^\beta}
\int_{\R^n} |\xi|^{-2\beta} P(\xi)\,\widehat\eta(\xi)\,d\xi.
\end{split}\end{equation}

Now we combine~\eqref{CAL:GA:090},~\eqref{CQJJNS:02}
and~\eqref{CQJJNS:03} and we conclude that
\begin{eqnarray*}&&\frac{\Gamma(\beta)}{\pi^\beta}
\int_{\R^n} |\xi|^{-2\beta} P(\xi)\,\widehat\eta(\xi)\,d\xi=
\iint_{\R^n\times(0,+\infty)}\e^{\beta-1}\widehat\Phi_\e(\xi)\,\eta(\xi)\,d\xi\,d\e\\&&\qquad
=(-i)^k \iint_{\R^n\times(0,+\infty)}\e^{\beta-1}P(\xi)\,\widehat\psi_\e(\xi,0)\,\eta(\xi)\,d\xi\,d\e\\&&\qquad=
(-i)^k\,\Gamma\left(\frac{n-2\beta+2k}{2} \right)\pi^{\frac{2\beta-n-2k}2}  \int_{\R^n}|\xi|^{ 2\beta-n-2k}
P(\xi)\,\eta(\xi)\,d\xi.
\end{eqnarray*}
The desired result follows by choosing~$\beta:=\frac{n-\alpha+k}{2}$.
\end{proof}

\begin{corollary}\label{FURIEZ}
The function
\begin{equation}\label{0oe2jdfc02uhcewubv89u2hnfivnionBNSM} \R^n\setminus\{0\}\ni x\mapsto {\mathcal{R}}(x):=\begin{dcases} \displaystyle\frac{\displaystyle\Gamma\left(\frac{n-2s}{2}\right)}{2^{2s}\pi^{\frac{n}2}\,\Gamma(s)} \,|x|^{2s-n} & {\mbox{ if }} n\ne2s,\\
-\frac{1}{\pi}\ln|x|
& {\mbox{ if~$ n=1$ and~$s=\displaystyle\frac12$}}
\end{dcases}\end{equation}
is the fundamental solution of~$(-\Delta)^s$, i.e.
$${\mbox{$(-\Delta)^s{\mathcal{R}}=\delta_0$ in the sense of distributions.}}$$
\end{corollary}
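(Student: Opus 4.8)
The plan is to verify the identity $(-\Delta)^s{\mathcal{R}}=\delta_0$ directly from the definition of the fractional Laplacian as a Fourier multiplier, using the symbol $(2\pi|\xi|)^{2s}$ as in~\eqref{FORi}, together with the two Fourier-analytic facts already recorded: Lemma~\ref{PRODpoikjhr3-2} (which computes $\widehat{\mathcal{R}}$ when $n>2s$) and Corollary~\ref{CAL:GA:099} (which handles general homogeneous powers times harmonic polynomials). Concretely, I would test $(-\Delta)^s{\mathcal{R}}$ against an arbitrary $\varphi\in C^\infty_c(\R^n)$, move the operator onto $\varphi$ by duality, and reduce everything to showing
$$
\int_{\R^n} {\mathcal{R}}(x)\,(-\Delta)^s\varphi(x)\,dx=\varphi(0).
$$

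\textbf{Case $n>2s$.} Here I would first argue that $(-\Delta)^s\varphi={\mathcal F}^{-1}\big((2\pi|\xi|)^{2s}\widehat\varphi\big)$ is a Schwartz function (its symbol is smooth away from the origin and the singularity at $0$ is mild since $2s<n$, in fact $2s<2\le n$ guarantees integrability of $|\xi|^{2s}$ near $0$ once we keep track that $\widehat\varphi$ is bounded). Then, using Parseval and Lemma~\ref{PRODpoikjhr3-2},
$$
\int_{\R^n}{\mathcal{R}}(x)\,(-\Delta)^s\varphi(x)\,dx
=\int_{\R^n}\widehat{\mathcal{R}}(\xi)\,\overline{{\mathcal F}\big((-\Delta)^s\varphi\big)(\xi)}\,d\xi
=\int_{\R^n}(2\pi|\xi|)^{-2s}\,(2\pi|\xi|)^{2s}\,\overline{\widehat\varphi(\xi)}\,d\xi
=\int_{\R^n}\overline{\widehat\varphi(\xi)}\,d\xi=\varphi(0),
$$
where the last equality is Fourier inversion at $0$. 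Alternatively, and perhaps cleaner to state without worrying about which functions are Schwartz, I would invoke Corollary~\ref{CAL:GA:099} with $P\equiv1$, $k=0$, and $\alpha:=n-2s\in(0,n)$, taking $\eta$ to be ${\mathcal F}^{-1}\big((2\pi|\xi|)^{2s}\widehat\varphi\big)$ (so that $\widehat\eta=(2\pi|\xi|)^{2s}\widehat\varphi$ up to conjugation), and matching the resulting constant $c_{n,0,n-2s}$ against the normalizing constant in~\eqref{0oe2jdfc02uhcewubv89u2hnfivnionBNSM} using the $\Gamma$-function identities from Appendix~\ref{sec:gamma}. This constant-chasing is the one genuinely computational step and I expect it to be the main bookkeeping obstacle, though not a conceptual one.

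\textbf{Case $n=1$, $s=\tfrac12$.} Here ${\mathcal{R}}(x)=-\tfrac1\pi\ln|x|$ is not homogeneous, so Lemma~\ref{PRODpoikjhr3-2} does not apply and one cannot simply read off a Fourier symbol. My plan is to proceed by a scaling-plus-differentiation trick, or more directly: we already know from the Lemma preceding Chapter~\ref{LIOUV:CHAP}'s statement (the computation ${(-\Delta)}^{1/2}\ln|x|=0$ for $x\ne0$) that $(-\Delta)^{1/2}{\mathcal{R}}$ is supported at the origin, hence is a finite combination of derivatives of $\delta_0$; by the homogeneity/parity of the logarithm (it is even, and $(-\Delta)^{1/2}$ lowers homogeneity by $1$ formally) the only admissible term is $c\,\delta_0$. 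To pin down $c$, I would pair against a fixed nice test function: take $\varphi(x)=e^{-\pi x^2}$, compute $(-\Delta)^{1/2}\varphi$ from its symbol $2\pi|\xi|$, and evaluate $-\tfrac1\pi\int_\R \ln|x|\,(-\Delta)^{1/2}\varphi(x)\,dx$ explicitly using the Fourier-side representation $\int_\R \widehat{\mathcal{R}}(\xi)\,2\pi|\xi|\,e^{-\pi\xi^2}\,d\xi$ together with the known distributional Fourier transform of $\ln|x|$ in one dimension (namely $\widehat{\ln|\cdot|}=-\tfrac1{2|\xi|}+c_0\delta_0$), which collapses the $2\pi|\xi|$ factor and yields $\varphi(0)=1$. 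Thus $c=1$ and $(-\Delta)^{1/2}{\mathcal{R}}=\delta_0$. The delicate point in this case is handling the distributional Fourier transform of $\ln|x|$ and the $\delta_0$ term carefully; I would present that computation as a short lemma or cite the appendix material on one-dimensional Fourier analysis. Finally, I would remark that identity~\eqref{fundsol} of Proposition~\ref{prop:inversion} is consistent with (and a special case, on $\R^n\setminus\{0\}$, of) this statement when $n\ne2s$.
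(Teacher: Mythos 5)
Your case analysis does not cover the full statement: the corollary asserts the formula for all $n\ne 2s$, which includes the regime $n<2s$, i.e.\ $n=1$ and $s\in\left(\tfrac12,1\right)$, where ${\mathcal{R}}(x)$ is a multiple of the \emph{growing} kernel $|x|^{2s-1}$. Neither of your two tools reaches this case: Lemma~\ref{PRODpoikjhr3-2} is stated only for $n>2s$, and Corollary~\ref{CAL:GA:099} requires $\alpha\in(0,n)$, which fails both for $\alpha=2s$ and for $\alpha=n-2s$ when $n=1<2s$. The paper handles this regime together with $s=\tfrac12$ by a direct computation: it pairs $\ell(x)=|x|^{2s-1}/(2s-1)$ (resp.\ $\ln|x|$) against $\check g$ on $(-R,R)$, integrates by parts, sends $R\to+\infty$, and uses the classical value of $\int_0^{+\infty}t^{2s-2}\sin t\,dt$, before substituting $g(\xi)=|\xi|^{2s}\widehat f(\xi)$. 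Your support-at-the-origin strategy could in principle be adapted here (using \eqref{fundsol} to see that $(-\Delta)^s|x|^{2s-1}$ vanishes pointwise away from $0$, then homogeneity of degree $-1$ and evenness to force $c\,\delta_0$), but as written the proposal simply omits this case, and that is a genuine gap.

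Two further points. First, your primary route for $n>2s$ is circular within this text: the proof of Lemma~\ref{PRODpoikjhr3-2} invokes precisely the identity \eqref{CAL:GA:098}, which is the displayed reformulation of the fundamental-solution property inside the proof of Corollary~\ref{FURIEZ}; so you may not use that lemma to prove the corollary. Your alternative route via Corollary~\ref{CAL:GA:099} is fine and is in fact the paper's argument (the paper takes $\alpha:=2s$ rather than $\alpha:=n-2s$, which is the same identity read in the other direction), provided you add the approximation step needed because $\eta=(-\Delta)^s\varphi$ is not compactly supported; relatedly, your claim that $(-\Delta)^s\varphi$ is Schwartz is false, since the symbol $(2\pi|\xi|)^{2s}$ is not smooth at $\xi=0$ and $(-\Delta)^s\varphi$ only decays like $|x|^{-n-2s}$. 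Second, your treatment of $n=1$, $s=\tfrac12$ is a legitimately different (and salvageable) alternative to the paper's computation, resting on the pointwise lemma $(-\Delta)^{1/2}\ln|x|=0$, Theorem~\ref{DE:CALFA:TG}, a dilation argument, and a normalization against a Gaussian; but you must justify the passage from the pointwise identity to the distributional one on $\R\setminus\{0\}$, phrase the scaling argument via $\ln|\lambda x|=\ln|x|+\ln\lambda$ and $(-\Delta)^{1/2}(\mathrm{const})=0$ (since $\ln|x|$ is not homogeneous), and replace $-\tfrac1{2|\xi|}$ by the finite-part distribution $-\tfrac12\,\mathrm{p.v.}$-type regularization of $1/|\xi|$, which is not locally integrable in dimension one.
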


\begin{proof}
It is convenient to rewrite~\eqref{0oe2jdfc02uhcewubv89u2hnfivnionBNSM}
in a less compact, but more explicit way by separating the case~$n<2s$ from the case~$n>2s$.
Note that if~$n<2s$ then necessarily~$n=1$ and~$s\in\left(\displaystyle\frac12,1\right)$.
Furthermore,
$$ \frac{\Gamma\left(\displaystyle\frac{1-2s}{2}\right)}{2^{2s}\pi^{\frac{1}{2}} \Gamma(s)}=
\frac{1}{2(2s-1)\cos(\pi s)\Gamma(2s-1)},$$
therefore we can write~\eqref{0oe2jdfc02uhcewubv89u2hnfivnionBNSM} in the equivalent form
$$ \R^n\setminus\{0\}\ni x\mapsto {\mathcal{R}}(x)=\begin{dcases} \displaystyle\frac{\displaystyle\Gamma\left(\frac{n-2s}{2}\right)}{2^{2s}\pi^{\frac{n}2}\,\Gamma(s)} \,|x|^{2s-n} & {\mbox{ if }} n>2s,\\
\frac{1}{2(2s-1)\cos(\pi s)\Gamma(2s-1)}|x|^{2s-1} & {\mbox{ if~$ n=1$ and~$s\in\left(\displaystyle\frac12,1\right)$,}}\\ -\frac{1}{\pi}\ln|x|
& {\mbox{ if~$ n=1$ and~$s=\displaystyle\frac12$.}}
\end{dcases}$$

Now, to prove the desired result we need to check that, for every~$f\in C^\infty_c(\R^n)$,
$$ \int_{\R^n}(-\Delta)^s{\mathcal{R}}(x)\,f(x)\,dx= f(0).$$
This, in the distributional sense, is equivalent to
\begin{equation*}
\int_{\R^n}{\mathcal{R}}(x)\,(-\Delta)^sf(x)\,dx= f(0)\end{equation*}
and so, by Plancherel Theorem, to
\begin{equation}\label{CAL:GA:098} (2\pi)^{2s} \int_{\R^n}|\xi|^{2s}\widehat {\mathcal{R}}(\xi)\,\overline{\widehat f(\xi)}\,d\xi= f(0).\end{equation}
To check this, we distinguish three cases, namely~$n>2s$,~$n=2s$ and~$n<2s$.

When~$n>2s$,
we apply Corollary~\ref{CAL:GA:099} with~$P:=1$ (hence~$k:=0$) and~$\alpha:=2s$, obtaining that, for
every~$\eta\in C^\infty_c(\R^n)$,
$$ \int_{\R^n} |x|^{2s-n}\, \widehat\eta(x)\,dx=c_{n,0,2s}
\int_{\R^n} |x|^{-2s} \eta(x)\,dx.$$
Thus, by the Plancherel Theorem,
$$ (2\pi)^{2s}\int_{\R^n} \widehat {\mathcal{R}}(\xi)\,\eta(\xi)\,d\xi=\int_{\R^n} |x|^{-2s} \eta(x)\,dx.$$
Up to an approximation process, we can choose~$\eta(x):=|x|^{2s}\overline{\widehat f(x)}$ and thereby conclude that
\begin{eqnarray*}(2\pi)^{2s}
\int_{\R^n} \widehat {\mathcal{R}}(\xi)\,|\xi|^{2s}\overline{\widehat f(\xi)}\,d\xi=\int_{\R^n}\overline{\widehat f(x)}\,dx=
\int_{\R^n}\check f(x)\,dx=\int_{\R^n}\check f(x) e^{-2\pi x\cdot 0}\,dx=f(0),
\end{eqnarray*}
which proves~\eqref{CAL:GA:098} in this case.

Now we deal with the case~$n=1$ and~$s\in\left[\frac12,1\right)$. For this, we define
$$ \ell(x):=\begin{dcases}
\ln |x| & {\mbox{ if }}\displaystyle s=\frac12,\\
\\
\displaystyle\frac{|x|^{2s-1}}{2s-1} & {\mbox{ if }}\displaystyle s\in\left(\frac12,1\right).
\end{dcases}$$
Given~$R>0$ and~$g\in C^\infty_c(\R)$, we have that
\begin{equation}\label{M98ijsdLJSnae9A}\begin{split}&
\int_{-R}^R \ell(x)\check g(x)\,dx=\int_{0}^R \ell(x)\big(\check g(x)+\check g(-x)\big)\,dx=
\iint_{(0,R)\times\R} \ell(x) g(\xi) \big(e^{2\pi ix\xi}+e^{-2\pi ix\xi}\big)\,dx\,d\xi\\&\qquad=
2\iint_{(0,R)\times\R} \ell(x) g(\xi) \cos(2\pi x\xi)\,dx\,d\xi\\&\qquad=
\iint_{(0,R)\times\R} \frac{d}{dx}\left( \ell(x) \frac{g(\xi)}{\pi \xi} \sin(2\pi x\xi) \right) \,dx\,d\xi-
\iint_{(0,R)\times\R} \ell'(x) \frac{g(\xi)}{\pi \xi} \sin(2\pi x\xi) \,dx\,d\xi\\&\qquad=
\int_{\R} \ell(R) \,\frac{g(\xi)}{\pi \xi} \sin(2\pi R\xi)\,d\xi-
\iint_{(0,R)\times\R} x^{2s-2} \,\frac{g(\xi)}{\pi \xi} \sin(2\pi x\xi) \,dx\,d\xi.
\end{split}\end{equation}
Our goal is now to take the limit as~$R\to+\infty$. To this end, we pick~$g$ such that
\begin{equation}\label{M98ijsdLJSnae9B}
\int_{\R}\left|\frac{g'(\xi)}{\xi}-\frac{g(\xi)}{\xi^2} \right|\,d\xi<+\infty
\end{equation} and we
point out that
\begin{eqnarray*}&&\left|
\int_{\R} \ell(R) \,\frac{g(\xi)}{\pi \xi} \sin(2\pi R\xi)\,d\xi\right|\\&=&\left|
\int_{\R} \frac{d}{d\xi}\left(\ell(R) \,\frac{g(\xi)}{2\pi^2 R\xi} \cos(2\pi R\xi)\right)\,d\xi-
\int_{\R} \ell(R)\left(\frac{g'(\xi)}{2\pi^2 R\xi}-\frac{g(\xi)}{2\pi^2 R\xi^2} \right)\cos(2\pi R\xi)\,d\xi\right|\\
&=&\frac{\ell(R)}{2\pi^2 R}
\left|\int_{\R} \left(\frac{g'(\xi)}{\xi}-\frac{g(\xi)}{\xi^2} \right)\cos(2\pi R\xi)\,d\xi\right|\\&\le&
\frac{\ell(R)}{2\pi^2 R}
\int_{\R} \left|\frac{g'(\xi)}{\xi}-\frac{g(\xi)}{\xi^2} \right|\,d\xi,
\end{eqnarray*}
which is infinitesimal as~$R\to+\infty$.

Thus, using the substitution~$t:=2\pi x|\xi|$, we deduce from~\eqref{M98ijsdLJSnae9A} that, for all~$g\in C^\infty_c(\R)$ satisfying~\eqref{M98ijsdLJSnae9B},
\begin{equation}\label{M98ijsdLJSnae9C} \begin{split}&
\int_{\R} \ell(x)\check g(x)\,dx=-\lim_{R\to+\infty}
\iint_{(0,R)\times\R} x^{2s-2} \,\frac{g(\xi)}{\pi |\xi|} \sin(2\pi x|\xi|) \,dx\,d\xi\\&\qquad
=-\frac{2^{1-2s}}{\pi^{2s}}\lim_{R\to+\infty}\int_{\R}\left[
\int_0^{2\pi R|\xi|} t^{2s-2}\,\frac{g(\xi)}{|\xi|^{2s}} \sin t\,dt\right]\,d\xi\\&\qquad
=-\frac{2^{1-2s}}{\pi^{2s}}
\int_0^{+\infty} t^{2s-2}\sin t\,dt\,\int_{\R}
\frac{g(\xi)}{|\xi|^{2s}} \,d\xi\\&\qquad=
\frac{2^{1-2s}\cos(\pi s)\,\Gamma(2s-1)}{\pi^{2s}}
\int_{\R}\frac{g(\xi)}{|\xi|^{2s}} \,d\xi
,\end{split}\end{equation}
where the last identity follows from a classical formula involving the Gamma Function
(see e.g. Proposition~A.12 in~\cite{MR3461641}).

Now, given~$f\in C^\infty_c(\R)$, we take~$g(\xi):=|\xi|^{2s}\widehat f(\xi)$.
Notice that, in this case,
\begin{equation*}\begin{split}&
\int_{\R} \left|\frac{g'(\xi)}{\xi}-\frac{g(\xi)}{\xi^2} \right|\,d\xi=
\int_{\R} \left|(2s-1)|\xi|^{2s-2}\widehat f(\xi)-\frac{|\xi|^{2s}(\widehat f)'(\xi)}{\xi} \right|\,d\xi\\&\qquad
\le(2s-1)\int_{\R} |\xi|^{2s-2}|\widehat f(\xi)|\,d\xi+\int_{\R} |\xi|^{2s-1}|(\widehat f)'(\xi)|\,d\xi
<+\infty\end{split}
\end{equation*}
and therefore~\eqref{M98ijsdLJSnae9B} is fulfilled.

Though~$g$ is not necessarily in~$C^\infty_c(\R)$ in this case, we can apply~\eqref{M98ijsdLJSnae9C} after an approximation argument and we thereby conclude that
\begin{equation*} \begin{split}
\int_{\R} \ell(x)\,{\mathcal{F}}^{-1}\big( |\xi|^{2s}\widehat f(\xi)\big)(x)\,dx&=
\frac{2^{1-2s} \cos(\pi s)\,\Gamma(2s-1)}{\pi^{2s}}
\int_{\R}\widehat f(\xi)\,d\xi\\&=\frac{2^{1-2s} \cos(\pi s)\,\Gamma(2s-1)}{\pi^{2s}} f(0).
\end{split}\end{equation*}
That is, by Plancherel Theorem, we obtain that
$$ \int_{\R} \check\ell(\xi)\, |\xi|^{2s}\widehat f(\xi)\,d\xi
=\frac{2^{1-2s} \cos(\pi s)\,\Gamma(2s-1)}{\pi^{2s}} f(0).~$$
Since the quantity in the right-hand side is a real number, this identity can be written by using complex conjugation as
\begin{equation*} \begin{split}&
\int_{\R}  |\xi|^{2s}\widehat\ell(\xi)\,\overline{\widehat f(\xi)}\,d\xi=
\frac{2^{1-2s} \cos(\pi s)\,\Gamma(2s-1)}{\pi^{2s}}f(0).
\end{split}\end{equation*}
leading to~\eqref{CAL:GA:098}.
\end{proof}

\begin{figure}\begin{center}
  \includegraphics[width=.5\linewidth]{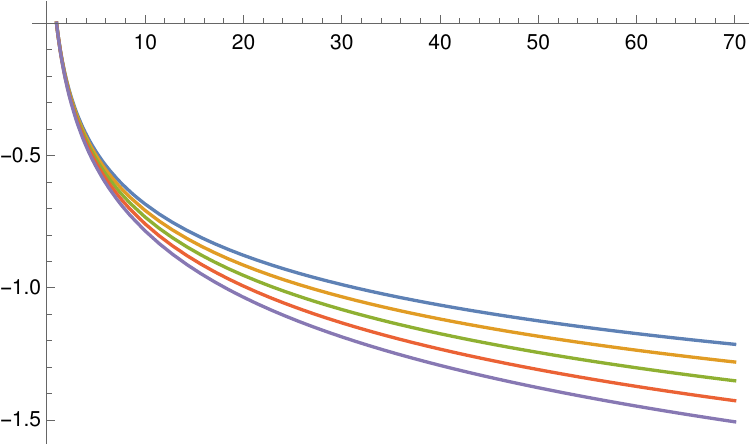}
  \caption{The function in~\eqref{RML52d} with~$s\in\left\{ 0.48,\,0.49,\,\frac12,\,0.51,\,0.52\right\}$.}
  \label{fig:sub2012orfuj}\end{center}
\end{figure}

Looking at~\eqref{0oe2jdfc02uhcewubv89u2hnfivnionBNSM} it is natural to wonder if the case~$n=1$ and~$s=\frac12$
can be recovered as a limit. This may be not immediately intuitive due to the presence of the logarithmic term.
Besides, to conveniently pass to the limit, it is advantageous to replace~\eqref{0oe2jdfc02uhcewubv89u2hnfivnionBNSM} with a different normalization, taking advantage
of the fact that fundamental solutions are actually defined ``up to an additive constant''.

To better understand the situation, we observe that~\eqref{0oe2jdfc02uhcewubv89u2hnfivnionBNSM} entails that, when~$n=1$,
$$ {\mathcal{R}}(1)=\begin{dcases} \displaystyle\frac{\displaystyle\Gamma\left(\frac{1-2s}{2}\right)}{2^{2s}\pi^{\frac{1}2}\,\Gamma(s)}  & {\mbox{ if~$s\ne\displaystyle\frac12$}},\\
0 & {\mbox{ if~$s=\displaystyle\frac12$,}}
\end{dcases}$$
and
$$ \lim_{s\to\frac12} \left|\frac{\displaystyle\Gamma\left(\frac{1-2s}{2}\right)}{2^{2s}\pi^{\frac{1}2}\,\Gamma(s)}\right|=+\infty\ne0,
$$
therefore to obtain the case~$s=\frac12$ through a limit procedure it is convenient to look at
an appropriate vertical translation of~\eqref{0oe2jdfc02uhcewubv89u2hnfivnionBNSM}.

For instance, we can consider the following gauge of the fundamental solution
\begin{equation}\label{RML52d} \R^n\setminus\{0\}\ni x\mapsto \widetilde{\mathcal{R}}(x):=\begin{dcases} \displaystyle\frac{\displaystyle\Gamma\left(\frac{n-2s}{2}\right)}{2^{2s}\pi^{\frac{n}2}\,\Gamma(s)} \,\Big(|x|^{2s-n}-1\Big) & {\mbox{ if }} n\ne2s,\\
-\frac{1}{\pi}\ln|x|
& {\mbox{ if~$ n=1$ and~$s=\displaystyle\frac12$,}}
\end{dcases}\end{equation}
which only differs from~\eqref{0oe2jdfc02uhcewubv89u2hnfivnionBNSM} by a vertical translation.

This allows for a convenient limit procedure as~$s\to\frac12$, because if~$n=1$, for all~$x\in\R\setminus\{0\}$,
\begin{eqnarray*}&&
\lim_{s\to\frac12}
\frac{\displaystyle\Gamma\left(\frac{1-2s}{2}\right)}{2^{2s}\pi^{\frac{1}2}\,\Gamma(s)} \,\Big(|x|^{2s-1}-1\Big)=
\lim_{\sigma\to0}
\frac{\displaystyle\Gamma(\sigma)}{2^{1-2\sigma}\;\pi^{\frac{1}2}\,\Gamma\left(\displaystyle\frac{1-2\sigma}{2}\right)} \,\Big(|x|^{-2\sigma}-1\Big)\\
&&\qquad=
\frac{1}{2\pi}
\lim_{\sigma\to0}\Gamma(\sigma)\,\Big(|x|^{-2\sigma}-1\Big)=\frac{1}{2\pi}
\lim_{\sigma\to0}\Gamma(\sigma)\,\Big( e^{-2\sigma\ln|x|}-1\Big)\\&&\qquad=\frac{1}{2\pi}
\lim_{\sigma\to0}\left(\frac1\sigma+O(1)\right)\,\Big( -2\sigma\ln|x|+O(\sigma^2)\Big)=\frac{1}{2\pi}\lim_{\sigma\to0}\Big( -2\ln|x|+O(\sigma)\Big)\\&&\qquad=-\frac{1}{\pi}\ln|x|,
\end{eqnarray*}
thus providing a coherent limit setting for the gauged fundamental solution~$\widetilde{\mathcal{R}}$.

See Figure~\ref{fig:sub2012orfuj} for a sketch of this limit configuration.
\medskip

See also~\cite{MR0290095, MR0350027, MR3461641, MR3916700, MR3965397}
for additional information on the fundamental solution of the fractional Laplacian.

\chapter{Some auxiliary results from the theory of distributions}\label{APPECC}

Here we recall some useful facts from the theory of distributions\footnote{For a detailed study of distribution theory see~\cite{MR0209834, MR2000535, MR2296978, MR2453959} and the references therein.} which come in handy to prove rigidity and classification results in the realm of partial differential equations (see e.g.~\cite{MR1996773} for further details on this topic).
To this end, we recall that if~$\Omega$ is an open subset of~$\R^n$ and~$T$ is a distribution
(i.e., a continuous linear form on~$C^\infty_c(\R^n)$), one says that~$T$ vanishes on~$\Omega$ if, for every~$\varphi\in C^\infty_c(\R^n)$ such that the support of~$\varphi$ is contained in~$\Omega$, it holds that~$T(\varphi )=0$.

In this notation, the \index{support of a distribution} support of a distribution~$T$ is defined as the complement of the largest open set on which~$T$ vanishes.

For example, one can check that the Dirac Delta Function at a point~$p$ vanishes on every open set not containing~$p$, from which one infers that its support is~$\{p\}$.

It is also useful to recall that, by the continuity property of a distribution~$T$ (see e.g. equation~(1.3.1) in~\cite{MR1996773}), one has that for every compact subset~$K$ of~$\R^n$ there exist~$N_K\in\N$ and~$C_K>0$ such that, for every~$\varphi\in C^\infty_c(K)$,
\begin{equation}\label{LSr-0Sor213De}
|T(\varphi)|\le C_K\sup_{{\alpha\in\N^n}\atop{|\alpha|\le N_K}}\| D^\alpha\varphi\|_{L^\infty(K)}.
\end{equation}
One says that the order of the distribution~$T$ is~$N$
if~$N$ is the smallest integer for which~\eqref{LSr-0Sor213De} holds true
independently of the compact set~$K$, i.e. the smallest~$N\in \N$ for which for every compact subset~$K$ of~$\R^n$ there exists~$C_K>0$ such that, for every~$\varphi\in C^\infty_c(K)$,
\begin{equation}\label{LSr-0Sor213De-U}
|T(\varphi)|\le C_K\sup_{{\alpha\in\N^n}\atop{|\alpha|\le N}}\| D^\alpha\varphi\|_{L^\infty(K)}.
\end{equation}
If there is no integer~$N$ for which~\eqref{LSr-0Sor213De-U} holds true, one says that the order of~$T$ is infinite.

For example, a derivative of order~$N$ of the Dirac Delta Function is a distribution of order~$N$, while the distribution
$$ T(\varphi):=\sum_{j=0}^{+\infty} \partial^j_{x_1} \varphi(je_1)$$
is of\footnote{To check that this distribution is of infinite order, one can argue for a contradiction. Suppose that~\eqref{LSr-0Sor213De-U} holds true for some~$N\in\N$, pick~$\e\in\left(0,\frac1{10}\right)$ and~$\varphi\in C^\infty_c(B_1)$ with~$\partial^{N+1}_{x_1}\varphi(0)=1$. Define~$\varphi_\e(x):=\varphi\left( \frac{x-(N+1)e_1}{\e}\right)$ and note that~$\varphi_\e\in C^\infty_c(K)$, with~$K:=B_{1/4}((N+1)e_1)$.

Then, one infers from~\eqref{LSr-0Sor213De-U} that
\begin{eqnarray*}&&\frac1{\e^{N+1}}=\frac1{\e^{N+1}}\left| \partial^{N+1}_{x_1}\varphi(0)\right|=
| \partial^{N+1}_{x_1}\varphi_\e((N+1)e_1)|=
\left|\sum_{j=0}^{+\infty} \partial^j_{x_1} \varphi_\e(je_1)\right|=
|T(\varphi_\e)|\\&&\qquad\le C_K\sup_{{\alpha\in\N^n}\atop{|\alpha|\le N}}\| D^\alpha\varphi_\e\|_{L^\infty(K)}
\le
C_K\sup_{{\alpha\in\N^n}\atop{|\alpha|\le N}} \frac1{\e^{|\alpha|}}
\| D^\alpha\varphi \|_{L^\infty(\R^n)}\le
\frac{C_K}{\e^N}\sup_{{\alpha\in\N^n}\atop{|\alpha|\le N}}
\| D^\alpha\varphi \|_{L^\infty(\R^n)}.
\end{eqnarray*}
Thus, multiplying by~$\e^{N+1}$ and taking the limit as~$\e\searrow0$ one obtains the desired contradiction.} infinite order.\medskip

A useful result in this setting goes as follows:

\begin{theorem}\label{DE:CALFA:TG}
Let~$p\in\R^n$.
Suppose that the support of a distribution~$T$ is contained in the singleton~$\{p\}$. 

Then,~$T$ is a finite combination of derivatives of the Dirac Delta Function at the point~$p$.

More explicitly, there exist~$N\in\N$ and~$\{c_\alpha\}_{{\alpha\in\N^n}\atop{|\alpha|\le N}}$ such that
$$ T=\sum_{{\alpha\in\N^n}\atop{|\alpha|\le N}} c_\alpha D^\alpha \delta_{p}.$$

Furthermore,~$T$ is of finite order, equal to~$N$, and the coefficients~$c_\alpha$ are determined by the relation
\begin{equation}\label{DE:CALFA} c_\alpha:=\frac{(-1)^{|\alpha|}}{\alpha!} T(x^\alpha).\end{equation}
\end{theorem}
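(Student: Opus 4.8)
\textbf{Proof plan for Theorem~\ref{DE:CALFA:TG}.}

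The plan is to reduce to the case $p=0$ by translation, and then proceed in three stages: first establish that $T$ has finite order, then use a Taylor expansion argument to show $T$ is determined by finitely many functionals of the form $\varphi\mapsto D^\alpha\varphi(0)$, and finally identify the coefficients. For the finite-order claim, I would invoke the continuity property~\eqref{LSr-0Sor213De}: taking $K:=\overline{B_1}$, a fixed compact neighborhood of $0$, there exist $N\in\N$ and $C_K>0$ with $|T(\varphi)|\le C_K\sup_{|\alpha|\le N}\|D^\alpha\varphi\|_{L^\infty(B_1)}$ for all $\varphi\in C^\infty_c(B_1)$. The key point is that because $\operatorname{supp}T=\{0\}$, this local bound already controls $T$ globally in the following sense: given any $\varphi\in C^\infty_c(\R^n)$ and a cutoff $\chi\in C^\infty_c(B_1,[0,1])$ with $\chi=1$ near $0$, one has $T(\varphi)=T(\chi\varphi)$ since $(1-\chi)\varphi$ vanishes on a neighborhood of $\operatorname{supp}T$; hence the bound with test function $\chi\varphi$ applies, and $N$ can be taken independent of the compact set. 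This shows $T$ has order at most $N$.

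The heart of the argument is the following claim: if $\varphi\in C^\infty_c(\R^n)$ satisfies $D^\alpha\varphi(0)=0$ for all $\alpha\in\N^n$ with $|\alpha|\le N$, then $T(\varphi)=0$. To prove this, I would use a scaling/cutoff trick. Fix $\chi\in C^\infty_c(B_1,[0,1])$ with $\chi=1$ in $B_{1/2}$, and for $\e\in(0,1)$ set $\chi_\e(x):=\chi(x/\e)$, so $\chi_\e\in C^\infty_c(B_\e)$. Since $(1-\chi_\e)\varphi$ vanishes near $0$, we have $T(\varphi)=T(\chi_\e\varphi)$ for every $\e$, and therefore
\begin{equation*}
|T(\varphi)|=|T(\chi_\e\varphi)|\le C_K\sup_{{\alpha\in\N^n}\atop{|\alpha|\le N}}\|D^\alpha(\chi_\e\varphi)\|_{L^\infty(\R^n)}.
\end{equation*}
By the Leibniz rule, $D^\alpha(\chi_\e\varphi)=\sum_{\beta\le\alpha}\binom{\alpha}{\beta}(D^\beta\chi_\e)(D^{\alpha-\beta}\varphi)$, and $\|D^\beta\chi_\e\|_{L^\infty}\le C\e^{-|\beta|}$, while the hypothesis $D^\gamma\varphi(0)=0$ for $|\gamma|\le N$ combined with Taylor's theorem gives $|D^{\alpha-\beta}\varphi(x)|\le C\,|x|^{N-|\alpha-\beta|+1}$ for $|\alpha-\beta|\le N$ and $x\in B_\e$ (recall $|\alpha|\le N$, so $|\alpha-\beta|\le N$). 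Hence on $B_\e$ each term is bounded by $C\e^{-|\beta|}\e^{N-|\alpha|+|\beta|+1}=C\e^{N-|\alpha|+1}\le C\e$. This gives $|T(\varphi)|\le C\e\to 0$ as $\e\searrow0$, proving the claim. The main obstacle in this step is keeping the multi-index bookkeeping in the Leibniz-plus-Taylor estimate clean, so that the powers of $\e$ genuinely cancel to leave a positive power; this is routine but must be done carefully.

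Once the claim is established, the structural conclusion follows by a standard linear-algebra argument. The map $\Lambda:C^\infty_c(\R^n)\to\R^{M}$ (where $M$ is the number of multi-indices $\alpha$ with $|\alpha|\le N$) given by $\Lambda(\varphi):=(D^\alpha\varphi(0))_{|\alpha|\le N}$ is linear and surjective (one can realize any prescribed jet by a polynomial times a fixed cutoff), and by the claim $T$ factors through $\Lambda$, i.e.\ $\ker\Lambda\subseteq\ker T$. Therefore there is a linear functional on $\R^M$, i.e.\ constants $\{c_\alpha\}_{|\alpha|\le N}$, such that $T(\varphi)=\sum_{|\alpha|\le N}(-1)^{|\alpha|}c_\alpha D^\alpha\varphi(0)=\sum_{|\alpha|\le N}c_\alpha (D^\alpha\delta_0)(\varphi)$, which is the asserted representation (the sign is chosen to match the convention $(D^\alpha\delta_0)(\varphi)=(-1)^{|\alpha|}D^\alpha\varphi(0)$). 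Finally, to pin down the coefficients via~\eqref{DE:CALFA}, I would test the representation against $\varphi(x)=x^\beta$ (after multiplying by a cutoff equal to $1$ near $0$, which does not change $D^\alpha(\,\cdot\,)(0)$ for the monomial): since $D^\alpha(x^\beta)\big|_{x=0}=\alpha!\,\delta_{\alpha\beta}$, one gets $T(x^\beta)=(-1)^{|\beta|}c_\beta\,\beta!$, i.e.\ $c_\beta=\frac{(-1)^{|\beta|}}{\beta!}T(x^\beta)$. That the order of $T$ equals exactly $N$ (and not less) is automatic from the minimality built into the definition of $N$, or can be read off from the representation once one checks that some $c_\alpha$ with $|\alpha|=N$ is nonzero for the minimal such $N$.
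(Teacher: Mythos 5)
Your proposal is correct and follows essentially the same route as the paper: reduction to $p=0$, finite order via a cutoff fixed near the support, the key annihilation claim (test functions whose $N$-jet vanishes at the origin are killed by $T$) proved with scaled cutoffs plus a Leibniz--Taylor estimate producing the factor $\e^{N-|\alpha|+1}$, and then the representation with the coefficients $c_\alpha=\frac{(-1)^{|\alpha|}}{\alpha!}T(x^\alpha)$ read off against cutoff monomials. The only difference is cosmetic: the paper extracts the representation by Taylor-expanding the test function directly (so the coefficients appear at once), whereas you factor $T$ through the finite-dimensional jet map and then identify the coefficients by testing monomials, which amounts to the same computation.
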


We remark that the fact that~$T$ in Theorem~\ref{DE:CALFA:TG} is of finite order follows from the following general fact:

\begin{lemma} \label{DE:CALFA:TGle1}
If a distribution has compact support, then it has finite order.
\end{lemma}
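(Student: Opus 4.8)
The statement to prove is Lemma~\ref{DE:CALFA:TGle1}: a distribution $T$ with compact support has finite order. The plan is to use a standard argument based on a fixed cutoff near the support. First I would fix a compact set $K$ containing the support of $T$ in its interior, and choose $\chi\in C^\infty_c(\R^n)$ with $\chi=1$ on a neighborhood of $\mathrm{supp}\,T$ and $\mathrm{supp}\,\chi\subseteq K$. The key observation is that for \emph{any} test function $\varphi\in C^\infty_c(\R^n)$ one has $T(\varphi)=T(\chi\varphi)$, because $\varphi-\chi\varphi$ vanishes on a neighborhood of $\mathrm{supp}\,T$, so $T(\varphi-\chi\varphi)=0$ by definition of the support of a distribution.

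Next I would invoke the continuity estimate~\eqref{LSr-0Sor213De} applied to the fixed compact set $K$: there exist $N_K\in\N$ and $C_K>0$ such that $|T(\psi)|\le C_K\sup_{|\alpha|\le N_K}\|D^\alpha\psi\|_{L^\infty(K)}$ for all $\psi\in C^\infty_c(K)$. Applying this with $\psi:=\chi\varphi$ (which indeed lies in $C^\infty_c(K)$), and using the Leibniz rule to bound $\|D^\alpha(\chi\varphi)\|_{L^\infty(K)}$ by a constant (depending only on $\chi$, $K$, and $N_K$) times $\sup_{|\beta|\le N_K}\|D^\beta\varphi\|_{L^\infty(\mathrm{supp}\,\chi)}$, I obtain
\begin{equation*}
|T(\varphi)|=|T(\chi\varphi)|\le C\sup_{{\alpha\in\N^n}\atop{|\alpha|\le N_K}}\|D^\alpha\varphi\|_{L^\infty(\R^n)}
\end{equation*}
with $C$ and $N_K$ now \emph{independent of $\varphi$}. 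In particular, for every compact $K'$ and every $\varphi\in C^\infty_c(K')$ this bound holds with the same $N_K$, which is precisely the condition~\eqref{LSr-0Sor213De-U} defining finite order. Hence the order of $T$ is at most $N_K<+\infty$.

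There is no serious obstacle here; the argument is essentially a one-line trick (replace $\varphi$ by $\chi\varphi$) plus a routine Leibniz estimate. The only point requiring a little care is to make sure the cutoff $\chi$ can be chosen with $\mathrm{supp}\,\chi$ a fixed compact set once and for all (this uses that $\mathrm{supp}\,T$ is compact, so it has a neighborhood with compact closure), and to note that the constant coming from the Leibniz rule depends only on finitely many derivatives of $\chi$ on the fixed compact $K$, hence is a genuine constant. I would also remark, for completeness, that this lemma supplies the finiteness-of-order claim used in Theorem~\ref{DE:CALFA:TG}, so that the representation $T=\sum_{|\alpha|\le N}c_\alpha D^\alpha\delta_p$ there is indeed a finite sum.
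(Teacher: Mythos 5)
Your proof is correct and follows essentially the same route as the paper's: replace $\varphi$ by $\chi\varphi$ for a fixed cutoff equal to $1$ near the (compact) support, apply the continuity estimate \eqref{LSr-0Sor213De} on the fixed compact set containing $\mathrm{supp}\,\chi$, and conclude via the Leibniz rule that the resulting $N$ is independent of $\varphi$. The paper phrases the reduction $T(\varphi)=T(\chi\varphi)$ through a partition of unity $\{\rho_j\}$ with $\rho_0=1$ near the support, but this is only a cosmetic difference from your direct use of the definition of the support of a distribution.
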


Concerning the expression in~\eqref{DE:CALFA}, we stress that the function~$\R^n\ni x\mapsto x^\alpha$
is not compactly supported (not even decaying at infinity), hence the term~$T(x^\alpha)$ has to be understood
after a cutoff procedure. Interestingly, the result obtained is independent of the cutoff, as remarked by
the following observation:

\begin{lemma} \label{DE:CALFA:TGle2}
Let~$T$ be a distribution with compact support~$K$. Let~$f\in C^\infty(\R^n)$.
Let~$\tau_1$,~$\tau_2\in C^\infty_c(\R^n)$ with~$\tau_1=\tau_2=1$ in a neighborhood of~$K$.

Then,~$T(\tau_1 f)=T(\tau_2 f)$.
\end{lemma}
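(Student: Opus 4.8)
The plan is to reduce the statement to a single vanishing assertion and then invoke the definition of the support of a distribution. Set $g := \tau_1 f - \tau_2 f = (\tau_1-\tau_2)f$. Since $\tau_1$ and $\tau_2$ lie in $C^\infty_c(\R^n)$ and $f\in C^\infty(\R^n)$, both $\tau_1 f$ and $\tau_2 f$ belong to $C^\infty_c(\R^n)$, so $T$ is defined on each of them and, by linearity, $T(g) = T(\tau_1 f) - T(\tau_2 f)$. Thus the conclusion $T(\tau_1 f) = T(\tau_2 f)$ is equivalent to $T(g)=0$, and the whole proof amounts to establishing this identity.

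Next I would locate the support of $g$. By hypothesis there are open sets $U_1\supseteq K$ and $U_2\supseteq K$ on which $\tau_1\equiv 1$ and $\tau_2\equiv 1$ respectively. On the open neighborhood $U := U_1\cap U_2$ of $K$ one then has $\tau_1-\tau_2\equiv 0$, hence $g\equiv 0$ on $U$. Therefore $\operatorname{supp} g$ is a closed set contained in $\R^n\setminus U$, which in turn is contained in $\R^n\setminus K$. In particular $\operatorname{supp} g$ is contained in the open set $\R^n\setminus K$.

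The final step is to recall that, since $\operatorname{supp} T = K$, the open set $\R^n\setminus K$ is one on which $T$ vanishes; that is, $T(\varphi)=0$ for every $\varphi\in C^\infty_c(\R^n)$ whose support is contained in $\R^n\setminus K$. Applying this with $\varphi:=g$ yields $T(g)=0$, and consequently $T(\tau_1 f)=T(\tau_2 f)$, as desired.

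I do not anticipate a genuine obstacle here. The only points requiring a small amount of care are to intersect the two neighborhoods so that $\operatorname{supp} g$ lands inside a single open set on which $T$ is known to vanish, and to check that $\tau_i f$ is genuinely compactly supported so that $T$ may legitimately be evaluated on it. Beyond these bookkeeping remarks, the result is an immediate consequence of linearity and the definition of the support of a distribution.
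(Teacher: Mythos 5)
Your argument is correct, but it takes a different route from the paper's. You reduce to showing $T\bigl((\tau_1-\tau_2)f\bigr)=0$ and then observe that $(\tau_1-\tau_2)f$ is a $C^\infty_c$ function whose support is disjoint from $K$, hence contained in the open set $\R^n\setminus K$ on which $T$ vanishes by the very definition of the support of a distribution; this settles the claim using nothing beyond linearity and that definition. The paper instead first invokes Lemma~\ref{DE:CALFA:TGle1} to know that $T$ has finite order and then applies Proposition~\ref{DE:CALFA:TGle3}, which asserts that $T(\varphi)=0$ whenever $D^\alpha\varphi$ vanishes on the support of $T$ for all $|\alpha|$ up to the order. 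Your proof is the more elementary one, and it works precisely because $\tau_1-\tau_2$ vanishes on a whole open neighborhood of $K$, not merely on $K$ itself; the paper's heavier tool is designed for the more delicate situation (needed in Theorem~\ref{DE:CALFA:TG}) where the test function only vanishes to finite order on the support, and here it is simply the hammer already lying on the table. The one point worth making explicit in your version is that the statement ``$T$ vanishes on $\R^n\setminus K$'' rests on the existence of a largest open set on which $T$ vanishes (i.e., that $T$ vanishes on the union of all open sets on which it vanishes, a partition-of-unity fact); since the paper's definition of support in Appendix~\ref{APPECC} takes this for granted, your use of it is consistent with the framework, but it is the hidden ingredient replacing the paper's appeal to Proposition~\ref{DE:CALFA:TGle3}.
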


In the light of Lemma~\ref{DE:CALFA:TGle2}, one can thus define~$T(x^\alpha)$ in~\eqref{DE:CALFA} simply as~$T(x^\alpha\tau(x))$ for a given~$\tau\in C^\infty_c(\R^n)$
with~$\tau=1$ in a neighborhood of the origin (the definition being actually independent of the specific choice of~$\tau$).\medskip

An ancillary result helpful to prove Lemma~\ref{DE:CALFA:TGle2} and Theorem~\ref{DE:CALFA:TG} is the following:

\begin{proposition} \label{DE:CALFA:TGle3}
Let~$T$ be a distribution of finite order~$N\in\N$.

Then, 
$$ T(\varphi)=0$$
for every~$\varphi\in C^\infty_c(\R^n)$ such that~$D^\alpha\varphi(x)=0$ whenever~$\alpha\in\N^n$ with~$|\alpha|\le N$
and~$x$ belongs to the support of~$T$.
\end{proposition}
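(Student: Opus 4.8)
The plan is to use the definition of finite order of $T$ together with a family of cutoff functions concentrated in shrinking neighborhoods of the support of $T$, combined with a Taylor expansion quantifying how small $\varphi$ and its derivatives are near that support.

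First I would fix notation: let $K$ be the support of $T$, observe that $K\cap\operatorname{supp}\varphi$ is compact, and fix once and for all the compact set $K_1:=\{x\in\R^n:\operatorname{dist}(x,K\cap\operatorname{supp}\varphi)\le1\}$. Let $N$ and $C_{K_1}$ be as in the finite-order estimate~\eqref{LSr-0Sor213De-U} applied with the compact set $K_1$. For each $\varepsilon\in(0,1)$ I would construct, by mollifying the indicator function of an $\varepsilon$-neighborhood of $K\cap\operatorname{supp}\varphi$, a function $\chi_\varepsilon\in C^\infty_c(\R^n)$ with $0\le\chi_\varepsilon\le1$, equal to $1$ on $\{\operatorname{dist}(\cdot,K\cap\operatorname{supp}\varphi)\le\varepsilon\}$, supported in $\{\operatorname{dist}(\cdot,K\cap\operatorname{supp}\varphi)\le2\varepsilon\}\subseteq K_1$, and satisfying $\|D^\beta\chi_\varepsilon\|_{L^\infty(\R^n)}\le C_\beta\varepsilon^{-|\beta|}$ for all $\beta\in\N^n$. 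Then $(1-\chi_\varepsilon)\varphi$ vanishes on the open set $\{\operatorname{dist}(\cdot,K\cap\operatorname{supp}\varphi)<\varepsilon\}\cup(\R^n\setminus\operatorname{supp}\varphi)$, which contains $K$; since $T$ vanishes there, $T\big((1-\chi_\varepsilon)\varphi\big)=0$, whence $T(\varphi)=T(\chi_\varepsilon\varphi)$.

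The key quantitative input comes next: by hypothesis $D^\gamma\varphi$ and all its derivatives of order $\le N-|\gamma|$ vanish on $K$ for every $|\gamma|\le N$, so applying Taylor's theorem to $D^\gamma\varphi$ along the segment joining a point $x$ (with $\operatorname{dist}(x,K)\le2$) to a nearest point of $K$ gives $|D^\gamma\varphi(x)|\le C\operatorname{dist}(x,K)^{N-|\gamma|+1}$, with $C$ depending only on $\|\varphi\|_{C^{N+1}(\R^n)}$. I would then combine this with the Leibniz rule: for $|\alpha|\le N$,
$$D^\alpha(\chi_\varepsilon\varphi)=\sum_{\beta\le\alpha}\binom{\alpha}{\beta}D^\beta\chi_\varepsilon\,D^{\alpha-\beta}\varphi,$$
and on $\operatorname{supp}\chi_\varepsilon$ (where $\operatorname{dist}(\cdot,K)\le2\varepsilon$) each summand is bounded by $C\varepsilon^{-|\beta|}\cdot\varepsilon^{N-|\alpha|+|\beta|+1}=C\varepsilon^{N-|\alpha|+1}\le C\varepsilon$, using $|\alpha|\le N$ and $\varepsilon<1$. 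Plugging this into~\eqref{LSr-0Sor213De-U} yields $|T(\varphi)|=|T(\chi_\varepsilon\varphi)|\le C_{K_1}\sup_{|\alpha|\le N}\|D^\alpha(\chi_\varepsilon\varphi)\|_{L^\infty(K_1)}\le C\varepsilon$, and letting $\varepsilon\searrow0$ gives $T(\varphi)=0$.

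The main obstacle — essentially the only nontrivial point — is the Taylor estimate $|D^\gamma\varphi(x)|\lesssim\operatorname{dist}(x,K)^{N-|\gamma|+1}$ holding uniformly near the possibly irregular closed set $K$; everything else is bookkeeping with the Leibniz rule and the explicit derivative bounds on the mollified cutoffs. A secondary point requiring mild care is that $K$ need not be compact, which is why throughout I restrict attention to the compact set $K\cap\operatorname{supp}\varphi$ and the fixed neighborhood $K_1$.
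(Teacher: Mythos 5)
Your proposal is correct and follows essentially the same route as the paper: a smooth cutoff at scale $\e$ around the support of $T$, the Taylor estimate $|D^\gamma\varphi|\lesssim\operatorname{dist}(\cdot,K)^{N-|\gamma|+1}$ coming from the vanishing of all derivatives of order $\le N$ on $K$, the Leibniz rule combined with the $\e^{-|\beta|}$ bounds on the cutoff, the finite-order estimate, and then $\e\searrow0$. The only (harmless) differences are cosmetic: you mollify a neighborhood of $K\cap\operatorname{supp}\varphi$ so the cutoff itself is compactly supported, while the paper mollifies a neighborhood of $K$ and uses that the product with $\varphi$ has compact support, and your fixed compact set $K_1$ requires $\e\le\tfrac12$ rather than $\e<1$, which is immaterial since $\e\to0$.
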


Now we provide the proofs of the results showcased above.

\begin{proof}[Proof of Lemma~\ref{DE:CALFA:TGle1}] Let~$K_\star$ denote the compact support of the distribution~$T$.
Let also~$K_\sharp$ be a compact set containing an open neighborhood of~$K_\star$.
We consider a partition of unity~$\{\rho_j\}_{j\in\N}$ such that~$\rho_0=1$ in~$K_\sharp$ and, for every~$x\in\R^n$,
$$\sum_{j=0}^{+\infty}\rho_j (x)=1.$$
Note that~$\rho_j=0$ in~$K_\sharp$ for all~$j\in\N\setminus\{0\}$, and therefore~$T(\rho_j\varphi)=0$ for all~$j\in\N\setminus\{0\}$ and~$\varphi\in C^\infty_c(\R^n)$.

In this way, given~$\varphi\in C^\infty_c(\R^n)$, if~$K$ is the support of~$K$ and~$K_0$ is the support of~$\rho_0$ we can use~\eqref{LSr-0Sor213De} and deduce that
\begin{eqnarray*}
|T(\varphi)|=\left|T\left(\sum_{j=0}^{+\infty}\rho_j\varphi\right)\right|=|T(\rho_0\varphi)|
\le C_{K_0} \sup_{{\alpha\in\N^n}\atop{|\alpha|\le N_{K_0}}}\| D^\alpha(\rho_0\varphi)\|_{L^\infty(K\cap K_0)}
\end{eqnarray*}
for some~$C_{K_0}>0$ and~$N_{K_0}\in\N$.

Consequently,
\begin{eqnarray*}
|T(\varphi)|
\le \widetilde C_{K_0} \sup_{{\alpha\in\N^n}\atop{|\alpha|\le N_{K_0}}}\| D^\alpha \varphi \|_{L^\infty(K)}
\end{eqnarray*}
for some~$\widetilde C_{K_0}>0$, from which we obtain that~$T$ has finite order, less than or equal to~$N_{K_0}$ (and we stress that~$K_0$ does not depend on~$\varphi$ and~$K$).
\end{proof}

\begin{proof}[Proof of Proposition~\ref{DE:CALFA:TGle3}] The proof is based on
a slight enlargement of supports, to avoid pathologies coming from boundary points of supports
(see e.g. the example in~(III, 7; 19) of~\cite{MR0209834} for related phenomena).

The technical details go as follows.
Let~$T$ and~$\varphi$ be as in the statement
of Proposition~\ref{DE:CALFA:TGle3} and~$K$ be the support of~$T$ (here,~$K$ is not necessarily a compact set). Pick
a mollifier~$\tau\in C^\infty_c(B_1,[0,+\infty))$ with unit mass. Given~$\e\in(0,1)$, to be taken as small as we wish, let
$$K_\e:=\big\{x\in\R^n {\mbox{ s.t. }}\exists y\in K{\mbox{ with }}|x-y|<\e\big\}.$$
Let also
$$ \tau_\e(x):=\frac1{\e^n}\int_{K_{4\e}}\tau\left(\frac{x-y}\e\right)\,dy.$$
We observe that~$\tau_\e\in C^\infty(\R^n)$. 

Furthermore, if~$x\in K_\e$ and~$y\in\R^n\setminus K_{4\e}$, then~$|x-y|\ge3\e$
and therefore~$\tau\left(\frac{x-y}\e\right)=0$. This entails that for all~$x\in K_\e$
$$ \tau_\e(x)=\frac1{\e^n}\int_{\R^n}\tau\left(\frac{x-y}\e\right)\,dy=\int_{\R^n}\tau(X)\,dX=1.$$
In particular,~$\varphi=\varphi\tau_\e$ in an open neighborhood of~$K$, giving that~$T(\varphi)=T(\varphi\tau_\e)$.

Additionally, if~$x\in \R^n\setminus K_{8\e}$ and~$y\in K_{4\e}$ then~$|x-y|\ge3\e$
and therefore~$\tau\left(\frac{x-y}\e\right)=0$. This yields that~$\tau_\e(x)=0$ for every~$x\in \R^n\setminus K_{8\e}$. As a result, the function~$\varphi\tau_\e$ is of compact support (since so is~$\varphi$) and its support is contained in~$K_{10\e}$ (which is not necessarily compact).

Besides, since the distribution~$T$ is supposed to have finite order~$N$, denoting by~$K_\star$ the support of~$\varphi$ we have that
\begin{equation}\label{eNalpha1}
|T(\varphi)|=|T(\varphi\tau_\e)|\le
C_{K_\star}\sup_{{\alpha\in\N^n}\atop{|\alpha|\le N}}\| D^\alpha(\varphi\tau_\e)\|_{L^\infty({K_\star})}=C_{K_\star}\sup_{{\alpha\in\N^n}\atop{|\alpha|\le N}}\| D^\alpha(\varphi\tau_\e)\|_{L^\infty(K_\star\cap {K_{10\e}})},
\end{equation}
for some~$C_{K_\star}>0$.

Now we observe that, if~$\alpha\in\N^n$ and~$|\alpha|\le N$, then, for all~$x\in K_{10\e}$,
\begin{equation}\label{0qodwhis-RSFVPDrgdpDD}
|D^\alpha \varphi(x)|\le C\e^{N-|\alpha|+1},
\end{equation}
for some~$C>0$.

To check this, we take~$x_0\in K$ with~$|x-x_0|\le10\e$ and we use the Taylor Formula to see that
$$ D^\alpha\varphi(x)=\sum_{{\beta\in\N^n}\atop{|\beta|\le N-|\alpha|}}\frac{1}{\beta!} D^{\alpha+\beta}\varphi(x_0)(x-x_0)^\beta+O\left(|x-x_0|^{N-|\alpha|+1}\right).
$$
The claim in~\eqref{0qodwhis-RSFVPDrgdpDD} thus follows since~$D^{\alpha+\beta}\varphi(x_0)$ here above vanishes, owing to our assumption on~$\varphi$.

Moreover, if~$\alpha\in\N^n$ and~$|\alpha|\le N$,
\begin{eqnarray*}
|D^\alpha\tau_\e(x)|\le \frac1{\e^{n+|\alpha|}}\int_{K_{4\e}}\left|D^\alpha\tau\left(\frac{x-y}\e\right)\right|\,dy\le\frac1{\e^{|\alpha|}}\int_{\R^n} |D^\alpha\tau(X)|\,dX\le C\e^{-|\alpha|},
\end{eqnarray*}
up to renaming~$C$.

Using this,~\eqref{0qodwhis-RSFVPDrgdpDD} and the Leibniz Product Rule, in~$K_\star\cap {K_{10\e}}$, for all~$\alpha\in\N^n$ with~$|\alpha|\le N$, we have that
\begin{eqnarray*}&& |D^\alpha(\varphi\tau_\e)|=\left|\sum_{{\beta,\gamma\in\N^n}\atop{\beta+\gamma=\alpha}}\left( {\alpha}\atop{\beta}\right)
D^\beta\varphi \,D^\gamma\tau_\e\right|\le
C \sum_{{\beta,\gamma\in\N^n}\atop{\beta+\gamma=\alpha}}\left( {\alpha}\atop{\beta}\right)
\e^{N-|\beta|+1} \e^{-|\gamma|}\\&&\qquad=C \sum_{{\beta,\gamma\in\N^n}\atop{\beta+\gamma=\alpha}}\left( {\alpha}\atop{\beta}\right)
\e^{N-|\alpha|+1}\le C\e^{N-|\alpha|+1}\le C\e,
\end{eqnarray*}
for some~$C>0$ possibly varying at each step of the calculation.

Combining this with~\eqref{eNalpha1}, we conclude that~$|T(\varphi)|\le C\e$ and therefore, taking~$\e$ arbitrarily small,~$T(\varphi)=0$, as desired.
\end{proof}

\begin{proof}[Proof of Lemma~\ref{DE:CALFA:TGle2}] We recall that~$T$ has finite order, due to Lemma~\ref{DE:CALFA:TGle1}. Given~$j\in\{1,2\}$, we observe that~$\tau_jf\in C^\infty_c(\R^n)$,
whence~$T(\tau_jf)$ is well defined. Let now~$\tau:=\tau_1-\tau_2$ and note that~$\tau$ vanishes identically in a neighborhood of~$K$.
Accordingly, by Proposition~\ref{DE:CALFA:TGle3}, we have that~$T(\tau f)=0$, from which we obtain the desired result.
\end{proof}

\begin{proof}[Proof of Theorem~\ref{DE:CALFA:TG}] Notice that~$T$ has finite order, say~$N\in\N$, thanks to Lemma~\ref{DE:CALFA:TGle1}.
Moreover, if the support of~$T$ is empty, then necessarily~$T(\varphi)=0$ for every~$\varphi\in C^\infty_c(\R^n)$ and we are done, therefore we can assume that the support of~$T$ is the
singleton~$\{p\}$.

Up to a translation, we can suppose that~$p=0$. Given~$f\in C^\infty_c(\R^n)$ and~$x\in B_1$, we use the Taylor Formula and write that
$$ f(x)=\sum_{{\alpha\in\N^n}\atop{|\alpha|\le N}}\frac1{\alpha!} D^\alpha f(0)x^\alpha+
\sum_{{\alpha\in\N^n}\atop{|\alpha|=N+1}}\frac1{\alpha!} D^\alpha f(\xi_\alpha)x^\alpha,$$
for some~$\xi_\alpha$ on the segment joining the origin to~$x$.

Thus, we take~$\tau\in C^\infty_c(B_3)$ with~$\tau=1$ in~$B_2$. In this way,
$$ T(f)=T(f\tau)=T\left(\sum_{{\alpha\in\N^n}\atop{|\alpha|\le N}}\frac1{\alpha!} D^\alpha f(0)x^\alpha\tau(x)\right)+T(g),$$
where
$$ g(x):=\tau(x)
\sum_{{\alpha\in\N^n}\atop{|\alpha|=N+1}}\frac1{\alpha!} D^\alpha f(\xi_\alpha)x^\alpha.$$
Notice that~$D^\beta g(0)$ for all~$\beta\in\N^n$ with~$|\beta|\le N$ and consequently, by
Proposition~\ref{DE:CALFA:TGle3}, we know that~$T(g)=0$.

{F}rom these observations we obtain that
\begin{eqnarray*}
T(f)=T\left(\sum_{{\alpha\in\N^n}\atop{|\alpha|\le N}}\frac1{\alpha!} D^\alpha f(0)x^\alpha\tau(x)\right)=
\sum_{{\alpha\in\N^n}\atop{|\alpha|\le N}}\frac1{\alpha!} D^\alpha f(0)T\left(x^\alpha\tau(x)\right)=
\sum_{{\alpha\in\N^n}\atop{|\alpha|\le N}}(-1)^{|\alpha|}c_\alpha D^\alpha f(0),
\end{eqnarray*}
which gives the desired result.
\end{proof}

\chapter{A version of a result by Norbert Wiener on Fourier analysis}

We recall a variant of a classical result by Norbert Wiener~\cite{MR1503035}
(the original version being about the fact that if the Fourier series of~$f$ is absolutely convergent and~$f$ never vanishes, then the Fourier series of~$1/f$ is absolutely convergent too;
the version here below was employed on page~\pageref{WI:TH:-0eirojeg:page}).
See~\cite{MR0107776, MR365002} for more information on the result by Wiener and related topics.

The case of the Fourier Transform needs several technical modifications with respect to the original argument and we follow here a proof based on complex analysis introduced in~\cite{MR1067573}.

\begin{theorem}\label{WI:TH:-0eirojeg}
Let~$\phi_1\in L^1(\R^n)$, with
$$\inf_{\R^n}|\widehat\phi_1+1|>0.$$

Then, there exists~$\phi_2\in L^1(\R^n)$ such that
$$ \widehat\phi_2=\frac{1}{\widehat\phi_1+1}-1.$$
\end{theorem}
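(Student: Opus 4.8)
The plan is to exploit the Banach algebra structure of $L^1(\R^n)$ under convolution, augmented by a unit to accommodate the ``$+1$'' terms. Concretely, I would work in the unital algebra $A:=L^1(\R^n)\oplus\C\delta_0$, where elements are written $\psi + c\delta_0$ with $\psi\in L^1(\R^n)$ and $c\in\C$, the product is convolution (so $\delta_0$ is the multiplicative identity), and the Fourier/Gelfand transform sends $\psi + c\delta_0$ to the bounded continuous function $\widehat\psi(\xi) + c$ on $\R^n$, together with its ``value at infinity'' $c$ (since $\widehat\psi(\xi)\to 0$ as $|\xi|\to\infty$ by the Riemann--Lebesgue Lemma). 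The hypothesis $\inf_{\R^n}|\widehat\phi_1 + 1|>0$, combined with the fact that $\widehat\phi_1(\xi)+1\to 1\neq 0$ at infinity, says precisely that the Gelfand transform of the element $a:=\phi_1 + \delta_0\in A$ is bounded away from zero on the whole maximal ideal space of $A$ (the one-point compactification of $\R^n$). The goal is to show that $a$ is invertible in $A$: if $a^{-1}=\phi_2 + d\delta_0$, then taking Fourier transforms gives $(\widehat\phi_1+1)(\widehat\phi_2+d)=1$, and evaluating the limit at infinity forces $d=1$, whence $\widehat\phi_2 = \frac{1}{\widehat\phi_1+1}-1$, as desired.

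The core analytic step is therefore the following local statement, which is the heart of the Wiener-type argument: if $f=\widehat\psi$ for some $\psi\in L^1(\R^n)$ and $f$ is nonvanishing on a neighborhood of a point $\xi_0\in\R^n$, then $\frac{1}{f}$ agrees, on a possibly smaller neighborhood of $\xi_0$, with the Fourier transform of an $L^1$ function. Following the approach of~\cite{MR1067573}, I would prove this by a complex-analytic device rather than by the classical power-series manipulation. The idea is to write, for $\xi$ near $\xi_0$,
\[
\frac{1}{f(\xi)} = \frac{1}{f(\xi_0)}\cdot\frac{1}{1 - g(\xi)},\qquad g:=1-\frac{f}{f(\xi_0)},
\]
where $g$ is small near $\xi_0$, and then represent $\frac{1}{1-g}$ by a Cauchy integral $\frac{1}{2\pi i}\oint_{|z|=r}\frac{1}{(1-z)}\cdot\frac{g'(z)}{\,\cdot\,}$-type formula, or more directly to use that $g$, suitably cut off by a bump function $\theta\in C^\infty_c$ equal to $1$ near $\xi_0$, defines an element $\theta g$ of the algebra $A$ with $\|\theta g\|_{A}$ controllable; once $\|\theta g\|_A<1$ after shrinking the support of $\theta$, the Neumann series $\sum_{k\ge 0}(\theta g)^{*k}$ converges in $A$ and provides the required $L^1$ representative. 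The delicate point is that $\theta g$ need not be small in $A$-norm just because $g$ is small in sup-norm near $\xi_0$; one genuinely needs that the Fourier-side norm of a cutoff of an $L^1$-transform can be made small by shrinking the cutoff, which is where the argument of~\cite{MR1067573} (estimating via the decay of $\psi$ and the smoothness of $\theta$) does the real work.

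From this local invertibility, I would conclude by a standard compactness/partition-of-unity patching. On the one-point compactification $\R^n\cup\{\infty\}$, every point has a neighborhood on which $\widehat\phi_1+1$ is nonvanishing and hence locally invertible in $A$ (at finite points by the local statement above applied to $f=\widehat\phi_1+1$; near $\infty$ because $\widehat\phi_1$ is small there, so $\widehat\phi_1+1$ is close to $1$ and a Neumann series in $\phi_1$ itself converges once we are outside a large ball). By compactness extract a finite subcover and a subordinate partition of unity, which by the local inversion lifts to a finite collection $\{b_j\}\subset A$ with $a b_j$ locally equal to the corresponding cutoff of $\delta_0$; summing the $b_j$ against the partition of unity yields $b\in A$ with $ab=\delta_0$, i.e. $a^{-1}=b$. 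The main obstacle, as noted, is the quantitative local step: making the convolution-algebra norm of a localized piece of an $L^1$-transform small. Everything else --- the algebraic setup, the behavior at infinity, and the patching --- is routine once that estimate is in hand.
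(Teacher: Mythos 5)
Your route is genuinely different from the paper's. You propose the classical Wiener strategy: unitize $L^1(\R^n)$, observe that the Gelfand transform of $\phi_1+\delta_0$ is nonvanishing on the one-point compactification, prove a local invertibility lemma at each finite point, handle a neighborhood of infinity separately, and patch with a partition of unity. The paper instead follows the Coquand--Stolzenberg device with no localization at all: it approximates $\phi_1$ in $L^1$ by $\phi_\star\in C^\infty_c(\R^n)$, writes $\frac{1}{\widehat\phi_1+1}-1=f(a(\xi),\xi)$ with $f(z,\xi)=-\widehat\phi_1(\xi)/(\widehat\phi_\star(\xi)+1+z)$ and $a=\widehat\phi_1-\widehat\phi_\star$, and represents this by a single Cauchy integral over a circle of radius $\iota/4$ centred at $a(\xi)$; the kernel $1/(z-a(\xi))$ is expanded as a geometric series converging in the norm $\|g\|_\star:=\|\check g\|_{L^1(\R^n)}$ precisely because $\|a\|_\star=\|\phi_1-\phi_\star\|_{L^1(\R^n)}\le\iota/16$ is smaller than $|z|$, while $\|f(z,\cdot)\|_\star$ is finite because $\widehat\phi_\star$ is a Schwartz function. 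So the quantitative input is a global $L^1$-approximation, not a cutoff in frequency.

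As written, though, your proposal has a genuine gap at exactly the point you flag, and the reference you lean on does not fill it. The claim that $\|\theta_\e g\|_{A}$ becomes small when the support of the cutoff shrinks is not justified: the only general bound available is $\|\theta_\e g\|_A\le\|\check\theta_\e\|_{L^1(\R^n)}\|g\|_A$, and $\|\check\theta_\e\|_{L^1(\R^n)}$ is invariant under dilation of the bump (and always at least $1$), while smallness of $g$ in the sup norm near $\xi_0$ says nothing about the $A$-norm (which dominates the sup norm, not the other way around). To make the local lemma work one must first split $\phi_1$ into a smooth compactly supported piece plus an $L^1$-small remainder and then exploit the smoothness of the transform of the nice piece together with $g(\xi_0)=0$ (a Taylor expansion at $\xi_0$ and a weighted $L^2$ estimate for the inverse transform of the localized smooth function); this is the real analytic content, and it is exactly the kind of estimate the paper performs, in a localization-free way, through the contour integral and the bound $\|\phi_1-\phi_\star\|_{L^1(\R^n)}\le\iota/16$. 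Note also that Coquand--Stolzenberg do not prove a ``cutoffs have small algebra norm'' statement at all: their contribution is the global Cauchy-integral representation, i.e.\ the proof the paper gives. The remaining pieces of your sketch are repairable but also imprecise as stated: near infinity one should not invoke a Neumann series ``in $\phi_1$ itself'' (its $L^1$ norm need not be less than $1$), but rather use that $(1-\theta_R)\widehat\phi_1={\mathcal{F}}(\phi_1-\check\theta_R*\phi_1)$ with $\check\theta_R$ an approximate identity, so its $A$-norm tends to $0$ as $R\to+\infty$; and the patching should be done by writing $\frac1{\widehat\phi_1+1}-1=\sum_j\chi_j\big(\frac1{\widehat\phi_1+1}-1\big)$ with each $\chi_j$ itself the transform of an $L^1$ function and multiplying each local inverse by $\chi_j$, using the algebra property of $\|\cdot\|_\star$, rather than ``summing the $b_j$''.
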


\begin{proof} We let~$\iota\in\left(0,\frac14\right)$ be such that~$|\widehat\phi_1(\xi)+1|\ge\iota$ for all~$\xi\in\R^n$.
Since~$\phi_1\in L^1(\R^n)$, 
we can take~$\phi_\star\in C^\infty_c(\R^n)$ such that
\begin{equation}\label{UNAMSPDLPSUTIS1i2u45-L0} \|\phi_1-\phi_\star\|_{L^1(\R^n)}\le\frac\iota{16}.\end{equation}
{F}rom this, it follows that
\begin{equation}\label{UNAMSPDLPSUTIS1i2u45-89QL}
|\widehat\phi_\star(\xi)-\widehat\phi_1(\xi)|\le\int_{\R^n}|\phi_\star(x)-\phi_1(x)|\,dx\le\frac\iota{16}
\end{equation}
and accordingly
\begin{equation}\label{UNAMSPDLPSUTIS1i2u45}
|\widehat\phi_\star(\xi)+1|\ge|\widehat\phi_1(\xi)+1|-|\widehat\phi_\star(\xi)-\widehat\phi_1(\xi)|
\ge\iota-\frac\iota{16}=\frac{15\,\iota}{16}.
\end{equation}

Now, we use methods from complex analysis. For this, we denote the complex disk of radius~$r$ by
$$ D_r:=\{z\in\C{\mbox{ s.t. }}|z|<r\}.$$
Thus, given~$\xi\in\R^n$, we define the function
$$ D_{\iota/2}\ni z\longmapsto f(z,\xi):=-\frac{\widehat\phi_1(\xi)}{\widehat\phi_\star(\xi)+1+z}.$$
By~\eqref{UNAMSPDLPSUTIS1i2u45}, for all~$z\in D_{\iota/2}$, we know that
\begin{equation}\label{UNAMSPDLPSUTIS1i2u45-834}
|\widehat\phi_\star(\xi)+1+z|\ge|\widehat\phi_\star(\xi)+1|-|z|\ge\frac{15\,\iota}{16}-\frac\iota2=\frac{7\,\iota}{16},\end{equation}
therefore~$f(\cdot;\xi)$ is holomorphic in~$D_{\iota/2}$.

We also let~$a(\xi):=\widehat\phi_1(\xi)-\widehat\phi_\star(\xi)$ and we observe that \begin{equation}\label{UNAMSPDLPSUTIS1i2u45-834-98uh}
|a(\xi)|\le\frac\iota{16}, \end{equation}
thanks to~\eqref{UNAMSPDLPSUTIS1i2u45-89QL}, yielding that,
if~$\gamma$ is a circle of radius~$\frac{\iota}{4}$ centered at~$a(\xi)$ and oriented counterclockwise,
then~$\gamma$ lies in~$D_{\iota/2}$ and, as a result, using Cauchy's Integral Formula, 
$$f(a(\xi),\xi)= \frac{1}{2\pi i} \oint _{\gamma } \frac{f(z,\xi)}{z-a(\xi)}\,dz.$$

Now we define
$$ \Upsilon(\xi):=\frac{1}{2\pi i} \oint _{\gamma } \frac{f(z,\xi)}{z-a(\xi)}\,dz,$$
we use the notation
\begin{equation}\label{OJDAKS934-isolMilmCNLSUR} \| g\|_\star:=\|\check g\|_{L^1(\R^n)}\end{equation}
and we claim that
\begin{equation}\label{OJDAKS934-isolMilmCNLS}
\|\Upsilon\|_\star<+\infty.
\end{equation}
Once this is proven, the desired result in Theorem~\ref{WI:TH:-0eirojeg} is established, because
we would define~$\phi_2:=\check\Upsilon$, infer from~\eqref{OJDAKS934-isolMilmCNLSUR} and~\eqref{OJDAKS934-isolMilmCNLS} that~$\phi_2\in L^1(\R^n)$ and find that
$$\frac{1}{\widehat\phi_1(\xi)+1}-1=
-\frac{\widehat\phi_1(\xi)}{\widehat\phi_1(\xi)+1}=
-\frac{\widehat\phi_1(\xi)}{\widehat\phi_\star(\xi)+1+a(\xi)}=f(a(\xi),\xi)= \frac{1}{2\pi i} \oint _{\gamma } \frac{f(z,\xi)}{z-a(\xi)}\,dz=\Upsilon(\xi)=\widehat\phi_2(\xi),
$$
as desired.

Thus, to complete the proof of Theorem~\ref{WI:TH:-0eirojeg}, it suffices to check~\eqref{OJDAKS934-isolMilmCNLS}. To this end, we observe that, if~$\lambda$ is a smooth arc of finite length in a bounded region
of~$\C$, parameterized, with a slight abuse of notation,
by~$z=\lambda(t)$, with~$t\in[a,b]$, for some~$b>a$,
we have the general inequality
\begin{equation}\label{1324OJDAKS934-isolMilmCNLS}\begin{split}&
\left\|\int_\lambda \psi(z,\cdot)\,dz\right\|_\star
=\left\|\int_\lambda \check\psi(z,\cdot)\,dz\right\|_{L^1(\R^n)}=
\int_{\R^n} \left|\int_\lambda \check\psi(z,x)\,dz\right|\,dx\\&\qquad\le
\iint_{\R^n\times[a,b]} |\check\psi(\lambda(t),x)|\,|\lambda'(t)|\,dx\,dt
\le (b-a)\|\lambda'\|_{L^\infty([a,b])}\sup_{z\in\lambda}
\int_{\R^n} |\check\psi(z,x)|\,dx\\&\qquad=
(b-a)\|\lambda'\|_{L^\infty([a,b])}\sup_{z\in\lambda}
\|\check\psi(z,\cdot)\|_{L^1(\R^n)}
=(b-a)\|\lambda'\|_{L^\infty([a,b])}\sup_{z\in\lambda}
\|\psi(z,\cdot)\|_\star.\end{split}
\end{equation}

Moreover, for all~$g=g(\xi)$ and~$h=h(\xi)$,
\begin{equation}\label{LKMS-pdokfwjenB5A42N23t4Ddwfjenf2} \begin{split}&\| gh\|_\star=\|{\mathcal{F}}^{-1}( gh)\|_{L^1(\R^n)}
=\|\check g*\check h\|_{L^1(\R^n)}\le\iint_{\R^n\times\R^n} |\check g(x-y)|\,|\check h(y)|\,dx\,dy\\&\qquad\qquad\qquad=\|\check g\|_{L^1(\R^n)}\,\|\check h\|_{L^1(\R^n)}=
\|g\|_\star\,\|h\|_\star.\end{split}\end{equation}

Now, for all~$z\in \C$, we define
$$ \R^n\ni \xi\longmapsto H(z,\xi):=\frac1z\sum_{k=0}^{+\infty}\left(\frac{\widehat\phi_1(\xi)-\widehat\phi_\star(\xi)}{z}\right)^k.$$
We notice that, by~\eqref{UNAMSPDLPSUTIS1i2u45-L0},
$$ \|\widehat\phi_1-\widehat\phi_\star\|_\star=\|\phi_1-\phi_\star\|_{L^1(\R^n)}\le\frac\iota{16}$$
and therefore, by~\eqref{LKMS-pdokfwjenB5A42N23t4Ddwfjenf2},
$$ \left\|\frac1z\left(\frac{\widehat\phi_1-\widehat\phi_\star}{z}\right)^k\right\|_\star=\frac{1}{|z|^{k+1}}\,\big\|(\widehat\phi_1-\widehat\phi_\star)^k \big\|_\star\le
\frac{1}{|z|^{k+1}}\, \| \widehat\phi_1-\widehat\phi_\star\|_\star^k\le\frac{1}{|z|^{k+1}}\left(\frac\iota{16}\right)^k.
$$
Consequently, for every~$z\in\C\setminus D_{\iota/15}$,
\begin{equation}\label{LKMS-pdokfwjenB5A42N23t4Ddwfjenf2c} \|H(z,\cdot)\|_\star\le\sum_{k=0}^{+\infty}\left\|\frac1z\left(\frac{\widehat\phi_1-\widehat\phi_\star}{z}\right)^k\right\|_\star\le \frac{15}\iota\sum_{k=0}^{+\infty}
\left(\frac{15}{16}\right)^k\le C,\end{equation}
for some~$C>0$, which depends only on~$n$ and~$\phi_1$ and which we take the freedom of renaming in every step of the calculation.

Moreover, by~\eqref{UNAMSPDLPSUTIS1i2u45-89QL}, we have that, for every~$z\in\C\setminus D_{\iota/15}$,
$$ \left|\frac{\widehat\phi_1(\xi)-\widehat\phi_\star(\xi)}{z}\right|\le\frac{15}{16}$$
and therefore we can compute the sum of the geometric series and find that, for every~$z\in\C\setminus D_{\iota/15}$,
\begin{equation}\label{LKMS-pdokfwjenB5A42N23t4Ddwfjenf2b} H(z,\xi)=\frac{1}{z-\widehat\phi_1(\xi)+\widehat\phi_\star(\xi)}=\frac1{z-a(\xi)}.\end{equation}

Now we recall~\eqref{UNAMSPDLPSUTIS1i2u45-834} and let
$$ L(z,\xi):=\frac{\widehat\phi_\star(\xi)}{\widehat\phi_\star(\xi)+1+z}
.$$
We observe that, when~$z\in D_{\iota/2}$, the function~$\R^n\ni\xi \mapsto L(z,\xi)$ is smooth and rapidly decreasing.

As a consequence, for all~$z\in D_{\iota/2}$,
\begin{equation}\label{LKMS-pdokfwjenB5A42N23t4Ddwfjenf2A} \|L(z,\cdot)\|_\star\le C.\end{equation}
Thus, since
\begin{eqnarray*}
-\frac{1}{z+1}\Big( \widehat\phi_1(\xi)-\widehat\phi_1(\xi) L(z,\xi)\Big)=
-\frac{\widehat\phi_1(\xi)}{z+1}\left( 1-\frac{\widehat\phi_\star(\xi)}{\widehat\phi_\star(\xi)+1+z}\right)
=
-\frac{\widehat\phi_1(\xi)}{\widehat\phi_\star(\xi)+1+z}=f(z,\xi),
\end{eqnarray*}
we deduce from~\eqref{LKMS-pdokfwjenB5A42N23t4Ddwfjenf2}
and~\eqref{LKMS-pdokfwjenB5A42N23t4Ddwfjenf2A} that
\begin{equation}\label{LKMS-pdokfwjenB5A42N23t4Ddwfjenf2d}\begin{split}&
\|f(z,\cdot)\|_\star=\frac{1}{|z+1|}\left\| \widehat\phi_1-\widehat\phi_1L(z,\cdot)\right\|_\star\le
\frac{1}{|z+1|}\left(\| \widehat\phi_1\|_\star+\|\widehat\phi_1 L(z,\cdot)\|_\star\right)\\&\qquad\le
\frac{\| \widehat\phi_1\|_\star}{|z+1|}\left(1+\|L(z,\cdot)\|_\star\right)=
\frac{\|\phi_1\|_{L^1(\R^n)}}{|z+1|}\left(1+\|L(z,\cdot)\|_\star\right)\le C.\end{split}
\end{equation}

Consequently, for all~$z\in D_{\iota/2}\setminus D_{\iota/15}$,
$$ J(z,\xi):=\frac{f(z,\xi)}{z-a(\xi)}=f(z,\xi)\,H(z,\xi),
$$
thanks to~\eqref{LKMS-pdokfwjenB5A42N23t4Ddwfjenf2b},
giving that, for all~$z\in D_{\iota/2}\setminus D_{\iota/15}$,
\begin{equation}\label{LKMS-pdokfwjenB5A42N23t4Ddwfjenf2b2}
\|J(z,\cdot)\|_\star\le \|f(z,\cdot)\|_\star\,\|H(z,\cdot)\|_\star\le C,
\end{equation}
due to~\eqref{LKMS-pdokfwjenB5A42N23t4Ddwfjenf2},~\eqref{LKMS-pdokfwjenB5A42N23t4Ddwfjenf2c} and~\eqref{LKMS-pdokfwjenB5A42N23t4Ddwfjenf2d}.

We now observe that if~$p\in\gamma$ then, in view of~\eqref{UNAMSPDLPSUTIS1i2u45-834-98uh},
$$ |p|\ge |p-a(\xi)|-|a(\xi)|\ge\frac\iota4-\frac\iota{16}=\frac{3\iota}{16}>\frac{\iota}{15},$$
which makes it possible to employ~\eqref{LKMS-pdokfwjenB5A42N23t4Ddwfjenf2b2}
along~$\gamma$.

For this reason, and recalling~\eqref{1324OJDAKS934-isolMilmCNLS},
\begin{eqnarray*}
\|\Upsilon\|_\star=\frac{1}{2\pi} \left\|\oint _{\gamma } \frac{f(z,\cdot)}{z-a(\cdot)}\,dz\right\|_\star
=\frac{1}{2\pi} \left\|\oint _{\gamma }J(z,\cdot)\,dz\right\|_\star\le
\frac{1}{2\pi} \oint _{\gamma }\|J(z,\cdot)\|_\star\,dz\le C.
\end{eqnarray*}
The proof of~\eqref{OJDAKS934-isolMilmCNLS} is thereby complete.
\end{proof}

\chapter{An interpolation inequality}\label{APPE:INTERPLEMM}

Here we present an interpolation inequality in Bessel potential spaces.
The proof that we present is based on dyadic partitions of unity in frequency space, according to
the general estimate presented in Theorem~\ref{0pirj09365-4g4eZXCHJKLRFG-544-NO3-COR-0oerkCC}.

For related inequalities, see e.g.~\cite{MR2570437, MR3813967, MR3990737} and the references therein;
see also~\cite[page~3]{MR2250142} and~\cite[Lemma~3.1]{MR1877265}
for further information and~\cite[page~59, formula~(3), and page~186, Remark 6]{MR1328645} for an approach based on complex interpolation.

\begin{proposition}
Let~$p\ge1$ and~$b>a>0$. Let also~$\mu\in(0,1)$.
Then, there exists~$C>0$, depending only on~$n$,~$p$,~$a$,~$b$ and~$\mu$, such that 
$$ \|u\|_{{\mathcal{L}}^p_{a} (\R^n)} \le C\, \|u\|^{\frac{b-\mu a}b}_{L^p(\R^n)}\,
\|u\|_{{\mathcal{L}}^p_{b} (\R^n)}^{\frac{\mu a}b}
$$
\end{proposition}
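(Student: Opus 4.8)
The plan is to prove this interpolation inequality by exploiting the Littlewood--Paley characterization of Bessel potential spaces provided in Theorem~\ref{0pirj09365-4g4eZXCHJKLRFG-544-NO3-COR-0oerkCC}. The key observation is that the three norms involved can all be compared to expressions built from the dyadic pieces~$\check\varphi_j * u$ of the dyadic partition of unity of Lemma~\ref{0pirj09365-4g4eZXCHJKLRFG-544-NO1-LL}. Specifically, by Theorem~\ref{0pirj09365-4g4eZXCHJKLRFG-544-NO3-COR-0oerkCC}, for every~$s>0$ the norm~$\|u\|_{{\mathcal{L}}^p_s(\R^n)}$ is equivalent (up to constants depending only on~$n$,~$p$,~$s$) to~$\big\|\big(\sum_{j=0}^{+\infty} 2^{2js}|\check\varphi_j*u|^2\big)^{1/2}\big\|_{L^p(\R^n)}$, while~$\|u\|_{L^p(\R^n)}$ is comparable from above by~$\big\|\big(\sum_{j=0}^{+\infty}|\check\varphi_j*u|^2\big)^{1/2}\big\|_{L^p(\R^n)}$ (for the lower control one uses that~$\sum_j\check\varphi_j*u$ reconstructs~$u$, combined with the estimate in Corollary~\ref{LEHDNfcitjn034ty-LeappCOR}, or more directly the case~$s\to0$ of the Littlewood--Paley inequality).

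First I would reduce to a pointwise inequality in the ``sequence'' variable~$j$. For each fixed~$x\in\R^n$, write~$c_j := |\check\varphi_j*u(x)|$ and consider the three square functions
\[
A(x):=\Big(\sum_{j=0}^{+\infty} 2^{2ja}c_j^2\Big)^{1/2},\quad
G(x):=\Big(\sum_{j=0}^{+\infty} c_j^2\Big)^{1/2},\quad
B(x):=\Big(\sum_{j=0}^{+\infty} 2^{2jb}c_j^2\Big)^{1/2}.
\]
The crucial elementary step is the pointwise bound~$A(x)\le C\, G(x)^{1-\mu a/b}\, B(x)^{\mu a/b}$, which is a consequence of the convexity of the exponent~$s\mapsto 2^{2js}$: indeed~$2^{2ja} = (2^{2j\cdot 0})^{1-a/b}(2^{2jb})^{a/b}$, so by H\"older's inequality with exponents~$\frac{b}{b-a}$ and~$\frac{b}{a}$ applied term by term in the sum defining~$A(x)^2$, one gets~$A(x)^2 \le G(x)^{2(b-a)/b} B(x)^{2a/b}$, that is~$A(x)\le G(x)^{1-a/b}B(x)^{a/b}$. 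This already gives the inequality with exponent~$a/b$ in place of~$\mu a/b$; to obtain the stated exponent~$\mu a/b\in(0,a/b)$ one then interpolates once more, this time trivially, using that~$A(x)\le B(x)$ (which follows from~$2^{2ja}\le 2^{2jb}$) to write~$A(x) = A(x)^\mu A(x)^{1-\mu} \le \big(G(x)^{1-a/b}B(x)^{a/b}\big)^\mu B(x)^{1-\mu} = G(x)^{\mu(1-a/b)} B(x)^{1-\mu a/b}$, and then a further application of the same convexity trick (or simply relabelling) rearranges this to the desired form~$A(x)\le C\,G(x)^{1-\mu a/b} B(x)^{\mu a/b}$ once one notices~$1-\mu a/b = \mu(1-a/b) + (1-\mu)\cdot 1$ and~$\mu a/b = \mu\cdot(a/b)$; I would double-check this last arithmetic carefully, as it is where the precise stated exponent comes out.

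Next I would pass from the pointwise bound to the~$L^p$ bound by raising to the power~$p$, integrating over~$\R^n$, and applying H\"older's inequality in~$x$ with exponents~$\frac{1}{1-\mu a/b}$ and~$\frac{1}{\mu a/b}$ (both~$\geq 1$ since~$\mu a/b\in(0,1)$). This yields
\[
\|A\|_{L^p(\R^n)} \le C\, \|G\|_{L^p(\R^n)}^{1-\mu a/b}\,\|B\|_{L^p(\R^n)}^{\mu a/b}.
\]
Finally, I would invoke Theorem~\ref{0pirj09365-4g4eZXCHJKLRFG-544-NO3-COR-0oerkCC} twice, with~$s:=a$ and~$s:=b$, to replace~$\|A\|_{L^p(\R^n)}$ by a constant times~$\|u\|_{{\mathcal{L}}^p_a(\R^n)}$ and~$\|B\|_{L^p(\R^n)}$ by a constant times~$\|u\|_{{\mathcal{L}}^p_b(\R^n)}$, and use the~$s\to 0^+$ analogue (or Corollary~\ref{LEHDNfcitjn034ty-LeappCOR} together with the upper bound in the Littlewood--Paley inequality) to bound~$\|G\|_{L^p(\R^n)}$ by a constant times~$\|u\|_{L^p(\R^n)}$. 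Collecting constants — all of which depend only on~$n$,~$p$,~$a$,~$b$ and~$\mu$ — gives the claimed estimate. The main obstacle I anticipate is purely bookkeeping: getting the interpolation exponents to land exactly on~$1-\mu a/b$ and~$\mu a/b$ (rather than on~$1-a/b$ and~$a/b$) requires the extra trivial interpolation step against~$A(x)\le B(x)$, and one must be careful that the two H\"older applications (one in~$j$, one in~$x$) are performed with compatible exponents; there is no deep analytic difficulty, since the Mikhlin-type machinery is already packaged in Theorem~\ref{0pirj09365-4g4eZXCHJKLRFG-544-NO3-COR-0oerkCC}.
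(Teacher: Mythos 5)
Your argument has a genuine gap at its central step: the pointwise inequality $A(x)\le C\,G(x)^{1-\mu a/b}B(x)^{\mu a/b}$ is false, and the ``extra trivial interpolation step'' cannot produce it. Your first H\"older application in $j$ correctly gives $A\le G^{1-a/b}B^{a/b}$, but the next step $A=A^{\mu}A^{1-\mu}\le\big(G^{1-a/b}B^{a/b}\big)^{\mu}B^{1-\mu}$ yields $A\le G^{\mu(1-a/b)}B^{1-\mu(1-a/b)}$, in which the exponent on $B$ has \emph{increased} from $a/b$ to $1-\mu(1-a/b)$, not decreased to $\mu a/b$; to push weight off $B$ you would need $A\le G$, which fails because $2^{2ja}\ge1$ gives $G\le A$ pointwise. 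In fact no bound of the desired shape holds at the level of the dyadic coefficients: taking $c_J=1$ and $c_j=0$ for $j\ne J$ gives $A=2^{Ja}$, $G=1$, $B=2^{Jb}$, and $2^{Ja}\le C\,2^{J\mu a}$ fails as $J\to+\infty$ since $\mu<1$. The same example, realized by a Schwartz function spectrally supported in a shell $\{|\xi|\sim 2^{J}\}$, shows that the exponent $\mu a/b$ on $\|u\|_{{\mathcal{L}}^p_b(\R^n)}$ cannot be reached by any argument that, like yours, reduces matters to the unweighted square function: for such $u$ one has $\|u\|_{{\mathcal{L}}^p_a(\R^n)}\sim 2^{Ja}\|u\|_{L^p(\R^n)}$ while the proposed right-hand side is $\sim 2^{J\mu a}\|u\|_{L^p(\R^n)}$. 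A secondary issue is your control of $\|G\|_{L^p(\R^n)}$: Corollary~\ref{LEHDNfcitjn034ty-LeappCOR} gives an inequality in the wrong direction, and the bound $\|G\|_{L^p(\R^n)}\le C\|u\|_{L^p(\R^n)}$ is the $s=0$ square-function estimate, valid for $p\in(1,+\infty)$ but not contained in Theorem~\ref{0pirj09365-4g4eZXCHJKLRFG-544-NO3-COR-0oerkCC} and false in general at $p=1$.

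For comparison, the paper's proof never passes through the unweighted square function: it applies H\"older in $j$ with exponents $\frac{b}{\mu a}$ and $\frac{b}{b-\mu a}$, so the first factor is the $b$-weighted square function and the second carries the weights $2^{\frac{2j(1-\mu)ab}{b-\mu a}}$; after H\"older in $x$ and Theorem~\ref{0pirj09365-4g4eZXCHJKLRFG-544-NO3-COR-0oerkCC}, this second factor is estimated through Minkowski's Integral Inequality (Theorem~\ref{MLAerSM:ijfKKSMdf02}) and Young's Convolution Inequality, via~\eqref{0pirj09365-4g4eZXCHJKLRFG-544-NO1-LL-eq4-09-BIS}, by $\|u\|_{L^p(\R^n)}$ times $\big(\sum_{j} 2^{\frac{2j(1-\mu)ab}{b-\mu a}}\big)^{1/2}$. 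Observe, however, that this series diverges for every $\mu\in(0,1)$, so the paper's own final summation runs into exactly the obstruction your counterexample detects (the manuscript is a draft and explicitly solicits error reports). What is provable along the lines you set up is the classical interpolation inequality with exponents $1-a/b$ and $a/b$, for $p\in(1,+\infty)$: your first H\"older step, then H\"older in $x$, then Theorem~\ref{0pirj09365-4g4eZXCHJKLRFG-544-NO3-COR-0oerkCC} for the indices $a$ and $b$ together with the $L^p$ square-function bound for $G$, give it completely. So the failure in your proposal is not a bookkeeping slip you can repair; it reflects the exponents in the statement itself.
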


\begin{proof} Let~$\varphi_j$ be given 
by Lemma~\ref{0pirj09365-4g4eZXCHJKLRFG-544-NO1-LL}. We use H\"older's Inequality with exponents~$\frac{b}{\mu a}$ and~$\frac{b}{b-\mu a}$ to see that
\begin{eqnarray*}&&
\sum_{j=0}^{+\infty} 2^{2ja}|\check\varphi_j*u|^2=
\sum_{j=0}^{+\infty} 2^{2j\mu a}|\check\varphi_j*u|^{\frac{2\mu a}b}\,2^{2j(1-\mu)a}|\check\varphi_j*u|^{\frac{2(b-\mu a)}{b}}\\&&\qquad\le
\left(\sum_{j=0}^{+\infty} 2^{2jb}|\check\varphi_j*u|^2\right)^{\frac{\mu a}{b}}
\left(\sum_{j=0}^{+\infty} 2^{\frac{2j(1-\mu)ab}{b-\mu a}}|\check\varphi_j*u|^2\right)^{\frac{b-\mu a}b}.
\end{eqnarray*}

On this account, recalling Theorem~\ref{0pirj09365-4g4eZXCHJKLRFG-544-NO3-COR-0oerkCC} and applying again H\"older's Inequality with exponents~$\frac{b}{\mu a}$ and~$\frac{b}{b-\mu a}$ we conclude that
\begin{eqnarray*}
\frac{ \|u\|_{{\mathcal{L}}^p_{a} (\R^n)}}{C}&\le&\left\|\,\left(
\sum_{j=0}^{+\infty} 2^{2ja}|\check\varphi_j*u|^2\right)^{\frac12} \,\right\|_{L^p(\R^n)}\\&=&
\left[ \int_{\R^n}\left(
\sum_{j=0}^{+\infty} 2^{2ja}|\check\varphi_j*u(x)|^2\right)^{\frac{p}2}\,dx\right]^{\frac1p}\\&\le&
\left[ \int_{\R^n}\left(\sum_{j=0}^{+\infty} 2^{2jb}|\check\varphi_j*u(x)|^2\right)^{\frac{p\mu a}{2b}}
\left(\sum_{j=0}^{+\infty} 2^{\frac{2j(1-\mu)ab}{b-\mu a}}|\check\varphi_j*u(x)|^2\right)^{\frac{p(b-\mu a)}{2b}}\,dx\right]^{\frac1p}\\&\le&\left[ \int_{\R^n}\left(\sum_{j=0}^{+\infty} 2^{2jb}|\check\varphi_j*u(x)|^2\right)^{\frac{p}{2}}
\,dx\right]^{\frac{\mu a}{bp}}
\left[ \int_{\R^n}
\left(\sum_{j=0}^{+\infty} 2^{\frac{2j(1-\mu)ab}{b-\mu a}}|\check\varphi_j*u(x)|^2\right)^{\frac{p}{2}}\,dx\right]^{\frac{b-\mu a}{bp}}\\&=&
\left\|\,\left(\sum_{j=0}^{+\infty} 2^{2jb}|\check\varphi_j*u(x)|^2\right)^{\frac12} \,\right\|_{L^p(\R^n)}^{\frac{\mu a}b}
\left\|\,
\left(\sum_{j=0}^{+\infty} 2^{\frac{2j(1-\mu)ab}{b-\mu a}}|\check\varphi_j*u(x)|^2\right)^{\frac12} \,\right\|_{L^p(\R^n)}^{\frac{b-\mu a}{b}}.
\end{eqnarray*}

Consequently, using again Theorem~\ref{0pirj09365-4g4eZXCHJKLRFG-544-NO3-COR-0oerkCC} and renaming constants,
\begin{equation}\label{MLAerSM:ijfKKSMdf0202o3tg-293rtijMMS} \frac{ \|u\|_{{\mathcal{L}}^p_{a} (\R^n)}}{C}\le
\|u\|_{{\mathcal{L}}^p_{b} (\R^n)}^{\frac{\mu a}b}\left\|\,
\left(\sum_{j=0}^{+\infty} 2^{\frac{2j(1-\mu)ab}{b-\mu a}}|\check\varphi_j*u(x)|^2\right)^{\frac12} \,\right\|_{L^p(\R^n)}^{\frac{b-\mu a}{b}}.\end{equation}

Besides, using the Minkowski's Integral Inequality in Theorem~\ref{MLAerSM:ijfKKSMdf02},
\begin{eqnarray*}
&&\left\|\,
\left(\sum_{j=0}^{+\infty} 2^{\frac{2j(1-\mu)ab}{b-\mu a}}|\check\varphi_j*u(x)|^2\right)^{\frac12} \,\right\|_{L^p(\R^n)}^2=\left[
\int_{\R^n}
\left(\sum_{j=0}^{+\infty} 2^{\frac{2j(1-\mu)ab}{b-\mu a}}|\check\varphi_j*u(x)|^2\right)^{\frac{p}{2}}\,dx\right]^{\frac2p}\\&&\qquad\le\sum_{j=0}^{+\infty} \left(\int_{\R^n}\Big(2^{\frac{2j(1-\mu)ab}{b-\mu a}}|\check\varphi_j*u(x)|^2\Big)^{\frac{p}2}
\,dx\right)^{\frac2p}=\sum_{j=0}^{+\infty} 2^{\frac{2j(1-\mu)ab}{b-\mu a}}\left(\int_{\R^n}
|\check\varphi_j*u(x)|^p
\,dx\right)^{\frac2p}\\&&\qquad=
\sum_{j=0}^{+\infty} 2^{\frac{2j(1-\mu)ab}{b-\mu a}}\|\check\varphi_j*u\|^2_{L^p(\R^n)}.
\end{eqnarray*}
Thus, by Young's Convolution Inequality and~\eqref{0pirj09365-4g4eZXCHJKLRFG-544-NO1-LL-eq4-09-BIS},
\begin{eqnarray*}
&&\left\|\,
\left(\sum_{j=0}^{+\infty} 2^{\frac{2j(1-\mu)ab}{b-\mu a}}|\check\varphi_j*u(x)|^2\right)^{\frac12} \,\right\|_{L^p(\R^n)}^2\le
\sum_{j=0}^{+\infty} 2^{\frac{2j(1-\mu)ab}{b-\mu a}}\|\check\varphi_j\|_{L^1(\R^n)}^2\|u\|^2_{L^p(\R^n)}\\&&\qquad\le C
\sum_{j=0}^{+\infty} 2^{\frac{2j(1-\mu)ab}{b-\mu a}}\|u\|^2_{L^p(\R^n)}=C\|u\|^2_{L^p(\R^n)}.
\end{eqnarray*}

We combine this information and~\eqref{MLAerSM:ijfKKSMdf0202o3tg-293rtijMMS} and, up to renaming constants, we obtain the desired result.
\end{proof}

\chapter{A short stroll around pseudodifferential operators}\label{ojdlfwenSTRFGbdollDeltafRn}

We provide a very succinct exposition of some basic facts related to the noble, and very deep, theory of 
pseudodifferential operators. Once again, we do not aim at being exhaustive and we refer to~\cite{MR0259335, MR388463, MR597144, MR597145, MR618463, MR1385196, MR1852334, MR2453959, MR2567604, MR2884718} and the references therein for thorough information.

To start with, we recall the notion of \index{pseudodifferential symbol} pseudodifferential symbol, which, given~$n$,~$N\in\N$ and~$m\in\R$, is any function~$a\in C^\infty(\R^N\times\R^n,\,\C)$ such that, for all~$\alpha\in\N^N$ and~$\beta\in\N^n$,
\begin{equation}\label{1e12q3ua02it} \sup_{{x\in\R^N}\atop{\xi\in\R^n}}(1+|\xi|)^{|\beta|-m}|D^\alpha_x D^\beta_\xi a(x,\xi)|<+\infty.\end{equation}

The space of such pseudodifferential symbols is sometimes denoted~$S^m_{1,0}(\R^N\times\R^n)$,
or~$S^m$ for short, and~$m$ is the order\footnote{By~\eqref{1e12q3ua02it}, we see that if a pseudodifferential symbol
has order~$m$, then it has also order~$m'$ for all~$m'\in[m,+\infty)$. Thus, sometimes one can pay additional
attention in order to select the ``best possible'' order of a symbol, according to the situation. This observation \label{COM78REDU-912ieyrhf-001-IMANSWJh}
will play a role when dealing with commutators in Section~\ref{COM78REDU-912ieyrhf-001}, since interesting cancellations occur in that framework to improve the order.}
of the pseudodifferential symbol.\medskip

We also say that a pseudodifferential symbol is of order~$-\infty$ if it is of order~$m$ for every~$m\in\R$. Pseudodifferential symbols of order~$-\infty$ are often denoted by~$S^{-\infty}$ and are also called ``smoothing operators'' (for a reason which will become apparent in the forthcoming Corollary~\ref{COMPOTHPS-q0woeirjUJSDNIie00}). 
If~$a\in C^\infty_c(\R^N\times\R^n)$ we have that~$a$ is a pseudodifferential symbol is of order~$-\infty$,
since if the support of~$a$ is contained in~$B_R\times B_R$ (where, with a mild abuse of notation,
we have denoted by~$B_R$ the balls in both~$\R^N$ and~$\R^n$),
then
$$ \sup_{{x\in\R^N}\atop{\xi\in\R^n}}(1+|\xi|)^{|\beta|-m}|D^\alpha_x D^\beta_\xi a(x,\xi)|
=\sup_{{x\in B_R}\atop{\xi\in B_R}}(1+R)^{|\beta|+|m|}|D^\alpha_x D^\beta_\xi a(x,\xi)|
<+\infty.$$
See also Corollary~\ref{COMPOTHPS-q0woeirjUJSDNIie00-cr5tgd} for an interesting example of
a pseudodifferential symbol of order~$-\infty$.

\begin{lemma}\label{1e12q3ua02it012oierjohfgbFJJasxLALEM}
Let~$j\in\N$. For each~$\ell\in\{0,\dots,j\}$, let~$c_\ell\in\C$ and~$a_\ell$ be a pseudodifferential symbol of order~$m_\ell$.

Then,~$c_0a_0+\dots+c_ja_j$ is a pseudodifferential symbol of order~$\max\{m_0,\dots,m_\ell\}$.
\end{lemma}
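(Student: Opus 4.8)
The statement is an elementary closure property of the symbol spaces $S^m_{1,0}$, so the plan is simply to verify the defining seminorm bound~\eqref{1e12q3ua02it} for the linear combination, reducing to the case of a single sum of two symbols by an obvious induction on $j$. First I would set $m:=\max\{m_0,\dots,m_j\}$ and observe that, since each $a_\ell$ is smooth, so is $c_0a_0+\dots+c_ja_j$, hence it lies in $C^\infty(\R^N\times\R^n,\C)$. It remains only to check the quantitative estimate.

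For the estimate, fix multi-indices $\alpha\in\N^N$ and $\beta\in\N^n$. By linearity of the derivative operators $D^\alpha_x$ and $D^\beta_\xi$,
\begin{equation*}
(1+|\xi|)^{|\beta|-m}\,\bigl|D^\alpha_x D^\beta_\xi (c_0a_0+\dots+c_ja_j)(x,\xi)\bigr|
\le \sum_{\ell=0}^j |c_\ell|\,(1+|\xi|)^{|\beta|-m}\,\bigl|D^\alpha_x D^\beta_\xi a_\ell(x,\xi)\bigr|.
\end{equation*}
The key point is the monotonicity remark already recorded in footnote on page~\pageref{COM78REDU-912ieyrhf-001-IMANSWJh}: since $m\ge m_\ell$ and $(1+|\xi|)\ge 1$, one has $(1+|\xi|)^{|\beta|-m}\le (1+|\xi|)^{|\beta|-m_\ell}$ for every $\xi$, so each summand is bounded by $|c_\ell|\,(1+|\xi|)^{|\beta|-m_\ell}\,|D^\alpha_x D^\beta_\xi a_\ell(x,\xi)|$. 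Taking the supremum over $x\in\R^N$ and $\xi\in\R^n$ and using that each $a_\ell\in S^{m_\ell}$ satisfies~\eqref{1e12q3ua02it}, the right-hand side is finite; hence $c_0a_0+\dots+c_ja_j$ satisfies~\eqref{1e12q3ua02it} with exponent $m$, i.e.\ it is a pseudodifferential symbol of order $m$.

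The only subtlety worth a sentence — and it is not really an obstacle — is that the order is stated as $\max\{m_0,\dots,m_j\}$ rather than merely ``some order''; the monotonicity step above is exactly what pins down this value, and one cannot in general claim a smaller order because the dominant term need not exhibit cancellation. If desired, one can phrase the whole argument for $j=1$ (two symbols) and then iterate, but since the estimate is already additive over the summands this reduction is cosmetic. No result beyond the definition of $S^m_{1,0}$ in~\eqref{1e12q3ua02it} is needed.
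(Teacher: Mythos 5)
Your proof is correct and follows essentially the same route as the paper: note smoothness of the sum, apply the triangle inequality to the derivatives, and use the monotonicity $(1+|\xi|)^{m_\ell-|\beta|}\le(1+|\xi|)^{\max\{m_0,\dots,m_j\}-|\beta|}$ together with the seminorm bounds for each $a_\ell$. Nothing is missing.
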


\begin{proof} The function~$c_0a_0+\dots+c_ja_j$ belongs to~$C^\infty(\R^N\times\R^n,\,\C)$ since so does each addendum.
In addition, by~\eqref{1e12q3ua02it}, for each~$\ell\in\{0,\dots,j\}$,
$$ C_{\alpha,\beta,\ell}:=\sup_{{x\in\R^N}\atop{\xi\in\R^n}}(1+|\xi|)^{|\beta|-m_\ell}|D^\alpha_x D^\beta_\xi a_\ell(x,\xi)|<+\infty.$$
As a result,
\begin{eqnarray*}&&
\Big|D^\alpha_x D^\beta_\xi \big(c_0a_0(x,\xi)+\dots+c_ja_j(x,\xi)\big)\Big|\le
\sum_{\ell=0}^j |c_\ell|\, |D^\alpha_x D^\beta_\xi a_\ell(x,\xi)|\le
\sum_{\ell=0}^j |c_\ell |\,C_{\alpha,\beta,\ell} (1+|\xi|)^{m_\ell-|\beta|}\\&&\qquad\qquad\le
\left(\sum_{\ell=0}^j |c_\ell|\,C_{\alpha,\beta,\ell} \right)\,(1+|\xi|)^{\max\{m_0,\dots,m_j\}-|\beta|},
\end{eqnarray*}
from which the desired result plainly follows.
\end{proof}

Each pseudodifferential symbol induces a pseudodifferential operator \index{pseudodifferential operator}
acting on a function~$u$ belonging to the Schwartz space of smooth and rapidly decreasing functions on~$\R^n$ via the relation
\begin{equation}\label{eqTA6DE4D3DF} T_a u(x):=\int_{\R^n} a(x,\xi)\,\widehat u(\xi)\,e^{2\pi ix\cdot\xi}\,d\xi.\end{equation}
Alternative notations for~$T_a$ are~$a(x,D)$ and~${\rm Op}(a)$.
One can also say that the order~$m$ of the symbol~$a$ is the order of the pseudodifferential operator~$T_a$.\medskip

Multiplication operators are useful examples of pseudodifferential operators of order zero:
\begin{lemma}\label{98iujrt4grLEetaom90iuy65rf-1kd-1}
Let~$\psi\in C^\infty_c(\R^n)$ and, for every function~$\zeta$ belonging to the Schwartz space of smooth and rapidly decreasing functions, set
$$ T\zeta(x):=\psi(x)\,\zeta(x).$$
Then, in the notation of~\eqref{eqTA6DE4D3DF}, we have that~$T=T_\psi$.

Also,~$T_\psi$ is a pseudodifferential operator of order zero.
\end{lemma}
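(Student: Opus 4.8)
The goal is to show that multiplication by a fixed function~$\psi\in C^\infty_c(\R^n)$ coincides with the pseudodifferential operator~$T_\psi$ defined through~\eqref{eqTA6DE4D3DF}, and that the associated symbol (namely the function~$(x,\xi)\mapsto\psi(x)$, which does not depend on~$\xi$) has order zero. The identification~$T=T_\psi$ is essentially a reformulation of the Fourier inversion formula, so the first step is to write, for a Schwartz function~$\zeta$,
\[
T_\psi\zeta(x)=\int_{\R^n}\psi(x)\,\widehat\zeta(\xi)\,e^{2\pi ix\cdot\xi}\,d\xi
=\psi(x)\int_{\R^n}\widehat\zeta(\xi)\,e^{2\pi ix\cdot\xi}\,d\xi
=\psi(x)\,\zeta(x),
\]
where in the last equality we have used that~$\zeta$ is Schwartz, so that its Fourier Transform is integrable and the Fourier inversion formula applies pointwise. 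This is the content of the first claim; the main thing to be careful about is simply that~$\psi(x)$ can be pulled out of the~$\xi$-integral because it does not depend on~$\xi$.

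For the second claim, I would verify directly that the symbol~$a(x,\xi):=\psi(x)$ satisfies the defining estimate~\eqref{1e12q3ua02it} with~$m=0$. Indeed, since~$\psi\in C^\infty_c(\R^n)$ we have~$a\in C^\infty(\R^n\times\R^n,\C)$, and for every pair of multi-indices~$\alpha$,~$\beta\in\N^n$,
\[
D^\alpha_x D^\beta_\xi a(x,\xi)=
\begin{dcases}
D^\alpha\psi(x) & {\mbox{if }}\beta=0,\\
0 & {\mbox{if }}\beta\ne0.
\end{dcases}
\]
When~$\beta\ne0$ the derivative vanishes identically, so~\eqref{1e12q3ua02it} holds trivially. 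When~$\beta=0$ we have~$|\beta|-m=0$, hence
\[
\sup_{{x\in\R^n}\atop{\xi\in\R^n}}(1+|\xi|)^{|\beta|-m}\,|D^\alpha_x D^\beta_\xi a(x,\xi)|
=\sup_{x\in\R^n}|D^\alpha\psi(x)|=\|D^\alpha\psi\|_{L^\infty(\R^n)}<+\infty,
\]
because~$\psi$ is smooth with compact support. This shows that~$a\in S^0_{1,0}(\R^n\times\R^n)$, that is,~$T_\psi$ is a pseudodifferential operator of order zero.

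I do not anticipate any real obstacle here: the statement is, in essence, a bookkeeping verification that the two definitions match and that a constant-in-$\xi$ compactly supported symbol meets the order-zero bound. The only minor point worth flagging is the justification for applying Fourier inversion pointwise in the first display, which is immediate for Schwartz functions; no approximation argument or density is needed. One could also remark, as a sanity check consistent with footnote~\ref{COM78REDU-912ieyrhf-001-IMANSWJh}, that~$\psi$ being compactly supported would allow one to regard the symbol as being of order~$-\infty$ as well, but for the present statement order zero is all that is asserted and all that is needed.
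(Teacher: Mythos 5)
Your proof is correct and follows essentially the same route as the paper's: pull~$\psi(x)$ out of the~$\xi$-integral and invoke Fourier inversion for Schwartz functions to get~$T=T_\psi$, then check~\eqref{1e12q3ua02it} with~$m=0$ by noting that~$D^\alpha_x D^\beta_\xi\psi$ vanishes for~$\beta\ne0$ and is bounded for~$\beta=0$. One caution about your closing aside: the symbol here is~$a(x,\xi)=\psi(x)$, which is compactly supported in~$x$ but not in the joint variables~$(x,\xi)$, so it is \emph{not} of order~$-\infty$ (for any~$m<0$ the factor~$(1+|\xi|)^{-m}$ blows up while~$|D^\alpha_x\psi(x)|$ does not decay in~$\xi$); indeed a nontrivial multiplication operator has no smoothing effect, and the paper's observation about order~$-\infty$ applies only to symbols in~$C^\infty_c(\R^n\times\R^n)$.
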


\begin{proof} For every function~$\zeta$ belonging to the Schwartz space of smooth and rapidly decreasing functions on~$\R^n$,
\begin{equation*} T_{\psi} \zeta(x)=\int_{\R^n} {\psi}(x)\,\widehat \zeta(\xi)\,e^{2\pi ix\cdot\xi}\,d\xi={\psi}(x)\zeta(x)=T\zeta(x).\end{equation*}
Moreover, for all~$\alpha$, $\beta\in\N^n$,
$$ D^\alpha_x D^\beta_\xi {\psi}=\begin{cases}
D^\alpha_x {\psi}&{\mbox{ if }}\beta=0,\\
0&{\mbox{otherwise,}}
\end{cases}$$
and consequently
$$ \sup_{{x\in\R^N}\atop{\xi\in\R^n}}(1+|\xi|)^{|\beta|}|D^\alpha_x D^\beta_\xi {\psi}(x)|=
\sup_{{x\in\R^N}}|D^\alpha_x {\psi}(x)|<+\infty.
$$
Comparing with~\eqref{1e12q3ua02it}, we conclude that~${\psi}$ is a
pseudodifferential symbol of order zero.\end{proof}

We will be specifically interested in the case~$N=n$, so we restrict to this situation from now on for the sake of simplicity (however, for interesting cases in which it is convenient to consider~$N\ne n$ see the ``amplitude functions''
discussed in~\cite[page~168]{MR2453959} and the references therein).
\medskip

We observe that pseudodifferential operators contain as a particular case differential operators of a given order~$K$,
and in such a case the order of the pseudodifferential operator coincides with that of the differential operator, since, if
$$ a(x,\xi):=\sum_{{\gamma\in\N^n}\atop{|\gamma|\le K}} (-2\pi i)^\gamma c_\gamma(x)\,\xi^\gamma,$$
we have that, under suitable smoothness assumptions on~$c_\gamma$,
\begin{eqnarray*}&& \sup_{{x\in\R^n}\atop{\xi\in\R^n}}(1+|\xi|)^{|\beta|-K}|D^\alpha_x D^\beta_\xi a(x,\xi)|
\le C\sup_{\xi\in\R^n} (1+|\xi|)^{|\beta|-K}
\sum_{{\gamma\in\N^n}\atop{{\beta\le\gamma}\atop{|\gamma|\le K}}} \|c_\gamma\|_{C^{|\alpha|}(\R^n)}\,|\xi^{\gamma-\beta}|\\ &&\qquad\qquad\le C\sup_{\xi\in\R^n} \sum_{{\gamma\in\N^n}\atop{{\beta\le\gamma}\atop{|\gamma|\le K}}}(1+|\xi|)^{|\gamma|-K}\le C
\end{eqnarray*}
and in this situation
\begin{eqnarray*} T_a u(x)&=&\sum_{{\gamma\in\N^n}\atop{|\gamma|\le K}} \int_{\R^n} (-2\pi i)^\gamma c_\gamma(x)\,\xi^\gamma\,\widehat u(\xi)\,e^{2\pi ix\cdot\xi}\,d\xi\\&=&
\sum_{{\gamma\in\N^n}\atop{|\gamma|\le K}} c_\gamma(x)\int_{\R^n} {\mathcal{F}}(\partial^\gamma u)(\xi)\,e^{2\pi ix\cdot\xi}\,d\xi\\&=&\sum_{{\gamma\in\N^n}\atop{|\gamma|\le K}} c_\gamma(x) \,\partial^\gamma u(x).
\end{eqnarray*}

The operator~$(1-\Delta)^s$ is also a pseudodifferential operator with symbol~$\big(1+4\pi^2|\xi|^2\big)^{s}$,
due to~\eqref{HSZUONDCCONT-FF}, \label{0pirj09365-4g4eZXCHJKLRFG-544-NO1-LL-eq49-09i23w-PAG-t2e32r4s}
which has order~$2s$, owing to~\eqref{0pirj09365-4g4eZXCHJKLRFG-544-NO1-LL-eq49-09i23w}.

Strictly speaking, in this notation, the operator~$(-\Delta)^s$ is not quite a pseudodifferential operator (according to the rather ``strict'' definition given in~\eqref{1e12q3ua02it} and~\eqref{eqTA6DE4D3DF})
since its
symbol would be~$(2\pi|\xi|)^{2s}$, which is not smooth at the origin. \label{SQUEZ-PP}
Though other more relaxed definitions are possible to comprise~$(-\Delta)^s$ in the theory
of pseudodifferential operators,
in the setting that we have presented here a way out to reduce~$(-\Delta)^s$ to a pseudodifferential operator up to a minor remainder will be presented
in Theorem~\ref{SQUEZ}.\medskip

It is useful to observe that ``translation invariant'' pseudodifferential operators reduce to Fourier multipliers, according to the following observation:

\begin{lemma}\label{TRAINVA}
In the notation of~\eqref{TAUNOTA}, given a pseudodifferential symbol~$a$, suppose that, for every~$y\in\R^n$,
$$ \tau_y T_a=T_a \tau_y.$$
Then, the symbol~$a$ does not depend on~$x$.

In this case, writing~$a=a(\xi)$, we have that~$T_a$ acts as the Fourier multiplier~$a$, namely
$$ T_a u={\mathcal{F}}^{-1}\Big( a\widehat u\Big).$$ 
\end{lemma}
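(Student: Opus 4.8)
\textbf{Proof plan for Lemma~\ref{TRAINVA}.}

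The plan is to exploit the fact that the Fourier Transform intertwines translations with modulations, and conversely. First I would compute, for a fixed~$y\in\R^n$ and a test function~$u$ in the Schwartz space, how~$T_a$ interacts with~$\tau_y$ directly from the definition~\eqref{eqTA6DE4D3DF}. On the one hand, $T_a(\tau_y u)(x)=\int_{\R^n} a(x,\xi)\,\widehat{\tau_y u}(\xi)\,e^{2\pi i x\cdot\xi}\,d\xi$, and since~$\widehat{\tau_y u}(\xi)=e^{2\pi i y\cdot\xi}\widehat u(\xi)$, this equals~$\int_{\R^n} a(x,\xi)\,e^{2\pi i y\cdot\xi}\widehat u(\xi)\,e^{2\pi i x\cdot\xi}\,d\xi$. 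On the other hand, $\tau_y(T_a u)(x)=T_a u(x+y)=\int_{\R^n} a(x+y,\xi)\,\widehat u(\xi)\,e^{2\pi i(x+y)\cdot\xi}\,d\xi=\int_{\R^n} a(x+y,\xi)\,e^{2\pi i y\cdot\xi}\widehat u(\xi)\,e^{2\pi i x\cdot\xi}\,d\xi$. The hypothesis~$\tau_y T_a=T_a\tau_y$ forces these two expressions to coincide for every test function~$u$ and every~$x,y$.

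Next I would conclude from this identity that~$a(x+y,\xi)=a(x,\xi)$ for (almost, hence by continuity, every)~$\xi$, and then for all~$x,y$. The cleanest way is to observe that the difference of the two integrands is~$\big(a(x+y,\xi)-a(x,\xi)\big)e^{2\pi i y\cdot\xi}\widehat u(\xi)e^{2\pi i x\cdot\xi}$, and that the vanishing of~$\int_{\R^n}\big(a(x+y,\xi)-a(x,\xi)\big)\widehat u(\xi)e^{2\pi i x\cdot\xi}\,d\xi$ for all Schwartz~$u$ (absorbing the harmless factor~$e^{2\pi i y\cdot\xi}$ into~$\widehat u$, which remains Schwartz) means, by density of Schwartz functions and the smoothness/polynomial-growth bounds~\eqref{1e12q3ua02it} on~$a$, that~$a(x+y,\xi)=a(x,\xi)$ identically in~$\xi$. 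Fixing~$x=0$ and letting~$y$ range over~$\R^n$ gives~$a(y,\xi)=a(0,\xi)$ for all~$y$, i.e. $a$ is independent of its first variable; write~$a=a(\xi)$.

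Finally, with~$a$ depending on~$\xi$ only, the defining formula~\eqref{eqTA6DE4D3DF} reads~$T_a u(x)=\int_{\R^n} a(\xi)\,\widehat u(\xi)\,e^{2\pi i x\cdot\xi}\,d\xi={\mathcal{F}}^{-1}\big(a\widehat u\big)(x)$, which is precisely the claim that~$T_a$ is the Fourier multiplier with symbol~$a$. I expect the main (though modest) obstacle to be the justification of the step ``the integral vanishes for all Schwartz~$u$, hence the symbol difference vanishes'': one must argue carefully that testing against all Schwartz functions, together with the continuity of~$\xi\mapsto a(x+y,\xi)-a(x,\xi)$ guaranteed by~\eqref{1e12q3ua02it}, is enough to kill the symbol difference pointwise; alternatively one can bypass this by noting that for each fixed~$\xi_0$ one may take~$\widehat u$ an approximate identity concentrating at~$\xi_0$, or simply invoke that a continuous function annihilating all Schwartz functions under the pairing~$u\mapsto\int (\cdot)\,\widehat u(\xi)e^{2\pi i x\cdot\xi}d\xi$ must be zero. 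Everything else is a short direct computation.
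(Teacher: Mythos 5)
Your proposal is correct and follows essentially the same route as the paper: compute $\tau_y(T_au)$ and $T_a(\tau_yu)$ from the definition using $\widehat{\tau_y u}(\xi)=e^{2\pi i y\cdot\xi}\widehat u(\xi)$, deduce that $\int_{\R^n}\big(a(x+y,\xi)-a(x,\xi)\big)\widehat u(\xi)e^{2\pi i(x+y)\cdot\xi}\,d\xi=0$ for all Schwartz $u$, hence $a(x+y,\xi)=a(x,\xi)$, and set $x=0$. Your extra care in justifying the passage from the vanishing of the integral for all Schwartz $u$ to the pointwise vanishing of the symbol difference only spells out a step the paper leaves implicit, and the multiplier identity is, as you say, immediate from the definition.
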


\begin{proof} We have that, for every~$u$ belonging to the Schwartz space of smooth and rapidly decreasing functions
and every~$y\in\R^n$,
\begin{eqnarray*}
0&=&\tau_y (T_au)(x)-T_a (\tau_yu)(x)\\
&=&\int_{\R^n} a(x+y,\xi)\,\widehat u(\xi)\,e^{2\pi i(x+y)\cdot\xi}\,d\xi-
\int_{\R^n} a(x,\xi)\,{\mathcal{F}}(\tau_y u)(\xi)\,e^{2\pi ix\cdot\xi}\,d\xi\\&=&
\int_{\R^n} \Big(a(x+y,\xi)-a(x,\xi)\Big)\,\widehat u(\xi)\,e^{2\pi i(x+y)\cdot\xi}\,d\xi,
\end{eqnarray*}
whence~$a(x+y,\xi)-a(x,\xi)=0$ and the desired result follows by choosing~$x:=0$.
\end{proof}

\section{Approximation}

For handy calculations in~\eqref{eqTA6DE4D3DF}, it would be desirable to write explicitly~$\widehat u$ in terms of~$u$
and swap arbitrarily the order of integration. This is possible when~$a$ is compactly supported (say, with~$u$
belonging to the Schwartz space of smooth and rapidly decreasing functions), but not in general. Therefore,
a useful auxiliary tool consists in the possibility of approximating arbitrary pseudodifferential symbols with compactly supported ones. This is indeed possible, according to the following result:

\begin{lemma} \label{LEM-012o3lrtgpseudodifferentimbol}
Let~$m\in\R$ and~$a$ be a pseudodifferential symbol of order~$m$.

For every~$\e\in(0,1]$ there exists~$a_\e\in C^\infty_c(\R^n\times\R^n)$ which is a pseudodifferential symbol of order~$m$ and for which there is a bound in~\eqref{1e12q3ua02it} that is uniform in~$\e$, i.e.
\begin{equation}\label{IKJYDMAMS1efcYS2YIS-1} \sup_{{{x\in\R^n}\atop{\xi\in\R^n}}\atop{\e\in(0,1]}}(1+|\xi|)^{|\beta|-m}|D^\alpha_x D^\beta_\xi a_\e(x,\xi)|<+\infty.\end{equation}

Additionally,~$a_\e$ converges to~$a$ as~$\e\searrow0$, together with all its derivatives,
and uniformly in~$\e$, i.e. for every~$\alpha$,~$\beta\in\N^n$,
\begin{equation}\label{IKJYDMAMS1efcYS2YIS-2} \lim_{\delta\searrow0}\sup_{\e\in(0,\delta]}\Big|D^\alpha_x D^\beta_\xi \Big(a_\e(x,\xi)-a(x,\xi)\Big)\Big|=0.\end{equation}

Finally, for all~$u$ in the Schwartz space of smooth and rapidly decreasing functions, for every~$x\in\R^n$,
\begin{equation}\label{IKJYDMAMS1efcYS2YIS-3} \lim_{\e\searrow0} T_{a_\e}u(x)=T_au(x).\end{equation}
\end{lemma}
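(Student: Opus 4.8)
The plan is to construct $a_\e$ by a standard cutoff-in-$(x,\xi)$ procedure and verify the three claimed properties directly. First I would fix a function $\chi\in C^\infty_c(\R^n\times\R^n,[0,1])$ with $\chi=1$ on $B_1\times B_1$, and set $a_\e(x,\xi):=\chi(\e x,\e \xi)\,a(x,\xi)$. Since $a\in C^\infty(\R^n\times\R^n)$ and $\chi(\e\,\cdot\,,\e\,\cdot\,)$ is smooth with compact support, $a_\e\in C^\infty_c(\R^n\times\R^n)$. The support of $a_\e$ is contained in $B_{R/\e}\times B_{R/\e}$ if $\chi$ is supported in $B_R\times B_R$, so each $a_\e$ is a bona fide compactly supported symbol, hence of order $-\infty$, and in particular of order $m$.

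For \eqref{IKJYDMAMS1efcYS2YIS-1} I would apply the Leibniz rule to $D^\alpha_x D^\beta_\xi a_\e$: each term is a product of $\e^{|\alpha'|+|\beta'|}(D^{\alpha'}_x D^{\beta'}_\xi\chi)(\e x,\e\xi)$ with $D^{\alpha-\alpha'}_x D^{\beta-\beta'}_\xi a(x,\xi)$. The factor coming from $a$ is bounded by $C(1+|\xi|)^{m-|\beta-\beta'|}\le C(1+|\xi|)^{m-|\beta|}(1+|\xi|)^{|\beta'|}$ using \eqref{1e12q3ua02it}, and on the support of the $\chi$-derivative one has $\e|\xi|\lesssim 1$, so $\e^{|\beta'|}(1+|\xi|)^{|\beta'|}\lesssim 1$ uniformly in $\e$; the factors $\e^{|\alpha'|}$ are bounded by $1$ since $\e\le1$. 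This gives a uniform bound of the form $C_{\alpha,\beta}(1+|\xi|)^{m-|\beta|}$, which is \eqref{IKJYDMAMS1efcYS2YIS-1}. For \eqref{IKJYDMAMS1efcYS2YIS-2}, on any fixed compact set $K$ of $(x,\xi)$-space, once $\e$ is small enough that $\e K\subseteq B_1\times B_1$ we have $a_\e=a$ there after differentiating (more carefully: the $\chi$-derivative terms with $(\alpha',\beta')\ne(0,0)$ vanish on $K$ for small $\e$, and the remaining term is $\chi(\e x,\e\xi)D^\alpha_xD^\beta_\xi a=D^\alpha_xD^\beta_\xi a$ on $K$). But \eqref{IKJYDMAMS1efcYS2YIS-2} as stated asks for a bound uniform over all of $\R^n\times\R^n$, which cannot hold for $m>0$ since $a$ itself is unbounded; I expect the intended reading is local uniform convergence (uniform in $\e$ on compacta), and I would prove that, noting the convergence is even eventually exact on each compact set. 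Actually, re-examining, the supremum in \eqref{IKJYDMAMS1efcYS2YIS-2} is over $\e\in(0,\delta]$ only, not over $(x,\xi)$, so for each fixed $(x,\xi)$ the quantity vanishes for $\delta$ small by the eventual-exactness observation; this is the main subtlety to get right.

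Finally, for \eqref{IKJYDMAMS1efcYS2YIS-3} I would fix $u$ in the Schwartz class and $x\in\R^n$ and write, using \eqref{eqTA6DE4D3DF},
\[
T_{a_\e}u(x)-T_au(x)=\int_{\R^n}\bigl(a_\e(x,\xi)-a(x,\xi)\bigr)\,\widehat u(\xi)\,e^{2\pi i x\cdot\xi}\,d\xi.
\]
The integrand is dominated by $2\,\|a(x,\cdot)(1+|\cdot|)^{-m}\|_{L^\infty}\,(1+|\xi|)^{m}\,|\widehat u(\xi)|$, which is integrable in $\xi$ since $\widehat u$ is Schwartz, and pointwise in $\xi$ it tends to $0$ as $\e\searrow0$ because $a_\e(x,\xi)\to a(x,\xi)$ (indeed $a_\e(x,\xi)=a(x,\xi)$ once $\e\le \min\{1/|x|,1/|\xi|\}$, with the obvious convention at $0$). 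The Dominated Convergence Theorem then yields \eqref{IKJYDMAMS1efcYS2YIS-3}. The genuinely delicate point throughout is tracking the $\e$-powers against the growth factors $(1+|\xi|)^{|\beta'|}$ in the Leibniz expansion to obtain the uniform symbol bound; everything else is bookkeeping once the cutoff $\chi(\e x,\e\xi)$ is chosen.
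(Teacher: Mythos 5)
Your proposal is correct and follows essentially the same route as the paper: the same dilated cutoff $a_\e(x,\xi)=\chi(\e x,\e\xi)a(x,\xi)$ (the paper merely uses a tensor-product cutoff $\tau_0(\e x_1)\cdots\tau_0(\e\xi_n)$), the same Leibniz-rule bookkeeping where the $\e$-powers are absorbed because derivatives of the cutoff are supported where $\e|\xi|\sim 1$, and the same dominated-convergence argument for \eqref{IKJYDMAMS1efcYS2YIS-3}. Your reading of \eqref{IKJYDMAMS1efcYS2YIS-2} as pointwise in $(x,\xi)$ with the supremum only over $\e\in(0,\delta]$ is exactly the paper's intended meaning, and your ``eventual exactness'' observation (all derivatives of $a_\e$ and $a$ coincide at a fixed $(x,\xi)$ once $\delta$ is small) is a slightly cleaner way to conclude than the paper's explicit estimate $\chi_{\R^n\setminus B_{1/\delta}}\,|D^{(\alpha,\beta)}a|+C\delta(1+|\xi|)^m$.
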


\begin{proof} Let~$\tau_0\in C^\infty_c((-2,2),\,[0,1])$ with~$\tau_0=1$ in~$[-1,1]$. Let also
$$\tau(x,\xi):=\tau_0(x_1)\dots\tau_0( x_n)\,\tau_0(\xi_1)\dots\tau_0(\xi_n).$$
Given~$\e\in(0,1]$, let
$$ a_\e(x,\xi):= \tau(\e x,\e\xi)\,a(x,\xi).$$
By the Leibniz Product Rule,
\begin{eqnarray*}
D^{(\alpha,\beta)}_{x,\xi} a_\e(x,\xi)
&=&\sum_{{(\eta,\sigma)\in\N^n\times\N^n}\atop{(\eta,\sigma) \leq( \alpha,\beta)} }
{{(\alpha,\beta)}\choose{( \eta,\sigma)} } \,D^{(\eta,\sigma)}_{x,\xi}\big(\tau(\e x,\e\xi)\big)\;D^{(\alpha-\eta,\beta-\sigma)}_{x,\xi}a(x,\xi)\\&=&\sum_{{(\eta,\sigma)\in\N^n\times\N^n}\atop{(\eta,\sigma) \leq( \alpha,\beta)} }
{{(\alpha,\beta)}\choose{( \eta,\sigma)} } \,\e^{|\eta|+|\sigma|}D^{(\eta,\sigma)}_{x,\xi}\tau(\e x,\e\xi)\;D^{(\alpha-\eta,\beta-\sigma)}_{x,\xi}a(x,\xi).
\end{eqnarray*}

{F}rom this and~\eqref{1e12q3ua02it} for the symbol~$a$ it follows that
\begin{eqnarray*}
|D^\alpha_x D^\beta_\xi a_\e(x,\xi)|&\le&
C_{\alpha,\beta} \sum_{{\sigma\in\N^n}\atop{\sigma \leq\beta} }
\e^{|\sigma|}\,\Big|D^\sigma_\xi \big(\tau_0(\e \xi_1)\dots\tau_0(\e\xi_n)\big)\Big|\,(1+|\xi|)^{m-|\beta|+|\sigma|}\\&\le&
C_{\alpha,\beta}\left((1+|\xi|)^{m-|\beta|}+ \sum_{{\sigma\in\N^n\setminus\{0\}}\atop{\sigma \leq\beta} }
\e^{|\sigma|}\,\chi_{B_{2\sqrt{n}}\setminus B_1}(\e\xi)\,(1+|\xi|)^{m-|\beta|+|\sigma|}\right)\\&\le&
C_{\alpha,\beta}(1+|\xi|)^{m-|\beta|}\left(1+ \sum_{{\sigma\in\N^n\setminus\{0\}}\atop{\sigma \leq\beta} }
\e^{|\sigma|}\,\left(1+\frac1\e\right)^{|\sigma|}\right)
\\&\le&C_{\alpha,\beta}
(1+|\xi|)^{m-|\beta|},
\end{eqnarray*}
which is~\eqref{IKJYDMAMS1efcYS2YIS-1}.

Moreover, if~$\e\in(0,\delta]$,
\begin{eqnarray*}
&&\Big|D^\alpha_x D^\beta_\xi \Big(a_\e(x,\xi)-a(x,\xi)\Big)\Big|
\\&\le&\big|\tau(\e x,\e\xi)-1\big|\;\big| D^{(\alpha,\beta)}_{x,\xi}a(x,\xi)\big|\\&&\qquad+C_{\alpha,\beta}\sum_{{(\eta,\sigma)\in\N^{2n}\setminus\{(0,0)\}}\atop{(\eta,\sigma) \leq( \alpha,\beta)} }
\e^{|\eta|+|\sigma|}\big|D^{(\eta,\sigma)}_{x,\xi}\tau(\e x,\e\xi)\big|\;\big|D^{(\alpha-\eta,\beta-\sigma)}_{x,\xi}a(x,\xi)\big|\\&\le&\chi_{\R^n\setminus B_{1/\delta}}(x)\;\big| D^{(\alpha,\beta)}_{x,\xi}a(x,\xi)\big|+C_{\alpha,\beta}\sum_{{(\eta,\sigma)\in\N^{2n}\setminus\{(0,0)\}}\atop{(\eta,\sigma) \leq( \alpha,\beta)} }
\e^{|\eta|+|\sigma|}(1+|\xi|)^{m}\\&\le&\chi_{\R^n\setminus B_{1/\delta}}(x)\;\big| D^{(\alpha,\beta)}_{x,\xi}a(x,\xi)\big|+C_{\alpha,\beta}\,\delta\,(1+|\xi|)^{m}\,
\end{eqnarray*}
from which we deduce~\eqref{IKJYDMAMS1efcYS2YIS-2}.

Finally, if~$u$ lies in the Schwartz space of smooth and rapidly decreasing functions,
\begin{eqnarray*}
|T_{a_\e}u(x)-T_au(x)|&\le&\int_{\R^n} |a_\e(x,\xi)-a(x,\xi)|\,|\widehat u(\xi)|\,d\xi.
\end{eqnarray*}
We also notice that
$$ |a_\e(x,\xi)-a(x,\xi)|\,|\widehat u(\xi)|\le\big(|a_\e(x,\xi)|+|a(x,\xi)|\big)\,|\widehat u(\xi)|
\le C(1+|\xi|)^m\,|\widehat u(\xi)|\in L^1(\R^n),$$
owing to~\eqref{1e12q3ua02it} and~\eqref{IKJYDMAMS1efcYS2YIS-1}, used with~$\alpha:=0$ and~$\beta:=0$.

Thus, using~\eqref{IKJYDMAMS1efcYS2YIS-2} with~$\alpha:=0$ and~$\beta:=0$ and the Dominated Convergence Theorem, one obtains~\eqref{IKJYDMAMS1efcYS2YIS-3}, as desired.
\end{proof}

\section{Composition rule}

A pivotal step is understanding how pseudodifferential operators combine together.

\begin{theorem}\label{COMPOTHPSDO}
Let~$a$ and~$b$ be pseudodifferential symbols.
Assume that~$a$ is of order~$m_1\in\R$ and~$b$ is of order~$m_2\in\R$.

Then, there exists a pseudodifferential symbol~$c$ such that
$$ T_a\circ T_b=T_c.$$

Moreover,~$c$ is of order~$m_1+m_2$ and we have that, for every~$K\in\N$,
\begin{equation} \label{COMPOTHPSDO1}
c(x,\xi)-\sum_{{\alpha\in\N^n}\atop{|\alpha|\le K}}\frac{1}{(2\pi i)^{|\alpha|}\,\alpha!}\,\partial^\alpha_\xi a(x,\xi)\,\partial^\alpha_x b(x,\xi)\end{equation}
is a pseudodifferential symbol of order~$m_1+m_2-K-1$.
\end{theorem}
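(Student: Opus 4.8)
The plan is to obtain the composition formula by starting from the definitions and performing a careful Fourier-analytic computation, reducing to the case of compactly supported symbols via Lemma~\ref{LEM-012o3lrtgpseudodifferentimbol}, where all manipulations with integrals are rigorously justified. First I would let~$u$ be in the Schwartz space and write out~$T_a(T_bu)(x)$ by unfolding~\eqref{eqTA6DE4D3DF} twice: expressing~$\widehat{T_bu}(\xi)$ as~$\int_{\R^n}T_bu(y)e^{-2\pi iy\cdot\xi}\,dy$ and then~$T_bu(y)=\int_{\R^n}b(y,\eta)\widehat u(\eta)e^{2\pi iy\cdot\eta}\,d\eta$. This produces an iterated integral
\[
T_a(T_bu)(x)=\iiint a(x,\xi)\,b(y,\eta)\,\widehat u(\eta)\,e^{2\pi i(x-y)\cdot\xi}\,e^{2\pi iy\cdot\eta}\,d\eta\,dy\,d\xi,
\]
and the candidate symbol is the oscillatory integral
\[
c(x,\xi):=\iint a(x,\xi+\zeta)\,b(x+w,\xi)\,e^{-2\pi iw\cdot\zeta}\,dw\,d\zeta
\]
obtained after the substitution~$y=x+w$, $\eta=\xi+\zeta$ and a formal change in the order of integration; the approximation step from Lemma~\ref{LEM-012o3lrtgpseudodifferentimbol} (applied to both~$a$ and~$b$, with the uniform bounds~\eqref{IKJYDMAMS1efcYS2YIS-1} and the convergence~\eqref{IKJYDMAMS1efcYS2YIS-2}--\eqref{IKJYDMAMS1efcYS2YIS-3}) is what makes these exchanges legitimate and lets one pass to the limit at the end.

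Next I would establish the asymptotic expansion~\eqref{COMPOTHPSDO1}. The idea is to Taylor-expand~$b(x+w,\xi)$ in the~$w$ variable around~$w=0$ to order~$K$:
\[
b(x+w,\xi)=\sum_{|\alpha|\le K}\frac{\partial_x^\alpha b(x,\xi)}{\alpha!}\,w^\alpha+R_K(x,w,\xi),
\]
with~$R_K$ the integral form of the remainder. For each monomial term, one uses that~$w^\alpha e^{-2\pi iw\cdot\zeta}=\big(\tfrac{i}{2\pi}\big)^{|\alpha|}\partial_\zeta^\alpha e^{-2\pi iw\cdot\zeta}$, integrates by parts in~$\zeta$ (this is where the inverse Fourier transform identity~$\int e^{-2\pi iw\cdot\zeta}\,dw=\delta_0(\zeta)$ in the distributional sense collapses the~$w$-integral), and obtains exactly~$\frac{1}{(2\pi i)^{|\alpha|}\alpha!}\partial_\xi^\alpha a(x,\xi)\,\partial_x^\alpha b(x,\xi)$. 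The remainder term~$R_K$ contributes a symbol whose order drops by~$K+1$ because each extra derivative of~$b$ in the Taylor remainder is paired, via the same integration-by-parts trick, with a gain of one power of~$(1+|\xi|)^{-1}$ from the decay of~$\partial_\xi a$ (using the symbol estimate~\eqref{1e12q3ua02it} for~$a$). Summing the explicit terms via Lemma~\ref{1e12q3ua02it012oierjohfgbFJJasxLALEM}, the principal part has order~$m_1+m_2$ and the displayed difference in~\eqref{COMPOTHPSDO1} has order~$m_1+m_2-K-1$.

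The step I expect to be the main obstacle is verifying that the oscillatory integral defining~$c$ genuinely yields a symbol of class~$S^{m_1+m_2}$, i.e.\ establishing the estimates~\eqref{1e12q3ua02it} for~$c$ and for the remainder terms~$R_K$, uniformly. The integrals~$\iint a(x,\xi+\zeta)b(x+w,\xi)e^{-2\pi iw\cdot\zeta}\,dw\,d\zeta$ are only conditionally convergent (the integrand does not decay in~$w$ or~$\zeta$ without using oscillation), so one must first regularize—insert a cutoff~$\chi(\epsilon w,\epsilon\zeta)$—and then repeatedly integrate by parts: derivatives landing on~$e^{-2\pi iw\cdot\zeta}$ can be traded against factors of~$w$ or~$\zeta$ via~$(1+|\zeta|^2)e^{-2\pi iw\cdot\zeta}=(1-\Delta_w/4\pi^2)e^{-2\pi iw\cdot\zeta}$ and symmetrically, which after enough iterations produces an absolutely integrable integrand at the cost of extra~$\xi$-derivatives on~$a$ (each of which improves decay by the symbol estimate) and extra~$w$-derivatives on~$b$ (each of which is harmless since~$b\in S^{m_2}$ gives bounds uniform in the first argument). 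Carrying out this book-keeping—choosing how many integrations by parts are needed as a function of~$n$,~$m_1$,~$m_2$,~$K$, and the multi-indices~$\alpha,\beta$ in~\eqref{1e12q3ua02it}—is the technical heart of the proof; everything else is Fourier inversion and the Leibniz rule. I would then conclude by invoking Lemma~\ref{LEM-012o3lrtgpseudodifferentimbol} once more to remove the regularizing cutoffs and confirm~$T_a\circ T_b=T_c$ as operators on the Schwartz space.
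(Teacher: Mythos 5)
Your proposal is correct and follows the same overall strategy as the paper: reduce to compactly supported symbols via Lemma~\ref{LEM-012o3lrtgpseudodifferentimbol}, unfold the double composition to get the exact symbol~$c$, Taylor-expand to order~$K$, read off the terms of~\eqref{COMPOTHPSDO1}, and show the remainder has order~$m_1+m_2-K-1$. The difference is only in execution: you keep the oscillatory-integral form~$c(x,\xi)=\iint a(x,\xi+\zeta)\,b(x+w,\xi)\,e^{-2\pi iw\cdot\zeta}\,dw\,d\zeta$ and Taylor-expand~$b$ in the spatial variable, converting~$w^\alpha$ into~$\zeta$-derivatives of~$a$ and collapsing with the delta, whereas the paper first takes the partial Fourier transform in the first variable of~$b$, writes~$c(x,\eta)=\int a(x,\sigma+\eta)\widehat b(\sigma,\eta)e^{2\pi ix\cdot\sigma}\,d\sigma$, and Taylor-expands~$a$ in its frequency slot, recovering~$\partial^\gamma_x b$ from~$(2\pi i\sigma)^\gamma\widehat b(\sigma,\eta)$; these are the same computation up to a Fourier transform in~$w$, and both produce the coefficients~$\frac{1}{(2\pi i)^{|\alpha|}\alpha!}$. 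Where the paper's route pays off is the remainder: after the compact-support reduction, $\widehat b(\sigma,\eta)$ decays rapidly in~$\sigma$, and the order drop~$m_1+m_2-K-1$ is obtained by splitting into~$|\sigma|\le|\eta|/2$ (where~$(1+|t\sigma+\eta|)^{m_1-K-1}\sim(1+|\eta|)^{m_1-K-1}$) and~$|\sigma|\ge|\eta|/2$ (where the rapid decay in~$\sigma$ absorbs everything). In your version the same issue appears in disguise: the~$(K+1)$-fold~$\xi$-derivatives of~$a$ are evaluated at the shifted frequency~$\xi+\zeta$, so the decay estimate~\eqref{1e12q3ua02it} for~$a$ alone does not give~$(1+|\xi|)^{m_1+m_2-K-1-|\beta|}$ uniformly; you must combine your integrations by parts in~$w$ (which give decay in~$\zeta$) with a small/large~$\zeta$ splitting exactly analogous to the paper's~$\sigma$-splitting. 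You correctly flag this book-keeping as the technical heart, so this is a matter of filling in the sketch rather than a flaw in the approach.
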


We observe that for translation invariant operators (i.e., Fourier multipliers, according Lemma~\ref{TRAINVA}),
the result in Theorem~\ref{COMPOTHPSDO} is straightforward: indeed, if~$a=a(\xi)$ and~$b=b(\xi)$, then
$$ T_a (T_b u)={\mathcal{F}}^{-1}\Big( a{\mathcal{F}} (T_b u)\Big)={\mathcal{F}}^{-1}\Big( ab \widehat u\Big)$$
and therefore~$c=ab$ in this case. This is in agreement with~\eqref{COMPOTHPSDO1} because, in this situation,
$$ \sum_{{\alpha\in\N^n}\atop{|\alpha|\le K}}\frac{1}{(2\pi i)^{|\alpha|}\,\alpha!}\,\partial^\alpha_\xi a\,\partial^\alpha_x b=ab,$$
since the derivatives in~$x$ vanish.

In this sense, Theorem~\ref{COMPOTHPSDO} provides a generalization of this multiplicative rule using an expansion,
plus a remainder having an arbitrarily small order.
\medskip

To prove Theorem~\ref{COMPOTHPSDO}, we make some preliminary observations.

\begin{lemma}\label{1e12q3ua02it012oierjohfgbFJJasxLA-lemma}
Let~$k$,~$K\in\N$, with~$k\le K$.
Let~$a$ and~$b$ be pseudodifferential symbols.
Assume that~$a$ is of order~$m_1\in\R$ and~$b$ is of order~$m_2\in\R$.

Then, for every set of coefficients~$c_\gamma\in\C$,
\begin{equation}\label{1e12q3ua02it012oierjohfgbFJJasxLA}
\sum_{{\gamma\in\N^n}\atop{k\le |\gamma|\le K}}c_\gamma\,\partial^\gamma_\xi a(x,\xi)\,\partial^\gamma_x b(x,\xi)\end{equation}
is a pseudodifferential symbol of order~$m_1+m_2-k$.
\end{lemma}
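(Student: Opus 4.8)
The statement is a purely symbolic estimate: the finite sum in~\eqref{1e12q3ua02it012oierjohfgbFJJasxLA} is claimed to be a pseudodifferential symbol of order~$m_1+m_2-k$. The plan is to reduce it to the single-term case and then apply the Leibniz rule together with the defining bounds~\eqref{1e12q3ua02it} for~$a$ and~$b$. First I would recall Lemma~\ref{1e12q3ua02it012oierjohfgbFJJasxLALEM}: a finite linear combination of pseudodifferential symbols of orders~$m_0,\dots,m_j$ is a pseudodifferential symbol of order~$\max\{m_0,\dots,m_j\}$. Hence it suffices to show that, for each fixed multi-index~$\gamma\in\N^n$ with~$k\le|\gamma|\le K$, the product
$$ p_\gamma(x,\xi):=\partial^\gamma_\xi a(x,\xi)\,\partial^\gamma_x b(x,\xi) $$
is a pseudodifferential symbol of order~$m_1+m_2-|\gamma|$; indeed, once this is known, $\sum_\gamma c_\gamma p_\gamma$ has order~$\max_{k\le|\gamma|\le K}(m_1+m_2-|\gamma|)=m_1+m_2-k$, since~$|\gamma|\ge k$, which is exactly the assertion.

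\textbf{Key steps.} For the single-term claim, I would argue as follows. Smoothness of~$p_\gamma$ is immediate since~$a,b\in C^\infty(\R^n\times\R^n)$. To verify the decay bound in~\eqref{1e12q3ua02it} with~$m:=m_1+m_2-|\gamma|$, fix~$\alpha,\beta\in\N^n$ and apply the Leibniz Product Rule:
\begin{equation*}
D^\alpha_x D^\beta_\xi p_\gamma(x,\xi)=\sum_{{(\eta,\sigma)\in\N^n\times\N^n}\atop{(\eta,\sigma)\le(\alpha,\beta)}}\binom{(\alpha,\beta)}{(\eta,\sigma)}\,D^\eta_x D^\sigma_\xi\big(\partial^\gamma_\xi a(x,\xi)\big)\,D^{\alpha-\eta}_x D^{\beta-\sigma}_\xi\big(\partial^\gamma_x b(x,\xi)\big).
\end{equation*}
Now~$D^\eta_x D^\sigma_\xi\partial^\gamma_\xi a=D^\eta_x D^{\sigma+\gamma}_\xi a$ and~$D^{\alpha-\eta}_x D^{\beta-\sigma}_\xi\partial^\gamma_x b=D^{\alpha-\eta+\gamma}_x D^{\beta-\sigma}_\xi b$. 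By~\eqref{1e12q3ua02it} applied to~$a$ (with order~$m_1$, $x$-multi-index~$\eta$, $\xi$-multi-index~$\sigma+\gamma$) there is a constant~$C_1$ with
$$ |D^\eta_x D^{\sigma+\gamma}_\xi a(x,\xi)|\le C_1\,(1+|\xi|)^{m_1-|\sigma|-|\gamma|}, $$
and by~\eqref{1e12q3ua02it} applied to~$b$ (with order~$m_2$, $x$-multi-index~$\alpha-\eta+\gamma$, $\xi$-multi-index~$\beta-\sigma$) there is a constant~$C_2$ with
$$ |D^{\alpha-\eta+\gamma}_x D^{\beta-\sigma}_\xi b(x,\xi)|\le C_2\,(1+|\xi|)^{m_2-|\beta|+|\sigma|}. $$
Multiplying these two bounds, the powers~$(1+|\xi|)^{-|\sigma|}$ and~$(1+|\xi|)^{+|\sigma|}$ cancel exactly, leaving the uniform estimate~$(1+|\xi|)^{m_1+m_2-|\gamma|-|\beta|}$ for each summand; since there are finitely many pairs~$(\eta,\sigma)$, summing gives
$$ \sup_{x,\xi}(1+|\xi|)^{|\beta|-(m_1+m_2-|\gamma|)}\,|D^\alpha_x D^\beta_\xi p_\gamma(x,\xi)|<+\infty, $$
which is precisely~\eqref{1e12q3ua02it} for~$p_\gamma$ with order~$m_1+m_2-|\gamma|$.

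\textbf{Main obstacle.} There is essentially no serious obstacle here — the cancellation of the~$|\sigma|$ powers is the only thing to be careful about, and it is the standard mechanism by which the Leibniz rule respects the symbol calculus. The one point requiring a little attention is bookkeeping the multi-index arithmetic in the Leibniz expansion (that~$D^\sigma_\xi\partial^\gamma_\xi=D^{\sigma+\gamma}_\xi$ and~$D^{\alpha-\eta}_x\partial^\gamma_x=D^{\alpha-\eta+\gamma}_x$, so that the $\xi$-order gain in~$a$ is~$|\sigma|+|\gamma|$ while the $\xi$-order loss in~$b$ is only~$|\sigma|$); the net gain of~$|\gamma|$ is what produces the advertised order. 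I would also remark, as a closing observation consistent with footnote~\ref{COM78REDU-912ieyrhf-001-IMANSWJh}, that the bound obtained is already the ``best'' one in the sense that no further cancellation is available from this term alone, and that the hypothesis~$|\gamma|\ge k$ is used only at the very end when invoking Lemma~\ref{1e12q3ua02it012oierjohfgbFJJasxLALEM} to take the maximum of the orders.
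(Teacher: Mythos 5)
Your proof is correct and follows essentially the same route as the paper: a Leibniz expansion of~$D^\alpha_x D^\beta_\xi\big(\partial^\gamma_\xi a\,\partial^\gamma_x b\big)$, the symbol bounds~\eqref{1e12q3ua02it} applied to the shifted multi-indices with the exact cancellation of the intermediate~$|\sigma|$ powers yielding order~$m_1+m_2-|\gamma|$ for each term, and then Lemma~\ref{1e12q3ua02it012oierjohfgbFJJasxLALEM} to take the maximum~$m_1+m_2-k$ over~$k\le|\gamma|\le K$. No gaps.
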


\begin{proof} By~\eqref{1e12q3ua02it},
$$ C_{\alpha,\beta}:=
\sup_{{x\in\R^n}\atop{\xi\in\R^n}}(1+|\xi|)^{|\beta|-m_1}|D^\alpha_x D^\beta_\xi a(x,\xi)|
+\sup_{{x\in\R^n}\atop{\xi\in\R^n}}(1+|\xi|)^{|\beta|-m_2}|D^\alpha_x D^\beta_\xi b(x,\xi)|<+\infty$$
and therefore, by the Leibniz Product Rule,
\begin{eqnarray*}&&
\left|D^\alpha_x D^\beta_\xi\left(\partial^\gamma_\xi a(x,\xi)\,\partial^\gamma_x b(x,\xi)\right)\right|\\&
\le& \sum_{{(\sigma,\theta)\in\N^n\times\N^n}\atop{(\sigma,\theta) \leq (\alpha,\beta)}}{{(\alpha,\beta)} \choose{( \sigma,\theta)} }
\big|D^{(\sigma,\theta+\gamma)}_{x,\xi} a(x,\xi) \big|\,\big|D^{(\alpha-\sigma+\gamma,\beta-\theta)}_{x,\xi} b(x,\xi)\big|\\&
\le& C\sum_{{(\sigma,\theta)\in\N^n\times\N^n}\atop{(\sigma,\theta) \leq (\alpha,\beta)}}
(1+|\xi|)^{m_1-|\theta+\gamma|} (1+|\xi|)^{m_2-|\beta-\theta|}
\\&\le&C(1+|\xi|)^{m_1+m_2-|\beta|-|\gamma|}.
\end{eqnarray*}
This gives that~$\partial^\gamma_\xi a(x,\xi)\,\partial^\gamma_x b(x,\xi)$
is a pseudodifferential symbol of order~$m_1+m_2-|\gamma|$.
Thus, owing to Lemma~\ref{1e12q3ua02it012oierjohfgbFJJasxLALEM}, the function in~\eqref{1e12q3ua02it012oierjohfgbFJJasxLA}
is a pseudodifferential symbol of order
\begin{equation*}
\max_{{\gamma\in\N^n}\atop{k\le |\gamma|\le K}}m_1+m_2-|\gamma|=m_1+m_2-k.
\qedhere\end{equation*}\end{proof}

\begin{corollary}\label{0ojwf2iyfuYHBhbv6c2nb-76cy4FSGBy3u654t}
Let~$k$,~$K\in\N$, with~$k< K$.
Let~$a$ and~$b$ be pseudodifferential symbols.
Assume that~$a$ is of order~$m_1\in\R$ and~$b$ is of order~$m_2\in\R$.

If
\begin{equation}\label{COMPOT0pw3oefjrkvfNNieNTE}
c(x,\xi)-\sum_{{\gamma\in\N^n}\atop{k\le |\gamma|\le K}}\frac{1}{(2\pi i)^{|\gamma|}\,\gamma!}\,\partial^\gamma_\xi a(x,\xi)\,\partial^\gamma_x b(x,\xi)\end{equation}
is a pseudodifferential symbol of order~$m_1+m_2-K-1$, then~$c$ is a pseudodifferential symbol of order~$m_1+m_2-k$.
\end{corollary}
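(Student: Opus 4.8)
\textbf{Proof strategy for Corollary~\ref{0ojwf2iyfuYHBhbv6c2nb-76cy4FSGBy3u654t}.}
The plan is to write~$c$ as the sum of two pieces whose orders we can control separately, and then invoke Lemma~\ref{1e12q3ua02it012oierjohfgbFJJasxLALEM}. First I would set
\[
r(x,\xi):=c(x,\xi)-\sum_{{\gamma\in\N^n}\atop{k\le |\gamma|\le K}}\frac{1}{(2\pi i)^{|\gamma|}\,\gamma!}\,\partial^\gamma_\xi a(x,\xi)\,\partial^\gamma_x b(x,\xi),
\]
which by hypothesis is a pseudodifferential symbol of order~$m_1+m_2-K-1$. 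Thus
\[
c(x,\xi)=r(x,\xi)+\sum_{{\gamma\in\N^n}\atop{k\le |\gamma|\le K}}\frac{1}{(2\pi i)^{|\gamma|}\,\gamma!}\,\partial^\gamma_\xi a(x,\xi)\,\partial^\gamma_x b(x,\xi),
\]
so~$c$ is expressed as a finite linear combination of the two summands on the right-hand side.

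Next I would identify the order of each of these two terms. By Lemma~\ref{1e12q3ua02it012oierjohfgbFJJasxLA-lemma} (applied with the coefficients~$c_\gamma:=\frac{1}{(2\pi i)^{|\gamma|}\,\gamma!}$, which are legitimate constants in~$\C$), the sum~$\sum_{k\le |\gamma|\le K}\frac{1}{(2\pi i)^{|\gamma|}\,\gamma!}\,\partial^\gamma_\xi a\,\partial^\gamma_x b$ is a pseudodifferential symbol of order~$m_1+m_2-k$. As for the remainder~$r$, it is of order~$m_1+m_2-K-1$ by assumption; since~$k<K$ we have~$m_1+m_2-K-1<m_1+m_2-k$, so~$r$ is in particular also of order~$m_1+m_2-k$ (recall the monotonicity in the order, as noted in footnote~\ref{COM78REDU-912ieyrhf-001-IMANSWJh} on page~\pageref{COM78REDU-912ieyrhf-001-IMANSWJh}, since~\eqref{1e12q3ua02it} for a given~$m$ implies the same estimate for every larger~$m$).

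Finally, with both summands recognized as pseudodifferential symbols of order~$m_1+m_2-k$, I would apply Lemma~\ref{1e12q3ua02it012oierjohfgbFJJasxLALEM} to their linear combination (with~$j=1$, coefficients~$c_0=c_1=1$, and orders~$m_0=m_1+m_2-K-1$, $m_1=m_1+m_2-k$, whose maximum is~$m_1+m_2-k$) to conclude that~$c$ is a pseudodifferential symbol of order~$\max\{m_1+m_2-K-1,\,m_1+m_2-k\}=m_1+m_2-k$, which is the desired claim. I do not anticipate a genuine obstacle here: the statement is essentially a bookkeeping rearrangement, and the only point requiring a little care is the correct invocation of the order-monotonicity so that the low-order remainder does not spoil the estimate — this is handled by the observation that a symbol of order~$m'$ is also a symbol of every order~$\geq m'$.
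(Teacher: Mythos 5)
Your proposal is correct and follows essentially the same route as the paper: decompose~$c$ as the hypothesized remainder (order~$m_1+m_2-K-1$) plus the sum, identify the sum's order via Lemma~\ref{1e12q3ua02it012oierjohfgbFJJasxLA-lemma}, and combine with Lemma~\ref{1e12q3ua02it012oierjohfgbFJJasxLALEM} to get order~$\max\{m_1+m_2-K-1,\,m_1+m_2-k\}=m_1+m_2-k$. The only cosmetic issue is the notational clash in your last step, where you reuse the symbol~$m_1$ both for the order of~$a$ and for one of the orders fed into Lemma~\ref{1e12q3ua02it012oierjohfgbFJJasxLALEM}; otherwise nothing needs changing.
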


\begin{proof} Let~$d(x,\xi)$ be as in~\eqref{COMPOT0pw3oefjrkvfNNieNTE}.
Let also~$s(x,\xi)$ denote the sum in~\eqref{COMPOT0pw3oefjrkvfNNieNTE}.
In this way, we know that~$d$ is of order~$m_1+m_2-K-1$
and, by Lemma~\ref{1e12q3ua02it012oierjohfgbFJJasxLA-lemma},~$s$ is of order~$m_1+m_2-k$.
Thus, using Lemma~\ref{1e12q3ua02it012oierjohfgbFJJasxLALEM}
and the fact that~$c=d+s$, we infer that~$c$ has order~$\max\{m_1+m_2-k,m_1+m_2-K-1\}=m_1+m_2-k$.
\end{proof}

We now dive into the proof of Theorem~\ref{COMPOTHPSDO}.

\begin{proof}[Proof of Theorem~\ref{COMPOTHPSDO}] First of all, we observe that
if the pseudodifferential symbol in~\eqref{COMPOTHPSDO1} is of order~$m_1+m_2-K-1$,
then automatically the order of~$c$ is~$m_1+m_2$, thanks to Corollary~\ref{0ojwf2iyfuYHBhbv6c2nb-76cy4FSGBy3u654t},
used here with~$k:=0$.

We will perform the proof under the additional assumption that~$a$ and~$b$ are compactly supported
(the general case can be obtained by approximation, using Lemma~\ref{LEM-012o3lrtgpseudodifferentimbol}
and some refined version of Fubini's Theorem which is needed to justify formal interchanges of integrals in the presence of complex exponentials;
here, we will sloppily surf over these important details about  oscillatory integrals and we refer
to~\cite[Sections~2.2 and~2.3]{MR1211419}, \cite[pages~168--169]{MR2453959}, \cite[Section~1.4]{MR1314815},
\cite[Section~2.5.1]{MR2567604} and~\cite[Section~3.3]{MR2884718} for a thorough theory).

Thus, using the compact support additional assumption to deal with integrable functions in the sense of Lebesgue,
we can write that, if~$u$ belongs to the Schwartz space of smooth and rapidly decreasing functions,
\begin{eqnarray*}
T_{b} u(z)&=&\int_{\R^n} b(z,\eta)\,\widehat u(\eta)\,e^{2\pi iz\cdot\eta}\,d\eta
\end{eqnarray*}
yielding that
\begin{eqnarray*}(
T_{a}\circ T_b) u(x)&=&\int_{\R^n} a(x,\xi)\,{\mathcal{F}} \big( T_bu\big)(\xi)\,e^{2\pi ix\cdot\xi}\,d\xi\\&=&\iint_{\R^n\times\R^n} a(x,\xi)\, T_bu(z)\,e^{2\pi i(x-z)\cdot\xi}\,d\xi\,dz\\
&=&\iiint_{\R^n\times\R^n\times\R^n} a(x,\xi)\, b(z,\eta)\,
\widehat u(\eta)\,e^{2\pi iz\cdot\eta}
\,e^{2\pi i(x-z)\cdot\xi}\,d\eta\,d\xi\,dz\\
&=&\int_{\R^n} c(x,\eta)\,\widehat u(\eta)\,e^{2\pi ix\cdot\eta}\,d\eta\\&=&T_cu(x),
\end{eqnarray*}
where
$$ c(x,\eta):=
\iint_{\R^n\times\R^n} a(x,\xi)\, b(z,\eta)
\,e^{2\pi i(x-z)\cdot(\xi-\eta)}\,d\xi\,dz
.$$
We stress that the above integrand is Lebesgue integrable, due to the compact support additional assumption on~$a$ and~$b$.

Thus, using the change of variable~$\sigma:=\xi-\eta$
and a Taylor expansion, and considering Fourier Transforms with respect to the first variable,
\begin{eqnarray*}
c(x,\eta)&=&
\int_{\R^n} a(x,\xi)\, \widehat b(\xi-\eta,\eta)
\,e^{2\pi i x\cdot(\xi-\eta)}\,d\xi\\&=&
\int_{\R^n} a(x,\sigma+\eta)\, \widehat b(\sigma,\eta)
\,e^{2\pi i x\cdot\sigma}\,d\sigma\\&=&
\int_{\R^n}\left(
\sum_{{\gamma\in\N^n}\atop{0\le |\gamma|\le K}}\frac1{\gamma!}\partial^\gamma_\eta a(x,\eta)\sigma^\gamma+{\mathcal{R}}_K(x,\eta,\sigma)
\right)\, \widehat b(\sigma,\eta)
\,e^{2\pi i x\cdot\sigma}\,d\sigma\\&=&
\int_{\R^n}
\sum_{{\gamma\in\N^n}\atop{0\le |\gamma|\le K}}\frac1{\gamma!}\partial^\gamma_\eta a(x,\eta)\sigma^\gamma\, \widehat b(\sigma,\eta)
\,e^{2\pi i x\cdot\sigma}\,d\sigma+{\mathcal{J}}(x,\eta),\end{eqnarray*}
for a suitable remainder~${\mathcal{R}}_K$ of the form
$$ {\mathcal{R}}_K(x,\eta,\sigma)=\sum_{{\gamma\in\N^n}\atop{ |\gamma|= K+1}}
\int _{0}^{1}\frac{(K+1)\sigma^\gamma}{\gamma!}\,
\partial^\gamma_\eta a(x,t\sigma+\eta)\,(1-t)^K\,dt
,$$
where we have used the short notation
$$ {\mathcal{J}}(x,\eta):=\int_{\R^n}{\mathcal{R}}_K(x,\eta,\sigma)\, \widehat b(\sigma,\eta)
\,e^{2\pi i x\cdot\sigma}\,d\sigma.$$

In particular, for some~$C_{K,\alpha,\beta}>0$,
\begin{equation}\label{45tyhT2AUpiuygfLOSKMD6782u3tg} |D^\alpha_x D^\beta_\eta\,
{\mathcal{R}}_K(x,\eta,\sigma)|\le C_{K,\alpha,\beta}\,|\sigma|^{K+1}\sup_{{{\gamma\in\N^n}\atop{|\gamma|= K+1}}\atop{t\in[0,1]}}
|\partial^\alpha_x\partial^{\gamma+\beta}_\eta a(x,t\sigma+\eta)|.\end{equation}

Therefore, since, for every given~$\eta\in\R^n$, if~$\phi(x):=\partial^\gamma_x b(x,\eta)$,
\begin{eqnarray*} &&
\int_{\R^n}(2\pi i\sigma)^\gamma\,\widehat b(\sigma,\eta)\,e^{2\pi i x\cdot\sigma}\,d\sigma=
\iint_{\R^n\times\R^n}(2\pi i\sigma)^\gamma b(y,\eta)\,e^{2\pi i( x-y)\cdot\sigma}\,d\sigma\,dy\\&&\qquad=(-1)^{|\gamma|}
\iint_{\R^n\times\R^n}b(y,\eta)\,\partial^\gamma_y \big(e^{2\pi i( x-y)\cdot\sigma}\big)\,d\sigma\,dy=
\iint_{\R^n\times\R^n}\partial^\gamma_y b(y,\eta)\,e^{2\pi i( x-y)\cdot\sigma}\,d\sigma\,dy\\
&&\qquad={\mathcal{F}}^{-1}\big({\mathcal{F}} (\phi)\big)(x)=\phi(x)=\partial^\gamma_x b(x,\eta),
\end{eqnarray*}
we conclude that
\begin{eqnarray*}
c(x,\eta)&=&
\sum_{{\gamma\in\N^n}\atop{0\le |\gamma|\le K}}\frac1{(2\pi i)^{|\gamma|}\gamma!}\partial^\gamma_\eta a(x,\eta)\partial_xb(x,\eta)+{\mathcal{J}}(x,\eta).\end{eqnarray*}

Consequently, comparing with~\eqref{COMPOT0pw3oefjrkvfNNieNTE}, to complete the proof of the desired result,
it suffices to show that
\begin{equation}\label{COMPOT0pw3oefjrkvfNNieNTEamsdcVSQujasn71}
{\mbox{${\mathcal{J}}$ is a pseudodifferential symbol of order~$m_1+m_2-K-1$.}}\end{equation} To this end,
we recall~\eqref{45tyhT2AUpiuygfLOSKMD6782u3tg} and the fact that~$a$ has order~$m_1$ to deduce that
$$ |D^\alpha_x D^\beta_\eta\,{\mathcal{R}}_K(x,\eta,\sigma)|\le C_{K,\alpha,\beta}\,|\sigma|^{K+1}\sup_{t\in[0,1]}(1+|t\sigma+\eta|)^{m_1-|\beta|-K-1}.$$

Furthermore, since we assumed that~$\R^n\ni x\mapsto b(x,\xi)$ is smooth and compactly supported,
using the fact that, for every~$M\in\N$,
$$ (1-\Delta_y)^M (e^{-2\pi iy\cdot\sigma})=(1+4\pi^2|\sigma|^2)^M e^{-2\pi iy\cdot\sigma},$$
we see that, if the support of~$b$ in its first variable is contained in some ball~$B$,
\begin{eqnarray*}
\widehat b(\sigma,\eta)&=& \int_{\R^n} b(y,\eta)\,e^{-2\pi iy\cdot\sigma}\,dy\\
&=& \frac{1}{(1+4\pi^2|\sigma|^2)^M}\int_{\R^n} b(y,\eta)\,(1-\Delta_y)^M(e^{-2\pi iy\cdot\sigma})\,dy\\
&=& \frac{1}{(1+4\pi^2|\sigma|^2)^M}\int_{\R^n} (1-\Delta_y)^M b(y,\eta)\,e^{-2\pi iy\cdot\sigma}\,dy.
\end{eqnarray*}

In addition, since~$b$ is of order~$m_2$,
$$ |D^\alpha_y D^\beta_\eta b(y,\eta)| \le C_{\alpha,\beta} (1+|\eta|)^{m_2-|\beta|}$$
for some~$C_{\alpha,\beta}>0$ and therefore
$$ |D^\beta_\eta\widehat b(\sigma,\eta)|\le\frac{C_{M,\beta}\,|B|\,(1+|\eta|)^{m_2-|\beta|}}{(1+4\pi^2|\sigma|^2)^M}.$$

{F}rom these observations and the Leibniz Product Rule we infer that
\begin{eqnarray*}&&|D^\alpha_x D^\beta_\eta
{\mathcal{J}}(x,\eta)|\\&\le&C_{\alpha,\beta}\sup_{{\gamma\in\N^n}\atop{\gamma\le\beta}}
\int_{\R^n} |D^\alpha_x D^\gamma_\eta {\mathcal{R}}_K(x,\eta,\sigma)|\, |D^{\beta-\gamma}_\eta\widehat b(\sigma,\eta)|\,d\sigma\\&\leq&C_{K,M,\alpha,\beta}\,|B|\,\sup_{{\gamma\in\N^n}\atop{\gamma\le\beta}}
\int_{\R^n} |\sigma|^{K+1}\sup_{t\in[0,1]}(1+|t\sigma+\eta|)^{m_1-|\gamma|-K-1}
(1+|\eta|)^{m_2-|\beta|+|\gamma|}\,\frac{d\sigma}{(1+4\pi^2|\sigma|^2)^M}.
\end{eqnarray*}

Now, if~$\sigma\in B_{|\eta|/2}$, for all~$t\in[0,1]$ we have that
$$ |t\sigma+\eta|\in \Big[ |\eta|-|\sigma|,\,|\eta|+|\sigma|\Big]\subseteq \left[ \frac{|\eta|}2,\frac{3|\eta|}2\right]$$
and thus, using~$C$ for short to rename constants and taking~$M$ conveniently large,
\begin{eqnarray*}&&
\int_{B_{|\eta|/2}} |\sigma|^{K+1}\sup_{t\in[0,1]}(1+|t\sigma+\eta|)^{m_1-|\gamma|-K-1}
(1+|\eta|)^{m_2-|\beta|+|\gamma|}\,\frac{d\sigma}{(1+4\pi^2|\sigma|^2)^M}\\&\le& C
\int_{B_{|\eta|/2}} |\sigma|^{K+1}(1+|\eta|)^{m_1-|\gamma|-K-1}
(1+|\eta|)^{m_2-|\beta|+|\gamma|}\,\frac{d\sigma}{(1+4\pi^2|\sigma|^2)^M}\\&\le&C(1+|\eta|)^{m_1+m_2-K-1-|\beta|}
\int_{\R^n} \frac{|\sigma|^{K+1}\,d\sigma}{(1+4\pi^2|\sigma|^2)^M}\\&\le&C(1+|\eta|)^{m_1+m_2-K-1-|\beta|},
\end{eqnarray*}yielding that
\begin{equation}\label{SCMEIEPDALPCGEARAD0234it}\begin{split}&|D^\alpha_x D^\beta_\eta
{\mathcal{J}}(x,\eta)|\\&\le
C|B|
\Bigg((1+|\eta|)^{m_1+m_2-K-1-|\beta|}\\&\qquad+\sup_{{\gamma\in\N^n}\atop{\gamma\le\beta}}
\int_{\R^n\setminus B_{|\eta|/2}} |\sigma|^{K+1}\sup_{t\in[0,1]}(1+|t\sigma+\eta|)^{m_1-|\gamma|-K-1}
(1+|\eta|)^{m_2-|\beta|+|\gamma|}\,\frac{d\sigma}{(1+4\pi^2|\sigma|^2)^M}\Bigg).
\end{split}\end{equation}

We also point out that, choosing~$M$ conveniently large,
\begin{equation}\label{SCMEIEPDALPCGEARAD0234it2}\begin{split}&
\int_{\R^n\setminus B_{|\eta|/2}} |\sigma|^{K+1}\sup_{t\in[0,1]}(1+|t\sigma+\eta|)^{m_1-|\gamma|-K-1}
(1+|\eta|)^{m_2-|\beta|+|\gamma|}\,\frac{d\sigma}{(1+4\pi^2|\sigma|^2)^M}\\&\qquad\leq
C(1+|\eta|)^{m_1+m_2-K-1-|\beta|}.\end{split}
\end{equation}
To check this, we distinguish two cases. If~$m_1-|\gamma|-K-1\ge0$ then, when~$\sigma\in\R^n\setminus B_{|\eta|/2}$,
$$ \sup_{t\in[0,1]}(1+|t\sigma+\eta|)^{m_1-|\gamma|-K-1}\le (1+|\sigma|+|\eta|)^{m_1-|\gamma|-K-1}\le C(1+|\sigma|)^{m_1-K-1}$$
and thus in this case, if~$M$ is large enough,
\begin{eqnarray*}&&
\int_{\R^n\setminus B_{|\eta|/2}} |\sigma|^{K+1}\sup_{t\in[0,1]}(1+|t\sigma+\eta|)^{m_1-|\gamma|-K-1}
(1+|\eta|)^{m_2-|\beta|+|\gamma|}\,\frac{d\sigma}{(1+4\pi^2|\sigma|^2)^M}\\&\le&C
\int_{\R^n\setminus B_{|\eta|/2}} |\sigma|^{K+1}(1+|\sigma|)^{m_1-K-1}
(1+|\eta|)^{m_1+m_2-K-1-|\beta|}\,\frac{d\sigma}{(1+4\pi^2|\sigma|^2)^M}\\&\le&
C(1+|\eta|)^{m_1+m_2-K-1-|\beta|}.
\end{eqnarray*}

If instead~$m_1-|\gamma|-K-1<0$ then, for~$\gamma\le\beta$,
\begin{eqnarray*}&&
\int_{\R^n\setminus B_{|\eta|/2}} |\sigma|^{K+1}\sup_{t\in[0,1]}(1+|t\sigma+\eta|)^{m_1-|\gamma|-K-1}
(1+|\eta|)^{m_2-|\beta|+|\gamma|}\,\frac{d\sigma}{(1+4\pi^2|\sigma|^2)^M}\\&\leq&
\int_{\R^n\setminus B_{|\eta|/2}} |\sigma|^{K+1}
(1+|\eta|)^{m_2-|\beta|+|\gamma|}\,\frac{d\sigma}{(1+4\pi^2|\sigma|^2)^M}\\&\leq&
(1+|\eta|)^{m_2}\int_{\R^n\setminus B_{|\eta|/2}}\frac{|\sigma|^{K+1}\,d\sigma}{(1+4\pi^2|\sigma|^2)^M}\\&\leq&C
(1+|\eta|)^{m_2}\Big( \chi_{B_1}(\eta)+|\eta|^{K+1+n-2M}\chi_{\R^n\setminus B_1}(\eta)\Big)\\&\le&C
\Big( \chi_{B_1}(\eta)+(1+|\eta|)^{m_2+K+1+n-2M}\chi_{\R^n\setminus B_1}(\eta)\Big)\\&\le&C(1+|\eta|)^{m_1+m_2-K-1-|\beta|},
\end{eqnarray*}as long as~$M$ is large enough.
This completes the proof of~\eqref{SCMEIEPDALPCGEARAD0234it2}.

Hence, by~\eqref{SCMEIEPDALPCGEARAD0234it} and~\eqref{SCMEIEPDALPCGEARAD0234it2},
\[ |D^\alpha_x D^\beta_\eta{\mathcal{J}}(x,\eta)|\le
C|B|(1+|\eta|)^{m_1+m_2-K-1-|\beta|}\]
and we have thereby established~\eqref{COMPOT0pw3oefjrkvfNNieNTEamsdcVSQujasn71}, as desired.
\end{proof}

An interesting application of Theorem~\ref{COMPOTHPSDO} is that
a pseudodifferential operator of order~$-\infty$ arises by using functions with disjoint supports:

\begin{corollary}\label{COMPOTHPS-q0woeirjUJSDNIie00-cr5tgd}
Let~$a$,~$\phi$,~$\psi$ be pseudodifferential symbols. Assume that~$\phi$ and~$\psi$ have disjoint supports.

Then, the pseudodifferential operator~$T_\phi\circ T_a\circ T_\psi$ has order~$-\infty$.
\end{corollary}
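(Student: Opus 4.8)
The plan is to deduce Corollary~\ref{COMPOTHPS-q0woeirjUJSDNIie00-cr5tgd} from the composition rule in Theorem~\ref{COMPOTHPSDO}, applied twice, together with the observation that the expansion terms in~\eqref{COMPOTHPSDO1} vanish because of the disjoint supports of~$\phi$ and~$\psi$. First I would fix notation: let~$m_a$, $m_\phi$ and~$m_\psi$ denote the orders of the symbols~$a$, $\phi$ and~$\psi$ respectively (so one has \emph{some} order for each, though in practice~$\phi$ and~$\psi$ are compactly supported and hence of order~$-\infty$, but we need not use this). By Theorem~\ref{COMPOTHPSDO}, the operator~$T_a\circ T_\psi$ equals~$T_b$ for a pseudodifferential symbol~$b$ of order~$m_a+m_\psi$, and then~$T_\phi\circ T_b$ equals~$T_c$ for a pseudodifferential symbol~$c$; so $T_\phi\circ T_a\circ T_\psi=T_c$ is a pseudodifferential operator. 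The point is to show that~$c$ is of order~$-\infty$, i.e. of order~$m_1+m_2-K$ for every~$K\in\N$.

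The key step is the following. By Theorem~\ref{COMPOTHPSDO} applied to the composition~$T_\phi\circ T_b$, for every~$K\in\N$ the function
\[
c(x,\xi)-\sum_{{\alpha\in\N^n}\atop{|\alpha|\le K}}\frac{1}{(2\pi i)^{|\alpha|}\,\alpha!}\,\partial^\alpha_\xi \phi(x,\xi)\,\partial^\alpha_x b(x,\xi)
\]
is a pseudodifferential symbol of order~$m_\phi+m_b-K-1$. Now I claim that each summand vanishes identically. Indeed, $\partial^\alpha_\xi\phi(x,\xi)$ is supported (in~$x$) inside the support of~$\phi$, while we must check that~$\partial^\alpha_x b(x,\xi)$ vanishes there. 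For this, apply Theorem~\ref{COMPOTHPSDO} to~$T_a\circ T_\psi$: for every~$L\in\N$, the function~$b(x,\xi)-\sum_{|\beta|\le L}\frac{1}{(2\pi i)^{|\beta|}\beta!}\partial^\beta_\xi a(x,\xi)\,\partial^\beta_x\psi(x,\xi)$ is a pseudodifferential symbol of order~$m_a+m_\psi-L-1$; here each term~$\partial^\beta_x\psi(x,\xi)$ is supported in~$x$ inside the support of~$\psi$, which is disjoint from that of~$\phi$. Hence on a neighborhood of the support of~$\phi$ the symbol~$b$ coincides with a pseudodifferential symbol of order~$m_a+m_\psi-L-1$ for \emph{every}~$L$; in particular, letting~$L\to\infty$, $b$ restricted to a neighborhood of the support of~$\phi$ is a symbol whose~$C^\infty$-norms decay faster than any power of~$\xi$. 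Therefore the product~$\partial^\alpha_\xi\phi(x,\xi)\,\partial^\alpha_x b(x,\xi)$, and hence the whole finite sum above, is a pseudodifferential symbol of order~$-\infty$ (by an argument analogous to that following~\eqref{1e12q3ua02it} showing compactly supported symbols, or here rapidly-decaying-in-$\xi$ symbols, are of order~$-\infty$). Consequently~$c$ itself is, for every~$K\in\N$, the sum of a symbol of order~$m_\phi+m_b-K-1$ and a symbol of order~$-\infty$, hence of order~$m_\phi+m_b-K-1$ by Lemma~\ref{1e12q3ua02it012oierjohfgbFJJasxLALEM}; since~$K$ is arbitrary, $c$ has order~$-\infty$.

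The main obstacle I anticipate is making rigorous the step "$b$ coincides, near the support of~$\phi$, with a symbol whose order is~$m_a+m_\psi-L$ for every~$L$, hence with a rapidly-$\xi$-decaying symbol." The cleanest way to phrase this without a limiting argument is to bound~$\partial^\alpha_x b(x,\xi)$ directly on the support of~$\phi$: since the asymptotic-expansion terms in the formula for~$b$ all vanish on that set (disjoint supports), the remainder has order~$m_a+m_\psi-L-1$ for every~$L$, which gives, for each multi-index and each~$N$, a bound~$|\partial^\alpha_x\partial^\gamma_\xi b(x,\xi)|\le C_{\alpha,\gamma,N}(1+|\xi|)^{-N}$ for~$x$ in (a fixed neighborhood of) the support of~$\phi$. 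Then multiplying by~$\partial^\alpha_\xi\phi(x,\xi)$, which is supported there and satisfies the bounds of a symbol of order~$m_\phi$, and applying the Leibniz rule exactly as in the proof of Lemma~\ref{1e12q3ua02it012oierjohfgbFJJasxLA-lemma}, yields that each summand — and hence the finite sum — lies in~$S^{-N}$ for every~$N$, i.e. is of order~$-\infty$. Once this is in place, the two-step application of Theorem~\ref{COMPOTHPSDO} together with Lemma~\ref{1e12q3ua02it012oierjohfgbFJJasxLALEM} closes the argument. A small bookkeeping point to be careful about: one should apply the composition theorem in the correct order (first~$a$ with~$\psi$ to produce~$b$, then~$\phi$ with~$b$) and keep track of which variable's support is relevant at each stage — it is the~$x$-support of the \emph{second} factor that gets differentiated in the expansion, and in both compositions this second factor is (a derivative of) either~$\psi$ or something supported near~$\mathrm{supp}\,\psi$, keeping us away from~$\mathrm{supp}\,\phi$.
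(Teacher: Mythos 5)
Your argument is correct, and it follows the same two-step skeleton as the paper (apply Theorem~\ref{COMPOTHPSDO} to $T_a\circ T_\psi$, then to the result composed with $T_\phi$, and let the disjointness of the supports kill the expansion), but it exploits the disjointness in a genuinely different way. The paper splits the symbol $c$ of $T_a\circ T_\psi$ as $\omega+(c-\omega)$, where $\omega$ is the truncated expansion (supported in ${\rm supp}\,\psi$), and treats $T_\phi\circ T_{c-\omega}$ and $T_\phi\circ T_\omega$ separately: for the second piece the expansion terms $\partial^\alpha_\xi\phi\,\partial^\alpha_x\omega$ vanish \emph{identically}, so both pieces have order $m_a+m_\phi+m_\psi-K-1$, and one concludes by Lemma~\ref{1e12q3ua02it012oierjohfgbFJJasxLALEM}. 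You instead keep the full symbol $b$ of $T_a\circ T_\psi$ and prove a quantitative statement: on the open complement of ${\rm supp}\,\psi$ (which contains ${\rm supp}\,\phi$) the expansion terms of $b$ vanish, so $b$ agrees there with a remainder of order $m_a+m_\psi-L-1$ for every $L$, hence $b$ and all its derivatives decay faster than any power of $(1+|\xi|)$ on that set; consequently the cross terms $\partial^\alpha_\xi\phi\,\partial^\alpha_x b$ in the second expansion are not zero but lie in $S^{-\infty}$, which is just as good. Your route costs one extra (easy, Leibniz-rule) observation not literally contained in Lemma~\ref{1e12q3ua02it012oierjohfgbFJJasxLA-lemma} — that multiplying a symbol by a function with all derivatives rapidly decaying in $\xi$ on its support produces an element of $S^{-\infty}$ — which you correctly flag and sketch; the paper's route avoids any decay discussion at the price of introducing $\omega$ and composing $T_\phi$ with two pieces.

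One wording fix: your announced claim that ``each summand vanishes identically'' is false as stated, since $b$ contains a remainder that need not vanish on ${\rm supp}\,\phi$; what your argument actually establishes, and all that is used, is that each summand is of order $-\infty$. Rephrase the claim accordingly (as you in effect do in your final paragraph) and the proof is complete.
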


\begin{proof} Let~$m_a$,~$m_\phi$ and~$m_\psi$ denote the orders of~$a$,~$\phi$ and~$\psi$ respectively.
Let also~$K\in\N$. By Theorem~\ref{COMPOTHPSDO}, we know that
$$ T_a\circ T_\psi=T_c,$$
for some pseudodifferential symbol~$c$ of order~$m_c:=m_a+m_\psi$ such that, setting
$$\omega(x,\xi):=\sum_{{\alpha\in\N^n}\atop{|\alpha|\le K}}\frac{1}{(2\pi i)^{|\alpha|}\,\alpha!}\,\partial^\alpha_\xi a(x,\xi)\,\partial^\alpha_x \psi(x,\xi),$$
we have that
\begin{equation*}
c(x,\xi)-\omega(x,\xi)\end{equation*}
is a pseudodifferential symbol of order~$m_a+m_\psi-K-1$.

This and Theorem~\ref{COMPOTHPSDO} yield that~$R:=T_\phi\circ T_{c-\omega}$ is a pseudodifferential symbol of order~$\mu_1:=m_a+m_\phi+m_\psi-K-1$.

We also remark that the support of~$\omega$ is contained in the support of~$\psi$ and it is therefore disjoint from
that of~$\phi$. As a result, for all~$x$,~$\xi\in\R^n$,
$$ \sum_{{\alpha\in\N^n}\atop{|\alpha|\le K}}\frac{1}{(2\pi i)^{|\alpha|}\,\alpha!}\,\partial^\alpha_\xi \phi(x,\xi)\,\partial^\alpha_x \omega(x,\xi)=0.$$
{F}rom this and Theorem~\ref{COMPOTHPSDO}, we infer that~$S:=T_\phi\circ T_\omega$ is a pseudodifferential operator of order~$\mu_2:=m_\phi+m_\omega-K-1$, where~$m_\omega$ is the order of~$\omega$.

Since, by Lemma~\ref{1e12q3ua02it012oierjohfgbFJJasxLALEM},~$$m_\omega=\max\{m_{c-\omega},m_c\}=
\max\{m_a+m_\psi-K-1,m_a+m_\psi\}=m_a+m_\psi,$$
we conclude that
$$ \mu_2=m_a+m_\phi+m_\psi-K-1=\mu_1.$$

All in all,
\begin{eqnarray*}
T_\phi\circ T_a\circ T_\psi=T_\phi\circ T_c=T_\phi\circ (T_{c-\omega}+T_\omega)=
R+S,
\end{eqnarray*}
which, by Lemma~\ref{1e12q3ua02it012oierjohfgbFJJasxLALEM}, has order
$\max\{\mu_1,\mu_2\}=\mu_1=m_a+m_\phi+m_\psi-K-1$, which can be made arbitrarily small by taking~$K$ as large as we wish.
\end{proof}

\section{Commutator rule}\label{COM78REDU-912ieyrhf-001}

We now deal with the commutator of two pseudodifferential operators.
In the translation invariant case (see Lemma~\ref{TRAINVA}) the corresponding symbols are associated with multipliers independent of~$x$, thus, since multiplication is commutative, the commutator vanishes.

In the general case, the commutator is nontrivial. Interestingly, it can be sharply investigated in view of
the composition result in Theorem~\ref{COMPOTHPSDO}. Actually, at a first glance, this result gives
that if~$T_a$ is of order~$m_1$ and~$T_b$ is of order~$m_2$, then both~$ T_a\circ T_b$ and~$ T_b\circ T_a$
are of order~$m_1+m_2$. But this can be sharpened in the light of the explicit representation
in~\eqref{COMPOTHPSDO1}, which allows one to cancel one term and find that the commutator is
in fact of order~$m_1+m_2-1$.

The details go as follows:

\begin{theorem}\label{COMPOTHPSDO-COMMTU215ER}
Let~$a$ and~$b$ be pseudodifferential symbols.
Assume that~$a$ is of order~$m_1\in\R$ and~$b$ is of order~$m_2\in\R$.

Then, the commutator~$ T_a\circ T_b- T_b\circ T_a$ is a pseudodifferential operator of order~$m_1+m_2-1$.
\end{theorem}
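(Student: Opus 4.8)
<br>

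The plan is to apply the composition result in Theorem~\ref{COMPOTHPSDO} twice, to both orderings of the composition, and then exploit the cancellation of the leading term in the asymptotic expansion~\eqref{COMPOTHPSDO1}. First I would invoke Theorem~\ref{COMPOTHPSDO} with the choice~$K:=1$ to write~$T_a\circ T_b=T_c$, where~$c$ is a pseudodifferential symbol of order~$m_1+m_2$ such that
\begin{equation*}
c(x,\xi)-a(x,\xi)\,b(x,\xi)-\frac{1}{2\pi i}\sum_{j=1}^n \partial_{\xi_j}a(x,\xi)\,\partial_{x_j}b(x,\xi)
\end{equation*}
is a pseudodifferential symbol of order~$m_1+m_2-2$. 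Symmetrically, applying Theorem~\ref{COMPOTHPSDO} again (with the roles of~$a$ and~$b$ interchanged, still~$K:=1$), I would write~$T_b\circ T_a=T_d$, where~$d$ is of order~$m_1+m_2$ and
\begin{equation*}
d(x,\xi)-b(x,\xi)\,a(x,\xi)-\frac{1}{2\pi i}\sum_{j=1}^n \partial_{\xi_j}b(x,\xi)\,\partial_{x_j}a(x,\xi)
\end{equation*}
is a pseudodifferential symbol of order~$m_1+m_2-2$.

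Next I would subtract these two relations. Since~$a(x,\xi)\,b(x,\xi)=b(x,\xi)\,a(x,\xi)$, the leading (order~$m_1+m_2$) terms cancel exactly, and one is left with
\begin{equation*}
c(x,\xi)-d(x,\xi)-\frac{1}{2\pi i}\sum_{j=1}^n\Big(\partial_{\xi_j}a(x,\xi)\,\partial_{x_j}b(x,\xi)-\partial_{\xi_j}b(x,\xi)\,\partial_{x_j}a(x,\xi)\Big)
\end{equation*}
being a pseudodifferential symbol of order~$m_1+m_2-2$ (as a difference of two such symbols, using Lemma~\ref{1e12q3ua02it012oierjohfgbFJJasxLALEM}). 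Now the sum appearing in the last display is a finite linear combination of products~$\partial^\gamma_\xi a\cdot\partial^\gamma_x b$ and~$\partial^\gamma_\xi b\cdot\partial^\gamma_x a$ with~$|\gamma|=1$; by Lemma~\ref{1e12q3ua02it012oierjohfgbFJJasxLA-lemma} (applied with~$k=K=1$, once with~$a,b$ in that order and once with~$b,a$), each such combination is a pseudodifferential symbol of order~$m_1+m_2-1$. Hence, again by Lemma~\ref{1e12q3ua02it012oierjohfgbFJJasxLALEM}, the symbol~$c-d$ is of order~$\max\{m_1+m_2-1,\,m_1+m_2-2\}=m_1+m_2-1$.

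Finally, I would conclude by observing that~$T_a\circ T_b-T_b\circ T_a=T_c-T_d=T_{c-d}$ (the pseudodifferential operator associated with the difference of symbols is the difference of the operators, directly from the defining formula~\eqref{eqTA6DE4D3DF}), so the commutator is a pseudodifferential operator whose symbol~$c-d$ has order~$m_1+m_2-1$, as claimed. I do not expect a genuine obstacle here: the entire argument rides on Theorem~\ref{COMPOTHPSDO} and the elementary algebra of symbol orders in Lemmata~\ref{1e12q3ua02it012oierjohfgbFJJasxLALEM} and~\ref{1e12q3ua02it012oierjohfgbFJJasxLA-lemma}. The one point requiring a little care is bookkeeping the orders correctly when subtracting the two expansions — in particular making sure the~$K=1$ remainders (order~$m_1+m_2-2$) are subsumed by the genuine leading commutator term (order~$m_1+m_2-1$), so that the final order is~$m_1+m_2-1$ and not accidentally claimed to be smaller.
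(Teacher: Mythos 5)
Your proposal is correct and follows essentially the same route as the paper: apply Theorem~\ref{COMPOTHPSDO} to both orderings, exploit the cancellation~$ab=ba$ of the leading terms, and control the remaining first-order terms via Lemmata~\ref{1e12q3ua02it012oierjohfgbFJJasxLA-lemma} and~\ref{1e12q3ua02it012oierjohfgbFJJasxLALEM}. The only cosmetic difference is that you fix~$K=1$ in the expansion while the paper keeps a general~$K$, which changes nothing since the final order is~$m_1+m_2-1$ either way.
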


\begin{proof} We use Theorem~\ref{COMPOTHPSDO} to see that
$$ T_a\circ T_b=T_c\qquad{\mbox{and}}\qquad
T_b\circ T_a=T_d,$$
for suitable pseudodifferential symbols such that, for every~$K\in\N$,
\begin{equation*} \begin{split}&
c(x,\xi)-\sum_{{\alpha\in\N^n}\atop{|\alpha|\le K}}\frac{1}{(2\pi i)^{|\alpha|}\,\alpha!}\,\partial^\alpha_\xi a(x,\xi)\,\partial^\alpha_x b(x,\xi)
\\ {\mbox{and }}\quad&
d(x,\xi)-\sum_{{\alpha\in\N^n}\atop{|\alpha|\le K}}\frac{1}{(2\pi i)^{|\alpha|}\,\alpha!}\,\partial^\alpha_\xi b(x,\xi)\,\partial^\alpha_x a(x,\xi)\end{split}
\end{equation*}
are pseudodifferential symbols of order~$m_1+m_2-K-1$.

As a result,~$ T_a\circ T_b- T_b\circ T_a=T_c-T_d=T_{c-d}$ and, by Lemma~\ref{1e12q3ua02it012oierjohfgbFJJasxLALEM},
\begin{eqnarray*} &&f(x,\xi)\\&:=&
c(x,\xi)-d(x,\xi)-\sum_{{\alpha\in\N^n}\atop{|\alpha|\le K}}\frac{1}{(2\pi i)^{|\alpha|}\,\alpha!}\,\partial^\alpha_\xi a(x,\xi)\,\partial^\alpha_x b(x,\xi)-\sum_{{\alpha\in\N^n}\atop{|\alpha|\le K}}\frac{1}{(2\pi i)^{|\alpha|}\,\alpha!}\,\partial^\alpha_\xi b(x,\xi)\,\partial^\alpha_x a(x,\xi)\\&=&
c(x,\xi)-d(x,\xi)-\sum_{{\alpha\in\N^n}\atop{1\le|\alpha|\le K}}\frac{1}{(2\pi i)^{|\alpha|}\,\alpha!}\,\partial^\alpha_\xi a(x,\xi)\,\partial^\alpha_x b(x,\xi)-\sum_{{\alpha\in\N^n}\atop{1\le|\alpha|\le K}}\frac{1}{(2\pi i)^{|\alpha|}\,\alpha!}\,\partial^\alpha_\xi b(x,\xi)\,\partial^\alpha_x a(x,\xi)
\end{eqnarray*}
is a pseudodifferential symbol of order~$m_1+m_2-K-1$.

By Lemma~\ref{1e12q3ua02it012oierjohfgbFJJasxLA-lemma}, used here with~$k:=1$, we also know that the two latter sums are pseudodifferential symbols of order~$m_1+m_2-1$.
That is,~$c-d-f$ is a pseudodifferential symbol of order~$m_1+m_2-1$ and therefore~$c-d$ has order~$\max\{
m_1+m_2-K-1,m_1+m_2-1\}=m_1+m_2-1$.
\end{proof}

\section{Continuity in Lebesgue spaces}

A delicate, and very useful, result from the theory of pseudodifferential operators is the continuity in Lebesgue spaces
for operators of order zero:

\begin{theorem}\label{TRAINVA-TH}
Let~$p\in(1,+\infty)$ and let~$a$ be a pseudodifferential symbol of order zero.
Then,~$T_a$ extends to a bounded linear operator from~$L^p(\R^n)$ to~$L^p(\R^n)$.
\end{theorem}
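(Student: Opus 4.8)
The plan is to prove the $L^p$-boundedness of a pseudodifferential operator of order zero by reducing it to the Mikhlin Multiplier Theorem (Theorem~\ref{Mikhlin Multiplier Theorem}) via a dyadic decomposition in the frequency variable, exactly in the spirit of the proof of Theorem~\ref{0pirj09365-4g4eZXCHJKLRFG-544-NO3-COR-0oerkCC}. First I would recall the dyadic partition of unity $\{\varphi_j\}_{j\in\N}$ from Lemma~\ref{0pirj09365-4g4eZXCHJKLRFG-544-NO1-LL}, write $1=\sum_{j=0}^{+\infty}\varphi_j$, and split $T_a u=\sum_{j=0}^{+\infty}T_a(\check\varphi_j*u)$, the identity being justified by Lemma~\ref{LEHDNfcitjn034ty-Leapp} applied to $u\in L^p(\R^n)$ (approximating first $u\in C^\infty_c(\R^n)$ and then passing to the limit). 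The key point is that for each $j$ the symbol $a(x,\xi)$ gets multiplied against $\varphi_j(\xi)\widehat u(\xi)$, which is supported in the dyadic shell $B_{2^{j+1}}\setminus B_{2^{j-1}}$, so that the ``bad'' non-decay of derivatives in $\xi$ is confined to a fixed shell and, after rescaling $\xi\mapsto 2^{j}\xi$, becomes uniformly controlled by the order-zero bound~\eqref{1e12q3ua02it}.

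Concretely, I would follow the Littlewood--Paley strategy: exploit the square function estimate furnished by Theorem~\ref{0pirj09365-4g4eZXCHJKLRFG-544-NO3-COR-0oerkCC} with $s=0$, which gives $\|u\|_{L^p(\R^n)}$ equivalent, up to constants, to $\big\|\big(\sum_{j}|\check\varphi_j*u|^2\big)^{1/2}\big\|_{L^p(\R^n)}$, since $\mathcal{L}^p_0(\R^n)=L^p(\R^n)$ by the conventions fixed in Section~\ref{SEC:BPSPA}. Thus it suffices to bound the vector-valued operator $u\mapsto \{\check\varphi_j*(T_a u)\}_{j\in\N}$, or rather $u\mapsto \{T_a(\check\varphi_j * u)\}_{j\in\N}$, from $L^p(\R^n)$ to $L^p(\R^n,\ell^2(\N))$. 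Using $\psi_j:=\varphi_{j-1}+\varphi_j+\varphi_{j+1}$ and the relation $\psi_j\varphi_j=\varphi_j$ from~\eqref{UTILTAYMAA8ikjf-x5-LAPSIN2}, I would realize this as a multiplier $m(\xi)$ acting from $\ell^2(\N)$ into $\ell^2(\N)$ whose $j$-th component involves $a(x,\xi)\psi_j(\xi)$ composed with the input components supported in shell $j$; by the order-zero bound on $a$ together with the derivative estimates~\eqref{0pirj09365-4g4eZXCHJKLRFG-544-NO1-LL-eq4} on $\varphi_j$ (and hence on $\psi_j$), one checks the Mikhlin hypothesis~\eqref{Mikhlin Multiplier Theorem-ASSUN} with $X=Y=\ell^2(\N)$, uniformly in the shell index. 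Here the rescaling $|D^\alpha_\xi\varphi_j(\xi)|\le C 2^{-|\alpha|j}\chi_{B_{2^{j+1}}\setminus B_{2^{j-1}}}(\xi)\le C|\xi|^{-|\alpha|}$ is what converts the naive pointwise bound into the scale-invariant Mikhlin condition, and the finite overlap of the shells (only $O(1)$ indices $j$ contribute for each fixed $\xi$) keeps the $\ell^2(\N)$-operator norms bounded.

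With the vector-valued Mikhlin estimate in hand, Theorem~\ref{Mikhlin Multiplier Theorem} yields $\|\{T_a(\check\varphi_j*u)\}_j\|_{L^p(\R^n,\ell^2(\N))}\le C\|\{\check\varphi_j*u\}_j\|_{L^p(\R^n,\ell^2(\N))}$, and combining this with the square function characterization on both sides (the right side controlled by $\|u\|_{L^p(\R^n)}$ via the second inequality in Theorem~\ref{0pirj09365-4g4eZXCHJKLRFG-544-NO3-COR-0oerkCC}, the left side controlling $\|T_a u\|_{L^p(\R^n)}$ via the first inequality, after summing the pieces using Lemma~\ref{LEHDNfcitjn034ty-Leapp}) gives $\|T_a u\|_{L^p(\R^n)}\le C\|u\|_{L^p(\R^n)}$ for all $u$ in the Schwartz class. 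Density then extends $T_a$ to a bounded operator on all of $L^p(\R^n)$.

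The main obstacle I anticipate is the careful bookkeeping needed to set up the vector-valued multiplier so that the $x$-dependence of $a(x,\xi)$ is handled correctly: Mikhlin's theorem as stated here is a statement about Fourier multipliers (symbols independent of $x$), while $a$ genuinely depends on $x$. The standard remedy is to freeze the dependence by writing, for each dyadic block, $T_a(\check\varphi_j*u)(x)=\int a(x,\xi)\varphi_j(\xi)\widehat u(\xi)e^{2\pi i x\cdot\xi}\,d\xi$ and to estimate it by a ``vertical'' Littlewood--Paley argument in which one treats $\xi\mapsto a(x,\xi)\varphi_j(\xi)$ (for fixed $x$) together with an auxiliary decomposition, or alternatively to invoke the standard Calder\'on--Zygmund / Hörmander kernel estimates for order-zero pseudodifferential operators (the kernel of $T_a$ is a Calder\'on--Zygmund kernel, and one applies the singular-integral $L^p$ theory already cited in the book after Theorem~\ref{MLAerSM:ijfKKSMdf02}). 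I would likely present the proof through the Calder\'on--Zygmund kernel route for cleanliness: establish that $a$ of order zero has a kernel $k(x,y)=\int a(x,\xi)e^{2\pi i(x-y)\cdot\xi}\,d\xi$ (as an oscillatory integral) satisfying $|k(x,y)|\le C|x-y|^{-n}$ and $|\nabla_y k(x,y)|\le C|x-y|^{-n-1}$ away from the diagonal, check $L^2$-boundedness by Plancherel (using $\sup_{x,\xi}|a(x,\xi)|<+\infty$), and then quote the Calder\'on--Zygmund singular integral theorem for $L^p$-boundedness, $1<p<+\infty$. The delicate part in that route is the oscillatory integral manipulation and the decay estimates on $k$, which require integration by parts in $\xi$ exploiting $(1-\Delta_\xi)^M e^{2\pi i(x-y)\cdot\xi}=(1+4\pi^2|x-y|^2)^M e^{2\pi i(x-y)\cdot\xi}$ together with the order-zero derivative bounds on $a$; this is routine but must be done with care near $|x-y|$ small versus large. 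Either way, the heart of the matter is the passage from the pointwise symbol estimates~\eqref{1e12q3ua02it} with $m=0$ to a genuine singular-integral / Mikhlin-type bound.
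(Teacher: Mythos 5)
The paper itself does not prove Theorem~\ref{TRAINVA-TH}: it only cites the literature (e.g.\ \cite[Theorem~5.19]{MR2884718}) and remarks that in the translation-invariant case the statement follows from the Mikhlin Multiplier Theorem via Lemma~\ref{TRAINVA}. Measured against that, your second route (Calder\'on--Zygmund) is essentially the standard argument of the cited references, and your instinct to abandon the first route is sound: Theorem~\ref{Mikhlin Multiplier Theorem} as stated in this book concerns multipliers $m(\xi)$ that do not depend on $x$, so the block symbols $a(x,\xi)\psi_j(\xi)$ cannot be fed into it, and the ``freezing of the $x$-dependence'' you allude to is not a bookkeeping detail but precisely the whole difficulty; the Littlewood--Paley reduction via Theorem~\ref{0pirj09365-4g4eZXCHJKLRFG-544-NO3-COR-0oerkCC} (with $s=0$) only converts the problem into an equivalent one, it does not remove the $x$-dependence.

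The genuine gap is in the $L^2$ step of your Calder\'on--Zygmund route. You propose to ``check $L^2$-boundedness by Plancherel, using $\sup_{x,\xi}|a(x,\xi)|<+\infty$''. That works only when $a$ is independent of $x$, in which case $T_a$ is a Fourier multiplier; for $x$-dependent symbols, Plancherel does not apply and boundedness of the symbol alone does \emph{not} imply $L^2$-boundedness of $T_a$ (this is exactly why Calder\'on--Vaillancourt-type theorems require control of finitely many $x$- and $\xi$-derivatives). The $L^2$ estimate for symbols of order zero in the class~\eqref{1e12q3ua02it} is itself a nontrivial theorem whose proof must exploit the derivative bounds, for instance via a dyadic decomposition of the symbol in $\xi$ combined with an almost-orthogonality (Cotlar--Stein) argument, or via expanding the symbol in Fourier modes in the $x$-variable after a suitable localization, so as to write $T_a$ as a superposition of modulated Fourier multipliers with summable norms. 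Without this input the kernel estimates are useless, since Calder\'on--Zygmund theory needs boundedness on some $L^{p_0}$ as a hypothesis. Once that step is supplied, the rest of your plan --- the oscillatory-integral kernel bounds $|k(x,y)|\le C|x-y|^{-n}$ and $|\nabla_y k(x,y)|\le C|x-y|^{-n-1}$ obtained by dyadic decomposition in $\xi$ and repeated integration by parts, followed by the singular-integral theorem for $1<p<+\infty$ --- is indeed the proof found in the texts the paper points to.
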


For a proof of this result, see e.g.~\cite[Theorem~5.19]{MR2884718}. Here, we just observe that
in the case of translation invariant pseudodifferential operators Theorem~\ref{TRAINVA-TH}
is a consequence of Mikhlin Multiplier Theorem (see Theorem~\ref{Mikhlin Multiplier Theorem}, used in this case with~$X:=Y:=\C$,
and Lemma~\ref{TRAINVA}; note indeed that condition~\eqref{Mikhlin Multiplier Theorem-ASSUN}
would be guaranteed in this framework by the assumption that the symbol in Theorem~\ref{TRAINVA-TH} has order zero).

\begin{corollary}\label{COMPOTHPSDO-CORCON}
Let~$p\in(1,+\infty)$ and let~$a$ be a pseudodifferential symbol of order~$\sigma\ge0$.

Then,~$T_a$ extends to a bounded linear operator from~${\mathcal{L}}^p_{\sigma}(\R^n)$ to~$L^p(\R^n)$.

More generally, for every~$\theta\ge0$,
$T_a$ extends to a bounded linear operator from~${\mathcal{L}}^p_{\sigma+\theta}(\R^n)$ to~${\mathcal{L}}^p_{\theta}(\R^n)$.
\end{corollary}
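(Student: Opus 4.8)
\textbf{Proof proposal for Corollary~\ref{COMPOTHPSDO-CORCON}.}

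The plan is to reduce everything to Theorem~\ref{TRAINVA-TH} (continuity of order-zero pseudodifferential operators on $L^p$) by pre- and post-composing $T_a$ with powers of $(1-\Delta)$, which are themselves pseudodifferential operators, and exploiting the isomorphism property of convolution against Bessel kernels established in Corollary~\ref{XL912-xpsmv-qw-map}. First I would prove the more general statement directly, since the first claim is the special case $\theta=0$. So fix $\theta\ge0$ and recall that, by definition of the Bessel potential norm and Corollary~\ref{XL912-xpsmv-qw-map}, a function $u$ lies in ${\mathcal{L}}^p_{\sigma+\theta}(\R^n)$ precisely when $(1-\Delta)^{(\sigma+\theta)/2}u\in L^p(\R^n)$, with comparable norms; equivalently $u={\mathcal{B}}^{((\sigma+\theta)/2)}*f$ for some $f\in L^p(\R^n)$ with $\|u\|_{{\mathcal{L}}^p_{\sigma+\theta}(\R^n)}=\|f\|_{L^p(\R^n)}$.

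The key computation is to write $T_a u$ in terms of $f$ through the composition rule. Since $(1-\Delta)^{\theta/2}$ is a pseudodifferential operator with symbol $(1+4\pi^2|\xi|^2)^{\theta/2}$ of order $\theta$ (as recalled on page~\pageref{0pirj09365-4g4eZXCHJKLRFG-544-NO1-LL-eq49-09i23w-PAG-t2e32r4s} and via~\eqref{0pirj09365-4g4eZXCHJKLRFG-544-NO1-LL-eq49-09i23w}), Theorem~\ref{COMPOTHPSDO} gives that the composition
\[
(1-\Delta)^{\theta/2}\circ T_a\circ (1-\Delta)^{-(\sigma+\theta)/2}
\]
is a pseudodifferential operator $T_b$ whose symbol $b$ has order $\theta+\sigma-(\sigma+\theta)=0$. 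Here $(1-\Delta)^{-(\sigma+\theta)/2}$ is exactly convolution against ${\mathcal{B}}^{((\sigma+\theta)/2)}$ by Lemma~\ref{PRODpoikjhr3-2:le-7}, so on $f\in L^p(\R^n)$ one has $(1-\Delta)^{-(\sigma+\theta)/2}f=u$ and therefore $(1-\Delta)^{\theta/2}(T_au)=T_b f$. By Theorem~\ref{TRAINVA-TH}, $T_b$ is bounded on $L^p(\R^n)$, whence
\[
\|(1-\Delta)^{\theta/2}(T_au)\|_{L^p(\R^n)}=\|T_b f\|_{L^p(\R^n)}\le C\|f\|_{L^p(\R^n)}=C\|u\|_{{\mathcal{L}}^p_{\sigma+\theta}(\R^n)}.
\]
Finally, by Lemma~\ref{PRODpoikjhr3-2:le-7} and Corollary~\ref{XL912-xpsmv-qw-map} again, the left-hand side equals $\|T_a u\|_{{\mathcal{L}}^p_{\theta}(\R^n)}$ up to equivalence of norms (for $\theta>0$; for $\theta=0$ it is literally $\|T_a u\|_{L^p(\R^n)}$, recalling the convention ${\mathcal{L}}^p_0(\R^n)=L^p(\R^n)$). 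This proves $T_a:{\mathcal{L}}^p_{\sigma+\theta}(\R^n)\to{\mathcal{L}}^p_{\theta}(\R^n)$ is bounded, and the density of, say, the Schwartz functions in these spaces lets one promote $T_a$ from its a priori definition on Schwartz functions to a bounded operator on the whole Bessel potential space.

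The main obstacle I anticipate is purely bookkeeping rather than conceptual: one must verify that the formal manipulation $(1-\Delta)^{\theta/2}(T_au)=T_b f$ is legitimate as an identity of genuine functions (not merely symbols), which requires knowing that $T_a$ of a Bessel potential is again a Bessel potential — this is precisely where Theorem~\ref{COMPOTHPSDO} and the approximation Lemma~\ref{LEM-012o3lrtgpseudodifferentimbol} are needed, since the composition formula is initially proved for compactly supported symbols and one passes to the general case by a limiting argument. A secondary care point is checking the order arithmetic: Theorem~\ref{COMPOTHPSDO} and Lemma~\ref{1e12q3ua02it012oierjohfgbFJJasxLALEM} give that the symbol of a composition has order equal to the sum of the orders, and since $\theta+\sigma-(\sigma+\theta)=0$ exactly, the order-zero hypothesis of Theorem~\ref{TRAINVA-TH} is met with no slack to spare, so one should not be sloppy about whether any of the intervening operators has order strictly larger than its nominal value.
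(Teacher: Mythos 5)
Your proposal is correct and follows essentially the same route as the paper: both conjugate $T_a$ by $(1-\Delta)^{\theta/2}$ and $(1-\Delta)^{-(\sigma+\theta)/2}$, invoke Theorem~\ref{COMPOTHPSDO} to see that the resulting composition has order zero, and then apply Theorem~\ref{TRAINVA-TH} together with the Bessel potential norm in~\eqref{ERFGHJN6789-09tftd90u8yhgiug8erhISB}. The only cosmetic difference is that the paper treats $\|T_au\|_{{\mathcal{L}}^p_{\theta}(\R^n)}=\|(1-\Delta)^{\theta/2}(T_au)\|_{L^p(\R^n)}$ as an identity straight from the definition of the norm, rather than an equivalence.
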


\begin{proof}The first claim follows from the second one by picking~$\theta:=0$, hence we focus on the proof of the second claim.

The operator~$L:=(1-\Delta)^{-\frac{\sigma+\theta}2}$ (that is, by
Lemma~\ref{PRODpoikjhr3-2:le-7}, the convolution against~${\mathcal{B}}^{((\sigma+\theta)/2)}$)
is a pseudodifferential operator of order~$-(\sigma+\theta)$ (see the discussion on page~\pageref{0pirj09365-4g4eZXCHJKLRFG-544-NO1-LL-eq49-09i23w-PAG-t2e32r4s}).
Similarly, the operator~$L_0:=(1-\Delta)^{\frac{\theta}2}$ has order~$\theta$.

Consequently, Theorem~\ref{COMPOTHPSDO} entails that the composition~$S:=L_0\circ T_a\circ L$ is a pseudodifferential operator of order zero.

This observation and Theorem~\ref{TRAINVA-TH} yield that
$$ \|S f\|_{L^p(\R^n)}\le C\|f\|_{L^p(\R^n)}.$$
Thus, choosing~$f:=(1-\Delta)^{\frac{\sigma+\theta}2}u$ (so that~$u={\mathcal{B}}^{((\sigma+\theta)/2)}*f$)
and recalling the Bessel potential norm in~\eqref{ERFGHJN6789-09tftd90u8yhgiug8erhISB},
\begin{equation*} 
\|T_au\|_{{\mathcal{L}}^p_{\theta}(\R^n)}=
\|L_0(T_au)\|_{L^p(\R^n)}=\|S ((1-\Delta)^{\frac{\sigma+\theta}2}u)\|_{L^p(\R^n)}\le C\| f\|_{L^p(\R^n)}=C\|u\|_{ {\mathcal{L}}^p_{\sigma+\theta}(\R^n)}
.\qedhere\end{equation*}
\end{proof}

\begin{corollary}\label{COMPOTHPS-q0woeirjUJSDNIie00}
Let~$p\in(1,+\infty)$ and let~$a$ be a pseudodifferential symbol of order~$\sigma\le0$.

Then,~$T_a$ extends to a bounded linear operator from~$L^p(\R^n)$ to~${\mathcal{L}}^p_{|\sigma|}(\R^n)$.

In particular, if~$a$ has order~$-\infty$, then, for all~$\theta\ge0$,~$T_a$ extends to a bounded linear operator from~$L^p(\R^n)$ to~${\mathcal{L}}^p_{\theta}(\R^n)$.
\end{corollary}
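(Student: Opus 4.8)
The plan is to reduce the statement to the continuity result in Lebesgue spaces (Theorem~\ref{TRAINVA-TH}) by conjugating $T_a$ with an appropriate power of $(1-\Delta)$ so as to land on a pseudodifferential operator of order zero, exactly in the spirit of the proof of Corollary~\ref{COMPOTHPSDO-CORCON}. Concretely, suppose first that $a$ has order $\sigma\le 0$ and set $|\sigma|=-\sigma$. The operator $L_0:=(1-\Delta)^{|\sigma|/2}$ is a pseudodifferential operator of order $|\sigma|$ (see the discussion on page~\pageref{0pirj09365-4g4eZXCHJKLRFG-544-NO1-LL-eq49-09i23w-PAG-t2e32r4s}), and by Theorem~\ref{COMPOTHPSDO} the composition $S:=L_0\circ T_a$ is a pseudodifferential operator of order $|\sigma|+\sigma=0$. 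Then Theorem~\ref{TRAINVA-TH} gives $\|Sf\|_{L^p(\R^n)}\le C\|f\|_{L^p(\R^n)}$ for all $f\in L^p(\R^n)$, $p\in(1,+\infty)$.

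The second step is to unwind the definition of the Bessel potential norm. Given $f\in L^p(\R^n)$, we have $Sf=L_0(T_af)=(1-\Delta)^{|\sigma|/2}(T_af)$, so by Lemma~\ref{PRODpoikjhr3-2:le-7} we can write $T_af={\mathcal{B}}^{(|\sigma|/2)}*g$ with $g:=Sf\in L^p(\R^n)$, and by Lemma~\ref{VEONORLP} this $g$ is the unique such function, namely $g=f_{T_af}$. Hence, using the norm in~\eqref{ERFGHJN6789-09tftd90u8yhgiug8erhISB},
\[
\|T_af\|_{{\mathcal{L}}^p_{|\sigma|}(\R^n)}=\|g\|_{L^p(\R^n)}=\|Sf\|_{L^p(\R^n)}\le C\|f\|_{L^p(\R^n)},
\]
which is the first assertion. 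One should add a remark that $T_a$ is a priori defined on the Schwartz class, and the displayed bound is what allows the extension by density to a bounded linear operator $L^p(\R^n)\to{\mathcal{L}}^p_{|\sigma|}(\R^n)$.

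For the last assertion, if $a$ has order $-\infty$ then $a$ has order $\sigma$ for every $\sigma\le 0$; fixing $\theta\ge 0$ and applying the case just proven with $\sigma:=-\theta$ gives that $T_a$ extends to a bounded operator from $L^p(\R^n)$ to ${\mathcal{L}}^p_{\theta}(\R^n)$, as claimed. I do not expect a serious obstacle here: the only points requiring a little care are the bookkeeping of orders in the composition (which is entirely handled by Theorem~\ref{COMPOTHPSDO}) and the density/extension argument, which is routine once the a priori estimate on Schwartz functions is in hand.
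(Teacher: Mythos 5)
Your proposal is correct and follows essentially the same route as the paper: compose $T_a$ with $(1-\Delta)^{|\sigma|/2}$ to obtain a zero-order operator via Theorem~\ref{COMPOTHPSDO}, apply Theorem~\ref{TRAINVA-TH}, and identify the resulting bound with the Bessel potential norm in~\eqref{ERFGHJN6789-09tftd90u8yhgiug8erhISB}. Your extra care in invoking Lemmata~\ref{PRODpoikjhr3-2:le-7} and~\ref{VEONORLP} and in spelling out the density extension and the order-$-\infty$ case are fine refinements, but they do not change the argument.
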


\begin{proof} The second statement follows from the first one, so we focus on the proof of the first claim.

The operator~$L:=(1-\Delta)^{\frac{|\sigma|}2}$
is a pseudodifferential operator of order~$|\sigma|=-\sigma$. Hence, by Theorem~\ref{COMPOTHPSDO},
we have that~$S:=L\circ T_a$ is a pseudodifferential operator of order zero.

{F}rom this observation and Theorem~\ref{TRAINVA-TH} we arrive at
\begin{equation*}
\|T_af\|_{ {\mathcal{L}}^p_{|\sigma|}(\R^n)}=
\|(1-\Delta)^{\frac{|\sigma|}2}(T_af)\|_{L^p(\R^n)}=\|S f\|_{L^p(\R^n)}\le C\|f\|_{L^p(\R^n)}
.\qedhere\end{equation*}
\end{proof}

Notice that Corollary~\ref{COMPOTHPS-q0woeirjUJSDNIie00} entails that Bessel potential spaces and pseudodifferential operators nicely combine in terms of continuity theory,
which in turn has a positive impact on the regularity theory as well. 
Namely, Corollary~\ref{COMPOTHPS-q0woeirjUJSDNIie00} can be seen as 
a continuity result in scales of Bessel potential spaces of the forward mapping~$f\mapsto T_a f$
when the symbol of~$a$ is non-positive.
This, in many situations of interest, provides essential information about the question on whether the solutions to a considered
equation have a regularity property in terms of the data (e.g., thanks to the ``ellipticity'' of the operator):
in our setting, if the equation is driven by an operator such as~$(1-\Delta)^{\frac{s}2}$, with~$s\ge0$,
one can consider an inverse operator~$(1-\Delta)^{\frac{\sigma}2}$, with~$\sigma:=-s\le0$,
which allows one to employ Corollary~\ref{COMPOTHPS-q0woeirjUJSDNIie00} to deduce a regularity statement.

It is also interesting that Corollary~\ref{COMPOTHPS-q0woeirjUJSDNIie00} highlights a
``universal'' feature of Bessel potential spaces with respect to pseudodifferential operators,
in the sense that the continuity property of the forward mapping associated to such an operator
does not depend much on the specific case, but mostly only on the (non-positive) order of the operator
(that is, for Corollary~\ref{COMPOTHPS-q0woeirjUJSDNIie00}
every pseudodifferential symbol of order~$\sigma\le0$ produces  in the Bessel potential spaces framework the same
continuity effect of the particular case of~$(1-\Delta)^{\frac\sigma2}$).
This is useful, because, while, in view of
their definition in~\eqref{HSZUONDCCONT-FF0iejdfujhnHSNDJNdik9238475tyhfj}, Bessel potential spaces are designed on the particular model
provided by~$(1-\Delta)^{\frac\sigma2}$,
Corollary~\ref{COMPOTHPS-q0woeirjUJSDNIie00} ensures that
all pseudodifferential operators with the same (non-positive) order behave the same, thus ``freeing'' on the one hand
Bessel potential spaces from the specific choice of a single pseudodifferential operator and allowing on the other hand a general continuity theory for all pseudodifferential operators. 

\section[The fractional Laplacian and pseudodifferential operators]{Squeezing the fractional Laplacian into the theory of pseudodifferential operators}

As noticed already on page~\pageref{SQUEZ-PP}, some further work is needed to
cover the fractional Laplacian with the standard theory of pseudodifferential operators
(at least, in the framework presented here in which the pseudodifferential symbols are supposed to be everywhere smooth).
Several options are possible for this, here we pursue the following strategy:

\begin{theorem}\label{SQUEZ} 
There exists a pseudodifferential symbol~$a=a(\xi)$ of order~$2s$ such that
\begin{equation}\label{OUAsjhkdnwGrfGttgr5CEinde9i3rnefijf-1} \inf_{\R^n}a>0\end{equation}
and a continuous function~$\eta=\eta(\xi)$ supported in~$B_1$ such that the following statement holds true.

We have that
\begin{equation}\label{OUAsjhkdnwGrfGttgr5CEinde9i3rnefijf-2}(-\Delta)^s = T_a+S,\end{equation}
where, for every~$u$ belonging to the Schwartz space of smooth and rapidly decreasing functions,
$$ Su :={\mathcal{F}}^{-1}\Big( \eta\widehat u\Big).$$

In addition, the function~$\frac1a$ is a pseudodifferential symbol of order~$-2s$ and~$T_{1/a}$ is the inverse of~$T_a$ in the sense that both~$T_a\circ T_{1/a}$ and~$T_{1/a}\circ T_a$ are the identity.

Moreover, for any~$p\ge1$, any~$\alpha\in\N^n$,
and any~$\zeta$ in the Schwartz space of smooth and rapidly decreasing functions,
\begin{equation}\label{OUAsjhkdnwGrfGttgr5CEinde9i3rnefijf-3}
\|D^\alpha(S \zeta)\|_{L^\infty(\R^n)}\le C\|\zeta\|_{L^1(\R^n)},
\end{equation}
with~$C>0$ depending only on~$n$,~$s$,~$p$ and~$\alpha$.

Furthermore, 
\begin{equation}\label{0p09i723erfghbnm5678uytrfdesxdfghjiuytfrdsextcyuygihYTGFCGYUIJN} (-\Delta)^s (T_{1/a}u) = T_{1/a}((-\Delta)^s u) = u + Ru,\end{equation}
where
$$ Ru :={\mathcal{F}}^{-1}\left( \frac{\eta}{a}\,\widehat u\right).$$
 
Finally, for any~$p\ge1$, any~$\alpha\in\N^n$,
and any~$\zeta$ in the Schwartz space of smooth and rapidly decreasing functions,
\begin{equation}\label{OUAsjhkdnwGrfGttgr5CEinde9i3rnefijf-4}
\|D^\alpha(R \zeta)\|_{L^\infty(\R^n)}\le C\|\zeta\|_{L^1(\R^n)},
\end{equation}
with~$C>0$ depending only on~$n$,~$s$,~$p$ and~$\alpha$.
\end{theorem}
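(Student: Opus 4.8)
\textbf{Plan of proof for Theorem~\ref{SQUEZ}.}
The plan is to build the pseudodifferential symbol~$a$ by smoothing out the singularity of~$(2\pi|\xi|)^{2s}$ at the origin while keeping the behaviour at infinity untouched. Concretely, first I would fix a radial cutoff~$\chi\in C^\infty_c(B_1,[0,1])$ with~$\chi=1$ in~$B_{1/2}$ and set
\begin{equation*}
a(\xi):=(2\pi|\xi|)^{2s}\big(1-\chi(\xi)\big)+\chi(\xi).
\end{equation*}
Away from the origin~$a$ agrees with~$(2\pi|\xi|)^{2s}$ up to a compactly supported smooth correction, and near the origin~$a\equiv 1$, so~$a\in C^\infty(\R^n)$ and~$\inf_{\R^n}a>0$, giving~\eqref{OUAsjhkdnwGrfGttgr5CEinde9i3rnefijf-1}. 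The symbol estimate~\eqref{1e12q3ua02it} with~$m=2s$ follows from the homogeneity bound~\eqref{0pirj09365-4g4eZXCHJKLRFG-544-NO1-LL-eq49-09i23w} applied with~$b:=2s$ (which controls~$|D^\beta_\xi (2\pi|\xi|)^{2s}|$ for~$|\xi|\ge 1/2$) together with the trivial boundedness of all derivatives of the compactly supported terms. Then I would define~$\eta(\xi):=(2\pi|\xi|)^{2s}-a(\xi)=\big((2\pi|\xi|)^{2s}-1\big)\chi(\xi)$, which is continuous and supported in~$B_1$, and~\eqref{OUAsjhkdnwGrfGttgr5CEinde9i3rnefijf-2} is then just the identity~${\mathcal{F}}^{-1}\big((2\pi|\xi|)^{2s}\widehat u\big)={\mathcal{F}}^{-1}(a\widehat u)+{\mathcal{F}}^{-1}(\eta\widehat u)$, using the Fourier representation~\eqref{FORi} of~$(-\Delta)^s$.

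Next I would establish that~$1/a$ is a pseudodifferential symbol of order~$-2s$: since~$a$ is smooth, bounded below, and behaves like~$(2\pi|\xi|)^{2s}$ at infinity, the Leibniz/chain rule applied to~$1/a$ produces finite sums of products of~$D^\gamma_\xi a$ divided by powers of~$a$, and each such term is~$O\big((1+|\xi|)^{-2s-|\beta|}\big)$ by the symbol bounds on~$a$ and~\eqref{OUAsjhkdnwGrfGttgr5CEinde9i3rnefijf-1}; this is a routine but slightly bookkeeping-heavy induction. Since both~$a$ and~$1/a$ are translation invariant, Lemma~\ref{TRAINVA} gives that~$T_a$ and~$T_{1/a}$ act as Fourier multipliers, so~$T_a\circ T_{1/a}={\mathcal{F}}^{-1}\big(a\cdot\tfrac1a\,\widehat{\,\cdot\,}\big)$ is the identity, and likewise for the other order; this gives the inverse statement. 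Then~\eqref{0p09i723erfghbnm5678uytrfdesxdfghjiuytfrdsextcyuygihYTGFCGYUIJN} follows by applying~$T_{1/a}$ on either side of~\eqref{OUAsjhkdnwGrfGttgr5CEinde9i3rnefijf-2}, using that all these multipliers commute: e.g. $(-\Delta)^s(T_{1/a}u)=(T_a+S)T_{1/a}u=u+S T_{1/a}u$, and~${\mathcal{F}}\big(S T_{1/a}u\big)=\eta\cdot\tfrac1a\widehat u$, so~$R={\mathcal{F}}^{-1}\big(\tfrac{\eta}{a}\widehat{\,\cdot\,}\big)$ as claimed; the symmetric identity is analogous.

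Finally, for the smoothing estimates~\eqref{OUAsjhkdnwGrfGttgr5CEinde9i3rnefijf-3} and~\eqref{OUAsjhkdnwGrfGttgr5CEinde9i3rnefijf-4}: both~$S$ and~$R$ are convolution operators, $S\zeta=\check\eta*\zeta$ and~$R\zeta=\check{(\eta/a)}*\zeta$, whose kernels~$\check\eta$ and~$\check{(\eta/a)}$ are the inverse Fourier transforms of \emph{compactly supported} functions. The key point is that~$\eta$ is supported in~$B_1$ and continuous (hence integrable), so~$\check\eta\in C^\infty(\R^n)$ with all derivatives bounded — indeed~$D^\alpha\check\eta(x)=\int_{B_1}(2\pi i\xi)^\alpha\eta(\xi)e^{2\pi ix\cdot\xi}\,d\xi$, which is bounded in~$x$ by~$(2\pi)^{|\alpha|}\|\eta\|_{L^1(B_1)}$. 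The same applies to~$\eta/a$, which is also continuous and supported in~$B_1$ thanks to~\eqref{OUAsjhkdnwGrfGttgr5CEinde9i3rnefijf-1}. Then Young's Convolution Inequality gives
\begin{equation*}
\|D^\alpha(S\zeta)\|_{L^\infty(\R^n)}=\|(D^\alpha\check\eta)*\zeta\|_{L^\infty(\R^n)}\le\|D^\alpha\check\eta\|_{L^\infty(\R^n)}\,\|\zeta\|_{L^1(\R^n)}\le C\|\zeta\|_{L^1(\R^n)},
\end{equation*}
and identically for~$R$. The main obstacle, such as it is, is not conceptual but a matter of care in the smoothing estimates: one must make sure the multipliers~$\eta$ and~$\eta/a$ are genuinely compactly supported and integrable (so that their inverse transforms are smooth bounded functions), which is exactly why the cutoff~$\chi$ must be chosen compactly supported and why the lower bound~\eqref{OUAsjhkdnwGrfGttgr5CEinde9i3rnefijf-1} is needed to divide by~$a$ on the support of~$\eta$; the rest is routine Fourier-analytic bookkeeping.
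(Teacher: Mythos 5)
Your proposal is correct and follows essentially the same route as the paper: the paper regularizes the symbol near the origin with $\phi(\xi)(1+4\pi^2|\xi|^2)^s$ instead of your constant~$1$, but this difference is immaterial, and everything else (defining~$\eta$ as the compactly supported discrepancy, inverting the multiplier~$a$ by a direct symbol estimate, and proving the smoothing bounds for~$S$ and~$R$ via $D^\alpha(\check\varphi*\zeta)=(D^\alpha\check\varphi)*\zeta$ with a pointwise bound on~$D^\alpha\check\varphi$ and Young's inequality) matches the paper's argument. The only small slip is the citation of~\eqref{0pirj09365-4g4eZXCHJKLRFG-544-NO1-LL-eq49-09i23w}, which concerns~$\langle\xi\rangle^{b}=(1+4\pi^2|\xi|^2)^{b/2}$; for the term~$(1-\chi)(2\pi|\xi|)^{2s}$ you should instead invoke the analogous homogeneity estimate~$|D^\beta_\xi|\xi|^{2s}|\le C|\xi|^{2s-|\beta|}$ on~$\{|\xi|\ge 1/2\}$, exactly as the paper does for its term~$a_1$.
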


Statements such as the one in~\eqref{0p09i723erfghbnm5678uytrfdesxdfghjiuytfrdsextcyuygihYTGFCGYUIJN} are quite common in the theory of pseudodifferential operators. Roughly speaking, they involve the construction of an approximate inverse operator, called in jargon a ``parametrix'' \index{parametrix}
(see e.g.~\cite[Chapter~7]{MR2453959}): the gist of this procedure is that sometimes one can invert a pseudodifferential operator up to operators which possess a smoothing effect
(when this inversion is only possible from one side, one speaks about
a right parametrix or a left parametrix).

\begin{proof}[Proof of Theorem~\ref{SQUEZ}] Let~$\phi\in C^\infty_c(B_1,[0,1])$ with~$\phi=1$ in~$B_{1/2}$.
Let
\begin{eqnarray*}
&&\eta(\xi):=\phi(\xi)\Big((2\pi|\xi|)^{2s}-(1+4\pi^2|\xi|^2)^s\Big)\\ {\mbox{and }}&&
a(\xi):=\big(1-\phi(\xi)\big)(2\pi|\xi|)^{2s}+\phi(\xi)(1+4\pi^2|\xi|^2)^s
.\end{eqnarray*}

We write~$a=a_1+a_2$, with~$a_1:=\big(1-\phi(\xi)\big)(2\pi|\xi|)^{2s}$
and~$a_2:=\phi(\xi)(1+4\pi^2|\xi|^2)^s$.
Notice that~$T_{a_2}$ is a pseudodifferential operator of order~$2s$,
due to Lemma~\ref{98iujrt4grLEetaom90iuy65rf-1kd-1} and Theorem~\ref{COMPOTHPSDO}.

Also,~$T_{a_1}$
is a pseudodifferential operator of order~$2s$, since, by the Leibniz Product Rule,
\begin{eqnarray*}&&
\sup_{{x\in\R^n}\atop{\xi\in\R^n}}(1+|\xi|)^{|\beta|-2s}|D^\alpha_x D^\beta_\xi a_1(\xi)|=
\chi_{\{0\}}(\alpha)\sup_{{\xi\in\R^n}}(1+|\xi|)^{|\beta|-2s}|D^\beta_\xi a_1(\xi)|
\\&&\qquad=(2\pi)^{2s}\chi_{\{0\}}(\alpha)\sup_{{\xi\in\R^n}}(1+|\xi|)^{|\beta|-2s}\left|
\sum_{{\eta\in\N^n}\atop{\eta\le\beta} }
{{\beta}\choose{\eta} } D^\eta\big(1-\phi(\xi)\big) D^{\beta-\eta}|\xi|^{2s}
\right|\\ &&\qquad\le
C\sup_{{\xi\in\R^n}}(1+|\xi|)^{|\beta|-2s}\left(\chi_{\R^n\setminus B_{1/2}}(\xi)\,|\xi|^{2s-|\beta|}
+
\sum_{{\eta\in\N^n}\atop{0\ne\eta\le\beta}} \chi_{B_1}(\xi) |\xi|^{2s-|\beta|+|\eta|}
\right)\\ &&\qquad\le
C\sup_{{\xi\in\R^n}}(1+|\xi|)^{|\beta|-2s}\left( \chi_{\R^n\setminus B_{1/2}}(\xi)\,|\xi|^{2s-|\beta|}
+ \chi_{B_1}(\xi)\,|\xi|^{2s-|\beta|+1}\right)
\\&&\qquad\le C,
\end{eqnarray*}
up to renaming~$C$.

Consequently,~$T_a=T_{a_1}+T_{a_2}$ is a pseudodifferential operator of order~$2s$, owing to
Lemma~\ref{1e12q3ua02it012oierjohfgbFJJasxLALEM}.
Notice also that~\eqref{OUAsjhkdnwGrfGttgr5CEinde9i3rnefijf-1} and~\eqref{OUAsjhkdnwGrfGttgr5CEinde9i3rnefijf-2} follow from the construction of~$\eta$ and~$a$.

Moreover,
\begin{eqnarray*}&&
\sup_{{x\in\R^n}\atop{\xi\in\R^n\setminus B_1}}(1+|\xi|)^{|\beta|+2s}\left|D^\alpha_x D^\beta_\xi \frac1{a(\xi)}\right|=
(2\pi)^{-2s}\chi_{\{0\}}(\alpha)\sup_{{\xi\in\R^n\setminus B_1}}(1+|\xi|)^{|\beta|+2s}\big|D^\beta_\xi |\xi|^{-2s}\big|
\\&&\qquad\le C\sup_{{\xi\in\R^n\setminus B_1}}(1+|\xi|)^{|\beta|+2s}|\xi|^{-2s-|\beta|} \le C.
\end{eqnarray*}
This information, combined with the smoothness of~$\frac1{a}$, yields that~$\frac1a$ is a pseudodifferential symbol of order~$-2s$.

Also, since~$a$ and~$\frac1a$ act as Fourier multipliers,~$T_a$ and~$T_{1/a}$ are the inverse of each other.
Additionally,
$$ {\mathcal{F}}\Big( (-\Delta)^s (T_{1/a}u)\Big) = {\mathcal{F}}\Big(T_{1/a}((-\Delta)^s u)\Big)=
\frac{(2\pi|\xi|)^{2s}}{a(\xi)}\,\widehat u(\xi)=
\frac{a(\xi)+\eta(\xi)}{a(\xi)}\,\widehat u(\xi)=
\widehat u(\xi)+\frac{\eta(\xi)}{a(\xi)}\,\widehat u(\xi),$$
which establishes~\eqref{0p09i723erfghbnm5678uytrfdesxdfghjiuytfrdsextcyuygihYTGFCGYUIJN}.

Now we prove~\eqref{OUAsjhkdnwGrfGttgr5CEinde9i3rnefijf-3} and~\eqref{OUAsjhkdnwGrfGttgr5CEinde9i3rnefijf-4}.
We observe that these claims are established once we prove that if~$\varphi=\varphi(\xi)$ 
is a continuous and compactly supported function, then
\begin{equation}\label{09o2j3rhngrsdTh09iuhgf6ytgvhyuujnbfdssdfghoqwieuyhgfweifu78i1-Deltas0023rTH}
\|D^\alpha( T_\varphi \zeta)\|_{L^\infty(\R^n)}\le C\|\zeta\|_{L^1(\R^n)},\end{equation}
with~$C>0$ depending only on~$n$,~$\varphi$,~$p$ and~$\alpha$.

To prove this, we observe that, if the support of~$\varphi$ is contained in some ball~$B_R$,
\begin{eqnarray*}
D^\alpha( T_\varphi \zeta)=D^\alpha \left( {\mathcal{F}}^{-1} \Big( \varphi\widehat\zeta\Big)\right)= D^\alpha \big( \check\varphi*\zeta\big)=
\big(D^\alpha \check\varphi\big)*\zeta.
\end{eqnarray*}
We also recall that, for every~$x\in\R^n$,
\begin{eqnarray*}
|D^\alpha \check\varphi(x)|&=&\left| D^\alpha \int_{\R^n} \varphi(\xi)\,e^{2\pi ix\cdot\xi}\,d\xi\right|\\
&=&\left|\int_{B_R} (2\pi i\xi)^\alpha\varphi(\xi)\,e^{2\pi ix\cdot\xi}\,d\xi\right|\\
&\le& CR^{n+|\alpha|}\|\varphi\|_{L^\infty(\R^n)}\\&=&C\|\varphi\|_{L^\infty(\R^n)},
\end{eqnarray*}
up to renaming~$C$.

As a consequence, for every~$x\in\R^n$,
\begin{eqnarray*}
\big|\big(D^\alpha \check\varphi\big)*\zeta(x)\big|&\le&\int_{\R^n}|D^\alpha \check\varphi (x-y)| \,|\zeta(y)|\,dy\le
C\int_{\R^n}|\zeta(y)|\,dy,
\end{eqnarray*}
which gives~\eqref{09o2j3rhngrsdTh09iuhgf6ytgvhyuujnbfdssdfghoqwieuyhgfweifu78i1-Deltas0023rTH}.
\end{proof}

The approach sketched here can be generalized to a broad class of
``elliptic'' operators, namely to the operators whose symbol
satisfies
$$|p(x, \xi)| \geq c|\xi|$$ for all~$\xi\in\R^n\setminus B_R$
and~$x\in\R^n$ (for some~$c$, $R>0$). 

This condition gives that there
exists a symbol~$q(x,\xi)$ such that, if~$P$ is the operator associated
to~$p$ and~$Q$ is the operator associated to~$q$, one has that $$ P Q -
I = S_1 \qquad{\mbox{and}}\qquad QP - I = S_2,$$ where~$I$ is identity, and~$ S_1$ and~$S_2$ are
of order~$-\infty$. The operator~$Q$ is called a
parametrix (or almost-inverse) of~$ P$.
For modern, detailed accounts of this method see~\cite{MR2453959, MR2884718}. 

\end{appendix}

\begin{appendix}

\chapter*{Acknowledgments}
This work was supported by the Australian Laureate Fellowship FL190100081,
by the Australian Future Fellowship FT230100333,
by the GNAMPA-INdAM project ``Equazioni nonlocali di tipo misto e geometrico'' (Italy) and the PRIN project 2022R537CS ``$NO^3$ - Nodal Optimization, NOnlinear elliptic equations, NOnlocal geometric problems, with a focus on regularity'' (Italy). It is a pleasure
to thank Gerd Grubb for very helpful discussions that introduced us to the elegant theory of pseudodifferential operators. We also thank 
Neils Jacob for useful and informative discussions about differential operators,
Yoshihiro Sawano for interesting discussions about Besov spaces, and Josh Troy for stimulating comments.

\end{appendix}

\begin{bibdiv}
\begin{biblist}

\bib{MR3809107}{article}{
   author={Abatangelo, Nicola},
   author={Jarohs, Sven},
   author={Salda\~{n}a, Alberto},
   title={Positive powers of the Laplacian: from hypersingular integrals to
   boundary value problems},
   journal={Commun. Pure Appl. Anal.},
   volume={17},
   date={2018},
   number={3},
   pages={899--922},
   issn={1534-0392},
   review={\MR{3809107}},
   doi={10.3934/cpaa.2018045},
}

\bib{MR4181195}{article}{
   author={Abatangelo, Nicola},
   author={Jarohs, Sven},
   author={Salda\~{n}a, Alberto},
   title={Fractional Laplacians on ellipsoids},
   journal={Math. Eng.},
   volume={3},
   date={2021},
   number={5},
   pages={Paper No. 038, 34},
   review={\MR{4181195}},
   doi={10.3934/mine.2021038},
}

\bib{getting}{article}{   
author={Abatangelo, Nicola},   
author={Valdinoci, Enrico},   
title={Getting acquainted with the fractional Laplacian},  
 conference={      title={Contemporary research in elliptic PDEs and related topics},   },   
 book={      series={Springer INdAM Ser.},      volume={33},      publisher={Springer, Cham},   },   date={2019},   pages={1--105},   review={\MR{3967804}}, }

\bib{MR4358140}{article}{
   author={Abdellaoui, Boumediene},
   author={Peral, Ireneo},
   author={Primo, Ana},
   author={Soria, Fernando},
   title={On the KPZ equation with fractional diffusion: global regularity
   and existence results},
   journal={J. Differential Equations},
   volume={312},
   date={2022},
   pages={65--147},
   issn={0022-0396},
   review={\MR{4358140}},
   doi={10.1016/j.jde.2021.12.016},
}

\bib{MR2884718}{book}{
   author={Abels, Helmut},
   title={Pseudodifferential and singular integral operators},
   series={De Gruyter Graduate Lectures},
   note={An introduction with applications},
   publisher={De Gruyter, Berlin},
   date={2012},
   pages={x+222},
   isbn={978-3-11-025030-5},
   review={\MR{2884718}},
}

\bib{MR3170202}{article}{
   author={Abels, Helmut},
   author={Grubb, Gerd},
   author={Wood, Ian Geoffrey},
   title={Extension theory and Kre\u{\i}n-type resolvent formulas for nonsmooth
   boundary value problems},
   journal={J. Funct. Anal.},
   volume={266},
   date={2014},
   number={7},
   pages={4037--4100},
   issn={0022-1236},
   review={\MR{3170202}},
   doi={10.1016/j.jfa.2014.01.016},
}
	
\bib{MR1411441}{book}{
   author={Adams, David R.},
   author={Hedberg, Lars Inge},
   title={Function spaces and potential theory},
   series={Grundlehren der mathematischen Wissenschaften [Fundamental
   Principles of Mathematical Sciences]},
   volume={314},
   publisher={Springer-Verlag, Berlin},
   date={1996},
   pages={xii+366},
   isbn={3-540-57060-8},
   review={\MR{1411441}},
   doi={10.1007/978-3-662-03282-4},
}

\bib{MR228702}{article}{
   author={Adams, R.},
   author={Aronszajn, N.},
   author={Smith, K. T.},
   title={Theory of Bessel potentials. II},
   journal={Ann. Inst. Fourier (Grenoble)},
   volume={17},
   date={1967},
   number={fasc. 2},
   pages={1--135 (1968)},
   issn={0373-0956},
   review={\MR{228702}},
}

\bib{MR2244530}{article}{
   author={Agranovich, M. S.},
   title={Strongly elliptic second order systems with spectral parameter in
   transmission conditions on a nonclosed surface},
   conference={
      title={Pseudo-differential operators and related topics},
   },
   book={
      series={Oper. Theory Adv. Appl.},
      volume={164},
      publisher={Birkh\"{a}user, Basel},
   },
   date={2006},
   pages={1--21},
   review={\MR{2244530}},
   doi={10.1007/3-7643-7514-0\_1},
}

\bib{MR4149690}{article}{
   author={Alibaud, Natha\"{e}l},
   author={del Teso, F\'{e}lix},
   author={Endal, J\o rgen},
   author={Jakobsen, Espen R.},
   title={The Liouville theorem and linear operators satisfying the maximum
   principle},
   language={English, with English and French summaries},
   journal={J. Math. Pures Appl. (9)},
   volume={142},
   date={2020},
   pages={229--242},
   issn={0021-7824},
   review={\MR{4149690}},
   doi={10.1016/j.matpur.2020.08.008},
}
	
\bib{MR3930629}{book}{
   author={Amann, Herbert},
   title={Linear and quasilinear parabolic problems. Vol. II},
   series={Monographs in Mathematics},
   volume={106},
   note={Function spaces},
   publisher={Birkh\"{a}user/Springer, Cham},
   date={2019},
   pages={xiv+464},
   isbn={978-3-030-11762-7},
   isbn={978-3-030-11763-4},
   review={\MR{3930629}},
   doi={10.1007/978-3-030-11763-4},
}

\bib{MR1688958}{book}{
   author={Andrews, George E.},
   author={Askey, Richard},
   author={Roy, Ranjan},
   title={Special functions},
   series={Encyclopedia of Mathematics and its Applications},
   volume={71},
   publisher={Cambridge University Press, Cambridge},
   date={1999},
   pages={xvi+664},
   isbn={0-521-62321-9},
   isbn={0-521-78988-5},
   review={\MR{1688958}},
   doi={10.1017/CBO9781107325937},
}

\bib{MR143935}{article}{
   author={Aronszajn, N.},
   author={Smith, K. T.},
   title={Theory of Bessel potentials. I},
   language={English, with French summary},
   journal={Ann. Inst. Fourier (Grenoble)},
   volume={11},
   date={1961},
   pages={385--475},
   issn={0373-0956},
   review={\MR{143935}},
}

\bib{MR2768550}{book}{
   author={Bahouri, Hajer},
   author={Chemin, Jean-Yves},
   author={Danchin, Rapha\"{e}l},
   title={Fourier analysis and nonlinear partial differential equations},
   series={Grundlehren der mathematischen Wissenschaften [Fundamental
   Principles of Mathematical Sciences]},
   volume={343},
   publisher={Springer, Heidelberg},
   date={2011},
   pages={xvi+523},
   isbn={978-3-642-16829-1},
   review={\MR{2768550}},
   doi={10.1007/978-3-642-16830-7},
}
	
\bib{MR3211862}{article}{
   author={Barrios, Bego\~{n}a},
   author={Peral, Ireneo},
   author={Soria, Fernando},
   author={Valdinoci, Enrico},
   title={A Widder's type theorem for the heat equation with nonlocal
   diffusion},
   journal={Arch. Ration. Mech. Anal.},
   volume={213},
   date={2014},
   number={2},
   pages={629--650},
   issn={0003-9527},
   review={\MR{3211862}},
   doi={10.1007/s00205-014-0733-1},
}

\bib{piola}{article} {
	AUTHOR = {Bellido, Jos\'{e} C.}
	AUTHOR = {Cueto, Javier},
	AUTHOR = {Mora-Corral, Carlos},
	TITLE = {Fractional {P}iola identity and polyconvexity in fractional
	spaces},
	JOURNAL = {Ann. Inst. H. Poincar\'{e} C Anal. Non Lin\'{e}aire},
	FJOURNAL = {Annales de l'Institut Henri Poincar\'{e} C. Analyse Non
	Lin\'{e}aire},
	VOLUME = {37},
	YEAR = {2020},
	NUMBER = {4},
	PAGES = {955--981},
	ISSN = {0294-1449,1873-1430},
	MRCLASS = {35R11 (26A33 35Q74 49J10 49J45)},
	MRNUMBER = {4104831},
	MRREVIEWER = {Sanjiban\ Santra},
	DOI = {10.1016/j.anihpc.2020.02.006},
	URL = {https://doi.org/10.1016/j.anihpc.2020.02.006},
}

\bib{mora}{article} {
	AUTHOR = {Bellido, Jos\'{e} C.},
	AUTHOR = {Cueto, Javier},
	AUTHOR = {Mora-Corral, Carlos},
	TITLE = {{$\Gamma $}-convergence of polyconvex functionals involving
	{$s$}-fractional gradients to their local counterparts},
	JOURNAL = {Calc. Var. Partial Differential Equations},
	FJOURNAL = {Calculus of Variations and Partial Differential Equations},
	VOLUME = {60},
	YEAR = {2021},
	NUMBER = {1},
	PAGES = {Paper No. 7, 29},
	ISSN = {0944-2669,1432-0835},
	MRCLASS = {49J45 (35R11 46E35)},
	MRNUMBER = {4179861},
	MRREVIEWER = {Jan\ Mal\'{y}},
	DOI = {10.1007/s00526-020-01868-5},
	URL = {https://doi.org/10.1007/s00526-020-01868-5},
}

\bib{MR1996120}{article}{
   author={B\'{e}nyi, \'{A}rp\'{a}d},
   title={Bilinear pseudodifferential operators with forbidden symbols on
   Lipschitz and Besov spaces},
   journal={J. Math. Anal. Appl.},
   volume={284},
   date={2003},
   number={1},
   pages={97--103},
   issn={0022-247X},
   review={\MR{1996120}},
   doi={10.1016/S0022-247X(03)00245-2},
}

\bib{MR0482275}{book}{
   author={Bergh, J\"{o}ran},
   author={L\"{o}fstr\"{o}m, J\"{o}rgen},
   title={Interpolation spaces. An introduction},
   series={Grundlehren der Mathematischen Wissenschaften, No. 223},
   publisher={Springer-Verlag, Berlin-New York},
   date={1976},
   pages={x+207},
   review={\MR{0482275}},
}

\bib{BERN1912}{article}{
author={Bernstein, Serge N.},
title={Sur l'ordre de la meilleure approximation des fonctions
continues par les polynomes de degr\'e donn\'e}, journal={Mem. Cl. Sci. Acad. Roy. Belg.}, volume={4}, pages={1--103}, date={1912},}

\bib{zbMATH03254379}{article}{
 author = {Besov, O. V.},
 title = {Investigation of a family of function spaces in connection with theorems of imbedding and extension},
 journal = {Transl., Ser. 2, Am. Math. Soc.},
 ISSN = {0065-9290},
 volume = {40},
 pages = {85--126},
 date = {1964},
 DOI = {10.1090/trans2/040/03},
 }

\bib{MR3709046}{article}{
   author={Biccari, Umberto},
   author={Warma, Mahamadi},
   author={Zuazua, Enrique},
   title={Addendum: Local elliptic regularity for the Dirichlet fractional
   Laplacian},
   journal={Adv. Nonlinear Stud.},
   volume={17},
   date={2017},
   number={4},
   pages={837--839},
   issn={1536-1365},
   review={\MR{3709046}},
   doi={10.1515/ans-2017-6020},
}

\bib{MR850715}{book}{
   author={Bliedtner, J.},
   author={Hansen, W.},
   title={Potential theory},
   series={Universitext},
   note={An analytic and probabilistic approach to balayage},
   publisher={Springer-Verlag, Berlin},
   date={1986},
   pages={xiv+435},
   isbn={3-540-16396-4},
   review={\MR{850715}},
   doi={10.1007/978-3-642-71131-2},
}

\bib{MR1936936}{article}{
   author={Bogdan, K.},
   author={Kulczycki, T.},
   author={Nowak, Adam},
   title={Gradient estimates for harmonic and~$q$-harmonic functions of
   symmetric stable processes},
   journal={Illinois J. Math.},
   volume={46},
   date={2002},
   number={2},
   pages={541--556},
   issn={0019-2082},
   review={\MR{1936936}},
}

\bib{MR4530901}{article}{
   author={Borthagaray, Juan Pablo},
   author={Nochetto, Ricardo H.},
   title={Besov regularity for the Dirichlet integral fractional Laplacian
   in Lipschitz domains},
   journal={J. Funct. Anal.},
   volume={284},
   date={2023},
   number={6},
   pages={Paper No. 109829},
   issn={0022-1236},
   review={\MR{4530901}},
   doi={10.1016/j.jfa.2022.109829},
}

\bib{MR3959045}{article}{
   author={Brasseur, Julien},
   author={Coville, J\'{e}r\^{o}me},
   author={Hamel, Fran\c{c}ois},
   author={Valdinoci, Enrico},
   title={Liouville type results for a nonlocal obstacle problem},
   journal={Proc. Lond. Math. Soc. (3)},
   volume={119},
   date={2019},
   number={2},
   pages={291--328},
   issn={0024-6115},
   review={\MR{3959045}},
   doi={10.1112/plms.12229},
}

\bib{MR1877265}{article}{
   author={Brezis, Ha\"{\i}m},
   author={Mironescu, Petru},
   title={Gagliardo-Nirenberg, composition and products in fractional
   Sobolev spaces},
   note={Dedicated to the memory of Tosio Kato},
   journal={J. Evol. Equ.},
   volume={1},
   date={2001},
   number={4},
   pages={387--404},
   issn={1424-3199},
   review={\MR{1877265}},
   doi={10.1007/PL00001378},
}

\bib{MR3813967}{article}{
   author={Brezis, Ha\"{\i}m},
   author={Mironescu, Petru},
   title={Gagliardo-Nirenberg inequalities and non-inequalities: the full
   story},
   journal={Ann. Inst. H. Poincar\'{e} C Anal. Non Lin\'{e}aire},
   volume={35},
   date={2018},
   number={5},
   pages={1355--1376},
   issn={0294-1449},
   review={\MR{3813967}},
   doi={10.1016/j.anihpc.2017.11.007},
}

\bib{MR3990737}{article}{
   author={Brezis, Ha\"{\i}m},
   author={Mironescu, Petru},
   title={Where Sobolev interacts with Gagliardo-Nirenberg},
   journal={J. Funct. Anal.},
   volume={277},
   date={2019},
   number={8},
   pages={2839--2864},
   issn={0022-1236},
   review={\MR{3990737}},
   doi={10.1016/j.jfa.2019.02.019},
}
		
\bib{MR3461641}{article}{
   author={Bucur, Claudia},
   title={Some observations on the Green function for the ball in the
   fractional Laplace framework},
   journal={Commun. Pure Appl. Anal.},
   volume={15},
   date={2016},
   number={2},
   pages={657--699},
   issn={1534-0392},
   review={\MR{3461641}},
   doi={10.3934/cpaa.2016.15.657},
}

\bib{MR3469920}{book}{
   author={Bucur, Claudia},
   author={Valdinoci, Enrico},
   title={Nonlocal diffusion and applications},
   series={Lecture Notes of the Unione Matematica Italiana},
   volume={20},
   publisher={Springer, [Cham]; Unione Matematica Italiana, Bologna},
   date={2016},
   pages={xii+155},
   isbn={978-3-319-28738-6},
   isbn={978-3-319-28739-3},
   review={\MR{3469920}},
   doi={10.1007/978-3-319-28739-3},
}

\bib{MR3348929}{article}{
   author={Chen, Wenxiong},
   author={D'Ambrosio, Lorenzo},
   author={Li, Yan},
   title={Some Liouville theorems for the fractional Laplacian},
   journal={Nonlinear Anal.},
   volume={121},
   date={2015},
   pages={370--381},
   issn={0362-546X},
   review={\MR{3348929}},
   doi={10.1016/j.na.2014.11.003},
}

\bib{MR3318148}{article}{
   author={Chen, Wenxiong},
   author={Fang, Yanqin},
   author={Yang, Ray},
   title={Liouville theorems involving the fractional Laplacian on a half
   space},
   journal={Adv. Math.},
   volume={274},
   date={2015},
   pages={167--198},
   issn={0001-8708},
   review={\MR{3318148}},
   doi={10.1016/j.aim.2014.12.013},
}

\bib{comi}{article} {
	AUTHOR = {Comi, Giovanni E.},
	AUTHOR = {Stefani, Giorgio},
	TITLE = {A distributional approach to fractional {S}obolev spaces and
	fractional variation: existence of blow-up},
	JOURNAL = {J. Funct. Anal.},
	FJOURNAL = {Journal of Functional Analysis},
	VOLUME = {277},
	YEAR = {2019},
	NUMBER = {10},
	PAGES = {3373--3435},
	ISSN = {0022-1236,1096-0783},
	MRCLASS = {46E39 (26A33 26B30 35A23 35B44 47B38)},
	MRNUMBER = {4001075},
	MRREVIEWER = {Giuseppe\ Di Fazio},
	DOI = {10.1016/j.jfa.2019.03.011},
	URL = {https://doi.org/10.1016/j.jfa.2019.03.011},
}

\bib{MR1067573}{article}{
   author={Coquand, Thierry},
   author={Stolzenberg, Gabriel},
   title={The Wiener lemma and certain of its generalizations},
   journal={Bull. Amer. Math. Soc. (N.S.)},
   volume={24},
   date={1991},
   number={1},
   pages={1--9},
   issn={0273-0979},
   review={\MR{1067573}},
   doi={10.1090/S0273-0979-1991-15949-5},
}

\bib{MR1314815}{book}{
   author={Cordes, H. O.},
   title={The technique of pseudodifferential operators},
   series={London Mathematical Society Lecture Note Series},
   volume={202},
   publisher={Cambridge University Press, Cambridge},
   date={1995},
   pages={xii+382},
   isbn={0-521-37864-8},
   review={\MR{1314815}},
   doi={10.1017/CBO9780511569425},
}

\bib{MR3624965}{article}{
   author={Cozzi, Matteo},
   title={Interior regularity of solutions of non-local equations in Sobolev
   and Nikol'skii spaces},
   journal={Ann. Mat. Pura Appl. (4)},
   volume={196},
   date={2017},
   number={2},
   pages={555--578},
   issn={0373-3114},
   review={\MR{3624965}},
   doi={10.1007/s10231-016-0586-3},
}

\bib{MR106810}{article}{
   author={Davis, Philip J.},
   title={Leonhard Euler's integral: A historical profile of the gamma
   function},
   journal={Amer. Math. Monthly},
   volume={66},
   date={1959},
   pages={849--869},
   issn={0002-9890},
   review={\MR{106810}},
   doi={10.2307/2309786},
}

\bib{depas}{article}{
   author={De Pas, Francesco},
   author={Dipierro, Serena},
   author={Valdinoci, Enrico},
   title={Fredholm alternative for a general class of nonlocal operators},
   journal={preprint},
}

\bib{MR2944369}{article}{
   author={Di Nezza, Eleonora},
   author={Palatucci, Giampiero},
   author={Valdinoci, Enrico},
   title={Hitchhiker's guide to the fractional Sobolev spaces},
   journal={Bull. Sci. Math.},
   volume={136},
   date={2012},
   number={5},
   pages={521--573},
   issn={0007-4497},
   review={\MR{2944369}},
   doi={10.1016/j.bulsci.2011.12.004},
}
	
\bib{MR4395952}{article}{
   author={Dipierro, Serena},
   author={Ros-Oton, Xavier},
   author={Serra, Joaquim},
   author={Valdinoci, Enrico},
   title={Non-symmetric stable operators: regularity theory and integration
   by parts},
   journal={Adv. Math.},
   volume={401},
   date={2022},
   pages={Paper No. 108321, 100},
   issn={0001-8708},
   review={\MR{4395952}},
   doi={10.1016/j.aim.2022.108321},
}

\bib{MR3596708}{article}{
   author={Dipierro, Serena},
   author={Savin, Ovidiu},
   author={Valdinoci, Enrico},
   title={Boundary behavior of nonlocal minimal surfaces},
   journal={J. Funct. Anal.},
   volume={272},
   date={2017},
   number={5},
   pages={1791--1851},
   issn={0022-1236},
   review={\MR{3596708}},
   doi={10.1016/j.jfa.2016.11.016},
}

\bib{MR3988080}{article}{
   author={Dipierro, Serena},
   author={Savin, Ovidiu},
   author={Valdinoci, Enrico},
   title={Definition of fractional Laplacian for functions with polynomial
   growth},
   journal={Rev. Mat. Iberoam.},
   volume={35},
   date={2019},
   number={4},
   pages={1079--1122},
   issn={0213-2230},
   review={\MR{3988080}},
   doi={10.4171/rmi/1079},
}


\bib{2021arXiv210107941D}{book}{
   author={Dipierro, Serena},
   author={Valdinoci, Enrico},
   title={Elliptic partial differential equations from an elementary
   viewpoint---a fresh glance at the classical theory},
   publisher={World Scientific Publishing Co. Pte. Ltd., Hackensack, NJ},
   date={2024, copyright 2024},
   pages={xvii+651},
   isbn={978-981-12-9079-4},
   isbn={978-981-12-9162-3},
   isbn={978-981-12-9081-7},
   review={\MR{4784613}},
   doi={10.1142/13776}}
   
\bib{DVfourier}{book}{
   author={Dipierro, Serena},
   author={Valdinoci, Enrico},
   title={A primer on Fourier Transform},
   }


\bib{MR2863859}{article}{
   author={Dong, Hongjie},
   author={Kim, Doyoon},
   title={On~$L_p$-estimates for a class of non-local elliptic equations},
   journal={J. Funct. Anal.},
   volume={262},
   date={2012},
   number={3},
   pages={1166--1199},
   issn={0022-1236},
   review={\MR{2863859}},
   doi={10.1016/j.jfa.2011.11.002},
}

\bib{MR2759038}{article}{
   author={Dupaigne, Louis},
   author={Sire, Yannick},
   title={A Liouville theorem for non local elliptic equations},
   conference={
      title={Symmetry for elliptic PDEs},
   },
   book={
      series={Contemp. Math.},
      volume={528},
      publisher={Amer. Math. Soc., Providence, RI},
   },
   date={2010},
   pages={105--114},
   review={\MR{2759038}},
   doi={10.1090/conm/528/10417},
}

\bib{zbMATH05836420}{article}{
 Author = {Dyda, Bart{\l}omiej},
 Title = {Fractional {Hardy} inequality with a remainder term},
 Journal = {Colloq. Math.},
 ISSN = {0010-1354},
 Volume = {122},
 Number = {1},
 Pages = {59--67},
 Year = {2011},
 Language = {English},
 DOI = {10.4064/cm122-1-6},
 Keywords = {26D10,31C25,46E35},
 zbMATH = {5836420},
 Zbl = {1228.26022}
}

\bib{MR2974318}{article}{
   author={Dyda, Bart\l omiej},
   title={Fractional calculus for power functions and eigenvalues of the
   fractional Laplacian},
   journal={Fract. Calc. Appl. Anal.},
   volume={15},
   date={2012},
   number={4},
   pages={536--555},
   issn={1311-0454},
   review={\MR{2974318}},
   doi={10.2478/s13540-012-0038-8},
}

\bib{MR3640641}{article}{
   author={Dyda, Bart\l omiej},
   author={Kuznetsov, Alexey},
   author={Kwa\'{s}nicki, Mateusz},
   title={Fractional Laplace operator and Meijer G-function},
   journal={Constr. Approx.},
   volume={45},
   date={2017},
   number={3},
   pages={427--448},
   issn={0176-4276},
   review={\MR{3640641}},
   doi={10.1007/s00365-016-9336-4},
}

\bib{MR698780}{book}{
   author={Erd\'{e}lyi, Arthur},
   author={Magnus, Wilhelm},
   author={Oberhettinger, Fritz},
   author={Tricomi, Francesco G.},
   title={Higher transcendental functions. Vol. II},
   note={Based on notes left by Harry Bateman;
   Reprint of the 1953 original},
   publisher={Robert E. Krieger Publishing Co., Inc., Melbourne, Fla.},
   date={1981},
   pages={xviii+396},
   isbn={0-89874-069-X},
   review={\MR{698780}},
}


\bib{MR3477075}{article}{
   author={Fall, Mouhamed Moustapha},
   title={Entire $s$-harmonic functions are affine},
   journal={Proc. Amer. Math. Soc.},
   volume={144},
   date={2016},
   number={6},
   pages={2587--2592},
   issn={0002-9939},
   review={\MR{3477075}},
   doi={10.1090/proc/13021},
}

\bib{MR3511811}{article}{
   author={Fall, Mouhamed Moustapha},
   author={Weth, Tobias},
   title={Liouville theorems for a general class of nonlocal operators},
   journal={Potential Anal.},
   volume={45},
   date={2016},
   number={1},
   pages={187--200},
   issn={0926-2601},
   review={\MR{3511811}},
   doi={10.1007/s11118-016-9546-1},
}

\bib{MR2569331}{article}{
   author={Farina, Alberto},
   title={Liouville-type theorems for elliptic problems},
   conference={
      title={Handbook of differential equations: stationary partial
      differential equations. Vol. IV},
   },
   book={
      series={Handb. Differ. Equ.},
      publisher={Elsevier/North-Holland, Amsterdam},
   },
   date={2007},
   pages={61--116},
   review={\MR{2569331}},
   doi={10.1016/S1874-5733(07)80005-2},
}

\bib{MR52018}{article}{
   author={Feller, William},
   title={On a generalization of Marcel Riesz' potentials and the
   semi-groups generated by them},
   journal={Comm. S\'{e}m. Math. Univ. Lund [Medd. Lunds Univ. Mat. Sem.]},
   volume={1952},
   date={1952},
   number={Tome Suppl\'{e}mentaire},
   pages={72--81},
   issn={0373-5613},
   review={\MR{52018}},
}

\bib{NLBFR}{book}{
   author={Fern\'{a}ndez-Real, Xavier},
   author={Ros-Oton, Xavier},
   title={Integro-differential elliptic equations},
   series={Progress in Mathematics},
   volume={350},
   publisher={Birkh\"{a}user/Springer, Cham},
   date={2024},
   pages={xvi+395},
   isbn={978-3-031-54241-1},
   isbn={978-3-031-54242-8},
   review={\MR{4769823}},
   doi={10.1007/978-3-031-54242-8},
}

\bib{FEYNM}{book}{
author={Feynman, Richard P.}
title={Surely you're joking, Mr. Feynman!}
pages={368},
date={2014},
publisher={Random House},
}

\bib{MR1681462}{book}{
   author={Folland, Gerald B.},
   title={Real analysis},
   series={Pure and Applied Mathematics (New York)},
   edition={2},
   note={Modern techniques and their applications;
   A Wiley-Interscience Publication},
   publisher={John Wiley \& Sons, Inc., New York},
   date={1999},
   pages={xvi+386},
   isbn={0-471-31716-0},
   review={\MR{1681462}},
}

\bib{MR3916700}{article}{
   author={Garofalo, Nicola},
   title={Fractional thoughts},
   conference={
      title={New developments in the analysis of nonlocal operators},
   },
   book={
      series={Contemp. Math.},
      volume={723},
      publisher={Amer. Math. Soc., [Providence], RI},
   },
   date={2019},
   pages={1--135},
   review={\MR{3916700}},
   doi={10.1090/conm/723/14569},
}

\bib{MR1269107}{book}{
   author={Grigis, Alain},
   author={Sj\"{o}strand, Johannes},
   title={Microlocal analysis for differential operators},
   series={London Mathematical Society Lecture Note Series},
   volume={196},
   note={An introduction},
   publisher={Cambridge University Press, Cambridge},
   date={1994},
   pages={iv+151},
   isbn={0-521-44986-3},
   review={\MR{1269107}},
   doi={10.1017/CBO9780511721441},
}

\bib{MR1044427}{article}{
   author={Grubb, Gerd},
   title={Pseudo-differential boundary problems in $L_p$ spaces},
   journal={Comm. Partial Differential Equations},
   volume={15},
   date={1990},
   number={3},
   pages={289--340},
   issn={0360-5302},
   review={\MR{1044427}},
   doi={10.1080/03605309908820688},
}

\bib{MR1385196}{book}{
   author={Grubb, Gerd},
   title={Functional calculus of pseudodifferential boundary problems},
   series={Progress in Mathematics},
   volume={65},
   edition={2},
   publisher={Birkh\"{a}user Boston, Inc., Boston, MA},
   date={1996},
   pages={x+522},
   isbn={0-8176-3738-9},
   review={\MR{1385196}},
   doi={10.1007/978-1-4612-0769-6},
}

\bib{MR2453959}{book}{
   author={Grubb, Gerd},
   title={Distributions and operators},
   series={Graduate Texts in Mathematics},
   volume={252},
   publisher={Springer, New York},
   date={2009},
   pages={xii+461},
   isbn={978-0-387-84894-5},
   review={\MR{2453959}},
}

\bib{MR3293447}{article}{
   author={Grubb, Gerd},
   title={Local and nonlocal boundary conditions for~$\mu$-transmission and
   fractional elliptic pseudodifferential operators},
   journal={Anal. PDE},
   volume={7},
   date={2014},
   number={7},
   pages={1649--1682},
   issn={2157-5045},
   review={\MR{3293447}},
   doi={10.2140/apde.2014.7.1649},
}

\bib{MR3276603}{article}{
   author={Grubb, Gerd},
   title={Fractional Laplacians on domains, a development of H\"{o}rmander's
   theory of~$\mu$-transmission pseudodifferential operators},
   journal={Adv. Math.},
   volume={268},
   date={2015},
   pages={478--528},
   issn={0001-8708},
   review={\MR{3276603}},
   doi={10.1016/j.aim.2014.09.018},
}
	
\bib{MR3771838}{article}{
   author={Grubb, Gerd},
   title={Regularity in~$L_p$ Sobolev spaces of solutions to fractional heat
   equations},
   journal={J. Funct. Anal.},
   volume={274},
   date={2018},
   number={9},
   pages={2634--2660},
   issn={0022-1236},
   review={\MR{3771838}},
   doi={10.1016/j.jfa.2017.12.011},
}

\bib{MR4499500}{article}{
   author={Guo, Jingwei},
   author={Zhu, Xiangrong},
   title={Some notes on endpoint estimates for pseudo-differential
   operators},
   journal={Mediterr. J. Math.},
   volume={19},
   date={2022},
   number={6},
   pages={Paper No. 260, 14},
   issn={1660-5446},
   review={\MR{4499500}},
   doi={10.1007/s00009-022-02193-1},
}

\bib{MR388463}{article}{
   author={H\"{o}rmander, Lars},
   title={Fourier integral operators. I},
   journal={Acta Math.},
   volume={127},
   date={1971},
   number={1-2},
   pages={79--183},
   issn={0001-5962},
   review={\MR{388463}},
   doi={10.1007/BF02392052},
}

\bib{MR1996773}{book}{
   author={H\"{o}rmander, Lars},
   title={The analysis of linear partial differential operators. I},
   series={Classics in Mathematics},
   note={Distribution theory and Fourier analysis;
   Reprint of the second (1990) edition [Springer, Berlin;  MR1065993
   (91m:35001a)]},
   publisher={Springer-Verlag, Berlin},
   date={2003},
   pages={x+440},
   isbn={3-540-00662-1},
   review={\MR{1996773}},
   doi={10.1007/978-3-642-61497-2},
}

\bib{MR107788}{article}{
   author={Horv\'{a}th, J.},
   title={On some composition formulas},
   journal={Proc. Amer. Math. Soc.},
   volume={10},
   date={1959},
   pages={433--437},
   issn={0002-9939},
   review={\MR{107788}},
   doi={10.2307/2032862},
}

\bib{MR500133}{article}{
   author={Horv\'{a}th, John},
   title={Composition of hypersingular integral operators},
   journal={Applicable Anal.},
   volume={7},
   date={1977/78},
   number={3},
   pages={171--190},
   issn={0003-6811},
   review={\MR{500133}},
   doi={10.1080/00036817808839189},
}

\bib{MR3617205}{book}{
   author={Hyt\"{o}nen, Tuomas},
   author={van Neerven, Jan},
   author={Veraar, Mark},
   author={Weis, Lutz},
   title={Analysis in Banach spaces. Vol. I. Martingales and
   Littlewood-Paley theory},
   series={Ergebnisse der Mathematik und ihrer Grenzgebiete. 3. Folge. A
   Series of Modern Surveys in Mathematics [Results in Mathematics and
   Related Areas. 3rd Series. A Series of Modern Surveys in Mathematics]},
   volume={63},
   publisher={Springer, Cham},
   date={2016},
   pages={xvi+614},
   isbn={978-3-319-48519-5},
   isbn={978-3-319-48520-1},
   review={\MR{3617205}},
}

\bib{MR1873235}{book}{
   author={Jacob, N.},
   title={Pseudo differential operators and Markov processes. Vol. I},
   note={Fourier analysis and semigroups},
   publisher={Imperial College Press, London},
   date={2001},
   pages={xxii+493},
   isbn={1-86094-293-8},
   review={\MR{1873235}},
   doi={10.1142/9781860949746},
}
		
\bib{MR1917230}{book}{
   author={Jacob, N.},
   title={Pseudo differential operators \& Markov processes. Vol. II},
   note={Generators and their potential theory},
   publisher={Imperial College Press, London},
   date={2002},
   pages={xxii+453},
   isbn={1-86094-324-1},
   review={\MR{1917230}},
   doi={10.1142/9781860949562},
}
			
\bib{MR2158336}{book}{
   author={Jacob, N.},
   title={Pseudo differential operators and Markov processes. Vol. III},
   note={Markov processes and applications},
   publisher={Imperial College Press, London},
   date={2005},
   pages={xxviii+474},
   isbn={1-86094-568-6},
   review={\MR{2158336}},
   doi={10.1142/9781860947155},
}

\bib{MR3917708}{article}{
   author={Kassmann, Moritz},
   author={Mengesha, Tadele},
   author={Scott, James},
   title={Solvability of nonlocal systems related to peridynamics},
   journal={Commun. Pure Appl. Anal.},
   volume={18},
   date={2019},
   number={3},
   pages={1303--1332},
   issn={1534-0392},
   review={\MR{3917708}},
   doi={10.3934/cpaa.2019063},
}

\bib{MR2218073}{book}{
   author={Kilbas, Anatoly A.},
   author={Srivastava, Hari M.},
   author={Trujillo, Juan J.},
   title={Theory and applications of fractional differential equations},
   series={North-Holland Mathematics Studies},
   volume={204},
   publisher={Elsevier Science B.V., Amsterdam},
   date={2006},
   pages={xvi+523},
   isbn={978-0-444-51832-3},
   isbn={0-444-51832-0},
   review={\MR{2218073}},
}
		
\bib{MR4249415}{collection}{
   title={Nonlinear partial differential equations for future applications},
   series={Springer Proceedings in Mathematics \& Statistics},
   volume={346},
   editor={Koike, Shigeaki},
   editor={Kozono, Hideo},
   editor={Ogawa, Takayoshi},
   editor={Sakaguchi, Shigeru},
   note={Sendai, Japan, July 10--28 and October 2--6, 2017;
   Including papers from the Partial Differential Equation and Future
   Applications workshops},
   publisher={Springer, Singapore},
   date={2021},
   pages={vii+261},
   isbn={978-981-33-4822-6},
   isbn={978-981-33-4821-9},
   review={\MR{4249415}},
   doi={10.1007/978-981-33-4822-6},
}

\bib{MR3613319}{article}{
   author={Kwa\'{s}nicki, Mateusz},
   title={Ten equivalent definitions of the fractional Laplace operator},
   journal={Fract. Calc. Appl. Anal.},
   volume={20},
   date={2017},
   number={1},
   pages={7--51},
   issn={1311-0454},
   review={\MR{3613319}},
   doi={10.1515/fca-2017-0002},
}

\bib{MR968206}{article}{
   author={Ishikawa, Yasushi},
   title={Remarks on transmission, antitransmission and antilocal properties
   for sums of stable generators},
   journal={Tsukuba J. Math.},
   volume={12},
   date={1988},
   number={2},
   pages={477--487},
   issn={0387-4982},
   review={\MR{968206}},
   doi={10.21099/tkbjm/1496160844},
}

\bib{MR0350027}{book}{
   author={Landkof, N. S.},
   title={Foundations of modern potential theory},
   series={Die Grundlehren der mathematischen Wissenschaften, Band 180},
   note={Translated from the Russian by A. P. Doohovskoy},
   publisher={Springer-Verlag, New York-Heidelberg},
   date={1972},
   pages={x+424},
   review={\MR{0350027}},
}

\bib{MR3726909}{book}{
   author={Leoni, Giovanni},
   title={A first course in Sobolev spaces},
   series={Graduate Studies in Mathematics},
   volume={181},
   edition={2},
   publisher={American Mathematical Society, Providence, RI},
   date={2017},
   pages={xxii+734},
   isbn={978-1-4704-2921-8},
   review={\MR{3726909}},
   doi={10.1090/gsm/181},
}

\bib{MR3393266}{article}{
   author={Leonori, Tommaso},
   author={Peral, Ireneo},
   author={Primo, Ana},
   author={Soria, Fernando},
   title={Basic estimates for solutions of a class of nonlocal elliptic and
   parabolic equations},
   journal={Discrete Contin. Dyn. Syst.},
   volume={35},
   date={2015},
   number={12},
   pages={6031--6068},
   issn={1078-0947},
   review={\MR{3393266}},
   doi={10.3934/dcds.2015.35.6031},
}

\bib{MR4233278}{article}{
   author={Mengesha, Tadele},
   author={Schikorra, Armin},
   author={Yeepo, Sasikarn},
   title={Calderon-Zygmund type estimates for nonlocal PDE with H\"{o}lder
   continuous kernel},
   journal={Adv. Math.},
   volume={383},
   date={2021},
   pages={Paper No. 107692, 64},
   issn={0001-8708},
   review={\MR{4233278}},
   doi={10.1016/j.aim.2021.107692},
}

\bib{MR313470}{article}{
   author={Mukherjea, A.},
   title={A remark on Tonelli's theorem on integration in product spaces},
   journal={Pacific J. Math.},
   volume={42},
   date={1972},
   pages={177--185},
   issn={0030-8730},
   review={\MR{313470}},
}

\bib{MR328016}{article}{
   author={Mukherjea, A.},
   title={Remark on Tonelli's theorem on integration in product spaces. II},
   journal={Indiana Univ. Math. J.},
   volume={23},
   date={1973/74},
   pages={679--684},
   issn={0022-2518},
   review={\MR{328016}},
   doi={10.1512/iumj.1974.23.23057},
}

\bib{MR346855}{book}{
   author={Narasimhan, Raghavan},
   title={Analysis on real and complex manifolds},
   series={Advanced Studies in Pure Mathematics, Vol. 1},
   edition={2},
   publisher={Masson \& Cie, \'{E}diteurs, Paris; North-Holland Publishing Co.,
   Amsterdam-London American Elsevier Publishing Co., Inc., New York},
   date={1973},
   pages={x+246},
   review={\MR{346855}},
}

\bib{MR2723563}{book}{
   author={Narici, Lawrence},
   author={Beckenstein, Edward},
   title={Topological vector spaces},
   series={Pure and Applied Mathematics (Boca Raton)},
   volume={296},
   edition={2},
   publisher={CRC Press, Boca Raton, FL},
   date={2011},
   pages={xviii+610},
   isbn={978-1-58488-866-6},
   review={\MR{2723563}},
}
	
\bib{MR365002}{article}{
   author={Newman, D. J.},
   title={A simple proof of Wiener's~$1/f$ theorem},
   journal={Proc. Amer. Math. Soc.},
   volume={48},
   date={1975},
   pages={264--265},
   issn={0002-9939},
   review={\MR{365002}},
   doi={10.2307/2040730},
}

\bib{MR0374877}{book}{
   author={Nikol\cprime ski\u{\i}, S. M.},
   title={Approximation of functions of several variables and imbedding
   theorems},
   series={Die Grundlehren der mathematischen Wissenschaften, Band 205},
   note={Translated from the Russian by John M. Danskin, Jr.},
   publisher={Springer-Verlag, New York-Heidelberg},
   date={1975},
   pages={viii+418},
   review={\MR{0374877}},
}

\bib{MR4046549}{article}{
   author={Nowak, Simon},
   title={$H^{s,p}$ regularity theory for a class of nonlocal elliptic
   equations},
   journal={Nonlinear Anal.},
   volume={195},
   date={2020},
   pages={111730, 28},
   issn={0362-546X},
   review={\MR{4046549}},
   doi={10.1016/j.na.2019.111730},
}

\bib{MR124611}{article}{
   author={Peetre, Jaak},
   title={R\'{e}ctification \`a l'article ``Une caract\'{e}risation abstraite des
   op\'{e}rateurs diff\'{e}rentiels''},
   language={French},
   journal={Math. Scand.},
   volume={8},
   date={1960},
   pages={116--120},
   issn={0025-5521},
   review={\MR{124611}},
   doi={10.7146/math.scand.a-10598},
}

\bib{MR0461123}{book}{
   author={Peetre, Jaak},
   title={New thoughts on Besov spaces},
   series={Duke University Mathematics Series, No. 1},
   publisher={Duke University, Mathematics Department, Durham, N.C.},
   date={1976},
   pages={vi+305},
   review={\MR{0461123}},
}
	
\bib{MR3311908}{article}{
   author={Quaas, Alexander},
   author={Xia, Aliang},
   title={Liouville type theorems for nonlinear elliptic equations and
   systems involving fractional Laplacian in the half space},
   journal={Calc. Var. Partial Differential Equations},
   volume={52},
   date={2015},
   number={3-4},
   pages={641--659},
   issn={0944-2669},
   review={\MR{3311908}},
   doi={10.1007/s00526-014-0727-8},
}

\bib{MR3538413}{article}{
   author={Quaas, Alexander},
   author={Xia, Aliang},
   title={A Liouville type theorem for Lane-Emden systems involving the
   fractional Laplacian},
   journal={Nonlinearity},
   volume={29},
   date={2016},
   number={8},
   pages={2279--2297},
   issn={0951-7715},
   review={\MR{3538413}},
   doi={10.1088/0951-7715/29/8/2279},
}

\bib{MR30102}{article}{
   author={Riesz, Marcel},
   title={L'int\'{e}grale de Riemann-Liouville et le probl\`eme de Cauchy},
   language={French},
   journal={Acta Math.},
   volume={81},
   date={1949},
   pages={1--223},
   issn={0001-5962},
   review={\MR{30102}},
   doi={10.1007/BF02395016},
}

\bib{MR3168912}{article}{
   author={Ros-Oton, Xavier},
   author={Serra, Joaquim},
   title={The Dirichlet problem for the fractional Laplacian: regularity up
   to the boundary},
   language={English, with English and French summaries},
   journal={J. Math. Pures Appl. (9)},
   volume={101},
   date={2014},
   number={3},
   pages={275--302},
   issn={0021-7824},
   review={\MR{3168912}},
   doi={10.1016/j.matpur.2013.06.003},
}

\bib{MR3482695}{article}{
   author={Ros-Oton, Xavier},
   author={Serra, Joaquim},
   title={Regularity theory for general stable operators},
   journal={J. Differential Equations},
   volume={260},
   date={2016},
   number={12},
   pages={8675--8715},
   issn={0022-0396},
   review={\MR{3482695}},
   doi={10.1016/j.jde.2016.02.033},
}

\bib{MR1419319}{book}{
   author={Runst, Thomas},
   author={Sickel, Winfried},
   title={Sobolev spaces of fractional order, Nemytskij operators, and
   nonlinear partial differential equations},
   series={De Gruyter Series in Nonlinear Analysis and Applications},
   volume={3},
   publisher={Walter de Gruyter \& Co., Berlin},
   date={1996},
   pages={x+547},
   isbn={3-11-015113-8},
   review={\MR{1419319}},
   doi={10.1515/9783110812411},
}

\bib{RUZPR14}{article}{
   author={Ruzhansky, Michael},
title={Introduction to pseudo-differential operators},
date={2014},
journal={preprint},
adsurl={https://ruzhansky.org/wp-content/uploads/2020/01/pseudos-notes-2014-tcc.pdf},
}

\bib{MR2567604}{book}{
   author={Ruzhansky, Michael},
   author={Turunen, Ville},
   title={Pseudo-differential operators and symmetries},
   series={Pseudo-Differential Operators. Theory and Applications},
   volume={2},
   note={Background analysis and advanced topics},
   publisher={Birkh\"{a}user Verlag, Basel},
   date={2010},
   pages={xiv+709},
   isbn={978-3-7643-8513-2},
   review={\MR{2567604}},
   doi={10.1007/978-3-7643-8514-9},
}

\bib{MR1211419}{book}{
   author={Saint Raymond, Xavier},
   title={Elementary introduction to the theory of pseudodifferential
   operators},
   series={Studies in Advanced Mathematics},
   publisher={CRC Press, Boca Raton, FL},
   date={1991},
   pages={viii+108},
   isbn={0-8493-7158-9},
   review={\MR{1211419}},
}

\bib{MR3839617}{book}{
   author={Sawano, Yoshihiro},
   title={Theory of Besov spaces},
   series={Developments in Mathematics},
   volume={56},
   publisher={Springer, Singapore},
   date={2018},
   pages={xxiii+945},
   isbn={978-981-13-0835-2},
   isbn={978-981-13-0836-9},
   review={\MR{3839617}},
   doi={10.1007/978-981-13-0836-9},
}

\bib{MR3420498}{article}{
   author={Schikorra, Armin},
   author={Shieh, Tien-Tsan},
   author={Spector, Daniel},
   title={$L^p$ theory for fractional gradient PDE with $VMO$ coefficients},
   journal={Atti Accad. Naz. Lincei Rend. Lincei Mat. Appl.},
   volume={26},
   date={2015},
   number={4},
   pages={433--443},
   issn={1120-6330},
   review={\MR{3420498}},
   doi={10.4171/RLM/714},
}

\bib{MR3714833}{article}{
   author={Schikorra, Armin},
   author={Shieh, Tien-Tsan},
   author={Spector, Daniel E.},
   title={Regularity for a fractional $p$-Laplace equation},
   journal={Commun. Contemp. Math.},
   volume={20},
   date={2018},
   number={1},
   pages={1750003, 6},
   issn={0219-1997},
   review={\MR{3714833}},
   doi={10.1142/S0219199717500031},
}

\bib{MR3615452}{article}{
   author={Schikorra, Armin},
   author={Spector, Daniel},
   author={Van Schaftingen, Jean},
   title={An $L^1$-type estimate for Riesz potentials},
   journal={Rev. Mat. Iberoam.},
   volume={33},
   date={2017},
   number={1},
   pages={291--303},
   issn={0213-2230},
   review={\MR{3615452}},
   doi={10.4171/RMI/937},
}

\bib{MR891189}{book}{
   author={Schmeisser, Hans-J\"{u}rgen},
   author={Triebel, Hans},
   title={Topics in Fourier analysis and function spaces},
   series={A Wiley-Interscience Publication},
   publisher={John Wiley \& Sons, Ltd., Chichester},
   date={1987},
   pages={300},
   isbn={0-471-90895-9},
   review={\MR{891189}},
}

\bib{MR0209834}{book}{
   author={Schwartz, Laurent},
   title={Th\'{e}orie des distributions},
   language={French},
   series={Publications de l'Institut de Math\'{e}matique de l'Universit\'{e} de
   Strasbourg, IX-X},
   note={Nouvelle \'{e}dition, enti\'{e}rement corrig\'{e}e, refondue et augment\'{e}e},
   publisher={Hermann, Paris},
   date={1966},
   pages={xiii+420},
   review={\MR{0209834}},
}

\bib{MR0259335}{article}{
   author={Seeley, R.},
   title={Topics in pseudo-differential operators},
   conference={
      title={Pseudo-Diff. Operators},
      address={C.I.M.E., Stresa},
      date={1968},
   },
   book={
      publisher={Edizioni Cremonese, Rome},
   },
   date={1969},
   pages={167--305},
   review={\MR{0259335}},
}

\bib{MR3385173}{article}{
   author={Serra, Joaquim},
   title={Regularity for fully nonlinear nonlocal parabolic equations with
   rough kernels},
   journal={Calc. Var. Partial Differential Equations},
   volume={54},
   date={2015},
   number={1},
   pages={615--629},
   issn={0944-2669},
   review={\MR{3385173}},
   doi={10.1007/s00526-014-0798-6},
}

\bib{shieh}{article}{
	AUTHOR = {Shieh, Tien-Tsan},
	AUTHOR = {Spector, Daniel E.},
	TITLE = {On a new class of fractional partial differential equations},
	JOURNAL = {Adv. Calc. Var.},
	FJOURNAL = {Advances in Calculus of Variations},
	VOLUME = {8},
	YEAR = {2015},
	NUMBER = {4},
	PAGES = {321--336},
	ISSN = {1864-8258,1864-8266},
	MRCLASS = {35R11 (26A33 49J45)},
	MRNUMBER = {3403430},
	DOI = {10.1515/acv-2014-0009},
	URL = {https://doi.org/10.1515/acv-2014-0009},
}

\bib{shiehtwo}{article}{
	AUTHOR = {Shieh, Tien-Tsan},
	AUTHOR = {Spector, Daniel E.},
	TITLE = {On a new class of fractional partial differential equations
	{II}},
	JOURNAL = {Adv. Calc. Var.},
	FJOURNAL = {Advances in Calculus of Variations},
	VOLUME = {11},
	YEAR = {2018},
	NUMBER = {3},
	PAGES = {289--307},
	ISSN = {1864-8258,1864-8266},
	MRCLASS = {35R11 (26A33 49J45)},
	MRNUMBER = {3819528},
	DOI = {10.1515/acv-2016-0056},
	URL = {https://doi.org/10.1515/acv-2016-0056},
}

\bib{MR1852334}{book}{
   author={Shubin, M. A.},
   title={Pseudodifferential operators and spectral theory},
   edition={2},
   note={Translated from the 1978 Russian original by Stig I. Andersson},
   publisher={Springer-Verlag, Berlin},
   date={2001},
   pages={xii+288},
   isbn={3-540-41195-X},
   review={\MR{1852334}},
   doi={10.1007/978-3-642-56579-3},
}

\bib{silhavi}{article} {
	AUTHOR = {\v{S}ilhav\'{y}, M.},
	TITLE = {Fractional vector analysis based on invariance requirements
	(critique of coordinate approaches)},
	JOURNAL = {Contin. Mech. Thermodyn.},
	FJOURNAL = {Continuum Mechanics and Thermodynamics},
	VOLUME = {32},
	YEAR = {2020},
	NUMBER = {1},
	PAGES = {207--228},
	ISSN = {0935-1175,1432-0959},
	MRCLASS = {53A45 (26A33 35R11)},
	MRNUMBER = {4048032},
	MRREVIEWER = {E.\ Capelas\ de Oliveira},
	DOI = {10.1007/s00161-019-00797-9},
	URL = {https://doi.org/10.1007/s00161-019-00797-9},
}
	
\bib{MR2281927}{article}{
   author={Srinivasan, Gopala Krishna},
   title={The gamma function: an eclectic tour},
   journal={Amer. Math. Monthly},
   volume={114},
   date={2007},
   number={4},
   pages={297--315},
   issn={0002-9890},
   review={\MR{2281927}},
   doi={10.1080/00029890.2007.11920418},
}

\bib{MR3012681}{article}{
   author={Srinivasan, Gopala Krishna},
   title={Dedekind's proof of Euler's reflection formula via ODEs},
   journal={Math. Newsl.},
   volume={21},
   date={2011},
   number={3},
   pages={82--83},
   issn={0971-1694},
   review={\MR{3012681}},
}

\bib{MR2587583}{article}{
   author={Stefanov, Atanas},
   title={Pseudodifferential operators with rough symbols},
   journal={J. Fourier Anal. Appl.},
   volume={16},
   date={2010},
   number={1},
   pages={97--128},
   issn={1069-5869},
   review={\MR{2587583}},
   doi={10.1007/s00041-009-9079-8},
}

\bib{MR0290095}{book}{
   author={Stein, Elias M.},
   title={Singular integrals and differentiability properties of functions},
   series={Princeton Mathematical Series, No. 30},
   publisher={Princeton University Press, Princeton, N.J.},
   date={1970},
   pages={xiv+290},
   review={\MR{0290095}},
}

\bib{MR3965397}{article}{
   author={Stinga, Pablo Ra\'{u}l},
   title={User's guide to the fractional Laplacian and the method of
   semigroups},
   conference={
      title={Handbook of fractional calculus with applications. Vol. 2},
   },
   book={
      publisher={De Gruyter, Berlin},
   },
   date={2019},
   pages={235--265},
   review={\MR{3965397}},
}

\bib{MR2000535}{book}{
   author={Strichartz, Robert S.},
   title={A guide to distribution theory and Fourier transforms},
   note={Reprint of the 1994 original [CRC, Boca Raton;  MR1276724
   (95f:42001)]},
   publisher={World Scientific Publishing Co., Inc., River Edge, NJ},
   date={2003},
   pages={x+226},
   isbn={981-238-430-8},
   review={\MR{2000535}},
   doi={10.1142/5314},
}

\bib{MR163159}{article}{
   author={Taibleson, Mitchell H.},
   title={On the theory of Lipschitz spaces of distributions on Euclidean
   $n$-space. I. Principal properties},
   journal={J. Math. Mech.},
   volume={13},
   date={1964},
   pages={407--479},
   review={\MR{163159}},
}

\bib{MR3308364}{book}{
   author={Taira, Kazuaki},
   title={Semigroups, boundary value problems and Markov processes},
   series={Springer Monographs in Mathematics},
   edition={2},
   publisher={Springer, Heidelberg},
   date={2014},
   pages={xx+716},
   isbn={978-3-662-43695-0},
   isbn={978-3-662-43696-7},
   review={\MR{3308364}},
   doi={10.1007/978-3-662-43696-7},
}

\bib{MR618463}{book}{
   author={Taylor, Michael E.},
   title={Pseudodifferential operators},
   series={Princeton Mathematical Series, No. 34},
   publisher={Princeton University Press, Princeton, N.J.},
   date={1981},
   pages={xi+452},
   isbn={0-691-08282-0},
   review={\MR{618463}},
}
		
\bib{MR597144}{book}{
   author={Tr\`eves, Fran\c{c}ois},
   title={Introduction to pseudodifferential and Fourier integral operators.
   Vol. 1},
   series={University Series in Mathematics},
   note={Pseudodifferential operators},
   publisher={Plenum Press, New York-London},
   date={1980},
   pages={xxvii+299+xi},
   isbn={0-306-40403-6},
   review={\MR{597144}},
}

\bib{MR597145}{book}{
   author={Tr\`eves, Fran\c{c}ois},
   title={Introduction to pseudodifferential and Fourier integral operators.
   Vol. 2},
   series={University Series in Mathematics},
   note={Fourier integral operators},
   publisher={Plenum Press, New York-London},
   date={1980},
   pages={xiv+301--649+xi},
   isbn={0-306-40404-4},
   review={\MR{597145}},
}

\bib{MR2296978}{book}{
   author={Tr\`eves, Fran\c{c}ois},
   title={Topological vector spaces, distributions and kernels},
   note={Unabridged republication of the 1967 original},
   publisher={Dover Publications, Inc., Mineola, NY},
   date={2006},
   pages={xvi+565},
   isbn={0-486-45352-9},
   review={\MR{2296978}},
}

\bib{MR4436039}{book}{
   author={Treves, Fran\c{c}ois},
   title={Analytic partial differential equations},
   series={Grundlehren der mathematischen Wissenschaften [Fundamental
   Principles of Mathematical Sciences]},
   volume={359},
   publisher={Springer, Cham},
   date={2022},
   pages={xiii+1228},
   isbn={978-3-030-94054-6},
   isbn={978-3-030-94055-3},
   review={\MR{4436039}},
   doi={10.1007/978-3-030-94055-3},
}

\bib{MR1163193}{book}{
   author={Triebel, Hans},
   title={Theory of function spaces. II},
   series={Monographs in Mathematics},
   volume={84},
   publisher={Birkh\"{a}user Verlag, Basel},
   date={1992},
   pages={viii+370},
   isbn={3-7643-2639-5},
   review={\MR{1163193}},
   doi={10.1007/978-3-0346-0419-2},
}

\bib{MR1328645}{book}{
   author={Triebel, Hans},
   title={Interpolation theory, function spaces, differential operators},
   edition={2},
   publisher={Johann Ambrosius Barth, Heidelberg},
   date={1995},
   pages={532},
   isbn={3-335-00420-5},
   review={\MR{1328645}},
}

\bib{MR2250142}{book}{
   author={Triebel, Hans},
   title={Theory of function spaces. III},
   series={Monographs in Mathematics},
   volume={100},
   publisher={Birkh\"{a}user Verlag, Basel},
   date={2006},
   pages={xii+426},
   isbn={978-3-7643-7581-2},
   isbn={3-7643-7581-7},
   review={\MR{2250142}},
}

\bib{MR2570437}{article}{
   author={Wadade, Hidemitsu},
   title={Remarks on the Gagliardo-Nirenberg type inequality in the Besov
   and the Triebel-Lizorkin spaces in the limiting case},
   journal={J. Fourier Anal. Appl.},
   volume={15},
   date={2009},
   number={6},
   pages={857--870},
   issn={1069-5869},
   review={\MR{2570437}},
   doi={10.1007/s00041-009-9069-x},
}
	
\bib{MR1349110}{book}{
   author={Watson, G. N.},
   title={A treatise on the theory of Bessel functions},
   series={Cambridge Mathematical Library},
   note={Reprint of the second (1944) edition},
   publisher={Cambridge University Press, Cambridge},
   date={1995},
   pages={viii+804},
   isbn={0-521-48391-3},
   review={\MR{1349110}},
}

\bib{MR3381284}{book}{
   author={Wheeden, Richard L.},
   author={Zygmund, Antoni},
   title={Measure and integral},
   series={Pure and Applied Mathematics (Boca Raton)},
   edition={2},
   note={An introduction to real analysis},
   publisher={CRC Press, Boca Raton, FL},
   date={2015},
   pages={xvii+514},
   isbn={978-1-4987-0289-8},
   review={\MR{3381284}},
}
	
\bib{MR1503035}{article}{
   author={Wiener, Norbert},
   title={Tauberian theorems},
   journal={Ann. of Math. (2)},
   volume={33},
   date={1932},
   number={1},
   pages={1--100},
   issn={0003-486X},
   review={\MR{1503035}},
   doi={10.2307/1968102},
}

\bib{MR0107776}{book}{
   author={Zygmund, A.},
   title={Trigonometric series. 2nd ed. Vols. I, II},
   publisher={Cambridge University Press, New York},
   date={1959},
   pages={Vol. I. xii+383 pp.; Vol. II. vii+354},
   review={\MR{0107776}},
}
	
\end{biblist}
\end{bibdiv}

\end{document}